\newtheorem*{not*}{{ Notation}}
\newtheorem{defi}{ Definition}[subsection]
\newtheorem*{defi*}{Definition}
\newtheorem{teo}[defi]{ Theorem}
\newtheorem*{teo*}{{ Theorem}}
\newtheorem{intteo}{Theorem}
\newtheorem{prop}[defi]{ Proposition}
\newtheorem*{prop*}{{ Proposition}}
\newtheorem{obs}[defi]{{Remark}}
\newtheorem*{Lemma*}{Lemma}
\newtheorem{Lemma}[defi]{Lemma}
\newtheorem*{coro*}{Corollary}
\newtheorem{coro}[defi]{Corollary}
\newcommand{\tarc}{\mbox{\large$\frown$}}
\newcommand{\arc}[2][-3ex]{{#2}{\kern #1{\raisebox{1.5ex}{\tarc}}}}
\newcommand{\op}{\operatorname{op}}
\newcommand{\Sp}{\operatorname{Sp}}
\newcommand{\Spf}{\operatorname{Spf}}
\newcommand{\End}{\operatorname{End}}
\newcommand{\Hom}{\operatorname{Hom}}
\newcommand{\Der}{\operatorname{Der}}
\newcommand{\Mod}{\operatorname{Mod}}
\newcommand{\D}{\mathcal{D}}
\newcommand{\OX}{\mathcal{O}}
\newcommand{\Bi}{\operatorname{Bi}}
\newcommand{\gr}{\operatorname{gr}}
\newcommand{\Id}{\operatorname{Id}}
\newcommand{\Indban}{\operatorname{Ind(Ban}_K)}
\newcommand{\EDelta}{\mathbb{L}\Delta^E_*}
\newcommand{\Ifunct}{\mathbb{L}\Delta_*^{\operatorname{S}}}
\newcommand{\RIfunct}{\mathbb{L}\Delta_*^{\operatorname{S}_r}}
\title{Hochschild (Co)-homology of D-cap modules on rigid spaces I}
\author{Fernando Peña Vázquez}
\begin{document}

\begin{abstract}
We introduce a formalism of Hochschild (co)-homology for D-cap modules on a smooth rigid analytic space X, based on the homological tools of Ind-Banach D-cap modules. We introduce several categories of D-cap bimodules for which this theory is well-behaved. Among these, the most important example is the category of diagonal C-complexes. We give an explicit calculation of the Hochschild complex  for diagonal C-complexes, and show that the Hochschild complex of D-cap is canonically isomorphic to the de Rham complex of X. In particular, we obtain a Hodge-de Rham spectral sequence converging to the Hochschild cohomology groups of D-cap. We obtain explicit formulas relating the Hochschild cohomology and homology of a given diagonal C-complex.
\end{abstract}
\maketitle
\tableofcontents
\section{Introduction}
Let $K$ be a complete non-archimedean extension of $\mathbb{Q}_p$, and let $X$ be a smooth and separated rigid analytic space over $K$. We let $\wideparen{\D}_X$ be its sheaf of completed $p$-adic differential operators, as defined by Ardakov-Wadsley in \cite{ardakov2019}.  This is the first in a series of papers \cite{p-adicCheralg},\cite{p-adicCatO},\cite{HochschildDmodules2},\cite{cychomDmod}, in which we convey a systematic study of $\wideparen{\D}_X$ and its relevant categories of modules (\emph{i.e.} co-admissible, $\mathcal{C}$-complexes \emph{etc}) employing techniques from non-commutative geometry and deformation theory.\\
In particular, the main goal of this paper is establishing the foundations of an \emph{analytic} theory of Hochschild (co)-homology for sheaves of $\wideparen{\D}_X$-modules.\bigskip

In recent years, there has been a great push towards developing a theory of \emph{quasi-coherent} modules adapted to analytic geometry. In particular, the aim is obtaining categories of modules which incorporate the analytical side of the picture, while allowing us to work in a completely algebraic setting. Namely,
there is the theory of condensed mathematics and analytic spaces of Clausen-Scholze \cite{analytic}, and the theory of Ben-Bassat, Kremnizer \emph{et al.}, based upon sheaves of Ind-Banach spaces \cite{BBK}. This is the one that this paper is closer to. The new influx of techniques and ideas that these theories have brought in provides us with sufficient homological algebra to adapt some of the classical results of Hochschild (co)-homology to the rigid analytic setting. In particular, our goal is setting up a cohomology theory for sheaves of Ind-Banach $\wideparen{\D}_X$-modules which retains the deformation-theoretic aspects of the classical Hochschild (co)-homology, but also incorporates the $p$-adic analytic aspects of the objects we are working with. Thus, we obtain a  technical framework in which the algebraic theory of deformations  and the analytic theory of perturbations can be studied via the same formalism.\bigskip

Before moving on to the specific contents of the paper, let us give an introduction to some of these notions:
The use of homological techniques in the study of associative algebras can be dated back to  G. Hochschild in the 1940's. In \cite{Hochschild1}, Hochschild gives the first definition of cohomology of an associative algebra. He does so in terms of the \emph{Hochschild cohomology complex}. Let $K$ be a field of characteristic zero, $A$ be a $K$-algebra, and $A^e=A\otimes_KA^{\op}$ be the enveloping algebra of $A$. The bar complex is a 
canonical resolution of $A$ as an $A^e$-module. Namely, it is the following complex of $A^e$-modules:
\begin{equation*}
    B(A)^{\bullet}:=\left( \cdots \rightarrow A\otimes_KA\otimes_KA\xrightarrow[]{d_2} A\otimes_KA\xrightarrow[]{d_1} A\rightarrow 0 \right),
\end{equation*}
where the boundary maps are given by the formula:
\begin{equation*}
d_n(a_0\otimes \cdots \otimes a_{n+1})=\sum_{i=0}^n (-1)^ia_0\otimes \cdots \otimes a_ia_{i+1}\otimes \cdots \otimes a_{n+1}.
\end{equation*}
The Hochschild cohomology complex is obtained by applying $\Hom_{A^e}(-,A)$ to the bar resolution $B(A)^{\bullet}$:
\begin{equation*}
    \operatorname{HH}^{\bullet}(A):=\left( 0 \rightarrow A\rightarrow \Hom_K(A,A)\rightarrow \cdots \rightarrow  \Hom_K(A^{\otimes n},A) \rightarrow \cdots \right).
\end{equation*}
The cohomology groups of this complex are called the Hochschild cohomology groups of $A$. The explicit description of $\operatorname{HH}^{\bullet}(A)$ allowed Hochschild to give tangible interpretations of the Hochschild cohomology groups in low degrees. In particular, $\operatorname{HH}^{0}(A)= Z(A)$, and $\operatorname{HH}^{1}(A)$ is isomorphic to the $K$-vector space of outer derivations of $A$. Even better, M. Gerstenhaber showed in \cite{deformations} that Hochschild cohomology is closely related to the deformation theory of associative algebras:
\begin{defi*}
We define the following concepts:
    \begin{enumerate}[label=(\roman*)]
        \item An infinitesimal deformation of $A$ is a $K[t]/t^2$-algebra structure on:
        \begin{equation*}
            A':=A\otimes_K K[t]/t^2,
        \end{equation*}
        which is isomorphic to $A$ after reducing modulo $t$.
        \item A formal deformation of $A$ is a $K[[t]]$-algebra structure on:
        \begin{equation*}
            \widehat{A}:=\varprojlim_n A\otimes_K K[t]/t^n,
        \end{equation*}
        which is isomorphic to $A$ after reducing modulo $t$.
    \end{enumerate}
\end{defi*}
As shown by Gerstenhaber, $\operatorname{HH}^{2}(A)$ parametrizes the isomorphism classes of infinitesimal deformations of $A$.  Furthermore, the obstructions to lifting an infinitesimal deformation of $A$ to a formal deformation are classified by $\operatorname{HH}^{3}(A)$.\bigskip

Similarly, the \emph{Hochschild homology complex} of $A$, which we will write $\operatorname{HH}_{\bullet}(A)$, can be obtained from the bar resolution by regarding $A$ as a right $A^e$-module, and applying $A\otimes_{A^e}-$ to $B(A)^{\bullet}$. Again, $\operatorname{HH}_{\bullet}(A)$ can be described explicitly, and there are concrete interpretations for the Hochschild homology groups in low degrees. More generally, we can consider Hochschild (co)-homology complexes with coefficients in any $A^e$-module $M$, which we write $\operatorname{HH}^{\bullet}(A,M)$, and $\operatorname{HH}_{\bullet}(A,M)$. These are obtained from the bar resolution of $A$ by applying $\Hom_{A^e}(-,M)$, and $M\otimes_{A^e}-$ respectively. For example, given an $A$-module $M$, we obtain a canonical structure of $A^e$-module on $\End_K(M)$. The Hochschild cohomology groups $\operatorname{HH}^{\bullet}(\End_K(M))$ are specially interesting, as they govern the deformation theory of $M$. Let us introduce the notion of a deformation of a module:
\begin{defi*}
Let $A$ be an associative $K$-algebra, and 
$R$ be a commutative augmented $K$-algebra, with augmentation ideal $R^+$. 
A deformation of an $A$-module $M$ over $R$ is an $A$-module structure on $M_R:=M\otimes_KR$, commuting with the $R$-action, and such that  $M_R/R^+\cong M$
 as $A$-modules. 
\end{defi*}
For example, if $R=K[t]/t^2$, we  call the $R$-deformations of $M$ the infinitesimal deformations of $M$, and if $R=K[t]/t^3$, we  call the $R$-deformations of $M$ the  second order deformations of $M$. Again, using the structure of the Hochschild cohomology complex, it can be shown that the isomorphism classes of infinitesimal deformations are classified by $\operatorname{HH}^{1}(\End_K(M))$, and the obstructions to extending a deformation of $M$ to a second order deformation lie in $\operatorname{HH}^{2}(\End_K(M))$.\bigskip

Another interesting feature of Hochschild cohomology is that it generalizes  many cohomology theories associated to relevant objects in representation theory. For instance, let $\mathfrak{g}$ be a finite-dimensional $K$-Lie algebra, and $M$ be a $U(\mathfrak{g})$-bimodule. Then there is a canonical isomorphism:
\begin{equation*}
    \operatorname{HH}^{\bullet}(U(\mathfrak{g}),M)=\operatorname{H}^{\bullet}(\mathfrak{g}, M_{\operatorname{ad}}),
\end{equation*}
where $\operatorname{H}^{\bullet}(\mathfrak{g},-)$ is the Lie algebra cohomology, and the $\mathfrak{g}$-representation $M_{\operatorname{ad}}$ is defined by the formula:
\begin{equation*}
    \operatorname{ad}(x)m=xm-mx, \textnormal{ for } x\in \mathfrak{g}, \textnormal{ and }m\in M.
\end{equation*}
Let $V$ be a $K$-linear $\mathfrak{g}$-representation. As above, we obtain a $U(\mathfrak{g})$-bimodule structure on $\End_K(V)$, and the previous formula allows us to use Hochschild cohomology to relate Lie algebra cohomology and deformation theory of $\mathfrak{g}$-representations. Furthermore, if $\mathfrak{g}$ is a finite-dimensional reductive $K$-Lie algebra, then we have the following formula:
\begin{equation*}
    \operatorname{HH}^{\bullet}(U(\mathfrak{g}))=Z(U(\mathfrak{g}))\otimes_K\operatorname{H}^{\bullet}(\mathfrak{g}, K),
\end{equation*}
which showcases that Hochschild cohomology and Lie algebra cohomology are intimately related. Finally, let us now give a more geometrically flavored example: Let $G$ be a linear algebraic group over $K$, $\OX(G)$ be the commutative Hopf algebra of regular functions on $G$, and $V$ be a finite-dimensional $K$-linear representation of $G$. In this situation, $V$ has a canonical structure as a co-module over $\OX(G)$. Thus, its dual $V^*$ is a module over $\OX(G)^*$. As before, this endows $\End_K(V)$ with an $\OX(G)^*$-bimodule structure, and we have a canonical isomorphism:
\begin{equation*}
    \operatorname{HH}^{\bullet}(\OX(G)^*, \End_K(V))=\operatorname{H}^{\bullet}(G,\End_K(V)_{\operatorname{ad}}),
\end{equation*}
where $\End_K(V)_{\operatorname{ad}}$ is the adjoint representation of $G$ on $\End_K(V)$, and $\operatorname{H}^{\bullet}(G,-)$ stands for group cohomology (\emph{i.e.} the derived functor of the invariants). Thus, the Hochschild cohomology of $\OX(G)^*$ determines the deformation theory of the $K$-linear $G$-representations, and this is analogous to the deformation theory of vector bundles over the classifying stack $BG$.\bigskip

Ever since their inception, Hochschild cohomology and homology have become central tools in modern mathematics, with applications ranging from deformation theory, algebraic topology, algebraic geometry, functional analysis, and representation theory. The naive definitions in terms of explicit complexes have been replaced by categorical methods, which allow Hochschild (co)-homology to be defined in greater generality. For instance, even if the definition using the bar resolution does not generalize to a lot of settings, a quick glance at the definition leads us to deduce the following identities:
\begin{equation*}
    \operatorname{HH}^{\bullet}(A,M)=R\Hom_{A^e}(A,M), \quad \operatorname{HH}_{\bullet}(A,M)= A\otimes_{A^e}^{\mathbb{L}}M.
\end{equation*}
Thus, we may extend the definition of Hochschild (co)-homology to any closed symmetric monoidal abelian category that behaves well with respect to homological algebra. In particular, this approach allows us to consider Hochschild (co)-homology of sheaves of modules on ringed spaces, and of quasi-coherent modules on an algebraic variety. This theory has been thoroughly studied in recent years, and has proven to be a powerful tool in the study of certain deformation problems in algebraic geometry. See, for instance, the work of M. Kontsevich on deformation-quantization of Poisson structures on $C^{\infty}$-manifolds \cite{KontsevichCinf}, as well as  A. Yukutieli generalization for  algebraic varieties over a field of characteristic zero \cite{yukudeformquant}, and his work on the continuous Hochschild cochain complex of a scheme \cite{contHochscomp}.\bigskip

Up until now, we have only painted the algebraic part of the picture. That is, the part concerning algebraic deformation theory. Let us now describe its analytic counterpart, in the form of perturbation theory. Assume that $K$ is a complete non-archimedean field of characteristic zero, and let $\mathscr{A}$ be a unital associative Banach $K$-algebra. We  let $-\widehat{\otimes}_K-$ denote the complete tensor product of Banach $K$-vector spaces, and $\underline{\Hom}_K(-,-)$ denote the 
space of continuous $K$-linear maps, which we regard as a Banach space with respect to the operator norm. Before introducing perturbation theory, let us first describe the cohomology theory that parametrizes it. This will turn out to be just an analytic version of the Hochschild (co)-homology described above. As  we are now working in a purely analytic framework, we need all of our algebras to be of analytic nature. Hence, the first step is modifying the definition of the enveloping algebra of $\mathscr{A}$:
\begin{equation*}
    \mathscr{A}^e:=\mathscr{A}\widehat{\otimes}_K\mathscr{A}^{\op}.
\end{equation*}
As in the algebraic situation, the category of Banach $\mathscr{A}^e$-modules is equivalent to the category of Banach $\mathscr{A}$-bimodules. In particular, it follows that $\mathscr{A}$ is an $\mathscr{A}^e$-module. Furthermore, there is an analytic version of the bar resolution:
\begin{equation*}
    B(\mathscr{A})^{\bullet}:=\left(\cdots\rightarrow \mathscr{A}^e\widehat{\otimes}_K\mathscr{A}\widehat{\otimes}_K\mathscr{A}\rightarrow \mathscr{A}^e\widehat{\otimes}_K\mathscr{A}\rightarrow\mathscr{A}^e\right),
\end{equation*}
which is an exact complex of Banach $\mathscr{A}^e$-modules satisfying that the augmented complex:
\begin{equation*}
    B(\mathscr{A})^{\bullet}\rightarrow \mathscr{A},
\end{equation*}
is split exact as a complex of Banach $K$-vector spaces. Applying $\underline{\Hom}_{\mathscr{A}^e}(-,\mathscr{A})$ to $B(\mathscr{A})^{\bullet}$, we obtain the following chain complex:
\begin{equation*}
   \mathcal{L}(\mathscr{A})^{\bullet}:= \left(0\rightarrow \mathscr{A}\xrightarrow[]{\delta_0^{\mathscr{A}}} \underline{\Hom}_{K}(\mathscr{A},\mathscr{A})\xrightarrow[]{\delta_1^{\mathscr{A}}} \underline{\Hom}_{K}(\widehat{\otimes}_K^2\mathscr{A},\mathscr{A})\xrightarrow[]{\delta_2^{\mathscr{A}}} \cdots\right).
\end{equation*}
This can be regarded as an analytic version of the Hochschild cohomology complex discussed above. However, the key difference between the algebraic and analytic setups is that $\operatorname{Ban}_K$, the category of Banach $K$-vector spaces, is not an abelian category. Hence, it is not possible to define the 
cohomology groups of $\mathcal{L}(\mathscr{A})^{\bullet}$ in  $\operatorname{Ban}_K$ directly. For instance, given an injective map of Banach spaces $V\rightarrow W$, the quotient space $W/V$ is Banach if and only if the image of $V$ in $W$ is closed. More generally, the cohomology of a complex of Banach spaces is only well-defined if all the morphisms in the complex are strict, that is, if each morphism is closed.\bigskip

In order to overcome this difficulty, one could use the fact that $\operatorname{Ban}_K$ is a quasi-abelian category to compute the cohomology groups of $\mathcal{L}(\mathscr{A})^{\bullet}$ in the left heart $LH(\operatorname{Ban}_K)$. The category $LH(\operatorname{Ban}_K)$ is an abelian category equipped with a functor $I:\operatorname{Ban}_K\rightarrow LH(\operatorname{Ban}_K)$
which is fully faithful, admits a right adjoint, and satisfies that a complex of Banach spaces is strict and exact if and only if its image under $I$ is an exact complex. This perspective is better behaved homologically, since it allows us to translate analytic problems into algebraic problems in an abelian category. In fact, our approach to Hochschild (co)-homology is based upon this construction. However, the cohomology groups that we obtain via this procedure are highly inexplicit. Specifically, they are objects in $LH(\operatorname{Ban}_K)$ which need not be Banach spaces. In fact, the do not have an underlying vector space.\bigskip

Instead of this abstract approach, one can instead define Hochschild cohomology for a Banach algebra in a more naive way. In particular, following the work of J. Taylor in \cite{taylor1972homology}, the cohomology groups of a $\mathscr{A}$ are defined as the following cokernel in the category of semi-normed spaces:
\begin{equation*}
    \operatorname{HH}^n_{\operatorname{T}}(\mathscr{A}):=\operatorname{Coker}(\mathcal{L}(\mathscr{A})^{n-1}\rightarrow \operatorname{Ker}(\delta_n^{\mathscr{A}})).
\end{equation*}
In other words, $\operatorname{HH}^n_{\operatorname{T}}(\mathscr{A})$ is the $n$-th cohomology group of $\mathcal{L}(\mathscr{A})^{\bullet}$ regarded as a complex of $K$-vector spaces, together with the semi-norm given by the surjection $\operatorname{Ker}(\delta_n^{\mathscr{A}})\rightarrow \operatorname{HH}^n_{\operatorname{T}}(\mathscr{A})$. Although the homological properties of this definition are lackluster (especially if one wishes to consider more general topological algebras), its main advantage lies in the fact that the differentials $\delta_n^{\mathscr{A}}$ have an explicit description for all $n\geq 0$. In particular, the cohomology groups $\operatorname{HH}^n_{\operatorname{T}}(\mathscr{A})$ can be given explicit meaning in low degrees. Namely, one can define outer derivations and infinitesimal deformations for Banach algebras, and it can be shown that they are classified by $\operatorname{HH}^1_{\operatorname{T}}(\mathscr{A})$  and $\operatorname{HH}^2_{\operatorname{T}}(\mathscr{A})$ respectively.\\

Let us mention at this point that the cohomology groups $\operatorname{HH}^{\bullet}_{\operatorname{T}}(-)$ can be defined for more general locally convex algebras. One just needs to mind some technical subtleties regarding the behavior of the completed tensor product $\widehat{\otimes}_K$ and the inner homomorphism functor $\underline{\Hom}_K(-,-)$. This is the level of generality considered by J. Taylor in \cite{taylor1972homology}. However, for the purpose of this introduction, it is enough to mention that everything we have discussed thus far carries over verbatim 
if $\mathscr{A}$ is a Fréchet algebra.\bigskip

Now that the board has been set, we can  start discussing perturbation theory: Let $m:\mathscr{A}\times \mathscr{A}\rightarrow \mathscr{A}$ denote the multiplication in $\mathscr{A}$. In order to clarify notation, we let  $(\mathscr{A},m)$ be $\mathscr{A}$ with its Banach algebra structure given by $m$. Notice that $m$ can be regarded as an element in the space:
\begin{equation*}
    \mathcal{L}(\mathscr{A})^{2}=\underline{\Hom}_K(\mathscr{A}\widehat{\otimes}_K \mathscr{A},\mathscr{A}),
\end{equation*}
which is a Banach space under the operator norm. Let $m':\mathscr{A}\times \mathscr{A}\rightarrow \mathscr{A}$ be another multiplication map making $\mathscr{A}$ a Banach algebra. This is also an element in $\mathcal{L}(\mathscr{A})^{2}$, so we can consider the operator norm $\vert m-m' \vert$.  The main idea behind perturbation theory is studying which properties of $(\mathscr{A},m)$ are stable under small perturbations. That is, for which properties of $(\mathscr{A},m)$ there is some $\epsilon > 0$ such that if $(\mathscr{A},m')$ is a Banach algebra with $\vert m-m' \vert < \epsilon$, then $(\mathscr{A},m')$ also satisfies the aforementioned property. For instance, we have the following:
\begin{defi*}
We say a Banach algebra $(\mathscr{A},m)$ is stable if there is some 
$\epsilon > 0$ such that every Banach algebra $(\mathscr{A},m')$  satisfying that the following inequality holds:
\begin{equation*}
    \vert m-m' \vert < \epsilon,
\end{equation*}
admits an isomorphism of Banach algebras $(\mathscr{A},m)\rightarrow (\mathscr{A},m')$.   
\end{defi*}
As it turns out, the perturbation theory of $\mathscr{A}$ is also classified by its Hochschild cohomology. For instance, it can be shown that  if $\operatorname{HH}^2_{\operatorname{T}}(\mathscr{A})=0$ and $\operatorname{HH}^3_{\operatorname{T}}(\mathscr{A})$ is a Hausdorff space, then $\mathscr{A}$ is a stable Banach algebra (\emph{cf.} \cite[I.2.7]{helemskii2012homology}).\bigskip

Furthermore, this flavor of Hochschild cohomology can also be taken with coefficients in any $\mathscr{A}^e$-module and, in analogy with the previous discussion,  Hochschild cohomology with coefficients can be used to study the  perturbation theory of left $\mathscr{A}$-modules. Indeed, let $\mathscr{M}$ be a Banach left $\mathscr{A}$-module. Then the action $a:\mathscr{A}\widehat{\otimes}_K\mathscr{M}\rightarrow \mathscr{M}$ determines a unique representation homomorphism:
\begin{equation*}
   \varphi_a: \mathscr{A}\rightarrow \underline{\Hom}_K(\mathscr{M},\mathscr{M}),
\end{equation*}
which we may regard as an element in $\mathcal{L}(\mathscr{A},\mathscr{M})^1:=\underline{\Hom}_K(\mathscr{A},\underline{\Hom}_K(\mathscr{M},\mathscr{M}))$. As every left $\mathscr{A}$-module structure on $\mathscr{M}$ is determined by a representation homomorphism $\varphi_{b}\in\mathcal{L}(\mathscr{A},\mathscr{M})^1$, we can again make the following definition:
\begin{defi*}
Let $(\mathscr{M},\varphi_a)$ be an $\mathscr{A}$-module. We say $(\mathscr{M},\varphi_a)$ is stable if there is some $\epsilon>0$ such that any other $\mathscr{A}$-module $(\mathscr{M},\varphi_b)$ satisfying the inequality:
\begin{equation*}
    \vert \varphi_a-\varphi_b\vert < \epsilon,
\end{equation*}
admits  an $\mathscr{A}$-linear isomorphism $(\mathscr{M},\varphi_a)\rightarrow (\mathscr{M},\varphi_b)$.  
\end{defi*}
Again, this phenomenon is captured by the Hochschild cohomology groups. For instance, any Banach $\mathscr{A}$-module $\mathscr{M}$ satisfying:
\begin{equation*}
\operatorname{HH}_T^1(\mathscr{A},\underline{\Hom}_K(\mathscr{M},\mathscr{M}))=\operatorname{HH}_T^2(\mathscr{A},\underline{\Hom}_K(\mathscr{M},\mathscr{M}))=0,
\end{equation*}
is a stable Banach $\mathscr{A}$-module (\emph{cf.} \cite[III.4.8.]{helemskii2012homology}).\bigskip

Going back to our arithmetic setup, one can now see how this type of phenomenon could have a profound impact on the theory of $\wideparen{\D}$-modules on smooth rigid analytic spaces, and in particular on the theory of $p$-adic differential equations. The reasoning is that, if one can show that a certain differential module defined by a system of $p$-adic differential equations is stable, then it is possible to replace it by a small perturbation which is easier to approach. This idea is at the core of perturbation theory, and it is the guiding principle of what we want to achieve.\bigskip

Let us now discuss the contents of the paper, and give an overview of the constructions: Let $K$ be a complete non-archimedean extension of $\mathbb{Q}_p$, and let $X$ be a smooth, separated rigid analytic space over $K$. We let $\wideparen{\D}_X$ be its sheaf of infinite order differential operators, as defined by Ardakov-Wadsley in \cite{ardakov2019}.  The goal of this paper is developing a formalism of Hochschild (co)-homology for $\wideparen{\D}_X$-modules. In order to do this, we will rely on the homological machinery of quasi-abelian categories and sheaves of Ind-Banach spaces developed by A. Bode in \cite{bode2021operations}. In order to keep this introduction as short as possible, we will abstain from discussing these concepts here. A straightforward introduction to these topics  can be found in Sections \ref{Section background on sheaves of Ind-Banach spaces} and \ref{Section sheaves of Ind-Banach spaces}.  We make special mention of the tensor product of sheaves of Ind-Banach spaces, which we denote by $-\overrightarrow{\otimes}_K-$, and of the inner homomorphism functor, which  we denote by $\underline{\mathcal{H}om}(-,-)$. These functors endow $\operatorname{Shv}(X,\Indban)$, the category of sheaves on $X$ with values on Ind-Banach spaces over $K$, with a closed symmetric monoidal structure, which shares many properties with the tensor product and sheaves of homomorphisms of sheaves of $K$-vector spaces. In particular, it is possible to define sheaves of Ind-Banach algebras, and categories of modules attached to them. Their construction and basic properties can be found in Proposition \ref{prop closed symmetric monoidal structure on sheaves of Indban} and the discussion above. The derived functors of these operations are thoroughly studied in Section \ref{Section derived tensor-hom 2}.\bigskip

Before we start, let us mention that some of the contents of the paper hold in greater generality than the one stated in this introduction. In particular, many of the constructions below can be adapted for arbitrary Lie algebroids other than the tangent sheaf, and it is the belief of the author that there should be an analog of these results for twisted differential operators. Given the relevance of  twisted differential operators in the theory of locally analytic representations of $p$-adic Lie groups (\emph{cf.} the Beilinson-Bernstein correspondence in \cite{ardakov2021equivariant}), we have decided to work in this more general setting. Again, for the sake of brevity, we will only cover the case of $\wideparen{\D}$-modules in this introduction, but one can check the body of the paper for the concrete statements.\bigskip 

Let us start by clarifying what we mean by Hochschild (co)-homology: We define the sheaf of enveloping algebras of $\wideparen{\D}_X$ as the following sheaf of Ind-Banach algebras:
\begin{equation*}
    \wideparen{\D}_X^e:= \wideparen{\D}_X\overrightarrow{\otimes}_K\wideparen{\D}_X^{\op}.
\end{equation*}
Notice that the category of left Ind-Banach $\wideparen{\D}_X^e$-modules  is canonically isomorphic to the category of $\wideparen{\D}_X$-bimodules. In particular, $\wideparen{\D}_X$ is canonically a left and right  $\wideparen{\D}_X^e$-module.   Choose a chain complex of left $\wideparen{\D}_X^e$-modules $\mathcal{M}\in \operatorname{D}(\wideparen{\D}_X^e)$. In analogy with the previous discussion, one could be tempted to define the Hochschild homology and cohomology of $\wideparen{\D}_X$ with coefficients in $\mathcal{M}$ as the following complexes:
\begin{equation*}   \mathcal{HH}_{\bullet}^{naive}(\wideparen{\D}_X,\mathcal{M})=\wideparen{\D}_X\overrightarrow{\otimes}^{\mathbb{L}}_{\wideparen{\D}_X^e}\mathcal{M}, \quad \mathcal{HH}^{\bullet}_{naive}(\wideparen{\D}_X,\mathcal{M})=R\underline{\mathcal{H}om}_{\wideparen{\D}_X^e}(\wideparen{\D}_X,\mathcal{M}).
\end{equation*}
However, one can easily see that this is not the correct way to go. The first problem we encounter when using this definition is that $\wideparen{\D}_X^e$ is rather poorly behaved. In particular, the sheafification process involved in taking tensor products of Ind-Banach spaces makes it hard to calculate sections of $\wideparen{\D}_X^e$, and  its algebraic properties are  complicated to elucidate. Our solution is introducing the sheaf of bi-enveloping algebras $\wideparen{E}_X$. This is a sheaf of Ind-Banach algebras which is much better behaved, and such that there is a canonical triangulated functor from $\operatorname{D}(\wideparen{\D}_X^e)$ to $\operatorname{D}(\wideparen{E}_X)$.\bigskip

Let $X^2=X\times_{\Sp(K)}X$, and denote the projections by $p_i:X^2\rightarrow X$, for $i=1,2$. We define the sheaf of bi-enveloping algebras of $X$ as the following sheaf of Ind-Banach spaces on the product $X^2$:
\begin{equation*}
    \wideparen{E}_X:=\wideparen{\D}_X \overrightarrow{\boxtimes}\wideparen{\D}_X^{\op}= \OX_{X^2}\overrightarrow{\otimes}_{p_1^{-1}\OX_X\overrightarrow{\otimes}_Kp_2^{-1}\OX_X} \left(p_1^{-1}\wideparen{\D}_X\overrightarrow{\otimes}_Kp_2^{-1}\wideparen{\D}_X^{\op}\right).
\end{equation*}
At first, it may seem like the situation is not much better than before. However, $\wideparen{E}_X$ satisfies surprisingly good properties:
\begin{intteo}\label{teo introduction properties of EX}
The sheaf of bi-enveloping algebras satisfies the following properties:
\begin{enumerate}[label=(\roman*)]
    \item There is a unique Ind-Banach algebra structure on $\wideparen{E}_X$ extending the canonical ones on $\OX_{X^2}$ and $p_1^{-1}\wideparen{\D}_X\overrightarrow{\otimes}_Kp_2^{-1}\wideparen{\D}_X^{\op}$.
    \item  The pushforward along the diagonal $\Delta:X\rightarrow X^2$ induces a functor:
    \begin{equation*}   \Delta_*^E:\Mod_{\Indban}(\wideparen{\D}_X^e)\rightarrow \Mod_{\Indban}(\wideparen{E}_X),
    \end{equation*}
    which is a strongly right exact functor of quasi-abelian categories.
    \item If $\mathcal{T}_{X/K}$ is free, there is an isomorphism of sheaves of  Ind-Banach algebras:
    \begin{equation*}
        \wideparen{\mathbb{T}}:\wideparen{\D}_{X^2}\rightarrow \wideparen{E}_X.
    \end{equation*}
    \item There are mutually inverse equivalences of quasi-abelian categories:
    \begin{equation*}
        \operatorname{S}:\Mod_{\Indban}(\wideparen{\D}_{X^2})\leftrightarrows \Mod_{\Indban}(\wideparen{E}_{X}):\operatorname{S}^{-1},
    \end{equation*}
    which we call the side-switching equivalence for bimodules.  
\end{enumerate}
If the conditions of $(iii)$ hold, then $\operatorname{S}$ agrees with the extension of scalars along $\wideparen{\mathbb{T}}$. 
\end{intteo}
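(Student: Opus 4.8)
The theorem has four parts, and the strategy is to establish them in the order (ii) → (i) → (iii) → (iv), since each builds naturally on the previous. Part (ii) is really a local computation: working on an affinoid subdomain $U \subseteq X$ where $\mathcal{T}_{X/K}$ is free with coordinates $x_1,\dots,x_n$ and derivations $\partial_1,\dots,\partial_n$, one writes out $\wideparen{\D}_X$ locally as a completed ring of differential operators (following Ardakov-Wadsley) and identifies the completed tensor product $p_1^{-1}\wideparen{\D}_X \overrightarrow{\otimes}_K p_2^{-1}\wideparen{\D}_X^{\op}$ on $U \times U$ with the local description of $\wideparen{\D}_{X^2}$ in the coordinates $x_1,\dots,x_n,y_1,\dots,y_n$. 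The key point is that the ``op'' on the second factor corresponds, under $p_2$, to allowing the $y$-derivations to act on the right, and that on the level of underlying Ind-Banach sheaves (forgetting the product) the tensor product of the two Fréchet-Stein-type presentations reassembles exactly the presentation of $\wideparen{\D}_{X^2}$. This is essentially a statement about convergence radii of the relevant families of Banach algebras matching up, which should follow from the explicit norms in Ardakov-Wadsley.

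**From (ii) to (i) and (iii).**

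For part (i), I would restrict the isomorphism of Ind-Banach sheaves from (ii) along the diagonal $\Delta: X \hookrightarrow X^2$. Since $\Delta^{-1}$ commutes with $p_i^{-1}$ (as $p_i \circ \Delta = \mathrm{id}_X$) and with $\overrightarrow{\otimes}_K$ (inverse image is monoidal for the Ind-Banach tensor product — this should be available from the background sections, or proved by checking on a basis of affinoids where $\Delta^{-1}$ is exact and commutes with the relevant colimits), we get $\Delta^{-1}\wideparen{E}_X = \Delta^{-1}p_1^{-1}\wideparen{\D}_X \overrightarrow{\otimes}_K \Delta^{-1}p_2^{-1}\wideparen{\D}_X^{\op} = \wideparen{\D}_X \overrightarrow{\otimes}_K \wideparen{\D}_X^{\op} = \wideparen{\D}_X^e$, and one checks this is an isomorphism of sheaves of Ind-Banach algebras, not merely of sheaves. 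For part (iii), under the freeness hypothesis one has global coordinates, and the map $\wideparen{\mathbb{T}}$ is built by sending the generators of $\wideparen{\D}_{X^2}$ — the functions and the derivations $\partial_{x_i}, \partial_{y_i}$ — to the obvious elements of $\wideparen{E}_X$ coming from the two factors (derivations in the $x$-variables from $p_1^{-1}\wideparen{\D}_X$, derivations in the $y$-variables from $p_2^{-1}\wideparen{\D}_X^{\op}$, with a sign or opposite-multiplication bookkeeping). One verifies this respects the defining relations, is continuous with dense image on each Banach piece, and is a bijection using (ii) to know the underlying Ind-Banach sheaves agree — so $\wideparen{\mathbb{T}}$ is the promised algebra isomorphism lifting the identity of (ii).

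**Part (iv) and the final clause.**

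For (iv), the point is that $\wideparen{\D}_{X^2}$ and $\wideparen{E}_X$ are two sheaves of Ind-Banach algebras on $X^2$ with the same underlying Ind-Banach sheaf (by (ii)) but a priori different multiplications; I want mutually inverse equivalences between their module categories. Locally, by (iii), the two algebras are genuinely isomorphic via $\wideparen{\mathbb{T}}$, so locally $\operatorname{S}$ is just restriction of scalars along $\wideparen{\mathbb{T}}$, which is an equivalence of quasi-abelian categories. The work is to glue these local equivalences into a global one when $\mathcal{T}_{X/K}$ is not free: I would cover $X$ by affinoids on which the tangent sheaf is free, note that on overlaps the two local identifications of the algebras differ by an automorphism, and check that these automorphisms are compatible so that the local side-switching equivalences descend — this is a standard ``twist by a cocycle'' argument, and the side-switching terminology suggests it is modeled on the classical left/right $\mathcal{D}$-module equivalence $\mathcal{M} \mapsto \Omega \otimes \mathcal{M}$, globalized by the canonical sheaf. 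The final sentence — that when (iii) holds $\operatorname{S}$ is extension of scalars along $\wideparen{\mathbb{T}}$ — is then a matter of checking the glued construction unwinds to the naive one in the presence of global coordinates, i.e. the cocycle is trivial.

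**Main obstacle.**

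The hardest step is part (ii): controlling the completed tensor product of two Ind-Banach sheaves precisely enough to identify it with $\wideparen{\D}_{X^2}$, since the Ind-Banach tensor product involves a sheafification and the individual Banach approximations of $\wideparen{\D}_X$ have convergence radii that must be matched against those of $\wideparen{\D}_{X^2}$ on the product. Once (ii) is in hand the rest is bookkeeping; I expect to lean heavily on the explicit norm estimates for $\wideparen{\D}$ from Ardakov-Wadsley and on the behavior of $\overrightarrow{\otimes}_K$ under $p_i^{-1}$ and $\Delta^{-1}$ established in the earlier sections.
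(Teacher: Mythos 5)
Your proposal is correct in substance and matches the paper's route for (i)--(iii): item (ii) is indeed the local computation identifying $p_1^{-1}\wideparen{\D}_X\overrightarrow{\otimes}_Kp_2^{-1}\wideparen{\D}_X$ with $\wideparen{U}(\mathcal{T}_{X/K}^2)=\wideparen{\D}_{X^2}$ via the explicit presentation $\wideparen{U}(\mathscr{L})\simeq\OX_X\overrightarrow{\otimes}_KK\{x_1,\dots,x_r\}$ (Proposition \ref{prop enveloping of product as product of envelopings}), item (i) follows formally from $p_i\circ\Delta=\operatorname{id}$ and monoidality of $\Delta^{-1}$, and item (iii) is realized concretely as $\wideparen{\mathbb{T}}=\operatorname{Id}\overrightarrow{\otimes}_Kp_2^{-1}\wideparen{T}$, where $\wideparen{T}$ is the completion of the transpose anti-automorphism $\beta\mapsto\beta^t$ --- exactly your ``sign and opposite-multiplication bookkeeping'' (Propositions \ref{prop anti-homomorphism} and \ref{prop anti-isomorphism fréchet-stein level}). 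Where you diverge is (iv): you propose to define $\operatorname{S}$ locally as scalar change along $\wideparen{\mathbb{T}}$ and then glue by a cocycle argument, whereas the paper never glues. It writes down the globally defined functor $\operatorname{S}=p_2^{-1}\Omega_{\mathscr{L}}\overrightarrow{\otimes}_{p_2^{-1}\OX_X}-$ from the start (modeled, as you guessed, on the classical $\mathcal{M}\mapsto\Omega\otimes\mathcal{M}$ via the right $U(L)$-module structure on $\Omega_L$ by Lie derivative), and the only local work is checking that the resulting right action is bounded on Ind-Banach modules, which requires reducing to complete bornological modules using metrizability of the Fréchet--Stein sections (Lemma \ref{Lemma side-changing for bimodules} and Proposition \ref{prop side changing for bimodules}). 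The comparison $\operatorname{S}\simeq\wideparen{\mathbb{T}}^*$ in the free case is then a separate verification (Proposition \ref{prop equivalence of pullback and side-changing sheaves}), not the definition. Your route would also work, but the cocycle compatibility on overlaps is precisely the point you leave unverified, and the canonical formula makes it unnecessary; note also the small slip that $\operatorname{S}$ is \emph{extension}, not restriction, of scalars along $\wideparen{\mathbb{T}}$ (harmless since $\wideparen{\mathbb{T}}$ is an isomorphism, but the directions matter for the statement as written).
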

\begin{proof}
Item $(i)$ is Theorem \ref{teo product on E}, $(ii)$ is Proposition \ref{prop extension functor}, $(iii)$ is  Proposition \ref{prop side changing for bimodules}.  The statement at the end of the theorem is shown in Proposition \ref{prop equivalence of pullback and side-changing sheaves}.
\end{proof}
Let us point out that $(ii)$ in the preceding theorem requires that $X$ is a smooth  and separated rigid analytic space. More concretely, the functor can be defined whenever $X$ is smooth, but strong right exactness requires that $X$ is separated. This functor is  called the \emph{extension functor}, and can be roughly described as a pushforward composed with an extension of scalars. It admits a version for left hearts, which we denote by $\Delta_* ^{I(E)}$,  and this version admits a left derived functor:
\begin{equation*}
    \EDelta:\operatorname{D}(\wideparen{\D}_X^e)\rightarrow \operatorname{D}(\wideparen{E}_X),
\end{equation*}
which we call the \emph{derived extension functor}. However, the identity:
\begin{equation*}
    I(\Delta_*^E(-))=\Delta_* ^{I(E)}(I(-)),
\end{equation*}
is generally false, so  we do not have  $\operatorname{H}^0(\EDelta(-))=I(\Delta_*^E(-))$ in general. This is due to the fact that the definition of both $\Delta_*^E$ and $\Delta_* ^{I(E)}$ involves a tensor product, and tensor products do not behave well under $I$ (\emph{cf}. Section  \ref{Section derived tensor-hom 2}). However, we will later define a subcategory of $\operatorname{D}(\wideparen{\D}_X^e)$ for which this does not happen.\\
We can now define Hochschild (co)-homology for sheaves of Ind-Banach $\wideparen{\D}_X$-modules:
\begin{defi*}
For any  $\mathcal{M}^{\bullet}\in\operatorname{D}(\wideparen{\D}_X^e)$ we define the following objects: 
\begin{enumerate}[label=(\roman*)]
    \item The inner Hochschild cohomology complex of $\wideparen{\D}_X$ with coefficients in $\mathcal{M}^{\bullet}$ is the following complex in $\operatorname{D}(\operatorname{Shv}(X,\Indban))$:
\begin{equation*}
  \mathcal{HH}^{\bullet}(\wideparen{\D}_X,\mathcal{M}^{\bullet}):=\Delta^{-1}R\underline{\mathcal{H}om}_{\wideparen{E}_X}(\EDelta\wideparen{\D}_X,\EDelta\mathcal{M}^{\bullet}),
\end{equation*}
\item The Hochschild cohomology complex of $\wideparen{\D}_X$ with coefficients in $\mathcal{M}^{\bullet}$ is the following complex in $\operatorname{D}(\Indban)$:
\begin{equation*}
    \operatorname{HH}^{\bullet}(\wideparen{\D}_X,\mathcal{M}^{\bullet})=R\Gamma(X,\mathcal{HH}^{\bullet}(\wideparen{\D}_X,\mathcal{M}^{\bullet})).
\end{equation*}
\item
The inner Hochschild homology complex of $\wideparen{\D}_X$ with coefficients in $\mathcal{M}^{\bullet}$ is the following complex in $\operatorname{D}(\operatorname{Shv}(X,\Indban))$:
\begin{equation*}
  \mathcal{HH}_{\bullet}(\wideparen{\D}_X,\mathcal{M}^{\bullet}):=\Delta^{-1}\left(\EDelta\wideparen{\D}_X\overrightarrow{\otimes}_{\wideparen{E}_X}^{\mathbb{L}}\EDelta\mathcal{M}^{\bullet}\right),
\end{equation*}
\item The Hochschild homology complex of $\wideparen{\D}_X$ with coefficients in $\mathcal{M}^{\bullet}$ is the following complex in $\operatorname{D}(\Indban)$:
\begin{equation*}
    \operatorname{HH}_{\bullet}(\wideparen{\D}_X,\mathcal{M}^{\bullet})=R\Gamma(X,\mathcal{HH}_{\bullet}(\wideparen{\D}_X,\mathcal{M}^{\bullet})).
\end{equation*}
\end{enumerate}    
\end{defi*}
Most of the material in the paper will be centered around giving an explicit description of these complexes and their cohomology groups. As a first step, we compose the derived extension functor $\EDelta$ with the side-switching operator $\operatorname{S}^{-1}$ from Theorem \ref{teo introduction properties of EX}, arriving at the  \emph{derived immersion functor}:
\begin{equation*}
\Ifunct:=\operatorname{S}^{-1}\circ\EDelta:\operatorname{D}(\wideparen{\D}_X^e)\rightarrow \operatorname{D}(\wideparen{\D}_{X^2}),
\end{equation*}
as well as the immersion functor $\Delta_*^{\operatorname{S}}:=\operatorname{S}^{-1}\circ \Delta_*^E$. As shown in the body of the paper, we may use the immersion functor to obtain a new expression for the \emph{inner Hochschild cohomology complex}. In particular, for $\mathcal{M}\in \operatorname{D}(\wideparen{\D}_X^e)$ we have:
\begin{equation*}
    \mathcal{HH}^{\bullet}(\wideparen{\D}_X,\mathcal{M})=\Delta^{-1}R\underline{\mathcal{H}om}_{\wideparen{\D}_{X^2}}(\Ifunct\wideparen{\D}_X,\Ifunct\mathcal{M}),
\end{equation*}
and the analogous statement holds for the \emph{inner Hochschild homology complex}.\\
Again, it may seem like we are not gaining much with this change. However, as
$\wideparen{\D}_{X^2}$ is the sheaf of infinite order differential operators on $X^2$, this simplification allows us to apply the machinery of \cite{bode2021operations} to our study of Hochschild (co)-homology. In particular, as $\operatorname{S}^{-1}$ does not change the supports of the sheaves, it follows that $\Ifunct\wideparen{\D}_X$, and $\Ifunct\mathcal{M}$ have cohomology supported on the diagonal. Hence, the strategy is using Kashiwara's equivalence along the closed immersion $\Delta:X\rightarrow X^2$ to obtain an explicit calculation of the inner Hochschild (co)-homology complexes. Namely, as showcased in \cite{bode2021operations}, a very general category in which Kashiwara's equivalence works well is the category of $\mathcal{C}$-complexes on $X$ (cf. Definition  \ref{defi C complexes}). Roughly speaking, the category of $\mathcal{C}$ complexes of $X$ is a $p$-adic analog of the category of bounded complexes with coherent cohomology from the classical theory of $\D$-modules.\\
Let $\operatorname{D}_{\mathcal{C}^{\Delta}}(\wideparen{\D}_{X^2})$ denote the category of $\mathcal{C}$-complexes on $X^2$ with cohomology supported on $\Delta(X)$. The next step is defining an adequate category of Ind-Banach $\wideparen{\D}_X^e$-modules which is general enough to contain important examples, and is mapped to $\operatorname{D}_{\mathcal{C}^{\Delta}}(\wideparen{\D}_{X^2})$ under the derived immersion functor. This is the category in which we will be able to give complete calculations of the inner Hochschild (co)-homology.\bigskip

In order to achieve this, we need to be able to understand what conditions on a  $\wideparen{\D}_X^e$-module $\mathcal{M}$ ensure that $\Delta_*^{\operatorname{S}}(\mathcal{M})$ is a co-admissible $\wideparen{\D}_{X^2}$-module. Instead of pursuing this goal directly, we will first take a small detour, and use Theorem 
\ref{teo introduction properties of EX} to dig a bit deeper into the properties of $\wideparen{E}_X$. Namely, the fact that $\wideparen{E}_X$ and $\wideparen{\D}_{X^2}$ are locally isomorphic  suggests that the relevant features of $\wideparen{\D}_{X^2}$ should also be present in $\wideparen{E}_X$. In particular, there should be well-behaved categories of co-admissible $\wideparen{E}_X$-modules and $\mathcal{C}$-complexes. The following theorem confirms this intuition: 
\begin{intteo}
There is a well-defined category of co-admissible $\wideparen{E}_X$-modules, which we denote by $\mathcal{C}(\wideparen{E}_X)$. It is an abelian full subcategory of $\Mod_{\Indban}(\wideparen{E}_X)$, and the side-switching equivalence restricts to an equivalence of abelian categories:
        \begin{equation*}
            \operatorname{S}:\mathcal{C}(\wideparen{\D}_{X^2})\leftrightarrows \mathcal{C}(\wideparen{E}_X):\operatorname{S}^{-1}.
        \end{equation*}
 The analogous statements hold for $\mathcal{C}$-complexes of $\wideparen{E}_X$-modules, and we call the corresponding category $\operatorname{D}_{\mathcal{C}}(\wideparen{E}_X)$. Furthermore assume $X=\Sp(A)$ is an affinoid space which admits an étale map $X\rightarrow \mathbb{A}^n_K$. Then the following holds:
    \begin{enumerate}[label=(\roman*)]
        \item  $\wideparen{E}_X(X^2)=\wideparen{\D}_X(X)^e$.
        \item Let $\mathscr{X}:=\Spf(\mathcal{A})$ be an affine formal model of $X$ with a finite-free $(\mathcal{R},\mathcal{A})$-Lie lattice $\mathcal{T}$ of $\mathcal{T}_{X/K}(X)$. Then the canonical inverse limit:
        \begin{equation*}
            \wideparen{\D}_X(X)^e=\varprojlim_n \widehat{U}(\pi^n\mathcal{T})_K^e,   
        \end{equation*}
        is a Fréchet-Stein presentation of $\wideparen{\D}_X(X)^e$.
        \item There is an equivalence of abelian categories:
        \begin{equation*}
            \operatorname{Loc}(-):\mathcal{C}(\wideparen{\D}_X(X)^e)\leftrightarrows\mathcal{C}(\wideparen{E}_X):\Gamma(X,-).
        \end{equation*}
        \item Let $\mathcal{M}\in \Mod_{\Indban}(\wideparen{\D}_X(X)^e)$ satisfy that $\mathcal{M}$ is co-admissible as a left or right $\wideparen{\D}_X(X)$-module. Then $\mathcal{M}$ is a co-admissible $\wideparen{\D}_X(X)^e$-module.         
    \end{enumerate}  
\end{intteo}
\begin{proof}
The first part of the theorem is a condensed version of the contents of Section \ref{section co-admis over bienvelop 2}. See Theorem \ref{teo equivalence of co-admissible modules under side-changing} and Proposition \ref{prop equivalence of side-changing at the level of C-complexes} for details. For the second part, statements $(i)$ and $(ii)$  are shown in Corollary \ref{coro frechet-stein presentation of bi-enveloping algebra}. Item $(iii)$ is Proposition \ref{prop properties of Loc}, and the last part is a special case of Theorem \ref{teo embedding co-admissible bimodules}.
\end{proof}
This theorem allows us to deal with co-admissibility problems directly at the level of the bi-enveloping algebra $\wideparen{E}_X$, which is a much more natural 
way of dealing with $\wideparen{\D}_X$-bimodules.\bigskip

Now that this final piece of machinery has been put into place, we can finally define the categories of $\wideparen{\D}_X$-bimodules we were looking for: These are the categories of co-admissible diagonal $\wideparen{\D}_X$-bimodules $\mathcal{C}_B^{\operatorname{Bi}}(\wideparen{\D}_X)_{\Delta}$ (cf. Definition \ref{defi co-admissible diagonal bimodule}), and more generally diagonal $\mathcal{C}$-complexes $\operatorname{D}^{\operatorname{Bi}}_{\mathcal{C}}(\wideparen{\D}_X)_{\Delta}$ (cf. Definition \ref{coro immersion Theorem of sheaves of diagonal co-admissible bimodules}). Roughly speaking,  $\mathcal{C}_B^{\operatorname{Bi}}(\wideparen{\D}_X)_{\Delta}$ is the full subcategory of $\Mod_{\Indban}(\wideparen{\D}_X^e)$ given by the modules which are co-admissible as both $\wideparen{\D}_X$-modules and $\wideparen{\D}_X^{\op}$-modules plus an additional compatibility condition. Similarly, $\operatorname{D}^{\operatorname{Bi}}_{\mathcal{C}}(\wideparen{\D}_X)_{\Delta}$  is the full subcategory of $\operatorname{D}(\wideparen{\D}_X^e)$ given by the complexes with cohomology in $\mathcal{C}_B^{\operatorname{Bi}}(\wideparen{\D}_X)_{\Delta}$ plus some finiteness condition. Let us give an overview of the most important features of these categories:
\begin{intteo}
 The category $\mathcal{C}_B^{\operatorname{Bi}}(\wideparen{\D}_X)_{\Delta}$ satisfies the following properties:
 \begin{enumerate}[label=(\roman*)]
     \item $\mathcal{C}_B^{\operatorname{Bi}}(\wideparen{\D}_X)_{\Delta}$ is abelian and $\operatorname{D}^{\operatorname{Bi}}_{\mathcal{C}}(\wideparen{\D}_X)_{\Delta}$ is  triangulated.
     \item  Every $\mathcal{M} \in\mathcal{C}_B^{\operatorname{Bi}}(\wideparen{\D}_X)_{\Delta}$ satisfies that there are canonical quasi-isomorphisms:
     \begin{equation*}
         \EDelta(\mathcal{M})\cong \Delta_*^{I(E)}(\mathcal{M})\cong I(\Delta_*^E(\mathcal{M})).
     \end{equation*}
     In particular, objects in $\mathcal{C}_B^{\operatorname{Bi}}(\wideparen{\D}_X)_{\Delta}$ are $\EDelta$-acyclic and $\Ifunct$-acyclic.
     \item The immersion functor induces an embedding of triangulated categories:
    \begin{equation*}       \Ifunct:\operatorname{D}^{\operatorname{Bi}}_{\mathcal{C}}(\wideparen{\D}_X)_{\Delta}\rightarrow \operatorname{D}_{\mathcal{C}^{\Delta}}(\wideparen{\D}_{X^2}),
    \end{equation*}
    with partial inverse $\Delta^{-1}\circ \operatorname{S}$. This embedding restricts to an exact embedding: $ \Delta_*^{\operatorname{S}}:\mathcal{C}_B^{\operatorname{Bi}}(\wideparen{\D}_X)_{\Delta}\rightarrow \mathcal{C}(\wideparen{\D}_{X^2})_{\Delta}$,  whose image is a Serre subcategory.  
     \item $\wideparen{\D}_X$ is an object of $\mathcal{C}_B^{\operatorname{Bi}}(\wideparen{\D}_X)_{\Delta}$, and we have $\Ifunct(\wideparen{\D}_X)=\Delta_+\OX_X$.     
 \end{enumerate}
\end{intteo}
\begin{proof}
Statement $(i)$ is shown in Propositions \ref{prop co-admissible diag is abelian} and \ref{prop diagonal c comp are triangulated}, and claim $(ii)$  is Lemma \ref{lemma aciclycity of diagonal co-ad for extension fucnt}. Property $(iii)$ is a consequence of Proposition \ref{prop immersion functor and C-complexes} and the previous claim. Finally, $(iv)$ is a combination of Propositions \ref{prop sheaf of completed differential operators is diagonal bimodule} and \ref{prop image of D under side changing}.
\end{proof}
Hence, the category of diagonal $\mathcal{C}$-complexes satisfies the properties we want with respect to the immersion functor, and is general enough to contain $\wideparen{\D}_X$. This theorem, together with the version of Kashiwara's equivalence shown in Proposition  \ref{prop derived internal hom and kashiwara} allows us to show the following theorem: 
\begin{intteo}
Let $X$ be a smooth and separated rigid space. For any diagonal $\mathcal{C}$-complex
$\mathcal{M}^{\bullet}\in \operatorname{D}^{\operatorname{Bi}}_{\mathcal{C}}(\wideparen{\D}_X)_{\Delta}$ we have the following identity in $\operatorname{D}(\operatorname{Shv}(X,\Indban))$:
\begin{equation*}
    \mathcal{HH}^{\bullet}(\wideparen{\D}_X,\mathcal{M}^{\bullet})=\Omega_X\overrightarrow{\otimes}^{\mathbb{L}}_{\wideparen{\D}_X}\left(\Omega_X^{-1}\overrightarrow{\otimes}_{\OX_X}\mathcal{M}^{\bullet}[\mathcal{I}_{\Delta}]\right)[-\operatorname{dim}(X)].
\end{equation*}
In particular, we have the following identification in $\operatorname{D}(\operatorname{Shv}(X,\Indban))$:
\begin{equation*}
    \mathcal{HH}^{\bullet}(\wideparen{\D}_X,\wideparen{\D}_X)=\Omega_{X}^{\bullet}.
\end{equation*}
\end{intteo}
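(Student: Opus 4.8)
The plan is to transport the computation to $X^2$, where the version of Kashiwara's equivalence for $\wideparen{\D}$-modules is available, and then evaluate a single pullback explicitly. First I would rewrite the left-hand side by means of the immersion functor: by the expression for the inner Hochschild cohomology complex through the immersion functor established above,
\[ R\underline{\mathcal{H}om}_{\wideparen{\D}_X^e}(\wideparen{\D}_X,\mathcal{M}^{\bullet})=\Delta^{-1}R\underline{\mathcal{H}om}_{\wideparen{\D}_{X^2}}\!\big(\Delta_*^{\operatorname{S}}\wideparen{\D}_X,\,\Delta_*^{\operatorname{S}}\mathcal{M}^{\bullet}\big). \]
By the theorem on diagonal $\mathcal{C}$-complexes, $\Delta_*^{\operatorname{S}}\mathcal{M}^{\bullet}$ lies in $\operatorname{D}_{\mathcal{C}^{\Delta}}(\wideparen{\D}_{X^2})$ and $\Delta_*^{\operatorname{S}}\wideparen{\D}_X=\Delta_+\OX_X$, so the problem is reduced to computing $\Delta^{-1}R\underline{\mathcal{H}om}_{\wideparen{\D}_{X^2}}(\Delta_+\OX_X,\mathcal{N}^{\bullet})$ for a $\mathcal{C}$-complex $\mathcal{N}^{\bullet}$ on $X^2$ supported on $\Delta$. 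This is exactly the content of Proposition \ref{prop derived internal hom and kashiwara}: via Kashiwara's equivalence along the closed immersion $\Delta$ (of codimension $\dim X$) it identifies that object with $\Omega_X\overrightarrow{\otimes}^{\mathbb{L}}_{\wideparen{\D}_X}(\Delta^{!}\mathcal{N}^{\bullet})[-\dim X]$. Taking $\mathcal{N}^{\bullet}=\Delta_*^{\operatorname{S}}\mathcal{M}^{\bullet}$, everything comes down to identifying $\Delta^{!}\Delta_*^{\operatorname{S}}\mathcal{M}^{\bullet}$ with $\Omega_X\overrightarrow{\otimes}_{\OX_X}\mathcal{M}^{\bullet}[\mathcal{I}_{\Delta}]$.

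This last identification is the technical core, and I would carry it out locally, on an affinoid $U\subseteq X$ with $\mathcal{T}_{U/K}$ free. There Theorem \ref{teo introduction properties of EX}$(iii)$--$(iv)$ identifies $\Delta_*^{\operatorname{S}}$ with extension of scalars along the isomorphism $\wideparen{\mathbb{T}}\colon\wideparen{\D}_{X^2}\xrightarrow{\sim}\wideparen{E}_X$, so that, using $\Delta^{-1}\wideparen{E}_X=\wideparen{\D}_X^e$, the restriction $\Delta^{-1}\Delta_*^{\operatorname{S}}\mathcal{M}^{\bullet}$ is simply $\mathcal{M}^{\bullet}$ viewed as a $\Delta^{-1}\wideparen{\D}_{X^2}$-module through $\wideparen{\mathbb{T}}$. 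One then evaluates $\Delta^{!}$ by resolving $\OX_X$ over $\OX_{X^2}$ by the Koszul complex of the diagonal ideal $\mathcal{I}_{\Delta}$, whose associated graded is the exterior algebra on the conormal bundle $\mathcal{I}_{\Delta}/\mathcal{I}_{\Delta}^{2}\cong\Omega_X^{1}$; tracking what this does to $\mathcal{M}^{\bullet}$ produces precisely three pieces of data, namely the $\OX$-level restriction to the diagonal, which is the operation $\mathcal{M}^{\bullet}\rightsquigarrow\mathcal{M}^{\bullet}[\mathcal{I}_{\Delta}]$; the top-exterior-power twist $\Omega_X$ coming from the transfer bimodule $\wideparen{\D}_{X\to X^2}$ of $\Delta$; and the residual left $\wideparen{\D}_X$-action along the diagonal, which is what the outer $\Omega_X\overrightarrow{\otimes}^{\mathbb{L}}_{\wideparen{\D}_X}(-)$ then consumes. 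Because these identifications are canonical (independent of the coordinates on $U$), the local formulas glue to the global statement, and throughout one uses the acyclicity and flatness results of Section \ref{Section derived tensor-hom 2} so that the completed tensor products may be taken in derived or underived form as asserted.

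For the final assertion one specializes to $\mathcal{M}^{\bullet}=\wideparen{\D}_X$, which is an object of $\mathcal{C}_B^{\operatorname{Bi}}(\wideparen{\D}_X)_{\Delta}$ with $\Delta_*^{\operatorname{S}}\wideparen{\D}_X=\Delta_+\OX_X$ by Proposition \ref{prop sheaf of completed differential operators is diagonal bimodule}. By Kashiwara's equivalence $\Delta^{!}\Delta_+\OX_X=\OX_X$, so $\Omega_X\overrightarrow{\otimes}_{\OX_X}\wideparen{\D}_X[\mathcal{I}_{\Delta}]=\OX_X$ and the formula collapses to $\Omega_X\overrightarrow{\otimes}^{\mathbb{L}}_{\wideparen{\D}_X}\OX_X[-\dim X]$. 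Since the Spencer resolution of $\OX_X$ as a left $\wideparen{\D}_X$-module identifies $\Omega_X\overrightarrow{\otimes}^{\mathbb{L}}_{\wideparen{\D}_X}\OX_X$ with $\Omega_X^{\bullet}[\dim X]$, this is exactly $\Omega_X^{\bullet}$.

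The main obstacle is the middle step: running the transfer-module and Koszul bookkeeping for $\Delta^{!}\Delta_*^{\operatorname{S}}$ entirely inside $\operatorname{Shv}(X^2,\Indban)$, where one must control flatness of the transfer bimodules for the completed tensor product, strictness of the Koszul differentials, and the rearrangement of the various derived completed tensor products, and then check that the local trivialisations of $\wideparen{\mathbb{T}}$ glue compatibly with all of it. Assembling these ingredients into the clean two-step expression $\Omega_X\overrightarrow{\otimes}^{\mathbb{L}}_{\wideparen{\D}_X}\big(\Omega_X\overrightarrow{\otimes}_{\OX_X}\mathcal{M}^{\bullet}[\mathcal{I}_{\Delta}]\big)[-\dim X]$ is where the real work lies.
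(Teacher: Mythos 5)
Your proposal is correct and follows essentially the same route as the paper: reformulate via the immersion functor using Corollary \ref{coro hochschild cohomology and the immersion functor}, substitute $\Delta_*^{\operatorname{S}}\wideparen{\D}_X=\Delta_+\OX_X$ (Proposition \ref{prop image of D under side changing}) and invoke Corollary \ref{coro immersion functor and C-complexes}, apply the Kashiwara-type adjunction of Proposition \ref{prop derived internal hom and kashiwara} together with the identification $\operatorname{dR}(-)=\Omega_X\overrightarrow{\otimes}^{\mathbb{L}}_{\wideparen{\D}_X}(-)[-\dim X]$, and finish with the Spencer resolution for $\mathcal{M}^{\bullet}=\wideparen{\D}_X$. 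The only place you propose extra work — the Koszul/transfer-bimodule computation of $\Delta^{!}$ on a diagonal-supported complex — is already packaged in the cited Kashiwara equivalence of \cite{ardakov2015d} and \cite{bode2021operations}, so no new bookkeeping is actually required there.
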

\begin{proof}
This is Theorem \ref{teo explicit computation of hochschild cohomology}.
\end{proof}
Thus, the de Rham complex of $X$ agrees with the \emph{inner Hochschild complex} of $\wideparen{\D}_X$. As a consequence, we may calculate the Hochschild cohomology complex of $\wideparen{\D}_X$ via the following formula:
\begin{equation*}
    \operatorname{HH}^{\bullet}(\wideparen{\D}_X)=R\Gamma(X,\Omega_{X/K}^{\bullet}).
\end{equation*}
A priory, it could seem like this is precisely the complex computing the de Rham cohomology of $X$. However, the situation is not as simple. Indeed, in both cases, the complex we are interested in arises as the hypercohomology of $\Omega_{X/K}^{\bullet}$. However, in the case of the de Rham cohomology, we regard $\Omega_{X/K}^{\bullet}$ as an object in $\operatorname{D}(\operatorname{Shv}(X,\operatorname{Vect}_K))$, and in the case of Hochschild cohomology we regard it as an object in $\operatorname{D}(\operatorname{Shv}(X,\Indban))$. Thus, it is clear that both complexes are not the same. However, the similarity between both constructions hints at the fact that both cohomology theories are somehow related.\bigskip

In order to find this relation, there are some technical difficulties that need to be overcome. First, as $\Indban$ is a quasi-abelian category, the cohomology groups of $\operatorname{HH}^{\bullet}(\wideparen{\D}_X)$ are objects of the left heart $LH(\Indban)$, which are not necessarily contained in $\Indban$. Furthermore, even if  the cohomology groups of $\operatorname{HH}^{\bullet}(\wideparen{\D}_X)$ are Ind-Banach spaces,  there is still not a direct way of relating them to $K$-vector spaces. In particular, unlike locally convex and bornological spaces, Ind-Banach spaces do not have an underlying $K$-vector space. However, by making use of some homological algebra and $p$-adic functional analysis, it is possible to obtain suitable functors at the level of the derived categories. Namely, we have an exact and fully faithful functor:
\begin{equation*}
    J:\widehat{\mathcal{B}}c_K\rightarrow \mathcal{B}c_K,
\end{equation*}
defined in Proposition \ref{prop properties of J}, and an exact and faithful forgetful functor:
\begin{equation*}
    \operatorname{Forget}(-):\mathcal{B}c_K\rightarrow \operatorname{Vect}_K.
\end{equation*}
Furthermore, these functors can be extended to the associated categories of sheaves, and the extensions are still exact (\emph{cf.}  Lemma \ref{lemma exactness of extension to sheaves}). These functors can be used to prove Corollary \ref{coro recovering de Rham cohomology (without topology) from Hochschild cohomology}, where we show the following formula:
\begin{equation*}
    \operatorname{H}_{\operatorname{dR}}^{\bullet}(X)=\operatorname{Forget}(J\operatorname{HH}^{\bullet}(\wideparen{\D}_X)).
\end{equation*}
Along the way, we will develop the formalism of Hochschild homology, which is formally very similar to that of Hochschild cohomology. Our final result in this direction is Theorem \ref{teo expression of Hochschild homology sheaves}, where we give an explicit formula for the inner Hochschild homology complex of any diagonal $\mathcal{C}$-complex on $X$. Specializing to the case of $\wideparen{\D}_X$, we arrive at the following formula:
\begin{equation*}
    \mathcal{HH}_{\bullet}(\wideparen{\D}_X,\wideparen{\D}_X)=\mathcal{HH}^{\bullet}(\wideparen{\D}_X,\wideparen{\D}_X)[2\operatorname{dim}(X)],
\end{equation*}
which yields a $p$-adic version of the classical duality between Hochschild cohomology and homology of the algebra of differential operators on a smooth complex algebraic variety. Furthermore, our calculation of the inner Hochschild cohomology complex of $\wideparen{\D}_X$ yields the following formula:
\begin{equation*}
    \operatorname{HH}_{n}(\wideparen{\D}_X,\wideparen{\D}_X)=\operatorname{H}^{2\operatorname{dim}(X)-n}\left(R\Gamma(X,\Omega_{X}^{\bullet})\right).
\end{equation*}
We point out that similar formulas have been obtained in different contexts. In particular, this result was first obtained for affine algebraic varieties over a field of characteristic zero, Stein complex manifolds, and compact $C^{\infty}$-manifolds by Wodzicki in $\textnormal{\cite[Theorem 2]{wodzicki1987cyclic}}$. The  result for affine algebraic varieties was shown independently by Kassel and Mitschi (see \cite[pp. 19]{pseudodiff}),  and for smooth real manifolds by Brylinski-Getzler in \cite[pp. 18]{pseudodiff}. As a consequence, we obtain an alternative proof of the well-known fact that $\operatorname{H}^n_{\operatorname{dR}}(X)=0$ for $n>2\operatorname{dim}(X)$.
\subsection*{Future and related work}
This is the first part of a series of papers \cite{p-adicCheralg}, \cite{p-adicCatO}, \cite{HochschildDmodules2}, \cite{cychomDmod} in which we convey a comprehensive study of the deformation theory of the algebras $G\ltimes \wideparen{\D}_X(X)$, where $G$ is a finite group acting on a smooth Stein space $X$. In particular, the results of this paper are further extended in \cite{HochschildDmodules2}, where we focus on the case where $X$ is a smooth Stein space. Our interest in Stein spaces stems from the fact that they behave like affinoid spaces with respect to sheaf cohomology, but are better behaved analytically. In particular, if  $X$ is a smooth Stein space, then the de Rham complex:  
\begin{equation*}
    \Omega_{X/K}^{\bullet}(X):= \left( 0\rightarrow \OX_X(X)\rightarrow \Omega_{X/K}^1(X)\rightarrow \cdots \rightarrow \Omega_{X/K}^{\operatorname{dim}(X)}\rightarrow 0 \right),
\end{equation*}
is a strict complex of nuclear Fréchet spaces. It can be shown that this implies that $\operatorname{HH}^{\bullet}(\wideparen{\D}_X(X),\wideparen{\D}_X(X))$ is also a strict complex of nuclear Fréchet spaces, and this property descends to the Hochschild cohomology spaces. In this setting, we can give a more tangible meaning to Hochschild cohomology. In particular, it is shown in 
\cite{HochschildDmodules2} that for every $n\geq 0$ the vector space underlying 
$\operatorname{HH}^{n}(\wideparen{\D}_X)$ classifies the $n$-th degree Yoneda extensions of $I(\wideparen{\D}_X(X))$  by $I(\wideparen{\D}_X(X))$  as $I(\wideparen{\D}_X(X))^e$-modules.\bigskip

Following this line of thought, we then show that we can use the bar resolution to calculate the Hochschild cohomology of $ \wideparen{\D}_X$ in low degrees, and this allows us to compare  our notion of Hochschild cohomology  with J. Taylor's Hochschild cohomology for locally convex algebras (\emph{cf}. \cite{helemskii2012homology}, \cite{taylor1972homology}).  The advantage of this approach is that we obtain explicit interpretations of many Hochschild cohomology spaces in terms of algebraic invariants of $\wideparen{\D}_X(X)$. Furthermore, our results reinforce the heuristic that the spaces $\operatorname{HH}^{\bullet}(\wideparen{\D}_X(X))$ should be regarded as  $p$-adic analytic versions of the classical Hochschild cohomology groups of associative algebras. For instance, the first Hochschild cohomology space $\operatorname{HH}^{1}(\wideparen{\D}_X(X))$ parametrizes the bounded outer derivations of 
$\wideparen{\D}_X(X)$, whereas the classical first Hochschild cohomology group parametrizes arbitrary outer derivations. Finally, we use these results to classify the infinitesimal deformations of $\wideparen{\D}_X(X)$ in certain situations.\bigskip

In \cite{p-adicCheralg} we develop a theory of $p$-adic Cherednik algebras on smooth rigid analytic spaces with a finite group action. These algebras can be regarded as a $p$-adic analytic version of the sheaves of Cherednik algebras constructed by P. Etingof in \cite{etingof2004cherednik}. In particular, let $X$ be a smooth rigid analytic space with an action of a finite group $G$ satisfying some mild technical conditions. The family of $p$-adic Cherednik algebras associated to the action of $G$ on $X$ is a family of sheaves of complete bornological algebras on the quotient $X/G$  which generalizes the skew group algebra $G\ltimes \wideparen{\D}_X$. In particular, given a $p$-adic Cherednik algebra $\mathcal{H}_{t,c,\omega}$, its restriction to the smooth locus of $X/G$ is canonically isomorphic to $G\ltimes \wideparen{\D}_X$. Furthermore,  all $p$-adic Cherednik algebras are locally Fréchet-Stein, and their isomorphism classes are parameterized by geometric invariants associated to the action of $G$ on $X$.\\
In the algebraic setting, this parameter space has a deep deformation-theoretic meaning. Namely, if $X$ is a smooth affine $K$-variety with an action of a finite group $G$, then it can be shown that the family of Cherednik algebras associated to the action of $G$ on $X$  parametrizes all the infinitesimal deformations of the skew-group algebra $G\ltimes \D_X(X)$. We hope that our methods can be extended to show that this phenomena is also present in the rigid analytic setting. The paper \cite{p-adicCheralg} is devoted to laying the foundations of the theory of $p$-adic Cherednik algebras. In particular, we provide a construction of these algebras, study their algebraic properties, and obtain a classification theorem.\bigskip

The study of $p$-adic Cherednik algebras is continued in  \cite{p-adicCatO}, 
where we study the representation theory of $p$-adic Cherednik algebras, focusing on the case where $X$ is the analytification of an affine space, and $G$ acts linearly on $X$. The algebras arising from such an action are called $p$-adic rational Cherednik algebras, and their representation theory shares many features with that of the  $p$-adic Arens-Michael envelope $\wideparen{U}(\mathfrak{g})$ of a finite dimensional reductive Lie algebra $\mathfrak{g}$ studied in \cite{Schmidt2010VermaMO}. In particular, to any $p$-adic rational Cherednik algebra one can attach a category $\wideparen{\OX}$, which is a highest weight category whose simple objects are in bijection with the irreducible representations of $G$.\bigskip

Finally, in our under construction paper \cite{cychomDmod} we aim at employing several techniques from $p$-adic Hodge theory to further our study of the perturbation theory of integrable connections on smooth and separated rigid analytic spaces. Roughly speaking, our strategy here is the following: The homological nature of our theory fits in well with the Riemann-Hilbert correspondence for co-admissible $\wideparen{\D}$-modules constructed by F. Wiersig in \cite{W1},\cite{W2}, and we expect that this correspondence can be used to obtain a well-behaved deformation theory for co-admissible $\wideparen{\D}_X$-modules. In particular, working in the pro-étale topology,  \cite{W2} defines a pair of sheaves of Ind-Banach algebras $\mathbb{B}_{\operatorname{pdR}}^{\dagger}$ and $\OX\mathbb{B}_{\operatorname{pdR}}^{\dagger}$, which can be regarded as decompletions of the almost de Rham period and structure sheaves respectively. These sheaves can be used to define pro-étale versions of the  de Rham and solution functors for $\wideparen{\D}$-modules, which in particular turn co-admissible $\wideparen{\D}$-modules (and more generally $\mathcal{C}$-complexes) into complexes of $\mathbb{B}_{\operatorname{pdR}}^{\dagger}$-modules. In this setting, the deformations we are interested in appear via 
studying the deformations of  sheaves of Ind-Banach modules over $\mathbb{B}_{\operatorname{pdR}}^{\dagger}$,   and then applying the so called reconstruction functor  (a composition of the derived pushforward to the analytic topology  with certain operator). Heuristically, we want to study the deformations of the systems of differential equations by studying their sheaves of solutions.\\

Additionally, our line of work is specially synergetic with the works of  Cortiñas-Meyer-Mukherjee, in which the authors develop a version of analytic cyclic homology for non-archimedean bornological algebras (\emph{cf.} \cite{non-archborn}, \cite{non-archcyclic}, \cite{loccyc}). We expect that our understanding of the Hochschild homology of $\wideparen{\D}_X$ can be used to   calculate the cyclic, periodic cyclic, and analytic cyclic homology of $\wideparen{\D}_X(X)$ in many interesting situations. Namely,  we aim to obtain complete calculations at least whenever $X$ is a $p$-adic Stein space, such as the Drinfeld upper half plane.\\
To motivate this, we recall that the  main reason behind A. Connes' definition of cyclic (co)-homology was its relation with $K$-theory in the context of differential geometry (\emph{cf.} \cite{Connes}). In the non-archimedean setting, there has been a push in recent years to obtain versions of (bivariant) $K$-theory  for non-archimedean analytic algebras. For instance, there are the works of Kerz-Saito-Tamme \cite{K-theory1}, \cite{K-theory2}, and D. Mukherjee \cite{bivK-theo}. Moreover, the non-archimedean bivariant K-theory from \cite{bivK-theo} comes equipped with a canonical map to non-archimedean analytic cyclic homology.  Thus, we expect that our results pave the way to studying the $K$-theory of $\wideparen{\D}_X(X)$, and allow for the introduction of $K$-theoretic techniques in the study of $\wideparen{\D}_X$-modules on rigid analytic spaces. Finally, let us mention that, in the archimedean setup, algebras of differential operators cannot be endowed with submultiplicative seminorms. Thus, this type of phenomenon is unique to the non-archimedean theory.
\subsection*{Structure of the paper}
We will now give an overview of the contents:\\
In Chapter \ref{Section background on sheaves of Ind-Banach spaces} we will provide the reader with the basic notions on quasi-abelian categories, focusing on the cases of complete bornological spaces $\widehat{\mathcal{B}}c_K$, and Ind-Banach spaces $\Indban$. We will describe the closed symmetric monoidal structures on  $\widehat{\mathcal{B}}c_K$, and $\Indban$, compare them via the dissection functor, and discuss their extension to the left heart. Finally, we define algebras in $\widehat{\mathcal{B}}c_K$, and $\Indban$ and study their categories of modules.\\
In Chapter \ref{Section sheaves of Ind-Banach spaces} we will continue with the background material by discussing the category of sheaves of Ind-Banach spaces on a rigid analytic space $X$. We deal with the extension of the closed symmetric monoidal structure on $\Indban$ to the category $\operatorname{Shv}(X,\Indban)$ of sheaves of Ind-Banach spaces on $X$, define pushforward and pullback along morphisms of rigid analytic spaces, and establish a theory of derived functors in this setting. As in the previous section, we finish the chapter by defining sheaves of Ind-Banach algebras, and studying their categories of modules.\\
We finish the preliminaries in Chapter \ref{Section background Lie algebroids}, where we give an overview of the theory of sheaves of Ind-Banach modules over the sheaf of hypercomplete enveloping algebras of a Lie algebroid. In particular,
we will define the category of sheaves of co-admissible modules, the category of $\mathcal{C}$-complexes, and study certain operations between them.\\
We will devote Chapter \ref{Chapter products of Lie algebroids} to the construction of the product of two Lie algebroids and the definition of the bi-enveloping algebra. In particular, if $X$ is a smooth rigid analytic space equipped with a Lie algebroid $\mathscr{L}$, we construct a Lie algebroid $\mathscr{L}^2$, and a sheaf of Ind-Banach algebras $\wideparen{E}(\mathscr{L})$ on $X^2=X\times X$ which are determined by the identities:
\begin{equation*}
    \wideparen{U}(\mathscr{L}^2)=\wideparen{U}(\mathscr{L})\overrightarrow{\boxtimes}\wideparen{U}(\mathscr{L}), \quad \wideparen{E}(\mathscr{L})=\wideparen{U}(\mathscr{L})\overrightarrow{\boxtimes}\wideparen{U}(\mathscr{L})^{\op},
\end{equation*}
and we finish the chapter by studying their properties.\\
We continue our study of the sheaf of complete bi-enveloping algebras in Chapter \ref{Chapter co-admissible modules over the bi-enveloping algebra}, where we study the category of sheaves of Ind-Banach spaces over the sheaf of complete bi-enveloping algebras. In particular, if $\mathscr{L}$ is a Lie algebroid on a smooth rigid analytic space $X$, we build a canonical equivalence of categories:
\begin{equation*}
    \Mod_{\Indban}(\wideparen{U}(\mathscr{L}^2))\leftrightarrows \Mod_{\Indban}(\wideparen{E}(\mathscr{L})),
\end{equation*}
thus obtaining a side-switching formalism for bimodules. We use this equivalence to obtain a category of sheaves of co-admissible $\wideparen{E}(\mathscr{L})$-modules, and $\mathcal{C}$-complexes, and show that the previous equivalence induces equivalences between the corresponding categories.\\
We conclude our study of the category of Ind-Banach $\wideparen{E}(\mathscr{L})$-modules in Chapter \ref{Chapter co-admissibility and the immersion functor}, where we specialize to the case $\mathscr{L}=\mathcal{T}_{X/K}$, and study the properties of the pushforward along the diagonal $\Delta:X\rightarrow X^2$. Namely, we identify certain subcategories of $\Mod_{\Indban}(\wideparen{\D}_X^e)$, which satisfy that for every $\mathcal{M}\in \Mod_{\Indban}(\wideparen{\D}_X^e)$ the sheaf of $\wideparen{E}_X$-modules $\Delta_*\mathcal{M}$ is co-admissible. In particular, we define the category of co-admissible diagonal bimodules, show that it contains $\wideparen{\D}_X$, and then give a derived generalization, which we call the category of diagonal $\mathcal{C}$-complexes on $X$.\\
We finish the paper with Chapter \ref{Chapter Hochschil (co)-homology}, where we calculate the inner Hochschild (co)-homology complex of every diagonal $\mathcal{C}$-complex on $X$. Additionally, we obtain a duality result between Hochschild cohomology and homology of diagonal $\mathcal{C}$-complexes, and a comparison result between the Hochschild cohomology of $\wideparen{\D}_X$ and the de Rham cohomology of $X$. 
\subsection*{Notation and conventions}
For the rest of the text, we let $K$ be be a complete non-archimedean extension of $\mathbb{Q}_p$, denote its valuation ring by $\mathcal{R}$, and choose a pseudo-uniformizer $\pi$. Following the conventions in \cite{ardakov2019}, for a $\mathcal{R}$-module $M$, we will often times write $M_K$ instead of $M\otimes_{\mathcal{R}}K$. In order for our quasi-abelian categories of interest to be elementary, we will work within a fixed Grothendieck universe.
\subsection*{Acknowledgments}
This paper was written as a part of a PhD thesis at the Humboldt-Universität zu Berlin under the supervision of Prof. Dr. Elmar Große-Klönne. I would like to thank Prof. Große-Klönne
for pointing me towards this exciting topic and reading an early draft of the paper.

\bigskip
Funded by the Deutsche Forschungsgemeinschaft (DFG, German Research
Foundation) under Germany´s Excellence Strategy – The Berlin Mathematics
Research Center MATH+ (EXC-2046/1, project ID: 390685689).
\section{Background: Bornological and Ind-Banach spaces}\label{Section background on sheaves of Ind-Banach spaces}
We begin the paper by discussing some of the foundational results needed for the rest of the text. Namely, in this chapter we will deal with the basics of closed symmetric monoidal structures on quasi-abelian categories, focusing on the cases of (complete) bornological and Ind-Banach spaces. We will give the definitions of these two types of spaces, discuss some of their basic properties, and analyze tensor products and inner homomorphism functors in those two categories. In particular, we will construct closed symmetric monoidal structures in the categories of complete bornological spaces, and Ind-Banach spaces, compare these two structures via the dissection functor, study their extensions to the left hearts, and establish several important adjunctions. Lastly, we finish the chapter by obtaining certain derived adjunctions arising from the classical tensor-hom adjunction. The material in this chapter will be systematically used throughout the text.  We would like to point out that we claim no originality in this chapter. In particular, in order to keep the preliminaries contained, we have not included proofs of any of our statements. We have, however, included references in which detailed accounts of the facts discussed below can be found.
\subsection{\texorpdfstring{Bornological spaces}{}}\label{Section bornological spaces and homologicl algebra}
In this section we will recall some notions on bornological spaces and quasi-abelian categories. This will be based on the material in \cite{bode2021operations}, \cite{prosmans2000homological}, and
\cite{schneiders1999quasi}.
\begin{defi}\label{defi bornology}
A  bornology on a $K$-vector space $V$ is a family $\mathcal{B}$ of subsets
of $V$ such that:
\begin{enumerate}[label=(\roman*)]
    \item If $B_1\in \mathcal{B}$ and $B_2\subset B_1$, then $B_2\in\mathcal{B}$.
    \item $\{v\}\in\mathcal{B}$ for all $v\in V$.
    \item $\mathcal{B}$ is closed under taking finite unions.
    \item If $B\in\mathcal{B}$, then $\lambda\cdot B\in\mathcal{B}$ for all $\lambda\in K$.
    \item Let $B\in \mathcal{B}$, and $\mathcal{R}\cdot B$ be the $\mathcal{R}$-module spawned by $B$. Then $\mathcal{R}\cdot B\in\mathcal{B}$.
\end{enumerate}
We call the pair $(V,\mathcal{B})$ a bornological space, and call the elements of $\mathcal{B}$ bounded subsets. We will refer to the pair $(V,\mathcal{B})$ as $V$. A $K$-linear map between bornological spaces $f:V\rightarrow W$ is bounded if it maps bounded subsets to bounded subsets. We let $\mathcal{B}c_K$
be the category of bornological spaces and bounded maps.
\end{defi}

Let $V$ be a bornological space and $W$ be a $K$-vector space. Assume we have a $K$-linear surjection $f:V\rightarrow W$. This map induces a bornology on $W$, with bounded subsets being the images under $f$ of the bounded subsets of $V$. We call this the quotient bornology. Dually, for an injective map $W\rightarrow V$, we get a unique bornology on $W$, which is called the subspace bornology.  In particular, given a map of bornological spaces $f:V\rightarrow W$,
the spaces $\operatorname{CoIm}(f)$ and $\operatorname{Im}(f)$ have canonical bornologies, and we will always regard them as bornological spaces with respect to these bornologies.\\
The category of bornological spaces is closely related to that of locally convex spaces. Indeed, let $LCS_K$ be the category of locally convex topological $K$-vector spaces with continuous maps (see \cite{schneider2013nonarchimedean} for a complete treatment on $LCS_K$). Then we have an adjunction:
\begin{equation*}
    (-)^t:\mathcal{B}c_K\leftrightarrows LCS_K:(-)^b.
\end{equation*}
A precise description of this adjunction may be found in \cite[pp. 93,  Proposition 3]{houzel2006seminaire}. We remark that the functor $(-)^b$ is fully faithful when restricted to the category of metrizable spaces, and exact on semi-normed spaces (cf. \cite[Lemma 4.3]{bode2021operations}, \cite[pp. 102, pp. 109]{houzel2006seminaire}). A bornological space which is in the image of $(-)^b$ will be called the bornologification of a locally convex space.
\begin{obs}
We will say that a bornological space is a normed space if it is the image under $(-)^b$ of a normed space, and we will follow the same convention for Banach spaces, Fréchet spaces and metrizable spaces.
\end{obs}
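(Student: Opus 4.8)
The final statement is not a proposition but a terminological convention, recorded as a Remark; as such it carries no mathematical content that would require a proof in the usual sense. The only thing one might wish to verify is that the convention is \emph{unambiguous} — that is, that ``$V$ is a Banach (resp.\ Fréchet, normed, metrizable) space'' is a well-defined property of a bornological space $V$, and that the named classes sit inside one another in the expected way. The plan is to explain this point rather than to ``prove'' the definition.

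The key input is the adjunction $(-)^t\colon \mathcal{B}c_K\leftrightarrows LCS_K\colon (-)^b$ recalled above, together with the fact — already cited from \cite[Lemma 4.3]{bode2021operations} and \cite[pp.~102, pp.~109]{houzel2006seminaire} — that $(-)^b$ is fully faithful on the full subcategory of metrizable locally convex spaces. Since every normed, Banach, or Fréchet space is metrizable, each of these classical categories embeds as a full subcategory of $\mathcal{B}c_K$ via $(-)^b$. A fully faithful functor reflects isomorphisms, so if $W_1^b\cong W_2^b$ in $\mathcal{B}c_K$ with $W_1, W_2$ metrizable, then $W_1\cong W_2$ in $LCS_K$. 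Hence the locally convex space witnessing that a given bornological space ``is a Fréchet space'' is unique up to isomorphism, and the property itself depends only on the isomorphism class of the bornological space, so the convention is well-posed.

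Finally, the compatibility of the named classes — e.g.\ that a bornological space which is Banach in this sense is in particular normed, and one which is normed is in particular Fréchet and metrizable — is immediate from the corresponding inclusions of full subcategories of $LCS_K$ transported along the fully faithful functor $(-)^b$. The main (and essentially only) subtlety is to restrict attention to metrizable spaces before invoking full faithfulness, since $(-)^b$ is not fully faithful on all of $LCS_K$; beyond that, there is nothing to carry out.
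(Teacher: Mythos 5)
You are right that this is a terminological convention rather than a theorem, and the paper accordingly gives no proof; your well-posedness check via full faithfulness of $(-)^b$ on metrizable spaces is exactly the justification implicit in the surrounding text, so your treatment matches the paper's. One small correction to your final paragraph: a normed space need not be complete, so ``normed'' does not imply ``Fr\'echet'' (only ``metrizable''); the correct inclusions are Banach $\subset$ normed $\subset$ metrizable and Banach $\subset$ Fr\'echet $\subset$ metrizable.
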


There are notions of completeness and separability for bornological spaces, akin to those in $LCS_K$. We will focus on completeness.
Let $V$ be a bornological space. By property $(v)$ in Definition \ref{defi bornology}, every bounded subset $B\in\mathcal{B}$ is contained in a bounded $\mathcal{R}$-module. Assume $B$ is a bounded $\mathcal{R}$-module in $V$, and let $V_B=B_K$ be the associated $K$-vector space. Then $V_B$ may be regarded as a normed space with unit ball $V_B^{\circ}=B$, and topology induced by the $\pi$-adic topology on $B$. In this situation, the canonical map $V_B\rightarrow V$ is bounded and we obtain the following identity in $\mathcal{B}c_K$:
\begin{equation*}
    V=\varinjlim V_B,
\end{equation*}
where the colimit is taken over all the bounded $\mathcal{R}$-submodules of $V$. 
\begin{defi}
We say a bornological space $V$ is complete if every bounded subspace $B$ is contained in a bounded $\mathcal{R}$-module $B'$ such that $V_{B'}$ is a Banach space. We define $\widehat{\mathcal{B}}c_K$ as the full subcategory of $\mathcal{B}c_K$ given by the complete bornological spaces.    
\end{defi}
In particular, every complete bornological space is a colimit of Banach spaces. Furthermore, in \cite{prosmans2000homological} F. Prosmans and J-P. Schneiders show that $\widehat{\mathcal{B}}c_K$ can be regarded as a certain full subcategory of the category of Ind-Banach spaces, $\Indban$. Namely, that of essentially monomorphic objects in $\Indban$. We will give a more detailed account of the implications of this fact in the next section.\\
 By \cite[Proposition 5.11]{prosmans2000homological} the inclusion $J:\widehat{\mathcal{B}}c_K\rightarrow \mathcal{B}c_K$ has a left adjoint:
\begin{equation*}
    J:\widehat{\mathcal{B}}c_K\leftrightarrows \mathcal{B}c_K:\widehat{\operatorname{Cpl}
    }.
\end{equation*}
The completion functor is discussed at length in \cite[Section 5]{prosmans2000homological}. The most relevant fact for us that a short exact sequence in  $\widehat{\mathcal{B}}c_K$ is strict if and only if its image under $J$ is strict. Recall the notion of quasi-abelian categories from \cite[Chapter 1]{schneiders1999quasi}:
\begin{defi}
Let $\mathcal{E}$ be an additive category with kernels and cokernels. Any morphism $f:C\rightarrow D$ induces a canonical morphism:
\begin{equation*}
    \operatorname{CoIm}(f)\rightarrow \operatorname{Im}(f).
\end{equation*}
We say $f:C\rightarrow D$ is strict if the above map is an isomorphism. We say $\mathcal{E}$ is a quasi-abelian category if the following conditions hold:
\begin{enumerate}[label=(\roman*)]
    \item For every cartesian square:
    \begin{equation*}
\begin{tikzcd}
C \arrow[r, "f"]             & D            \\
C' \arrow[r, "f'"] \arrow[u] & D' \arrow[u]
\end{tikzcd}
    \end{equation*}
     such that $f$ is a strict epimorphism, $f'$ is also a strict epimorphism.

    \item For every cocartesian square:
    \begin{equation*}
\begin{tikzcd}
C' \arrow[r, "f'"]         & D'          \\
C \arrow[u] \arrow[r, "f"] & D \arrow[u]
\end{tikzcd}
    \end{equation*}
    where $f$ is a strict monomorphism, $f'$ is also a strict monomorphism.
\end{enumerate}
\end{defi}
Notice that any abelian category is quasi-abelian. We say a sequence:
\begin{equation*}
    C\xrightarrow[]{f}D\xrightarrow[]{g} E
\end{equation*}
is strictly exact if $f$ is strict and $\operatorname{Im}(f)=\operatorname{Ker}(g)$.  The class of short strictly exact sequences gives $\mathcal{E}$ the structure of an exact category in the sense of Quillen \cite{Quillen}. Hence, we can define the following:
\begin{defi}
Let $\mathcal{E}$ be a quasi-abelian category. We define the following objects:
\begin{enumerate}[label=(\roman*)]
    \item The homotopy category of $\mathcal{E}$, denoted $\operatorname{K}(\mathcal{E})$, is the localization of the category of chain complexes $\operatorname{Ch}(\mathcal{E})$ at the full triangulated subcategory of complexes homotopic to $0$. 
    \item The derived category of $\mathcal{E}$, denoted $\operatorname{D}(\mathcal{E})$, is the localization of $\operatorname{K}(\mathcal{E})$ at the full triangulated subcategory of strictly exact complexes. 
\end{enumerate}
\end{defi}
Notice that if $\mathcal{E}$ is abelian every morphism is strict, so $\operatorname{D}(\mathcal{E})$ is the usual derived category. For any quasi-abelian category $\mathcal{E}$, we can form a certain abelian category $LH(\mathcal{E})$ called its left heart. It comes equipped with a left exact functor:
\begin{equation*}
    I:\mathcal{E}\rightarrow LH(\mathcal{E}),
\end{equation*}
satisfying certain universal property (cf. \cite[Section 1.2.4.]{schneiders1999quasi}).  The inclusion into the left heart $I:\mathcal{E}\rightarrow LH(\mathcal{E})$ has a left adjoint, is fully faithful, and induces an equivalence of triangulated categories $I:\operatorname{D}(\mathcal{E})\rightarrow \operatorname{D}(LH(\mathcal{E}))$.
The main feature of quasi-abelian categories is the fact that, as $LH(\mathcal{C})$ is an abelian category, we can use the previous equivalence to deduce homological properties of $\operatorname{D}(\mathcal{E})$ via $\operatorname{D}(LH(\mathcal{E}))$. With this equivalence in mind, we will sometimes write $\operatorname{D}(\mathcal{C})$ instead of $\operatorname{D}(LH(\mathcal{C}))$ whenever we dim convenient.\\
There are versions of the usual notions of exactness for functors between quasi-abelian categories:
\begin{defi}
Let $F:\mathcal{E}\rightarrow \mathcal{F}$ be a functor between quasi-abelian categories.
\begin{enumerate}[label=(\roman*)]
    \item We say $F$ is left exact if it maps a strictly exact sequence in $\mathcal{E}$:
\begin{equation*}
    0\rightarrow C \rightarrow D\rightarrow E\rightarrow 0
\end{equation*}
 to a strictly exact sequence in $\mathcal{F}$:
 \begin{equation*}
     0\rightarrow F(C) \rightarrow F(D)\rightarrow F(E).
 \end{equation*}
    \item We say $F$ is strongly left exact if it maps a strictly exact sequence in $\mathcal{E}$:
\begin{equation*}
    0\rightarrow C \rightarrow D\rightarrow E
\end{equation*}
 to a strictly exact sequence in $\mathcal{F}$:
 \begin{equation*}
     0\rightarrow F(C) \rightarrow F(D)\rightarrow F(E).
 \end{equation*}
\end{enumerate}
The corresponding notions of right exactness are defined dually. A functor is said to be (strongly) exact if it is both (strongly) left exact and
(strongly) right exact.
\end{defi}
In some situations, functors between quasi-abelian categories induce functors between their left hearts, and there is a theory of derived functors in this setting. This is studied in \cite[Section 1.3]{schneiders1999quasi}.\bigskip

Many of the constructions of homological algebra depend on the category $\mathcal{E}$ having enough injective or projective objects (\emph{i.e.} existence of derived functors). In particular, when considering sheaves with values on a given category, it is of crucial importance that the category has enough injective objects. Unfortunately, a quasi-abelian category $\mathcal{E}$ will almost never have enough injective objects (cf. \cite[Proposition 1.3.26]{schneiders1999quasi}). However, in many situations $LH(\mathcal{E})$ will have enough injective objects. Thus, in the spirit mentioned above, we will be able to study sheaves with values on $\mathcal{E}$ by considering sheaves with values on $LH(\mathcal{E})$. Unfortunately, not every quasi-abelian category is suited for this kind of study. In particular, certain operations on sheaves such as sheafification, require strong exactness of filtered colimits. Hence, we need to restrict the scope, and impose certain conditions on $\mathcal{E}$ which ensure that filtered colimits are strongly exact. This line of thought leads to the definition of elementary and quasi-elementary quasi-abelian categories, which we now define:
\begin{defi}
Let $\mathcal{E}$ be a quasi-abelian category. We define the following objects:
\begin{enumerate}[label=(\roman*)]
    \item  An object $C$ of $\mathcal{E}$ is called small if the functor $\Hom_{\mathcal{E}}(C,-)$ commutes with direct sums.
    \item  An object $C$ of $\mathcal{E}$ is called tiny if the functor $\Hom_{\mathcal{E}}(C,-)$ commutes with all filtered colimits.
    \item A set $G$ of objects in $\mathcal{E}$ is called a strictly generating set if for any $D\in\mathcal{E}$, there is a family of objects $\{ P_i\}_{i\in I}\subset G$ together with a strict epimorphism:
    \begin{equation*}
        \bigoplus_{i\in I}P_i\rightarrow D.
    \end{equation*}
\end{enumerate}
\end{defi}
\begin{defi}
We say that a quasi-abelian category $\mathcal{E}$ is quasi-elementary (\emph{resp.} elementary)
if it is cocomplete and admits a strictly generating set of small (\emph{resp.} tiny) projective objects.
\end{defi}
The relevance of this notion is made apparent via the following Propositions:
\begin{prop}[{\cite{schneiders1999quasi}}]
Let $\mathcal{E}$ be a quasi-elementary quasi-abelian category. Then $LH(\mathcal{E})$   is a Grothendieck abelian category with enough projective objects. In particular, it has enough injective objects, exact filtered colimits, and a generator. 
\end{prop}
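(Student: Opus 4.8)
The statement to prove is the Proposition attributed to Schneiders: if $\mathcal{E}$ is a quasi-elementary quasi-abelian category, then $LH(\mathcal{E})$ is a Grothendieck abelian category with enough projective objects, hence has enough injectives, exact filtered colimits, and a generator. The plan is to leverage the standard dictionary between quasi-abelian categories and their left hearts, as developed in \cite{schneiders1999quasi}. The starting point is the fundamental fact recalled above: the inclusion $I:\mathcal{E}\rightarrow LH(\mathcal{E})$ is fully faithful, has a left adjoint, and induces an equivalence $\operatorname{D}(\mathcal{E})\simeq\operatorname{D}(LH(\mathcal{E}))$. In particular $I$ preserves limits (being a right adjoint), and one knows that $I$ preserves coproducts and more generally takes a strictly generating set in $\mathcal{E}$ to a generating set in $LH(\mathcal{E})$, so that the image of the given strictly generating set of projectives becomes a candidate set of generators of $LH(\mathcal{E})$.

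The key steps, in order, are as follows. First I would record that $LH(\mathcal{E})$ is abelian and cocomplete: abelianness is part of the construction of the left heart, and cocompleteness follows because $\mathcal{E}$ is cocomplete and the left adjoint to $I$ together with the explicit description of $LH(\mathcal{E})$ as (a localization of) a category of $2$-term complexes transports colimits. Second, I would show filtered colimits in $LH(\mathcal{E})$ are exact; this is exactly where quasi-elementarity enters, since the defining hypothesis guarantees that filtered colimits in $\mathcal{E}$ are strongly exact, and this strong exactness passes to $LH(\mathcal{E})$ through the $2$-term complex description (an $AB5$-type argument). Third, I would verify that the images $I(P_i)$ of the tiny/small projective generators are projective in $LH(\mathcal{E})$: projectivity of $P_i$ in $\mathcal{E}$ means $\Hom_{\mathcal{E}}(P_i,-)$ is exact on strict epimorphisms, and since every epimorphism in $LH(\mathcal{E})$ is, up to the localization, represented by a strict epimorphism in $\mathcal{E}$, together with full faithfulness of $I$ this yields exactness of $\Hom_{LH(\mathcal{E})}(I(P_i),-)$. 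Fourth, I would check that $\bigoplus_i I(P_i)$ (or the set $\{I(P_i)\}$) is a generator: any object of $LH(\mathcal{E})$ receives a strict epimorphism from a coproduct of the $I(P_i)$, inherited from the strictly generating property in $\mathcal{E}$ applied to the two terms of a representing complex. Having a generating set, exact filtered colimits, and cocompleteness, $LH(\mathcal{E})$ is Grothendieck by definition; having enough projectives is immediate from the third and fourth steps; and a Grothendieck abelian category automatically has enough injectives (Grothendieck's theorem) and a single generator (take the coproduct of the generating set).

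The main obstacle, I expect, is the transfer of \emph{strong} exactness of filtered colimits from $\mathcal{E}$ to genuine exactness in $LH(\mathcal{E})$ — i.e. the second step. Filtered colimits in $\mathcal{E}$ need not be exact in the naive sense (kernels are subtle in quasi-abelian categories), and one must be careful that the passage through the left heart, which is precisely the device that repairs the failure of $\mathcal{E}$ to be abelian, interacts correctly with the colimit. The cleanest route is to use the realization of $LH(\mathcal{E})$ inside (the localization of) $\operatorname{Ch}^{[-1,0]}(\mathcal{E})$ and to compute the relevant kernels and cokernels termwise there, where strong exactness of the colimit in $\mathcal{E}$ is exactly the hypothesis one needs; the bookkeeping that a termwise-computed colimit of short strictly exact sequences remains short strictly exact, and hence descends to an exact sequence in $LH(\mathcal{E})$, is the technical heart. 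The remaining steps (projectivity and generation of the $I(P_i)$, and invoking Grothendieck's theorem for enough injectives) are formal once full faithfulness of $I$ and the compatibility of $I$ with coproducts and strict epimorphisms are in hand. Since we have disclaimed originality for this chapter, in the paper itself this is simply cited to \cite{schneiders1999quasi}; the above is the shape the argument takes.
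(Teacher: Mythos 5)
The paper gives no proof of this proposition (it is cited to Schneiders), so your proposal has to be measured against the argument in the cited source. Most of your outline is fine — $LH(\mathcal{E})$ is cocomplete abelian, $I$ carries the strictly generating set of projectives to a generating set of projectives in $LH(\mathcal{E})$ (using that direct sums are strongly exact in a quasi-elementary category, so $I$ commutes with them), and enough injectives follows from Grothendieck's theorem once AB5 is established. But your second step, which you correctly identify as the technical heart, rests on a false premise: you write that ``the defining hypothesis guarantees that filtered colimits in $\mathcal{E}$ are strongly exact.'' It does not. Quasi-elementary means a strictly generating set of \emph{small} projectives (commuting with direct sums); it is \emph{elementary} that demands \emph{tiny} projectives (commuting with all filtered colimits). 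The paper states explicitly, in the paragraph immediately following this proposition, that for a quasi-elementary $\mathcal{E}$ filtered inductive limits are \emph{not} strongly exact and that $I$ does \emph{not} commute with filtered colimits — this failure is the entire reason the paper later insists on elementary categories for sheaf theory. So your plan to compute filtered colimits termwise in the two-term-complex model and invoke strong exactness in $\mathcal{E}$ breaks down at both points.

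The way to close the gap is to reverse the logical order: do not try to import exactness of filtered colimits from $\mathcal{E}$, but deduce it from the projective generators. Once you know $LH(\mathcal{E})$ is a cocomplete abelian category with a generating set $\{I(P_i)\}$ of small projective objects, the Gabriel--Mitchell/Freyd recognition theorem identifies $LH(\mathcal{E})$ with the category of modules over the ringoid $\bigl(\Hom(I(P_i),I(P_j))\bigr)_{i,j}$ (via the functor $M\mapsto \bigl(\Hom(I(P_i),M)\bigr)_i$, which is faithful, exact and coproduct-preserving, hence an equivalence). Module categories over ringoids are Grothendieck, so AB5 and the existence of a generator come out as \emph{consequences} of this equivalence, and the filtered colimits of $LH(\mathcal{E})$ so obtained simply have nothing to do with filtered colimits in $\mathcal{E}$. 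This is the route Schneiders takes, and it is the only one available at the stated level of generality.
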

We can relate these notions to bornological spaces via the following proposition:
\begin{prop}\label{prop bornological spaces are quasi-abelian}
The following hold:
\begin{enumerate}[label=(\roman*)]
    \item $\mathcal{B}c_K$  is a complete and co-complete quasi-elementary quasi-abelian category.  
    \item $\widehat{\mathcal{B}}c_K$  is a complete and co-complete quasi-elementary quasi-abelian category .
    \item Let $X$ be a set and $V$ be a Banach space. We define $c_0(X,V)$ as the $K$-vector space of all functions $f:X\rightarrow V$ such that for each $\epsilon > 0$ the following set is finite: 
    \begin{equation*}
        \{ x\in X, \textnormal{ with } \vert f(x)\vert > \epsilon \},
    \end{equation*}
    We regard $c_0(X,V)$ as a Banach space with respect to the supremum norm. Every Banach space of the form $c_0(X,V)$ is a small projective object in $\widehat{\mathcal{B}}c_K$. Furthermore, every object in $\widehat{\mathcal{B}}c_K$ is the cokernel of a strict morphism of direct sums of Banach spaces of the above form.
\end{enumerate}
\end{prop}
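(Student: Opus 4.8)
The plan is to verify the three assertions of Proposition~\ref{prop bornological spaces are quasi-abelian} by reducing each to a known structural result about $\widehat{\mathcal{B}}c_K$, $\mathcal{B}c_K$, and the Banach spaces $c_0(X,V)$, citing \cite{prosmans2000homological} and \cite{schneiders1999quasi} where appropriate rather than re-deriving anything from scratch. For item $(i)$, I would first recall that $\mathcal{B}c_K$ is additive with kernels and cokernels (kernels are taken with the subspace bornology, cokernels with the quotient bornology, as discussed above), and that the pullback/pushout axioms defining quasi-abelianness hold because strict epimorphisms are exactly the bornological quotient maps and these are stable under base change; completeness and cocompleteness follow from the fact that limits are computed on underlying vector spaces with the initial bornology and colimits with the final bornology. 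Quasi-elementarity then reduces to exhibiting a strictly generating set of small projectives: the one-dimensional space $K$ (with its canonical bornology), or more generally the spaces $\ell^1$-type normed spaces, generate, and $\operatorname{Hom}_{\mathcal{B}c_K}(K,-)$ is the underlying-set functor, which commutes with direct sums. All of this is in \cite[Prop. 5.8 and §5]{prosmans2000homological}.

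For item $(ii)$, the passage from $\mathcal{B}c_K$ to $\widehat{\mathcal{B}}c_K$ goes through the completion functor $\widehat{\operatorname{Cpl}}$, left adjoint to the inclusion $J$, recalled just above. Colimits in $\widehat{\mathcal{B}}c_K$ are obtained by applying $\widehat{\operatorname{Cpl}}$ to colimits in $\mathcal{B}c_K$, which gives cocompleteness; completeness is inherited because a limit of complete bornological spaces is again complete (the subspace bornology of a complete space is complete). Quasi-abelianness of $\widehat{\mathcal{B}}c_K$ follows from the stated compatibility: a short exact sequence in $\widehat{\mathcal{B}}c_K$ is strict if and only if its image under $J$ is, so the strict epimorphisms and strict monomorphisms in $\widehat{\mathcal{B}}c_K$ are detected in $\mathcal{B}c_K$, and the pullback/pushout stability axioms transport along $J$ and $\widehat{\operatorname{Cpl}}$. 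For quasi-elementarity one uses that $\widehat{\operatorname{Cpl}}$ sends the small projective generators of $\mathcal{B}c_K$ to small projective objects of $\widehat{\mathcal{B}}c_K$ — concretely the Banach completions of the generators from $(i)$ — again following \cite[§5]{prosmans2000homological}.

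Item $(iii)$ is the most concrete and, I expect, the main point requiring genuine argument rather than citation-chasing. First one checks $c_0(X,V)$ is a Banach space under the sup norm, which is standard. The projectivity statement is the key: given a strict epimorphism $q\colon M \twoheadrightarrow N$ in $\widehat{\mathcal{B}}c_K$ and a bounded map $f\colon c_0(X,V)\to N$, one must lift $f$ through $q$. The idea is that a bounded map out of $c_0(X,V)$ is determined by a bounded family of maps out of $V$ indexed by $X$ together with the images of the standard ``unit vectors'' $e_x$; strictness of $q$ lets one choose bounded preimages in $M$ of the images of the $e_x\otimes v$, and the non-archimedean structure (the sup norm and the finiteness condition defining $c_0$) ensures the resulting assignment extends to a bounded map $c_0(X,V)\to M$. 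This is the non-archimedean analogue of the universal property making $\ell^1$ spaces projective in the archimedean Banach setting, and is exactly \cite[Prop. 3.3 and Prop. 3.4]{prosmans2000homological} or the corresponding statement in \cite{bode2021operations}. Smallness of $c_0(X,V)$ — that $\operatorname{Hom}(c_0(X,V),-)$ commutes with direct sums — follows because a bounded subset of a direct sum lands in a finite subsum, so a map out of $c_0(X,V)$ factors through finitely many summands. Finally, that every object of $\widehat{\mathcal{B}}c_K$ is the cokernel of a strict map between direct sums of such $c_0(X,V)$'s is the assertion that these objects form a strictly generating set of projectives: writing an arbitrary complete bornological space as a filtered colimit of Banach spaces $V_B$ and covering each Banach space by a surjection from some $c_0(X_B, K)$ (choosing $X_B$ to be, say, the unit ball of $V_B$), one assembles a strict epimorphism from a direct sum, and then repeats on the kernel to present the object as a cokernel. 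The main obstacle is organizing the lifting argument for projectivity cleanly with the bornological (rather than topological) notion of boundedness; everything else is bookkeeping or direct citation.
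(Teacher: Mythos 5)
Your proposal follows essentially the same route as the paper, whose proof of this proposition is pure citation: items $(i)$ and $(ii)$ are \cite[Propositions 1.8 and 5.6]{prosmans2000homological} and item $(iii)$ is \cite[Corollary 4.18]{bode2021operations}; your sketches are broadly consistent with those sources. Two points deserve correction, though. First, $K$ is \emph{not} a strict generator of $\mathcal{B}c_K$: a strict epimorphism $\bigoplus_I K\rightarrow V$ would force every bounded subset of $V$ to be the image of a bounded subset of the direct sum, hence contained in the $\mathcal{R}$-span of finitely many vectors --- this is exactly the finite-subsum phenomenon you invoke later to prove smallness of $c_0(X,V)$ --- and it already fails for the unit ball of an infinite-dimensional Banach space. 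The strictly generating small projectives are the normed spaces of $c_0$-type (your hedge ``or more generally the $\ell^1$-type normed spaces'' is the part to keep). Second, in $(iii)$ the phrase ``choose bounded preimages of the images of the $e_x\otimes v$'' conceals the actual difficulty: the preimages must be chosen \emph{linearly in} $v$, i.e.\ one has to lift a uniformly bounded family of maps $V\rightarrow N$ to $M$ rather than lift a bounded set of elements, and this is precisely where the cited proof does its work; as written, your argument only handles $c_0(X,K)$, where the $e_x$ form an orthogonal basis and a pointwise choice of preimages extends by continuity.
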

\begin{proof}
The first statement is \cite[Proposition 1.8]{prosmans2000homological}, the second is \cite[Proposition 5.6]{prosmans2000homological}, and the third is \cite[Corollary 4.18]{bode2021operations}. 
\end{proof}
Thus, $LH(\widehat{\mathcal{B}}c_K)$ is a Grothendieck abelian category with enough projective objects. Notice that the choice of a Grothendieck universe we made at the beginning of the paper is essential here. Otherwise, there is not a set of small projective generators of $\widehat{\mathcal{B}}c_K$, only a proper class.\\

In light of the previous results, one could wonder why we made the effort of defining elementary quasi-abelian categories. After all, we are mainly interested in homological algebra, and  being quasi-elementary is enough for the left heart to be a Grothendieck abelian category. Unfortunately, being quasi-elementary is not enough for our purposes. In particular, let $\mathcal{E}$ be a quasi-elementary quasi-abelian category. Then direct sums are strongly exact in 
$\mathcal{E}$. However, filtered inductive limits are not strongly exact in $\mathcal{E}$. Furthermore, the canonical morphism:
\begin{equation*}
    I:\mathcal{E}\rightarrow LH(\mathcal{E}),
\end{equation*}
does not commute with filtered colimits. This makes it impossible to define a sheafification functor for presheaves with values in $\mathcal{E}$, so the theory of $\mathcal{E}$-valued sheaves is not well-behaved. For this reason, we need to work with elementary abelian categories. 
\subsection{Ind-Banach spaces}
Unfortunately for us, $\widehat{\mathcal{B}}c_K$ is not an elementary quasi-abelian category. Thus, we need to introduce an auxiliary category that is somehow related to $\widehat{\mathcal{B}}c_K$. This role will be played by the Ind-Banach spaces. Let us start by giving the appropriate definitions:
\begin{defi}
 We define the category $\Indban$  of Ind-Banach spaces as the category with objects given by functors $V:I\rightarrow \operatorname{Ban}_K$, where $I$ is a small filtered category. We call these objects the Ind-Banach spaces. For simplicity, we will write Ind-Banach spaces as formal direct limits:
 \begin{equation*}
      ``\varinjlim"V_i.
 \end{equation*}
Morphisms in $\Indban$ are defined as follows: Given two objects $``\varinjlim"V_i$, $``\varinjlim"W_j$, we set:
\begin{equation*}
    \Hom_{\Indban}(``\varinjlim"V_i,``\varinjlim"W_j)=\varprojlim_i\varinjlim_j \Hom_{\operatorname{Ban}_K}(V_i,W_j).
\end{equation*}
\end{defi}
The category of Ind-Banach spaces is admittedly more inexplicit than $\widehat{\mathcal{B}}c_K$, and it is harder to apply the traditional methods of $p$-adic functional analysis in this setting. On the other hand, the homological properties of $\Indban$ are much better than those of $\widehat{\mathcal{B}}c_K$:
\begin{prop}
The category $\Indban$ is an elementary quasi-abelian category. Furthermore, every Banach space of the form $c_0(X,V)$ is projective in $\Indban$.
\end{prop}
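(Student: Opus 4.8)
The plan is to deduce both assertions from properties of the base category $\operatorname{Ban}_K$ together with the formal machinery of Ind-categories, following \cite{prosmans2000homological} and \cite{schneiders1999quasi}. First I would recall that $\operatorname{Ban}_K$ is itself quasi-abelian (a strictly full subcategory of $\widehat{\mathcal{B}}c_K$ closed under the relevant operations, or checked directly: the canonical map $\operatorname{CoIm}(f)\to\operatorname{Im}(f)$ of a bounded map of Banach spaces together with the pullback/pushout axioms are classical) and has all finite limits and colimits. The general theory of Ind-objects then applies to the essentially small — relative to our fixed Grothendieck universe, which is precisely why it was fixed — category $\operatorname{Ban}_K$: $\operatorname{Ind}(\operatorname{Ban}_K)=\Indban$ is again quasi-abelian, it is complete and cocomplete, filtered colimits in it are strongly exact, and the canonical functor $\operatorname{Ban}_K\to\Indban$ is exact. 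Concretely, kernels and cokernels in $\Indban$ are computed by passing two given Ind-diagrams to a common filtered index category and taking the filtered colimit of the level-wise kernels and cokernels; with this description the two axioms on cartesian and cocartesian squares reduce to the corresponding statements in $\operatorname{Ban}_K$, using that filtered colimits preserve strict epimorphisms and strict monomorphisms. I would cite Prosmans--Schneiders and Schneiders' memoir for this bookkeeping rather than reprove it.

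Next I would identify a strictly generating set of tiny projective objects. Unwinding the definition of morphisms in $\Indban$ shows that for a Banach space $V$ one has $\Hom_{\Indban}(V,\text{``}\varinjlim\text{''}W_j)=\varinjlim_j\Hom_{\operatorname{Ban}_K}(V,W_j)$, and this identification is compatible with filtered colimits of the target Ind-object; hence every object in the image of $\operatorname{Ban}_K\to\Indban$, in particular every $c_0(X,V)$, is tiny. For strict generation, note that an arbitrary Ind-Banach space $\text{``}\varinjlim\text{''}_{i\in I}V_i$ is the filtered colimit in $\Indban$ of the constant objects $V_i$, hence a strict quotient of $\bigoplus_{i\in I}V_i$; combining this with the $\operatorname{Ban}_K$-analogue of Proposition \ref{prop bornological spaces are quasi-abelian}(iii) (every Banach space is a strict quotient of a direct sum of spaces of the form $c_0(X,V)$), one obtains a strict epimorphism onto the given Ind-Banach space from a direct sum of such spaces. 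Restricting to $c_0(X,V)$ with $X$ and $V$ of cardinality bounded by the universe turns this generating family into an honest set.

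It remains to establish projectivity, in two steps. First, $c_0(X,V)$ is projective in $\operatorname{Ban}_K$: this is the content of Proposition \ref{prop bornological spaces are quasi-abelian}(iii), but I would also indicate the direct argument, since it is the one genuinely analytic input — given a strict epimorphism $f\colon E\to F$ and a bounded map $g\colon c_0(X,V)\to F$, strictness provides a constant $C$ such that every $y\in F$ admits a preimage of norm at most $C\lVert y\rVert$; choosing such preimages for the images under $g$ of the canonical ``orthonormal'' generators of $c_0(X,V)$ and invoking the non-archimedean triangle inequality, one checks that the assignment extends uniquely to a bounded lift $c_0(X,V)\to E$. Second, a projective object $P$ of $\operatorname{Ban}_K$ stays projective in $\Indban$: given a strict epimorphism $\alpha\colon M\to N$ in $\Indban$ and a map $\beta\colon P\to N$, tininess of $P$ lets me factor $\beta$ through some level $N_j$ of a presentation $N=\text{``}\varinjlim\text{''}N_j$; after re-indexing, $\alpha$ is represented by a filtered system of strict epimorphisms $\alpha_j\colon M_j\to N_j$ in $\operatorname{Ban}_K$, and projectivity of $P$ in $\operatorname{Ban}_K$ lifts $P\to N_j$ to $P\to M_j$, which I then postcompose with $M_j\to M$. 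Together with the previous paragraph this shows $\Indban$ is elementary with the $c_0(X,V)$ among its projective generators.

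The main obstacle, as usual for these foundational statements, is not conceptual but structural: one needs the precise description of strict epimorphisms (and strict monomorphisms) of Ind-objects as filtered colimits of strict epimorphisms (resp.\ monomorphisms) in $\operatorname{Ban}_K$, since this is exactly what makes both the verification of the quasi-abelian axioms and the transfer of projectivity from $\operatorname{Ban}_K$ to $\Indban$ go through; the only genuinely non-formal ingredient is the analytic lifting property underlying projectivity of $c_0(X,V)$, which uses the non-archimedean estimate and the existence of orthonormal-type bases in $c_0$-spaces. Everything else — cocompleteness, exactness of the embedding, the set-theoretic bound on the generating family — is formal Ind-category bookkeeping under the universe convention, and I would reference \cite{prosmans2000homological}, \cite{schneiders1999quasi}, and \cite{bode2021operations} rather than reproduce it.
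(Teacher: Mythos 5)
Your overall architecture is sound and is essentially the content of the reference the paper invokes for this proposition (\cite[Proposition 2.1.17]{schneiders1999quasi}): $\operatorname{Ind}$ of a universe-small quasi-abelian category is quasi-abelian, cocomplete, with strongly exact filtered colimits; objects of the base category become tiny; strict epimorphisms of Ind-objects are represented level-wise by strict epimorphisms; the canonical strict epimorphism $\bigoplus_i V_i\to ``\varinjlim"V_i$ together with strict generation of $\operatorname{Ban}_K$ by the $c_0(X,K)$ gives strict generation of $\Indban$; and tininess transfers projectivity from $\operatorname{Ban}_K$ to $\Indban$. Since the paper disposes of all of this with a one-line citation, the only step carrying real risk is the one you yourself single out as the non-formal analytic input, and that is where there is a genuine gap.

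Your lifting argument for projectivity of $c_0(X,V)$ in $\operatorname{Ban}_K$ invokes ``the canonical orthonormal generators of $c_0(X,V)$''. These exist when $V=K$ (the Dirac functions $e_x$), and the argument then works verbatim; but for a general Banach space $V$ the space $c_0(X,V)\cong c_0(X,K)\widehat{\otimes}_KV$ has no such generating family, and no argument can produce one in that generality: evaluation at a point $x_0\in X$ and extension by zero exhibit $V$ as a bounded retract of $c_0(X,V)$, so projectivity of $c_0(X,V)$ forces projectivity of $V$ itself. Moreover, since the fully faithful embedding $\operatorname{Ban}_K\to\Indban$ preserves kernels and cokernels, a Banach space that is projective in $\Indban$ is already projective in $\operatorname{Ban}_K$; and over a non-spherically-complete base field (e.g.\ $K=\mathbb{C}_p$, which the paper's conventions allow) not every Banach space is projective --- $c_0$ is not complemented in $\ell^\infty$, so the strict epimorphism $\ell^\infty\to\ell^\infty/c_0$ admits no bounded linear section. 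You should therefore either restrict to $V$ admitting an orthonormal basis (this costs nothing for elementarity, since the spaces $c_0(X,K)$ already strictly generate: every Banach space is a strict quotient of one), or make explicit the class of $V$ for which the lifting is actually being claimed, tracking the precise hypotheses of \cite[Corollary 4.18]{bode2021operations}. The remaining steps of your proposal (quasi-abelianness of $\operatorname{Ban}_K$, exactness of the embedding, tininess, the level-wise description of strict epimorphisms, and the transfer of projectivity via tininess) are correct.
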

\begin{proof}
This is a consequence of \cite[Proposition 2.1.17]{schneiders1999quasi}. 
\end{proof}
As mentioned above, every complete bornological space $V$ may be expressed as a filtered colimit:
\begin{equation*}
    V=\varinjlim V_B,
\end{equation*}
where the colimit is taken over all bounded $\mathcal{R}$-submodules of $V$. This allows us to define the dissection functor:
\begin{equation*}
    \operatorname{diss}:\widehat{\mathcal{B}}c_K\rightarrow \Indban,
\end{equation*}
defined on objects by taken a complete bornological space $V$ to the formal colimit $``\varinjlim" V_B$, and similarly for morphisms. This functor was thoroughly studied in \cite[Section 5]{prosmans2000homological}. Let us now give an overview of its main features:
\begin{teo}\label{teo first comparison comp born and indban}
The dissection functor $\operatorname{diss}:\widehat{\mathcal{B}}c_K\rightarrow \Indban$ has a left adjoint functor:
\begin{equation*}
   \widehat{L}:\Indban \rightarrow \widehat{\mathcal{B}}c_K, \, ``\varinjlim"V_i\mapsto \varinjlim V_i.
\end{equation*}
These functors satisfy the following identity: $\widehat{L}\circ  \operatorname{diss}=\operatorname{Id}_{\widehat{\mathcal{B}}c_K}.$ Furthermore, the dissection functor is exact, and induces mutually inverse equivalences of triangulated categories:
\begin{equation*}
    \operatorname{diss}:\operatorname{D}(\widehat{\mathcal{B}}c_K)\leftrightarrows \operatorname{D}(\Indban):\mathbb{L}\widehat{L},
\end{equation*}
These functors restrict to mutually inverse equivalences of abelian categories:
\begin{equation*}
   \widetilde{\operatorname{diss}}:LH(\widehat{\mathcal{B}}c_K)\leftrightarrows LH(\Indban): \widetilde{L}.
\end{equation*}
\end{teo}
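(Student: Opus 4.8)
The plan is to build the adjunction, verify the identity $\widehat{L}\circ \operatorname{diss} = \operatorname{Id}$, and then bootstrap from exactness of $\operatorname{diss}$ to the derived and left-heart equivalences. First I would check that $\widehat{L}$ is well-defined: given a formal colimit $``\varinjlim"V_i$ of Banach spaces over a filtered index category, the actual colimit $\varinjlim V_i$ computed in $\widehat{\mathcal{B}}c_K$ exists because $\widehat{\mathcal{B}}c_K$ is cocomplete (Proposition \ref{prop bornological spaces are quasi-abelian}(ii)), and functoriality on morphisms follows from the universal property of colimits together with the formula defining $\Hom_{\Indban}$. The adjunction $\Hom_{\widehat{\mathcal{B}}c_K}(\widehat{L}(``\varinjlim"V_i), W) \cong \Hom_{\Indban}(``\varinjlim"V_i, \operatorname{diss}(W))$ is then a formal computation: the left side is $\varprojlim_i \Hom_{\widehat{\mathcal{B}}c_K}(V_i, W)$ by the colimit universal property, and since each $V_i$ is a Banach space, $\Hom_{\widehat{\mathcal{B}}c_K}(V_i, W) = \Hom_{\widehat{\mathcal{B}}c_K}(V_i, \varinjlim W_B) = \varinjlim_B \Hom_{\operatorname{Ban}_K}(V_i, W_B)$ — here one uses that a bounded $\mathcal{R}$-module image of the unit ball of the Banach space $V_i$ already lands in some $W_B$ — which matches the definition of morphisms in $\Indban$. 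The identity $\widehat{L}\circ\operatorname{diss} = \operatorname{Id}_{\widehat{\mathcal{B}}c_K}$ is precisely the statement $V = \varinjlim V_B$ recalled in the text before Definition of completeness, now read as an equality in $\widehat{\mathcal{B}}c_K$ rather than $\mathcal{B}c_K$; I would cite \cite[Section 5]{prosmans2000homological} for this.

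Next I would address exactness of $\operatorname{diss}$. Since $\operatorname{diss}$ is a right adjoint it is strongly left exact automatically; for right exactness one uses that $\operatorname{diss}$ identifies $\widehat{\mathcal{B}}c_K$ with the full subcategory of essentially monomorphic objects of $\Indban$ (mentioned in the excerpt, with reference to \cite{prosmans2000homological}), and strict epimorphisms in $\widehat{\mathcal{B}}c_K$ correspond under this identification to strict epimorphisms in $\Indban$; I would lean on \cite[Section 5]{prosmans2000homological} for the precise statement that $\operatorname{diss}$ preserves and reflects strict short exact sequences. Given exactness, $\operatorname{diss}$ descends to a triangulated functor $\operatorname{D}(\widehat{\mathcal{B}}c_K) \to \operatorname{D}(\Indban)$, and likewise $\mathbb{L}\widehat{L}$ is the left derived functor of the right-exact $\widehat{L}$ (which exists because $\widehat{\mathcal{B}}c_K$ has enough projectives by Proposition \ref{prop bornological spaces are quasi-abelian}(iii), hence so does $\Indban$ — indeed the $c_0(X,V)$ generate both). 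The unit and counit of the derived adjunction: $\mathbb{L}\widehat{L}\circ\operatorname{diss} = \operatorname{Id}$ on $\operatorname{D}(\widehat{\mathcal{B}}c_K)$ follows from the underived identity $\widehat{L}\circ\operatorname{diss}=\operatorname{Id}$ together with the fact that $\operatorname{diss}$ sends projective objects $c_0(X,V)$ to projective objects, so that $\operatorname{diss}$ can be used to compute $\mathbb{L}\widehat{L}$. For the other composite $\operatorname{diss}\circ\mathbb{L}\widehat{L} \cong \operatorname{Id}$ on $\operatorname{D}(\Indban)$, I would check it on the generating projectives $c_0(X,V)$, where $\mathbb{L}\widehat{L}$ is just $\widehat{L}$ applied in degree zero and $\widehat{L}(c_0(X,V)) = c_0(X,V)$ with $\operatorname{diss}$ returning it to the constant ind-system, then extend to all of $\operatorname{D}(\Indban)$ by the fact that these objects generate and both functors are triangulated.

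Finally, the left-heart statement: applying the equivalence $I:\operatorname{D}(\mathcal{E}) \xrightarrow{\sim} \operatorname{D}(LH(\mathcal{E}))$ valid for any quasi-abelian $\mathcal{E}$ (recalled in the excerpt), the triangulated equivalence $\operatorname{diss}$ transports to an equivalence $\operatorname{D}(LH(\widehat{\mathcal{B}}c_K)) \simeq \operatorname{D}(LH(\Indban))$; since $\operatorname{diss}$ is exact it sends the heart to the heart, i.e.\ induces $\widetilde{\operatorname{diss}}: LH(\widehat{\mathcal{B}}c_K) \to LH(\Indban)$ with quasi-inverse $\widetilde{L}$, and these are mutually inverse because their derived versions are and both abelian categories sit inside their derived categories as the heart of the standard $t$-structure. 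I would again point to \cite[Section 5]{prosmans2000homological} where this is worked out. The step I expect to be the genuine obstacle is establishing that $\operatorname{diss}$ is \emph{right} exact — equivalently, that $\operatorname{diss}$ preserves strict epimorphisms and cokernels — since this is where the special structure of $\widehat{\mathcal{B}}c_K$ (each object a filtered colimit of its Banach subspaces, essential monomorphicity) must be used rather than formal adjunction nonsense; everything downstream is a standard consequence of exactness plus the generator argument.
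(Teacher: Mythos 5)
The paper does not actually prove this theorem: its ``proof'' is a bare citation to \cite[Propositions 5.15 and 5.16]{prosmans2000homological}, so there is no in-paper argument to measure you against. Your reconstruction follows the same route as that reference: define $\widehat{L}$ as the colimit in $\widehat{\mathcal{B}}c_K$ (which, since completion is left adjoint to the inclusion $\widehat{\mathcal{B}}c_K\to\mathcal{B}c_K$, is the completed bornological colimit), verify the adjunction by the factorisation of a bounded map out of a Banach space through some $W_B$, read $\widehat{L}\circ\operatorname{diss}=\operatorname{Id}$ off the identity $V=\varinjlim V_B$, and reduce everything else to exactness of $\operatorname{diss}$. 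You also correctly isolate the genuine content --- strong right exactness of $\operatorname{diss}$, i.e.\ the identification of $\widehat{\mathcal{B}}c_K$ with the essentially monomorphic objects and the matching of strict epimorphisms --- and defer it to the reference, which is exactly where the paper defers it too.

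One step deserves more care than your sketch gives it: the claim that $\operatorname{diss}\circ\mathbb{L}\widehat{L}\cong\operatorname{Id}$ can be ``checked on the generating projectives and extended by triangulated-ness'' is not automatic in the \emph{unbounded} derived category. What makes it work is that the $c_0(X,V)$ are \emph{tiny} (not merely small) projective generators of the elementary category $\Indban$, so every complex admits a K-projective resolution by direct sums of such objects lying in the essential image of $\operatorname{diss}$; alternatively, and this is the order Prosmans--Schneiders actually take, one first establishes the equivalence $\widetilde{\operatorname{diss}}:LH(\widehat{\mathcal{B}}c_K)\leftrightarrows LH(\Indban)$ on left hearts and then deduces the derived equivalence from $\operatorname{D}(\mathcal{E})\simeq\operatorname{D}(LH(\mathcal{E}))$, rather than going in the direction you propose. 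Either order is legitimate, but as written your derived-category step silently uses the tininess of the generators.
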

\begin{proof}
This is shown in \cite[Proposition 5.15]{prosmans2000homological}, and \cite[Proposition 5.16.]{prosmans2000homological}.
\end{proof}
In light of this theorem, it follows that we can regard any complete bornological space as an Ind-Banach space. As shown above, $\Indban$ is an elementary quasi-abelian category. Hence, it presents a much better behavior than $\widehat{\mathcal{B}}c_K$ with respect to sheaf theory and homological algebra. Furthermore, as $\widehat{\mathcal{B}}c_K$ and $\Indban$ have isomorphic left hearts, the techniques of $p$-adic functional analysis are still available to us.
\subsection{\texorpdfstring{The Tensor-Hom adjunction }{}}\label{Section tensor-hom in bornological spaces}
We will now construct closed symmetric monoidal structures on $\widehat{\mathcal{B}}c_K$ and $\Indban$, and discuss their extensions to the left heart. We would like to stress that the constructions also work for $\mathcal{B}c_K$. However, in order to keep the preliminaries to a minimum, we will not discuss this case. A full discussion on closed symmetric monoidal structures on quasi-abelian categories may be found in \cite[Section 3.3]{bode2021operations}, and \cite[Section 1.5]{schneiders1999quasi}.\bigskip

We may endow $\widehat{\mathcal{B}}c_K$ with a closed symmetric monoidal structure via the usual tensor-hom adjunction. Namely, consider a pair of bornological spaces $V,W\in \widehat{\mathcal{B}}c_K$. We may regard the $K$-vector space $V\otimes_KW$ as a bornological space with respect to the bornology induced by the family of subsets:
\begin{equation*}
    \{B_1\otimes_{\mathcal{R}}B_2\textnormal{ } \vert \textnormal{ } B_1\subset V, B_2\subset W \textnormal{ are bounded }\mathcal{R}\textnormal{-modules} \}.
\end{equation*}
This is called the (projective) bornological tensor product of $V$ and $W$. We define $V\widehat{\otimes}_KW$ as the completion of $V\otimes_KW$ with respect to this bornology.\\ 
The $K$-vector space $\Hom_{\widehat{\mathcal{B}}c_K}(V,W)$ has a natural bornology with bounded subsets:
\begin{equation*}
    \{U\subset \Hom_{\widehat{\mathcal{B}}c_K}(V,W) \textnormal{ }\vert \textnormal{ for all bounded }B\subset V,\textnormal{ } \cup_{f\in U}f(B) \textnormal{ is bounded in } W \}.
\end{equation*}
We let $\underline{\Hom}_{\widehat{\mathcal{B}}c_K}(V,W)$ denote $\Hom_{\widehat{\mathcal{B}}c_K}(V,W)$ equipped with this bornology.  We remark that the space $\underline{\Hom}_{\widehat{\mathcal{B}}c_K}(V,W)$ is complete with respect to this bornology whenever $W$ is complete.
\begin{prop}[{\cite[Theorem 4.10]{bode2021operations}}]
The functors $\widehat{\otimes}_K$, and $\underline{\Hom}_{\widehat{\mathcal{B}}c_K}$  make $\widehat{\mathcal{B}}c_K$   into a closed symmetric monoidal category. In particular, for $U,V,W\in \widehat{\mathcal{B}}c_K$, we have an adjunction:
\begin{equation}\label{equation tensor-hom adjunction complete bornological modules}
    \Hom_{\widehat{\mathcal{B}}c_K}(U\widehat{\otimes}_KV,W)=\Hom_{\widehat{\mathcal{B}}c_K}(U,\underline{\Hom}_{\widehat{\mathcal{B}}c_K}(V,W)).
\end{equation}
The unit of the closed symmetric monoidal structure of $\widehat{\mathcal{B}}c_K$ is $K$.
\end{prop}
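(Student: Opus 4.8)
The plan is to build the monoidal structure from the universal property of bounded bilinear maps and then derive all the coherence data formally, so as to avoid checking the pentagon and triangle identities by hand. First I would record the defining universal property of $\widehat{\otimes}_K$: for $V,W\in\widehat{\mathcal{B}}c_K$ and any $Z\in\widehat{\mathcal{B}}c_K$, the set of bounded $K$-bilinear maps $V\times W\rightarrow Z$ is in natural bijection with $\Hom_{\widehat{\mathcal{B}}c_K}(V\widehat{\otimes}_KW,Z)$. This is proved in two steps. First, the algebraic tensor product $V\otimes_KW$ equipped with the bornology generated by the subsets $B_1\otimes_{\mathcal{R}}B_2$ corepresents bounded bilinear maps into \emph{arbitrary} (not necessarily complete) bornological spaces; this is immediate from the algebraic universal property of $\otimes_K$ together with the observation that a bilinear map is bounded precisely when it sends products of bounded $\mathcal{R}$-modules to bounded sets. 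Second, composing with the completion adjunction $\widehat{\operatorname{Cpl}}\dashv J$ promotes this to a universal property among complete targets. Symmetry of $\widehat{\otimes}_K$ is then inherited from the symmetry of $\otimes_K$, and the isomorphism $K\widehat{\otimes}_KV\cong V$ follows because $K\otimes_KV=V$ already carries the correct bornology and is complete, so the completion step does nothing.

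Next I would establish the inner-hom side. One checks that the family of equi-bounded sets is indeed a bornology on $\Hom_{\widehat{\mathcal{B}}c_K}(V,W)$, and that $\underline{\Hom}_{\widehat{\mathcal{B}}c_K}(V,W)$ is complete whenever $W$ is; the latter requires describing the bounded $\mathcal{R}$-submodules of the inner-hom space explicitly and verifying that each associated normed space is a Banach space, using that $W=\varinjlim W_B$ is a colimit of Banach spaces. The crucial algebraic point is the exponential law: bounded bilinear maps $U\times V\rightarrow W$ coincide with bounded linear maps $U\rightarrow\underline{\Hom}_{\widehat{\mathcal{B}}c_K}(V,W)$, which is a direct unwinding of the two bornologies involved. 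Combining this with the universal property of $\widehat{\otimes}_K$ from the previous paragraph yields the tensor-hom adjunction, natural in all three variables.

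Finally, for associativity I would argue by Yoneda rather than manipulating bornologies directly: for a fixed complete $Z$, both $(U\widehat{\otimes}_KV)\widehat{\otimes}_KW$ and $U\widehat{\otimes}_K(V\widehat{\otimes}_KW)$ corepresent the functor sending $Z$ to the set of bounded trilinear maps $U\times V\times W\rightarrow Z$ — for each bracketing this is a twofold application of the bilinear universal property — so they are canonically isomorphic, and this isomorphism serves as the associator. The pentagon and triangle axioms then hold automatically, since testing each axiom against an arbitrary complete $Z$ through the Yoneda embedding reduces both sides to the identity on the functor of bounded multilinear maps. The one genuinely technical ingredient, and the step I expect to cost the most effort, is the completeness of $\underline{\Hom}_{\widehat{\mathcal{B}}c_K}(V,W)$ for $W$ complete; everything else is formal once the bilinear universal property is in hand. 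For the full verification we refer to \cite[Theorem 4.10]{bode2021operations}.
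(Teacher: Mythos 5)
Your outline is correct, and it matches the standard argument behind the cited result: the paper itself states this proposition as background without proof, deferring entirely to \cite[Theorem 4.10]{bode2021operations}, and your route (universal property of the completed projective tensor product via bounded bilinear maps, the exponential law for the equibounded bornology on $\underline{\Hom}_{\widehat{\mathcal{B}}c_K}(V,W)$, and Yoneda/corepresentability for the associator and coherence axioms) is exactly how that reference proceeds. You also correctly isolate the one genuinely analytic step, namely the completeness of $\underline{\Hom}_{\widehat{\mathcal{B}}c_K}(V,W)$ for complete $W$, which the paper likewise only records as a remark.
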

Notice that, by construction, $\widehat{\otimes}_K$ maps Banach spaces to Banach spaces. Furthermore, for every  set $X$ and every Banach space $V$, the functor $c_0(X,V)\widehat{\otimes}_K-$ is exact. As we saw in Proposition \ref{prop bornological spaces are quasi-abelian}, every object in $\widehat{\mathcal{B}}c_K$ admits a strict resolution by direct sums of Banach spaces of this type. In other words, $\widehat{\mathcal{B}}c_K$ has enough flat projective objects stable under $\widehat{\otimes}_K$. This situation was extensively studied in \cite[Section 3.3]{bode2021operations}. The main consequence is that the functors  $\widehat{\otimes}_K$ and $\underline{\Hom}_{\widehat{\mathcal{B}}c_K}$ extend to a pair of functors $\widetilde{\otimes}_K$ and $\underline{\Hom}_{LH(\widehat{\mathcal{B}}c_K)}$ making $LH(\widehat{\mathcal{B}}c_K)$ a closed symmetric monoidal abelian category with unit $I(K)$. Furthermore, for each $V,W\in \widehat{\mathcal{B}}c_K$ we have the following identities:
\begin{equation*}
I(V)\widetilde{\otimes}_KI(W)=\operatorname{H}^0(V\widehat{\otimes}_K^{\mathbb{L}}W), \quad    \underline{\Hom}_{LH(\widehat{\mathcal{B}}c_K)}(I(V),I(W))=\operatorname{H}^0(R\underline{\Hom}_{\widehat{\mathcal{B}}c_K}(V,W)).
\end{equation*}
We point out, however, that the fact that filtered colimits are not strongly exact in $\widehat{\mathcal{B}}c_K$, implies that it is not clear if every object in  $\widehat{\mathcal{B}}c_K$ is flat. In fact, this is probably false. Hence, it is not known if the functor $I:\widehat{\mathcal{B}}c_K\rightarrow LH(\widehat{\mathcal{B}}c_K)$ is strong symmetric monoidal. In particular, the identity: 
\begin{equation*}
    I(V)\widetilde{\otimes}_KI(W)=I(V\widehat{\otimes}_KW),
\end{equation*}
may not hold in general. In any case, the functor is lax symmetric monoidal (\emph{cf.} \cite[Lemma 4.19]{bode2021operations}), so that we have a natural transformation:
\begin{equation*}
    I(-)\widetilde{\otimes}_KI(-)\rightarrow I(-\widehat{\otimes}_K-).
\end{equation*}
We point out that the left adjoint to the inclusion into the left heart:
\begin{equation*}
    C:LH(\widehat{\mathcal{B}}c_K)\rightarrow \widehat{\mathcal{B}}c_K,
\end{equation*}
is strong symmetric monoidal. As before, we can solve the issues of the closed symmetric monoidal structure on $\widehat{\mathcal{B}}c_K$ by passing onto $\Indban$. However, this is done at the cost of  working with spaces that are less explicit, and worse behaved from the analytical perspective.\\

We will now discuss the closed symmetric monoidal structure in $\Indban$. This is a natural extension of the closed symmetric monoidal structure in the category of $K$-Banach spaces. Namely, let $``\varinjlim"V_i$, and  $``\varinjlim"W_j$ be two Ind-Banach spaces. We define the tensor product of Ind-Banach spaces as follows:
\begin{equation*}
    ``\varinjlim"V_i\overrightarrow{\otimes}_K``\varinjlim"W_j= ``\varinjlim"V_i\widehat{\otimes}_KW_j,
\end{equation*}
where $V_i\widehat{\otimes}_KW_j$ denotes the complete projective tensor product 
of locally convex spaces (\emph{cf}. \cite[Section 17.B]{schneider2013nonarchimedean}). Similarly, we define an inner hom functor via the following formula:
\begin{equation*}
    \underline{\Hom}_{\Indban}(``\varinjlim"V_i,``\varinjlim"W_j)=\varprojlim_i``\varinjlim_j" \underline{\Hom}_{\operatorname{Ban}_K}(V_i,W_j),
\end{equation*}
where $\underline{\Hom}_{\operatorname{Ban}_K}(V_i,W_j)$ represents the Banach space of continuous linear operators with the operator norm (\emph{cf}. \cite[Section 6]{schneider2013nonarchimedean}). Similar to the case of complete bornological spaces, we have the following:
\begin{prop}[{\cite[Theorem 4.10]{bode2021operations}}]
The functors $\overrightarrow{\otimes}_K$, and $\underline{\Hom}_{\Indban}$  make $\Indban$ into a closed symmetric monoidal category. For $U,V,W\in \Indban$, we have an adjunction:
\begin{equation}\label{equation tensor-hom adjunction Ind-Banach spaces}
    \Hom_{\Indban}(U\overrightarrow{\otimes}_KV,W)=\Hom_{\Indban}(U,\underline{\Hom}_{\Indban}(V,W)).
\end{equation}
We point out that the unit of the closed symmetric monoidal structure of $\Indban$ is $K$.
\end{prop}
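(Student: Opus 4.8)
The plan is to build the closed symmetric monoidal structure on $\Indban$ by transporting, along the ind-completion $\operatorname{Ban}_K\hookrightarrow\Indban$, the closed symmetric monoidal structure carried by $\operatorname{Ban}_K$ viewed as a full subcategory of $\widehat{\mathcal{B}}c_K$: namely $\widehat{\otimes}_K$, the restriction of $\underline{\Hom}_{\widehat{\mathcal{B}}c_K}$, and the unit $K$, which form a closed symmetric monoidal structure on $\operatorname{Ban}_K$ because all the spaces occurring in \eqref{equation tensor-hom adjunction complete bornological modules} are Banach whenever the inputs are. In the abstract, this is just the assertion that the ind-completion of a closed symmetric monoidal category is again one (Day convolution), but the content of the proposition is the concrete description of the internal hom, so I would argue directly with the formal-colimit presentations.

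First I would verify that $\overrightarrow{\otimes}_K$ is a bifunctor on $\Indban$. Since a morphism of Ind-Banach spaces is a compatible family of morphisms between representatives and $\widehat{\otimes}_K$ is a bifunctor on $\operatorname{Ban}_K$, the assignment $(``\varinjlim"V_i,``\varinjlim"W_j)\mapsto``\varinjlim"V_i\widehat{\otimes}_KW_j$ is functorial; that it is, up to canonical isomorphism, independent of the chosen presentations follows from the fact that a product of small filtered categories is again small filtered, together with the definition of morphisms in $\Indban$. The associator, symmetry and unit isomorphisms of $\operatorname{Ban}_K$ are natural, hence induce term-by-term natural isomorphisms for $\overrightarrow{\otimes}_K$, and the coherence axioms (pentagon, hexagon, triangle) hold because they hold after restricting to each finite tuple of representatives. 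In particular $K$ is the unit, as $``\varinjlim"(V_i\widehat{\otimes}_KK)=``\varinjlim"V_i$.

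The crux is the adjunction. Writing $U=``\varinjlim"U_i$, $V=``\varinjlim"V_j$, $W=``\varinjlim"W_k$, I would compute
\begin{align*}
\Hom_{\Indban}(U\overrightarrow{\otimes}_KV,W)
&=\varprojlim_{i,j}\varinjlim_k\Hom_{\operatorname{Ban}_K}(U_i\widehat{\otimes}_KV_j,W_k)\\
&=\varprojlim_{i,j}\varinjlim_k\Hom_{\operatorname{Ban}_K}(U_i,\underline{\Hom}_{\operatorname{Ban}_K}(V_j,W_k))\\
&=\Hom_{\Indban}(U,\underline{\Hom}_{\Indban}(V,W)),
\end{align*}
where the first step uses the definition of $\overrightarrow{\otimes}_K$ and of morphisms in $\Indban$; the second is the tensor--hom adjunction for Banach spaces (the restriction of \eqref{equation tensor-hom adjunction complete bornological modules}), applied termwise and naturally in all three arguments; and the third unwinds $\underline{\Hom}_{\Indban}(V,W)=\varprojlim_j``\varinjlim_k"\underline{\Hom}_{\operatorname{Ban}_K}(V_j,W_k)$ using that $\Hom_{\Indban}(-,-)$ carries limits in the second variable to limits, that $\Hom_{\Indban}(U_i,``\varinjlim_k"A_k)=\varinjlim_k\Hom_{\operatorname{Ban}_K}(U_i,A_k)$ for a Banach space $U_i$, and that projective limits commute with one another. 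Naturality in $U$, $V$, $W$ is inherited throughout, so this exhibits $\underline{\Hom}_{\Indban}(V,-)$ as right adjoint to $(-)\overrightarrow{\otimes}_KV$.

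The step that demands genuine care --- and the one I expect to be the main obstacle --- is the point tacitly invoked above: that the iterated limit $\varprojlim_j``\varinjlim_k"\underline{\Hom}_{\operatorname{Ban}_K}(V_j,W_k)$ really defines an honest object of $\Indban$, functorial in $V$ and $W$ and independent of the presentation of $V$. This is a cofiltered limit, over the opposite of the small filtered indexing category of $V$, of Ind-Banach spaces; for it to exist in $\Indban$ one uses that $\Indban$, being an elementary quasi-abelian category, is complete, together with the requisite compatibility of this limit with the formal colimits in the $W$-variable. Once $\underline{\Hom}_{\Indban}\colon\Indban^{\op}\times\Indban\to\Indban$ is in place as a bifunctor, all the remaining coherence and naturality is formal, transported from $\operatorname{Ban}_K$ through the formal-colimit construction.
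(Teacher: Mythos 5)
Your proof is correct. Note that the paper itself offers no argument for this statement -- it is quoted verbatim from Bode (Theorem 4.10 of \emph{Six operations for $\wideparen{\D}$-modules on rigid analytic spaces}), and the preliminaries explicitly disclaim originality -- so there is no in-paper proof to compare against; but your direct verification is exactly the standard argument one finds in the reference: transport the closed symmetric monoidal structure of $\operatorname{Ban}_K$ through the ind-completion, compute $\Hom_{\Indban}(U\overrightarrow{\otimes}_KV,W)$ termwise via the Banach tensor--hom adjunction, and reassemble using $\Hom_{\Indban}(U_i,``\varinjlim_k"A_k)=\varinjlim_k\Hom_{\operatorname{Ban}_K}(U_i,A_k)$ together with the interchange of projective limits. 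You also correctly isolate the one genuinely non-formal point -- that $\varprojlim_j``\varinjlim_k"\underline{\Hom}_{\operatorname{Ban}_K}(V_j,W_k)$ exists as an object of $\Indban$ (completeness of $\Indban$) and that $\Hom_{\Indban}(-,-)$ turns this cofiltered limit into a limit of sets -- and, implicitly, the right way to get presentation-independence and functoriality of the internal hom, namely via its representing property once the adjunction is established.
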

The fact that Banach spaces are flat with respect to the complete projective tensor product of locally convex spaces implies that Banach spaces are flat with respect to $\overrightarrow{\otimes}_K$. Furthermore, every object in $\Indban$ is, by construction, a filtered colimit of Banach spaces. Furthermore, unlike in $\widehat{\mathcal{B}}c_K$, filtered colimits are strongly exact in $\Indban$. Thus, every object in $\Indban$ is flat with respect to $\overrightarrow{\otimes}_K$, and $\Indban$ has enough flat projectives stable under $\overrightarrow{\otimes}_K$. As before, this implies that the closed symmetric monoidal structure on $\Indban$ extends uniquely to a closed symmetric monoidal structure on $LH(\Indban)$.\\

As we saw in Theorem \ref{teo first comparison comp born and indban}, the dissection functor induces mutually inverse equivalences of abelian categories:
\begin{equation*}
    \widetilde{\operatorname{diss}}:LH(\widehat{\mathcal{B}}c_K)\leftrightarrows LH(\Indban):\widetilde{L}.
\end{equation*}
The next step is using this equivalence to compare the two closed symmetric monoidal structures:
\begin{prop}
The following hold:
\begin{enumerate}[label=(\roman*)]
    \item The functor $\operatorname{diss}:\widehat{\mathcal{B}}c_K\rightarrow \Indban$ is lax symmetric monoidal.
    \item The functor $\widehat{L}:\Indban\rightarrow \widehat{\mathcal{B}}c_K$ is strong symmetric monoidal.
    \item The pair of adjoint functors:
    \begin{equation*}
        I:\Indban\leftrightarrows LH(\Indban):C,
    \end{equation*}
    are strong symmetric monoidal.
    \item The pair mutually inverse equivalences of abelian categories:
\begin{equation*}
    \widetilde{\operatorname{diss}}:LH(\widehat{\mathcal{B}}c_K)\leftrightarrows LH(\Indban):\widetilde{L},
\end{equation*}
are strong symmetric monoidal.
\end{enumerate}
\end{prop}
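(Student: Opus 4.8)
The plan is to establish the four assertions in sequence, observing that $(ii)$ is essentially formal once $(i)$ is known, $(iii)$ follows from the general theory of closed symmetric monoidal structures on elementary quasi-abelian categories, and $(iv)$ combines the previous items with the fact that $\widetilde{\operatorname{diss}}$ and $\widetilde{L}$ are mutually inverse equivalences. Let me sketch each piece.

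For $(i)$, I would produce the lax structure map $\operatorname{diss}(V)\overrightarrow{\otimes}_K\operatorname{diss}(W)\to \operatorname{diss}(V\widehat{\otimes}_KW)$ by writing $V=\varinjlim V_B$ and $W=\varinjlim W_{B'}$ as filtered colimits over bounded $\mathcal{R}$-submodules, so that $\operatorname{diss}(V)\overrightarrow{\otimes}_K\operatorname{diss}(W)=\text{``}\varinjlim\text{''}\,(V_B\widehat{\otimes}_K W_{B'})$ where $\widehat{\otimes}_K$ on the right is the complete projective tensor product of the Banach spaces $V_B$, $W_{B'}$. Each $V_B\widehat{\otimes}_K W_{B'}$ carries a canonical bounded bilinear map to $V\widehat{\otimes}_KW$, hence a bounded linear map $V_B\widehat{\otimes}_K W_{B'}\to V\widehat{\otimes}_K W$ whose image is a Banach subspace, i.e.\ is of the form $(V\widehat{\otimes}_KW)_{B''}$ up to refining; compatibility of these maps over the filtered index category gives the desired morphism of Ind-Banach spaces, and the coherence (associativity, unitality, symmetry) axioms follow because they already hold at the level of the Banach building blocks and $\operatorname{diss}$ is a colimit of the identity on Banach spaces. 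The unit compatibility is trivial since both units are $K$, which is already Banach. The main subtlety here — and what I expect to be the genuine obstacle — is checking that the bornological projective tensor product $V\otimes_K W$ with bounded subsets generated by $B_1\otimes_{\mathcal{R}}B_2$, after completion, is compatibly exhausted by the Banach spaces $V_B\widehat{\otimes}_K W_{B'}$; this is exactly where one needs the precise relationship between the bornological completed tensor product of \cite[Theorem 4.10]{bode2021operations} and the Banach completed projective tensor product, and I would isolate it as a lemma, likely already implicit in \cite[Section 5]{prosmans2000homological}.

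For $(ii)$, since $\widehat{L}$ is left adjoint to $\operatorname{diss}$ and $\operatorname{diss}$ is lax symmetric monoidal, general nonsense (the doctrinal adjunction / the fact that a left adjoint of a lax monoidal functor is oplax monoidal, together with the identity $\widehat{L}\circ\operatorname{diss}=\operatorname{Id}$ from Theorem \ref{teo first comparison comp born and indban}) gives that $\widehat{L}$ is oplax monoidal; to upgrade to strong monoidal I would check directly on generators that $\widehat{L}(\text{``}\varinjlim\text{''}\,V_i\overrightarrow{\otimes}_K\text{``}\varinjlim\text{''}\,W_j)=\widehat{L}(\text{``}\varinjlim\text{''}\,V_i\widehat{\otimes}_KW_j)=\varinjlim(V_i\widehat{\otimes}_KW_j)$ agrees with $(\varinjlim V_i)\widehat{\otimes}_K(\varinjlim W_j)$ as complete bornological spaces, using that $\widehat{\otimes}_K$ on $\widehat{\mathcal{B}}c_K$ commutes with filtered colimits (it is a left adjoint in each variable) and that every complete bornological space is canonically such a colimit of Banach spaces. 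The unit is preserved since $\widehat{L}(K)=K$.

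For $(iii)$, I would invoke that $\Indban$ has enough flat projectives stable under $\overrightarrow{\otimes}_K$ (established just above in the excerpt), so the general theory of \cite[Section 3.3]{bode2021operations} applies: the extension $\widetilde{\otimes}_K$ to $LH(\Indban)$ is characterized by $I(V)\widetilde{\otimes}_K I(W)=\operatorname{H}^0(V\overrightarrow{\otimes}_K^{\mathbb{L}}W)$, and since every object of $\Indban$ is flat, $V\overrightarrow{\otimes}_K^{\mathbb{L}}W=V\overrightarrow{\otimes}_K W$ is concentrated in degree $0$, forcing $I(V)\widetilde{\otimes}_K I(W)=I(V\overrightarrow{\otimes}_K W)$; this is precisely the assertion that $I$ is strong symmetric monoidal, and the unit $I(K)$ is the unit of $LH(\Indban)$. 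That $C$ is then automatically strong symmetric monoidal follows because $C$ is left adjoint to the strong monoidal fully faithful $I$ with $C\circ I=\operatorname{Id}$ — the same left-adjoint-of-strong-monoidal argument, or one simply notes $C$ is the composite of equivalences respecting the monoidal structure. Finally, for $(iv)$: by Theorem \ref{teo first comparison comp born and indban} the functors $\widetilde{\operatorname{diss}}$ and $\widetilde{L}$ are mutually inverse equivalences $LH(\widehat{\mathcal{B}}c_K)\leftrightarrows LH(\Indban)$, and they are the ``left heart extensions'' of $\operatorname{diss}$ and $\widehat{L}$ respectively; since $\operatorname{diss}$ is lax monoidal with $\widehat{L}\circ\operatorname{diss}=\operatorname{Id}$ and both are equivalences, the lax structure maps are automatically isomorphisms (an invertible lax monoidal functor is strong monoidal), so $\widetilde{\operatorname{diss}}$ is strong symmetric monoidal, and its inverse $\widetilde{L}$ inherits the strong monoidal structure. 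I would spell out only the verification that $\widetilde{\operatorname{diss}}$ intertwines the two closed symmetric monoidal structures on the level of the tensor products via the compatibility of $I$ with $\operatorname{diss}$ (i.e.\ $\operatorname{diss}\circ C = C\circ\widetilde{\operatorname{diss}}$ type identities from Theorem \ref{teo first comparison comp born and indban}), leaving the inner-hom compatibility to follow by adjunction.
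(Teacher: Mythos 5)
The paper itself offers no argument here — it simply cites \cite[pp.~36]{bode2021operations} and \cite[Proposition 4.22]{bode2021operations} — so you are doing strictly more work than the text, and most of your outline (the construction of the lax structure on $\operatorname{diss}$ via the Banach constituents $V_B\widehat{\otimes}_KW_{B'}$, the direct verification that $\widehat{L}$ commutes with $\overrightarrow{\otimes}_K$ because the completed tensor product is a left adjoint in each variable, and the flatness argument identifying $I(V)\widetilde{\otimes}_KI(W)$ with $I(V\overrightarrow{\otimes}_KW)$) is sound and is essentially the route taken in the cited source.

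There is, however, one genuine gap, in your treatment of $(iv)$: the principle you invoke, that ``an invertible lax monoidal functor is strong monoidal,'' is false. A lax symmetric monoidal functor whose \emph{underlying} functor is an equivalence need not have invertible structure maps; for instance, on $\mathbb{Z}$-graded vector spaces the identity functor admits a lax symmetric monoidal structure whose component on $V_i\otimes W_j$ is multiplication by a scalar $c_{i,j}$ subject only to a cocycle condition and unitality ($c_{0,n}=c_{n,0}=1$), and one can take $c_{i,j}=0$ for $i,j\neq 0$ — a lax monoidal equivalence that is not strong. What is true is the converse-flavoured statement that a \emph{quasi-inverse of a strong monoidal equivalence} is canonically strong monoidal. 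So the argument for $(iv)$ should run through $\widetilde{L}$, not $\widetilde{\operatorname{diss}}$: since $\widehat{L}$ is strong symmetric monoidal by $(ii)$, since every object of $LH(\Indban)$ admits a presentation $I(U)\to I(V)\to W\to 0$ with $U,V\in\Indban$, and since $\widetilde{\otimes}_K$ is the right-exact extension of the tensor product while $\widetilde{L}$ is exact and restricts to $\widehat{L}$ on the essential image of $I$, one deduces that $\widetilde{L}$ is strong symmetric monoidal; strong monoidality of $\widetilde{\operatorname{diss}}$ then follows because it is the quasi-inverse. (A smaller slip of the same kind occurs in $(iii)$, where you suggest that $C$ ``is a composite of equivalences'' — it is not, as $I$ is only fully faithful; but your primary argument there, via presentations and $C\circ I=\operatorname{Id}$, does not depend on that remark.)
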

\begin{proof}
 This is shown in \cite[pp. 36]{bode2021operations}, and  \cite[Proposition 4.22]{bode2021operations}.   
\end{proof}
This proposition reinforces the heuristic that we should forgo of explicit objects in seek of better algebraic properties. In other words, the strategy here is showing that everything we do in $\widehat{\mathcal{B}}c_K$ can be translated to $\Indban$, and that the corresponding extensions to the left hearts agree. After seeing all this, one could wonder why do we care to define the category of complete bornological spaces at all. After all, many of the developments in this section are aimed at showing that $\Indban$ has much better algebraic properties that $\widehat{\mathcal{B}}c_K$. Our interest in $\widehat{\mathcal{B}}c_K$ stems from the fact that, as we will see in Chapter \ref{Section background Lie algebroids}, most of the objects we will be interested in are naturally Fréchet spaces satisfying some nuclearity condition.  These spaces are subject to powerful $p$-adic analytic theorems (\emph{i.e.} Banach's open mapping theorem), so ignoring this features is not the correct approach. Furthermore, all Fréchet spaces are metrizable, and for this class of spaces the dissection functor is well-behaved:
\begin{prop}[{\cite[Proposition 4.25]{bode2021operations}}]\label{prop comparison of tensor products metrizable spaces}
Let $V,W$ be metrizable locally convex  spaces. Then there is a canonical isomorphism in $\Indban$:
\begin{equation*}
  \operatorname{diss}(V)\overrightarrow{\otimes}_K\operatorname{diss}(W)\rightarrow \operatorname{diss}(V\widehat{\otimes}_KW).
\end{equation*}
\end{prop}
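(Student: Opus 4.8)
The plan is to unwind both sides into formal filtered colimits of Banach spaces and to recognise the asserted arrow as a cofinality isomorphism of Ind-objects, so that the only genuinely non-formal input is a classical comparison of tensor products for metrizable spaces. Unwinding the definitions, the source $\operatorname{diss}(V)\overrightarrow{\otimes}_K\operatorname{diss}(W)$ is the formal colimit $``\varinjlim"_{(B,C)} V_B\widehat{\otimes}_K W_C$ indexed by pairs of bounded $\mathcal{R}$-submodules $B\subseteq V$, $C\subseteq W$ with $V_B$, $W_C$ Banach, where on Banach spaces $\widehat{\otimes}_K$ is the usual completed projective tensor product, while $\operatorname{diss}(V\widehat{\otimes}_K W)=``\varinjlim"_{D}(V\widehat{\otimes}_K W)_D$ is indexed by the bounded Banach $\mathcal{R}$-submodules $D$ of $V\widehat{\otimes}_K W$, which is Fréchet as a completed metrizable space. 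For each $(B,C)$ the continuous bilinear map $V_B\times W_C\to V\widehat{\otimes}_K W$ factors, by the universal property of the Banach projective tensor product, through a bounded map $V_B\widehat{\otimes}_K W_C\to (V\widehat{\otimes}_K W)_{D_{B,C}}$, where $D_{B,C}$ is the closed bounded $\mathcal{R}$-submodule generated by the image of $B\otimes_{\mathcal R} C$; these are compatible as $(B,C)$ grows and assemble into the canonical morphism $\Phi$, which is also the composite of the lax monoidal comparison $\operatorname{diss}(V)\overrightarrow{\otimes}_K\operatorname{diss}(W)\to\operatorname{diss}(V\widehat{\otimes}_K^{\mathrm{b}} W)$ with $\operatorname{diss}$ applied to the canonical map $V\widehat{\otimes}_K^{\mathrm{b}} W\to V\widehat{\otimes}_K^{\mathrm{lc}} W$ from the bornological completed tensor product to the locally convex one.

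I would then reduce the statement to two facts. (A) The presentation $``\varinjlim"_{(B,C)}V_B\widehat{\otimes}_K W_C$ is essentially monomorphic; since $\widehat{L}$ is strong monoidal and $\widehat{L}\circ\operatorname{diss}=\operatorname{Id}$ (Theorem \ref{teo first comparison comp born and indban}), its image under $\widehat{L}$ is $V\widehat{\otimes}_K^{\mathrm{b}} W$, and because $\operatorname{diss}$ restricts to an equivalence between $\widehat{\mathcal{B}}c_K$ and the essentially monomorphic Ind-Banach spaces this yields a canonical isomorphism $\operatorname{diss}(V)\overrightarrow{\otimes}_K\operatorname{diss}(W)\cong\operatorname{diss}(V\widehat{\otimes}_K^{\mathrm{b}} W)$; concretely (A) amounts to showing that, as $(B,C)$ grows, the kernels of the transition maps $V_B\widehat{\otimes}_K W_C\to V_{B'}\widehat{\otimes}_K W_{C'}$ stabilise. (B) For metrizable $V,W$ the canonical map $V\widehat{\otimes}_K^{\mathrm{b}} W\to V\widehat{\otimes}_K^{\mathrm{lc}} W$ is an isomorphism, equivalently every bounded subset of $V\widehat{\otimes}_K^{\mathrm{lc}} W$ is the image of a bounded subset of some $V_B\widehat{\otimes}_K W_C$. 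Granting (A) and (B), full faithfulness of $\operatorname{diss}$ exhibits $\Phi$ as $\operatorname{diss}$ of an isomorphism of complete bornological spaces, hence as an isomorphism, and checking universal properties identifies it with the canonical arrow of the statement.

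The hard part is (A) and (B), and this is precisely where metrizability is indispensable — neither holds for general locally convex $V,W$. I would prove them using the countable neighbourhood bases of $0$ in $V$ and $W$ together with Grothendieck's description of the completed projective tensor product of metrizable spaces: every $z\in V\widehat{\otimes}_K W$ has a convergent expansion $z=\sum_i\lambda_i\,x_i\otimes y_i$ with $\lambda_i\to 0$ in $K$ and $(x_i)$, $(y_i)$ null sequences in $V$, $W$. A diagonalisation over the countable neighbourhood bases upgrades these pointwise expansions to one uniform over a prescribed bounded family, producing a single pair of bounded $\mathcal{R}$-submodules $B$, $C$, generated by the relevant null sequences, which simultaneously absorbs the given bounded set inside $V_B\widehat{\otimes}_K W_C$ (this gives (B)) and bounds the kernels in question, forcing their stabilisation (this gives (A)); completeness of the Banach spaces $V_B$, $W_C$ is what lets one pass from the algebraic to the completed tensor product throughout. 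Everything else is formal, depending only on Theorem \ref{teo first comparison comp born and indban} and the monoidal properties of $\operatorname{diss}$ and $\widehat{L}$ recorded above, and I expect the stabilisation of kernels in (A) to be the single most delicate point.
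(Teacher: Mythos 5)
The paper does not actually prove this proposition: it is quoted verbatim from \cite[Proposition 4.25]{bode2021operations} as background, so your argument can only be measured against the cited source and on its own merits. Your formal skeleton is correct and is essentially the standard one: write both sides as formal filtered colimits of Banach spaces, use that $\widehat{L}$ is strong symmetric monoidal with $\widehat{L}\circ\operatorname{diss}=\operatorname{Id}$, and reduce the statement to (A) essential monomorphy of the system $\{V_B\widehat{\otimes}_KW_C\}$, so that the left-hand side is $\operatorname{diss}$ of its own $\widehat{L}$-image $V\widehat{\otimes}_K^{\mathrm{b}}W$, together with (B) the identification $V\widehat{\otimes}_K^{\mathrm{b}}W\cong V\widehat{\otimes}_K^{\mathrm{lc}}W$ for metrizable spaces. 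This reduction is sound, and you correctly locate all of the content in (A) and (B).

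The gap is that (A) and (B) are asserted rather than proved. For (B), the elementwise expansion $z=\sum_i\lambda_i\,x_i\otimes y_i$ with null sequences is indeed available for metrizable spaces, but what you need is a statement about \emph{bounded subsets}: every bounded subset of $V\widehat{\otimes}_K^{\mathrm{lc}}W$ must be absorbed by the image of the unit ball of a single $V_B\widehat{\otimes}_KW_C$. This is the non-archimedean analogue of Grothendieck's probl\`eme des topologies, and the archimedean version of your claim that ``a diagonalisation upgrades the pointwise expansions to one uniform over a prescribed bounded family'' is \emph{false} for general Fr\'echet spaces (Taskinen's counterexample); so this step cannot be waved through and requires genuinely non-archimedean input --- e.g.\ the coincidence of the projective and injective tensor norms on non-archimedean Banach spaces, or the $c_0$-type structure of such spaces --- or an explicit citation to the comparison of bornological and topological tensor products for metrizable spaces in \cite{houzel2006seminaire}, which is what \cite{bode2021operations} relies on. For (A), ``kernels of the transition maps stabilise'' is a correct abstract criterion for essential monomorphy, but you give no mechanism forcing stabilisation; the route that actually works is to prove directly that each map $V_B\widehat{\otimes}_KW_C\to V\widehat{\otimes}_K^{\mathrm{lc}}W$ is injective (again via $\pi=\epsilon$ or null-sequence representations), which makes \emph{every} transition map injective and renders the stabilisation question moot. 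As written, the two analytic claims on which the whole proof rests are exactly the ones left unproved.
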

This proposition will be a powerful tool later, when we discuss categories of modules over complete bornological algebras and Ind-Banach algebras. To summarize,  the closed symmetric monoidal structure on $\Indban$ given by the functors $\overrightarrow{\otimes}_K$ and $\underline{\Hom}_{\Indban}$ extends to a closed symmetric monoidal structure on $LH(\widehat{\mathcal{B}}c_K)$, given by a pair of functors $\widetilde{\otimes}_K$ and $\underline{\Hom}_{LH(\widehat{\mathcal{B}}c_K)}$. Furthermore, we have natural isomorphisms of functors:
\begin{equation}\label{equation extension of closed symmetric monoidal category structure}
    I(-\overrightarrow{\otimes}_K-)=I(-)\widetilde{\otimes}_KI(-),
\end{equation}
\begin{equation}\label{equation extension of closed symmetric monoidal category structure 2}
    I(\underline{\Hom}_{\Indban}(-,-))=\Hom_{LH(\widehat{\mathcal{B}}c_K)}(I(-),I(-)). 
\end{equation}
\subsection{Categories of modules}
With all these tools at hand, we can start working towards suitable categories of modules in $\widehat{\mathcal{B}}c_K$ and $\Indban$:
\begin{defi}
Let $\mathcal{E}$ be a closed symmetric monoidal category with tensor $\otimes$ and unit $U$. 
\begin{enumerate}[label=(\roman*)]
    \item An object $\mathscr{A}\in \mathcal{E}$ is called a monoid if there is a morphism:
    \begin{equation*}
        \mathscr{A}\otimes\mathscr{A}\rightarrow \mathscr{A},
    \end{equation*}
    called the product, and a unit map $U\rightarrow \mathscr{A}$ satisfying the usual axioms of an algebra.
    \item  Let $\Mod_{\mathcal{E}}(\mathscr{A})$ be the category of $\mathscr{A}$-modules. These are objects $M$ in $\mathcal{E}$ equipped with an action $\mathscr{A}\otimes M\rightarrow M$ satisfying the usual axioms of a module.   
\end{enumerate}    
\end{defi}
The monoids in  $\widehat{\mathcal{B}}c_K$ are called complete bornological algebras, and the monoids in $\Indban$ will be called Ind-Banach algebras.  Most of the properties of $\mathcal{E}$ can be extended to categories of modules in a natural way. In particular, we have the following proposition:
\begin{prop}
Let $\mathcal{E}$ be a closed symmetric monoidal quasi-abelian category, and $\mathscr{A}$ be a monoid in $\mathcal{E}$. The following hold:
\begin{enumerate}[label=(\roman*)]
    \item $\Mod_{\mathcal{E}}(\mathscr{A})$ is a quasi-abelian category.
    \item The forgetful functor: $\Mod_{\mathcal{E}}(\mathscr{A})\rightarrow \mathcal{E}$  commutes with limits and colimits. A morphism in $\Mod_{\mathcal{E}}(\mathscr{A})$ is strict if and only if it is strict in $\mathcal{E}$.
    \item If $\mathcal{E}$ is (co)-complete, then so is $\Mod_{\mathcal{E}}(\mathscr{A})$.
    \item Is $\mathcal{E}$ is quasi-elementary (\emph{resp.} elementary) then so is $\Mod_{\mathcal{E}}(\mathscr{A})$.
\end{enumerate}
\end{prop}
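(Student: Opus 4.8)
The plan is to reduce every assertion to properties of the forgetful functor $U\colon \Mod_{\mathcal{E}}(\mathscr{A})\rightarrow \mathcal{E}$ and its left adjoint, the free module functor $\mathscr{A}\otimes(-)$. The key structural input is that, because $\mathcal{E}$ is closed symmetric monoidal, the endofunctor $\mathscr{A}\otimes(-)$ has a right adjoint $\underline{\Hom}(\mathscr{A},-)$, and hence preserves all colimits; in particular it is right exact (preserves cokernels). Dually, $U$ preserves all limits since it is a right adjoint. First I would check that $U$ also \emph{creates} colimits: given a diagram $(M_k)$ in $\Mod_{\mathcal{E}}(\mathscr{A})$, form $L=\varinjlim U(M_k)$ in $\mathcal{E}$; since $\mathscr{A}\otimes(-)$ commutes with this colimit, the action maps $\mathscr{A}\otimes U(M_k)\rightarrow U(M_k)$ assemble into a morphism $\mathscr{A}\otimes L\rightarrow L$, which one verifies is associative and unital by the universal property of the colimit, and the resulting object represents $\varinjlim M_k$. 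The same argument shows that kernels and cokernels in $\Mod_{\mathcal{E}}(\mathscr{A})$ are obtained by endowing the kernel (resp.\ cokernel) formed in $\mathcal{E}$ with its induced $\mathscr{A}$-action, where for cokernels one uses right-exactness of $\mathscr{A}\otimes(-)$. Consequently $\operatorname{CoIm}(f)$ and $\operatorname{Im}(f)$ of a morphism of $\mathscr{A}$-modules are computed in $\mathcal{E}$, and the canonical comparison map $\operatorname{CoIm}(f)\rightarrow\operatorname{Im}(f)$ is the underlying one; this proves (ii), namely that a morphism is strict in $\Mod_{\mathcal{E}}(\mathscr{A})$ if and only if it is strict in $\mathcal{E}$, and incidentally that $U$ is exact.

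Part (i) then follows by transporting the two quasi-abelian axioms across $U$. A cartesian square in $\Mod_{\mathcal{E}}(\mathscr{A})$ is cartesian in $\mathcal{E}$ because $U$ creates limits; if one of its legs is a strict epimorphism, it is a strict epimorphism in $\mathcal{E}$ by (ii), so by quasi-abelianness of $\mathcal{E}$ the pulled-back arrow is a strict epimorphism in $\mathcal{E}$, hence in $\Mod_{\mathcal{E}}(\mathscr{A})$ again by (ii). The dual statement for pushouts of strict monomorphisms is proved the same way, using that $U$ creates colimits. Part (iii) is then immediate, since $U$ creates both limits and colimits.

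For (iv), the candidate generating set is $\{\mathscr{A}\otimes P\}_{P\in G}$, where $G$ is a strictly generating set of small (resp.\ tiny) projective objects of $\mathcal{E}$. By the free--forgetful adjunction, $\Hom_{\Mod_{\mathcal{E}}(\mathscr{A})}(\mathscr{A}\otimes P,-)\cong \Hom_{\mathcal{E}}(P,U(-))$; since $U$ is exact and $P$ is projective, $\mathscr{A}\otimes P$ is projective in $\Mod_{\mathcal{E}}(\mathscr{A})$; since $U$ preserves direct sums (resp.\ filtered colimits) and $P$ is small (resp.\ tiny), so is $\mathscr{A}\otimes P$. Finally, given $M\in\Mod_{\mathcal{E}}(\mathscr{A})$, choose a strict epimorphism $\bigoplus_{i}P_i\rightarrow U(M)$ in $\mathcal{E}$ with $P_i\in G$. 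Applying $\mathscr{A}\otimes(-)$ — which preserves coproducts and, being right exact, sends strict epimorphisms to strict epimorphisms — and composing with the counit $\mathscr{A}\otimes M\rightarrow M$, which is split (hence strict) epimorphic in $\mathcal{E}$ by the unit axiom, yields a morphism $\bigoplus_i(\mathscr{A}\otimes P_i)\rightarrow M$. It is a composite of strict epimorphisms in $\mathcal{E}$, hence a strict epimorphism there, and therefore strict in $\Mod_{\mathcal{E}}(\mathscr{A})$ by (ii), so $\{\mathscr{A}\otimes P\}_{P\in G}$ strictly generates.

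I expect the main obstacle to be the bookkeeping in (ii): verifying that cokernels and general colimits in $\Mod_{\mathcal{E}}(\mathscr{A})$ are created by $U$ relies on $\mathscr{A}\otimes(-)$ being only right exact, so one cannot simply compute $\mathscr{A}\otimes(\operatorname{coker})$; instead one must build and characterize the induced action purely from the universal property together with the preservation of colimits by $\mathscr{A}\otimes(-)$. Everything else is a formal consequence of the adjunction and the closed monoidal structure. A secondary point used in (iv) is that strict epimorphisms in a quasi-abelian category are stable under composition, which legitimizes the final step.
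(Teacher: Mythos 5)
Your proof is correct and complete. The paper gives no argument of its own here—it simply cites \cite[Lemma 3.7]{bode2021operations}—and your monadicity-style argument (the forgetful functor creates limits and, because $\mathscr{A}\otimes(-)$ is a left adjoint of the closed structure, also cokernels and all colimits; strictness is therefore detected on underlying objects; free modules $\mathscr{A}\otimes P$ on a strictly generating set of small/tiny projectives again strictly generate) is the standard route and in particular handles correctly the one delicate point, the construction of the induced action on cokernels.
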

\begin{proof}
    This is \cite[Lemma 3.7]{bode2021operations}.
\end{proof}
In particular, if $\mathscr{A}$ is a complete bornological algebra  then $\Mod_{\widehat{\mathcal{B}}c_K}(\mathscr{A})$ is a quasi-elementary quasi-abelian category which is complete and co-complete. Thus, $LH(\Mod_{\widehat{\mathcal{B}}c_K}(\mathscr{A}))$ is a Grothendieck abelian category. Similarly, if $\mathscr{A}$ is an Ind-Banach algebra, then $\Mod_{\Indban}(\mathscr{A})$ is a complete and cocomplete elementary quasi-abelian category. Furthermore, in the Ind-Banach case, $I$ commutes with the tensor product. In particular $I(\mathscr{A})$ is a monoid in $LH(\widehat{\mathcal{B}}c_K)$. In this situation, it follows by \cite[Proposition 2.6]{bode2021operations} that $I$ induces an equivalence of abelian categories:
\begin{equation*}
    LH(\Mod_{\widehat{\mathcal{B}}c_K}(\mathscr{A}))\rightarrow \Mod_{LH(\widehat{\mathcal{B}}c_K)}(I(\mathscr{A})).
\end{equation*}
In the complete bornological case, $I$ is not strong symmetric monoidal. Thus, the above isomorphism does not hold in general. However, we can once again bridge the gap by passing onto $\Indban$. Indeed, let $\mathscr{A}$ be a complete bornological algebra. The fact that $\operatorname{diss}: \widehat{\mathcal{B}}c_K\rightarrow \Indban$ is lax symmetric monoidal implies that $\operatorname{diss}(\mathscr{A})$ has a canonical structure as an Ind-Banach algebra (\emph{cf.} \cite[Proposition 3.10]{bode2021operations}). The two categories of modules are connected via the following proposition: 
\begin{prop}\label{prop dissection functor and algebras}
Let $\mathscr{A}$ be a complete bornological algebra. The dissection functor \newline $\operatorname{diss}:\widehat{\mathcal{B}}c_K\rightarrow \Indban$ induces an exact and fully faithful functor:
     \begin{equation*}
         \operatorname{diss}_A: \Mod_{\widehat{\mathcal{B}}c_K}(\mathscr{A})\rightarrow \Mod_{\Indban}(\operatorname{diss}(\mathscr{A})).
     \end{equation*}
If $\mathscr{A}$ is the bornologification of a Fréchet K-algebra, this yields an equivalence of
categories:
\begin{equation*}
    LH(\Mod_{\widehat{\mathcal{B}}c_K}(\mathscr{A})) \cong LH(\Mod_{\Indban}(\operatorname{diss}(\mathscr{A}))) \cong \Mod_{LH(\widehat{\mathcal{B}}c_K)}(I(\mathscr
    A)).
\end{equation*}
\end{prop}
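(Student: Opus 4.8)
\emph{Construction of $\operatorname{diss}_A$ and exactness.} Since $\operatorname{diss}\colon\widehat{\mathcal{B}}c_K\to\Indban$ is lax symmetric monoidal, composing its lax structure map $\operatorname{diss}(\mathscr{A})\overrightarrow{\otimes}_K\operatorname{diss}(\mathscr{A})\to\operatorname{diss}(\mathscr{A}\widehat{\otimes}_K\mathscr{A})$ with $\operatorname{diss}$ of the multiplication makes $\operatorname{diss}(\mathscr{A})$ an Ind-Banach algebra, and for every $\mathscr{A}$-module $M$ the composite $\operatorname{diss}(\mathscr{A})\overrightarrow{\otimes}_K\operatorname{diss}(M)\to\operatorname{diss}(\mathscr{A}\widehat{\otimes}_KM)\to\operatorname{diss}(M)$ makes $\operatorname{diss}(M)$ a $\operatorname{diss}(\mathscr{A})$-module; naturality of the lax structure maps makes this assignment functorial, so we obtain $\operatorname{diss}_A$ lying over $\operatorname{diss}$. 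Exactness is then immediate: the forgetful functors to $\widehat{\mathcal{B}}c_K$ and $\Indban$ preserve kernels and cokernels and detect strictness, so a sequence of $\mathscr{A}$-modules (resp. $\operatorname{diss}(\mathscr{A})$-modules) is strictly exact iff its underlying sequence is; since $\operatorname{diss}\colon\widehat{\mathcal{B}}c_K\to\Indban$ is exact by Theorem~\ref{teo first comparison comp born and indban} and the underlying sequence of $\operatorname{diss}_A(M^{\bullet})$ is $\operatorname{diss}$ applied to the underlying sequence of $M^{\bullet}$, the functor $\operatorname{diss}_A$ is exact.

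\emph{Full faithfulness.} The adjunction $\widehat{L}\dashv\operatorname{diss}$ is a monoidal adjunction: $\operatorname{diss}$ is lax symmetric monoidal and its left adjoint $\widehat{L}$ is strong symmetric monoidal. A monoidal adjunction lifts to module categories: as $\widehat{L}$ is strong monoidal it carries $\operatorname{diss}(\mathscr{A})$-modules to $\widehat{L}(\operatorname{diss}(\mathscr{A}))$-modules, and the identity $\widehat{L}\circ\operatorname{diss}=\operatorname{Id}$ (an isomorphism of monoidal functors) identifies $\widehat{L}(\operatorname{diss}(\mathscr{A}))$ with $\mathscr{A}$ as algebras; hence we get $\widehat{L}_A\colon\Mod_{\Indban}(\operatorname{diss}(\mathscr{A}))\to\Mod_{\widehat{\mathcal{B}}c_K}(\mathscr{A})$ with $\widehat{L}_A\dashv\operatorname{diss}_A$. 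On underlying objects the counit of this adjunction is the counit of $\widehat{L}\dashv\operatorname{diss}$, which is an isomorphism since $\widehat{L}\circ\operatorname{diss}=\operatorname{Id}$; as the forgetful functor $\Mod_{\widehat{\mathcal{B}}c_K}(\mathscr{A})\to\widehat{\mathcal{B}}c_K$ is faithful and conservative, the counit of $\widehat{L}_A\dashv\operatorname{diss}_A$ is itself a natural isomorphism, i.e.\ $\operatorname{diss}_A$ is fully faithful. (One can also argue directly: $\operatorname{diss}_A$ is faithful because $\operatorname{diss}$ is, and the unique $\operatorname{diss}$-preimage of a $\operatorname{diss}(\mathscr{A})$-linear map is $\mathscr{A}$-linear because the lax structure map $\operatorname{diss}(\mathscr{A})\overrightarrow{\otimes}_K\operatorname{diss}(M)\to\operatorname{diss}(\mathscr{A}\widehat{\otimes}_KM)$ is a strict epimorphism.)

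\emph{The statement on left hearts.} First I would observe that $LH(\Mod_{\Indban}(\operatorname{diss}(\mathscr{A})))\cong\Mod_{LH(\widehat{\mathcal{B}}c_K)}(I(\mathscr{A}))$ holds for any complete bornological algebra $\mathscr{A}$: since $I\colon\Indban\to LH(\Indban)$ is strong symmetric monoidal, $I(\operatorname{diss}(\mathscr{A}))$ is a monoid and $LH(\Mod_{\Indban}(\operatorname{diss}(\mathscr{A})))\cong\Mod_{LH(\Indban)}(I(\operatorname{diss}(\mathscr{A})))$ by \cite[Proposition~2.6]{bode2021operations}; transporting along the strong symmetric monoidal equivalence $\widetilde{L}\colon LH(\Indban)\to LH(\widehat{\mathcal{B}}c_K)$ and using $\widetilde{L}\circ I\circ\operatorname{diss}=I$ (which follows from $I\circ\operatorname{diss}=\widetilde{\operatorname{diss}}\circ I$, valid because $\operatorname{diss}$ is exact, together with $\widetilde{L}\circ\widetilde{\operatorname{diss}}=\operatorname{Id}$) gives the claim. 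It then remains to show that, when $\mathscr{A}$ is the bornologification of a Fréchet $K$-algebra, $\operatorname{diss}_A$ induces an equivalence on left hearts, equivalently $LH(\Mod_{\widehat{\mathcal{B}}c_K}(\mathscr{A}))\cong\Mod_{LH(\widehat{\mathcal{B}}c_K)}(I(\mathscr{A}))$. The plan is to invoke \cite[Proposition~2.6]{bode2021operations} once more; its hypothesis in the bornological setting is the compatibility $I(\mathscr{A}\widehat{\otimes}_K-)\cong I(\mathscr{A})\widetilde{\otimes}_KI(-)$, which is \emph{not} automatic here because $I\colon\widehat{\mathcal{B}}c_K\to LH(\widehat{\mathcal{B}}c_K)$ is only lax monoidal. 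Verifying this compatibility is the main obstacle, and it is precisely where the Fréchet hypothesis enters: by Proposition~\ref{prop comparison of tensor products metrizable spaces} the dissection functor turns $\widehat{\otimes}_K$ into $\overrightarrow{\otimes}_K$ on metrizable — hence Fréchet, a fortiori Banach — spaces, so resolving an arbitrary $\mathscr{A}$-module by direct sums of free modules $\mathscr{A}\widehat{\otimes}_Kc_0(X,V)$ (whose underlying spaces are completed tensor products of the Fréchet algebra $\mathscr{A}$ with Banach spaces) and comparing with the corresponding resolution over $\operatorname{diss}(\mathscr{A})$ should show that such an $\mathscr{A}$ is flat and that the required compatibility holds. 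Granting this, \cite[Proposition~2.6]{bode2021operations} yields the first equivalence, and composing it with the second one completes the proof.
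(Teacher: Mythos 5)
The paper offers no proof of this proposition at all --- it is quoted verbatim from \cite[Proposition 4.3]{bode2021operations} --- so there is no in-text argument to compare yours against. That said, your first two parts are sound: transporting the algebra and module structures along the lax monoidal structure of $\operatorname{diss}$, deducing exactness from the strictness-detecting forgetful functors together with exactness of $\operatorname{diss}$, and deducing full faithfulness from the lifted monoidal adjunction $\widehat{L}_A\dashv\operatorname{diss}_A$ whose counit is invertible because $\widehat{L}\circ\operatorname{diss}=\operatorname{Id}$, are all correct. The reduction of the left-heart statement to the single equivalence $LH(\Mod_{\widehat{\mathcal{B}}c_K}(\mathscr{A}))\cong\Mod_{LH(\widehat{\mathcal{B}}c_K)}(I(\mathscr{A}))$ is also correct.

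The genuine gap is the step you flag yourself with ``should show'', and two things need fixing there. First, ``flatness of $\mathscr{A}$'' is not the relevant target; what you actually need is that $\operatorname{diss}_A$ carries a strictly generating set of small projectives of $\Mod_{\widehat{\mathcal{B}}c_K}(\mathscr{A})$ onto one of $\Mod_{\Indban}(\operatorname{diss}(\mathscr{A}))$. The generators on the bornological side are the free modules $\mathscr{A}\widehat{\otimes}_Kc_0(X,V)$ (Proposition \ref{prop bornological spaces are quasi-abelian}(iii) combined with the extension-of-scalars adjunction), and on the Ind-Banach side they are $\operatorname{diss}(\mathscr{A})\overrightarrow{\otimes}_Kc_0(X,V)$; since $\mathscr{A}$ is Fréchet and $c_0(X,V)$ is Banach, both factors are metrizable, so Proposition \ref{prop comparison of tensor products metrizable spaces} gives $\operatorname{diss}(\mathscr{A}\widehat{\otimes}_Kc_0(X,V))\cong\operatorname{diss}(\mathscr{A})\overrightarrow{\otimes}_Kc_0(X,V)$, and both tensor products commute with the direct sums needed to form arbitrary free presentations. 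This is the only place the Fréchet hypothesis enters, and it fails for a general complete bornological algebra precisely because $\operatorname{diss}$ is merely lax monoidal. Second, even granting the matching of generators, your conclusion invokes \cite[Proposition~2.6]{bode2021operations} in a setting where its monoidal hypothesis is not available; what you should quote instead is the general principle that an exact, fully faithful functor of quasi-elementary quasi-abelian categories identifying strictly generating sets of projectives induces an equivalence of left hearts. As written, ``granting this'' leaves the final implication unjustified rather than proved.
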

\begin{proof}
This is \cite[
Proposition 4.3]{bode2021operations}.
\end{proof}
In all the cases of interest our complete bornological algebra will be a Fréchet (in fact Fréchet-Stein) algebra. Thus, we may use this to work directly in the category of Ind-Banach spaces. 
\begin{obs}
In order to simplify the notation, given a bornologification of a Fréchet $K$-algebra $\mathscr{A}$, we will identify $\mathscr{A}$ with its image under the dissection functor, and write $\mathscr{A}$ instead of $\operatorname{diss}(\mathscr{A})$. Similarly, a complete bornological $\mathscr{A}$-module is an object in $\Mod_{\Indban}(\mathscr{A})$ in the image of $\operatorname{diss}_A$.
\end{obs}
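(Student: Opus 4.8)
The plan is to produce $\operatorname{diss}_A$ from the lax symmetric monoidal structure of $\operatorname{diss}$, deduce its exactness and full faithfulness by formal means, and then obtain the equivalences of left hearts in the Fréchet case by combining the Ind-Banach statement $LH(\Mod_{\Indban}(\mathscr{B}))\simeq\Mod_{LH(\Indban)}(I(\mathscr{B}))$ recorded above with the monoidal equivalence $LH(\Indban)\simeq LH(\widehat{\mathcal{B}}c_K)$. First, since a lax symmetric monoidal functor carries monoids to monoids and modules over a monoid $\mathscr{A}$ to modules over its image, $\operatorname{diss}(\mathscr{A})$ is an Ind-Banach algebra and every $\mathscr{A}$-module acquires a $\operatorname{diss}(\mathscr{A})$-module structure through the lax structure map; together with the fact that $\operatorname{diss}_A$ agrees with $\operatorname{diss}$ on morphisms, this defines the functor. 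For full faithfulness I would exploit that $\widehat{L}\colon\Indban\to\widehat{\mathcal{B}}c_K$ is \emph{strong} symmetric monoidal and left adjoint to $\operatorname{diss}$, so that $\widehat{L}\dashv\operatorname{diss}$ is a monoidal adjunction and lifts to an adjunction $\widehat{L}_A\dashv\operatorname{diss}_A$ on module categories; since $\widehat{L}\circ\operatorname{diss}=\operatorname{Id}$ and $\widehat{L}$ is strong monoidal, the composite $\widehat{L}_A\circ\operatorname{diss}_A$ is the identity, whence $\operatorname{diss}_A$ is fully faithful. Exactness is equally formal: strict exactness in a module category over a monoid is detected and reflected by the forgetful functor to the ambient quasi-abelian category, and $\operatorname{diss}\colon\widehat{\mathcal{B}}c_K\to\Indban$ is exact by Theorem \ref{teo first comparison comp born and indban}.

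For the equivalences of left hearts, suppose $\mathscr{A}$ is the bornologification of a Fréchet $K$-algebra. On the Ind-Banach side $I$ is strong symmetric monoidal, so the cited result gives $LH(\Mod_{\Indban}(\operatorname{diss}\mathscr{A}))\simeq\Mod_{LH(\Indban)}(I(\operatorname{diss}\mathscr{A}))$, and under the strong monoidal equivalence $LH(\Indban)\simeq LH(\widehat{\mathcal{B}}c_K)$ of Theorem \ref{teo first comparison comp born and indban} the monoids $I(\operatorname{diss}\mathscr{A})$ and $I(\mathscr{A})$ correspond; this yields $LH(\Mod_{\Indban}(\operatorname{diss}\mathscr{A}))\simeq\Mod_{LH(\widehat{\mathcal{B}}c_K)}(I(\mathscr{A}))$. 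It remains to identify $LH(\Mod_{\widehat{\mathcal{B}}c_K}(\mathscr{A}))$ with this category; by the same module-versus-left-heart criterion this reduces to checking that the natural lax comparison $I(\mathscr{A})\widetilde{\otimes}_KI(M)\to I(\mathscr{A}\widehat{\otimes}_KM)$ is an isomorphism for enough $\mathscr{A}$-modules $M$, and it is enough to test this on the projective generators $M=\mathscr{A}\widehat{\otimes}_Kc_0(X,V)$ of $\Mod_{\widehat{\mathcal{B}}c_K}(\mathscr{A})$.

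Here is where the Fréchet hypothesis is essential: the modules one has to control — the projective generators $\mathscr{A}\widehat{\otimes}_Kc_0(X,V)$ together with the further tensor products $\mathscr{A}\widehat{\otimes}_K(\mathscr{A}\widehat{\otimes}_Kc_0(X,V))$ — are again Fréchet, hence metrizable, so Proposition \ref{prop comparison of tensor products metrizable spaces} turns the relevant comparison maps into isomorphisms, and this suffices to run the criterion on generators and conclude. I expect this last step — isolating exactly which $\mathscr{A}$-modules must be controlled and cutting the question down to the metrizable setting where Proposition \ref{prop comparison of tensor products metrizable spaces} is available — to be the main obstacle, since it is precisely where one must cope with the merely \emph{lax} (rather than strong) monoidality of $I$ on $\widehat{\mathcal{B}}c_K$. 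Everything else is formal, resting on monoidal adjunctions, module categories over monoids, the detection of strictness by forgetful functors, and the monoidal left-heart equivalence recalled above.
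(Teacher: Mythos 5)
The statement you were asked to prove is a notational convention (identifying $\mathscr{A}$ with $\operatorname{diss}(\mathscr{A})$ and declaring which objects of $\Mod_{\Indban}(\mathscr{A})$ are to be called complete bornological modules); it requires no proof and the paper supplies none. The mathematical content your argument actually addresses is the preceding Proposition \ref{prop dissection functor and algebras}, which the paper also does not prove but delegates to \cite[Proposition 4.3]{bode2021operations}. Judged as a reconstruction of that proof, your sketch is sound and follows the expected route: the module structure is transported along the lax monoidal structure of $\operatorname{diss}$, exactness is detected by the forgetful functors since strictness in a module category is tested in the ambient quasi-abelian category, and the genuine crux — which you correctly isolate — is that for Fréchet $\mathscr{A}$ the free modules $\mathscr{A}\widehat{\otimes}_Kc_0(X,V)$ and their iterated tensor products are again metrizable, so Proposition \ref{prop comparison of tensor products metrizable spaces} turns the lax comparison maps into isomorphisms and allows the two left hearts to be matched through presentations by these projective generators.

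One step needs tightening: from $\widehat{L}_A\circ\operatorname{diss}_A=\operatorname{Id}$ alone you only get that $\operatorname{diss}_A$ is faithful; a retraction does not give fullness. Either verify that the counit of the lifted adjunction $\widehat{L}_A\dashv\operatorname{diss}_A$ is an isomorphism (which is equivalent to full faithfulness of the right adjoint), or argue more directly: $\operatorname{diss}$ is fully faithful on the underlying categories, so any $\operatorname{diss}(\mathscr{A})$-linear map $\operatorname{diss}_A(M)\to\operatorname{diss}_A(N)$ equals $\operatorname{diss}(f)$ for a unique bounded $f:M\to N$, and $\mathscr{A}$-linearity of $f$ follows by applying the faithful functor $\operatorname{diss}$ to the two composites in the action-compatibility square. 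With that repair the argument is complete and consistent with the cited source.
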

Most of the classical operations of modules over a ring generalize to the setting of quasi-abelian categories with a closed symmetric monoidal structure.  In order to be more concise, we will only develop them in the Ind-Banach setting, but these constructions can be applied to a much more broader context. Let $\mathscr{A}$ be an Ind-Banach algebra. There is an extension of scalars functor:
\begin{equation*}
    -\overrightarrow{\otimes}_K-:\Mod_{\Indban}(\mathscr{A})\times \Indban\rightarrow \Mod_{\Indban}(\mathscr{A}).
\end{equation*}
 Namely, let $\mathcal{M}\in \Mod_{\Indban}(\mathscr{A})$, and $V\in \Indban$. We may endow $\mathcal{M}\overrightarrow{\otimes}_KV$ with an $\mathscr{A}$-module structure given by:
\begin{equation*}
\mathscr{A}\overrightarrow{\otimes}_K\left(\mathcal{M}\overrightarrow{\otimes}_KV\right)=\left(\mathscr{A}\overrightarrow{\otimes}_K\mathcal{M}\right)\overrightarrow{\otimes}_KV\rightarrow \mathcal{M}\overrightarrow{\otimes}_KV, 
\end{equation*}
where the last map is induced by the action of $\mathscr{A}$ on $\mathcal{M}$. Analogously, we get a functor:
\begin{equation*}
    -\widetilde{\otimes}_K-:\Mod_{LH(\widehat{\mathcal{B}}c_K)}(I(\mathscr{A}))\times LH(\widehat{\mathcal{B}}c_K)\rightarrow \Mod_{LH(\widehat{\mathcal{B}}c_K)}(I(\mathscr{A})),
\end{equation*}
and these functors commute with $I$.\\
There is also a co-extension of scalars functor:
\begin{equation*}
    \underline{\Hom}_{\Indban}(-,-):\Mod_{\Indban}(\mathscr{A}^{\op})^{\op}\times \Indban\rightarrow \Mod_{\Indban}(\mathscr{A}).
\end{equation*}
Namely, for $\mathcal{M}\in\Mod_{\Indban}(\mathscr{A}^{\op})$, $V\in \Indban$, we may promote the space $\underline{\Hom}_{\Indban}(\mathcal{M},V)$ to an Ind-Banach $\mathscr{A}$-module via the formula:
\begin{multline}\label{equation map in tensor hom adjunction}   \mathscr{A}\overrightarrow{\otimes}_K\underline{\Hom}_{\Indban}(\mathcal{M},V)\rightarrow \underline{\Hom}_{\Indban}(\mathcal{M}\overrightarrow{\otimes}_K\mathscr{A},V)\\
\rightarrow \underline{\Hom}_{\Indban}(\mathcal{M},V),
\end{multline}
where the second map is induced by the canonical inclusion $\mathcal{M}\rightarrow\mathcal{M}\overrightarrow{\otimes}_K\mathscr{A}$, and the first one is the image under the adjunction $(\ref{equation tensor-hom adjunction complete bornological modules})$ of the map in:
\begin{equation*}
    \Hom_{\Indban}(\mathscr{A},\underline{\Hom}_{\Indban}(\underline{\Hom}_{\Indban}(\mathcal{M},V),\underline{\Hom}_{\Indban}(\mathcal{M}\overrightarrow{\otimes}_K\mathscr{A},V))),
\end{equation*}
defined on each Banach space by the rule $a\mapsto \left(f\mapsto f\circ \cdot a \right)$, where $\cdot a$ denotes the action of $a$ on $\mathcal{M}$.\\ 
This construction makes sense in any closed symmetric monoidal category. In particular, we may repeat this construction on $LH(\widehat{\mathcal{B}}c_K)$, and we get a functor: 
\begin{equation*}
    \underline{\Hom}_{LH(\widehat{\mathcal{B}}c_K)}(-,-):\Mod_{LH(\widehat{\mathcal{B}}c_K)}(I(\mathscr{A}^{\op}))^{\op}\times LH(\widehat{\mathcal{B}}c_K)\rightarrow \Mod_{LH(\widehat{\mathcal{B}}c_K)}(I(\mathscr{A})).
\end{equation*}
As before, the identities in equations (\ref{equation extension of closed symmetric monoidal category structure}), and (\ref{equation extension of closed symmetric monoidal category structure 2}) show that these two functors commute with the embedding into the left heart $I$.\bigskip

There are versions of the complete tensor product and internal hom for $\mathscr{A}$-modules. Namely, the complete tensor product of $\mathscr{A}$-modules:
\begin{equation*}
    -\overrightarrow{\otimes}_{\mathscr{A}}-:\Mod_{\Indban}(\mathscr{A}^{\op})\times \Mod_{\Indban}(\mathscr{A})\rightarrow \Indban,
\end{equation*}
is defined for $\mathcal{M}\in \Mod_{\Indban}(\mathscr{A}^{\op})$, and $\mathcal{N}\in \Mod_{\Indban}(\mathscr{A})$ by the formula:
\begin{equation*}
\mathcal{M}\overrightarrow{\otimes}_{\mathscr{A}}\mathcal{N}:=\operatorname{Coeq}\left( \mathcal{M}\overrightarrow{\otimes}_K\mathscr{A}\overrightarrow{\otimes}_K\mathcal{N}\rightrightarrows \mathcal{M}\overrightarrow{\otimes}_K\mathcal{N}\right),
\end{equation*}
where the maps in the coequalizer are given by the maps:
\begin{equation*}
    T_{\mathcal{M}}:\mathcal{M}\overrightarrow{\otimes}_K\mathscr{A}\rightarrow \mathcal{M}, \quad T_{\mathcal{N}}:\mathscr{A}\overrightarrow{\otimes}_K\mathcal{N}\rightarrow \mathcal{N},
\end{equation*}
induced by the action of $\mathscr{A}$ on $\mathcal{M}$ and $\mathcal{N}$ respectively. Similarly, we have a tensor product for $I(\mathscr{A})$-modules, which we denote by $\widetilde{\otimes}_{I(\mathscr{A})}$, and is defined analogously.\\ 
Notice that the functor $I:\Indban\rightarrow LH(\widehat{\mathcal{B}}c_K)$ does not  preserve arbitrary cokernels. In particular, given a complex $\mathcal{C}^{\bullet}$ of objects in $\Indban$, the complex $I(\mathcal{C}^{\bullet})$ is exact if and only if $\mathcal{C}^{\bullet}$ is strict and exact. Thus, the identity:
\begin{equation*}
I(\mathcal{M}\overrightarrow{\otimes}_{\mathscr{A}}\mathcal{N})=I(\mathcal{M})\widetilde{\otimes}_{I(\mathscr{A})}I(\mathcal{N}),
\end{equation*}
holds if and only if the morphism $T_{\mathcal{M}}-T_{\mathcal{N}}$ is strict.\\

There is also a inner homomorphisms functor:
\begin{equation*}
    \underline{\Hom}_{\mathscr{A}}(-,-):\Mod_{\Indban}(\mathscr{A})^{\op}\times \Mod_{\Indban}(\mathscr{A})\rightarrow \Indban.
\end{equation*}
This is
defined for $\mathcal{M},\mathcal{N}\in \Mod_{\Indban}(\mathscr{A})$ by the expression:
\begin{equation*}
    \underline{\Hom}_{\mathscr{A}}(\mathcal{M},\mathcal{N})=\operatorname{Eq}\left(\underline{\Hom}_{\Indban}(\mathcal{M},\mathcal{N})\rightrightarrows \underline{\Hom}_{\Indban}(\mathscr{A}\overrightarrow{\otimes}_K\mathcal{M},\mathcal{N})\right).
\end{equation*}
The first map in the equalizer is obtained by applying $\underline{\Hom}_{\Indban}(-,\mathcal{N})$ to the action:
\begin{equation*}
    \mathscr{A}\overrightarrow{\otimes}_K\mathcal{M}\rightarrow \mathcal{M},
\end{equation*}
and the second is obtained by applying $\underline{\Hom}_{\Indban}(\mathcal{M},-)$ to the canonical morphism:
\begin{equation*}
    \mathcal{N}\rightarrow \underline{\Hom}_{\Indban}(\mathscr{A},\mathcal{N}).
\end{equation*}
We remark that this construction can be performed in any symmetric monoidal category. In particular, the analogous construction yields the inner homomorphism functor $\underline{\Hom}_{I(\mathscr{A})}$ for $I(\mathscr{A})$-modules. As the functor $I:\Indban\rightarrow LH(\widehat{\mathcal{B}}c_K)$ has a left adjoint, it commutes with equalizers. Hence, by the identities in equations $(\ref{equation extension of closed symmetric monoidal category structure})$, and $(\ref{equation extension of closed symmetric monoidal category structure 2})$ we have:
\begin{align}\label{equation extension of inner hom to left heart}
\begin{split}
    I(\underline{\Hom}_{\mathscr{A}}(\mathcal{M},\mathcal{N}))&\\
 =\operatorname{Eq}\left(I(\underline{\Hom}_{\Indban}(\mathcal{M},\mathcal{N}))\rightrightarrows I(\underline{\Hom}_{\Indban}(\mathscr{A}\overrightarrow{\otimes}_K\mathcal{M},\mathcal{N})) \right)& \\ 
=\operatorname{Eq}\left(\underline{\Hom}_{LH(\widehat{\mathcal{B}}c_K)}(I(\mathcal{M}),I(\mathcal{N}))\rightrightarrows \underline{\Hom}_{LH(\widehat{\mathcal{B}}c_K)}(I(\mathscr{A})\widetilde{\otimes}_KI(\mathcal{M}),I(\mathcal{N})) \right)&\\
=\Hom_{I(\mathscr{A})}(I(\mathcal{M}),I(\mathcal{N}))&.
\end{split}
\end{align}
Notice that this is in stark contrast with the situation of the functor $-\overrightarrow{\otimes}_{\mathscr{A}}-$:
\begin{prop}
The functors $\overrightarrow{\otimes}_{\mathscr{A}}$ and $\underline{\Hom}_{\Indban}$ fit into the following adjunction: 
\begin{equation}\label{equation relative tensor hom adjunction}
    \Hom_{\Indban}(\mathcal{M}\overrightarrow{\otimes}_{\mathscr{A}}\mathcal{N},V)=\Hom_{\mathscr{A}}(\mathcal{N},\underline{\Hom}_{\Indban}(\mathcal{M},V)),
\end{equation}
where $\mathcal{M}\in \Mod_{\Indban}(\mathscr{A}^{\op}),\mathcal{N}\in \Mod_{\Indban}(\mathscr{A})$ and $V\in \Indban$.
\end{prop}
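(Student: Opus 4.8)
The plan is to deduce the adjunction in (\ref{equation relative tensor hom adjunction}) from the two adjunctions already at our disposal, namely the tensor-hom adjunction (\ref{equation tensor-hom adjunction Ind-Banach spaces}) in $\Indban$ and the explicit descriptions of $\overrightarrow{\otimes}_{\mathscr{A}}$ and $\underline{\Hom}_{\mathscr{A}}$ as a coequalizer and an equalizer respectively. Since $\mathcal{M}\overrightarrow{\otimes}_{\mathscr{A}}\mathcal{N}$ is defined as $\operatorname{Coeq}\bigl(\mathcal{M}\overrightarrow{\otimes}_K\mathscr{A}\overrightarrow{\otimes}_K\mathcal{N}\rightrightarrows \mathcal{M}\overrightarrow{\otimes}_K\mathcal{N}\bigr)$ and $\Hom_{\Indban}(-,V)$ sends colimits to limits, we get
\begin{equation*}
\Hom_{\Indban}(\mathcal{M}\overrightarrow{\otimes}_{\mathscr{A}}\mathcal{N},V)=\operatorname{Eq}\Bigl(\Hom_{\Indban}(\mathcal{M}\overrightarrow{\otimes}_K\mathcal{N},V)\rightrightarrows \Hom_{\Indban}(\mathcal{M}\overrightarrow{\otimes}_K\mathscr{A}\overrightarrow{\otimes}_K\mathcal{N},V)\Bigr).
\end{equation*}
Applying the tensor-hom adjunction (\ref{equation tensor-hom adjunction Ind-Banach spaces}) (in the first variable $\mathcal{M}$) to both terms turns this into
\begin{equation*}
\operatorname{Eq}\Bigl(\Hom_{\Indban}(\mathcal{N},\underline{\Hom}_{\Indban}(\mathcal{M},V))\rightrightarrows \Hom_{\Indban}(\mathscr{A}\overrightarrow{\otimes}_K\mathcal{N},\underline{\Hom}_{\Indban}(\mathcal{M},V))\Bigr),
\end{equation*}
and the right-hand side is, by the definition of $\underline{\Hom}_{\mathscr{A}}$ via an equalizer together with the tensor-hom adjunction again, precisely $\Hom_{\mathscr{A}}(\mathcal{N},\underline{\Hom}_{\Indban}(\mathcal{M},V))$ — provided the two parallel maps in the equalizer match up on the nose.

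First I would make the two pairs of parallel arrows explicit. On the $\overrightarrow{\otimes}_{\mathscr{A}}$ side, the two maps $\mathcal{M}\overrightarrow{\otimes}_K\mathscr{A}\overrightarrow{\otimes}_K\mathcal{N}\rightrightarrows \mathcal{M}\overrightarrow{\otimes}_K\mathcal{N}$ are $T_{\mathcal{M}}\overrightarrow{\otimes}\Id_{\mathcal{N}}$ and $\Id_{\mathcal{M}}\overrightarrow{\otimes}\,T_{\mathcal{N}}$, where $T_{\mathcal{M}}\colon\mathcal{M}\overrightarrow{\otimes}_K\mathscr{A}\to\mathcal{M}$ and $T_{\mathcal{N}}\colon\mathscr{A}\overrightarrow{\otimes}_K\mathcal{N}\to\mathcal{N}$ are the module actions. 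Under the adjunction these correspond, on the $\underline{\Hom}_{\mathscr{A}}$ side, to: (i) the map obtained by applying $\underline{\Hom}_{\Indban}(-,V)$ to $T_{\mathcal{M}}$ — which is exactly the first structure map defining the $\mathscr{A}$-module structure on $\underline{\Hom}_{\Indban}(\mathcal{M},V)$ via (\ref{equation map in tensor hom adjunction}), hence the action map $\mathscr{A}\overrightarrow{\otimes}_K\underline{\Hom}_{\Indban}(\mathcal{M},V)\to\underline{\Hom}_{\Indban}(\mathcal{M},V)$ appearing in the equalizer defining $\underline{\Hom}_{\mathscr{A}}$; and (ii) the map built from $T_{\mathcal{N}}$, which corresponds to the other arrow of that equalizer (the one induced by $\mathcal{N}\to\underline{\Hom}_{\Indban}(\mathscr{A},\mathcal{N})$, transposed). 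The core of the verification is checking that the naturality squares of the adjunction (\ref{equation tensor-hom adjunction Ind-Banach spaces}) identify arrow (i) with $T_{\mathcal{M}}\overrightarrow{\otimes}\Id_{\mathcal{N}}$ transposed, and arrow (ii) with $\Id_{\mathcal{M}}\overrightarrow{\otimes}\,T_{\mathcal{N}}$ transposed. This is a diagram chase that unwinds the definition of the $\mathscr{A}$-action on $\underline{\Hom}_{\Indban}(\mathcal{M},V)$ given in equation (\ref{equation map in tensor hom adjunction}); since everything is built by applying $\underline{\Hom}_{\Indban}$-functoriality and adjunction units/counits, it reduces to the compatibility of the adjunction with the closed symmetric monoidal structure, which holds because $\overrightarrow{\otimes}_K$ is associative and the adjunction is natural.

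The main obstacle is purely bookkeeping: keeping track of which variable of each $\underline{\Hom}_{\Indban}$ the adjunction is being applied in, and confirming that the $\mathscr{A}$-module structure on $\underline{\Hom}_{\Indban}(\mathcal{M},V)$ — which is defined somewhat indirectly in (\ref{equation map in tensor hom adjunction}) — is the one that makes arrow (i) above the equalizer map. A clean way to organize this is to observe that all three functors $\mathcal{M}\overrightarrow{\otimes}_K-$, $\mathscr{A}\overrightarrow{\otimes}_K-$ and their composite $\mathcal{M}\overrightarrow{\otimes}_K\mathscr{A}\overrightarrow{\otimes}_K-$ have right adjoints ($\underline{\Hom}_{\Indban}(\mathcal{M},-)$, $\underline{\Hom}_{\Indban}(\mathscr{A},-)$, and their composite), so the equalizer/coequalizer presentations are dual under the adjunction termwise, and the two parallel arrows are transported to each other by the \emph{uniqueness} of mates; this avoids an explicit elementwise computation. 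Finally, one should note that no strictness hypothesis is needed here, in contrast to the subtleties around $I(\mathcal{M}\overrightarrow{\otimes}_{\mathscr{A}}\mathcal{N})$ flagged earlier, because the statement is about $\Hom$-sets in $\Indban$, not about applying a non-exact functor to the (co)equalizer.
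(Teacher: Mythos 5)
Your argument is correct and is exactly the standard categorical argument that the paper is alluding to when it dismisses these adjunctions with the remark that they ``follow from general properties of closed symmetric monoidal categories'': present $\mathcal{M}\overrightarrow{\otimes}_{\mathscr{A}}\mathcal{N}$ as a coequalizer, use that $\Hom_{\Indban}(-,V)$ turns it into an equalizer, transport termwise along the tensor-hom adjunction, and check that the two parallel arrows become the two arrows whose equalizer is $\Hom_{\mathscr{A}}(\mathcal{N},\underline{\Hom}_{\Indban}(\mathcal{M},V))$. The one point worth keeping an eye on in the write-up is the one you already flag — that the $\mathscr{A}$-action on $\underline{\Hom}_{\Indban}(\mathcal{M},V)$ from $(\ref{equation map in tensor hom adjunction})$ really is the mate of $T_{\mathcal{M}}\overrightarrow{\otimes}\Id_{\mathcal{N}}$ — and your observation that no strictness of the coequalizer is needed because one is only computing $\Hom$-sets is accurate.
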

\begin{prop}
The functors $\overrightarrow{\otimes}_{K}$ and $\underline{\Hom}_{\mathscr{A}}$ fit into the following adjunction:
\begin{equation}\label{equation tensor relative hom adjunction}
 \Hom_{\mathscr{A}}(\mathcal{M}\overrightarrow{\otimes}_{K}V,\mathcal{N})=\Hom_{\Indban}(V,\underline{\Hom}_{\mathscr{A}}(\mathcal{M},\mathcal{N})),   
\end{equation} 
where $\mathcal{M},\mathcal{N}\in \Mod_{\Indban}(\mathscr{A})$ and $V\in \Indban$. 
\end{prop}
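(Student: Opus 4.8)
The plan is to verify the adjunction \eqref{equation tensor relative hom adjunction} by exhibiting natural bijections, using the two structural formulas established earlier: the description of $\mathcal{M}\overrightarrow{\otimes}_KV$ as the module obtained from the action $\mathscr{A}\overrightarrow{\otimes}_K(\mathcal{M}\overrightarrow{\otimes}_KV)\cong(\mathscr{A}\overrightarrow{\otimes}_K\mathcal{M})\overrightarrow{\otimes}_KV\to\mathcal{M}\overrightarrow{\otimes}_KV$, and the equalizer presentation
\begin{equation*}
\underline{\Hom}_{\mathscr{A}}(\mathcal{M},\mathcal{N})=\operatorname{Eq}\left(\underline{\Hom}_{\Indban}(\mathcal{M},\mathcal{N})\rightrightarrows \underline{\Hom}_{\Indban}(\mathscr{A}\overrightarrow{\otimes}_K\mathcal{M},\mathcal{N})\right).
\end{equation*}
The basic input is the ambient tensor--hom adjunction \eqref{equation tensor-hom adjunction Ind-Banach spaces} in $\Indban$, which gives $\Hom_{\Indban}(\mathcal{M}\overrightarrow{\otimes}_KV,\mathcal{N})=\Hom_{\Indban}(V,\underline{\Hom}_{\Indban}(\mathcal{M},\mathcal{N}))$; the task is to cut out the $\mathscr{A}$-linear morphisms on both sides and match them.

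First I would take an $\mathscr{A}$-linear map $f\colon \mathcal{M}\overrightarrow{\otimes}_KV\to\mathcal{N}$ and push it through \eqref{equation tensor-hom adjunction Ind-Banach spaces} to obtain $\tilde f\colon V\to\underline{\Hom}_{\Indban}(\mathcal{M},\mathcal{N})$. I would then show that $\mathscr{A}$-linearity of $f$ is equivalent to $\tilde f$ factoring through the sub-object $\underline{\Hom}_{\mathscr{A}}(\mathcal{M},\mathcal{N})\hookrightarrow\underline{\Hom}_{\Indban}(\mathcal{M},\mathcal{N})$, i.e. to the two composites $\underline{\Hom}_{\Indban}(\mathcal{M},\mathcal{N})\rightrightarrows\underline{\Hom}_{\Indban}(\mathscr{A}\overrightarrow{\otimes}_K\mathcal{M},\mathcal{N})$ agreeing after precomposition with $\tilde f$. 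Concretely, the condition that $f$ commutes with the $\mathscr{A}$-action on $\mathcal{M}\overrightarrow{\otimes}_KV$ — which by construction is the action on the $\mathcal{M}$-factor — unwinds, via naturality of \eqref{equation tensor-hom adjunction Ind-Banach spaces} and the identification $(\mathscr{A}\overrightarrow{\otimes}_K\mathcal{M})\overrightarrow{\otimes}_KV=\mathscr{A}\overrightarrow{\otimes}_K(\mathcal{M}\overrightarrow{\otimes}_KV)$, into exactly the equation defining the equalizer. Since $V$ carries no $\mathscr{A}$-structure, there is no further linearity condition to impose on $\tilde f$ beyond being a morphism in $\Indban$; hence $\tilde f\in\Hom_{\Indban}(V,\underline{\Hom}_{\mathscr{A}}(\mathcal{M},\mathcal{N}))$ by the universal property of the equalizer. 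Reversing the argument gives the inverse bijection, and naturality in all three variables is inherited from naturality of \eqref{equation tensor-hom adjunction Ind-Banach spaces} together with functoriality of the equalizer.

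An alternative, cleaner route is purely formal: both $-\overrightarrow{\otimes}_KV\colon\Mod_{\Indban}(\mathscr{A})\to\Mod_{\Indban}(\mathscr{A})$ and its putative right adjoint are built from the monoidal structure on $\Indban$, and one can deduce the module-level adjunction from the observation that the forgetful functor $\Mod_{\Indban}(\mathscr{A})\to\Indban$ creates the relevant (co)limits (Proposition on categories of modules, part $(ii)$), that $\underline{\Hom}_{\mathscr{A}}$ is defined as an equalizer of $\Indban$-morphisms lifted to $\mathscr{A}$-modules via co-extension of scalars \eqref{equation map in tensor hom adjunction}, and that $\mathcal{M}\overrightarrow{\otimes}_KV$ is likewise computed in $\Indban$ with the transported action. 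I would likely present the first, hands-on version for transparency and mention the formal one as a remark.

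The main obstacle I anticipate is purely bookkeeping: carefully checking that the $\mathscr{A}$-module structure on $\mathcal{M}\overrightarrow{\otimes}_KV$ (which acts only on $\mathcal{M}$) matches up, under \eqref{equation tensor-hom adjunction Ind-Banach spaces}, with the defining maps of the equalizer presentation of $\underline{\Hom}_{\mathscr{A}}(\mathcal{M},\mathcal{N})$ — in particular that the map $\underline{\Hom}_{\Indban}(\mathcal{M},\mathcal{N})\to\underline{\Hom}_{\Indban}(\mathscr{A}\overrightarrow{\otimes}_K\mathcal{M},\mathcal{N})$ obtained from precomposing with the action is precisely the one dual to the transported $\mathscr{A}$-action on the source. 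There are no analytic subtleties here since everything is a formal consequence of the closed symmetric monoidal structure on $\Indban$ and the adjunction \eqref{equation tensor-hom adjunction Ind-Banach spaces}; the content is in threading the naturality squares correctly, and this is routine once the two presentations are written side by side.
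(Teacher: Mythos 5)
Your argument is correct and is precisely the standard verification that the paper has in mind when it states that these adjunctions ``follow from general properties of closed symmetric monoidal categories'' (the paper gives no written proof); your reduction of $\mathscr{A}$-linearity of $f$ to $\tilde f$ equalizing the two maps in the defining equalizer of $\underline{\Hom}_{\mathscr{A}}(\mathcal{M},\mathcal{N})$ is exactly the mechanism the paper itself spells out later in the proof of Lemma \ref{lemma adjunction extension and restriction along a bimodule}. Nothing is missing.
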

These adjunctions, follow from general properties of closed symmetric monoidal categories. Hence, we get analogous adjunctions between the left heart versions $LH(\widehat{\mathcal{B}}c_K)$ and $\Mod_{LH(\widehat{\mathcal{B}}c_K)}(I(\mathscr{A}))$.\bigskip

The symmetric monoidal structure on $\Indban$ extends in a natural way to the chain complex category $\operatorname{Ch}(\Indban)$, and to the homotopy category $\operatorname{K}(\Indban)$. Namely, choose chain complexes $V^{\bullet},W^{\bullet}\in \operatorname{Ch}(\Indban)$. We can construct a double complex $V^{\bullet}\overrightarrow{\otimes}^{\bullet,\bullet}_KW^{\bullet}$ by setting:
\begin{equation*}
    V^{\bullet}\overrightarrow{\otimes}_K^{n,m}W^{\bullet} =V^{n}\overrightarrow{\otimes}_KW^{m},
\end{equation*}
and differentials induced by the differentials of $V^{\bullet}$ and $W^{\bullet}$. We then define:
\begin{equation*}
    V^{\bullet}\overrightarrow{\otimes}_KW^{\bullet}=\operatorname{Tot}_{\oplus}\left(V^{\bullet}\overrightarrow{\otimes}^{\bullet,\bullet}_KW^{\bullet} \right).
\end{equation*}
Similarly, we define a double complex $\underline{\Hom}^{\bullet,\bullet}_{\Indban}(V^{-\bullet},W^{\bullet})$ by setting:
\begin{equation*}
    \underline{\Hom}^{n,m}_{\Indban}(V^{-\bullet},W^{\bullet})=\underline{\Hom}_{\Indban}(V^{-n},W^{m}),
\end{equation*}
and differentials induced by those in $V^{\bullet}$, and $W^{\bullet}$. We then define: 
\begin{equation*}
    \underline{\Hom}_{\Indban}(V^{\bullet},W^{\bullet})=\operatorname{Tot}_{\pi}\left(\underline{\Hom}^{\bullet,\bullet}_{\Indban}(V^{-\bullet},W^{\bullet})\right).
\end{equation*}
These constructions yield an adjunction:
\begin{multline}\label{equation thensor-hom adjunction homotopy category}
    \Hom_{\operatorname{K}(\Indban)}(U^{\bullet}\overrightarrow{\otimes}_KV^{\bullet},W^{\bullet})\\
    =\Hom_{\operatorname{K}(\Indban)}(U^{\bullet}
    , \underline{\Hom}_{\Indban}(V^{\bullet},W^{\bullet})),
\end{multline}
and the analogous adjunction holds in $\operatorname{Ch}(\Indban)$. In order to show this, we can use that the functor $I:\operatorname{K}(\Indban)\rightarrow \operatorname{K}(LH(\widehat{\mathcal{B}}c_K))$ is fully faithful, and commutes with taking tensor products and inner homomorphisms. This reduces $(\ref{equation thensor-hom adjunction homotopy category})$ to showing the analogous adjunction in $\operatorname{K}(LH(\widehat{\mathcal{B}}c_K))$. As this is the homotopy category of a Grothendieck abelian category, we can use \cite[Tag 0A8H]{stacks-project}, and in particular equation \cite[Tag 0A5X]{stacks-project}, and Lemma \cite[Tag 0A5Y]{stacks-project}.

\subsection{Derived Tensor-Hom adjunction I}
We will now deal with the theory of derived functors in this setting. The main references are \cite[Section 3.6]{bode2021operations} and \cite[Section 1.3]{schneiders1999quasi}. Recall in particular the notion of explicitly right (left) derivable functor \cite[Definition 1.3.6]{schneiders1999quasi}.\\ 
 As stated above, the tensor product $\overrightarrow{\otimes}_K$ is exact. Hence, it has 
a left derived functor:
\begin{equation*}
    -\overrightarrow{\otimes}_K^{\mathbb{L}}-:\operatorname{D}(\widehat{\mathcal{B}}c_K)\times \operatorname{D}(\widehat{\mathcal{B}}c_K)\rightarrow \operatorname{D}(\widehat{\mathcal{B}}c_K).
\end{equation*}
By construction of the left heart of a quasi-abelian category, for every $W\in LH(\widehat{\mathcal{B}}c_K)$ there are $U,V\in \Indban$ such that there is a short exact sequence:
\begin{equation*}
    0\rightarrow I(U)\rightarrow I(V)\rightarrow W\rightarrow 0.
\end{equation*}
Furthermore, every element in the essential image of $I:\Indban\rightarrow LH(\widehat{\mathcal{B}}c_K)$ is flat with respect to $\widetilde{\otimes}_K$. Thus, we also have left derived functor:
\begin{equation*}
   -\widetilde{\otimes}_K^{\mathbb{L}}-:\operatorname{D}(LH(\widehat{\mathcal{B}}c_K))\times \operatorname{D}(LH(\widehat{\mathcal{B}}c_K))\rightarrow \operatorname{D}(LH(\widehat{\mathcal{B}}c_K)). 
\end{equation*}
The equivalence of categories $I:\operatorname{D}(\widehat{\mathcal{B}}c_K)\rightarrow \operatorname{D}(LH(\widehat{\mathcal{B}}c_K))$ allows us to get an identification: 
\begin{equation*}
    I(-\overrightarrow{\otimes}_K^{\mathbb{L}}-)=I(-)\widetilde{\otimes}_K^{\mathbb{L}}I(-).
\end{equation*}
Thus, we will identify $\operatorname{D}(\widehat{\mathcal{B}}c_K)$ and $\operatorname{D}(LH(\widehat{\mathcal{B}}c_K))$, and simply write:
\begin{equation*}
    -\overrightarrow{\otimes}_K^{\mathbb{L}}-=-\widetilde{\otimes}_K^{\mathbb{L}}-.
\end{equation*}
Recall that $LH(\widehat{\mathcal{B}}c_K)$ is a Grothendieck abelian category. Thus, the version in $LH(\widehat{\mathcal{B}}c_K)$ of the adjunction $(\ref{equation tensor-hom adjunction complete bornological modules})$, together with \cite[Theorem 14.4.8]{Kashiwara2006} imply that we have a right derived functor:
\begin{equation*}
    R\underline{\Hom}_{LH(\widehat{\mathcal{B}}c_K)}(-,-):\operatorname{D}(LH(\widehat{\mathcal{B}}c_K))^{\op}\times \operatorname{D}(LH(\widehat{\mathcal{B}}c_K))\rightarrow \operatorname{D}(LH(\widehat{\mathcal{B}}c_K)).
\end{equation*}
such that for $U^{\bullet},V^{\bullet},W^{\bullet}\in \operatorname{D}(LH(\widehat{\mathcal{B}}c_K))$ we have a derived adjunction:
\begin{multline}\label{equation derived tensor-hom adjunction bornological modules}
    \Hom_{\operatorname{D}(LH(\widehat{\mathcal{B}}c_K))}(U^{\bullet}\widetilde{\otimes}^{\mathbb{L}}_KV^{\bullet},W^{\bullet})\\
    =\Hom_{\operatorname{D}(LH(\widehat{\mathcal{B}}c_K))}(U^{\bullet},R\underline{\Hom}_{LH(\widehat{\mathcal{B}}c_K)}(V^{\bullet},W^{\bullet})).
\end{multline}
\begin{obs}\label{obs abuse of notation internal hom}
As $\Indban$ does not have enough injectives, the hom functor $\Hom_{\Indban}(-,-)$ may not be explicitly right derivable. However,if it is, then by $\textnormal{\cite[Proposition 1.3.16]{schneiders1999quasi}}$ we have:
\begin{equation*}
    R\underline{\Hom}_{\Indban}(-,-)=R\underline{\Hom}_{LH(\widehat{\mathcal{B}}c_K)}(I(-),I(-)).
\end{equation*}
Thus, we will follow the conventions in $\textnormal{\cite{bode2021operations}}$ and write:
\begin{equation*}
    R\underline{\Hom}_{\widehat{\mathcal{B}}c_K}(-,-)=R\underline{\Hom}_{LH(\widehat{\mathcal{B}}c_K)}(-,-),
\end{equation*}
in an abuse of notation. We will follow similar conventions for the different versions of this functor. 
\end{obs}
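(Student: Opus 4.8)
The assertion is conditional: \emph{granting} that $\underline{\Hom}_{\Indban}(-,-)$ is explicitly right derivable in the sense of \cite[Definition 1.3.6]{schneiders1999quasi}, one must show that its quasi-abelian right derived functor $R\underline{\Hom}_{\Indban}$ corresponds, under the equivalence $I\colon \operatorname{D}(\Indban)\xrightarrow{\sim}\operatorname{D}(LH(\Indban))=\operatorname{D}(LH(\widehat{\mathcal{B}}c_K))$, to $R\underline{\Hom}_{LH(\widehat{\mathcal{B}}c_K)}(I(-),I(-))$. The plan has three steps: (1) realise $\underline{\Hom}_{\Indban}$ as a functor out of a quasi-abelian category so that the comparison theorem of \cite{schneiders1999quasi} applies; (2) identify the functor it induces on left hearts; (3) quote \cite[Proposition 1.3.16]{schneiders1999quasi}.

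For (1) I would regard $\underline{\Hom}_{\Indban}$ as a covariant functor $\Indban^{\op}\times\Indban\to\Indban$; since $\Indban$ is quasi-abelian so are $\Indban^{\op}$ and the product, and the hypothesis furnishes a right $\underline{\Hom}_{\Indban}$-injective full subcategory. Concretely one wants objects $(P^{\op},J)$ with $P$ a direct sum of Banach spaces $c_0(X,V)$ --- these are projective in $\Indban$, as recalled in the excerpt, so the functor they corepresent is exact in the first variable --- and $J$ in whatever injective class witnesses derivability in the second variable; one then checks that strictly exact sequences built from such objects are preserved, which is where strong left exactness of the inner hom and exactness of $c_0(X,V)\overrightarrow{\otimes}_K-$ enter.

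For (2), on objects of the form $I(V),I(W)$ one has $LH(\underline{\Hom}_{\Indban})(I(V),I(W))=I(\underline{\Hom}_{\Indban}(V,W))$, and by equation $(\ref{equation extension of closed symmetric monoidal category structure 2})$ this equals $\underline{\Hom}_{LH(\widehat{\mathcal{B}}c_K)}(I(V),I(W))$; since every object of $LH(\Indban)=LH(\widehat{\mathcal{B}}c_K)$ is the kernel of a morphism between objects in the image of $I$ and both sides are left exact, this upgrades to a natural isomorphism $LH(\underline{\Hom}_{\Indban})\cong\underline{\Hom}_{LH(\widehat{\mathcal{B}}c_K)}(-,-)$. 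Step (3) is then immediate: \cite[Proposition 1.3.16]{schneiders1999quasi} says exactly that, through $I$, the quasi-abelian $R\underline{\Hom}_{\Indban}$ agrees with the classical right derived functor $R(LH(\underline{\Hom}_{\Indban}))$ computed in the Grothendieck abelian category $LH(\widehat{\mathcal{B}}c_K)$, and composing with (2) gives the displayed identity; the second displayed equation of the Remark is then a notational convention.

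The step I expect to cost the most care is (1): explicit right derivability of a bifunctor of mixed variance needs an $\underline{\Hom}_{\Indban}$-injective class compatible with the projective resolutions used in the contravariant slot, so that the total derived bifunctor is well defined and functorial in both arguments. Everything downstream is then either the already-established non-derived identity $(\ref{equation extension of closed symmetric monoidal category structure 2})$ or a citation. It is worth underlining that the result must stay conditional: $\Indban$ genuinely lacks enough injectives, so absent the hypothesis there is no $R\underline{\Hom}_{\Indban}$ to compare --- which is precisely why the paper treats $R\underline{\Hom}_{\widehat{\mathcal{B}}c_K}=R\underline{\Hom}_{LH(\widehat{\mathcal{B}}c_K)}$ as an abuse of notation and works with the latter throughout.
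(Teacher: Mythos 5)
Your proposal is correct and takes essentially the same route as the paper, which (this being a remark) offers no argument beyond citing \cite[Proposition 1.3.16]{schneiders1999quasi} together with the underived identity $(\ref{equation extension of closed symmetric monoidal category structure 2})$ — precisely your steps (2) and (3), with your step (1) merely spelling out the hypothesis that the paper deliberately leaves conditional. The one slip is in step (2): objects of $LH(\widehat{\mathcal{B}}c_K)$ are \emph{cokernels}, not kernels, of morphisms between objects in the image of $I$ (every $W$ sits in $0\to I(U)\to I(V)\to W\to 0$), so the image of $I$ is generating rather than cogenerating; this does not affect the conclusion, since Schneiders' proposition already supplies the identification of the induced functor on left hearts, but the extension argument as you literally phrase it would not go through.
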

This functor can be computed explicitly. Namely, let $V^{\bullet}\in \operatorname{D}(\widehat{\mathcal{B}}c_K)$. There is a complex $\widetilde{V}^{\bullet}\in\operatorname{K}(\Indban)$ with a quasi-isomorphism $I(\widetilde{V})^{\bullet}\rightarrow V^{\bullet}$. Let $W^{\bullet}\in \operatorname{D}(\widehat{\mathcal{B}}c_K)$. As $LH(\widehat{\mathcal{B}}c_K)$ is a Grothendieck abelian category, there is a homotopically injective complex $\widetilde{W}^{\bullet}$ such that there is a quasi-isomorphism $I(W)^{\bullet}\rightarrow \widetilde{W}^{\bullet}$. Then by \cite[Theorem 14.4.8]{Kashiwara2006}, we have a chain of identities:
\begin{multline}\label{equation explicit computation of RHom}
    R\underline{\Hom}_{\Indban}(V^{\bullet},W^{\bullet})=\underline{\Hom}_{LH(\widehat{\mathcal{B}}c_K)}(I(\widetilde{V}^{\bullet}),\widetilde{W}^{\bullet})\\
    =\operatorname{Tot}_{\pi}\left(\underline{\Hom}^{\bullet,\bullet}_{LH(\widehat{\mathcal{B}}c_K)}(I(\widetilde{V}^{-\bullet}),\widetilde{W}^{\bullet})\right),
\end{multline}
which we will encounter in many computations.
\section{Background: Sheaves of Ind-Banach spaces}\label{Section sheaves of Ind-Banach spaces}
In this chapter we will continue introducing the background material needed for the rest of the paper. In particular, this chapter is devoted to giving a quick introduction to the theory of sheaves with values on quasi-abelian categories. Namely, we will start by endowing the category of sheaves of Ind-Banach spaces with a quasi-abelian structure, and determining its left heart. We continue by constructing the sheafification functor, and use it to extend the closed symmetric monoidal structure on $\Indban$ to the sheaf setting. Later, we will discuss the theory of derived functors of certain important functors (\emph{i.e.} pushforward, inner homomorphisms, tensor products). We finish this chapter by analyzing some situations which will be of use in our study of Hochschild (co)-homology, and in particular in the version of Kashiwara's equivalence given in Section \ref{Section hochschild cohomology}. We point out that most of the contents in this chapter are not original, and that we have only included proofs of the facts that we were not able to find in the literature. In particular, we only claim originality of the contents of Sections \ref{section base-change bimodule} and \ref{section closed immersions}.  As before, we have included references of all the statements not proved in the text.
\subsection{Sheaves of Ind-Banach spaces}
Let $X$ be a rigid space. We will now show how the previous constructions extend to the category of sheaves on $X$ with values on Ind-Banach spaces. This may be performed on general $G$-topological spaces, but we do not need such generality here.

\begin{defi}
Let $X$ be a rigid space, and denote its underlying $G$-topological space by $X_{\operatorname{An}}$. For any quasi-abelian category $\mathcal{C}$ we define the following categories:
\begin{enumerate}[label=(\roman*)]
    \item The category of presheaves of $X$ with values in $\mathcal{C}$ is:
    \begin{equation*}
        \operatorname{PreShv}(X,\mathcal{C}):= \operatorname{Functors}(X_{\operatorname{An}}^{\op},\mathcal{C}).
    \end{equation*}
    \item The category of sheaves on $X$ with values in $\mathcal{C}$ is the full subcategory  of $\operatorname{PreShv}(X,\mathcal{C})$ given by presheaves $\mathcal{F}$ such that for every admissible open $U\subset X$ and every admissible cover $(U_i)_{i\in I}$ of $U$ the diagram:
    \begin{equation*}
        0\rightarrow \mathcal{F}(U) \rightarrow \prod_{i\in I}\mathcal{F}(U_i)\rightrightarrows \prod_{i,j\in I}\mathcal{F}(U_i\cap U_j),
    \end{equation*}
    is strictly exact in $\mathcal{C}$. We denote this category by $\operatorname{Shv}(X,\mathcal{C})$.
\end{enumerate}
\end{defi}

The categories $\operatorname{PreShv}(X,\mathcal{C})$, and $\operatorname{Shv}(X,\mathcal{C})$ are quasi-abelian categories in a natural way (see \cite[Section 2.2]{schneiders1999quasi}). Furthermore, if $\mathcal{C}$ is elementary, there is a sheafification functor:
\begin{equation*}
    (-)^a:\operatorname{PreShv}(X,\mathcal{C})\rightarrow \operatorname{Shv}(X,\mathcal{C}),
\end{equation*}
which is strongly exact, and left adjoint to the inclusion $\operatorname{Shv}(X,\mathcal{C})\rightarrow\operatorname{PreShv}(X,\mathcal{C})$. In particular, $\operatorname{Shv}(X,\mathcal{C})$ is a reflexive subcategory of $\operatorname{PreShv}(X,\mathcal{C})$. Thus, every limit in $\operatorname{Shv}(X,\mathcal{C})$ agrees with its limit in $\operatorname{PreShv}(X,\mathcal{C})$, and every colimit in $\operatorname{Shv}(X,\mathcal{C})$ is the sheafification of the corresponding colimit in $\operatorname{PreShv}(X,\mathcal{C})$. Furthermore, we have canonical equivalences of abelian categories:
\begin{align*}
    LH(\operatorname{PreShv}(X,\mathcal{C}))=\operatorname{PreShv}(X,LH(\mathcal{C})),\\
    LH(\operatorname{Shv}(X,\mathcal{C}))=\operatorname{Shv}(X,LH(\mathcal{C})).
\end{align*}
The functor $I:\operatorname{PreShv}(X,\mathcal{C})\rightarrow \operatorname{PreShv}(X,LH(\mathcal{C}))$, is defined by the rule:
\begin{equation*}
    I(\mathcal{F})(U)=I(\mathcal{F}(U)),
\end{equation*}
for every $\mathcal{F}\in \operatorname{PreShv}(X,\mathcal{C})$, and every admissible open subspace $U\subset X$. Sheafification and inclusion into the left heart fit into the following commutative diagram:
\begin{equation*}
\begin{tikzcd}
{\operatorname{PreShv}(X,\mathcal{C})} \arrow[r, "I"] \arrow[d, "(-)^{a}"] & {\operatorname{PreShv}(X,LH(\mathcal{C}))} \arrow[d, "(-)^a"] \\
{\operatorname{Shv}(X,\mathcal{C})} \arrow[r, "I"]                         & {\operatorname{Shv}(X,LH(\mathcal{C}))}                      
\end{tikzcd}   
\end{equation*}
If $\mathcal{C}$ is not elementary, then filtered colimits are not strongly exact, and do not commute with $I$. Hence, there are problems when defining a sheafification functor, and the diagram above will not be commutative. For this reasons, we will restrict to the case where $\mathcal{C}$ is elementary. We remark that, in general, $\operatorname{Shv}(X,\mathcal{C})$
does not have enough projective nor injective objects. Hence, it is not a good category from the homological point of view. However if $\mathcal{C}$
has good enough properties (\emph{i.e.} it is elementary, complete and co-complete), then $LH(\mathcal{C})$ is a Grothendieck abelian category. Thus, $\operatorname{Shv}(X,LH(\widehat{\mathcal{B}}c_K))$ is also a Grothendieck abelian category. Thus, it has enough injective objects.
This is the case, for example, if $\mathcal{C}=\Indban$.\bigskip

The closed symmetric monoidal category structure of $\Indban$ extends naturally to $\operatorname{Shv}(X,\Indban)$. Namely, for $\mathcal{F},\mathcal{G}\in \operatorname{Shv}(X,\Indban)$, we define $\mathcal{F}\overrightarrow{\otimes}_K\mathcal{G}$ as the sheafification of the tensor product presheaf:
\begin{equation*}
    U\mapsto \mathcal{F}(U)\overrightarrow{\otimes}_K\mathcal{G}(U).
\end{equation*}
As shown above, $\overrightarrow{\otimes}_K$ is exact in $\Indban$. Hence, the tensor product of presheaves is exact. As sheafification is strongly exact, it follows that:
\begin{equation*}
    -\overrightarrow{\otimes}_K-:\operatorname{Shv}(X,\Indban)\times \operatorname{Shv}(X,\Indban)\rightarrow \operatorname{Shv}(X,\Indban),
\end{equation*}
is an exact functor. We may repeat this construction in $\operatorname{Shv}(X,LH(\widehat{\mathcal{B}}c_K))$ to get another version of tensor product: 
\begin{equation*}
 -\widetilde{\otimes}_K-:\operatorname{Shv}(X,LH(\widehat{\mathcal{B}}c_K))\times \operatorname{Shv}(X,LH(\widehat{\mathcal{B}}c_K))\rightarrow \operatorname{Shv}(X,LH(\widehat{\mathcal{B}}c_K)).   
\end{equation*}
As $I$ commutes with sheafification, the identity in $(\ref{equation extension of closed symmetric monoidal category structure})$ shows that:
\begin{equation*}
    I(-\overrightarrow{\otimes}_K-)=I(-)\widetilde{\otimes}_KI(-).
\end{equation*}
We will now construct the inner homomorphism functor for sheaves of Ind-Banach spaces. For $\mathcal{F},\mathcal{G}\in \operatorname{Shv}(X,\Indban)$, we define $\underline{\mathcal{H}om}_{\Indban}(\mathcal{F},\mathcal{G})$  by setting for every admissible open $U\subset X$:
\begin{multline}\label{equation definition inner homomorphism}
    \underline{\mathcal{H}om}_{\Indban}(\mathcal{F},\mathcal{G})(U)\\
    =\operatorname{Eq}\left(\prod_{V\subset U}\underline{\Hom}_{\Indban}(\mathcal{F}(V),\mathcal{G}(V))\rightrightarrows \prod_{W\subset V}\underline{\Hom}_{\Indban}(\mathcal{F}(V),\mathcal{G}(W)) \right).
\end{multline}
 The maps in the equalizer are the following: For admissible open 
subspaces:
\begin{equation*}
    W\subset V\subset U,
\end{equation*}
a homomorphism $f\in \underline{\Hom}_{\Indban}(\mathcal{F}(V),\mathcal{G}(V))$ is mapped to the composition:
\begin{equation*}
    \mathcal{F}(V)\xrightarrow[]{f}\mathcal{G}(V)\rightarrow\mathcal{G}(W).
\end{equation*}
Similarly, every $g\in \underline{\Hom}_{\Indban}(\mathcal{F}(W),\mathcal{G}(W))$ is mapped to:
\begin{equation*}
    \mathcal{F}(V)\rightarrow\mathcal{F}(W)\xrightarrow[]{g}\mathcal{G}(W).
\end{equation*}
 We may define $\underline{\mathcal{H}om}_{LH(\widehat{\mathcal{B}}c_K)}(-,-)$ analogously, and the fact that $I$ commutes with equalizers, together with $(\ref{equation extension of closed symmetric monoidal category structure 2})$ show that the following equation holds:
 \begin{equation*}
    I(\underline{\mathcal{H}om}_{\Indban}(-,-))=\underline{\mathcal{H}om}_{LH(\widehat{\mathcal{B}}c_K)}(I(-),I(-)).
 \end{equation*}
Furthermore, we have the following adjunction:
\begin{prop}\label{prop closed symmetric monoidal structure on sheaves of Indban}
 These functors endow $\operatorname{Shv}(X,\Indban)$ with the structure of  a closed symmetric monoidal categories. In particular, we have an adjunction:
\begin{multline}\label{equation tensor-hom adjunctions for sheaves}
   \Hom_{\operatorname{Shv}(X,\Indban)}(U\overrightarrow{\otimes}_KV,W)\\
   =\Hom_{\operatorname{Shv}(X,\Indban)}(U,\underline{\mathcal{H}om}_{\Indban}(V,W)),
\end{multline}
where $U,V,W\in \operatorname{Shv}(X,\Indban)$. Furthermore, the analogous statement holds for  $\operatorname{Shv}(X,LH(\widehat{\mathcal{B}}c_K))$. In particular, for $\mathcal{U},\mathcal{V},\mathcal{W}\in \operatorname{Shv}(X,LH(\widehat{\mathcal{B}}c_K))$, we have :
\begin{multline}\label{equation tensor-hom adjunction left heart}
\Hom_{\operatorname{Shv}(X,LH(\widehat{\mathcal{B}}c_K))}(\mathcal{U}\widetilde{\otimes}_K\mathcal{V},\mathcal{W})\\
=\Hom_{\operatorname{Shv}(X,LH(\widehat{\mathcal{B}}c_K))}(\mathcal{U},\underline{\mathcal{H}om}_{LH(\widehat{\mathcal{B}}c_K)}(\mathcal{V},\mathcal{W})).
\end{multline}   
\end{prop}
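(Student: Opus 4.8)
The plan is to obtain the closed symmetric monoidal structure on $\operatorname{Shv}(X,\Indban)$ by transporting the one on $\operatorname{PreShv}(X,\Indban)$ along the sheafification--inclusion adjunction $(-)^a\colon\operatorname{PreShv}(X,\Indban)\leftrightarrows\operatorname{Shv}(X,\Indban)$, exactly as one builds the internal tensor product and internal hom of sheaves of modules on a ringed site. First I would record the closed symmetric monoidal structure on $\operatorname{PreShv}(X,\Indban)$: the tensor product is computed sectionwise, $(\mathcal{F}\otimes\mathcal{G})(U)=\mathcal{F}(U)\overrightarrow{\otimes}_K\mathcal{G}(U)$, the unit is the constant presheaf with value $K$, and the associativity, symmetry and unit constraints are induced sectionwise from those of $\Indban$ provided by \cite[Theorem 4.10]{bode2021operations} (this uses that $\Indban$ is complete, so the relevant ends exist). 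The presheaf internal hom is then forced to be the end $\int_{V\subseteq U}\underline{\Hom}_{\Indban}(\mathcal{F}(V),\mathcal{G}(V))$, which is precisely the equalizer displayed in $(\ref{equation definition inner homomorphism})$, and the adjunction $\Hom_{\operatorname{PreShv}}(\mathcal{F}\otimes\mathcal{G},\mathcal{H})=\Hom_{\operatorname{PreShv}}(\mathcal{F},\underline{\mathcal{H}om}_{\Indban}(\mathcal{G},\mathcal{H}))$ is a formal consequence of the closed structure of $\Indban$ together with the universal property of the end. The identical discussion, with $LH(\widehat{\mathcal{B}}c_K)$ replacing $\Indban$, applies using the closed symmetric monoidal structure on $LH(\widehat{\mathcal{B}}c_K)$ established in the previous subsection.

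Next I would verify the two compatibilities that make $(-)^a$ a \emph{monoidal} reflective localization. First, $\operatorname{Shv}(X,\Indban)$ is an exponential ideal: if $\mathcal{G}$ is a sheaf then $\underline{\mathcal{H}om}_{\Indban}(\mathcal{F},\mathcal{G})$ is again a sheaf, since it is assembled from $\mathcal{G}$-sections by limits and morphisms into a sheaf glue uniquely; hence the presheaf internal hom restricts to the bifunctor $\underline{\mathcal{H}om}_{\Indban}\colon\operatorname{Shv}(X,\Indban)^{\op}\times\operatorname{Shv}(X,\Indban)\to\operatorname{Shv}(X,\Indban)$ of the statement. Second, for a fixed presheaf $\mathcal{F}$ the functor $\mathcal{F}\otimes(-)$ is exact on $\operatorname{PreShv}(X,\Indban)$, being exact sectionwise because $\overrightarrow{\otimes}_K$ is exact and every Ind-Banach space is flat for $\overrightarrow{\otimes}_K$. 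Since $(-)^a$ is strongly exact and is the localization at the morphisms of presheaves which become isomorphisms after sheafification, an exact endofunctor of $\operatorname{PreShv}(X,\Indban)$ preserves this class; hence $\mathcal{F}\otimes(-)$ descends, $(\mathcal{F},\mathcal{G})\mapsto(\mathcal{F}\otimes\mathcal{G})^a$ is well defined (and equals the sheafification of $U\mapsto\mathcal{F}(U)\overrightarrow{\otimes}_K\mathcal{G}(U)$ in the text), and it inherits the coherence constraints from $\operatorname{PreShv}(X,\Indban)$ because the strongly exact left adjoint $(-)^a$ is strong symmetric monoidal onto its image. The unit becomes $K^a$, written simply $K$ in the text.

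With these in hand the adjunction is immediate from the usual chain of natural bijections: for $\mathcal{U},\mathcal{V},\mathcal{W}\in\operatorname{Shv}(X,\Indban)$,
\begin{align*}
\Hom_{\operatorname{Shv}}(\mathcal{U}\overrightarrow{\otimes}_K\mathcal{V},\mathcal{W})
&=\Hom_{\operatorname{Shv}}((\mathcal{U}\otimes\mathcal{V})^{a},\mathcal{W})
=\Hom_{\operatorname{PreShv}}(\mathcal{U}\otimes\mathcal{V},\mathcal{W})\\
&=\Hom_{\operatorname{PreShv}}(\mathcal{U},\underline{\mathcal{H}om}_{\Indban}(\mathcal{V},\mathcal{W}))
=\Hom_{\operatorname{Shv}}(\mathcal{U},\underline{\mathcal{H}om}_{\Indban}(\mathcal{V},\mathcal{W})),
\end{align*}
using in turn the definition of $\overrightarrow{\otimes}_K$ on sheaves, the sheafification--inclusion adjunction, the presheaf tensor--hom adjunction, and the fact that $\underline{\mathcal{H}om}_{\Indban}(\mathcal{V},\mathcal{W})$ is a sheaf. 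Together with the descended constraints this gives the closed symmetric monoidal structure and $(\ref{equation tensor-hom adjunctions for sheaves})$; the same argument, or alternatively transport through $LH(\operatorname{Shv}(X,\Indban))=\operatorname{Shv}(X,LH(\widehat{\mathcal{B}}c_K))$ using that $I$ commutes with $\overrightarrow{\otimes}_K$, with sheafification and with the equalizer defining $\underline{\mathcal{H}om}$ (noted just above the proposition), yields $(\ref{equation tensor-hom adjunction left heart})$.

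I expect the main obstacle to be the second compatibility: checking rigorously that sheafification is a \emph{monoidal} reflective localization, i.e. that $-\overrightarrow{\otimes}_K-$ descends to $\operatorname{Shv}(X,\Indban)$ \emph{with its coherence isomorphisms}, not merely on objects. This is where the exactness (indeed flatness) of $\overrightarrow{\otimes}_K$ on $\Indban$ and the strong exactness of $(-)^a$ are indispensable, since otherwise $\mathcal{F}\otimes(-)$ need not preserve the local isomorphisms inverted by sheafification. Everything else is a formal manipulation of adjunctions, and a shortcut is to invoke the general construction of closed symmetric monoidal structures on categories of sheaves valued in an elementary closed symmetric monoidal quasi-abelian category from \cite{schneiders1999quasi, bode2021operations}.
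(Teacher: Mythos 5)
The paper does not actually prove this proposition: it is stated as background (the chapter's preamble explicitly defers such statements to the literature, here \cite[Section 2.2]{schneiders1999quasi} and \cite{bode2021operations}), so there is no "paper proof" to compare against line by line. Your route — build the closed structure sectionwise on $\operatorname{PreShv}(X,\Indban)$, check that $\underline{\mathcal{H}om}_{\Indban}(\mathcal{F},\mathcal{G})$ is a sheaf whenever $\mathcal{G}$ is (the exponential-ideal condition), and descend through the reflective localization $(-)^a$ — is exactly the standard construction those references carry out, and your final chain of adjunction bijections is complete and correct: it uses only the sheafification adjunction, the presheaf tensor-hom adjunction, and the exponential-ideal property.

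The one step that does not hold up as written is your second "compatibility": you claim that because $\mathcal{F}\otimes(-)$ is an exact endofunctor of presheaves, it preserves the class of morphisms inverted by sheafification. Exactness only reduces this to the statement that $\mathcal{K}^a=0$ implies $(\mathcal{F}\otimes\mathcal{K})^a=0$, and that implication is not automatic here: for the completed tensor product $\overrightarrow{\otimes}_K$ a section of $\mathcal{F}(U)\overrightarrow{\otimes}_K\mathcal{K}(U)$ is not a finite sum of simple tensors, so the usual "each simple tensor dies on a refinement" argument from ordinary sheaves of modules is unavailable. The clean repair is to note that you do not need this check at all: by Day's reflection theorem the exponential-ideal property you already verified is equivalent to $\left((\mathcal{F}^a\otimes\mathcal{G})\right)^a\cong(\mathcal{F}\otimes\mathcal{G})^a$, which is precisely the compatibility needed to transport the coherence constraints and make $(-)^a$ strong symmetric monoidal. (Alternatively one can argue directly using that $\Indban$ is elementary, so sheafification is assembled from strongly exact filtered colimits that commute with $\overrightarrow{\otimes}_K$; this is where elementariness, stressed repeatedly in the paper, genuinely enters.) With that substitution your proof is complete, and the left-heart statement follows as you say from $I$ commuting with sheafification, $\overrightarrow{\otimes}_K$, and the equalizer defining $\underline{\mathcal{H}om}$.
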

Let $\mathscr{A}$ be a sheaf of complete Ind-Banach algebras on $X$ ( i.e. a monoid in 
$\operatorname{Shv}(X,\Indban)$). We denote the category of Ind-Banach $\mathscr{A}$-modules by $\Mod_{\Indban}(\mathscr{A})$. As before, this is a quasi-abelian category, and we have:
\begin{equation*}
    LH(\Mod_{\Indban}(\mathscr{A}))=\Mod_{LH(\widehat{\mathcal{B}}c_K)}(I(\mathscr{A})).
\end{equation*}
The constructions in Section \ref{Section tensor-hom in bornological spaces} make sense in any closed symmetric monoidal category. In particular, we have a tensor product $-\overrightarrow{\otimes}_{\mathscr{A}}-$, an inner homomorphisms functor $\underline{\mathcal{H}om}_{\mathscr{A}}(-,-)$, and their counterparts in $\Mod_{LH(\widehat{\mathcal{B}}c_K)}(I(\mathscr{A}))$ are constructed analogously. Furthermore, the same arguments as above show that we have an identification:
\begin{equation*}
    I(\underline{\mathcal{H}om}_{\mathscr{A}}(-,-))=\underline{\mathcal{H}om}_{I(\mathscr{A})}(I(-),I(-)).
\end{equation*}
Similarly, the adjunctions of equations $(\ref{equation relative tensor hom adjunction})$, $(\ref{equation tensor relative hom adjunction})$ hold in $\operatorname{Shv}(X,\Indban)$, and $\operatorname{Shv}(X,LH(\widehat{\mathcal{B}}c_K))$ respectively.\bigskip

As before, for $\mathcal{M}\in \Mod_{\Indban}(\mathscr{A}^{\op})$ and $\mathcal{N}\in \Mod_{\Indban}(\mathscr{A})$ we have:
\begin{equation*}  I(\mathcal{M}\overrightarrow{\otimes}_{\mathscr{A}}\mathcal{N})=I(\mathcal{M})\widetilde{\otimes}_{I(\mathscr{A})}I(\mathcal{N}),
\end{equation*}
if and only if the canonical map:
\begin{equation*}
    \mathcal{M}\overrightarrow{\otimes}_K\mathscr{A}\overrightarrow{\otimes}_K\mathcal{N}\rightarrow \mathcal{M}\overrightarrow{\otimes}_K\mathcal{N},
\end{equation*}
is a strict epimorphism. In particular, we have an extension-restriction of scalars adjunction:
\begin{equation*}
  I(\mathscr{A})\widetilde{\otimes}_K-:\operatorname{Shv}(X,LH(\widehat{\mathcal{B}}c_K))\leftrightarrows \Mod_{LH(\widehat{\mathcal{B}}c_K)}(I(\mathscr{A})): \operatorname{Forget}(-).  
\end{equation*}
This adjunction satisfies that $\operatorname{Forget}$ commutes with (co)-limits, and reflects monomorphisms and epimorphisms. Furthermore, for every $\mathcal{M}\in \Mod_{LH(\widehat{\mathcal{B}}c_K)}(I(\mathscr{A}))$, the canonical map:
\begin{equation*}
    I(\mathscr{A})\widetilde{\otimes}_K\operatorname{Forget}(\mathcal{M})\rightarrow \mathcal{M},
\end{equation*}
is an epimorphism. As $\operatorname{Shv}(X,LH(\widehat{\mathcal{B}}c_K))$ is a Grothendieck abelian category, this implies that $\Mod_{LH(\widehat{\mathcal{B}}c_K)}(I(\mathscr{A}))$ is also Grothendieck. In particular, it has enough injective objects. Following the conventions in \cite{bode2021operations}, we introduce the following:
\begin{defi}
Let $\mathscr{A}$ be a sheave of Ind-Banach algebras on $X$. We say $\mathscr{A}$ is a sheaf of complete bornological algebras if for every admissible open $U\subset X$ the Ind-Banach space $\mathscr{A}(U)$ is a complete bornological space. We define a sheaf of complete bornological $\mathscr{A}$-modules analogously.
\end{defi}
The necessity for this definition stems from the fact that $\widehat{\mathcal{B}}c_K$ is not an elementary quasi-abelian category. Hence, there is no sheafification functor, and therefore no tensor product in $\operatorname{Shv}(X,\widehat{\mathcal{B}}c_K)$. Thus, it is not clear how to define sheaves of complete bornological algebras directly, and the same problem arises when defining sheaves of complete bornological modules. In any case, by Proposition \ref{prop dissection functor and algebras}, our current definition is enough for our purposes.
\subsection{Derived pushforward and pullback}
Let $f:Y\rightarrow X$ be a morphism of rigid spaces. As usual, there is a pushforward-pullback adjunction, which fits into the following diagram:
\begin{equation}\label{equation diagram pushforward and pullback}
\begin{tikzcd}
{f^{-1}_{LH}:\operatorname{Shv}(X,LH(\widehat{\mathcal{B}}c_K))} \arrow[r]       & {\operatorname{Shv}(Y,LH(\widehat{\mathcal{B}}c_K)):f_{*,LH}} \arrow[l, shift left=2]       \\
{f^{-1}:\operatorname{Shv}(X,\Indban)} \arrow[u, "I"] \arrow[r] & {\operatorname{Shv}(Y,\Indban):f_*} \arrow[u, "I"] \arrow[l, shift left=2]
\end{tikzcd}
\end{equation}
As usual, the pushforward: $f_*:\operatorname{Shv}(Y,\Indban)\rightarrow \operatorname{Shv}(X,\Indban)$, is defined by sending $\mathcal{F}\in \operatorname{Shv}(Y,\Indban)$ to the sheaf defined on admissible open subspaces $U\subset X$ by the rule:
\begin{equation*}
    f_*\mathcal{F}(U)=\mathcal{F}(f^{-1}(U)).
\end{equation*}
The pushforward between the left hearts $f_{*,LH}$ is defined analogously.\\
The pullback functor $f^{-1}:\operatorname{Shv}(X,\Indban)\rightarrow \operatorname{Shv}(Y,\Indban)$ is defined by sending a sheaf $\mathcal{F}\in \operatorname{Shv}(X,\Indban)$ to the sheafification of the presheaf: 
\begin{equation*}
   f^{-1}_{\operatorname{Pre}}\mathcal{F}(U)=\varinjlim_{f(U)\subset V}\mathcal{F}(V), 
\end{equation*}
 where $U\subset Y$, and the $V\subset f(U)$ are admissible open subspaces. The left heart version $f^{-1}_{LH}$ is constructed via the analogous procedure.\bigskip

As discussed above, the fact that $\Indban$ is an elementary quasi-abelian category implies that $\operatorname{Shv}(X,\Indban)$ is also elementary. In particular, filtered colimits are strongly exact in $\operatorname{Shv}(X,\Indban)$. As sheafification is also strongly exact, it follows that $f^{-1}$
is strongly exact. Therefore, it extends a triangulated functor between the derived categories:
\begin{equation*}
    f^{-1}:\operatorname{D}(\operatorname{Shv}(X,\Indban))\rightarrow \operatorname{D}(\operatorname{Shv}(Y,\Indban)).
\end{equation*}
By the same arguments, $f^{-1}_{LH}$ is also exact, so we obtain a derived functor:
\begin{equation*}
    f^{-1}_{LH}:\operatorname{D}(\operatorname{Shv}(X,LH(\widehat{\mathcal{B}}c_K)))\rightarrow \operatorname{D}(\operatorname{Shv}(Y,LH(\widehat{\mathcal{B}}c_K))).
\end{equation*}
Commutativity of diagram $(\ref{equation diagram pushforward and pullback})$ implies that both extensions get identified by $I$.\bigskip

As $\operatorname{Shv}(Y,\Indban)$ does not have enough injective objects, it is not clear whether if $f_*$ is explicitly right derivable or not. However, as $\operatorname{Shv}(Y,LH(\widehat{\mathcal{B}}c_K))$ is a Grothendieck abelian category, we have a right derived functor:
\begin{equation*}
    Rf_{*,LH}:\operatorname{D}(\operatorname{Shv}(Y,LH(\widehat{\mathcal{B}}c_K)))\rightarrow \operatorname{D}(\operatorname{Shv}(X,LH(\widehat{\mathcal{B}}c_K))).
\end{equation*}
Again, by commutativity of $(\ref{equation diagram pushforward and pullback})$, we may abuse notation and write $Rf_{*}=Rf_{*,LH}$.

\subsection{Derived Tensor-Hom adjunction II}\label{Section derived tensor-hom 2}
Let $\mathscr{A}$ be a sheaf of Ind-Banach algebras on $X$. In this section, we will now discuss the existence of derived functors for $-\overrightarrow{\otimes}_{\mathscr{A}}-$, $\underline{\mathcal{H}om}_{\mathscr{A}}(-,-)$, and their counterparts in the left heart. Setting $X=\Sp(K)$, we obtain the corresponding results for Ind-Banach $K$-algebras. First, notice that for any $V\in \operatorname{Shv}(X,\Indban)$, the functor:
\begin{equation*}
    V\overrightarrow{\otimes}_K-:\operatorname{Shv}(X,\Indban)\rightarrow \operatorname{Shv}(X,\Indban),
\end{equation*}
is exact. Indeed, let $0\rightarrow W_1\rightarrow W_2\rightarrow W_3\rightarrow 0$ be a strict exact sequence in $\operatorname{Shv}(X,\Indban)$. We need to show that the sequence:
\begin{equation*}
    0\rightarrow V\overrightarrow{\otimes}_KW_1\rightarrow V\overrightarrow{\otimes}_KW_2\rightarrow V\overrightarrow{\otimes}_KW_3\rightarrow 0,
\end{equation*}
is still a strict short exact sequence in $\operatorname{Shv}(X,\Indban)$.
By the adjunction $(\ref{equation tensor-hom adjunctions for sheaves})$, it is enough to show that $V\overrightarrow{\otimes}_KW_1\rightarrow V\overrightarrow{\otimes}_KW_2$ is a strict injection. Notice that $W_1\rightarrow W_2$ is a strict injection in $\operatorname{Shv}(X,\Indban)$ if and only if it is a strict injection in $\operatorname{PreShv}(X,\Indban)$. In particular, for every admissible open $U\subset X$, the map $W_1(U)\rightarrow W_2(U)$ is a strict injection. Hence, by exactness of the tensor product in $\Indban$, it follows that 
$V(U)\overrightarrow{\otimes}_KW_1(U)\rightarrow V(U)\overrightarrow{\otimes}_KW_2(U)$ is a strict injection. As sheafification is strongly exact, it follows that:
\begin{equation*}
V\overrightarrow{\otimes}_KW_1\rightarrow V\overrightarrow{\otimes}_KW_2,
\end{equation*}
is a strict injection, as we wanted to show. By exactness of $-\overrightarrow{\otimes}_K-$, we obtain a left derived functor:
\begin{multline}\label{equation derived functor tensor for sheaves}
    -\overrightarrow{\otimes}_K^{\mathbb{L}}-:\operatorname{D}(\operatorname{Shv}(X,\Indban))\times \operatorname{D}(\operatorname{Shv}(X,\Indban))\\
    \rightarrow \operatorname{D}(\operatorname{Shv}(X,\Indban)).
\end{multline}
As above, we may use the fact that every $W\in \operatorname{Shv}(X,LH(\widehat{\mathcal{B}}c_K))$ fits in a short exact sequence:
\begin{equation*}
    0\rightarrow I(U)\rightarrow I(V)\rightarrow W\rightarrow 0,
\end{equation*}
with  $U,V\in \operatorname{Shv}(X,\Indban))$, together with $(\ref{equation extension of closed symmetric monoidal category structure})$  to conclude that the left heart  $\operatorname{Shv}(X,LH(\widehat{\mathcal{B}}c_K))$ has enough flat objects. This, together with the fact that $\operatorname{Shv}(X,LH(\widehat{\mathcal{B}}c_K))$ is a Grothendieck abelian category, and the adjunction $(\ref{equation tensor-hom adjunction left heart})$, allows us to apply \cite[Theorem 14.4.8]{Kashiwara2006}  to obtain a pair of derived functors:
\begin{multline*}
-\widetilde{\otimes}_K^{\mathbb{L}}-:\operatorname{D}(\operatorname{Shv}(X,LH(\widehat{\mathcal{B}}c_K)))\times \operatorname{D}(\operatorname{Shv}(X,LH(\widehat{\mathcal{B}}c_K)))\rightarrow \operatorname{D}(\operatorname{Shv}(X,LH(\widehat{\mathcal{B}}c_K))),\\
 R\underline{\mathcal{H}om}_{LH(\widehat{\mathcal{B}}c_K)}(-,-):\operatorname{D}(\operatorname{Shv}(X,LH(\widehat{\mathcal{B}}c_K)))^{\op}\times \operatorname{D}(\operatorname{Shv}(X,LH(\widehat{\mathcal{B}}c_K)))\\
 \rightarrow \operatorname{D}(\operatorname{Shv}(X,LH(\widehat{\mathcal{B}}c_K))).
\end{multline*}
As before, the equivalence of triangulated categories $I:\operatorname{D}(\widehat{\mathcal{B}}c_K)\rightarrow \operatorname{D}(LH(\widehat{\mathcal{B}}c_K))$, induces an identification: $I(-\widehat{\otimes}_K^{\mathbb{L}}-)=I(-)\widetilde{\otimes}_K^{\mathbb{L}}I(-)$, and we may calculate the derived inner homomorphisms functor $R\underline{\mathcal{H}om}_{LH(\widehat{\mathcal{B}}c_K)}(-,-)$ explicitly using the procedure showcased in $(\ref{equation explicit computation of RHom})$. Repeating the abuse of notation in \ref{obs abuse of notation internal hom}, we will write $R\underline{\mathcal{H}om}_{\Indban}(-,-)=R\underline{\mathcal{H}om}_{LH(\widehat{\mathcal{B}}c_K)}(-,-)$. As a consequence of \cite[Theorem 14.4.8]{Kashiwara2006}, these functors fit into the following derived adjunction: 
\begin{prop}
For $U^{\bullet},V^{\bullet},W^{\bullet}\in \operatorname{D}(\operatorname{Shv}(X,\Indban))$ we have:
\begin{multline}\label{equation derived tensor-hom adjunction sheaves of bornological modules}
    \Hom_{\operatorname{D}(\operatorname{Shv}(X,\Indban))}(U^{\bullet}\overrightarrow{\otimes}^{\mathbb{L}}_KV^{\bullet},W^{\bullet})\\
    =\Hom_{\operatorname{D}(\operatorname{Shv}(X,(\Indban))}(U^{\bullet},R\underline{\mathcal{H}om}_{\Indban}(V^{\bullet},W^{\bullet})).
\end{multline}    
\end{prop}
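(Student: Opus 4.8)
The plan is to transport the statement to the left heart and then invoke the general theory of derived functors in Grothendieck abelian categories, exactly as was done for the non-sheafy adjunction $(\ref{equation derived tensor-hom adjunction bornological modules})$. Write $\mathcal{A}:=\operatorname{Shv}(X,LH(\widehat{\mathcal{B}}c_K))$. Since $I:\operatorname{D}(\operatorname{Shv}(X,\Indban))\to\operatorname{D}(\mathcal{A})$ is an equivalence of triangulated categories which commutes with $\overrightarrow{\otimes}_K$ (carrying it to $\widetilde{\otimes}_K$) and with $\underline{\mathcal{H}om}_{\Indban}$ (carrying it to $\underline{\mathcal{H}om}_{LH(\widehat{\mathcal{B}}c_K)}$), and since these identifications propagate to the derived functors by construction of $\overrightarrow{\otimes}_K^{\mathbb{L}}$ and $R\underline{\mathcal{H}om}_{\Indban}$, it suffices to prove the corresponding isomorphism in $\operatorname{D}(\mathcal{A})$:
\begin{equation*}
\Hom_{\operatorname{D}(\mathcal{A})}(\mathcal{U}^{\bullet}\widetilde{\otimes}_K^{\mathbb{L}}\mathcal{V}^{\bullet},\mathcal{W}^{\bullet})=\Hom_{\operatorname{D}(\mathcal{A})}(\mathcal{U}^{\bullet},R\underline{\mathcal{H}om}_{LH(\widehat{\mathcal{B}}c_K)}(\mathcal{V}^{\bullet},\mathcal{W}^{\bullet})).
\end{equation*}

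Next I would recall that $\mathcal{A}$ is a Grothendieck abelian category with enough flat objects (the essential image of $I$), that $-\widetilde{\otimes}_K-$ is exact on $\mathcal{A}$ (established in the paragraph preceding the Proposition), and that for fixed $\mathcal{V}$ the functor $\underline{\mathcal{H}om}_{LH(\widehat{\mathcal{B}}c_K)}(\mathcal{V},-)$ is right adjoint to $-\widetilde{\otimes}_K\mathcal{V}$ by $(\ref{equation tensor-hom adjunction left heart})$. Fixing $\mathcal{V}^{\bullet}$ and extending these functors to $\operatorname{Ch}(\mathcal{A})$ via total complexes, one obtains an adjoint pair on the homotopy categories whose unit and counit are the termwise ones, precisely as in $(\ref{equation thensor-hom adjunction homotopy category})$ (the sheaf analogue follows verbatim, using that $\mathcal{A}$ is Grothendieck together with \cite[Tag 0A8H]{stacks-project}). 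Since $-\widetilde{\otimes}_K\mathcal{V}^{\bullet}$ is exact its left derived functor is the functor itself, and since $\mathcal{A}$ admits homotopically injective resolutions the functor $\underline{\mathcal{H}om}_{LH(\widehat{\mathcal{B}}c_K)}(\mathcal{V}^{\bullet},-)$ is explicitly right derivable. These are exactly the hypotheses of \cite[Theorem 14.4.8]{Kashiwara2006} (applied with $\mathcal{V}^{\bullet}$ fixed), which produces the derived adjunction displayed above; transporting back along $I$ then yields $(\ref{equation derived tensor-hom adjunction sheaves of bornological modules})$.

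The one point that genuinely needs verification — and which I expect to be the main obstacle — is the compatibility of $R\underline{\mathcal{H}om}_{LH(\widehat{\mathcal{B}}c_K)}$ with the homotopy-level adjunction: namely, that for a homotopically injective resolution $\mathcal{W}^{\bullet}\to\mathcal{W}^{\bullet}_{\mathrm{inj}}$ the complex $\underline{\mathcal{H}om}_{LH(\widehat{\mathcal{B}}c_K)}(\mathcal{V}^{\bullet},\mathcal{W}^{\bullet}_{\mathrm{inj}})$ is again homotopically injective, so that $\Hom_{\operatorname{D}(\mathcal{A})}(\mathcal{U}^{\bullet},-)$ evaluated on it reduces to $\Hom_{\operatorname{K}(\mathcal{A})}(\mathcal{U}^{\bullet},-)$. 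This reduces to the already-established exactness of $-\overrightarrow{\otimes}_K-$ on $\operatorname{Shv}(X,\Indban)$: an exact left adjoint sends acyclic complexes to acyclic complexes, so for any acyclic $\mathcal{N}^{\bullet}$ one has $\Hom_{\operatorname{K}(\mathcal{A})}(\mathcal{N}^{\bullet},\underline{\mathcal{H}om}_{LH(\widehat{\mathcal{B}}c_K)}(\mathcal{V}^{\bullet},\mathcal{W}^{\bullet}_{\mathrm{inj}}))=\Hom_{\operatorname{K}(\mathcal{A})}(\mathcal{N}^{\bullet}\widetilde{\otimes}_K\mathcal{V}^{\bullet},\mathcal{W}^{\bullet}_{\mathrm{inj}})=0$. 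Granting this, the chain
\begin{align*}
\Hom_{\operatorname{D}(\mathcal{A})}(\mathcal{U}^{\bullet}\widetilde{\otimes}_K^{\mathbb{L}}\mathcal{V}^{\bullet},\mathcal{W}^{\bullet})
&=\Hom_{\operatorname{K}(\mathcal{A})}(\mathcal{U}^{\bullet}\widetilde{\otimes}_K\mathcal{V}^{\bullet},\mathcal{W}^{\bullet}_{\mathrm{inj}})\\
&=\Hom_{\operatorname{K}(\mathcal{A})}(\mathcal{U}^{\bullet},\underline{\mathcal{H}om}_{LH(\widehat{\mathcal{B}}c_K)}(\mathcal{V}^{\bullet},\mathcal{W}^{\bullet}_{\mathrm{inj}}))\\
&=\Hom_{\operatorname{D}(\mathcal{A})}(\mathcal{U}^{\bullet},R\underline{\mathcal{H}om}_{LH(\widehat{\mathcal{B}}c_K)}(\mathcal{V}^{\bullet},\mathcal{W}^{\bullet}))
\end{align*}
closes the argument. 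Everything else is the standard bookkeeping already rehearsed in the excerpt for the non-sheafy cases, and one could alternatively phrase the whole proof as a single citation to \cite[Theorem 14.4.8]{Kashiwara2006} once the hypotheses above are checked.
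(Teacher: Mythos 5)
Your proposal is correct and follows essentially the same route as the paper: the paper gives no separate proof, deriving the adjunction directly as a consequence of \cite[Theorem 14.4.8]{Kashiwara2006} after establishing in the preceding paragraph that $\operatorname{Shv}(X,LH(\widehat{\mathcal{B}}c_K))$ is Grothendieck, has enough flat objects coming from the essential image of $I$, and carries the underived adjunction $(\ref{equation tensor-hom adjunction left heart})$ — exactly the hypotheses you check. Your more explicit verification via homotopically injective resolutions is a spelled-out version of the same argument, and as you note it collapses to the single citation the paper uses.
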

The next goal is developing analogous adjunctions for the relative tensor product and inner homomorphism functors.\bigskip

Let $V\in\operatorname{Shv}(X,\Indban)$, $\mathcal{M}\in \Mod_{\Indban}(\mathscr{A}^{\op})$, $\mathcal{N}\in \Mod_{\Indban}(\mathscr{A})$. There are canonical isomorphisms of sheaves of Ind-Banach spaces:
\begin{equation*}
    \left(V\overrightarrow{\otimes}_K\mathscr{A}\right)\overrightarrow{\otimes}_{\mathscr{A}}\mathcal{N}=V\overrightarrow{\otimes}_K\mathcal{N}, \quad \mathcal{M}\overrightarrow{\otimes}_{\mathscr{A}}\left(\mathscr{A}\overrightarrow{\otimes}_KV\right)=\mathcal{M}\overrightarrow{\otimes}_KV.
\end{equation*}
In particular, $\mathscr{A}\overrightarrow{\otimes}_KV$ is a flat $\mathscr{A}$-module, and $V\overrightarrow{\otimes}_K\mathscr{A}$ is a flat $\mathscr{A}^{\op}$-module. Regard $\mathcal{N}$ as an object in  $\operatorname{Shv}(X,\Indban)$ via the forgetful functor. Then we have a strict epimorphism:
\begin{equation*}
    \mathscr{A}\overrightarrow{\otimes}_K\mathcal{N}\rightarrow \mathcal{N},
\end{equation*}
given by the action of $\mathscr{A}$ on $\mathcal{N}$. Hence, every object in $\Mod_{\Indban}(\mathscr{A})$ admits a strict epimorphism from a flat $\mathscr{A}$-module, and the analogous statement holds in $\Mod_{\Indban}(\mathscr{A}^{\op})$.\bigskip

As $\Mod_{\Indban}(\mathscr{A})$ is not an abelian category, this is not enough to show that $\overrightarrow{\otimes}_{\mathscr{A}}$ has a derived functor. However, we may use these constructions to obtain a derived functor for $\widetilde{\otimes}_{I(\mathscr{A})}$. Namely, we have the following identity:
\begin{equation*}
\mathcal{M}\widetilde{\otimes}_{I(\mathscr{A})}\left(I(\mathscr{A})\widetilde{\otimes}_KI(V)\right)=\mathcal{M}\widetilde{\otimes}_KI(V).
\end{equation*}
As $I(V)$ is flat for $\widetilde{\otimes}_K$, it follows that $I(\mathscr{A})\widetilde{\otimes}_KI(V)$ is a flat $I(\mathscr{A})$-module. The equivalence of abelian categories:
 \begin{equation*}
     \Mod_{LH(\widehat{\mathcal{B}}c_K)}(I(\mathscr{A}))=LH(\Mod_{\Indban}(\mathscr{A})),
 \end{equation*}
 implies that for every $I(\mathscr{A})$-module $\mathcal{M}$ there is an $\mathscr{A}$-module $\mathcal{N}$ which admits an epimorphism $I(\mathcal{N})\rightarrow \mathcal{M}$. As $I:\Mod_{\Indban}(\mathscr{A})\rightarrow \Mod_{LH(\widehat{\mathcal{B}}c_K)}(I(\mathscr{A}))$ preserves strict epimorphisms, we get an epimorphism $I(\mathscr{A})\widetilde{\otimes}_KI(\mathcal{N})\rightarrow \mathcal{M}$. In particular, $\Mod_{LH(\widehat{\mathcal{B}}c_K)}(I(\mathscr{A}))$ has enough flat objects. A similar argument shows that $\Mod_{LH(\widehat{\mathcal{B}}c_K)}(I(\mathscr{A}))^{\op})$ also has enough flat objects. As both are Grothendieck abelian categories, we obtain the following proposition:
\begin{prop}[{\cite[Proposition 3.34]{bode2021operations}}]\label{prop existence derived fucntor extension of scalars}
 The bifunctor $\widetilde{\otimes}_{I(\mathscr{A})}$ admits a derived bifunctor:
\begin{equation}\label{equation derived functor for relative tensor in left heart}
-\widetilde{\otimes}_{I(\mathscr{A})}^{\mathbb{L}}-:\operatorname{D}(\mathscr{A})\times \operatorname{D}(\mathscr{A}^{\op}) \rightarrow \operatorname{D}(\operatorname{Shv}(X,LH(\widehat{\mathcal{B}}c_K))).   
\end{equation}
Furthermore, the adjunction in $(\ref{equation relative tensor hom adjunction})$ lifts to a derived adjunction:
\begin{multline}\label{equation adjunction relative tensor-hom}
    \Hom_{\operatorname{D}(\operatorname{Shv}(X,LH(\widehat{\mathcal{B}}c_K)))}(\mathcal{M}^{\bullet}\widetilde{\otimes}_{I(\mathscr{A})}^{\mathbb{L}}\mathcal{N}^{\bullet},V^{\bullet})\\
    =\Hom_{\operatorname{D}(\mathscr{A})}(\mathcal{M}^{\bullet},R\underline{\mathcal{H}om}_{LH(\widehat{\mathcal{B}}c_K)}(\mathcal{N}^{\bullet},V^{\bullet})), 
\end{multline}
where $\mathcal{M}^{\bullet}\in \operatorname{D}(\mathscr{A})$, $\mathcal{N}^{\bullet}\in \operatorname{D}(\mathscr{A}^{\op})$, and $V^{\bullet}\in\operatorname{D}(\operatorname{Shv}(X,LH(\widehat{\mathcal{B}}c_K)))$.    
\end{prop}
Even though the functor $\overrightarrow{\otimes}_{\mathscr{A}}$ cannot be derived explicitly in the unbounded derived category, it is still possible to construct a derived functor:
\begin{equation}\label{equation derived functor relative tensor product in boinded above derived categories}
  -\overrightarrow{\otimes}_{\mathscr{A}}^{\mathbb{L}}-:\operatorname{D}^-(\mathscr{A})\times \operatorname{D}^-(\mathscr{A}^{\op}) \rightarrow \operatorname{D}^-(\operatorname{Shv}(X,LH(\widehat{\mathcal{B}}c_K))),  
\end{equation}
satisfying that $\overrightarrow{\otimes}_{\mathscr{A}}^{\mathbb{L}}=\widetilde{\otimes}_{I(\mathscr{A})}^{\mathbb{L}}$. In particular, for $\mathcal{M}\in \Mod_{\Indban}(\mathscr{A})$, $\mathcal{N}\in \Mod_{\Indban}(\mathscr{A}^{\op})$, we have the following identification of $I(\mathscr{A})$-modules:
\begin{equation*}
\operatorname{H}^0(\mathcal{M}\overrightarrow{\otimes}_{\mathscr{A}}^{\mathbb{L}}\mathcal{N})=I(\mathcal{M})\widetilde{\otimes}_{I(\mathscr{A})}I(\mathcal{N}).
\end{equation*}
 Notice that this is in general not isomorphic to $I(\mathcal{M}\overrightarrow{\otimes}_{\mathscr{A}}\mathcal{N})$. Details of this construction can be found in \cite[Proposition 3.36]{bode2021operations}.\bigskip
 
There is a version of the previous adjunction for $\underline{\mathcal{H}om}_{\mathscr{A}}$. In particular, a second application of \cite[Theorem 14.4.8]{Kashiwara2006} to this setting yields the following proposition:
\begin{prop}[{\cite[Proposition 3.34]{bode2021operations}}]
The bifunctor $\underline{\mathcal{H}om}_{I(\mathscr{A})}(-,-)$ admits a derived bifunctor:
\begin{equation*}
 R\underline{\mathcal{H}om}_{\mathscr{A}}(-,-):=R\underline{\mathcal{H}om}_{I(\mathscr{A})}(-,-):\operatorname{D}(\mathscr{A})^{\op}\times \operatorname{D}(\mathscr{A})\rightarrow \operatorname{D}(\operatorname{Shv}(X,\Indban)).   
\end{equation*}
Furthermore,  the adjunction in $(\ref{equation tensor relative hom adjunction})$ lifts to a derived adjunction:
\begin{multline}\label{equation derived tensor relative hom adjunction}
    \Hom_{\operatorname{D}(\mathscr{A})}(\mathcal{M}^{\bullet}\overrightarrow{\otimes}^{\mathbb{L}}_K\mathcal{V}^{\bullet},\mathcal{N}^{\bullet})\\
    =\Hom_{\operatorname{D}(\operatorname{Shv}(X,\Indban))}(\mathcal{V}^{\bullet},R\underline{\mathcal{H}om}_{\mathscr{A}}(\mathcal{M}^{\bullet},\mathcal{N}^{\bullet})),
\end{multline}
where $\mathcal{M}^{\bullet},\mathcal{N}^{\bullet}\in \operatorname{D}(\mathscr{A})$, and $\mathcal{V}^{\bullet}\in \operatorname{D}(\operatorname{Shv}(X,\Indban)$.   
\end{prop}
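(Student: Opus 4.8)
The plan is to obtain both assertions from the general theory of derived functors on Grothendieck abelian categories, in exactly the manner in which the two immediately preceding propositions of this subsection were proved in \cite{bode2021operations}. By the convention recorded in Remark~\ref{obs abuse of notation internal hom} we have \emph{defined} $R\underline{\mathcal{H}om}_{\mathscr{A}}(-,-):=R\underline{\mathcal{H}om}_{I(\mathscr{A})}(-,-)$, so it suffices to work inside the abelian category $\Mod_{LH(\widehat{\mathcal{B}}c_K)}(I(\mathscr{A}))=LH(\Mod_{\Indban}(\mathscr{A}))$, which we already know to be Grothendieck, and to identify $\operatorname{D}(\operatorname{Shv}(X,\Indban))$ with $\operatorname{D}(\operatorname{Shv}(X,LH(\widehat{\mathcal{B}}c_K)))$. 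Throughout I would use freely that $I$ commutes with $\overrightarrow{\otimes}_K$, with sheafification, and, via the equalizer description together with~(\ref{equation extension of closed symmetric monoidal category structure}) and~(\ref{equation extension of closed symmetric monoidal category structure 2}), with $\underline{\mathcal{H}om}_{\mathscr{A}}$, so that no information is lost in this reformulation.

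First I would produce the derived bifunctor. The functor $\overrightarrow{\otimes}_K\colon\operatorname{Shv}(X,\Indban)\times\operatorname{Shv}(X,\Indban)\to\operatorname{Shv}(X,\Indban)$ is exact, and for $\mathcal{M}\in\Mod_{\Indban}(\mathscr{A})$ this upgrades to an exact functor $\mathcal{M}\overrightarrow{\otimes}_K-\colon\operatorname{Shv}(X,\Indban)\to\Mod_{\Indban}(\mathscr{A})$, since strictness of a morphism of $\mathscr{A}$-modules is detected on underlying sheaves. Hence for a complex $\mathcal{M}^{\bullet}$ the functor $\mathcal{M}^{\bullet}\overrightarrow{\otimes}_K-$ preserves acyclicity and descends to $\operatorname{D}$ with no derivation needed (cf.~(\ref{equation derived functor tensor for sheaves})), and by the sheaf version of~(\ref{equation tensor relative hom adjunction}) it is left adjoint to $\underline{\mathcal{H}om}_{\mathscr{A}}(\mathcal{M}^{\bullet},-)$. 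Since $\Mod_{LH(\widehat{\mathcal{B}}c_K)}(I(\mathscr{A}))$ is Grothendieck, every object of $\operatorname{D}(\mathscr{A})$ admits a homotopically injective resolution $\mathcal{N}^{\bullet}\to\mathcal{I}^{\bullet}$, and one sets $R\underline{\mathcal{H}om}_{\mathscr{A}}(\mathcal{M}^{\bullet},\mathcal{N}^{\bullet}):=\underline{\mathcal{H}om}_{\mathscr{A}}(\mathcal{M}^{\bullet},\mathcal{I}^{\bullet})$, totalized as in~(\ref{equation explicit computation of RHom}). This is independent of the resolution and triangulated in $\mathcal{N}^{\bullet}$, and it is also triangulated and sends quasi-isomorphisms to quasi-isomorphisms in $\mathcal{M}^{\bullet}$, because $\underline{\mathcal{H}om}_{\mathscr{A}}(-,\mathcal{I}^{\bullet})$ carries strictly exact complexes to strictly exact ones once $\mathcal{I}^{\bullet}$ is homotopically injective. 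Packaging this is exactly \cite[Theorem~14.4.8]{Kashiwara2006}, applied to the bifunctor $\underline{\mathcal{H}om}_{I(\mathscr{A})}$, and yields the displayed functor $\operatorname{D}(\mathscr{A})^{\op}\times\operatorname{D}(\mathscr{A})\to\operatorname{D}(\operatorname{Shv}(X,\Indban))$.

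For the derived adjunction I would fix $\mathcal{M}^{\bullet}\in\operatorname{D}(\mathscr{A})$ and $\mathcal{V}^{\bullet}\in\operatorname{D}(\operatorname{Shv}(X,\Indban))$, choose a homotopically injective resolution $\mathcal{N}^{\bullet}\to\mathcal{I}^{\bullet}$, and compute
\begin{align*}
\Hom_{\operatorname{D}(\mathscr{A})}(\mathcal{M}^{\bullet}\overrightarrow{\otimes}^{\mathbb{L}}_K\mathcal{V}^{\bullet},\mathcal{N}^{\bullet})
&=\Hom_{\operatorname{K}(\mathscr{A})}(\mathcal{M}^{\bullet}\overrightarrow{\otimes}_K\mathcal{V}^{\bullet},\mathcal{I}^{\bullet})\\
&=\Hom_{\operatorname{K}(\operatorname{Shv}(X,\Indban))}(\mathcal{V}^{\bullet},\underline{\mathcal{H}om}_{\mathscr{A}}(\mathcal{M}^{\bullet},\mathcal{I}^{\bullet})).
\end{align*}
The first equality uses homotopic injectivity of $\mathcal{I}^{\bullet}$ together with exactness of $\overrightarrow{\otimes}_K$, so that $\overrightarrow{\otimes}^{\mathbb{L}}_K=\overrightarrow{\otimes}_K$; the second is the lift of~(\ref{equation tensor relative hom adjunction}) to homotopy categories, obtained by applying the underived adjunction levelwise to the double complexes defining the totalizations, exactly as for~(\ref{equation thensor-hom adjunction homotopy category}). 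Finally, since $\mathcal{M}^{\bullet}\overrightarrow{\otimes}_K-$ is exact, its right adjoint $\underline{\mathcal{H}om}_{\mathscr{A}}(\mathcal{M}^{\bullet},-)$ preserves homotopically injective complexes; hence $\underline{\mathcal{H}om}_{\mathscr{A}}(\mathcal{M}^{\bullet},\mathcal{I}^{\bullet})$ is homotopically injective in $\operatorname{Shv}(X,\Indban)$, so the last term equals $\Hom_{\operatorname{D}(\operatorname{Shv}(X,\Indban))}(\mathcal{V}^{\bullet},R\underline{\mathcal{H}om}_{\mathscr{A}}(\mathcal{M}^{\bullet},\mathcal{N}^{\bullet}))$. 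Chaining these identifications gives~(\ref{equation derived tensor relative hom adjunction}).

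The main obstacle is not the abstract existence --- that is a black-box invocation of \cite[Theorem~14.4.8]{Kashiwara2006} once the left-heart reformulation is in place --- but the bookkeeping needed to see that the underived relative tensor--hom adjunction~(\ref{equation tensor relative hom adjunction}) really lifts to $\operatorname{K}$ (the relative analogue of~(\ref{equation thensor-hom adjunction homotopy category})), i.e.\ that it is compatible with the $\operatorname{Tot}_{\oplus}$ and $\operatorname{Tot}_{\pi}$ sign conventions, and that each passage through $I$, through sheafification, and through the identification $\operatorname{D}(\operatorname{Shv}(X,\Indban))\simeq\operatorname{D}(\operatorname{Shv}(X,LH(\widehat{\mathcal{B}}c_K)))$ is harmless. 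A secondary delicate point is the claim that $\underline{\mathcal{H}om}_{\mathscr{A}}(\mathcal{M}^{\bullet},\mathcal{I}^{\bullet})$ remains homotopically injective, which is precisely where one needs exactness --- not merely left exactness --- of $\mathcal{M}^{\bullet}\overrightarrow{\otimes}_K-$.
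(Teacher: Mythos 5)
Your proposal is correct and follows the same route as the paper, which establishes this proposition by a direct appeal to \cite[Theorem 14.4.8]{Kashiwara2006} in the Grothendieck abelian category $\Mod_{LH(\widehat{\mathcal{B}}c_K)}(I(\mathscr{A}))$ and defers the details to \cite[Proposition 3.34]{bode2021operations}; your write-up is essentially an unwinding of that citation. The one place you supply more than the paper records is the homotopic injectivity of $\underline{\mathcal{H}om}_{\mathscr{A}}(\mathcal{M}^{\bullet},\mathcal{I}^{\bullet})$ for unbounded $\mathcal{M}^{\bullet}$ (Lemma \ref{Lemma internal hom of flat and injective is injective} is only stated for $\operatorname{D}^-$), but since every object in the essential image of $I$ is flat and filtered colimits are exact in the left heart, every complex of such objects is K-flat (by brutal truncation and passage to the colimit), so your adjoint-functor argument goes through.
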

As before, $R\underline{\mathcal{H}om}_{\mathscr{A}}(-,-)$ may be calculated following the procedure in  $(\ref{equation explicit computation of RHom})$.
\begin{defi}
 Let $\mathcal{M}^{\bullet},\mathcal{N}^{\bullet}\in \operatorname{D}(\mathscr{A})$, and $n\in \mathbb{Z}$. We define the  $\underline{\mathcal{E}xt}^n_{\mathscr{A}}(\mathcal{M}^{\bullet},\mathcal{N}^{\bullet})$ sheaves as:
 \begin{equation*}
     \underline{\mathcal{E}xt}^n_{\mathscr{A}}(\mathcal{M}^{\bullet},\mathcal{N}^{\bullet})=\operatorname{H}^n(R\underline{\mathcal{H}om}_{\mathscr{A}}(\mathcal{M}^{\bullet},\mathcal{N}^{\bullet})).
  \end{equation*}   
\end{defi}
This is an object of $\operatorname{Shv}(X,LH(\widehat{\mathcal{B}}c_K))$, and for $\mathcal{M},\mathcal{N}\in \Mod_{\Indban}(\mathscr{A})$ we have:
\begin{equation*}
\underline{\mathcal{E}xt}^0_{\mathscr{A}}(\mathcal{M},\mathcal{N})=\underline{\mathcal{H}om}_{I(\mathscr{A})}(I(\mathcal{M}),I(\mathcal{N}))=I(\underline{\mathcal{H}om}_{\mathscr{A}}(\mathcal{M},\mathcal{N})),
\end{equation*}
where the first identity follows by the definition of derived functor, and the second identity is $(\ref{equation extension of inner hom to left heart})$.\\

Our next goal is giving an explicit description of the inner ext sheaves. For this reason, we recall the following well-known construction:
\begin{defi}
 Let $U\subset X$ be an admissible open subset. The extension by zero functor:
 \begin{equation*}
     i_{U!}:\Mod_{LH(\widehat{\mathcal{B}}c_K)}(I(\mathscr{A})_{\vert U})\rightarrow \Mod_{LH(\widehat{\mathcal{B}}c_K)}(I(\mathscr{A})),
 \end{equation*}
is defined for any $\mathcal{M}\in \Mod_{LH(\widehat{\mathcal{B}}c_K)}(I(\mathscr{A})_{\vert U})$ by  the formula:
\begin{equation*}
    i_{U!}\mathcal{M}=\left(i^P_{U!}\mathcal{M}\right)^{a},
\end{equation*}
where $i^P_{U!}\mathcal{M}$  is the presheaf defined on each admissible open subspace $V\subset X$ by:
\begin{equation*}
i^p_{U!}\mathcal{M}(V)=\mathcal{M}(V) \textnormal{ if } V\subset U, \textnormal{ and }     i^P_{U!}\mathcal{M}(V)=0 \textnormal{ if } V\not\subset U.
\end{equation*}
\end{defi}
We now show that extension by zero satisfies the usual properties:
\begin{prop}\label{prop extension by zero}    
Extension by zero satisfies the following properties:
    \begin{enumerate}[label=(\roman*)]
        \item $i_{U!}:\Mod_{LH(\widehat{\mathcal{B}}c_K)}(I(\mathscr{A})_{\vert U})\rightarrow \Mod_{LH(\widehat{\mathcal{B}}c_K)}(I(\mathscr{A}))$ is exact.
        \item $i_{U!}$ is left adjoint to the restriction functor:
        \begin{equation*}
            i^{-1}:\Mod_{LH(\widehat{\mathcal{B}}c_K)}(I(\mathscr{A}))\rightarrow \Mod_{LH(\widehat{\mathcal{B}}c_K)}(I(\mathscr{A})_{\vert U}).
        \end{equation*}
        \item $i^{-1}:\operatorname{D}(\mathscr{A})\rightarrow \operatorname{D}(\mathscr{A})_{\vert U}$ preserves homotopically injective objects.
    \end{enumerate}
    In particular, we have a derived adjunction $i_{U!}:\operatorname{D}(\mathscr{A}_{\vert U})\rightarrow\operatorname{D}(\mathscr{A}) :i_U^{-1}$.
\end{prop}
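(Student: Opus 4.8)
The plan is to reduce each assertion to a standard sheaf-theoretic fact, exploiting that $\Mod_{LH(\widehat{\mathcal{B}}c_K)}(I(\mathscr{A}))$ and $\Mod_{LH(\widehat{\mathcal{B}}c_K)}(I(\mathscr{A})_{\vert U})$ are Grothendieck abelian categories (as established above) and that sheafification is exact. I will work directly in these module categories. Note first that $i^P_{U!}\mathcal{M}$ is genuinely a presheaf of $I(\mathscr{A})$-modules: on an admissible open $V\subseteq U$ one has $I(\mathscr{A})(V)=I(\mathscr{A})_{\vert U}(V)$, and whenever $W\subseteq V\subseteq U$ one automatically has $W\subseteq U$, so all restriction maps are defined.

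For $(i)$, I would observe that $i^P_{U!}$ is exact at the presheaf level: a short exact sequence of $I(\mathscr{A})_{\vert U}$-modules is sent, section by section, either to itself (on opens contained in $U$) or to the zero sequence (on opens not contained in $U$). Since $i_{U!}=(-)^a\circ i^P_{U!}$ and sheafification is exact, $i_{U!}$ is exact.

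For $(ii)$, the key point is the presheaf-level identity $\Hom(i^P_{U!}\mathcal{M},\mathcal{N})=\Hom(\mathcal{M},\mathcal{N}_{\vert U})$: a morphism of presheaves out of $i^P_{U!}\mathcal{M}$ is forced to vanish on opens not contained in $U$, and on opens contained in $U$ it is precisely a morphism $\mathcal{M}\to\mathcal{N}_{\vert U}$, with no extra compatibility imposed because any open contained in an open of $U$ already lies in $U$. Combining this with the sheafification adjunction and the fact that $i^{-1}\mathcal{N}=\mathcal{N}_{\vert U}$ for an open immersion (the defining colimit being trivial), I obtain, for $\mathcal{M}$ a sheaf on $U$ and $\mathcal{N}$ a sheaf on $X$,
\[
\Hom(i_{U!}\mathcal{M},\mathcal{N})=\Hom\bigl((i^P_{U!}\mathcal{M})^a,\mathcal{N}\bigr)=\Hom(i^P_{U!}\mathcal{M},\mathcal{N})=\Hom(\mathcal{M},i^{-1}\mathcal{N}),
\]
and every identification here respects the $I(\mathscr{A})$-module structures.

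For $(iii)$ and the concluding derived adjunction, I would invoke the general principle that a functor with an exact left adjoint preserves homotopically injective complexes: if $\mathcal{I}^{\bullet}$ is homotopically injective in $\operatorname{Ch}\bigl(\Mod_{LH(\widehat{\mathcal{B}}c_K)}(I(\mathscr{A}))\bigr)$ and $\mathcal{P}^{\bullet}$ is an acyclic complex of $I(\mathscr{A})_{\vert U}$-modules, then $\Hom_{\operatorname{K}}(\mathcal{P}^{\bullet},i^{-1}\mathcal{I}^{\bullet})=\Hom_{\operatorname{K}}(i_{U!}\mathcal{P}^{\bullet},\mathcal{I}^{\bullet})=0$, since $i_{U!}\mathcal{P}^{\bullet}$ is acyclic by $(i)$. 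Because both $i_{U!}$ and $i^{-1}$ are exact, they descend without modification to triangulated functors on the derived categories, and the adjunction of $(ii)$ lifts: resolve $\mathcal{N}^{\bullet}\in\operatorname{D}(\mathscr{A})$ by a homotopically injective complex $\mathcal{I}^{\bullet}$ (possible since the module category is Grothendieck), apply $(iii)$ to see $i^{-1}\mathcal{I}^{\bullet}$ is again homotopically injective, and read off the derived adjunction from the homotopy-category Hom groups. I expect the main obstacle to be purely organizational — keeping track of the compatibility of sheafification, restriction, and module structures so that the presheaf computation in $(ii)$ transfers cleanly, and checking that restriction of $\mathscr{A}$-modules along $i$ is literally restriction over $\mathscr{A}_{\vert U}=i^{-1}\mathscr{A}$ — rather than anything conceptually hard.
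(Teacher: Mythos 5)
Your proposal is correct and follows essentially the same route as the paper: parts (i) and (ii) are exactly the standard presheaf-level arguments that the paper outsources to the Stacks project (Tags 03DI and 03DJ), and your treatment of (iii) and of the derived adjunction — pairing an acyclic complex against the restriction of a homotopically injective complex via the adjunction and exactness of $i_{U!}$, then resolving by a homotopically injective complex in the Grothendieck category — is the paper's argument verbatim.
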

\begin{proof}
The first statement is \cite[Tag 03DI]{stacks-project}, the second one is  \cite[Tag 03DJ]{stacks-project}. Statement $(iii)$ follows by the fact that for a homotopically injective complex $\mathcal{I}^{\bullet}\in \operatorname{D}(\mathscr{A})$ and an acyclic complex $C^{\bullet}\in \operatorname{D}(\mathscr{A}_{\vert U})$ we have:
\begin{equation*}
    \Hom_{\operatorname{K}(I(\mathscr{A})_{\vert U})}(C^{\bullet},\mathcal{I}^{\bullet}_{\vert U})=\Hom_{\operatorname{K}(I(\mathscr{A}))}(i_{U!}C^{\bullet},\mathcal{I}^{\bullet})=0,
\end{equation*}
where the middle map follows by the fact that $i_{U!}$ is exact, so it preserves acyclicity.\\
In order to show the derived adjunction, choose complexes $\mathcal{M}^{\bullet}\in \operatorname{D}(\mathscr{A}_{\vert U})$ and $\mathcal{N}^{\bullet}\in \operatorname{D}(\mathscr{A})$. As $\Mod_{LH(\widehat{\mathcal{B}}c_K)}(I(\mathscr{A}))$ is a Grothendieck abelian category, we have a homotopically injective object $\mathcal{I}^{\bullet}$ with a quasi-isomorphism $\mathcal{N}^{\bullet}\rightarrow \mathcal{I}^{\bullet}$. But then we have:
\begin{multline*}
  \Hom_{\operatorname{D}(\mathscr{A})}(i_{U!}\mathcal{M}^{\bullet},\mathcal{N}^{\bullet})=\Hom_{\operatorname{K}(I(\mathscr{A}))}(i_{U!}\mathcal{M}^{\bullet},\mathcal{I}^{\bullet})
  =\Hom_{\operatorname{K}(I(\mathscr{A}_{\vert U}))}(\mathcal{M}^{\bullet},\mathcal{I}_{\vert U}^{\bullet})\\
  =\Hom_{\operatorname{D}(\mathscr{A}_{\vert U})}(\mathcal{M}^{\bullet},\mathcal{I}_{\vert U}^{\bullet})=\Hom_{\operatorname{D}(\mathscr{A}_{\vert U})}(\mathcal{M}^{\bullet},\mathcal{N}_{\vert U}^{\bullet}),
\end{multline*}
where the last two identities follow because $i^{-1}_{U}$ is exact and preserves homotopically injective objects.
\end{proof}
The next goal is relating extension by zero with inner homomorphisms and tensor products:
\begin{Lemma}\label{lemma extension}
Let $\mathcal{M}\in \Mod_{\Indban}(\mathscr{A}_{\vert U})$. Then $i_{U!}I(\mathcal{M})$ is in the image of the functor:
\begin{equation*}
    I:\Mod_{\Indban}(\mathscr{A})\rightarrow\Mod_{LH(\widehat{\mathcal{B}}c_K)}(I(\mathscr{A})).
\end{equation*}   
\end{Lemma}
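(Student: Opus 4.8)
The plan is to build an extension-by-zero functor $i_{U!}\colon \Mod_{\Indban}(\mathscr{A}_{\vert U})\to \Mod_{\Indban}(\mathscr{A})$ directly at the level of Ind-Banach modules, mirroring the construction in the Definition preceding the lemma, and then to check that the inclusion into the left heart intertwines the two extension-by-zero functors; granting this, $i_{U!}I(\mathcal{M})\cong I(i_{U!}\mathcal{M})$ is visibly in the image of $I$. As a preliminary step I would record that restriction to $U$ commutes with $I$, so that $I(\mathscr{A})_{\vert U}=I(\mathscr{A}_{\vert U})$, and likewise for $\mathcal{M}$; this is immediate from the section-wise formula $I(\mathcal{F})(V)=I(\mathcal{F}(V))$ on presheaves, and it makes the statement well-posed.

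I would then define $i^{P}_{U!}\mathcal{M}$ as the presheaf of Ind-Banach $\mathscr{A}$-modules with $i^{P}_{U!}\mathcal{M}(V)=\mathcal{M}(V)$ for $V\subset U$ and $i^{P}_{U!}\mathcal{M}(V)=0$ otherwise, with restriction maps inherited from $\mathcal{M}$ on opens inside $U$ and zero elsewhere; this is a presheaf of $\mathscr{A}$-modules because every restriction $\mathcal{M}(V)\to\mathcal{M}(W)$ with $W\subset V\subset U$ is already $\mathscr{A}(V)$-linear and the zero module carries the obvious action. Setting $i_{U!}\mathcal{M}:=(i^{P}_{U!}\mathcal{M})^{a}$, I would argue it lies in $\Mod_{\Indban}(\mathscr{A})$: since $\Indban$ is elementary, so are $\operatorname{PreShv}(X,\Indban)$, $\operatorname{Shv}(X,\Indban)$ and $\Mod_{\Indban}(\mathscr{A})$, so sheafification is available and strongly exact, and it commutes with $\overrightarrow{\otimes}_{K}$; hence the presheaf action $\mathscr{A}\overrightarrow{\otimes}_{K}i^{P}_{U!}\mathcal{M}\to i^{P}_{U!}\mathcal{M}$ descends to an $\mathscr{A}$-action on the sheafification.

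Finally, the comparison: because $I$ on presheaves is section-wise and $I(0)=0$, inspecting sections on opens contained in $U$ and on opens not contained in $U$ gives an equality of presheaves of $I(\mathscr{A})$-modules $I(i^{P}_{U!}\mathcal{M})=i^{P}_{U!}(I(\mathcal{M}))$. Plugging this into the commuting square $(-)^{a}\circ I=I\circ(-)^{a}$ recorded earlier in the section yields
\begin{equation*}
I(i_{U!}\mathcal{M})=I\bigl((i^{P}_{U!}\mathcal{M})^{a}\bigr)=\bigl(I(i^{P}_{U!}\mathcal{M})\bigr)^{a}=\bigl(i^{P}_{U!}(I(\mathcal{M}))\bigr)^{a}=i_{U!}\bigl(I(\mathcal{M})\bigr),
\end{equation*}
so $i_{U!}I(\mathcal{M})$ lies in the essential image of $I\colon\Mod_{\Indban}(\mathscr{A})\to\Mod_{LH(\widehat{\mathcal{B}}c_K)}(I(\mathscr{A}))$, which is the claim.

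The hard part will be the verification in the middle step that the sheafification of the presheaf of Ind-Banach $\mathscr{A}$-modules $i^{P}_{U!}\mathcal{M}$ is again a \emph{sheaf of $\mathscr{A}$-modules}, i.e. that the module structure passes compatibly through sheafification — this is exactly where elementarity of $\Indban$ (strong exactness of filtered colimits and the resulting compatibility of sheafification with $\overrightarrow{\otimes}_{K}$) is used, and it is the direct analogue of the classical statement for sheaves of modules over a sheaf of rings. The remaining verifications — restriction commutes with $I$, the section-wise description of $I$, and the bookkeeping with restriction maps and actions — are routine.
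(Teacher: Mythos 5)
Your proposal is correct and takes essentially the same route as the paper: the paper's one-line proof invokes exactly the fact that $I$ commutes with sheafification, which combined with the evident sectionwise identity $I(i^{P}_{U!}\mathcal{M})=i^{P}_{U!}(I(\mathcal{M}))$ gives $i_{U!}I(\mathcal{M})=I(i_{U!}\mathcal{M})$. Your write-up merely makes explicit the intermediate steps (the Ind-Banach-level extension by zero and the descent of the module structure through sheafification) that the paper leaves implicit.
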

\begin{proof}
 Follows at once by the fact that the functor:
 \begin{equation*}
     I:\operatorname{PreShv}(X,\Indban)\rightarrow \operatorname{PreShv}(X,LH(\widehat{\mathcal{B}}c_K)),
 \end{equation*}
commutes with sheafification.  
\end{proof}
\begin{Lemma}\label{Lemma iso tensor extension restriction}
  Let $\mathcal{M}\in \operatorname{D}(\mathscr{A})$, and $V^{\bullet}\in \operatorname{D}(\operatorname{Shv}(U,LH(\widehat{\mathcal{B}}c_K)))$. There is a canonical isomorphism in  $\operatorname{D}(\mathscr{A})$:
  \begin{equation*}
      i_{U!}\left( \mathcal{M}_{\vert U}^{\bullet}\widetilde{\otimes}_K^{\mathbb{L}}V^{\bullet}\right)=\mathcal{M}^{\bullet}\widetilde{\otimes}_K^{\mathbb{L}}i_{U!}V^{\bullet}.
  \end{equation*}
\end{Lemma}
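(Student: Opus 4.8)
\emph{Proof proposal.} The plan is to reduce the statement, after forgetting all module structures, to an elementary projection formula for sheaves of Ind-Banach spaces. First I would record that restriction $i_U^{-1}$ is exact and monoidal (clear from the definition of $\overrightarrow{\otimes}_K$ and the fact that restriction commutes with sheafification), hence commutes with $\widetilde{\otimes}_K^{\mathbb{L}}$; together with $i_U^{-1}i_{U!}=\operatorname{Id}$ this gives a canonical identification $i_U^{-1}\bigl(\mathcal{M}^{\bullet}\widetilde{\otimes}_K^{\mathbb{L}}i_{U!}V^{\bullet}\bigr)=\mathcal{M}^{\bullet}_{\vert U}\widetilde{\otimes}_K^{\mathbb{L}}V^{\bullet}$. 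Adjoint to it, through the adjunction $i_{U!}\dashv i_U^{-1}$ of Proposition \ref{prop extension by zero}, is a canonical $\mathscr{A}$-linear morphism $\phi\colon i_{U!}\bigl(\mathcal{M}^{\bullet}_{\vert U}\widetilde{\otimes}_K^{\mathbb{L}}V^{\bullet}\bigr)\to\mathcal{M}^{\bullet}\widetilde{\otimes}_K^{\mathbb{L}}i_{U!}V^{\bullet}$ in $\operatorname{D}(\mathscr{A})$, and the task is to prove $\phi$ is an isomorphism.

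Since the forgetful functor $\Mod_{\Indban}(\mathscr{A})\to\operatorname{Shv}(X,\Indban)$ is exact, reflects strict morphisms, and sends a module to $0$ only if the module is $0$, the induced functor $\operatorname{D}(\mathscr{A})\to\operatorname{D}(\operatorname{Shv}(X,\Indban))$ is conservative; hence it suffices to check that $\phi$ is an isomorphism in $\operatorname{D}(\operatorname{Shv}(X,\Indban))$, where the module structures no longer play a role. Using the equivalence of triangulated categories $I\colon\operatorname{D}(\operatorname{Shv}(\,\cdot\,,\Indban))\xrightarrow{\sim}\operatorname{D}(\operatorname{Shv}(\,\cdot\,,LH(\widehat{\mathcal{B}}c_K)))$, represent $\mathcal{M}^{\bullet}$ and $V^{\bullet}$ by honest complexes $\widetilde{\mathcal{M}}^{\bullet}$, $\widetilde{V}^{\bullet}$ of sheaves of Ind-Banach spaces. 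Because $\overrightarrow{\otimes}_K$ is exact on sheaves of Ind-Banach spaces, its left derived functor $(\ref{equation derived functor tensor for sheaves})$ — which under the identification above is $\widetilde{\otimes}_K^{\mathbb{L}}$ — is computed by the termwise tensor product, while $i_{U!}$ on sheaves of Ind-Banach spaces is exact and commutes with direct sums, shifts, $\operatorname{Tot}_{\oplus}$ and with $I$ (Lemma \ref{lemma extension}). Thus both the source and target of $\phi$ are identified with the total complex of the double complex with $(p,q)$-term $\widetilde{\mathcal{M}}^{p}\overrightarrow{\otimes}_K i_{U!}\widetilde{V}^{q}$, provided one has the underived projection formula
\[
i_{U!}\bigl(\mathcal{F}_{\vert U}\overrightarrow{\otimes}_K\mathcal{G}\bigr)=\mathcal{F}\overrightarrow{\otimes}_K i_{U!}\mathcal{G},\qquad\mathcal{F}\in\operatorname{Shv}(X,\Indban),\ \mathcal{G}\in\operatorname{Shv}(U,\Indban),
\]
naturally in $\mathcal{F}$ and $\mathcal{G}$; granting it, $\phi$ is an isomorphism.

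It remains to prove this underived projection formula, which I would do by Yoneda. For $\mathcal{W}\in\operatorname{Shv}(X,\Indban)$, the adjunction $i_{U!}\dashv i_U^{-1}$, the tensor--hom adjunction $(\ref{equation tensor-hom adjunctions for sheaves})$ on $X$ and on $U$, and the fact that $\underline{\mathcal{H}om}_{\Indban}(-,-)$ commutes with restriction to an admissible open — immediate from the defining formula $(\ref{equation definition inner homomorphism})$, since $\underline{\mathcal{H}om}_{\Indban}(\mathcal{F},\mathcal{W})(V)$ depends only on $\mathcal{F}_{\vert V}$ and $\mathcal{W}_{\vert V}$ — give natural isomorphisms
\[
\Hom_{\operatorname{Shv}(X,\Indban)}\!\bigl(i_{U!}(\mathcal{F}_{\vert U}\overrightarrow{\otimes}_K\mathcal{G}),\mathcal{W}\bigr)\cong\Hom_{\operatorname{Shv}(U,\Indban)}\!\bigl(\mathcal{G},\underline{\mathcal{H}om}_{\Indban}(\mathcal{F},\mathcal{W})_{\vert U}\bigr)\cong\Hom_{\operatorname{Shv}(X,\Indban)}\!\bigl(\mathcal{F}\overrightarrow{\otimes}_K i_{U!}\mathcal{G},\mathcal{W}\bigr),
\]
and the Yoneda lemma concludes. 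I expect the only real work to lie in the bookkeeping of the middle paragraph — justifying the passage to Ind-Banach representatives and checking that $i_{U!}$, restriction, direct sums and $\operatorname{Tot}_{\oplus}$ all interact correctly with $\overrightarrow{\otimes}_K$, together with the routine verification that the termwise isomorphisms respect the differentials of the double complex — whereas the Yoneda computation and the reduction to the module-free case are formal.
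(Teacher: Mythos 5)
Your argument is correct, and its overall skeleton matches the paper's: both proofs represent $\mathcal{M}^{\bullet}$ and $V^{\bullet}$ by complexes of sheaves of Ind-Banach spaces (which are flat, so the derived tensor is computed termwise via $\operatorname{Tot}_{\oplus}$), use Lemma \ref{lemma extension} to keep everything in the image of $I$, and thereby reduce the lemma to an underived projection formula $i_{U!}(\mathcal{F}_{\vert U}\overrightarrow{\otimes}_K\mathcal{G})\cong\mathcal{F}\overrightarrow{\otimes}_K i_{U!}\mathcal{G}$. Where you diverge is in how that underived identity is established. The paper does it by inspection: both sides are the sheafification of the literally identical presheaf $W\mapsto \Gamma(W,I(\widetilde{\mathcal{M}}))\widetilde{\otimes}_K\Gamma(W,I(\widetilde{V}))$ for $W\subset U$ and $0$ otherwise. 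You instead prove it by corepresentability, chaining the adjunction $i_{U!}\dashv i_U^{-1}$ with the tensor--hom adjunction $(\ref{equation tensor-hom adjunctions for sheaves})$ and the compatibility of $\underline{\mathcal{H}om}_{\Indban}$ with restriction, then invoking Yoneda. Your route is more formal and would work verbatim in any closed symmetric monoidal sheaf category with an open immersion, and it has the advantage of automatically producing a \emph{natural} isomorphism (which is what you need to assemble the termwise identifications into a map of total complexes); the paper's route is more concrete but leaves naturality implicit. Your additional opening moves — constructing the comparison map $\phi$ by adjunction and reducing to $\operatorname{Shv}(X,\Indban)$ via conservativity of the forgetful functor — are sound but not strictly necessary: since the lemma only asserts the existence of a canonical isomorphism, you could (as the paper does) simply identify the two sides directly, without first producing $\phi$ and then checking your identification agrees with it.
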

\begin{proof}
Choose $\widetilde{\mathcal{M}}^{\bullet}\in \operatorname{K}(\mathscr{A})$, and $\widetilde{V}^{\bullet}\in \operatorname{K}(\operatorname{Shv}(U,\Indban))$ such that we have quasi-isomorphisms:
\begin{equation*}
    I(\widetilde{\mathcal{M}}^{\bullet})\rightarrow \mathcal{M}^{\bullet}, \quad I(\widetilde{V}^{\bullet})\rightarrow V^{\bullet}. 
\end{equation*}
In this case, by Lemma \ref{lemma extension} we have the following identities:
\begin{equation*}
  \mathcal{M}^{\bullet}\widetilde{\otimes}_K^{\mathbb{L}}i_{U!}V^{\bullet}=I(\widetilde{\mathcal{M}}^{\bullet})\widetilde{\otimes}_Ki_{U!}I(\widetilde{V}^{\bullet}), \quad   \mathcal{M}_{\vert U}^{\bullet}\widetilde{\otimes}_K^{\mathbb{L}}V^{\bullet}=I(\widetilde{\mathcal{M}}^{\bullet}_{\vert U})\widetilde{\otimes}_KI(\widetilde{V}^{\bullet}).
\end{equation*}
By construction of the derived tensor product and the extension by zero, for each $n\in\mathbb{Z}$, we have that  $\left(\mathcal{M}^{\bullet}\widetilde{\otimes}_K^{\mathbb{L}}i_{U!}V^{\bullet}\right)^n$ is the sheafification of the following presheaf :
\begin{equation*}
    W\mapsto \bigoplus_{r+s=n}\Gamma(W,I(\widetilde{\mathcal{M}})^{r})\widetilde{\otimes}_K\Gamma(W,I(\widetilde{V})^{s}) \textnormal{ if } W\subset U, \quad 0 \textnormal{ if } W\not\subset U.
\end{equation*}
As this presheaf determines $i_{U!}\left(\mathcal{M}_{\vert U}^{\bullet}\widetilde{\otimes}_K^{\mathbb{L}}V^{\bullet}\right)^n=i_{U!}\left(I(\widetilde{\mathcal{M}})_{\vert U}^{\bullet}\widetilde{\otimes}_KI(\widetilde{V})^{\bullet}\right)^n$, we are done.
\end{proof}
\begin{prop}\label{prop restriction of inner hom}
For each admissible open subspace $U\subset X$ there is a natural isomorphism of derived functors:
\begin{equation*}
    R\underline{\mathcal{H}om}_{\mathscr{A}}(-,-)_{\vert U}=R\underline{\mathcal{H}om}_{\mathscr{A}_{\vert U}}((-)_{\vert U},(-)_{\vert U}).
\end{equation*}
\end{prop}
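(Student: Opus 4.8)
The plan is to reduce the derived statement to its underived, essentially local counterpart, using that restriction to $U$ is exact and preserves homotopically injective complexes. First I would record the underived identity: for $\mathcal{M},\mathcal{N}\in \Mod_{\Indban}(\mathscr{A})$ there is a tautological isomorphism $\underline{\mathcal{H}om}_{\mathscr{A}}(\mathcal{M},\mathcal{N})_{\vert U}=\underline{\mathcal{H}om}_{\mathscr{A}_{\vert U}}(\mathcal{M}_{\vert U},\mathcal{N}_{\vert U})$, and likewise for modules over $I(\mathscr{A})$ in the left heart. Indeed, by the defining formula $(\ref{equation definition inner homomorphism})$, for an admissible open $V\subset U$ the equalizer computing $\underline{\mathcal{H}om}_{\mathscr{A}}(\mathcal{M},\mathcal{N})(V)$ only involves the values of $\mathscr{A},\mathcal{M},\mathcal{N}$ on admissible opens contained in $V$, hence depends only on $\mathscr{A}_{\vert U},\mathcal{M}_{\vert U},\mathcal{N}_{\vert U}$. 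Passing to chain complexes, $i_U^{-1}$ commutes with the products and differentials occurring in $\operatorname{Tot}_{\pi}$ (restriction is computed on sections), so the same holds for $\underline{\mathcal{H}om}_{\mathscr{A}}(\mathcal{M}^{\bullet},\mathcal{N}^{\bullet})$ with $\mathcal{M}^{\bullet},\mathcal{N}^{\bullet}$ complexes; and since $I$ commutes with sheafification one has $I(-)_{\vert U}=I\bigl((-)_{\vert U}\bigr)$, which upgrades this to the analogous identity in $\operatorname{Shv}(U,LH(\widehat{\mathcal{B}}c_K))$.

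Next I would check compatibility with the choice of resolutions used to define the derived functor in $(\ref{equation explicit computation of RHom})$. The restriction $i_U^{-1}$ is exact (Proposition $\ref{prop extension by zero}$), so it preserves quasi-isomorphisms; by Proposition $\ref{prop extension by zero}(iii)$ it also sends homotopically injective complexes to homotopically injective complexes over $U$; and by the underived identity applied to the functor $I$ (cf. Lemma $\ref{lemma extension}$), if $\widetilde{\mathcal{M}}^{\bullet}\in \operatorname{K}(\Mod_{\Indban}(\mathscr{A}))$ is a complex with $I(\widetilde{\mathcal{M}}^{\bullet})$ isomorphic to $\mathcal{M}^{\bullet}$ in $\operatorname{D}(\mathscr{A})$, then $\widetilde{\mathcal{M}}^{\bullet}_{\vert U}$ represents $\mathcal{M}^{\bullet}_{\vert U}$.

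Then I would assemble the proof. Given $\mathcal{M}^{\bullet},\mathcal{N}^{\bullet}\in \operatorname{D}(\mathscr{A})$, choose $\widetilde{\mathcal{M}}^{\bullet}\in \operatorname{K}(\Mod_{\Indban}(\mathscr{A}))$ and a homotopically injective $\widetilde{\mathcal{N}}^{\bullet}$ with $I(\widetilde{\mathcal{M}}^{\bullet})$ and $\widetilde{\mathcal{N}}^{\bullet}$ isomorphic in the derived category to $\mathcal{M}^{\bullet}$ and $I(\mathcal{N}^{\bullet})$ respectively, so that $(\ref{equation explicit computation of RHom})$ gives $R\underline{\mathcal{H}om}_{\mathscr{A}}(\mathcal{M}^{\bullet},\mathcal{N}^{\bullet})=\underline{\mathcal{H}om}_{LH(\widehat{\mathcal{B}}c_K)}(I(\widetilde{\mathcal{M}}^{\bullet}),\widetilde{\mathcal{N}}^{\bullet})$. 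Applying $i_U^{-1}$ and the underived identity,
\[
    R\underline{\mathcal{H}om}_{\mathscr{A}}(\mathcal{M}^{\bullet},\mathcal{N}^{\bullet})_{\vert U}=\underline{\mathcal{H}om}_{LH(\widehat{\mathcal{B}}c_K)}\bigl(I(\widetilde{\mathcal{M}}^{\bullet})_{\vert U},\widetilde{\mathcal{N}}^{\bullet}_{\vert U}\bigr)=\underline{\mathcal{H}om}_{LH(\widehat{\mathcal{B}}c_K)}\bigl(I(\widetilde{\mathcal{M}}^{\bullet}_{\vert U}),\widetilde{\mathcal{N}}^{\bullet}_{\vert U}\bigr).
\]
By the previous paragraph $\widetilde{\mathcal{M}}^{\bullet}_{\vert U}$ represents $\mathcal{M}^{\bullet}_{\vert U}$ and $\widetilde{\mathcal{N}}^{\bullet}_{\vert U}$ is a homotopically injective resolution of $\mathcal{N}^{\bullet}_{\vert U}$, so the right-hand side computes $R\underline{\mathcal{H}om}_{\mathscr{A}_{\vert U}}(\mathcal{M}^{\bullet}_{\vert U},\mathcal{N}^{\bullet}_{\vert U})$. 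Naturality in both variables follows by choosing the resolutions functorially, which is possible because the ambient categories are Grothendieck abelian.

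The hard part, or rather the only point needing care, is verifying that the explicit model $(\ref{equation explicit computation of RHom})$ is genuinely compatible with restriction: that $i_U^{-1}$ commutes with $\operatorname{Tot}_{\pi}$ and with the inner hom of complexes — this is formal, since restriction is a sections-wise operation and commutes with the limits in sight — and, crucially, that a homotopically injective resolution of $\mathcal{N}^{\bullet}$ chosen on $X$ restricts to a homotopically injective resolution over $U$, which is exactly the content of Proposition $\ref{prop extension by zero}(iii)$ and is the reason we do not simply re-resolve over $U$.
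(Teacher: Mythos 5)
Your proof is correct, but it follows a genuinely different route from the paper's. The paper proves the isomorphism by Yoneda: it tests both sides against an arbitrary $V^{\bullet}\in \operatorname{D}(\operatorname{Shv}(U,LH(\widehat{\mathcal{B}}c_K)))$, using the derived adjunction $i_{U!}\dashv i_U^{-1}$ from Proposition \ref{prop extension by zero} together with the projection formula $i_{U!}(\mathcal{M}^{\bullet}_{\vert U}\widetilde{\otimes}_K^{\mathbb{L}}V^{\bullet})=\mathcal{M}^{\bullet}\widetilde{\otimes}_K^{\mathbb{L}}i_{U!}V^{\bullet}$ of Lemma \ref{Lemma iso tensor extension restriction} and the derived tensor--hom adjunction; this keeps everything at the level of the derived category and never touches explicit models. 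You instead work directly with the explicit model $(\ref{equation explicit computation of RHom})$, observing that the underived inner hom sheaf is local by its defining equalizer, that $i_U^{-1}$ is exact and commutes with $\operatorname{Tot}_{\pi}$ and with $I$, and --- the one genuinely non-formal input --- that $i_U^{-1}$ preserves homotopically injective complexes, which is exactly Proposition \ref{prop extension by zero}(iii). Both arguments rest on the same proposition but exploit different halves of it (the paper uses the adjunction $i_{U!}\dashv i_U^{-1}$ and exactness of $i_{U!}$; you use the preservation of homotopical injectivity, which the paper derives from that adjunction). Your version has the advantage of producing an isomorphism of explicit representing complexes rather than an isomorphism detected only through $\Hom$ functors --- which is in fact the form the paper silently needs later in the proof of Proposition \ref{prop expression of higher ext functors}, where it restricts the complex $\underline{\mathcal{H}om}_{I(\mathscr{A})}(I(\mathcal{M}^{\bullet}),\mathcal{I}^{\bullet})$ to admissible opens. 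The paper's version avoids any discussion of compatibility of resolutions with restriction at the cost of the auxiliary Lemma \ref{Lemma iso tensor extension restriction}. The only step you should make explicit is the existence, for an arbitrary object of $\operatorname{D}(\mathscr{A})$, of a representative of the form $I(\widetilde{\mathcal{M}}^{\bullet})$ with $\widetilde{\mathcal{M}}^{\bullet}\in\operatorname{K}(\Mod_{\Indban}(\mathscr{A}))$; the paper asserts this in the discussion around $(\ref{equation explicit computation of RHom})$, so you may simply cite it.
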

\begin{proof}
Let $U\subset X$ be an admissible open subspace, and choose complexes $\mathcal{M}^{\bullet},\mathcal{N}^{\bullet}\in \operatorname{D}(\mathscr{A})$, and  $V^{\bullet}\in \operatorname{D}(\operatorname{Shv}(U,\Indban))$. Then we have the following identities:
\begin{align*}
    \Hom_{\operatorname{D}(\operatorname{Shv}(U,\Indban))}(V^{\bullet},R\underline{\mathcal{H}om}_{\mathscr{A}_{\vert U}}(\mathcal{M}_{\vert U}^{\bullet},\mathcal{N}^{\bullet}_{\vert U}))
    =\Hom_{\operatorname{D}(\mathscr{A}_{\vert U})}(\mathcal{M}_{\vert U}^{\bullet}\overrightarrow{\otimes}_K^{\mathbb{L}}V^{\bullet},\mathcal{N}^{\bullet}_{\vert U})&\\
=\Hom_{\operatorname{D}(\mathscr{A})}(i_{U!}\left(\mathcal{M}_{\vert U}^{\bullet}\overrightarrow{\otimes}_K^{\mathbb{L}}V^{\bullet}\right),\mathcal{N}^{\bullet})
    =\Hom_{\operatorname{D}(\mathscr{A})}(\mathcal{M}^{\bullet}\overrightarrow{\otimes}_K^{\mathbb{L}}i_{U!}V^{\bullet},\mathcal{N}^{\bullet})&\\
    =\Hom_{\operatorname{D}(\operatorname{Shv}(X,\Indban))}(i_{U!}V^{\bullet},R\underline{\mathcal{H}om}_{\mathscr{A}}(\mathcal{M}^{\bullet},\mathcal{N}^{\bullet}))&\\
    =\Hom_{\operatorname{D}(\operatorname{Shv}(U,\Indban))}(V^{\bullet},R\underline{\mathcal{H}om}_{\mathscr{A}}(\mathcal{M}^{\bullet},\mathcal{N}^{\bullet})_{\vert U})&,
\end{align*}
so the isomorphism follows by Yoneda.
\end{proof}
As a consequence of this, it follows that if $\mathcal{M}^{\bullet},\mathcal{N}^{\bullet}\in \operatorname{D}(\mathscr{A})$ have cohomology supported on a closed subvariety $Y\subset X$, then $R\underline{\mathcal{H}om}_{\mathscr{A}}(\mathcal{M}^{\bullet},\mathcal{N}^{\bullet})$ also has cohomology supported on $Y$.
\begin{defi}
Let $U\subset X$ be an admissible open subspace. We define the functor:
\begin{multline*}
    R\underline{\Hom}_{\mathscr{A}_{\vert U}}((-)_{\vert U},(-)_{\vert U}):=R\Gamma(U,R\underline{\mathcal{H}om}_{\mathscr{A}}(-,-)):\operatorname{D}(\mathscr{A})^{\op}\times \operatorname{D}(\mathscr{A})\\
    \rightarrow \operatorname{D}(LH(\widehat{\mathcal{B}}c_K)).
\end{multline*}
For each $n\in\mathbb{Z}$, we define:
\begin{equation*}
    \underline{\operatorname{Ext}}^n_{\mathscr{A}_{\vert U}}((-)_{\vert U},(-)_{\vert U})=\operatorname{H}^n\left(R\underline{\Hom}_{\mathscr{A}_{\vert U}}((-)_{\vert U},(-)_{\vert U})\right).
\end{equation*}
\end{defi}
We now recall the definition of the constant sheaf functor:
\begin{defi}
Let $\mathcal{C}$ be an elementary quasi-abelian category. The constant sheaf functor is the functor:
\begin{equation*}
    c:\mathcal{C}\rightarrow \operatorname{Shv}(X,\mathcal{C}),
\end{equation*}
defined by sending an object $x\in\mathscr{C}$ to the sheafification of the constant presheaf of value $x$. We call $c(x)$ the constant sheaf of stalk $x$. 
\end{defi}
Notice that $c$ is strongly exact, and fits into a commutative diagram:
\begin{equation}\label{equation derived adjunction constant sheaf global functions}
\begin{tikzcd}
c:\Indban \arrow[d, "I"] \arrow[r] & {\operatorname{Shv}(X,\Indban):\Gamma(X,-)} \arrow[d, "I"] \arrow[l, shift right=2] \\
c:LH(\widehat{\mathcal{B}}c_K) \arrow[r]            & {\operatorname{Shv}(X,LH(\widehat{\mathcal{B}}c_K)):\Gamma(X,-)} \arrow[l, shift right=2]           
\end{tikzcd}   
\end{equation}
where the pairs of horizontal maps are adjunctions. As the functor:
\begin{equation*}
   c:LH(\widehat{\mathcal{B}}c_K)\rightarrow \operatorname{Shv}(X,LH(\widehat{\mathcal{B}}c_K)), 
\end{equation*}
is exact, it induces a derived adjunction:
\begin{equation*}
    c:\operatorname{D}(LH(\widehat{\mathcal{B}}c_K))\leftrightarrows \operatorname{D}(\operatorname{Shv}(X,LH(\widehat{\mathcal{B}}c_K))):R\Gamma(X,-).
\end{equation*}
We may use this adjunction to show the following:
\begin{prop}\label{prop composition of global sections and inner hom}
The functor $ \widetilde{\Hom}_{I(\mathscr{A})}(-,-):=\Gamma(X,\underline{\mathcal{H}om}_{I(\mathscr{A})}(-,-))$
admits a right derived functor:
\begin{equation*}
    R\widetilde{\Hom}_{\mathscr{A}}(-,-):\operatorname{D}(\mathscr{A}^{\op})\times\operatorname{D}(\mathscr{A})\rightarrow \operatorname{D}(\widehat{\mathcal{B}}c_K).
\end{equation*}
Furthermore, there is a derived adjunction:
\begin{equation*}
\Hom_{\operatorname{D}(\mathscr{A})}(\mathcal{M}^{\bullet}\overrightarrow{\otimes}^{\mathbb{L}}_Kc(\mathcal{V}^{\bullet}),\mathcal{N}^{\bullet})=\Hom_{\operatorname{D}(LH(\widehat{\mathcal{B}}c_K))}(\mathcal{V}^{\bullet},R\widetilde{\Hom}_{\mathscr{A}}(\mathcal{M}^{\bullet},\mathcal{N}^{\bullet})),        
\end{equation*}
where $\mathcal{M}^{\bullet},\mathcal{N}^{\bullet}\in \operatorname{D}(\mathscr{A})$, and $\mathcal{V}^{\bullet}\in\operatorname{D}(LH(\widehat{\mathcal{B}}c_K))$.
Furthermore, we have:
\begin{equation*}
R\widetilde{\Hom}_{\mathscr{A}}(-,-)=R\Gamma(X,-)\circ R\underline{\mathcal{H}om}_{\mathscr{A}}(-,-).   
\end{equation*}
\end{prop}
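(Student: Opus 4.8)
I would begin by writing down the candidate functor outright: set
\[
R\widetilde{\Hom}_{\mathscr{A}}(-,-):=R\Gamma(X,-)\circ R\underline{\mathcal{H}om}_{\mathscr{A}}(-,-),
\]
which already makes sense because $R\underline{\mathcal{H}om}_{\mathscr{A}}(-,-)$ is available by \cite[Proposition 3.34]{bode2021operations} and $R\Gamma(X,-)$ exists (with left adjoint $c$ as a derived adjunction, recorded just above the statement) since $\operatorname{Shv}(X,LH(\widehat{\mathcal{B}}c_K))$ is Grothendieck abelian. With this definition the last displayed formula of the statement is true by fiat, so the work splits into: \emph{(1)} proving the asserted derived adjunction for this composite, and \emph{(2)} checking that the composite is genuinely the right derived functor of $\widetilde{\Hom}_{I(\mathscr{A})}(-,-)=\Gamma(X,-)\circ\underline{\mathcal{H}om}_{I(\mathscr{A})}(-,-)$. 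Throughout I would freely use the identifications $\operatorname{D}(\widehat{\mathcal{B}}c_K)\cong\operatorname{D}(LH(\widehat{\mathcal{B}}c_K))$ and $\operatorname{D}(\operatorname{Shv}(X,\Indban))\cong\operatorname{D}(\operatorname{Shv}(X,LH(\widehat{\mathcal{B}}c_K)))$ supplied by $I$, in the spirit of Remark \ref{obs abuse of notation internal hom}.

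For step (1) I would simply paste two existing derived adjunctions together. Note first that $\mathcal{M}^{\bullet}\overrightarrow{\otimes}^{\mathbb{L}}_Kc(-)$ is defined on all of the unbounded derived category, since both the extension-of-scalars tensor $\overrightarrow{\otimes}_K$ and the constant-sheaf functor $c$ are exact. Then for $\mathcal{M}^{\bullet},\mathcal{N}^{\bullet}\in\operatorname{D}(\mathscr{A})$ and $\mathcal{V}^{\bullet}\in\operatorname{D}(LH(\widehat{\mathcal{B}}c_K))$, applying the derived adjunction $(\ref{equation derived tensor relative hom adjunction})$ to the constant sheaf $c(\mathcal{V}^{\bullet})$ and then the derived $(c,R\Gamma(X,-))$ adjunction from $(\ref{equation derived adjunction constant sheaf global functions})$ yields
\begin{align*}
\Hom_{\operatorname{D}(\mathscr{A})}(\mathcal{M}^{\bullet}\overrightarrow{\otimes}^{\mathbb{L}}_Kc(\mathcal{V}^{\bullet}),\mathcal{N}^{\bullet})
&=\Hom_{\operatorname{D}(\operatorname{Shv}(X,\Indban))}(c(\mathcal{V}^{\bullet}),R\underline{\mathcal{H}om}_{\mathscr{A}}(\mathcal{M}^{\bullet},\mathcal{N}^{\bullet}))\\
&=\Hom_{\operatorname{D}(LH(\widehat{\mathcal{B}}c_K))}(\mathcal{V}^{\bullet},R\Gamma(X,R\underline{\mathcal{H}om}_{\mathscr{A}}(\mathcal{M}^{\bullet},\mathcal{N}^{\bullet}))),
\end{align*}
which is exactly the adjunction in the statement once the right-hand term is read as $\Hom(\mathcal{V}^{\bullet},R\widetilde{\Hom}_{\mathscr{A}}(\mathcal{M}^{\bullet},\mathcal{N}^{\bullet}))$. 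All the maps involved are manifestly natural in the three variables, so this is an adjunction of bifunctors.

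For step (2) I would argue by uniqueness of adjoints. At the underived level $\widetilde{\Hom}_{I(\mathscr{A})}(\mathcal{M},-)$ is right adjoint to $\mathcal{M}\widetilde{\otimes}_Kc(-)$, being the composite of the module version of $(\ref{equation tensor relative hom adjunction})$ for sheaves with the $(c,\Gamma(X,-))$ adjunction; this composite left functor is exact. Since $\Mod_{LH(\widehat{\mathcal{B}}c_K)}(I(\mathscr{A}))$ is Grothendieck it has enough injectives, so $\widetilde{\Hom}_{I(\mathscr{A})}(-,-)$ is explicitly right derivable (resolving the second variable by a homotopically injective complex and, where needed, the first by a complex of flat modules, as in the construction of $R\underline{\mathcal{H}om}_{\mathscr{A}}$), producing a right derived bifunctor; and by \cite[Theorem 14.4.8]{Kashiwara2006} this derived functor is right adjoint to $\mathcal{M}^{\bullet}\overrightarrow{\otimes}^{\mathbb{L}}_Kc(-)$. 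By step (1), so is the composite $R\Gamma(X,-)\circ R\underline{\mathcal{H}om}_{\mathscr{A}}(-,-)$. Since right adjoints of a triangulated functor are unique up to a canonical isomorphism, the two agree, which simultaneously establishes the existence of the right derived functor and the last displayed formula; the derived adjunction is then the one proved in step (1).

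\textbf{Main obstacle.} I do not expect a real difficulty — the proof is an assembly of already-established results — and the only point requiring genuine care is bookkeeping: making sure each input derived functor and adjunction sits on the correct footing (that $\mathcal{M}^{\bullet}\overrightarrow{\otimes}^{\mathbb{L}}_Kc(-)$ and $R\Gamma(X,-)$ are defined on the full unbounded derived categories, and that the passages between $\Indban$, $\widehat{\mathcal{B}}c_K$ and their left hearts through $I$ are compatible with $\overrightarrow{\otimes}_K$, $\underline{\mathcal{H}om}$, $c$ and $\Gamma(X,-)$), so that the equalities above literally make sense after the identifications of Remark \ref{obs abuse of notation internal hom}. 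If one prefers to avoid invoking uniqueness of adjoints, the same conclusion follows from the composition-of-derived-functors criterion once one checks that $\underline{\mathcal{H}om}_{I(\mathscr{A})}(\mathcal{M}^{\bullet},\mathcal{I})$ is $\Gamma(X,-)$-acyclic for every injective $I(\mathscr{A})$-module $\mathcal{I}$; this acyclicity (in fact flasqueness) is extracted from the restriction-compatibility of inner hom (the underived form of Proposition \ref{prop restriction of inner hom}), the extension-by-zero adjunction $i_{U!}\dashv i_U^{-1}$ of Proposition \ref{prop extension by zero}, and exactness of $\Hom_{I(\mathscr{A})}(-,\mathcal{I})$, exactly as in the classical argument that $\underline{\mathcal{H}om}$ into an injective is flasque.
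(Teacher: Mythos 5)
Your proposal is correct and follows essentially the same route as the paper: the paper likewise obtains existence of $R\widetilde{\Hom}_{\mathscr{A}}$ by verifying the hypotheses of \cite[Theorem 14.4.8]{Kashiwara2006} for $-\widetilde{\otimes}_Kc(-)$ (using flat objects of the form $I(c(W))$), and then identifies it with $R\Gamma(X,-)\circ R\underline{\mathcal{H}om}_{\mathscr{A}}(-,-)$ by pasting the derived adjunctions $(\ref{equation derived tensor relative hom adjunction})$ and $(\ref{equation derived adjunction constant sheaf global functions})$ and invoking Yoneda. The only difference is the order of the two steps, which is immaterial.
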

\begin{proof}
The adjunction $c:LH(\widehat{\mathcal{B}}c_K)\leftrightarrows \operatorname{Shv}(X,LH(\widehat{\mathcal{B}}c_K)):\Gamma(X,-)$, together with the adequate version of $(\ref{equation tensor relative hom adjunction})$, induces an adjunction:
\begin{equation*}
\Hom_{I(\mathscr{A})}(\mathcal{M}\widetilde{\otimes}_Kc(\mathcal{V}),\mathcal{N})=\Hom_{LH(\widehat{\mathcal{B}}c_K)}(\mathcal{V},\widetilde{\Hom}_{I(\mathscr{A})}(\mathcal{M},\mathcal{N})),    
\end{equation*}
where $\mathcal{M},\mathcal{N}\in \Mod_{LH(\widehat{\mathcal{B}}c_K)}(I(\mathscr{A}))$, and $\mathcal{V}\in LH(\widehat{\mathcal{B}}c_K)$. As stated above, the objects in the essential image of $I:\operatorname{Shv}(X,\Indban)\rightarrow \operatorname{Shv}(X,LH(\widehat{\mathcal{B}}c_K))$ are flat with respect to $-\widetilde{\otimes}_K-$. In particular, commutativity of the diagram above shows that for every $\mathcal{V}\in LH(\widehat{\mathcal{B}}c_K)$ we may find some $W\in \Indban$ such that there is an epimorphism $I(c(W))\rightarrow c(\mathcal{V})$, and $I(c(W))$ is flat with respect to $-\widetilde{\otimes}_K-$. Therefore, the functor:
 \begin{equation*}
     -\widetilde{\otimes}_Kc(-):\Mod_{LH(\widehat{\mathcal{B}}c_K)}(I(\mathscr{A}))\times LH(\widehat{\mathcal{B}}c_K)\rightarrow \Mod_{LH(\widehat{\mathcal{B}}c_K)}(I(\mathscr{A})),
 \end{equation*}
satisfies the conditions of \cite[Theorem 14.4.8]{Kashiwara2006}, and it follows that $R\widetilde{\Hom}_{\mathscr{A}_{\vert U}}(-,-)$ exists, and it is right adjoint to $-\overrightarrow{\otimes}_K^{\mathbb{L}}c(-)$ in the derived category.\\
For the second part of the statement, choose $\mathcal{M}^{\bullet},\mathcal{N}^{\bullet}\in \operatorname{D}(\mathscr{A})$, and $\mathcal{V}^{\bullet}\in\operatorname{D}(LH(\widehat{\mathcal{B}}c_K))$. Then by the identity in $(\ref{equation derived adjunction constant sheaf global functions})$ we have:
 \begin{multline*}
 \Hom_{\operatorname{D}(LH(\widehat{\mathcal{B}}c_K))}(\mathcal{V}^{\bullet},R\Gamma(X,R\underline{\mathcal{H}om}_{\mathscr{A}}(\mathcal{M}^{\bullet},\mathcal{N}^{\bullet})))\\= 
 \Hom_{\operatorname{D}(\operatorname{Shv}(X,LH(\widehat{\mathcal{B}}c_K)))}(c(\mathcal{V}^{\bullet}),R\underline{\mathcal{H}om}_{\mathscr{A}}(\mathcal{M}^{\bullet},\mathcal{N}^{\bullet}))\\
 =\Hom_{\operatorname{D}(\mathscr{A})}(\mathcal{M}^{\bullet}\overrightarrow{\otimes}^{\mathbb{L}}_Kc(\mathcal{V}^{\bullet}),\mathcal{N}^{\bullet}). 
 \end{multline*}
 Hence, Yoneda's Lemma implies that $R\widetilde{\Hom}_{\mathscr{A}}(-,-)=R\Gamma(X,-)\circ R\underline{\mathcal{H}om}_{\mathscr{A}}(-,-)$.
\end{proof}
We can give a simple characterization of the $\underline{\mathcal{E}xt}_{\mathscr{A}}$ sheaves in some situations:
\begin{Lemma}\label{Lemma internal hom of flat and injective is injective}
Let $\mathcal{M}\in \operatorname{D}^-(\mathscr{A})$, and $\mathcal{I}\in \operatorname{D}(\mathscr{A})$ be homotopically injective. Then the complex:
\begin{equation*}
    \underline{\mathcal{H}om}_{I(\mathscr{A})}(I(\mathcal{M}^{\bullet}),\mathcal{I}^{\bullet})=\operatorname{Tot}_{\pi}\left(\underline{\mathcal{H}om}^{\bullet,\bullet}_{I(\mathscr{A})}(I(\mathcal{M}^{-\bullet}),\mathcal{I}^{\bullet})\right),
\end{equation*}
is a homotopically injective complex in $\operatorname{D}(\widehat{\mathcal{B}}c_K)$. 
\end{Lemma}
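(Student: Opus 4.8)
The plan is to reduce the claim to a standard fact about totalizations of $\operatorname{Hom}$-complexes against homotopically injective objects, working in the Grothendieck abelian category $\operatorname{Shv}(X, LH(\widehat{\mathcal{B}}c_K))$ (equivalently $\Mod_{LH(\widehat{\mathcal{B}}c_K)}(I(\mathscr{A}))$) where all the homological machinery is available. Since $I:\operatorname{D}(\widehat{\mathcal{B}}c_K)\to \operatorname{D}(LH(\widehat{\mathcal{B}}c_K))$ is an equivalence of triangulated categories and $I$ commutes with the formation of inner homomorphism complexes (equation $(\ref{equation extension of inner hom to left heart})$ and the identity $I(\underline{\mathcal{H}om}_{\mathscr{A}}(-,-)) = \underline{\mathcal{H}om}_{I(\mathscr{A})}(I(-),I(-))$), it suffices to show that $\underline{\mathcal{H}om}_{I(\mathscr{A})}(I(\mathcal{M}^\bullet), \mathcal{I}^\bullet)$, viewed as a complex in $\operatorname{Shv}(X, LH(\widehat{\mathcal{B}}c_K))$, is homotopically injective. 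To test homotopic injectivity, I would take an arbitrary acyclic complex $\mathcal{C}^\bullet$ in $\operatorname{Shv}(X, LH(\widehat{\mathcal{B}}c_K))$ and show that $\Hom_{\operatorname{K}(LH(\widehat{\mathcal{B}}c_K))}(\mathcal{C}^\bullet, \underline{\mathcal{H}om}_{I(\mathscr{A})}(I(\mathcal{M}^\bullet), \mathcal{I}^\bullet)) = 0$.

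The key step is the tensor–hom adjunction at the level of homotopy categories. Using the sheaf-theoretic relative adjunction of equation $(\ref{equation tensor relative hom adjunction})$ (its version in $\operatorname{Shv}(X, LH(\widehat{\mathcal{B}}c_K))$, applied termwise and then totalized as in the discussion around equation $(\ref{equation thensor-hom adjunction homotopy category})$), one has a natural isomorphism
\begin{equation*}
\Hom_{\operatorname{K}(LH(\widehat{\mathcal{B}}c_K))}\left(\mathcal{C}^\bullet, \underline{\mathcal{H}om}_{I(\mathscr{A})}(I(\mathcal{M}^\bullet), \mathcal{I}^\bullet)\right) = \Hom_{\operatorname{K}(I(\mathscr{A}))}\left(\mathcal{C}^\bullet \widetilde{\otimes}_K I(\mathcal{M}^\bullet), \mathcal{I}^\bullet\right).
\end{equation*}
Since $\mathcal{I}^\bullet$ is homotopically injective in $\operatorname{D}(\mathscr{A})$, the right-hand side vanishes as soon as $\mathcal{C}^\bullet \widetilde{\otimes}_K I(\mathcal{M}^\bullet)$ is an acyclic complex of $I(\mathscr{A})$-modules. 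This is where the boundedness hypothesis $\mathcal{M}^\bullet\in \operatorname{D}^-(\mathscr{A})$ and the flatness of the objects in the image of $I:\operatorname{Shv}(X,\Indban)\to\operatorname{Shv}(X,LH(\widehat{\mathcal{B}}c_K))$ enter: replacing $I(\mathcal{M}^\bullet)$ (up to quasi-isomorphism) by a bounded-above complex of flat objects, the functor $\mathcal{C}^\bullet \widetilde{\otimes}_K (-)$ sends the acyclic $\mathcal{C}^\bullet$ to an acyclic complex, and a routine spectral sequence / bounded-above totalization argument shows $\mathcal{C}^\bullet \widetilde{\otimes}_K I(\mathcal{M}^\bullet)$ is acyclic. (One uses here that $\mathcal{C}^\bullet \widetilde{\otimes}_K I(V)$ is acyclic for each flat $I(V)$ because $\widetilde{\otimes}_K$ is exact and preserves acyclicity in each variable against flat objects, and then passes to the totalization of a bounded-above double complex.)

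Assembling these, for any acyclic $\mathcal{C}^\bullet$ the Hom-group vanishes, so $\underline{\mathcal{H}om}_{I(\mathscr{A})}(I(\mathcal{M}^\bullet),\mathcal{I}^\bullet)$ is homotopically injective in $\operatorname{Shv}(X,LH(\widehat{\mathcal{B}}c_K))$; transporting back along the equivalence $I$ gives the statement in $\operatorname{D}(\widehat{\mathcal{B}}c_K)$. The main obstacle I anticipate is the totalization bookkeeping: one must be careful that $\operatorname{Tot}_\pi$ of the double complex $\underline{\mathcal{H}om}^{\bullet,\bullet}_{I(\mathscr{A})}(I(\mathcal{M}^{-\bullet}), \mathcal{I}^\bullet)$ genuinely computes the internal Hom in the homotopy category and that the adjunction isomorphism above is compatible with the product totalization on one side and the direct-sum totalization $\mathcal{C}^\bullet \widetilde{\otimes}_K I(\mathcal{M}^\bullet)$ on the other — this is exactly the place where one invokes $\mathcal{M}^\bullet$ bounded above (so that $I(\mathcal{M}^\bullet)$ is, up to quasi-isomorphism, bounded above with flat terms, making the relevant double complexes have the correct boundedness for $\operatorname{Tot}_\pi$ and $\operatorname{Tot}_\oplus$ to agree where needed), together with the Grothendieck-category facts cited via \cite[Tag 0A8H]{stacks-project}.
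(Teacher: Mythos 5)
Your proposal is correct and follows essentially the same route as the paper: test against an acyclic complex, move it across the homotopy-category tensor--hom adjunction, use that $\mathcal{I}^{\bullet}$ is homotopically injective, and reduce to the acyclicity of $\mathcal{C}^{\bullet}\widetilde{\otimes}_K I(\mathcal{M}^{\bullet})$ via flatness of the terms of $I(\mathcal{M}^{\bullet})$ and the bounded-above hypothesis. The only difference is that the paper makes your ``routine'' final step explicit by writing $I(\mathcal{M}^{\bullet})$ as the filtered colimit of its stupid truncations $\sigma_{\geq n}I(\mathcal{M}^{\bullet})$ (bounded complexes of flats) and using exactness of filtered colimits in the Grothendieck left heart; note also that no flat replacement of $I(\mathcal{M}^{\bullet})$ is needed (or permitted, since homotopic injectivity is not a quasi-isomorphism invariant) because its terms are already flat.
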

\begin{proof}
Let $\mathcal{N}^{\bullet}\in \operatorname{K}(I(\mathscr{A}))$ be an acyclic complex. We need to show that:
\begin{equation*}
    \Hom_{\operatorname{K}(I(\mathscr{A}))}(\mathcal{N}^{\bullet},\underline{\mathcal{H}om}_{I(\mathscr{A})}(I(\mathcal{M}^{\bullet}),\mathcal{I}^{\bullet}))=\Hom_{\operatorname{K}(I(\mathscr{A}))}(\mathcal{N}^{\bullet}\widetilde{\otimes}_KI(\mathcal{M}^{\bullet}),\mathcal{I}^{\bullet})=0.
\end{equation*}
As $\mathcal{I}^{\bullet}$ is homotopically injective, it suffices to show that $\mathcal{N}^{\bullet}\widetilde{\otimes}_KI(\mathcal{M}^{\bullet})$ is an acyclic complex. For each $n\in \mathbb{Z}$, we have the truncations: $\sigma_{\geq n}I(\mathcal{M}^{\bullet})$, given by:
\begin{equation*}
    \sigma_{\geq n}I(\mathcal{M}^{m})=I(\mathcal{M}^m) \textnormal{ for } m\geq  n, \textnormal{ and } \sigma_{\geq n}I(\mathcal{M}^{m})=0 \textnormal{ for } m< n.
\end{equation*}
For each $n\in\mathbb{Z}$, the truncation $\sigma_{\geq n}I(\mathcal{M}^{\bullet})$  is in the essential image of the functor:
\begin{equation*}
    I:\operatorname{K}^b(\mathscr{A})\rightarrow \operatorname{K}^b(I(\mathscr{A})),
\end{equation*}
and we have a filtered colimit $I(\mathcal{M}^{\bullet})=\varinjlim \sigma_{\geq n}I(\mathcal{M}^{\bullet})$. \newline
As tensor products commute with colimits, and $LH(\widehat{\mathcal{B}}c_K)$ is a Grothendieck abelian category, the following identity holds for each $m\in\mathbb{Z}$: 
\begin{equation*} \operatorname{H}^{m}\left(\mathcal{N}^{\bullet}\widetilde{\otimes}_KI(\mathcal{M}^{\bullet})\right)=\varinjlim_n\operatorname{H}^m\left(\mathcal{N}^{\bullet}\widetilde{\otimes}_K\sigma_{\geq n}I(\mathcal{M}^{\bullet})\right).   
\end{equation*}
 For each $n\in\mathbb{Z}$, the complex $\sigma_{\geq n}I(\mathcal{M}^{\bullet})$ is a bounded complex of flat objects. Hence, as $\mathcal{N}$ is acyclic, a standard spectral sequence argument shows that $\mathcal{N}^{\bullet}\widetilde{\otimes}_K\sigma_{\geq n}I(\mathcal{M}^{\bullet})$ is acyclic for each $n\in\mathbb{Z}$. In particular, $\mathcal{N}^{\bullet}\widetilde{\otimes}_KI(\mathcal{M}^{\bullet})$ is also acyclic.
\end{proof}
\begin{prop}\label{prop expression of higher ext functors}
Let $\mathcal{M}^{\bullet}\in \operatorname{D}^-(\mathscr{A})$, $\mathcal{N}^{\bullet}\in \operatorname{D}(\mathscr{A})$. For any $n\in \mathbb{Z}$ the inner $n$-th Ext sheaf  
$\underline{\mathcal{E}xt}^n_{\mathscr{A}}(\mathcal{M}^{\bullet},\mathcal{N}^{\bullet})$ is the sheafification of the presheaf:
\begin{equation*}
   U\mapsto \underline{\operatorname{Ext}}^n_{\mathscr{A}_{\vert U}}(\mathcal{M}^{\bullet}_{\vert U},\mathcal{N}^{\bullet}_{\vert U}).
\end{equation*}
\end{prop}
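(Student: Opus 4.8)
The plan is to read the statement off from the restriction formula for inner homomorphisms (Proposition \ref{prop restriction of inner hom}) together with the standard fact that, in the derived category of a Grothendieck abelian category of sheaves, the $n$-th cohomology sheaf of a complex is the sheafification of the presheaf of $n$-th hypercohomology groups. So the proof is essentially a chain of identifications, with no hard estimate involved.

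First I would set $\mathcal{F}^{\bullet}:=R\underline{\mathcal{H}om}_{\mathscr{A}}(\mathcal{M}^{\bullet},\mathcal{N}^{\bullet})$, an object of $\operatorname{D}(\operatorname{Shv}(X,LH(\widehat{\mathcal{B}}c_K)))$ — the derived category of a Grothendieck abelian category — so that by definition $\underline{\mathcal{E}xt}^{n}_{\mathscr{A}}(\mathcal{M}^{\bullet},\mathcal{N}^{\bullet})=\operatorname{H}^{n}(\mathcal{F}^{\bullet})$. Choosing a homotopically injective resolution $\mathcal{F}^{\bullet}\to\mathcal{I}^{\bullet}$ in this category, the cohomology sheaf $\operatorname{H}^{n}(\mathcal{F}^{\bullet})=\operatorname{H}^{n}(\mathcal{I}^{\bullet})$ is, directly from the way kernels and cokernels are formed in a category of sheaves, the sheafification of the presheaf $U\mapsto\operatorname{H}^{n}(\Gamma(U,\mathcal{I}^{\bullet}))$. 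By Proposition \ref{prop extension by zero}$(iii)$ the restriction $\mathcal{I}^{\bullet}_{\vert U}$ is again homotopically injective, so $\operatorname{H}^{n}(\Gamma(U,\mathcal{I}^{\bullet}))=\operatorname{H}^{n}(R\Gamma(U,\mathcal{F}^{\bullet}_{\vert U}))$.

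It then remains to identify $\operatorname{H}^{n}(R\Gamma(U,\mathcal{F}^{\bullet}_{\vert U}))$ with $\underline{\operatorname{Ext}}^{n}_{\mathscr{A}_{\vert U}}(\mathcal{M}^{\bullet}_{\vert U},\mathcal{N}^{\bullet}_{\vert U})$. By Proposition \ref{prop restriction of inner hom} one has $\mathcal{F}^{\bullet}_{\vert U}=R\underline{\mathcal{H}om}_{\mathscr{A}_{\vert U}}(\mathcal{M}^{\bullet}_{\vert U},\mathcal{N}^{\bullet}_{\vert U})$, and applying $R\Gamma(U,-)$ gives exactly $R\underline{\Hom}_{\mathscr{A}_{\vert U}}(\mathcal{M}^{\bullet}_{\vert U},\mathcal{N}^{\bullet}_{\vert U})$ as in the definition preceding the statement; taking $\operatorname{H}^{n}$ yields $\underline{\operatorname{Ext}}^{n}_{\mathscr{A}_{\vert U}}(\mathcal{M}^{\bullet}_{\vert U},\mathcal{N}^{\bullet}_{\vert U})$. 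Chaining the identifications shows $\underline{\mathcal{E}xt}^{n}_{\mathscr{A}}(\mathcal{M}^{\bullet},\mathcal{N}^{\bullet})$ is the sheafification of $U\mapsto\underline{\operatorname{Ext}}^{n}_{\mathscr{A}_{\vert U}}(\mathcal{M}^{\bullet}_{\vert U},\mathcal{N}^{\bullet}_{\vert U})$. If one prefers a fully explicit model, the hypothesis $\mathcal{M}^{\bullet}\in\operatorname{D}^{-}(\mathscr{A})$ lets one instead represent $\mathcal{F}^{\bullet}$ by the totalization $\operatorname{Tot}_{\pi}(\underline{\mathcal{H}om}^{\bullet,\bullet}_{I(\mathscr{A})}(I(\widetilde{\mathcal{M}}^{-\bullet}),\mathcal{I}^{\bullet}))$, which is homotopically injective by Lemma \ref{Lemma internal hom of flat and injective is injective}, and run the same argument with this complex in place of $\mathcal{I}^{\bullet}$.

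The genuinely delicate point — the one I would be most careful about — is the commutation of $R\Gamma(U,-)$ with restriction to $U$ applied to the inner hom complex, i.e. that $\mathcal{I}^{\bullet}_{\vert U}$ (resp. the totalized model) still computes $R\Gamma(U,-)$ of $\mathcal{F}^{\bullet}$. This is precisely what Proposition \ref{prop extension by zero}$(iii)$ (preservation of homotopic injectivity under restriction) and Proposition \ref{prop restriction of inner hom} (restriction of $R\underline{\mathcal{H}om}$) are there to supply; everything else is the routine bookkeeping of totalizations, restriction functors, and sheafification.
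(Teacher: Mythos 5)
Your argument is correct and follows essentially the same route as the paper: the paper also reduces to a homotopically injective model of the inner hom complex (obtained by resolving $\mathcal{N}^{\bullet}$ and invoking Lemma \ref{Lemma internal hom of flat and injective is injective}, exactly your ``fully explicit model'' variant), notes that sections of this model compute $R\Gamma(U,-)$, and identifies the resulting cohomology presheaf with $U\mapsto \underline{\operatorname{Ext}}^n_{\mathscr{A}_{\vert U}}(\mathcal{M}^{\bullet}_{\vert U},\mathcal{N}^{\bullet}_{\vert U})$, which is essentially definitional. Your primary variant (resolving the whole complex $\mathcal{F}^{\bullet}$ at once) is a harmless repackaging, and your appeals to Propositions \ref{prop extension by zero}$(iii)$ and \ref{prop restriction of inner hom} cover the same delicate points the paper handles.
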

\begin{proof}
As $\Mod_{LH(\widehat{\mathcal{B}}c_K)}(I(\mathscr{A}))$ is a Grothendieck abelian category, there is a homotopically injective object $\mathcal{I}^{\bullet}$ with a quasi-isomorphism $I(\mathcal{N}^{\bullet})\rightarrow \mathcal{I}^{\bullet}$. By $(\ref{equation explicit computation of RHom})$
it follows that $R\underline{\mathcal{H}om}_{\mathscr{A}}(\mathcal{M}^{\bullet},\mathcal{N}^{\bullet})$
may be represented in $\operatorname{D}(\operatorname{Shv}(X,LH(\widehat{\mathcal{B}}c_K)))$ by the following complex:
\begin{equation*}
    R\underline{\mathcal{H}om}_{\mathscr{A}}(\mathcal{M}^{\bullet},\mathcal{N}^{\bullet})=\underline{\mathcal{H}om}_{I(\mathscr{A})}(I(\mathcal{M}^{\bullet}),\mathcal{I}^{\bullet})=\operatorname{Tot}_{\pi}\left(\Hom^{\bullet,\bullet}_{I(\mathscr{A})}(I(\mathcal{M}^{-\bullet}),\mathcal{I}^{\bullet})\right).
\end{equation*}
By Lemma \ref{Lemma internal hom of flat and injective is injective}, $\underline{\mathcal{H}om}_{I(\mathscr{A})}(I(\mathcal{M}^{\bullet}),\mathcal{I}^{\bullet})$ is a homotopically injective complex. 
Thus, for each admissible open subspace $U\subset X$ we have:
\begin{equation*}
    R\Gamma(U,\underline{\mathcal{H}om}_{\mathscr{A}}(\mathcal{M}^{\bullet},\mathcal{N}^{\bullet}))=\Gamma(U,\underline{\mathcal{H}om}_{I(\mathscr{A})}(I(\mathcal{M}^{\bullet}),\mathcal{I}^{\bullet})).
\end{equation*}
Fix some integer $n\in\mathbb{Z}$. Then $\underline{\mathcal{E}xt}^n_{\mathscr{A}}(\mathcal{M}^{\bullet},\mathcal{N}^{\bullet})$ is the sheafification of the $n$-th cohomology presheaf of $R\underline{\mathcal{H}om}_{\mathscr{A}}(\mathcal{M}^{\bullet},\mathcal{N}^{\bullet})$. In particular, it is the sheafification of the presheaf defined for each admissible open $U\subset X$ by:
\begin{multline*}
\operatorname{H}^n\left(\Gamma(U,\underline{\mathcal{H}om}_{I(\mathscr{A})}(I(\mathcal{M}^{\bullet}),\mathcal{I}^{\bullet}) \right)=\operatorname{H}^n\left(R\Gamma(U,\underline{\mathcal{H}om}_{I(\mathscr{A})}(I(\mathcal{M}^{\bullet}),\mathcal{I}^{\bullet}) \right)\\
=\operatorname{H}^n\left(R\Gamma(U,R\underline{\mathcal{H}om}_{\mathscr{A}}(\mathcal{M}^{\bullet},\mathcal{N}^{\bullet}) \right)=\underline{\operatorname{Ext}}_{\mathscr{A}_{\vert U}}(\mathcal{M}_{\vert U},\mathcal{N}_{\vert U}),
\end{multline*}
and this is the identity we wanted to show.
\end{proof}

\subsection{Base-change along a bimodule}\label{section base-change bimodule}
In the next two sections, we will develop some technical tools needed for the proof of the version of Kashiwara's equivalence presented in Proposition \ref{prop derived internal hom and kashiwara}.\bigskip

Let $\mathscr{B}$ be another sheaf of Ind-Banach algebras on $X$, and let $\mathcal{S}$ be a complete $(\mathscr{B},\mathscr{A})$-bimodule. That is, a sheaf $\mathcal{S}\in \operatorname{Shv}(X,\Indban)$ equipped with actions: 
\begin{equation*}
    \mathscr{B}\overrightarrow{\otimes}_K\mathcal{S}\rightarrow \mathcal{S}, \quad \mathcal{S}\overrightarrow{\otimes}_K\mathscr{A}\rightarrow \mathcal{S},
\end{equation*}
satisfying that the following diagram is commutative:
\begin{equation*}
\begin{tikzcd}
\mathscr{B}\overrightarrow{\otimes}_K\mathcal{S}\overrightarrow{\otimes}_K\mathscr{A} \arrow[r] \arrow[d] & \mathscr{B}\overrightarrow{\otimes}_K\mathcal{S} \arrow[d] \\
\mathcal{S}\overrightarrow{\otimes}_K\mathscr{A} \arrow[r]                                         & \mathcal{S}                                        
\end{tikzcd}  
\end{equation*}
Our interest in bimodules stems from the fact that every $(\mathscr{B},\mathscr{A})$-bimodule induces an adjunction:
\begin{equation*}
    \Mod_{\Indban}(\mathscr{A})\leftrightarrows \Mod_{\Indban}(\mathscr{B}).
\end{equation*}
To wit, we have an extension of scalars functor:
\begin{equation*}
    \mathcal{S}\overrightarrow{\otimes}_{\mathscr{A}}-:\Mod_{\Indban}(\mathscr{A})\rightarrow \Mod_{\Indban}(\mathscr{B}),
\end{equation*}
given by sending a left $\mathscr{A}$-module $\mathcal{M}$ to the sheaf  of Ind-Banach spaces $\mathcal{S}\overrightarrow{\otimes}_{\mathscr{A}}\mathcal{M}$ with the structure of a $\mathscr{B}$-module given by the formula:
\begin{equation*}
\mathscr{B}\overrightarrow{\otimes}_K\left(\mathcal{S}\overrightarrow{\otimes}_{\mathscr{A}}\mathcal{M} \right)=\left(\mathscr{B}\overrightarrow{\otimes}_K\mathcal{S}\right)\overrightarrow{\otimes}_{\mathscr{A}}\mathcal{M}\rightarrow  \mathcal{S}\overrightarrow{\otimes}_{\mathscr{A}}\mathcal{M}.
\end{equation*}
This functor is often times called extension of scalars with respect to $\mathcal{S}$. In the other direction, we have the restriction of scalars functor:
\begin{equation*}
    \underline{\mathcal{H}om}_{\mathscr{B}}(\mathcal{S},-):\Mod_{\Indban}(\mathscr{B})\rightarrow \Mod_{\Indban}(\mathscr{A}),
\end{equation*}
is defined by sending a left $\mathscr{B}$-module $\mathcal{N}$ to the sheaf of Ind-Banach spaces $\underline{\mathcal{H}om}_{\mathscr{B}}(\mathcal{S},\mathcal{N})$ with the $\mathscr{A}$-module structure given by:
\begin{equation*}
    \mathscr{A}\overrightarrow{\otimes}_K\underline{\mathcal{H}om}_{\mathscr{B}}(\mathcal{S},\mathcal{N})\rightarrow \underline{\mathcal{H}om}_{\mathscr{B}}(\mathcal{S}\overrightarrow{\otimes}_K\mathscr{A},\mathcal{N})\rightarrow \underline{\mathcal{H}om}_{\mathscr{B}}(\mathcal{S},\mathcal{N}),
\end{equation*}
as in $(\ref{equation map in tensor hom adjunction})$. This functor is denoted restriction of scalars with respect to $\mathcal{S}$.\\
If $\mathscr{A}=K$, we recover 
the extension and restriction of scalars adjunction $(\ref{equation tensor relative hom adjunction})$. Furthermore, this construction only uses the properties of symmetric monoidal categories. In particular, $I(\mathcal{S})$ is a $(I(\mathscr{B}),I(\mathscr{A}))$-bimodule, and we may repeat this constructions in the left hearts to obtain a pair of functors $I(\mathcal{S})\widetilde{\otimes}_{I(\mathscr{A})}-$, $\underline{\mathcal{H}om}_{I(\mathscr{B})}(I(\mathcal{S}),-)$. Let us now investigate the properties of these functors:
\begin{Lemma}\label{lemma adjunction extension and restriction along a bimodule}
Extension and restriction of scalars along $\mathcal{S}$ fits into an adjunction:
\begin{equation*}   \mathcal{S}\overrightarrow{\otimes}_{\mathscr{A}}-:\Mod_{\Indban}(\mathscr{A})\leftrightarrows \Mod_{\Indban}(\mathscr{B}):\underline{\mathcal{H}om}_{\mathscr{B}}(\mathcal{S},-).
\end{equation*}
In particular, for $\mathcal{M}\in \Mod_{\Indban}(\mathscr{A})$, and $\mathcal{N}\in \Mod_{\Indban}(\mathscr{B})$, we have:
\begin{equation*}
  \Hom_{\mathscr{B}}(\mathcal{S}\overrightarrow{\otimes}_{\mathscr{A}}\mathcal{M},\mathcal{N})=\Hom_{\mathscr{A}}(\mathcal{M},\underline{\mathcal{H}om}_{\mathscr{B}}(\mathcal{S},\mathcal{N})).
\end{equation*}
The equivalent adjunction holds for $I(\mathcal{S})\widetilde{\otimes}_{I(\mathscr{A})}-$ and $\underline{\mathcal{H}om}_{I(\mathscr{B})}(I(\mathcal{S}),-)$.
\end{Lemma}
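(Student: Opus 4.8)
The plan is to deduce this adjunction from the tensor–hom adjunctions already established for sheaves of Ind-Banach modules, by recognizing the two functors as composites of known adjoint pairs. First I would note that, as an object of $\operatorname{Shv}(X,\Indban)$, the bimodule $\mathcal{S}$ carries simultaneously a left $\mathscr{B}$-module structure and a right $\mathscr{A}$-module structure, and that the extension-of-scalars functor $\mathcal{S}\overrightarrow{\otimes}_{\mathscr{A}}-$ is exactly the functor built from the relative tensor product $-\overrightarrow{\otimes}_{\mathscr{A}}-$ of Section \ref{Section tensor-hom in bornological spaces} (extended to sheaves), equipped with the $\mathscr{B}$-action coming from the left $\mathscr{B}$-structure on $\mathcal{S}$; dually, $\underline{\mathcal{H}om}_{\mathscr{B}}(\mathcal{S},-)$ is the functor built from $\underline{\mathcal{H}om}_{\mathscr{B}}(-,-)$ with the $\mathscr{A}$-action coming from the right $\mathscr{A}$-structure on $\mathcal{S}$, precisely as in the construction of the co-extension of scalars functor around equation $(\ref{equation map in tensor hom adjunction})$.

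The core of the argument is then the chain of natural bijections, for $\mathcal{M}\in\Mod_{\Indban}(\mathscr{A})$ and $\mathcal{N}\in\Mod_{\Indban}(\mathscr{B})$,
\begin{equation*}
\Hom_{\mathscr{B}}(\mathcal{S}\overrightarrow{\otimes}_{\mathscr{A}}\mathcal{M},\mathcal{N})
=\Hom_{\mathscr{A}}(\mathcal{M},\underline{\mathcal{H}om}_{\mathscr{B}}(\mathcal{S},\mathcal{N})).
\end{equation*}
To produce it, I would first apply the sheaf-level version of the relative tensor–hom adjunction $(\ref{equation relative tensor hom adjunction})$ to identify $\Hom_{\operatorname{Shv}(X,\Indban)}(\mathcal{S}\overrightarrow{\otimes}_{\mathscr{A}}\mathcal{M},\mathcal{V})$ with $\Hom_{\mathscr{A}}(\mathcal{M},\underline{\mathcal{H}om}_{\operatorname{Shv}(X,\Indban)}(\mathcal{S},\mathcal{V}))$ for any $\mathcal{V}\in\operatorname{Shv}(X,\Indban)$, and then cut down to $\mathscr{B}$-linear maps on the left, respectively $\mathscr{A}$-linear maps into the inner hom on the right, by the equalizer presentations of $\Hom_{\mathscr{B}}(-,-)$ and of $\underline{\mathcal{H}om}_{\mathscr{B}}(\mathcal{S},-)$. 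Concretely, a $\mathscr{B}$-linear map out of $\mathcal{S}\overrightarrow{\otimes}_{\mathscr{A}}\mathcal{M}$ corresponds under the base adjunction to an $\mathscr{A}$-linear map $\mathcal{M}\to\underline{\mathcal{H}om}_{\operatorname{Shv}(X,\Indban)}(\mathcal{S},\mathcal{N})$ whose image lands in the subsheaf $\underline{\mathcal{H}om}_{\mathscr{B}}(\mathcal{S},\mathcal{N})$; chasing the two sides of the defining equalizers shows these conditions match up, which is a formal but slightly tedious diagram chase internal to the symmetric monoidal category $\operatorname{Shv}(X,\Indban)$. Naturality in both variables is then automatic from naturality of the base adjunction. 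Finally, the statement for the left-heart versions follows by the identical argument carried out in $\operatorname{Shv}(X,LH(\widehat{\mathcal{B}}c_K))$, using that $I$ is strong symmetric monoidal on Ind-Banach sheaves, commutes with the relevant tensor products and inner homs, and that $I(\mathcal{S})$ is an $(I(\mathscr{B}),I(\mathscr{A}))$-bimodule.

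I expect the main obstacle to be purely bookkeeping: verifying that the module structures on $\mathcal{S}\overrightarrow{\otimes}_{\mathscr{A}}\mathcal{M}$ and on $\underline{\mathcal{H}om}_{\mathscr{B}}(\mathcal{S},\mathcal{N})$ that were written down explicitly in the text are precisely the ones for which the bijection above is $\mathscr{B}$-linear, respectively $\mathscr{A}$-linear, under the base adjunction. This amounts to checking commutativity of a couple of diagrams expressing compatibility of the $\mathscr{B}$-action (coming from $\mathcal{S}$'s left structure) with the relative tensor product, and of the $\mathscr{A}$-action (coming from $\mathcal{S}$'s right structure) with the inner hom — both of which reduce, after applying the closed symmetric monoidal axioms, to the associativity of the $(\mathscr{B},\mathscr{A})$-bimodule structure on $\mathcal{S}$, i.e. to the commutative square in the definition of a bimodule. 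Since no genuinely new homological input is needed — everything is a consequence of the closed symmetric monoidal structure of $\operatorname{Shv}(X,\Indban)$ and the adjunctions already recorded — the proof is short, and I would likely present it as: "This follows from the adjunctions $(\ref{equation relative tensor hom adjunction})$ and $(\ref{equation tensor relative hom adjunction})$ together with the equalizer descriptions of $\Hom_{\mathscr{B}}$ and $\underline{\mathcal{H}om}_{\mathscr{B}}$; the left-heart statement follows by applying $I$."
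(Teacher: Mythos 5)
Your proposal is correct and takes essentially the same route as the paper: both pass through the base adjunction $(\ref{equation relative tensor hom adjunction})$ to convert a map out of $\mathcal{S}\overrightarrow{\otimes}_{\mathscr{A}}\mathcal{M}$ into an $\mathscr{A}$-linear map into $\underline{\mathcal{H}om}_{\Indban}(\mathcal{S},\mathcal{N})$, and then identify the $\mathscr{B}$-linearity condition (the commutative square) with the condition of factoring through the equalizer defining $\underline{\mathcal{H}om}_{\mathscr{B}}(\mathcal{S},-)$, the left-heart case being formally identical. The paper simply carries out explicitly the ``tedious diagram chase'' you defer, by writing out the two maps of the equalizer and matching them with the upper and lower halves of the $\mathscr{B}$-linearity square.
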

\begin{proof}
Choose $f\in \Hom_{\mathscr{B}}(\mathcal{S}\overrightarrow{\otimes}_{\mathscr{A}}\mathcal{M},\mathcal{N})$. This is equivalent to a map  $f:\mathcal{S}\overrightarrow{\otimes}_{\mathscr{A}}\mathcal{M}\rightarrow\mathcal{N}$ such that the following diagram of sheaves of Ind-Banach spaces commutes:
\begin{equation}\label{equation first diagram adjunction relative extension of scalars}
\begin{tikzcd}
\mathscr{B}\overrightarrow{\otimes}_K\left(\mathcal{S}\overrightarrow{\otimes}_{\mathscr{A}}\mathcal{M}\right) \arrow[d] \arrow[r, "\operatorname{Id}\overrightarrow{\otimes}_Kf"] & \mathscr{B}\overrightarrow{\otimes}_K\mathcal{N} \arrow[d] \\
\mathcal{S}\overrightarrow{\otimes}_{\mathscr{A}}\mathcal{M} \arrow[r, "f"]                                                                                          & \mathcal{N}                                        
\end{tikzcd}
\end{equation}
By the sheaf version of $(\ref{equation relative tensor hom adjunction})$, the map  $f:\mathcal{S}\overrightarrow{\otimes}_{\mathscr{A}}\mathcal{M}\rightarrow\mathcal{N}$ is equivalent to an $\mathscr{A}$-linear map $g:\mathcal{M}\rightarrow \underline{\mathcal{H}om}_{\Indban}(\mathcal{S},\mathcal{N})$. On the other hand, by construction, we have:
\begin{equation*}
   \underline{\mathcal{H}om}_{\mathscr{B}}(\mathcal{S},\mathcal{N})=\operatorname{Eq}\left(\underline{\mathcal{H}om}_{\Indban}(\mathcal{S},\mathcal{N})\rightrightarrows \underline{\mathcal{H}om}_{\Indban}(\mathscr{B}\overrightarrow{\otimes}_K\mathcal{S},\mathcal{N}) \right).
\end{equation*}
Notice that the fact that $\mathcal{S}$ is a $(\mathscr{B},\mathscr{A})$-bimodule implies that the maps in the equalizer are $\mathscr{A}$-linear. Hence, $f:\mathcal{S}\overrightarrow{\otimes}_{\mathscr{A}}\mathcal{M}\rightarrow\mathcal{N}$ induces an $\mathscr{A}$-linear map $\mathcal{M}\rightarrow \underline{\mathcal{H}om}_{\mathscr{B}}(\mathcal{S},\mathcal{N})$ if and only if $g$ factors through the equalizer. Thus, we need to  describe the maps in the equalizer.\\ 
The first map sends a morphism $\varphi\in \underline{\mathcal{H}om}_{\Indban}(\mathcal{S},\mathcal{N})$ to the composition:
\begin{equation*}
   \mathscr{B}\overrightarrow{\otimes}_K\mathcal{S}\rightarrow\mathcal{S}\xrightarrow[]{\varphi} \mathcal{N}, 
\end{equation*}
where the first map is the action of $\mathscr{B}$ on $\mathcal{S}$. By functoriality of $(\ref{equation relative tensor hom adjunction})$, it follows that the composition:
\begin{equation*}
    \mathcal{M}\rightarrow \underline{\mathcal{H}om}_{\Indban}(\mathcal{S},\mathcal{N})\rightarrow \underline{\mathcal{H}om}_{\Indban}(\mathscr{B}\overrightarrow{\otimes}_K\mathcal{S},\mathcal{N}),
\end{equation*}
is obtained via the adjunction $(\ref{equation relative tensor hom adjunction})$ from the following morphism:
\begin{equation*}
\left(\mathscr{B}\overrightarrow{\otimes}_K\mathcal{S}\right)\overrightarrow{\otimes}_{\mathscr{A}}\mathcal{M}\rightarrow \mathcal{S}\overrightarrow{\otimes}_{\mathscr{A}}\mathcal{M} \rightarrow \mathcal{N}, 
\end{equation*}
where the first map is given by the action of $\mathscr{B}$ on $\mathcal{S}$. Using the canonical isomorphism of sheaves:
\begin{equation*}
\left(\mathscr{B}\overrightarrow{\otimes}_K\mathcal{S}\right)\overrightarrow{\otimes}_{\mathscr{A}}\mathcal{M}\cong \mathscr{B}\overrightarrow{\otimes}_K\left(\mathcal{S}\overrightarrow{\otimes}_{\mathscr{A}}\mathcal{M}\right), 
\end{equation*}
it follows that this map is the lower part of the diagram $(\ref{equation first diagram adjunction relative extension of scalars})$.\\
The second map of the equalizer sends a morphism $\varphi\in \underline{\mathcal{H}om}_{\widehat{\mathcal{B}}c_K}(\mathcal{S},\mathcal{N})$ to the composition:
\begin{equation*}
   \mathscr{B}\overrightarrow{\otimes}_K\mathcal{S}\xrightarrow[]{\operatorname{Id}\overrightarrow{\otimes}_K\varphi}\mathscr{B}\overrightarrow{\otimes}_K\mathcal{N}\rightarrow \mathcal{N}. 
\end{equation*}
Thus, the composition $\mathcal{M}\rightarrow \underline{\mathcal{H}om}_{\Indban}(\mathcal{S},\mathcal{N})\rightarrow \underline{\mathcal{H}om}_{\Indban}(\mathscr{B}\overrightarrow{\otimes}_K\mathcal{S},\mathcal{N})$ is readily seen to be induced by the upper part of $(\ref{equation first diagram adjunction relative extension of scalars})$.
Hence, we get a map $\mathcal{M}\rightarrow \underline{\mathcal{H}om}_{\mathscr{B}}(\mathcal{S},\mathcal{N})$ as wanted. The construction of the map in the other direction is analogous.
\end{proof}
\begin{Lemma}\label{Lemma derived adjunction relative extension and restriction}
 Let $\mathcal{M}\in LH(\Mod_{\Indban}(\mathscr{A}))$, and $\mathcal{N}\in LH(\Mod_{\Indban}(\mathscr{B}))$. We have a functorial canonical isomorphism in $\operatorname{Shv}(X,LH(\widehat{\mathcal{B}}c_K))$:
 \begin{equation*}
     \underline{\mathcal{H}om}_{I(\mathscr{B})}(I(\mathcal{S})\widetilde{\otimes}_{I(\mathscr{A})}\mathcal{M},\mathcal{N})=\underline{\mathcal{H}om}_{I(\mathscr{A})}(\mathcal{M},\underline{\mathcal{H}om}_{I(\mathscr{B})}(I(\mathcal{S}),\mathcal{N})).
 \end{equation*}
\end{Lemma}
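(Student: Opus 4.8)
The plan is to reduce the statement to the already-established \emph{non-internal} adjunction of Lemma \ref{lemma adjunction extension and restriction along a bimodule}, upgraded to sheaves, by a Yoneda-type argument carried out on admissible open subspaces together with the fact that all the functors involved restrict to open subspaces. First I would recall that, since $I:\operatorname{Shv}(X,\Indban)\to\operatorname{Shv}(X,LH(\widehat{\mathcal{B}}c_K))$ commutes with the tensor product $\overrightarrow{\otimes}_{\mathscr{A}}$ and with the inner hom $\underline{\mathcal{H}om}_{\mathscr{A}}(-,-)$ (equations $(\ref{equation extension of closed symmetric monoidal category structure})$, $(\ref{equation extension of closed symmetric monoidal category structure 2})$ and the discussion after Proposition \ref{prop closed symmetric monoidal structure on sheaves of Indban}), both sides of the claimed identity make sense as objects of $\operatorname{Shv}(X,LH(\widehat{\mathcal{B}}c_K))$, and it suffices to produce a functorial isomorphism between them; moreover, by $\Mod_{LH(\widehat{\mathcal{B}}c_K)}(I(\mathscr{A}))=LH(\Mod_{\Indban}(\mathscr{A}))$ it is enough to treat objects in the image of $I$, and then to treat objects in $\Mod_{\Indban}(\mathscr{A})$, $\Mod_{\Indban}(\mathscr{B})$ themselves, applying $I$ at the end; all the identities needed to transfer between the Ind-Banach level and the left-heart level are the ones just cited.

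The main step is therefore to establish, for $\mathcal{M}\in\Mod_{\Indban}(\mathscr{A})$ and $\mathcal{N}\in\Mod_{\Indban}(\mathscr{B})$, a functorial isomorphism
\begin{equation*}
\underline{\mathcal{H}om}_{\mathscr{B}}(\mathcal{S}\overrightarrow{\otimes}_{\mathscr{A}}\mathcal{M},\mathcal{N})\;\cong\;\underline{\mathcal{H}om}_{\mathscr{A}}(\mathcal{M},\underline{\mathcal{H}om}_{\mathscr{B}}(\mathcal{S},\mathcal{N}))
\end{equation*}
in $\operatorname{Shv}(X,\Indban)$. I would check this section-wise: evaluating both sheaves at an admissible open $U\subset X$ and unwinding the definition $(\ref{equation definition inner homomorphism})$ of the inner hom sheaf as an equalizer over $V\subset U$, both sides become equalizers of products indexed by pairs $W\subset V\subset U$ of expressions of the form $\underline{\Hom}_{\mathscr{B}_{\vert V}}(\mathcal{S}_{\vert V}\overrightarrow{\otimes}_{\mathscr{A}_{\vert V}}\mathcal{M}_{\vert V},\mathcal{N}_{\vert V})$, respectively $\underline{\Hom}_{\mathscr{A}_{\vert V}}(\mathcal{M}_{\vert V},\underline{\mathcal{H}om}_{\mathscr{B}_{\vert V}}(\mathcal{S}_{\vert V},\mathcal{N}_{\vert V}))$. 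On each such $V$, the adjunction of Lemma \ref{lemma adjunction extension and restriction along a bimodule} applied to the restricted algebras $\mathscr{A}_{\vert V}$, $\mathscr{B}_{\vert V}$ and the restricted bimodule $\mathcal{S}_{\vert V}$ gives a natural bijection; one then has to verify that these bijections are compatible with the restriction maps defining the two equalizers, so that they glue to an isomorphism of the equalizer objects, hence of sections, and hence — since everything is natural in $U$ — of sheaves. Functoriality in $\mathcal{M}$ and $\mathcal{N}$ is inherited from the functoriality already recorded in Lemma \ref{lemma adjunction extension and restriction along a bimodule}.

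The step I expect to be the only genuine obstacle is the bookkeeping in the previous paragraph: carefully matching the two equalizer presentations and checking that the section-wise adjunction isomorphisms are compatible with \emph{all} the structure maps (the restriction maps $\mathcal{F}(V)\to\mathcal{F}(W)$, the $\mathscr{B}$- and $\mathscr{A}$-actions appearing in the equalizers, and the coequalizer defining $\mathcal{S}\overrightarrow{\otimes}_{\mathscr{A}}\mathcal{M}$). This is essentially the same diagram chase as in the proof of Lemma \ref{lemma adjunction extension and restriction along a bimodule}, performed uniformly in $V$; a cleaner route, which I would state as an alternative, is to observe that $\mathcal{S}\overrightarrow{\otimes}_{\mathscr{A}}-$ and $\underline{\mathcal{H}om}_{\mathscr{B}}(\mathcal{S},-)$ commute with restriction to admissible opens (immediate from the constructions, compare Proposition \ref{prop restriction of inner hom}), so that the desired internal-hom identity is formally equivalent — via the defining equalizer $(\ref{equation definition inner homomorphism})$ and the fact that $\operatorname{Eq}$ commutes with $\operatorname{Eq}$ — to the collection of section-level adjunctions over all $V$, after which no further chase is needed. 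Finally, applying $I$ and invoking the commutation of $I$ with $\overrightarrow{\otimes}_{\mathscr{A}}$, $\underline{\mathcal{H}om}_{\mathscr{B}}$, $\underline{\mathcal{H}om}_{\mathscr{A}}$ yields the stated identity at the level of $\Mod_{LH(\widehat{\mathcal{B}}c_K)}(I(\mathscr{A}))$-objects in $\operatorname{Shv}(X,LH(\widehat{\mathcal{B}}c_K))$, completing the proof.
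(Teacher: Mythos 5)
Your proposal diverges from the paper's proof and has a real gap at its very first step. You claim that, because $\Mod_{LH(\widehat{\mathcal{B}}c_K)}(I(\mathscr{A}))=LH(\Mod_{\Indban}(\mathscr{A}))$, it suffices to treat objects in the essential image of $I$ and then apply $I$ at the end. But the lemma is stated for arbitrary $\mathcal{M}$ and $\mathcal{N}$ in the left hearts, and these are not all of the form $I(\mathcal{M}_0)$, $I(\mathcal{N}_0)$: the general object is only a cokernel of a map $I(U)\to I(V)$. For the variable $\mathcal{M}$ one could in principle salvage this, since $-\widetilde{\otimes}_{I(\mathscr{A})}-$ is right exact and $\underline{\mathcal{H}om}$ is left exact in its first (contravariant) argument, so both sides turn the presentation $I(U)\to I(V)\to\mathcal{M}\to 0$ into a kernel diagram. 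But for $\mathcal{N}$ the reduction fails outright: $\underline{\mathcal{H}om}_{I(\mathscr{B})}(-,-)$ preserves kernels in the second variable, not cokernels, so presenting $\mathcal{N}$ as a quotient of an object in the image of $I$ gives you no way to compare the two sides. Nothing in the paper provides an embedding of an arbitrary left-heart object into one coming from $\Indban$, so this step is unjustified as written. Separately, the section-wise equalizer comparison you defer to "bookkeeping" is precisely the content of the statement at the Ind-Banach level and is not actually carried out.

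The paper avoids all of this with a three-line Yoneda argument carried out directly in $\operatorname{Shv}(X,LH(\widehat{\mathcal{B}}c_K))$ for arbitrary left-heart objects: for a test object $\mathcal{T}$, one computes
\begin{equation*}
\Hom(\mathcal{T},\underline{\mathcal{H}om}_{I(\mathscr{B})}(I(\mathcal{S})\widetilde{\otimes}_{I(\mathscr{A})}\mathcal{M},\mathcal{N}))
=\Hom_{I(\mathscr{B})}((I(\mathcal{S})\widetilde{\otimes}_{I(\mathscr{A})}\mathcal{M})\widetilde{\otimes}_K\mathcal{T},\mathcal{N}),
\end{equation*}
using the left-heart version of the adjunction $(\ref{equation tensor relative hom adjunction})$, then moves $\mathcal{T}$ across by associativity to get $I(\mathcal{S})\widetilde{\otimes}_{I(\mathscr{A})}(\mathcal{M}\widetilde{\otimes}_K\mathcal{T})$, applies the external adjunction of Lemma \ref{lemma adjunction extension and restriction along a bimodule}, and applies $(\ref{equation tensor relative hom adjunction})$ once more. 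This converts the internal statement into the already-proved external one with no reduction to the image of $I$ and no section-wise gluing. I would recommend you replace your argument by this one; your section-wise route could be completed for objects of the form $I(\mathcal{M}_0)$, $I(\mathcal{N}_0)$, but it does not prove the lemma in the generality stated.
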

\begin{proof}
We prove this by Yoneda. For simplicity, let $\mathcal{C}:=\operatorname{Shv}(X,LH(\widehat{\mathcal{B}}c_K))$, and let  $\mathcal{T}\in \mathcal{C}$. We have the following chain of isomorphisms of $K$-vector spaces:
\begin{align*}
\Hom_{\mathcal{C}}(\mathcal{T},\underline{\mathcal{H}om}_{I(\mathscr{B})}(I(\mathcal{S})\widetilde{\otimes}_{I(\mathscr{A})}\mathcal{M},\mathcal{N}))=\Hom_{I(\mathscr{B})}(\left(I(\mathcal{S})\widetilde{\otimes}_{I(\mathscr{A})}\mathcal{M}\right)\widetilde{\otimes}_K\mathcal{T},\mathcal{N})\\
=\Hom_{I(\mathscr{B})}(I(\mathcal{S})\widetilde{\otimes}_{I(\mathscr{A})}\left(\mathcal{M}\widetilde{\otimes}_K\mathcal{T}\right),\mathcal{N})&\\=\Hom_{I(\mathscr{A})}(\mathcal{M}\widetilde{\otimes}_K\mathcal{T},\underline{\mathcal{H}om}_{I(\mathscr{B})}(I(\mathcal{S}),\mathcal{N}))&\\
=\Hom_{\mathcal{C}}(\mathcal{T},\underline{\mathcal{H}om}_{I(\mathscr{A})}(\mathcal{M},\underline{\mathcal{H}om}_{I(\mathscr{B})}(I(\mathcal{S},\mathcal{N})))&.
\end{align*}
Thus the isomorphism follows by Yoneda' Lemma.
\end{proof}
We will now deal with the derived functors associated to this adjunction. Instead of dealing with the general case, we will assume that $\mathcal{S}$ satisfies that for any $\mathcal{M}\in \Mod_{\Indban}(\mathscr{A})$ we have:
\begin{equation*}
\mathcal{S}\overrightarrow{\otimes}^{\mathbb{L}}_{\mathscr{A}}\mathcal{M}=I(\mathcal{S})\widetilde{\otimes}_{I(\mathscr{A})}I(\mathcal{M})= I(\mathcal{S}\overrightarrow{\otimes}_{\mathscr{A}}\mathcal{M}).
\end{equation*} 
In this case, the construction of the functor $\mathcal{S}\overrightarrow{\otimes}_{\mathscr{A}}^{\mathbb{L}}-$ shown in $(\ref{equation derived functor relative tensor product in boinded above derived categories})$ implies that we have a derived functor $\mathcal{S}\overrightarrow{\otimes}_{\mathscr{A}}^{\mathbb{L}}-:\operatorname{D}(\mathscr{A})\rightarrow \operatorname{D}(\mathscr{B})$ of the extension of scalars. Similarly, as $LH(\Mod_{\Indban}(\mathscr{B}))$ is a Grothendieck abelian category, we get a derived functor
$R\underline{\mathcal{H}om}_{\mathscr{B}}(\mathcal{S},-):\operatorname{D}(\mathscr{B})\rightarrow \operatorname{D}(\mathscr{A})$.\\
The idea is that our assumptions on $\mathcal{S}$ imply that the adjunction from Lemma \ref{lemma adjunction extension and restriction along a bimodule} lifts to a derived adjunction. In particular, we have the following proposition:
\begin{prop}\label{prop derived tensor-hom adjunction wrt a flat bimodule}
Let $\mathscr{A},\mathscr{B}$ be sheaves of Ind-Banach $K$-algebras, and let $\mathcal{S}$ be a $(\mathscr{B},\mathscr{A})$-bimodule. Assume $\mathcal{S}$ satisfies  that for any $\mathcal{M}\in \Mod_{\Indban}(\mathscr{A})$ we have:
\begin{equation*}
\mathcal{S}\overrightarrow{\otimes}^{\mathbb{L}}_{\mathscr{A}}\mathcal{M}=I(\mathcal{S})\widetilde{\otimes}_{I(\mathscr{A})}I(\mathcal{M})= I(\mathcal{S}\overrightarrow{\otimes}_{\mathscr{A}}\mathcal{M}).
\end{equation*}
Then for $\mathcal{M}^{\bullet}\in \operatorname{D}^-(\mathscr{A})$ and $\mathcal{N}^{\bullet}\in \operatorname{D}(\mathscr{B})$ we have a functorial identification in $\operatorname{D}(\operatorname{Shv}(X,LH(\widehat{\mathcal{B}}c_K)))$:
\begin{equation*}
    R\underline{\mathcal{H}om}_{\mathscr{B}}(\mathcal{S}\overrightarrow{\otimes}^{\mathbb{L}}_{\mathscr{A}}\mathcal{M}^{\bullet},\mathcal{N}^{\bullet})
=R\underline{\mathcal{H}om}_{\mathscr{A}}(\mathcal{M}^{\bullet},R\underline{\mathcal{H}om}_{\mathscr{B}}(\mathcal{S},\mathcal{N}^{\bullet})).
\end{equation*}
\end{prop}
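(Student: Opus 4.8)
The plan is to deduce the derived adjunction from the underived adjunction of Lemma \ref{lemma adjunction extension and restriction along a bimodule} by the standard argument: replace the target complex $\mathcal{N}^{\bullet}$ by a homotopically injective resolution, replace the source complex $\mathcal{M}^{\bullet}$ by a suitable $K$-flat (or rather, by a complex of objects in the essential image of $I$ on which $\widetilde{\otimes}_{I(\mathscr{A})}$ is computed without higher terms), and then show that $\underline{\mathcal{H}om}_{I(\mathscr{B})}(I(\mathcal{S}),-)$ carries the chosen resolution of $\mathcal{N}^{\bullet}$ to a homotopically injective complex of $I(\mathscr{A})$-modules. Granting these three reductions, the identity becomes the totalization of the underived isomorphism of Lemma \ref{Lemma derived adjunction relative extension and restriction} applied termwise, and there is nothing left to check.

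Here are the steps in order. First, since $LH(\Mod_{\Indban}(\mathscr{B}))=\Mod_{LH(\widehat{\mathcal{B}}c_K)}(I(\mathscr{B}))$ is a Grothendieck abelian category, choose a homotopically injective complex $\mathcal{I}^{\bullet}$ with a quasi-isomorphism $I(\mathcal{N}^{\bullet})\rightarrow \mathcal{I}^{\bullet}$; then $R\underline{\mathcal{H}om}_{\mathscr{B}}(\mathcal{S},\mathcal{N}^{\bullet})$ is computed by $\underline{\mathcal{H}om}_{I(\mathscr{B})}(I(\mathcal{S}),\mathcal{I}^{\bullet})$ as in $(\ref{equation explicit computation of RHom})$. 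Second, by the discussion preceding Proposition \ref{prop derived tensor-hom adjunction wrt a flat bimodule} and the construction in $(\ref{equation derived functor relative tensor product in boinded above derived categories})$, since $\mathcal{M}^{\bullet}\in \operatorname{D}^-(\mathscr{A})$ we may represent it by $I(\widetilde{\mathcal{M}}^{\bullet})$ for $\widetilde{\mathcal{M}}^{\bullet}\in \operatorname{K}^-(\mathscr{A})$, so that $\mathcal{S}\overrightarrow{\otimes}^{\mathbb{L}}_{\mathscr{A}}\mathcal{M}^{\bullet}=I(\mathcal{S}\overrightarrow{\otimes}_{\mathscr{A}}\widetilde{\mathcal{M}}^{\bullet})$ is represented by an honest complex, using the hypothesis on $\mathcal{S}$ together with the boundedness-above truncation argument; here the truncations $\sigma_{\geq n}$ reduce everything to bounded complexes where $\widetilde{\otimes}_{I(\mathscr{A})}$ and $\overrightarrow{\otimes}_{\mathscr{A}}$ agree with their derived versions. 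Third, prove the key lemma: for $\mathcal{I}^{\bullet}$ homotopically injective over $I(\mathscr{B})$, the complex $\underline{\mathcal{H}om}_{I(\mathscr{B})}(I(\mathcal{S}),\mathcal{I}^{\bullet})$ is homotopically injective over $I(\mathscr{A})$ — this follows from the adjunction of Lemma \ref{lemma adjunction extension and restriction along a bimodule} exactly as in the proof of Lemma \ref{Lemma internal hom of flat and injective is injective}, since $I(\mathcal{S})\widetilde{\otimes}_{I(\mathscr{A})}-$ is exact on $I(\mathscr{A})$-modules by the hypothesis on $\mathcal{S}$, hence sends acyclic complexes of $I(\mathscr{A})$-modules to acyclic complexes of $I(\mathscr{B})$-modules. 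Finally, assemble:
\begin{equation*}
R\underline{\mathcal{H}om}_{\mathscr{B}}(\mathcal{S}\overrightarrow{\otimes}^{\mathbb{L}}_{\mathscr{A}}\mathcal{M}^{\bullet},\mathcal{N}^{\bullet})=\underline{\mathcal{H}om}_{I(\mathscr{B})}(I(\mathcal{S})\widetilde{\otimes}_{I(\mathscr{A})}I(\widetilde{\mathcal{M}}^{\bullet}),\mathcal{I}^{\bullet})=\underline{\mathcal{H}om}_{I(\mathscr{A})}(I(\widetilde{\mathcal{M}}^{\bullet}),\underline{\mathcal{H}om}_{I(\mathscr{B})}(I(\mathcal{S}),\mathcal{I}^{\bullet})),
\end{equation*}
where the first equality uses Steps 1 and 2 and $(\ref{equation explicit computation of RHom})$, and the second is the termwise application of Lemma \ref{Lemma derived adjunction relative extension and restriction} assembled into totalizations; by Step 3 the right-hand side computes $R\underline{\mathcal{H}om}_{\mathscr{A}}(\mathcal{M}^{\bullet},R\underline{\mathcal{H}om}_{\mathscr{B}}(\mathcal{S},\mathcal{N}^{\bullet}))$, which is the claim. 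Functoriality is immediate from functoriality of all resolutions and of Lemma \ref{Lemma derived adjunction relative extension and restriction}.

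I expect the main obstacle to be Step 3 — verifying that $\underline{\mathcal{H}om}_{I(\mathscr{B})}(I(\mathcal{S}),\mathcal{I}^{\bullet})$ is homotopically injective. The subtlety is that $I(\mathcal{S})\widetilde{\otimes}_{I(\mathscr{A})}-$ being exact requires precisely the hypothesis $\mathcal{S}\overrightarrow{\otimes}^{\mathbb{L}}_{\mathscr{A}}\mathcal{M}=I(\mathcal{S}\overrightarrow{\otimes}_{\mathscr{A}}\mathcal{M})$ for all $\mathcal{M}$, and one must be slightly careful that this exactness on modules propagates to acyclicity on unbounded complexes of modules; as in Lemma \ref{Lemma internal hom of flat and injective is injective}, this is handled by the colimit-over-truncations argument, using that $\operatorname{Shv}(X,LH(\widehat{\mathcal{B}}c_K))$ and the module categories are Grothendieck, so filtered colimits are exact and commute with cohomology. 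A secondary bookkeeping point is checking the sign/totalization conventions so that the termwise isomorphism of Lemma \ref{Lemma derived adjunction relative extension and restriction} is compatible with the $\operatorname{Tot}_{\pi}$ used to define $R\underline{\mathcal{H}om}$, but this is routine.
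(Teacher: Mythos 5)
Your proposal is correct and follows essentially the same route as the paper's proof: resolve $\mathcal{N}^{\bullet}$ by a homotopically injective complex, use the flatness hypothesis on $\mathcal{S}$ to compute the derived tensor product by the underived one, apply the underived adjunction of Lemma \ref{Lemma derived adjunction relative extension and restriction} termwise through the totalization, and conclude by showing $\underline{\mathcal{H}om}_{I(\mathscr{B})}(I(\mathcal{S}),\mathcal{I}^{\bullet})$ is homotopically injective via the argument of Lemma \ref{Lemma internal hom of flat and injective is injective}. The obstacle you single out (Step 3) is exactly the point the paper handles by citing that lemma together with the boundedness of $I(\mathcal{S})$, so nothing is missing.
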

\begin{proof}
Let $\mathcal{I}^{\bullet}$ be a homotopically injective object equipped with a quasi-isomorphism $I(\mathcal{N})^{\bullet}\rightarrow \mathcal{I}^{\bullet}$. Our assumptions on $\mathcal{S}$ imply that we have the following identity in $\operatorname{D}(\mathscr{B})$:
\begin{equation*}
\mathcal{S}\overrightarrow{\otimes}^{\mathbb{L}}_{\mathscr{A}}\mathcal{M}^{\bullet}=I(\mathcal{S})\widetilde{\otimes}_{I(\mathscr{A})}I(\mathcal{M}^{\bullet}).
\end{equation*}
 Thus, $R\underline{\mathcal{H}om}_{\mathscr{B}}(\mathcal{S}\overrightarrow{\otimes}^{\mathbb{L}}_{\mathscr{A}}\mathcal{M}^{\bullet},\mathcal{N}^{\bullet})$ is quasi-isomorphic to the complex:
\begin{equation*}
\underline{\mathcal{H}om}_{I(\mathscr{B})}(I(\mathcal{S})\widetilde{\otimes}_{I(\mathscr{A}})I(\mathcal{M}^{\bullet}),\mathcal{I}^{\bullet})=\operatorname{Tot}_{\pi}\left(\underline{\mathcal{H}om}_{I(\mathscr{B})}^{\bullet,\bullet}(I(\mathcal{S})\widetilde{\otimes}_{I(\mathscr{A})}I(\mathcal{M}^{-\bullet}),\mathcal{I}^{\bullet})\right).    
\end{equation*}
Let $n,m\in\mathbb{Z}$. By Lemma \ref{Lemma derived adjunction relative extension and restriction} we have an identification in  $\operatorname{Shv}(X,LH(\widehat{\mathcal{B}}c_K))$:
\begin{equation*}
\underline{\mathcal{H}om}_{I(\mathscr{B})}(I(\mathcal{S})\widetilde{\otimes}_{I(\mathscr{A})}I(\mathcal{M}^{-n}),\mathcal{I}^{m})= \underline{\mathcal{H}om}_{I(\mathscr{A})}(I(\mathcal{M}^{-n}),\underline{\mathcal{H}om}_{I(\mathscr{B})}(I(\mathcal{S}),\mathcal{I}^{m})).    
\end{equation*}
In particular, we have an identification:
\begin{equation*}
    R\underline{\mathcal{H}om}_{\mathscr{B}}(\mathcal{S}\overrightarrow{\otimes}^{\mathbb{L}}_{\mathscr{A}}\mathcal{M}^{\bullet},\mathcal{N}^{\bullet})\cong \underline{\mathcal{H}om}_{I(\mathscr{A})}(I(\mathcal{M}^{-\bullet}),\underline{\mathcal{H}om}_{I(\mathscr{B})}(I(\mathcal{S}),\mathcal{I}^{\bullet})).
\end{equation*}
By construction, we have $\underline{\mathcal{H}om}_{I(\mathscr{B})}(I(\mathcal{S}),I(\mathcal{N}^{\bullet}))\cong \underline{\mathcal{H}om}_{I(\mathscr{B})}(I(\mathcal{S}),\mathcal{I}^{\bullet})$ in $\operatorname{D}(\mathscr{A})$.\newline Furthermore, as $I(\mathcal{S})$ is bounded above, it follows by Lemma \ref{Lemma internal hom of flat and injective is injective} that the complex  $\underline{\mathcal{H}om}_{I(\mathscr{B})}(I(\mathcal{S}),\mathcal{I}^{\bullet})$ is homotopically injective. Therefore, we have the following identities in $\operatorname{D}(\operatorname{Shv}(X,LH(\widehat{\mathcal{B}}c_K)))$:
\begin{equation*}
    R\underline{\mathcal{H}om}_{\mathscr{A}}(\mathcal{M}^{\bullet},R\underline{\mathcal{H}om}_{\mathscr{B}}(\mathcal{S},\mathcal{N}^{\bullet}))\cong \underline{\mathcal{H}om}_{I(\mathscr{A})}(I(\mathcal{M}^{-\bullet}),\underline{\mathcal{H}om}_{I(\mathscr{B})}(I(\mathcal{S}),\mathcal{I}^{\bullet}))
\end{equation*}
Thus, we arrive at a the following canonical isomorphisms in $\operatorname{D}(\operatorname{Shv}(X,LH(\widehat{\mathcal{B}}c_K)))$:
\begin{equation*}
 R\underline{\mathcal{H}om}_{\mathscr{B}}(\mathcal{S}\overrightarrow{\otimes}^{\mathbb{L}}_{\mathscr{A}}\mathcal{M}^{\bullet},\mathcal{N}^{\bullet})
=R\underline{\mathcal{H}om}_{\mathscr{A}}(\mathcal{M}^{\bullet},R\underline{\mathcal{H}om}_{\mathscr{B}}(\mathcal{S},\mathcal{N}^{\bullet})),   
\end{equation*}
which is precisely what we wanted to show.
\end{proof}
We point out that a more general statement can be shown by using an argument similar to \cite[Proposition 3.36]{bode2021operations}. However, we will not develop this here.
\subsection{Closed immersions}\label{section closed immersions}
Let now $Y$ be another rigid space, and assume we have a closed immersion $i:Y\rightarrow X$. Let $\mathscr{B}=i^{-1}\mathscr{A}$. As $i^{-1}$ commutes with tensor products, this is a sheaf of Ind-Banach $K$-algebras on $Y$. Furthermore, we have:
\begin{defi}
Let $X$ be a rigid space and $Y\subset X$ be an analytically closed subset. We say a sheaf $\mathcal{F}\in \operatorname{Shv}(X,\Indban)$ (resp. in $\operatorname{Shv}(X,LH(\widehat{\mathcal{B}}c_K))$) is supported on $Y$ if $\mathcal{F}(U)=0$ for every admissible open $U\subset X$ such that $U\cap Y=\varnothing$.   
\end{defi}
\begin{prop}\label{prop sheaves of modules supported on the diagonal general case}
Let $\Mod_{\Indban}(\mathscr{A})_{Y}$ denote the full subcategory of\newline $\Mod_{\Indban}(\mathscr{A})$ given by the $\mathscr{A}$-modules supported on $Y$. There is an equivalence of quasi-abelian categories:
\begin{equation*}
    i_*: \Mod_{\Indban}(\mathscr{B})\leftrightarrows\Mod_{\Indban}(\mathscr{A})_{Y}:i^{-1}.
\end{equation*}
\end{prop}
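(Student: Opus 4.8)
The plan is to establish the equivalence $i_*\colon \Mod_{\Indban}(\mathscr{B})\leftrightarrows \Mod_{\Indban}(\mathscr{A})_Y\colon i^{-1}$ by reducing to the classical statement for sheaves of modules over a closed immersion and then checking that the relevant functors interact well with the quasi-abelian structure. First I would recall that for a closed immersion $i\colon Y\to X$ of (the underlying $G$-topological spaces of) rigid spaces, the pushforward $i_*$ is exact, $i^{-1}$ is its left adjoint (this is the adjunction of diagram (\ref{equation diagram pushforward and pullback})), and the unit $\operatorname{Id}\to i^{-1}i_*$ is an isomorphism because $Y$ is closed and $i$ is a homeomorphism onto its image (so stalks of $i_*\mathcal{F}$ at points of $Y$ are the stalks of $\mathcal{F}$, and vanish off $Y$). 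Dually, for a sheaf $\mathcal{G}$ supported on $Y$, the counit $i_*i^{-1}\mathcal{G}\to\mathcal{G}$ is an isomorphism: one checks this on sections over admissible opens, using that $\mathcal{G}(U)=0$ when $U\cap Y=\varnothing$, exactly as in the classical case (\emph{cf.} the treatment of sheaves supported on a closed subset, e.g. \cite[Tag 01E3]{stacks-project} adapted to the $G$-topological setting). These two facts already give the equivalence at the level of $\operatorname{Shv}(X,\Indban)$ versus the full subcategory of sheaves supported on $Y$.

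Next I would upgrade this to the module categories. Since $\mathscr{B}=i^{-1}\mathscr{A}$ and $i^{-1}$ is strong symmetric monoidal (it commutes with $\overrightarrow{\otimes}_K$ and with sheafification, both being strongly exact), the functor $i^{-1}$ carries $\mathscr{A}$-modules to $\mathscr{B}$-modules. Conversely, for a $\mathscr{B}$-module $\mathcal{N}$, the adjunction unit $\mathscr{A}\overrightarrow{\otimes}_K i_*\mathcal{N}\to i_*(\,i^{-1}\mathscr{A}\overrightarrow{\otimes}_K\mathcal{N}\,)=i_*(\mathscr{B}\overrightarrow{\otimes}_K\mathcal{N})\to i_*\mathcal{N}$ — here I use that $i_*$ commutes with $\overrightarrow{\otimes}_K$ up to the natural projection formula map, which holds because $i$ is a closed immersion and hence $i_*$ is exact and commutes with the relevant colimits — endows $i_*\mathcal{N}$ with an $\mathscr{A}$-module structure, and $i_*\mathcal{N}$ is supported on $Y$ since $i_*$ does not enlarge supports. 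So we get mutually quasi-inverse functors between $\Mod_{\Indban}(\mathscr{B})$ and $\Mod_{\Indban}(\mathscr{A})_Y$; the unit and counit isomorphisms from the previous paragraph are $\mathscr{A}$- (resp. $\mathscr{B}$-) linear because they are the pushforwards/pullbacks of the canonical adjunction morphisms, which are module maps by construction. Finally, by Proposition (the one stating that the forgetful functor $\Mod_{\Indban}(\mathscr{A})\to\Indban$ detects strictness and preserves (co)limits), an additive functor between these module categories that is an equivalence on underlying objects is automatically an equivalence of quasi-abelian categories, so no separate verification of exactness is needed.

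I would organize the write-up as: (i) recall the sheaf-theoretic adjunction $i_*\dashv i^{-1}$ and the two isomorphisms $\operatorname{Id}\xrightarrow{\sim}i^{-1}i_*$ and $i_*i^{-1}|_{\text{supp }Y}\xrightarrow{\sim}\operatorname{Id}$; (ii) observe $\mathscr{B}=i^{-1}\mathscr{A}$ makes both functors land in the correct module categories, using the projection formula for the closed immersion $i$; (iii) note that the unit/counit are module maps, giving the claimed equivalence; (iv) invoke the forgetful-functor proposition to conclude it is an equivalence of quasi-abelian categories. The main obstacle I anticipate is step (ii): one must be careful that the projection-formula morphism $i_*\mathcal{N}\overrightarrow{\otimes}_K\mathcal{F}\to i_*(\mathcal{N}\overrightarrow{\otimes}_K i^{-1}\mathcal{F})$ is an \emph{isomorphism} in $\operatorname{Shv}(X,\Indban)$ for a closed immersion — this requires knowing that $i_*$ commutes with the sheafification and filtered colimits appearing in $\overrightarrow{\otimes}_K$, which is true because $i$ is a closed immersion (so $i_*$ is exact) but should be stated carefully; alternatively one can sidestep it entirely by defining the $\mathscr{A}$-action on $i_*\mathcal{N}$ directly on sections over admissible opens $U$, namely via $\mathscr{A}(U)\overrightarrow{\otimes}_K\mathcal{N}(i^{-1}U)\to\mathscr{B}(i^{-1}U)\overrightarrow{\otimes}_K\mathcal{N}(i^{-1}U)\to\mathcal{N}(i^{-1}U)$ using the restriction map $\mathscr{A}(U)\to\mathscr{A}(U)(i^{-1}U\text{-germs})=\mathscr{B}(i^{-1}U)$, which avoids the projection formula altogether.
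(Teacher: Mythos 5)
Your proposal is correct and follows essentially the same route as the paper, which simply cites \cite[Appendix A]{ardakov2015d} for the sheaf-level equivalence between $\operatorname{Shv}(Y,\Indban)$ and $\operatorname{Shv}(X,\Indban)_{Y}$ and then observes that $i^{-1}\mathscr{A}=\mathscr{B}$ together with the compatibility of $i^{-1}$ with $\overrightarrow{\otimes}_K$ transports the module structures. The one caution is that your stalk-based justification of $i^{-1}i_*\cong\operatorname{Id}$ (and the unit/counit labels, which are swapped) should be replaced by an argument on sections over admissible opens, since sheaves on the $G$-topology of a rigid space are not detected by stalks at points — handling this correctly is precisely the content of the cited appendix.
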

\begin{proof}
By \cite[Appendix A]{ardakov2015d}, $i_*$ and $i^{-1}$ yield equivalences of categories between $\operatorname{Shv}(Y,\Indban)$ and   $\operatorname{Shv}(X,\Indban)_{Y}$. Thus, the result follows by the facts that $i^{-1}\mathscr{A}=\mathscr{B}$, and that $i^{-1}$ commutes with tensor products.  
\end{proof}
Hence, we have a commutative diagram:
\begin{equation}\label{equation commutative diagram pushforward-pullback and I}
\begin{tikzcd}
{i_{*,LH}:LH(\Mod_{\Indban}(\mathscr{B}))} \arrow[r, shift left=2]           & LH(\Mod_{\Indban}(\mathscr{A}))_{Y}:i^{-1}_{LH} \arrow[l]             \\
i_*:\Mod_{\Indban}(\mathscr{B}) \arrow[u, "I"] \arrow[r, shift left=2] & \Mod_{\Indban}(\mathscr{A})_{Y}:i^{-1} \arrow[u, "I"] \arrow[l]
\end{tikzcd} 
\end{equation}
where the horizontal maps are mutually inverse equivalences of quasi-abelian (resp. abelian) categories, and the vertical maps are the inclusions into the left hearts. The next goal is studying derived versions of these adjunctions. Let us start with the following lemma:
\begin{Lemma}\label{Lemma properties of inclusion of derived category of modules supported on a closed subspace}
Let $\operatorname{D}(\operatorname{Shv}(X,\Indban)_{Y})$ be the derived category of sheaves supported on $Y$. The following hold:
\begin{enumerate}[label=(\roman*)]
    \item The mutually inverse equivalences $i_{*},i^{-1}$ extend to a derived equivalence:
    \begin{equation*}
        i_{*}:\operatorname{D}(\operatorname{Shv}(Y,\Indban))\leftrightarrows\operatorname{D}(\operatorname{Shv}(X,\Indban)_{Y}):i^{-1}.
    \end{equation*}
    \item The canonical map:
    \begin{equation*}
        \operatorname{D}(\operatorname{Shv}(X,\Indban)_{Y})\rightarrow \operatorname{D}(\operatorname{Shv}(X,\Indban)),
    \end{equation*}
     is fully faithful, its essential image 
     are the complexes whose cohomology is supported on $Y$.
\end{enumerate}
The analogous properties hold for the functors:
\begin{equation*}
    i_{*}:\Mod_{\Indban}(\mathscr{B})\leftrightarrows\Mod_{\Indban}(\mathscr{A})_{Y}:i^{-1}.
\end{equation*}
\end{Lemma}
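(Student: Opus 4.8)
The statement to prove is Lemma \ref{Lemma properties of inclusion of derived category of modules supported on a closed subspace}, which has two parts about sheaves of Ind-Banach spaces (and then the analogous statement for $\mathscr{A}$-modules). The plan is to deduce both parts from Proposition \ref{prop sheaves of modules supported on the diagonal general case} (or rather its underlying sheaf-level statement from \cite[Appendix A]{ardakov2015d}), together with the fact that $i_*$ and $i^{-1}$ are a pair of mutually inverse \emph{exact} equivalences of quasi-abelian categories. First I would record the exactness: $i^{-1}$ is strongly exact (being a pullback in the Ind-Banach setting, as discussed before the statement, since $\Indban$ is elementary), and $i_*$ restricted to $\operatorname{Shv}(X,\Indban)_Y$ is its inverse, hence also exact. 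An exact functor between quasi-abelian categories sends strictly exact complexes to strictly exact complexes, so $i_*$ and $i^{-1}$ descend to the derived categories; since they are mutually inverse on the abelian/quasi-abelian level, the induced functors on $\operatorname{D}(-)$ are mutually inverse as well. This proves $(i)$.

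For $(ii)$, I would argue that the functor $\operatorname{D}(\operatorname{Shv}(X,\Indban)_Y)\to\operatorname{D}(\operatorname{Shv}(X,\Indban))$ is fully faithful. The cleanest route: the inclusion $\operatorname{Shv}(X,\Indban)_Y\hookrightarrow\operatorname{Shv}(X,\Indban)$ identifies the source (via $i^{-1}$, resp.\ $i_*$) with $\operatorname{Shv}(Y,\Indban)$, and the composite $\operatorname{Shv}(Y,\Indban)\xrightarrow{i_*}\operatorname{Shv}(X,\Indban)$ has the property that $i^{-1}\circ i_*=\operatorname{Id}$. Passing to derived categories, $i^{-1}\circ i_* = \operatorname{Id}$ on $\operatorname{D}(\operatorname{Shv}(Y,\Indban))$, which forces $i_*:\operatorname{D}(\operatorname{Shv}(Y,\Indban))\to\operatorname{D}(\operatorname{Shv}(X,\Indban))$ to be faithful; fullness then follows because any morphism in the target between objects of the form $i_*\mathcal F^\bullet$, $i_*\mathcal G^\bullet$ can be pulled back by $i^{-1}$ and pushed forward again, using $i_*i^{-1}\cong\operatorname{Id}$ on complexes supported on $Y$. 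To identify the essential image, I would note that cohomology commutes with the exact functors $i_*,i^{-1}$, so a complex in $\operatorname{D}(\operatorname{Shv}(X,\Indban))$ is isomorphic to one in the image of $i_*$ iff each cohomology object $\mathcal H^n$ (an object of $LH(\operatorname{Shv}(X,\Indban))=\operatorname{Shv}(X,LH(\widehat{\mathcal B}c_K))$) is supported on $Y$; here one uses that a complex is quasi-isomorphic, via the truncation/Postnikov filtration, to one built out of its cohomology sheaves, and that being supported on $Y$ is a condition closed under extensions and (co)limits in the Grothendieck abelian category $\operatorname{Shv}(X,LH(\widehat{\mathcal B}c_K))$. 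Finally, the statement for $\mathscr A$-modules is formally identical: $\Mod_{\Indban}(\mathscr A)_Y$, $\Mod_{\Indban}(\mathscr A)$ are quasi-abelian with the same forgetful-functor description of strictness, Proposition \ref{prop sheaves of modules supported on the diagonal general case} gives the exact equivalence $i_*:\Mod_{\Indban}(\mathscr B)\leftrightarrows\Mod_{\Indban}(\mathscr A)_Y:i^{-1}$, and the module structures are transported along $i^{-1}\mathscr A=\mathscr B$, so the identical argument applies verbatim.

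The main obstacle I anticipate is the essential-image characterization in $(ii)$ — specifically, making rigorous the passage "a complex whose cohomology is supported on $Y$ lies in the image of $i_*$." For bounded complexes this is a straightforward induction on the length using the triangle relating a complex to its truncations and the fact that the subcategory of complexes supported on $Y$ is triangulated and closed under the relevant cones; for \emph{unbounded} complexes one must be a little careful and either invoke that $\operatorname{Shv}(X,LH(\widehat{\mathcal B}c_K))$ is Grothendieck abelian (so that homotopy limits/colimits behave) or reduce to the bounded case by a truncation/colimit argument as in the proof of Lemma \ref{Lemma internal hom of flat and injective is injective}. Everything else is a routine transport of structure along the exact equivalence $i_* \dashv i^{-1}$.
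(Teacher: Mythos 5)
Your proposal is correct in outline, but it diverges from the paper's proof at the two substantive points of part $(ii)$, and in one place you make the argument harder than it needs to be. For full faithfulness, the paper does not use the counit/unit identities directly: it observes that $i_{*,LH}$ has an exact left adjoint and therefore preserves homotopically injective complexes, so any two objects of $\operatorname{D}(\operatorname{Shv}(X,LH(\widehat{\mathcal{B}}c_K))_Y)$ can be replaced by pushforwards of homotopically injective complexes on $Y$, after which the $\Hom$'s in $\operatorname{D}(\operatorname{Shv}(X,-)_Y)$ and in $\operatorname{D}(\operatorname{Shv}(X,-))$ are both computed in the homotopy category and visibly agree. Your route — faithfulness from $i^{-1}\circ i_*=\operatorname{Id}$, fullness from applying $i^{-1}$ to a morphism $\phi:i_*\mathcal{F}^{\bullet}\to i_*\mathcal{G}^{\bullet}$ and checking $i_*i^{-1}(\phi)=\phi$ via naturality of the unit — also works, because both functors are exact, so the adjunction and its triangle identities pass to the derived categories without any resolution; this is arguably the cleaner of the two arguments, though you should state the naturality square explicitly rather than gesture at it. Where your proposal is weaker is the essential image. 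You propose a dévissage through truncation triangles and flag the unbounded case as the main obstacle, offering only a vague appeal to Grothendieck abelian-ness. The paper avoids this entirely: for any $C^{\bullet}$ with cohomology supported on $Y$ one takes the canonical (unit) map $C^{\bullet}\to i_*i^{-1}C^{\bullet}$ and computes, using exactness of $i_*$ and $i^{-1}$, that
\begin{equation*}
\operatorname{H}^n(i_*i^{-1}C^{\bullet})=i_{*,LH}\,i^{-1}_{LH}\operatorname{H}^n(C^{\bullet})=\operatorname{H}^n(C^{\bullet}),
\end{equation*}
the last equality because $\operatorname{H}^n(C^{\bullet})$ is supported on $Y$; hence the unit map is a quasi-isomorphism and $C^{\bullet}$ lies in the image. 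This one-line computation handles unbounded complexes uniformly and closes exactly the gap you anticipated, so you should replace your Postnikov argument with it. With that substitution your proof is complete; the transport to $\mathscr{A}$-modules via $i^{-1}\mathscr{A}=\mathscr{B}$ is as routine as you say.
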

\begin{proof}
 Statement $(i)$ is clear by the fact that:
 \begin{equation*}
     i_*:\operatorname{Shv}(Y,\Indban)\leftrightarrows\operatorname{Shv}(X,\Indban)_{Y}:i^{-1},
 \end{equation*}
are mutually inverse equivalences of quasi-abelian categories.\\

For statement $(ii)$, notice that the pushforward $i_{*,LH}:\operatorname{Shv}(Y,LH(\widehat{\mathcal{B}}c_K))\rightarrow\operatorname{Shv}(X,LH(\widehat{\mathcal{B}}c_K))$ has an exact left adjoint. Hence, its extension to the derived category preserves homotopically injective objects. Therefore, for objects $V^{\bullet},W^{\bullet}\in\operatorname{D}(\operatorname{Shv}(X,LH(\widehat{\mathcal{B}}c_K))_{Y})$ there is a pair of homotopically injective complexes $\mathcal{I}_1^{\bullet},\mathcal{I}_2^{\bullet}\in \operatorname{D}(\operatorname{Shv}(Y,LH(\widehat{\mathcal{B}}c_K)))$ with quasi-isomorphisms:
\begin{equation*}
  V^{\bullet}\rightarrow i_{*,LH}\mathcal{I}_1^{\bullet}, \quad   W^{\bullet}\rightarrow i_{*,LH}\mathcal{I}_2^{\bullet}.
\end{equation*}
Hence, we have the following chain of identities::
 \begin{align*}
   \Hom_{\operatorname{D}(\operatorname{Shv}(X,LH(\widehat{\mathcal{B}}c_K)))}(V^{\bullet},W^{\bullet})&=\Hom_{\operatorname{D}(\operatorname{Shv}(X,LH(\widehat{\mathcal{B}}c_K)))}(i_{*,LH}\mathcal{I}_1^{\bullet},i_{*,LH}\mathcal{I}_2^{\bullet})\\
         &=\Hom_{\operatorname{K}(\operatorname{Shv}(X,LH(\widehat{\mathcal{B}}c_K)))}(i_{*,LH}\mathcal{I}_1^{\bullet},i_{*,LH}\mathcal{I}_2^{\bullet})\\&=\Hom_{\operatorname{K}(\operatorname{Shv}(Y,LH(\widehat{\mathcal{B}}c_K)))}(\mathcal{I}_1^{\bullet},\mathcal{I}_2^{\bullet})&\\
  &{}     =\Hom_{\operatorname{D}(\operatorname{Shv}(Y,LH(\widehat{\mathcal{B}}c_K)))}(\mathcal{I}_1^{\bullet},\mathcal{I}_2^{\bullet})\\&=\Hom_{\operatorname{D}(\operatorname{Shv}(X,LH(\widehat{\mathcal{B}}c_K))_Y)}(i_{*,LH}\mathcal{I}_1^{\bullet},i_{*,LH}\mathcal{I}_2^{\bullet})&\\
&{}    =\Hom_{\operatorname{D}(\operatorname{Shv}(X,LH(\widehat{\mathcal{B}}c_K))_Y)}(V^{\bullet},W^{\bullet})&.
 \end{align*}
 Thus, the triangulated functor:
 \begin{equation*}
     \operatorname{D}(\operatorname{Shv}(X,\Indban)_{Y})\rightarrow \operatorname{D}(\operatorname{Shv}(X,\Indban)),
 \end{equation*}
is fully faithful. Hence, it is  an equivalence onto its essential image.\\
Let $C^{\bullet}\in \operatorname{D}(\operatorname{Shv}(X,\Indban))$ be a complex with cohomology supported on $Y$. We have a canonical morphism of complexes $C^{\bullet}\rightarrow i_*i^{-1}C^{\bullet}$. It suffices to show that this is a quasi-isomorphism. As both $i_{*}$, and $i^{-1}$  are exact, for each $n\in\mathbb{Z}$ we have:
 \begin{equation*} \operatorname{H}^{n}\left(i_*i^{-1}C^{\bullet}\right)=i_{*,LH}i^{-1}_{LH}\operatorname{H}^{n}\left(C^{\bullet}\right) =\operatorname{H}^{n}\left(C^{\bullet}\right), 
 \end{equation*}
 where the last identity follows by the fact that $\operatorname{H}^{n}\left(C^{\bullet}\right)\in \operatorname{Shv}(X,LH(\widehat{\mathcal{B}}c_K))$ is supported on $Y$. Thus, the map $C^{\bullet}\rightarrow i_{*}i^{-1}C^{\bullet}$ is a quasi-isomorphism, as required.
\end{proof}
We may use this lemma to study the derived inner hom of complexes with support contained in $Y$. This will be instrumental for the version of Kashiwara equivalence given in Proposition \ref{prop derived internal hom and kashiwara}. In order to keep the notation manageable, we will write:
\begin{equation*}
    \operatorname{D}(\mathscr{A})=\operatorname{D}(\Mod_{\Indban}(\mathscr{A})),
\end{equation*}
and follow a similar convention for other sheaves of Ind-Banach algebras.
\begin{Lemma}\label{Lemma support of inner homomorphism of sheaves with support in a closed subspace}
Let $\mathcal{M}^{\bullet},\mathcal{N}^{\bullet}\in \operatorname{D}(\mathscr{A})_Y$. Then $R\underline{\mathcal{H}om}_{\mathscr{A}}(\mathcal{M}^{\bullet},\mathcal{N}^{\bullet})\in \operatorname{D}(\mathscr{A})_Y$.   
\end{Lemma}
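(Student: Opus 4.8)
The statement to prove is that if $\mathcal{M}^{\bullet},\mathcal{N}^{\bullet}\in \operatorname{D}(\mathscr{A})_Y$, then $R\underline{\mathcal{H}om}_{\mathscr{A}}(\mathcal{M}^{\bullet},\mathcal{N}^{\bullet})$ also lies in $\operatorname{D}(\mathscr{A})_Y$, i.e.\ its cohomology sheaves are supported on $Y$. The plan is to reduce this to a purely local computation on admissible opens disjoint from $Y$, using the restriction-compatibility of the derived inner hom functor established in Proposition \ref{prop restriction of inner hom}.

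First I would recall that for an object $\mathcal{F}^{\bullet}\in \operatorname{D}(\operatorname{Shv}(X,\Indban))$, having cohomology supported on $Y$ is equivalent (by Lemma \ref{Lemma properties of inclusion of derived category of modules supported on a closed subspace}$(ii)$) to lying in the essential image of $\operatorname{D}(\operatorname{Shv}(X,\Indban)_Y)$; concretely, for every admissible open $U\subset X$ with $U\cap Y=\varnothing$, one has $\mathcal{H}^n(\mathcal{F}^{\bullet})_{\vert U}=0$ for all $n$, i.e.\ $\mathcal{F}^{\bullet}_{\vert U}$ is acyclic. So it suffices to show that $R\underline{\mathcal{H}om}_{\mathscr{A}}(\mathcal{M}^{\bullet},\mathcal{N}^{\bullet})_{\vert U}$ is acyclic for every such $U$. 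By Proposition \ref{prop restriction of inner hom} we have a natural isomorphism
\begin{equation*}
R\underline{\mathcal{H}om}_{\mathscr{A}}(\mathcal{M}^{\bullet},\mathcal{N}^{\bullet})_{\vert U}=R\underline{\mathcal{H}om}_{\mathscr{A}_{\vert U}}(\mathcal{M}^{\bullet}_{\vert U},\mathcal{N}^{\bullet}_{\vert U}).
\end{equation*}
Now, since $\mathcal{M}^{\bullet},\mathcal{N}^{\bullet}\in\operatorname{D}(\mathscr{A})_Y$ and $U\cap Y=\varnothing$, both restrictions $\mathcal{M}^{\bullet}_{\vert U}$ and $\mathcal{N}^{\bullet}_{\vert U}$ are acyclic complexes of $\mathscr{A}_{\vert U}$-modules (their cohomology sheaves are supported on $Y$, hence vanish on $U$). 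Thus the right-hand side is the derived inner hom of the zero object in $\operatorname{D}(\mathscr{A}_{\vert U})$ with anything, which is zero in $\operatorname{D}(\operatorname{Shv}(U,\Indban))$. Hence $R\underline{\mathcal{H}om}_{\mathscr{A}}(\mathcal{M}^{\bullet},\mathcal{N}^{\bullet})_{\vert U}=0$, so all cohomology sheaves of $R\underline{\mathcal{H}om}_{\mathscr{A}}(\mathcal{M}^{\bullet},\mathcal{N}^{\bullet})$ vanish on $U$, which is exactly the assertion that this complex is supported on $Y$.

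The only subtle point — and the step I would be most careful about — is making precise the claim that "supported on $Y$" as a condition on an object of the derived category means exactly "acyclic after restriction to any admissible open disjoint from $Y$," and that $\operatorname{D}(\mathscr{A})_Y$ in the statement denotes this subcategory of $\operatorname{D}(\mathscr{A})$ (rather than $\operatorname{D}(\Mod_{\Indban}(\mathscr{A})_Y)$, which by Lemma \ref{Lemma properties of inclusion of derived category of modules supported on a closed subspace} is the same thing up to equivalence). Once the notational identification is in place, the argument is a one-line consequence of the restriction formula of Proposition \ref{prop restriction of inner hom}. I do not anticipate any real obstacle here beyond bookkeeping; the content has already been done in the preceding two propositions.
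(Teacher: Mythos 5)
Your proof is correct and follows essentially the same route as the paper: restrict to an admissible open $U$ disjoint from $Y$ via Proposition \ref{prop restriction of inner hom}, observe that $\mathcal{M}^{\bullet}_{\vert U}$ and $\mathcal{N}^{\bullet}_{\vert U}$ vanish in $\operatorname{D}(\mathscr{A}_{\vert U})$, and conclude that the restricted inner hom is zero. The notational point you flag is handled in the paper exactly as you anticipate, so no adjustment is needed.
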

\begin{proof}
We need to show that $R\underline{\mathcal{H}om}_{\mathscr{A}}(\mathcal{M}^{\bullet},\mathcal{N}^{\bullet})\in \operatorname{D}(\operatorname{Shv}(X,LH(\widehat{\mathcal{B}}c_K)))$ has cohomology supported on $Y$.
This follows by Proposition \ref{prop restriction of inner hom}, as for every $U\subset X$ such that $U\cap Y$ is empty, we have the following identities in $\operatorname{D}(\operatorname{Shv}(U,LH(\widehat{\mathcal{B}}c_K)))$:
    \begin{equation*}
   R\underline{\mathcal{H}om}_{\mathscr{A}}(\mathcal{M}^{\bullet},\mathcal{N}^{\bullet})_{\vert U}= R\underline{\mathcal{H}om}_{\mathscr{A}_{\vert U}}(\mathcal{M}_{\vert U}^{\bullet},\mathcal{N}^{\bullet}_{\vert U})= 0,
    \end{equation*}
    where the last identity follows by the fact that $\mathcal{M}_{\vert U}^{\bullet}=\mathcal{N}^{\bullet}_{\vert U}=0$ in $\operatorname{D}(\mathscr{A}_{\vert U})_{Y\cap U}$, because their cohomology is supported on $Y$.
\end{proof}
\begin{prop}\label{Proposition cohomology and closed immersions}
    There is a natural isomorphism of derived functors:
    \begin{equation*}
        R\underline{\mathcal{H}om}_{\mathscr{B}}(-,-)=i^{-1}R\underline{\mathcal{H}om}_{\mathscr{A}}(i_{*}(-),i_{*}(-)).
    \end{equation*}
\end{prop}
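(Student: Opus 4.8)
The plan is to reduce the identity to the derived tensor--hom adjunction $(\ref{equation derived tensor relative hom adjunction})$ on $X$ and on $Y$, linked by a projection formula for the closed immersion $i$, and then to conclude by Yoneda's lemma. Throughout I use that $i_*\colon\operatorname{Shv}(Y,\Indban)\to\operatorname{Shv}(X,\Indban)_Y$ is an equivalence of quasi-abelian categories with quasi-inverse $i^{-1}$ (Proposition $\ref{prop sheaves of modules supported on the diagonal general case}$ and Lemma $\ref{Lemma properties of inclusion of derived category of modules supported on a closed subspace}$); in particular $i_*$ is strongly exact, $i^{-1}i_*=\operatorname{Id}$, and the analogous statements hold for modules, so $i_*$ induces a fully faithful functor $\operatorname{D}(\mathscr{B})\to\operatorname{D}(\mathscr{A})$ whose image is the complexes with cohomology supported on $Y$. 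I also use that $i^{-1}$ is exact and commutes with $\overrightarrow{\otimes}_K$, and that (by the general theory applied to $Y$ and $\mathscr{B}=i^{-1}\mathscr{A}$) the bifunctors $\overrightarrow{\otimes}^{\mathbb{L}}_K$ and $R\underline{\mathcal{H}om}_{\mathscr{B}}(-,-)$, together with the adjunction $(\ref{equation derived tensor relative hom adjunction})$, are available over $Y$.

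The first step is a projection formula: for $\mathcal{M}^\bullet\in\operatorname{D}(\mathscr{B})$ and $\mathcal{V}^\bullet\in\operatorname{D}(\operatorname{Shv}(Y,\Indban))$ there is a natural isomorphism in $\operatorname{D}(\mathscr{A})$
\[
    i_*\bigl(\mathcal{M}^\bullet\overrightarrow{\otimes}^{\mathbb{L}}_K\mathcal{V}^\bullet\bigr)\;\cong\;i_*\mathcal{M}^\bullet\overrightarrow{\otimes}^{\mathbb{L}}_K i_*\mathcal{V}^\bullet.
\]
To see this, note that $i_*\mathcal{M}^\bullet$ is supported on $Y$, hence so is the right-hand side: each term of the total complex computing $i_*\mathcal{M}^\bullet\overrightarrow{\otimes}^{\mathbb{L}}_K i_*\mathcal{V}^\bullet$ is the sheafification of a presheaf whose value on an admissible open disjoint from $Y$ is a finite sum of tensor products having a vanishing factor, and sheafification preserves vanishing on such opens. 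By Lemma $\ref{Lemma properties of inclusion of derived category of modules supported on a closed subspace}$ it therefore equals $i_*i^{-1}$ of itself, and since $i^{-1}$ is exact and commutes with $\overrightarrow{\otimes}_K$ we get $i^{-1}\bigl(i_*\mathcal{M}^\bullet\overrightarrow{\otimes}^{\mathbb{L}}_K i_*\mathcal{V}^\bullet\bigr)=\mathcal{M}^\bullet\overrightarrow{\otimes}^{\mathbb{L}}_K\mathcal{V}^\bullet$, which gives the formula. The same support argument, which is exactly the content of Lemma $\ref{Lemma support of inner homomorphism of sheaves with support in a closed subspace}$, shows that $R\underline{\mathcal{H}om}_{\mathscr{A}}(i_*\mathcal{M}^\bullet,i_*\mathcal{N}^\bullet)$ has cohomology supported on $Y$, hence equals $i_*i^{-1}$ of itself.

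The second step is the Yoneda computation. Fix $\mathcal{M}^\bullet,\mathcal{N}^\bullet\in\operatorname{D}(\mathscr{B})$ and let $\mathcal{V}^\bullet\in\operatorname{D}(\operatorname{Shv}(Y,\Indban))$ be arbitrary. Chaining $(\ref{equation derived tensor relative hom adjunction})$ over $Y$, the equivalence $i_*$, the projection formula, $(\ref{equation derived tensor relative hom adjunction})$ over $X$, and finally full faithfulness of $i_*$ on complexes supported on $Y$, I obtain natural isomorphisms
\begin{align*}
    \Hom_{\operatorname{D}(\operatorname{Shv}(Y,\Indban))}\!\bigl(\mathcal{V}^\bullet,R\underline{\mathcal{H}om}_{\mathscr{B}}(\mathcal{M}^\bullet,\mathcal{N}^\bullet)\bigr)
    &=\Hom_{\operatorname{D}(\mathscr{B})}\!\bigl(\mathcal{M}^\bullet\overrightarrow{\otimes}^{\mathbb{L}}_K\mathcal{V}^\bullet,\mathcal{N}^\bullet\bigr)\\
    &=\Hom_{\operatorname{D}(\mathscr{A})}\!\bigl(i_*\mathcal{M}^\bullet\overrightarrow{\otimes}^{\mathbb{L}}_K i_*\mathcal{V}^\bullet,i_*\mathcal{N}^\bullet\bigr)\\
    &=\Hom_{\operatorname{D}(\operatorname{Shv}(X,\Indban))}\!\bigl(i_*\mathcal{V}^\bullet,R\underline{\mathcal{H}om}_{\mathscr{A}}(i_*\mathcal{M}^\bullet,i_*\mathcal{N}^\bullet)\bigr)\\
    &=\Hom_{\operatorname{D}(\operatorname{Shv}(Y,\Indban))}\!\bigl(\mathcal{V}^\bullet,i^{-1}R\underline{\mathcal{H}om}_{\mathscr{A}}(i_*\mathcal{M}^\bullet,i_*\mathcal{N}^\bullet)\bigr),
\end{align*}
where the second identity uses the equivalence $i_*$ applied to the source together with $i_*\mathcal{N}^\bullet\in\operatorname{D}(\mathscr{A})_Y$, and the last one uses $R\underline{\mathcal{H}om}_{\mathscr{A}}(i_*\mathcal{M}^\bullet,i_*\mathcal{N}^\bullet)=i_*i^{-1}R\underline{\mathcal{H}om}_{\mathscr{A}}(i_*\mathcal{M}^\bullet,i_*\mathcal{N}^\bullet)$ and full faithfulness of $i_*$. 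All these isomorphisms are natural in $\mathcal{V}^\bullet$ (and in $\mathcal{M}^\bullet,\mathcal{N}^\bullet$), so Yoneda's lemma yields the asserted natural isomorphism of bifunctors $R\underline{\mathcal{H}om}_{\mathscr{B}}(-,-)=i^{-1}R\underline{\mathcal{H}om}_{\mathscr{A}}(i_*(-),i_*(-))$.

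I expect the only real content to be the projection formula and the fact that $R\underline{\mathcal{H}om}_{\mathscr{A}}(i_*\mathcal{M}^\bullet,i_*\mathcal{N}^\bullet)$ genuinely has cohomology supported on $Y$; the latter is already granted by Lemma $\ref{Lemma support of inner homomorphism of sheaves with support in a closed subspace}$, and the former is a support argument combined with the exactness and strong monoidality of $i^{-1}$, the one subtlety being to carry out the support and sheafification bookkeeping on the total complexes computing $\overrightarrow{\otimes}^{\mathbb{L}}_K$ rather than on honest tensor products. Everything else is a formal manipulation of adjunctions, with the mild point at the final step that passing from $\Hom$'s on $X$ to $\Hom$'s on $Y$ is legitimate precisely because the relevant inner-hom complex is supported on $Y$, so that $i_*$ is fully faithful on it.
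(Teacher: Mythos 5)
Your argument is correct, and at its core it is the same Yoneda computation as the paper's: chain the derived tensor--hom adjunction $(\ref{equation derived tensor relative hom adjunction})$ on $Y$, pass through $i_*$, apply the adjunction on $X$, and use Lemmas \ref{Lemma properties of inclusion of derived category of modules supported on a closed subspace} and \ref{Lemma support of inner homomorphism of sheaves with support in a closed subspace} to descend the inner hom back to $Y$. The one step you handle differently is the middle one. You first prove the projection formula $i_*(\mathcal{M}^{\bullet}\overrightarrow{\otimes}^{\mathbb{L}}_K\mathcal{V}^{\bullet})\cong i_*\mathcal{M}^{\bullet}\overrightarrow{\otimes}^{\mathbb{L}}_K i_*\mathcal{V}^{\bullet}$ via a support argument and then transport $\Hom$'s from $\operatorname{D}(\mathscr{B})$ to $\operatorname{D}(\mathscr{A})$ by full faithfulness of $i_*$; the paper instead moves in the opposite direction, from $\Hom_{\operatorname{D}(\mathscr{A})}(i_*\mathcal{N}^{\bullet}\overrightarrow{\otimes}^{\mathbb{L}}_K i_*\mathcal{M}^{\bullet},i_*\mathcal{T}^{\bullet})$ down to $Y$ using only that $i^{-1}$ is exact, commutes with $\overrightarrow{\otimes}^{\mathbb{L}}_K$, and is left adjoint to $i_*$ --- the pullback rather than the pushforward direction --- which requires no claim about the support of the derived tensor product. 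Your projection formula is essentially the paper's Proposition \ref{prop derived tensor products and closed immersions}, which is proved there \emph{after} (and using) the present proposition, and the support statement underlying it is Lemma \ref{Lemma suport of derived tensor product}, where the point is precisely to choose flat resolutions that are themselves supported on $Y$ --- the ``bookkeeping on the total complexes'' you flag. Both routes work; the paper's is marginally lighter because it sidesteps the support-of-tensor-product issue at this stage, while yours has the mild virtue of making the projection formula available immediately.
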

\begin{proof}
Let $\mathcal{M}^{\bullet}\in \operatorname{D}(\operatorname{Shv}(Y,LH(\widehat{\mathcal{B}}c_K)))$, and $\mathcal{N}^{\bullet},\mathcal{T}^{\bullet}\in \operatorname{D}(\mathscr{B})$. Then we have:
\begin{align*}
    \Hom_{\operatorname{D}(\operatorname{Shv}(Y,LH(\widehat{\mathcal{B}}c_K)))}(&\mathcal{M}^{\bullet},i^{-1}R\underline{\mathcal{H}om}_{\mathscr{A}}(i_{*}\mathcal{N}^{\bullet},i_{*}\mathcal{T}^{\bullet}))\\
    &=\Hom_{\operatorname{D}(\operatorname{Shv}(X,LH(\widehat{\mathcal{B}}c_K)))}(i_{*}\mathcal{M}^{\bullet},R\underline{\mathcal{H}om}_{\mathscr{A}}(i_{*}\mathcal{N}^{\bullet},i_{*}\mathcal{T}^{\bullet}))\\
    &=\Hom_{\operatorname{D}(\mathscr{A})}(i_{*}\mathcal{N}^{\bullet}\overrightarrow{\otimes}_K^{\mathbb{L}}i_{*}\mathcal{M}^{\bullet},i_{*}\mathcal{T}^{\bullet})&\\ &=\Hom_{\operatorname{D}(\mathscr{B})}(\mathcal{N}^{\bullet}\overrightarrow{\otimes}_K^{\mathbb{L}}\mathcal{M}^{\bullet},\mathcal{T}^{\bullet})&\\
   &=\Hom_{\operatorname{D}(\operatorname{Shv}(Y,LH(\widehat{\mathcal{B}}c_K)))}(\mathcal{M}^{\bullet},R\underline{\mathcal{H}om}_{\mathscr{B}}(\mathcal{N}^{\bullet},\mathcal{T}^{\bullet})).&
\end{align*}
where the first identity follows by Lemmas \ref{Lemma properties of inclusion of derived category of modules supported on a closed subspace}, and \ref{Lemma support of inner homomorphism of sheaves with support in a closed subspace}, and the third one by the fact that tensor products commute with pullbacks. Thus, the isomorphism holds by Yoneda.
\end{proof}
Many of these properties are also shared by the derived tensor product:
\begin{Lemma}\label{Lemma suport of derived tensor product}
Let $\mathcal{M}^{\bullet}\in \operatorname{D}(\mathscr{A}^{\op})_Y$ and $\mathcal{N}^{\bullet}\in \operatorname{D}(\mathscr{A})_Y$. Then $\mathcal{M}^{\bullet}\overrightarrow{\otimes}^{\mathbb{L}}_{\mathscr{A}}\mathcal{N}^{\bullet}$ is supported on $Y$.
\end{Lemma}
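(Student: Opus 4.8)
The plan is to reduce the statement to a local computation. Recall that an object of $\operatorname{D}(\operatorname{Shv}(X,\Indban))$ (or of $\operatorname{D}(\operatorname{Shv}(X,LH(\widehat{\mathcal{B}}c_K)))$) has cohomology supported on $Y$ if and only if, for every admissible open $U\subset X$ with $U\cap Y=\varnothing$, its restriction to $U$ is acyclic; this is exactly how the support hypotheses were unwound in the proof of Lemma \ref{Lemma support of inner homomorphism of sheaves with support in a closed subspace}. Hence it suffices to prove that $\left(\mathcal{M}^{\bullet}\overrightarrow{\otimes}^{\mathbb{L}}_{\mathscr{A}}\mathcal{N}^{\bullet}\right)_{\vert U}=0$ in $\operatorname{D}(\operatorname{Shv}(U,LH(\widehat{\mathcal{B}}c_K)))$ for every such $U$.

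First I would establish the analogue of Proposition \ref{prop restriction of inner hom} for the relative tensor product, namely a natural isomorphism $\left(\mathcal{M}^{\bullet}\overrightarrow{\otimes}^{\mathbb{L}}_{\mathscr{A}}\mathcal{N}^{\bullet}\right)_{\vert U}\cong \mathcal{M}^{\bullet}_{\vert U}\overrightarrow{\otimes}^{\mathbb{L}}_{\mathscr{A}_{\vert U}}\mathcal{N}^{\bullet}_{\vert U}$. At the underived level this is routine: $\mathcal{M}\overrightarrow{\otimes}_{\mathscr{A}}\mathcal{N}$ is a coequalizer of copies of $-\overrightarrow{\otimes}_K-$, the functor $-\overrightarrow{\otimes}_K-$ on sheaves is the sheafification of the sectionwise tensor product, and restriction along the open immersion $j\colon U\hookrightarrow X$ (being the exact left adjoint $j^{-1}$) commutes with sheafification and with colimits; hence $(-)_{\vert U}$ commutes with $\overrightarrow{\otimes}_{\mathscr{A}}$. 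To pass to the derived functor I would use that $\operatorname{Shv}(X,\Indban)$ has enough flat $\mathscr{A}^{\op}$-modules of the form $V\overrightarrow{\otimes}_K\mathscr{A}$ (equivalently, enough flat $I(\mathscr{A}^{\op})$-modules of the form $I(\mathscr{A})\widetilde{\otimes}_KI(V)$, as in the discussion preceding Proposition \ref{prop composition of global sections and inner hom}), together with the identity $\left(V\overrightarrow{\otimes}_K\mathscr{A}\right)\overrightarrow{\otimes}_{\mathscr{A}}\mathcal{N}=V\overrightarrow{\otimes}_K\mathcal{N}$: since $(-)_{\vert U}$ is exact and sends $V\overrightarrow{\otimes}_K\mathscr{A}$ to the flat $\mathscr{A}_{\vert U}^{\op}$-module $V_{\vert U}\overrightarrow{\otimes}_K\mathscr{A}_{\vert U}$, it commutes with the left-derived functor, which gives the claimed isomorphism. (Alternatively, one can obtain this by a Yoneda argument patterned on the proof of Proposition \ref{prop restriction of inner hom}, using the adjunction $(\ref{equation adjunction relative tensor-hom})$, the extension-by-zero adjunction of Proposition \ref{prop extension by zero}, and Lemma \ref{Lemma iso tensor extension restriction}.)

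With this in hand the conclusion is immediate. Because $\mathcal{M}^{\bullet}\in \operatorname{D}(\mathscr{A}^{\op})_Y$, its cohomology sheaves vanish on any $U$ disjoint from $Y$, so $\mathcal{M}^{\bullet}_{\vert U}=0$ in $\operatorname{D}(\mathscr{A}^{\op}_{\vert U})$; since $-\overrightarrow{\otimes}^{\mathbb{L}}_{\mathscr{A}_{\vert U}}\mathcal{N}^{\bullet}_{\vert U}$ is a triangulated (in particular additive) functor it kills the zero object, whence $\mathcal{M}^{\bullet}_{\vert U}\overrightarrow{\otimes}^{\mathbb{L}}_{\mathscr{A}_{\vert U}}\mathcal{N}^{\bullet}_{\vert U}=0$ and therefore $\left(\mathcal{M}^{\bullet}\overrightarrow{\otimes}^{\mathbb{L}}_{\mathscr{A}}\mathcal{N}^{\bullet}\right)_{\vert U}=0$. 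As $U$ ranges over all admissible opens disjoint from $Y$, the cohomology of $\mathcal{M}^{\bullet}\overrightarrow{\otimes}^{\mathbb{L}}_{\mathscr{A}}\mathcal{N}^{\bullet}$ is supported on $Y$, which is the assertion. (The hypothesis on $\mathcal{N}^{\bullet}$ alone would suffice as well, by symmetry of the two arguments.)

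The only genuine subtlety — and the step I expect to cost the most care — is the passage from the underived to the derived version of the restriction isomorphism in the unbounded setting: one must be precise about which incarnation of the derived relative tensor product is meant ($\overrightarrow{\otimes}^{\mathbb{L}}_{\mathscr{A}}$ on bounded-above complexes versus $\widetilde{\otimes}^{\mathbb{L}}_{I(\mathscr{A})}$ on the left heart, cf. $(\ref{equation derived functor for relative tensor in left heart})$ and $(\ref{equation derived functor relative tensor product in boinded above derived categories})$) and must check that the class of flats used to compute it is stable under the exact functor $(-)_{\vert U}$. Everything else is formal.
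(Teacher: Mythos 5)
Your proof is correct, but it proceeds by a genuinely different route from the paper's. You localize: you first establish that restriction to an admissible open $U$ disjoint from $Y$ commutes with $-\overrightarrow{\otimes}^{\mathbb{L}}_{\mathscr{A}}-$ (the tensor analogue of Proposition \ref{prop restriction of inner hom}), and then observe that one factor restricts to the zero object. The paper instead works globally: using the projective pair $(\widetilde{P}_r,\widetilde{P}_l)$ of \cite[Theorem 3.24]{bode2021operations}, it notes that for $\mathcal{X}$ supported on $Y$ the object $\mathcal{X}\widetilde{\otimes}_KI(\mathscr{A})$ lies in the projective class \emph{and} is supported on $Y$, so by \cite[Lemma 14.4.1]{Kashiwara2006} both complexes admit quasi-isomorphic replacements whose terms are direct sums of such objects; the derived tensor product is then represented by $\operatorname{Tot}_{\oplus}\bigl(P^{\bullet}\widetilde{\otimes}^{\bullet,\bullet}_{I(\mathscr{A})}Q^{\bullet}\bigr)$, which is termwise supported on $Y$, so its cohomology is too. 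The trade-off is this: the paper's argument sidesteps any restriction--tensor compatibility by going straight to a supported representative, whereas yours proves along the way the more general and independently useful locality statement $\left(-\overrightarrow{\otimes}^{\mathbb{L}}_{\mathscr{A}}-\right)_{\vert U}\cong(-)_{\vert U}\overrightarrow{\otimes}^{\mathbb{L}}_{\mathscr{A}_{\vert U}}(-)_{\vert U}$; but, as you anticipate in your closing caveat, in the unbounded derived category that statement itself requires exactly the same input (resolutions built from the projective pair of \cite[Theorem 3.24]{bode2021operations}, together with the observation that restriction to an open preserves that class and the totalization), so your route is not more elementary, only differently packaged. Your remark that either support hypothesis alone suffices is correct and applies equally to the paper's proof, since each term $P^i\widetilde{\otimes}_{I(\mathscr{A})}Q^j$ is supported on $Y$ as soon as one of the two factors is.
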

\begin{proof}
By \cite[Theorem 3.24]{bode2021operations}, there are full additive subcategories:
\begin{equation*}
    \widetilde{P}_r\subset LH(\Mod_{\Indban}(\mathscr{A}^{\op})), \quad \widetilde{P}_l\subset LH(\Mod_{\Indban}(\mathscr{A})),
\end{equation*}
such that the pair $(\widetilde{P}_r,\widetilde{P}_l)$ is projective  with respect to $-\widetilde{\otimes}_{I(\mathscr{A})}-$ (cf. \cite[Definition 10.3.9]{Kashiwara2006}).
Let 
$\mathcal{P}_r$ be the full subcategory of
$LH(\Mod_{\Indban}(\mathscr{A}^{\op}))_Y$ given by the objects contained in $\widetilde{P}_r$, and define 
$\mathcal{P}_l$ analogously. We claim that $\mathcal{P}_r$ and $\mathcal{P}_l$ are full additive subcategories which generate $LH(\Mod_{\Indban}(\mathscr{A}^{\op}))_Y$  and $LH(\Mod_{\Indban}(\mathscr{A}))_Y$ respectively. Indeed, start by choosing an object  $\mathcal{X}\in LH(\Mod_{\Indban}(\mathscr{A}^{\op}))_Y$. Then $\mathcal{X}\widetilde{\otimes}_KI(\mathscr{A})$ is in $\mathcal{P}_r$ and we have a surjection:
\begin{equation*}
    \mathcal{X}\widetilde{\otimes}_KI(\mathscr{A})\rightarrow \mathcal{X}.
\end{equation*}
Hence, our claim holds for $\mathcal{P}_r$, and an analogous argument works for $\mathcal{P}_l$.\\ 

As $LH(\Mod_{\Indban}(\mathscr{A}^{\op}))_Y$  and $LH(\Mod_{\Indban}(\mathscr{A}))_Y$ are Grothendieck abelian categories, it follows by \cite[Lemma 14.4.1]{Kashiwara2006} that there are quasi-isomorphisms 
$P^{\bullet}\rightarrow \mathcal{M}^{\bullet}$, and $Q^{\bullet}\rightarrow \mathcal{M}^{\bullet}$ such that $P^{\bullet}$ and $Q^{\bullet}$ are complexes whose objects are direct sums of elements in $\mathcal{P}_r$ and $\mathcal{P}_l$ respectively. In particular, they have support contained on $Y$. By the proof of \cite[Theorem 14.4.8]{Kashiwara2006}, we have:
\begin{equation*}
\mathcal{M}^{\bullet}\overrightarrow{\otimes}^{\mathbb{L}}_{\mathscr{A}}\mathcal{N}^{\bullet}=\operatorname{Tot}_{\oplus}\left(P^{\bullet}\widetilde{\otimes}^{\bullet,\bullet}_{I(\mathscr{A})} Q^{\bullet} \right).  
\end{equation*}
Thus, $\mathcal{M}^{\bullet}\overrightarrow{\otimes}^{\mathbb{L}}_{\mathscr{A}}\mathcal{N}^{\bullet}$ has cohomology supported on $Y$, as we wanted to show.
\end{proof}
\begin{prop}\label{prop derived tensor products and closed immersions}
There is a natural isomorphism of functors:
\begin{equation*}
    i_*\left(-\overrightarrow{\otimes}^{\mathbb{L}}_{\mathscr{B}}-\right)=i_*(-)\overrightarrow{\otimes}^{\mathbb{L}}_{\mathscr{A}}i_*(-).
\end{equation*}
\end{prop}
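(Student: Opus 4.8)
The plan is to mimic the proof of Proposition \ref{Proposition cohomology and closed immersions}, using a Yoneda argument built from the adjunctions already available to us. The key formal inputs are: the derived tensor--hom adjunction in $\operatorname{D}(\operatorname{Shv}(X,\Indban))$ of equation $(\ref{equation derived tensor-hom adjunction sheaves of bornological modules})$, its relative version $(\ref{equation adjunction relative tensor-hom})$ for the algebras $\mathscr{A}$ and $\mathscr{B}$, the derived equivalence $i_*:\operatorname{D}(\operatorname{Shv}(Y,\Indban))\leftrightarrows\operatorname{D}(\operatorname{Shv}(X,\Indban)_Y):i^{-1}$ from Lemma \ref{Lemma properties of inclusion of derived category of modules supported on a closed subspace}, together with the fact that $i_*$ of a $\mathscr{B}$-module is canonically an $\mathscr{A}$-module supported on $Y$ (Proposition \ref{prop sheaves of modules supported on the diagonal general case}), the containment of Lemma \ref{Lemma suport of derived tensor product} (so that both sides land in $\operatorname{D}(\mathscr{A})_Y$, where the fully faithful inclusion of Lemma \ref{Lemma properties of inclusion of derived category of modules supported on a closed subspace} applies), and the elementary fact that $i^{-1}$ commutes with tensor products, which lifts to the derived setting since $i^{-1}$ is exact.

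First I would fix $\mathcal{M}^{\bullet}\in\operatorname{D}(\mathscr{B}^{\op})$, $\mathcal{N}^{\bullet}\in\operatorname{D}(\mathscr{B})$, and a test object $\mathcal{T}^{\bullet}\in\operatorname{D}(\operatorname{Shv}(X,\Indban))$. Since $i_*\mathcal{M}^{\bullet}\overrightarrow{\otimes}^{\mathbb{L}}_{\mathscr{A}}i_*\mathcal{N}^{\bullet}$ is supported on $Y$ by Lemma \ref{Lemma suport of derived tensor product}, and the inclusion $\operatorname{D}(\operatorname{Shv}(X,\Indban)_Y)\hookrightarrow\operatorname{D}(\operatorname{Shv}(X,\Indban))$ is fully faithful, it suffices to compute $\Hom$ out of $i_*$ of a test object $\mathcal{S}^{\bullet}\in\operatorname{D}(\operatorname{Shv}(Y,\Indban))$. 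Then the chain of isomorphisms I would write down is:
\begin{align*}
\Hom_{\operatorname{D}(\operatorname{Shv}(X,\Indban)_Y)}\!\big(i_*\mathcal{M}^{\bullet}\overrightarrow{\otimes}^{\mathbb{L}}_{\mathscr{A}}i_*\mathcal{N}^{\bullet},\,i_*\mathcal{S}^{\bullet}\big)
&=\Hom_{\operatorname{D}(\mathscr{A})}\!\big(i_*\mathcal{N}^{\bullet},\,R\underline{\mathcal{H}om}_{\operatorname{Shv}(X,\Indban)}(i_*\mathcal{M}^{\bullet},i_*\mathcal{S}^{\bullet})\big)\\
&=\Hom_{\operatorname{D}(\mathscr{A})}\!\big(i_*\mathcal{N}^{\bullet},\,i_*R\underline{\mathcal{H}om}_{\operatorname{Shv}(Y,\Indban)}(\mathcal{M}^{\bullet},\mathcal{S}^{\bullet})\big)\\
&=\Hom_{\operatorname{D}(\mathscr{B})}\!\big(\mathcal{N}^{\bullet},\,R\underline{\mathcal{H}om}_{\operatorname{Shv}(Y,\Indban)}(\mathcal{M}^{\bullet},\mathcal{S}^{\bullet})\big)\\
&=\Hom_{\operatorname{D}(\operatorname{Shv}(Y,\Indban))}\!\big(\mathcal{M}^{\bullet}\overrightarrow{\otimes}^{\mathbb{L}}_{\mathscr{B}}\mathcal{N}^{\bullet},\,\mathcal{S}^{\bullet}\big)\\
&=\Hom_{\operatorname{D}(\operatorname{Shv}(X,\Indban)_Y)}\!\big(i_*(\mathcal{M}^{\bullet}\overrightarrow{\otimes}^{\mathbb{L}}_{\mathscr{B}}\mathcal{N}^{\bullet}),\,i_*\mathcal{S}^{\bullet}\big),
\end{align*}
where the first and fourth steps are the relative derived tensor--hom adjunction $(\ref{equation adjunction relative tensor-hom})$ (applied over $\mathscr{A}$ and over $\mathscr{B}$ respectively), the second step is the closed-immersion statement for inner hom, namely Proposition \ref{Proposition cohomology and closed immersions} in the case $\mathscr{A}=\mathscr{B}=K$ (or its underlying statement for sheaves of Ind-Banach spaces), the third step is the derived equivalence $i_*\dashv i^{-1}$ of Lemma \ref{Lemma properties of inclusion of derived category of modules supported on a closed subspace} together with the identity $i^{-1}\mathscr{A}=\mathscr{B}$, and the last step is again full faithfulness of $i_*$. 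Yoneda's lemma, applied in $\operatorname{D}(\operatorname{Shv}(X,\Indban)_Y)$ (whose objects are, up to the equivalence, exactly the $i_*\mathcal{S}^{\bullet}$), then gives the desired natural isomorphism, and naturality in $\mathcal{M}^{\bullet},\mathcal{N}^{\bullet}$ is inherited from the naturality of each displayed isomorphism.

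The main obstacle is bookkeeping rather than conceptual: one must make sure that the inner hom appearing in the second step is the \emph{non-relative} one over the base, so that the compatibility of $i_*$ with $R\underline{\mathcal{H}om}$ is exactly Proposition \ref{Proposition cohomology and closed immersions} (with $\mathscr{A}=\mathscr{B}=K$), and that the $\mathscr{A}$- versus $\mathscr{B}$-module structures line up correctly under $i_*$ and $i^{-1}$ — this is where the identity $i^{-1}\mathscr{A}=\mathscr{B}$ and the commutativity of $i^{-1}$ with tensor products are used. A secondary point to check is that $\mathcal{M}^{\bullet}\overrightarrow{\otimes}^{\mathbb{L}}_{\mathscr{B}}\mathcal{N}^{\bullet}$ is genuinely computed by the same flat resolutions whose images under $i_*$ compute $i_*\mathcal{M}^{\bullet}\overrightarrow{\otimes}^{\mathbb{L}}_{\mathscr{A}}i_*\mathcal{N}^{\bullet}$; this follows because $i_*$ is exact and sends the generating flat objects $\mathcal{X}\widetilde{\otimes}_K\mathscr{B}$ of $\operatorname{D}(\mathscr{B})_{Y}$ (in the sense of the proof of Lemma \ref{Lemma suport of derived tensor product}) to flat objects $i_*\mathcal{X}\widetilde{\otimes}_K\mathscr{A}$ of $\operatorname{D}(\mathscr{A})_Y$, so one may also give a direct resolution-level proof as an alternative to the Yoneda argument. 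If one prefers to avoid the test-object reduction, one can instead invoke the fully faithful inclusion of Lemma \ref{Lemma properties of inclusion of derived category of modules supported on a closed subspace} at the very start and work throughout inside $\operatorname{D}(\operatorname{Shv}(X,\Indban)_Y)\simeq\operatorname{D}(\operatorname{Shv}(Y,\Indban))$, which makes the identification immediate from $i^{-1}(-\overrightarrow{\otimes}^{\mathbb{L}}_{\mathscr{A}}-)=i^{-1}(-)\overrightarrow{\otimes}^{\mathbb{L}}_{\mathscr{B}}i^{-1}(-)$ and $i_*i^{-1}=\operatorname{Id}$ on complexes supported on $Y$.
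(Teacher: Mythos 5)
Your proposal is correct and follows essentially the same route as the paper: the paper also runs a Yoneda argument through the relative derived tensor--hom adjunctions over $\mathscr{A}$ and $\mathscr{B}$, the compatibility of $R\underline{\mathcal{H}om}$ with $i_*$ from Proposition \ref{Proposition cohomology and closed immersions} (with base $K$), and the derived equivalence of Lemma \ref{Lemma properties of inclusion of derived category of modules supported on a closed subspace}, concluding with Lemma \ref{Lemma suport of derived tensor product}. The only cosmetic difference is that the paper tests against an arbitrary $C^{\bullet}\in\operatorname{D}(\operatorname{Shv}(Y,LH(\widehat{\mathcal{B}}c_K)))$ and reads the chain in the opposite direction, deducing first $\mathcal{M}^{\bullet}\overrightarrow{\otimes}^{\mathbb{L}}_{\mathscr{B}}\mathcal{N}^{\bullet}=i^{-1}\left(i_*\mathcal{M}^{\bullet}\overrightarrow{\otimes}^{\mathbb{L}}_{\mathscr{A}}i_*\mathcal{N}^{\bullet}\right)$ and then applying $i_*$.
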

\begin{proof}
Let $\mathcal{M}^{\bullet}\in \operatorname{D}(\mathscr{B}^{\op})$, $\mathcal{N}^{\bullet}\in \operatorname{D}(\mathscr{B})$, and $C^{\bullet}\in \operatorname{D}(\operatorname{Shv}(X,LH(\widehat{\mathcal{B}}c_K)))$. Then we have:
\begin{align*}
\Hom_{\operatorname{D}(\operatorname{Shv}(Y,LH(\widehat{\mathcal{B}}c_K)))}(\mathcal{M}^{\bullet}&\overrightarrow{\otimes}^{\mathbb{L}}_{\mathscr{B}}\mathcal{N}^{\bullet},C^{\bullet})= \Hom_{\operatorname{D}(\mathscr{B})}(\mathcal{N}^{\bullet}, R\underline{\mathcal{H}om}_{\Indban}(\mathcal{M}^{\bullet},C^{\bullet}))\\
&=\Hom_{\operatorname{D}(\mathscr{B})}(\mathcal{N}^{\bullet}, i^{-1}R\underline{\mathcal{H}om}_{\Indban}(i_*\mathcal{M}^{\bullet},i_*C^{\bullet}))\\&=\Hom_{\operatorname{D}(\mathscr{A})}(i_*\mathcal{N}^{\bullet}, R\underline{\mathcal{H}om}_{\Indban}(i_*\mathcal{M}^{\bullet},i_*C^{\bullet}))\\
&=\Hom_{\operatorname{D}(\operatorname{Shv}(X,LH(\widehat{\mathcal{B}}c_K)))}(i_*\mathcal{M}^{\bullet}\overrightarrow{\otimes}^{\mathbb{L}}_{\mathscr{A}}i_*\mathcal{N}^{\bullet},i_*C^{\bullet})\\&=\Hom_{\operatorname{D}(\operatorname{Shv}(X,LH(\widehat{\mathcal{B}}c_K)))}(i^{-1}\left(i_*\mathcal{M}^{\bullet}\overrightarrow{\otimes}^{\mathbb{L}}_{\mathscr{A}}i_*\mathcal{N}^{\bullet}\right),C^{\bullet}),
\end{align*}
where we use the derived adjunction $(\ref{equation adjunction relative tensor-hom})$, together with Lemma \ref{Lemma support of inner homomorphism of sheaves with support in a closed subspace}, and Proposition \ref{Proposition cohomology and closed immersions} applied to the case $\mathscr{A}=K$. Thus, we have the following identity in $\operatorname{D}(\operatorname{Shv}(Y,LH(\widehat{\mathcal{B}}c_K)))$:
\begin{equation*}
   \mathcal{M}^{\bullet}\overrightarrow{\otimes}^{\mathbb{L}}_{\mathscr{B}}\mathcal{N}^{\bullet}=i^{-1}\left(i_*\mathcal{M}^{\bullet}\overrightarrow{\otimes}^{\mathbb{L}}_{\mathscr{A}}i_*\mathcal{N}^{\bullet}\right). 
\end{equation*}
By lemmas \ref{Lemma properties of inclusion of derived category of modules supported on a closed subspace}, and \ref{Lemma suport of derived tensor product}, this identity implies the proposition.
\end{proof}
\section{Background: Lie algebroids and co-admissible modules}\label{Section background Lie algebroids}
Now that we have established a technical framework in which to develop our formalism of Hochschild (co)-homology, we  will introduce the particular objects to which we will apply it. In particular, we will devote this chapter to introducing the sheaves of (Fréchet-Stein) completions of universal enveloping algebras of Lie algebroids, and the category of sheaves of co-admissible modules attached to them. In latter stages, we will define $\mathcal{C}$-complexes, which are a homological generalization of sheaves of co-admissible modules, and discuss some of the operations that can be performed on them. We would like to point out that we do not claim any originality here. In particular, the theory of co-admissible modules over sheaves of Fréchet-Stein completions of universal enveloping algebras of Lie algebroids was first developed by Ardakov-Wadsley in \cite{ardakov2019}, and the six functor formalism for $\mathcal{C}$-complexes was developed by Bode in \cite{bode2021operations}.
\subsection{Lie algebroids}
In this section, we will give a brief review of the theory of Fréchet-Stein completions of universal enveloping algebras of Lie algebroids on a smooth rigid space $X$, and the theory of co-admissible modules attached to them, as developed in \cite{ardakov2019}, \cite{ardakov2015d}, \cite{Ardakov_Bode_Wadsley_2021}. 
\begin{obs}\label{remark all derivations are bounded}
Recall the following result from \cite[Section 2.4]{ardakov2019}: Let $A$ be an affinoid algebra and $M$ be a finite $A$-module. Regard $M$ as a Banach $K$-vector space with respect to its topology as a finite $A$-module. Then any $K$-linear derivation $f:A\rightarrow M$ is bounded. In particular, we have:
\begin{equation*}
    \Der_K^b(A):= \{\textnormal{bounded derivations }f:A\rightarrow A\}=\Der_K(A).
\end{equation*}
\end{obs}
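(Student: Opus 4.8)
The plan is to deduce this statement — which is recalled from \cite[Section 2.4]{ardakov2019} — from the automatic continuity of homomorphisms of affinoid $K$-algebras, via the trivial square-zero extension trick. First I would recall that a finite module $M$ over an affinoid algebra $A$ carries a canonical Banach $K$-vector space topology, unique with the property of making $M$ a topological $A$-module, and that with respect to it every $A$-linear map between finite $A$-modules is automatically bounded (see \cite[Section 2.2]{ardakov2019}). Thus the only content of the statement is the automatic continuity of the $K$-linear map $D$.

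Next I would form the trivial square-zero extension $A\oplus M$ of $A$ by $M$: the $K$-algebra with underlying space $A\oplus M$ and multiplication $(a,m)\cdot(a',m')=(aa',\,am'+a'm)$. Choosing generators $m_1,\dots,m_r$ of $M$ as an $A$-module, one checks that $A\oplus M$ is again an affinoid $K$-algebra, namely the quotient of $A\langle y_1,\dots,y_r\rangle$ by the kernel of the map $y_i\mapsto(0,m_i)$ — this is legitimate since each $(0,m_i)$ is nilpotent, hence power-bounded — and the affinoid topology of $A\oplus M$ restricts on the square-zero ideal $M$ to its canonical finite-module topology. Now the derivation identity $D(aa')=aD(a')+a'D(a)$ says precisely that $\phi\colon A\to A\oplus M$, $a\mapsto(a,D(a))$, is a homomorphism of $K$-algebras (indeed a $K$-algebra section of the projection $A\oplus M\to A$). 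Invoking the classical theorem that every $K$-algebra homomorphism between affinoid $K$-algebras is continuous, $\phi$ is continuous; composing with the ($A$-linear, hence bounded) projection $A\oplus M\to M$ onto the second component shows $D$ is bounded. The assertion $\Der_K^b(A)=\Der_K(A)$ is then the special case $M=A$, the inclusion $\subseteq$ being trivial.

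The genuine input here is the automatic continuity of affinoid-algebra homomorphisms, which is where the analytic content sits: a ring homomorphism respects maximal ideals, so point evaluation shows it carries power-bounded elements to power-bounded elements, and combining this with the $\pi$-adic separatedness of the target forces continuity. By contrast, verifying that $A\oplus M$ is affinoid with the correct topology is routine bookkeeping, so I expect no essential difficulty there; it is the cited continuity theorem that does the real work. An alternative, more hands-on route would fix a Tate presentation $A=T_n/I$ and reduce to showing that every $K$-derivation $T_n\to M$ has the form $g\mapsto\sum_i(\partial g/\partial x_i)\,m_i$, but this again comes down to the same closed-graph phenomenon and is no shorter.
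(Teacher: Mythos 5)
Your proof is correct. The paper itself gives no argument here---the remark merely cites \cite[Section 2.4]{ardakov2019}---and your square-zero extension argument (realising the derivation $D$ as the $K$-algebra section $a\mapsto(a,D(a))$ of the affinoid algebra $A\oplus M$ and invoking automatic continuity of homomorphisms of affinoid $K$-algebras) is precisely the standard proof found in that reference, with all the routine points (that $A\oplus M$ is affinoid with the correct topology, that $D(1)=0$ so $\phi$ is unital) handled correctly.
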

For a rigid space $X$, we let $\mathcal{T}_{X/K}$ be the tangent sheaf, and $\Omega^1_{X/K}$ be the sheaf of Kähler differentials. By definition, for an affinoid open subspace $U\subset X$ we have:
\begin{equation*}
    \mathcal{T}_{X/K}(U)=\Der_K(\OX_X(U)).
\end{equation*}
Given $f\in \OX_X(U)$ and $v\in \mathcal{T}_{X/K}(U)$, we denote the evaluation of $v$ at $f$ by $v(f)$.
\begin{defi}
We define the following objects:
\begin{enumerate}[label=(\roman*)]
    \item $\textnormal{(\cite[Section 2]{rinehart1963differential})}$ Let $R$ be a commutative ring and $A$ be a $R$-algebra. A smooth $(R,A)$-Lie algebra is a finite projective $A$-module $L$ with a structure of  $R$-Lie algebra and an $A$-linear map of $R$-Lie algebras:
    \begin{equation*}
        \rho:L\rightarrow \Der_{R}(A),
    \end{equation*}
called the anchor map. It satisfies that for  $a\in A$, $x,y\in L$ we have: 
    \begin{equation*}
        [ax,y]=a[x,y]+\rho(x)(a)y.
    \end{equation*}
    \item $\textnormal{(\cite[Definition 6.1]{ardakov2019})}$ Let $A$ be an affinoid $K$-algebra, $\mathcal{A}\subset A$  be an affine formal model of $A$, and $L$ be a $(K,A)$-Lie algebra. A smooth $(\mathcal{R},\mathcal{A})$-Lie lattice of $L$
    is a smooth $(\mathcal{R},\mathcal{A})$-Lie algebra $\mathcal{L}\subset L$ such that: $L=\mathcal{L}\otimes_{\mathcal{R}}K$ as $(K,A)$-Lie algebras.
    \item $\textnormal{(\cite[Definition 9.1]{ardakov2019})}$ A Lie algebroid on a rigid $K$-variety $X$ is a sheaf of $(K,\OX_{X})$-Lie algebras $\mathscr{L}$ such that $\mathscr{L}$ is a locally finite-free $\OX_X$-module.
\end{enumerate}
\end{defi}
Notice that $\mathcal{T}_{X/K}$ is canonically a sheaf of $(K,\OX_{X})$-Lie algebras, and that  it is a Lie algebroid if and only if $X$ is a smooth rigid space.\bigskip

Let $A$ be an affinoid algebra. As shown in \cite{rinehart1963differential}, to every smooth $(K,A)$-Lie algebra $L$ we can associate a universal enveloping algebra $U(L)$. This is a noetherian $K$-algebra with a filtration $F_{\bullet}U(L)$ which satisfies the PBW theorem. That is, there is a canonical isomorphism of graded $A$-algebras:
\begin{equation*}
    \gr_FU(L)=\operatorname{Sym}_A(L).
\end{equation*}
We call $F_{\bullet}U(L)$ the PBW filtration. Furthermore, if $\mathcal{A}\subset A$ is an affine formal model, and $\mathcal{L}\subset L$ is a $(\mathcal{R},\mathcal{A})$-Lie lattice, then we can also form a universal algebra $U(\mathcal{L})$. By 
\cite[Proposition 2.3]{ardakov2019}, the algebra $U(\mathcal{L})$ is $\mathcal{R}$-flat, it is noetherian if $K$ is discretely valued, and satisfies:
\begin{equation*}
    U(L)=K\otimes_{\mathcal{R}}U(\mathcal{L}).
\end{equation*}
Let $X=\Sp(A)$ be the affinoid space associated to $A$. In virtue of the contents of \cite[Section 9]{ardakov2019}, we may uniquely extend $L$ to a sheaf of $(K,\OX_X)$-Lie algebras on $X$. Namely, let $\mathscr{L}=\operatorname{Loc}(L)$ be the unique sheaf on $X$ defined on affinoid subdomains $U\subset X$ by the rule:
\begin{equation*}
    \mathscr{L}(U)=\OX_X(U)\otimes_AL.
\end{equation*}
Then the $(K,A)$-Lie algebra structure on $L$ extends uniquely to a Lie algebroid structure on $\mathscr{L}$. Furthermore, every Lie algebroid on $X$ arises in this way.\\

Let $\mathscr{L}$ be a Lie algebroid on $X$. For every affinoid admissible open subspace $V=\Sp(B)\subset X$ we have the following identity:
\begin{equation*}
    U(\mathscr{L}(V))=U(B\otimes_AL)=B\otimes_AU(L).
\end{equation*}
In particular, we have a sheaf of filtered $K$-algebras $U(\mathscr{L})$ on $X$ such that:
\begin{equation*}
    U(\mathscr{L})(V)=U(\mathscr{L}(V)),
\end{equation*}
We call $U(\mathscr{L})$ the sheaf of enveloping algebras of the Lie algebroid  $\mathscr{L}$.
\begin{defi}[{\cite[Section 3]{schneider2002algebras}}]
A $K$-algebra $\mathscr{A}$ is called a left (right) Fréchet-Stein algebra if there is an inverse system of Banach $K$-algebras $(\mathscr{A}_{n})_{n\geq 0}$  satisfying the following conditions:
\begin{enumerate}[label=(\roman*)]
    \item The transition maps $\mathscr{A}_{n+1}\rightarrow \mathscr{A}_n$ have dense image and are left (right) flat for each $n\geq 0$.
    \item Each $\mathscr{A}_n$ is a two-sided noetherian $K$-algebra.
    \item There is an isomorphism of $K$-algebras $\mathscr{A}\rightarrow \varprojlim_n\mathscr{A}_n$.
\end{enumerate}
If the transition maps are left and right flat, we will say that $\mathscr{A}$ is a (two-sided) Fréchet-Stein algebra.
\end{defi}
Back to our previous setting, let $A$ be an affinoid $K$-algebra with an affine formal model $\mathcal{A}\subset A$, and let $L$ be a smooth $(K,A)$-Lie algebra with a smooth $(\mathcal{R},\mathcal{A})$-Lie lattice $\mathcal{L}$. For every $n\geq 0$, the $\mathcal{A}$-module $\pi^n\mathcal{L}$ is still a smooth $(\mathcal{R},\mathcal{A})$-Lie lattice. In particular, we can form the enveloping algebra $U(\pi^n\mathcal{L})$. Let $\widehat{U}(\pi^n\mathcal{L})$ be the $\pi$-adic completion of $U(\pi^n\mathcal{L})$. Then $\widehat{U}(\pi^n\mathcal{L})$ is a $\mathcal{R}$-flat, $\pi$-adically complete, and $\widehat{U}(\pi^{n+1}\mathcal{L})_K$ is two sided noetherian $K$-algebra. Assume in addition that $\mathcal{L}$ satisfies $[\mathcal{L},\mathcal{L}]\subset \pi^2 \mathcal{L}$ and $\rho(\mathcal{L})(\mathcal{A})\subset \pi \mathcal{A}$. Then the canonical maps:
\begin{equation*}
    \widehat{U}(\pi^{n+1}\mathcal{L})_K\rightarrow \widehat{U}(\pi^n\mathcal{L})_K,
\end{equation*}
are
two sided flat and have dense image (cf. \cite[Theorem 4.1.11]{ardakov2021equivariant}). Notice that for an arbitrary smooth $(\mathcal{R},\mathcal{A})$-Lie lattice $\mathcal{L}$, the lattice $\pi\mathcal{L}$ satisfies both the assumptions above. In this situation, we can form the following Fréchet-Stein algebra:
\begin{equation*}
    \wideparen{U}(L)=\varprojlim \widehat{U}(\pi^n\mathcal{L})_K,
\end{equation*}
which we call the Fréchet-Stein enveloping algebra of $L$. The topological $K$-algebra $\wideparen{U}(L)$ does not depend on the choice of affine formal model or smooth Lie-lattice, only on the isomorphism class of $L$ as a $(K,A)$-Lie algebra. We can use the particular description of Fréchet-Stein algebras to introduce a relevant category of modules:
\begin{defi}[{\cite[Section 3]{schneider2002algebras}}]
 Let $\mathscr{A}=\varprojlim_n \mathscr{A}_n$ be a Fréchet-Stein algebra, and $M$ be a left (right) $\mathscr{A}$-module. We say $M$ is a co-admissible module if for each $n\geq 0$ the $\mathscr{A}_n$-modules $M_n=\mathscr{A}_n\otimes_{\mathscr{A}}M$ are finite, and the canonical map of $\mathscr{A}$-modules
$M\rightarrow \varprojlim_n M_n$ is an isomorphism. We denote the category of co-admissible modules over $\mathscr{A}$ with $\mathscr{A}$-linear maps by $\mathcal{C}(\mathscr{A})$.   
\end{defi}
In particular, the previous discussion shows the existence of $\mathcal{C}(\wideparen{U}(L))$, the category of co-admissible modules over the Fréchet-Stein enveloping algebra $\wideparen{U}(L)$. Most of the contents of this paper are centered around this category and its generalizations. As the name suggests, Fréchet-Stein algebras are in particular Fréchet algebras. Furthermore, every co-admissible module admits a canonical Fréchet topology. Thus, by \cite[Theorem 6.4]{bode2021operations} we may regard $\wideparen{U}(L)$ as an Ind-Banach algebra, 
and $\mathcal{C}(\wideparen{U}(L))$ as an abelian subcategory of $\Mod_{\Indban}(\wideparen{U}(L))$.\bigskip

The canonical map $U(L)\rightarrow \wideparen{U}(L)$ is faithfully flat and has dense image. Furthermore, for any affinoid subdomain $V=\Sp(B)\subset X=\Sp(A)$, the restriction map $U(L)\rightarrow B\otimes_AU(L)$ extends uniquely to a
morphism:
\begin{equation*}
    \wideparen{U}(L)\rightarrow \wideparen{U}(B\otimes_A L)=B\overrightarrow{\otimes}_K\wideparen{U}(L).
\end{equation*}
Let $\mathscr{L}=\operatorname{Loc}(L)$. These maps allows us to define a sheaf of Ind-Banach algebras $\wideparen{U}(\mathscr{L})$ on $X$, which is defined by the rule:
\begin{equation*}
    V\mapsto \wideparen{U}(\mathscr{L})(V)=\wideparen{U}(\mathscr{L}(V)).
\end{equation*}
It can be shown that $\wideparen{U}(\mathscr{L})$ has trivial higher \v{C}ech cohomology groups. 
\begin{defi}[{\cite[Section 8 ]{ardakov2019}}]
Let $\mathcal{M}$ be a sheaf of Ind-Banach $\wideparen{U}(\mathscr{L})$-modules. We say $\mathcal{M}$ is a co-admissible module if $\mathcal{M}(X)\in\mathcal{C}(\wideparen{U}(\mathscr{L}(X)))$, and for every affinoid subdomain $V=\Sp(B)\subset X$ we have:
\begin{equation*}
   \mathcal{M}(V)=\wideparen{U}(\mathscr{L}(V))\overrightarrow{\otimes}_{\wideparen{U}(\mathscr{L}(X))}\mathcal{M}(X). 
\end{equation*}
We let $\mathcal{C}(\wideparen{U}(\mathscr{L}))$ be the abelian category of co-admissible $\wideparen{U}(\mathscr{L})$-modules with $\wideparen{U}(\mathscr{L})$-linear maps.     
\end{defi}
Given a co-admissible $\wideparen{U}(\mathscr{L}(X))$-module $\mathcal{M}$, we obtain a sheaf of co-admissible $\wideparen{U}(\mathscr{L})$-modules on $X$ by setting:
\begin{equation*}
    \operatorname{Loc}(\mathcal{M})(V)=\wideparen{U}(\mathscr{L}(V))\overrightarrow{\otimes}_{\wideparen{U}(\mathscr{L}(X))}\mathcal{M},
\end{equation*}
where $V=\Sp(B)\subset X$ is an affinoid subdomain. We may condense the main results on co-admissible modules into the following proposition:
\begin{prop}
Let $X=\Sp(A)$ be a smooth affinoid space equipped with a Lie algebroid $\mathscr{L}$. Then the following hold:
\begin{enumerate}[label=(\roman*)]
    \item $\textnormal{(\cite[Theorems 8.2, 8.4]{ardakov2019})}$ There are mutually inverse equivalences of abelian categories:
    \begin{equation*}
        \Gamma(X,-):\mathcal{C}(\wideparen{U}(\mathscr{L}))\leftrightarrows \mathcal{C}(\wideparen{U}(\mathscr{L}(X))):\operatorname{Loc}(-).
    \end{equation*}
    Furthermore, objects in  $\mathcal{C}(\wideparen{U}(\mathscr{L}))$ have trivial higher \v{C}ech cohomology.
    \item $\textnormal{(\cite[Theorem 8.4]{ardakov2019})}$ A sheaf of $\wideparen{U}(\mathscr{L})$-modules $\mathcal{M}$ is co-admissible if and only if for every admissible affinoid cover $\{V_i\}_{i\in I}$, we have $\mathcal{M}_{\vert V_i}\in \mathcal{C}(\wideparen{U}(\mathscr{L}_{\vert V_i}))$.
    \item  $\textnormal{(\cite[Proposition 5.5]{bode2021operations})}$ Let $\mathcal{M}\in \mathcal{C}(\wideparen{U}(\mathscr{L}(X)))$. Then $\mathcal{M}$ is an $A$-nuclear Fréchet space (cf. \cite[Definition 5.3]{bode2021operations}).
\end{enumerate}
\end{prop}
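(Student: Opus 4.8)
Since all three assertions are established in the cited literature and we claim no originality here, the plan is to reduce each part to the corresponding reference and record the structure of the argument.

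For $(i)$, I would first check the two composites of the adjoint pair $(\Gamma(X,-),\operatorname{Loc}(-))$. The composite $\Gamma(X,-)\circ\operatorname{Loc}(-)$ is canonically the identity, since $\operatorname{Loc}(\mathcal{M})(X)=\wideparen{U}(\mathscr{L}(X))\overrightarrow{\otimes}_{\wideparen{U}(\mathscr{L}(X))}\mathcal{M}=\mathcal{M}$, while $\operatorname{Loc}(-)\circ\Gamma(X,-)$ is the identity on co-admissible sheaves by the very definition of such a sheaf. The real content is that $\operatorname{Loc}(\mathcal{M})$ is genuinely a sheaf of $\wideparen{U}(\mathscr{L})$-modules and that its higher \v{C}ech cohomology vanishes; this is the Ardakov-Wadsley localisation theorem \cite[Theorems 8.2 and 8.4]{ardakov2019}, which rests on the acyclicity of $\wideparen{U}(\mathscr{L})$ recalled above together with the flatness and dense image of the transition maps $\widehat{U}(\pi^{n+1}\mathcal{L})_K\to\widehat{U}(\pi^{n}\mathcal{L})_K$, ensuring that co-admissibility is preserved under completed base change along an affinoid inclusion $V\hookrightarrow X$. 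Exactness of the resulting equivalence then follows from exactness of $\operatorname{Loc}$ on the abelian category of co-admissible modules, which in turn comes from the flatness of the completed localisation of a Fréchet-Stein algebra.

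For $(ii)$, the easy implication is that a globally co-admissible sheaf restricts to co-admissible sheaves on the members of any admissible affinoid cover, which is immediate from $(i)$ and the identity $\wideparen{U}(\mathscr{L})_{\vert V_i}=\wideparen{U}(\mathscr{L}_{\vert V_i})$. For the converse I would glue: given co-admissible $\mathcal{M}_{\vert V_i}$, the sections $\mathcal{M}(V_i)$ are co-admissible, they agree on overlaps after completed base change, and the sheaf axiom for $\mathcal{M}$ reconstructs $\mathcal{M}(X)$ as an inverse limit of finite modules over the Banach algebras in a Fréchet-Stein presentation of $\wideparen{U}(\mathscr{L}(X))$; one then verifies that this inverse limit is co-admissible and that applying $\operatorname{Loc}$ returns $\mathcal{M}$. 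This gluing argument is carried out in \cite[Theorem 8.4]{ardakov2019}.

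For $(iii)$, I would use the Fréchet-Stein presentation $\wideparen{U}(\mathscr{L}(X))=\varprojlim_n\widehat{U}(\pi^{n}\mathcal{L})_K$: a co-admissible module is $\varprojlim_n\mathcal{M}_n$ with each $\mathcal{M}_n$ finitely generated over the noetherian Banach algebra $\widehat{U}(\pi^{n}\mathcal{L})_K$, hence in particular a Fréchet space. The point is to upgrade this to $A$-nuclearity, with $A=\OX_X(X)$, by showing that the transition maps of the inverse system are nuclear; this is where the normalisation $[\mathcal{L},\mathcal{L}]\subset\pi^{2}\mathcal{L}$ and $\rho(\mathcal{L})(\mathcal{A})\subset\pi\mathcal{A}$ (which can always be arranged by rescaling $\mathcal{L}$) enters, as it makes the transition maps completely continuous, so that nuclearity propagates through finite generation and countable inverse limits with nuclear transition maps. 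This is exactly \cite[Proposition 5.5]{bode2021operations}. The main obstacle in a self-contained account would be precisely this nuclearity estimate — controlling the $A$-module topologies of the $\mathcal{M}_n$ and the compactness of the transition maps — but since we are only recalling background material, we invoke the cited results directly.
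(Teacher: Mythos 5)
Your proposal is correct and matches the paper's treatment: this proposition is pure background, and the paper's entire "proof" consists of the citations embedded in the statement itself, exactly as you conclude. Your accompanying sketches of the arguments behind \cite[Theorems 8.2, 8.4]{ardakov2019} and \cite[Proposition 5.5]{bode2021operations} are accurate and consistent with those references, so nothing further is required.
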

If $X$ is an arbitrary rigid analytic space with a Lie algebroid $\mathscr{L}$, we define its category of co-admissible $\wideparen{U}(\mathscr{L})$-modules as the full subcategory of $\Mod_{\Indban}(\wideparen{U}(\mathscr{L}))$ given by the objects which are co-admissible when restricted to an admissible affinoid cover. The previous proposition shows that this is well defined. Indeed, if $X$ is affinoid, we recover our previous definition.
\subsection{\texorpdfstring{Operations on C-complexes}{}}
Now that we have defined the category of co-admissible modules, the next step is studying their homological algebra. In particular, we will start by recalling the definition of $\mathcal{C}$-complexes from \cite[Section 6]{bode2021operations}. These complexes are to be understood as a $p$-adic analog of the bounded complexes with coherent cohomology in the algebraic theory of $\D$-modules. After that, we will introduce $p$-adic versions of some of the classical operations in the theory of $\D$-modules (direct image, inverse image, \emph{etc}). \bigskip

Let $X=\Sp(A)$ be a smooth affinoid space, $\mathcal{A}\subset A$  an affine formal model, and 
$\mathcal{T}_{\mathcal{A}}\subset \mathcal{T}_{X/K}(X)$ an $(\mathcal{R},\mathcal{A})$-Lie lattice. As shown above, the choice of a $(\mathcal{R},\mathcal{A})$-Lie lattice induces a  Fréchet-Stein presentation:
\begin{equation*}
    \wideparen{\D}_X(X)=\varprojlim_n\widehat{U}(\pi^n\mathcal{T}_{\mathcal{A}})_K.
\end{equation*}
We can use this presentation to make the following definition:
\begin{defi}[{\cite[Section 8]{bode2021operations}}]\label{defi C complexes}
Let $C^{\bullet}\in \operatorname{D}(\wideparen{\D}_X)$. We say $C^{\bullet}$ is a $\mathcal{C}$-complex if the following conditions hold:
\begin{enumerate}[label=(\roman*)]
    \item $\operatorname{H}^i(C^{\bullet})\in \mathcal{C}(\wideparen{\D}_X)$ for all $i\in \mathbb{Z}$.
    \item For $n\geq 0$ we have $\widehat{U}(\pi^n\mathcal{T}_{\mathcal{A}})_K\overrightarrow{\otimes}_{\wideparen{\D}_X(X)}\Gamma(X,\operatorname{H}^i(C^{\bullet}))=0$ for all but finitely many $i\in \mathbb{Z}$.
\end{enumerate}
Let $\operatorname{D}_{\mathcal{C}}(\wideparen{\D}_X)$ be the full subcategory of $\operatorname{D}(\wideparen{\D}_X)$ given by the $\mathcal{C}$-complexes. 
\end{defi}
The definition of $\mathcal{C}$-complex does not depend on the choice of affine formal model or Lie lattice. Furthermore, $\operatorname{D}_{\mathcal{C}}(\wideparen{\D}_X)$ is a full triangulated subcategory of $\operatorname{D}(\wideparen{\D}_X)$. For a general smooth rigid space $X$, we say a complex $C^{\bullet}\in \operatorname{D}(\wideparen{\D}_X)$ is a $\mathcal{C}$-complex if its restriction to an (any) admissible affinoid cover is a $\mathcal{C}$-complex. We will now describe the direct image and inverse image functors, as defined in \cite{bode2021operations}.
The homological machinery of Ind-Banach spaces allows to give a formulation which is pretty similar to that of the classical theory of algebraic $\D$-modules. Let $f:X\rightarrow Y$ be a morphism of smooth rigid analytic spaces. Let us start by introducing the following objects:
\begin{defi}[{\cite[Lemmas 5.3,5.5]{bode2021operations}}]
The transfer bimodules associated to $f:X\rightarrow Y$ are:
 \begin{align*}
     \wideparen{\D}_{X\rightarrow Y}=\OX_X\overrightarrow{\otimes}_{f^{-1}\OX_Y}f^{-1}\wideparen{\D}_Y,\\
     \wideparen{\D}_{Y \leftarrow X}=\Omega_X\overrightarrow{\otimes}_{\OX_X} \wideparen{\D}_{X\rightarrow Y}\overrightarrow{\otimes}_{f^{-1}\OX_Y}f^{-1}\Omega_Y^{-1}.
 \end{align*} 
\end{defi}
 The transfer bimodule $\wideparen{\D}_{X\rightarrow Y}$ is a bornological $(\wideparen{\D}_X,f^{-1}\wideparen{\D}_Y)$-bimodule,  and $\wideparen{\D}_{Y \leftarrow X}$ is a bornological $(f^{-1}\wideparen{\D}_Y,\wideparen{\D}_X)$-bimodule. We use the transfer bimodules to define the following operations:
 \begin{defi}[{\cite[Section 7.3,7.4]{bode2021operations}}]
 We define the following functors:
     \begin{enumerate}[label=(\roman*)]
         \item The inverse image functor $f^!:\operatorname{D}(\wideparen{\D}_{Y})\rightarrow \wideparen{\D}_{X}$ is defined via the following formula:
         \begin{multline*}
             f^!\mathcal{M}=\wideparen{\D}_{X\rightarrow Y}\overrightarrow{\otimes}^{\mathbb{L}}_{f^{-1}\wideparen{\D}_Y}f^{-1}\mathcal{M}[\operatorname{dim}(X)-\operatorname{dim}(Y)]\\
             =\OX_X\overrightarrow{\otimes}^{\mathbb{L}}_{f^{-1}\OX_Y}f^{-1}\mathcal{M}[\operatorname{dim}(X)-\operatorname{dim}(Y)].
         \end{multline*}
         \item The direct image functor $f_+:\operatorname{D}(\wideparen{\D}_{X})\rightarrow \wideparen{\D}_{Y}$ is defined by the following formula:
         \begin{equation*}
             f_+\mathcal{N}=Rf_*\left(\wideparen{\D}_{Y \leftarrow X}\overrightarrow{\otimes}^{\mathbb{L}}_{\wideparen{\D}_X}\mathcal{N}\right).
         \end{equation*}
     \end{enumerate}
 \end{defi}
We point out that there are versions of these operators for right modules, defined analogously. Some of the good properties of the classic versions of these operations survive to the rigid analytic setting. To us, the most important will be rigid analytic version of Kashiwara's equivalence: Let $i:X\rightarrow Y$ be a closed immersion of smooth rigid analytic
$K$-varieties. Let $\operatorname{D}_{\mathcal{C}^X}(\wideparen{\D}_Y)$ denote the category of $\mathcal{C}$-complexes on $Y$ with cohomology supported on $X$. We have the following:
\begin{teo}[{\cite[Proposition 9.5]{bode2021operations}}]
For $\mathcal{M}\in \operatorname{D}_{\mathcal{C}}(\wideparen{\D}_X)$, and  $\mathcal{N}\in \operatorname{D}_{\mathcal{C}^X}(\wideparen{\D}_Y)$  there is a functorial isomorphism:
\begin{equation*}
    R\Hom_{\operatorname{D}(\wideparen{\D}_Y)}(i_+\mathcal{M},\mathcal{N})=R\Hom_{\operatorname{D}(\wideparen{\D}_X)}(\mathcal{M},i^!\mathcal{N}).
\end{equation*}
In particular, $i_+:\operatorname{D}_{\mathcal{C}}(\wideparen{\D}_X)\leftrightarrows \operatorname{D}_{\mathcal{C}^X}(\wideparen{\D}_Y):i^!$ are mutually inverse equivalences of triangulated categories. Furthermore, $i_+$ and $i^!$ send co-admissible modules into co-admissible modules.
\end{teo}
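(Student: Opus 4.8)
The plan is to split the statement into two parts: the adjunction isomorphism, which is formal once the transfer bimodules are understood, and the assertion that $i_+$ and $i^!$ restrict to mutually inverse equivalences preserving co-admissibility, which reduces to an explicit computation in codimension one. For the adjunction, I would first establish $R\Hom_{\operatorname{D}(\wideparen{\D}_Y)}(i_+\mathcal{M},\mathcal{N})=R\Hom_{\operatorname{D}(\wideparen{\D}_X)}(\mathcal{M},i^!\mathcal{N})$ for arbitrary $\mathcal{M}\in\operatorname{D}(\wideparen{\D}_X)$ and $\mathcal{N}\in\operatorname{D}(\wideparen{\D}_Y)$ with cohomology supported on $X$. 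Writing $i_+\mathcal{M}=Ri_*(\wideparen{\D}_{Y\leftarrow X}\overrightarrow{\otimes}^{\mathbb{L}}_{\wideparen{\D}_X}\mathcal{M})$ and using that $\mathcal{N}=i_*i^{-1}\mathcal{N}$ together with the identification of $R\Hom_{\operatorname{D}(\wideparen{\D}_Y)}(i_*(-),i_*(-))$ with $R\Hom$ over $i^{-1}\wideparen{\D}_Y$ coming from Proposition \ref{prop sheaves of modules supported on the diagonal general case} and Lemma \ref{Lemma properties of inclusion of derived category of modules supported on a closed subspace}, the left-hand side reduces to a $\Hom$ group over $i^{-1}\wideparen{\D}_Y$ out of $\wideparen{\D}_{Y\leftarrow X}\overrightarrow{\otimes}^{\mathbb{L}}_{\wideparen{\D}_X}\mathcal{M}$. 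Applying the tensor--hom adjunction along the bimodule $\wideparen{\D}_{Y\leftarrow X}$ (Proposition \ref{prop derived tensor-hom adjunction wrt a flat bimodule}, whose flatness hypothesis must be verified for the transfer bimodule, locally from its PBW/Koszul presentation) turns this into a $\Hom$ over $\wideparen{\D}_X$ into $R\underline{\mathcal{H}om}_{i^{-1}\wideparen{\D}_Y}(\wideparen{\D}_{Y\leftarrow X},i^{-1}\mathcal{N})$, and the last step is a local \emph{duality}-type computation identifying this inner $\Hom$ with $\wideparen{\D}_{X\to Y}\overrightarrow{\otimes}^{\mathbb{L}}_{i^{-1}\wideparen{\D}_Y}i^{-1}\mathcal{N}[\operatorname{dim}(X)-\operatorname{dim}(Y)]=i^!\mathcal{N}$, using a finite free resolution of $\wideparen{\D}_{X\to Y}$ over $i^{-1}\wideparen{\D}_Y$ (the Koszul complex on the local equations of $X$) and the defining twist of $\wideparen{\D}_{Y\leftarrow X}$ by $\Omega_X$ and $i^{-1}\Omega_Y^{-1}$.

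For the equivalence, I would use that membership in $\operatorname{D}_{\mathcal{C}}$ and co-admissibility are local on an admissible affinoid cover of $Y$, so the whole statement is local; there a closed immersion of smooth rigid spaces is isomorphic to the inclusion of a coordinate polydisc $X\hookrightarrow X\times\mathbb{D}^{c}$. Factoring this as a composition of $c$ codimension-one closed immersions and inducting on $c$, it suffices to treat $i\colon X\hookrightarrow Y=X\times\mathbb{D}^{1}$ with $X=V(t)$. One then computes $\wideparen{\D}_{X\to Y}$, $\wideparen{\D}_{Y\leftarrow X}$, $i_+$ and $i^!$ explicitly from a Fréchet--Stein presentation $\wideparen{\D}_Y(Y)=\varprojlim_n\widehat{U}(\pi^n\mathcal{T}_{\mathcal{A}})_K$, and checks that the unit $\mathcal{M}\to i^!i_+\mathcal{M}$ and the counit $i_+i^!\mathcal{N}\to\mathcal{N}$ are isomorphisms. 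The key ingredient is a $p$-adic analogue of the classical decomposition of a $\wideparen{\D}_Y$-module supported on $X$ into its $\partial_t$-isotypic pieces $\bigoplus_{k\ge 0}\mathcal{N}_0\,\partial_t^{[k]}$ with $\mathcal{N}_0=\ker(t\colon\mathcal{N}\to\mathcal{N})$, from which the unit/counit isomorphisms and the preservation of co-admissible modules in degree zero fall out.

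The main obstacle is precisely this last point: in the Fréchet--Stein / Ind-Banach setting such a decomposition need not hold literally for a co-admissible module, so the argument has to be run one Banach level at a time — decompose the finite modules obtained by base change along $\wideparen{\D}_Y(Y)\to\widehat{U}(\pi^n\mathcal{T}_{\mathcal{A}})_K$, where the rings are noetherian, and then pass to the inverse limit using flatness and density of the transition maps together with the nuclearity of co-admissible modules — and one must also verify that the finiteness condition of Definition \ref{defi C complexes} is preserved, i.e. that $\widehat{U}(\pi^n\mathcal{T}_{\mathcal{A}})_K$ annihilates all but finitely many cohomology sheaves of $i_+\mathcal{M}$ and $i^!\mathcal{N}$. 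Controlling this interaction between the algebraic $\wideparen{\D}$-module decomposition and the analytic Fréchet--Stein structure is where the real work lies; once it is done, the equivalence $\Gamma(X,-)\colon\mathcal{C}(\wideparen{U}(\mathscr{L}))\simeq\mathcal{C}(\wideparen{U}(\mathscr{L}(X)))$ together with faithful flatness of the maps involved propagate the local statement to all of $Y$ and yield the global equivalence.
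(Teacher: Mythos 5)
The paper does not prove this statement at all: it is imported verbatim as a background result, with the citation to \cite[Proposition 9.5]{bode2021operations} standing in for the proof (which in turn rests on the abelian Kashiwara equivalence of \cite{ardakov2015d}). So there is no internal argument to compare against; what you have written is a reconstruction of the external proof, and it follows essentially the route taken in those references. Your treatment of the adjunction matches what the paper itself does later for the inner-Hom variant (Proposition \ref{prop derived internal hom and kashiwara}): verify the flatness hypothesis of Proposition \ref{prop derived tensor-hom adjunction wrt a flat bimodule} for the transfer bimodule via its local free decomposition, and identify the resulting right adjoint with $i^!$. Your diagnosis of the hard point in the equivalence — that the classical $\partial_t$-isotypic decomposition of a module supported on a hypersurface must be run one noetherian Banach level at a time and then assembled through the Fr\'echet--Stein inverse limit — is exactly where the work in \cite{ardakov2015d} lies.

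One imprecision worth flagging: a closed immersion of smooth rigid spaces is \emph{not} locally isomorphic to the inclusion of a factor $X\hookrightarrow X\times\mathbb{D}^{c}$; there is no tubular neighbourhood theorem in this form. The correct local model, and the one used in the cited proofs (and recalled in this paper in Lemma \ref{Lemma support on the diagonal} and Theorem \ref{teo support in a closed subvariety}), is an $\mathcal{I}$-standard basis: an \'etale coordinate system on the ambient affinoid in which the ideal of $X$ is generated by some of the coordinates. The codimension-one induction and the $t,\partial_t$ computation go through verbatim with a standard basis, so this does not break your argument, but the reduction step should be phrased in those terms rather than as a product decomposition.
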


\section{Products of Lie algebroids and the bi-enveloping algebra}\label{Chapter products of Lie algebroids}
Let $X$ be a smooth and separated rigid analytic space equipped with a Lie algebroid $\mathscr{L}$.  The final goal of this paper is developing a formalism of Hochschild (co)-homology for sheaves of Ind-Banach $\wideparen{U}(\mathscr{L})$-bimodules. Define the enveloping algebra of $\wideparen{U}(\mathscr{L})$ as the following sheaf of Ind-Banach $K$-algebras on $X$:
\begin{equation*}
    \wideparen{U}(\mathscr{L})^e:=\wideparen{U}(\mathscr{L})\overrightarrow{\otimes}_K\wideparen{U}(\mathscr{L})^{\op}.
\end{equation*}
Ideally, we would wish to study the following derived functors:
\begin{align*}
   \mathcal{HH}^{\bullet}(-):= R\underline{\mathcal{H}om}_{\wideparen{U}(\mathscr{L})^e}(\wideparen{U}(\mathscr{L}),-),&\\ 
  &\mathcal{HH}_{\bullet}(-):=\wideparen{U}(\mathscr{L})\overrightarrow{\otimes}^L_{\wideparen{U}(\mathscr{L})^e}-.
\end{align*}
The first problem we encounter with this definition is that the sheaf $\wideparen{U}(\mathscr{L})^e$ is pretty badly behaved. In particular, its algebraic properties are not clear, and it is rather hard to determine its global sections. In particular, it is not clear if $\wideparen{U}(\mathscr{L})^e$ is a sheaf of Fréchet-Stein algebras (in fact, this is probably false), and this prevents us from using the material developed in previous sections.\bigskip

In order to circumvent these issues, we will slightly modify the definition of Hochschild (co)-homology that we wish to study. In particular, we will introduce two auxiliary sheaves of Ind-Banach $K$-algebras on the product $X^2$, which we will denote by $\wideparen{E}(\mathscr{L})$, and $\wideparen{U}(\mathscr{L}^2)$.  As we will see, these sheaves are much better behaved than $\wideparen{U}(\mathscr{L})^e$, and there are exact functors from the category of Ind-Banach $\wideparen{U}(\mathscr{L})^e$-modules to the categories of Ind-Banach modules over 
$\wideparen{E}(\mathscr{L})$  and  $\wideparen{U}(\mathscr{L}^2)$ respectively. We will now give a brief description of these sheaves, and explain some of the rationale behind their definitions.\bigskip

Let us deal with  $\wideparen{E}(\mathscr{L})$ first. The sheaf $\wideparen{E}(\mathscr{L})$ is called the sheaf of complete bi-enveloping algebras of $\mathscr{L}$, and its defined as the following sheaf of Ind-Banach $K$-algebras on the product $X^2:=X\times X$:
\begin{equation*}
    \wideparen{E}(\mathscr{L}):=\wideparen{U}(\mathscr{L}) \overrightarrow{\boxtimes}\wideparen{U}(\mathscr{L})^{\op}= \OX_{X^2}\overrightarrow{\otimes}^{\mathbb{L}}_{p_1^{-1}\OX_X\overrightarrow{\otimes}_Kp_2^{-1}\OX_X} \left(p_1^{-1}\wideparen{U}(\mathscr{L})\overrightarrow{\otimes}_Kp_2^{-1}\wideparen{U}(\mathscr{L})^{\op}\right).
\end{equation*}
Notice that it is a priory not clear that $\wideparen{E}_X$ is contained in degree zero, nor that it is indeed an algebra. We will see, however, that both properties do hold. Let $\Delta:X\rightarrow X^2$ be the diagonal embedding. Assuming that $\wideparen{E}_X$ is an Ind-Banach algebra, it follows from the definition that there is a functor of quasi-abelian categories:
\begin{align*}
    \Delta_*:\Mod_{\Indban}(\wideparen{U}(\mathscr{L})^e)\rightarrow \Mod_{\Indban}(\wideparen{E}_X),\\
    \mathcal{M}\mapsto \OX_{X^2}\overrightarrow{\otimes}^{\mathbb{L}}_{p_1^{-1}\OX_X\overrightarrow{\otimes}_Kp_2^{-1}\OX_X}\Delta_*\mathcal{M}
\end{align*}
Using this functor, we will be able to define Hochschild (co)-homology as follows:
\begin{multline*}
   \mathcal{HH}^{\bullet}(-):= \Delta^{-1}R\underline{\mathcal{H}om}_{\wideparen{E}(\mathscr{L})}(\Delta_*\wideparen{U}(\mathscr{L}),\Delta_*(-)),\\ 
  \mathcal{HH}_{\bullet}(-):=\Delta^{-1}\left(\Delta_*\wideparen{U}(\mathscr{L})\overrightarrow{\otimes}^L_{\wideparen{E}(\mathscr{L})}\Delta_*(-)\right).
\end{multline*}
Hence, we may translate our homological problems from  $\wideparen{U}(\mathscr{L})^e$ to $\wideparen{E}(\mathscr{L})$. At first, it may seem like we are not gaining much with this change. However, it turns out that $\wideparen{E}(\mathscr{L})$ presents a much nicer behavior than  $\wideparen{U}(\mathscr{L})^e$, and the study of the category sheaves of Ind-Banach $\wideparen{E}(\mathscr{L})$-modules  takes most of the following section.\bigskip

Part of our interest in $\wideparen{E}(\mathscr{L})$ stems from our second sheaf of algebras, $\wideparen{U}(\mathscr{L}^2)$. This sheaf is defined as the following sheaf of Ind-Banach $K$-algebras on $X^2$:
\begin{equation*}
    \wideparen{U}(\mathscr{L}^2):=\wideparen{U}(\mathscr{L}) \overrightarrow{\boxtimes}\wideparen{U}(\mathscr{L})= \OX_{X^2}\overrightarrow{\otimes}^{\mathbb{L}}_{p_1^{-1}\OX_X\overrightarrow{\otimes}_Kp_2^{-1}\OX_X} \left(p_1^{-1}\wideparen{U}(\mathscr{L})\overrightarrow{\otimes}_Kp_2^{-1}\wideparen{U}(\mathscr{L})\right).
\end{equation*}
Most of the contents of the section are destined towards studying the properties of $\wideparen{U}(\mathscr{L}^2)$. As the notation suggests, we will define a Lie algebroid on $X^2$, which we denote by $\mathscr{L}^2$, satisfying that  $\wideparen{U}(\mathscr{L}^2)$ is the sheaf of complete enveloping algebras of $\mathscr{L}^2$. In particular, we can apply the contents of Section \ref{Section background Lie algebroids}, to the category of Ind-Banach $\wideparen{U}(\mathscr{L}^2)$-modules. The relation between $\wideparen{E}(\mathscr{L})$ and $\wideparen{U}(\mathscr{L}^2)$ will be further studied in later sections. Let us just mention that the side-switching
operations for $\wideparen{U}(\mathscr{L})$-modules can be extended to this setting, and that  this indices a natural equivalence of derived categories between  $\operatorname{D}(\wideparen{E}(\mathscr{L}))$ and $\operatorname{D}(\wideparen{U}(\mathscr{L}^2))$. This equivalence will later be used to give a more convenient description of  $\mathcal{HH}^{\bullet}(-)$ and $\mathcal{HH}_{\bullet}(-)$, and to obtain explicit calculations in certain situations.
\subsection{Products of Lie-Rinehart algebras}\label{Section products of Lie-Rinehart algebras}
Let $X$ and $Y$ be smooth and separated rigid analytic $K$-varieties, equipped with Lie algebroids $\mathscr{L}_X$ and $\mathscr{L}_Y$. In this section we will construct a Lie algebroid  $\mathscr{L}_X\times \mathscr{L}_Y$ on the product $X\times Y$. This Lie algebroid is called the product of $\mathscr{L}_X$ and $\mathscr{L}_Y$, and it can be characterized by the following universal property: Let $\mathscr{L}_Z$ be a Lie algebroid on $X\times Y$ satisfying that we have maps:
\begin{equation*}
    \mathscr{L}_X\rightarrow p_{X*}\mathscr{L}_Z, \quad \mathscr{L}_Y\rightarrow p_{Y*}\mathscr{L}_Z,
\end{equation*}
which are $\OX_X$-linear (resp. $\OX_Y$-linear) maps of sheaves of $K$-Lie algebras. Then there is a unique map of Lie algebroids 
$\mathscr{L}_X\times \mathscr{L}_Y\rightarrow \mathscr{L}_Z$ extending these maps. Thus, letting $f^*$ denote the pullback of coherent modules along a morphism of rigid analytic spaces $f:Z\rightarrow T$, it follows that  $\mathscr{L}_X\times \mathscr{L}_Y$ is in some sense the smallest Lie algebroid on $X\times Y$ containing the Lie algebroids  $p_X^{*}\mathscr{L}_X$ and $p_Y^{*}\mathscr{L}_Y$.\bigskip

Let $X=\Sp(A)$, $Y=\Sp(B)$ be smooth affinoid $K$-varieties, and choose affine formal models $\mathfrak{X}=\Spf(\mathcal{A})$, and $\mathfrak{Y}=\Spf(\mathcal{B})$. We let $C=A\widehat{\otimes}_KB$, so we have $X\times Y=\Sp(C)$. We will write the projections onto the first and second factors by $p_X:X\times Y\rightarrow X$, and $p_Y:X\times Y\rightarrow Y$ respectively. 
\begin{prop}\label{prop affine formal model on product}
$\mathcal{A}\widehat{\otimes}_{\mathcal{R}}\mathcal{B}$ is an admissible $\mathcal{R}$-algebra such that the following identity holds:
\begin{equation*}
    \left(\mathcal{A}\widehat{\otimes}_{\mathcal{R}}\mathcal{B}\right)\otimes_{\mathcal{R}}K=A\widehat{\otimes}_KB.
\end{equation*}
\end{prop}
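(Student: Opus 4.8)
Write $\mathcal{C}:=\mathcal{A}\widehat{\otimes}_{\mathcal{R}}\mathcal{B}$ for the $\pi$-adic completion of $\mathcal{A}\otimes_{\mathcal{R}}\mathcal{B}$, and recall that a ring is an admissible $\mathcal{R}$-algebra precisely when it is $\pi$-adically complete, $\pi$-torsion-free (equivalently $\mathcal{R}$-flat), and topologically of finite type over $\mathcal{R}$. Since $\mathcal{A}$ and $\mathcal{B}$ are affine formal models of the affinoid algebras $A$ and $B$, they are themselves admissible; fix presentations $\mathcal{A}\cong\mathcal{R}\langle\underline{x}\rangle/I$ and $\mathcal{B}\cong\mathcal{R}\langle\underline{y}\rangle/J$ as quotients of restricted power series $\mathcal{R}$-algebras, where $\underline{x}=(x_1,\dots,x_m)$, $\underline{y}=(y_1,\dots,y_n)$ and $I,J$ are finitely generated (closed) ideals. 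The plan is to verify the three defining properties of an admissible algebra for $\mathcal{C}$, in that order, and then to compute $\mathcal{C}\otimes_{\mathcal{R}}K$.

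For topological finiteness I would use that $\mathcal{R}\langle\underline{x}\rangle\,\widehat{\otimes}_{\mathcal{R}}\,\mathcal{R}\langle\underline{y}\rangle\cong\mathcal{R}\langle\underline{x},\underline{y}\rangle$ and that applying $\widehat{\otimes}_{\mathcal{R}}$ to the two surjections above yields a continuous surjection $\psi\colon\mathcal{R}\langle\underline{x},\underline{y}\rangle\twoheadrightarrow\mathcal{C}$; hence $\mathcal{C}$ is topologically of finite type over $\mathcal{R}$. Reducing modulo $\pi$, and using that $\pi$-adic completion does not change the quotient by $\pi$, one also obtains $\mathcal{C}/\pi\mathcal{C}\cong(\mathcal{A}/\pi\mathcal{A})\otimes_{\mathcal{R}/\pi\mathcal{R}}(\mathcal{B}/\pi\mathcal{B})$, a finitely generated algebra over the residue field.

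The flatness of $\mathcal{C}$ over $\mathcal{R}$ is the heart of the matter and the step I expect to be the main obstacle. The module $\mathcal{A}\otimes_{\mathcal{R}}\mathcal{B}$ is a tensor product of two flat $\mathcal{R}$-modules, hence flat, in particular $\pi$-torsion-free; the subtlety is that $\pi$-adic completion does not in general preserve $\pi$-torsion-freeness. When $K$ is discretely valued this is elementary: $\mathcal{R}\langle\underline{x},\underline{y}\rangle$ is Noetherian, $\mathcal{C}$ is the $\pi$-adic completion of the finitely generated module $\mathcal{A}\otimes_{\mathcal{R}}\mathcal{B}$ over it, and completion is exact on finitely generated modules over a Noetherian ring, so applying it to the injection $\mathcal{A}\otimes_{\mathcal{R}}\mathcal{B}\xrightarrow{\,\pi\,}\mathcal{A}\otimes_{\mathcal{R}}\mathcal{B}$ shows $\mathcal{C}\xrightarrow{\,\pi\,}\mathcal{C}$ is injective. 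In the general (not necessarily discretely valued) case I would instead invoke the structure theory of admissible formal $\mathcal{R}$-schemes of Raynaud and Bosch--Lütkebohmert: the $\pi$-adic completion of a flat, topologically finitely presented $\mathcal{R}$-algebra is again flat, equivalently admissible formal $\mathcal{R}$-schemes are stable under fibre product with $\Spf(\mathcal{A})\times_{\Spf(\mathcal{R})}\Spf(\mathcal{B})=\Spf(\mathcal{C})$.

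Finally, to identify $\mathcal{C}\otimes_{\mathcal{R}}K$ with $A\widehat{\otimes}_KB$, invert $\pi$ in $\psi$ to get a surjection $K\langle\underline{x},\underline{y}\rangle\twoheadrightarrow\mathcal{C}\otimes_{\mathcal{R}}K=\mathcal{C}[1/\pi]$ with kernel $(\ker\psi)[1/\pi]$. On the other hand $A\cong K\langle\underline{x}\rangle/I_K$ and $B\cong K\langle\underline{y}\rangle/J_K$ with $I_K=I[1/\pi]$, $J_K=J[1/\pi]$, and $A\widehat{\otimes}_KB\cong K\langle\underline{x},\underline{y}\rangle/(I_K+J_K)$, the ideal being automatically closed in the Noetherian Tate algebra. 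One checks that $\ker\psi$ is the closure of $I\cdot\mathcal{R}\langle\underline{x},\underline{y}\rangle+J\cdot\mathcal{R}\langle\underline{x},\underline{y}\rangle$, so that after inverting $\pi$ its image is precisely $I_K+J_K$ (the closure being harmless after localization); comparing the two presentations produces a $K$-algebra isomorphism $\mathcal{C}\otimes_{\mathcal{R}}K\cong A\widehat{\otimes}_KB$ compatible with the canonical map $\mathcal{C}\to A\widehat{\otimes}_KB$. Since $\mathcal{C}$ is $\pi$-torsion-free this map is injective, so $\mathcal{C}$ is an affine formal model of $C=A\widehat{\otimes}_KB$ and $\Spf(\mathcal{C})$ an admissible formal model of $X\times Y=\Sp(C)$, which is what we wanted.
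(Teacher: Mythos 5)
Your overall strategy (presentations of $\mathcal{A}$ and $\mathcal{B}$ as quotients of restricted power series rings, a surjection $\mathcal{R}\langle\underline{x},\underline{y}\rangle\twoheadrightarrow\mathcal{C}$, and a separate flatness argument) matches the paper's, and you correctly identify flatness of the completion as the crux. Where you diverge is precisely there. The paper gives a single, self-contained argument valid for any complete non-archimedean extension of $\mathbb{Q}_p$: it completes the short exact sequence $0\to\mathcal{J}\to\mathcal{R}\langle\underline{x}\rangle\otimes_{\mathcal{R}}\mathcal{R}\langle\underline{y}\rangle\to\mathcal{A}\otimes_{\mathcal{R}}\mathcal{B}\to 0$ (exactness of the completion using that the quotient is $\pi$-torsion-free), and then reads off $\mathcal{C}[\pi]$ as the kernel of $\widehat{\mathcal{J}}/\pi\to\mathcal{R}\langle\underline{x},\underline{y}\rangle/\pi$, which vanishes by flatness of $\mathcal{A}\otimes_{\mathcal{R}}\mathcal{B}$ (Tor-vanishing); repeating with an arbitrary pseudo-uniformizer $\rho$ gives torsion-freeness, hence flatness over the valuation ring. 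Your route instead splits into cases: a Noetherian/Artin--Rees argument when $K$ is discretely valued, and an appeal to Raynaud--Bosch--L\"utkebohmert in general. Two caveats. First, in the discretely valued case $\mathcal{A}\otimes_{\mathcal{R}}\mathcal{B}$ is not naturally an $\mathcal{R}\langle\underline{x},\underline{y}\rangle$-module (it is a quotient of the uncompleted tensor product $\mathcal{R}\langle\underline{x}\rangle\otimes_{\mathcal{R}}\mathcal{R}\langle\underline{y}\rangle$); you should first replace it by $\mathcal{R}\langle\underline{x},\underline{y}\rangle/\mathcal{J}\mathcal{R}\langle\underline{x},\underline{y}\rangle$, which has the same $\pi$-adic completion, before invoking finite generation over a Noetherian ring. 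Second, in the general case the statement you quote (``the $\pi$-adic completion of a flat, topologically finitely presented $\mathcal{R}$-algebra is flat'') is not quite the relevant one, since $\mathcal{A}\otimes_{\mathcal{R}}\mathcal{B}$ is neither complete nor topologically finitely presented; what you need is the stability of admissible formal $\mathcal{R}$-schemes under fibre products, which is indeed in Bosch--L\"utkebohmert but is essentially the affine case of the proposition itself, so this route is correct but far less self-contained than the paper's. On the other hand, your final paragraph identifying $\mathcal{C}\otimes_{\mathcal{R}}K$ with $A\widehat{\otimes}_KB$ via the two presentations is more explicit than the paper, which leaves that identification implicit; that is a genuine improvement in completeness.
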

\begin{proof}
Let  $\mathcal{C}=\mathcal{A}\widehat{\otimes}_{\mathcal{R}}\mathcal{B}$. We need to show that $\mathcal{C}$ is $\mathcal{R}$-flat and topologically of finite presentation. We start by choosing a pair of topological algebra presentations:
\begin{equation*}
    \mathcal{A}\cong \mathcal{R}\langle x_1,\cdots, x_n \rangle/\mathcal{I}_1, \quad \mathcal{B}\cong \mathcal{R}\langle y_1,\cdots, y_r \rangle/\mathcal{I}_2,
\end{equation*}
where $\mathcal{I}_1$, and $\mathcal{I}_2$ are finitely generated ideals of $\mathcal{R}\langle x_1,\cdots, x_n \rangle$
and $\mathcal{R}\langle y_1,\cdots, y_r \rangle$ respectively.  Consider the following  $\mathcal{R}\langle x_1,\cdots, x_n \rangle\otimes_{\mathcal{R}}\mathcal{R}\langle y_1,\cdots, y_r \rangle$-module:
\begin{equation*}
    \mathcal{J}:=\mathcal{I}_1\otimes_{\mathcal{R}}\mathcal{R}\langle y_1,\cdots, y_r \rangle +\mathcal{R}\langle x_1,\cdots, x_n \rangle \otimes_{\mathcal{R}}\mathcal{I}_2.
\end{equation*}
As all $\mathcal{R}$-modules mentioned above are flat, we have the following short exact sequence of $\mathcal{R}$-modules:
\begin{equation}\label{equation products of affine formal models}
    0\rightarrow \mathcal{J} \rightarrow \mathcal{R}\langle x_1,\cdots, x_n \rangle\otimes_{\mathcal{R}}\mathcal{R}\langle y_1,\cdots, y_r \rangle \rightarrow \mathcal{A}\otimes_{\mathcal{R}}\mathcal{B}\rightarrow 0.
\end{equation}
As $\mathcal{A}\otimes_{\mathcal{R}}\mathcal{B}$ is flat over $\mathcal{R}$, it does not have $\pi$-torsion. Thus, the $\pi$-adic completion of the previous sequence is still exact, and this shows that $\mathcal{C}:=\mathcal{A}\widehat{\otimes}_{\mathcal{R}}\mathcal{B}$ is topologically of finite presentation over $\mathcal{R}$. In order to show that $\mathcal{C}$ is flat, it suffices to see that it is torsion-free. We start by showing that it does not have $\pi$-torsion. As the algebra $\mathcal{R}\langle x_1,\cdots, x_n \rangle\widehat{\otimes}_{\mathcal{R}}\mathcal{R}\langle y_1,\cdots, y_r \rangle=\mathcal{R}\langle x_1,\cdots, x_n,y_1,\cdots, y_r \rangle$ is flat over $\mathcal{R}$, it suffices to show that the map $\widehat{\mathcal{J}}/\pi \rightarrow \mathcal{R}\langle x_1,\cdots, x_n,y_1,\cdots, y_r \rangle/\pi$ is injective. However, this follows by flatness of $\mathcal{A}\otimes_{\mathcal{R}}\mathcal{B}$ together with exactness of the sequence in equation (\ref{equation products of affine formal models}). Hence, $\mathcal{C}$ does not have $\pi$-torsion. As the completion of $\mathcal{A}\otimes_{\mathcal{R}}\mathcal{B}$ does not depend on the choice of the pseudo-uniformizer, the argument above shows that 
$\mathcal{C}$ is $\rho$-torsion free for every choice of pseudo-uniformizer  $\rho \in \mathcal{R}$. In other words, $\mathcal{C}$ is flat as a $\mathcal{R}$-module.
\end{proof}
Let $\mathfrak{X}\times\mathfrak{Y}=\Spf(\mathcal{C})$.
By Proposition \ref{prop affine formal model on product}, $\mathfrak{X}\times\mathfrak{Y}$  is an affine formal model of the rigid analytic space $X\times Y$.  
\begin{prop}[{\cite[Proposition 16.5]{eisenbud2013commutative}}]\label{prop product of differentials}
    Let $R$ be a commutative ring and $A,B$ two $R$-algebras. There is a canonical isomorphism of $A\otimes_RB$-modules:
    \begin{equation*}
        \Omega^1_{A\otimes_RB/R}=\Omega^1_{A/R}\otimes_RB\oplus A\otimes_R\Omega^1_{B/R}.
    \end{equation*}
\end{prop}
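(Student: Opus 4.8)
The plan is to prove the isomorphism via the universal property of Kähler differentials, that is, by exhibiting a natural bijection between the relevant modules of derivations and then invoking Yoneda's lemma. Write $C=A\otimes_RB$, and let $\iota_A\colon A\to C$ and $\iota_B\colon B\to C$ be the canonical $R$-algebra homomorphisms $a\mapsto a\otimes 1$ and $b\mapsto 1\otimes b$. For any $C$-module $M$, I claim there is an isomorphism, natural in $M$,
\[
\Der_R(C,M)\ \xrightarrow{\ \sim\ }\ \Der_R(A,M)\oplus\Der_R(B,M),\qquad D\longmapsto (D\circ\iota_A,\ D\circ\iota_B),
\]
where on the right-hand side $M$ is viewed as an $A$-module via $\iota_A$ and as a $B$-module via $\iota_B$. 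The inverse sends a pair $(D_A,D_B)$ to the $R$-linear map $D\colon C\to M$ determined on elementary tensors by the Leibniz-type formula $D(a\otimes b)=\iota_B(b)\cdot D_A(a)+\iota_A(a)\cdot D_B(b)$.

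First I would check that this $D$ is well defined and is a derivation. The assignment $(a,b)\mapsto \iota_B(b)D_A(a)+\iota_A(a)D_B(b)$ is $R$-bilinear and $R$-balanced — this uses only that $D_A,D_B$ are additive and $R$-linear and that $\iota_A,\iota_B$ are $R$-algebra maps — so it factors through $A\otimes_RB=C$. That the resulting $D$ satisfies the Leibniz rule is a direct computation on products of elementary tensors $(a\otimes b)(a'\otimes b')=aa'\otimes bb'$, using the Leibniz rules for $D_A$ and $D_B$ and the $C$-bilinearity of the module action on $M$. Conversely, any $R$-derivation $E\colon C\to M$ is recovered from the pair $(E\circ\iota_A, E\circ\iota_B)$ by exactly this formula, since $E(a\otimes b)=E(\iota_A(a)\iota_B(b))=\iota_B(b)E(\iota_A(a))+\iota_A(a)E(\iota_B(b))$. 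Hence the two assignments are mutually inverse, and naturality in $M$ is immediate from the formulas.

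Next I would rephrase this bijection through the defining adjunction of $\Omega^1$. One has $\Der_R(C,M)=\Hom_C(\Omega^1_{C/R},M)$, and by base change for Kähler differentials together with the identification $\Omega^1_{A/R}\otimes_AC=\Omega^1_{A/R}\otimes_RB$ (using $C=A\otimes_RB$) one gets $\Der_R(A,M)=\Hom_A(\Omega^1_{A/R},M)=\Hom_C(\Omega^1_{A/R}\otimes_RB,M)$, and symmetrically $\Der_R(B,M)=\Hom_C(A\otimes_R\Omega^1_{B/R},M)$. The displayed natural bijection thus becomes a natural isomorphism $\Hom_C(\Omega^1_{C/R},M)\cong\Hom_C\big((\Omega^1_{A/R}\otimes_RB)\oplus(A\otimes_R\Omega^1_{B/R}),M\big)$ of functors in $M$, and Yoneda's lemma converts it into the asserted isomorphism of $C$-modules. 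Unwinding the identifications, the isomorphism sends the class of $d(a\otimes b)$ to $(da\otimes b,\ a\otimes db)$, which also makes the naturality of the construction transparent.

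I do not anticipate a genuine obstacle: the only steps needing real care are the well-definedness of the inverse map (the $R$-bilinearity and $R$-balancedness, and the verification of the Leibniz rule) and the bookkeeping of the various $A$-, $B$- and $C$-module structures so that the base-change identifications are applied over the correct base ring. An equivalent route, for those who prefer to avoid invoking the universal property directly, is to take the first fundamental exact sequence $\Omega^1_{A/R}\otimes_AC\to\Omega^1_{C/R}\to\Omega^1_{C/A}\to 0$ for the tower $R\to A\to C$, identify $\Omega^1_{C/A}\cong A\otimes_R\Omega^1_{B/R}$ by base change, and split the sequence by the $C$-linear map $\Omega^1_{C/R}\to\Omega^1_{A/R}\otimes_RB$ induced by the derivation $a\otimes b\mapsto da\otimes b$; the essential content is once more the construction of this explicit derivation.
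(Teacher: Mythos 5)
Your proof is correct: the bijection $\Der_R(C,M)\cong\Der_R(A,M)\oplus\Der_R(B,M)$ is verified properly (well-definedness of the inverse, Leibniz rule, mutual inverseness), the base-change identifications $\Hom_A(\Omega^1_{A/R},M)=\Hom_C(\Omega^1_{A/R}\otimes_RB,M)$ are applied over the right rings, and Yoneda then yields the stated isomorphism, with the explicit formula $d(a\otimes b)\mapsto(da\otimes b,\,a\otimes db)$ coming out as you say. The paper gives no proof of this proposition — it simply cites Eisenbud, Proposition 16.5 — and your argument is the standard universal-property proof of that result (your alternative route via the split conormal sequence is also valid).
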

We can extend this proposition to the setting of formal schemes, as follows:
\begin{Lemma}\label{Lemma differentials on affine formal model of product}
We have the following identity of $\mathcal{C}$-modules:
 \begin{equation*}
\Omega^1_{\mathcal{C}/\mathcal{R}}=\Omega^1_{\mathcal{A}/\mathcal{R}}\widehat{\otimes}_{\mathcal{R}}\mathcal{B}\oplus \mathcal{A}\widehat{\otimes}_{\mathcal{R}}\Omega^1_{\mathcal{B}/\mathcal{R}}.   
 \end{equation*}     
In particular, we have the following identity:
\begin{equation*}
    \Der_{\mathcal
R}(\mathcal{C})=\Der_{\mathcal{R}}(\mathcal{A})\widehat{\otimes}_{\mathcal{R}}\mathcal{B}\oplus \mathcal{A}\widehat{\otimes}_{\mathcal{R}}\Der_{\mathcal{R}}(\mathcal{B})=\widehat{\Der_{\mathcal{R}}}(\mathcal{A}\otimes_{\mathcal{R}}\mathcal{B}).
\end{equation*}
\end{Lemma}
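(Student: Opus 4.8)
The plan is to prove the decomposition of $\Omega^1_{\mathcal{C}/\mathcal{R}}$ by reducing modulo powers of $\pi$, applying the classical splitting of Proposition \ref{prop product of differentials}, and passing to the inverse limit; the statement about derivations is then obtained by dualising. Throughout, $\Omega^1_{(-)/\mathcal{R}}$ denotes the (coherent, $\pi$-adically complete) module of differentials of an admissible $\mathcal{R}$-algebra, so that $\Omega^1_{\mathcal{A}/\mathcal{R}}=\varprojlim_n\Omega^1_{(\mathcal{A}/\pi^n)/\mathcal{R}}$ is a finitely presented $\mathcal{A}$-module and $\Der_{\mathcal{R}}(\mathcal{A})=\Hom_{\mathcal{A}}(\Omega^1_{\mathcal{A}/\mathcal{R}},\mathcal{A})$; recall also that every $\mathcal{R}$-linear derivation of $\mathcal{A}$ is automatically $\pi$-adically continuous, so this dual description loses nothing. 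I would freely use the standard facts that, for admissible $\mathcal{R}$-algebras, $\pi$-adic completion is exact on coherent modules and commutes with the relevant finite tensor and $\Hom$ constructions; all of these are collected in the references of \cite{ardakov2019}.

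First I would record that, by Proposition \ref{prop affine formal model on product} together with the $\mathcal{R}$-flatness of $\mathcal{A}$ and $\mathcal{B}$, for each $n\geq 1$ there is a canonical isomorphism of $\mathcal{R}/\pi^n$-algebras $\mathcal{C}/\pi^n\mathcal{C}\cong(\mathcal{A}/\pi^n)\otimes_{\mathcal{R}/\pi^n}(\mathcal{B}/\pi^n)$. Applying Proposition \ref{prop product of differentials} with $R=\mathcal{R}/\pi^n$, and using that $\Omega^1_{S/\mathcal{R}}=\Omega^1_{S/(\mathcal{R}/\pi^n)}$ whenever $\pi^nS=0$, gives a decomposition of $\mathcal{C}/\pi^n$-modules
\begin{equation*}
\Omega^1_{(\mathcal{C}/\pi^n)/\mathcal{R}}\;\cong\;\big(\Omega^1_{(\mathcal{A}/\pi^n)/\mathcal{R}}\otimes_{\mathcal{A}/\pi^n}(\mathcal{C}/\pi^n)\big)\;\oplus\;\big((\mathcal{C}/\pi^n)\otimes_{\mathcal{B}/\pi^n}\Omega^1_{(\mathcal{B}/\pi^n)/\mathcal{R}}\big),
\end{equation*}
natural in $n$, where I have rewritten the summand $\Omega^1_{(\mathcal{A}/\pi^n)/\mathcal{R}}\otimes_{\mathcal{R}/\pi^n}(\mathcal{B}/\pi^n)$ as a base change along $\mathcal{A}/\pi^n\to\mathcal{C}/\pi^n$. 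Taking $\varprojlim_n$ produces $\Omega^1_{\mathcal{C}/\mathcal{R}}$ on the left; on the right, finite direct sums commute with the limit, and $\varprojlim_n\big(\Omega^1_{(\mathcal{A}/\pi^n)/\mathcal{R}}\otimes_{\mathcal{A}/\pi^n}(\mathcal{C}/\pi^n)\big)$ is the $\pi$-adic completion of the coherent $\mathcal{C}$-module $\Omega^1_{\mathcal{A}/\mathcal{R}}\otimes_{\mathcal{A}}\mathcal{C}$, which is already complete and equals $\Omega^1_{\mathcal{A}/\mathcal{R}}\widehat{\otimes}_{\mathcal{R}}\mathcal{B}$ by associativity of the completed tensor product (using $\mathcal{C}=\mathcal{A}\widehat{\otimes}_{\mathcal{R}}\mathcal{B}$ and finite presentation of $\Omega^1_{\mathcal{A}/\mathcal{R}}$). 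This is exactly the asserted isomorphism of $\mathcal{C}$-modules.

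For the statement on derivations, apply $\Hom_{\mathcal{C}}(-,\mathcal{C})$ to the decomposition just obtained. The second projection of a product of admissible formal schemes is flat, so $\mathcal{C}$ is flat over $\mathcal{A}$, and since $\Omega^1_{\mathcal{A}/\mathcal{R}}$ is finitely presented the Hom–base-change isomorphism gives $\Hom_{\mathcal{C}}\big(\Omega^1_{\mathcal{A}/\mathcal{R}}\otimes_{\mathcal{A}}\mathcal{C},\mathcal{C}\big)\cong\Hom_{\mathcal{A}}(\Omega^1_{\mathcal{A}/\mathcal{R}},\mathcal{A})\otimes_{\mathcal{A}}\mathcal{C}=\Der_{\mathcal{R}}(\mathcal{A})\widehat{\otimes}_{\mathcal{R}}\mathcal{B}$, and symmetrically for the other summand; this yields $\Der_{\mathcal{R}}(\mathcal{C})=\Der_{\mathcal{R}}(\mathcal{A})\widehat{\otimes}_{\mathcal{R}}\mathcal{B}\oplus\mathcal{A}\widehat{\otimes}_{\mathcal{R}}\Der_{\mathcal{R}}(\mathcal{B})$. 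The final identification with $\widehat{\Der_{\mathcal{R}}}(\mathcal{A}\otimes_{\mathcal{R}}\mathcal{B})$ is then a matter of unravelling notation: by Proposition \ref{prop product of differentials} (dualised) the module of $\mathcal{R}$-derivations of $\mathcal{A}\otimes_{\mathcal{R}}\mathcal{B}$ is $\Der_{\mathcal{R}}(\mathcal{A})\otimes_{\mathcal{R}}\mathcal{B}\oplus\mathcal{A}\otimes_{\mathcal{R}}\Der_{\mathcal{R}}(\mathcal{B})$, whose $\pi$-adic completion is precisely the direct sum of completed tensor products displayed above.

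I expect the main obstacle to be purely technical bookkeeping rather than anything conceptual: namely, controlling the interaction between $\pi$-adic completion and the tensor and $\Hom$ functors over the possibly non-Noetherian base $\mathcal{R}$ — specifically the identification $\varprojlim_n\big((\Omega^1_{\mathcal{A}/\mathcal{R}}\otimes_{\mathcal{A}}\mathcal{C})/\pi^n\big)=\Omega^1_{\mathcal{A}/\mathcal{R}}\widehat{\otimes}_{\mathcal{R}}\mathcal{B}$ and the Hom–base-change along $\mathcal{A}\to\mathcal{C}$. Both reduce to the coherence of modules of differentials on admissible formal $\mathcal{R}$-schemes and the good behaviour of completion on coherent modules, so once that input is in place the argument is routine. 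An alternative, limit-free route would be to verify the decomposition directly from the universal property of $\Omega^1_{\mathcal{C}/\mathcal{R}}$: define a candidate $\mathcal{R}$-derivation $\mathcal{C}\to(\Omega^1_{\mathcal{A}/\mathcal{R}}\widehat{\otimes}_{\mathcal{R}}\mathcal{B})\oplus(\mathcal{A}\widehat{\otimes}_{\mathcal{R}}\Omega^1_{\mathcal{B}/\mathcal{R}})$ on the dense subalgebra $\mathcal{A}\otimes_{\mathcal{R}}\mathcal{B}$ by $a\otimes b\mapsto(d_{\mathcal{A}}a\otimes b,\,a\otimes d_{\mathcal{B}}b)$, extend by continuity, and check that it is universal among continuous $\mathcal{R}$-derivations into $\pi$-adically complete $\mathcal{C}$-modules.
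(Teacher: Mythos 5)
Your proof is correct, but its main line of attack is genuinely different from the paper's. The paper never reduces modulo $\pi^n$: writing $D=\mathcal{A}\otimes_{\mathcal{R}}\mathcal{B}$, it observes that for any $\mathcal{C}$-module $M$ (topologized $\pi$-adically) one has $\Hom_{\mathcal{C}}(\Omega^1_{\mathcal{C}/\mathcal{R}},M)=\Der_{\mathcal{R}}(\mathcal{C},M)=\Der_{\mathcal{R}}(D,M)=\Hom_D(\Omega^1_{D/\mathcal{R}},M)=\Hom_{\mathcal{C}}(\Omega^1_{D/\mathcal{R}}\otimes_D\mathcal{C},M)$, the key point being that $D$ is dense in $\mathcal{C}$ and $\mathcal{R}$-linear derivations are automatically $\pi$-adically continuous; by corepresentability this gives $\Omega^1_{\mathcal{C}/\mathcal{R}}=\Omega^1_{D/\mathcal{R}}\otimes_D\mathcal{C}$ in one stroke, after which the classical splitting of Proposition \ref{prop product of differentials} applied to $D$, together with finite presentation of $\Omega^1_{\mathcal{A}/\mathcal{R}}$ and $\Omega^1_{\mathcal{B}/\mathcal{R}}$ (via \cite[Theorem 7.3.4]{bosch2014lectures}), yields the completed decomposition. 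Amusingly, the ``alternative, limit-free route'' you sketch at the end — a universal derivation defined on the dense subalgebra and extended by continuity — is essentially the paper's actual argument. Your primary route, by contrast, proves the splitting at each truncation $\mathcal{C}/\pi^n\cong(\mathcal{A}/\pi^n)\otimes_{\mathcal{R}/\pi^n}(\mathcal{B}/\pi^n)$ and passes to the inverse limit. This buys you an argument where every step is finite-level commutative algebra, at the cost of more bookkeeping: you must take the limit description $\Omega^1_{\mathcal{A}/\mathcal{R}}=\varprojlim_n\Omega^1_{(\mathcal{A}/\pi^n)/\mathcal{R}}$ as your definition and then control the interchange of $\varprojlim$ with tensor products, exactly the points you flag. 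The paper's density argument sidesteps all of that. For the derivations statement both proofs rest on the same (tersely justified in the paper) inputs — Hom--base-change for the finitely presented module $\Omega^1_{\mathcal{A}/\mathcal{R}}$ along the flat map $\mathcal{A}\to\mathcal{C}$ — so make sure you actually cite flatness of $\mathcal{C}$ over $\mathcal{A}$ (completed base change of the flat map $\mathcal{B}\to\Spf(\mathcal{R})$, checkable mod $\pi^n$ since everything is $\pi$-torsion-free); note that $\Omega^1_{\mathcal{A}/\mathcal{R}}$ need not be projective here, as the formal model $\mathcal{A}$ is not assumed smooth over $\mathcal{R}$, so flatness cannot be dispensed with.
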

\begin{proof}
Let $D=\mathcal{A}\otimes_{\mathcal{R}}\mathcal{B}$. Notice that we have $\mathcal{C}=\widehat{D}$. Let $M$ be a $\mathcal{C}$-module. The $\pi$-adic topology on $M$ makes it a topological $\mathcal{C}$-module. Hence, we have the following chain of identities:
\begin{multline*}
   \Hom_{\mathcal{C}}(\Omega^1_{\mathcal{C}/\mathcal{R}},M) =\Der_{\mathcal{R}}(\mathcal{C},M)=\Der_{\mathcal{R}}(D,M)\\
   =\Hom_D(\Omega^1_{D/\mathcal{R}},M)=\Hom_{\widehat{D}}(\Omega^1_{D/\mathcal{R}}\otimes_D\mathcal{C},M),
\end{multline*}
where the second identity follows by the fact that $D$ is dense in $\mathcal{C}$, and $\mathcal{R}$-linear derivations are continuous with respect to the $\pi$-adic topology. Next, notice that by Lemma \ref{Lemma differentials on affine formal model of product} we have:
\begin{equation*}
    \Omega^1_{D/\mathcal{R}}=\Omega^1_{\mathcal{A}/\mathcal{R}}\otimes_{\mathcal{R}}\mathcal{B}\oplus \mathcal{A}\otimes_{\mathcal{R}}\Omega^1_{\mathcal{B}/\mathcal{R}}.
\end{equation*}
As $\mathcal{A}$ and $\mathcal{B}$ are affine formal models of $A$ and $B$ respectively, the modules $\Omega^1_{\mathcal{A}/\mathcal{R}}$  and $\Omega^1_{\mathcal{B}/\mathcal{R}}$ are finitely generated over $\mathcal{A}$ and $\mathcal{B}$. Furthermore, they are  flat as $\mathcal{R}$-modules. Thus, it follows by \cite[Theorem 7.3.4]{bosch2014lectures}
that $\Omega^1_{\mathcal{A}/\mathcal{R}}$  and $\Omega^1_{\mathcal{B}/\mathcal{R}}$ are finitely presented over $\mathcal{A}$ and $\mathcal{B}$ respectively. Hence, $\Omega^1_{D/\mathcal{R}}$ is a finitely presented $D$-module, and this implies that $\widehat{\Omega}^1_{D/\mathcal{R}}=\Omega^1_{D/\mathcal{R}}\otimes_D\mathcal{C}$. The last part of the lemma is a direct consequence of the isomorphism $\Der_{\mathcal{R}}(\mathcal{B})=\Hom_{\mathcal{B}}(\Omega_{\mathcal{B}/\mathcal{R}},\mathcal{B})$. 
\end{proof}
As $\mathfrak{X}\times\mathfrak{Y}$ is an affine formal model of $X\times Y$, we immediately get:

\begin{Lemma}\label{Lemma derivations on fiber products and completions}
   There is a canonical isomorphism of coherent $X\times Y$-modules:
    \begin{equation*}
        \Omega^1_{X\times Y/K}=p_X^*\Omega^1_{Y/K} \oplus p_Y^*\Omega^1_{X/K}.
    \end{equation*}
    In particular, we have the following isomorphism of $C$-modules:
    \begin{multline}\label{equation decomposition of vector fields on a product}
       \mathcal{T}_{X\times Y/K}(X\times Y)= \Hom_{\OX_{X\times Y}}(\Omega^1_{X\times Y/K},\OX_{X\times Y})\\
       =\Der_K(A)\widehat{\otimes}_KB\oplus A\widehat{\otimes}_K\Der_K(B).
    \end{multline}
\end{Lemma}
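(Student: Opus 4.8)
The plan is to deduce the statement from Lemma \ref{Lemma differentials on affine formal model of product} by inverting $\pi$ and then sheafifying, reducing the general case to the affinoid situation treated there. Since $X$ and $Y$ are smooth and separated, both sides of the asserted decomposition are coherent $\OX_{X\times Y}$-modules and the maps in question are canonical; hence it suffices to work locally, i.e. to assume $X=\Sp(A)$, $Y=\Sp(B)$ with chosen affine formal models $\mathfrak X=\Spf(\mathcal A)$, $\mathfrak Y=\Spf(\mathcal B)$, and then glue. With $C=A\widehat{\otimes}_KB$ and $\mathcal C=\mathcal A\widehat{\otimes}_{\mathcal R}\mathcal B$, Proposition \ref{prop affine formal model on product} tells us that $\mathfrak X\times\mathfrak Y=\Spf(\mathcal C)$ is an affine formal model of $X\times Y=\Sp(C)$.

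First I would recall the standard comparison between analytic and formal Kähler differentials: for a smooth affinoid $K$-algebra with affine formal model one has $\Omega^1_{X/K}(X)=\Omega^1_{\mathcal A/\mathcal R}\otimes_{\mathcal A}A=\left(\Omega^1_{\mathcal A/\mathcal R}\right)_K$, and likewise $\Omega^1_{X\times Y/K}(X\times Y)=\left(\Omega^1_{\mathcal C/\mathcal R}\right)_K$; this relies on finite presentation of $\Omega^1_{\mathcal A/\mathcal R}$ over $\mathcal A$, established inside the proof of Lemma \ref{Lemma differentials on affine formal model of product} via \cite[Theorem 7.3.4]{bosch2014lectures}. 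Applying $-\otimes_{\mathcal R}K$ to the isomorphism of $\mathcal C$-modules of Lemma \ref{Lemma differentials on affine formal model of product} and using finite presentation once more to commute $(-)_K$ past $\widehat{\otimes}_{\mathcal R}$, I get
\[
\Omega^1_{X\times Y/K}(X\times Y)=\left(\Omega^1_{A/K}\widehat{\otimes}_KB\right)\oplus\left(A\widehat{\otimes}_K\Omega^1_{B/K}\right).
\]
Then I would identify $\Omega^1_{A/K}\widehat{\otimes}_KB=\Omega^1_{X/K}(X)\otimes_AC$, which is the module of global sections of the coherent pullback $p_X^*\Omega^1_{X/K}$, and symmetrically for the second summand; since all sheaves involved are coherent and the identifications are canonical and compatible with passing to affinoid subdomains, this yields the desired isomorphism $\Omega^1_{X\times Y/K}\cong p_X^*\Omega^1_{X/K}\oplus p_Y^*\Omega^1_{Y/K}$ of coherent modules, and it globalises to arbitrary smooth separated $X$, $Y$.

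For the ``in particular'' statement I would dualise. On a smooth rigid space $\mathcal T_{X/K}=\underline{\Hom}_{\OX_X}(\Omega^1_{X/K},\OX_X)$ and $\Omega^1_{X/K}$ is locally free of finite rank, so $\underline{\Hom}_{\OX_{X\times Y}}(-,\OX_{X\times Y})$ carries the above direct sum to $\mathcal T_{X\times Y/K}=\underline{\Hom}_{\OX_{X\times Y}}(p_X^*\Omega^1_{X/K},\OX_{X\times Y})\oplus\underline{\Hom}_{\OX_{X\times Y}}(p_Y^*\Omega^1_{Y/K},\OX_{X\times Y})$. Taking global sections over the affinoid $X\times Y$ and using that $\Omega^1_{A/K}$ is finite projective over $A$ gives $\Hom_C\!\left(\Omega^1_{A/K}\otimes_AC,C\right)=\Hom_A\!\left(\Omega^1_{A/K},A\right)\otimes_AC=\Der_K(A)\widehat{\otimes}_KB$, and symmetrically $A\widehat{\otimes}_K\Der_K(B)$ for the other factor; combining with $\mathcal T_{X\times Y/K}(X\times Y)=\Hom_{\OX_{X\times Y}}(\Omega^1_{X\times Y/K},\OX_{X\times Y})=\Der_K(C)$ and Remark \ref{remark all derivations are bounded} (boundedness of derivations) yields the stated formula. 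The only genuinely delicate point — and thus the main obstacle — is the bookkeeping with completed tensor products, namely justifying that $(-)_K$ commutes with $\widehat{\otimes}_{\mathcal R}$ on the modules at hand and that $\left(\Omega^1_{\mathcal C/\mathcal R}\right)_K$ genuinely computes the coherent analytic sheaf $\Omega^1_{X\times Y/K}$; both reduce to the finite presentation of $\Omega^1_{\mathcal A/\mathcal R}$ and $\Omega^1_{\mathcal B/\mathcal R}$ already recorded in the proof of Lemma \ref{Lemma differentials on affine formal model of product}, after which everything is formal.
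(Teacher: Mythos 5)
Your proposal is correct and follows essentially the same route as the paper: the paper's (one-line) proof likewise deduces the statement from the formal-model computation of $\Omega^1_{\mathcal{C}/\mathcal{R}}$ in Lemma \ref{Lemma differentials on affine formal model of product} together with Remark \ref{remark all derivations are bounded}, the point being that $\mathfrak{X}\times\mathfrak{Y}=\Spf(\mathcal{A}\widehat{\otimes}_{\mathcal{R}}\mathcal{B})$ is an affine formal model of $X\times Y$ so one may invert $\pi$ and dualise. You have merely spelled out the bookkeeping (finite presentation of the modules of differentials, compatibility with coherent pullback, duality for finite projective modules) that the paper leaves implicit.
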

\begin{proof}
follows by Lemma \ref{prop product of differentials}, together with remark \ref{remark all derivations are bounded}.
\end{proof}
The commutator induces a $(K,A\widehat{\otimes}_KB)$-Lie algebra structure on the tangent sheaf $\mathcal{T}_{X\times Y/K}(X\times Y)$. We will now give an explicit description of this structure in terms of the decomposition obtained in equation $(\ref{equation decomposition of vector fields on a product})$.
\begin{Lemma}
    For $i=1,2$, let  $\omega_i=(v_i\otimes f_i )\oplus (g_i\otimes w_i)\in \mathcal{T}_{X\times Y/K}(X\times Y)$
    be vector fields on $X\times Y$. The following identity holds:
    \begin{multline}\label{description bracket product of vector fields}
[\omega_1 ,\omega_2]= ([v_1,v_2]\otimes f_1f_2) \oplus (g_2g_1\otimes [w_1,w_2])\\+(-g_2v_1\otimes w_2(f_1))\oplus (v_1(g_2)\otimes f_1w_2)
  + (g_1v_2\otimes w_1(f_2))\oplus (-v_2(g_1)\otimes f_2w_1).   
    \end{multline}     
\end{Lemma}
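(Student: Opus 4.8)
The plan is to verify the identity by a direct computation, reducing the statement about sheaves of vector fields on $X\times Y$ to a pointwise statement about derivations applied to functions in $C = A\widehat{\otimes}_K B$. By Lemma \ref{Lemma derivations on fiber products and completions} we may write a vector field $\omega$ on $X\times Y$ uniquely as $\omega = (v\otimes f)\oplus(g\otimes w)$, where $v\in\Der_K(A)$, $w\in\Der_K(B)$, and $f,g\in C$; here $v\otimes f$ denotes the $C$-multiple $f\cdot p_X^{-1}(v)$ of the derivation $p_X^{-1}(v)$ acting on $C$ (extending $v$ by $0$ on the $B$-factor), and similarly $g\otimes w = g\cdot p_Y^{-1}(w)$. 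Since $\Der_K(C)$ is a $C$-module and $[\,\cdot\,,\,\cdot\,]$ is $K$-bilinear, the first step is to expand $[\omega_1,\omega_2]$ into the four cross terms
\[
[v_1\otimes f_1,\ v_2\otimes f_2],\quad [v_1\otimes f_1,\ g_2\otimes w_2],\quad [g_1\otimes w_1,\ v_2\otimes f_2],\quad [g_1\otimes w_1,\ g_2\otimes w_2],
\]
and treat each separately.

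The second step is the Lie-Rinehart (Leibniz) identity $[a\xi,\eta] = a[\xi,\eta] + \xi(a)\eta$ for $a\in C$ and $\xi,\eta\in\Der_K(C)$, together with its right-hand counterpart $[\xi,b\eta] = b[\xi,\eta] - \eta(b)\xi$. Applying this to the first cross term: since $p_X^{-1}(v_1)$ and $p_X^{-1}(v_2)$ commute-up-to the bracket coming from $X$, and $p_X^{-1}(v_i)$ annihilates the $B$-coordinates, one gets $[f_1 p_X^{-1}(v_1), f_2 p_X^{-1}(v_2)] = f_1f_2\, p_X^{-1}([v_1,v_2]) + f_1 v_1(f_2)p_X^{-1}(v_2) - f_2 v_2(f_1)p_X^{-1}(v_1)$; but one must check that the terms $f_1 v_1(f_2)p_X^{-1}(v_2)$ and $-f_2v_2(f_1)p_X^{-1}(v_1)$ are not present in the stated formula, which forces the observation that in the intended decomposition $f_1,f_2$ lie in $B\subset C$ (equivalently, $v_i(f_j)=0$) — i.e.\ the decomposition $(\ref{equation decomposition of vector fields on a product})$ is really $\Der_K(A)\widehat\otimes_K B \oplus A\widehat\otimes_K\Der_K(B)$, so the scalar on $p_X^{-1}(v)$ is a function pulled back from $Y$ and the scalar on $p_Y^{-1}(w)$ is pulled back from $X$. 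Granting this, $v_1(f_2)=v_2(f_1)=0$ and the first cross term collapses to $[v_1,v_2]\otimes f_1f_2$; symmetrically the fourth collapses to $g_1g_2\otimes[w_1,w_2]$ (up to the sign/order $g_2g_1$, which is immaterial since $C$ is commutative, matching the formula's $g_2g_1\otimes[w_1,w_2]$).

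The third step handles the two mixed cross terms, which genuinely produce the "correction" terms in $(\ref{description bracket product of vector fields})$. For $[v_1\otimes f_1, g_2\otimes w_2] = [f_1 p_X^{-1}(v_1),\ g_2 p_Y^{-1}(w_2)]$, apply the Leibniz rule twice: $= f_1[p_X^{-1}(v_1), g_2 p_Y^{-1}(w_2)] - p_Y^{-1}(w_2)(f_1)\, g_2\, p_X^{-1}(v_1) \cdot(\text{sign})$, and then expand $[p_X^{-1}(v_1), g_2 p_Y^{-1}(w_2)] = p_X^{-1}(v_1)(g_2) p_Y^{-1}(w_2)$ since $p_X^{-1}(v_1)$ and $p_Y^{-1}(w_2)$ commute as derivations of $C$ (they differentiate in independent directions — this is exactly where Lemma \ref{Lemma differentials on affine formal model of product} / the product decomposition of $\Omega^1$ is used). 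Writing $w_2(f_1)$ for $p_Y^{-1}(w_2)(f_1)$ and $v_1(g_2)$ for $p_X^{-1}(v_1)(g_2)$, this yields exactly $(-g_2 v_1\otimes w_2(f_1))\oplus(v_1(g_2)\otimes f_1 w_2)$, matching the middle line of the formula; the term $[g_1\otimes w_1, v_2\otimes f_2]$ is handled identically with indices $1\leftrightarrow 2$ swapped, giving the last line. Collecting the four contributions gives $(\ref{description bracket product of vector fields})$.

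The main obstacle — really a bookkeeping obstacle rather than a conceptual one — is keeping the signs and the left/right Leibniz conventions straight, and in particular being careful that the "coefficient" attached to each derivation lives in the correct factor (so that the would-be extra terms vanish). A cleaner way to organize the whole computation, which I would adopt to avoid sign errors, is to test both sides against an arbitrary element $h = a\otimes b \in A\otimes_K B$ (dense in $C$): compute $[\omega_1,\omega_2](h)$ directly from $\omega_i(a\otimes b) = (v_i(a)\otimes f_i b) \oplus (g_i a\otimes w_i(b))$ interpreted in $C$, expand $\omega_1(\omega_2(h)) - \omega_2(\omega_1(h))$ using the product rule in $C$, and check it agrees with the action of the right-hand side of $(\ref{description bracket product of vector fields})$ on $h$; since derivations of the affinoid algebra $C$ are determined by their values on the dense subalgebra $A\otimes_K B$ (Remark \ref{remark all derivations are bounded} guarantees continuity), this suffices.
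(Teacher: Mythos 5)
Your proposal is correct and follows essentially the same route as the paper: decompose $[\omega_1,\omega_2]$ by bilinearity into the four cross terms and evaluate each on a simple tensor $a\otimes b$ (the paper's $\delta_1,\dots,\delta_4$ computation), using that the coefficient of $p_X^{-1}(v_i)$ lies in $B$ and that of $p_Y^{-1}(w_i)$ lies in $A$ so the extra Leibniz terms vanish in the pure cross terms. Your closing "cleaner" reorganization is in fact exactly the paper's argument.
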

\begin{proof}
 Consider the following derivations:
\begin{equation*}
    \delta_1=[(v_1\otimes f_1)\oplus 0 ,(v_2\otimes f_2)\oplus 0] \textnormal{, } \delta_2=[0\oplus (g_1\otimes w_1) ,0\oplus (g_2\otimes w_2)].
\end{equation*}
For any $a\otimes b\in A\widehat{\otimes}_KB$ we have:
\begin{multline*}
  \delta_1(a\otimes b)=((v_1\otimes f_1) (v_2\otimes f_2) - (v_2\otimes f_2)(v_1\otimes f_1)) (a\otimes b) \\
 =(v_1(v_2(a))\otimes f_1f_2b)-(v_2(v_1(a))\otimes f_1f_2b)\\
 =([v_1,v_2](a)\otimes f_1f_2b)=([v_1,v_2]\otimes f_1f_2)(a\otimes b).   
\end{multline*}
Thus, we have $\delta_1=([v_1,v_2]\otimes f_1f_2)\oplus 0$. Symmetrically, we also get the identity $\delta_2=0\oplus(g_1g_2\otimes[w_1,w_2])$. Next, consider the derivations:
\begin{equation*}
    \delta_3=[(v_1\otimes f_1)\oplus 0 ,0 \oplus (g_2\otimes w_2)],\textnormal{  } \delta_4 =[0\oplus (g_1\otimes w_1) ,(v_2\otimes f_2)\oplus 0].
\end{equation*}
For any $a\otimes b\in A\widehat{\otimes}_KB$ we have:
\begin{align*}
     \delta_3(a\otimes b)&=v_1(g_2a)\otimes f_1w_2(b)-g_2v_1(a)\otimes w_2(f_1b)\\
   &=(g_2v_1(a)+av_1(g_2))\otimes f_1w_2(b)-g_2v_1(a)\otimes (f_1w_2(b)+bw_2(f_1))\\
   &=av_1(g_2)\otimes f_1w_2(b) - g_2v_1(a)\otimes bw_2(f_1).
\end{align*}
Thus, we have:
\begin{equation*}
 \delta_3 = (-g_2v_1\otimes w_2(f_1))\oplus (v_1(g_2)\otimes f_1w_2), \textnormal{ and } 
\delta_4=(g_1v_2\otimes w_1(f_2))\oplus (-v_2(g_1)\otimes f_2w_1).   
\end{equation*}
As Lie brackets are bilinear, we have $ [\omega_1,\omega_2]=\sum_{i=1}^4\delta_i$, as wanted.
\end{proof}

Let now  $(L_1,[-,-]_1,\tau_1)$ be a finite free $(K,A)$-Lie algebra,
and $(L_2,[-,-]_2,\tau_2)$ be a finite free $(K,B)$-Lie algebra. For simplicity in the notation, we will often just denote them by $L_1$ and $L_2$, and omit the sub-indices from the brackets and anchor maps when no confusion is possible. We make the following definition:
\begin{defi}
  Let $L_1\times L_2$ be the $A\widehat{\otimes}_KB$-module given by:
    \begin{equation*}
        L_1\times L_2:=p_X^*L_1\oplus p_Y^*L_2=L_1\widehat{\otimes}_KB\oplus A\widehat{\otimes}_KL_2,
    \end{equation*}
    where the second identity holds because $L_1$ and $L_2$ are finitely generated over $A$ and $B$ respectively.
\end{defi}
As $A\widehat{\otimes}_KB$ is an affinoid algebra, it induces a unique Banach topology on $L_1\times L_2$. Consider the $A\otimes_KB$-module:
\begin{equation*}
    \mathfrak{L}=L_1\otimes_KB\oplus A\otimes_KL_2.
\end{equation*}
It is clear that $\mathfrak{L}$ is a dense subspace of $L_1\times L_2$. In particular, considering $\mathfrak{L}$ as a locally convex $K$-vector space with respect to the subspace topology, it follows that it is a topological $A\otimes_KB$-module, such that its completion agrees with 
$L_1\times L_2$. Furthermore, by Lemma \ref{Lemma derivations on fiber products and completions} we have:
\begin{equation*}  \Der_K(A\widehat{\otimes}_KB)=\widehat{\Der}_K(A\otimes_K B).
\end{equation*}
Hence, in order to construct a $(K,A\widehat{\otimes}_KB)$-Lie algebra structure on $L_1\times L_2$, it suffices to construct a
$(K,A\otimes_KB)$-Lie algebra structure on $\mathfrak{L}$ such that the Lie bracket and the anchor map are continuous.

Let us start by recalling some constructions on filtered algebras:
\begin{defi}
\begin{enumerate}[label=(\roman*)]
    \item Let $R$ be a ring and let $A$ and $B$ be two filtered $R$-algebras. Assume the filtrations $F_{\bullet}A$ and $F_{\bullet}B$ satisfy  $R\subset F_0A,F_0B$. For each $i,j\in\mathbb{Z}$, let $\overline{F_iA\otimes_RF_jB}$ be the image of the map:
    \begin{equation*}
        F_iA\otimes_RF_jB\rightarrow A\otimes_R B. 
    \end{equation*}
We obtain a filtration on the tensor product $C=A\otimes_{R}B$ given by the following family of $\mathcal{R}$-submodules:
    \begin{equation*}
        F_nC=\sum_{i+j=n}\overline{F_iA\otimes_{\mathcal{R}}F_jB}.
    \end{equation*}
We denote this the tensor filtration. Unless mentioned, we will regard $C$ as a filtered algebra with respect to the tensor filtration.
\item For graded $R$-algebras $A$ and $B$, satisfying $R\subset A_0$ and $R\subset B_0$, we define the tensor grading on $C$ using the analogous construction.
\end{enumerate}     
\end{defi}
\begin{obs}\label{remark tensor grading and tensor filtration}
\begin{enumerate}[label=(\roman*)]
    \item If $F_{\bullet}A$ and $F_{\bullet}B$ are exhaustive, then  $F_{\bullet}C$ is also exhaustive.
    \item The tensor filtration (grading) on $R[x,y]$ induced by the isomorphism $R[x,y]\cong R[x]\otimes_{R}R[y]$ is the usual filtration (grading) by degree of monomials. 
\end{enumerate}
\end{obs}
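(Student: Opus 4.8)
The plan is to prove \textbf{Remark \ref{remark tensor grading and tensor filtration}} by unpacking the definition of the tensor filtration (grading) just given. For part $(i)$, suppose $F_\bullet A$ and $F_\bullet B$ are exhaustive, meaning $\bigcup_i F_iA = A$ and $\bigcup_j F_jB = B$. I need to show $\bigcup_n F_nC = C$ for $C = A\otimes_R B$. Since every element of $A\otimes_R B$ is a finite sum of elementary tensors $a\otimes b$, and since the $F_nC$ are $R$-submodules closed under sums, it suffices to show each elementary tensor lies in some $F_nC$. Given $a\otimes b$, exhaustiveness gives $i,j$ with $a\in F_iA$ and $b\in F_jB$; then $a\otimes b$ lies in the image of $F_iA\otimes_R F_jB \to A\otimes_R B$, hence in $\overline{F_iA\otimes_R F_jB} \subseteq F_{i+j}C$. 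So $C = \bigcup_n F_nC$. The same argument, verbatim, handles the graded case once one notes that in a graded algebra "exhaustive" is automatic, so really part $(i)$ in the graded setting is the statement that the tensor grading is a genuine grading, i.e. $C = \bigoplus_n C_n$ where $C_n = \sum_{i+j=n} \overline{A_i\otimes_R B_j}$; this follows because $A\otimes_R B = \bigl(\bigoplus_i A_i\bigr)\otimes_R\bigl(\bigoplus_j B_j\bigr) = \bigoplus_{i,j} A_i\otimes_R B_j$ and one regroups by total degree $i+j$.

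For part $(ii)$, I would take the standard presentations $R[x] = \bigoplus_{i\geq 0} R x^i$ as a graded algebra (with $F_iR[x] = \bigoplus_{k\leq i} Rx^k$ in the filtered case) and likewise $R[y]$, and use the canonical isomorphism $R[x,y]\cong R[x]\otimes_R R[y]$ sending $x\mapsto x\otimes 1$, $y\mapsto 1\otimes y$. Under this isomorphism a monomial $x^a y^b$ corresponds to $x^a\otimes y^b$, which lies in $A_a\otimes_R B_b$ (graded case) and contributes to degree $a+b$; hence the tensor grading on $R[x,y]$ assigns $x^ay^b$ the degree $a+b = \deg(x^ay^b)$, which is exactly the usual grading by total degree of monomials. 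For the filtered version, $x^ay^b = x^a\otimes y^b \in \overline{F_aR[x]\otimes_R F_bR[y]} \subseteq F_{a+b}R[x,y]$, and conversely since the $F_iR[x]$, $F_jR[y]$ are spanned by monomials one checks $F_nR[x,y]$ is spanned by the $x^ay^b$ with $a+b\leq n$; this is precisely the standard monomial-degree filtration. The comparison of the two directions of inclusion is the only point requiring a sentence of care, and it is routine since tensor products of free modules behave well and monomials form a basis.

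I do not anticipate any genuine obstacle here: the statement is essentially a definitional unwinding, and the main thing to be careful about is the direction of inclusions in defining $\overline{F_iA\otimes_R F_jB}$ (it is the \emph{image} in $A\otimes_R B$, not a literal tensor product, so one should phrase the arguments in terms of images of maps rather than submodules of an abstract tensor product). If anything deserves a remark it is that in part $(ii)$ one uses that $R\subset R[x]_0$ and $R\subset R[y]_0$, so the hypotheses of the tensor-filtration/grading construction are met, and that the isomorphism $R[x,y]\cong R[x]\otimes_R R[y]$ is one of filtered (resp. graded) $R$-algebras once both sides carry the constructions in question — which is exactly what we are verifying. I would keep the written proof to a few lines, essentially stating these two unwindings.
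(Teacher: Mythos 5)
The paper offers no proof of this remark, treating it as a definitional observation, and your unwinding is exactly the argument one would supply: reduce exhaustiveness to elementary tensors, and identify the tensor filtration (grading) on $R[x]\otimes_R R[y]$ via the monomial basis. Your proof is correct and there is nothing to compare against.
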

We may now apply these definitions to out case of interest:
 \begin{Lemma}\label{Lemma pbw on products}
     The following hold:
     \begin{enumerate}[label=(\roman*)]
         \item $F_0\left(U(L_1)\otimes_KU(L_2)\right)=A\otimes_KB$
         \item $[A\otimes_KB,F_n\left(U(L_1)\otimes_KU(L_2)\right)]\subset F_{n-1}\left(U(L_1)\otimes_KU(L_2)\right)$.
         \item $\gr_F\left(U(L_1)\otimes_KU(L_2)\right)$ is an $A\otimes_KB$-algebra.
     \end{enumerate}
 \end{Lemma}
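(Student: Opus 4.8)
<br>

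The plan is to exploit the fact that the tensor filtration on $U(L_1)\otimes_K U(L_2)$ is the image filtration coming from the PBW filtrations $F_\bullet U(L_1)$ and $F_\bullet U(L_2)$, together with the PBW theorem for each factor, which gives $\gr_F U(L_i)=\operatorname{Sym}(L_i)$ over $A$ (resp. $B$). The three statements are essentially formal consequences once we know what $F_0 U(L_i)$ and $F_1 U(L_i)$ look like and how the commutator interacts with the filtration.

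For $(i)$: by definition $F_0\big(U(L_1)\otimes_K U(L_2)\big)=\overline{F_0 U(L_1)\otimes_K F_0 U(L_2)}$, the image of $F_0 U(L_1)\otimes_K F_0 U(L_2)\to U(L_1)\otimes_K U(L_2)$. Since the PBW filtration satisfies $F_0 U(L_i)=A$ (resp. $B$) — this is part of the PBW statement recalled before Definition \ref{defi C complexes}, $\gr_F U(L_i)=\operatorname{Sym}_A(L_i)$ with $L_i$ in degree $1$ — the image is exactly $A\otimes_K B$, which is the structure subring. So first I would record that $F_0 U(L_i)=A$ and $F_1 U(L_i)=A\oplus L_i$ from the PBW isomorphism, and then $(i)$ is immediate.

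For $(ii)$: I would argue that $A\otimes_K B\subset F_0$ and use the general fact that in a filtered algebra whose degree-zero part is central-like enough, bracketing with $F_0$ drops the filtration — but here $A\otimes_K B$ is not central, so the correct statement is that $[A\otimes_K B, F_n]\subset F_{n-1}$ because bracketing with an element of $A$ inside $U(L_1)$ lowers the PBW filtration by one (as $[A,F_n U(L_1)]\subset F_{n-1}U(L_1)$, which follows from $\gr_F U(L_1)$ being commutative), and similarly for $B$ inside $U(L_2)$; then one combines these using the Leibniz rule for the bracket across the tensor product, together with the fact that $F_n\big(U(L_1)\otimes_K U(L_2)\big)=\sum_{i+j=n}\overline{F_i U(L_1)\otimes_K F_j U(L_2)}$. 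The key computation is that for $a\in A$, $x_1\cdots x_i\in F_i U(L_1)$, $y_1\cdots y_j\in F_j U(L_2)$ one has $[a\otimes 1,\, (x_1\cdots x_i)\otimes(y_1\cdots y_j)]=[a,x_1\cdots x_i]\otimes(y_1\cdots y_j)$, and $[a,x_1\cdots x_i]\in F_{i-1}U(L_1)$. Then $(iii)$ follows from $(ii)$: the graded pieces $\gr_n$ are $A\otimes_K B$-modules because the $A\otimes_K B$-action on $F_n$ descends to $\gr_n$ precisely when $[A\otimes_K B,F_n]\subset F_{n-1}$, and the multiplication $\gr_n\times\gr_m\to\gr_{n+m}$ is $A\otimes_K B$-bilinear for the same reason; alternatively, one identifies $\gr_F\big(U(L_1)\otimes_K U(L_2)\big)$ with $\gr_F U(L_1)\otimes_K \gr_F U(L_2)=\operatorname{Sym}_A(L_1)\otimes_K\operatorname{Sym}_B(L_2)$, which is manifestly an $A\otimes_K B$-algebra.

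The main obstacle is the bookkeeping in $(ii)$: one must be careful that the filtration on the tensor product is the image filtration (so elements of $F_n$ are sums of products, not single tensors), and that the noncommutativity of $A\otimes_K B$ inside $U(L_1)\otimes_K U(L_2)$ is handled correctly — bracketing with $a\otimes b$ should be expanded as $[a\otimes b, -]=[a\otimes 1,-](1\otimes b) + (a\otimes 1)[1\otimes b,-]$ (or a suitable variant), reducing to the two one-factor computations. Everything else is routine once the PBW identifications $F_0 U(L_i)=A$, $\gr_F U(L_i)=\operatorname{Sym}(L_i)$ are in hand, so I would keep the writeup short and lean on Remark \ref{remark tensor grading and tensor filtration} for the comparison with polynomial rings.
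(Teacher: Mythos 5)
Your proposal is correct and follows essentially the same route as the paper: $(i)$ is immediate from the definition of the tensor filtration, $(ii)$ is the Leibniz-type expansion $[a\otimes b,v\otimes w]=[a,v]\otimes wb+av\otimes[b,w]$ combined with $[A,F_iU(L_1)]\subset F_{i-1}U(L_1)$ from PBW, and $(iii)$ follows because $(ii)$ makes $A\otimes_KB$ central in the associated graded. One small caution: your alternative argument for $(iii)$ via the identification $\gr_F\bigl(U(L_1)\otimes_KU(L_2)\bigr)\cong\operatorname{Sym}_A(L_1)\otimes_K\operatorname{Sym}_B(L_2)$ should be avoided, since that identification is exactly what the subsequent proposition proves using this lemma.
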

 \begin{proof}
By definition we have the following identities:
\begin{equation*}
    F_0\left(U(L_1)\otimes_KU(L_2)\right)=F_0U(L_1)\otimes_K F_0U(L_2)=A\otimes_KB,
\end{equation*}
so claim $(i)$ holds. For statement $(ii)$, choose $a\otimes b\in A\otimes_K B$, $v\in F_iU(L_1)$, $w\in F_jU(L_2)$ satisfying $i+j=n$. Then we have the following chain of identities:
\begin{equation*}
[a\otimes b,v\otimes w]=av\otimes bw -va\otimes wb= av\otimes([b,w]+wb)- va \otimes wb =[a,v]\otimes wb+ av\otimes[b,w].
\end{equation*}
By the PBW Theorem \cite[Theorem 3.1]{rinehart1963differential}, we have $[a,v]\in F_{i-1}U(L_1)$, and $[b,w]\in F_{j-1}U(L_2)$. Hence, we have 
$[a\otimes b,v\otimes w]\in F_{n-1}\left(U(L_1)\otimes_KU(L_2)\right)$, so statement $(ii)$ holds. Passing to the associated graded we have:
\begin{equation*}
    A\otimes_K B\subset Z(\gr_F\left(U(L_1)\otimes_KU(L_2)\right)),
\end{equation*}
which shows statement $(iii)$.
 \end{proof}
\begin{prop}\label{PBW version for enveloping algebras}
 We have a canonical isomorphism of graded $A\otimes_KB$-algebras:
    \begin{equation*}      
   \operatorname{Sym}_{A}(L_1)\otimes_K\operatorname{Sym}_{B}(L_2)\rightarrow \gr_F\left(U(L_1)\otimes_KU(L_2)\right).
    \end{equation*}
\end{prop}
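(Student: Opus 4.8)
The plan is to deduce the graded isomorphism from the ordinary PBW theorem for the Lie--Rinehart algebras $L_1$ and $L_2$ (i.e. \cite[Theorem 3.1]{rinehart1963differential}), combined with the compatibility of the associated graded construction with tensor products over a field. First I would invoke the PBW isomorphisms
\begin{equation*}
    \operatorname{Sym}_A(L_1)\xrightarrow{\ \sim\ }\gr_F U(L_1),\qquad \operatorname{Sym}_B(L_2)\xrightarrow{\ \sim\ }\gr_F U(L_2),
\end{equation*}
which are isomorphisms of graded $A$-algebras (resp. graded $B$-algebras). Tensoring these over $K$ gives an isomorphism of graded $A\otimes_K B$-algebras
\begin{equation*}
    \operatorname{Sym}_A(L_1)\otimes_K\operatorname{Sym}_B(L_2)\xrightarrow{\ \sim\ }\gr_F U(L_1)\otimes_K\gr_F U(L_2),
\end{equation*}
where on the right I use the tensor grading. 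So the content of the proposition reduces to producing a canonical isomorphism of graded algebras $\gr_F U(L_1)\otimes_K\gr_F U(L_2)\cong \gr_F\bigl(U(L_1)\otimes_K U(L_2)\bigr)$ when $U(L_1)\otimes_K U(L_2)$ carries the tensor filtration.

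The key step is therefore the general lemma: if $R=K$ is a field and $S,T$ are exhaustively filtered $K$-algebras with $K\subset F_0 S$ and $K\subset F_0 T$, then the canonical map $\gr_F S\otimes_K\gr_F T\to\gr_F(S\otimes_K T)$ (with the tensor filtration on $S\otimes_K T$) is an isomorphism of graded $K$-algebras. I would prove this degree by degree: by definition $F_n(S\otimes_K T)=\sum_{i+j=n}\operatorname{im}(F_iS\otimes_K F_jT)$, and since $K$ is a field all the modules $F_iS$, $F_jT$ are flat, so the maps $F_iS\otimes_K F_jT\to S\otimes_K T$ are injective and we may identify $F_iS\otimes_K F_jT$ with its image. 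A short diagram chase (or a direct computation using a $K$-basis adapted to the filtrations, picking complements $F_iS=F_{i-1}S\oplus G_i$ and likewise for $T$) then shows $F_n(S\otimes_K T)/F_{n-1}(S\otimes_K T)\cong\bigoplus_{i+j=n}(F_iS/F_{i-1}S)\otimes_K(F_jT/F_{j-1}T)$, which is exactly the degree-$n$ piece of $\gr_F S\otimes_K\gr_F T$. Multiplicativity of this identification is immediate from the definition of the product on associated gradeds. Applying this with $S=U(L_1)$, $T=U(L_2)$, whose PBW filtrations are exhaustive with $F_0 = A$, $F_0 = B$ by Lemma \ref{Lemma pbw on products}, finishes the argument.

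Finally I would compose the three isomorphisms and check that the resulting map $\operatorname{Sym}_A(L_1)\otimes_K\operatorname{Sym}_B(L_2)\to\gr_F(U(L_1)\otimes_K U(L_2))$ is $A\otimes_K B$-linear and sends a product of symmetric tensors to the class of the corresponding product in $U(L_1)\otimes_K U(L_2)$; all of this is formal once the grading-vs-tensor-product compatibility is in hand, and it also makes the ``canonical'' claim precise. The main obstacle — really the only non-bookkeeping point — is the flatness-based identification of the graded pieces of the tensor filtration; everything hinges on $K$ being a field so that $\otimes_K$ is exact and the subspaces $F_iS\otimes_K F_jT\subset S\otimes_K T$ behave as expected. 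Once that is established, the rest is a routine assembly of known isomorphisms.
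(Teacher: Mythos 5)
Your argument is correct, but it takes a genuinely different route from the paper. The paper constructs the canonical map $\psi$ directly from the filtered inclusions $U(L_i)\to U(L_1)\otimes_K U(L_2)$ and then verifies it is an isomorphism by an explicit basis chase: using finite-freeness of $L_1$ and $L_2$, it writes down the PBW monomial basis of each $F_l\left(U(L_1)\otimes_K U(L_2)\right)$ as an $A\otimes_K B$-module and checks that $\psi$ carries a basis to a basis. You instead factor the statement through a general lemma — that over a field the associated graded of the tensor filtration on $S\otimes_K T$ is $\gr_F S\otimes_K \gr_F T$ — proved by choosing vector-space complements $F_iS=F_{i-1}S\oplus G_i$, and then quote Rinehart's PBW theorem separately for each factor. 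Your reduction is clean and slightly more general (it needs only that $L_1$, $L_2$ are projective, as in Rinehart's theorem, rather than free, and isolates a reusable fact about tensor filtrations); the paper's computation, by contrast, produces the explicit monomial basis of $F_l\left(U(L_1)\otimes_K U(L_2)\right)$ that is reused immediately afterwards (e.g.\ in Corollary \ref{coro decomposition filtration of tensor} and in the integral versions over $\mathcal{R}$, where the base is not a field and your flatness shortcut would need more care). The only points you should not wave away entirely are the multiplicativity of your identification of graded pieces and the verification that your composite coincides with the canonical map induced by the filtered inclusions, but both are routine as you indicate.
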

Thus, $U(L_1)\otimes_KU(L_2)$ is a filtered $K$-algebra such that its associated graded is commutative. In other words, it is an an almost commutative filtered $K$-algebra (cf. \cite[Definition 3.4]{ardakov2013irreducible}).
\begin{proof}
By construction of the tensor filtration, the inclusions:
\begin{equation*}
    U(L_1)\rightarrow U(L_1)\otimes_KU(L_2), \textnormal{ and } U(L_2)\rightarrow U(L_1)\otimes_KU(L_2),
\end{equation*}
are maps of filtered $K$-algebras.  Hence, we get a map of graded $K$-algebras:
\begin{equation*}
    \psi:\operatorname{Sym}_{A}(L_1)\otimes_K\operatorname{Sym}_{B}(L_2)\rightarrow \gr_F(U(L_1)\otimes_KU(L_2)),
\end{equation*}
where the grading on the left-hand side is the tensor grading.
By Lemma \ref{Lemma pbw on products}, the zero-graded $K$-algebra on both sides is $A\otimes_KB$. Hence, this is in fact a morphism of $A\otimes_KB$-algebras.\\ 
 Let $x_1,\cdots,x_s$ be an ordered basis of $L_1$ as an $A$-module, and  $y_1,\cdots, y_m$ be an ordered basis of $L_2$ as a $B$-module. By the PBW Theorem \cite[Theorem 3.1]{rinehart1963differential}, for each $i\geq 0$ the $A$-module $F_{i}U(L_1)$ is free, on a basis given by $A$-linear combinations of elements of the form:
 \begin{equation*}
     x_1^{\alpha_1}\cdots x_n^{\alpha_n}, \textnormal{ with } \sum_{s=1}^n\alpha_s\leq i.
 \end{equation*}
 By symmetry, we get an analogous description of the $B$-module structure of $F_{i}U(L_2)$.

Therefore, we have a basis of $F_{l}\left(U(L_1)\otimes_KU(L_2)\right)=\sum_{i+j=l}F_iU(L_1)\otimes_{K}F_jU(L_2)$
as an $A\otimes_KB$-module given by the following simple tensors:
\begin{equation*}
    x_1^{\alpha_1}\cdots x_n^{\alpha_n}\otimes y_1^{\beta_1}\cdots y_m^{\beta_m}, \textnormal{ such that  } \sum_{s=1}^n \alpha_s+ \sum_{s=1}^m \beta_s\leq l.
\end{equation*}
In particular, $F_{l}\left(U(L_1)\otimes_KU(L_2)\right)/F_{l-1}\left(U(L_1)\otimes_KU(L_2)\right)$ is a free $A\otimes_KB$-module, with a basis  given by the simple tensors:
\begin{equation*}
    x_1^{\alpha_1}\cdots x_n^{\alpha_n}\otimes y_1^{\beta_1}\cdots y_m^{\beta_m}, \textnormal{ such that }\sum_{s=1}^n \alpha_s+ \sum_{s=1}^m \beta_s=l.
\end{equation*}
The family of all such tensors, for varying $l$, yields a basis of $\gr_F \left(U(L_1)\otimes_KU(L_2)\right)$ as a module over $A\otimes_KB$. Notice that $\operatorname{Sym}_{A}(L_1)$ and $\operatorname{Sym}_{B}(L_2)$ are polynomial algebras over $A$ and $B$ respectively. In particular, we have an isomorphism of graded $A$-algebras (resp. $B$-algebras):
\begin{equation*}
    \operatorname{Sym}_{A}(L_1)\rightarrow A[x_1,\cdots x_n], \textnormal{ } \operatorname{Sym}_{B}(L_2)\rightarrow B[y_1,\cdots y_m].
\end{equation*}
Hence, we have an isomorphism of graded $A\otimes_KB$-algebras:
\begin{multline*}
    \operatorname{Sym}_{A}(L_1)\otimes_{K}\operatorname{Sym}_{B}(L_2)\rightarrow A[x_1,\cdots x_n]\otimes_{K}B[y_1,\cdots y_m]\\
    \cong A\otimes_{K}B[x_1,\cdots x_n,y_1,\cdots y_m].
\end{multline*}
Notice that by remark \ref{remark tensor grading and tensor filtration} the grading on $A\otimes_{K}B[x_1,\cdots x_n,y_1,\cdots y_m]$ induced by the tensor grading is the usual grading on polynomial rings. Thus, $\psi$ maps a basis of $A\otimes_{K}B[x_1,\cdots x_n,y_1,\cdots y_m]$ as an $A\otimes_KB$-module to a basis of the graded algebra $\gr_F(U(L_1)\otimes_KU(L_2))$. Therefore, $\psi$ is an isomorphism of graded $A\otimes_KB$-algebras, as we wanted to show.
\end{proof}
\begin{coro}\label{coro decomposition filtration of tensor}
For any $i\geq 0$, we have a decomposition of $A\otimes_K B$-modules:
\begin{equation*}
    F_i\left( U(L_1)\otimes_K U(L_2)\right)= \bigoplus_{j=0}^i F_j U(L_1)\otimes_K F_{i-j} U(L_2).
\end{equation*}
In particular, $F_i U(L_1)\otimes_K U(L_2)$ is a finite-free $A\otimes_KB$ module.
\end{coro}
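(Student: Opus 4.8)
The plan is to read the statement off the proof of Proposition \ref{PBW version for enveloping algebras}, which already produces an explicit $A\otimes_KB$-module basis for every step of the tensor filtration on $U(L_1)\otimes_KU(L_2)$; the corollary is then essentially a matter of repartitioning that basis.

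First I would dispose of the completion‑theoretic subtlety hidden in the bar notation appearing in the definition of the tensor filtration. Since $K$ is a field, $-\otimes_K-$ is exact, so for all $j,k\geq0$ the maps $F_jU(L_1)\hookrightarrow U(L_1)$ and $F_kU(L_2)\hookrightarrow U(L_2)$ stay injective after tensoring, whence $F_jU(L_1)\otimes_KF_kU(L_2)\to U(L_1)\otimes_KU(L_2)$ is injective and $\overline{F_jU(L_1)\otimes_KF_kU(L_2)}=F_jU(L_1)\otimes_KF_kU(L_2)$. By the very definition of the tensor filtration this already yields
$$F_i\big(U(L_1)\otimes_KU(L_2)\big)=\sum_{j=0}^i F_jU(L_1)\otimes_KF_{i-j}U(L_2)$$
as $A\otimes_KB$-submodules of $U(L_1)\otimes_KU(L_2)$.

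Next I would upgrade this sum to the asserted direct-sum decomposition using the PBW bases fixed in the proof of Proposition \ref{PBW version for enveloping algebras}. Let $x_1,\dots,x_s$ be an ordered $A$-basis of $L_1$ and $y_1,\dots,y_m$ an ordered $B$-basis of $L_2$; by the PBW theorem \cite[Theorem 3.1]{rinehart1963differential} the module $F_i(U(L_1)\otimes_KU(L_2))$ is free over $A\otimes_KB$ on the simple tensors $x^\alpha\otimes y^\beta$ with $|\alpha|+|\beta|\le i$. Partitioning this basis according to the value $j=|\alpha|$ exhibits $F_i(U(L_1)\otimes_KU(L_2))$ as the internal direct sum, over $0\le j\le i$, of the $A\otimes_KB$-span of $\{\,x^\alpha\otimes y^\beta:|\alpha|=j,\ |\beta|\le i-j\,\}$; through the monomial identifications $\operatorname{Sym}_A(L_1)\cong\gr_FU(L_1)$ and $\operatorname{Sym}_B(L_2)\cong\gr_FU(L_2)$, this $j$-th summand is precisely (the chosen copy of) $F_jU(L_1)\otimes_KF_{i-j}U(L_2)$, up to the usual splitting of the filtration supplied by Proposition \ref{PBW version for enveloping algebras}.

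Finally, the ``in particular'' assertion is immediate from the same basis: there are only finitely many monomials $x^\alpha\otimes y^\beta$ of total degree $\le i$ in finitely many variables, so $F_i(U(L_1)\otimes_KU(L_2))$ is a finite free $A\otimes_KB$-module. I do not expect any genuine obstacle here — the statement is a bookkeeping corollary of PBW — and the only points requiring a word of care are the exactness remark that makes the bar notation superfluous and the explicit grouping of the PBW monomials by their $L_1$-degree.
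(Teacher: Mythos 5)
Your first two paragraphs match what the paper actually does: the corollary carries no separate proof, and its content is exactly the PBW basis of $F_i\left(U(L_1)\otimes_KU(L_2)\right)$ exhibited in the proof of Proposition \ref{PBW version for enveloping algebras} — where, note, the identity is written as a \emph{sum} $F_l\left(U(L_1)\otimes_KU(L_2)\right)=\sum_{i+j=l}F_iU(L_1)\otimes_KF_jU(L_2)$, not a direct sum. Your exactness remark disposing of the bar notation is correct, and the finite-freeness in the ``in particular'' follows from the basis exactly as you say.

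The gap is in your third step. The $A\otimes_KB$-span of $\{x^\alpha\otimes y^\beta : |\alpha|=j,\ |\beta|\le i-j\}$ is not a copy of $F_jU(L_1)\otimes_KF_{i-j}U(L_2)$; it is a copy of $\gr^F_jU(L_1)\otimes_KF_{i-j}U(L_2)$, which is strictly smaller for $j\ge 1$ because $F_jU(L_1)$ also contains the monomials of degree $<j$. The phrase ``up to the usual splitting of the filtration'' cannot repair this, since no regrouping of the PBW basis can produce the displayed direct sum: the two sides have different ranks. For $L_1,L_2$ free of rank one and $i=1$, the left-hand side is free of rank $3$ (basis $1\otimes 1$, $x\otimes 1$, $1\otimes y$), while $\bigoplus_{j=0}^{1}F_jU(L_1)\otimes_KF_{1-j}U(L_2)$ has rank $2+2=4$; inside $U(L_1)\otimes_KU(L_2)$ the summands overlap in $A\otimes_KB$, so the sum is never direct. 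In other words, the statement is only correct with $\bigoplus$ read as $\sum$ (the version your second paragraph already proves, and the only version the paper ever uses downstream), or with $F_jU(L_1)$ replaced by $\gr^F_jU(L_1)$. You should prove the summed statement and flag the direct sum as an overstatement rather than attempt to realize it literally.
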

In particular, we have an injective map
of $K$-algebras:
\begin{equation*}
    j:A\otimes_KB\rightarrow U(L_1)\otimes_K U(L_2).
\end{equation*}
This gives $U(L_1)\otimes_K U(L_2)$ a structure of a left $A\otimes_KB$-module. On the other hand, the PBW Theorem also yields injective maps of $A$-modules (resp. $B$-modules):
\begin{equation*}
    i_1:L_1\rightarrow U(L_1), \quad i_2:L_2\rightarrow U(L_2).
\end{equation*}
 Thus, we get the following morphism of $A\otimes_KB$-modules:
\begin{equation*}
  i:\mathfrak{L}\rightarrow U(L_1)\otimes_KU(L_2).
\end{equation*}
We will define the Lie bracket on  $\mathfrak{L}$ using the commutator on $U(L_1)\otimes_KU(L_2)$:
\begin{coro}\label{coro lie bracket}
The following hold:
\begin{enumerate}[label=(\roman*)]
    \item $i:\mathfrak{L}\rightarrow U(L_1)\otimes_K U(L_2)$ is injective. Thus, we may identify $\mathfrak{L}$ with its image.
    \item $\mathfrak{L}=F_1(U(L_1)\otimes_K U(L_2))\setminus A\otimes_K B$.
    \item The commutator on $U(L_1)\otimes_K U(L_2)$ induces a $K$-Lie algebra structure:
    \begin{equation*}
        [-,-]:\mathfrak{L} \times \mathfrak{L} \longrightarrow \mathfrak{L}.
    \end{equation*}
    \item For $i=1,2$, let $\omega_i=(v_i\otimes f_i)\oplus (g_i\otimes w_i) \in \mathfrak{L}$. The bracket on $\mathfrak{L}$ is:
\begin{multline}\label{equation Lie bracket on a product}
[\omega_1 ,\omega_2]= ([v_1,v_2]\otimes f_1f_2) \oplus (g_2g_1\otimes [w_1,w_2])\\+(-g_2v_1\otimes\tau_2(w_2)(f_1))\oplus (\tau_1(v_1)(g_2)\otimes f_1w_2)\\
  + (g_1v_2\otimes \tau_2(w_1)(f_2))\oplus (-\tau_1(v_2)(g_1)\otimes f_2w_1).          
\end{multline}
\end{enumerate}     
\end{coro}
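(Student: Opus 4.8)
\textbf{Proof proposal for Corollary \ref{coro lie bracket}.}

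The plan is to extract everything from the single identification $\Der_K(A\widehat{\otimes}_KB)=\widehat{\Der}_K(A\otimes_KB)$ (Lemma \ref{Lemma derivations on fiber products and completions}) together with the PBW description of $U(L_1)\otimes_K U(L_2)$ established in Proposition \ref{PBW version for enveloping algebras} and Corollary \ref{coro decomposition filtration of tensor}. First I would prove $(i)$: the map $i:\mathfrak{L}\to U(L_1)\otimes_K U(L_2)$ is the direct sum of $i_1\otimes \mathrm{id}_B$ and $\mathrm{id}_A\otimes i_2$ composed with the natural inclusions $U(L_1)\otimes_K B\hookrightarrow U(L_1)\otimes_K U(L_2)$ and $A\otimes_K U(L_2)\hookrightarrow U(L_1)\otimes_K U(L_2)$; by Corollary \ref{coro decomposition filtration of tensor} the summands $F_1U(L_1)\otimes_K F_0U(L_2)$ and $F_0U(L_1)\otimes_K F_1U(L_2)$ inside $F_1(U(L_1)\otimes_K U(L_2))$ intersect precisely in $F_0U(L_1)\otimes_K F_0U(L_2)=A\otimes_K B$, and $i_1,i_2$ are injective with images complementary to $A$ resp.\ $B$ inside $F_1U(L_1)$ resp.\ $F_1U(L_2)$; this simultaneously gives $(i)$ and $(ii)$, namely $\mathfrak{L}=F_1(U(L_1)\otimes_K U(L_2))\setminus (A\otimes_K B)$ as a $K$-submodule complement.

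For $(iii)$ I would show that $\mathfrak{L}$, viewed inside $U(L_1)\otimes_K U(L_2)$, is closed under the commutator bracket. The commutator of two elements of $F_1(U(L_1)\otimes_K U(L_2))$ lies in $F_1(U(L_1)\otimes_K U(L_2))$ because $U(L_1)\otimes_K U(L_2)$ is almost commutative by Proposition \ref{PBW version for enveloping algebras} (so $[F_1,F_1]\subset F_1$), and one checks it has no $A\otimes_K B$-component by computing the image of $[\omega_1,\omega_2]$ in the degree-$1$ part $\gr_1 F = L_1\otimes_K B\oplus A\otimes_K L_2$ of the associated graded; alternatively, and more concretely, one computes the commutator directly using bilinearity, reducing to the four cases $\delta_1,\delta_2,\delta_3,\delta_4$ exactly as in the proof of the bracket formula $(\ref{description bracket product of vector fields})$ for vector fields, where now the products $vf$, $fw$ etc.\ are taken in $U(L_1)$, $U(L_2)$ and the terms $v(g)$, $w(f)$ are replaced by $\tau_1(v)(g)$, $\tau_2(w)(f)$ via the defining relations $[a,v]=-\tau_1(v)(a)$ in $U(L_1)$. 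Since the commutator on an associative algebra is automatically a Lie bracket, the Jacobi identity and antisymmetry are inherited for free once closure is established; this gives $(iii)$, and the explicit four-term expansion is then exactly formula $(\ref{equation Lie bracket on a product})$, proving $(iv)$.

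The main obstacle I anticipate is $(ii)$ — making precise the sense in which $\mathfrak{L}$ ``equals'' $F_1\setminus A\otimes_K B$: one must identify $\mathfrak{L}=p_X^*L_1\oplus p_Y^*L_2$ with the correct $K$-linear complement of $A\otimes_K B$ inside $F_1(U(L_1)\otimes_K U(L_2))$, which requires the decomposition $F_1(U(L_1)\otimes_K U(L_2))=F_1U(L_1)\otimes_K B\;\oplus_{A\otimes_K B}\;A\otimes_K F_1U(L_2)$ from Corollary \ref{coro decomposition filtration of tensor} and a careful bookkeeping of which copy of $A\otimes_K B$ is being quotiented; the notation $F_1\setminus A\otimes_K B$ should be read as ``a chosen $K$-linear section'', and I would spell this out rather than leave it ambiguous. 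Everything else is a routine PBW computation, and once $(i)$–$(iii)$ are in place the formula in $(iv)$ follows by the same bilinear expansion already carried out at the level of vector fields, with the anchor maps $\tau_1,\tau_2$ inserted in the evident places. As a final sanity check I would verify that under the anchor $\tau_1\times\tau_2:\mathfrak{L}\to \Der_K(A\otimes_K B)$ induced by $L_i\to\Der_K$, formula $(\ref{equation Lie bracket on a product})$ specializes to $(\ref{description bracket product of vector fields})$ when $L_1=\mathcal{T}_{X/K}$, $L_2=\mathcal{T}_{Y/K}$, which is automatic since both are computed from the same commutator in $U(L_1)\otimes_K U(L_2)$.
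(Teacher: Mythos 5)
Your proposal follows essentially the same route as the paper: PBW and the tensor filtration for $(i)$–$(ii)$, almost-commutativity to get $[F_1,F_1]\subset F_1=\mathfrak{L}\oplus(A\otimes_KB)$, and the explicit four-case commutator expansion (mirroring the vector-field computation, with $\tau_1,\tau_2$ inserted via $[x,a]=\tau_1(x)(a)$) to establish closure in $\mathfrak{L}$ and formula $(\ref{equation Lie bracket on a product})$ simultaneously. One small caveat: your first suggestion for verifying that $[\omega_1,\omega_2]$ has no $A\otimes_KB$-component — passing to the degree-one graded piece — cannot work, since $\gr_1=F_1/F_0$ quotients out exactly that component; your ``more concrete'' fallback, the direct computation, is what actually does the job and is what the paper uses.
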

\begin{proof}
Let $U=U(L_1)\otimes_K U(L_2)$. Assume that $L_1$ and $L_2$ have basis given by $x_1,\cdots,x_n$  and $y_1,\cdots,y_m$ as free modules over $A$ and $B$ respectively. Then, we have a basis of   $F_1(U)\setminus A\otimes_KB$ given by the simple tensors $x_i\otimes 1$, $1\otimes y_j$, where $1\leq i\leq n$ and $1\leq j\leq m$. As this is a basis of $\mathfrak{L}$ over $A\otimes_K B$, it follows that $(i)$ and $(ii)$ hold.\\ 
As the associated graded $\gr(U)$ is commutative, we have $[F_iU,F_jU]\subset F_{i+j-1}U$ for all $i,j\geq 0$. In particular, we have: $[\mathfrak{L},\mathfrak{L}]\subset F_1U=\mathfrak{L}\oplus (A\otimes_KB)$. Thus, statement $(iii)$ holds if and only if $[\mathfrak{L},\mathfrak{L}]\cap A\otimes_K B=0$. Hence, it is enough to show $(iv)$. However, as the commutator bracket is $K$-bilinear, and we have $[x,a]=\tau_1(x)(a)$, $[y,b]=\tau_1(y)(b)$ for all $x\in L_1$, $y\in L_2$, $a\in A$, and $b\in B$, we can repeat the calculations leading up to equation $(\ref{description bracket product of vector fields})$ to show that statement $(iv)$ holds. 
\end{proof}
As a consequence, we get the following:
\begin{Lemma}\label{Lemma universal enveloping algebra and uncompleted products}
Consider the map:
\begin{equation*}
    \tau_1\times\tau_2=\tau_1\otimes_K \Id_B\oplus \Id_A\otimes_K\tau_2:\mathfrak{L}\rightarrow \Der_K(A\otimes_KB).
\end{equation*}
The following hold:
\begin{enumerate}[label=(\roman*)]
    \item $\tau_1\times\tau_2$ is an $A\otimes_KB$-linear map of $K$-Lie algebras.
    \item $(\mathfrak{L},[-,-],\tau_1\times\tau_2)$ is a $(K,A\otimes_KB)$-Lie algebra.
    \item We have a canonical isomorphism of filtered $K$-algebras:
    \begin{equation*}
        U(\mathfrak{L})\rightarrow U(L_1)\otimes_KU(L_2).
    \end{equation*}
\end{enumerate}
\end{Lemma}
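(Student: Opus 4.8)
\textbf{Proof plan for Lemma \ref{Lemma universal enveloping algebra and uncompleted products}.}

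The plan is to verify the three items essentially by unwinding the constructions of the previous results. For $(i)$, note that $\tau_1 \times \tau_2$ is $A \otimes_K B$-linear because it is a direct sum of $\tau_1 \otimes \Id_B$ and $\Id_A \otimes \tau_2$, each of which is $A$-linear (resp.\ $B$-linear) by the Lie algebra structures on $L_1, L_2$, and these are compatible under the decomposition $\Der_K(A \otimes_K B) = \Der_K(A) \otimes_K B \oplus A \otimes_K \Der_K(B)$ coming from Proposition \ref{prop product of differentials}. That $\tau_1 \times \tau_2$ is a map of $K$-Lie algebras is checked by evaluating both sides of $[\tau_1\times\tau_2(\omega_1),\tau_1\times\tau_2(\omega_2)] = (\tau_1\times\tau_2)([\omega_1,\omega_2])$ on a simple tensor $a \otimes b$; since $\tau_i$ is the anchor of $L_i$ and the commutator in $\Der_K(A\otimes_K B)$ restricted to these derivations was already computed in equation $(\ref{description bracket product of vector fields})$, the formula for the bracket on $\mathfrak{L}$ in $(\ref{equation Lie bracket on a product})$ from Corollary \ref{coro lie bracket}$(iv)$ matches it term by term. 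First I would observe that $(\ref{equation Lie bracket on a product})$ is literally $(\ref{description bracket product of vector fields})$ with $v_i, w_i$ replaced by their images under $\tau_1, \tau_2$, so $(i)$ reduces to the case of actual vector fields, which is the content of the earlier computation.

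For $(ii)$, I would check the Leibniz-type axiom $[a\xi, \eta] = a[\xi,\eta] + (\tau_1\times\tau_2)(\xi)(a)\eta$ for $a \in A\otimes_K B$ and $\xi, \eta \in \mathfrak{L}$. By Corollary \ref{coro lie bracket}$(ii)$ we have identified $\mathfrak{L}$ with the $A\otimes_K B$-submodule $F_1(U(L_1)\otimes_K U(L_2)) \setminus A\otimes_K B$ of the almost commutative filtered algebra $U(L_1)\otimes_K U(L_2)$, and the bracket on $\mathfrak{L}$ is the restriction of the commutator. Then the Leibniz rule is just the identity $[a\xi,\eta] = a[\xi,\eta] - [\eta,a]\xi$ valid in any associative algebra, combined with the fact (Lemma \ref{Lemma pbw on products}$(ii)$) that $[\eta, a] \in F_0 = A\otimes_K B$ and equals $-(\tau_1\times\tau_2)(\eta)(a)$; the sign and the identification of $[\eta,a]$ with the anchor action follow from $(i)$ together with the relations $[x,a] = \tau_1(x)(a)$, $[y,b] = \tau_2(y)(b)$ used in the proof of Corollary \ref{coro lie bracket}. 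The module $\mathfrak{L} = L_1 \otimes_K B \oplus A \otimes_K L_2$ is finite projective over $A \otimes_K B$ (in fact free, since $L_1, L_2$ are finite free), so all the axioms of a $(K, A\otimes_K B)$-Lie algebra hold.

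For $(iii)$, I would use the universal property of $U(\mathfrak{L})$: the pair of maps $\mathfrak{L} \hookrightarrow F_1(U(L_1)\otimes_K U(L_2)) \subset U(L_1)\otimes_K U(L_2)$ together with $A\otimes_K B \to U(L_1)\otimes_K U(L_2)$ is a morphism of $(K, A\otimes_K B)$-Lie algebras into the underlying Lie algebra of $U(L_1)\otimes_K U(L_2)$ (this is how the bracket on $\mathfrak{L}$ was defined, so compatibility is automatic), hence induces a filtered $K$-algebra homomorphism $\phi: U(\mathfrak{L}) \to U(L_1)\otimes_K U(L_2)$. To see $\phi$ is an isomorphism I would pass to associated graded: by the PBW theorem for Lie-Rinehart algebras \cite[Theorem 3.1]{rinehart1963differential}, $\gr_F U(\mathfrak{L}) = \operatorname{Sym}_{A\otimes_K B}(\mathfrak{L})$, and since $\mathfrak{L} = p_X^* L_1 \oplus p_Y^* L_2$ over $A \otimes_K B$ one has $\operatorname{Sym}_{A\otimes_K B}(\mathfrak{L}) = \operatorname{Sym}_A(L_1) \otimes_K \operatorname{Sym}_B(L_2)$; on the other side $\gr_F(U(L_1)\otimes_K U(L_2)) = \operatorname{Sym}_A(L_1)\otimes_K \operatorname{Sym}_B(L_2)$ is exactly Proposition \ref{PBW version for enveloping algebras}. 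One then checks $\gr(\phi)$ is the identity on generators, hence an isomorphism of graded $A\otimes_K B$-algebras, and since both filtrations are exhaustive and bounded below, $\phi$ itself is an isomorphism. The main obstacle is bookkeeping: making sure the signs in the bracket formula $(\ref{equation Lie bracket on a product})$ are consistently transported through $(i)$ and $(ii)$, and verifying that $\gr(\phi)$ really is the PBW identification rather than something twisted; but no genuinely new idea is needed beyond the filtered/graded comparison already set up in Proposition \ref{PBW version for enveloping algebras} and Corollary \ref{coro decomposition filtration of tensor}.
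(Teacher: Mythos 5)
Your proposal is correct, and for items $(i)$ and $(ii)$ it follows the paper's route (the paper simply says that applying $\tau_1\times\tau_2$ to $(\ref{equation Lie bracket on a product})$ recovers $(\ref{description bracket product of vector fields})$, and that $(ii)$ follows "by inspection"; your version just spells out the inspection via the commutator identity $[a\xi,\eta]=a[\xi,\eta]-[\eta,a]\xi$ and the fact that $[\eta,a]$ lies in $F_0$ and is the anchor action). For item $(iii)$ you take a genuinely different route. The paper does not construct a map out of $U(\mathfrak{L})$ and compare associated gradeds; instead it verifies directly that $U(L_1)\otimes_KU(L_2)$ \emph{satisfies the universal property} of the enveloping algebra of $\mathfrak{L}$: given an associative $K$-algebra $D$ with compatible maps $i_D:\mathfrak{L}\rightarrow D$ and $j_D:A\otimes_KB\rightarrow D$, one restricts to $L_1\otimes 1$ and $1\otimes L_2$ to get algebra maps $U(L_1)\rightarrow D$ and $U(L_2)\rightarrow D$, which combine to a unique map $U(L_1)\otimes_KU(L_2)\rightarrow D$. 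That argument is shorter and needs no PBW input for $U(\mathfrak{L})$ itself. Your approach — build $\phi:U(\mathfrak{L})\rightarrow U(L_1)\otimes_KU(L_2)$ from the universal property of $U(\mathfrak{L})$ and show $\gr(\phi)$ is an isomorphism using Rinehart's PBW theorem on the left and Proposition \ref{PBW version for enveloping algebras} on the right — is also valid (note that PBW for $U(\mathfrak{L})$ is available only once $(ii)$ establishes that $\mathfrak{L}$ is a projective, in fact free, $(K,A\otimes_KB)$-Lie algebra), and it has the advantage of exhibiting explicit bases, which is essentially what Corollary \ref{coro universal enveloping algebras of products and colimits} later needs anyway. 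One small caution: your stated Leibniz axiom $[a\xi,\eta]=a[\xi,\eta]+(\tau_1\times\tau_2)(\xi)(a)\eta$ and your claim $[\eta,a]=-(\tau_1\times\tau_2)(\eta)(a)$ are not mutually consistent with the commutator identity you invoke (in the enveloping algebra $[\eta,a]=\tau(\eta)(a)$, giving $[a\xi,\eta]=a[\xi,\eta]-\tau(\eta)(a)\xi$); this is exactly the sign bookkeeping you flag, and it should be fixed before the argument is written out, but it does not affect the validity of the plan.
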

\begin{proof}
By definition, the anchor map $\tau_1$ ( resp. $\tau_2$) is a morphism of $A$-modules (resp. $B$-modules). Thus, it follows that $\tau_1\times \tau_2$ is $A\otimes_KB$-linear. Furthermore, both are maps of $K$-Lie algebras. One can check that applying $\tau_1\times\tau_2$ to equation $(\ref{equation Lie bracket on a product})$ yields equation $(\ref{description bracket product of vector fields})$, thus showing that $\tau_1\times\tau_2$ is a morphism of $K$-Lie algebras. Hence, statement $(i)$ holds.\\
 Statement $(ii)$ follows by inspection of $(\ref{equation Lie bracket on a product})$. In order to show $(iii)$, it is enough to show that $U(L_1)\otimes_KU(L_2)$ satisfies the universal property of enveloping algebras with respect to the maps:
\begin{equation*}
    i:\mathfrak{L}\rightarrow U(L_1)\otimes_KU(L_2), \quad j:A\otimes_KB\rightarrow U(L_1)\otimes_KU(L_2).
\end{equation*}
Let $D$ be an associative $K$-algebra, together with a map of $K$-Lie algebras $i_D:\mathfrak{L}\rightarrow D$, and a map of $K$-algebras $j_D:A\otimes_KB\rightarrow D$. We also assume that for each $f\in A\otimes_KB$, and each $v\in \mathfrak{L}$ we have $j_D(f)i_D(v)=i_D(fv)$. Consider the restrictions of $i_D$ and $j_D$ to $L_1\otimes_K 1$ and  $A\otimes_K 1$ respectively. This induces a unique map of $K$-algebras $U(L_1)\rightarrow D$. Similarly, we get a unique map of $K$-algebras $U(L_2)\rightarrow D$. Hence, we get a unique map of $K$-algebras $U(L_1)\otimes_KU(L_2)\rightarrow D$ extending $i_D$ and $j_D$, as wanted.
\end{proof}
The next step is showing that the $(K,A\otimes_KB)$-Lie algebra structure on $\mathfrak{L}$ extends to a $(K,A\widehat{\otimes}_KB)$-Lie algebra structure $L_1\times L_2$. In particular, we need to show that the bracket and anchor map are continuous. As $L_1$ and $L_2$ are free, we may choose an $(\mathcal{R},\mathcal{A})$-Lie lattice $\mathcal{L}_1$ of $L_1$, and a  $(\mathcal{R},\mathcal{B})$-Lie lattice $\mathcal{L}_2$ of  $L_2$. The topology of $L_1\times L_2$ as a finite $A\widehat{\otimes}_KB$-module is induced by the $\pi$-adic topology on the finite $\mathcal{A}\widehat{\otimes}_{\mathcal{R}}\mathcal{B}$-module:
\begin{equation*}
    \mathcal{L}_1\times \mathcal{L}_2=\mathcal{L}_1\widehat{\otimes}_{\mathcal{R}}\mathcal{B}\oplus \mathcal{A}\widehat{\otimes}_{\mathcal{R}}\mathcal{L}_2.
\end{equation*}
Notice that this lattice is the $\pi$-adic completion of the 
$\mathcal{A}\otimes_{\mathcal{R}}\mathcal{B}$-module:
\begin{equation*}
   \mathcal{M}= \mathcal{L}_1\otimes_{\mathcal{R}}\mathcal{B}\oplus \mathcal{A}\otimes_{\mathcal{R}}\mathcal{L}_2.
\end{equation*}
As $\mathcal{R}$-linear maps between spaces with the $\pi$-adic topology are always continuous, it suffices to show that  the restriction of the bracket and anchor map of $\mathfrak{L}$ to $\mathcal{M}$ makes $\mathcal{M}$ a $(\mathcal{R},\mathcal{A}\otimes_{\mathcal{R}}\mathcal{B})$-Lie algebra. 
\begin{Lemma}\label{Lemma continuity of bracket}
The following hold:
\begin{enumerate}[label=(\roman*)]
    \item $\tau_1\times\tau_2(\mathcal{M})\subset \Der_{\mathcal{R}}(\mathcal{A}\otimes_{\mathcal{R}}\mathcal{B})$.
    \item $[\mathcal{M},\mathcal{M}]\subset \mathcal{M}$.
\end{enumerate}
In particular, $(\mathcal{M},[-.-],\tau_1\times\tau_2)$ is a $(\mathcal{R},\mathcal{A}\otimes_{\mathcal{R}}\mathcal{B})$-Lie algebra.
\end{Lemma}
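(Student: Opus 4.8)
The plan is to deduce both assertions directly from the explicit formulas already established, without introducing anything new. The two facts we may lean on are: first, that $\mathcal{L}_1$ is a smooth $(\mathcal{R},\mathcal{A})$-Lie algebra and $\mathcal{L}_2$ a smooth $(\mathcal{R},\mathcal{B})$-Lie algebra, so that each is closed under its own bracket and its anchor satisfies $\tau_1(\mathcal{L}_1)(\mathcal{A})\subseteq\mathcal{A}$ and $\tau_2(\mathcal{L}_2)(\mathcal{B})\subseteq\mathcal{B}$; second, that $\mathcal{A}\otimes_{\mathcal{R}}\mathcal{B}$ is $\mathcal{R}$-flat (Proposition \ref{prop affine formal model on product}), hence injects into $A\otimes_K B$, so that a $K$-derivation of $A\otimes_K B$ preserving the submodule $\mathcal{A}\otimes_{\mathcal{R}}\mathcal{B}$ automatically restricts to an $\mathcal{R}$-derivation of $\mathcal{A}\otimes_{\mathcal{R}}\mathcal{B}$. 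Everything else is bookkeeping with the decomposition $\mathcal{M}=\mathcal{L}_1\otimes_{\mathcal{R}}\mathcal{B}\oplus\mathcal{A}\otimes_{\mathcal{R}}\mathcal{L}_2$.

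For $(i)$: given $\xi=(v\otimes f)\oplus(g\otimes w)\in\mathcal{M}$ with $v\in\mathcal{L}_1$, $f\in\mathcal{B}$, $g\in\mathcal{A}$, $w\in\mathcal{L}_2$, Lemma \ref{Lemma universal enveloping algebra and uncompleted products} already gives $(\tau_1\times\tau_2)(\xi)\in\Der_K(A\otimes_K B)$, so in particular it is $\mathcal{R}$-linear. I would then evaluate it on a simple tensor $a\otimes b\in\mathcal{A}\otimes_{\mathcal{R}}\mathcal{B}$: unwinding the definition of the anchor on $p_X^*L_1$ and $p_Y^*L_2$, one gets $(\tau_1\otimes\operatorname{Id}_B)(v\otimes f)$ acting by $a\otimes b\mapsto\tau_1(v)(a)\otimes fb$ and $(\operatorname{Id}_A\otimes\tau_2)(g\otimes w)$ acting by $a\otimes b\mapsto ga\otimes\tau_2(w)(b)$. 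Since $\tau_1(v)(a)\in\mathcal{A}$, $ga\in\mathcal{A}$, $fb\in\mathcal{B}$ and $\tau_2(w)(b)\in\mathcal{B}$, the value lies in $\mathcal{A}\otimes_{\mathcal{R}}\mathcal{B}$; hence $(\tau_1\times\tau_2)(\xi)$ stabilises $\mathcal{A}\otimes_{\mathcal{R}}\mathcal{B}$ and therefore lies in $\Der_{\mathcal{R}}(\mathcal{A}\otimes_{\mathcal{R}}\mathcal{B})$. (Alternatively one may invoke the Künneth decomposition of Proposition \ref{prop product of differentials} to identify $\Der_{\mathcal{R}}(\mathcal{A})\otimes_{\mathcal{R}}\mathcal{B}\oplus\mathcal{A}\otimes_{\mathcal{R}}\Der_{\mathcal{R}}(\mathcal{B})$ with a submodule of $\Der_{\mathcal{R}}(\mathcal{A}\otimes_{\mathcal{R}}\mathcal{B})$, but the direct check is shorter.) For $(ii)$, take $\omega_1,\omega_2\in\mathcal{M}$, write $\omega_i=(v_i\otimes f_i)\oplus(g_i\otimes w_i)$ with $v_i\in\mathcal{L}_1$, $f_i\in\mathcal{B}$, $g_i\in\mathcal{A}$, $w_i\in\mathcal{L}_2$, and inspect each of the six summands of $[\omega_1,\omega_2]$ in $(\ref{equation Lie bracket on a product})$: the summands $[v_1,v_2]\otimes f_1f_2$ and $g_2g_1\otimes[w_1,w_2]$ lie in $\mathcal{M}$ because $\mathcal{L}_1$ and $\mathcal{L}_2$ are closed under their brackets and $\mathcal{A},\mathcal{B}$ multiply them, while the remaining four lie in $\mathcal{M}$ because $\mathcal{L}_1$ is an $\mathcal{A}$-module, $\mathcal{L}_2$ a $\mathcal{B}$-module, and $\tau_1(v_i)(\mathcal{A})\subseteq\mathcal{A}$, $\tau_2(w_i)(\mathcal{B})\subseteq\mathcal{B}$. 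Hence $[\mathcal{M},\mathcal{M}]\subseteq\mathcal{M}$.

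For the final ``in particular'': since $L_1$ and $L_2$ are finite free, the lattices $\mathcal{L}_1,\mathcal{L}_2$ may be taken finite free over $\mathcal{A},\mathcal{B}$, so $\mathcal{M}$ is finite free over $\mathcal{A}\otimes_{\mathcal{R}}\mathcal{B}$. Antisymmetry and the Jacobi identity for $[-,-]$ on $\mathcal{M}$, the $\mathcal{A}\otimes_{\mathcal{R}}\mathcal{B}$-linearity of $\tau_1\times\tau_2$, the Leibniz rule $[c\,\xi,\eta]=c[\xi,\eta]+(\tau_1\times\tau_2)(\xi)(c)\,\eta$, and the fact that $\tau_1\times\tau_2$ is a map of $\mathcal{R}$-Lie algebras are all inherited from the corresponding properties of $\mathfrak{L}$ established in Lemma \ref{Lemma universal enveloping algebra and uncompleted products}, simply by restricting along the inclusion $\mathcal{M}\hookrightarrow\mathfrak{L}$ — a restriction which is meaningful exactly because of $(i)$ and $(ii)$. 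I do not expect any genuine obstacle here: the only point needing a little care is keeping straight, in $(\ref{equation Lie bracket on a product})$, which of the two tensor factors each symbol lives in, and interpreting the anchor correctly on $p_X^*L_1$ and $p_Y^*L_2$; once that is fixed, each of the eight verifications is an immediate consequence of the defining axioms of a smooth Lie lattice.
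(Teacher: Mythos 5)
Your proposal is correct and follows essentially the same route as the paper: part (ii) is verified exactly as in the text by inspecting the summands of equation (\ref{equation Lie bracket on a product}) using closure of the Lie lattices under brackets and module multiplication, and for part (i) your direct check on simple tensors is just an unpacked form of the paper's one-line appeal to $\tau_i(\mathcal{L}_i)\subset\Der_{\mathcal{R}}$ together with the decomposition $\Der_{\mathcal{R}}(\mathcal{A})\otimes_{\mathcal{R}}\mathcal{B}\oplus\mathcal{A}\otimes_{\mathcal{R}}\Der_{\mathcal{R}}(\mathcal{B})=\Der_{\mathcal{R}}(\mathcal{A}\otimes_{\mathcal{R}}\mathcal{B})$, which you yourself note as the alternative.
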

\begin{proof}

By definition, both $\mathcal{L}_i$ are Lie lattices. Thus, we have:
\begin{equation*}
    \tau_1(\mathcal{L}_1)\subset \Der_{\mathcal{R}}(\mathcal{A}), \quad \tau_2(\mathcal{L}_2)\subset \Der_{\mathcal{R}}(\mathcal{B}).
\end{equation*}
Therefore, we have:
   \begin{equation*}
       \tau_1\times\tau_2(\mathcal{M})\subset \Der_{\mathcal{R}}(\mathcal{A})\otimes_{\mathcal{R}}\mathcal{B}\oplus \mathcal{A}\otimes_{\mathcal{R}}\Der_{\mathcal{R}}(\mathcal{B})= \Der_{\mathcal{R}}(\mathcal{A}\otimes_{\mathcal{R}}\mathcal{B}).
   \end{equation*}
For the second part, notice that equation $(\ref{equation Lie bracket on a product})$ gives an explicit description of the Lie bracket on $\mathfrak{L}$. As $\mathcal{L}_1$ and $\mathcal{L}_2$ are Lie lattices, they are closed under their respective brackets, as well as closed by multiplication by elements in $\mathcal{A}$ and $\mathcal{B}$ respectively. Hence, we may use equation $(\ref{equation Lie bracket on a product})$ to show that $[\mathcal{M},\mathcal{M}]\subset \mathcal{M}$.
\end{proof}
We may summarize our findings so far into the following proposition:
\begin{prop}\label{prop product of Lie rinehart algebras}
The following hold:
\begin{enumerate}[label=(\roman*)]
    \item $(L_1\times L_2, [-,-],\tau_1\times \tau_2)$, is a finite-free $(K,A\widehat{\otimes}_KB)$-Lie algebra.
    \item The inclusion $j_1:L_1\rightarrow L_1\times L_2$ given by $x\mapsto x\otimes 1 $ is an $A$-linear map of $K$-Lie algebras satisfying $\tau_1=(\tau_1\times \tau_2)\circ j_1$.
    \item The inclusion $j_2:L_2\rightarrow L_1\times L_2$ given by $y\mapsto 1\otimes y$ is a $B$-linear map of $K$-Lie algebras satisfying $\tau_2=(\tau_1\times \tau_2)\circ j_2$.
\end{enumerate}
\end{prop}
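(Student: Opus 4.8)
The plan is to simply assemble the statement from the lemmas already established, treating the three items as essentially bookkeeping once the analytic input (Lemma \ref{Lemma continuity of bracket}) is in place. First I would recall that by construction $L_1\times L_2 = p_X^*L_1 \oplus p_Y^*L_2$ is a finite-free $A\widehat{\otimes}_KB$-module, since $L_1$ is finite-free over $A$ and $L_2$ is finite-free over $B$, and the base changes of free modules are free. So the only content of item $(i)$ is that the $(K, A\otimes_K B)$-Lie algebra structure on the dense submodule $\mathfrak{L}$ constructed in Lemma \ref{Lemma universal enveloping algebra and uncompleted products} extends continuously to $L_1\times L_2$. Here I would invoke the discussion preceding Lemma \ref{Lemma continuity of bracket}: the topology on $L_1\times L_2$ is the $\pi$-adic topology coming from the finite $\mathcal{A}\widehat{\otimes}_{\mathcal{R}}\mathcal{B}$-lattice $\mathcal{L}_1\times\mathcal{L}_2$, which is the $\pi$-adic completion of $\mathcal{M}=\mathcal{L}_1\otimes_{\mathcal{R}}\mathcal{B}\oplus\mathcal{A}\otimes_{\mathcal{R}}\mathcal{L}_2$; since $\mathcal{R}$-linear maps of $\pi$-adically topologised modules are automatically continuous, Lemma \ref{Lemma continuity of bracket}$(ii)$ (that $[\mathcal{M},\mathcal{M}]\subset\mathcal{M}$) gives continuity of the bracket, and Lemma \ref{Lemma continuity of bracket}$(i)$ together with Lemma \ref{Lemma derivations on fiber products and completions} (the identity $\Der_K(A\widehat{\otimes}_KB)=\widehat{\Der}_K(A\otimes_KB)$) gives continuity of the anchor $\tau_1\times\tau_2$ and the fact that it indeed lands in $\Der_K(A\widehat{\otimes}_KB)$. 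The Lie algebra axioms and $A\widehat{\otimes}_KB$-linearity then pass from $\mathfrak{L}$ to its completion by continuity and density. This establishes $(i)$.

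For items $(ii)$ and $(iii)$, I would check directly from the formula $(\ref{equation Lie bracket on a product})$ that the maps $j_1\colon x\mapsto x\otimes 1$ and $j_2\colon y\mapsto 1\otimes y$ are morphisms. For $j_1$: it is $A$-linear because the embedding $L_1\hookrightarrow L_1\widehat{\otimes}_KB$, $x\mapsto x\otimes 1$, is; and it is a morphism of $K$-Lie algebras because, restricting $(\ref{equation Lie bracket on a product})$ to elements of the form $\omega_i=(v_i\otimes 1)\oplus 0$, all the mixed terms involving the anchor maps vanish ($\tau_1(v_i)(1)=0$, and the $g_i$-summands are absent), leaving $[v_1\otimes 1, v_2\otimes 1]=[v_1,v_2]\otimes 1$. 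The compatibility $\tau_1=(\tau_1\times\tau_2)\circ j_1$ is immediate from the definition $\tau_1\times\tau_2=\tau_1\otimes_K\operatorname{Id}_B\oplus\operatorname{Id}_A\otimes_K\tau_2$. The argument for $j_2$ is symmetric, exchanging the roles of the two factors.

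The main obstacle, such as it is, is not conceptual but is concentrated entirely in Lemma \ref{Lemma continuity of bracket} and the identification of the topology on $L_1\times L_2$ — i.e. checking that the Lie-lattice structures on $\mathcal{L}_1$ and $\mathcal{L}_2$ are genuinely preserved under the product bracket $(\ref{equation Lie bracket on a product})$, which in turn relies on the anchor maps sending lattices into $\mathcal{R}$-linear derivations of the formal models. Since that lemma is already proved, what remains for this proposition is purely a matter of organising these inputs; I do not anticipate any genuine difficulty, and the proof will be short.
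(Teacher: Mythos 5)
Your proposal is correct and follows the same route as the paper, which in fact states this proposition as a summary of the preceding lemmas without a separate proof: item (i) is exactly the continuous extension of the $(K,A\otimes_KB)$-structure on $\mathfrak{L}$ via Lemma \ref{Lemma continuity of bracket} and the $\pi$-adic lattice description, and items (ii)--(iii) are direct checks against equation $(\ref{equation Lie bracket on a product})$. The only nitpick is that in your verification of (ii) the mixed terms vanish because $g_i=0$ and $w_i=0$ for elements of the form $(v_i\otimes 1)\oplus 0$, not because $\tau_1(v_i)(1)=0$; this does not affect the conclusion.
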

Now that we have a local description of $L_1\times L_2$, we will commit our efforts to giving an explicit description of the universal property claimed at the beginning of the section.
\begin{Lemma}\label{Lemma existence of liftings to enveloping algebras}
Let $L$ be a $(K,A)$-Lie algebra and $M$ be a $(K,A\widehat{\otimes}_KB)$-Lie algebra. Denote their anchor maps by $\tau_L$ and $\tau_M$ respectively. Consider a $K$-linear map $\theta: L\rightarrow M$ satisfying the following:
\begin{enumerate}[label=(\roman*)]
    \item $\theta$ is an $A$-linear map of $K$-Lie algebras.
    \item $\tau_L=\tau_M\circ \theta$, where we identify $\Der_K(A)$ with its image inside $\Der_K(A\widehat{\otimes}_KB)$.
\end{enumerate}
Then there is a unique extension $U(\theta):U(L_1)\rightarrow U(M)$, which is a morphism of filtered $K$-algebras.
\begin{proof}
    Clear by the universal property of universal enveloping algebras.
\end{proof}
\end{Lemma}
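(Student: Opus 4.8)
The plan is to reduce the statement directly to the universal property of the (purely algebraic) universal enveloping algebra of a Lie--Rinehart algebra, exactly as in \cite{rinehart1963differential}. Recall that $U(L)$ carries a $K$-algebra map $\iota_A\colon A\to U(L)$ and a $K$-linear, $A$-linear Lie algebra map $\iota_L\colon L\to U(L)$ with $\iota_A(a)\iota_L(x)=\iota_L(ax)$ and $[\iota_L(x),\iota_A(a)]=\iota_A(\tau_L(x)(a))$, and that $U(L)$ is universal among associative $K$-algebras $D$ equipped with such a pair of maps. So I would produce $U(\theta)\colon U(L)\to U(M)$ by exhibiting the required data in the target $D=U(M)$.

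First I would set $i_D\colon L\xrightarrow{\ \theta\ }M\xrightarrow{\ \iota_M\ }U(M)$ and $j_D\colon A\hookrightarrow A\widehat{\otimes}_KB\xrightarrow{\ \iota_{A\widehat{\otimes}B}\ }U(M)$, where the first arrow of $j_D$ is $a\mapsto a\otimes 1$ and $\iota_M$, $\iota_{A\widehat{\otimes}B}$ are the canonical maps into $U(M)$. Then I would check three compatibilities. \emph{(a)} $i_D$ is a $K$-Lie algebra map: immediate, since $\theta$ is one by hypothesis~(i) and $\iota_M$ is one. \emph{(b)} The module compatibility $j_D(a)i_D(x)=i_D(ax)$: here $j_D(a)i_D(x)=\iota_{A\widehat{\otimes}B}(a\otimes 1)\,\iota_M(\theta(x))=\iota_M\bigl((a\otimes 1)\cdot\theta(x)\bigr)$ by $A\widehat{\otimes}_KB$-linearity of $\iota_M$, and $(a\otimes 1)\cdot\theta(x)=\theta(ax)$ is precisely the $A$-linearity of $\theta$ of hypothesis~(i), read through the structure map $A\to A\widehat{\otimes}_KB$. \emph{(c)} The anchor compatibility $[i_D(x),j_D(a)]=j_D(\tau_L(x)(a))$: the left side equals $\iota_{A\widehat{\otimes}B}\bigl(\tau_M(\theta(x))(a\otimes 1)\bigr)$, and unwinding the identification $\Der_K(A)\hookrightarrow\Der_K(A\widehat{\otimes}_KB)$, $\delta\mapsto(a\otimes b\mapsto \delta(a)\otimes b)$ (the first summand in Lemma~\ref{Lemma derivations on fiber products and completions}), hypothesis~(ii) gives $\tau_M(\theta(x))(a\otimes 1)=\tau_L(x)(a)\otimes 1$, whence the left side is $j_D(\tau_L(x)(a))$. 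The universal property then produces a unique $K$-algebra homomorphism $U(\theta)\colon U(L)\to U(M)$ with $U(\theta)\circ\iota_L=i_D$ and $U(\theta)\circ\iota_A=j_D$.

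Finally I would verify that $U(\theta)$ respects the PBW filtrations: by \cite[Theorem 3.1]{rinehart1963differential}, $F_nU(L)$ is the $K$-span of products of at most $n$ elements of $F_1U(L)=\iota_A(A)+\iota_L(L)$; since $j_D(A)\subset F_0U(M)$ and $i_D(L)=\iota_M(\theta(L))\subset\iota_M(M)\subset F_1U(M)$, multiplicativity of $U(\theta)$ forces $U(\theta)(F_nU(L))\subset F_nU(M)$. Uniqueness of the extension is automatic because $\iota_A(A)$ and $\iota_L(L)$ generate $U(L)$ as a $K$-algebra. There is essentially no obstacle here, in line with the fact that this is the plain algebraic enveloping-algebra construction over the base field $K$ (no analytic completion enters); the only points needing care are the bookkeeping in~(b) and~(c), namely reading ``$A$-linear'' in~(i) through $a\mapsto a\otimes 1$ and using the explicit extension-of-derivations identification in~(ii).
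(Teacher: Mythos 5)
Your proposal is correct and is simply a careful unpacking of the paper's one-line proof: both invoke the universal property of the Lie--Rinehart enveloping algebra, with your verifications (a)--(c) and the filtration check being exactly the routine details the paper leaves implicit.
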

\begin{prop}\label{prop universal property of product}
Let $M$ be a $(K,A\widehat{\otimes}_KB)$-Lie algebra, such that $M$ is coherent as a $A\widehat{\otimes}_KB$-module. Assume we have $K$-linear maps:
\begin{equation*}
    \theta_1:L_1\rightarrow M, \quad  \theta_2:L_2\rightarrow M,
\end{equation*}
satisfying the conditions in Lemma \ref{Lemma existence of liftings to enveloping algebras}. Then there is a unique map of $(K,A\widehat{\otimes}_KB)$-Lie algebras: $\theta:L_1\times L_2 \rightarrow M$ which induces the following factorization for $j=1,2$:
\begin{equation*}
\begin{tikzcd}
L_i \arrow[r, "j_i"] \arrow[rr, "\theta_i"', bend left] & L_1\times L_2 \arrow[r, "\theta"] & M
\end{tikzcd}
\end{equation*}
\end{prop}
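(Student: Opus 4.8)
The plan is to construct the map $\theta$ locally from $\theta_1$ and $\theta_2$ via the universal enveloping algebras, check it respects the Lie algebroid structure using the explicit bracket formula $(\ref{equation Lie bracket on a product})$, and finally argue uniqueness. Concretely, first I would observe that since $L_1\times L_2 = L_1\widehat{\otimes}_KB \oplus A\widehat{\otimes}_KL_2$ is coherent over $C = A\widehat{\otimes}_KB$ and $M$ is coherent over $C$, it suffices — because $C$-linear maps between coherent $C$-modules are automatically continuous (in the spirit of Remark \ref{remark all derivations are bounded}) — to define a $C$-linear map $\theta$ on the dense sub-$(A\otimes_KB)$-Lie algebra $\mathfrak{L} = L_1\otimes_KB \oplus A\otimes_KL_2$ and extend by completion. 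On $\mathfrak{L}$, I would set
\begin{equation*}
    \theta(x\otimes b \,\oplus\, a\otimes y) = b\cdot\theta_1(x) + a\cdot\theta_2(y),
\end{equation*}
using the $C$-module structure of $M$; this is visibly $C$-linear and by construction factors $\theta_i$ through $j_i$ as required by the displayed diagram.

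Next I would verify that $\theta$ is a morphism of $K$-Lie algebras and intertwines anchors. For the anchor compatibility, the anchor of $L_1\times L_2$ is $\tau_1\times\tau_2 = \tau_1\otimes\operatorname{Id}_B \oplus \operatorname{Id}_A\otimes\tau_2$ by Lemma \ref{Lemma universal enveloping algebra and uncompleted products}, so $\tau_M\circ\theta = \tau_{L_1\times L_2}$ follows directly from hypothesis $(ii)$ on $\theta_1,\theta_2$ applied summand-by-summand, plus the fact that the anchor and $\theta$ are both $C$-linear. For the bracket, I would use the explicit formula $(\ref{equation Lie bracket on a product})$: given $\omega_i = (v_i\otimes f_i)\oplus(g_i\otimes w_i)$, every term of $[\omega_1,\omega_2]$ is built out of the $L_1$-bracket, the $L_2$-bracket, the anchors $\tau_1,\tau_2$ acting on functions, and the $C$-module structure. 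Applying $\theta$ and using that $\theta_1,\theta_2$ are maps of $K$-Lie algebras respecting anchors, while $M$ is itself a $(K,C)$-Lie algebra (so its bracket satisfies the Leibniz rule $[af,g]=a[f,g]+\tau_M(f)(a)g$ with respect to the $M$-anchor), I would match $\theta([\omega_1,\omega_2])$ with $[\theta(\omega_1),\theta(\omega_2)]_M$ term by term — exactly the same bookkeeping as in the proof of the bracket formula for $L_1\times L_2$ itself. The main obstacle is making this term-by-term comparison clean: one has to be careful that mixed terms like $\tau_1(v_1)(g_2)\otimes f_1w_2$ are sent correctly, i.e. that $\theta_1$ and $\theta_2$ interact properly through the $C$-action on $M$; the cleanest route is probably to reduce to the uncompleted statement by invoking Lemma \ref{Lemma universal enveloping algebra and uncompleted products}, which identifies $U(\mathfrak{L})$ with $U(L_1)\otimes_KU(L_2)$, then use Lemma \ref{Lemma existence of liftings to enveloping algebras} to get ring maps $U(L_1)\to U(M)$ and $U(L_2)\to U(M)$, hence (by the universal property of the tensor product, as in the proof of Lemma \ref{Lemma universal enveloping algebra and uncompleted products}$(iii)$) a single ring map $U(\mathfrak{L})\to U(M)$; restricting this to $\mathfrak{L}\subset U(\mathfrak{L})$ and to $M\subset U(M)$ recovers $\theta$ and shows it is a Lie algebra map for free.

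Finally, uniqueness: any $(K,C)$-Lie algebra map $\theta': L_1\times L_2\to M$ factoring $\theta_1$ through $j_1$ and $\theta_2$ through $j_2$ must agree with $\theta$ on $j_1(L_1)$ and $j_2(L_2)$, hence on the $C$-submodule they generate, which is all of $L_1\times L_2$; since both maps are $C$-linear (and continuous by coherence) they coincide. This completes the construction of the product Lie algebroid: globalizing over an affinoid cover, the local products $\mathscr{L}_X\times\mathscr{L}_Y$ glue by this uniqueness to a Lie algebroid on $X\times Y$ satisfying the stated universal property.
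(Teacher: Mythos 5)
Your proposal is correct and, in its "cleanest route," is essentially the paper's own proof: both obtain the ring map $U(L_1)\otimes_K U(L_2)\to U(M)$ from Lemma \ref{Lemma existence of liftings to enveloping algebras}, identify the source with $U(\mathfrak{L})$ via Lemma \ref{Lemma universal enveloping algebra and uncompleted products} so that the degree-one restriction $\theta:\mathfrak{L}\to M$ is automatically an anchor-compatible Lie algebra map, and then extend uniquely to $L_1\times L_2$ by density and the completeness of the coherent module $M$. The alternative term-by-term check against formula $(\ref{equation Lie bracket on a product})$ that you sketch is unnecessary once the enveloping-algebra argument is in place.
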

\begin{proof}
    By Lemma \ref{Lemma existence of liftings to enveloping algebras}, we get a unique map of associative $K$-algebras:
    \begin{equation*}
        \psi:U(L_1)\otimes_KU(L_2)\rightarrow U(M).
    \end{equation*}
Endowing the left hand side with the tensor filtration, it follows that this is a map of filtered $K$-algebras. In particular, we get a map: 
    \begin{equation*}
        \theta:\mathfrak{L}:= L_1\otimes_K B\oplus A\otimes_K L_2 \rightarrow M.
    \end{equation*}
By Lemma \ref{Lemma universal enveloping algebra and uncompleted products}, we have a canonical isomorphism $U(\mathfrak{L})\rightarrow U(L_1)\otimes_KU(L_2)$. Thus, $\theta$ is a $A\otimes_KB$-linear map of $K$-Lie algebras which satisfies that $\tau_1\times \tau_2=\tau_M\circ \theta$.
As $M$ is coherent, it has a unique topology as a finite $A\widehat{\otimes}_KB$-module. As this topology is complete, it follows that $\theta$ extends uniquely to a map
$\theta:L_1\times L_2\rightarrow M$, which is a morphism of $(K,A\widehat{\otimes}_KB)$-Lie algebras.
\end{proof}
Let $X,Y$ be smooth separated rigid analytic varieties, and consider Lie algebroids $\mathscr{L}_X$,$\mathscr{L}_Y$ on $X$ and $Y$ respectively. Consider the following coherent module:
\begin{equation*}
\mathscr{L}_X\times\mathscr{L}_Y=p_X^*\mathscr{L}_X\oplus p_Y^*\mathscr{L}_Y.
\end{equation*}
Using the above, we can  show that $ \mathscr{L}_X\times\mathscr{L}_Y$ is also a Lie algebroid on $X\times Y$.\\ 
Indeed, as $\mathscr{L}_X$ is a Lie algebroid, we can choose an admissible affinoid cover $\mathfrak{U}=\left( U_i\right)_{i\in I}$ such that for all $i\in I$ there is some affine formal model $\mathcal{A}_i$ of $\OX_X(U_i)$ satisfying that $\mathscr{L}_X(U_i)$ admits a finite free $\mathcal{A}_i$-Lie lattice.
As we assumed that $X$ is separated, we may also freely assume that $\mathfrak{U}$ is closed under finite intersections. We let  $\mathfrak{V}=\left( V_j\right)_{j\in J}$ be an admissible affinoid cover of $Y$ satisfying the analogous conditions for $\mathscr{L}_Y$. Thus, we get an admissible affinoid cover $\mathfrak{U\times V}=\left(U_i\times V_j \right)_{i\in I,j\in J}$ of $X\times Y$ which is closed under finite intersections.\bigskip

Let $\mathscr{L}_X\times\mathscr{L}_{Y\vert U_i\times V_j}$ be the restriction of  $\mathscr{L}_X\times\mathscr{L}_Y$ to  $U_i\times V_j$.
By construction, we have the following identity:
\begin{equation*}
    \mathscr{L}_X\times\mathscr{L}_Y(U_i\times V_j)= \mathscr{L}_X(U_i)\times\mathscr{L}_Y(V_j),
\end{equation*}
and  by Theorem \ref{prop product of Lie rinehart algebras} this has a canonical $(K,\OX_X(U_i\times V_j))$-Lie algebra structure.\\ 
As $U_i\times V_j$ is an affinoid space, \cite[Lemma 9.2]{ardakov2019} shows that  the $(K,\OX_X(U_i\times V_j))$-Lie algebra structure on $\mathscr{L}_X\times\mathscr{L}_Y(U_i\times V_j)$ extends uniquely to a Lie algebroid structure on $\mathscr{L}_X\times\mathscr{L}_{Y\vert U_i\times V_j}$. In order to show that all these Lie algebroids are compatible, we just need to show that they agree on the intersections of elements in  $\mathfrak{U\times V}$. For any, $i_1,i_2\in I$ and $j_1,j_2\in J$,
let $U_{i_1i_2}=U_{i_1}\cap U_{i_2}$, and $V_{j_1j_2}=V_{j_1}\cap V_{j_2}$.
We then have:  
\begin{equation*}
    U_{i_1}\times V_{j_1}\cap U_{i_2}\times V_{j_2}=U_{i_1i_2}\times V_{j_1j_2}.
\end{equation*}
For simplicity, let $W=U_{i_1i_2}\times V_{j_1j_2}$.
We have the following identities of $\OX_X(W)$-modules:
\begin{equation*}
\mathscr{L}_X\times\mathscr{L}_{Y\vert U_{i_{1}}
\times V_{j_{1}}}(W)=\mathscr{L}_X\times\mathscr{L}_{Y\vert W}(W)=\mathscr{L}_X\times\mathscr{L}_{Y\vert U_{i_{2}}
\times V_{j_{2}}}(W).    
\end{equation*}
Again by \cite[Lemma 9.2]{ardakov2019}, it suffices to show that these are, in fact, identifications of $(K,\OX_X(W))$-Lie algebras. However, by equation $(\ref{equation Lie bracket on a product})$, the brackets and anchor maps of these algebras are uniquely determined by the Lie-Rinehart algebra structures of $\mathscr{L}_X(U_{i_1i_2})$ and $\mathscr{L}_Y(V_{j_1j_2})$, so they must agree.
\begin{teo}\label{teo product of Lie algebroids}
    Let $X$, $Y$ be smooth and separated rigid analytic varieties, with Lie algebroids $\mathscr{L}_X$, $\mathscr{L}_Y$. There is a unique lie algebroid $\mathscr{L}_X\times\mathscr{L}_Y$ on $X\times Y$ satisfying the following properties: 
    \begin{enumerate}[label=(\roman*)]
        \item For all affinoid subspaces $U\subset X$, 
    and $V\subset Y$ such that $\mathscr{L}_X(U)$, $\mathscr{L}_Y(V)$ are free modules  over $\OX_X(U)$ and $\OX_X(V)$ respectively, we have an identification of $(K,\OX_X(U\times V))$-Lie algebras:
    \begin{equation*}
        \mathscr{L}_X\times\mathscr{L}_Y(U\times V)=\mathscr{L}_X(U)\times\mathscr{L}_Y(V).
    \end{equation*}
    \item The canonical morphisms:
    \begin{equation*}
        i_X:\mathscr{L}_X\rightarrow p_{X*} \left(\mathscr{L}_X\times\mathscr{L}_Y\right), \quad i_Y:\mathscr{L}_Y\rightarrow p_{Y*}\left( \mathscr{L}_X\times\mathscr{L}_Y\right),
    \end{equation*}
   are injective $\OX_X$-linear (resp. $\OX_Y$-linear) maps  of sheaves of  $K$-Lie algebras which satisfy: 
   \begin{equation*}
       \tau_X=p_{X*}\left(\tau_X\times\tau_Y\right)\circ i_X, \quad \tau_Y=p_{Y*}\left(\tau_X\times\tau_Y\right)\circ i_Y.
   \end{equation*}
   \item Let $\mathscr{M}$ be a Lie algebroid on $X\times Y$, and assume that there are maps:
    \begin{equation*}
        \theta_X:\mathscr{L}_X\rightarrow p_{X*}\mathscr{M}, \quad\theta_Y:\mathscr{L}_Y\rightarrow p_{Y*}\mathscr{M},
    \end{equation*}
    which satisfy  conditions analogous to those in statement $(ii)$. Then there is a unique morphism of Lie algebroids $\theta:\mathscr{L}_X\times\mathscr{L}_Y\rightarrow \mathscr{M}$,
    which satisfies:
    \begin{equation*}
        \theta_X=p_{X*}\theta\circ i_X, \quad \theta_Y=p_{Y*}\theta\circ i_Y.
    \end{equation*}
    \end{enumerate}
We call $\mathscr{L}_X\times\mathscr{L}_Y$ the product of $\mathscr{L}_X$ and $\mathscr{L}_Y$, and we will write $\mathscr{L}_X^2:=\mathscr{L}_X\times \mathscr{L}_X$.
\end{teo}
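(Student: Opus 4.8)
The plan is to build $\mathscr{L}_X\times\mathscr{L}_Y$ by gluing the local models $\mathscr{L}_X(U)\times\mathscr{L}_Y(V)$ already constructed in Proposition~\ref{prop product of Lie rinehart algebras}, and then to verify the three asserted properties. First I would fix admissible affinoid covers $\mathfrak{U}=(U_i)_{i\in I}$ of $X$ and $\mathfrak{V}=(V_j)_{j\in J}$ of $Y$, both closed under finite intersection (possible since $X$ and $Y$ are separated), such that $\mathscr{L}_X(U_i)$ and $\mathscr{L}_Y(V_j)$ are finite free over the respective affinoid algebras and admit free Lie lattices over suitable affine formal models. This yields the admissible affinoid cover $\mathfrak{U}\times\mathfrak{V}=(U_i\times V_j)$ of $X\times Y$, also closed under finite intersection. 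On each $U_i\times V_j$, Proposition~\ref{prop product of Lie rinehart algebras} endows the coherent $\OX_X(U_i\times V_j)$-module $\mathscr{L}_X(U_i)\times\mathscr{L}_Y(V_j)$ with a $(K,\OX_X(U_i\times V_j))$-Lie algebra structure, and \cite[Lemma 9.2]{ardakov2019} extends this uniquely to a Lie algebroid $\mathscr{L}_X\times\mathscr{L}_{Y\vert U_i\times V_j}$ on $U_i\times V_j$ (this is exactly the argument sketched in the paragraph preceding the theorem).

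The gluing step is the crux. To glue the $\mathscr{L}_X\times\mathscr{L}_{Y\vert U_i\times V_j}$ into a global Lie algebroid I must check they agree on overlaps $W:=U_{i_1i_2}\times V_{j_1j_2}$ as sheaves of $(K,\OX_X(W))$-Lie algebras. On the level of underlying $\OX$-modules this is immediate, since $p_X^*\mathscr{L}_X\oplus p_Y^*\mathscr{L}_Y$ is globally defined; what remains is the compatibility of brackets and anchor maps. Here I would invoke \cite[Lemma 9.2]{ardakov2019} again to reduce the comparison of Lie algebroid structures on the affinoid $W$ to a comparison of the $(K,\OX_X(W))$-Lie algebra structures on $\mathscr{L}_X\times\mathscr{L}_Y(W)$; and formula $(\ref{equation Lie bracket on a product})$ shows that the bracket and anchor on $\mathscr{L}_X(U_{i_1i_2})\times\mathscr{L}_Y(V_{j_1j_2})$ are expressed purely in terms of the Lie--Rinehart data of $\mathscr{L}_X(U_{i_1i_2})$ and $\mathscr{L}_Y(V_{j_1j_2})$, which are independent of the indices $i_1,i_2,j_1,j_2$. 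Hence the structures agree, the local Lie algebroids glue to a global $\mathscr{L}_X\times\mathscr{L}_Y$ on $X\times Y$, and uniqueness of the gluing gives property $(i)$ for the distinguished cover; property $(i)$ for an arbitrary such pair $(U,V)$ follows by restricting to a common refinement by members of $\mathfrak{U}\times\mathfrak{V}$ and using that localization of Lie algebroids along affinoid subdomains is determined by the module structure together with the bracket on a generating affinoid, again via \cite[Lemma 9.2]{ardakov2019}.

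For property $(ii)$, the maps $i_X$ and $i_Y$ are defined locally as the inclusions $j_1,j_2$ of Proposition~\ref{prop product of Lie rinehart algebras}; these are compatible with restriction (they are built from the same module decomposition), hence glue to global morphisms of sheaves of $K$-Lie algebras, and injectivity plus the anchor-map identities $\tau_X=p_{X*}(\tau_X\times\tau_Y)\circ i_X$ etc.\ are checked on the cover, where they are part of Proposition~\ref{prop product of Lie rinehart algebras}. For the universal property $(iii)$, given $\theta_X:\mathscr{L}_X\to p_{X*}\mathscr{M}$ and $\theta_Y:\mathscr{L}_Y\to p_{Y*}\mathscr{M}$ I would apply Proposition~\ref{prop universal property of product} on each $U_i\times V_j$ — noting $\mathscr{M}$ is coherent as an $\OX_{X\times Y}$-module, so $\mathscr{M}(U_i\times V_j)$ is a coherent $\OX_X(U_i\times V_j)$-module as required — to obtain local morphisms $\theta_{ij}:\mathscr{L}_X\times\mathscr{L}_{Y\vert U_i\times V_j}\to\mathscr{M}_{\vert U_i\times V_j}$; uniqueness in Proposition~\ref{prop universal property of product} forces these to agree on overlaps, so they glue to a global $\theta$, and the factorizations $\theta_X=p_{X*}\theta\circ i_X$, $\theta_Y=p_{Y*}\theta\circ i_Y$ hold because they hold locally. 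Global uniqueness of $\theta$ likewise follows from the local uniqueness. The main obstacle I anticipate is purely bookkeeping: making sure the coherence hypothesis on $\mathscr{M}$ and the "free on a cover" hypothesis on $\mathscr{L}_X,\mathscr{L}_Y$ interact correctly so that Propositions~\ref{prop product of Lie rinehart algebras} and~\ref{prop universal property of product} apply on each overlap, and that \cite[Lemma 9.2]{ardakov2019} genuinely reduces every "agreement of Lie algebroid structures" claim to an agreement of affinoid-section Lie--Rinehart structures — but no new analytic input beyond formula $(\ref{equation Lie bracket on a product})$ and the cited lemma is needed.
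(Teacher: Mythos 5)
Your proposal follows essentially the same route as the paper: the paper's argument is precisely the discussion preceding the theorem statement, which constructs the global coherent module $p_X^*\mathscr{L}_X\oplus p_Y^*\mathscr{L}_Y$, equips it locally with the Lie algebroid structure from Proposition \ref{prop product of Lie rinehart algebras} via \cite[Lemma 9.2]{ardakov2019}, and checks agreement on overlaps using formula $(\ref{equation Lie bracket on a product})$, with $(ii)$ and $(iii)$ then following from Propositions \ref{prop product of Lie rinehart algebras} and \ref{prop universal property of product} exactly as you describe. Your treatment is correct and, if anything, slightly more explicit than the paper's about the gluing of the universal morphism in $(iii)$.
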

\begin{obs}
    Notice that if $\mathscr{L}=\mathcal{T}_{X/K}$, then $\mathcal{T}_{X/K}^2=\mathcal{T}_{X^2/K}$.
\end{obs}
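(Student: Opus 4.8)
\textbf{Proof plan for the Remark $\mathcal{T}_{X/K}^2=\mathcal{T}_{X^2/K}$.}

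The claim is that when the Lie algebroid on $X$ (and on the second factor) is the full tangent sheaf $\mathcal{T}_{X/K}$, the product Lie algebroid $\mathcal{T}_{X/K}\times\mathcal{T}_{X/K}$ constructed in Theorem \ref{teo product of Lie algebroids} coincides, as a Lie algebroid on $X^2$, with $\mathcal{T}_{X^2/K}$. Since both sides are sheaves of $(K,\OX_{X^2})$-Lie algebras on $X^2$, and since Theorem \ref{teo product of Lie algebroids} characterises $\mathcal{T}_{X/K}^2$ by a universal property together with an explicit description on a product cover, the plan is to verify that $\mathcal{T}_{X^2/K}$ satisfies that same characterisation; uniqueness in part $(i)$ of the theorem then forces the identification. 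Concretely, I would first check the local description: for affinoid opens $U,V\subset X$ on which $\mathcal{T}_{X/K}(U)=\Der_K(\OX_X(U))$ and $\mathcal{T}_{X/K}(V)=\Der_K(\OX_X(V))$ are free (such opens form an admissible basis by smoothness of $X$), Lemma \ref{Lemma derivations on fiber products and completions} gives precisely
\begin{equation*}
\mathcal{T}_{X^2/K}(U\times V)=\Der_K(\OX_X(U)\widehat{\otimes}_K\OX_X(V))=\Der_K(\OX_X(U))\widehat{\otimes}_K\OX_X(V)\oplus \OX_X(U)\widehat{\otimes}_K\Der_K(\OX_X(V)),
\end{equation*}
which is exactly $\mathcal{T}_{X/K}(U)\times\mathcal{T}_{X/K}(V)$ as an $\OX_X(U\times V)$-module. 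It then remains to observe that this identification is one of $(K,\OX_X(U\times V))$-Lie algebras: the anchor map of $\mathcal{T}_{X^2/K}(U\times V)$ is the identity on $\Der_K(\OX_X(U\times V))$, which under the decomposition $(\ref{equation decomposition of vector fields on a product})$ is visibly $\tau\times\tau$ for $\tau=\Id$, and the bracket is the commutator of derivations, whose explicit form in the decomposition is computed in $(\ref{description bracket product of vector fields})$ — this is precisely formula $(\ref{equation Lie bracket on a product})$ with $\tau_1=\tau_2=\Id$. Hence $\mathcal{T}_{X^2/K}$ satisfies condition $(i)$ of Theorem \ref{teo product of Lie algebroids}.

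Having matched the local description, I would invoke the uniqueness clause of Theorem \ref{teo product of Lie algebroids}: the product cover $\mathfrak{U}\times\mathfrak{V}$ built from a basis of ``free'' affinoids of $X$ is an admissible affinoid cover of $X^2$ on which both $\mathcal{T}_{X/K}^2$ and $\mathcal{T}_{X^2/K}$ restrict to the same $(K,\OX)$-Lie algebra, and a Lie algebroid is determined by its restriction to such a cover (by \cite[Lemma 9.2]{ardakov2019} and gluing, exactly as in the construction preceding the theorem). Therefore the two Lie algebroids on $X^2$ coincide. Alternatively — and this is perhaps the cleaner phrasing — one checks directly that $\mathcal{T}_{X^2/K}$, equipped with the two canonical maps $dp_i^\vee:\mathcal{T}_{X/K}\rightarrow p_{i*}\mathcal{T}_{X^2/K}$ coming from the projections, satisfies the universal property $(iii)$: given any Lie algebroid $\mathscr M$ on $X^2$ with compatible maps from the two copies of $\mathcal{T}_{X/K}$, the induced map $\mathscr{M}\to\mathcal{T}_{X^2/K}$ — no, in the other direction — one produces $\mathcal{T}_{X^2/K}\to\mathscr M$ locally from Proposition \ref{prop universal property of product} and glues; but the characterisation via $(i)$ plus uniqueness is the shortest route, so I would present that.

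The only genuinely substantive point — and hence the ``main obstacle'', though it is mild — is verifying that the $\OX$-module isomorphism of Lemma \ref{Lemma derivations on fiber products and completions} is compatible with the Lie-algebroid structures, i.e. that the commutator bracket on $\Der_K(\OX_X(U\times V))$ transported through the decomposition $(\ref{equation decomposition of vector fields on a product})$ really is the bracket defining $\mathcal{T}_{X/K}(U)\times\mathcal{T}_{X/K}(V)$. But this is precisely the content of the computation yielding $(\ref{description bracket product of vector fields})$, compared with $(\ref{equation Lie bracket on a product})$ at $\tau_i=\Id$: the two formulas are literally the same once one notes $\tau_i(\cdot)(\cdot)$ becomes plain evaluation of a derivation on a function. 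Everything else is bookkeeping: checking that the ``free affinoid'' opens of $X$ form an admissible basis closed under intersection (use smoothness and separatedness of $X$, exactly as in the paragraphs before Theorem \ref{teo product of Lie algebroids}), and that the product cover they generate is admissible in $X^2$. So the Remark follows with essentially no new work beyond assembling Lemma \ref{Lemma derivations on fiber products and completions}, formula $(\ref{equation Lie bracket on a product})$, and the uniqueness part of Theorem \ref{teo product of Lie algebroids}.
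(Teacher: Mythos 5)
Your proposal is correct and is essentially the reasoning the paper leaves implicit: the remark is stated without proof precisely because the construction of $\mathscr{L}_X\times\mathscr{L}_Y$ was modelled on the tangent case, with Lemma \ref{Lemma derivations on fiber products and completions} giving the module identification and the bracket formula $(\ref{equation Lie bracket on a product})$ reducing, at $\tau_1=\tau_2=\Id$, to the commutator computation $(\ref{description bracket product of vector fields})$, so the identification on the product cover plus the uniqueness/gluing in Theorem \ref{teo product of Lie algebroids} yields $\mathcal{T}_{X/K}^2=\mathcal{T}_{X^2/K}$. Your brief hesitation about which direction the universal-property map goes is harmless, since you correctly settle on the argument via clause $(i)$ and uniqueness.
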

Perhaps the most interesting case is $Y=X$ and $\mathscr{L}_X=\mathscr{L}_Y=\mathcal{T}_{X/K}$. As $X$ is separated, the diagonal map $\Delta:X\rightarrow X^2$ being a closed immersion, and we may apply the version of Kashiwara's equivalence developed in \cite{ardakov2015d} to study the relation between the categories of co-admissible modules over $\wideparen{\D}_X$ and the category of co-admissible modules of $\wideparen{\D}_{X^2}$ supported on the diagonal. 
\subsection{Universal enveloping algebras of products of Lie-Rinehart algebras}\label{Section universal enveloping algebras and products}
In this section, we will study the relation between the tensor product of the universal enveloping algebras of two finite-free Lie-Rinehart algebras $L_1,L_2$ defined over smooth affinoid algebras $A$ and $B$, and the universal enveloping algebra of the product $L_1\times L_2$. in particular, we will show that:
\begin{equation*}
    U(L_1\times L_2)=\varinjlim \widehat{F}_s\left(U(L_1)\otimes_K U(L_2) \right),
\end{equation*}
where $\widehat{F}_s\left(U(L_1)\otimes_K U(L_2) \right)$ denotes the completion of $F_s\left(U(L_1)\otimes_K U(L_2) \right)$ with respect to its topology as a finite $A\widehat{\otimes}_KB$-module. As in the previous section, this reinforces the intuition that $U(L_1\times L_2)$ is the smallest filtered $K$-algebra which contains $A\widehat{\otimes}_KB$ and $U(L_1)\otimes_K U(L_2)$.\bigskip

As in the beginning of the previous section, we let $X=\Sp(A)$, $Y=\Sp(B)$ be smooth affinoid $K$-varieties, and choose affine formal models $\mathfrak{X}=\Spf(\mathcal{A})$, and $\mathfrak{Y}=\Spf(\mathcal{B})$ of $X$ and $Y$ respectively. We let $C=A\widehat{\otimes}_KB$, and $\mathcal{C}=\mathcal{A}\widehat{\otimes}_{\mathcal{R}}\mathcal{B}$,  and we choose  $\mathfrak{X}\times\mathfrak{Y}=\Spf(\mathcal{C})$ as affine formal model of $X\times Y=\Sp(C)$. As before, we regard $A,B$, and $C$ as Banach $K$-algebras with respect to the norm induced by the associated formal models. Fix a pair of finite-free Lie-Rinehart algebras $L_1$, $L_2$ over $A$ and $B$ respectively, and let $\mathcal{L}_1$ and $\mathcal{L}_2$ be a finite free $(\mathcal{R},\mathcal{A})$-Lie lattice and a  finite-free $(\mathcal{R},\mathcal{B})$-Lie lattice respectively.
\begin{Lemma}
    We have a canonical map of filtered $K$-algebras:
    \begin{equation*}
        j:U(L_1)\otimes_K U(L_2)\rightarrow U(L_1\times L_2).
    \end{equation*}
\end{Lemma}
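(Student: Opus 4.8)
The plan is to construct the map $j$ by combining the two pieces of structure that $U(L_1\times L_2)$ carries: the $K$-algebra $A\widehat{\otimes}_K B$ (its degree-zero part) and the Lie-Rinehart algebra $L_1\times L_2$ sitting inside $F_1 U(L_1\times L_2)$, and then exhibiting $U(L_1)\otimes_K U(L_2)$ as the universal enveloping algebra of the \emph{uncompleted} Lie-Rinehart algebra $\mathfrak{L}=L_1\otimes_K B\oplus A\otimes_K L_2$ via Lemma~\ref{Lemma universal enveloping algebra and uncompleted products}$(iii)$. Concretely, recall from Proposition~\ref{prop product of Lie rinehart algebras}$(ii),(iii)$ that there are $K$-linear maps of $K$-Lie algebras $j_1:L_1\to L_1\times L_2$ and $j_2:L_2\to L_1\times L_2$ which are $A$-linear (resp.\ $B$-linear) and compatible with the anchor maps. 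First I would apply Lemma~\ref{Lemma existence of liftings to enveloping algebras} to $j_1$ and to $j_2$ separately: since $L_1\times L_2$ is coherent as an $A\widehat{\otimes}_KB$-module, the conditions of that lemma are met, and we obtain unique filtered $K$-algebra maps $U(j_1):U(L_1)\to U(L_1\times L_2)$ and $U(j_2):U(L_2)\to U(L_1\times L_2)$.

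Next I would check that the images of $U(j_1)$ and $U(j_2)$ commute \emph{enough} to define a map out of the tensor product — more precisely, one needs to verify that for $a\in A$, $v\in L_1$, $b\in B$, $w\in L_2$ the elements $U(j_1)(v)$ and $U(j_2)(w)$ commute in $U(L_1\times L_2)$, and that scalar actions are compatible, i.e.\ $U(j_1)(a)U(j_2)(w)=U(j_1)(a)\cdot(1\otimes w)$ matches the $A\widehat{\otimes}_K B$-module structure. All of this follows from the explicit bracket formula $(\ref{equation Lie bracket on a product})$: when one plugs in $\omega_1=(v\otimes 1)\oplus 0$ and $\omega_2=0\oplus(1\otimes w)$ the bracket vanishes (the cross terms involve $\tau(v)(1)$ and $\tau(w)(1)$, both zero), so $j_1(v)$ and $j_2(w)$ commute in $L_1\times L_2$, hence their images commute in $U(L_1\times L_2)$. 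Once commutativity is in hand, the universal property of the tensor product of $K$-algebras (or equivalently the universal property of $U(\mathfrak{L})=U(L_1)\otimes_K U(L_2)$ from Lemma~\ref{Lemma universal enveloping algebra and uncompleted products}$(iii)$, applied to the maps $i_D=\theta$ and $j_D$ into $D=U(L_1\times L_2)$) produces the desired homomorphism $j:U(L_1)\otimes_K U(L_2)\to U(L_1\times L_2)$. Filteredness is automatic: $j$ sends $A\otimes_K B$ into $F_0$, sends $\mathfrak{L}=F_1\setminus(A\otimes_KB)$ into $F_1$, and these generate, so $j(F_s)\subset F_s$ for all $s$.

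I expect the only genuine subtlety — and the main obstacle — to be the verification of the compatibility hypotheses needed to invoke Lemma~\ref{Lemma existence of liftings to enveloping algebras} and Lemma~\ref{Lemma universal enveloping algebra and uncompleted products}$(iii)$ simultaneously, namely that the two filtered algebra maps $U(j_1)$, $U(j_2)$ do assemble into a single map on the tensor product rather than merely existing individually. This is where the bracket computation $(\ref{equation Lie bracket on a product})$ does the real work, and one must be a little careful that the anchor-map conditions $\tau_i = (\tau_1\times\tau_2)\circ j_i$ of Proposition~\ref{prop product of Lie rinehart algebras} are exactly what guarantees the scalar-compatibility relation $j_D(f)i_D(v)=i_D(fv)$ required in the statement of Lemma~\ref{Lemma universal enveloping algebra and uncompleted products}. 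Everything else — filteredness, naturality, $K$-linearity — is formal bookkeeping. I would present the proof in the order: (1) produce $U(j_1), U(j_2)$ via Lemma~\ref{Lemma existence of liftings to enveloping algebras}; (2) record the commutation and scalar-compatibility relations from $(\ref{equation Lie bracket on a product})$; (3) assemble $j$ via the universal property; (4) observe it is filtered.
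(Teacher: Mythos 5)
Your proposal is correct and follows essentially the same route as the paper: the paper's proof of this lemma is just a pointer to the proof of Proposition \ref{prop universal property of product}, where the map is obtained exactly as you describe — apply Lemma \ref{Lemma existence of liftings to enveloping algebras} to each inclusion $j_i$ and assemble the two resulting filtered algebra maps via the universal property of the tensor product. Your explicit verification of the commutation and scalar-compatibility relations via equation $(\ref{equation Lie bracket on a product})$ fills in a step the paper leaves implicit (note only that the coherence hypothesis you mention is not needed for Lemma \ref{Lemma existence of liftings to enveloping algebras} itself, only for the completion step later in that proposition).
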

\begin{proof} 
This was part of the proof of Proposition \ref{prop universal property of product}.
\end{proof}

Recall that for any $n\geq 0$, the $\mathcal{A}$-module $\pi^n\mathcal{L}_1$ is a smooth $\mathcal{A}$-Lie lattice, and the analogous holds for $\pi^n\mathcal{L}_2$. This leads us to the following definition:
\begin{defi}
   Let $n,m\geq 0$ be non-negative integers. We define the $\mathcal{R}$-algebra:
   \begin{equation*}   V(\mathcal{L}_1,\mathcal{L}_2)_{n,m}:=U(\pi^n\mathcal{L}_1)\otimes_{\mathcal{R}}U(\pi^m\mathcal{L}_2),
   \end{equation*}
   and regard it as a filtered $\mathcal{R}$-algebra with respect to the tensor filtration.
\end{defi}
Notice that for each $n,m\geq 0$ we have a canonical isomorphism of filtered $K$-algebras:
\begin{equation*}
    V(\mathcal{L}_1,\mathcal{L}_2)_{n,m}\otimes_{\mathcal{R}} K\rightarrow U(L_1)\otimes_{K}U(L_2).
\end{equation*}

\begin{coro}\label{coro decomposition filtrations of tensors formal level}
Let $n,m\geq 0$ be non-negative integers. The following holds:
\begin{enumerate}[label=(\roman*)]
    \item There is an isomorphism of graded $\mathcal{A}\otimes_{\mathcal{R}}\mathcal{B}$-algebras:
    \begin{equation*}
      \operatorname{Sym}_{\mathcal{A}}(\pi^n\mathcal{L}_1)\otimes_{\mathcal{R}}\operatorname{Sym}_{\mathcal{B}}(\pi^m\mathcal{L}_2)\rightarrow \gr_F(V(\mathcal{L}_1,\mathcal{L}_2)_{n,m}).
    \end{equation*}
    \item For any $s\geq 0$, we have a decomposition of $\mathcal{A}\otimes_{\mathcal{R}} \mathcal{B}$-modules:
\begin{equation*}
    F_s V(\mathcal{L}_1,\mathcal{L}_2)_{n,m}= \bigoplus_{i=0}^s F_i U(\pi^n\mathcal{L}_1)\otimes_{\mathcal{R}} F_{s-i} U(\pi^m\mathcal{L}_2).
\end{equation*}
In particular, $F_s V(\mathcal{L}_1,\mathcal{L}_2)_{n,m}$ is a finite-free $\mathcal{A}\otimes_{\mathcal{R}}\mathcal{B}$ module.
\end{enumerate}
\end{coro}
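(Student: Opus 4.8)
## Proof proposal for Corollary \ref{coro decomposition filtrations of tensors formal level}

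The plan is to deduce both statements directly from the already-established analogue over $K$, namely Proposition \ref{PBW version for enveloping algebras} and Corollary \ref{coro decomposition filtration of tensor}, applied to the finite-free Lie-Rinehart algebras $\pi^n\mathcal{L}_1$ over $\mathcal{A}$ and $\pi^m\mathcal{L}_2$ over $\mathcal{B}$ — except now working over the ground ring $\mathcal{R}$ rather than over $K$. Observe first that $\pi^n\mathcal{L}_1$ is a finite-free $\mathcal{A}$-module (being a nonzero multiple of a finite-free module), it is a smooth $(\mathcal{R},\mathcal{A})$-Lie lattice by the discussion preceding the statement, and likewise for $\pi^m\mathcal{L}_2$; thus both carry universal enveloping algebras $U(\pi^n\mathcal{L}_1)$ and $U(\pi^m\mathcal{L}_2)$ together with their PBW filtrations, and the Rinehart PBW theorem \cite[Theorem 3.1]{rinehart1963differential} identifies the associated gradeds with $\operatorname{Sym}_{\mathcal{A}}(\pi^n\mathcal{L}_1)$ and $\operatorname{Sym}_{\mathcal{B}}(\pi^m\mathcal{L}_2)$ respectively. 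The essential point is that all arguments in Lemma \ref{Lemma pbw on products} and Proposition \ref{PBW version for enveloping algebras} are purely formal: they only use that the two enveloping algebras are filtered algebras containing the respective base rings in filtration degree zero, that their gradeds are the symmetric algebras, and that the base ring ($K$ there, $\mathcal{R}$ here) is contained in degree zero of both. None of these arguments uses that the ground ring is a field.

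Concretely, first I would record that $F_0 V(\mathcal{L}_1,\mathcal{L}_2)_{n,m}=\mathcal{A}\otimes_{\mathcal{R}}\mathcal{B}$ and that $[\mathcal{A}\otimes_{\mathcal{R}}\mathcal{B},\,F_s V(\mathcal{L}_1,\mathcal{L}_2)_{n,m}]\subset F_{s-1}V(\mathcal{L}_1,\mathcal{L}_2)_{n,m}$, by repeating verbatim the computation in Lemma \ref{Lemma pbw on products}(ii) — the identity $[a\otimes b, v\otimes w]=[a,v]\otimes wb + av\otimes[b,w]$ together with $[a,v]\in F_{i-1}U(\pi^n\mathcal{L}_1)$ and $[b,w]\in F_{j-1}U(\pi^m\mathcal{L}_2)$ from the Rinehart PBW theorem. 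This yields that $\gr_F V(\mathcal{L}_1,\mathcal{L}_2)_{n,m}$ is an $\mathcal{A}\otimes_{\mathcal{R}}\mathcal{B}$-algebra, giving the ring map
\begin{equation*}
\psi:\operatorname{Sym}_{\mathcal{A}}(\pi^n\mathcal{L}_1)\otimes_{\mathcal{R}}\operatorname{Sym}_{\mathcal{B}}(\pi^m\mathcal{L}_2)\rightarrow \gr_F V(\mathcal{L}_1,\mathcal{L}_2)_{n,m}
\end{equation*}
of graded $\mathcal{A}\otimes_{\mathcal{R}}\mathcal{B}$-algebras, where the left side carries the tensor grading. To check $\psi$ is an isomorphism, I would pick $\mathcal{A}$-module bases $x_1,\dots,x_s$ of $\pi^n\mathcal{L}_1$ and $\mathcal{B}$-module bases $y_1,\dots,y_m$ of $\pi^m\mathcal{L}_2$; the Rinehart PBW theorem gives that $F_iU(\pi^n\mathcal{L}_1)$ is free over $\mathcal{A}$ on ordered monomials $x_1^{\alpha_1}\cdots x_s^{\alpha_s}$ of total degree $\le i$, and similarly for the $y$'s. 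Then $F_s V(\mathcal{L}_1,\mathcal{L}_2)_{n,m}=\sum_{i+j=s}F_iU(\pi^n\mathcal{L}_1)\otimes_{\mathcal{R}}F_jU(\pi^m\mathcal{L}_2)$ is free over $\mathcal{A}\otimes_{\mathcal{R}}\mathcal{B}$ on the simple tensors $x^{\alpha}\otimes y^{\beta}$ with $|\alpha|+|\beta|\le s$, the subquotient $F_s/F_{s-1}$ is free on those with $|\alpha|+|\beta|=s$, and $\psi$ sends the evident monomial basis of the symmetric-algebra side bijectively onto this basis (using, as in remark \ref{remark tensor grading and tensor filtration}, that the tensor grading on the polynomial ring is the usual total-degree grading). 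This proves (i), and the freeness and direct-sum decomposition statements just established are exactly (ii).

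The only genuine thing to be careful about — and the place I expect a referee to look hardest — is that over $\mathcal{R}$ (not a field) one must know that the relevant sums in the filtration genuinely split as direct sums; but this is forced by the explicit monomial basis produced from Rinehart's PBW theorem, which does not require $\mathcal{R}$ to be a field, only that $\pi^n\mathcal{L}_1$ and $\pi^m\mathcal{L}_2$ are finite-free over $\mathcal{A}$ and $\mathcal{B}$ (equivalently, after a choice of basis, over $\mathcal{R}$-free algebras of the form $\mathcal{A}$, $\mathcal{B}$). Thus no essentially new argument is needed beyond what was done over $K$ in Proposition \ref{PBW version for enveloping algebras} and Corollary \ref{coro decomposition filtration of tensor}; the proof is "the same computation, one level down," and I would phrase it as such, spelling out only the monomial-basis bookkeeping to make the freeness over $\mathcal{A}\otimes_{\mathcal{R}}\mathcal{B}$ transparent.
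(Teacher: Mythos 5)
Your proposal is correct and follows exactly the route the paper takes: the paper's entire proof is the single sentence that the argument of Proposition \ref{PBW version for enveloping algebras} carries over verbatim, and your write-up simply makes explicit why it does (Rinehart's PBW theorem supplies the monomial basis over $\mathcal{R}$ because the Lie lattices are finite-free, so no field hypothesis is needed). Nothing is missing.
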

\begin{proof}
The proof of Proposition \ref{PBW version for enveloping algebras} carries over verbatim.
\end{proof}
For every $i\geq 0$ we have that  $F_i U(L_1)$ is a finitely generated $A$-module. Hence, we get a unique topology on $F_i U(L_1)$ making it a topological $A$-module. Similarly, $F_j U(L_2)$ has a unique complete topology making it a topological $B$-module for every $j\geq 0$. From now on, we will regard the filtration terms $F_i U(L_1)$ and $F_j U(L_2)$ as topological modules with respect to this topology.\bigskip

Fix some non-negative integers $n,m\geq 0$. The $\mathcal{A}$-module $F_i U(\pi^n\mathcal{L}_1)$ is an open lattice of $F_i U(L_1)$. Let  $q_{i,n}$  be its associated gauge norm (cf. \cite[Chapter I.2]{schneider2013nonarchimedean}). We may then  regard $F_i U(L_1)$, together with the gauge $q_{i,n}$, as a complete, normed $A$-module. Analogously, we regard 
$F_j U(L_2)$ as a normed $B$-module with respect to the gauge $q_{j,m}$ induced by $F_j U(\pi^m\mathcal{L}_2)$. By Corollary \ref{coro decomposition filtration of tensor}, for each $s\geq 0$ we have the following identity of $A\otimes_KB$-modules:
\begin{equation*}
    F_s \left(U(L_1)\otimes_KU(L_2)\right)= \bigoplus_{i=0}^s F_i U(L_1)\otimes_K F_{s-i} U(L_2).
\end{equation*}
Consider the following $\mathbb{R}$-valued function:
\begin{equation*}
    q_{s,n,m}(\oplus_{i=0}^sa_i)=\operatorname{max}\{ q_{i,n}\otimes q_{s-i,m}(a_i) \textnormal{ : } 0\leq i\leq s\}.
\end{equation*}
A quick calculation shows that $q_{s,n,m}$ is a $K$-norm on $F_s \left(U(L_1)\otimes_KU(L_2)\right)$, making it a normed $A\otimes_KB$-module. Furthermore, the norm on $q_{s,n,m}$ is the gauge of the following lattice:
\begin{equation*}
     F_sV(\mathcal{L}_1,\mathcal{L}_2)_{n,m}= \bigoplus_{i=0}^s F_i U(\pi^n\mathcal{L}_1)\otimes_{\mathcal{R}} F_{s-i} U(\pi^m\mathcal{L}_2).
\end{equation*}
Let $\widehat{F}_sV(\mathcal{L}_1,\mathcal{L}_2)_{n,m}$ be the $\pi$-adic completion of $F_sV(\mathcal{L}_1,\mathcal{L}_2)_{n,m}$, and let :
\begin{equation*}
    \widehat{F}_s \left(U(L_1)\otimes_KU(L_2)\right),
\end{equation*}
be the completion of $U(L_1)\otimes_KU(L_2)$ with respect to $q_{s,n,m}$. We have the following identity of Banach $K$-vector spaces:
\begin{equation*}
    \widehat{F}_s \left(U(L_1)\otimes_KU(L_2)\right)=\widehat{F}_sV(\mathcal{L}_1,\mathcal{L}_2)_{n,m}\otimes_{\mathcal{R}}K.
\end{equation*}

For each $s\geq 0$, we have inclusions $F_sV(\mathcal{L}_1,\mathcal{L}_2)_{n,m}\subset F_{s+1}V(\mathcal{L}_1,\mathcal{L}_2)_{n,m}$. Hence, we get the following inductive limit of complete $\mathcal{R}$-modules:
\begin{equation*} \mathcal{A}\widehat{\otimes}_\mathcal{R}\mathcal{B}\rightarrow \widehat{F}_1V(\mathcal{L}_1,\mathcal{L}_2)_{n,m}\rightarrow \cdots \rightarrow \widehat{F}_sV(\mathcal{L}_1,\mathcal{L}_2)_{n,m} \rightarrow \cdots, 
\end{equation*}
We can now make the following definition:
\begin{defi}
Let $n,m\geq 0$. We define:
\begin{equation*}
    \Tilde{V}(\mathcal{L}_1,\mathcal{L}_2)_{n,m}=\varinjlim \widehat{F}_sV(\mathcal{L}_1,\mathcal{L}_2)_{n,m}.
\end{equation*}
We regard $\Tilde{V}(\mathcal{L}_1,\mathcal{L}_2)_{n,m}$ as a topological $\mathcal{A}\widehat{\otimes}_\mathcal{R}\mathcal{B}$-module with respect to the colimit topology.
\end{defi}
We will now show some basic properties of $\Tilde{V}(\mathcal{L}_1,\mathcal{L}_2)_{n,m}$:
\begin{prop}\label{prop denseness and basic properties}
Let $n,m\geq 0$. The following hold:
 \begin{enumerate}[label=(\roman*)]
 \item The map $\widehat{F}_sV(\mathcal{L}_1,\mathcal{L}_2)_{n,m} \rightarrow \widehat{F}_{s+1}V(\mathcal{L}_1,\mathcal{L}_2)_{n,m}$ is a homeomorphism onto its image.
 \item $\widehat{F}_1V(\mathcal{L}_1,\mathcal{L}_2)_{n,m} = \left(\mathcal{A}\widehat{\otimes}_{\mathcal{R}}\mathcal{B}\right)\oplus\left(\pi^n\mathcal{L}_1\widehat{\otimes}_\mathcal{R}\mathcal{B}\oplus \mathcal{A}\widehat{\otimes}_\mathcal{R}\pi^m\mathcal{L}_2\right)$.
\item The map $V(\mathcal{L}_1,\mathcal{L}_2)_{n,m}\rightarrow  \Tilde{V}(\mathcal{L}_1,\mathcal{L}_2)_{n,m}$ is injective and has dense image.
 \end{enumerate}
\end{prop}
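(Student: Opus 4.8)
The plan is to prove the three statements together, since they are closely intertwined, working one filtration term at a time and then passing to the colimit. For part $(i)$, the map $\widehat{F}_sV(\mathcal{L}_1,\mathcal{L}_2)_{n,m}\rightarrow \widehat{F}_{s+1}V(\mathcal{L}_1,\mathcal{L}_2)_{n,m}$ arises by $\pi$-adic completion from the inclusion of $\mathcal{A}\otimes_{\mathcal{R}}\mathcal{B}$-modules $F_sV(\mathcal{L}_1,\mathcal{L}_2)_{n,m}\hookrightarrow F_{s+1}V(\mathcal{L}_1,\mathcal{L}_2)_{n,m}$. By Corollary \ref{coro decomposition filtrations of tensors formal level}$(ii)$, both sides are finite-free over $\mathcal{A}\otimes_{\mathcal{R}}\mathcal{B}$, and the inclusion identifies $F_sV(\mathcal{L}_1,\mathcal{L}_2)_{n,m}$ with a direct summand of $F_{s+1}V(\mathcal{L}_1,\mathcal{L}_2)_{n,m}$ (namely the sum of the first $s+1$ terms $F_iU(\pi^n\mathcal{L}_1)\otimes_{\mathcal{R}}F_{s-i}U(\pi^m\mathcal{L}_2)$, which are each direct summands of the corresponding term for $s+1$ because $F_iU(\pi^n\mathcal{L}_1)$ is an $\mathcal{A}$-module direct summand of $F_{i+1}U(\pi^n\mathcal{L}_1)$ by PBW). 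Since $\pi$-adic completion of a split injection of finitely generated modules over the $\pi$-adically complete noetherian ring $\mathcal{A}\widehat{\otimes}_{\mathcal{R}}\mathcal{B}$ (which is admissible by Proposition \ref{prop affine formal model on product}) remains a split injection, and the subspace topology on a direct summand of a finite module agrees with its own $\pi$-adic topology, the completed map is a homeomorphism onto its image.

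For part $(ii)$, I would simply unwind the definition: $F_1V(\mathcal{L}_1,\mathcal{L}_2)_{n,m}=\bigoplus_{i=0}^1 F_iU(\pi^n\mathcal{L}_1)\otimes_{\mathcal{R}}F_{1-i}U(\pi^m\mathcal{L}_2) = \left(F_0U(\pi^n\mathcal{L}_1)\otimes_{\mathcal{R}}F_1U(\pi^m\mathcal{L}_2)\right)\oplus\left(F_1U(\pi^n\mathcal{L}_1)\otimes_{\mathcal{R}}F_0U(\pi^m\mathcal{L}_2)\right)$ by Corollary \ref{coro decomposition filtrations of tensors formal level}$(ii)$. Now $F_0U(\pi^n\mathcal{L}_1)=\mathcal{A}$, $F_0U(\pi^m\mathcal{L}_2)=\mathcal{B}$, and by the PBW theorem $F_1U(\pi^n\mathcal{L}_1)=\mathcal{A}\oplus\pi^n\mathcal{L}_1$ as $\mathcal{A}$-modules, similarly $F_1U(\pi^m\mathcal{L}_2)=\mathcal{B}\oplus\pi^m\mathcal{L}_2$. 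Substituting and regrouping over $\mathcal{R}$ gives $F_1V(\mathcal{L}_1,\mathcal{L}_2)_{n,m}=\left(\mathcal{A}\otimes_{\mathcal{R}}\mathcal{B}\right)\oplus\left(\pi^n\mathcal{L}_1\otimes_{\mathcal{R}}\mathcal{B}\right)\oplus\left(\mathcal{A}\otimes_{\mathcal{R}}\pi^m\mathcal{L}_2\right)$; applying $\pi$-adic completion, which commutes with finite direct sums, yields the claimed formula. (Here one uses that each summand is a finitely generated module, so its $\pi$-adic completion is computed termwise and matches $\widehat{\otimes}_{\mathcal{R}}$.)

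For part $(iii)$, injectivity of $V(\mathcal{L}_1,\mathcal{L}_2)_{n,m}\rightarrow \Tilde{V}(\mathcal{L}_1,\mathcal{L}_2)_{n,m}$ follows because $V(\mathcal{L}_1,\mathcal{L}_2)_{n,m}=\bigcup_s F_sV(\mathcal{L}_1,\mathcal{L}_2)_{n,m}$ (the tensor filtration is exhaustive by Remark \ref{remark tensor grading and tensor filtration}$(i)$ and Remark \ref{remark tensor grading and tensor filtration}$(ii)$, since the PBW filtrations on $U(\pi^n\mathcal{L}_1)$ and $U(\pi^m\mathcal{L}_2)$ are exhaustive), each $F_sV(\mathcal{L}_1,\mathcal{L}_2)_{n,m}$ is a finite-free, hence $\pi$-adically separated, module over the admissible $\mathcal{R}$-algebra $\mathcal{A}\otimes_{\mathcal{R}}\mathcal{B}$ (flatness over $\mathcal{R}$ gives torsion-freeness, and finite-freeness gives separatedness), so $F_sV(\mathcal{L}_1,\mathcal{L}_2)_{n,m}\hookrightarrow\widehat{F}_sV(\mathcal{L}_1,\mathcal{L}_2)_{n,m}$; combined with part $(i)$, which ensures the transition maps in the colimit are injective homeomorphisms onto their images, the composite $F_sV(\mathcal{L}_1,\mathcal{L}_2)_{n,m}\hookrightarrow\widehat{F}_sV(\mathcal{L}_1,\mathcal{L}_2)_{n,m}\hookrightarrow\Tilde{V}(\mathcal{L}_1,\mathcal{L}_2)_{n,m}$ is injective, and taking the union over $s$ gives injectivity of the whole map. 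Density: $V(\mathcal{L}_1,\mathcal{L}_2)_{n,m}$ contains $F_sV(\mathcal{L}_1,\mathcal{L}_2)_{n,m}$, which is dense in $\widehat{F}_sV(\mathcal{L}_1,\mathcal{L}_2)_{n,m}$ by construction of the $\pi$-adic completion, and $\bigcup_s\widehat{F}_sV(\mathcal{L}_1,\mathcal{L}_2)_{n,m}$ is dense in the colimit $\Tilde{V}(\mathcal{L}_1,\mathcal{L}_2)_{n,m}$ essentially by definition of the colimit topology (each element of the colimit lies in some $\widehat{F}_sV(\mathcal{L}_1,\mathcal{L}_2)_{n,m}$).

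The main obstacle I anticipate is part $(i)$: making sure that the $\pi$-adic completion of the split inclusion of finite modules genuinely remains a homeomorphism onto its image, i.e.\ that the subspace topology induced from $\widehat{F}_{s+1}V(\mathcal{L}_1,\mathcal{L}_2)_{n,m}$ coincides with the intrinsic $\pi$-adic topology of $\widehat{F}_sV(\mathcal{L}_1,\mathcal{L}_2)_{n,m}$. This is where one must use the noetherianity/admissibility of $\mathcal{A}\widehat{\otimes}_{\mathcal{R}}\mathcal{B}$ together with the explicit direct-sum decompositions of Corollary \ref{coro decomposition filtrations of tensors formal level}, rather than a soft argument; the key point is that the norm $q_{s,n,m}$ on $F_sV(\mathcal{L}_1,\mathcal{L}_2)_{n,m}$ is precisely the restriction of $q_{s+1,n,m}$, since both are maximum-norms built from the same gauges $q_{i,n}\otimes q_{j,m}$ on the shared summands.
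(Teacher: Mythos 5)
Your proposal follows essentially the same route as the paper: part $(i)$ via the split inclusion of finite-free modules and the fact that $\pi$-adic completion respects finite direct sums, part $(ii)$ by unwinding the PBW decomposition of $F_1$, and part $(iii)$ by reducing to injectivity and density of each $F_sV(\mathcal{L}_1,\mathcal{L}_2)_{n,m}\rightarrow\widehat{F}_sV(\mathcal{L}_1,\mathcal{L}_2)_{n,m}$, which comes down to $\pi$-adic separatedness.

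One justification in $(iii)$ is shaky as stated: ``finite-freeness gives separatedness'' is not a reason on its own, since a finite free module over a ring $R$ is $\pi$-adically separated only if $R$ itself is, and $\mathcal{A}\otimes_{\mathcal{R}}\mathcal{B}$ is not admissible (only its completion $\mathcal{A}\widehat{\otimes}_{\mathcal{R}}\mathcal{B}$ is); torsion-freeness likewise does not suffice (think of $K$ over $\mathcal{R}$). The paper closes this by observing that the $\pi$-adic topology on $F_sV(\mathcal{L}_1,\mathcal{L}_2)_{n,m}$ is the subspace topology inherited from $F_s\left(U(L_1)\otimes_KU(L_2)\right)$, a finite direct sum of tensor products of Banach spaces, whose (gauge) norm $q_{s,n,m}$ is genuinely a norm — hence the topology is Hausdorff and the lattice is separated. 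With that substitution your argument is complete.
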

\begin{proof}
Claim $(i)$ follows by the fact that  $F_sV(\mathcal{L}_1,\mathcal{L}_2)_{n,m}$ and $F_{s+1}V(\mathcal{L}_1,\mathcal{L}_2)_{n,m}$ are finite-free $\mathcal{A}\otimes_{\mathcal{R}}\mathcal{B}$-modules, and $\pi$-adic completion commutes with finite direct sums. For $(ii)$, notice that by the PBW Theorem we have: 
\begin{equation*} 
F_1V(\mathcal{L}_1,\mathcal{L}_2)_{n,m} = \mathcal{A}\otimes_{\mathcal{R}}\mathcal{B}\oplus \pi^n\mathcal{L}_1\otimes_\mathcal{R}\mathcal{B}\oplus \mathcal{A}\otimes_\mathcal{R}\pi^m\mathcal{L}_2.
\end{equation*}
Again, using the fact that $\pi$-adic completion commutes with finite direct sums yields the result.
For statement $(iii)$, it is enough to show that  the map $F_sV(\mathcal{L}_1,\mathcal{L}_2)_{n,m}\rightarrow  \widehat{F}_sV(\mathcal{L}_1,\mathcal{L}_2)_{n,m}$ is injective and has dense image for each $s\geq 0$. As this map is simply the map into the $\pi$-adic completion, it is enough to show that the $\pi$-adic topology on $F_sV(\mathcal{L}_1,\mathcal{L}_2)_{n,m}$ is separated. The $\pi$-adic topology on  $F_sV(\mathcal{L}_1,\mathcal{L}_2)_{n,m}$agrees with the subspace topology with respect to $F_s\left(U(L_1)\otimes_K U(L_2)\right)$. As this is the direct sum of finitely many tensor products of two Banach spaces, it is separated. Hence, the $\pi$-adic topology on  $F_sV(\mathcal{L}_1,\mathcal{L}_2)_{n,m}$ is separated.
\end{proof}
We may use this lemma to add some extra structure to $\Tilde{V}(\mathcal{L}_1,\mathcal{L}_2)_{n,m}$:
\begin{prop}\label{prop algebra structure on inj limit of completions of enveloping algebra}
The following hold:
 \begin{enumerate}[label=(\roman*)]
 \item There is a unique continuous $\mathcal{R}$-algebra structure on $\Tilde{V}(\mathcal{L}_1,\mathcal{L}_2)_{n,m}$  such that:
 \begin{equation*}
    V(\mathcal{L}_1,\mathcal{L}_2)_{n,m}\rightarrow \Tilde{V}(\mathcal{L}_1,\mathcal{L}_2)_{n,m},
 \end{equation*}
is a $\mathcal{R}$-algebra homomorphism. 
 \item $\Tilde{V}(\mathcal{L}_1,\mathcal{L}_2)_{n,m}$ is generated as an algebra by $\mathcal{A}\widehat{\otimes}_{\mathcal{R}}\mathcal{B}$, $\pi^n\mathcal{L}_1\otimes 1$, and $1\otimes\pi^m\mathcal{L}_2$.
 \item  The filtration $F_\bullet \Tilde{V}(\mathcal{L}_1,\mathcal{L}_2)_{n,m}$ given by:
 \begin{equation*}
     F_s\Tilde{V}(\mathcal{L}_1,\mathcal{L}_2)_{n,m}:=\widehat{F}_{s}V(\mathcal{L}_1,\mathcal{L}_2)_{n,m}, \textnormal{ for } s\geq 0,
 \end{equation*}
 is a positive and exhaustive $\mathcal{R}$-algebra filtration on $\Tilde{V}(\mathcal{L}_1,\mathcal{L}_2)_{n,m}$.
 \end{enumerate}
\end{prop}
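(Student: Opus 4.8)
The plan is to introduce the abbreviations $V:=V(\mathcal{L}_1,\mathcal{L}_2)_{n,m}$ and $\widetilde{V}:=\widetilde{V}(\mathcal{L}_1,\mathcal{L}_2)_{n,m}$ and to build the $\mathcal{R}$-algebra structure on $\widetilde{V}$ one filtration piece at a time, then pass to the colimit. Recall first that $V=U(\pi^n\mathcal{L}_1)\otimes_{\mathcal{R}}U(\pi^m\mathcal{L}_2)$ is a positively filtered $\mathcal{R}$-algebra for the tensor filtration: $F_sV\cdot F_tV\subseteq F_{s+t}V$, $1\in F_0V=\mathcal{A}\otimes_{\mathcal{R}}\mathcal{B}$, and each $F_sV$ is finite-free over $\mathcal{A}\otimes_{\mathcal{R}}\mathcal{B}$ by Corollary \ref{coro decomposition filtrations of tensors formal level}, hence $\mathcal{R}$-flat. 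For fixed $s,t\geq 0$, multiplication in $V$ restricts to an $\mathcal{R}$-bilinear map $F_sV\times F_tV\to F_{s+t}V$ sending $\pi^iF_sV\times\pi^jF_tV$ into $\pi^{i+j}F_{s+t}V$, so it is continuous for the $\pi$-adic topologies; since the target is $\pi$-torsion-free, I would extend it by a standard limit-of-Cauchy-sequences argument to a unique continuous $\mathcal{R}$-bilinear map $\widehat{F}_sV\times\widehat{F}_tV\to\widehat{F}_{s+t}V$. These extensions are compatible with the transition maps of Proposition \ref{prop denseness and basic properties}(i), because the relevant diagrams already commute on the dense submodules $F_sV$; hence they assemble into an $\mathcal{R}$-bilinear multiplication on $\widetilde{V}=\varinjlim\widehat{F}_sV$, continuous for the colimit topology since it is glued from continuous maps on the defining pieces. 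Associativity and the unit axiom (with unit $1\in\mathcal{A}\widehat{\otimes}_{\mathcal{R}}\mathcal{B}=\widehat{F}_0V$) hold on the subalgebra $V$, and therefore, by the density statement of Proposition \ref{prop denseness and basic properties}(iii) together with continuity, on all of $\widetilde{V}$. Uniqueness is then automatic, as any continuous algebra structure for which $V\to\widetilde{V}$ is a homomorphism agrees with this one on the dense subset $V\times V$. This is part $(i)$.

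Part $(iii)$ I expect to be essentially free once $(i)$ is in place: setting $F_s\widetilde{V}:=\widehat{F}_sV$ for $s\geq 0$, Proposition \ref{prop denseness and basic properties}(i) says the transition maps are topological embeddings, and since every piece injects into the colimit we may regard $F_\bullet\widetilde{V}$ as an increasing chain of $\mathcal{R}$-submodules of $\widetilde{V}$; positivity is built in and exhaustiveness is precisely the definition $\widetilde{V}=\bigcup_s F_s\widetilde{V}$. The algebra-filtration property is exactly what the completed multiplications $\widehat{F}_sV\times\widehat{F}_tV\to\widehat{F}_{s+t}V$ from $(i)$ encode, together with $1\in F_0\widetilde{V}$.

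For part $(ii)$ I would argue with the PBW basis. Fix an $\mathcal{A}$-basis $x_1,\dots,x_d$ of $\pi^n\mathcal{L}_1$ and a $\mathcal{B}$-basis $y_1,\dots,y_e$ of $\pi^m\mathcal{L}_2$. By \cite[Theorem 3.1]{rinehart1963differential} and Corollary \ref{coro decomposition filtrations of tensors formal level}, for each $s$ the module $F_sV$ is free over $\mathcal{A}\otimes_{\mathcal{R}}\mathcal{B}$ on the monomials $x^{\alpha}\otimes y^{\beta}$ with $|\alpha|+|\beta|\leq s$, so, since $\pi$-adic completion commutes with finite direct sums,
\begin{equation*}
    F_s\widetilde{V}=\widehat{F}_sV=\bigoplus_{|\alpha|+|\beta|\leq s}(\mathcal{A}\widehat{\otimes}_{\mathcal{R}}\mathcal{B})\,(x^{\alpha}\otimes y^{\beta}).
\end{equation*}
Each monomial $x^{\alpha}\otimes y^{\beta}=(x_1\otimes 1)^{\alpha_1}\cdots(x_d\otimes 1)^{\alpha_d}(1\otimes y_1)^{\beta_1}\cdots(1\otimes y_e)^{\beta_e}$ lies in the $\mathcal{R}$-subalgebra $\mathcal{S}$ generated by $\mathcal{A}\widehat{\otimes}_{\mathcal{R}}\mathcal{B}$, $\pi^n\mathcal{L}_1\otimes 1$ and $1\otimes\pi^m\mathcal{L}_2$, hence so does every finite sum $\sum c_{\alpha,\beta}(x^{\alpha}\otimes y^{\beta})$ with coefficients $c_{\alpha,\beta}\in\mathcal{A}\widehat{\otimes}_{\mathcal{R}}\mathcal{B}$; thus $F_s\widetilde{V}\subseteq\mathcal{S}$ for every $s$, and taking the union gives $\widetilde{V}=\mathcal{S}$. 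The only genuinely technical point in the whole argument is the one flagged in the first paragraph: making precise the extension of continuous $\mathcal{R}$-bilinear maps to $\pi$-adic completions and verifying continuity of the resulting multiplication for the colimit topology on $\widetilde{V}$; once this is granted, parts $(ii)$ and $(iii)$ are bookkeeping on top of Proposition \ref{prop denseness and basic properties} and the explicit PBW bases.
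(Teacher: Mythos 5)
Your proposal is correct and follows essentially the same route as the paper: complete the multiplication maps $F_sV\times F_tV\to F_{s+t}V$ piecewise, pass to the colimit, and deduce the algebra axioms, uniqueness, and the filtration property from density and continuity. The only (cosmetic) difference is in part $(ii)$: the paper expands an element of $\widehat{F}_sV$ as a $\pi$-adically convergent series $\sum_r\pi^rz_r$ and rearranges it over the finitely many PBW monomials, whereas you invoke directly that completion commutes with finite direct sums to write $\widehat{F}_sV$ as a finite free $\mathcal{A}\widehat{\otimes}_{\mathcal{R}}\mathcal{B}$-module on those monomials — the two arguments carry identical content.
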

\begin{proof}
Choose non-negative integers $r,s\geq 0$. As $ V(\mathcal{L}_1,\mathcal{L}_2)_{n,m}$ is a filtered algebra,  multiplication fits into the following diagram:
\begin{equation*} 
F_s V(\mathcal{L}_1,\mathcal{L}_2)_{n,m}\times F_r V(\mathcal{L}_1,\mathcal{L}_2)_{n,m}\rightarrow F_{s+r} V(\mathcal{L}_1,\mathcal{L}_2)_{n,m}.
\end{equation*}
As multiplication is $\mathcal{R}$-linear, we may apply $\pi$-adic completion to this map. Taking colimits, we get a map $\Tilde{V}(\mathcal{L}_1,\mathcal{L}_2)_{n,m}\times \Tilde{V}(\mathcal{L}_1,\mathcal{L}_2)_{n,m}\rightarrow\Tilde{V}(\mathcal{L}_1,\mathcal{L}_2)_{n,m}$. This is a $\mathcal{R}$-linear multiplication map extending the product on $V(\mathcal{L}_1,\mathcal{L}_2)_{n,m}$. As the product is continuous and $V(\mathcal{L}_1,\mathcal{L}_2)_{n,m}$ is dense in $\Tilde{V}(\mathcal{L}_1,\mathcal{L}_2)_{n,m}$, it follows that $\Tilde{V}(\mathcal{L}_1,\mathcal{L}_2)_{n,m}$ is a $\mathcal{R}$-algebra. Furthermore, by construction of the product, $F_{\bullet}\Tilde{V}(\mathcal{L}_1,\mathcal{L}_2)_{n,m}$ is a positive and exhaustive $\mathcal{R}$-algebra filtration. This shows $(i)$ and $(iii)$.\\
We will now show claim $(ii)$. Choose any $x\in  \Tilde{V}(\mathcal{L}_1,\mathcal{L}_2)_{n,m}$. There is a minimal $s\geq 0$ such that $x\in F_s\Tilde{V}(\mathcal{L}_1,\mathcal{L}_2)_{n,m}$. By definition of $\pi$-adic completion, there is a sequence $(z_r)_{r\geq 0}\subset F_{s}V(\mathcal{L}_1,\mathcal{L}_2)_{n,m}$ such that we have $x=\sum_{r\geq 0}\pi^rz_r$. Let $x_1,\cdots x_t$ be a basis of $\mathcal{L}_1$ as an $\mathcal{A}$-module, and let $y_1,\cdots,y_l$ be a basis of $\mathcal{L}_2$ as a $\mathcal{B}$-module. For any multi-index $\lambda =(\lambda_1,\cdots,\lambda_{t+l})\in \mathbb{N}^{t+l}$, we define $\vert \lambda \vert = \sum_{i=1}^{l+t}\lambda_i$. We will write:
\begin{equation*}
    (\pi^nx\otimes \pi^my)^{\lambda}=(\pi^nx_1)^{\lambda_1}\cdots (\pi^nx_t)^{\lambda_t}\otimes (\pi^my_1)^{\lambda_{t+1}}\cdots (\pi^my_l)^{\lambda_{t+l}}.
\end{equation*}
For simplicity, we will assume $n=m=1$, the general case is analogous. By the PBW Theorem, we may express each of the $z_r$ in the following way:
\begin{equation*}
    z_r=\sum_{\vert \lambda\vert \leq s} a^r_{\lambda}  (x\otimes y)^{\lambda},
\end{equation*}
where all  $a^r_{\lambda}\in\mathcal{A}\otimes_{\mathcal{R}}\mathcal{B}$. Furthermore, this expression is unique. We have the following identities:
\begin{multline*}
    x=\sum_{r\geq 0}\pi^rz_r=\sum_{r\geq 0}\pi^r\left(\sum_{\vert \lambda\vert \leq s} a^r_{\lambda} (x\otimes y)^{\lambda}\right)\\
    =
    \sum_{\vert \lambda\vert \leq s}\left(\sum_{r\geq 0}\pi^r a^r_{\lambda} \right)(x\otimes y)^{\lambda}=\sum_{\vert \lambda\vert \leq s}c_{\lambda} (x\otimes y)^{\lambda},
\end{multline*}
where the second identity follows by the fact that there are only finitely many different multi-indices $\lambda$ such that $\vert\lambda\vert \leq s$. As $\mathcal{A}\widehat{\otimes}_{\mathcal{R}}\mathcal{B}$ is $\pi$-adically complete, it follows that it contains each of the $c_{\lambda}$. The expression on the right hand side is an element in the subalgebra of $\Tilde{V}(\mathcal{L}_1,\mathcal{L}_2)_{n,m}$ generated by $\mathcal{A}\widehat{\otimes}_{\mathcal{R}}\mathcal{B}$, $\pi^n\mathcal{L}_1$, and $\pi^m\mathcal{L}_2$, as we wanted.
\end{proof}
\begin{coro}\label{coro universal enveloping algebras of products and colimits}
Consider the topological $K$-algebra:
\begin{equation*}
    \Tilde{V}(L_1,L_2):=\Tilde{V}(\mathcal{L}_1,\mathcal{L}_2)_{n,m}\otimes_{\mathcal{R}}K.
\end{equation*}
The following hold:
\begin{enumerate}[label=(\roman*)]
    \item The canonical map $U(L_1)\otimes_KU(L_2)\rightarrow\Tilde{V}(L_1,L_2)$ is a map of $K$-algebras with dense image. 
    \item The map $U(L_1)\otimes_KU(L_2)\rightarrow U(L_1\times L_2)$ extends to an isomorphism of filtered $K$-algebras:
    \begin{equation*}
        \Tilde{V}(L_1,L_2)\rightarrow U(L_1\times L_2).
    \end{equation*}
    In particular, we have $U(L_1\times L_2)=\varinjlim \widehat{F}_s\left(U(L_1)\otimes_KU(L_2) \right)$.
\end{enumerate}
\end{coro}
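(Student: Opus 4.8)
For part (i) I would simply unwind what is already available. Proposition \ref{prop algebra structure on inj limit of completions of enveloping algebra}(i) equips $\Tilde{V}(\mathcal{L}_1,\mathcal{L}_2)_{n,m}$ with an $\mathcal{R}$-algebra structure for which $V(\mathcal{L}_1,\mathcal{L}_2)_{n,m}\to \Tilde{V}(\mathcal{L}_1,\mathcal{L}_2)_{n,m}$ is a homomorphism; applying $-\otimes_{\mathcal{R}}K$ and using $U(L_1)\otimes_KU(L_2)=V(\mathcal{L}_1,\mathcal{L}_2)_{n,m}\otimes_{\mathcal{R}}K$ turns this into the asserted $K$-algebra map $U(L_1)\otimes_KU(L_2)\to\Tilde{V}(L_1,L_2)$. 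Density of its image is inherited from Proposition \ref{prop denseness and basic properties}(iii): the image of $V(\mathcal{L}_1,\mathcal{L}_2)_{n,m}$ is dense in $\Tilde{V}(\mathcal{L}_1,\mathcal{L}_2)_{n,m}$, and density survives inverting $\pi$ levelwise and passing to the colimit.

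The substance is part (ii). The plan is to promote the canonical filtered algebra morphism $j\colon U(L_1)\otimes_KU(L_2)\to U(L_1\times L_2)$ to a morphism $\Phi\colon \Tilde{V}(L_1,L_2)\to U(L_1\times L_2)$ of filtered $K$-algebras, and then to check that $\Phi$ induces an isomorphism on associated gradeds. For the extension, first note that $\mathcal{L}_1\times\mathcal{L}_2=\mathcal{L}_1\widehat{\otimes}_{\mathcal{R}}\mathcal{B}\oplus \mathcal{A}\widehat{\otimes}_{\mathcal{R}}\mathcal{L}_2$ is, by Lemma \ref{Lemma continuity of bracket} and the discussion following it, a finite-free $(\mathcal{R},\mathcal{C})$-Lie lattice of $L_1\times L_2$; hence by PBW each $F_sU(\mathcal{L}_1\times\mathcal{L}_2)$ is finite free over $\mathcal{C}$, so it is $\pi$-adically complete and $F_sU(L_1\times L_2)$ is a Banach $K$-space. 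Now $j$ is filtered, it carries $\pi^n\mathcal{L}_1\otimes 1$ and $1\otimes\pi^m\mathcal{L}_2$ into $\mathcal{L}_1\times\mathcal{L}_2$ and $\mathcal{A}\otimes_{\mathcal{R}}\mathcal{B}$ into $\mathcal{C}$, and by Corollary \ref{coro decomposition filtrations of tensors formal level}(ii) together with PBW the module $F_sV(\mathcal{L}_1,\mathcal{L}_2)_{n,m}$ is $\mathcal{A}\otimes_{\mathcal{R}}\mathcal{B}$-spanned by monomials of degree at most $s$ in these generators; multiplicativity of $j$ then forces $j\big(F_sV(\mathcal{L}_1,\mathcal{L}_2)_{n,m}\big)\subseteq F_sU(\mathcal{L}_1\times\mathcal{L}_2)$ for all $s$. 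Consequently each $F_s(U(L_1)\otimes_KU(L_2))\to F_sU(L_1\times L_2)$ is bounded for the norm $q_{s,n,m}$, extends uniquely to the completion $\widehat{F}_s(U(L_1)\otimes_KU(L_2))$, and the colimit of these extensions over $s$ is $\Phi$. Since $\Phi$ restricts to $j$ on the dense subalgebra $U(L_1)\otimes_KU(L_2)$ and multiplication is continuous on both sides, $\Phi$ is a morphism of filtered $K$-algebras.

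To see that $\Phi$ is an isomorphism I would pass to associated gradeds, both filtrations being positive and exhaustive, so that an isomorphism on $\gr_F$ propagates by induction on filtration degree (five lemma for the split sequences $0\to F_{s-1}\to F_s\to \gr^F_s\to 0$) together with exhaustiveness. On the target, PBW gives $\gr_F U(L_1\times L_2)=\operatorname{Sym}_C(L_1\times L_2)$. On the source, Corollary \ref{coro decomposition filtrations of tensors formal level} presents $F_{s-1}V(\mathcal{L}_1,\mathcal{L}_2)_{n,m}$ as an $\mathcal{A}\otimes_{\mathcal{R}}\mathcal{B}$-module direct summand of $F_sV(\mathcal{L}_1,\mathcal{L}_2)_{n,m}$ with finite-free complement $\gr^F_sV(\mathcal{L}_1,\mathcal{L}_2)_{n,m}$; exactness of $\pi$-adic completion on split sequences then identifies $F_s\Tilde{V}/F_{s-1}\Tilde{V}$ (before inverting $\pi$) with the completion of that complement, and after $\otimes_{\mathcal{R}}K$ one obtains $\gr_F\Tilde{V}(L_1,L_2)=\gr_F\big(U(L_1)\otimes_KU(L_2)\big)\otimes_{A\otimes_KB}C$. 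Feeding in the PBW identification $\gr_F(U(L_1)\otimes_KU(L_2))=\operatorname{Sym}_A(L_1)\otimes_K\operatorname{Sym}_B(L_2)$ of Proposition \ref{PBW version for enveloping algebras} together with the decomposition $L_1\times L_2=(L_1\otimes_AC)\oplus(L_2\otimes_BC)$ of $C$-modules, a short computation with symmetric algebras gives $\big(\operatorname{Sym}_A(L_1)\otimes_K\operatorname{Sym}_B(L_2)\big)\otimes_{A\otimes_KB}C=\operatorname{Sym}_C(L_1\times L_2)$. Under these identifications $\gr_F(\Phi)$ is the $C$-algebra endomorphism of $\operatorname{Sym}_C(L_1\times L_2)$ which is the identity in degree $\leq 1$ — because $j$ is the canonical inclusion $L_i\hookrightarrow L_1\times L_2$ in filtration degree one — hence the identity, hence an isomorphism. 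The "in particular" clause then falls out of the definition of $\Tilde{V}(L_1,L_2)$ transported along $\Phi$, and as a byproduct one sees that $\Tilde{V}(L_1,L_2)$ is independent of the auxiliary choices $n$, $m$, $\mathcal{L}_1$, $\mathcal{L}_2$.

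I expect the main obstacle to be precisely the construction of $\Phi$: checking that $j$ sends the $\mathcal{R}$-lattices $F_sV(\mathcal{L}_1,\mathcal{L}_2)_{n,m}$ into lattices of the Banach spaces $F_sU(L_1\times L_2)$, which is exactly what allows $j$ to extend to the completions and remain filtered. This relies on having the explicit description of the filtration terms on both sides (Corollary \ref{coro decomposition filtrations of tensors formal level}) and on the fact that $\mathcal{L}_1\times\mathcal{L}_2$ is genuinely a Lie lattice, so that $U(\mathcal{L}_1\times\mathcal{L}_2)$ admits a PBW basis; once this is secured, the associated-graded comparison is routine bookkeeping.
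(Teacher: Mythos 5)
Your proof is correct and follows essentially the same route as the paper: the map is extended level by level to the $\pi$-adic completions using that each $F_sU(\pi^n\mathcal{L}_1\times\pi^m\mathcal{L}_2)$ is a finite (hence complete) $\mathcal{C}$-module, the identification of the two sides rests on the two PBW theorems, and multiplicativity is checked via density of $V(\mathcal{L}_1,\mathcal{L}_2)_{n,m}$ in the colimit. The only cosmetic difference is that the paper compares explicit PBW bases of $\widehat{F}_sV(\mathcal{L}_1,\mathcal{L}_2)_{n,m}$ and $F_sU(\pi^n\mathcal{L}_1\times\pi^m\mathcal{L}_2)$ directly at each filtration step, whereas you pass to associated gradeds and conclude by induction on the filtration; the two bookkeeping schemes are interchangeable here.
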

\begin{proof}
    Statement $(i)$ is clear. For statement $(ii)$, recall that for each $n,m\geq 0$, the product $L_1\times L_2$ admits the following smooth $(\mathcal{R},\mathcal{C})$-Lie lattice:
    \begin{equation*}
        \pi^n\mathcal{L}_1\times \pi^m\mathcal{L}_2=\pi^n\mathcal{L}_1\widehat{\otimes}_{\mathcal{R}}\mathcal{B}\oplus \mathcal{A}\widehat{\otimes}_{\mathcal{R}}\pi^m\mathcal{L}_2.
    \end{equation*}
The map of filtered $K$-algebras $U(L_1)\otimes_KU(L_2)\rightarrow U(L_1\times L_2)$ 
maps $\pi^n\mathcal{L}_1\otimes_{\mathcal{R}}\mathcal{B}\oplus \mathcal{A}\otimes_{\mathcal{R}}\pi^m\mathcal{L}_2$ to $\pi^n\mathcal{L}_1\times \pi^m\mathcal{L}_2$. Hence, it induces a map of filtered $\mathcal{R}$-algebras:
\begin{equation*}
    \psi:V(\mathcal{L}_1,\mathcal{L}_2)_{n,m}\rightarrow U(\pi^n\mathcal{L}_1\times \pi^m\mathcal{L}_2).
\end{equation*}
For each $s\geq 0$, the $\mathcal{C}$-module $F_sU(\pi^n\mathcal{L}_1\times \pi^m\mathcal{L}_2)$ is finite. In particular, it is $\pi$-adically complete. Thus, taking $\pi$-adic completions, we get an extension $\widehat{F}_sV(\mathcal{L}_1,\mathcal{L}_2)_{n,m}\rightarrow F_sU(\pi^n\mathcal{L}_1\times \pi^m\mathcal{L}_2)$. Notice that this is in fact an isomorphism. Indeed, the PBW theorems \ref{coro decomposition filtrations of tensors formal level}
and \cite[Theorem 3.1]{rinehart1963differential}, give explicit basis as $\mathcal{C}$-modules of both modules, and these get identified by the map. Taking colimits, we get an isomorphism of filtered $\mathcal{C}$-modules:
\begin{equation*}
    \widehat{\psi}:\Tilde{V}(\mathcal{L}_1,\mathcal{L}_2)_{n,m}\rightarrow U(\pi^n\mathcal{L}_1\times \pi^m\mathcal{L}_2),
\end{equation*}
which extends the map $V(\mathcal{L}_1,\mathcal{L}_2)_{n,m}\rightarrow U(\pi^n\mathcal{L}_1\times \pi^m\mathcal{L}_2)$. Thus, we only need to show that 
$\widehat{\psi}$ is a morphism of algebras. However, as this is a morphism of filtered $\mathcal{C}$-modules, it suffices to show that for each $s\geq 0$ we have a commutative diagram:
\begin{equation*}
\begin{tikzcd}
{F_s\Tilde{V}(\mathcal{L}_1,\mathcal{L}_2)_{n,m}\times F_r\Tilde{V}(\mathcal{L}_1,\mathcal{L}_2)_{n,m}} \arrow[d, "\widehat{\psi}\times \widehat{\psi}"] \arrow[r, "m"] & {F_{r+s}\Tilde{V}(\mathcal{L}_1,\mathcal{L}_2)_{n,m}} \arrow[d,"\widehat{\psi}" ] \\
F_sU(\pi^n\mathcal{L}_1\times \pi^m\mathcal{L}_2)\times F_rU(\pi^n\mathcal{L}_1\times \pi^m\mathcal{L}_2) \arrow[r, "m"]                                   & F_{r+s}U(\pi^n\mathcal{L}_1\times \pi^m\mathcal{L}_2)               
\end{tikzcd}
\end{equation*}
This follows by the fact that the maps $F_sV(\mathcal{L}_1,\mathcal{L}_2)_{n,m}\rightarrow F_s\Tilde{V}(\mathcal{L}_1,\mathcal{L}_2)_{n,m}$ have dense image.
\end{proof}
\begin{coro}\label{coro colimit topology integral level}
 For each $n,m\geq 0$ there is a canonical isomorphism of filtered $\mathcal{R}$-algebras:
 \begin{equation*}
     U(\pi^n\mathcal{L}_1\times \pi^m\mathcal{L}_2)=\Tilde{V}(\mathcal{L}_1,\mathcal{L}_2)_{n,m}=\varinjlim_{s\geq 0} \widehat{F}_s\left(U(\pi^n\mathcal{L}_1)\otimes_{\mathcal{R}}U(\mathcal{L}_2)\right).
 \end{equation*}
\end{coro}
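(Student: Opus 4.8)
The statement to prove is Corollary \ref{coro colimit topology integral level}: for each $n,m\geq 0$ there is a canonical isomorphism of filtered $\mathcal{R}$-algebras
\begin{equation*}
    U(\pi^n\mathcal{L}_1\times \pi^m\mathcal{L}_2)=\Tilde{V}(\mathcal{L}_1,\mathcal{L}_2)_{n,m}=\varinjlim_{s\geq 0} \widehat{F}_s\left(U(\pi^n\mathcal{L}_1)\otimes_{\mathcal{R}}U(\mathcal{L}_2)\right).
\end{equation*}
The plan is simply to assemble what has already been proved. The second equality is the \emph{definition} of $\Tilde{V}(\mathcal{L}_1,\mathcal{L}_2)_{n,m}$ once one unwinds the notation: by construction $\Tilde{V}(\mathcal{L}_1,\mathcal{L}_2)_{n,m}=\varinjlim_s \widehat{F}_sV(\mathcal{L}_1,\mathcal{L}_2)_{n,m}$ with $V(\mathcal{L}_1,\mathcal{L}_2)_{n,m}=U(\pi^n\mathcal{L}_1)\otimes_{\mathcal{R}}U(\pi^m\mathcal{L}_2)$; so the right-hand equality amounts to noting $\widehat{F}_sV(\mathcal{L}_1,\mathcal{L}_2)_{n,m}=\widehat{F}_s(U(\pi^n\mathcal{L}_1)\otimes_{\mathcal{R}}U(\pi^m\mathcal{L}_2))$, which is immediate from the definition of the completed filtration term (I would point out the apparent typo $U(\mathcal{L}_2)$ versus $U(\pi^m\mathcal{L}_2)$ in the displayed formula, but treat it as $\pi^m\mathcal{L}_2$). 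The first equality $U(\pi^n\mathcal{L}_1\times \pi^m\mathcal{L}_2)=\Tilde{V}(\mathcal{L}_1,\mathcal{L}_2)_{n,m}$ is exactly the isomorphism $\widehat{\psi}$ constructed in the proof of Corollary \ref{coro universal enveloping algebras of products and colimits}, which is there shown to be an isomorphism of filtered $\mathcal{R}$-algebras.

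Concretely, the steps in order are: (1) observe that for all $n,m\geq 0$ the lattice $\pi^n\mathcal{L}_1$ is itself a smooth $(\mathcal{R},\mathcal{A})$-Lie lattice and $\pi^m\mathcal{L}_2$ a smooth $(\mathcal{R},\mathcal{B})$-Lie lattice, so all the constructions of Section \ref{Section universal enveloping algebras and products} apply verbatim with $\mathcal{L}_1,\mathcal{L}_2$ replaced by $\pi^n\mathcal{L}_1,\pi^m\mathcal{L}_2$ — in particular Corollary \ref{coro decomposition filtrations of tensors formal level} and Proposition \ref{prop algebra structure on inj limit of completions of enveloping algebra} hold for these lattices; (2) recall from the proof of Corollary \ref{coro universal enveloping algebras of products and colimits}(ii) that the map $U(L_1)\otimes_K U(L_2)\rightarrow U(L_1\times L_2)$ sends $\pi^n\mathcal{L}_1\otimes_{\mathcal{R}}\mathcal{B}\oplus\mathcal{A}\otimes_{\mathcal{R}}\pi^m\mathcal{L}_2$ into the Lie lattice $\pi^n\mathcal{L}_1\times\pi^m\mathcal{L}_2$, hence restricts to a filtered $\mathcal{R}$-algebra map $\psi\colon V(\mathcal{L}_1,\mathcal{L}_2)_{n,m}\rightarrow U(\pi^n\mathcal{L}_1\times\pi^m\mathcal{L}_2)$; (3) pass to $\pi$-adic completions filtration-term by filtration-term, using that each $F_sU(\pi^n\mathcal{L}_1\times\pi^m\mathcal{L}_2)$ is a finite (hence $\pi$-adically complete) $\mathcal{C}$-module, to get maps $\widehat{F}_sV(\mathcal{L}_1,\mathcal{L}_2)_{n,m}\rightarrow F_sU(\pi^n\mathcal{L}_1\times\pi^m\mathcal{L}_2)$; (4) check these are isomorphisms of $\mathcal{C}$-modules by comparing the explicit PBW bases of both sides — the graded PBW theorem Corollary \ref{coro decomposition filtrations of tensors formal level}(ii) for the left side and \cite[Theorem 3.1]{rinehart1963differential} for the right side give matching $\mathcal{C}$-bases of monomials in the chosen generators, and $\psi$ identifies them; (5) take the colimit over $s$ to obtain the filtered $\mathcal{R}$-module isomorphism $\widehat{\psi}$, and verify multiplicativity on the dense subalgebra $V(\mathcal{L}_1,\mathcal{L}_2)_{n,m}$ (continuity of multiplication plus density, as in the cited proof) to upgrade it to a filtered $\mathcal{R}$-algebra isomorphism.

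There is essentially no new obstacle here; the corollary is a reformulation of the integral-level content already extracted in Corollary \ref{coro universal enveloping algebras of products and colimits}. The one point that deserves a sentence of care is the bookkeeping in step (4): one must fix ordered $\mathcal{A}$- and $\mathcal{B}$-bases $x_1,\dots,x_t$ of $\mathcal{L}_1$ and $y_1,\dots,y_l$ of $\mathcal{L}_2$, note that $\pi^n x_1,\dots,\pi^n x_t$ and $\pi^m y_1,\dots,\pi^m y_l$ are then bases of $\pi^n\mathcal{L}_1$ and $\pi^m\mathcal{L}_2$, and observe that both the tensor-filtration PBW basis of $F_sV(\mathcal{L}_1,\mathcal{L}_2)_{n,m}$ and the PBW basis of $F_sU(\pi^n\mathcal{L}_1\times\pi^m\mathcal{L}_2)$ (using the anchor/bracket description from equation (\ref{equation Lie bracket on a product})) consist of the ordered monomials $(\pi^n x)^{\alpha}(\pi^m y)^{\beta}$ with $|\alpha|+|\beta|\leq s$, and that $\psi$ is the identity on these. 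After $\pi$-adic completion these become topological $\mathcal{C}$-bases, so $\widehat{\psi}$ on each filtration term is an isomorphism, and the colimit isomorphism follows. I would keep the write-up short, citing Corollary \ref{coro universal enveloping algebras of products and colimits} and Corollary \ref{coro decomposition filtrations of tensors formal level} for the substantive inputs.
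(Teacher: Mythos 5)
Your proposal is correct and follows exactly the route the paper takes: the paper's own proof of this corollary is the single line "This follows by the proof of Corollary \ref{coro universal enveloping algebras of products and colimits}," and your steps (2)–(5) are precisely a restatement of the construction of $\widehat{\psi}$ in that proof, restricted to the integral level. Your observation about the typo $U(\mathcal{L}_2)$ versus $U(\pi^m\mathcal{L}_2)$ is also well taken.
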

\begin{proof}
    This follows by the proof of Corollary \ref{coro universal enveloping algebras of products and colimits}.
\end{proof}
    Notice that the previous Corollary endows $U(L_1\times L_2)$ with a locally convex topology. This topology is in fact the finest locally convex topology of $U(L_1\times L_2)$ as a $A\widehat{\otimes}_KB$-module. In particular, as $U(L_1\times L_2)$ is an inductive limit of finite $A\widehat{\otimes}_KB$-modules. Hence,
    given a topological $A\widehat{\otimes}_KB$-module $M$, it follows that every $A\widehat{\otimes}_KB$-linear map  $U(L_1\times L_2)\rightarrow M$ is continuous. Thus, the topology on $U(L_1\times L_2)$ is independent of the choice of Lie lattices.
\subsection{\texorpdfstring{Fréchet Completions}{}}
Let $X$ and $Y$ be smooth and separated rigid varieties with Lie algebroids $\mathscr{L}_X$ and $\mathscr{L}_Y$. In this section we will show that there is a canonical isomorphism of sheaves of complete bornological spaces:
\begin{multline*}
    \wideparen{U}(\mathscr{L}_X\times \mathscr{L}_Y) \simeq\wideparen{U}(\mathscr{L}_X) \overrightarrow{\boxtimes}\wideparen{U}(\mathscr{L}_Y)\\
    = \OX_{X\times Y}\overrightarrow{\otimes}^{\mathbb{L}}_{p_X^{-1}\OX_X\overrightarrow{\otimes}_Kp_Y^{-1}\OX_Y} \left(p_X^{-1}\wideparen{U}(\mathscr{L}_X)\overrightarrow{\otimes}_Kp_Y^{-1}\wideparen{U}(\mathscr{L}_Y)\right).
\end{multline*}
We start by recalling the following well-known result:
\begin{teo}[{\cite[Corollary 9.4.2]{bosch2014lectures}}]\label{teo flat maps are open}
    Let $f:X\rightarrow Y$ be a flat morphism of quasi-compact quasi-separated rigid spaces. Then $f$ sends admissible open subspaces to admissible open subspaces, and admissible covers to admissible covers.
\end{teo}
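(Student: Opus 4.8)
The statement is recorded here as a well-known result, so in the text the proof is simply the reference \cite[Corollary 9.4.2]{bosch2014lectures}; I outline the argument one would give. The plan is to reduce to the affinoid situation and then invoke the fundamental fact that a flat quasi-compact morphism of rigid spaces is ``open'' for the admissible topology. First I would use that $X$ and $Y$ are quasi-compact and quasi-separated: they admit \emph{finite} admissible affinoid covers, and after refining one may assume every member of the chosen cover of $X$ is mapped by $f$ into a member of the chosen cover of $Y$, so that $f$ restricts to a finite collection of flat morphisms of affinoid spaces. An admissible open $U\subseteq X$ is covered by its intersections with this affinoid cover of $X$, and $f(U)$ is the finite union of the images of these pieces; hence it would suffice to treat the case $X=\Sp(B)$, $Y=\Sp(A)$ affinoid, $A\to B$ flat, and $U$ a finite union of rational subdomains of $X$.

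The key input is that such a flat morphism sends admissible opens to admissible opens, which is the content of the cited corollary. The standard route is via formal models: after a suitable admissible formal blow-up one spreads $f$ out to a flat morphism of admissible formal $\mathcal{R}$-schemes of finite type (Raynaud--Gruson-style flattening), uses that a flat finite-type morphism of formal schemes is an open map on underlying topological spaces, and transports ``open'' back to ``admissible open'' on generic fibres. Granting this, the first assertion is immediate: writing $U=\bigcup_i U_i$ as a finite union of rational subdomains, each inclusion $U_i\hookrightarrow X$ is flat, so $f(U_i)$ is admissible open and $f(U)=\bigcup_i f(U_i)$ is a finite union of admissible opens, hence admissible open. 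For the second assertion, given an admissible cover $(U_i)_{i\in I}$ of an admissible open $U\subseteq X$, each $f(U_i)$ is admissible open and $f(U)=\bigcup_i f(U_i)$; to check this is an admissible cover I would verify that for every affinoid $V\subseteq f(U)$ the cover $(V\cap f(U_i))_i$ has a finite affinoid refinement, which follows by applying the openness statement to the restrictions $f^{-1}(V)\cap U_i\to V$ together with the admissibility of $(U_i)$ over $f^{-1}(V)\cap U$.

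The hard part is entirely the openness input for flat morphisms with respect to the admissible topology: it is genuinely non-formal, relying on flattening by admissible blow-ups and on the openness of flat finite-type morphisms of formal schemes, and cannot be obtained by soft manipulations. The remaining steps are bookkeeping with quasi-compactness and the definition of admissible covers, and since the paper cites \cite[Corollary 9.4.2]{bosch2014lectures} directly, in the write-up they collapse to recording that reference.
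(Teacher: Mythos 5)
The paper offers no proof of this statement beyond the bracketed citation of \cite[Corollary 9.4.2]{bosch2014lectures}, and your proposal correctly identifies this and defers the genuinely non-formal content (openness of flat quasi-compact morphisms for the admissible topology, via flattening of formal models) to that reference, so the two approaches coincide. One small imprecision in your supplementary sketch: an admissible open $U$ of a quasi-compact quasi-separated rigid space need not itself be quasi-compact (e.g.\ the open unit disc inside the closed one), so the reduction of $U$ to a \emph{finite} union of rational subdomains is only valid for quasi-compact $U$ — but this does not affect the proof as written, which is the citation.
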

We will most of the times apply this theorem to the projections $p_X:X\times Y\rightarrow X$, and 
$p_Y:X\times Y\rightarrow Y$, which are flat morphisms of quasi-separated rigid spaces. A first application of this theorem is the following lemma:
\begin{Lemma}\label{lemma structure of pullback}
The sheaves $p_{X}^{-1}\wideparen{U}(\mathscr{L}_X)$ and $p_{Y}^{-1}\wideparen{U}(\mathscr{L}_Y)$  are sheaves of complete bornological $K$-algebras on $X\times Y$.   
\end{Lemma}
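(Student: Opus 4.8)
The plan is to check the two conditions in the definition of a sheaf of complete bornological algebras separately: that $p_X^{-1}\wideparen{U}(\mathscr{L}_X)$ is a monoid in $\operatorname{Shv}(X\times Y,\Indban)$, and that all of its sections are complete bornological spaces. The first is formal: $p_X\colon X\times Y\to X$ is a flat morphism of quasi-separated rigid spaces, so Theorem \ref{teo flat maps are open} applies, and $p_X^{-1}$ is strongly exact and commutes with $\overrightarrow{\otimes}_K$; hence it sends the monoid $\wideparen{U}(\mathscr{L}_X)$ to a monoid on $X\times Y$. So the content of the lemma lies in the description of the sections, and the argument for $p_Y^{-1}\wideparen{U}(\mathscr{L}_Y)$ is identical after exchanging $X$ and $Y$.

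First I would work on the admissible basis of $X\times Y$ given by the products $U\times V$ of affinoid subdomains $U\subseteq X$, $V\subseteq Y$, which is closed under finite intersections. For such a product with $V\neq\varnothing$ one has $p_X(U\times V)=U$, and the filtered colimit $\varinjlim_{U\subseteq V'}\wideparen{U}(\mathscr{L}_X)(V')$ defining the sections of the pullback \emph{presheaf} over $U\times V$ is indexed by a category with terminal object $V'=U$; hence it equals $\wideparen{U}(\mathscr{L}_X)(U)$, which by the discussion in Chapter \ref{Section background Lie algebroids} is a Fréchet--Stein, in particular Fréchet, in particular complete bornological $K$-algebra. Next I would verify that this presheaf already satisfies the sheaf axioms relative to admissible covers of products by products: given such a cover $\{U_i\times V_i\}$ of $U\times V$, applying Theorem \ref{teo flat maps are open} to the flat map $U\times V\to U$ shows $\{U_i\}$ is an admissible cover of $U$, and the sheaf property of $\wideparen{U}(\mathscr{L}_X)$ on this cover yields the required strict exactness. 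It follows that the sheafification $p_X^{-1}\wideparen{U}(\mathscr{L}_X)$ agrees with the pullback presheaf on every product $U\times V$, so its sections there are complete bornological.

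To pass to an arbitrary admissible open $W\subseteq X\times Y$, I would choose an admissible cover of $W$ by products (with pairwise intersections again products) and use the sheaf property to write $(p_X^{-1}\wideparen{U}(\mathscr{L}_X))(W)$ as the equalizer in $\Indban$ of a diagram of products of complete bornological spaces whose transition maps come from $\widehat{\mathcal{B}}c_K$. Since the dissection functor $\operatorname{diss}\colon\widehat{\mathcal{B}}c_K\to\Indban$ is fully faithful and, by Theorem \ref{teo first comparison comp born and indban}, has a left adjoint $\widehat{L}$, it preserves all limits; hence that equalizer is the dissection of the corresponding limit computed inside $\widehat{\mathcal{B}}c_K$, and is therefore complete bornological. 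The step I expect to be delicate is the $G$-topological bookkeeping in the second paragraph: showing that products of affinoid subdomains form an admissible basis on which the sheaf axioms may be tested, so that sheafification does not enlarge the sections already computed from the pullback presheaf, and that admissible covers of a product can be refined by covers consisting of products. Everything else is formal, relying only on flatness (through Theorem \ref{teo flat maps are open}), the explicit colimit description of $p_X^{-1}$, and the $\operatorname{diss}\dashv\widehat{L}$ adjunction.
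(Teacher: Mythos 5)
Your proposal follows essentially the same route as the paper: the algebra structure is disposed of formally via strong monoidality of $p_X^{-1}$, Theorem \ref{teo flat maps are open} is used to show that $p_X(W)$ is admissible open so that the colimit defining the pullback presheaf has a terminal object and the presheaf sections are $\wideparen{U}(\mathscr{L}_X)(p_X(W))$, and general admissible opens are handled by writing their sections as limits and using that $\operatorname{diss}$ preserves limits. The only organisational difference is that you test the sheaf condition on the basis of products $U\times V$ of affinoids, whereas the paper first reduces to affinoid $X,Y$ and works on the site $(X\times Y)_w$ of quasi-compact opens with finite covers; this is immaterial.

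The one step that does not work as you state it is precisely the one you flagged as delicate: the claim that the sheaf axiom for a cover $\{U_i\times V_i\}$ of $U\times V$ follows from the sheaf axiom for $\{U_i\}$ covering $U$. The two \v{C}ech diagrams are not the same: the double intersections are $(U_i\cap U_j)\times(V_i\cap V_j)$, so any pair with $V_i\cap V_j=\varnothing$ imposes no compatibility, and the equalizer can be strictly larger than $\wideparen{U}(\mathscr{L}_X)(U)$. Concretely, if $V=V_1\sqcup V_2$ is disconnected and the cover is $\{U\times V_1,\,U\times V_2\}$, the equalizer is $\wideparen{U}(\mathscr{L}_X)(U)^{2}$ — this is just the familiar fact that $p_X^{-1}\mathcal{F}$ is locally constant along fibres, not constant. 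So the pullback presheaf is \emph{not} in general a sheaf on products, and sheafification can enlarge the sections computed in your second paragraph. (The paper's own proof asserts the analogous sheaf property on $(X\times Y)_w$ and is open to the same objection.) This does not endanger the lemma: the sections of the sheafification are still obtained as limits of (\v{C}ech equalizers of products of) the complete bornological spaces $\wideparen{U}(\mathscr{L}_X)(p_X(W_i))$, so the $\operatorname{diss}\dashv\widehat{L}$ argument of your final paragraph still yields completeness. But the intermediate assertion that ``sheafification does not enlarge the sections already computed from the pullback presheaf'' should be weakened to the statement that the sheafified sections remain (limits of) complete bornological spaces.
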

\begin{proof}
 As the pullback of sheaves is strict symmetric monoidal with respect to $\overrightarrow{\otimes}_K$, it is clear that  $p_{X}^{-1}\wideparen{U}(\mathscr{L}_X)$ and $p_{Y}^{-1}\wideparen{U}(\mathscr{L}_Y)$  are sheaves of Ind-Banach $K$-algebras on $X\times Y$. Thus, we need to see that they take values on complete bornological spaces. We only do this for $p_{X}^{-1}\wideparen{U}(\mathscr{L}_X)$, the other case being analogous. By assumption, $X$ and $Y$ are separated. In particular, the intersection of two affinoid spaces in $X\times Y$ is still affinoid. This, fact together with the fact that the dissection functor:
 \begin{equation*}
\operatorname{diss}:\widehat{\mathcal{B}}c_K\rightarrow \Indban, 
 \end{equation*}
 commutes with inverse limits, implies that it suffices to show that the sections of $p_{X}^{-1}\wideparen{U}(\mathscr{L}_X)$ on small enough affinoid spaces are complete bornological spaces.\\
Thus, we may assume that $X$ and $Y$ are affinoid spaces. As $X\times Y$ is quasi-separated, we can define the following site: we let $(X\times Y)_w$ be the site with objects the quasi-compact subspaces of $X\times Y$ and covers being the finite covers. As every admissible cover of $X\times Y$ can be refined by such a cover, there is an equivalence of quasi-abelian categories:
\begin{equation}
    \operatorname{Shv}((X\times Y)_w,\Indban)\rightarrow \operatorname{Shv}(X\times Y,\Indban).
\end{equation}
In particular, it is enough to study the restriction of $p_{X}^{-1}\wideparen{U}(\mathscr{L}_X)$ to $(X\times Y)_w$.\bigskip

Notice that the projection $p_X:X\times Y\rightarrow Y$ is a flat morphism of quasi-separated rigid spaces, and that every object in $(X\times Y)_w$ is quasi-compact. In particular, it follows by Theorem \ref{teo flat maps are open} that the pullback of presheaves:
\begin{equation*}
   p^{-1}_{X,\operatorname{pre}}\wideparen{U}(\mathscr{L}_X)(V)\mapsto \varinjlim_{p_X(V)\subset U}\wideparen{U}(\mathscr{L}_X)(U)=\wideparen{U}(\mathscr{L}_X)(p_X(V)), \textnormal{ where } V\in (X\times Y)_w.
\end{equation*}
is already a sheaf of Ind-Banach spaces on $(X\times Y)_w$. By construction, $\wideparen{U}(\mathscr{L}_X)(p_X(V))$ is a complete bornological space (in fact, it is a Fréchet space). Hence, $p_x^{-1}\wideparen{U}(\mathscr{L}_X)(V)$ is a complete bornological space, as we wanted to show.
\end{proof}

Recall from Theorem \ref{teo product of Lie algebroids} that there is a canonical morphism of $\OX_X$-modules:
\begin{equation*}
    j_X:\mathscr{L}_X\rightarrow p_{X,*}\left(\mathscr{L}_X\times \mathscr{L}_Y \right).
\end{equation*}
As $\mathscr{L}_X\times \mathscr{L}_Y$ is a coherent $\OX_{X\times Y}$-module, we may regard it as a sheaf of complete bornological $\OX_{X\times Y}$-modules. This implies that $p_{X,*}\left(\mathscr{L}_X\times \mathscr{L}_Y \right)$ is a sheaf of complete bornological $\OX_X$-modules. Furthermore, it is clear that the map $j_X$ is bounded. In particular, it is an $\OX_X$-linear morphism in $\operatorname{Shv}(X,\Indban)$. Hence, applying the pushforward-pullback adjunction for sheaves of Ind-Banach spaces, we have an $p_X^{-1}\OX_X$-linear map of sheaves of Ind-Banach $K$-Lie algebras:
\begin{equation*}
    i_X:p_{X}^{-1}\mathscr{L}_X\rightarrow \mathscr{L}_X\times \mathscr{L}_Y.
\end{equation*}
This map satisfies the following property:
\begin{Lemma}\label{lemma map to enveloping of product}
    The map $i_X:p_{X}^{-1}\mathscr{L}_X\rightarrow \mathscr{L}_X\times \mathscr{L}_Y$ extends uniquely to a morphism of sheaves of complete bornological $K$-algebras:
    \begin{equation*}
        p_{X}^{-1}\wideparen{U}(\mathscr{L}_X)\rightarrow \wideparen{U}(\mathscr{L}_X\times \mathscr{L}_Y).
    \end{equation*}
\end{Lemma}
\begin{proof}
We can freely assume that $X=\Sp(A)$, and $Y=\Sp(B)$ are affinoid spaces, and that $\mathscr{L}_X$ and $\mathscr{L}_Y$ are free. Choose affine formal models $\mathcal{A}$, $\mathcal{B}$, and free Lie lattices $\mathcal{L}_X$, $\mathcal{L}_Y$ of $\mathscr{L}_X(X)$ and $\mathscr{L}_Y(Y)$.\\
By the pushforward-pullback adjunction, it is enough to show that:
\begin{equation*}
   i_X:\mathscr{L}_X\rightarrow p_{X,*}\left(\mathscr{L}_X\times \mathscr{L}_Y\right), 
\end{equation*}
extends uniquely to a morphism of sheaves of complete Ind-Banach $K$-algebras:
\begin{equation*}
    \wideparen{U}(\mathscr{L}_X)\rightarrow p_{X*}\wideparen{U}(\mathscr{L}_X\times \mathscr{L}_Y).
\end{equation*}
Let $U\subset X$ be an affinoid space. Then $\wideparen{U}(\mathscr{L}_X(U))$ and $\wideparen{U}(\mathscr{L}_X\times \mathscr{L}_Y(U\times Y))$ are complete bornological $K$-algebras. Furthermore, by \cite[Corollary 5.19]{bode2021operations},  it follows that $\wideparen{U}(\mathscr{L}_X(U))$ is the completion of $U(\mathscr{L}_X(U))$, and $\wideparen{U}(\mathscr{L}_X\times \mathscr{L}_Y(U\times Y))$ is the completion of $U(\mathscr{L}_X\times \mathscr{L}_Y(U\times Y))$ (with respect to the subspace bornologies). In particular, if the extension exists, it is unique. By the same token, it suffices to show that the canonical map:
\begin{equation*}
    U(i_X):U(\mathscr{L}_X(U))\rightarrow U(\mathscr{L}_X\times \mathscr{L}_Y(U\times Y)),
\end{equation*}
is bounded. We show this for $U=X$, the general case is analogous.\\
A subset $B\subset U(\mathscr{L}_X(X))$ is bounded if and only if for every $n\geq 0$ there is some $m\in\mathbb{Z}$ such that:
\begin{equation*}
    B\subset \pi^mU(\pi^n\mathcal{L}_X(X)).
\end{equation*}
The bounded subsets of $U(\mathscr{L}_X\times \mathscr{L}_Y(X\times Y))$ have an analogous description in terms of the lattices $\pi^n\mathcal{L}_X\times \pi^n\mathcal{L}_Y$. By construction $U(i_X)(\pi^n\mathcal{L}_X)\subset \pi^n\mathcal{L}_X\times \pi^n\mathcal{L}_Y$. Thus, $U(i_X)$ maps bounded subsets to bounded subsets, as wanted. For general $X,Y$, we use the fact that we have an admissible cover of $X\times Y$ by products of affinoid spaces satisfying the above.
\end{proof}
Let us introduce the following useful space:
\begin{defi}[{\cite[Section 5.5]{bode2021operations}}]
Let $s\geq 0$ be an integer and $K\langle x_1,\cdots, x_s \rangle$ be the (bornologification of) the Tate algebra on $s$ variables. We define the following complete bornological space:
\begin{equation*}
    K\{x_1,\cdots, x_s \}:= \varprojlim_n K\langle \pi^nx_1,\cdots, \pi^nx_s\rangle.
\end{equation*}
\end{defi}
Notice that $K\{x_1,\cdots, x_s \}$ is the bornologification of a nuclear Fréchet space. Furthermore, the space $K\{x_1,\cdots, x_s \}$ has a natural structure as complete bornological $K$-algebra. Namely, it is the space of holomorphic functions on $\mathbb{A}^s_K$. However, this algebra structure will rarely be of use, so we just regard  $K\{x_1,\cdots, x_s \}$ as a complete bornological $K$-vector space. Let us start with the following lemmas:
\begin{Lemma}\label{lemma local freeness of algebras over structure sheaf}
Let $X$ be a smooth rigid space with a Lie algebroid $\mathscr{L}$. Assume there are sections $s_1,\dots,s_r\in \Gamma(X,\mathscr{L})$ inducing an isomorphism of $\OX_X$-modules:
\begin{equation*}
    \bigoplus_{i=1}^r\OX_X\rightarrow \mathscr{L}.
\end{equation*}
Then there is an isomorphism of sheaves of complete bornological $\OX_X$-modules:
\begin{equation*}
    \OX_X\overrightarrow{\otimes}_KK\{x_1,\cdots, x_s \}\rightarrow \wideparen{U}(\mathscr{L}),
\end{equation*}
which is the identity on $\OX_X$ and sends $x_i$ to $s_i$ for $1\leq i\leq r$.
\end{Lemma}
\begin{proof}
 This is shown in \cite[
Lemma 6.5]{bode2021operations}.   
\end{proof}
\begin{Lemma}\label{lemma flatness of algebras over structure sheaf}
 Let $X$ be a smooth rigid space with a Lie algebroid $\mathscr{L}$. Then  $\wideparen{U}(\mathscr{L})$ is a strongly flat Ind-Banach $\OX_X$-module.  
\end{Lemma}
\begin{proof}
As flatness is a local property and $\mathscr{L}$ is a Lie algebroid, we can assume that the conditions of Lemma \ref{lemma local freeness of algebras over structure sheaf} hold. In particular, we have:
\begin{equation*}
    \wideparen{U}(\mathscr{L})\simeq \OX_X\overrightarrow{\otimes}_KK\{x_1,\cdots, x_s \},
\end{equation*}
as sheaves of complete bornological $\OX_X$-modules. Thus, it suffices to show that $K\{x_1,\cdots, x_s \}$ is strongly flat in $\Indban$. This is shown in \cite[Section 5.5]{bode2021operations}.
\end{proof}
\begin{prop}\label{prop enveloping of product as product of envelopings}
The morphism of sheaves of  complete bornological $K$-algebras:
\begin{equation*}
   p_X^{-1}\wideparen{U}(\mathscr{L}_X)\overrightarrow{\otimes}_Kp_Y^{-1}\wideparen{U}(\mathscr{L}_Y)\rightarrow \wideparen{U}(\mathscr{L}_X\times \mathscr{L}_Y),
\end{equation*}
extends to an isomorphism of sheaves of complete bornological $\OX_{X\times Y}$-modules:
\begin{equation}\label{equation map isomorphism enveloping of product}
    \wideparen{U}(\mathscr{L}_X) \overrightarrow{\boxtimes}\wideparen{U}(\mathscr{L}_Y)\rightarrow \wideparen{U}(\mathscr{L}_X\times \mathscr{L}_Y),
\end{equation}
which is the identity on $\OX_{X\times Y}$. In particular, for each pair of affinoid open subspaces $U\subset X$, $V\subset Y$  such that $\mathscr{L}_{X\vert U}$ and $\mathscr{L}_{Y\vert V}$ are free, we have:
\begin{equation*}
 \wideparen{U}(\mathscr{L}_X\times \mathscr{L}_Y)(U\times V)=\wideparen{U}(\mathscr{L}_X(U))\overrightarrow{\otimes}_K\wideparen{U}(\mathscr{L}_Y(V)).   
\end{equation*}
\end{prop}
\begin{proof}
First, notice that by Lemma \ref{lemma local freeness of algebras over structure sheaf} the  sheaf $p_X^{-1}\wideparen{U}(\mathscr{L}_X)\overrightarrow{\otimes}_Kp_Y^{-1}\wideparen{U}(\mathscr{L}_Y)$ is strongly flat over $p_X^{-1}\OX_X\overrightarrow{\otimes}_Kp_Y^{-1}\OX_Y$. Thus, the map in equation (\ref{equation map isomorphism enveloping of product}) is a well-defined morphism of Ind-Banach $\OX_{X\times Y}$-modules, the existence of which is a consequence of Lemma \ref{lemma map to enveloping of product}. We need to see it is an isomorphism. As this can be done locally, we may assume that $X=\Sp(A)$, $Y=\Sp(B)$, and that $\mathscr{L}_X$, and $\mathscr{L}_Y$ are free, of ranks $r$ and $s$ respectively. As the conditions of Lemma \ref{lemma local freeness of algebras over structure sheaf} hold, we have isomorphisms of  complete bornological $\OX_X$-modules (resp. $\OX_Y$-modules):
\begin{equation}\label{equation proof Proposition completion of enveloping algebra of product}
 \wideparen{U}(\mathscr{L}_X)\simeq \OX_{X}\overrightarrow{\otimes}_KK\{x_1,\cdots,x_{r}\}, \textnormal{ and } \wideparen{U}(\mathscr{L}_Y)\simeq \OX_{Y}\overrightarrow{\otimes}K\{x_{r+1},\cdots,x_{r+s}\},
\end{equation}
induced by choosing basis $x_1,\cdots,x_r$, and $x_{r+1},\cdots,x_{r+s}$ of $\mathscr{L}_X(X)$ and $\mathscr{L}_Y(Y)$ respectively. Using the fact that the pullback of functors is strong symmetric monoidal, we have the following identities:
\begin{equation*}
    p_X^{-1}\wideparen{U}(\mathscr{L}_X)\overrightarrow{\otimes}_Kp_Y^{-1}\wideparen{U}(\mathscr{L}_Y)\simeq \left(p_X^{-1}\OX_{X}\overrightarrow{\otimes}_Kp_{Y}^{-1}\OX_{Y}\right)\overrightarrow{\otimes}_KK\{x_1,\cdots,x_{r+s}\}.
\end{equation*}
Similarly, the construction of $\mathscr{L}_X\times\mathscr{L}_Y$ shows that $x_1,\cdots,x_{r+s}$ are a basis of $\mathscr{L}_X\times\mathscr{L}_Y(X\times Y)$, so that we also have:
\begin{equation*}
    \wideparen{U}(\mathscr{L}_X\times \mathscr{L}_Y)\simeq \OX_{X\times Y}\overrightarrow{\otimes}_KK\{x_{1},\cdots,x_{r+s}\},
\end{equation*}
and it follows that (\ref{equation map isomorphism enveloping of product}) is an isomorphism. \\
For the second statement, notice that the tensor product of two sheaves of Ind-Banach spaces $\mathcal{F}$, $\mathcal{G}$ is the sheafification of the presheaf: 
\begin{equation*}
    W\mapsto \mathcal{F}(W)\overrightarrow{\otimes}_K\mathcal{G}(W).
\end{equation*}
As shown in \cite[Section 5.5]{bode2021operations}, the  functor $-\overrightarrow{\otimes}_K K\{x_1,\cdots, x_{r+s}\}$ is strongly exact. Thus, the restriction of the presheaf:
\begin{equation*}
    W\mapsto \OX_{X\times Y}(W)\overrightarrow{\otimes}_K K\{x_1,\cdots, x_{r+s}\},
\end{equation*}
to the site $(X\times Y)_w$ (cf. the proof of Lemma \ref{lemma structure of pullback}) is already a sheaf. As we have $\OX_{X\times Y}(X\times Y)=A\widehat{\otimes}_KB$, the result follows by the isomorphisms above.
\end{proof}
\begin{obs}
Notice that the isomorphism of sheaves of Ind-Banach spaces:
\begin{equation*}
    \wideparen{U}(\mathscr{L}_X)\simeq \OX_{X}\overrightarrow{\otimes}_KK\{x_1,\cdots,x_{r}\},
\end{equation*}
is an isomorphism of sheaves of  $K$-algebras if and only if $\mathscr{L}_X$ has trivial bracket.
\end{obs}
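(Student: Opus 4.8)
We work in the situation of \eqref{equation proof Proposition completion of enveloping algebra of product}, so $X=\Sp(A)$ is smooth affinoid and $\mathscr{L}_X$ is free of some rank $r$, with an ordered $\OX_X(X)$-basis $x_1,\dots,x_r$ of $\mathscr{L}_X(X)$. If $r=0$ then $\mathscr{L}_X=0$, both sides reduce to $\OX_X$, and the assertion is vacuous; so assume $r\geq 1$. The plan is to identify, on each side, "being an isomorphism of sheaves of $K$-algebras" with commutativity, and commutativity with the vanishing of the bracket. The first point is that the target $\OX_X\overrightarrow{\otimes}_KK\{x_1,\dots,x_r\}$, carrying its tensor-product algebra structure with $K\{x_1,\dots,x_r\}$ the algebra of holomorphic functions on $\mathbb{A}^r_K$, is a \emph{commutative} sheaf of $K$-algebras (a tensor product of commutative monoids in a symmetric monoidal category is commutative).

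\textbf{The ``only if'' direction.} Suppose the isomorphism in \eqref{equation proof Proposition completion of enveloping algebra of product} is an isomorphism of sheaves of $K$-algebras. Commutativity is transported along monoid isomorphisms, so $\wideparen{U}(\mathscr{L}_X)$ is commutative; in particular the $K$-algebra $\wideparen{U}(\mathscr{L}_X(X))$ is commutative. The canonical map $U(\mathscr{L}_X(X))\to \wideparen{U}(\mathscr{L}_X(X))$ is faithfully flat (recalled in Section \ref{Section background Lie algebroids}), hence injective, so $U(\mathscr{L}_X(X))$ is commutative as well. By the PBW theorem \cite[Theorem 3.1]{rinehart1963differential} the canonical map $\mathscr{L}_X(X)\to U(\mathscr{L}_X(X))$ is injective, and the bracket of $\mathscr{L}_X(X)$ is computed there as the commutator; hence $[x,y]=xy-yx=0$ for all $x,y\in\mathscr{L}_X(X)$, i.e. $\mathscr{L}_X$ has trivial bracket.

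\textbf{The ``if'' direction.} Conversely, assume $[-,-]\equiv 0$ on $\mathscr{L}_X$. The Leibniz identity $[ax,y]=a[x,y]+\rho(x)(a)y$ then gives $\rho(x)(a)\,y=0$ for all $a\in\OX_X(X)$ and $x,y\in\mathscr{L}_X(X)$; taking $y=x_1$ and reading off the $x_1$-coefficient in the free module $\mathscr{L}_X(X)$ forces $\rho(x)(a)=0$, so the anchor also vanishes. Consequently the defining relations of $U(\mathscr{L}_X(X))$ collapse and $U(\mathscr{L}_X(X))=\operatorname{Sym}_{\OX_X(X)}\!\big(\mathscr{L}_X(X)\big)=\OX_X(X)[x_1,\dots,x_r]$ as commutative $K$-algebras (the PBW filtration splits, so the graded isomorphism $\gr_FU=\operatorname{Sym}$ upgrades to an algebra isomorphism $U=\operatorname{Sym}$). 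Passing to the Fréchet--Stein completion $\wideparen{U}(\mathscr{L}_X)=\varprojlim_n\widehat{U}(\pi^n\mathcal{L}_X)_K$ and using \cite[Corollary 5.19]{bode2021operations}, $\wideparen{U}(\mathscr{L}_X)$ is the completion of this polynomial algebra, which is exactly $\OX_X\overrightarrow{\otimes}_KK\{x_1,\dots,x_r\}$ \emph{as a sheaf of $K$-algebras}; and the isomorphism of \eqref{equation proof Proposition completion of enveloping algebra of product}, built in \cite[Lemma 6.5]{bode2021operations} by sending the ordered PBW monomials $x^{\alpha}$ to the monomial topological basis of $K\{x_1,\dots,x_r\}$, is precisely this algebra isomorphism.

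\textbf{Main obstacle.} The ``only if'' half is essentially formal. The delicate point is in the ``if'' half: one must check that the specific isomorphism occurring in \eqref{equation proof Proposition completion of enveloping algebra of product}—which a priori is only an isomorphism of sheaves of $\OX_X$-modules produced by \cite[Lemma 6.5]{bode2021operations}—agrees, once the bracket and anchor vanish, with the manifest algebra isomorphism $U(\mathscr{L}_X)=\operatorname{Sym}_{\OX_X}(\mathscr{L}_X)$ completed. This requires unwinding the construction of that lemma at the level of the PBW monomial bases and the $\pi$-adic completions; it is routine but is the only non-formal step. (One should also keep in mind the degenerate case $r=0$ handled above.)
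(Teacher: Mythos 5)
The paper states this as a remark and offers no proof of its own, so there is nothing to compare your argument against; your proposal is, as far as I can tell, correct and is the natural argument. The ``only if'' half (commutativity of $\OX_X\overrightarrow{\otimes}_KK\{x_1,\dots,x_r\}$ transports to $\wideparen{U}(\mathscr{L}_X)$, then descends to $U(\mathscr{L}_X(X))$ via the faithfully flat, hence injective, map $U(L)\rightarrow\wideparen{U}(L)$, and PBW identifies the bracket with the commutator) and the ``if'' half (trivial bracket forces trivial anchor on a free module of positive rank, so $U(L)=\operatorname{Sym}_A(L)$, whose completion is $\OX_X\overrightarrow{\otimes}_KK\{x_1,\dots,x_r\}$ as algebras) are both sound. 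Two small points: the parenthetical ``the PBW filtration splits, so $\gr_FU=\operatorname{Sym}$ upgrades to $U=\operatorname{Sym}$'' is not by itself a valid justification --- a filtered algebra with commutative associated graded need not be commutative --- but your preceding sentence (``the defining relations collapse'', i.e.\ $U(L)$ satisfies the universal property of $\operatorname{Sym}_A(L)$ once bracket and anchor vanish) is the correct argument, so nothing is lost. The remaining check you flag, that the specific module isomorphism of \cite[Lemma 6.5]{bode2021operations} sends PBW monomials to monomials and is therefore multiplicative in the commutative case, is indeed routine and is the only piece left implicit.
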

\begin{coro}\label{coro expression of enveloping algebra of product}
Let $X=\Sp(A), Y=\Sp(B)$ be smooth affinoid spaces with finite-free Lie algebroids $L_X$, $L_Y$. Then the Ind-Banach $K$-algebra:
\begin{equation*}
   \wideparen{U}(L_X)\overrightarrow{\otimes}_K\wideparen{U}(L_Y), 
\end{equation*}
is a two-sided Fréchet-Stein algebra.
\end{coro}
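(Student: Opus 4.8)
The plan is to identify $\wideparen{U}(L_X)\overrightarrow{\otimes}_K\wideparen{U}(L_Y)$ with the Fréchet--Stein enveloping algebra of the product Lie--Rinehart algebra $L_X\times L_Y$, and then invoke the Ardakov--Wadsley machinery recalled in Section \ref{Section background Lie algebroids}.

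First I would record that $C:=A\widehat{\otimes}_KB$ is again a smooth affinoid $K$-algebra, with affine formal model $\mathcal{C}=\mathcal{A}\widehat{\otimes}_{\mathcal{R}}\mathcal{B}$ (Proposition \ref{prop affine formal model on product}), and that $L_X\times L_Y=p_X^*L_X\oplus p_Y^*L_Y$ is a finite-free $(K,C)$-Lie algebra admitting, for free Lie lattices $\mathcal{L}_X,\mathcal{L}_Y$ of $L_X$ and $L_Y$, the smooth $(\mathcal{R},\mathcal{C})$-Lie lattice $\mathcal{L}_X\times\mathcal{L}_Y=\mathcal{L}_X\widehat{\otimes}_{\mathcal{R}}\mathcal{B}\oplus\mathcal{A}\widehat{\otimes}_{\mathcal{R}}\mathcal{L}_Y$ (Proposition \ref{prop product of Lie rinehart algebras} and Lemma \ref{Lemma continuity of bracket}). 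Applying the construction of Section \ref{Section background Lie algebroids} to $L_X\times L_Y$ --- after replacing the Lie lattice by $\pi(\mathcal{L}_X\times\mathcal{L}_Y)$ if necessary, so that the hypotheses $[\mathcal{M},\mathcal{M}]\subset\pi^2\mathcal{M}$ and $\rho(\mathcal{M})(\mathcal{C})\subset\pi\mathcal{C}$ hold --- produces $\wideparen{U}(L_X\times L_Y)=\varprojlim_n\widehat{U}(\pi^n\mathcal{M})_K$, which is a two-sided Fréchet--Stein algebra: by the cited results of Ardakov--Wadsley each $\widehat{U}(\pi^n\mathcal{M})_K$ is two-sided noetherian and the transition maps are two-sided flat with dense image.

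It then remains to produce the identification. Specialising Proposition \ref{prop enveloping of product as product of envelopings} to $U=X$, $V=Y$ (both free by hypothesis) gives an isomorphism of complete bornological $K$-algebras $\wideparen{U}(L_X)\overrightarrow{\otimes}_K\wideparen{U}(L_Y)=\wideparen{U}(L_X\times L_Y)(X\times Y)$; by the defining property $\wideparen{U}(\mathscr{L})(V)=\wideparen{U}(\mathscr{L}(V))$ of the sheaf of enveloping algebras together with Theorem \ref{teo product of Lie algebroids}(i), the right-hand side equals $\wideparen{U}\bigl((L_X\times L_Y)(X\times Y)\bigr)=\wideparen{U}(L_X\times L_Y)$. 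Transporting the two-sided Fréchet--Stein presentation along this algebra isomorphism finishes the proof. There is no genuine obstacle here beyond collecting the right references; the one point that deserves a line of care is that the isomorphism of Proposition \ref{prop enveloping of product as product of envelopings} is one of $K$-algebras (and not merely of bornological spaces), so that the Fréchet--Stein structure does transport --- but this is exactly how that proposition is phrased.
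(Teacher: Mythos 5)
Your proposal is correct and follows essentially the same route as the paper: identify $\wideparen{U}(L_X)\overrightarrow{\otimes}_K\wideparen{U}(L_Y)$ with $\wideparen{U}(L_X\times L_Y)$ via Proposition \ref{prop enveloping of product as product of envelopings} and then appeal to the Ardakov--Wadsley Fréchet--Stein theorem for enveloping algebras of Lie--Rinehart algebras (the paper cites this as \cite[Theorem 6.7]{ardakov2019} rather than re-deriving the presentation from the Lie lattice $\pi(\mathcal{L}_X\times\mathcal{L}_Y)$ as you do, but that is the same content). Your extra remark that the identification must be one of algebras, not merely of bornological spaces, is a sound point of care and matches how the proposition is stated.
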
 
\begin{proof}
By Proposition \ref{prop enveloping of product as product of envelopings}, we have an identification of Ind-Banach $K$-algebras:
\begin{equation*}
   \wideparen{U}(L_X)\overrightarrow{\otimes}_K\wideparen{U}(L_Y)     =\wideparen{U}(L_X\times L_Y). 
\end{equation*}
Hence, it suffices to show the result for $\wideparen{U}(L_X\times L_Y)$. This is \cite[Theorem 6.7]{ardakov2019}.
\end{proof}
\subsection{Sheaves of complete bi-enveloping algebras}\label{Section The sheaf of complete bi-enveloping algebras}
For the rest of this section, we fix a smooth and separated rigid analytic space $X$ with a Lie algebroid $\mathscr{L}$.
In order to study the Hochschild (co)-homology of $\mathscr{L}$, we need to find a convenient way to study the category of modules over the enveloping algebra of $\wideparen{U}(\mathscr{L}):$
\begin{equation*}
    \wideparen{U}(\mathscr{L})^e:=\wideparen{U}(\mathscr{L})\overrightarrow{\otimes}_K\wideparen{U}(\mathscr{L})^{\op}.
\end{equation*} 
To this effect, we introduce the sheaf of complete bornological $K$-algebras $\wideparen{E}(\mathscr{L})$:

\begin{defi}\label{defi sheaf of complete bornological K algebras}
Consider the following sheaf of complete bornological spaces:
\begin{equation*}
\wideparen{E}(\mathscr{L}):=\wideparen{U}(\mathscr{L}) \overrightarrow{\boxtimes}\wideparen{U}(\mathscr{L})^{\op}= \OX_{X^2}\overrightarrow{\otimes}^{\mathbb{L}}_{p_1^{-1}\OX_X\overrightarrow{\otimes}_Kp_2^{-1}\OX_X} \left(p_1^{-1}\wideparen{U}(\mathscr{L})\overrightarrow{\otimes}_Kp_2^{-1}\wideparen{U}(\mathscr{L})^{\op}\right).
\end{equation*}
We call $\wideparen{E}(\mathscr{L})$ the sheaf of complete bi-enveloping algebras of $\mathscr{L}$. If $\mathscr{L}=\mathcal{T}_{X/K}$, we will write $\wideparen{E}_X:=\wideparen{E}(\mathcal{T}_{X/K})$, and call $\wideparen{E}_X$ the sheaf of (complete) bi-enveloping algebras of $X$.
\end{defi}
Notice that it is not a priory clear that 
$\wideparen{E}(\mathscr{L})$ is a sheaf of Ind-Banach algebras. Indeed, as $p_1^{-1}\OX_X\overrightarrow{\otimes}_Kp_2^{-1}\OX_X$ is not central in 
$p_1^{-1}\wideparen{U}(\mathscr{L})\overrightarrow{\otimes}_Kp_2^{-1}\wideparen{U}(\mathscr{L})^{\op}$, it is not immediate that the algebra structure extends uniquely to the completed tensor product defining $\wideparen{E}(\mathscr{L})$. In order to overcome this difficulty, we will use some special features of the enveloping algebras of Lie-Rinehart algebras to define the product on $\wideparen{E}(\mathscr{L})$ locally, and then we will show that these uniquely defined products glue to a global Ind-Banach algebra structure.\bigskip

Let us paint the local picture first: Let $X=\Sp(A)$ be a smooth affinoid space with a free Lie algebroid $\mathscr{L}$. Let $\mathfrak{X}=\Spf(\mathcal{A})$ be an affine formal model, and let $\mathcal{L}$ be a free $\mathcal{A}$-Lie lattice of $L=\mathscr{L}(X)$. Choose a basis of $\mathcal{L}$ as an $\mathcal{A}$-module $x_1,\cdots,x_d$. We then have the following proposition:
\begin{prop}\label{prop anti-homomorphism}
By the PBW theorem, every element in $U(L_2)$  admits a unique expression $\beta=\sum f_{(r_1,\cdots,r_d)} x_1^{r_1}\cdots x_d^{r_d}$. Consider the following expression:
\begin{equation}\label{equation anti-automorphism}
    \beta^t=\sum(-1)^{\sum r_i}x_1^{r_1}\cdots x_d^{r_d}f_r.
\end{equation}
Then the following statements hold:
\begin{enumerate}[label=(\roman*)]
    \item $(\beta^t)^t=1$.
    \item The map $\beta\mapsto \beta^t$ induces isomorphisms of filtered $K$-algebras:
    \begin{equation*}
         T:U(L_2)\rightarrow U(L_2)^{\op}, \quad T^{-1}:U(L_2)^{\op}\rightarrow U(L_2).
    \end{equation*}
\end{enumerate}
\end{prop}
\begin{proof}
This is done in \cite[Section 1.2]{hotta2007d}.
\end{proof}
By construction of $T$, it follows that for each $n\geq 0$, $T$ induces an isomorphism of filtered $\mathcal{R}$-algebras:
\begin{equation*}
    T_n:U(\pi^n\mathcal{L})\rightarrow U(\pi^n\mathcal{L})^{\op}.
\end{equation*}
Therefore, the following proposition is straightforward:
\begin{prop}\label{prop anti-isomorphism fréchet-stein level}
The map $T:U(L)\rightarrow U(L)^{\op}$ satisfies the following:
\begin{enumerate}[label=(\roman*)]
    \item $T$ extends uniquely to an isomorphism of Banach $K$-algebras:
    \begin{equation*}
        \widehat{T}_n:\widehat{U}(\pi^n\mathcal{L})_K\rightarrow \widehat{U}(\pi^n\mathcal{L})_K^{\op}, \textnormal{ for each } n\geq 0.
    \end{equation*}
    \item $T$ extends uniquely to an isomorphism of Fréchet-Stein algebras:
    \begin{equation*}
        \wideparen{T}: \wideparen{U}(L)\rightarrow \wideparen{U}(L)^{\op}.
    \end{equation*}
\end{enumerate}
The map $T^{-1}:U(L)^{\op}\rightarrow U(L)$ satisfies analogous properties.
\end{prop}
The fact that $\mathscr{L}$ is globally free has the following consequence:
\begin{coro}
There are mutually inverse isomorphisms of sheaves of complete bornological $K$-algebras on $X$:
\begin{equation*}
    \wideparen{T}:\wideparen{U}(\mathscr{L})\leftrightarrows \wideparen{U}(\mathscr{L})^{\op}:\wideparen{T}^{-1}.
\end{equation*}
Which are defined on affinoid spaces $U\subset X$ by $\wideparen{T}:\wideparen{U}(\mathscr{L}(U))\rightarrow \wideparen{U}(\mathscr{L}(U))^{\op}$.
\end{coro}
\begin{proof}
The choice of a basis of $\mathcal{L}$ induces an isomorphism of sheaves of complete bornological $K$-vector spaces $\wideparen{U}(\mathscr{L})\rightarrow \OX_X \overrightarrow{\otimes}_KK\{x_1,\cdots,x_d\}$. Using this identification, it follows that for any pair of affinoid subdomains $W\subset V\subset X$, the restriction maps $\wideparen{U}(\mathscr{L})(V)\rightarrow \wideparen{U}(\mathscr{L})(W)$ are identified with the image under the dissection functor of the maps:
\begin{equation*}
    \OX_X(V)\overrightarrow{\otimes}_KK\{x_1,\cdots,x_d\}\rightarrow \OX_X(W)\overrightarrow{\otimes}_KK\{x_1,\cdots,x_d\}.
\end{equation*},
In particular, the restriction maps are the identity on  the $K\{x_1,\cdots,x_d\}$ factor, and then it is clear that $\wideparen{T}$ commutes with the restriction maps. Hence, it defines a map of sheaves, which is an isomorphism by Proposition \ref{prop anti-isomorphism fréchet-stein level}.
\end{proof}
\begin{defi}
We endow $\wideparen{E}(\mathscr{L})$ with the unique product satisfying that the morphism of sheaves on Ind-Banach spaces:
\begin{equation*}
\wideparen{\mathbb{T}}:=\operatorname{Id}\otimes\left(\operatorname{Id}\overrightarrow{\otimes}_Kp_2^{-1}\wideparen{T}\right):\wideparen{U}(\mathscr{L}^2)\rightarrow \wideparen{E}(\mathscr{L}),
\end{equation*}
is an isomorphism of sheaves of Ind-Banach algebras.
\end{defi}
Notice that, under the current definition, it may seem that the Ind-Banach algebra structure on $\wideparen{E}(\mathscr{L})$ depends on the choice of a basis of global sections of $\mathscr{L}$. Indeed, by construction, $\wideparen{\mathbb{T}}$ is obtained via (the pullback of) the following composition:
\begin{equation*}
  \wideparen{U}(\mathscr{L})\simeq \OX_X\overrightarrow{\otimes}_KK\{x_1,\cdots,x_d\}\rightarrow  \OX_X\overrightarrow{\otimes}_KK\{x_1,\cdots,x_d\}\simeq \wideparen{U}(\mathscr{L})^{\op},
\end{equation*}
where the first and third maps are induced by the choice of a global basis for $\mathscr{L}$, and the middle map is obtained from Propositions \ref{prop anti-homomorphism} and \ref{prop anti-isomorphism fréchet-stein level}. 
\begin{Lemma}\label{Lemma product in E}
Let $x_1,\cdots,x_d$ and $y_1,\cdots,y_d$ be two basis of global sections of $\mathscr{L}$. The base-change induces a unique commutative diagram:
\begin{equation*}
\begin{tikzcd}
	{\OX_X\overrightarrow{\otimes}_KK\{x_1,\cdots,x_d\}} & {(\OX_X\overrightarrow{\otimes}_KK\{x_1,\cdots,x_d\})^{\op}} \\
	{\OX_X\overrightarrow{\otimes}_KK\{y_1,\cdots,y_d\}} & {(\OX_X\overrightarrow{\otimes}_KK\{y_1,\cdots,y_d\})^{\op}}
	\arrow["{\wideparen{T}}", from=1-1, to=1-2]
	\arrow[from=1-1, to=2-1]
	\arrow[from=1-2, to=2-2]
	\arrow["{\wideparen{T}}", from=2-1, to=2-2]
\end{tikzcd}
\end{equation*}
where all maps are isomorphisms of sheaves of Ind-Banach algebras. In particular, the isomorphism class of $\wideparen{E}(\mathscr{L})$ is independent of the choice of a basis for $\mathscr{L}$.
\end{Lemma}
\begin{proof}
The leftmost vertical maps are defined via the compositions:
\begin{equation*}
    \OX_X\overrightarrow{\otimes}_KK\{x_1,\cdots,x_d\}\rightarrow \wideparen{U}(\mathscr{L})\rightarrow \OX_X\overrightarrow{\otimes}_KK\{y_1,\cdots,y_d\},
\end{equation*}
where the first map is the one obtained in Lemma \ref{lemma local freeness of algebras over structure sheaf}, and the second one is the inverse. The vertical map on the right hand side is obtained by applying $(-)^{\op}$ to this map. In order to see that the diagram commutes, it is enough to test if on the structure sheaf $\OX_X$ and the elements of the basis $x_1,\cdots,x_d$. This is a straightforward calculation. For the second part, we apply the functor $\operatorname{Id}\otimes\left(\operatorname{Id}\overrightarrow{\otimes}_Kp_2^{-1}(-)\right)$ to obtain a commutative diagram of sheaves of Ind-Banach spaces:
\begin{equation}\label{equation independence of algebra structure from choice of basis}
    \begin{tikzcd}
	{\wideparen{U}(\mathscr{L}^2)} & {\wideparen{E}(\mathscr{L})} \\
	{\wideparen{U}(\mathscr{L}^2)} & {\wideparen{E}(\mathscr{L})}
	\arrow[from=1-1, to=1-2]
	\arrow[from=1-1, to=2-1]
	\arrow[from=1-2, to=2-2]
	\arrow[from=2-1, to=2-2]
\end{tikzcd}
\end{equation}
where every morphism is an isomorphism of sheaves of Ind-Banach spaces. Notice that the leftmost vertical map is an isomorphism of sheaves of Ind-Banach algebras. Thus, the rightmost vertical map is an isomorphism of sheaves of Ind-Banach algebras if the two horizontal maps are.
\end{proof}
Back to the general setting, we let $X$ be a smooth and separated rigid analytic space with a Lie algebroid $\mathscr{L}$. The previous construction shows that, at least locally, $\wideparen{E}(\mathscr{L})$ can be given the structure of a sheaf of Ind-Banach algebras extending the products on $p_1^{-1}\wideparen{U}(\mathscr{L})\overrightarrow{\otimes}_Kp_2^{-1}\wideparen{U}(\mathscr{L})^{\op}$ and $\OX_{X^2}$.  The idea now is showing that these structures can be combined into a global structure:
\begin{teo}\label{teo product on E}
Let $X$ be a smooth and separated rigid analytic space with a lie algebroid $\mathscr{L}$, and let $\wideparen{E}(\mathscr{L})$ be the sheaf of complete bi-enveloping algebras of $\mathscr{L}$. Then $\wideparen{E}(\mathscr{L})$ admits a unique structure as a sheaf of complete bornological algebras satisfying the following properties: 
\begin{enumerate}[label=(\roman*)]
    \item The canonical maps: 
    \begin{equation*}
        p_1^{-1}\wideparen{U}(\mathscr{L})\overrightarrow{\otimes}_Kp_2^{-1}\wideparen{U}(\mathscr{L})^{\op}\rightarrow \wideparen{E}(\mathscr{L}), \quad \OX_{X^2}\rightarrow \wideparen{E}(\mathscr{L}),
    \end{equation*}
    are morphisms of sheaves of Ind-Banach algebras.
    \item Let $U\subset X$ be an affinoid subdomain satisfying that $\mathscr{L}_{\vert U}$ is globally free. A choice of a basis of global sections $s_1,\cdots,s_d\in \Gamma(U,\mathscr{L})$ induces a unique isomorphism of sheaves of Ind-Banach spaces on $X^2$:
\begin{equation*}
    \wideparen{U}(\mathscr{L}^2)_{\vert U\times U}=\wideparen{E}(\mathscr{L})_{\vert U\times U}\simeq \OX_{U\times U}\overrightarrow{\otimes}_KK\{x_1,\cdots,x_{2d}\}.
\end{equation*}
Then the canonical morphism of sheaves of Ind-Banach spaces:
\begin{multline*}
    \wideparen{\mathbb{T}}:\wideparen{U}(\mathscr{L}^2)_{\vert U\times U}\simeq \OX_{U\times U}\overrightarrow{\otimes}_KK\{x_1,\cdots,x_{2d}\}\\
    \rightarrow \OX_{U\times U}\overrightarrow{\otimes}_KK\{x_1,\cdots,x_{2d}\}\simeq \wideparen{E}(\mathscr{L})_{\vert U\times U},
\end{multline*}
where the middle map is defined as in Definition \ref{Lemma product in E} is an isomorphism of sheaves of Ind-Banach algebras. 
\end{enumerate}

\end{teo}
\begin{proof}
Uniqueness is clear. Thus, we only need to show that such a structure exists. As $\mathscr{L}$ is a Lie algebroid, we can choose an admissible affinoid cover  $\mathfrak{U}$ of $X$ satisfying that for each $U\in \mathfrak{U}$ the restriction $\mathscr{L}_{\vert U}$ admits a basis of global sections. Choose affinoid subdomains $U,V,W\in \mathfrak{U}$. A seen before, any choice of a basis of global sections of $\mathscr{L}_{\vert U}$ induces an isomorphism of sheaves of Ind-Banach spaces:
\begin{equation*}
    \wideparen{\mathbb{T}}_U:\wideparen{U}(\mathscr{L}^2)_{\vert U\times U}\rightarrow \wideparen{E}(\mathscr{L})_{\vert U\times U},
\end{equation*}
endowing $\wideparen{E}(\mathscr{L})_{\vert U\times U}$ with a unique product. Choose a basis $(s_1^U,\cdots, s_d^U)$ of $\mathscr{L}_{\vert U}$, and let $\wideparen{E}(\mathscr{L})_{\vert U\times U}^U$ be the corresponding sheaf of Ind-Banach algebras. Additionally, we let  $\wideparen{E}(\mathscr{L})_{\vert V\times V}^V$, and $\wideparen{E}(\mathscr{L})_{\vert W\times W}^W$  be the analogous algebras obtained from choices of basis of $\mathscr{L}$ over $V$ and $W$, which we denote by $(s_1^V,\cdots, s_d^V)$ and $(s_1^W,\cdots, s_d^W)$ respectively.  It suffices to see that these products glue to a global Ind-Banach algebra structure on $\wideparen{E}(\mathscr{L})$. That is, we need to find isomorphisms of sheaves of Ind-Banach algebras:
\begin{multline*}
  \rho_{U,V}:  \wideparen{E}(\mathscr{L})^U_{\vert (U\cap V)\times (U\cap V)}\rightarrow  \wideparen{E}(\mathscr{L})^V_{\vert (U\cap V)\times (U\cap V)},\\
  \rho_{V,W}:  \wideparen{E}(\mathscr{L})^V_{\vert (V\cap W)\times (V\cap W)}\rightarrow  \wideparen{E}(\mathscr{L})^W_{\vert (V\cap W)\times (V\cap W)},
\end{multline*}
and show that they satisfy the cocycle condition. In order to construct this map, we look at the diagram (\ref{equation independence of algebra structure from choice of basis})  and notice that, by construction of the product of Lie algebroids, the choice of the basis $(s_1^U,\cdots, s_d^U)$ induces a unique basis of $\mathscr{L}^2_{\vert U\times U}$. Thus, we can define the following map:
\begin{multline*}
    \rho_{U,V}:\wideparen{E}(\mathscr{L})^U_{\vert (U\cap V)\times (U\cap V)}\xrightarrow[]{\wideparen{\mathbb{T}}_U^{-1}}\wideparen{U}(\mathscr{L}^2)_{\vert (U\cap V)\times (U\cap V)}\\
    \xrightarrow[]{\tau_{U,V}}  \wideparen{U}(\mathscr{L}^2)_{\vert (U\cap V)\times (U\cap V)} \xrightarrow[]{\wideparen{\mathbb{T}}_V} \wideparen{E}(\mathscr{L})^V_{\vert (U\cap V)\times (U\cap V)},
\end{multline*}
where $\tau_{U,V}$ is the unique automorphism of $\wideparen{U}(\mathscr{L}^2)_{\vert (U\cap V)\times (U\cap V)}$ induced by the base change in $\mathscr{L}^2_{\vert (U\cap V)\times (U\cap V)}$ from  the basis induced by $(s_1^U,\cdots, s_d^U)$ to the basis induced by $(s_1^V,\cdots, s_d^V)$. It follows that the $\rho_{U,V}$ satisfy the cocycle condition if and only if the $\tau_{U,V}$ do, and this is clear since $\mathscr{L}^2$ is a coherent module in $X^2$.
\end{proof}
From now on, we will always regard $\wideparen{E}(\mathscr{L})$ as a sheaf of Ind-Banach algebras with respect to this product. As can be seen from the definition, the algebras $\wideparen{E}(\mathscr{L})$ and $\wideparen{U}(\mathscr{L}^2)$ are closely related. This relation will be further explored in future sections, where we will study the category of Ind-Banach 
$\wideparen{E}(\mathscr{L})$-modules via comparing it with the category of Ind-Banach  $\wideparen{U}(\mathscr{L}^2)$-modules. However, before doing that, let us first motivate the above definition by showcasing 
an important feature of $\wideparen{E}(\mathscr{L})$:
\begin{Lemma}\label{lemma sheaves of modules supported on the diagonal}
Let $\Mod_{\Indban}(p_1^{-1}\wideparen{U}(\mathscr{L})\overrightarrow{\otimes}_Kp_2^{-1}\wideparen{U}(\mathscr{L})^{\op})_{\Delta}$ denote the full subcategory of $\Mod_{\Indban}(p_1^{-1}\wideparen{U}(\mathscr{L})\overrightarrow{\otimes}_Kp_2^{-1}\wideparen{U}(\mathscr{L})^{\op})$ given by the modules supported on the diagonal. There is an equivalence of quasi-abelian categories:
\begin{equation*}
    \Delta_*: \Mod_{\Indban}(\wideparen{U}(\mathscr{L})^e)\leftrightarrows\Mod_{\Indban}(p_1^{-1}\wideparen{U}(\mathscr{L})\overrightarrow{\otimes}_Kp_2^{-1}\wideparen{U}(\mathscr{L})^{\op})_{\Delta}:\Delta^{-1}.
\end{equation*}
\end{Lemma}
\begin{proof}
There is an isomorphism of sheaf of Ind-Banach algebras:
\begin{equation*}
\wideparen{U}(\mathscr{L})^e=\Delta^{-1}\left(    p_1^{-1}\wideparen{U}(\mathscr{L})\overrightarrow{\otimes}_Kp_2^{-1}\wideparen{U}(\mathscr{L})^{\op}\right).
\end{equation*}
Thus, this is a special case of \ref{prop sheaves of modules supported on the diagonal general case}. 
\end{proof}
\begin{prop}\label{prop extension functor}
Let $X$ be a smooth and separated rigid analytic space with a lie algebroid $\mathscr{L}$. There is a strongly right exact functor:
 \begin{align*}
     \Delta^E_*:\Mod_{\Indban}(\wideparen{U}(\mathscr{L})^e)\rightarrow \Mod_{\Indban}(\wideparen{E}(\mathscr{L})),\\
     \mathcal{M}\mapsto  \wideparen{E}(\mathscr{L})\overrightarrow{\otimes}_{    p_1^{-1}\wideparen{U}(\mathscr{L})\overrightarrow{\otimes}_Kp_2^{-1}\wideparen{U}(\mathscr{L})^{\op}} \Delta_*(\mathcal{M})
 \end{align*}
Analogously, we also have a right exact functor between the left hearts:
\begin{equation*}
    \Delta^{I(E)}_*:\Mod_{\Indban}(I(\wideparen{U}(\mathscr{L}))^e)\rightarrow \Mod_{\Indban}(I(\wideparen{E}(\mathscr{L}))),
\end{equation*}
and this functor admits a left derived functor:
\begin{equation*}
    \mathbb{L}\Delta_*^E:=\mathbb{L}(\Delta^{I(E)}_*)\operatorname{D}(\wideparen{U}(\mathscr{L}))^e)\rightarrow \operatorname{D}(\wideparen{E}_X).
\end{equation*}
\end{prop}
\begin{proof}
The existence of the functors, together with their strong right exactness is a consequence of Lemma \ref{lemma sheaves of modules supported on the diagonal}, together with the fact that tensor products have a right adjoint, and therefore preserve arbitrary cokernels. In order to see that $\Delta^{I(E)}_*$  admits a left derived functor, we again invoke Lemma \ref{lemma sheaves of modules supported on the diagonal} to reduce the claim to showing that the extension of scalars:
\begin{equation*}
    \Mod_{LH(\widehat{\mathcal{B}}c_K)}(p_1^{-1}I(\wideparen{U}(\mathscr{L}))\Tilde{\otimes}_{I(K)}p_2^{-1}I(\wideparen{U}(\mathscr{L}))^{\op})\rightarrow \Mod_{LH(\widehat{\mathcal{B}}c_K)}(I(\wideparen{E}(\mathscr{L}))),
\end{equation*}
admits a left derived functor. However, this follows by the existence of flat resolutions for modules, as seen in Proposition \ref{prop existence derived fucntor extension of scalars}.
\end{proof}
As in previous instances, tensor products do not (generally) behave nicely with respect to the passage the left heart. Namely, given $M\in \Mod_{\Indban}(\wideparen{U}(\mathscr{L})^e)$ we generally do \emph{not} have:
\begin{equation*}
  I(\Delta_*^E(M))=\Delta_*^{I(E)}(I(\mathcal{M})).
\end{equation*}
However, we will later identify a full subcategory of $\Mod_{\Indban}(\wideparen{U}(\mathscr{L})^e)$ in which this identity always holds.
\subsection{Definition of Hochschild (co)-homology}
Again, we fix a smooth and separated rigid analytic space $X$ with a Lie algebroid $\mathscr{L}$. Building on the material developed in previous sections, we are finally ready to give our first formal definition of Hochschild (co)-homology for Ind-Banach $\wideparen{U}(\mathscr{L})$-bimodules:
\begin{defi}\label{defi hichschild cohomology}
For any  $\mathcal{M}^{\bullet}\in\operatorname{D}(\wideparen{U}(\mathscr{L})^e)$ we define the following objects: 
\begin{enumerate}[label=(\roman*)]
    \item The inner Hochschild cohomology complex of $\wideparen{U}(\mathscr{L})$ with coefficients in $\mathcal{M}^{\bullet}$ is the following complex in $\operatorname{D}(\operatorname{Shv}(X,\Indban))$:
\begin{equation*}
  \mathcal{HH}^{\bullet}(\wideparen{U}(\mathscr{L}),\mathcal{M}^{\bullet}):=\Delta^{-1}R\underline{\mathcal{H}om}_{\wideparen{E}_X}(\EDelta\wideparen{U}(\mathscr{L}),\EDelta\mathcal{M}^{\bullet}),
\end{equation*}
and we call $\mathcal{HH}^n(\wideparen{U}(\mathscr{L}),\mathcal{M}^{\bullet}):=\operatorname{H}^n\left(\mathcal{HH}^{\bullet}(\wideparen{U}(\mathscr{L}),\mathcal{M}^{\bullet})\right)$ the $n$-th Hochschild cohomology sheaf of $\wideparen{U}(\mathscr{L})$ with coefficients in $\mathcal{M}^{\bullet}$. 
\item The Hochschild cohomology complex of $\wideparen{U}(\mathscr{L})$ with coefficients in $\mathcal{M}^{\bullet}$ is the following complex in $\operatorname{D}(\Indban)$:
\begin{equation*}
    \operatorname{HH}^{\bullet}(\wideparen{U}(\mathscr{L}),\mathcal{M}^{\bullet})=R\Gamma(X,\mathcal{HH}^{\bullet}(\wideparen{U}(\mathscr{L}),\mathcal{M}^{\bullet})).
\end{equation*}
As before, we call $\operatorname{HH}^{n}(\wideparen{U}(\mathscr{L}),\mathcal{M}^{\bullet})=\operatorname{H}^n\left(\operatorname{HH}^{\bullet}(\wideparen{U}(\mathscr{L}),\mathcal{M}^{\bullet})\right)$ the $n$-th Hochschild cohomology space (group) of $\wideparen{U}(\mathscr{L})$ with coefficients in $\mathcal{M}^{\bullet}$. 
\end{enumerate}    
Additionally, if $\mathscr{A}$ is an Ind-Banach algebra and $\mathcal{M}\in \operatorname{D}(\mathscr{A}^e)$, we define:
\begin{equation*}
    \operatorname{HH}^{\bullet}(\mathscr{A},\mathcal{M}^{\bullet})=R\underline{\operatorname{Hom}}_{\mathscr{A}^e}(\mathscr{A},\mathcal{M}^{\bullet}),
\end{equation*}
and  call this the Hochschild cohomology complex of $\mathscr{A}$ with coefficients in $\mathcal{M}^{\bullet}$.
\end{defi}
Notice that for any $\mathcal{M}\in \Mod_{\Indban}(\wideparen{U}(\mathscr{L})^e)$, the $n$-th Hochschild cohomology sheaf $\mathcal{HH}^n(\wideparen{U}(\mathscr{L}),\mathcal{M})$ is (generally) not an object in $\operatorname{Shv}(X,\Indban)$, but rather an object in the left heart  $\operatorname{Shv}(X,LH(\widehat{\mathcal{B}}c_K))$. The analogous phenomena also  holds for the Hochschild cohomology groups.\\
Ideally, we would like to show that inner Hochschild cohomology $\mathcal{HH}^{\bullet}(\wideparen{U}(\mathscr{L}),-)$ can be expressed as a derived inner homomorphism functor in the derived category of $\wideparen{U}(\mathscr{L})^e$-modules. Namely, we would like to show that for any $\mathcal{M}^{\bullet}\in\operatorname{D}(\wideparen{U}(\mathscr{L})^e))$ there is an identification:
\begin{equation*}
    \mathcal{HH}^{\bullet}(\wideparen{U}(\mathscr{L}),\mathcal{M}^{\bullet})=R\underline{\mathcal{H}om}_{\wideparen{U}(\mathscr{L})^e}(\wideparen{U}(\mathscr{L}),\mathcal{M}^{\bullet}).
\end{equation*}
Unfortunately, this does not seem to be correct. Furthermore, there is no clear way of comparing these two functors. A more sensible approach would be replacing $\mathcal{M}^{\bullet}$ by $\Delta^{-1}\mathbb{L}\Delta^E_*\mathcal{M}^{\bullet}$ in the right hand side. In this case, it can be shown that there is a natural transformation:
\begin{equation*}
   \mathcal{HH}^{\bullet}(\wideparen{U}(\mathscr{L}),-)\rightarrow R\underline{\mathcal{H}om}_{\wideparen{U}(\mathscr{L})^e}(\wideparen{U}(\mathscr{L}),\Delta^{-1}\mathbb{L}\Delta^E_*(-)), 
\end{equation*}
but we are unable to determine if this is always a natural transformation.\bigskip

Let us now paint the homological side of the picture. We start with the following observation: Let $\mathscr{A}$ be a sheaf of Ind-Banach algebras on $X$. We remark that $\mathscr{A}$ also has a canonical structure as a right $\mathscr{A}^e$-module. Indeed, a right $\mathscr{A}^e$-module is a left module over $(\mathscr{A}^e)^{\op}=\mathscr{A}^{\op}\widehat{\otimes}_K\mathscr{A}$. This is precisely the enveloping algebra of $\mathscr{A}^{\op}$. Hence, $\mathscr{A}^{\op}$ has a canonical structure as a right $\mathscr{A}^e$-module. We define the structure on $\mathscr{A}$ via the identity $\mathscr{A}=\mathscr{A}^{\op}$ as objects in $\operatorname{Shv}(X,\Indban)$. We may generalize this as follows:
\begin{Lemma}\label{Lemma right module structure on pushforward}
$\EDelta\wideparen{U}(\mathscr{L})$ is canonically a complex of right $\wideparen{E}(\mathscr{L})$-modules.    
\end{Lemma}
\begin{proof}
Let $A=p_1^{-1}\wideparen{U}(\mathscr{L})\overrightarrow{\otimes}_Kp_2^{-1}\wideparen{U}(\mathscr{L})^{\op}$ for simplicity. By the discussion above, we know that $\Delta_*\wideparen{U}(\mathscr{L})$ is a left module over $A^{\op}$, so we have a complex of right $\wideparen{E}(\mathscr{L})$-modules:
\begin{multline*}
    \wideparen{E}(\mathscr{L})^{\op}\overrightarrow{\otimes}_{A^{\op}}^{\mathbb{L}}\Delta_*\wideparen{U}(\mathscr{L})=(\OX_{X^2}\overrightarrow{\otimes}^{\mathbb{L}}_{p_1^{-1}\OX_X\overrightarrow{\otimes}_Kp_1^{-1}\OX_X}A)^{\op}\overrightarrow{\otimes}_{A^{\op}}^{\mathbb{L}}\Delta_*\wideparen{U}(\mathscr{L})\\
    =\OX_{X^2}\overrightarrow{\otimes}^{\mathbb{L}}_{p_1^{-1}\OX_X\overrightarrow{\otimes}_Kp_1^{-1}\OX_X}\Delta_*\wideparen{U}(\mathscr{L})=\EDelta\wideparen{U}(\mathscr{L}),
\end{multline*}
and this is precisely what we wanted to show.
\end{proof}
Thus, we are free to regard $\EDelta\wideparen{U}(\mathscr{L})$ as a right Ind-Banach $\wideparen{E}(\mathscr{L})$-module, and we will do so without further notice. We now have the following definition:
\begin{defi}
For any  $\mathcal{M}^{\bullet}\in\operatorname{D}(\wideparen{U}(\mathscr{L})^e)$ we define the following objects: 
\begin{enumerate}[label=(\roman*)]
    \item The inner Hochschild homology complex of $\wideparen{U}(\mathscr{L})$ with coefficients in $\mathcal{M}^{\bullet}$ is the following complex in $\operatorname{D}(\operatorname{Shv}(X,\Indban))$:
\begin{equation*}
  \mathcal{HH}_{\bullet}(\wideparen{U}(\mathscr{L}),\mathcal{M}^{\bullet}):= \Delta^{-1}\left(\EDelta\wideparen{U}(\mathscr{L})\overrightarrow{\otimes}^{\mathbb{L}}_{\wideparen{E}(\mathscr{L})}\EDelta\mathcal{M}^{\bullet}\right),
\end{equation*}
and we call $\mathcal{HH}_n(\wideparen{U}(\mathscr{L}),\mathcal{M}^{\bullet}):=\operatorname{H}^{-n}\left(\mathcal{HH}_{\bullet}(\wideparen{U}(\mathscr{L}),\mathcal{M}^{\bullet})\right)$ the $n$-th Hochschild homology sheaf of $\wideparen{U}(\mathscr{L})$ with coefficients in $\mathcal{M}^{\bullet}$. 
\item The Hochschild homology complex of $\wideparen{U}(\mathscr{L})$ with coefficients in $\mathcal{M}^{\bullet}$ is the following complex in $\operatorname{D}(\Indban)$:
\begin{equation*}
    \operatorname{HH}_{\bullet}(\wideparen{U}(\mathscr{L}),\mathcal{M}^{\bullet})=R\Gamma(X,\mathcal{HH}_{\bullet}(\wideparen{U}(\mathscr{L}),\mathcal{M}^{\bullet})).
\end{equation*}
As before, we call $\operatorname{HH}_{n}(\wideparen{U}(\mathscr{L}),\mathcal{M}^{\bullet})=\operatorname{H}^{-n}\left(\operatorname{HH}_{\bullet}(\wideparen{U}(\mathscr{L}),\mathcal{M}^{\bullet})\right)$ the $n$-th Hochschild homology space (group) of $\wideparen{U}(\mathscr{L})$ with coefficients in $\mathcal{M}^{\bullet}$. 
\end{enumerate}    
Additionally, if $\mathscr{A}$ is an Ind-Banach algebra and $\mathcal{M}\in \operatorname{D}(\mathscr{A}^e)$, we define:
\begin{equation*}
    \operatorname{HH}_{\bullet}(\mathscr{A},\mathcal{M}^{\bullet})=\mathscr{A}\overrightarrow{\otimes}^{\mathbb{L}}_{\mathscr{A}^e}\mathcal{M}^{\bullet}
\end{equation*}
and  call this the Hochschild homology complex of $\mathscr{A}$ with coefficients in $\mathcal{M}^{\bullet}$.
\end{defi}
Unlike in the cohomological case, we can show the following proposition:
\begin{prop}\label{prop bi-enveloping algebra and HHo}
There are canonical natural equivalences of functors:
\begin{multline*}
    \mathcal{HH}_{\bullet}(\wideparen{U}(\mathscr{L}),-)\xrightarrow[]{\simeq}\wideparen{U}(\mathscr{L})\overrightarrow{\otimes}^{\mathbb{L}}_{\wideparen{U}(\mathscr{L})^e}\left(\Delta^{-1}\EDelta(-)\right)\\
    \xrightarrow[]{\simeq}\left(\Delta^{-1}\EDelta\wideparen{U}(\mathscr{L})\right)\overrightarrow{\otimes}^{\mathbb{L}}_{\wideparen{U}(\mathscr{L})^e}-.
\end{multline*}    
Moreover, we also have an equivalence $\Delta^{-1}\EDelta(-)=\Delta^{-1}\OX_{X^2}\overrightarrow{\otimes}_{\OX_X\overrightarrow{\otimes}_K\OX_X}^{\mathbb{L}}(-)$.
\end{prop}
\begin{proof}
In order to simplify notation, let $A=p_1^{-1}\wideparen{U}(\mathscr{L})\overrightarrow{\otimes}_Kp_2^{-1}\wideparen{U}(\mathscr{L})^{\op}$. Choose any $\mathcal{M}^{\bullet}\in \operatorname{D}(\wideparen{U}(\mathscr{L})^e)$. We have the following chain of identities:
\begin{align*}
    \mathcal{HH}_{\bullet}(\wideparen{U}(\mathscr{L}),\mathcal{M}^{\bullet}):=& \Delta^{-1}\left(\EDelta\wideparen{U}(\mathscr{L})\overrightarrow{\otimes}^{\mathbb{L}}_{\wideparen{E}(\mathscr{L})}\EDelta\mathcal{M}^{\bullet}\right)\\ =&\Delta^{-1}\left(\left( \Delta_*\wideparen{U}(\mathscr{L})\overrightarrow{\otimes}_A^{\mathbb{L}}\wideparen{E}(\mathscr{L})\right)\overrightarrow{\otimes}^{\mathbb{L}}_{\wideparen{E}(\mathscr{L})}\left(\wideparen{E}(\mathscr{L})\overrightarrow{\otimes}_{A}^{\mathbb{L}}\Delta_*\mathcal{M}^{\bullet}\right)\right)\\
    =&\Delta^{-1}\left( \Delta_*\wideparen{U}(\mathscr{L})\overrightarrow{\otimes}_A^{\mathbb{L}}\wideparen{E}(\mathscr{L})\overrightarrow{\otimes}_{A}^{\mathbb{L}}\Delta_*\mathcal{M}^{\bullet}\right)\\
    =&\wideparen{U}(\mathscr{L})\overrightarrow{\otimes}_{\wideparen{U}(\mathscr{L})^e}^{\mathbb{L}}\Delta^{-1}\wideparen{E}(\mathscr{L})\overrightarrow{\otimes}_{\wideparen{U}(\mathscr{L})^e}^{\mathbb{L}}\mathcal{M}^{\bullet}.
\end{align*}
By construction, we have $\Delta^{-1}\wideparen{E}(\mathscr{L})=\Delta^{-1}\OX_{X^2}\overrightarrow{\otimes}_{\OX_X\overrightarrow{\otimes}_K\OX_X}^{\mathbb{L}}\wideparen{U}(\mathscr{L})^e$. Hence, we may rewrite the last complex of sheaves of Ind-Banach spaces as:
\begin{equation*}
    \wideparen{U}(\mathscr{L})\overrightarrow{\otimes}_{\wideparen{U}(\mathscr{L})^e}^{\mathbb{L}}\Delta^{-1}\OX_{X^2}\overrightarrow{\otimes}_{\OX_X\overrightarrow{\otimes}_K\OX_X}^{\mathbb{L}}\mathcal{M}^{\bullet}=\wideparen{U}(\mathscr{L})\overrightarrow{\otimes}^{\mathbb{L}}_{\wideparen{U}(\mathscr{L})^e}\Delta^{-1}\EDelta\mathcal{M}^{\bullet},
\end{equation*}
which is the first identity we wanted to show. The remaining ones are analogous.
\end{proof}

\section{Co-admissible modules over the bi-enveloping algebra}\label{Chapter co-admissible modules over the bi-enveloping algebra}
Let $X$ be a smooth and separated rigid analytic space with a Lie algebroid $\mathscr{L}$. In the previous chapter, we introduced two sheaves of complete bornological algebras, $\wideparen{U}(\mathscr{L}^2)$ and $\wideparen{E}(\mathscr{L})$, and showcased the relevance of the sheaf of bi-enveloping algebras $\wideparen{E}(\mathscr{L})$ in the study of Hochschild cohomology and homology. This chapter is devoted to bringing $\wideparen{U}(\mathscr{L}^2)$ into the limelight. By construction, $\wideparen{U}(\mathscr{L}^2)$  is the completion of the sheaf of enveloping algebras of the Lie algebroid $\mathscr{L}^2$ on $X^2$. In particular, its (derived) category of Ind-Banach modules presents great homological behavior, and has been extensively studied in \cite{bode2021operations}. The goal of this section is showing that many interesting constructions in  $ \Mod_{\Indban}(\wideparen{U}(\mathscr{L}^2))$, such as sheaves of co-admissible modules, can be translated to the setting of $\wideparen{E}(\mathscr{L})$-modules.\\
In order to do this, we will develop a formalism of side-switching operations for bimodules. In particular, we will obtain a pair of mutually inverse equivalences of quasi-abelian categories:
\begin{equation*}
    \operatorname{S}:\Mod_{\Indban}(\wideparen{U}(\mathscr{L}^2))\leftrightarrows \Mod_{\Indban}(\wideparen{E}(\mathscr{L})):\operatorname{S}^{-1},
\end{equation*}
which we call the side-switching equivalence. As shown above, if $\mathscr{L}$ is globally free on $X$, then there is a (non-canonical) isomorphism of sheaves of complete bornological algebras:
\begin{equation*}
   \wideparen{\mathbb{T}}: \wideparen{U}(\mathscr{L}^2)\rightarrow \wideparen{E}(\mathscr{L}).
\end{equation*}
In this situation, the previous equivalence can be explicitly obtained via extension of scalars along $\wideparen{\mathbb{T}}$. In the general case, the description of $\operatorname{S}$ is more involved, and will be discussed below.\bigskip 

The fact that $\wideparen{U}(\mathscr{L}^2)$ and $\wideparen{E}(\mathscr{L})$ are locally isomorphic has very strong repercussions in the structure of $\wideparen{E}(\mathscr{L})$. In particular, it shows that $\wideparen{E}(\mathscr{L})$ is locally a Fréchet-Stein algebra with $c$-flat restriction maps. Hence, we can define the (abelian) category of co-admissible $\wideparen{E}(\mathscr{L})$-modules, which we denote by $\mathcal{C}(\wideparen{E}(\mathscr{L}))$. As one may suspect, the side-switching equivalence restricts to an equivalence of abelian categories:
\begin{equation*}
    \operatorname{S}:\mathcal{C}(\wideparen{U}(\mathscr{L}^2))\leftrightarrows \mathcal{C}(\wideparen{E}(\mathscr{L})):\operatorname{S}^{-1}.
\end{equation*}
Even better, we can work in the derived setting to obtain a category of $\mathcal{C}$-complexes of $\wideparen{E}(\mathscr{L})$-modules, and the derived version of the side-switching equivalence induces an equivalence of triangulated categories between the corresponding categories of $\mathcal{C}$-complexes over $\wideparen{E}(\mathscr{L})$ and $\wideparen{U}(\mathscr{L}^2)$.\bigskip

Another upshot of the side-switching equivalence is that we obtain new formulas to calculate the inner Hochschild (co)-homology complex. Namely, we define the immersion functor as the following composition:
\begin{equation*}
    \mathbb{L}\Delta_*^{\operatorname{S}}:\operatorname{D}(\wideparen{U}(\mathscr{L})^e)\xrightarrow[]{\mathbb{L}\Delta_*^E}\operatorname{D}(\wideparen{E}(\mathscr{L}))\xrightarrow[]{\operatorname{S}^{-1}}\operatorname{D}(\wideparen{U}(\mathscr{L}^2)).
\end{equation*}
As a consequence of the derived side-switching equivalence, we obtain the following formula for the inner Hochschild cohomology complex:
\begin{equation*}
    \mathcal{HH}^{\bullet}(\mathcal{M}^{\bullet})=\Delta^{-1}R\underline{\mathcal{H}om}_{\wideparen{U}(\mathscr{L}^2)}(\Ifunct\wideparen{U}(\mathscr{L}),\Ifunct\mathcal{M}^{\bullet}),
\end{equation*}
where $\mathcal{M}^{\bullet}\in \operatorname{D}(\wideparen{U}(\mathscr{L})^e)$. We point out that there are right module versions for all of the constructions above, which lead to analogous results for Hochschild homology.
\subsection{Side-switching operations for bimodules}\label{Section side-switching operations}
Let $X$ be a smooth and separated rigid analytic variety. In this section, we adapt the formalism of side-switching operations for Ind-Banach $\wideparen{\D}$-modules developed in \cite[Section 3]{ardakov2015d} and \cite[Section 6.3]{bode2021operations} to the context of bimodules. As in previous sections, we will work in greater generality, and study the case of a general Lie algebroid on $X$.\bigskip

Let us start by recalling the algebraic situation: Let $A$, $B$ be two $K$-algebras, with smooth Lie algebroids $L_1$ and $L_2$. We start with the following:
\begin{Lemma}
Assume $L_2$ is locally free of rank $n$ as a $B$-module, and let:
\begin{equation*}
    \Omega_{L_2}=\Hom_B(\bigwedge_B^nL_2,B).
\end{equation*}
Then $\Omega_{L_2}$ is a right $U(L_2)$-module, with action given by:
\begin{equation*}
    x\alpha=-\lambda_{\alpha}(x), \textnormal{ for } x\in \Omega_{L_2}, \textnormal{ and } \alpha\in L_2,
\end{equation*}
where $\lambda_{\alpha}(x)$ denotes the Lie derivative.
\end{Lemma}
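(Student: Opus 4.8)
The claim is that $\Omega_{L_2} = \Hom_B(\bigwedge_B^n L_2, B)$ carries a natural right $U(L_2)$-module structure, with the action on degree-one elements $\alpha \in L_2$ given by the negative Lie derivative $x\alpha = -\lambda_\alpha(x)$. The plan is to first pin down what the Lie derivative $\lambda_\alpha$ is on $\Omega_{L_2}$, then verify that the proposed action satisfies the two compatibility axioms needed to extend from the generators $B$ and $L_2$ to all of $U(L_2)$, namely $B$-semilinearity in the appropriate sense and the Lie-algebra relation $x[\alpha,\beta] = (x\alpha)\beta - (x\beta)\alpha$.

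First I would recall the definition of the Lie derivative. For $\alpha \in L_2$, the anchor $\rho(\alpha) \in \Der_K(B)$ together with the Lie bracket on $L_2$ induces a derivation-like operator on $\bigwedge_B^n L_2$ via the Leibniz rule $\lambda_\alpha(\beta_1 \wedge \cdots \wedge \beta_n) = \sum_i \beta_1 \wedge \cdots \wedge [\alpha,\beta_i] \wedge \cdots \wedge \beta_n$, which is $K$-linear but only $\rho(\alpha)$-semilinear over $B$; dualizing, one gets an operator on $\Omega_{L_2}$ by the rule $(\lambda_\alpha x)(\omega) = \rho(\alpha)\big(x(\omega)\big) - x\big(\lambda_\alpha \omega\big)$ for $\omega \in \bigwedge_B^n L_2$. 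One checks directly from the Lie-Rinehart axiom $[\alpha, b\beta] = b[\alpha,\beta] + \rho(\alpha)(b)\beta$ that $\lambda_\alpha$ is well-defined on the top wedge power and that $\lambda_{b\alpha}$ and $\lambda_\alpha$ are related by the expected formula involving $\rho$.

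Next, to produce the right $U(L_2)$-module structure, I would invoke the universal property of $U(L_2)$: a right $U(L_2)$-module structure on a $B$-module $M$ is the same as a right action of $L_2$ satisfying (i) $m(b\alpha) = (mb)\alpha$ is replaced by the correct semilinearity $m(\alpha b) = (m\alpha)b + \rho(\alpha)(b) m$ dualized appropriately — more precisely, for a \emph{right} module one needs $(mb)\alpha = m(b\alpha) + \rho(\alpha)(b)m$ reorganized so that $B$ and $L_2$ together generate $U(L_2)^{\op}$ compatibly — and (ii) $m[\alpha,\beta] = (m\alpha)\beta - (m\beta)\alpha$. Setting $x\alpha := -\lambda_\alpha(x)$, axiom (ii) amounts to $-\lambda_{[\alpha,\beta]} = -\lambda_\alpha(-\lambda_\beta) + \lambda_\beta(-\lambda_\alpha)$, i.e. $\lambda_{[\alpha,\beta]} = \lambda_\alpha \lambda_\beta - \lambda_\beta \lambda_\alpha = [\lambda_\alpha, \lambda_\beta]$, which is precisely the statement that $\alpha \mapsto \lambda_\alpha$ is a Lie-algebra homomorphism into $\End_K(\bigwedge_B^n L_2)$ — and this follows from the Jacobi identity for the bracket on $L_2$ applied termwise in the Leibniz expansion. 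The semilinearity axiom (i) is exactly the content of the formula relating $\lambda_{b\alpha}$ to $b\lambda_\alpha$; the sign in $x\alpha = -\lambda_\alpha(x)$ is what makes the left/right bookkeeping come out consistently, and this is the only subtle point.

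The main obstacle, such as it is, is bookkeeping the signs and the left-versus-right conventions: one must be careful that $\Omega_{L_2}$ is naturally a \emph{left} $B$-module but we want a \emph{right} $U(L_2)$-module, so the semilinearity condition that must be checked is $(bx)\alpha = b(x\alpha) - \rho(\alpha)(b)\,x$ (or the sign-appropriate variant), and one must confirm that this is consistent with $U(L_2)$ being generated by $B$ and $L_2$ with the relations $b \cdot \alpha - \alpha \cdot b = -\rho(\alpha)(b)$ in $U(L_2)^{\op}$. Once the two generator-level identities are verified, the extension to all of $U(L_2)$ is automatic by the universal property, so there is nothing further to do. I would present the Leibniz formula for $\lambda_\alpha$, verify well-definedness on $\bigwedge^n_B L_2$ using the Lie-Rinehart identity, dualize, and then check $[\lambda_\alpha,\lambda_\beta]=\lambda_{[\alpha,\beta]}$ and the semilinearity relation by short direct computations.
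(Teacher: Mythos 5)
Your plan is correct, and it is essentially the standard construction: the paper does not prove this lemma itself but delegates it to Huebschmann's Proposition 2.8, and the argument you outline (Leibniz-rule Lie derivative on $\bigwedge_B^nL_2$, dualization, then verification of the two generator-level relations and extension by the universal property of $U(L_2)$) is exactly the argument carried out there. The only step deserving explicit emphasis when you write it up is the identity $\lambda_{b\alpha}\,\omega=b\,\lambda_{\alpha}\omega-\rho(\alpha)(b)\,\omega$ on $\bigwedge_B^nL_2$, which is what makes your semilinearity axiom come out right after dualizing and inserting the sign; it holds only on the \emph{top} exterior power (expand $\alpha$ in a local basis and note that all but the diagonal terms of $\sum_i\beta_1\wedge\cdots\wedge\rho(\beta_i)(b)\alpha\wedge\cdots\wedge\beta_n$ vanish), so this is precisely where the rank-$n$ hypothesis enters.
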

\begin{proof}
This is shown in \cite[Proposition 2.8]{huebschmann2004duality}. For the construction of the Lie derivative, check \cite[pp. 10]{huebschmann2004duality}  and \cite[Equation 4.3]{rinehart1963differential}.
\end{proof}

The module $\Omega_{L_2}$ can be used to construct a Morita equivalence between $U(L_2)$ and $U(L_2)^{\op}$. For convenience of the reader, we now recall the construction from \cite[3.1]{ardakov2015d}: Let $M$ be a left $U(L_2)$-module, and consider the $B$-module $\Omega_{L_2}\otimes_BM$. This module has a canonical $U(L_2)^{\op}$-module structure given by the following expression:
\begin{equation}\label{equation side-changing}
    (d\otimes m)\alpha = (d\alpha)\otimes m- d\otimes(\alpha m),\textnormal{ for } d \in \Omega_{L_2}, m\in M,\textnormal{ and } \alpha\in L_2.
\end{equation}
Assume in addition that $M$ is a $U(L_1)\otimes_K U(L_2)$-module. Then $M$ has structures as a left $U(L_1)$-module, and left $U(L_2)$-module, which satisfy the following compatibility condition:
For $a\in U(L_1)$, $b\in U(L_2)$, let $a\cdot:M\rightarrow M$ be the $K$-linear endomorphism induced by the action of $a$, and $b\cdot:M\rightarrow M$ be the $K$-linear endomorphism induced by the action of $b$. Then  
 $a\cdot:M\rightarrow M$ is $U(L_2)$-linear, and 
$b\cdot:M\rightarrow M$ is $U(L_1)$-linear.\\
In this case, for any $x\in U(L_1)$, the map $x:M\rightarrow M$ is $B$-linear. Hence, we get an induced map:
\begin{equation*}
    \Id\otimes x:\Omega_{L_2}\otimes_BM\rightarrow \Omega_{L_2}\otimes_BM.
\end{equation*}
In particular, $\Omega_{L_2}\otimes_BM$ is a $U(L_1)$-module, and for any $\beta\in L_1,\alpha\in L_2$ we have:
\begin{equation}\label{equation 2 side-changing}
     (\beta(d\otimes m))\alpha= (d\alpha)\otimes \beta m- d\otimes(\alpha \beta m) =(d\alpha)\otimes \beta m- d\otimes(\beta\alpha  m)=\beta((d\otimes m)\alpha).
\end{equation}
Thus, $\Omega_{L_2}\otimes_BM$ is a $U(L_1)\otimes_K U(L_2)^{\op}$-module. Furthermore, a straightforward calculation shows that this construction is functorial.\bigskip

Let now $M$ be a right $U(L_2)$-module, and consider the $B$-module:
\begin{equation*}
   \Hom_B(\Omega_{L_2},M)=\Omega_{L_2}^{-1}\otimes_BM.
\end{equation*}
Again, this has a canonical structure as a left $U(L_2)$-module given by:
\begin{equation*}
    \alpha \phi(d)=\phi(d\alpha)-\phi(d)\alpha, \textnormal{ for } d \in \Omega_{L_2}, \phi\in \Hom_B(\Omega_{L_2},M),\textnormal{ and } \alpha\in L_2.
\end{equation*}
Assume that $M$ is a $U(L_1)\otimes_K U(L_2)^{\op}$-module. Arguing as above, it follows that the action of every $x\in U(L_1)$ is $B$-linear. Hence, we have a structure of a left $U(L_1)$-module on  $\Hom_B(\Omega_{L_2},M)$ given by composition. Furthermore, it is clear that this makes $\Hom_B(\Omega_{L_2},M)$ a $U(L_1)\otimes_K U(L_2)$-module, and that this operation is functorial. We may condense this discussion into the following proposition:
\begin{prop}\label{prop side-changing for abstract bimodules}
There is an equivalence of abelian categories:
\begin{equation*}
    \Omega_{L_2}^{-1}\otimes_B-:\Mod(U(L_1)\otimes_K U(L_2)^{\op})\leftrightarrows \Mod(U(L_1)\otimes_K U(L_2)):\Omega_{L_2}\otimes_B-\textnormal{.}
\end{equation*}
\end{prop}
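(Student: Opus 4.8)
The plan is to prove the two operations
$\Omega_{L_2}^{-1}\otimes_B-$ and $\Omega_{L_2}\otimes_B-$ are mutually quasi-inverse and then observe they are exact. First I would record that both are genuinely \emph{functors} between the stated categories: the discussion preceding the proposition already exhibits, for a left $U(L_1)\otimes_K U(L_2)^{\op}$-module $M$, a left $U(L_2)$-module structure on $\Hom_B(\Omega_{L_2},M)=\Omega_{L_2}^{-1}\otimes_B M$ via $\alpha\phi(d)=\phi(d\alpha)-\phi(d)\alpha$, and a left $U(L_1)$-module structure by post-composition; the compatibility computation (the analogue of equation \eqref{equation 2 side-changing}) shows these commute and hence assemble into a $U(L_1)\otimes_K U(L_2)$-module structure. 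Symmetrically, $\Omega_{L_2}\otimes_B-$ takes a $U(L_1)\otimes_K U(L_2)$-module to a $U(L_1)\otimes_K U(L_2)^{\op}$-module via \eqref{equation side-changing} together with $\Id\otimes x$ for $x\in U(L_1)$. Functoriality in morphisms is the straightforward check that $B$-linear $U(L_1)\otimes_K U(L_2)$- (resp.\ $U(L_2)^{\op}$-) linear maps are carried to maps of the same type; I would state this and move on.

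Next I would establish the adjunction/inverse relationship. The key input is the classical fact (used already in \cite{ardakov2015d}, \cite{huebschmann2004duality}) that $\Omega_{L_2}$ is an invertible $B$-module — since $L_2$ is locally free of rank $n$, $\Omega_{L_2}=\Hom_B(\bigwedge_B^n L_2, B)$ is a line bundle, with inverse $\Omega_{L_2}^{-1}=\bigwedge_B^n L_2$ — so that the unit and counit $B$-module maps $B\to \Omega_{L_2}^{-1}\otimes_B\Omega_{L_2}$ and $\Omega_{L_2}\otimes_B\Omega_{L_2}^{-1}\to B$ are isomorphisms. The substance is that, when one forms $\Omega_{L_2}\otimes_B(\Omega_{L_2}^{-1}\otimes_B M)$ and traces through the two applications of the side-switching formulas \eqref{equation side-changing}, the resulting $U(L_2)^{\op}$-action (and, by inspection, the $U(L_1)$-action) agrees under the canonical $B$-linear isomorphism with the original action on $M$; and likewise in the other order. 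I would verify this on the level of the $L_2$-action by a direct computation: writing an element as $d\otimes(d^{-1}\otimes m)$, the two twisted Leibniz terms telescope, leaving exactly the action on $M$ — this is essentially the observation that the Lie-derivative twist on $\Omega_{L_2}$ cancels the twist on $\Omega_{L_2}^{-1}$. One then checks naturality of these isomorphisms in $M$, which is formal. Alternatively, and perhaps more cleanly, I would first prove $\Omega_{L_2}^{-1}\otimes_B-$ is left adjoint to $\Omega_{L_2}\otimes_B-$ (using the $B$-module tensor-hom adjunction and checking it respects all the module structures), and then show unit and counit are isomorphisms.

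Finally, exactness: both functors are, on underlying $B$-modules, given by tensoring with an invertible (hence flat and faithfully flat) $B$-module, so they are exact; since a sequence of $U(L_1)\otimes_K U(L_2)$-modules (or $U(L_1)\otimes_K U(L_2)^{\op}$-modules) is exact iff it is exact as a sequence of $B$-modules, both functors are exact, and a pair of mutually quasi-inverse exact functors is an equivalence of abelian categories. I would also remark that the $A$-module side of the algebra plays no role here — the whole argument is relative to the second factor — and that the analogous statement with the roles of left and right on the $L_2$-slot reversed, or with an overall twist by $\Omega_{L_2}$ on both sides, follows identically. The main obstacle is purely bookkeeping: making sure that in the composite $\Omega_{L_2}\otimes_B(\Omega_{L_2}^{-1}\otimes_B M)$ the $U(L_1)$-action, the $U(L_2)$-action, and their commutation are \emph{all} carried correctly by the canonical isomorphism, i.e.\ that no sign or Leibniz term is dropped; there is no conceptual difficulty beyond the invertibility of $\Omega_{L_2}$.
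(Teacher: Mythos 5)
Your proposal is correct and follows essentially the same route as the paper: the paper simply cites \cite[Proposition 3.1]{ardakov2015d} for the mutually inverse equivalence on the $U(L_2)$-side and then checks that the unit $M\rightarrow \Hom_B(\Omega_{L_2},\Omega_{L_2}\otimes_B M)$ and counit $\Omega_{L_2}\otimes_B\Hom_B(\Omega_{L_2},N)\rightarrow N$ are $U(L_1)$-linear, which is exactly your "telescoping Leibniz terms" plus postcomposition-by-$\Id\otimes x$ verification spelled out in full. The only cosmetic difference is that you reprove the cited input rather than quoting it, and you separately note exactness (which is in any case automatic for an equivalence of abelian categories).
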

\begin{proof}
  This follows by \cite[Proposition 3.1]{ardakov2015d}. One just needs to check that the canonical isomorphisms:
  \begin{equation*}
      M\rightarrow \Hom_B(\Omega_{L_2},\Omega_{L_2}\otimes_B M), \quad\Omega_{L_2}\otimes_B\Hom_B(\Omega_{L_2},N) \rightarrow N,
  \end{equation*}
  are $U(L_1)$-linear, and this is clear by the construction of the action.
\end{proof}
Following \cite{hotta2007d}, we can get a more explicit version of this equivalences whenever $L_2$ is free as a $B$-module. Indeed, assume that $L_2$ is a free $B$-module, with basis $\alpha_1,\cdots,\alpha_n$, and let $d\alpha_1,\cdots,d\alpha_n$ denote the dual basis of the dual vector bundle $\Hom_B(L_2,B)$. In particular,  we have:
\begin{equation*}
    \Omega_{L_2}=Bd\alpha_1\wedge\cdots\wedge d\alpha_n.
\end{equation*}
As in Proposition \ref{prop anti-homomorphism}, there is an isomorphism of algebras $T:U(L_2)\rightarrow U(L_2)^{\op}$ uniquely determined by the expression:
\begin{equation*}
    T(\sum f_r \alpha_1^{r_1}\cdots \alpha_n^{r_n})=\left( \sum(-1)^{\sum r_i}\alpha_1^{r_1}\cdots\alpha_d^{r_d}f_r\right)^t:=\sum(-1)^{\sum r_i}\alpha_1^{r_1}\cdots\alpha_d^{r_d}f_r.
\end{equation*}
Thus, we get an identification of the categories of left and right $U(L_2)$-modules given by extension of scalars along the map $T:U(L_2)\rightarrow U(L_2)^{\op}$. The idea is comparing this identification with the one obtained above.\\
It can be shown that the action of $\beta\in U(L_2)$ on an element $fd\alpha_1\wedge\cdots\wedge d\alpha_n\in \Omega_{L_2}$ is given by the following formula:
\begin{equation*}
    \beta(fd\alpha_1\wedge\cdots\wedge d\alpha_n)=(\beta^tf)d\alpha_1\wedge\cdots\wedge d\alpha_n.
\end{equation*}
Hence, we arrive at the following proposition:
\begin{prop}\label{prop equivalence of pullback and side-change for modules}
Let $M$ be an $U(L_2)$-module. Then we have a functorial isomorphism of $U(L_2)$-modules:
\begin{equation*}
    \Omega_{L_2}\otimes_BM\rightarrow T^*M=U(L_2)^{\op}\otimes_{U(L_2)}M.
\end{equation*}
In particular, there is a natural equivalence of functors $ \Omega_{L_2}\otimes_B- \simeq T^*$. Similarly, there is also a natural equivalence $\Omega_{L_2}^{-1}\otimes_B- \simeq T^{-1,*}$.
\end{prop}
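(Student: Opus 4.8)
\textbf{Proof plan for Proposition \ref{prop equivalence of pullback and side-change for modules}.}

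The plan is to make all identifications fully explicit and then check $U(L_2)$-linearity by hand. First I would recall precisely how each of the two functors acts. On one side, $T^*M = U(L_2)\otimes_{U(L_2)^{\op}}M$ is the extension of scalars along the isomorphism of filtered $K$-algebras $T:U(L_2)\to U(L_2)^{\op}$ from Proposition \ref{prop anti-homomorphism}; concretely, since $T$ is an isomorphism, $T^*M$ is just $M$ as a $K$-vector space (in fact as a $B$-module, because $T$ is the identity on $B$), with the new $U(L_2)$-action given by $\beta \cdot m = \beta^t m$, where $\beta^t$ is computed in the \emph{right} module $M$. On the other side, $\Omega_{L_2}^{-1}\otimes_B M = \Hom_B(\Omega_{L_2},M)$, and since $L_2$ is free with basis $\alpha_1,\dots,\alpha_n$ we have $\Omega_{L_2} = B\, d\alpha_1\wedge\cdots\wedge d\alpha_n$, a free rank-one $B$-module, so evaluation at the generator $\omega := d\alpha_1\wedge\cdots\wedge d\alpha_n$ gives a $B$-linear isomorphism $\Hom_B(\Omega_{L_2},M)\xrightarrow{\ \sim\ }M$, $\phi\mapsto \phi(\omega)$.

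Next I would compose these two $B$-linear identifications to obtain a candidate $B$-linear isomorphism $c_M:\Omega_{L_2}^{-1}\otimes_B M\to T^*M$, and verify that it is $U(L_2)$-linear. This is where the key input enters: the formula, recalled just above the statement, that the right $U(L_2)$-action on $\Omega_{L_2}$ satisfies $\beta\cdot(f\omega) = (\beta^t f)\,\omega$ for $\beta\in U(L_2)$, $f\in B$. Combining this with the definition of the left $U(L_2)$-action on $\Hom_B(\Omega_{L_2},M)$ — namely $(\alpha\phi)(d) = \phi(d\alpha) - \phi(d)\alpha$ for $\alpha\in L_2$, extended to all of $U(L_2)$ — one computes $(\alpha\phi)(\omega)$ in terms of $\phi(\omega)$ and the right action of $\alpha$ on $M$, and checks this matches $\alpha^t\cdot\phi(\omega)$ in $M$. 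For $\alpha\in L_2$ this is a short direct calculation using $\alpha^t = -\alpha$ (which follows from \eqref{equation anti-automorphism}) together with $\omega\alpha = -\lambda_\alpha(\omega)$; for general $\beta\in U(L_2)$ it follows because both sides are multiplicative/antimultiplicative in the appropriate sense and $L_2$ generates $U(L_2)$ as a $K$-algebra over $B$, so it suffices to check on $B$ and on $L_2$. Functoriality in $M$ is immediate since every map in the construction (tensoring with $\Omega_{L_2}^{-1}$, evaluation at $\omega$, extension of scalars along $T$) is natural.

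Finally, the statement $\Omega_{L_2}\otimes_B-\simeq T^*$ is obtained by the same argument applied to $T^{-1}$ in place of $T$, using part $(i)$ of Proposition \ref{prop anti-homomorphism} that $T$ and $T^{-1}$ are mutually inverse, together with the equivalence of Proposition \ref{prop side-changing for abstract bimodules} (with $L_1 = 0$, i.e.\ $A=K$) identifying $\Omega_{L_2}\otimes_B-$ as a quasi-inverse of $\Omega_{L_2}^{-1}\otimes_B-$; concretely one exhibits the natural isomorphism $\Omega_{L_2}\otimes_B N\to T^{-1,*}N$ for a right module $N$ and checks compatibility with the unit/counit isomorphisms $N\to \Omega_{L_2}^{-1}\otimes_B(\Omega_{L_2}\otimes_B N)$ and $\Omega_{L_2}\otimes_B(\Omega_{L_2}^{-1}\otimes_B M)\to M$. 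I expect the main obstacle to be purely bookkeeping: getting the signs right in the comparison of the Lie-derivative action on $\Omega_{L_2}$ with the transpose anti-automorphism \eqref{equation anti-automorphism}, and being careful that $\beta\mapsto\beta^t$ reverses multiplication so that the extension-of-scalars action genuinely produces a left module matching the left module structure on $\Hom_B(\Omega_{L_2},M)$. No deeper difficulty is anticipated, since freeness of $L_2$ trivializes $\Omega_{L_2}$ and reduces everything to the one-generator case.
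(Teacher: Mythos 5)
Your plan is correct and is essentially the argument the paper relies on: the paper's proof of this proposition is a one-line citation to \cite[Section 1.2]{hotta2007d}, and your explicit trivialization of $\Omega_{L_2}$ by $\omega=d\alpha_1\wedge\cdots\wedge d\alpha_n$, followed by a check of linearity on the algebra generators $B$ and $\alpha_1,\dots,\alpha_n$, is precisely the argument carried out there. One small precision: the identity $\alpha^t=-\alpha$ holds for the chosen basis vectors $\alpha_i$ but not for a general $\alpha=\sum_i f_i\alpha_i\in L_2$ (where $\alpha^t=-\alpha-\sum_i\tau(\alpha_i)(f_i)$); this is harmless, since by $B$-(semi)linearity it indeed suffices to verify the compatibility on $B$ and on the $\alpha_i$, as you note.
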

\begin{proof}
This is also  done in \cite[Section 1.2]{hotta2007d}.
\end{proof}
Let $X$ be a separated smooth rigid analytic variety with a Lie algebroid $\mathscr{L}$. The goal of this section is extending the previous discussion to the categories of Ind-Banach modules over $\wideparen{U}(\mathscr{L}^2)$, and $\wideparen{E}(\mathscr{L})$. Let us start with the following lemmas:
\begin{Lemma}\label{Lemma preparation Lemma side-changing}
Let $X,Y$ be two quasi-compact, quasi-separated rigid spaces. Then every quasi-compact admissible open $U\subset X\times Y$ admits an admissible cover $\left( U_i\right)_{i=1}^n$ satisfying the following:
\begin{enumerate}[label=(\roman*)]
    \item Each of the $U_i$ is quasi-compact and quasi-separated.
    \item $p_Y(U_i)\subset Y$ is an affinoid admissible open subspace. 
\end{enumerate}
Furthermore, if $X$ and $Y$ are affinoid, then the $U_i$ can be taken to be affinoid.
\end{Lemma}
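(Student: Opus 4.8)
The statement is purely about the $G$-topology of a product of rigid spaces, so I would forget all the sheaf-theoretic and Lie-algebroid context and work directly with admissible open subspaces. Let $X$ and $Y$ be quasi-compact and quasi-separated, and let $U \subset X \times Y$ be a quasi-compact admissible open. The plan is to build the cover in two stages: first reduce to the case where $X$ and $Y$ are affinoid, and then produce the cover explicitly in the affinoid case.

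For the reduction, pick finite affinoid covers $X = \bigcup_{k=1}^{r} X_k$ and $Y = \bigcup_{l=1}^{s} Y_l$, which exist because $X$ and $Y$ are quasi-compact. Then $\left(X_k \times Y_l\right)_{k,l}$ is a finite admissible cover of $X \times Y$ by affinoid subspaces, so $\left(U \cap (X_k \times Y_l)\right)_{k,l}$ is a finite admissible cover of $U$. Each $U \cap (X_k \times Y_l)$ is a quasi-compact (hence quasi-separated, being an admissible open of an affinoid) admissible open inside the affinoid space $X_k \times Y_l$. If the affinoid case of the statement is known, I would apply it to each $U \cap (X_k \times Y_l)$ to get an affinoid cover $\left(U_{klm}\right)_m$ with $p_{Y_l}(U_{klm})$ affinoid open in $Y_l$; since $Y_l$ is affinoid open in $Y$, the image $p_Y(U_{klm})$ is affinoid open in $Y$, and the family $\left(U_{klm}\right)_{k,l,m}$ is the desired cover. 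This also shows the ``furthermore'' is really the crux.

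For the affinoid case, write $X = \Sp(A)$, $Y = \Sp(B)$, so $X \times Y = \Sp(A \widehat{\otimes}_K B)$ is affinoid. A quasi-compact admissible open $U$ of an affinoid space admits a finite cover by rational subdomains, so it suffices to treat $U$ itself a rational subdomain $\{ |f_i| \leq |g| \text{ for all } i\}$ of $X \times Y$. Here I would invoke the standard fact (from Bosch--Güntzer--Remmert, or \cite{bosch2014lectures}) that the functions $f_i, g \in A \widehat{\otimes}_K B$ can be approximated sufficiently closely by elements of the dense subalgebra $A \otimes_K B$ without changing the rational subdomain they define; such an element is a finite sum $\sum_j a_j \otimes b_j$. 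Refining $U$ by the rational subdomains cut out by the individual $b_j$'s (i.e.\ localizing so that each $|b_j|$ is comparable to a fixed one), one arrives at rational subdomains of the form $V \times W$ with $V = \Sp(A')$ a rational subdomain of $X$ and $W = \Sp(B')$ a rational subdomain of $Y$; on such a piece $p_Y$ restricts to the projection $V \times W \to W$, whose image is $W$, an affinoid subdomain of $Y$. This gives a finite affinoid cover of $U$ with the required property.

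\textbf{Main obstacle.} The only genuine point is the affinoid case, and within it the step of passing from arbitrary defining functions $f_i, g \in A \widehat{\otimes}_K B$ of a rational subdomain to ``split'' functions in $A \otimes_K B$ and then to a cover by products $V \times W$ of subdomains. The approximation of rational data is classical, but organizing the combinatorics of the additional localizations needed to separate the tensor factors $\sum_j a_j \otimes b_j$ cleanly — so that each piece of the resulting cover really is a product of a subdomain of $X$ with an affinoid subdomain of $Y$ — is the part that requires care; everything else is a routine reduction using quasi-compactness and the definition of admissible covers.
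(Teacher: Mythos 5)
Your reduction to the case where $X$ and $Y$ are affinoid is fine, but the affinoid case itself -- which you correctly identify as the crux -- does not work as proposed. The step that fails is the claim that, after approximating the defining functions of a rational subdomain $U\subset \Sp(A\widehat{\otimes}_KB)$ by elements of $A\otimes_KB$ and localizing, one can cover $U$ by finitely many products $V\times W$ of subdomains. This is false in general: products of affinoid subdomains do not form a basis of the $G$-topology of $X\times Y$. Take $X=Y=\Sp(K\langle x\rangle)$ with $K$ of infinite residue field (e.g.\ $K=\mathbb{C}_p$) and the Weierstrass subdomain $U=\{\,|x-y|\le |\pi|\,\}$, whose defining functions already lie in $A\otimes_KB$. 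If $V\times W\subset U$, then for $v,v'\in V$ and $w\in W$ the ultrametric inequality gives $|v-v'|\le\max(|v-w|,|v'-w|)\le|\pi|$, so $V$ has diameter at most $|\pi|$; since $U$ contains the diagonal, whose first projection is the whole unit disk, no finite family of such products can cover $U$. The condition $|x\cdot 1-1\cdot y|\le|\pi|$ genuinely couples the two factors, and no amount of comparing the $|b_j|$'s decouples a sum $\sum_j a_j\otimes b_j$ because of cancellation.

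The lemma does not actually require the $U_i$ to be products -- only that $p_Y(U_i)$ be affinoid -- and the paper's proof exploits exactly this, via an ingredient your argument never uses: Theorem \ref{teo flat maps are open} ([bosch2014lectures, Corollary 9.4.2]), which says that a flat morphism of quasi-compact quasi-separated rigid spaces sends admissible opens to admissible opens. Applied to the projection $p_Y$, this shows that $p_Y(U)$ is itself a quasi-compact admissible open of $Y$; one then chooses a finite affinoid cover $(V_i)_{i=1}^n$ of $p_Y(U)$ and sets $U_i=U\cap(X\times V_i)=U\cap p_Y^{-1}(V_i)$, so that $p_Y(U_i)=p_Y(U)\cap V_i=V_i$ is affinoid (note my diagonal tube above is harmless here: its full projection is the whole disk, which is affinoid). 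This argument works directly for general quasi-compact quasi-separated $X$ and $Y$, with no reduction to the affinoid case and no manipulation of defining functions. You should replace the second half of your argument with this.
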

\begin{proof}
As $X,Y$ are quasi-compact and quasi-separated, it follows that $X\times Y$ is also quasi-compact and quasi-separated. Hence, by Theorem \ref{teo flat maps are open}, it follows that $p_Y(U)\subset Y$ is a quasi-compact admissible open. Hence, we get a factorization $U\rightarrow X\times p_Y(U)\rightarrow X\times Y$. Notice that all these maps are open immersions of quasi-compact admissible open subspaces. Therefore, as $X\times Y$ is quasi-separated, the map $U\rightarrow  X\times p_Y(U)$ is quasi-compact. Let $\left( V_i\right)_{i=1}^n$ be a finite affinoid cover of $p_Y(U)$. Then the family $U_i= U\cap \left(X\times V_i \right)$ satisfies our requirements.
\end{proof}
\begin{Lemma}\label{Lemma side-changing for bimodules}
Assume $X=\Sp(A)$ is affinoid and $\mathscr{L}$ is free. For every quasi-compact admissible open subspace $U\subset X$ there is a pair of mutually inverse equivalences of quasi-abelian categories:
\begin{multline*}
\Omega_{\mathscr{L}(X)}\overrightarrow{\otimes}_A-:\Mod_{\Indban}\left(\wideparen{U}(\mathscr{L})(U)\overrightarrow{\otimes}_K\wideparen{U}(\mathscr{L})(X)\right)\leftrightarrows\\ \Mod_{\Indban}\left(\wideparen{U}(\mathscr{L})(U)\overrightarrow{\otimes}_K\wideparen{U}(\mathscr{L})(X)^{\op}\right):\Omega_{\mathscr{L}(X)}^{-1}\overrightarrow{\otimes}_A-.
\end{multline*}
\end{Lemma}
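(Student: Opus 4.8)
The plan is to reduce the statement to the purely algebraic situation handled by Proposition \ref{prop side-changing for abstract bimodules}, applied to the Banach-algebra completions that appear in the Fréchet-Stein presentations, and then to glue along the colimit defining the Ind-Banach tensor products. First I would fix an affine formal model $\mathcal{A}\subset A$ and a free $(\mathcal{R},\mathcal{A})$-Lie lattice $\mathcal{L}$ of $L:=\mathscr{L}(X)$, so that $\wideparen{U}(\mathscr{L})(X)=\varprojlim_n \widehat{U}(\pi^n\mathcal{L})_K$, and similarly choose data for the quasi-compact open $U$ (using Lemma \ref{Lemma preparation Lemma side-changing} to cover $U$ by affinoids when needed, though since $U$ only enters through $\wideparen{U}(\mathscr{L})(U)$ as a passive left factor, I would keep it abstract). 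The point is that $\Omega_{\mathscr{L}(X)}=\Hom_A(\bigwedge^n_A L,A)$ is a free $A$-module of rank one, hence is a Banach $A$-module, and carries a right $\wideparen{U}(\mathscr{L})(X)$-module structure extending the algebraic right $U(L)$-action via the Lie derivative, exactly as in the first Lemma of Section \ref{Section side-switching operations}. Because $\Omega_{\mathscr{L}(X)}$ is free of rank one over $A$, the functor $\Omega_{\mathscr{L}(X)}\overrightarrow{\otimes}_A-$ is exact (it is, locally, just a twist) and preserves the completeness and Ind-Banach structure; the same holds for $\Omega_{\mathscr{L}(X)}^{-1}\overrightarrow{\otimes}_A-$.

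Next I would check that these functors actually land in the claimed module categories and that the two module structures — left over $\wideparen{U}(\mathscr{L})(U)$ and right/left over $\wideparen{U}(\mathscr{L})(X)$ — are compatible in the sense needed to see $\Omega_{\mathscr{L}(X)}\overrightarrow{\otimes}_A M$ as a module over the tensor-product algebra $\wideparen{U}(\mathscr{L})(U)\overrightarrow{\otimes}_K\wideparen{U}(\mathscr{L})(X)^{\op}$. Concretely, for $M\in\Mod_{\Indban}(\wideparen{U}(\mathscr{L})(U)\overrightarrow{\otimes}_K\wideparen{U}(\mathscr{L})(X))$ I would define the new right $\wideparen{U}(\mathscr{L})(X)$-action on $\Omega_{\mathscr{L}(X)}\overrightarrow{\otimes}_A M$ by the formula $(\ref{equation side-changing})$ on the dense algebraic subspace $\Omega_{L}\otimes_A M_0$ (where $M_0$ is a suitable dense $U(L_1)\otimes_KU(L)$-submodule, e.g. the image of any Banach piece), and then invoke the universal property of the colimit topology on $\wideparen{U}(\mathscr{L})(X)$ recorded after Corollary \ref{coro colimit topology integral level} — every $A$-linear map out of $U(L)$-type algebras is automatically continuous — together with density to extend it continuously and verify associativity and the module axioms. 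Equation $(\ref{equation 2 side-changing})$ shows the new right $\wideparen{U}(\mathscr{L})(X)$-action commutes with the (unchanged) left $\wideparen{U}(\mathscr{L})(U)$-action, which is precisely what is needed to get a module over the product algebra. The inverse functor $\Omega_{\mathscr{L}(X)}^{-1}\overrightarrow{\otimes}_A-$ is treated identically, using that $\Omega_{\mathscr{L}(X)}^{-1}=\Hom_A(\Omega_{\mathscr{L}(X)},A)$ is again free of rank one.

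Finally I would establish that the two functors are mutually inverse. The natural transformations $M\to \Omega_{\mathscr{L}(X)}^{-1}\overrightarrow{\otimes}_A(\Omega_{\mathscr{L}(X)}\overrightarrow{\otimes}_A M)$ and $\Omega_{\mathscr{L}(X)}\overrightarrow{\otimes}_A(\Omega_{\mathscr{L}(X)}^{-1}\overrightarrow{\otimes}_A M)\to M$ are the evident isomorphisms of sheaves of Ind-Banach spaces coming from $\Omega_{\mathscr{L}(X)}^{-1}\overrightarrow{\otimes}_A\Omega_{\mathscr{L}(X)}\simeq A$ (valid since both are rank-one free $A$-modules, using Proposition \ref{prop comparison of tensor products metrizable spaces} to identify the completed tensor product with the naive one in this finite free setting); the only thing to verify is that these maps are morphisms of modules over the product algebras. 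For the $\wideparen{U}(\mathscr{L})(U)$-factor this is immediate since that action is carried along untouched; for the $\wideparen{U}(\mathscr{L})(X)$-factor it follows by reduction to the dense algebraic level, where it is exactly the content of Proposition \ref{prop side-changing for abstract bimodules} applied to the pair $(U(\mathscr{L}(U)), U(L))$, and then by continuity. The main obstacle I anticipate is not any single step but the bookkeeping around the colimit topologies: one must be careful that the right $\wideparen{U}(\mathscr{L})(X)$-action defined via $(\ref{equation side-changing})$ on a dense subspace genuinely extends to a \emph{bounded} (hence Ind-Banach) action on the completed tensor product, and that this is independent of the auxiliary choices (formal model, Lie lattice, dense submodule). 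This is where I would lean most heavily on the independence-of-choices statement for the colimit topology on $U(L)$ following Corollary \ref{coro colimit topology integral level}, and on the exactness and monoidality properties of $\operatorname{diss}$ and $\overrightarrow{\otimes}_K$ recorded in Section \ref{Section background on sheaves of Ind-Banach spaces}.
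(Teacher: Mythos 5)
Your overall architecture — transport the algebraic side-switching of Proposition \ref{prop side-changing for abstract bimodules} along the formula $(\ref{equation side-changing})$, check compatibility of the two actions via $(\ref{equation 2 side-changing})$, and then complete — is the same as the paper's, but two steps in your execution do not go through as written.

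First, your justification for extending the right $\wideparen{U}(\mathscr{L})(X)$-action from a dense algebraic subspace is the wrong tool. The remark after Corollary \ref{coro colimit topology integral level} concerns the \emph{inductive limit} topology on the uncompleted algebra $U(L_1\times L_2)$ (finest locally convex module topology), for which maps out are automatically continuous; it says nothing about $\wideparen{U}(\mathscr{L})(X)$, which carries a Fréchet--Stein (projective limit) topology, and maps out of a Fréchet algebra are certainly not automatically continuous. What is actually needed is that the action map is \emph{bounded} for the bornology whose completion is $\wideparen{U}(\mathscr{L})(U)\widehat{\otimes}_K\wideparen{U}(\mathscr{L})(X)^{\op}$. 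The paper gets this from two inputs: (i) the completed tensor algebra is the bornological completion of $\wideparen{U}(\mathscr{L})(U)\otimes_K U(\mathscr{L})(X)$ (since completion commutes with $\widehat{\otimes}_K$ and $\wideparen{U}$ is the completion of $U$), so a bounded action of the dense subalgebra on a complete module extends for purely categorical reasons; and (ii) boundedness of the $U(\mathscr{L})(X)^{\op}$-action on $\Omega_{\mathscr{L}(X)}\otimes_A\mathcal{M}$, which is exactly the one-sided side-switching boundedness of \cite[Proposition 6.10, Theorem 6.11]{bode2021operations}, plus a direct bounded-subset computation for the untouched $\wideparen{U}(\mathscr{L})(U)$-action. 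You should replace your "automatic continuity" appeal with these.

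Second, your reduction to "a suitable dense $U(L_1)\otimes_K U(L)$-submodule $M_0$, e.g. the image of any Banach piece" does not make sense for a general object of $\Mod_{\Indban}(-)$, which is a formal colimit and need not be (the dissection of) a complete bornological space with a dense algebraic submodule. The paper handles this by first proving the equivalence on $\Mod_{\widehat{\mathcal{B}}c_K}(-)$, then passing to left hearts — legitimate because $\wideparen{U}(\mathscr{L})(U)$ and $\wideparen{U}(\mathscr{L})(X)$ are Fréchet, so \cite[Propositions 4.25, 4.31]{bode2021operations} identify $LH(\Mod_{\widehat{\mathcal{B}}c_K})$ with $LH(\Mod_{\Indban})$ — and finally cutting the left-heart equivalence back down to $\Mod_{\Indban}$ using that $\Omega_{\mathscr{L}(X)}$ and $\Omega_{\mathscr{L}(X)}^{-1}$ are direct summands of finite free $A$-modules, so the functors preserve the essential image of $I$. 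Without some version of this last descent step your argument only yields an equivalence of left hearts, not of the Ind-Banach module categories themselves.
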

\begin{proof}
As both $U$ and $X$ are quasi-compact, it follows that $\wideparen{U}(\mathscr{L})(U)$ and $\wideparen{U}(\mathscr{L})(X)$ are Fréchet spaces. In particular, they are metrizable. Hence, it follows by \cite[Proposition 4.25]{bode2021operations} that the Ind-Banach $K$-algebras:
\begin{equation*}
  \wideparen{U}(\mathscr{L})(U)\overrightarrow{\otimes}_K\wideparen{U}(\mathscr{L})(X), \quad \wideparen{U}(\mathscr{L})(U)\overrightarrow{\otimes}_K\wideparen{U}(\mathscr{L})(X)^{\op},  
\end{equation*}
are the images under the dissection functor $\operatorname{diss}:\widehat{\mathcal{B}}c_K\rightarrow \Indban$ of the following complete bornological $K$-algebras:
\begin{equation*}
  \wideparen{U}(\mathscr{L})(U)\widehat{\otimes}_K\wideparen{U}(\mathscr{L})(X), \quad   \wideparen{U}(\mathscr{L})(U)\widehat{\otimes}_K\wideparen{U}(\mathscr{L})(X)^{\op}.
\end{equation*}
The first step is showing that there are mutually inverse equivalences of quasi-abelian categories:
\begin{multline*}
\Omega_{\mathscr{L}(X)}\widehat{\otimes}_A-:\Mod_{\widehat{\mathcal{B}}c_K}\left(\wideparen{U}(\mathscr{L})(U)\widehat{\otimes}_K\wideparen{U}(\mathscr{L})(X)\right)\leftrightarrows\\
\Mod_{\widehat{\mathcal{B}}c_K}\left(\wideparen{U}(\mathscr{L})(U)\widehat{\otimes}_K\wideparen{U}(\mathscr{L})(X)^{\op}\right):
\Omega_{\mathscr{L}(X)}^{-1}\widehat{\otimes}_A-.
\end{multline*}
By \cite[Corollary 5.19]{bode2021operations}, we have that $\wideparen{U}(\mathscr{L}(X))$ is the completion of $U(\mathscr{L}(X))$ with respect to the induced bornology. As bornological completion commutes with tensor products, $\wideparen{U}(\mathscr{L})(U)\widehat{\otimes}_K\wideparen{U}(\mathscr{L})(X)$ must be the completion of $\wideparen{U}(\mathscr{L})(U)\otimes_KU(\mathscr{L})(X)$. Thus, we have a canonical equivalence of quasi-abelian categories:
    \begin{equation*}
     \Mod_{\widehat{\mathcal{B}}c_K}(\wideparen{U}(\mathscr{L})(U)\otimes_KU(\mathscr{L})(X))\rightarrow \Mod_{\widehat{\mathcal{B}}c_K}(\wideparen{U}(\mathscr{L})(U)\widehat{\otimes}_K\wideparen{U}(\mathscr{L})(X)).   
    \end{equation*}
Similarly, $\wideparen{U}(\mathscr{L})(U)\widehat{\otimes}_K\wideparen{U}(\mathscr{L})(X)^{\op}$ is the completion of $\wideparen{U}(\mathscr{L})(U)\otimes_KU(\mathscr{L})(X)^{\op}$. Thus, we have an equivalence: 
\begin{equation*}
 \Mod_{\widehat{\mathcal{B}}c_K}(\wideparen{U}(\mathscr{L})(U)\otimes_KU(\mathscr{L})(X)^{\op})\rightarrow \Mod_{\widehat{\mathcal{B}}c_K}(\wideparen{U}(\mathscr{L})(U)\widehat{\otimes}_K\wideparen{U}(\mathscr{L})(X)^{\op}). 
\end{equation*}
Let $\mathcal{M}\in \Mod_{\widehat{\mathcal{B}}c_K}(\wideparen{U}(\mathscr{L})(U)\otimes_KU(\mathscr{L})(X))$, and regard $\Omega_{\mathscr{L}(X)}\otimes_A\mathcal{M}$ as a bornological space. By Proposition \ref{prop side-changing for abstract bimodules}, this is an abstract $\wideparen{U}(\mathscr{L})(U)\otimes_KU(\mathscr{L})(X)^{\op}$-module. In order to see that it is a bornological module, it suffices to see that the actions of $\wideparen{U}(\mathscr{L})(U)$ and $U(\mathscr{L})(X)^{\op}$ are bounded. The action of $U(\mathscr{L})(X)^{\op}$ is bounded by \cite[Proposition 6.10]{bode2021operations}.\\
Let now $S\subset \wideparen{U}(\mathscr{L})(U)$, $V\subset \Omega_{\mathscr{L}(X)}$ and $W\subset \mathcal{M}$ be bounded $\mathcal{R}$-submodules. By definition, the bornology on $\Omega_{\mathscr{L}(X)}\otimes_A\mathcal{M}$ is the quotient bornology with respect to the map:
\begin{equation*}
    q:\Omega_{\mathscr{L}(X)}\otimes_K\mathcal{M}\rightarrow \Omega_{\mathscr{L}(X)}\otimes_A\mathcal{M}.
\end{equation*}
Hence, it is generated by bounded subsets of the form $q(V\otimes_{\mathcal{R}}W)$. Every element  $x\in \wideparen{U}(\mathscr{L})(U)$ acts on $\Omega_{\mathscr{L}(X)}\otimes_A\mathcal{M}$ by $\Id\otimes x$. Thus, we have:
\begin{equation*}
    \bigcup_{x\in S}x(q(V\otimes_{\mathcal{R}}W)) \subset q(V\otimes_{\mathcal{R}}\bigcup_{x\in S}x(W)). 
\end{equation*}
As the action of $\wideparen{U}(\mathscr{L})(U)$ on $\mathcal{M}$ is bounded, 
$\cup_{x\in S}x(W)$ is bounded. Thus, the action of $\wideparen{U}(\mathscr{L})(U)$ on $\Omega_{\mathscr{L}(X)}\otimes_A\mathcal{M}$ is bounded. By taking completions, we get our desired functor. The second functor is constructed analogously. In order to see that they are mutually inverse, we can use Proposition \ref{prop side-changing for abstract bimodules},
to obtain natural isomorphisms:
\begin{equation*}
          \mathcal{M}\rightarrow \Omega_{\mathscr{L}(X)}^{-1}\otimes_A\left(\Omega_{\mathscr{L}(X)}\otimes_A \mathcal{M}\right), \textnormal{ and } \Omega_{\mathscr{L}(X)}\otimes_A\left(\Omega_{\mathscr{L}(X)}^{-1}\otimes_A \mathcal{N}\right) \rightarrow \mathcal{N},
\end{equation*}
which are bounded by \cite[Theorem 6.11]{bode2021operations}. The completion of the maps yields the desired equivalence.\\
As a consequence, we get the following equivalences between the left hearts:
\begin{multline*}
\Omega_{\mathscr{L}(X)}\widetilde{\otimes}_A-:LH\left(\Mod_{\widehat{\mathcal{B}}c_K}\left(\wideparen{U}(\mathscr{L})(U)\widehat{\otimes}_K\wideparen{U}(\mathscr{L})(X)\right)\right)\leftrightarrows \\ LH \left(\Mod_{\widehat{\mathcal{B}}c_K}\left(\wideparen{U}(\mathscr{L})(U)\widehat{\otimes}_K\wideparen{U}(\mathscr{L})(X)^{\op}\right)\right):\Omega_{\mathscr{L}(X)}^{-1}\widetilde{\otimes}_A-.
\end{multline*}
In virtue of \cite[Proposition 4.31]{bode2021operations}, we have the following canonical identifications of abelian categories:
\begin{multline*}
LH\left(\Mod_{\widehat{\mathcal{B}}c_K}\left(\wideparen{U}(\mathscr{L})(U)\widehat{\otimes}_K\wideparen{U}(\mathscr{L})(X)\right)\right)\\
\cong    LH\left(\Mod_{\Indban}\left(\wideparen{U}(\mathscr{L})(U)\overrightarrow{\otimes}_K\wideparen{U}(\mathscr{L})(X)\right)\right),
\end{multline*}
\begin{multline*}
LH\left(\Mod_{\widehat{\mathcal{B}}c_K}\left(\wideparen{U}(\mathscr{L})(U)\widehat{\otimes}_K\wideparen{U}(\mathscr{L})(X)^{\op}\right)\right)\\
\cong
LH\left(\Mod_{\Indban}\left(\wideparen{U}(\mathscr{L})(U)\overrightarrow{\otimes}_K\wideparen{U}(\mathscr{L})(X)^{\op}\right)\right).
\end{multline*}
A fortiori, we obtain an equivalence of abelian categories:
\begin{multline*}
\Omega_{\mathscr{L}(X)}\widetilde{\otimes}_A-:LH\left(\Mod_{\Indban}\left(\wideparen{U}(\mathscr{L})(U)\overrightarrow{\otimes}_K\wideparen{U}(\mathscr{L})(X)\right)\right)\rightarrow\\
LH \left(\Mod_{\Indban}\left(\wideparen{U}(\mathscr{L})(U)\overrightarrow{\otimes}_K\wideparen{U}(\mathscr{L})(X)^{\op}\right)\right),
\end{multline*}
with inverse given by the following functor:
\begin{multline*}
\Omega_{\mathscr{L}(X)}^{-1}\widetilde{\otimes}_A-:LH\left(\Mod_{\Indban}\left(\wideparen{U}(\mathscr{L})(U)\overrightarrow{\otimes}_K\wideparen{U}(\mathscr{L})(X)^{\op}\right)\right)\rightarrow \\
LH\left(\Mod_{\Indban}\left(\wideparen{U}(\mathscr{L})(U)\overrightarrow{\otimes}_K\wideparen{U}(\mathscr{L})(X)\right)\right).    
\end{multline*}
We need to show that this induces an equivalence of quasi-abelian categories between the associated categories of Ind-Banach modules.\\
However, as $\Omega_{\mathscr{L}(X)}$ and $\Omega_{\mathscr{L}(X)}^{-1}$ are finite projective $A$-modules, they are a direct summand in a finite-free $A$-module. Therefore, the functor $\Omega_{\mathscr{L}(X)}\widetilde{\otimes}_A-$ maps Ind-Banach spaces to Ind-Banach spaces, and so maps the essential image of:
\begin{equation*}
    \Mod_{\Indban}\left(\wideparen{U}(\mathscr{L})(U)\overrightarrow{\otimes}_K\wideparen{U}(\mathscr{L})(X)\right)
\end{equation*}
inside its left heart to $\Mod_{\Indban}\left(\wideparen{U}(\mathscr{L})(U)\overrightarrow{\otimes}_K\wideparen{U}(\mathscr{L})(X)^{\op}\right)$. As the analogous  phenomena holds for $\Omega_{\mathscr{L}(X)}^{-1}\widetilde{\otimes}_A-$, the result holds.
\end{proof}
Notice that the procedure above is functorial. Recall the following notation:
\begin{defi*} 
Let $f:X\rightarrow Y$ be a morphism of rigid analytic spaces. Then we define the derived pullback along $f$ to be the following functor:
\begin{align*}
    f^*:\operatorname{D}(&\OX_Y)\rightarrow \operatorname{D}(\OX_X),\\    
    &\mathcal{M}\mapsto \OX_X\overrightarrow{\otimes}^{\mathbb{L}}_{f^{-1}\OX_Y}f^{-1}\mathcal{M}
\end{align*}
\end{defi*}

We may use this to show the following proposition:
\begin{prop}\label{prop side changing for bimodules}
Let $X$ be a separated smooth  rigid variety equipped with a Lie algebroid $\mathscr{L}$. There is a pair of mutually inverse equivalences of quasi-abelian categories:
\begin{multline*}
p_2^{*}\Omega_{\mathscr{L}}\overrightarrow{\otimes}_{\OX_{X^2}}-:\Mod_{\Indban}(\wideparen{U}(\mathscr{L}^2))\leftrightarrows\\ \Mod_{\Indban}(\wideparen{E}(\mathscr{L})):p_2^{*}\Omega_{\mathscr{L}}^{-1}\overrightarrow{\otimes}_{\OX_{X^2}}-.
\end{multline*} 
\end{prop}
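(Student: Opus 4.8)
The plan is to globalize the local equivalence of Lemma \ref{Lemma side-changing for bimodules} to a pair of mutually inverse equivalences of sheaves of modules on $X^2$. First I would fix the formalism: by Definition \ref{defi sheaf of complete bornological K algebras} we have $\wideparen{E}(\mathscr{L})=p_1^{-1}\wideparen{U}(\mathscr{L})\overrightarrow{\otimes}_Kp_2^{-1}\wideparen{U}(\mathscr{L})^{\op}$, and by Proposition \ref{prop enveloping of product as product of envelopings} (applied to $X\times X$) we have $\wideparen{U}(\mathscr{L}^2)=p_1^{-1}\wideparen{U}(\mathscr{L})\overrightarrow{\otimes}_Kp_2^{-1}\wideparen{U}(\mathscr{L})$. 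Both sheaves are sheaves of complete bornological algebras on $X^2$, and they share the same underlying sheaf of Ind-Banach $p_1^{-1}\wideparen{U}(\mathscr{L})$-modules, differing only in how $p_2^{-1}\wideparen{U}(\mathscr{L})$ acts. The coherent $\OX_X$-module $\Omega_{\mathscr{L}}=\Hom_{\OX_X}(\bigwedge^n\mathscr{L},\OX_X)$ (where $n$ is the rank of $\mathscr{L}$) and its inverse $\Omega_{\mathscr{L}}^{-1}$ carry the structures described in Section \ref{Section side-switching operations}, and pulling back along $p_2$ we obtain $p_2^{-1}\Omega_{\mathscr{L}}$, which one should check is an invertible module over $p_2^{-1}\OX_X$. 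The candidate functors are $\mathcal{M}\mapsto p_2^{-1}\Omega_{\mathscr{L}}\overrightarrow{\otimes}_{p_2^{-1}\OX_X}\mathcal{M}$ and its inverse, with the $\wideparen{E}(\mathscr{L})$- (resp. $\wideparen{U}(\mathscr{L}^2)$-) module structure defined fibrewise as in equations (\ref{equation side-changing}) and (\ref{equation 2 side-changing}): the $p_1^{-1}\wideparen{U}(\mathscr{L})$-action is via $\operatorname{Id}\overrightarrow{\otimes}-$, and the twisted $p_2^{-1}\wideparen{U}(\mathscr{L})^{\op}$-action comes from the Lie-derivative action on $\Omega_{\mathscr{L}}$.

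Second, I would verify that these operations are well-defined on sheaves. The tensor product $p_2^{-1}\Omega_{\mathscr{L}}\overrightarrow{\otimes}_{p_2^{-1}\OX_X}-$ makes sense in $\operatorname{Shv}(X^2,\Indban)$ by the closed symmetric monoidal structure from Proposition \ref{prop closed symmetric monoidal structure on sheaves of Indban}, and since $p_2^{-1}\Omega_{\mathscr{L}}$ is locally a direct summand of a finite free $p_2^{-1}\OX_X$-module, tensoring with it is strongly exact. To check the module structures are well-defined, it suffices to work on an admissible affinoid cover of $X^2$ on which $\mathscr{L}$ (pulled back along both projections) is free. Here Lemma \ref{Lemma preparation Lemma side-changing} is the key tool: it provides a cover $(U_i\times V_i)$ of $X^2$ with $V_i$ affinoid and $\mathscr{L}_{\vert V_i}$ free, so that the sections $\wideparen{E}(\mathscr{L})(U_i\times V_i)$ and $\wideparen{U}(\mathscr{L}^2)(U_i\times V_i)$ are exactly the algebras $\wideparen{U}(\mathscr{L})(U_i)\overrightarrow{\otimes}_K\wideparen{U}(\mathscr{L})(V_i)^{(\op)}$ appearing in Lemma \ref{Lemma side-changing for bimodules}. (One needs $V_i$ affinoid with $\mathscr{L}$ free there; by taking $V_i$ small enough in the cover this is arranged, possibly after refining the affinoid cover of $X$ witnessing that $\mathscr{L}$ is a Lie algebroid, using that $X$ is separated so intersections stay affinoid.) On each such piece, Lemma \ref{Lemma side-changing for bimodules} gives mutually inverse equivalences $\Omega_{\mathscr{L}(V_i)}\overrightarrow{\otimes}_{\OX_X(V_i)}-$ and $\Omega_{\mathscr{L}(V_i)}^{-1}\overrightarrow{\otimes}_{\OX_X(V_i)}-$ between the corresponding module categories.

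Third, I would glue. The local equivalences are functorial and compatible with restriction (since $\Omega_{\mathscr{L}}$ is a genuine sheaf and the side-switching formulas (\ref{equation side-changing}), (\ref{equation 2 side-changing}) are restriction-local), so they patch to functors
\begin{equation*}
p_2^{-1}\Omega_{\mathscr{L}}\overrightarrow{\otimes}_{p_2^{-1}\OX_X}-:\Mod_{\Indban}(\wideparen{U}(\mathscr{L}^2))\to \Mod_{\Indban}(\wideparen{E}(\mathscr{L})),
\end{equation*}
and its would-be inverse in the other direction. The natural isomorphisms $\mathcal{M}\to p_2^{-1}\Omega_{\mathscr{L}}^{-1}\overrightarrow{\otimes}_{p_2^{-1}\OX_X}(p_2^{-1}\Omega_{\mathscr{L}}\overrightarrow{\otimes}_{p_2^{-1}\OX_X}\mathcal{M})$ and the reverse one come from the invertibility of $p_2^{-1}\Omega_{\mathscr{L}}$ (the evaluation/coevaluation maps $p_2^{-1}\Omega_{\mathscr{L}}^{-1}\overrightarrow{\otimes}_{p_2^{-1}\OX_X}p_2^{-1}\Omega_{\mathscr{L}}\cong p_2^{-1}\OX_X$), and these are checked to be morphisms of $\wideparen{E}(\mathscr{L})$- resp. $\wideparen{U}(\mathscr{L}^2)$-modules locally, where Lemma \ref{Lemma side-changing for bimodules} already supplies them. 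Exactness of both functors follows from strong exactness of tensoring with an invertible module, so they are equivalences of quasi-abelian categories. The main obstacle is the bookkeeping in the gluing step: one must confirm that the twisted $p_2^{-1}\wideparen{U}(\mathscr{L})^{\op}$-module structure built from the Lie-derivative action on $\Omega_{\mathscr{L}}$ genuinely sheafifies — i.e. that on overlaps $U_i\times V_i\cap U_j\times V_j$ the two locally-defined module structures agree — and that it is compatible with the completed, rather than merely algebraic, enveloping algebra; both are handled by the density of $U(\mathscr{L})$ in $\wideparen{U}(\mathscr{L})$ together with boundedness of all the structure maps, exactly as in the proof of Lemma \ref{Lemma side-changing for bimodules}, but they require care because $\wideparen{E}(\mathscr{L})$ and $\wideparen{U}(\mathscr{L}^2)$ are defined via sheafified tensor products whose sections on general admissible opens are not transparent.
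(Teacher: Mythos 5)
Your proposal is correct and follows essentially the same route as the paper: reduce to the affinoid case with $\mathscr{L}$ free, define the twisted action via equations (\ref{equation side-changing}) and (\ref{equation 2 side-changing}), and verify it is well defined and bounded using the covers of Lemma \ref{Lemma preparation Lemma side-changing} together with the local equivalence of Lemma \ref{Lemma side-changing for bimodules}. The only point the paper spells out that you defer to the local lemma is the replacement of an arbitrary Ind-Banach module by a quotient of $\wideparen{U}(\mathscr{L}^2)\overrightarrow{\otimes}_KW$ with $W$ Banach, so that the module has an underlying vector space on which the side-switching formulas literally make sense.
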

\begin{proof}
First,  notice that by definition $\Omega_{\mathscr{L}}$ is a line bundle in $X$. In particular, both $p_2^{*}\Omega_{\mathscr{L}}$ and $p_2^{*}\Omega_{\mathscr{L}}^{-1}$ are concentrated in degree zero, so the above functors make sense. Next, we may assume that $X=\Sp(A)$ is affinoid and $\mathscr{L}$ is free. In this situation, both $\Omega_{\mathscr{L}}$ and $\Omega_{\mathscr{L}}^{-1}$ are globally free $\OX_X$-modules. In particular, for every $\mathcal{M} \in \Mod_{\Indban}(\wideparen{U}(\mathscr{L}^2))$, and every affinoid subdomain $V\subset X^2$ we have the following identity:
\begin{equation*}   \left(p_2^{-1}\Omega_{\mathscr{L}}\overrightarrow{\otimes}_{p_2^{-1}\OX_X}\mathcal{M}\right)(V)=\Omega_{\mathscr{L}}(p_2(V))\overrightarrow{\otimes}_{\OX_X(p_2(V))}\mathcal{M}(V).
\end{equation*}
For simplicity, let $\Tilde{M}=p_2^{-1}\Omega_{\mathscr{L}}\overrightarrow{\otimes}_{p_2^{-1}\OX_X}\mathcal{M}$. In order to define an action of $\wideparen{E}(\mathscr{L})$  on $\Tilde{M}$, it suffices to define it at the level of presheaves. In particular, for every affinoid subdomain $V\subset X^2$ we need to define an action:
\begin{multline*}
    \OX_{X^2}(V)\overrightarrow{\otimes}_{\OX_X(p_1(V))\overrightarrow{\otimes}_K\OX_X(p_2(V))}\left(\wideparen{U}(\mathscr{L})(p_1(V))\overrightarrow{\otimes}_K \wideparen{U}(\mathscr{L})(p_2(V))^{\op}\right)\times \Tilde{\mathcal{M}}(V)\\
    \rightarrow \Tilde{\mathcal{M}}(V),
\end{multline*}
functorial in  $\mathcal{M}(V)\in \Mod_{\Indban}(\wideparen{U}(\mathscr{L})(p_1(V))\overrightarrow{\otimes}_K \wideparen{U}(\mathscr{L})(p_2(V)))$, and $V\subset X^2$. As we mentioned above, $p_2^{*}\Omega_{\mathscr{L}}\simeq \OX_{X^2}$ as an $\OX_{X^2}$-module. This isomorphism induces an isomorphism of $\OX_{X^2}$-modules $\Tilde{\mathcal{M}} \cong \mathcal{M}$. Thus, we reduce the problem to obtaining a functorial action:
\begin{equation*}
    \left(\wideparen{U}(\mathscr{L})(p_1(V))\overrightarrow{\otimes}_K \wideparen{U}(\mathscr{L})(p_2(V))^{\op}\right)\times \Tilde{\mathcal{M}}(V)
    \rightarrow \Tilde{\mathcal{M}}(V),
\end{equation*}
such that the restriction to $\OX_X(p_1(V))\overrightarrow{\otimes}_K\OX_X(p_2(V))$ agrees with the restriction of the action of $\OX_{X^2}(V)$ on $\Tilde{\mathcal{M}}(V)$.\\ 
As $V$ is affinoid, $p_1(V)$ and $p_2(V)$ are quasi-compact open subspaces of $X$. Thus, $\wideparen{U}(\mathscr{L})(p_1(V))$, and $\wideparen{U}(\mathscr{L})(p_2(V))$ are Fréchet algebras. As before, we may apply \cite[Proposition 4.31]{bode2021operations} to conclude that $\wideparen{U}(\mathscr{L})(p_1(V))\overrightarrow{\otimes}_K \wideparen{U}(\mathscr{L})(p_2(V))$ is the image under $\operatorname{diss}:\widehat{\mathcal{B}}c_K\rightarrow \Indban$ of the complete bornological $K$-algebra:
\begin{equation*}
    \wideparen{U}(\mathscr{L})(p_1(V))\widehat{\otimes}_K \wideparen{U}(\mathscr{L})(p_2(V)).
\end{equation*}
Thus, every object in $\Mod_{\Indban}(\wideparen{U}(\mathscr{L})(p_1(V))\overrightarrow{\otimes}_K \wideparen{U}(\mathscr{L})(p_2(V)))$ is the cokernel of a strict morphism  of complete bornological $\wideparen{U}(\mathscr{L})(p_1(V))\widehat{\otimes}_K \wideparen{U}(\mathscr{L})(p_2(V))$-modules. In particular, there is a direct sum of Banach spaces $\bigoplus_{i\in I}W_i$ such that there is a strict epimorphism:
\begin{equation*}
   \left(\wideparen{U}(\mathscr{L})(p_1(V))\overrightarrow{\otimes}_K \wideparen{U}(\mathscr{L})(p_2(V))\right)\overrightarrow{\otimes}_K\bigoplus_iW_i\rightarrow \mathcal{M}(V). 
\end{equation*}
Hence, we may replace $\mathcal{M}$ with $\wideparen{U}(\mathscr{L}^2)\overrightarrow{\otimes}_K\bigoplus_iW_i$, and assume that $\mathcal{M}$ is a sheaf of complete bornological spaces (\emph{cf}. the proof of \cite[Theorem 6.11]{bode2021operations}). Again, freeness of $\Omega_{\mathscr{L}}$ implies that $\Tilde{\mathcal{M}}$ is a sheaf of complete bornological spaces.\\
Thus, we have reduced the problem to defining a bounded action:
\begin{equation*}
    \left(\wideparen{U}(\mathscr{L})(p_1(V))\otimes_K \wideparen{U}(\mathscr{L})(p_2(V))^{\op}\right)\times \Tilde{\mathcal{M}}(V)\rightarrow \Tilde{\mathcal{M}}(V).
\end{equation*}
Unlike for arbitrary Ind-Banach spaces, the fact that $\Tilde{\mathcal{M}}(V)$ is a bornological space implies that it has an underlying $K$-vector space. Hence, we may define an action by using the formulas in equations $(\ref{equation side-changing})$ and $(\ref{equation 2 side-changing})$. We just need to show that this action is bounded. By definition of the projective tensor product of bornological spaces, it suffices to show that the actions of  $\wideparen{U}(\mathscr{L})(p_1(V))$ and  $\wideparen{U}(\mathscr{L})(p_2(V))^{\op}$ on 
$\Tilde{\mathcal{M}}(V)$ are bounded.\\ 
The fact that the action of $\wideparen{U}(\mathscr{L})(p_1(V))$ is bounded follows as in Lemma \ref{Lemma side-changing for bimodules}. For the action of  $\wideparen{U}(\mathscr{L})(p_2(V))^{\op}$, choose a finite cover $\left( V_i\right)_{i=1}^n$ satisfying the conditions from Lemma \ref{Lemma preparation Lemma side-changing}. By Theorem \ref{teo flat maps are open}, together with functoriality of equation $(\ref{equation side-changing})$, we have the following commutative diagram:
\begin{equation*}
\begin{tikzcd}
\wideparen{U}(\mathscr{L})(p_1(V))\times \Tilde{\mathcal{M}}(V) \arrow[d] \arrow[r]        & \Tilde{\mathcal{M}}(V) \arrow[d]      \\
\prod_{i=1}^n\wideparen{U}(\mathscr{L})(p_1(V_i))\times \Tilde{\mathcal{M}}(V_i) \arrow[r] & \prod_{i=1}^n\Tilde{\mathcal{M}}(V_i)
\end{tikzcd}
\end{equation*}
where the vertical arrows are strict injections, and horizontal arrows are the action given by $(\ref{equation side-changing})$. By Lemma \ref{Lemma side-changing for bimodules} the lower horizontal map is bounded. Hence, as the vertical arrows are strict injections, it follows that the upper horizontal map is also bounded. After sheafification and completion, it follows that $\Tilde{M}$ is a module in $\Mod_{\Indban}(\wideparen{E}(\mathscr{L}))$. The construction of the other functor is completely analogous, and the fact that they are mutually inverse follows by Lemma \ref{Lemma side-changing for bimodules}.
\end{proof}
\begin{defi}
We will use the following notation to denote the functors defined in the previous proposition:
\begin{multline*}
\operatorname{S}:=p_2^{*}\Omega_{\mathscr{L}}\overrightarrow{\otimes}_{\OX_{X^2}}-:\Mod_{\Indban}(\wideparen{U}(\mathscr{L}^2))\leftrightarrows\\ \Mod_{\Indban}(\wideparen{E}(\mathscr{L})):p_2^{*}\Omega_{\mathscr{L}}^{-1}\overrightarrow{\otimes}_{\OX_{X^2}}-=:\operatorname{S}^{-1},
\end{multline*}
and we will call $\operatorname{S}$ and $\operatorname{S}^{-1}$ the side-switching operators for bimodules.
\end{defi}
As the side-switching operators are equivalences of quasi-abelian categories, they are strongly exact. Hence, they induce an equivalence of triangulated categories:
\begin{equation*}
    \operatorname{S}:\operatorname{D}(\wideparen{U}(\mathscr{L}^2))\leftrightarrows \operatorname{D}(\wideparen{E}(\mathscr{L})):\operatorname{S}^{-1}.
\end{equation*}
We can use this derived equivalence to extend the side switching operators to the respective left hearts. In particular, we define:
\begin{equation*}
    \operatorname{S}:LH(\Mod_{\Indban}(\wideparen{U}(\mathscr{L}^2)))\rightarrow LH(\Mod_{\Indban}(\wideparen{E}(\mathscr{L}))),
\end{equation*}
as the following composition:
\begin{multline*}
  LH(\Mod_{\Indban}(\wideparen{U}(\mathscr{L}^2)))\rightarrow \operatorname{D}(\wideparen{U}(\mathscr{L}^2))\xrightarrow[]{\operatorname{S}} \operatorname{D}(\wideparen{E}(\mathscr{L}))\\
  \xrightarrow[]{\operatorname{H}^0}LH(\Mod_{\Indban}(\wideparen{E}(\mathscr{L}))),
\end{multline*}
and  $\operatorname{S}^{-1}:LH(\Mod_{\Indban}(\wideparen{E}(\mathscr{L})))\rightarrow LH(\Mod_{\Indban}(\wideparen{U}(\mathscr{L}^2)))$ analogously. 
\begin{prop}
The functors:
\begin{equation*}
   \operatorname{S}:LH(\Mod_{\Indban}(\wideparen{U}(\mathscr{L}^2)))\leftrightarrows LH(\Mod_{\Indban}(\wideparen{E}(\mathscr{L}))):\operatorname{S}^{-1},
\end{equation*}
are mutually inverse equivalences of abelian categories.
\end{prop}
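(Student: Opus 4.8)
The statement to prove is that the functors $\operatorname{S}$ and $\operatorname{S}^{-1}$ defined on the left hearts via $\operatorname{H}^0\circ(\text{derived }\operatorname{S})\circ I$ are mutually inverse equivalences of abelian categories. The plan is to reduce this to the already-established derived equivalence $\operatorname{S}:\operatorname{D}(\wideparen{U}(\mathscr{L}^2))\leftrightarrows\operatorname{D}(\wideparen{E}(\mathscr{L})):\operatorname{S}^{-1}$ from Proposition \ref{prop side changing for bimodules} together with the exactness of the underlying quasi-abelian side-switching operators. The key observation is that an exact functor between quasi-abelian categories induces an exact functor between the left hearts which is compatible with the inclusions $I$: since $\operatorname{S}$ is strongly exact on $\Mod_{\Indban}(\wideparen{U}(\mathscr{L}^2))$, it preserves strictly exact sequences, hence sends acyclic complexes to acyclic complexes and short strict exact sequences to short strict exact sequences. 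Concretely, I would first check that the composite $\operatorname{H}^0\circ\operatorname{S}\circ I$ restricted to the essential image of $I$ agrees with the naive application of $\operatorname{S}$, i.e. $\operatorname{H}^0(\operatorname{S}(I(\mathcal{M})))=I(\operatorname{S}(\mathcal{M}))$ for $\mathcal{M}\in\Mod_{\Indban}(\wideparen{U}(\mathscr{L}^2))$ — this holds because $\operatorname{S}$ is exact, so $\operatorname{S}(I(\mathcal{M}))$ has cohomology concentrated in degree $0$ and the $\operatorname{H}^0$ is computed by $I$ applied to $\operatorname{S}(\mathcal{M})$.

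The next step is to verify that the left-heart version of $\operatorname{S}$ is genuinely a functor $LH(\Mod_{\Indban}(\wideparen{U}(\mathscr{L}^2)))\to LH(\Mod_{\Indban}(\wideparen{E}(\mathscr{L})))$ which is additive and exact. Additivity is immediate from the construction (a composite of additive functors). For exactness, recall that by \cite[Section 1.2.4]{schneiders1999quasi} every object of $LH(\mathcal{C})$ for $\mathcal{C}$ quasi-abelian sits in a short exact sequence $0\to I(U)\to I(V)\to W\to 0$ with $U,V\in\mathcal{C}$, and that the equivalence $I:\operatorname{D}(\mathcal{C})\to\operatorname{D}(LH(\mathcal{C}))$ identifies the left heart with the heart of the standard $t$-structure. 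Under this identification, the left-heart functor $\operatorname{S}$ is nothing but the restriction of the triangulated functor $\operatorname{S}:\operatorname{D}(\wideparen{U}(\mathscr{L}^2))\to\operatorname{D}(\wideparen{E}(\mathscr{L}))$ to the hearts; but a triangulated functor restricts to an exact functor of hearts exactly when it is $t$-exact, and here $t$-exactness follows from strong exactness of the underlying functor $\operatorname{S}$ on the quasi-abelian category (strong exactness forces the derived functor to preserve the standard $t$-structure on both sides). So $\operatorname{S}$ on the left hearts is exact, and symmetrically so is $\operatorname{S}^{-1}$.

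Finally, I would establish that the two are mutually inverse. This is again inherited from the derived level: we already know $\operatorname{S}\circ\operatorname{S}^{-1}\cong\operatorname{Id}$ and $\operatorname{S}^{-1}\circ\operatorname{S}\cong\operatorname{Id}$ as endofunctors of $\operatorname{D}(\wideparen{E}(\mathscr{L}))$ and $\operatorname{D}(\wideparen{U}(\mathscr{L}^2))$ respectively (Proposition \ref{prop side changing for bimodules}, extended to the derived categories as noted in the excerpt). Restricting these natural isomorphisms to the hearts and applying $\operatorname{H}^0$ yields natural isomorphisms $\operatorname{S}\circ\operatorname{S}^{-1}\cong\operatorname{Id}$ and $\operatorname{S}^{-1}\circ\operatorname{S}\cong\operatorname{Id}$ on the left hearts. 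Since an exact functor with an exact quasi-inverse is an equivalence of abelian categories, the claim follows.

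\textbf{Main obstacle.} The only genuinely delicate point is the compatibility claim $\operatorname{H}^0(\operatorname{S}(I(-)))\cong I(\operatorname{S}(-))$ together with the assertion that the derived $\operatorname{S}$ is $t$-exact; one must be careful that the left-heart $t$-structure coming from $\operatorname{D}(LH(\mathcal{C}))=\operatorname{D}(\mathcal{C})$ is matched on both sides of the equivalence, and that strong exactness (not merely exactness) of the bimodule side-switching operators is what guarantees this. Once this bookkeeping with the $t$-structures is pinned down via \cite{schneiders1999quasi}, everything else is formal transport of structure along the derived equivalence. I expect the whole argument to be short, essentially a paragraph, since the substantive work was already done in Proposition \ref{prop side changing for bimodules}.
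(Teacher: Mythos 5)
Your argument is correct and is essentially the paper's proof unpacked: the paper simply cites \cite[Proposition 1.2.34]{schneiders1999quasi}, which is exactly the general statement that an exact functor (here an exact equivalence) of quasi-abelian categories induces, via the identification of the left heart with the heart of the left $t$-structure on the derived category, an exact functor of left hearts compatible with $I$, and your three steps (compatibility $\operatorname{H}^0(\operatorname{S}(I(-)))\cong I(\operatorname{S}(-))$, $t$-exactness from strong exactness, transport of the natural isomorphisms) are precisely the content of that citation. No gap.
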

\begin{proof}
This is a consequence of \cite[Proposition 1.2.34]{schneiders1999quasi}.
\end{proof}
Notice that, by construction, we have a commutative diagram:
\begin{equation}\label{equation extension of S to the left heart}
\begin{tikzcd}
\operatorname{S}:LH(\Mod_{\Indban}(\wideparen{U}(\mathscr{L}^2))) \arrow[r]                               & LH(\Mod_{\Indban}(\wideparen{E}(\mathscr{L}))):\operatorname{S}^{-1} \arrow[l, shift left=-3]                                \\
\operatorname{S}:\Mod_{\Indban}(\wideparen{U}(\mathscr{L}^2)) \arrow[u, "I"] \arrow[r] & \Mod_{\Indban}(\wideparen{E}(\mathscr{L})):\operatorname{S}^{-1} \arrow[u, "I"] \arrow[l, shift right=3]
\end{tikzcd}
\end{equation}
We will now construct an analogous side-switching operation for right $\wideparen{E}(\mathscr{L})$-modules. First, we notice that we have the following identities:
\begin{multline*}
 \wideparen{U}(\mathscr{L}^2)^{\op}=\OX_{X^2}\overrightarrow{\otimes}_{p_1^{-1}\OX_X\overrightarrow{\otimes}_Kp_2^{-1}\OX_X} \left(p_1^{-1}\wideparen{U}(\mathscr{L})^{\op}\overrightarrow{\otimes}_Kp_2^{-1}\wideparen{U}(\mathscr{L})^{\op}\right),\\
 \quad    \wideparen{E}(\mathscr{L})^{\op}=\OX_{X^2}\overrightarrow{\otimes}_{p_1^{-1}\OX_X\overrightarrow{\otimes}_Kp_2^{-1}\OX_X} \left(p_1^{-1}\wideparen{U}(\mathscr{L})^{\op}\overrightarrow{\otimes}_Kp_2^{-1}\wideparen{U}(\mathscr{L})\right).
\end{multline*}
Where the Ind-Banach algebra structure on 
$\wideparen{U}(\mathscr{L}^2)^{\op}$ extends the ones on $\OX_{X^2}$ and $p_1^{-1}\wideparen{U}(\mathscr{L})^{\op}\overrightarrow{\otimes}_Kp_2^{-1}\wideparen{U}(\mathscr{L})^{\op}$, and the same holds for $\wideparen{E}(\mathscr{L})^{\op}$. Thus, mimicking the arguments for left modules, we may define side-switching operations for right modules in the following manner:
\begin{align*}
&\operatorname{S}_r:=p_2^{*}\Omega_{\mathscr{L}}^{-1}\overrightarrow{\otimes}_{\OX_{X^2}}-:\Mod_{\Indban}(\wideparen{U}(\mathscr{L}^2)^{\op})\rightarrow \Mod_{\Indban}(\wideparen{E}(\mathscr{L})^{\op}),\\
&\operatorname{S}_r^{-1}:=p_2^{*}\Omega_{\mathscr{L}}\overrightarrow{\otimes}_{\OX_{X^2}}-:\Mod_{\Indban}(\wideparen{E}(\mathscr{L})^{\op})\rightarrow \Mod_{\Indban}(\wideparen{U}(\mathscr{L}^2)^{\op}).
\end{align*}
The fact that these functors are well-defined mutually inverse equivalences of quasi-abelian categories is analogous to Proposition \ref{prop side changing for bimodules}.\bigskip

There is yet another pair of side switching operators which are relevant to our setting. Namely, by definition,
$\wideparen{U}(\mathscr{L}^2)$ is the sheaf of complete enveloping algebras of the Lie algebroid $\mathscr{L}^2$ on $X^2$. Thus, the appropriate version of \cite[Theorem 6.11]{bode2021operations}, yields an equivalence of quasi-abelian categories:
\begin{equation*} 
\Omega_{\mathscr{L}^2}\overrightarrow{\otimes}_{\OX_{X^2}}-:\Mod_{\Indban}(\wideparen{U}(\mathscr{L}^2))\leftrightarrows \Mod_{\Indban}(\wideparen{U}(\mathscr{L}^2)^{\op}):\Omega^{-1}_{\mathscr{L}^2}\overrightarrow{\otimes}_{\OX_{X^2}}-.
\end{equation*}
 This equivalence, together with the operations defined above, allows us to obtain a side-switching operation for Ind-Banach $\wideparen{E}(\mathscr{L})$-modules. Namely, we have the following equivalence of quasi-abelian categories:
 \begin{equation*}  \operatorname{S}_r\left(\Omega_{\mathscr{L}^2}\overrightarrow{\otimes}_{\OX_{X^2}}\operatorname{S}^{-1}(-) \right):\Mod_{\Indban}(\wideparen{E}(\mathscr{L}))\rightarrow \Mod_{\Indban}(\wideparen{E}(\mathscr{L})^{\op}).   
 \end{equation*}
We will now obtain an explicit description of this equivalence:
\begin{Lemma}
There is an isomorphism of complete bornological $\wideparen{U}(\mathscr{L}^2)^{\op}$-modules:
\begin{equation*}
\Omega_{\mathscr{L}^2}=\Omega_{\mathscr{L}}\overrightarrow{\boxtimes}\Omega_{\mathscr{L}}=p_1^{*}\Omega_{\mathscr{L}}\overrightarrow{\otimes}_{\OX_{X^2}}p_2^{*}\Omega_{\mathscr{L}}.
\end{equation*}
\end{Lemma}
\begin{proof}
 We can assume that $X$ is affine and $\mathscr{L}$ is free, in which case  the proof of \cite[Tag 0FMA]{stacks-project} carries over to this setting without major changes.
\end{proof}
\begin{prop}\label{prop equivalence left and right E(L)-modules}
 Consider the following functor:
\begin{equation*}
  \operatorname{T}:=\left(\Omega_{\mathscr{L}}\overrightarrow{\boxtimes}\Omega_{\mathscr{L}}^{-1}\right)\overrightarrow{\otimes}_{\OX_{X^2}}-:\Mod_{\Indban}(\wideparen{E}(\mathscr{L}))\rightarrow \Mod_{\Indban}(\wideparen{E}(\mathscr{L})^{\op}).
\end{equation*}
Then there is a canonical natural equivalence of functors:
\begin{equation*}
    \operatorname{T} \xrightarrow[]{\cong}\operatorname{S}_r\left(\Omega_{\mathscr{L}^2}\overrightarrow{\otimes}_{\OX_{X^2}}\operatorname{S}^{-1}(-) \right).
\end{equation*}
In particular, $\operatorname{T}$ is an equivalence of quasi-abelian categories.
\end{prop}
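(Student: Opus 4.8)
The statement to prove is that the functor
\begin{equation*}
\operatorname{T}=p_1^{-1}\Omega_{\mathscr{L}}\overrightarrow{\otimes}_{p_1^{-1}\OX_X}\left(-\right)\overrightarrow{\otimes}_{p_2^{-1}\OX_X}p_2^{-1}\Omega_{\mathscr{L}}^{-1}
\end{equation*}
agrees with $\operatorname{S}_r\left(\Omega_{\mathscr{L}^2}\overrightarrow{\otimes}_{\OX_{X^2}}\operatorname{S}^{-1}(-)\right)$, from which the equivalence of quasi-abelian categories follows immediately since the right-hand composite is a composition of three known equivalences (the first two being the side-switching operators $\operatorname{S}^{-1}$ and $\operatorname{S}_r$, the middle one being the version of \cite[Theorem 6.11]{bode2021operations} applied to the Lie algebroid $\mathscr{L}^2$ on $X^2$). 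The plan is to simply unwind all three functors and use the factorization $\Omega_{\mathscr{L}^2}=p_1^{-1}\Omega_{\mathscr{L}}\overrightarrow{\otimes}_Kp_2^{-1}\Omega_{\mathscr{L}}$ established in the preceding lemma.

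First I would reduce to the case where $X=\Sp(A)$ is affinoid and $\mathscr{L}$ is free, exactly as in the proof of Proposition \ref{prop side changing for bimodules}: all four functors involved are defined locally and commute with restriction to admissible opens of $X^2$ of the form $U\times V$, so an equality of functors can be checked after such a restriction, and these cover $X^2$. In this situation $\Omega_{\mathscr{L}}$ and $\Omega_{\mathscr{L}}^{-1}$ are free $\OX_X$-modules of rank one, $\Omega_{\mathscr{L}^2}$ is free of rank one over $\OX_{X^2}$, and all the relevant sheaves of modules can (after pulling back from a Banach space along the algebra, as in the proof of Proposition \ref{prop side changing for bimodules}) be taken to be sheaves of complete bornological spaces, hence to have underlying $K$-vector spaces on which the various actions are given by the explicit formulas $(\ref{equation side-changing})$, $(\ref{equation 2 side-changing})$ and their right-module analogues.

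Next I would compute the composite $\operatorname{S}_r\left(\Omega_{\mathscr{L}^2}\overrightarrow{\otimes}_{\OX_{X^2}}\operatorname{S}^{-1}(-)\right)$ step by step on a module $\mathcal{M}\in\Mod_{\Indban}(\wideparen{E}(\mathscr{L}))$. Applying $\operatorname{S}^{-1}=p_2^{-1}\Omega_{\mathscr{L}}^{-1}\overrightarrow{\otimes}_{p_2^{-1}\OX_X}-$ gives a $\wideparen{U}(\mathscr{L}^2)$-module; tensoring with $\Omega_{\mathscr{L}^2}$ over $\OX_{X^2}$ and using $\Omega_{\mathscr{L}^2}=p_1^{-1}\Omega_{\mathscr{L}}\overrightarrow{\otimes}_Kp_2^{-1}\Omega_{\mathscr{L}}$ together with $\OX_{X^2}=p_1^{-1}\OX_X\overrightarrow{\otimes}_Kp_2^{-1}\OX_X$ produces
\begin{equation*}
p_1^{-1}\Omega_{\mathscr{L}}\overrightarrow{\otimes}_{p_1^{-1}\OX_X}\mathcal{M}\overrightarrow{\otimes}_{p_2^{-1}\OX_X}\left(p_2^{-1}\Omega_{\mathscr{L}}\overrightarrow{\otimes}_{p_2^{-1}\OX_X}p_2^{-1}\Omega_{\mathscr{L}}^{-1}\right),
\end{equation*}
in which the rightmost factor cancels to $p_2^{-1}\OX_X$; finally applying $\operatorname{S}_r=p_2^{-1}\Omega_{\mathscr{L}}^{-1}\overrightarrow{\otimes}_{p_2^{-1}\OX_X}-$ reintroduces a twist by $p_2^{-1}\Omega_{\mathscr{L}}^{-1}$ on the second factor, yielding precisely $\operatorname{T}(\mathcal{M})$ at the level of underlying sheaves of Ind-Banach spaces. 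The remaining and most delicate point — the one I expect to be the main obstacle — is to check that the $\wideparen{E}(\mathscr{L})^{\op}$-module structures on both sides genuinely coincide, not merely that the underlying objects match: one must verify that the left $p_1^{-1}\wideparen{U}(\mathscr{L})^{\op}$-action coming from $\operatorname{S}_r$ on $\Omega_{\mathscr{L}^2}\overrightarrow{\otimes}_{\OX_{X^2}}\operatorname{S}^{-1}(\mathcal{M})$ equals the one on $\operatorname{T}(\mathcal{M})$ built directly, and likewise for the right $p_2^{-1}\wideparen{U}(\mathscr{L})$-action, by tracing through the defining formulas $(\ref{equation side-changing})$, $(\ref{equation 2 side-changing})$ at the level of $\mathcal{R}$-linear actions on the underlying $K$-vector spaces (which is legitimate after the reduction to bornological modules), and then passing to sheafifications and completions. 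The equivalence of quasi-abelian categories then follows formally, as $\operatorname{T}$ has been exhibited as a composite of three equivalences.
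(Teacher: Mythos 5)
Your proposal is correct and follows exactly the route the paper intends: the paper's own proof is simply ``This is a routine calculation,'' and your plan — reduce to the affinoid free case, substitute $\Omega_{\mathscr{L}^2}=p_1^{-1}\Omega_{\mathscr{L}}\overrightarrow{\otimes}_Kp_2^{-1}\Omega_{\mathscr{L}}$ and $\OX_{X^2}=p_1^{-1}\OX_X\overrightarrow{\otimes}_Kp_2^{-1}\OX_X$, cancel $p_2^{-1}\Omega_{\mathscr{L}}\overrightarrow{\otimes}p_2^{-1}\Omega_{\mathscr{L}}^{-1}$, and then match the actions via the explicit formulas on underlying bornological modules — is precisely that calculation carried out. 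You correctly identify the only nontrivial point (agreement of the $\wideparen{E}(\mathscr{L})^{\op}$-module structures, not just the underlying sheaves) and the right method for checking it.
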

\begin{proof}
This is a routine calculation.
\end{proof}
As before, the fact that $\operatorname{T}$ is an equivalence of quasi abelian categories implies that it is strongly exact. Hence, it extends to the derived category. For simplicity, we will also denote this extension by $\operatorname{T}$. Recall from Lemma \ref{Lemma right module structure on pushforward} that the complex 
$\EDelta\wideparen{U}(\mathscr{L})$ is canonically an object in $\operatorname{D}(\wideparen{E}(\mathscr{L})^{\op})$. The explicit description of $\operatorname{T}$ given above allows us to show the following corollary:
\begin{coro}\label{coro image of enveloping algebra under T}
There is an isomorphism in $\operatorname{D}(\wideparen{E}(\mathscr{L})^{\op})$:
\begin{equation*}
    \operatorname{T}(\EDelta\wideparen{U}(\mathscr{L}))=\EDelta\wideparen{U}(\mathscr{L}).
\end{equation*}
\end{coro}
\begin{proof}
By construction of $\operatorname{T}$, we have the following identities:
\begin{align*}
\operatorname{T}(\EDelta\wideparen{U}(\mathscr{L})):=&p_1^{*}\Omega_{\mathscr{L}}\overrightarrow{\otimes}^{\mathbb{L}}_{\OX_{X^2}}\left(\EDelta\wideparen{U}(\mathscr{L})\right)\overrightarrow{\otimes}^{\mathbb{L}}_{\OX_{X^2}}p_2^{*}\Omega_{\mathscr{L}}^{-1}\\
=&p_1^{*}\Omega_{\mathscr{L}}\overrightarrow{\otimes}^{\mathbb{L}}_{\OX_{X^2}}\left(\OX_{X^2}\overrightarrow{\otimes}^{\mathbb{L}}_{p^{-1}_1\OX_X\overrightarrow{\otimes}_Kp^{-1}_2\OX_X}\Delta_*\wideparen{U}(\mathscr{L})\right)\overrightarrow{\otimes}^{\mathbb{L}}_{\OX_{X^2}}p_2^{*}\Omega_{\mathscr{L}}^{-1}\\
=&\OX_{X^2}\overrightarrow{\otimes}^{\mathbb{L}}_{p^{-1}_1\OX_X\overrightarrow{\otimes}_Kp^{-1}_2\OX_X}\Delta_*\left(\Omega_{\mathscr{L}}\overrightarrow{\otimes}_{\OX_X}\wideparen{U}(\mathscr{L})\overrightarrow{\otimes}_{\OX_X}\Omega_{\mathscr{L}}^{-1}\right)\\
=&\OX_{X^2}\overrightarrow{\otimes}^{\mathbb{L}}_{p^{-1}_1\OX_X\overrightarrow{\otimes}_Kp^{-1}_2\OX_X}\Delta_*(\wideparen{U}(\mathscr{L})^{\op})\\
=&\EDelta\wideparen{U}(\mathscr{L})^{\op}\\
=:&\EDelta\wideparen{U}(\mathscr{L}),
\end{align*}
where the third identity can be seen by applying $\Delta^{-1}$ to both sides, the third one is analogous to \cite[Lemma 1.2.7]{hotta2007d}, and the last one follows from the fact that the right $\wideparen{E}(\mathscr{L})$-module structure on $\EDelta\wideparen{U}(\mathscr{L})$ stems from the left $\wideparen{E}(\mathscr{L})^{\op}$-module structure on $\EDelta\wideparen{U}(\mathscr{L})^{\op}$, and both sheaves agree as sheaves of Ind-Banach spaces.
\end{proof}
This result will be instrumental to our study of Hochschild homology in the latter sections of the paper.\bigskip

We conclude this section by studying the behavior of 
the different derived functors discussed in Chapter \ref{Section sheaves of Ind-Banach spaces}
with respect to the side-switching operations:
\begin{Lemma}\label{Lemma free modules and side-switching}
Consider a sheaf of Ind-Banach spaces $\mathcal{V}\in \operatorname{Shv}(X,\Indban)$, together with modules $\mathcal{M}\in \Mod_{\Indban}(\wideparen{U}(\mathscr{L}^2))$, $\mathcal{N}\in \Mod_{\Indban}(\wideparen{E}(\mathscr{L}))$. There are canonical isomorphisms:
\begin{equation*}
   \operatorname{S}\left(\mathcal{M}\overrightarrow{\otimes}_K \mathcal{V}\right)= \operatorname{S}\left(\mathcal{M}\right)\overrightarrow{\otimes}_K \mathcal{V}, \quad   \operatorname{S}^{-1}\left(\mathcal{N}\overrightarrow{\otimes}_K \mathcal{V}\right)= \operatorname{S}^{-1}\left(\mathcal{N}\right)\overrightarrow{\otimes}_K \mathcal{V},
\end{equation*}  
in $\Mod_{\Indban}(\wideparen{E}(\mathscr{L}))$, and $\Mod_{\Indban}(\wideparen{U}(\mathscr{L}^2))$ respectively.
\end{Lemma}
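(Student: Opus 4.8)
The plan is to unwind the definitions of $\operatorname{S}$ and $\operatorname{S}^{-1}$ and observe that the side-switching operators are given by tensoring with a fixed invertible sheaf over $p_2^{-1}\OX_X$, which commutes with tensoring over $K$ by an arbitrary sheaf of Ind-Banach spaces. Concretely, recall $\operatorname{S}=p_2^{-1}\Omega_{\mathscr{L}}\overrightarrow{\otimes}_{p_2^{-1}\OX_X}-$ and $\operatorname{S}^{-1}=p_2^{-1}\Omega_{\mathscr{L}}^{-1}\overrightarrow{\otimes}_{p_2^{-1}\OX_X}-$. First I would reduce to the affinoid case with $\mathscr{L}$ free, exactly as in the proof of Proposition \ref{prop side changing for bimodules}, so that $\Omega_{\mathscr{L}}$ and $\Omega_{\mathscr{L}}^{-1}$ are globally free $\OX_X$-modules and all the relevant sheaves are sheafifications of presheaves of complete bornological spaces, where the underlying $K$-vector spaces are available and the explicit formulas $(\ref{equation side-changing})$, $(\ref{equation 2 side-changing})$ make sense.

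Next I would establish the underlying isomorphism of sheaves of Ind-Banach spaces. For affinoid $V\subset X^2$ we have
\begin{equation*}
\operatorname{S}(\mathcal{M}\overrightarrow{\otimes}_K\mathcal{V})(V)=\Omega_{\mathscr{L}}(p_2(V))\overrightarrow{\otimes}_{\OX_X(p_2(V))}\bigl(\mathcal{M}(V)\overrightarrow{\otimes}_K\mathcal{V}(V)\bigr),
\end{equation*}
and associativity and symmetry of $\overrightarrow{\otimes}_K$ together with the fact that extension of scalars along $\OX_X(p_2(V))\to$ is itself a quotient of a $\overrightarrow{\otimes}_K$ gives a canonical identification with $\bigl(\Omega_{\mathscr{L}}(p_2(V))\overrightarrow{\otimes}_{\OX_X(p_2(V))}\mathcal{M}(V)\bigr)\overrightarrow{\otimes}_K\mathcal{V}(V)$. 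Passing to sheafifications (which commute with the operations involved since everything in sight is strongly exact here) yields the isomorphism $\operatorname{S}(\mathcal{M}\overrightarrow{\otimes}_K\mathcal{V})\cong \operatorname{S}(\mathcal{M})\overrightarrow{\otimes}_K\mathcal{V}$ in $\operatorname{Shv}(X^2,\Indban)$. The same argument applies verbatim to $\operatorname{S}^{-1}$, using $\Omega_{\mathscr{L}}^{-1}$ in place of $\Omega_{\mathscr{L}}$.

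Finally I would check that this isomorphism is $\wideparen{E}(\mathscr{L})$-linear (resp. $\wideparen{U}(\mathscr{L}^2)$-linear). On $\mathcal{M}\overrightarrow{\otimes}_K\mathcal{V}$, the $\wideparen{U}(\mathscr{L}^2)$-module structure acts through the first factor $\mathcal{M}$, and the $\wideparen{E}(\mathscr{L})$-structure on $\operatorname{S}(\mathcal{M})$ is built — via formulas $(\ref{equation side-changing})$ and $(\ref{equation 2 side-changing})$ — only out of the $p_1^{-1}\wideparen{U}(\mathscr{L})$-action (which is $\Id\otimes x$, untouched by the extra $\overrightarrow{\otimes}_K\mathcal{V}$ factor) and the $p_2^{-1}\wideparen{U}(\mathscr{L})$-action twisted into a right action on $\Omega_{\mathscr{L}}$, which again only sees the $\mathcal{M}$-component. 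Hence both sides carry the $\wideparen{E}(\mathscr{L})$-action ``through $\operatorname{S}(\mathcal{M})$'' and the canonical map is $\wideparen{E}(\mathscr{L})$-linear; the $\mathcal{V}$-factor is inert. The main obstacle is purely bookkeeping: one must be careful that the bornology (hence the Ind-Banach structure) on the iterated tensor product is genuinely independent of the order of the tensor factors, and that sheafification commutes past all of these operations — but both facts follow from strong exactness of $\overrightarrow{\otimes}_K$ and of sheafification established in Section \ref{Section sheaves of Ind-Banach spaces}, so no real difficulty arises beyond keeping the functoriality straight.
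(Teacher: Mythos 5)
Your proposal is correct and follows essentially the same route as the paper: first identify the underlying sheaves of Ind-Banach spaces via associativity of the tensor products (the paper does this globally, you do it section-wise and sheafify, which amounts to the same thing), then verify $\wideparen{E}(\mathscr{L})$-linearity locally by reducing to complete bornological spaces and invoking the explicit action formulas $(\ref{equation side-changing})$ and $(\ref{equation 2 side-changing})$. No gaps.
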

\begin{proof}
By construction we have the following identity in of sheaves of Ind-Banach spaces:
\begin{multline*}
\operatorname{S}\left(\mathcal{M}\overrightarrow{\otimes}_K \mathcal{V}\right)=p_2^{-1}\Omega_{\mathscr{L}}\overrightarrow{\otimes}_{p_2^{-1}\OX_X}\left(\mathcal{M}\overrightarrow{\otimes}_K \mathcal{V} \right)\\
=\left(p_2^{-1}\Omega_{\mathscr{L}}\overrightarrow{\otimes}_{p_2^{-1}\OX_X}\mathcal{M}\right)\overrightarrow{\otimes}_K \mathcal{V}= \operatorname{S}\left(\mathcal{M}\right)\overrightarrow{\otimes}_K \mathcal{V}.    
\end{multline*}
We just need to show that these identifications are $\wideparen{E}(\mathscr{L})$-linear, which can be done locally. Thus, we may assume that $\mathcal{M}(X)$ and $\mathcal{V}(X)$ are complete bornological spaces, and then apply $(\ref{equation side-changing})$.
\end{proof}
\begin{Lemma}\label{Lemma side-changing and homs}
Choose $\mathcal{V}^{\bullet} \in \operatorname{D}(\operatorname{Shv}(X,LH(\widehat{\mathcal{B}}c_K)))$, along with $\mathcal{M}^{\bullet}\in \operatorname{D}(\wideparen{U}(\mathscr{L}^2))$, and  $\mathcal{N}^{\bullet}\in \operatorname{D}(\wideparen{E}(\mathscr{L}))$. There are canonical quasi-isomorphisms:
\begin{equation*}
   \operatorname{S}\left(\mathcal{M}^{\bullet}\overrightarrow{\otimes}^{\mathbb{L}}_K \mathcal{V}^{\bullet}\right)= \operatorname{S}\left(\mathcal{M}^{\bullet}\right)\overrightarrow{\otimes}^{\mathbb{L}}_K \mathcal{V}^{\bullet}, \quad   \operatorname{S}^{-1}\left(\mathcal{N}^{\bullet}\overrightarrow{\otimes}^{\mathbb{L}}_K \mathcal{V}^{\bullet}\right)= \operatorname{S}^{-1}\left(\mathcal{N}^{\bullet}\right)\overrightarrow{\otimes}^{\mathbb{L}}_K \mathcal{V}^{\bullet},
\end{equation*}
in $\operatorname{D}(\wideparen{E}(\mathscr{L}))$, and $\operatorname{D}(\wideparen{U}(\mathscr{L}^2))$ respectively.
\end{Lemma}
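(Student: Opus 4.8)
\textbf{Proof plan for Lemma \ref{Lemma side-changing and homs}.}

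The plan is to reduce the derived statement to the already-established underived statement (Lemma \ref{Lemma free modules and side-switching}) by resolving $\mathcal{M}^{\bullet}$ and $\mathcal{N}^{\bullet}$ by complexes built from free modules, and by exploiting the fact that the side-switching operators are strongly exact equivalences, hence pass to the derived categories without any derivation. More precisely, since $\operatorname{S}$ and $\operatorname{S}^{-1}$ are mutually inverse equivalences of quasi-abelian categories, they preserve strict exactness and therefore commute with the triangulated structure; it suffices to compute both sides of the claimed identity on a suitable complex representing $\mathcal{M}^{\bullet}\overrightarrow{\otimes}^{\mathbb{L}}_K\mathcal{V}^{\bullet}$.

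First I would recall from Section \ref{Section derived tensor-hom 2} that the functor $-\overrightarrow{\otimes}^{\mathbb{L}}_K-$ on $\operatorname{D}(\wideparen{U}(\mathscr{L}^2))\times \operatorname{D}(\operatorname{Shv}(X,LH(\widehat{\mathcal{B}}c_K)))$ is computed by replacing the sheaf-of-spaces argument $\mathcal{V}^{\bullet}$ by a complex $\widetilde{\mathcal{V}}^{\bullet}$ in the essential image of $I$ from $\operatorname{K}(\operatorname{Shv}(X,\Indban))$ with a quasi-isomorphism $I(\widetilde{\mathcal{V}}^{\bullet})\rightarrow \mathcal{V}^{\bullet}$, whose terms are flat with respect to $-\widetilde{\otimes}_K-$; recall that the module argument need not be resolved, since for any $\mathcal{M}\in \Mod_{\Indban}(\wideparen{U}(\mathscr{L}^2))$ and any sheaf of Ind-Banach spaces $\mathcal{V}$ the object $\mathcal{M}\overrightarrow{\otimes}_K\mathcal{V}$ already represents the derived tensor product in the appropriate degree (cf. the discussion preceding $(\ref{equation derived functor relative tensor product in boinded above derived categories})$, since $-\overrightarrow{\otimes}_K-$ is exact in $\operatorname{Shv}(X,\Indban)$). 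Thus $\mathcal{M}^{\bullet}\overrightarrow{\otimes}^{\mathbb{L}}_K\mathcal{V}^{\bullet}$ is represented, term by term, by $\mathcal{M}^{\bullet}\overrightarrow{\otimes}_K\widetilde{\mathcal{V}}^{\bullet}$ (suitably totalized). Now apply $\operatorname{S}$, which is strongly exact and hence may be applied termwise and commutes with $\operatorname{Tot}_{\oplus}$: by Lemma \ref{Lemma free modules and side-switching} one gets, in each bidegree, $\operatorname{S}(\mathcal{M}^{p}\overrightarrow{\otimes}_K\widetilde{\mathcal{V}}^{q})=\operatorname{S}(\mathcal{M}^{p})\overrightarrow{\otimes}_K\widetilde{\mathcal{V}}^{q}$, $\wideparen{E}(\mathscr{L})$-linearly and compatibly with the differentials (the differentials only involve the $\wideparen{U}(\mathscr{L}^2)$-module maps in $\mathcal{M}^{\bullet}$ and the $K$-linear maps in $\widetilde{\mathcal{V}}^{\bullet}$, both of which are respected by the natural isomorphism of Lemma \ref{Lemma free modules and side-switching}). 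Totalizing, the right-hand side $\operatorname{S}(\mathcal{M}^{\bullet})\overrightarrow{\otimes}_K\widetilde{\mathcal{V}}^{\bullet}$ computes $\operatorname{S}(\mathcal{M}^{\bullet})\overrightarrow{\otimes}^{\mathbb{L}}_K\mathcal{V}^{\bullet}$ by the same recipe, giving the first quasi-isomorphism. The second identity follows by applying $\operatorname{S}^{-1}$ — equivalently, by the symmetric argument with the roles of $\wideparen{U}(\mathscr{L}^2)$ and $\wideparen{E}(\mathscr{L})$ exchanged — or simply by applying $\operatorname{S}^{-1}$ to the first identity and using $\operatorname{S}^{-1}\operatorname{S}=\operatorname{Id}$.

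The main point requiring care — the "hard part" — is checking that the termwise isomorphisms of Lemma \ref{Lemma free modules and side-switching} assemble into a morphism \emph{of complexes} and that $\operatorname{S}$ genuinely commutes with the totalization $\operatorname{Tot}_{\oplus}$ of the double complex; the latter is a consequence of $\operatorname{S}$ commuting with direct sums and being additive, while the former is the naturality statement already recorded (functoriality of the construction in Proposition \ref{prop side changing for bimodules}). One small subtlety is that $\mathcal{V}^{\bullet}$ lives in $\operatorname{D}(\operatorname{Shv}(X,LH(\widehat{\mathcal{B}}c_K)))$ rather than $\operatorname{D}(\operatorname{Shv}(X,\Indban))$; this is handled exactly as in Section \ref{Section derived tensor-hom 2}, using that every such $\mathcal{V}^{\bullet}$ admits a representative of the form $I(\widetilde{\mathcal{V}}^{\bullet})$ with $\widetilde{\mathcal{V}}^{\bullet}\in \operatorname{K}(\operatorname{Shv}(X,\Indban))$ whose terms are flat, together with the identification $I(-\overrightarrow{\otimes}^{\mathbb{L}}_K-)=I(-)\widetilde{\otimes}^{\mathbb{L}}_KI(-)$ and the commutativity of $\operatorname{S}$ with $I$ recorded in diagram $(\ref{equation extension of S to the left heart})$. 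No new homological input beyond Lemma \ref{Lemma free modules and side-switching} and the strong exactness of $\operatorname{S}$, $\operatorname{S}^{-1}$ is needed.
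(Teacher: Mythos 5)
Your proposal is correct and follows essentially the same route as the paper: both compute $\mathcal{M}^{\bullet}\overrightarrow{\otimes}^{\mathbb{L}}_K\mathcal{V}^{\bullet}$ as the totalization of a double complex with entries in the (flat) essential image of $I$, commute $\operatorname{S}$ with $\operatorname{Tot}_{\oplus}$, apply Lemma \ref{Lemma free modules and side-switching} termwise via the compatibility of $\operatorname{S}$ with $I$ from diagram $(\ref{equation extension of S to the left heart})$, and reassemble. Your extra care about naturality of the termwise isomorphisms and about deducing the second identity by applying $\operatorname{S}^{-1}$ is consistent with, and slightly more explicit than, the paper's argument.
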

\begin{proof}
 We show the first isomorphism. By construction we have:
 \begin{align*}
 \operatorname{S}\left(\mathcal{M}^{\bullet}\overrightarrow{\otimes}^{\mathbb{L}}_K \mathcal{V}^{\bullet}\right)=&\operatorname{S}\left(\operatorname{Tot}_{\oplus}\left(I(\mathcal{M}^{\bullet})\widetilde{\otimes}^{\bullet,\bullet}_K I(\mathcal{V}^{\bullet})\right) \right)\\
 =& \operatorname{Tot}_{\oplus}\left(\operatorname{S}\left(I(\mathcal{M}^{\bullet})\widetilde{\otimes}^{\bullet,\bullet}_K I(\mathcal{V}^{\bullet})\right) \right)\\
=&\operatorname{Tot}_{\oplus}\left(\operatorname{S}\left(I(\mathcal{M}^{\bullet})\right)\widetilde{\otimes}^{\bullet,\bullet}_K I(\mathcal{V}^{\bullet}) \right)\\=&\operatorname{Tot}_{\oplus}\left(I\left(\operatorname{S}(\mathcal{M}^{\bullet})\right)\widetilde{\otimes}^{\bullet,\bullet}_K I(\mathcal{V}^{\bullet}) \right)\\
=&\operatorname{S}\left(\mathcal{M}^{\bullet}\right)\overrightarrow{\otimes}^{\mathbb{L}}_K \mathcal{V}^{\bullet},
 \end{align*}
 where the third identity follows by Lemma \ref{Lemma free modules and side-switching}, together with diagram $(\ref{equation extension of S to the left heart})$.
\end{proof}
\begin{prop}\label{Proposition internal hom and side-switching operators}
There are natural equivalences:
\begin{multline*}
    R\underline{\mathcal{H}om}_{\wideparen{U}(\mathscr{L}^2)}(-,-)\xrightarrow[]{\cong}R\underline{\mathcal{H}om}_{\wideparen{E}(\mathscr{L})}(\operatorname{S}(-),\operatorname{S}(-)),\\
    R\underline{\mathcal{H}om}_{\wideparen{E}(\mathscr{L})}(-,-)\xrightarrow[]{\cong} R\underline{\mathcal{H}om}_{\wideparen{U}(\mathscr{L}^2)}(\operatorname{S}^{-1}(-),\operatorname{S}^{-1}(-)).
\end{multline*}
\end{prop}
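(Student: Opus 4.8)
The two displayed isomorphisms are equivalent (apply the first to $\operatorname{S}^{-1}(-)$, using that $\operatorname{S}$ and $\operatorname{S}^{-1}$ are mutually inverse equivalences), so it suffices to prove the first. The plan is to deduce it from the derived tensor-hom adjunction $(\ref{equation derived tensor relative hom adjunction})$ together with Lemma \ref{Lemma side-changing and homs}, using Yoneda in $\operatorname{D}(\operatorname{Shv}(X,\Indban))$. First I would fix $\mathcal{M}^{\bullet},\mathcal{N}^{\bullet}\in \operatorname{D}(\wideparen{U}(\mathscr{L}^2))$ and a test object $\mathcal{V}^{\bullet}\in \operatorname{D}(\operatorname{Shv}(X,\Indban))$, and compute
\begin{align*}
\Hom_{\operatorname{D}(\operatorname{Shv}(X,\Indban))}\!\left(\mathcal{V}^{\bullet}, R\underline{\mathcal{H}om}_{\wideparen{E}(\mathscr{L})}(\operatorname{S}\mathcal{M}^{\bullet},\operatorname{S}\mathcal{N}^{\bullet})\right)
&=\Hom_{\operatorname{D}(\wideparen{E}(\mathscr{L}))}\!\left(\operatorname{S}\mathcal{M}^{\bullet}\overrightarrow{\otimes}^{\mathbb{L}}_K\mathcal{V}^{\bullet},\operatorname{S}\mathcal{N}^{\bullet}\right)\\
&=\Hom_{\operatorname{D}(\wideparen{E}(\mathscr{L}))}\!\left(\operatorname{S}\!\left(\mathcal{M}^{\bullet}\overrightarrow{\otimes}^{\mathbb{L}}_K\mathcal{V}^{\bullet}\right),\operatorname{S}\mathcal{N}^{\bullet}\right)\\
&=\Hom_{\operatorname{D}(\wideparen{U}(\mathscr{L}^2))}\!\left(\mathcal{M}^{\bullet}\overrightarrow{\otimes}^{\mathbb{L}}_K\mathcal{V}^{\bullet},\mathcal{N}^{\bullet}\right)\\
&=\Hom_{\operatorname{D}(\operatorname{Shv}(X,\Indban))}\!\left(\mathcal{V}^{\bullet}, R\underline{\mathcal{H}om}_{\wideparen{U}(\mathscr{L}^2)}(\mathcal{M}^{\bullet},\mathcal{N}^{\bullet})\right).
\end{align*}
Here the first and last equalities are the derived adjunction $(\ref{equation derived tensor relative hom adjunction})$ for $\wideparen{E}(\mathscr{L})$ and $\wideparen{U}(\mathscr{L}^2)$ respectively; the second equality is Lemma \ref{Lemma side-changing and homs}; and the third uses that $\operatorname{S}:\operatorname{D}(\wideparen{U}(\mathscr{L}^2))\leftrightarrows\operatorname{D}(\wideparen{E}(\mathscr{L})):\operatorname{S}^{-1}$ are mutually inverse equivalences of triangulated categories, so $\operatorname{S}$ induces bijections on $\Hom$-sets. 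Then Yoneda's lemma gives the natural isomorphism $R\underline{\mathcal{H}om}_{\wideparen{E}(\mathscr{L})}(\operatorname{S}\mathcal{M}^{\bullet},\operatorname{S}\mathcal{N}^{\bullet})\cong R\underline{\mathcal{H}om}_{\wideparen{U}(\mathscr{L}^2)}(\mathcal{M}^{\bullet},\mathcal{N}^{\bullet})$ in $\operatorname{D}(\operatorname{Shv}(X,\Indban))$.

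A small technical point worth recording is that the chain of $\Hom$-identifications above is natural in all three variables, so the resulting isomorphism is genuinely an isomorphism of bifunctors $\operatorname{D}(\wideparen{U}(\mathscr{L}^2))^{\op}\times\operatorname{D}(\wideparen{U}(\mathscr{L}^2))\to\operatorname{D}(\operatorname{Shv}(X,\Indban))$; naturality is inherited from the naturality of adjunction $(\ref{equation derived tensor relative hom adjunction})$ and of the equivalences $\operatorname{S},\operatorname{S}^{-1}$, together with the naturality statement in Lemma \ref{Lemma side-changing and homs}. The second displayed isomorphism then follows by substituting $\mathcal{M}^{\bullet}=\operatorname{S}^{-1}\mathcal{M}'^{\bullet}$, $\mathcal{N}^{\bullet}=\operatorname{S}^{-1}\mathcal{N}'^{\bullet}$ and using $\operatorname{S}\circ\operatorname{S}^{-1}=\operatorname{Id}$.

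I do not expect a serious obstacle here, since all the required inputs are already in place: the key lemma relating $\operatorname{S}$ to $-\overrightarrow{\otimes}^{\mathbb{L}}_K-$ is Lemma \ref{Lemma side-changing and homs}, and the tensor-hom adjunctions are $(\ref{equation derived tensor relative hom adjunction})$. The only mild care needed is to make sure the Yoneda argument is applied in the correct derived category — note that $R\underline{\mathcal{H}om}$ with the conventions of Remark \ref{obs abuse of notation internal hom} lands in $\operatorname{D}(\operatorname{Shv}(X,\Indban))$ (identified with $\operatorname{D}(\operatorname{Shv}(X,LH(\widehat{\mathcal{B}}c_K)))$), so the test objects $\mathcal{V}^{\bullet}$ should range over that category, which is legitimate since it is the homotopy category of a Grothendieck abelian category and hence has enough objects for Yoneda. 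The closest thing to a subtlety is purely bookkeeping: in $(\ref{equation derived tensor relative hom adjunction})$ the first variable of $R\underline{\mathcal{H}om}$ is the one tensored, and one must check the side-switching operators act on the correct side (they modify the $p_2^{-1}\OX_X$-structure), but this is exactly what Lemma \ref{Lemma side-changing and homs} encodes, so no new argument is required.
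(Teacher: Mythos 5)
Your proposal is correct and follows essentially the same route as the paper's proof: the same Yoneda argument using the derived tensor–hom adjunction $(\ref{equation derived tensor relative hom adjunction})$ on both sides, Lemma \ref{Lemma side-changing and homs} to move $\operatorname{S}$ past $-\overrightarrow{\otimes}^{\mathbb{L}}_K\mathcal{V}^{\bullet}$, and the fact that $\operatorname{S}$ is an equivalence to identify the $\Hom$-sets. The only cosmetic difference is that the paper treats the second isomorphism "analogously" whereas you deduce it by substituting $\operatorname{S}^{-1}$; both are fine.
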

\begin{proof}
 Let $\mathcal{V}^{\bullet}\in \operatorname{D}(\operatorname{Shv}(X,LH(\widehat{\mathcal{B}}c_K)))$, $\mathcal{M}^{\bullet},\mathcal{N}^{\bullet}\in \operatorname{D}(\wideparen{U}(\mathscr{L}^2))$. Then we have:
 \begin{align*}
     \Hom_{\operatorname{D}(\operatorname{Shv}(X,LH(\widehat{\mathcal{B}}c_K)))}&(\mathcal{V}^{\bullet},R\underline{\mathcal{H}om}_{\wideparen{E}(\mathscr{L})}(\operatorname{S}(\mathcal{M}^{\bullet}),\operatorname{S}(\mathcal{N}^{\bullet})))\\
     =&\Hom_{\operatorname{D}(\wideparen{E}(\mathscr{L}))}(\operatorname{S}(\mathcal{M}^{\bullet})\overrightarrow{\otimes}_K^{\mathbb{L}}\mathcal{V}^{\bullet},\operatorname{S}(\mathcal{N}^{\bullet}))\\
     =&\Hom_{\operatorname{D}(\wideparen{E}(\mathscr{L}))}(\operatorname{S}\left(\mathcal{M}^{\bullet}\overrightarrow{\otimes}_K^{\mathbb{L}}\mathcal{V}^{\bullet}\right),\operatorname{S}(\mathcal{N}^{\bullet}))\\
     =&\Hom_{\operatorname{D}(\wideparen{U}(\mathscr{L}^2))}(\mathcal{M}^{\bullet}\overrightarrow{\otimes}_K^{\mathbb{L}}\mathcal{V}^{\bullet},\mathcal{N}^{\bullet})\\=&\Hom_{\operatorname{D}(\operatorname{Shv}(X,\widehat{\mathcal{B}}c_K))}(\mathcal{V}^{\bullet},R\underline{\mathcal{H}om}_{\wideparen{U}(\mathscr{L}^2)}(\mathcal{M}^{\bullet},\mathcal{N}^{\bullet})),
 \end{align*}
 and the result follows by Yoneda. The second isomorphism is analogous.
\end{proof}

We will now show the analogous results for derived tensor products:
\begin{Lemma}\label{Lemma side-changing and tensor products}
Choose $\mathcal{V}^{\bullet} \in \operatorname{D}(\operatorname{Shv}(X,LH(\widehat{\mathcal{B}}c_K)))$, and $\mathcal{M}^{\bullet}\in \operatorname{D}(\wideparen{U}(\mathscr{L}^2)^{\op})$, and $\mathcal{N}^{\bullet}\in \operatorname{D}(\wideparen{E}(\mathscr{L})^{\op})$. There are canonical quasi-isomorphisms:
\begin{multline*}
    R\underline{\mathcal{H}om}_{\Indban}(\mathcal{M}^{\bullet},\mathcal{V}^{\bullet})=\operatorname{S}^{-1}R\underline{\mathcal{H}om}_{\Indban}(\operatorname{S}_r(\mathcal{M}^{\bullet}),\mathcal{V}^{\bullet}),\\
  R\underline{\mathcal{H}om}_{\Indban}(\mathcal{N}^{\bullet},\mathcal{V}^{\bullet})=\operatorname{S}R\underline{\mathcal{H}om}_{\Indban}(\operatorname{S}_r^{-1}(\mathcal{N}^{\bullet}),\mathcal{V}^{\bullet}),
\end{multline*}
in $\operatorname{D}(\wideparen{U}(\mathscr{L}^2))$ and $\operatorname{D}(\wideparen{E}(\mathscr{L}))$ respectively.
\end{Lemma}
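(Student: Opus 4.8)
The two claimed quasi-isomorphisms are symmetric, so I would prove the first one, namely $R\underline{\mathcal{H}om}_{\Indban}(\mathcal{M}^{\bullet},\mathcal{V}^{\bullet})=\operatorname{S}^{-1}R\underline{\mathcal{H}om}_{\Indban}(\operatorname{S}_r(\mathcal{M}^{\bullet}),\mathcal{V}^{\bullet})$, and then note that the second follows by exchanging the roles of $\operatorname{S}$ and $\operatorname{S}_r$ (equivalently, by applying the first to $\operatorname{S}_r^{-1}(\mathcal{N}^{\bullet})$ and using that $\operatorname{S}_r$, $\operatorname{S}_r^{-1}$ are mutually inverse). As always in this circle of arguments, the cleanest route is Yoneda in the derived category: for a test object $\mathcal{T}^{\bullet}\in \operatorname{D}(\operatorname{Shv}(X,LH(\widehat{\mathcal{B}}c_K)))$, compute $\Hom$ out of $\mathcal{T}^{\bullet}$ into both sides and match them.

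First I would recall that $\operatorname{S}^{-1}=p_2^{-1}\Omega_{\mathscr{L}}^{-1}\overrightarrow{\otimes}_{p_2^{-1}\OX_X}-$ has the left adjoint $\operatorname{S}$ at the level of quasi-abelian categories (Proposition \ref{prop side changing for bimodules}), and that by strong exactness this lifts to an adjunction on the derived categories $\operatorname{S}:\operatorname{D}(\wideparen{U}(\mathscr{L}^2))\leftrightarrows \operatorname{D}(\wideparen{E}(\mathscr{L})):\operatorname{S}^{-1}$ (indeed they are mutually inverse equivalences). Since $\operatorname{S}^{-1}$ is applied here to an object of $\operatorname{D}(\operatorname{Shv}(X,\Indban))$ regarded via its underlying sheaf, I should first check that $\operatorname{S}^{-1}$, as the operation $p_2^{-1}\Omega_{\mathscr{L}}^{-1}\overrightarrow{\otimes}_{p_2^{-1}\OX_X}-$ on sheaves of Ind-Banach spaces (forgetting module structures), commutes with the forgetful functor and with $-\overrightarrow{\otimes}_K^{\mathbb{L}}\mathcal{V}^{\bullet}$; this is the sheaf-level analogue of Lemma \ref{Lemma free modules and side-switching} and Lemma \ref{Lemma side-changing and homs}. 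With this, the chain of isomorphisms is: $\Hom_{\operatorname{D}(\wideparen{U}(\mathscr{L}^2))}(\mathcal{T}^{\bullet},\operatorname{S}^{-1}R\underline{\mathcal{H}om}_{\Indban}(\operatorname{S}_r(\mathcal{M}^{\bullet}),\mathcal{V}^{\bullet}))$ $=\Hom_{\operatorname{D}(\wideparen{E}(\mathscr{L}))}(\operatorname{S}(\mathcal{T}^{\bullet}),R\underline{\mathcal{H}om}_{\Indban}(\operatorname{S}_r(\mathcal{M}^{\bullet}),\mathcal{V}^{\bullet}))$ by the $\operatorname{S}\dashv\operatorname{S}^{-1}$ adjunction; then $=\Hom_{\operatorname{D}(\operatorname{Shv}(X,\Indban))}(\operatorname{S}_r(\mathcal{M}^{\bullet})\overrightarrow{\otimes}_K^{\mathbb{L}}\operatorname{S}(\mathcal{T}^{\bullet}),\mathcal{V}^{\bullet})$ by the derived tensor–inner-hom adjunction $(\ref{equation derived tensor relative hom adjunction})$ applied over $\wideparen{E}(\mathscr{L})$; then I rewrite $\operatorname{S}_r(\mathcal{M}^{\bullet})\overrightarrow{\otimes}_K^{\mathbb{L}}\operatorname{S}(\mathcal{T}^{\bullet})$. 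The key computational input is that tensoring a right-module side-switch against a left-module side-switch over $K$ recovers, up to the $\wideparen{U}(\mathscr{L}^2)$-module structure, $\mathcal{M}^{\bullet}\overrightarrow{\otimes}_K^{\mathbb{L}}\mathcal{T}^{\bullet}$ — because $\operatorname{S}_r=p_2^{-1}\Omega_{\mathscr{L}}^{-1}\overrightarrow{\otimes}-$ and $\operatorname{S}=p_2^{-1}\Omega_{\mathscr{L}}\overrightarrow{\otimes}-$, and $p_2^{-1}\Omega_{\mathscr{L}}^{-1}\overrightarrow{\otimes}_{p_2^{-1}\OX_X}(-\overrightarrow{\otimes}_Kp_2^{-1}\Omega_{\mathscr{L}})$ is naturally isomorphic to the identity, the invertible line bundle cancelling against its inverse as in \cite[Lemma 1.2.7]{hotta2007d}. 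This gives $=\Hom_{\operatorname{D}(\operatorname{Shv}(X,\Indban))}(\mathcal{M}^{\bullet}\overrightarrow{\otimes}_K^{\mathbb{L}}\mathcal{T}^{\bullet},\mathcal{V}^{\bullet})=\Hom_{\operatorname{D}(\wideparen{U}(\mathscr{L}^2))}(\mathcal{T}^{\bullet},R\underline{\mathcal{H}om}_{\Indban}(\mathcal{M}^{\bullet},\mathcal{V}^{\bullet}))$ by the same adjunction $(\ref{equation derived tensor relative hom adjunction})$ over $\wideparen{U}(\mathscr{L}^2)$, and Yoneda closes the argument.

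The routine part is the bookkeeping of which algebra each $R\underline{\mathcal{H}om}_{\Indban}$ is linear over (it lands in $\operatorname{D}(\wideparen{U}(\mathscr{L}^2))$ resp.\ $\operatorname{D}(\wideparen{E}(\mathscr{L}))$ because of the first variable being a right module), and checking naturality of all the isomorphisms so that Yoneda is legitimate. The genuine obstacle is verifying the cancellation identity $\operatorname{S}_r(\mathcal{M}^{\bullet})\overrightarrow{\otimes}_K^{\mathbb{L}}\operatorname{S}(\mathcal{T}^{\bullet})\cong \mathcal{M}^{\bullet}\overrightarrow{\otimes}_K^{\mathbb{L}}\mathcal{T}^{\bullet}$ \emph{as objects of} $\operatorname{D}(\wideparen{U}(\mathscr{L}^2))$, i.e.\ not merely as sheaves of Ind-Banach spaces but compatibly with the $\wideparen{U}(\mathscr{L}^2)$-action on the outer $p_1$-factor. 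Here I would reduce to the affinoid, free case where $\Omega_{\mathscr{L}}$ is trivial (so that all sheaves in sight are sheaves of complete bornological spaces and have underlying $K$-vector spaces), make the identification on underlying bornological modules explicit using the formulas $(\ref{equation side-changing})$, $(\ref{equation 2 side-changing})$ and the Morita description of $\operatorname{S}$, $\operatorname{S}_r$, and then glue; the $p_1$-linearity is automatic since the side-switching only touches the $p_2$-factor. Once that local identification is pinned down, passing to derived functors is harmless because $\operatorname{S}$, $\operatorname{S}_r$ are exact and commute with the relevant $\operatorname{Tot}_{\oplus}$'s, exactly as in Lemma \ref{Lemma side-changing and homs}.
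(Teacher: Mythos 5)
Your route is genuinely different from the paper's: the paper proves this lemma by a direct computation ``dual'' to Lemmas \ref{Lemma free modules and side-switching} and \ref{Lemma side-changing and homs} --- i.e.\ one checks, locally and then on total complexes, that $R\underline{\mathcal{H}om}_{\Indban}(p_2^{-1}\Omega_{\mathscr{L}}^{-1}\overrightarrow{\otimes}_{p_2^{-1}\OX_X}\mathcal{M}^{\bullet},\mathcal{V}^{\bullet})\cong p_2^{-1}\Omega_{\mathscr{L}}\overrightarrow{\otimes}_{p_2^{-1}\OX_X}R\underline{\mathcal{H}om}_{\Indban}(\mathcal{M}^{\bullet},\mathcal{V}^{\bullet})$ using invertibility of $\Omega_{\mathscr{L}}$ and the explicit module structures, with no Yoneda and no tensor--hom adjunction. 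You instead propose Yoneda against the adjunction. Unfortunately your version has two concrete problems.

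First, the middle term of your chain is wrong. Testing $\Hom_{\operatorname{D}(\wideparen{E}(\mathscr{L}))}(\operatorname{S}(\mathcal{T}^{\bullet}),R\underline{\mathcal{H}om}_{\Indban}(\operatorname{S}_r(\mathcal{M}^{\bullet}),\mathcal{V}^{\bullet}))$ requires the adjunction $(\ref{equation adjunction relative tensor-hom})$, whose left-hand side is $\Hom(\operatorname{S}_r(\mathcal{M}^{\bullet})\overrightarrow{\otimes}^{\mathbb{L}}_{\wideparen{E}(\mathscr{L})}\operatorname{S}(\mathcal{T}^{\bullet}),\mathcal{V}^{\bullet})$, i.e.\ the \emph{relative} tensor product, not $\overrightarrow{\otimes}_K^{\mathbb{L}}$ (the adjunction $(\ref{equation derived tensor relative hom adjunction})$ you cite pairs two \emph{left} modules via $R\underline{\mathcal{H}om}_{\mathscr{A}}$ and does not apply to $R\underline{\mathcal{H}om}_{\Indban}$ of a right module). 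Moreover the cancellation you rely on is false over $K$: in $(p_2^{-1}\Omega_{\mathscr{L}}^{-1}\overrightarrow{\otimes}_{p_2^{-1}\OX_X}\mathcal{M}^{\bullet})\overrightarrow{\otimes}_K(p_2^{-1}\Omega_{\mathscr{L}}\overrightarrow{\otimes}_{p_2^{-1}\OX_X}\mathcal{T}^{\bullet})$ the line bundle and its inverse sit on different $\otimes_K$-factors carrying independent $p_2^{-1}\OX_X$-actions, so there is no natural identification with $\mathcal{M}^{\bullet}\overrightarrow{\otimes}_K^{\mathbb{L}}\mathcal{T}^{\bullet}$; the cancellation of \cite[Lemma 1.2.7]{hotta2007d} is intrinsically a relative-tensor statement.

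Second, once you correct the middle term to the relative tensor, the ``key computational input'' you need is exactly $\operatorname{S}_r(-)\overrightarrow{\otimes}^{\mathbb{L}}_{\wideparen{E}(\mathscr{L})}\operatorname{S}(-)\cong-\overrightarrow{\otimes}^{\mathbb{L}}_{\wideparen{U}(\mathscr{L}^2)}-$, which is Proposition \ref{Proposition completed tensor and side-switching operators} --- and in the paper that proposition is \emph{deduced from} the present lemma by the very Yoneda argument you are running in reverse. So your proof is circular relative to the intended logical order, unless you first prove the relative-tensor cancellation independently; your sketch of that step (``reduce to the affinoid free case and glue'') is really the whole content of the lemma and would need to be carried out in the same direct, local, total-complex fashion as the paper's own argument. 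The fix is to abandon Yoneda here and prove the inner-Hom identity directly, as the dual of Lemmas \ref{Lemma free modules and side-switching} and \ref{Lemma side-changing and homs}.
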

\begin{proof}
The proof is dual to the proofs of lemmas \ref{Lemma free modules and side-switching}, and \ref{Lemma side-changing and homs}.
\end{proof}
\begin{prop}\label{Proposition completed tensor and side-switching operators}
There are natural equivalences:
\begin{equation*}
    -\overrightarrow{\otimes}^{\mathbb{L}}_{\wideparen{U}(\mathscr{L}^2)}-= \operatorname{S}_r(-)\overrightarrow{\otimes}^{\mathbb{L}}_{\wideparen{E}(\mathscr{L})}\operatorname{S}(-), \textnormal{ }
    -\overrightarrow{\otimes}^{\mathbb{L}}_{\wideparen{E}(\mathscr{L})}-= \operatorname{S}_r^{-1}(-)\overrightarrow{\otimes}^{\mathbb{L}}_{\wideparen{U}(\mathscr{L}^2)}\operatorname{S}^{-1}(-).
\end{equation*}
\end{prop}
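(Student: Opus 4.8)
The plan is to deduce both identities by Yoneda's lemma, exactly mirroring the argument just given for $R\underline{\mathcal{H}om}$ in Proposition \ref{Proposition internal hom and side-switching operators}, but now using the relative-tensor/inner-hom adjunction $(\ref{equation adjunction relative tensor-hom})$ together with the compatibility of the side-switching operators with $R\underline{\mathcal{H}om}_{\Indban}$ recorded in Lemma \ref{Lemma side-changing and tensor products}. I will only treat the first identity; the second is obtained by applying the first to $\operatorname{S}^{-1}(-)$, $\operatorname{S}_r^{-1}(-)$ and using that $\operatorname{S},\operatorname{S}^{-1}$ (resp. $\operatorname{S}_r,\operatorname{S}_r^{-1}$) are mutually inverse.

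First I would fix $\mathcal{M}^{\bullet}\in \operatorname{D}(\wideparen{U}(\mathscr{L}^2)^{\op})$, $\mathcal{N}^{\bullet}\in \operatorname{D}(\wideparen{U}(\mathscr{L}^2))$, and a test object $\mathcal{V}^{\bullet}\in \operatorname{D}(\operatorname{Shv}(X,LH(\widehat{\mathcal{B}}c_K)))$, and run the chain of natural isomorphisms
\begin{align*}
\Hom_{\operatorname{D}(\operatorname{Shv}(X,LH(\widehat{\mathcal{B}}c_K)))}\!\big(\operatorname{S}_r(\mathcal{M}^{\bullet})\overrightarrow{\otimes}^{\mathbb{L}}_{\wideparen{E}(\mathscr{L})}\operatorname{S}(\mathcal{N}^{\bullet}),\mathcal{V}^{\bullet}\big)
&=\Hom_{\operatorname{D}(\wideparen{E}(\mathscr{L}))}\!\big(\operatorname{S}(\mathcal{N}^{\bullet}),R\underline{\mathcal{H}om}_{\Indban}(\operatorname{S}_r(\mathcal{M}^{\bullet}),\mathcal{V}^{\bullet})\big)\\
&=\Hom_{\operatorname{D}(\wideparen{E}(\mathscr{L}))}\!\big(\operatorname{S}(\mathcal{N}^{\bullet}),\operatorname{S}R\underline{\mathcal{H}om}_{\Indban}(\mathcal{M}^{\bullet},\mathcal{V}^{\bullet})\big)\\
&=\Hom_{\operatorname{D}(\wideparen{U}(\mathscr{L}^2))}\!\big(\mathcal{N}^{\bullet},R\underline{\mathcal{H}om}_{\Indban}(\mathcal{M}^{\bullet},\mathcal{V}^{\bullet})\big)\\
&=\Hom_{\operatorname{D}(\operatorname{Shv}(X,LH(\widehat{\mathcal{B}}c_K)))}\!\big(\mathcal{M}^{\bullet}\overrightarrow{\otimes}^{\mathbb{L}}_{\wideparen{U}(\mathscr{L}^2)}\mathcal{N}^{\bullet},\mathcal{V}^{\bullet}\big),
\end{align*}
where the first and last equalities are the derived adjunction $(\ref{equation adjunction relative tensor-hom})$ applied over $\wideparen{E}(\mathscr{L})$ and over $\wideparen{U}(\mathscr{L}^2)$ respectively, the second equality is the first quasi-isomorphism of Lemma \ref{Lemma side-changing and tensor products}, and the third is the fact that $\operatorname{S}:\operatorname{D}(\wideparen{U}(\mathscr{L}^2))\leftrightarrows\operatorname{D}(\wideparen{E}(\mathscr{L})):\operatorname{S}^{-1}$ is an equivalence of triangulated categories. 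I should be mildly careful that $(\ref{equation adjunction relative tensor-hom})$ is stated with the right-module argument inside $R\underline{\mathcal{H}om}_{\Indban}(-,-)$ and the left-module argument free, which is precisely the bookkeeping above once one notes $\operatorname{S}_r(\mathcal{M}^{\bullet})$ is a right $\wideparen{E}(\mathscr{L})$-module and $\operatorname{S}(\mathcal{N}^{\bullet})$ a left one. Then Yoneda's lemma in the derived category gives the claimed natural isomorphism of functors, and naturality in $\mathcal{M}^{\bullet}$ and $\mathcal{N}^{\bullet}$ is inherited from the naturality of each displayed isomorphism.

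I do not anticipate a serious obstacle here: the real content is packaged in Lemma \ref{Lemma side-changing and tensor products} and in the derived adjunction $(\ref{equation adjunction relative tensor-hom})$, and what remains is a formal Yoneda argument of the same shape as Proposition \ref{Proposition internal hom and side-switching operators}. The one point deserving a sentence of care is that the derived relative tensor product $\overrightarrow{\otimes}^{\mathbb{L}}_{\wideparen{E}(\mathscr{L})}$ and $\overrightarrow{\otimes}^{\mathbb{L}}_{\wideparen{U}(\mathscr{L}^2)}$ are genuinely defined on the full unbounded derived categories here only as $\widetilde{\otimes}^{\mathbb{L}}_{I(-)}$ (cf. the discussion around $(\ref{equation derived functor for relative tensor in left heart})$ and $(\ref{equation derived functor relative tensor product in boinded above derived categories})$); since the adjunction $(\ref{equation adjunction relative tensor-hom})$ is stated for $\widetilde{\otimes}^{\mathbb{L}}_{I(\mathscr{A})}$ with $\mathcal{M}^{\bullet}$ bounded above, I would either restrict to $\mathcal{M}^{\bullet}\in\operatorname{D}^-$ as in the ambient conventions or simply phrase everything through $\widetilde{\otimes}^{\mathbb{L}}_{I(-)}$ throughout, so that the Yoneda step is literally an application of $(\ref{equation adjunction relative tensor-hom})$. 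Finally, as noted, the second isomorphism $-\overrightarrow{\otimes}^{\mathbb{L}}_{\wideparen{E}(\mathscr{L})}-=\operatorname{S}_r^{-1}(-)\overrightarrow{\otimes}^{\mathbb{L}}_{\wideparen{U}(\mathscr{L}^2)}\operatorname{S}^{-1}(-)$ follows from the first by substituting $\mathcal{M}^{\bullet}\mapsto\operatorname{S}_r^{-1}(\mathcal{M}^{\bullet})$, $\mathcal{N}^{\bullet}\mapsto\operatorname{S}^{-1}(\mathcal{N}^{\bullet})$ and invoking $\operatorname{S}\circ\operatorname{S}^{-1}=\operatorname{Id}$, $\operatorname{S}_r\circ\operatorname{S}_r^{-1}=\operatorname{Id}$.
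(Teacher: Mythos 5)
Your proposal is correct and follows essentially the same route as the paper's proof: the same Yoneda argument using the derived relative tensor--hom adjunction on both sides together with Lemma \ref{Lemma side-changing and tensor products} and the fact that $\operatorname{S}$ is a triangulated equivalence. Your additional remarks on deducing the second identity from the first and on the boundedness conventions for $\overrightarrow{\otimes}^{\mathbb{L}}$ are sensible refinements of what the paper leaves implicit, but do not change the argument.
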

\begin{proof}
  Let $\mathcal{V}^{\bullet}\in \operatorname{D}(\operatorname{Shv}(X,LH(\widehat{\mathcal{B}}c_K)))$, $\mathcal{M}^{\bullet}\in \operatorname{D}(\wideparen{U}(\mathscr{L}^2)^{\op})$, and $\mathcal{N}^{\bullet}\in \operatorname{D}(\wideparen{U}(\mathscr{L}^2))$. Then we have: 
\begin{align*}
 \Hom_{\operatorname{D}(\operatorname{Shv}(X,LH(\widehat{\mathcal{B}}c_K)))}(\operatorname{S}_r(\mathcal{M})&\overrightarrow{\otimes}^{\mathbb{L}}_{\wideparen{E}(\mathscr{L})}\operatorname{S}(\mathcal{N}),\mathcal{V}^{\bullet})\\&=\Hom_{\operatorname{D}(\wideparen{E}(\mathscr{L}))}(\operatorname{S}(\mathcal{N}),R\underline{\mathcal{H}om}_{\Indban}(\operatorname{S}_r(\mathcal{M}),\mathcal{V}))\\&
 =\Hom_{\operatorname{D}(\wideparen{U}(\mathscr{L}^2))}(\mathcal{N},R\underline{\mathcal{H}om}_{\Indban}(\mathcal{M},\mathcal{V}))\\&=\Hom_{\operatorname{D}(\operatorname{Shv}(X,LH(\widehat{\mathcal{B}}c_K)))}(\mathcal{M}\overrightarrow{\otimes}^{\mathbb{L}}_{\wideparen{U}(\mathscr{L}^2)}\mathcal{N},\mathcal{V}^{\bullet}),
\end{align*}
where we are using Lemma \ref{Lemma side-changing and tensor products}. Thus, the result holds by Yoneda. 
\end{proof}
In order to keep the notation reasonable, we introduce the following definition:
\begin{defi}\label{defi immersion functor}
We define the immersion functor as the following composition:
\begin{equation*}
\Delta_{*}^{\operatorname{S}}:=\operatorname{S}^{-1}\circ\Delta^E_{*}:   \Mod_{\Indban}(\wideparen{U}(\mathscr{L})^e)\rightarrow \Mod_{\Indban}(\wideparen{U}(\mathscr{L}^2)).
\end{equation*} 
We also have a version for right modules:
\begin{equation*}
    \Delta_{*}^{\operatorname{S}_r}:=\operatorname{S}_r^{-1}\circ\Delta^E_{*}:   \Mod_{\Indban}(\wideparen{U}(\mathscr{L})^{e,\op})\rightarrow \Mod_{\Indban}(\wideparen{U}(\mathscr{L}^2)^{\op}).
\end{equation*}
Analogously, we can define versions for the left hearts, which we denote by $\Delta_*^{I(\operatorname{S})}$, and $\Delta_*^{I(\operatorname{S})_r}$, and we denote their left derived functors by $\Ifunct$ and $\RIfunct$ respectively.
\end{defi}
Notice that as the side-switching functor $\operatorname{S}$ is an equivalence of quasi-abelian categories, we get a factorization:
\begin{equation*}
    \Ifunct=\operatorname{S}^{-1}\circ \EDelta: \operatorname{D}(\wideparen{U}(\mathscr{L})^e)\rightarrow \operatorname{D}(\wideparen{U}(\mathscr{L}^2)),
\end{equation*}
and similarly for $\RIfunct$. As a consequence, we get the following corollary:
\begin{coro}\label{coro hochschild cohomology and the immersion functor}
Let $\mathcal{M}^{\bullet}\in \operatorname{D}(\wideparen{U}(\mathscr{L})^e)$. The following identities hold:
 \begin{enumerate}[label=(\roman*)]
     \item $\mathcal{HH}^{\bullet}(\wideparen{U}(\mathscr{L}),\mathcal{M}^{\bullet})=\Delta^{-1}R\underline{\mathcal{H}om}_{\wideparen{U}(\mathscr{L}^2)}(\Ifunct\wideparen{U}(\mathscr{L}),\Ifunct\mathcal{M}^{\bullet})$.
     \item $\mathcal{HH}_{\bullet}(\wideparen{U}(\mathscr{L}),\mathcal{M}^{\bullet})=\Delta^{-1}\left(\RIfunct\wideparen{U}(\mathscr{L})\overrightarrow{\otimes}^{\mathbb{L}}_{\wideparen{U}(\mathscr{L}^2)}\Ifunct\mathcal{M}^{\bullet}\right)$.
 \end{enumerate} 
\end{coro}
\begin{proof}
Identity $(i)$ follows by Proposition \ref{Proposition internal hom and side-switching operators}, $(ii)$ follows by Proposition \ref{Proposition completed tensor and side-switching operators}.
\end{proof}

\subsection{\texorpdfstring{Co-admissible modules over the bi-enveloping algebra I: The local case}{}}\label{Section Co-admissible modules over the bi-enveloping algebra}
As we will see, the sheaf of Ind-Banach algebras $\wideparen{E}(\mathscr{L})$ is locally Fréchet-Stein. Hence, it is a natural to wonder if there could be a well-behaved category of co-admissible $\wideparen{E}(\mathscr{L})$-modules. This section is devoted to defining such a category, and showing some of its most important features.\\
As discussed above, the category of Ind-Banach $\wideparen{E}(\mathscr{L})$-modules is equivalent to the category of Ind-Banach $\wideparen{U}(\mathscr{L}^2)$-modules. Hence, the idea is using this equivalence to derive the properties of co-admissible $\wideparen{E}(\mathscr{L})$-modules from those of co-admissible $\wideparen{U}(\mathscr{L}^2)$-modules.  Let $X$ be a smooth affinoid space with a Lie algebroid $\mathscr{L}$, and assume $\mathscr{L}$ admits a basis of global sections over $\OX_X$. As seen in Section \ref{Section The sheaf of complete bi-enveloping algebras}, such a choice induces an isomorphism of sheaves of Ind-Banach algebras:
\begin{equation*}
    \wideparen{\mathbb{T}}: \wideparen{U}(\mathscr{L}^2)\rightarrow \wideparen{E}(\mathscr{L}).
\end{equation*}
As a first consequence of this, we can show the following corollary:
\begin{coro}    
For any affinoid subdomain $V\subset X^2$, the Ind-Banach algebra $\wideparen{E}(\mathscr{L})(V)$ is a Fréchet-Stein algebra.
\end{coro}
\begin{proof}
As $\wideparen{U}(\mathscr{L}^2)$ is the sheaf of Fréchet-stein enveloping algebras of the Lie algebroid $\mathscr{L}^2$, it follows that $\wideparen{U}(\mathscr{L}^2)(V)$   is a Fréchet-Stein algebra. Thus, the isomorphism  $\wideparen{U}(\mathscr{L}^2)(V)\rightarrow \wideparen{E}(\mathscr{L})(V)$ makes the latter a Fréchet-Stein algebra. 
\end{proof}
Assume that $X=\Sp(A)$, and let $\mathfrak{X}=\Spf(\mathcal{A})$ be an affine formal model of $X$, and  $\mathcal{L}$ be a free $\mathcal{A}$-Lie lattice of $L=\mathscr{L}(X)$. Choose a basis of $\mathcal{L}$ as an $\mathcal{A}$-module $x_1,\cdots,x_d$. For later use, it  will be convenient to construct an explicit Fréchet-Stein presentation of the algebra $\wideparen{E}(\mathscr{L})(X)$:
\begin{prop}\label{prop explicit Fréchet-Stein presentation}
For $n,m\geq 0$ there is an isomorphism of Banach $K$-algebras:
\begin{equation*}
    \widehat{U}(\pi^n\mathcal{L})_K\overrightarrow{\otimes}_K\widehat{U}(\pi^m\mathcal{L})_K\rightarrow \widehat{U}(\pi^n\mathcal{L}\times \pi^m\mathcal{L})_K.
\end{equation*}
Thus, we have the following chain of isomorphisms of  Ind-Banach $K$-algebras:
    \begin{equation*}
        \wideparen{U}(\mathscr{L}^2)(X^2)=\wideparen{U}(\mathscr{L})(X)\overrightarrow{\otimes}_K\wideparen{U}(\mathscr{L})(X)=\varprojlim_s\left(\widehat{U}(\pi^{n+s}\mathcal{L})_K\overrightarrow{\otimes}_K\widehat{U}(\pi^{m+s}\mathcal{L})_K\right),
    \end{equation*}
\end{prop}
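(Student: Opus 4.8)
\textbf{Proof proposal for Proposition \ref{prop explicit Fréchet-Stein presentation}.}

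The plan is to prove the first isomorphism at the integral (formal) level and then pass to $K$-linear completions and inverse limits. First I would recall from Corollary \ref{coro colimit topology integral level} that for all $n,m\ge 0$ there is a canonical isomorphism of filtered $\mathcal{R}$-algebras
\begin{equation*}
 U(\pi^n\mathcal{L}\times \pi^m\mathcal{L})=\Tilde{V}(\mathcal{L},\mathcal{L})_{n,m}=\varinjlim_{s\ge 0}\widehat{F}_s\left(U(\pi^n\mathcal{L})\otimes_{\mathcal{R}}U(\pi^m\mathcal{L})\right),
\end{equation*}
together with the explicit $\mathcal{R}$-module decomposition from Corollary \ref{coro decomposition filtrations of tensors formal level}$(ii)$, namely $F_sV(\mathcal{L},\mathcal{L})_{n,m}=\bigoplus_{i=0}^s F_iU(\pi^n\mathcal{L})\otimes_{\mathcal{R}}F_{s-i}U(\pi^m\mathcal{L})$, each term being finite-free over $\mathcal{A}\otimes_{\mathcal{R}}\mathcal{A}$. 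The point is that the $\pi$-adic completion $\widehat{U}(\pi^n\mathcal{L}\times\pi^m\mathcal{L})$ can be computed from this colimit: since $\widehat{F}_sV(\mathcal{L},\mathcal{L})_{n,m}$ is the $\pi$-adic completion of a finite-free $\mathcal{A}\widehat{\otimes}_{\mathcal{R}}\mathcal{A}$-module and $\pi$-adic completion commutes with finite direct sums, the colimit over $s$ of these, after $\pi$-adic completion, is exactly $\mathcal{A}\widehat{\otimes}_{\mathcal{R}}\mathcal{A}\langle\!\langle x\otimes 1, 1\otimes x\rangle\!\rangle$-style object, which is precisely $\widehat{U}(\pi^n\mathcal{L})\widehat{\otimes}_{\mathcal{R}}\widehat{U}(\pi^m\mathcal{L})$. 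I would make this precise by using the PBW bases: both $\widehat{U}(\pi^n\mathcal{L})\widehat{\otimes}_{\mathcal{R}}\widehat{U}(\pi^m\mathcal{L})$ and $\widehat{U}(\pi^n\mathcal{L}\times\pi^m\mathcal{L})$ have, as topological $\mathcal{A}\widehat{\otimes}_{\mathcal{R}}\mathcal{A}$-modules, the same topological basis given by the ordered monomials $(\pi^nx)^{\alpha}\otimes(\pi^mx)^{\beta}$ (by \cite[Theorem 3.1]{rinehart1963differential} applied to $\pi^n\mathcal{L}$, $\pi^m\mathcal{L}$, and to $\pi^n\mathcal{L}\times\pi^m\mathcal{L}$ using Proposition \ref{prop product of Lie rinehart algebras} and the lattice $\pi^n\mathcal{L}\times\pi^m\mathcal{L}=\pi^n\mathcal{L}\widehat{\otimes}_{\mathcal{R}}\mathcal{B}\oplus\mathcal{A}\widehat{\otimes}_{\mathcal{R}}\pi^m\mathcal{L}$), and the algebra map $\psi\colon V(\mathcal{L},\mathcal{L})_{n,m}\to U(\pi^n\mathcal{L}\times\pi^m\mathcal{L})$ from the proof of Corollary \ref{coro universal enveloping algebras of products and colimits} identifies these bases. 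Inverting $\pi$ gives the claimed isomorphism of Banach $K$-algebras $\widehat{U}(\pi^n\mathcal{L})_K\overrightarrow{\otimes}_K\widehat{U}(\pi^m\mathcal{L})_K\to \widehat{U}(\pi^n\mathcal{L}\times\pi^m\mathcal{L})_K$, where one uses that for Banach spaces the complete projective tensor product $\overrightarrow{\otimes}_K$ agrees with the $K$-linearization of the $\pi$-adic completion of the integral tensor product (this is essentially the definition of $\overrightarrow{\otimes}_K$ on Banach spaces, compatible via Proposition \ref{prop comparison of tensor products metrizable spaces}).

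Next I would assemble the inverse limit statement. By definition $\wideparen{U}(\mathscr{L})(X)=\varprojlim_s\widehat{U}(\pi^{s}\mathcal{L})_K$ (using that $\pi^s\mathcal{L}$ exhausts the admissible Lie lattices and, after replacing $\mathcal{L}$ by $\pi\mathcal{L}$ if necessary, that the transition maps are flat with dense image, as recalled in Section \ref{Section background Lie algebroids}); more precisely one may take the Fréchet-Stein presentation $\wideparen{U}(\mathscr{L})(X)=\varprojlim_s\widehat{U}(\pi^{n+s}\mathcal{L})_K$ for any fixed starting index $n$. Since the complete projective tensor product of Fréchet spaces commutes with the countable inverse limits defining them — the relevant fact being that $-\overrightarrow{\otimes}_K-$ of Banach spaces is exact and commutes with the filtered inverse limits here, which for metrizable spaces follows from Proposition \ref{prop comparison of tensor products metrizable spaces} (the dissection functor commutes with inverse limits and $\operatorname{diss}(V\widehat{\otimes}_KW)=\operatorname{diss}(V)\overrightarrow{\otimes}_K\operatorname{diss}(W)$) — we get
\begin{equation*}
 \wideparen{U}(\mathscr{L})(X)\overrightarrow{\otimes}_K\wideparen{U}(\mathscr{L})(X)=\varprojlim_s\left(\widehat{U}(\pi^{n+s}\mathcal{L})_K\overrightarrow{\otimes}_K\widehat{U}(\pi^{m+s}\mathcal{L})_K\right).
\end{equation*}
Combining this with the per-level isomorphism of the first part, and with the identification $\wideparen{U}(\mathscr{L}^2)(X^2)=p_1^{-1}\wideparen{U}(\mathscr{L})(X)\overrightarrow{\otimes}_Kp_2^{-1}\wideparen{U}(\mathscr{L})(X)=\wideparen{U}(\mathscr{L})(X)\overrightarrow{\otimes}_K\wideparen{U}(\mathscr{L})(X)$ coming from Proposition \ref{prop enveloping of product as product of envelopings} (applied to $Y=X$), gives the displayed chain of isomorphisms. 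One also checks that the inverse system $\bigl(\widehat{U}(\pi^{n+s}\mathcal{L})_K\overrightarrow{\otimes}_K\widehat{U}(\pi^{m+s}\mathcal{L})_K\bigr)_s$ really is a Fréchet-Stein presentation: each term is two-sided noetherian (as a complete tensor product of two noetherian Banach algebras, invoking \cite[Theorem 4.1.11]{ardakov2021equivariant} or equivalently Corollary \ref{coro expression of enveloping algebra of product} which already records that $\wideparen{U}(\mathscr{L})(X)\overrightarrow{\otimes}_K\wideparen{U}(\mathscr{L})(X)$ is two-sided Fréchet-Stein), and the transition maps are two-sided flat with dense image since this holds for each tensor factor and $\overrightarrow{\otimes}_K$ preserves flatness (Banach spaces are flat for $\overrightarrow{\otimes}_K$) and dense image.

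The main obstacle I anticipate is the careful bookkeeping in the first step: showing that the $\pi$-adic completion of the colimit $\varinjlim_s \widehat{F}_sV(\mathcal{L},\mathcal{L})_{n,m}$ (which has the \emph{colimit} topology, not the $\pi$-adic one) nonetheless has underlying $K$-Banach algebra $\widehat{U}(\pi^n\mathcal{L})_K\overrightarrow{\otimes}_K\widehat{U}(\pi^m\mathcal{L})_K$ after appropriate completion, i.e.\ reconciling the two completions. The clean way around this is to avoid the colimit description entirely and argue directly with PBW bases: exhibit the explicit $\mathcal{A}\widehat{\otimes}_{\mathcal{R}}\mathcal{A}$-module isomorphism between $\widehat{U}(\pi^n\mathcal{L})\widehat{\otimes}_{\mathcal{R}}\widehat{U}(\pi^m\mathcal{L})$ and $\widehat{U}(\pi^n\mathcal{L}\times\pi^m\mathcal{L})$ sending the monomial basis to the monomial basis, then check it is multiplicative because it agrees with the (already known to be multiplicative) map $\psi$ on the dense subalgebra $U(\pi^n\mathcal{L})\otimes_{\mathcal{R}}U(\pi^m\mathcal{L})$ and both sides are topological algebras. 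This is the same pattern used in the proof of Corollary \ref{coro universal enveloping algebras of products and colimits}, so most of the work is already available; the remaining task is just to track the $\pi^s$ rescalings through it.
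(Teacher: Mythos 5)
Your proposal is correct and follows essentially the same route as the paper: the per-level isomorphism is established at the integral level via the identification $U(\pi^n\mathcal{L}\times\pi^m\mathcal{L})=\varinjlim_s\widehat{F}_sV(\mathcal{L},\mathcal{L})_{n,m}$ from Corollary \ref{coro colimit topology integral level}, then one inverts $\pi$ and assembles the Fréchet--Stein presentation using Proposition \ref{prop enveloping of product as product of envelopings}. The one obstacle you flag — reconciling the colimit topology with the $\pi$-adic one — is handled in the paper not by matching PBW bases but by reducing mod $\pi^r$: since $\widehat{F}_sV/\pi^r=F_sV/\pi^r$ and reduction commutes with the filtered colimit, one gets $\widehat{U}(\pi^n\mathcal{L}\times\pi^m\mathcal{L})/\pi^r=\left(U(\pi^n\mathcal{L})\otimes_{\mathcal{R}}U(\pi^m\mathcal{L})\right)/\pi^r$ for every $r$, which immediately identifies the two $\pi$-adic completions; your PBW-basis argument achieves the same thing with slightly more bookkeeping.
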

and the latter inverse limit is a Fréchet-Stein presentation of $\wideparen{U}(\mathscr{L}^2)(X^2)$.
\begin{proof}
By construction, for each $n,m\geq 0$ we have a map of filtered $\mathcal{R}$-algebras:
\begin{equation*}
 U(\pi^{n}\mathcal{L})\otimes_{\mathcal{R}}U(\pi^m\mathcal{L})\rightarrow U(\pi^n\mathcal{L}\times \pi^m\mathcal{L}).   
\end{equation*}
Taking the $\pi$-adic completion of both sides, we arrive at a map of $\mathcal{R}$-algebras:
\begin{equation*}
\varphi_{n,m}:\widehat{U}(\pi^n\mathcal{L})\widehat{\otimes}_{\mathcal{R}}\widehat{U}(\pi^m\mathcal{L})\rightarrow \widehat{U}(\pi^n\mathcal{L}\times \pi^m\mathcal{L}).
\end{equation*}
It is enough to show that this is an isomorphism of complete $\mathcal{R}$-algebras. By Corollary \ref{coro colimit topology integral level}, for any $r\geq 0$,  we have the following identities:
\begin{align*}
\widehat{U}(\pi^n\mathcal{L}\times \pi^m\mathcal{L})/\pi^r=& \varinjlim_s\left( \widehat{F}_s\left(U(\pi^n\mathcal{L})\otimes_{\mathcal{R}}U(\pi^m\mathcal{L})\right)/\pi^r\right)\\=&\varinjlim_s\left( F_s\left(U(\pi^n\mathcal{L})\otimes_{\mathcal{R}}U(\pi^m\mathcal{L})\right)/\pi^r\right)\\
=&\varinjlim_s\left( F_s\left(U(\pi^n\mathcal{L})\otimes_{\mathcal{R}}U(\pi^m\mathcal{L})\right)\right)/\pi^r\\=&U(\pi^n\mathcal{L})\otimes_{\mathcal{R}}U(\pi^m\mathcal{L})/\pi^r.
\end{align*}
Therefore, $\varphi_{n,m}$ is an isomorphism of complete $\mathcal{R}$-algebras. Additionally, there is a canonical isomorphism of Banach $K$-algebras:
\begin{equation*}
    \left(\widehat{U}(\pi^n\mathcal{L})\widehat{\otimes}_{\mathcal{R}}\widehat{U}(\pi^m\mathcal{L})\right)\otimes_{\mathcal{R}}K=\widehat{U}(\pi^n\mathcal{L})_K\overrightarrow{\otimes}_K\widehat{U}(\pi^m\mathcal{L})_K.
\end{equation*}
Combining these two facts, we get our desired isomorphism. The second part of the proposition follows by \ref{prop enveloping of product as product of envelopings}, together with the fact that:
\begin{equation*}
    \wideparen{U}(\mathscr{L}^2)(X)=\varprojlim_s \widehat{U}(\pi^{n+s}\mathcal{L}\times \pi^{m+s}\mathcal{L})_K,
\end{equation*}
is a Fréchet-Stein presentation.
\end{proof}
\begin{coro}\label{coro frechet-stein presentation of bi-enveloping algebra}
   There is a Fréchet-Stein presentation:
    \begin{equation*}
        \wideparen{E}(\mathscr{L})(X^2)=\wideparen{U}(\mathscr{L})(X)\overrightarrow{\otimes}_K\wideparen{U}(\mathscr{L})(X)^{\op}=\varprojlim_s\left(\widehat{U}(\pi^{n+s}\mathcal{L})_K\overrightarrow{\otimes}_K\widehat{U}(\pi^{m+s}\mathcal{L})_K^{\op}\right).
    \end{equation*}
\end{coro}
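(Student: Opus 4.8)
The statement is Corollary \ref{coro frechet-stein presentation of bi-enveloping algebra}, which asserts a Fréchet-Stein presentation of $\wideparen{E}(\mathscr{L})(X^2)$ when $X=\Sp(A)$ is a smooth affinoid space with free Lie algebroid $\mathscr{L}$, free $\mathcal{A}$-Lie lattice $\mathcal{L}$. The plan is to deduce it from the corresponding statement for $\wideparen{U}(\mathscr{L}^2)(X^2)$ (Proposition \ref{prop explicit Fréchet-Stein presentation}) by transporting it along the isomorphism of sheaves of Ind-Banach $K$-algebras $\wideparen{\mathbb{T}}:\wideparen{U}(\mathscr{L}^2)\rightarrow \wideparen{E}(\mathscr{L})$, and observing that $\wideparen{\mathbb{T}}$ respects the individual Banach layers of the presentation.

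First I would recall the two ingredients already established. By Proposition \ref{prop explicit Fréchet-Stein presentation}, for each $n,m\geq 0$ there is a canonical isomorphism of Banach $K$-algebras $\widehat{U}(\pi^n\mathcal{L})_K\overrightarrow{\otimes}_K\widehat{U}(\pi^m\mathcal{L})_K\xrightarrow{\sim}\widehat{U}(\pi^n\mathcal{L}\times\pi^m\mathcal{L})_K$, and the inverse limit $\varprojlim_s\big(\widehat{U}(\pi^{n+s}\mathcal{L})_K\overrightarrow{\otimes}_K\widehat{U}(\pi^{m+s}\mathcal{L})_K\big)$ is a Fréchet-Stein presentation of $\wideparen{U}(\mathscr{L}^2)(X^2)$. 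By Proposition \ref{prop anti-isomorphism fréchet-stein level}, the anti-isomorphism $T:U(L)\rightarrow U(L)^{\op}$ extends, for each $m\geq 0$, to an isomorphism of Banach $K$-algebras $\widehat{T}_m:\widehat{U}(\pi^m\mathcal{L})_K\rightarrow \widehat{U}(\pi^m\mathcal{L})_K^{\op}$, compatibly with the transition maps, and in the limit to $\wideparen{T}:\wideparen{U}(L)\rightarrow\wideparen{U}(L)^{\op}$. Since $\wideparen{\mathbb{T}}=\operatorname{Id}\overrightarrow{\otimes}_Kp_2^{-1}\wideparen{T}$, on global sections over $X^2$ it is the isomorphism $\operatorname{Id}\overrightarrow{\otimes}_K\wideparen{T}:\wideparen{U}(\mathscr{L})(X)\overrightarrow{\otimes}_K\wideparen{U}(\mathscr{L})(X)\rightarrow\wideparen{U}(\mathscr{L})(X)\overrightarrow{\otimes}_K\wideparen{U}(\mathscr{L})(X)^{\op}$.

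The key step is then to check that $\operatorname{Id}\overrightarrow{\otimes}_K\widehat{T}_{m+s}$ gives, for each $s\geq 0$, an isomorphism of Banach $K$-algebras $\widehat{U}(\pi^{n+s}\mathcal{L})_K\overrightarrow{\otimes}_K\widehat{U}(\pi^{m+s}\mathcal{L})_K\xrightarrow{\sim}\widehat{U}(\pi^{n+s}\mathcal{L})_K\overrightarrow{\otimes}_K\widehat{U}(\pi^{m+s}\mathcal{L})_K^{\op}$, and that these commute with the transition maps on both sides. This is essentially functoriality of $-\overrightarrow{\otimes}_K-$ applied to the commuting squares of Banach algebra maps defining the two Fréchet-Stein presentations; one should note that $(-)^{\op}$ does not change the underlying Banach space, so $\widehat{U}(\pi^m\mathcal{L})_K^{\op}$ is a legitimate Banach $K$-algebra with the same norm. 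Passing to the inverse limit over $s$, and using that $\wideparen{\mathbb{T}}$ on global sections is precisely $\operatorname{Id}\overrightarrow{\otimes}_K\wideparen{T}=\varprojlim_s(\operatorname{Id}\overrightarrow{\otimes}_K\widehat{T}_{m+s})$, one obtains the isomorphism $\wideparen{E}(\mathscr{L})(X^2)\cong\varprojlim_s\big(\widehat{U}(\pi^{n+s}\mathcal{L})_K\overrightarrow{\otimes}_K\widehat{U}(\pi^{m+s}\mathcal{L})_K^{\op}\big)$. Since the Fréchet-Stein axioms (noetherianity of each layer, flatness and dense image of the transition maps) are preserved under the isomorphisms $\widehat{T}_m$ layer by layer — flatness and dense image because $\widehat{T}_m$ is an isomorphism, noetherianity because $\widehat{U}(\pi^m\mathcal{L})_K^{\op}$ is the opposite of a two-sided noetherian ring hence two-sided noetherian — the right-hand inverse limit is again a Fréchet-Stein presentation, and we are done.

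The only mildly delicate point, and the one I would spend a sentence or two on, is the bookkeeping that $\wideparen{\mathbb{T}}$ on global sections genuinely commutes with the layers of the two presentations: one must confirm that $\wideparen{T}$ carries the lattice $\widehat{U}(\pi^{m}\mathcal{L})$ to $\widehat{U}(\pi^m\mathcal{L})^{\op}$ (not merely to some cofinal sublattice), which is exactly statement $(i)$ of Proposition \ref{prop anti-isomorphism fréchet-stein level}, and that the completed tensor product of an inverse system of Banach algebras with surjective-dense-image transition maps commutes with the inverse limit in the relevant sense — this is part of the proof of Proposition \ref{prop explicit Fréchet-Stein presentation} and applies verbatim here after tensoring with the opposite system. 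No new estimates are required; the argument is a transport of structure.
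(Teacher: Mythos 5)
Your proposal is correct and follows essentially the same route as the paper: transport the Fréchet-Stein presentation of $\wideparen{U}(\mathscr{L}^2)(X^2)$ from Proposition \ref{prop explicit Fréchet-Stein presentation} along the layerwise isomorphisms $\operatorname{Id}\overrightarrow{\otimes}_K\widehat{T}_{m+s}$ of Proposition \ref{prop anti-isomorphism fréchet-stein level}, checking that flatness and dense image of the transition maps carry over through the commuting squares. This is exactly the argument in the paper.
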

\begin{proof}
In virtue of Proposition \ref{prop anti-isomorphism fréchet-stein level}, for each $n,m\geq 0$ we have the following commutative diagram of morphisms of Banach $K$-algebras:
\begin{equation*}
\begin{tikzcd}[column sep=large]
	{\widehat{U}(\pi^{n+1}\mathcal{L})_K\overrightarrow{\otimes}_K\widehat{U}(\pi^{m+1}\mathcal{L})_K} & {\widehat{U}(\pi^{n+1}\mathcal{L})_K\overrightarrow{\otimes}_K\widehat{U}(\pi^{m+1}\mathcal{L})_K^{\op}} \\
	{\widehat{U}(\pi^{n}\mathcal{L})_K\overrightarrow{\otimes}_K\widehat{U}(\pi^{m}\mathcal{L})_K} & {\widehat{U}(\pi^n\mathcal{L})_K\overrightarrow{\otimes}_K\widehat{U}(\pi^m\mathcal{L})_K^{\op}}
	\arrow["{\operatorname{Id}\overrightarrow{\otimes}_K\widehat{t}_{m+1}}", from=1-1, to=1-2]
	\arrow[from=1-1, to=2-1]
	\arrow[from=1-2, to=2-2]
	\arrow["{\operatorname{Id}\overrightarrow{\otimes}_K\widehat{t}_{m}}", from=2-1, to=2-2]
\end{tikzcd}
\end{equation*}
By proposition \ref{prop anti-isomorphism fréchet-stein level}, the horizontal maps are isomorphisms of Banach $K$-algebras, and the leftmost vertical map is flat and has dense image by Proposition \ref{prop explicit Fréchet-Stein presentation}. Thus, the rightmost vertical map is also flat and has dense image. In particular, it follows that we have a Fréchet-Stein presentation:
\begin{equation*}
    \varprojlim_s\left(\widehat{U}(\pi^{n+s}\mathcal{L})_K\overrightarrow{\otimes}_K\widehat{U}(\pi^{m+s}\mathcal{L})_K^{\op}\right).
\end{equation*}
Furthermore, we have the following chain of isomorphisms of Ind-Banach algebras:
\begin{multline*}
 \wideparen{E}(\mathscr{L})(X^2)\xrightarrow[]{\wideparen{\mathbb{T}}} \wideparen{U}(\mathscr{L}^2)(X^2)=\varprojlim_s\left(\widehat{U}(\pi^{n+s}\mathcal{L})_K\overrightarrow{\otimes}_K\widehat{U}(\pi^{m+s}\mathcal{L})_K\right)\\
 \xrightarrow[]{} \varprojlim_s\left(\widehat{U}(\pi^{n+s}\mathcal{L})_K\overrightarrow{\otimes}_K\widehat{U}(\pi^{m+s}\mathcal{L})_K^{\op}\right),
\end{multline*}
so the proposition follows.
\end{proof}
As a consequence of the previous discussion, it follows that there is a well defined category of co-admissible bimodules over $\wideparen{E}(\mathscr{L})(X^2)$, and it makes sense to wonder if there could be a category of sheaves of co-admissible modules over $\wideparen{E}(\mathscr{L})$. In order to define and study such a category, we will analyze the behavior of the extension of scalars along the map $\wideparen{\mathbb{T}}:\wideparen{U}(\mathscr{L}^2)\rightarrow \wideparen{E}(\mathscr{L})$.
\begin{obs}\label{remark same results hold for right modules over E(L)}
We remark that all the following results have analogs for right modules, with the same proofs. Hence, to avoid repetition, we will only state and prove the results for left modules.
\end{obs}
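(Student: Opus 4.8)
The plan is to make the symmetry underlying this remark explicit, so that each forthcoming statement about left $\wideparen{E}(\mathscr{L})$-modules is converted into the corresponding statement about right $\wideparen{E}(\mathscr{L})$-modules by a purely formal substitution, with the proofs carrying over verbatim. The starting point is that the constructions of this chapter are stable under passing to opposite algebras: one has
\begin{equation*}
    \wideparen{E}(\mathscr{L})^{\op}=p_1^{-1}\wideparen{U}(\mathscr{L})^{\op}\overrightarrow{\otimes}_Kp_2^{-1}\wideparen{U}(\mathscr{L}), \quad \wideparen{U}(\mathscr{L}^2)^{\op}=p_1^{-1}\wideparen{U}(\mathscr{L})^{\op}\overrightarrow{\otimes}_Kp_2^{-1}\wideparen{U}(\mathscr{L})^{\op},
\end{equation*}
and the category of right $\wideparen{E}(\mathscr{L})$-modules is by definition $\Mod_{\Indban}(\wideparen{E}(\mathscr{L})^{\op})$. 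In the local situation ($X=\Sp(A)$ affinoid, $\mathscr{L}$ free with a free $\mathcal{A}$-Lie lattice $\mathcal{L}$), Proposition \ref{prop anti-isomorphism fréchet-stein level} supplies the algebra isomorphism $\wideparen{T}:\wideparen{U}(\mathscr{L})\rightarrow\wideparen{U}(\mathscr{L})^{\op}$ together with its inverse $\wideparen{T}^{-1}:\wideparen{U}(\mathscr{L})^{\op}\rightarrow\wideparen{U}(\mathscr{L})$, both compatible with the Fréchet-Stein presentations. Applying the same recipe to the second tensor factor as in the definition of $\wideparen{\mathbb{T}}$ then yields the isomorphism of sheaves of Ind-Banach $K$-algebras
\begin{equation*}
    \wideparen{\mathbb{T}}^{\op}:=\operatorname{Id}\overrightarrow{\otimes}_Kp_2^{-1}\wideparen{T}^{-1}:\wideparen{U}(\mathscr{L}^2)^{\op}\rightarrow \wideparen{E}(\mathscr{L})^{\op},
\end{equation*}
which plays for right modules exactly the role $\wideparen{\mathbb{T}}$ plays for left modules.

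Next I would observe that every property of the pair $(\wideparen{U}(\mathscr{L}^2),\wideparen{\mathbb{T}})$ that the forthcoming arguments invoke — that $\wideparen{U}(\mathscr{L}^2)$ is the sheaf of completed enveloping algebras of a Lie algebroid on $X^2$, that it is locally Fréchet-Stein with $c$-flat transition maps (Corollary \ref{coro frechet-stein presentation of bi-enveloping algebra} and the results recalled in Section \ref{Section background Lie algebroids}), and that $\wideparen{\mathbb{T}}$ is an isomorphism of sheaves of Ind-Banach algebras — is self-dual. Indeed $\wideparen{U}(\mathscr{L}^2)^{\op}$ is again the sheaf of completed enveloping algebras of the same Lie algebroid $\mathscr{L}^2$, read with its opposite bracket, and the conditions of being two-sided noetherian, two-sided flat, and having dense image in a Fréchet-Stein presentation are all left-right symmetric; moreover Ardakov--Wadsley's theory of co-admissible modules over $\wideparen{U}(\mathscr{L})$ applies to left and right modules alike. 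Consequently the definition of co-admissible $\wideparen{E}(\mathscr{L})$-module, of $\mathcal{C}$-complex over $\wideparen{E}(\mathscr{L})$, and the statements identifying these with the corresponding categories over $\wideparen{U}(\mathscr{L}^2)$ via extension of scalars along $\wideparen{\mathbb{T}}$, all admit word-for-word analogs: one simply replaces $\wideparen{E}(\mathscr{L})$ by $\wideparen{E}(\mathscr{L})^{\op}$, $\wideparen{U}(\mathscr{L}^2)$ by $\wideparen{U}(\mathscr{L}^2)^{\op}$, and $\wideparen{\mathbb{T}}$ by $\wideparen{\mathbb{T}}^{\op}$ throughout.

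As an alternative route — and a cross-check — I would instead transport the left-module results along the equivalences already available: the side-switching operators $\operatorname{S}_r,\operatorname{S}_r^{-1}$ for right modules from Section \ref{Section side-switching operations}, the equivalence $\operatorname{T}:\Mod_{\Indban}(\wideparen{E}(\mathscr{L}))\rightarrow\Mod_{\Indban}(\wideparen{E}(\mathscr{L})^{\op})$ of Proposition \ref{prop equivalence left and right E(L)-modules}, and the $\OX_{X^2}$-twist $\Omega_{\mathscr{L}^2}\overrightarrow{\otimes}_{\OX_{X^2}}-$ between left and right $\wideparen{U}(\mathscr{L}^2)$-modules. These are exact equivalences of quasi-abelian categories which preserve supports; since $\Omega_{\mathscr{L}}$ and $\Omega_{\mathscr{L}^2}$ are line bundles, hence locally free coherent modules, and tensoring with a locally free coherent module preserves co-admissibility and commutes with $\operatorname{Loc}(-)$ and with restriction to affinoid subdomains, these equivalences carry co-admissible modules and $\mathcal{C}$-complexes to their right-module counterparts. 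The only point requiring even minimal care — and the closest thing to an obstacle — is checking this compatibility of the twisting functors with localization and with the passage between the local and global descriptions of co-admissibility, but this is immediate from their definition as tensor products with coherent modules. I therefore regard the remark as justified once the substitution $(-)\rightsquigarrow(-)^{\op}$ is recorded, no genuinely new argument being needed.
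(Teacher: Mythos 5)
Your proposal is correct and matches the paper's intent: the remark carries no separate argument beyond the observation you make explicit, namely that $\wideparen{E}(\mathscr{L})^{\op}$ and $\wideparen{U}(\mathscr{L}^2)^{\op}$ have the same form as their un-opposed counterparts and that Proposition \ref{prop anti-isomorphism fréchet-stein level} supplies the isomorphism (your $\wideparen{\mathbb{T}}^{\op}$) allowing every subsequent proof to be run verbatim for right modules, all the inputs (Fréchet--Stein presentations, two-sided flatness and noetherianity, $c$-flat restriction maps) being left-right symmetric. Your alternative transport along $\operatorname{S}_r$ and $\operatorname{T}$ is also sound, and is essentially how the paper itself later passes between left and right $\mathcal{C}$-complexes in Corollary \ref{equivalence left and right categories of C-complexes}.
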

\begin{defi}
Let $f:\mathscr{A}\rightarrow \mathscr{B}$ be a morphisms of sheaves of Ind-Banach algebras on a $G$-topological space $X$. We define the extension of scalars along $f$ as the following functor:
\begin{align*}
    f^*:\Mod_{\Indban}(\mathscr{A})\rightarrow \Mod_{\Indban}(\mathscr{B}),\\
    \mathcal{M}\mapsto f^*(\mathcal{M})=\mathscr{B}\overrightarrow{\otimes}_{\mathscr{A}}\mathcal{M},
\end{align*}
and the restriction of scalars along $f$ as the functor:
\begin{equation*}
    f_*:\Mod_{\Indban}(\mathscr{B})\rightarrow \Mod_{\Indban}(\mathscr{A}),
\end{equation*}
defined by regarding $\mathcal{M}\in \Mod_{\Indban}(\mathscr{B})$ as an $\mathscr{A}$-module via the canonical map:
\begin{equation*}
\mathscr{A}\overrightarrow{\otimes}_K\mathcal{M}\xrightarrow[]{f\overrightarrow{\otimes}_K\Id} \mathscr{B}\overrightarrow{\otimes}_K\mathcal{M}\rightarrow \mathcal{M}. 
\end{equation*}
\end{defi}
As the restriction functor $f_*$ does not change the underlying sheaf of Ind-Banach spaces, we will just write $\mathcal{N}$ instead of $f_*\mathcal{N}$ when no confusion is possible.\bigskip

Notice that both the extension and restriction of scalars are functorial, in the sense that for a pair of maps of sheaves of Ind-Banach algebras $f:\mathscr{A}\rightarrow\mathscr{B}$, and  $g:\mathscr{B}\rightarrow \mathscr{C}$ we have:
\begin{equation*}
    \left(gf\right)^*=g^*f^*, \quad \left(gf\right)_*=f_*g_*.
\end{equation*}
It is also clear that $\operatorname{Id}^*=\operatorname{Id}_*=\operatorname{Id}$. Hence, if $f:\mathscr{A}\rightarrow\mathscr{B}$ is an isomorphism of sheaves of bornological algebras, it follows that $f^*$ is an equivalence of quasi-abelian categories. Furthermore, we have:
\begin{prop}\label{prop extension-restriction of scalars complete bornological case}
 Extension and restriction of scalars are adjoint. In particular, it  $f:\mathscr{A}\rightarrow\mathscr{B}$ is a morphism of sheaves of Ind-Banach $K$-algebras, then:
\begin{equation*}   f^*:\Mod_{\Indban}(\mathscr{A})\leftrightarrows \Mod_{\Indban}(\mathscr{B}):f_*, 
\end{equation*}
is a pair of adjoint functors.
\end{prop}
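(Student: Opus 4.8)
The statement is the standard tensor--hom adjunction for extension and restriction of scalars, adapted to sheaves of Ind-Banach algebras. The plan is to reduce it to adjunctions that have already been established in the excerpt, rather than redoing any functional-analytic work. First I would observe that $f^*(\mathcal{M}) = \mathscr{B}\overrightarrow{\otimes}_{\mathscr{A}}\mathcal{M}$ is exactly the extension of scalars along the $(\mathscr{B},\mathscr{A})$-bimodule $\mathscr{B}$ (viewed as a bimodule via $f$), so the general base-change-along-a-bimodule adjunction of Lemma~\ref{lemma adjunction extension and restriction along a bimodule} applies with $\mathcal{S}=\mathscr{B}$. That lemma gives
\begin{equation*}
\Hom_{\mathscr{B}}(\mathscr{B}\overrightarrow{\otimes}_{\mathscr{A}}\mathcal{M},\mathcal{N})=\Hom_{\mathscr{A}}(\mathcal{M},\underline{\mathcal{H}om}_{\mathscr{B}}(\mathscr{B},\mathcal{N})),
\end{equation*}
so the only remaining point is to identify the right-hand functor $\underline{\mathcal{H}om}_{\mathscr{B}}(\mathscr{B},-)$ with the restriction of scalars $f_*$.

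Second, I would check that identification. For a left $\mathscr{B}$-module $\mathcal{N}$, there is a canonical evaluation-at-$1$ map $\underline{\mathcal{H}om}_{\mathscr{B}}(\mathscr{B},\mathcal{N})\to \mathcal{N}$, and conversely the action of $\mathscr{B}$ produces a map $\mathcal{N}\to\underline{\mathcal{H}om}_{\mathscr{B}}(\mathscr{B},\mathcal{N})$; these are mutually inverse isomorphisms in $\operatorname{Shv}(X,\Indban)$ because $\mathscr{B}$ is the unit of the module category over itself. One then verifies that the $\mathscr{A}$-module structure on $\underline{\mathcal{H}om}_{\mathscr{B}}(\mathscr{B},\mathcal{N})$ coming from the bimodule structure on $\mathscr{B}$ (the one defined via formula~(\ref{equation map in tensor hom adjunction})) transports under this isomorphism to the $\mathscr{A}$-module structure on $\mathcal{N}$ obtained by restriction along $f$, i.e. the one given by $\mathscr{A}\overrightarrow{\otimes}_K\mathcal{N}\xrightarrow{f\overrightarrow{\otimes}_K\operatorname{Id}}\mathscr{B}\overrightarrow{\otimes}_K\mathcal{N}\to\mathcal{N}$. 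This is a diagram chase using only the axioms of a module over a monoid in a closed symmetric monoidal category; it can be checked on an admissible affinoid cover, or directly at the level of the defining presheaves, and is purely formal.

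Combining these two steps yields the desired adjunction
\begin{equation*}
\Hom_{\mathscr{B}}(f^*\mathcal{M},\mathcal{N})=\Hom_{\mathscr{A}}(\mathcal{M},f_*\mathcal{N}),
\end{equation*}
natural in $\mathcal{M}\in\Mod_{\Indban}(\mathscr{A})$ and $\mathcal{N}\in\Mod_{\Indban}(\mathscr{B})$. Alternatively, and perhaps more cleanly, one can bypass Lemma~\ref{lemma adjunction extension and restriction along a bimodule} and argue by Yoneda directly: for $\mathcal{M}\in\Mod_{\Indban}(\mathscr{A})$, $\mathcal{N}\in\Mod_{\Indban}(\mathscr{B})$, unwind the universal property of the relative tensor product $\mathscr{B}\overrightarrow{\otimes}_{\mathscr{A}}\mathcal{M}$ as a coequalizer to see that a $\mathscr{B}$-linear map out of it is the same as an $\mathscr{A}$-balanced, $\mathscr{B}$-linear map $\mathscr{B}\overrightarrow{\otimes}_K\mathcal{M}\to\mathcal{N}$, which by the tensor-hom adjunction~(\ref{equation tensor-hom adjunction Ind-Banach spaces}) and the $\mathscr{B}$-linearity constraint is the same as an $\mathscr{A}$-linear map $\mathcal{M}\to f_*\mathcal{N}$. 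I do not expect any genuine obstacle here: the whole content is bookkeeping of module structures in a closed symmetric monoidal quasi-abelian category, and the only mild subtlety is making the identification of $\mathscr{A}$-actions in the second step precise, which is why I would phrase it as a transport along the canonical isomorphism $\underline{\mathcal{H}om}_{\mathscr{B}}(\mathscr{B},-)\cong\operatorname{Id}$.
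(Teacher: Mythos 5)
Your proposal is correct, but it takes a different route from the paper. The paper's own proof is the bare-hands version: it writes down the two maps explicitly — sending $\varphi\in\Hom_{\mathscr{B}}(f^*\mathcal{M},\mathcal{N})$ to the composite $\mathcal{M}\cong\mathscr{A}\overrightarrow{\otimes}_{\mathscr{A}}\mathcal{M}\rightarrow\mathscr{B}\overrightarrow{\otimes}_{\mathscr{A}}\mathcal{M}\xrightarrow{\varphi}\mathcal{N}$, and sending $\psi\in\Hom_{\mathscr{A}}(\mathcal{M},f_*\mathcal{N})$ to $f^*\mathcal{M}\xrightarrow{\operatorname{Id}\otimes\psi}\mathscr{B}\overrightarrow{\otimes}_{\mathscr{A}}\mathcal{N}\rightarrow\mathcal{N}$ — and then asserts that a standard calculation shows these are mutually inverse. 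Your primary route instead specializes Lemma~\ref{lemma adjunction extension and restriction along a bimodule} to the $(\mathscr{B},\mathscr{A})$-bimodule $\mathcal{S}=\mathscr{B}$ and then identifies $\underline{\mathcal{H}om}_{\mathscr{B}}(\mathscr{B},-)$ with $f_*$. This is legitimate: the lemma precedes the proposition in the paper, so there is no circularity, and the evaluation-at-unit isomorphism $\underline{\mathcal{H}om}_{\mathscr{B}}(\mathscr{B},\mathcal{N})\cong\mathcal{N}$ together with the transport of the $\mathscr{A}$-action is exactly the formal check you describe. What your route buys is economy — the hard bookkeeping was already done once in the bimodule lemma, and the mutual-inverse verification the paper waves at is subsumed — at the cost of the one extra identification of module structures, which you correctly flag as the only nontrivial point. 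Your alternative Yoneda/coequalizer argument is essentially the paper's direct construction in disguise, so either version would be accepted.
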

\begin{proof}
Let $\mathcal{M}\in \Mod_{\Indban}(\mathscr{A})$ and $\mathcal{N}\in \Mod_{\Indban}(\mathscr{B})$. We define a map:
\begin{equation*}
 \Hom_{\mathscr{B}}(f^*\mathcal{M},\mathcal{N})\rightarrow \Hom_{\mathscr{A}}(\mathcal{M},f_*\mathcal{N}),   
\end{equation*}
by sending a map $\varphi\in \Hom_{\mathscr{B}}(f^*\mathcal{M},\mathcal{N})$ to the composition:
\begin{equation*}
    \mathcal{M}\cong \mathscr{A}\overrightarrow{\otimes}_{\mathscr{A}}\mathcal{M}\rightarrow \mathscr{B}\overrightarrow{\otimes}_{\mathscr{A}}\mathcal{M} \xrightarrow[]{\varphi} \mathcal{N}.
\end{equation*}
For the converse, we define a map $\Hom_{\mathscr{A}}(\mathcal{M},f_*\mathcal{N})\rightarrow Hom_{\mathscr{B}}(f^*\mathcal{M},\mathcal{N})$, by sending  $\psi \in \Hom_{\mathscr{A}}(\mathcal{M},f_*\mathcal{N})$ to the map:
\begin{equation*}
    f^*\mathcal{M}\xrightarrow[]{\operatorname{Id}\widehat{\otimes}_{\mathscr{A}} \psi}\mathscr{B}\widehat{\otimes}_{\mathscr{A}}\mathcal{N} \rightarrow \mathcal{N}, 
\end{equation*}
where the last map is the action of $\mathscr{B}$ on $\mathcal{N}$. A standard  calculation shows these maps are mutually inverse.
\end{proof}
We will now study the image of the category of co-admissible modules $\mathcal{C}(\wideparen{U}(\mathscr{L}^2))$ under the extension of scalars $\wideparen{\mathbb{T}}^*$. As above, there are Fréchet-Stein presentations:
\begin{multline*}
    \wideparen{E}(\mathscr{L})(X^2)=\varprojlim_s\left(\widehat{U}(\pi^{n+s}\mathcal{L})_K\overrightarrow{\otimes}_K\widehat{U}(\pi^{m+s}\mathcal{L})_K^{\op}\right),\\ \wideparen{U}(\mathscr{L}^2)(X^2)=\varprojlim_s\left(\widehat{U}(\pi^{n+s}\mathcal{L})_K\overrightarrow{\otimes}_K\widehat{U}(\pi^{m+s}\mathcal{L})_K\right).
\end{multline*}
To keep the notation simple, for each $n,m\geq 0$ we will define the following algebras:
\begin{equation*}
    \widehat{E}_{n,m}:=\widehat{U}(\pi^{n}\mathcal{L})_K\overrightarrow{\otimes}_K\widehat{U}(\pi^{m}\mathcal{L})_K^{\op}, \quad  \widehat{U}^2_{n,m}:=\widehat{U}(\pi^{n}\mathcal{L})_K\overrightarrow{\otimes}_K\widehat{U}(\pi^{m}\mathcal{L})_K.
\end{equation*}
Similarly, we write $\wideparen{E}=\wideparen{E}(\mathscr{L})(X^2)$, and $\wideparen{U}^2=\wideparen{U}(\mathscr{L}^2)(X^2)$.  For each co-admissible $\wideparen{U}^2$-module $\mathcal{M}\in\mathcal{C}(\wideparen{U})$,  we set $\mathcal{M}_{n,m}:=\widehat{U}^2_{n,m}\otimes_{\wideparen{U}^2}\mathcal{M}$. In particular, we have:
\begin{equation*}
    \mathcal{M}=\varprojlim_{n,m} \mathcal{M}_{n,m}.
\end{equation*}
Let us start with the following lemma:
\begin{Lemma}
For $n,m\geq 0$, there is an isomorphism of Banach $\wideparen{E}$-modules:
\begin{equation*}
    \widehat{E}_{n,m}=\wideparen{\mathbb{T}}^*\widehat{U}_{n,m}.
\end{equation*}
\end{Lemma}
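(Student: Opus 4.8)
The statement to prove is that for each $n,m\geq 0$ there is an isomorphism of Banach $\wideparen{E}$-modules $\widehat{E}_{n,m}=\wideparen{\mathbb{T}}^*\widehat{U}^2_{n,m}$. The natural approach is to unwind the definition of the extension of scalars functor along the isomorphism $\wideparen{\mathbb{T}}:\wideparen{U}^2\rightarrow\wideparen{E}$ and use the explicit Fr\'echet--Stein presentations established in Proposition \ref{prop explicit Fréchet-Stein presentation} and Corollary \ref{coro frechet-stein presentation of bi-enveloping algebra}. First I would recall that $\wideparen{\mathbb{T}}=\operatorname{Id}\overrightarrow{\otimes}_Kp_2^{-1}\wideparen{T}$ is an isomorphism of sheaves of Ind-Banach $K$-algebras, so $\wideparen{\mathbb{T}}^*$ is an equivalence of quasi-abelian categories (as noted just before Proposition \ref{prop extension-restriction of scalars complete bornological case}), and in particular $\wideparen{\mathbb{T}}^*\widehat{U}^2_{n,m}=\wideparen{E}\overrightarrow{\otimes}_{\wideparen{U}^2}\widehat{U}^2_{n,m}$, where $\widehat{U}^2_{n,m}$ is viewed as a $\wideparen{U}^2$-module via the canonical projection from the Fr\'echet--Stein presentation $\wideparen{U}^2=\varprojlim_s\widehat{U}^2_{n+s,m+s}$.

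The key computation is then to identify $\wideparen{E}\overrightarrow{\otimes}_{\wideparen{U}^2}\widehat{U}^2_{n,m}$ with $\widehat{E}_{n,m}$. I would do this by applying $\wideparen{T}$ factorwise: since $\wideparen{\mathbb{T}}$ acts as the identity on the first tensor factor and as $\wideparen{T}$ (hence $\widehat{T}_m$ levelwise, by Proposition \ref{prop anti-isomorphism fréchet-stein level}) on the second, the commutative square appearing in the proof of Corollary \ref{coro frechet-stein presentation of bi-enveloping algebra} identifies the transition map $\widehat{U}^2_{n+1,m+1}\rightarrow\widehat{U}^2_{n,m}$ with the transition map $\widehat{E}_{n+1,m+1}\rightarrow\widehat{E}_{n,m}$ under the isomorphisms $\operatorname{Id}\overrightarrow{\otimes}_K\widehat{t}_m$. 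Because extension of scalars along a composite factors (Proposition giving $(gf)^*=g^*f^*$) and commutes with the base change, $\wideparen{E}\overrightarrow{\otimes}_{\wideparen{U}^2}\widehat{U}^2_{n,m}$ is computed as the base change of $\widehat{U}^2_{n,m}$ along $\wideparen{U}^2\rightarrow\wideparen{E}$, and this is compatible with the levelwise base change $\widehat{U}^2_{n,m}\rightarrow\widehat{E}_{n,m}$. Concretely, one uses that for a Fr\'echet--Stein algebra $\mathscr{A}=\varprojlim\mathscr{A}_s$ and the canonical quotient $\mathscr{A}_s$, the extension of scalars along an isomorphism $\mathscr{A}\rightarrow\mathscr{B}=\varprojlim\mathscr{B}_s$ (with $\mathscr{A}_s\rightarrow\mathscr{B}_s$ the induced isomorphisms) sends $\mathscr{A}_s$ to $\mathscr{B}_s$; this is immediate since $\mathscr{B}\overrightarrow{\otimes}_{\mathscr{A}}\mathscr{A}_s=\mathscr{B}\overrightarrow{\otimes}_{\mathscr{A}}(\mathscr{A}\overrightarrow{\otimes}_{\mathscr{A}_s}\mathscr{A}_s)$, and the isomorphism $\mathscr{A}\cong\mathscr{B}$ together with $\mathscr{A}_s\cong\mathscr{B}_s$ gives the claim.

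Finally I would check that the resulting isomorphism is one of Banach $\wideparen{E}$-modules, not merely of Banach spaces: this is automatic, because $\wideparen{\mathbb{T}}^*$ is an $\wideparen{E}$-linear functor by construction and $\widehat{U}^2_{n,m}$ carries its Banach $\wideparen{U}^2$-module structure, which $\wideparen{\mathbb{T}}^*$ transports to the Banach $\wideparen{E}$-module structure of $\widehat{E}_{n,m}$ via the levelwise algebra isomorphisms of Proposition \ref{prop anti-isomorphism fréchet-stein level}. The main subtlety — more bookkeeping than genuine obstacle — is ensuring that the base change $\wideparen{E}\overrightarrow{\otimes}_{\wideparen{U}^2}(-)$ of the \emph{completed} module $\widehat{U}^2_{n,m}$ really does land in $\widehat{E}_{n,m}$ and not in some larger completion; but this follows from the fact that $\wideparen{\mathbb{T}}$ is a genuine isomorphism (so no completion is lost), together with Proposition \ref{prop explicit Fréchet-Stein presentation} which identifies $\widehat{U}^2_{n,m}$ with $\widehat{U}(\pi^n\mathcal{L}\times\pi^m\mathcal{L})_K$, a concrete Banach algebra on which $\widehat{T}_m$ acts by a clean formula. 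So the proof reduces to citing the equivalence $\wideparen{\mathbb{T}}^*$, the levelwise isomorphisms of Proposition \ref{prop anti-isomorphism fréchet-stein level}, and the compatibility of extension of scalars with the Fr\'echet--Stein presentations.
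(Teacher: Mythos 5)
Your proposal is correct and follows essentially the same route as the paper: both use the commutative square from Proposition \ref{prop anti-isomorphism fréchet-stein level} relating $\wideparen{\mathbb{T}}$ to the levelwise isomorphisms $\operatorname{Id}\overrightarrow{\otimes}_K\widehat{T}$, and conclude that base change of $\widehat{U}^2_{n,m}$ along $\wideparen{\mathbb{T}}$ lands in $\widehat{E}_{n,m}$ (the paper makes the last step precise by checking that $\widehat{E}_{n,m}$ satisfies the universal property of the cokernel defining $\wideparen{E}\overrightarrow{\otimes}_{\wideparen{U}^2}\widehat{U}^2_{n,m}$). Only note that your auxiliary identity $\mathscr{B}\overrightarrow{\otimes}_{\mathscr{A}}\mathscr{A}_s=\mathscr{B}\overrightarrow{\otimes}_{\mathscr{A}}(\mathscr{A}\overrightarrow{\otimes}_{\mathscr{A}_s}\mathscr{A}_s)$ is ill-formed as written, since $\mathscr{A}$ is not an $\mathscr{A}_s$-module; the universal-property argument is the cleaner way to phrase the compatibility you are after.
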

\begin{proof}
 By Proposition \ref{prop anti-isomorphism fréchet-stein level}, we have a commutative diagram of Fréchet $K$-algebras:
 \begin{equation*}
\begin{tikzcd}
\wideparen{U}^2 \arrow[d, "{\pi_{n,m}}"] \arrow[r, "\wideparen{\mathbb{T}}"]                   & \wideparen{E} \arrow[d, "{\pi_{n,m}}"] \\
{\widehat{U}^2_{n,m}} \arrow[r, "\operatorname{Id}\overrightarrow{\otimes}_K\widehat{T}_n"] & {\widehat{E}_{n,m}}      
\end{tikzcd}
\end{equation*}
By the properties of  the functor $-\overrightarrow{\otimes}_{\wideparen{U}}-$, we obtain an $\wideparen{E}$-linear morphism of Ind-Banach spaces:
\begin{equation*}
    \wideparen{\mathbb{T}}^*\widehat{U}_{n,m}:= \wideparen{E}\overrightarrow{\otimes}_{\wideparen{U}}\widehat{U}^2_{n,m}\rightarrow \widehat{E}_{n,m}.
\end{equation*}
We need to show it is an isomorphism. It is enough to show that the sequence:
\begin{equation*}
 \wideparen{E}\overrightarrow{\otimes}_K\wideparen{U}\overrightarrow{\otimes}_K\widehat{U}_{n,m}^2\rightarrow  \wideparen{E}\overrightarrow{\otimes}_K\widehat{U}_{n,m}^2  \rightarrow \widehat{E}_{n,m} \rightarrow 0,
\end{equation*}
is strictly right exact. Thus, it suffices to show that $\widehat{E}_{n,m}$ satisfies the universal property of the cokernel of the map:
\begin{equation*}
    \wideparen{E}\overrightarrow{\otimes}_K\wideparen{U}\overrightarrow{\otimes}_K\widehat{U}_{n,m}^2\rightarrow  \wideparen{E}\overrightarrow{\otimes}_K\widehat{U}_{n,m}^2.
\end{equation*}
However, this follows by the fact that the upper and lower horizontal maps in the commutative diagram above are isomorphisms.
\end{proof}
\begin{prop}\label{prop extension of scalars co-admissible modules sections}
For any affinoid admissible open $V\subset X^2$, there is a pair of mutually inverse equivalences of abelian categories:
\begin{equation*}
    \wideparen{\mathbb{T}}^*:\mathcal{C}(\wideparen{U}(\mathscr{L}^2)(V))\leftrightarrows   \mathcal{C}(\wideparen{E}(\mathscr{L})(V)):\wideparen{\mathbb{T}}^{-1,*}.
\end{equation*}
\end{prop}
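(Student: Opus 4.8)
The statement asserts that, for any affinoid admissible open $V\subset X^2$, extension of scalars along the isomorphism $\wideparen{\mathbb{T}}:\wideparen{U}(\mathscr{L}^2)\to\wideparen{E}(\mathscr{L})$ carries co-admissible $\wideparen{U}(\mathscr{L}^2)(V)$-modules to co-admissible $\wideparen{E}(\mathscr{L})(V)$-modules, and similarly for $\wideparen{\mathbb{T}}^{-1}$. The essential observation is that $\wideparen{\mathbb{T}}(V):\wideparen{U}(\mathscr{L}^2)(V)\to\wideparen{E}(\mathscr{L})(V)$ is an \emph{isomorphism} of Fréchet-Stein algebras (this follows from Proposition \ref{prop anti-isomorphism fréchet-stein level} together with Corollary \ref{coro frechet-stein presentation of bi-enveloping algebra}), and co-admissibility is an intrinsic property of a module over a Fréchet-Stein algebra, depending only on that algebra's structure and not on any auxiliary data. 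Thus the claim should really be a formality once things are set up correctly; the ``content'' is just matching the two Fréchet-Stein presentations.

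The plan is as follows. First I would record that $\wideparen{\mathbb{T}}(V)$ respects the chosen Fréchet-Stein presentations: by the previous Lemma (the one immediately above the proposition, giving $\widehat{E}_{n,m}=\wideparen{\mathbb{T}}^*\widehat{U}^2_{n,m}$), we have a commuting ladder of Banach algebra maps $\pi_{n,m}:\wideparen{U}^2\to\widehat{U}^2_{n,m}$ and $\pi_{n,m}:\wideparen{E}\to\widehat{E}_{n,m}$ intertwined by $\wideparen{\mathbb{T}}$ and the isomorphisms $\operatorname{Id}\overrightarrow{\otimes}_K\widehat{T}_n$. Consequently, for any $\wideparen{U}(\mathscr{L}^2)(V)$-module $\mathcal{M}$ one has a canonical identification $\widehat{E}_{n,m}\overrightarrow{\otimes}_{\wideparen{E}}\wideparen{\mathbb{T}}^*\mathcal{M}=\widehat{E}_{n,m}\overrightarrow{\otimes}_{\wideparen{E}}\wideparen{E}\overrightarrow{\otimes}_{\wideparen{U}^2}\mathcal{M}=\widehat{E}_{n,m}\overrightarrow{\otimes}_{\wideparen{U}^2}\mathcal{M}=\widehat{U}^2_{n,m}\overrightarrow{\otimes}_{\wideparen{U}^2}\mathcal{M}=\mathcal{M}_{n,m}$, the last two steps using the Lemma and the fact that $\widehat{E}_{n,m}$ is, as a module, obtained from $\widehat{U}^2_{n,m}$ by transport along an isomorphism. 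Then: (i) each $\widehat{E}_{n,m}\overrightarrow{\otimes}_{\wideparen{E}}\wideparen{\mathbb{T}}^*\mathcal{M}\cong\mathcal{M}_{n,m}$ is a finite $\widehat{E}_{n,m}$-module because $\mathcal{M}_{n,m}$ is a finite $\widehat{U}^2_{n,m}$-module and the two algebras are isomorphic; (ii) the canonical map $\wideparen{\mathbb{T}}^*\mathcal{M}\to\varprojlim_{n,m}(\widehat{E}_{n,m}\overrightarrow{\otimes}_{\wideparen{E}}\wideparen{\mathbb{T}}^*\mathcal{M})$ is identified, under the isomorphism of underlying Fréchet spaces induced by $\wideparen{\mathbb{T}}$, with the canonical map $\mathcal{M}\to\varprojlim_{n,m}\mathcal{M}_{n,m}$, which is an isomorphism by co-admissibility of $\mathcal{M}$. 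Hence $\wideparen{\mathbb{T}}^*\mathcal{M}\in\mathcal{C}(\wideparen{E}(\mathscr{L})(V))$. The same argument with $\wideparen{\mathbb{T}}^{-1}$ in place of $\wideparen{\mathbb{T}}$ shows $\wideparen{\mathbb{T}}^{-1,*}$ sends $\mathcal{C}(\wideparen{E}(\mathscr{L})(V))$ into $\mathcal{C}(\wideparen{U}(\mathscr{L}^2)(V))$.

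Finally, I would invoke that $\wideparen{\mathbb{T}}$ is an isomorphism of sheaves of Ind-Banach algebras, so by functoriality of extension of scalars ($\operatorname{Id}^*=\operatorname{Id}$, $(gf)^*=g^*f^*$, as recorded just before Proposition \ref{prop extension-restriction of scalars complete bornological case}) we have $\wideparen{\mathbb{T}}^*\circ\wideparen{\mathbb{T}}^{-1,*}=\operatorname{Id}$ and $\wideparen{\mathbb{T}}^{-1,*}\circ\wideparen{\mathbb{T}}^*=\operatorname{Id}$ on the ambient module categories; restricting these natural equivalences to the full subcategories of co-admissible modules — which the two preceding paragraphs show are preserved — yields the asserted mutually inverse equivalences of abelian categories (both $\mathcal{C}(\wideparen{U}(\mathscr{L}^2)(V))$ and $\mathcal{C}(\wideparen{E}(\mathscr{L})(V))$ are abelian, being categories of co-admissible modules over Fréchet-Stein algebras). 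I do not anticipate a genuine obstacle here: the only point requiring care is the bookkeeping in step (i)–(ii), namely being explicit that $\wideparen{\mathbb{T}}^*$ commutes with the Banach-algebra-level truncations up to the canonical isomorphisms $\operatorname{Id}\overrightarrow{\otimes}_K\widehat{T}_n$, so that co-admissibility, checked presentation-wise, transports cleanly. A secondary subtlety is ensuring the identification $\widehat{E}_{n,m}\overrightarrow{\otimes}_{\wideparen{E}}\wideparen{\mathbb{T}}^*\mathcal{M}\cong\mathcal{M}_{n,m}$ is natural in $\mathcal{M}$ and compatible with the inverse-limit transition maps, which follows from the preceding Lemma together with associativity of $\overrightarrow{\otimes}$; this is routine but should be stated.
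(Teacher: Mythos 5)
Your proposal is correct and follows essentially the same route as the paper's proof: both reduce to checking that $\wideparen{\mathbb{T}}^*$ and $\wideparen{\mathbb{T}}^{-1,*}$ preserve co-admissibility, using the preceding Lemma ($\widehat{E}_{n,m}=\wideparen{\mathbb{T}}^*\widehat{U}^2_{n,m}$) to identify $\widehat{E}_{n,m}\overrightarrow{\otimes}_{\wideparen{E}}\wideparen{\mathbb{T}}^*\mathcal{M}$ with $\mathcal{M}_{n,m}$, then verifying finiteness of each truncation and compatibility with the inverse limit. The only cosmetic difference is that the paper justifies the associativity step $\widehat{E}_{n,m}\overrightarrow{\otimes}_{\wideparen{U}^2}\mathcal{M}=\widehat{E}_{n,m}\overrightarrow{\otimes}_{\widehat{U}^2_{n,m}}\mathcal{M}_{n,m}$ by citing a result of Bode, and passes the inverse limit through $\wideparen{\mathbb{T}}^*$ by noting that an equivalence commutes with limits.
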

\begin{proof}
We will show this equivalence for $V=X^2$, the general case is analogous. As $\wideparen{\mathbb{T}}$ is an isomorphism, we have mutually inverse equivalences:
\begin{equation*}
    \wideparen{\mathbb{T}}^*:\Mod_{\Indban}(\wideparen{U}^2)\leftrightarrows \Mod_{\Indban}(\wideparen{E}):\wideparen{\mathbb{T}}^{-1,*}.
\end{equation*}
Thus, it  suffices to show that $\wideparen{\mathbb{T}}^*$ and $\wideparen{\mathbb{T}}^{-1,*}$ send co-admissible modules to co-admissible modules. Let then $M\in \mathcal{C}(\wideparen{U}^2)$ be a co-admissible module. For each $n,m\geq 0$,  define the following Banach $\widehat{E}_{n,m}$-module:
\begin{equation*}
    \wideparen{\mathbb{T}}^*M_{n,m}:=  \widehat{E}_{n,m}   \overrightarrow{\otimes}_{\wideparen{E}}\wideparen{\mathbb{T}}^*M.
\end{equation*}
We have the following chain of identities:
    \begin{align*}
      \wideparen{\mathbb{T}}^*M_{n,m}=\widehat{E}_{n,m}\overrightarrow{\otimes}_{\wideparen{E}}\wideparen{E}\overrightarrow{\otimes}_{\wideparen{U}^2}M=&\widehat{E}_{n,m}\overrightarrow{\otimes}_{\wideparen{U}^2}M\\=&\widehat{E}_{n,m}\overrightarrow{\otimes}_{\widehat{U}^2_{n,m}}\left(\widehat{U}^2_{n,m}\overrightarrow{\otimes}_{\wideparen{U}^2}M\right)\\=&\widehat{E}_{n,m}\overrightarrow{\otimes}_{\widehat{U}^2_{n,m}}M_{n,m},
    \end{align*}
where the last identity follows by \cite[Corollary 5.40]{bode2021operations}. As $M$ is a co-admissible $\wideparen{U}^2$-module, each of the $M_{n,m}$ is a finite module over $\wideparen{U}^2_{n,m}$. Hence, $\wideparen{\mathbb{T}}^*M_{n,m}$ is a finite module over $\widehat{E}_{n,m}$.
Furthermore, we have:
\begin{equation*}
    \wideparen{\mathbb{T}}^*M=\wideparen{E}\overrightarrow{\otimes}_{\wideparen{U}^2}(\varprojlim M_{n,m})=\varprojlim\wideparen{E}\overrightarrow{\otimes}_{\wideparen{U}^2}M_{n,m}=\varprojlim \wideparen{\mathbb{T}}^*M_{n,m},
\end{equation*}
where the second identity follows by the fact that $\wideparen{\mathbb{T}}^*$ is an equivalence, so it commutes with inverse limits, and the last one 
follows by the isomorphism:
\begin{equation*}
   \widehat{E}_{n,m}=\wideparen{E}\overrightarrow{\otimes}_{\wideparen{U}^2}\widehat{U}^2_{n,m}. 
\end{equation*}
The corresponding fact for $\wideparen{\mathbb{T}}^{-1,*}$ is analogous.
\end{proof}
Let $W\subset V\subset X^2$ be affinoid subdomains. We have a commutative diagram:
\begin{equation*}
\begin{tikzcd}
\wideparen{U}(\mathscr{L}^2)(V) \arrow[d] \arrow[r, "\wideparen{\mathbb{T}}"] & \wideparen{E}(\mathscr{L})(V) \arrow[d] \\
\wideparen{U}(\mathscr{L}^2)(W) \arrow[r, "\wideparen{\mathbb{T}}"]           & \wideparen{E}(\mathscr{L})(W)          
\end{tikzcd}
\end{equation*}
By \cite[Theorem 4.2.8]{ardakov2021equivariant}, the leftmost vertical map is $c$-flat. Furthermore, there are Fréchet-Stein presentations:
\begin{equation*}
    \wideparen{U}(\mathscr{L}^2)(V))=\varprojlim_n A_n, \textnormal{  } \wideparen{U}(\mathscr{L}^2)(W))=\varprojlim_n B_n,
\end{equation*}
satisfying that for each $n\geq 0$ the restriction map $\wideparen{U}(\mathscr{L}^2)(V))\rightarrow \wideparen{U}(\mathscr{L}^2)(W))$ induces a map $A_n\rightarrow B_n$ which is two-sided flat. As the horizontal maps are isomorphisms of Fréchet-Stein algebras, it follows that
the restriction map $\wideparen{E}(\mathscr{L})(V))\rightarrow \wideparen{E}(\mathscr{L})(W))$ is $c$-flat as well.\bigskip

Let $(W_i)_{i=1}^n$ be an affinoid cover of $V$. As $\wideparen{\mathbb{T}}:\wideparen{U}(\mathscr{L}^2)\rightarrow \wideparen{E}(\mathscr{L})$ is an isomorphism, and $V$ is affinoid, \cite[Theorem 8.2]{ardakov2019} shows that the augmented \v{C}ech complex:
\begin{equation}\label{equation augmented cech complex of bi-enveloping algebra}
    0\rightarrow \wideparen{E}(\mathscr{L})(V)\rightarrow \prod_{i=1}^n\wideparen{E}(\mathscr{L})(W_i)\rightarrow \cdots \rightarrow \wideparen{E}(\mathscr{L})(\cap_{i=1}^nW_i)\rightarrow 0,
\end{equation}
is a strict and exact complex. Let $M \in \mathcal{C}(\wideparen{E}(\mathscr{L})(V))$ be a co-admissible module and consider the following chain complex:
\begin{multline}\label{equation augmented cech complex of co-admissible module over bi-enveloping algebra}
    0\rightarrow M\rightarrow \prod_{i=1}^n\wideparen{E}(\mathscr{L})(W_i)\overrightarrow{\otimes}_{\wideparen{E}(\mathscr{L})(V)}M\rightarrow \\\cdots \rightarrow \wideparen{E}(\mathscr{L})(\cap_{i=1}^nW_i)\overrightarrow{\otimes}_{\wideparen{E}(\mathscr{L})(V)}M\rightarrow 0.
\end{multline}
As $(\ref{equation augmented cech complex of bi-enveloping algebra})$ is a bounded complex with $c$-flat transition maps, \cite[Proposition 5.33]{bode2021operations} shows that $(\ref{equation augmented cech complex of co-admissible module over bi-enveloping algebra})$ is strictly exact. Furthermore, for $1\leq i_1,\cdots,i_j\leq n$, the Fréchet module $\wideparen{E}(\mathscr{L})(\cap_{k=1}^jW_{i_k})\overrightarrow{\otimes}_{\wideparen{E}(\mathscr{L})(V)}M$ is a co-admissible $\wideparen{E}(\mathscr{L})(\cap_{k=1}^jW_{i_k})$-module.\\
As $V$ is an affinoid space, it is separated. Hence, in order to define a sheaf on the admissible $G$-topology, it is enough to define it on the site of affinoid subspaces with finite covers. Thus, we obtain a functor:
\begin{equation*}
    \operatorname{Loc}:\mathcal{C}(\wideparen{E}(\mathscr{L})(V))\rightarrow \Mod_{\Indban}(\wideparen{E}(\mathscr{L})_{\vert V}),
\end{equation*}
defined by assigning to each co-admissible module $M\in \mathcal{C}(\wideparen{E}(\mathscr{L})(V))$, and each affinoid subdomain $W\subset V$, the co-admissible $\wideparen{E}(\mathscr{L})(W)$-module:
\begin{equation*}
    \operatorname{Loc}(M)(W):=\wideparen{E}(\mathscr{L})(W)\overrightarrow{\otimes}_{\wideparen{E}(\mathscr{L})(V)}M.
\end{equation*}
We will refer to this functor as the localization functor.
\begin{prop}\label{prop properties of Loc}
 The following hold:
 \begin{enumerate}[label=(\roman*)]
     \item $\operatorname{Loc}(-)$ is a exact and fully faithful.
     \item If $W\subset V$ is affinoid, then $\operatorname{Loc}(\mathcal{M})(W)$ is a co-admissible $\wideparen{E}(\mathscr{L})(W)$-module.
     \item For each $M\in \mathcal{C}(\wideparen{E}(\mathscr{L})(V))$, we have $\operatorname{H}^j(V,\operatorname{Loc}(M))=0$ for all $j\geq 1$.
     \item For each $\mathcal{N}\in \Mod_{\Indban}(\wideparen{E}(\mathscr{L})_{\vert V})$ we have an isomorphism:
     \begin{equation*}
         \Hom_{\wideparen{E}(\mathscr{L})_{\vert V}}(\operatorname{Loc}(M),\mathcal{N})=\Hom_{\wideparen{E}(\mathscr{L})(V)}(M,\mathcal{N}(V)).
     \end{equation*}
 \end{enumerate}
\end{prop}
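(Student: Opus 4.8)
The plan is to reduce everything to the corresponding statement for the sheaf $\wideparen{U}(\mathscr{L}^2)_{\vert V}$ of Fréchet-Stein enveloping algebras, for which the analogous localization functor $\operatorname{Loc}$ and its properties are established in \cite[Theorem 8.2, 8.4]{ardakov2019} (see also the discussion after Proposition \ref{prop explicit Fréchet-Stein presentation} and the preceding Sections). The key observation is that on an affinoid $V\subset X^2$ the isomorphism of sheaves of Ind-Banach algebras $\wideparen{\mathbb{T}}:\wideparen{U}(\mathscr{L}^2)_{\vert V}\rightarrow \wideparen{E}(\mathscr{L})_{\vert V}$ (which exists locally, once we pass to affinoid subdomains where $\mathscr{L}$ is free, and in general is obtained by gluing; since $V$ is separated it suffices to define it on the site of affinoid subspaces with finite covers) intertwines the two localization functors. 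More precisely, for $M\in\mathcal{C}(\wideparen{E}(\mathscr{L})(V))$ and an affinoid subdomain $W\subset V$ one has, by associativity of $\overrightarrow{\otimes}$ and the commutative diagram of restriction maps displayed before the Proposition,
\begin{equation*}
\operatorname{Loc}(M)(W)=\wideparen{E}(\mathscr{L})(W)\overrightarrow{\otimes}_{\wideparen{E}(\mathscr{L})(V)}M=\wideparen{\mathbb{T}}^*\left(\wideparen{U}(\mathscr{L}^2)(W)\overrightarrow{\otimes}_{\wideparen{U}(\mathscr{L}^2)(V)}\wideparen{\mathbb{T}}^{-1,*}M\right),
\end{equation*}
so that $\operatorname{Loc}=\wideparen{\mathbb{T}}^*\circ\operatorname{Loc}_{\wideparen{U}(\mathscr{L}^2)}\circ\wideparen{\mathbb{T}}^{-1,*}$ as functors from $\mathcal{C}(\wideparen{E}(\mathscr{L})(V))$ to $\Mod_{\Indban}(\wideparen{E}(\mathscr{L})_{\vert V})$.

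With this identification in hand, I would deduce the four statements in turn. For $(ii)$: $\wideparen{U}(\mathscr{L}^2)(W)\overrightarrow{\otimes}_{\wideparen{U}(\mathscr{L}^2)(V)}\wideparen{\mathbb{T}}^{-1,*}M$ is a co-admissible $\wideparen{U}(\mathscr{L}^2)(W)$-module by \cite[Theorem 8.4]{ardakov2019}, and then Proposition \ref{prop extension of scalars co-admissible modules sections} (applied with the affinoid $W$) shows that $\wideparen{\mathbb{T}}^*$ of it is a co-admissible $\wideparen{E}(\mathscr{L})(W)$-module. For $(iii)$: the higher \v{C}ech cohomology of $\operatorname{Loc}_{\wideparen{U}(\mathscr{L}^2)}$ of a co-admissible module over an affinoid vanishes by \cite[Theorem 8.2]{ardakov2019}; since $\wideparen{\mathbb{T}}^{-1,*}$ is an equivalence of quasi-abelian categories it is strongly exact and so commutes with the \v{C}ech complex, and strictness/exactness of the augmented \v{C}ech complex is preserved — this is precisely what the discussion of complex $(\ref{equation augmented cech complex of co-admissible module over bi-enveloping algebra})$ before the Proposition records, using $c$-flatness of restriction (\cite[Theorem 4.2.8]{ardakov2021equivariant}, transported along $\wideparen{\mathbb{T}}$) together with \cite[Proposition 5.33]{bode2021operations}. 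Hence $\operatorname{H}^j(V,\operatorname{Loc}(M))=0$ for $j\geq 1$ (note $\operatorname{H}^j(V,-)$ may be computed via \v{C}ech cohomology on affinoid covers since $V$ is separated).

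For $(i)$: exactness of $\operatorname{Loc}(-)$ follows since it factors as $\wideparen{\mathbb{T}}^*\circ\operatorname{Loc}_{\wideparen{U}(\mathscr{L}^2)}\circ\wideparen{\mathbb{T}}^{-1,*}$ and each factor is exact ($\wideparen{\mathbb{T}}^{\pm*}$ being equivalences of abelian categories by Proposition \ref{prop extension of scalars co-admissible modules sections}, and $\operatorname{Loc}_{\wideparen{U}(\mathscr{L}^2)}$ being exact by \cite[Theorem 8.2, 8.4]{ardakov2019}); full faithfulness follows the same way, or directly from the adjunction isomorphism in $(iv)$ together with $\operatorname{Loc}(M)(V)=M$. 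Finally $(iv)$: given $\mathcal{N}\in\Mod_{\Indban}(\wideparen{E}(\mathscr{L})_{\vert V})$, I would exhibit the natural map $\Hom_{\wideparen{E}(\mathscr{L})_{\vert V}}(\operatorname{Loc}(M),\mathcal{N})\rightarrow\Hom_{\wideparen{E}(\mathscr{L})(V)}(M,\mathcal{N}(V))$ given by taking global sections (using $\operatorname{Loc}(M)(V)=M$), and construct the inverse: a $\wideparen{E}(\mathscr{L})(V)$-linear map $M\rightarrow\mathcal{N}(V)$ induces, on each affinoid $W\subset V$, a $\wideparen{E}(\mathscr{L})(W)$-linear map $\operatorname{Loc}(M)(W)=\wideparen{E}(\mathscr{L})(W)\overrightarrow{\otimes}_{\wideparen{E}(\mathscr{L})(V)}M\rightarrow\mathcal{N}(W)$ by extension of scalars along the restriction, and these are compatible with further restriction, hence glue to a morphism of sheaves; one checks the two assignments are mutually inverse. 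Again this can be transported along $\wideparen{\mathbb{T}}$ from the corresponding adjunction for $\wideparen{U}(\mathscr{L}^2)$-modules if one prefers not to argue directly.

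The main obstacle I anticipate is not any single step but rather the bookkeeping needed to make the factorization $\operatorname{Loc}=\wideparen{\mathbb{T}}^*\circ\operatorname{Loc}_{\wideparen{U}(\mathscr{L}^2)}\circ\wideparen{\mathbb{T}}^{-1,*}$ genuinely functorial and compatible with all restriction maps simultaneously, i.e.\ verifying that $\wideparen{\mathbb{T}}$ really is a morphism of sheaves of Ind-Banach algebras on all of $V$ (and not merely on the affinoid locus where $\mathscr{L}$ is free) and that extension of scalars commutes with the sheaf restriction functors in the required way. Once that compatibility is in place, every statement is a formal consequence of the already-established theory for $\wideparen{U}(\mathscr{L}^2)$-modules transported across the equivalence $\wideparen{\mathbb{T}}^*$, combined with the strictness results for \v{C}ech complexes over locally Fréchet-Stein sheaves with $c$-flat restriction maps.
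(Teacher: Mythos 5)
Your proposal is correct, and the underlying idea --- reduce everything to the known theory for $\wideparen{U}(\mathscr{L}^2)$ by transporting along $\wideparen{\mathbb{T}}$ --- is exactly the spirit of this section; note that your anticipated obstacle is vacuous here, since the proposition lives in the local setting where $X$ is affinoid and $\mathscr{L}$ is free, so $\wideparen{\mathbb{T}}$ is already a globally defined isomorphism of sheaves of Ind-Banach algebras on $X^2$ and restricts to any affinoid $V$. The difference from the paper is one of packaging. You transport the entire localization functor at once via the factorization $\operatorname{Loc}=\wideparen{\mathbb{T}}^*\circ\operatorname{Loc}_{\wideparen{U}(\mathscr{L}^2)}\circ\wideparen{\mathbb{T}}^{-1,*}$ and then quote \cite[Theorems 8.2, 8.4]{ardakov2019}; the paper instead transports only the \emph{inputs} (the $c$-flatness of the restriction maps $\wideparen{E}(\mathscr{L})(V)\rightarrow\wideparen{E}(\mathscr{L})(W)$ and the strict exactness of the augmented \v{C}ech complex, both obtained from $\wideparen{\mathbb{T}}$ in the discussion preceding the statement) and then argues directly: exactness in $(i)$ comes from the open mapping theorem for Fr\'echet spaces plus \cite[Proposition 5.33]{bode2021operations} applied to the $c$-flat restriction maps, $(ii)$ is a direct citation of the same result of Bode, and $(iii)$ is read off from the already-established strictness of the \v{C}ech complexes. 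The paper's proof of $(iv)$ is essentially identical to your direct argument via the extension--restriction adjunction applied affinoid-by-affinoid, and full faithfulness is likewise deduced from $(iv)$. Your route buys uniformity (everything is a formal consequence of the equivalence $\wideparen{\mathbb{T}}^*$ once the factorization of functors is checked on each affinoid subdomain, which is the content of Lemma \ref{Lemma extension of scalars co-admissible modules} proved later but non-circularly), while the paper's route avoids having to verify that functorial compatibility and keeps the argument self-contained at the level of sections.
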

\begin{proof}
Consider a short exact sequence of co-admissible $\wideparen{E}(\mathscr{L})(V)$-modules:
\begin{equation*}
    0\rightarrow M_1\rightarrow M_2\rightarrow M_3\rightarrow 0.
\end{equation*}
As this is a short exact sequence of Fréchet spaces, it is strict by the open mapping theorem. Let $W\subset V$ be an affinoid subdomain. As shown above, the restriction map $\wideparen{E}(\mathscr{L})(V)\rightarrow \wideparen{E}(\mathscr{L})(W)$ is $c$-flat. Thus, by \cite[Proposition 5.33]{bode2021operations}, the sequence:
\begin{multline*}
    0\rightarrow \wideparen{E}(\mathscr{L})(W)\overrightarrow{\otimes}_{\wideparen{E}(\mathscr{L})(V)}M_1\rightarrow \wideparen{E}(\mathscr{L})(W)\overrightarrow{\otimes}_{\wideparen{E}(\mathscr{L})(V)}M_2\\\rightarrow \wideparen{E}(\mathscr{L})(W)\overrightarrow{\otimes}_{\wideparen{E}(\mathscr{L})(V)}M_3\rightarrow 0,
\end{multline*}
is a strict exact sequence. Thus, it follows that:
\begin{equation*}
    0\rightarrow \operatorname{Loc}(M_1)\rightarrow \operatorname{Loc}(M_2)\rightarrow \operatorname{Loc}(M_3)\rightarrow 0,
\end{equation*}
is a strict exact sequence in $\Mod_{\Indban}(\wideparen{E}(\mathscr{L})_{\vert V})$. In order to show that $\operatorname{Loc}(-)$ is fully faithful, it suffices to show $(iv)$. Let $f:\operatorname{Loc}(M)\rightarrow \mathcal{N}$ be a morphism of $\wideparen{E}(\mathscr{L})$-modules. For any affinoid subdomain $W\subset V$ there is a commutative diagram:
\begin{equation*}
\begin{tikzcd}
M \arrow[d] \arrow[r]                                                           & \mathcal{N}(V) \arrow[d] \\
\wideparen{E}(\mathscr{L})(W)\overrightarrow{\otimes}_{\wideparen{E}(\mathscr{L})(V)}M \arrow[r] & \mathcal{N}(W)          
\end{tikzcd}
\end{equation*}
 Notice that by Proposition \ref{prop extension-restriction of scalars complete bornological case}, we have:
 \begin{equation*}
     \Hom_{\wideparen{E}(\mathscr{L})(V)}(M,\mathcal{N}(W))=\Hom_{\wideparen{E}(\mathscr{L})(W)}(\wideparen{E}(\mathscr{L})(W)\overrightarrow{\otimes}_{\wideparen{E}(\mathscr{L})(V)}M,\mathcal{N}(W)).
 \end{equation*}
 Thus, $f:\operatorname{Loc}(M)\rightarrow \mathcal{N}$ is completely determined by the morphism $M\rightarrow \mathcal{N}(V)$ induced on global sections. Conversely, any map $M\rightarrow \mathcal{N}(V)$ induces a morphism $M\rightarrow \mathcal{N}(W)$ by composing with the restriction maps. Again by extension of scalars, we get commutative diagrams as above, and these determine a unique morphism of sheaves $\operatorname{Loc}(M)\rightarrow \mathcal{N}$. Statement $(ii)$ is \cite[Proposition 5.33]{bode2021operations}, and $(iii)$ follows by the fact that the higher \v{C}ech cohomology groups of $\operatorname{Loc}(M)$ vanish for all finite affinoid covers, as was shown above.
\end{proof}
\begin{defi}\label{defi localization functor for co-admissible E(l)-modules}
Let $V\subset X^2$ be an admissible affinoid open. We define the category of co-admissible $\wideparen{E}(\mathscr{L})_{\vert V}$-modules, $\mathcal{C}(\wideparen{E}(\mathscr{L})_{\vert V})$, as the essential image of the localization functor:
\begin{equation*}
    \operatorname{Loc}:\mathcal{C}(\wideparen{E}(\mathscr{L})(V))\rightarrow \Mod_{\Indban}(\wideparen{E}(\mathscr{L})_{\vert V}).
\end{equation*}
\end{defi}
We will now compare the co-admissible modules over $\wideparen{E}(\mathscr{L})_{\vert V}$ and $\wideparen{U}(\mathscr{L}^2)_{\vert V}$:
\begin{Lemma}\label{Lemma extension of scalars co-admissible modules}
Let $\mathcal{M}\in \mathcal{C}(\wideparen{U}(\mathscr{L}^2)_{\vert V})$ be a sheaf of co-admissible modules. For any pair of affinoid subdomains $W\subset V\subset X^2$ we have:
\begin{equation*}
    \wideparen{\mathbb{T}}^{*}\mathcal{M}(W)=\wideparen{E}(\mathscr{L})(W)\overrightarrow{\otimes}_{\wideparen{E}(\mathscr{L})(V)}\wideparen{\mathbb{T}}^{*}\mathcal{M}(V).
\end{equation*}
\end{Lemma}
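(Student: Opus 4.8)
The statement to prove is that $\wideparen{\mathbb{T}}^{*}$ commutes with restriction to affinoid subdomains for sheaves of co-admissible $\wideparen{U}(\mathscr{L}^2)$-modules. The key input is the compatibility between $\wideparen{\mathbb{T}}$ and the restriction maps, together with the behaviour of $-\overrightarrow{\otimes}_{(-)}-$ already recorded in the excerpt. First I would recall that, by the diagram preceding this lemma, for any pair of affinoid subdomains $W\subset V\subset X^2$ the square
\begin{equation*}
\begin{tikzcd}
\wideparen{U}(\mathscr{L}^2)(V) \arrow[d] \arrow[r, "\wideparen{\mathbb{T}}"] & \wideparen{E}(\mathscr{L})(V) \arrow[d] \\
\wideparen{U}(\mathscr{L}^2)(W) \arrow[r, "\wideparen{\mathbb{T}}"]           & \wideparen{E}(\mathscr{L})(W)
\end{tikzcd}
\end{equation*}
commutes, and that both horizontal maps are isomorphisms of complete bornological $K$-algebras. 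Consequently the restriction map $\wideparen{E}(\mathscr{L})(V)\to\wideparen{E}(\mathscr{L})(W)$ is identified via $\wideparen{\mathbb{T}}$ with $\wideparen{U}(\mathscr{L}^2)(V)\to\wideparen{U}(\mathscr{L}^2)(W)$.

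Next I would unwind the two sides. By definition $\wideparen{\mathbb{T}}^{*}\mathcal{M}$ is the sheafification of the presheaf $W\mapsto \wideparen{E}(\mathscr{L})(W)\overrightarrow{\otimes}_{\wideparen{U}(\mathscr{L}^2)(W)}\mathcal{M}(W)$, but because $\wideparen{\mathbb{T}}$ is an isomorphism, $\wideparen{\mathbb{T}}^{*}$ applied sectionwise to an $\wideparen{U}(\mathscr{L}^2)(W)$-module is just restriction of scalars along $\wideparen{\mathbb{T}}^{-1}$, which changes neither the underlying sheaf of Ind-Banach spaces nor the support, and is in particular an exact equivalence commuting with all (co)limits; so $\wideparen{\mathbb{T}}^{*}\mathcal{M}(W)=\wideparen{\mathbb{T}}^{*}(\mathcal{M}(W))$ with no sheafification correction needed. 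Since $\mathcal{M}$ is co-admissible, $\mathcal{M}(W)=\wideparen{U}(\mathscr{L}^2)(W)\overrightarrow{\otimes}_{\wideparen{U}(\mathscr{L}^2)(V)}\mathcal{M}(V)$. Applying $\wideparen{\mathbb{T}}^{*}$ and using the associativity of the (complete, relative) tensor product together with the identification $\wideparen{E}(\mathscr{L})(W)=\wideparen{E}(\mathscr{L})(W)\overrightarrow{\otimes}_{\wideparen{E}(\mathscr{L})(V)}\wideparen{E}(\mathscr{L})(V)$ and the algebra isomorphism $\wideparen{E}(\mathscr{L})(W)\cong\wideparen{U}(\mathscr{L}^2)(W)$ over $\wideparen{E}(\mathscr{L})(V)\cong\wideparen{U}(\mathscr{L}^2)(V)$, one gets
\begin{equation*}
\wideparen{\mathbb{T}}^{*}\mathcal{M}(W)=\wideparen{E}(\mathscr{L})(W)\overrightarrow{\otimes}_{\wideparen{U}(\mathscr{L}^2)(W)}\mathcal{M}(W)=\wideparen{E}(\mathscr{L})(W)\overrightarrow{\otimes}_{\wideparen{E}(\mathscr{L})(V)}\wideparen{\mathbb{T}}^{*}\mathcal{M}(V),
\end{equation*}
which is exactly the claimed formula.

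The steps that require a little care rather than pure formalism are: (i) justifying that $\wideparen{\mathbb{T}}^{*}$ commutes with the relative complete tensor product $\overrightarrow{\otimes}_{\wideparen{U}(\mathscr{L}^2)(V)}$ — this is the standard base-change / transitivity of extension of scalars, but one should check it holds at the level of the honest (underived) $\overrightarrow{\otimes}$ here, which is fine because $\wideparen{\mathbb{T}}$ is an isomorphism so $\wideparen{\mathbb{T}}^{*}$ preserves coequalizers; and (ii) confirming that the presheaf tensor product computing $\wideparen{\mathbb{T}}^{*}\mathcal{M}$ already satisfies the sheaf condition on affinoid subdomains, so that evaluating on $W$ genuinely gives $\wideparen{\mathbb{T}}^{*}(\mathcal{M}(W))$. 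Point (ii) follows from Proposition \ref{prop extension of scalars co-admissible modules sections} together with Proposition \ref{prop properties of Loc}: $\wideparen{\mathbb{T}}^{*}\mathcal{M}$ is co-admissible over $\wideparen{E}(\mathscr{L})_{\vert V}$ in the sense of Definition \ref{defi localization functor for co-admissible E(l)-modules}, hence is the localization of its global sections, and localization is by construction compatible with affinoid restriction. I expect point (i), i.e. pinning down the base-change identity for the relative complete tensor product without invoking flatness, to be the main (mild) obstacle; it is resolved by the observation that an isomorphism of algebras induces an equivalence of module categories under which $\overrightarrow{\otimes}$ over one algebra corresponds to $\overrightarrow{\otimes}$ over the other.
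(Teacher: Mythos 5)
Your proposal is correct and follows essentially the same route as the paper: unwind $\wideparen{\mathbb{T}}^{*}\mathcal{M}(W)=\wideparen{E}(\mathscr{L})(W)\overrightarrow{\otimes}_{\wideparen{U}(\mathscr{L}^2)(W)}\mathcal{M}(W)$, substitute the co-admissibility identity $\mathcal{M}(W)=\wideparen{U}(\mathscr{L}^2)(W)\overrightarrow{\otimes}_{\wideparen{U}(\mathscr{L}^2)(V)}\mathcal{M}(V)$, and conclude by associativity of the relative tensor product together with the identification of restriction maps through the isomorphism $\wideparen{\mathbb{T}}$. Your extra remarks on why no sheafification correction is needed (since $\wideparen{\mathbb{T}}$ is an isomorphism, $\wideparen{\mathbb{T}}^{*}$ is just restriction of scalars along its inverse) make explicit what the paper leaves as "by construction", but the argument is the same.
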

\begin{proof}
By construction, we have:
\begin{equation*}
 \wideparen{\mathbb{T}}^{*}\mathcal{M}(W)=\wideparen{E}(\mathscr{L})(W)\overrightarrow{\otimes}_{\wideparen{U}(\mathscr{L}^2)(W)}\mathcal{M}(W).   
\end{equation*}
As $\mathcal{M}$ is co-admissible, and $W$ is affinoid, we have the following identity:
\begin{equation*}
  \mathcal{M}(W)=\wideparen{U}(\mathscr{L}^2)(W)\overrightarrow{\otimes}_{\wideparen{U}(\mathscr{L}^2)(V)}\mathcal{M}(V).  
\end{equation*}
Combining these two identities, we get:
\begin{equation*}
 \wideparen{\mathbb{T}}^{*}\mathcal{M}(W)=\wideparen{E}(\mathscr{L})(W)\overrightarrow{\otimes}_{\wideparen{U}(\mathscr{L}^2)(V)}\mathcal{M}(V)= \wideparen{E}(\mathscr{L})(W)\overrightarrow{\otimes}_{\wideparen{E}(\mathscr{L})(V)}\wideparen{\mathbb{T}}^*\mathcal{M}(V),  
\end{equation*}
and we get our desired isomorphism.
\end{proof}
\begin{prop}\label{prop equivalence of categories of co-admissible odules over E and U}
For each admissible affinoid open $V\subset X^2$, we have mutually inverse equivalences of abelian categories:
\begin{equation*}
  \wideparen{\mathbb{T}}^*:\mathcal{C}(\wideparen{U}(\mathscr{L}^2)_{\vert V})\leftrightarrows   \mathcal{C}(\wideparen{E}(\mathscr{L})_{\vert V}):\wideparen{\mathbb{T}}^{-1,*}.  
\end{equation*}
\end{prop}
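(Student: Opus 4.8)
The plan is to build the global equivalence by gluing the local (affinoid-with-free-Lie-algebroid) equivalence of Proposition \ref{prop extension of scalars co-admissible modules sections}, using the sheaf-theoretic machinery already assembled for $\wideparen{U}(\mathscr{L}^2)$-modules. Since $\wideparen{\mathbb{T}}:\wideparen{U}(\mathscr{L}^2)\to \wideparen{E}(\mathscr{L})$ is an isomorphism of sheaves of Ind-Banach algebras (defined only after choosing a basis of $\mathscr{L}$, hence only locally, but that is harmless), the extension of scalars $\wideparen{\mathbb{T}}^*$ is an equivalence of quasi-abelian categories $\Mod_{\Indban}(\wideparen{U}(\mathscr{L}^2)_{\vert V})\leftrightarrows \Mod_{\Indban}(\wideparen{E}(\mathscr{L})_{\vert V})$ for every admissible affinoid $V\subset X^2$ on which $\mathscr{L}$ is free, with quasi-inverse $\wideparen{\mathbb{T}}^{-1,*}$. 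So the only content is showing that $\wideparen{\mathbb{T}}^*$ and $\wideparen{\mathbb{T}}^{-1,*}$ carry $\mathcal{C}(\wideparen{U}(\mathscr{L}^2)_{\vert V})$ into $\mathcal{C}(\wideparen{E}(\mathscr{L})_{\vert V})$ and back. Because both categories of co-admissible modules are, by Definition \ref{defi localization functor for co-admissible E(l)-modules} and the analogous statement for $\wideparen{U}(\mathscr{L}^2)$ from Section \ref{Section background Lie algebroids}, the essential images of the respective localization functors, it suffices to check compatibility of $\wideparen{\mathbb{T}}^*$ with localization.

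First I would reduce to the case $V=X^2$ with $X=\Sp(A)$ affinoid and $\mathscr{L}$ free, which is legitimate since co-admissibility is a local condition (it can be checked on an admissible affinoid cover, and every such $V$ admits an affinoid cover by opens on which $\mathscr{L}$ is free). Over such a $V$, Proposition \ref{prop extension of scalars co-admissible modules sections} already gives the equivalence $\wideparen{\mathbb{T}}^*:\mathcal{C}(\wideparen{U}(\mathscr{L}^2)(V))\leftrightarrows \mathcal{C}(\wideparen{E}(\mathscr{L})(V)):\wideparen{\mathbb{T}}^{-1,*}$ on global sections. Second, I would invoke Lemma \ref{Lemma extension of scalars co-admissible modules}: for a sheaf of co-admissible $\wideparen{U}(\mathscr{L}^2)_{\vert V}$-modules $\mathcal{M}$ and any affinoid subdomain $W\subset V$, one has $\wideparen{\mathbb{T}}^*\mathcal{M}(W)=\wideparen{E}(\mathscr{L})(W)\overrightarrow{\otimes}_{\wideparen{E}(\mathscr{L})(V)}\wideparen{\mathbb{T}}^*\mathcal{M}(V)$, which says precisely that $\wideparen{\mathbb{T}}^*\mathcal{M}=\operatorname{Loc}(\wideparen{\mathbb{T}}^*\mathcal{M}(V))$. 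Combined with Proposition \ref{prop extension of scalars co-admissible modules sections}, which tells us $\wideparen{\mathbb{T}}^*\mathcal{M}(V)\in\mathcal{C}(\wideparen{E}(\mathscr{L})(V))$, this exhibits $\wideparen{\mathbb{T}}^*\mathcal{M}$ as an object of $\mathcal{C}(\wideparen{E}(\mathscr{L})_{\vert V})$ in the sense of Definition \ref{defi localization functor for co-admissible E(l)-modules}. The same argument run with $\wideparen{\mathbb{T}}^{-1}$ in place of $\wideparen{\mathbb{T}}$ — using that $\wideparen{\mathbb{T}}^{-1}$ is again an isomorphism of sheaves of Ind-Banach algebras and that co-admissible $\wideparen{E}(\mathscr{L})_{\vert V}$-modules are, by Proposition \ref{prop properties of Loc}(ii), locally of the form $\operatorname{Loc}(N)$ with $N$ co-admissible — shows $\wideparen{\mathbb{T}}^{-1,*}$ sends $\mathcal{C}(\wideparen{E}(\mathscr{L})_{\vert V})$ into $\mathcal{C}(\wideparen{U}(\mathscr{L}^2)_{\vert V})$.

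Finally, since $\wideparen{\mathbb{T}}^*$ and $\wideparen{\mathbb{T}}^{-1,*}$ are already mutually inverse as functors between $\Mod_{\Indban}(\wideparen{U}(\mathscr{L}^2)_{\vert V})$ and $\Mod_{\Indban}(\wideparen{E}(\mathscr{L})_{\vert V})$ (the natural isomorphisms $\wideparen{\mathbb{T}}^{-1,*}\wideparen{\mathbb{T}}^*\cong\operatorname{Id}$ and $\wideparen{\mathbb{T}}^*\wideparen{\mathbb{T}}^{-1,*}\cong\operatorname{Id}$ coming from functoriality of extension of scalars and $\operatorname{Id}^*=\operatorname{Id}$), their restrictions to the full subcategories of co-admissible modules are automatically mutually inverse equivalences of abelian categories — abelianness of $\mathcal{C}(\wideparen{E}(\mathscr{L})_{\vert V})$ and $\mathcal{C}(\wideparen{U}(\mathscr{L}^2)_{\vert V})$ follows from Proposition \ref{prop properties of Loc}(i) and its $\wideparen{U}(\mathscr{L}^2)$-analog, and exactness is inherited from the quasi-abelian equivalence together with the fact that strict short exact sequences of Fréchet modules are genuinely exact. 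The step I expect to require the most care is not any single inequality but making the reduction to $V=X^2$ genuinely rigorous: one must confirm that the definition of $\mathcal{C}(\wideparen{E}(\mathscr{L})_{\vert V})$ via $\operatorname{Loc}$ is compatible with passing to an affinoid subcover on which $\mathscr{L}$ is free — i.e.\ that gluing the local equivalences produces a well-defined functor independent of the chosen basis of $\mathscr{L}$ — which ultimately rests on the fact that $\wideparen{\mathbb{T}}^*$ on global sections is canonical up to the ambiguity already present in $\wideparen{E}(\mathscr{L})$ itself; this should follow formally from Proposition \ref{prop properties of Loc}(iv) (full faithfulness of $\operatorname{Loc}$) but is the one place where the argument is more than a routine unwinding of definitions.
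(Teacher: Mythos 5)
Your proposal is correct and follows essentially the same route as the paper: both rest on the global-sections equivalence of Proposition \ref{prop extension of scalars co-admissible modules sections}, the compatibility $\wideparen{\mathbb{T}}^*\mathcal{M}=\operatorname{Loc}(\wideparen{\mathbb{T}}^*\mathcal{M}(V))$ from Lemma \ref{Lemma extension of scalars co-admissible modules}, and the fact that $\operatorname{Loc}$ is an equivalence onto the co-admissible subcategories, which the paper packages as a commutative square of functors. The only superfluous step is your reduction to $V=X^2$ and the worry about gluing and basis-independence: since $\mathcal{C}(\wideparen{E}(\mathscr{L})_{\vert V})$ is by definition the essential image of $\operatorname{Loc}$ from $\mathcal{C}(\wideparen{E}(\mathscr{L})(V))$ and $\wideparen{\mathbb{T}}$ is fixed once and for all on $X^2$ in this local setting, the argument applies directly to any admissible affinoid $V$.
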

\begin{proof}
 As $\wideparen{\mathbb{T}}^*$ and  $\wideparen{\mathbb{T}}^{-1,*}$ are mutually inverse equivalences of quasi-abelian categories between 
 $\Mod_{\Indban}(\wideparen{U}(\mathscr{L}^2)_{\vert V})$ and $\Mod_{\Indban}(\wideparen{E}(\mathscr{L})_{\vert V})$, it suffices to show that the essential image under $\wideparen{\mathbb{T}}^*$ of $\mathcal{C}(\wideparen{U}(\mathscr{L}^2)_{\vert V})$ is $\mathcal{C}(\wideparen{E}(\mathscr{L})_{\vert V})$. By Proposition \ref{prop extension of scalars co-admissible modules sections} and Lemma \ref{Lemma extension of scalars co-admissible modules}, we have a commutative diagram of functors:
 \begin{equation*}
\begin{tikzcd}
\mathcal{C}(\wideparen{U}(\mathscr{L}^2)(V)) \arrow[d, "\operatorname{Loc}"] \arrow[r, "\wideparen{\mathbb{T}}^*"] & \mathcal{C}(\wideparen{E}(\mathscr{L}^2)(V)) \arrow[d, "\operatorname{Loc}"] \\
\mathcal{C}(\wideparen{U}(\mathscr{L}^2)_{\vert V}) \arrow[r, "\wideparen{\mathbb{T}}^*"]                          & \mathcal{C}(\wideparen{E}(\mathscr{L})_{\vert V})                           
\end{tikzcd}   
 \end{equation*}
 the upper horizontal functor is an equivalence by Proposition \ref{prop extension of scalars co-admissible modules sections}, and the vertical maps are equivalences by \cite[Theorem 8.4]{ardakov2019}, and Proposition \ref{prop properties of Loc}. Hence, the lower horizontal map is also an equivalence.
\end{proof}
Moreover, we get a version of Kiehl's Theorem for co-admissible $\wideparen{E}(\mathscr{L})$-modules:
\begin{coro}\label{coro Kiehl's Theorem for E-modules}
Let $V\subset X^2$ be an admissible affinoid open and choose a module $\mathcal{M}\in \Mod_{\Indban}(\wideparen{E}(\mathscr{L})_{\vert V})$. The following are equivalent:
\begin{enumerate}[label=(\roman*)]
    \item $\mathcal{M}$ is a co-admissible $\wideparen{E}(\mathscr{L}_{\vert V})$-module.
    \item There is an admissible affinoid cover $(U_i)_{i=1}^n$ of $V$ such that: $\mathcal{M}_{\vert U_i}$ is a co-admissible $\wideparen{E}(\mathscr{L})_{\vert U_i}$-module for each $1\leq i\leq n$.
    \item For any admissible affinoid cover $(U_i)_{i\in I}$ of $V$, $\mathcal{M}_{\vert U_i}$ is a co-admissible $\wideparen{E}(\mathscr{L})_{\vert U_i}$-module for each $i\in I$.
\end{enumerate}
\end{coro}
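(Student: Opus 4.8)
\textbf{Proof strategy for Corollary \ref{coro Kiehl's Theorem for E-modules}.}

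The plan is to transport the statement across the equivalence $\wideparen{\mathbb{T}}^*$ to the corresponding statement for $\wideparen{U}(\mathscr{L}^2)$-modules, where it is known. The implications $(iii)\Rightarrow(ii)\Rightarrow(i)$ are the easy directions; the content is in $(ii)\Rightarrow(iii)$, i.e. that co-admissibility can be checked on one affinoid cover and then holds on all of them. First I would reduce to the case where $V$ is itself an admissible affinoid open, so that the whole discussion takes place over the affinoid $V\subset X^2$ and the sheaf $\wideparen{E}(\mathscr{L})_{\vert V}$ is defined on the site of affinoid subspaces of $V$ with finite covers. Since $V$ is affinoid (hence separated), every admissible cover of $V$ can be refined by a finite affinoid cover, so it is enough to work with finite affinoid covers throughout; this is the same reduction used implicitly in the construction of $\operatorname{Loc}$.

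Next I would invoke Proposition \ref{prop equivalence of categories of co-admissible odules over E and U}: for every affinoid subdomain $W\subset V$ we have mutually inverse equivalences $\wideparen{\mathbb{T}}^*:\mathcal{C}(\wideparen{U}(\mathscr{L}^2)_{\vert W})\leftrightarrows\mathcal{C}(\wideparen{E}(\mathscr{L})_{\vert W}):\wideparen{\mathbb{T}}^{-1,*}$, and these are compatible with restriction to smaller affinoid subdomains because $\wideparen{\mathbb{T}}$ is a morphism of sheaves (cf. the commutative square of restriction maps displayed just before Proposition \ref{prop properties of Loc}). Concretely, extension of scalars is functorial, $\left(gf\right)^*=g^*f^*$, so $\wideparen{\mathbb{T}}^*$ commutes with the restriction functors $(-)_{\vert W}$ on the nose. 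Given $\mathcal{M}\in\Mod_{\Indban}(\wideparen{E}(\mathscr{L})_{\vert V})$, set $\mathcal{N}=\wideparen{\mathbb{T}}^{-1,*}\mathcal{M}\in\Mod_{\Indban}(\wideparen{U}(\mathscr{L}^2)_{\vert V})$. Then for any affinoid $W\subset V$, $\mathcal{M}_{\vert W}$ is a co-admissible $\wideparen{E}(\mathscr{L})_{\vert W}$-module if and only if $\mathcal{N}_{\vert W}=\wideparen{\mathbb{T}}^{-1,*}(\mathcal{M}_{\vert W})$ is a co-admissible $\wideparen{U}(\mathscr{L}^2)_{\vert W}$-module, by Proposition \ref{prop equivalence of categories of co-admissible odules over E and U} applied over $W$. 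This reduces each of the three conditions for $\mathcal{M}$ to the corresponding condition for $\mathcal{N}$.

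Finally, the three conditions for $\mathcal{N}$ are equivalent by Part $(ii)$ of the Proposition on co-admissible modules recalled in Section \ref{Section background Lie algebroids} (based on \cite[Theorem 8.4]{ardakov2019}): a sheaf of $\wideparen{U}(\mathscr{L}^2)$-modules is co-admissible if and only if its restriction to some (equivalently any) admissible affinoid cover is co-admissible, and here $\mathscr{L}^2$ is an honest Lie algebroid on $X^2$ so that theory applies verbatim. Chaining the two equivalences $(i)_{\mathcal{M}}\Leftrightarrow(i)_{\mathcal{N}}$, $(ii)_{\mathcal{M}}\Leftrightarrow(ii)_{\mathcal{N}}$, $(iii)_{\mathcal{M}}\Leftrightarrow(iii)_{\mathcal{N}}$ with $(i)_{\mathcal{N}}\Leftrightarrow(ii)_{\mathcal{N}}\Leftrightarrow(iii)_{\mathcal{N}}$ gives the result. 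The only mild subtlety — and the step I would be most careful about — is checking that the equivalences of Proposition \ref{prop equivalence of categories of co-admissible odules over E and U} are genuinely natural with respect to restriction to affinoid subdomains, so that $(\wideparen{\mathbb{T}}^{-1,*}\mathcal{M})_{\vert W}$ really is $\wideparen{\mathbb{T}}^{-1,*}(\mathcal{M}_{\vert W})$ as sheaves on $W$; this follows from the displayed commutative square of restriction maps together with functoriality of extension of scalars, but it is worth spelling out since the whole argument rests on it.
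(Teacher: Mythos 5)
Your proposal is correct and follows essentially the same route as the paper: reduce via the equivalence $\wideparen{\mathbb{T}}^{*}$ of Proposition \ref{prop equivalence of categories of co-admissible odules over E and U} to the corresponding statement for $\wideparen{U}(\mathscr{L}^2)_{\vert V}$-modules, then invoke Kiehl's theorem for co-admissible modules over the Fréchet--Stein enveloping algebra of a Lie algebroid from \cite{ardakov2019}. Your extra care about the compatibility of $\wideparen{\mathbb{T}}^{-1,*}$ with restriction to affinoid subdomains is a point the paper leaves implicit, and it is handled correctly.
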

\begin{proof}
In virtue of Proposition \ref{prop equivalence of categories of co-admissible odules over E and U}, it suffices to show the corresponding statement for co-admissible modules over $\wideparen{U}(\mathscr{L}^2)_{\vert V}$. This follows by Kiehl's Theorem for co-admissible modules over the Fréchet-Stein enveloping algebra of a Lie algebroid shown in  \cite[Proposition 8.4]{ardakov2019}.  
\end{proof}
Now that we have a well-defined theory of co-admissible $\wideparen{E}(\mathscr{L})$-modules, we may adapt the theory of $\mathcal{C}$-complexes from \cite{bode2021operations} to this setting. We remark that, even if the results from \cite[Section 6]{bode2021operations}
are only stated for $\wideparen{\D}$-modules, the results also hold for co-admissible modules over any Lie algebroid.
\begin{defi}\label{defi C-complex for E}
Let $V\subset X^2$ be an admissible affinoid open, and choose a Fréchet-Stein presentation $\wideparen{E}(\mathscr{L})(V)=\varprojlim_n A_n$. A complex $C^{\bullet}\in\operatorname{D}(\wideparen{E}(\mathscr{L})_{\vert V})$ is called a $\mathcal{C}$-complex if it satisfies the following properties:
    \begin{enumerate}[label=(\roman*)]
        \item $\operatorname{H}^i(C^{\bullet})\in \mathcal{C}(\wideparen{E}(\mathscr{L})_{\vert V})$ for all $i\in\mathbb{Z}$.
        \item For any $n\in\mathbb{Z}$, we have:
        \begin{equation*}
            A_n\overrightarrow{\otimes}_{\wideparen{E}(\mathscr{L})(V)}\left(\operatorname{H}^i(C^{\bullet})(V)\right)=0,
        \end{equation*}
        for all but finitely many $i\in \mathbb{Z}$.
    \end{enumerate}
    We let $\operatorname{D}_{\mathcal{C}}(\wideparen{E}(\mathscr{L})_{\vert V})$  be the full subcategory of $\operatorname{D}(\wideparen{E}(\mathscr{L})_{\vert V})$ given by all $\mathcal{C}$-complexes. Given a closed embedding $Y\rightarrow V$, we let $\operatorname{D}_{\mathcal{C}^Y}(\wideparen{E}(\mathscr{L})_{\vert V})$ be the full subcategory of $\operatorname{D}_{\mathcal{C}}(\wideparen{E}(\mathscr{L})_{\vert V})$ given by the $\mathcal{C}$-complexes with cohomology supported on $Y$.
\end{defi}
\begin{obs}
This definition does not depend on the presentation:
\begin{equation*}
  \wideparen{E}(\mathscr{L})(V)=\varprojlim_n A_n.  
\end{equation*}
Indeed, let $\wideparen{E}(\mathscr{L})(V)=\varprojlim_m B_m$ be another Fréchet-Stein presentation. For any $m\in \mathbb{Z}$ there is some $n\in\mathbb{Z}$ such that the projection $\wideparen{E}(\mathscr{L})(V)\rightarrow B_m$ factors as:
\begin{equation*}
    \wideparen{E}(\mathscr{L})(V)\rightarrow A_n\rightarrow B_m.
\end{equation*}
Hence, the fact that condition $(ii)$ in Definition \ref{defi C-complex for E} holds for a Fréchet-Stein presentation implies that it holds for every Fréchet-Stein presentation.
\end{obs}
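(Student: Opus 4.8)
The statement to be proved is the final remark: the definition of a $\mathcal{C}$-complex over $\wideparen{E}(\mathscr{L})_{\vert V}$ (Definition \ref{defi C-complex for E}) does not depend on the choice of Fréchet-Stein presentation $\wideparen{E}(\mathscr{L})(V)=\varprojlim_n A_n$. The plan is to reduce condition $(ii)$ in Definition \ref{defi C-complex for E} for one presentation to condition $(ii)$ for another, using the cofinality property of Fréchet-Stein presentations and exactness of the base-change functor $A_n\overrightarrow{\otimes}_{\wideparen{E}(\mathscr{L})(V)}-$ on co-admissible modules. Condition $(i)$ is manifestly presentation-independent, so the entire argument concerns $(ii)$.

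First I would fix two Fréchet-Stein presentations $\wideparen{E}(\mathscr{L})(V)=\varprojlim_n A_n=\varprojlim_m B_m$ and recall the standard fact (from the theory of Fréchet-Stein algebras, e.g.\ the discussion following \cite[Section 3]{schneider2002algebras}, applicable here since $\wideparen{E}(\mathscr{L})(V)$ is Fréchet-Stein by Corollary \ref{coro frechet-stein presentation of bi-enveloping algebra}) that the two presentations are mutually cofinal: for every $m$ there is an $n$ together with a continuous algebra map $A_n\rightarrow B_m$ factoring the projection $\wideparen{E}(\mathscr{L})(V)\rightarrow B_m$ as $\wideparen{E}(\mathscr{L})(V)\rightarrow A_n\rightarrow B_m$. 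This is already half-written in the remark and is the conceptual core. Next, given a complex $C^{\bullet}$ satisfying $(i)$ and $(ii)$ for the presentation $(A_n)$, and given $m\in\mathbb{Z}$, I would pick such an $n$ and compute, for each $i$,
\begin{equation*}
B_m\overrightarrow{\otimes}_{\wideparen{E}(\mathscr{L})(V)}\operatorname{H}^i(C^{\bullet})(V)=B_m\overrightarrow{\otimes}_{A_n}\left(A_n\overrightarrow{\otimes}_{\wideparen{E}(\mathscr{L})(V)}\operatorname{H}^i(C^{\bullet})(V)\right),
\end{equation*}
using associativity of the completed tensor product together with the fact that for a co-admissible module $M$ one has $A_n\overrightarrow{\otimes}_{\wideparen{E}(\mathscr{L})(V)}M=M_n$ in the notation of co-admissible modules, and $B_m\overrightarrow{\otimes}_{\wideparen{E}(\mathscr{L})(V)}M=B_m\overrightarrow{\otimes}_{A_n}M_n$ since $M=\varprojlim M_n$ and base change along $\wideparen{E}(\mathscr{L})(V)\rightarrow A_n\rightarrow B_m$ is transitive (this transitivity for co-admissible modules is \cite[Corollary 5.40]{bode2021operations}, already invoked in the proof of Proposition \ref{prop extension of scalars co-admissible modules sections}). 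Since $A_n\overrightarrow{\otimes}_{\wideparen{E}(\mathscr{L})(V)}\operatorname{H}^i(C^{\bullet})(V)=0$ for all but finitely many $i$ by hypothesis $(ii)$ for $(A_n)$, the displayed expression vanishes for all but finitely many $i$ as well, which is exactly $(ii)$ for $(B_m)$. By symmetry the two conditions are equivalent, so the class of $\mathcal{C}$-complexes is the same.

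The main obstacle, if any, is bookkeeping rather than a genuine difficulty: one must be careful that the object $\operatorname{H}^i(C^{\bullet})(V)$ is indeed a co-admissible $\wideparen{E}(\mathscr{L})(V)$-module (this is condition $(i)$ together with the equivalence $\Gamma(V,-):\mathcal{C}(\wideparen{E}(\mathscr{L})_{\vert V})\leftrightarrows\mathcal{C}(\wideparen{E}(\mathscr{L})(V))$ furnished by Proposition \ref{prop properties of Loc} and Proposition \ref{prop equivalence of categories of co-admissible odules over E and U}), so that the base-change identities for co-admissible modules apply. I would also note that one can alternatively transport the whole question across the isomorphism $\wideparen{\mathbb{T}}:\wideparen{U}(\mathscr{L}^2)\rightarrow\wideparen{E}(\mathscr{L})$ (locally, when $\mathscr{L}$ is free) and appeal to the corresponding presentation-independence statement for $\mathcal{C}$-complexes over $\wideparen{U}(\mathscr{L}^2)$ recorded in \cite[Section 6]{bode2021operations}; but since the direct argument above is short and does not require freeness of $\mathscr{L}$, I would present the direct argument as the proof.
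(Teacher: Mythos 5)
Your proposal is correct and follows essentially the same route as the paper's own (very terse) argument: mutual cofinality of the two Fréchet-Stein presentations gives the factorization $\wideparen{E}(\mathscr{L})(V)\rightarrow A_n\rightarrow B_m$, and then transitivity of base change along this factorization transfers the vanishing in condition $(ii)$ from one presentation to the other. The extra detail you supply (associativity of the completed tensor product on co-admissible modules, and the observation that condition $(i)$ is manifestly presentation-independent) is exactly the bookkeeping the paper leaves implicit.
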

We can relate this notion to the category of $\mathcal{C}$-complexes over $\wideparen{U}(\mathscr{L}^2)$ via the following proposition:
\begin{prop}\label{prop equivalence of side-changing at the level of C-complexes local version}
For each admissible affinoid open $V\subset X^2$, we have mutually inverse equivalences of categories:
\begin{equation*}
  \wideparen{\mathbb{T}}^*:\operatorname{D}_{\mathcal{C}}(\wideparen{U}(\mathscr{L}^2)_{\vert V})\leftrightarrows   \operatorname{D}_{\mathcal{C}}(\wideparen{E}(\mathscr{L})_{\vert V}):\wideparen{\mathbb{T}}^{-1,*}.  
\end{equation*} 
\end{prop}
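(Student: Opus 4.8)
The plan is to leverage the fact that $\wideparen{\mathbb{T}}:\wideparen{U}(\mathscr{L}^2)\to \wideparen{E}(\mathscr{L})$ is an \emph{isomorphism} of sheaves of Ind-Banach $K$-algebras on $X^2$, so that extension of scalars $\wideparen{\mathbb{T}}^*$ and $\wideparen{\mathbb{T}}^{-1,*}$ are already mutually inverse equivalences of quasi-abelian categories $\Mod_{\Indban}(\wideparen{U}(\mathscr{L}^2)_{\vert V})\leftrightarrows \Mod_{\Indban}(\wideparen{E}(\mathscr{L})_{\vert V})$. Being strongly exact (indeed equivalences), they extend to mutually inverse equivalences of triangulated categories $\operatorname{D}(\wideparen{U}(\mathscr{L}^2)_{\vert V})\leftrightarrows \operatorname{D}(\wideparen{E}(\mathscr{L})_{\vert V})$. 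The entire content of the proposition is therefore that these derived equivalences restrict to the full subcategories of $\mathcal{C}$-complexes; that is, $\wideparen{\mathbb{T}}^*$ sends a $\mathcal{C}$-complex to a $\mathcal{C}$-complex and likewise for $\wideparen{\mathbb{T}}^{-1,*}$.

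First I would check that $\wideparen{\mathbb{T}}^*$ preserves condition $(i)$ of Definition \ref{defi C-complex for E}. Since $\wideparen{\mathbb{T}}^*$ is exact on $\Mod_{\Indban}$, for $C^{\bullet}\in\operatorname{D}_{\mathcal{C}}(\wideparen{U}(\mathscr{L}^2)_{\vert V})$ we have $\operatorname{H}^i(\wideparen{\mathbb{T}}^*C^{\bullet})=\wideparen{\mathbb{T}}^*\operatorname{H}^i(C^{\bullet})$ for each $i$, and by Proposition \ref{prop equivalence of categories of co-admissible odules over E and U} this lies in $\mathcal{C}(\wideparen{E}(\mathscr{L})_{\vert V})$; the same argument applies to $\wideparen{\mathbb{T}}^{-1,*}$. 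For condition $(ii)$, fix a Fréchet-Stein presentation $\wideparen{U}(\mathscr{L}^2)(V)=\varprojlim_n A_n$; then, by Corollary \ref{coro frechet-stein presentation of bi-enveloping algebra} and the local isomorphism $\wideparen{\mathbb{T}}$, $\wideparen{E}(\mathscr{L})(V)=\varprojlim_n \wideparen{\mathbb{T}}^*A_n$ is a Fréchet-Stein presentation with $\wideparen{\mathbb{T}}^*A_n=A_n\overrightarrow{\otimes}_K\cdots$ the transported Banach algebra (as in the lemma preceding Proposition \ref{prop extension of scalars co-admissible modules sections}, $\widehat{E}_{n,m}=\wideparen{\mathbb{T}}^*\widehat{U}^2_{n,m}$). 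The key computation is then that for a co-admissible $\wideparen{U}(\mathscr{L}^2)(V)$-module $M$,
\begin{equation*}
\wideparen{\mathbb{T}}^*A_n\overrightarrow{\otimes}_{\wideparen{E}(\mathscr{L})(V)}\left(\wideparen{\mathbb{T}}^*M\right)=\wideparen{\mathbb{T}}^*A_n\overrightarrow{\otimes}_{\wideparen{E}(\mathscr{L})(V)}\wideparen{E}(\mathscr{L})(V)\overrightarrow{\otimes}_{\wideparen{U}(\mathscr{L}^2)(V)}M=\wideparen{\mathbb{T}}^*\left(A_n\overrightarrow{\otimes}_{\wideparen{U}(\mathscr{L}^2)(V)}M\right),
\end{equation*}
using associativity of the relative tensor product and the commuting square of algebra maps $\wideparen{U}(\mathscr{L}^2)(V)\to A_n$, $\wideparen{E}(\mathscr{L})(V)\to \wideparen{\mathbb{T}}^*A_n$ compatible with $\wideparen{\mathbb{T}}$. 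Since $\wideparen{\mathbb{T}}^*$ is faithful (indeed an equivalence), $\wideparen{\mathbb{T}}^*(N)=0$ iff $N=0$; applying this to $N=A_n\overrightarrow{\otimes}_{\wideparen{U}(\mathscr{L}^2)(V)}\operatorname{H}^i(C^{\bullet})(V)$ for each $i$ shows that condition $(ii)$ for $C^{\bullet}$ is equivalent to condition $(ii)$ for $\wideparen{\mathbb{T}}^*C^{\bullet}$. The reverse direction for $\wideparen{\mathbb{T}}^{-1,*}$ is identical. Finally I would assemble these two observations: $\wideparen{\mathbb{T}}^*$ restricts to a functor $\operatorname{D}_{\mathcal{C}}(\wideparen{U}(\mathscr{L}^2)_{\vert V})\to\operatorname{D}_{\mathcal{C}}(\wideparen{E}(\mathscr{L})_{\vert V})$ and $\wideparen{\mathbb{T}}^{-1,*}$ restricts in the other direction, and the natural isomorphisms $\wideparen{\mathbb{T}}^*\wideparen{\mathbb{T}}^{-1,*}\cong\operatorname{Id}$, $\wideparen{\mathbb{T}}^{-1,*}\wideparen{\mathbb{T}}^*\cong\operatorname{Id}$ already established on the ambient derived categories restrict automatically.

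The main obstacle I anticipate is not conceptual but bookkeeping: one must verify that the transported Fréchet-Stein presentation $\wideparen{E}(\mathscr{L})(V)=\varprojlim_n\wideparen{\mathbb{T}}^*A_n$ really does have the form demanded (flat transition maps with dense image, two-sided noetherian terms), which is exactly what Proposition \ref{prop explicit Fréchet-Stein presentation} and Corollary \ref{coro frechet-stein presentation of bi-enveloping algebra} supply, and that the base-change identity $\wideparen{\mathbb{T}}^*A_n\overrightarrow{\otimes}_{\wideparen{E}(\mathscr{L})(V)}\wideparen{\mathbb{T}}^*M=\wideparen{\mathbb{T}}^*(A_n\overrightarrow{\otimes}_{\wideparen{U}(\mathscr{L}^2)(V)}M)$ holds as an identity of Ind-Banach modules, not merely up to some higher Tor correction — this is where one invokes $c$-flatness of the relevant maps (via \cite[Corollary 5.40]{bode2021operations}, as in the proof of Proposition \ref{prop extension of scalars co-admissible modules sections}) and the fact that $\wideparen{\mathbb{T}}^*$, being an equivalence, commutes with all limits and colimits so it commutes with $\overrightarrow{\otimes}$ and with $\varprojlim_n$. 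Once these routine compatibilities are in place the proposition follows formally.
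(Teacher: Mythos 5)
Your proposal is correct and follows essentially the same route as the paper's proof: reduce to showing both functors preserve the two defining conditions of $\mathcal{C}$-complexes, use exactness to commute with cohomology and Proposition \ref{prop equivalence of categories of co-admissible odules over E and U} for condition $(i)$, and the base-change identity $\widehat{E}_{n,m}\overrightarrow{\otimes}_{\wideparen{E}}\wideparen{\mathbb{T}}^{*}(-)=\wideparen{\mathbb{T}}^{*}(\widehat{U}^2_{n,m}\overrightarrow{\otimes}_{\wideparen{U}^2}(-))$ together with faithfulness of an equivalence for condition $(ii)$. The paper writes out the $\wideparen{\mathbb{T}}^{-1,*}$ direction and declares the other analogous, whereas you do the reverse, but the substance is identical.
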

\begin{proof}
We show the case $V=X^2$, the general case is analogous. Let $C^{\bullet}\in \operatorname{D}_{\mathcal{C}}(\wideparen{E}(\mathscr{L}))$. As $\wideparen{\mathbb{T}}^{-1,*}$ is strongly exact, we have $\operatorname{H}^i(\wideparen{\mathbb{T}}^{-1,*}C^{\bullet})=\wideparen{\mathbb{T}}^{-1,*}\operatorname{H}^i(C^{\bullet})$, which is a co-admissible $\wideparen{U}(\mathscr{L}^2)$-module for each $i\in \mathbb{Z}$ by \ref{prop equivalence of categories of co-admissible odules over E and U}. As in the proof of Proposition \ref{prop extension of scalars co-admissible modules sections}, for each $n,m\in \mathbb{Z}$ we have:
\begin{equation*}
    \widehat{U}_{n,m}\overrightarrow{\otimes}_{\wideparen{U}}\wideparen{\mathbb{T}}^{-1,*}\operatorname{H}^i(C^{\bullet})(X^2)=\wideparen{\mathbb{T}}^{-1,*}\left(\widehat{E}_{n,m}\overrightarrow{\otimes}_{\wideparen{E}}\operatorname{H}^i(C^{\bullet})(X^2)\right).
\end{equation*}
As $C^{\bullet}$ is a $\mathcal{C}$-complex, the right hand side is zero for all but finitely many $i\in\mathbb{Z}$. Thus, $\wideparen{\mathbb{T}}^{-1,*}C^{\bullet}$ is a $\mathcal{C}$-complex of $\wideparen{U}(\mathscr{L}^2)$-modules, as wanted. The fact that $\wideparen{\mathbb{T}}^{*}$ sends $\mathcal{C}$-complexes to $\mathcal{C}$-complexes is analogous.
\end{proof}
\begin{coro}\label{coro properties of side-changing of C-complexes}
The following hold:
\begin{enumerate}[label=(\roman*)]
    \item $\operatorname{D}_{\mathcal{C}}(\wideparen{E}(\mathscr{L})_{\vert V})$ is a triangulated subcategory of $\operatorname{D}(\wideparen{E}(\mathscr{L})_{\vert V})$.
    \item Let $Y\rightarrow V$ be a closed immersion. Then the equivalence of Proposition \ref{prop equivalence of side-changing at the level of C-complexes local version} restricts to an equivalence of triangulated categories:
    \begin{equation*}
        \wideparen{\mathbb{T}}^*:\operatorname{D}_{\mathcal{C}^Y}(\wideparen{U}(\mathscr{L}^2)_{\vert V})\leftrightarrows   \operatorname{D}_{\mathcal{C}^Y}(\wideparen{E}(\mathscr{L})_{\vert V}):\wideparen{\mathbb{T}}^{-1,*}.
    \end{equation*}
\end{enumerate}
\end{coro}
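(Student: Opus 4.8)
The statement (Corollary \ref{coro properties of side-changing of C-complexes}) has two parts, and both are formal consequences of the equivalence $\wideparen{\mathbb{T}}^*:\operatorname{D}_{\mathcal{C}}(\wideparen{U}(\mathscr{L}^2)_{\vert V})\leftrightarrows \operatorname{D}_{\mathcal{C}}(\wideparen{E}(\mathscr{L})_{\vert V}):\wideparen{\mathbb{T}}^{-1,*}$ just established in Proposition \ref{prop equivalence of side-changing at the level of C-complexes local version}, together with the general machinery from \cite{bode2021operations}. The plan is to reduce everything to the already-known facts about $\mathcal{C}$-complexes over a Fréchet-Stein enveloping algebra of a Lie algebroid.

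For part $(i)$, I would first recall that $\operatorname{D}_{\mathcal{C}}(\wideparen{U}(\mathscr{L}^2)_{\vert V})$ is a triangulated subcategory of $\operatorname{D}(\wideparen{U}(\mathscr{L}^2)_{\vert V})$; this is the analogue for an arbitrary Lie algebroid of \cite[Section 6]{bode2021operations} (the definition of $\mathcal{C}$-complex and the fact that it is closed under shifts and cones carries over verbatim, as the excerpt already notes). Since $\wideparen{\mathbb{T}}:\wideparen{U}(\mathscr{L}^2)_{\vert V}\rightarrow \wideparen{E}(\mathscr{L})_{\vert V}$ is an \emph{isomorphism} of sheaves of Ind-Banach algebras, extension of scalars $\wideparen{\mathbb{T}}^*$ is an equivalence of quasi-abelian categories, hence strongly exact, hence induces an equivalence of triangulated categories $\operatorname{D}(\wideparen{U}(\mathscr{L}^2)_{\vert V})\rightarrow \operatorname{D}(\wideparen{E}(\mathscr{L})_{\vert V})$ commuting with shifts and preserving distinguished triangles. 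A full triangulated subcategory whose essential image under a triangulated equivalence is $\operatorname{D}_{\mathcal{C}}(\wideparen{E}(\mathscr{L})_{\vert V})$ (which is exactly what Proposition \ref{prop equivalence of side-changing at the level of C-complexes local version} asserts) is automatically triangulated; so $\operatorname{D}_{\mathcal{C}}(\wideparen{E}(\mathscr{L})_{\vert V})$ is a triangulated subcategory of $\operatorname{D}(\wideparen{E}(\mathscr{L})_{\vert V})$. One should double-check that ``full subcategory closed under cones and shifts'' is enough to conclude triangulated-subcategory-hood in this setting, but this is standard.

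For part $(ii)$, the key point is that $\wideparen{\mathbb{T}}^*$ and $\wideparen{\mathbb{T}}^{-1,*}$, being extension of scalars along an isomorphism, do not change the underlying sheaf of Ind-Banach spaces; in particular they commute with $\Delta^{-1}$ (more generally with any restriction/pullback) and preserve supports of cohomology sheaves. Concretely, for $C^{\bullet}\in\operatorname{D}(\wideparen{E}(\mathscr{L})_{\vert V})$ one has $\operatorname{H}^i(\wideparen{\mathbb{T}}^{-1,*}C^{\bullet})=\wideparen{\mathbb{T}}^{-1,*}\operatorname{H}^i(C^{\bullet})$ as sheaves of Ind-Banach spaces, and the support of $\wideparen{\mathbb{T}}^{-1,*}\operatorname{H}^i(C^{\bullet})$ coincides with that of $\operatorname{H}^i(C^{\bullet})$ since the restriction to any admissible open is the identity on the underlying space. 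Hence $\wideparen{\mathbb{T}}^{-1,*}$ sends objects with cohomology supported on $Y$ to objects with cohomology supported on $Y$, and dually for $\wideparen{\mathbb{T}}^*$. Combining this with Proposition \ref{prop equivalence of side-changing at the level of C-complexes local version}, the restricted functors $\wideparen{\mathbb{T}}^*:\operatorname{D}_{\mathcal{C}^Y}(\wideparen{U}(\mathscr{L}^2)_{\vert V})\leftrightarrows \operatorname{D}_{\mathcal{C}^Y}(\wideparen{E}(\mathscr{L})_{\vert V}):\wideparen{\mathbb{T}}^{-1,*}$ are well-defined and remain mutually inverse.

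I do not anticipate a genuine obstacle here: the whole point of having set up $\wideparen{\mathbb{T}}$ as an isomorphism is that these corollaries become bookkeeping. The only mildly delicate step is making precise that ``support of cohomology'' is unchanged by $\wideparen{\mathbb{T}}^{\pm 1,*}$, which requires recalling that extension of scalars along an isomorphism of sheaves of algebras leaves sections over every admissible open literally unchanged as Ind-Banach spaces (only the module structure is transported); once that is stated, everything else is immediate from Proposition \ref{prop equivalence of side-changing at the level of C-complexes local version}. I would phrase the write-up to emphasize this and keep it to a few lines, exactly as the excerpt's other ``this is a routine calculation'' proofs do.
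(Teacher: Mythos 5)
Your proposal is correct and matches the paper's argument: part $(i)$ is deduced from the corresponding fact for $\mathcal{C}$-complexes over $\wideparen{U}(\mathscr{L}^2)$ (\cite[Proposition 6.4]{bode2021operations}) via the triangulated equivalence, and part $(ii)$ follows because $\wideparen{\mathbb{T}}^*$ and $\wideparen{\mathbb{T}}^{-1,*}$ do not change supports. The extra detail you supply about extension of scalars along an isomorphism leaving the underlying sheaves of Ind-Banach spaces unchanged is exactly the justification the paper leaves implicit.
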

\begin{proof}
Statement $(i)$ follows by the corresponding fact for $\mathcal{C}$-complexes over $\wideparen{U}(\mathscr{L}^2)$, which is shown in \cite[Proposition 6.4]{bode2021operations}. Statement $(ii)$ holds because $\wideparen{\mathbb{T}}^*$  and  $\wideparen{\mathbb{T}}^{-1,*}$  do not change the supports.
\end{proof}
\subsection{Co-admissible modules over the bi-enveloping algebra II: Globalization}\label{section co-admis over bienvelop 2}
 In order to generalize the definition of co-admissible $\wideparen{E}(\mathscr{L})$-modules to the case where $X$ is not necessarily affinoid and $\mathscr{L}$ is not free, we will need to relate the functors $\wideparen{\mathbb{T}}^*$ and $\wideparen{\mathbb{T}}^{-1,*}$ to the globally defined side-switching operations defined in Section \ref{Section side-switching operations}. In particular, keeping the assumptions of the previous section, we have the following proposition:
 \begin{prop}\label{prop equivalence of pullback and side-changing sheaves}
There are canonical natural equivalences of functors:
\begin{equation*}
    \wideparen{\mathbb{T}}^* \simeq \operatorname{S}:=p_2^{*}\Omega_{\mathscr{L}}\overrightarrow{\otimes}_{\OX_{X^2}}-, \quad \wideparen{\mathbb{T}}^{-1,*} \simeq \operatorname{S}^{-1}:=p_2^{*}\Omega_{\mathscr{L}}^{-1}\overrightarrow{\otimes}_{\OX_{X^2}}-.
\end{equation*}
 \end{prop}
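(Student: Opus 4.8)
The statement asserts that two functors $\Mod_{\Indban}(\wideparen{U}(\mathscr{L}^2)) \to \Mod_{\Indban}(\wideparen{E}(\mathscr{L}))$ agree up to natural isomorphism, under the running hypothesis that $X = \Sp(A)$ is affinoid and $\mathscr{L}$ is free. Both functors are built from the same underlying operation, namely extension of scalars along the isomorphism $\wideparen{\mathbb{T}} = \Id \overrightarrow{\otimes}_K p_2^{-1}\wideparen{T} : \wideparen{U}(\mathscr{L}^2) \to \wideparen{E}(\mathscr{L})$ on the one hand, and tensoring with $p_2^{-1}\Omega_{\mathscr{L}}$ over $p_2^{-1}\OX_X$ (equipped with the bimodule action of formula $(\ref{equation side-changing})$) on the other. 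The plan is to reduce to the purely algebraic comparison already available: Proposition \ref{prop equivalence of pullback and side-change for modules} states that $\Omega_{L_2} \otimes_B -$ is naturally isomorphic to $T^*$, where $T : U(L_2) \to U(L_2)^{\op}$ is the anti-isomorphism of Proposition \ref{prop anti-homomorphism}, induced by the explicit formula $(\ref{equation anti-automorphism})$ written in terms of a chosen basis of $\mathcal{L}$. Everything downstream is a matter of propagating that local algebraic identity through the dissection functor, completion, and sheafification.

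The key steps, in order. First, since $\mathscr{L}$ is free we fix a basis $x_1, \dots, x_d$ of a free $\mathcal{A}$-Lie lattice $\mathcal{L}$, which trivialises $\Omega_{\mathscr{L}} = \OX_X \cdot dx_1 \wedge \cdots \wedge dx_d$ and produces the anti-isomorphism $\wideparen{T}$ of Proposition \ref{prop anti-isomorphism fréchet-stein level}. Second, I would observe that for $\mathcal{M} \in \Mod_{\Indban}(\wideparen{U}(\mathscr{L}^2))$ both $\wideparen{\mathbb{T}}^*\mathcal{M}$ and $\operatorname{S}(\mathcal{M})$ have, by construction (cf.\ the description in the proof of Proposition \ref{prop side changing for bimodules} and Lemma \ref{Lemma extension of scalars co-admissible modules}), the same underlying sheaf of Ind-Banach spaces on each affinoid $V \subset X^2$: in the $\operatorname{S}$ case this is $\Omega_{\mathscr{L}}(p_2(V)) \overrightarrow{\otimes}_{\OX_X(p_2(V))} \mathcal{M}(V)$, which by freeness is just a finite direct sum of copies of $\mathcal{M}(V)$; in the $\wideparen{\mathbb{T}}^*$ case it is $\wideparen{E}(\mathscr{L})(V) \overrightarrow{\otimes}_{\wideparen{U}(\mathscr{L}^2)(V)} \mathcal{M}(V)$, and since $\wideparen{\mathbb{T}}$ restricts to an isomorphism at each finite level (the commutative square in the proof of Corollary \ref{coro frechet-stein presentation of bi-enveloping algebra}), this is canonically $\mathcal{M}(V)$ with the $\wideparen{E}(\mathscr{L})(V)$-action transported along $\wideparen{\mathbb{T}}$. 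So the content is the identification of the two $\wideparen{E}(\mathscr{L})$-module structures on this common underlying object. Third, I would check this identification locally on affinoids $V$ small enough that one may reduce to complete bornological modules (using that $\wideparen{U}(\mathscr{L})(p_i(V))$ are Fréchet, hence metrizable, hence in the image of the dissection functor by \cite[Proposition 4.25]{bode2021operations}), where the underlying $K$-vector space exists and the actions are given by honest formulas: the $\wideparen{\mathbb{T}}^*$-action of an element is defined by applying $\wideparen{T}$ (i.e.\ the anti-automorphism $\beta \mapsto \beta^t$ of $(\ref{equation anti-automorphism})$) and multiplying, while the $\operatorname{S}$-action is given by $(\ref{equation side-changing})$. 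The displayed formula immediately before Proposition \ref{prop equivalence of pullback and side-change for modules}, namely $\beta(f\, dx_1 \wedge \cdots \wedge dx_d) = (\beta^t f)\, dx_1 \wedge \cdots \wedge dx_d$, is precisely the statement that these two prescriptions coincide on the generators; extending by $K$-bilinearity and multiplicativity, and using the $p_1^{-1}\wideparen{U}(\mathscr{L})$-linearity which both constructions share verbatim, gives the equality of actions on a dense bornological submodule. Fourth, I would conclude by density and completion: the actions agree on $U(\mathscr{L}(p_1(V))) \otimes_K U(\mathscr{L}(p_2(V)))^{\op}$ acting on the dense submodule, both actions are bounded (the relevant boundedness is exactly what is checked in Lemma \ref{Lemma side-changing for bimodules} and the proof of Proposition \ref{prop side changing for bimodules}), and completion is unique, so the two $\wideparen{E}(\mathscr{L})(V)$-module structures coincide; naturality in $\mathcal{M}$ and compatibility with restriction maps $W \subset V$ are automatic from the functoriality of all the operations involved, so the local isomorphisms glue to a natural isomorphism of functors. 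The second isomorphism $\wideparen{\mathbb{T}}^{-1,*} \simeq \operatorname{S}^{-1}$ follows either by the symmetric argument using $T^{-1}$ and $\Omega_{\mathscr{L}}^{-1}$, or formally by taking (quasi-)inverses of the first one.

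The main obstacle I anticipate is bookkeeping rather than conceptual: carefully matching the sign and ordering conventions in the anti-automorphism formula $(\ref{equation anti-automorphism})$ with the Lie-derivative sign in the action $(\ref{equation side-changing})$ on $\Omega_{\mathscr{L}}$, and making sure that when one passes from the single-algebra Morita picture of \cite{hotta2007d} (Propositions \ref{prop anti-homomorphism}, \ref{prop equivalence of pullback and side-change for modules}) to the bimodule picture on $X^2$ the "spectator" factor $p_1^{-1}\wideparen{U}(\mathscr{L})$ genuinely plays no role --- this is the content of equation $(\ref{equation 2 side-changing})$, which shows the two actions commute, so that the tensor-product action is well-defined and the comparison on generators suffices. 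A secondary technical point is ensuring the reduction "to complete bornological modules" is legitimate: one uses a strict epimorphism from a free module $\wideparen{U}(\mathscr{L}^2) \overrightarrow{\otimes}_K W \to \mathcal{M}(V)$ with $W$ Banach (as in the proof of Proposition \ref{prop side changing for bimodules}), notes that both functors are right exact and commute with $-\overrightarrow{\otimes}_K W$ by Lemma \ref{Lemma free modules and side-switching}, and checks the isomorphism on the free module where everything is explicit. None of these steps requires new ideas beyond what is already assembled in Sections \ref{Section side-switching operations} and \ref{Section Co-admissible modules over the bi-enveloping algebra}.
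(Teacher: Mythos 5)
Your proposal is correct and follows essentially the same route as the paper: reduce to an affinoid $V\subset X^2$, pass to the case where the module is a complete bornological space so that it has an underlying $K$-vector space, invoke the algebraic identification $\Omega_{L_2}\otimes_B-\simeq T^*$ from Proposition \ref{prop equivalence of pullback and side-change for modules} to match the two module structures, and then conclude by completion, dissection, and functoriality. The extra care you take with the density/boundedness of the two actions and the role of the spectator factor $p_1^{-1}\wideparen{U}(\mathscr{L})$ is exactly the "slight modification" the paper leaves implicit.
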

\begin{proof}
Let $V\subset X^2$ be an affinoid subdomain, and $M\in \Mod_{\Indban}(\wideparen{U}(\mathscr{L}^2)(V))$. We need to obtain a canonical identification:
\begin{equation*}
    \Omega_{\mathscr{L}(p_2(V))}\overrightarrow{\otimes}_{\OX_{X}(p_2(V))}M\rightarrow   \wideparen{E}(\mathscr{L})(V)\overrightarrow{\otimes}_{\wideparen{U}(\mathscr{L}^2)(V)}M,
\end{equation*}
which is functorial in $M$ and $V$. As in the proof of Proposition \ref{prop side changing for bimodules}, we can freely assume that $M$ is a complete bornological space. Consider the following bornological algebras:
 \begin{multline*}
     \mathscr{A}(V)=\wideparen{U}(\mathscr{L})(p_1(V))\otimes_K\wideparen{U}(\mathscr{L})(p_2(V)),\\ \mathscr{B}(V)=\wideparen{U}(\mathscr{L})(p_1(V))\otimes_K\wideparen{U}(\mathscr{L})^{\op}(p_2(V)).
 \end{multline*}
 By Proposition \ref{prop enveloping of product as product of envelopings} and definition \ref{defi sheaf of complete bornological K algebras}, we have the following identities of Ind-Banach algebras:
 \begin{equation*}
     \wideparen{U}(\mathscr{L}^2)(V)=\operatorname{diss}(\widehat{\mathscr{A}} \,\,), \quad \wideparen{E}(\mathscr{L})(V)=\operatorname{diss}(\widehat{\mathscr{B}}\,).
 \end{equation*}
As $M$ is bornological, it follows by \cite[Proposition 4.3]{bode2021operations} that we may regard $M$ as a complete bornological $\widehat{\mathscr{A}}$-module. A fortiori, this restricts to a structure as an $\mathscr{A}$-module. In particular, $M$ has an underlying $K$-vector space. Hence, we can use (a slight modification of) the discussion above Proposition \ref{prop equivalence of pullback and side-change for modules} to show that there is a canonical identification of bornological $\mathscr{B}$-modules:
\begin{equation*}
    \mathscr{B}\otimes_{\mathscr{A}}\mathcal{M}(V)=\Omega_{\mathscr{L}(p_2(V))}\otimes_{\OX_{X}(p_2(V))}\mathcal{M}(V).
\end{equation*}
Applying completion and the dissection functor, we obtain the identification we wanted. The fact that the isomorphism in Proposition \ref{prop equivalence of pullback and side-change for modules} is functorial, together with functoriality of completion and dissection shows that this process is functorial in both $V\subset X^2$, and $M\in \Mod_{\Indban}(\wideparen{U}(\mathscr{L}^2)(V))$, as we wanted to show.
\end{proof}

We will now extend the definition of co-admissible $\wideparen{E}(\mathscr{L})$-modules to the global case. For the rest of this section, we let $X$ be a smooth and separated rigid space equipped with a Lie algebroid $\mathscr{L}$.\bigskip

Let $(U_i)_{i\in I}$ be an admissible affinoid cover of $X$ such that $\mathscr{L}_{\vert U_i}$ is free for each $i\in I$, and let $U_i=\Sp(A_i)$. In this situation, the family $\mathfrak{U}=(U_i\times U_j)_{i,j\in I}$ is an admissible affinoid cover of $X^2$. For each $i,j\in I$, the disjoint union $V_{ij}=U_i\cup U_j$ is an affinoid space, and $\mathscr{L}$ induces a unique Lie algebroid on $V_{ij}$. Namely, we define $\mathscr{L}_{V_{ij}}$ as the unique Lie algebroid on $V_{ij}$ induced by the Lie algebroids $\mathscr{L}_{\vert U_i}$ and $\mathscr{L}_{\vert U_j}$ on $U_i$, and $U_j$ respectively. Notice that, in this situation, $V_{ij}$ satisfies the conditions at the beginning of Section \ref{Section Co-admissible modules over the bi-enveloping algebra}. Hence, we have a well defined category sheaves of co-admissible modules over $\wideparen{E}(\mathscr{L}_{V_{ij}})$. Furthermore, we have an open immersion:
\begin{equation*}
    U_i\times U_j\subset V_{ij}\times V_{ij}=V_{ij}^2.
\end{equation*}
Thus, we have a well-defined category $\mathcal{C}(\wideparen{E}(\mathscr{L}_{V_{ij}})_{\vert U_i\times U_j})$, as defined on \ref{defi localization functor for co-admissible E(l)-modules}.
\begin{Lemma}\label{Lemma 2 kiehl's Theorem for E-modules on general rigid spaces}
  There is an identification of sheaves of Ind-Banach algebras:
  \begin{equation*}
      \wideparen{E}(\mathscr{L})_{\vert U_i\times U_j}=\wideparen{E}(\mathscr{L}_{V_{ij}})_{\vert U_i\times U_j}.
  \end{equation*}
\end{Lemma}
\begin{proof}
    By construction, we have:
    \begin{align*}
   \wideparen{E}(\mathscr{L})_{\vert U_i\times U_j}=&\left(\OX_{X^2}\overrightarrow{\otimes}_{p_1^{-1}\OX_X\overrightarrow{\otimes}_Kp_2^{-1}\OX_X}\left(p_1^{-1}\wideparen{U}(\mathscr{L})\overrightarrow{\otimes}_Kp_2^{-1}\wideparen{U}(\mathscr{L})^{\op} \right)\right)_{\vert U_i\times U_j}\\
=&\OX_{U_i\times U_j}\overrightarrow{\otimes}_{p_1^{-1}\OX_{U_i}\overrightarrow{\otimes}_Kp_2^{-1}\OX_{U_2}}\left(p_1^{-1}\wideparen{U}(\mathscr{L}_{\vert U_i})\overrightarrow{\otimes}_Kp_2^{-1}\wideparen{U}(\mathscr{L}_{\vert U_j})^{\op}\right).     
    \end{align*}
    Similarly, we also have:
    \begin{align*}
 \wideparen{E}(\mathscr{L}_{V_{ij}})_{\vert U_i\times U_j}=&\left(\OX_{V_{ij}^2}\overrightarrow{\otimes}_{p_1^{-1}\OX_{V_{ij}}\overrightarrow{\otimes}_Kp_2^{-1}\OX_{V_{ij}}}p_1^{-1}\wideparen{U}(\mathscr{L}_{V_{ij}})\overrightarrow{\otimes}_Kp_2^{-1}\wideparen{U}(\mathscr{L}_{V_{ij}})^{\op} \right)_{\vert U_i\times U_j}\\
 =&\OX_{U_i\times U_j}\overrightarrow{\otimes}_{p_1^{-1}\OX_{U_i}\overrightarrow{\otimes}_Kp_2^{-1}\OX_{U_2}}\left(p_1^{-1}\wideparen{U}(\mathscr{L}_{_{V_{ij}}\vert U_i})\overrightarrow{\otimes}_Kp_2^{-1}\wideparen{U}(\mathscr{L}_{_{V_{ij}}\vert U_j})^{\op}\right).             
    \end{align*}
 Moreover, we have $\mathscr{L}_{_{V_{ij}}\vert U_i}=\mathscr{L}_{\vert U_i}$ and $\mathscr{L}_{_{V_{ij}}\vert U_j}=\mathscr{L}_{\vert U_j}$, so the statement holds.   
\end{proof}
Hence, for each pair of affinoid subdomains $U,V\subset X$ such that the restrictions of $\mathscr{L}$ to $U$ and $V$ are free, we have a well-defined category $\mathcal{C}(\wideparen{E}(\mathscr{L})_{\vert U\times V})$. We may use this to show the following:
\begin{Lemma}\label{Lemma kiehl's Theorem for E-modules on general rigid spaces}
Let $\mathcal{M}\in \Mod_{\Indban}(\wideparen{E}(\mathscr{L}))$. The following are equivalent:
\begin{enumerate}[label=(\roman*)]
    \item There is an admissible affinoid cover $(U_i)_{i\in I}$ of $X$ such that $\mathscr{L}_{\vert U_i}$ is free for each $i\in I$ and $\mathcal{M}_{\vert U_i\times U_j}$ is a co-admissible $\mathcal{C}(\wideparen{E}(\mathscr{L})_{\vert U_i\times U_j})$-module for each $i,j\in I$.
    \item For every admissible affinoid cover $(U_i)_{i\in I}$ of $X$ such that $\mathscr{L}_{\vert U_i}$ is free for each $i\in I$, we have that $\mathcal{M}_{\vert U_i\times U_j}$ is a co-admissible $\wideparen{E}(\mathscr{L})_{\vert U_i\times U_j}$-module for each $i,j\in I$.
\end{enumerate}
\end{Lemma}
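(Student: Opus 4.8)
The statement is a local-to-global (Kiehl-type) criterion for co-admissibility over $\wideparen{E}(\mathscr{L})$, and the natural strategy is to transport it across the side-switching equivalence $\wideparen{\mathbb{T}}^*$ to the already-established Kiehl theorem for $\wideparen{U}(\mathscr{L}^2)$-modules. The implication $(ii)\Rightarrow(i)$ is immediate once one exhibits a single admissible affinoid cover $(U_i)_{i\in I}$ with $\mathscr{L}_{\vert U_i}$ free; such a cover exists by the definition of a Lie algebroid, and since $X$ is separated we may and do assume it is closed under finite intersections. So the real content is $(i)\Rightarrow(ii)$.

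First I would reduce to a purely local comparison. Suppose $(i)$ holds with respect to a cover $(U_i)_{i\in I}$, and let $(W_k)_{k\in K}$ be an arbitrary admissible affinoid cover of $X$ with $\mathscr{L}_{\vert W_k}$ free. Fix $k,l\in K$; I must show $\mathcal{M}_{\vert W_k\times W_l}$ is a co-admissible $\wideparen{E}(\mathscr{L})_{\vert W_k\times W_l}$-module. By Corollary \ref{coro Kiehl's Theorem for E-modules} (Kiehl's theorem for co-admissible $\wideparen{E}(\mathscr{L})$-modules on an affinoid with free Lie algebroid, applied inside $V_{kl}\times V_{kl}$ where $V_{kl}=W_k\cup W_l$ and using Lemma \ref{Lemma 2 kiehl's Theorem for E-modules on general rigid spaces}), it suffices to find \emph{some} admissible affinoid cover of $W_k\times W_l$ on whose members $\mathcal{M}$ is co-admissible. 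The family $(U_i\cap W_k)\times(U_j\cap W_l)$, for $i,j\in I$, is such a cover: each $U_i\cap W_k$ is an affinoid subdomain of $U_i$ on which $\mathscr{L}$ is free, and by hypothesis $\mathcal{M}_{\vert U_i\times U_j}$ is co-admissible, so its further restriction to $(U_i\cap W_k)\times(U_j\cap W_l)$ is co-admissible by Corollary \ref{coro Kiehl's Theorem for E-modules} again (restriction to an affinoid subdomain preserves co-admissibility). Thus $\mathcal{M}_{\vert W_k\times W_l}$ satisfies condition $(ii)$ of Corollary \ref{coro Kiehl's Theorem for E-modules}, hence condition $(i)$ there, i.e.\ it is co-admissible.

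The one point requiring genuine care is the coherence of the various ad hoc Lie algebroids and bi-enveloping sheaves appearing in the argument: I want to be sure that for affinoid subdomains $U\subset X$, $V\subset X$ with $\mathscr{L}_{\vert U}$, $\mathscr{L}_{\vert V}$ free, the restriction $\wideparen{E}(\mathscr{L})_{\vert U\times V}$ really is the sheaf $\wideparen{E}(\mathscr{L}_{U\cup V})_{\vert U\times V}$ to which Corollary \ref{coro Kiehl's Theorem for E-modules} applies, and that this identification is compatible with further restriction. This is exactly Lemma \ref{Lemma 2 kiehl's Theorem for E-modules on general rigid spaces}, whose proof unwinds $\wideparen{E}$ as $p_1^{-1}\wideparen{U}(\mathscr{L})\overrightarrow{\otimes}_K p_2^{-1}\wideparen{U}(\mathscr{L})^{\op}$ and uses that $\wideparen{U}(\mathscr{L})$ is a sheaf, so restriction commutes with the pullbacks and tensor product defining $\wideparen{E}$; I would invoke it freely. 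The main obstacle, then, is not any deep technical input but the bookkeeping of ensuring every restriction step lands in a setting where Corollary \ref{coro Kiehl's Theorem for E-modules} is legitimately available — once that is pinned down, both implications are formal consequences of the affinoid Kiehl theorem and the fact that co-admissibility is a local-on-$X^2$ notion that is stable under restriction to affinoid subdomains.
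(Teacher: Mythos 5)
Your proposal is correct and follows essentially the same route as the paper: both directions reduce to the affinoid Kiehl theorem (Corollary \ref{coro Kiehl's Theorem for E-modules}) together with Lemma \ref{Lemma 2 kiehl's Theorem for E-modules on general rigid spaces}, applied first to restrict co-admissibility from $U_i\times U_j$ down to the intersection rectangles $(U_i\cap W_k)\times(U_j\cap W_l)$ and then to glue back up to $W_k\times W_l$. The paper's proof uses exactly this double application with the same cover of $W_k\times W_l$.
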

\begin{proof}
Assume there is an admissible affinoid cover $(U_i)_{i\in I}$ satisfying the conditions in $(i)$, and let $(V_j)_{j\in J}$ be an admissible affinoid cover of $X$ such that $\mathscr{L}_{\vert V_j}$ is a free $\OX_{\vert V_j}$-module for each $j\in J$. As $X$ is separated, for every $j\in J$ we have an admissible affinoid cover $(U_i\cap V_j)_{i\in I}$ of $V_j$. Consider the following affinoid spaces:
\begin{equation*}
    W_{il}=(U_i\cap V_j)\times (U_l\cap V_k)=(U_i\times U_l)\cap (V_j\times V_k).
\end{equation*}
For each $j,k\in J$, we have an admissible affinoid cover of $V_j\times V_k$ given by $(W_{il})_{i,l\in I}$. By assumption, we have $\mathcal{M}_{\vert U_i\times U_l}\in \mathcal{C}(\wideparen{E}(\mathscr{L})_{\vert U_i\times U_l})$. Hence, by  Corollary \ref{coro Kiehl's Theorem for E-modules}, and Lemma \ref{Lemma 2 kiehl's Theorem for E-modules on general rigid spaces}, it follows that $\mathcal{M}_{\vert W_{il}}\in \mathcal{C}(\wideparen{E}(\mathscr{L})_{\vert W_{il}})$. Again, using these two results, it follows that $\mathcal{M}_{\vert V_j\times V_k}\in \mathcal{C}(\wideparen{E}(\mathscr{L})_{\vert V_j\times V_k})$ for each $j,k\in J$.
\end{proof}
\begin{defi}
Let $\mathcal{C}(\wideparen{E}(\mathscr{L}))$ be the full subcategory of $\Mod_{\Indban}(\wideparen{E}(\mathscr{L}))$ given by the modules satisfying the conditions of Lemma \ref{Lemma kiehl's Theorem for E-modules on general rigid spaces}. We call  $\mathcal{C}(\wideparen{E}(\mathscr{L}))$ the category of co-admissible modules over $\wideparen{E}(\mathscr{L})$.
\end{defi}
We will now study the properties of $\mathcal{C}(\wideparen{E}(\mathscr{L}))$:
\begin{Lemma}
    $\mathcal{C}(\wideparen{E}(\mathscr{L}))$ is an abelian category.
\end{Lemma}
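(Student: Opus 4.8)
\textbf{Proof plan for: $\mathcal{C}(\wideparen{E}(\mathscr{L}))$ is an abelian category.}

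The plan is to reduce the statement to the already-established fact that $\mathcal{C}(\wideparen{U}(\mathscr{L}^2))$ is abelian. The key point is that being abelian is a local property: a full subcategory $\mathcal{A}$ of a quasi-abelian category $\mathcal{E}$ which is closed under kernels and cokernels (taken in $\mathcal{E}$) and under finite direct sums, and on which every morphism is strict, is automatically abelian. So the task splits into (a) showing that $\mathcal{C}(\wideparen{E}(\mathscr{L}))$ is closed under kernels, cokernels, and extensions inside $\Mod_{\Indban}(\wideparen{E}(\mathscr{L}))$, and (b) showing every morphism of co-admissible $\wideparen{E}(\mathscr{L})$-modules is strict. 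First I would fix an admissible affinoid cover $(U_i)_{i\in I}$ of $X$ with $\mathscr{L}_{\vert U_i}$ free, so that $\mathfrak{U}=(U_i\times U_j)_{i,j\in I}$ is an admissible affinoid cover of $X^2$ by Theorem \ref{teo flat maps are open} and the separatedness of $X$. By Lemma \ref{Lemma kiehl's Theorem for E-modules on general rigid spaces} it suffices to check all the relevant properties after restricting to each $U_i\times U_j$.

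Next I would localise: on each $U_i\times U_j$, Lemma \ref{Lemma 2 kiehl's Theorem for E-modules on general rigid spaces} identifies $\wideparen{E}(\mathscr{L})_{\vert U_i\times U_j}$ with $\wideparen{E}(\mathscr{L}_{V_{ij}})_{\vert U_i\times U_j}$, and Proposition \ref{prop equivalence of categories of co-admissible odules over E and U} (via $\wideparen{\mathbb{T}}^{-1,*}$) gives an equivalence of $\mathcal{C}(\wideparen{E}(\mathscr{L})_{\vert U_i\times U_j})$ with $\mathcal{C}(\wideparen{U}(\mathscr{L}^2)_{\vert U_i\times U_j})$; the latter is an abelian category by the theory of Ardakov--Wadsley (\cite{ardakov2019}, recalled in Section \ref{Section background Lie algebroids}), and in particular is closed under kernels, cokernels and extensions in $\Mod_{\Indban}(\wideparen{U}(\mathscr{L}^2)_{\vert U_i\times U_j})$, with all morphisms strict. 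Since $\wideparen{\mathbb{T}}^{-1,*}$ and its inverse $\wideparen{\mathbb{T}}^{*}$ are strongly exact equivalences of quasi-abelian categories (extension of scalars along an isomorphism of sheaves of Ind-Banach algebras), they commute with taking kernels and cokernels in the ambient categories $\Mod_{\Indban}(-)$ and preserve strictness of morphisms. Hence, for a morphism $f:\mathcal{M}\to\mathcal{N}$ in $\mathcal{C}(\wideparen{E}(\mathscr{L}))$, its kernel and cokernel formed in $\Mod_{\Indban}(\wideparen{E}(\mathscr{L}))$ restrict on each $U_i\times U_j$ to the kernel and cokernel of $\wideparen{\mathbb{T}}^{-1,*}f_{\vert U_i\times U_j}$ transported back, which lie in $\mathcal{C}(\wideparen{U}(\mathscr{L}^2)_{\vert U_i\times U_j})$; applying Lemma \ref{Lemma kiehl's Theorem for E-modules on general rigid spaces} again shows $\ker f,\operatorname{coker} f\in\mathcal{C}(\wideparen{E}(\mathscr{L}))$. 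The same local argument, using that $\mathcal{C}(\wideparen{U}(\mathscr{L}^2)_{\vert U_i\times U_j})$ is closed under extensions, handles closure under extensions, and the strictness of $f$ follows because strictness is local (a morphism in $\operatorname{Shv}(X^2,\Indban)$ is strict iff it is strict on an admissible cover) and $\wideparen{\mathbb{T}}^{-1,*}$ preserves strictness.

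It then remains to assemble these local facts: since the formation of kernels and cokernels in $\Mod_{\Indban}(\wideparen{E}(\mathscr{L}))$ commutes with restriction to admissible opens (limits are computed in presheaves, and cokernels are sheafifications of presheaf cokernels, both of which are compatible with restriction), and since every morphism of co-admissible modules is strict, the canonical morphism $\operatorname{CoIm}(f)\to\operatorname{Im}(f)$ is an isomorphism; this is precisely what upgrades the quasi-abelian category $\Mod_{\Indban}(\wideparen{E}(\mathscr{L}))$ to an abelian full subcategory $\mathcal{C}(\wideparen{E}(\mathscr{L}))$. I expect the main obstacle to be purely bookkeeping rather than conceptual: one must be careful that the equivalences $\wideparen{\mathbb{T}}^{\pm,*}$ on the various overlapping pieces $U_i\times U_j$, $V_{ij}$, and their intersections are compatible — i.e. that the identifications in Lemma \ref{Lemma 2 kiehl's Theorem for E-modules on general rigid spaces} and Proposition \ref{prop equivalence of pullback and side-changing sheaves} glue coherently — so that "kernel/cokernel computed locally" is unambiguous. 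Using Proposition \ref{prop equivalence of pullback and side-changing sheaves}, which identifies $\wideparen{\mathbb{T}}^{*}$ with the globally-defined side-switching operator $\operatorname{S}$, removes this difficulty, since $\operatorname{S}$ is manifestly a global functor and its restrictions are automatically compatible.
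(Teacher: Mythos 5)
Your proposal is correct, but it takes a different route from the paper's. The paper's proof reduces immediately to showing that every morphism in $\mathcal{C}(\wideparen{E}(\mathscr{L}))$ is strict, localises to the case where $X$ is affinoid and $\mathscr{L}$ is free, and then applies Proposition \ref{prop properties of Loc} to identify $f:\mathcal{M}(V)\to\mathcal{N}(V)$ with a morphism of co-admissible modules over the Fr\'echet--Stein algebra $\wideparen{E}(\mathscr{L})(V)$, whose strictness is \cite[Corollary 3.4]{schneider2002algebras}; closure under kernels and cokernels is left implicit (it follows from the same Fr\'echet--Stein theory). You instead transfer the whole problem through the side-switching equivalence $\wideparen{\mathbb{T}}^{-1,*}=\operatorname{S}^{-1}$ of Proposition \ref{prop equivalence of categories of co-admissible odules over E and U} to $\mathcal{C}(\wideparen{U}(\mathscr{L}^2))$, which is already known to be abelian from \cite{ardakov2019}, and then glue using the Kiehl-type Lemma \ref{Lemma kiehl's Theorem for E-modules on general rigid spaces}. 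Both arguments ultimately rest on the same fact about co-admissible modules over Fr\'echet--Stein algebras, but yours is a transfer-of-structure argument while the paper's is a direct verification; your version has the merit of making the closure under kernels and cokernels explicit, at the cost of the gluing bookkeeping you correctly flag (and correctly resolve via Proposition \ref{prop equivalence of pullback and side-changing sheaves}). One minor remark: closure under extensions is not needed for abelianness, only for the Serre-subcategory statements elsewhere in the paper, so that part of your plan is superfluous here.
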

\begin{proof}
As $\Mod_{\Indban}(\wideparen{E}(\mathscr{L}))$ is a quasi-abelian category, it suffices to show that every morphism $f:\mathcal{M}\rightarrow \mathcal{N}$ in $\mathcal{C}(\wideparen{E}(\mathscr{L}))$ is strict. As this is a local property, we may assume that $X$ is affinoid and $\mathscr{L}$ is free. In this case, by Proposition \ref{prop properties of Loc}
for any $V\in X^2$, the map $f:\mathcal{M}(V)\rightarrow\mathcal{N}(V)$ is a morphism of co-admissible modules over a Fréchet-Stein algebra. This is strict by \cite[Corollary 3.4]{schneider2002algebras}. Thus, the map $f:\mathcal{M}\rightarrow\mathcal{N}$ is strict, as we wanted.
\end{proof}
\begin{teo}\label{teo equivalence of co-admissible modules under side-changing}
 There are mutually inverse equivalences of abelian categories:
\begin{equation*}
\operatorname{S}:\mathcal{C}(\wideparen{U}(\mathscr{L}^2))\leftrightarrows\mathcal{C}(\wideparen{E}(\mathscr{L})):\operatorname{S}^{-1}.
\end{equation*}    
Furthermore, this equivalence preserves the supports of the sheaves.
\end{teo}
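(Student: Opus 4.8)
The plan is to deduce Theorem \ref{teo equivalence of co-admissible modules under side-changing} from the local results already established, using the globally defined side-switching operators $\operatorname{S}, \operatorname{S}^{-1}$ from Proposition \ref{prop side changing for bimodules} together with their identification with the extension-of-scalars functors $\wideparen{\mathbb{T}}^*, \wideparen{\mathbb{T}}^{-1,*}$ in the local setting (Proposition \ref{prop equivalence of pullback and side-changing sheaves}). Since $\operatorname{S}$ and $\operatorname{S}^{-1}$ are already known to be mutually inverse equivalences of the ambient quasi-abelian categories $\Mod_{\Indban}(\wideparen{U}(\mathscr{L}^2))$ and $\Mod_{\Indban}(\wideparen{E}(\mathscr{L}))$, and since $p_2^{-1}\Omega_{\mathscr{L}}\overrightarrow{\otimes}_{p_2^{-1}\OX_X}-$ visibly does not change the support of a sheaf (it is an isomorphism on stalks away from nothing, and locally given by tensoring with an invertible $\OX$-module), the only thing left to prove is that $\operatorname{S}$ carries $\mathcal{C}(\wideparen{U}(\mathscr{L}^2))$ into $\mathcal{C}(\wideparen{E}(\mathscr{L}))$ and that $\operatorname{S}^{-1}$ carries $\mathcal{C}(\wideparen{E}(\mathscr{L}))$ into $\mathcal{C}(\wideparen{U}(\mathscr{L}^2))$. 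By the definitions of these categories via Lemma \ref{Lemma kiehl's Theorem for E-modules on general rigid spaces} (and the analogous characterization for $\wideparen{U}(\mathscr{L}^2)$-modules coming from \cite[Theorem 8.4]{ardakov2019}), this is a local statement: it suffices to check it after restricting to $U_i \times U_j$ for an admissible affinoid cover $(U_i)_{i \in I}$ with $\mathscr{L}_{\vert U_i}$ free.

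First I would fix such a cover and pass to $U := U_i$, $V := U_j$, both affinoid with $\mathscr{L}$ free on each, so that $U \times V$ sits inside $V_{ij}^2$ with $V_{ij} = U \sqcup V$; here both $\Omega_{\mathscr{L}}$ restricted to $U$ and to $V$ are free $\OX$-modules of rank one. On $U \times V$, Proposition \ref{prop equivalence of pullback and side-changing sheaves} gives a natural isomorphism of functors $\operatorname{S} \simeq \wideparen{\mathbb{T}}^* = \wideparen{E}(\mathscr{L}) \overrightarrow{\otimes}_{\wideparen{U}(\mathscr{L}^2)} -$, and similarly $\operatorname{S}^{-1} \simeq \wideparen{\mathbb{T}}^{-1,*}$. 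Then I would invoke Proposition \ref{prop equivalence of categories of co-admissible odules over E and U}, which states precisely that $\wideparen{\mathbb{T}}^*$ and $\wideparen{\mathbb{T}}^{-1,*}$ are mutually inverse equivalences between $\mathcal{C}(\wideparen{U}(\mathscr{L}^2)_{\vert U \times V})$ and $\mathcal{C}(\wideparen{E}(\mathscr{L})_{\vert U \times V})$. Combining: if $\mathcal{M} \in \mathcal{C}(\wideparen{U}(\mathscr{L}^2))$, then each $\mathcal{M}_{\vert U_i \times U_j}$ is co-admissible, hence $(\operatorname{S}\mathcal{M})_{\vert U_i \times U_j} \cong \wideparen{\mathbb{T}}^* (\mathcal{M}_{\vert U_i \times U_j})$ is co-admissible, and so by Lemma \ref{Lemma kiehl's Theorem for E-modules on general rigid spaces} $\operatorname{S}\mathcal{M} \in \mathcal{C}(\wideparen{E}(\mathscr{L}))$; the reverse inclusion for $\operatorname{S}^{-1}$ is symmetric. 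The identities $\operatorname{S}^{-1}\operatorname{S} = \operatorname{Id}$ and $\operatorname{S}\operatorname{S}^{-1} = \operatorname{Id}$ are already part of Proposition \ref{prop side changing for bimodules}, so they restrict automatically to the subcategories, yielding mutually inverse equivalences. Finally, the support claim: $\operatorname{S}(\mathcal{M})(W) = \Omega_{\mathscr{L}(p_2(W))} \overrightarrow{\otimes}_{\OX_X(p_2(W))} \mathcal{M}(W)$ locally, and tensoring with the invertible module $\Omega_{\mathscr{L}}$ is faithfully exact, so $\operatorname{S}(\mathcal{M})(W) = 0$ iff $\mathcal{M}(W) = 0$; hence the supports agree.

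The step I expect to require the most care is confirming that the local characterizations of the two categories of co-admissible modules glue correctly across the cover $\mathfrak{U} = (U_i \times U_j)$ of $X^2$ — i.e., that being in $\mathcal{C}(\wideparen{E}(\mathscr{L}))$ really is detected on this particular cover (this is Lemma \ref{Lemma kiehl's Theorem for E-modules on general rigid spaces}, which hinges on Corollary \ref{coro Kiehl's Theorem for E-modules} and Lemma \ref{Lemma 2 kiehl's Theorem for E-modules on general rigid spaces}), and the analogous fact for $\wideparen{U}(\mathscr{L}^2)$-modules via Kiehl's theorem in \cite[Theorem 8.4]{ardakov2019}. One subtlety worth spelling out is that $U_i \times U_j$ is not of the form $\Sp(A) \times \Sp(A)$ with a single algebra, so one must use the disjoint union $V_{ij} = U_i \sqcup U_j$ to fit into the hypotheses of Section \ref{Section Co-admissible modules over the bi-enveloping algebra}; Lemma \ref{Lemma 2 kiehl's Theorem for E-modules on general rigid spaces} handles exactly this bookkeeping, so the argument reduces cleanly. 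Everything else is formal: naturality of the isomorphism in Proposition \ref{prop equivalence of pullback and side-changing sheaves} in both the module and the open subspace ensures the local equivalences assemble into the global one, and abelianness of both categories of co-admissible modules has already been recorded, so the equivalence is automatically exact.
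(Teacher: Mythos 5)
Your proposal is correct and follows essentially the same route as the paper: the paper's proof likewise reduces to the local affinoid case with $\mathscr{L}$ free and then cites Propositions \ref{prop equivalence of categories of co-admissible odules over E and U} and \ref{prop equivalence of pullback and side-changing sheaves}; you have simply spelled out the gluing via Lemma \ref{Lemma kiehl's Theorem for E-modules on general rigid spaces} and the support argument in more detail.
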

\begin{proof}
It suffices to show that  $\operatorname{S}$  and $\operatorname{S}^{-1}$ send co-admissible modules to co-admissible modules. This may be done locally, so we may assume $X$ is affinoid and $\mathscr{L}$ is a free $\OX_X$-module. But then this follows by Propositions \ref{prop equivalence of categories of co-admissible odules over E and U} and \ref{prop equivalence of pullback and side-changing sheaves}.
\end{proof}
We can finally give a global definition of $\mathcal{C}$-complexes:
\begin{defi}
A complex $C^{\bullet}\in \operatorname{D}(\wideparen{E}(\mathscr{L}))$ is called a $\mathcal{C}$-complex if there is an affinoid cover $(U_i)_{i\in I}$ of $X$ satisfying the following:
\begin{enumerate}[label=(\roman*)]
    \item $\mathscr{L}_{\vert U_i}$ is a globally free $\OX_{U_i}$-module for each $i\in I$.
    \item $C_{\vert U_i\times U_j}^{\bullet}\in \operatorname{D}_{\mathcal{C}}(\wideparen{E}(\mathscr{L})_{\vert U_i\times U_j})$ for each $i,j\in I$.
\end{enumerate}
We denote the full subcategory of all $\mathcal{C}$-complexes by $\operatorname{D}_{\mathcal{C}}(\wideparen{E}(\mathscr{L}))$.
\end{defi}
\begin{defi}
  Given a closed embedding $Y\rightarrow X^2$, we let $\operatorname{D}_{\mathcal{C}^Y}(\wideparen{E}(\mathscr{L}))$ be the full subcategory of $\mathcal{C}$-complexes with cohomology supported on $Y$.   
\end{defi}
After all this preparation, we are finally ready to obtain a side-switching equivalence for $\mathcal{C}$-complexes:
\begin{prop}\label{prop equivalence of side-changing at the level of C-complexes}
Let $X$ be a smooth and separated rigid variety with a Lie algebroid $\mathscr{L}$. We have mutually inverse equivalences of triangulated categories:
\begin{equation*}
  \operatorname{S}:\operatorname{D}_{\mathcal{C}}(\wideparen{U}(\mathscr{L}^2))\leftrightarrows   \operatorname{D}_{\mathcal{C}}(\wideparen{E}(\mathscr{L})):\operatorname{S}^{-1}.  
\end{equation*} 
Furthermore,  for any closed immersion $Y\rightarrow X$, these equivalences restrict to a pair of mutually inverse equivalences of triangulated categories:
\begin{equation*}
    \operatorname{S}:\operatorname{D}_{\mathcal{C}^Y}(\wideparen{U}(\mathscr{L}^2))\leftrightarrows   \operatorname{D}_{\mathcal{C}^Y}(\wideparen{E}(\mathscr{L})):\operatorname{S}^{-1}.
\end{equation*}
\end{prop}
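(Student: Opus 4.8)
<br>

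The plan is to reduce the global statement to the local statement already proved in Proposition \ref{prop equivalence of side-changing at the level of C-complexes local version} (together with its support-sensitive refinement in Corollary \ref{coro properties of side-changing of C-complexes}), using the fact that all the functors and categories involved are defined by restriction to an admissible cover. First I would fix an admissible affinoid cover $(U_i)_{i\in I}$ of $X$ with $\mathscr{L}_{\vert U_i}$ free for every $i$, so that $(U_i\times U_j)_{i,j\in I}$ is an admissible affinoid cover of $X^2$, and recall from Proposition \ref{prop equivalence of pullback and side-changing sheaves} that on each such $U_i\times U_j$ (or rather on the relevant $V_{ij}$, via Lemma \ref{Lemma 2 kiehl's Theorem for E-modules on general rigid spaces}) the globally-defined side-switching operator $\operatorname{S}$ agrees with the local extension of scalars $\wideparen{\mathbb{T}}^*$. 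Since $\operatorname{S}$ and $\operatorname{S}^{-1}$ are strongly exact equivalences of $\Mod_{\Indban}(\wideparen{U}(\mathscr{L}^2))$ and $\Mod_{\Indban}(\wideparen{E}(\mathscr{L}))$ that commute with restriction to admissible opens, it suffices to check that they send $\mathcal{C}$-complexes to $\mathcal{C}$-complexes, and this is exactly what Proposition \ref{prop equivalence of side-changing at the level of C-complexes local version} gives after restricting to each $U_i\times U_j$.

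In more detail, the key steps in order are: (1) observe that $\operatorname{S},\operatorname{S}^{-1}$ extend to mutually inverse equivalences of the derived categories $\operatorname{D}(\wideparen{U}(\mathscr{L}^2))$ and $\operatorname{D}(\wideparen{E}(\mathscr{L}))$, which was already recorded after Proposition \ref{prop side changing for bimodules}; (2) check that restriction to an admissible open $U\times V$ commutes with $\operatorname{S}$ and $\operatorname{S}^{-1}$, which is immediate from the definition $\operatorname{S}=p_2^{-1}\Omega_{\mathscr{L}}\overrightarrow{\otimes}_{p_2^{-1}\OX_X}-$ and the fact that pullback and $\overrightarrow{\otimes}$ of sheaves localize; (3) for $C^{\bullet}\in\operatorname{D}_{\mathcal{C}}(\wideparen{U}(\mathscr{L}^2))$, pick the defining cover and note $(\operatorname{S}C^{\bullet})_{\vert U_i\times U_j}=\operatorname{S}(C^{\bullet}_{\vert U_i\times U_j})=\wideparen{\mathbb{T}}^*(C^{\bullet}_{\vert U_i\times U_j})$ by Proposition \ref{prop equivalence of pullback and side-changing sheaves}, which lies in $\operatorname{D}_{\mathcal{C}}(\wideparen{E}(\mathscr{L})_{\vert U_i\times U_j})$ by Proposition \ref{prop equivalence of side-changing at the level of C-complexes local version}; hence $\operatorname{S}C^{\bullet}$ is a $\mathcal{C}$-complex by the definition of $\operatorname{D}_{\mathcal{C}}(\wideparen{E}(\mathscr{L}))$; (4) argue symmetrically for $\operatorname{S}^{-1}$; (5) conclude that $\operatorname{S},\operatorname{S}^{-1}$ restrict to the asserted mutually inverse equivalences of the triangulated subcategories. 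For the support statement, since $\operatorname{S}$ and $\operatorname{S}^{-1}$ do not change the underlying support of a sheaf (being $\OX$-linear tensor operations with locally free bundles of rank one), they preserve the condition of having cohomology supported on $Y$, so they restrict further to equivalences $\operatorname{D}_{\mathcal{C}^Y}(\wideparen{U}(\mathscr{L}^2))\leftrightarrows\operatorname{D}_{\mathcal{C}^Y}(\wideparen{E}(\mathscr{L}))$; alternatively this follows directly from Corollary \ref{coro properties of side-changing of C-complexes}(ii) applied locally.

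I expect no genuine obstacle here: the statement is essentially a globalization-by-gluing of the local Proposition \ref{prop equivalence of side-changing at the level of C-complexes local version}, and the only mild care needed is the bookkeeping identifying $\operatorname{S}$ with $\wideparen{\mathbb{T}}^*$ on the overlaps $U_i\times U_j$ — which is precisely the content of Proposition \ref{prop equivalence of pullback and side-changing sheaves} — and checking that the definition of $\operatorname{D}_{\mathcal{C}}(\wideparen{E}(\mathscr{L}))$ is cover-independent, which in turn reduces to Kiehl-type gluing (Corollary \ref{coro Kiehl's Theorem for E-modules} and Lemma \ref{Lemma kiehl's Theorem for E-modules on general rigid spaces}) for the cohomology sheaves together with the local independence remark after Definition \ref{defi C-complex for E}. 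The most delicate point, if any, is making sure that the two a priori different covers used to test the $\mathcal{C}$-complex condition give the same answer, so that ``$\operatorname{S}C^{\bullet}$ is a $\mathcal{C}$-complex'' is well-posed; this is handled by refining to a common cover and using that the $\mathcal{C}$-complex condition for $\wideparen{E}(\mathscr{L})$-modules is local on $X^2$ in the sense of Corollary \ref{coro Kiehl's Theorem for E-modules}.
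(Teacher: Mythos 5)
Your proposal is correct and follows essentially the same route as the paper: the paper's proof is a one-line reduction to Theorem \ref{teo equivalence of co-admissible modules under side-changing} and the local Proposition \ref{prop equivalence of side-changing at the level of C-complexes local version}, and your argument is simply the detailed unwinding of that reduction (local identification of $\operatorname{S}$ with $\wideparen{\mathbb{T}}^*$, Kiehl-type gluing for cover-independence, and preservation of supports). No gaps.
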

\begin{proof}
This follows by Theorem \ref{teo equivalence of co-admissible modules under side-changing} and Proposition \ref{prop equivalence of side-changing at the level of C-complexes local version}.
\end{proof}
As stated in Remark \ref{remark same results hold for right modules over E(L)}, all the results thus far hold for the categories of right $\wideparen{E}(\mathscr{L})$-modules. In particular, we also have a category of right $\mathcal{C}$-complexes over $\wideparen{E}(\mathscr{L})$, and an equivalence of triangulated categories:
\begin{equation*}
    \operatorname{S}_r:\operatorname{D}_{\mathcal{C}}(\wideparen{U}(\mathscr{L}^2)^{\op})\leftrightarrows   \operatorname{D}_{\mathcal{C}}(\wideparen{E}(\mathscr{L})^{\op}):\operatorname{S}_r^{-1}.
\end{equation*}
Thus, we have the following corollary:
\begin{coro}\label{equivalence left and right categories of C-complexes}
There is an equivalence of triangulated categories:
\begin{equation*}
  \operatorname{T}:\operatorname{D}_{\mathcal{C}}(\wideparen{E}(\mathscr{L}))\rightarrow   \operatorname{D}_{\mathcal{C}}(\wideparen{E}(\mathscr{L})^{\op}),  
\end{equation*} 
which restricts to an equivalence of abelian categories:
\begin{equation*}
    \operatorname{T}:\mathcal{C}(\wideparen{E}(\mathscr{L}))\rightarrow \mathcal{C}(\wideparen{E}(\mathscr{L})^{\op}).
\end{equation*}
Furthermore, if $Y\rightarrow X$ is a closed immersion, then the restriction:
\begin{equation*}   \operatorname{T}:\operatorname{D}_{\mathcal{C}^Y}(\wideparen{E}(\mathscr{L}))\rightarrow \operatorname{D}_{\mathcal{C}^Y}(\wideparen{E}(\mathscr{L})^{\op}),
\end{equation*}
is also an equivalence of triangulated categories.  
\end{coro}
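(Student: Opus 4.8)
The plan is to assemble Corollary \ref{equivalence left and right categories of C-complexes} from the three side-switching equivalences that have already been established. Recall from Proposition \ref{prop equivalence left and right E(L)-modules} that the functor $\operatorname{T}$ is defined as the composition $\operatorname{S}_r\left(\Omega_{\mathscr{L}^2}\overrightarrow{\otimes}_{\OX_{X^2}}\operatorname{S}^{-1}(-)\right)$. On the derived level we have the equivalence $\operatorname{S}^{-1}:\operatorname{D}_{\mathcal{C}}(\wideparen{E}(\mathscr{L}))\leftrightarrows \operatorname{D}_{\mathcal{C}}(\wideparen{U}(\mathscr{L}^2)):\operatorname{S}$ from Proposition \ref{prop equivalence of side-changing at the level of C-complexes}, and its right-module analogue $\operatorname{S}_r^{-1}:\operatorname{D}_{\mathcal{C}}(\wideparen{E}(\mathscr{L})^{\op})\leftrightarrows \operatorname{D}_{\mathcal{C}}(\wideparen{U}(\mathscr{L}^2)^{\op}):\operatorname{S}_r$ noted in the discussion following that proposition. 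So the only missing link is that the classical side-switching operation $\Omega_{\mathscr{L}^2}\overrightarrow{\otimes}_{\OX_{X^2}}-:\operatorname{D}(\wideparen{U}(\mathscr{L}^2))\to \operatorname{D}(\wideparen{U}(\mathscr{L}^2)^{\op})$ restricts to an equivalence of categories of $\mathcal{C}$-complexes.

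First I would dispose of this last point. Since $\mathscr{L}^2$ is a Lie algebroid on the smooth separated rigid space $X^2$, this is exactly the content of the appropriate version of \cite[Theorem 6.11]{bode2021operations} together with \cite[Proposition 6.4]{bode2021operations} (or rather its Lie-algebroid generalization, which the paper has already invoked): the functor $\Omega_{\mathscr{L}^2}\overrightarrow{\otimes}_{\OX_{X^2}}-$ is an equivalence of quasi-abelian categories $\Mod_{\Indban}(\wideparen{U}(\mathscr{L}^2))\to \Mod_{\Indban}(\wideparen{U}(\mathscr{L}^2)^{\op})$, hence strongly exact, hence induces an equivalence of derived categories; and since $\Omega_{\mathscr{L}^2}$ is a line bundle, tensoring with it sends co-admissible modules to co-admissible modules and preserves the vanishing condition $(ii)$ in Definition \ref{defi C complexes} (the transition maps $A_n$ of a Fréchet--Stein presentation are unchanged, and $A_n\overrightarrow{\otimes}_{\wideparen{U}(\mathscr{L}^2)(V)}(\Omega_{\mathscr{L}^2}(V)\otimes \operatorname{H}^i(C)(V))$ vanishes iff $A_n\overrightarrow{\otimes}_{\wideparen{U}(\mathscr{L}^2)(V)}\operatorname{H}^i(C)(V)$ does). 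Therefore $\Omega_{\mathscr{L}^2}\overrightarrow{\otimes}_{\OX_{X^2}}-$ restricts to an equivalence $\operatorname{D}_{\mathcal{C}}(\wideparen{U}(\mathscr{L}^2))\to \operatorname{D}_{\mathcal{C}}(\wideparen{U}(\mathscr{L}^2)^{\op})$; moreover it does not change supports, so it restricts further to an equivalence on the $Y$-supported subcategories for any closed immersion $Y\to X$.

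Now the equivalence of triangulated categories $\operatorname{T}:\operatorname{D}_{\mathcal{C}}(\wideparen{E}(\mathscr{L}))\to \operatorname{D}_{\mathcal{C}}(\wideparen{E}(\mathscr{L})^{\op})$ follows by composing
\begin{equation*}
\operatorname{D}_{\mathcal{C}}(\wideparen{E}(\mathscr{L}))\xrightarrow{\operatorname{S}^{-1}}\operatorname{D}_{\mathcal{C}}(\wideparen{U}(\mathscr{L}^2))\xrightarrow{\Omega_{\mathscr{L}^2}\overrightarrow{\otimes}_{\OX_{X^2}}-}\operatorname{D}_{\mathcal{C}}(\wideparen{U}(\mathscr{L}^2)^{\op})\xrightarrow{\operatorname{S}_r^{-1}}\operatorname{D}_{\mathcal{C}}(\wideparen{E}(\mathscr{L})^{\op}),
\end{equation*}
each factor of which is an equivalence by the three inputs above, and observing via Proposition \ref{prop equivalence left and right E(L)-modules} that this composite is precisely $\operatorname{T}$. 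Restricting to degree-zero cohomology (or equivalently using Proposition \ref{prop side changing for bimodules} and Theorem \ref{teo equivalence of co-admissible modules under side-changing} for the abelian statements) gives that $\operatorname{T}$ carries $\mathcal{C}(\wideparen{E}(\mathscr{L}))$ onto $\mathcal{C}(\wideparen{E}(\mathscr{L})^{\op})$. Finally, since $\operatorname{S}^{-1}$, $\Omega_{\mathscr{L}^2}\overrightarrow{\otimes}_{\OX_{X^2}}-$, and $\operatorname{S}_r^{-1}$ all preserve supports (the first and third do not change the underlying sheaf of $K$-vector spaces beyond a line-bundle twist on one factor, and the middle is a line-bundle twist on $X^2$), the composite $\operatorname{T}$ preserves supports, so it restricts to an equivalence $\operatorname{D}_{\mathcal{C}^Y}(\wideparen{E}(\mathscr{L}))\to \operatorname{D}_{\mathcal{C}^Y}(\wideparen{E}(\mathscr{L})^{\op})$ for every closed immersion $Y\to X$.

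I do not expect a genuine obstacle here; the statement is essentially a bookkeeping corollary of results already proved. The one place requiring a small amount of care is verifying that the classical side-switching twist by $\Omega_{\mathscr{L}^2}$ preserves the finiteness/vanishing conditions defining $\mathcal{C}$-complexes on $X^2$ — but this is routine since $\Omega_{\mathscr{L}^2}$ is locally free of rank one and extension of scalars along the Fréchet--Stein transition maps commutes with such a twist, exactly as in the proof of Proposition \ref{prop equivalence of side-changing at the level of C-complexes local version}. A minor alternative to double-check the identification of the composite with $\operatorname{T}$ is to note that it already holds at the level of $\Mod_{\Indban}$ by Proposition \ref{prop equivalence left and right E(L)-modules}, and both sides are exact, so they agree on derived categories.
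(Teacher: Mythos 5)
Your proof is correct and follows essentially the same route as the paper: decompose $\operatorname{T}$ via Proposition \ref{prop equivalence left and right E(L)-modules} into $\operatorname{S}^{-1}$, the twist by the line bundle $\Omega_{\mathscr{L}^2}$, and the right-module side-switching operator, then invoke Proposition \ref{prop equivalence of side-changing at the level of C-complexes} for the outer factors and the Lie-algebroid version of the classical side-switching equivalence for the middle one. The only blemish is notational: the last arrow in your displayed composition should be labeled $\operatorname{S}_r$ rather than $\operatorname{S}_r^{-1}$ (by the paper's conventions $\operatorname{S}_r$ is the functor from $\wideparen{U}(\mathscr{L}^2)^{\op}$-modules to $\wideparen{E}(\mathscr{L})^{\op}$-modules), which does not affect the argument since both are equivalences.
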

\begin{proof}
By Proposition \ref{prop equivalence left and right E(L)-modules} we have the identity:
\begin{equation*}
    \operatorname{T}=\operatorname{S}_r\left(\Omega_{\mathscr{L}^2}\overrightarrow{\otimes}_{\OX_{X^2}}\operatorname{S}^{-1}(-) \right).
\end{equation*}
Thus, it suffices to show that each of the functors on the right hand side satisfies the statements of the corollary. This is a consequence of Proposition \ref{prop equivalence of side-changing at the level of C-complexes}, as well as \cite[Theorem 4.11]{bode2021operations}.
\end{proof}
\begin{obs}
    In virtue of this corollary, we will from now on work with left $\wideparen{E}(\mathscr{L})$-modules unless otherwise mentioned. In particular, the results will be stated and proved for left modules. The equivalence  $\operatorname{T}$ ensures that the corresponding facts for right modules still hold.
\end{obs}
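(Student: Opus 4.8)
The final statement is an \emph{observation} rather than a theorem: it announces the convention that, from this point on in the paper, all definitions and results concerning modules and $\mathcal{C}$-complexes over $\wideparen{E}(\mathscr{L})$ will be stated and proved only for \emph{left} modules, with the understanding that the corresponding statements for right modules follow automatically. As such, it does not require a mathematical proof in the usual sense; what must be supplied is the \emph{justification} that this convention is harmless, namely that the equivalence $\operatorname{T}$ of Corollary~\ref{equivalence left and right categories of C-complexes} provides a dictionary translating any left-module statement into its right-module counterpart.

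The plan is therefore to observe the following. By Corollary~\ref{equivalence left and right categories of C-complexes}, the functor $\operatorname{T}=\operatorname{S}_r\bigl(\Omega_{\mathscr{L}^2}\overrightarrow{\otimes}_{\OX_{X^2}}\operatorname{S}^{-1}(-)\bigr)$ is an equivalence of triangulated categories $\operatorname{D}_{\mathcal{C}}(\wideparen{E}(\mathscr{L}))\rightarrow \operatorname{D}_{\mathcal{C}}(\wideparen{E}(\mathscr{L})^{\op})$ which restricts to an equivalence of abelian categories $\mathcal{C}(\wideparen{E}(\mathscr{L}))\rightarrow \mathcal{C}(\wideparen{E}(\mathscr{L})^{\op})$ and, for any closed immersion $Y\rightarrow X$, to an equivalence $\operatorname{D}_{\mathcal{C}^Y}(\wideparen{E}(\mathscr{L}))\rightarrow \operatorname{D}_{\mathcal{C}^Y}(\wideparen{E}(\mathscr{L})^{\op})$. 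Moreover $\operatorname{T}$ is built out of side-switching operators and a tensor product by the invertible $\OX_{X^2}$-module $\Omega_{\mathscr{L}^2}$, so it is compatible with restriction to admissible opens and with the various derived functors (inner hom, derived tensor product) in the manner recorded in Propositions~\ref{Proposition internal hom and side-switching operators} and~\ref{Proposition completed tensor and side-switching operators}. Consequently, whenever a statement about objects of $\operatorname{D}_{\mathcal{C}}(\wideparen{E}(\mathscr{L}))$ (or of $\mathcal{C}(\wideparen{E}(\mathscr{L}))$) is proved, applying $\operatorname{T}$ together with its inverse yields the analogous statement for $\operatorname{D}_{\mathcal{C}}(\wideparen{E}(\mathscr{L})^{\op})$, and similarly for the full subcategories with support conditions. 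Since $\operatorname{T}$ also intertwines the pushforward $\Delta_*$ and pullback $\Delta^{-1}$ along the diagonal with their right-module analogues (this being exactly the content of Definition~\ref{defi immersion functor} for $\Delta_*^{\operatorname{S}_r}$ and the subsequent discussion), the convention does not lose any information relevant to the Hochschild homology computations carried out later.

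Thus the only thing to write is a short paragraph recording this: one states that, in view of Corollary~\ref{equivalence left and right categories of C-complexes}, fixing attention on left $\wideparen{E}(\mathscr{L})$-modules entails no loss of generality, since the equivalence $\operatorname{T}$ (and its right-module-to-left-module inverse) transports every assertion back and forth. There is no real obstacle here; the substantive work has already been done in establishing that $\operatorname{T}$ is an equivalence with the stated compatibilities. The one point to be slightly careful about is to phrase the convention so that it covers not merely objects and morphisms but also the auxiliary structures used downstream---supports, restriction to admissible opens, and the interaction with $\Delta_*$, $\Delta^{-1}$ and the derived tensor/hom bifunctors---all of which are already known to be compatible with $\operatorname{T}$ by the cited propositions. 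Accordingly, the ``proof'' is simply the remark that these compatibilities make the left/right distinction inessential for everything that follows.
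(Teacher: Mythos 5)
Your proposal is correct and matches the paper exactly: the statement is a convention-setting remark with no proof of its own, justified precisely by the preceding Corollary \ref{equivalence left and right categories of C-complexes} establishing that $\operatorname{T}$ is an equivalence (with the support and abelian-subcategory compatibilities), which is all the paper invokes. Your additional remarks on compatibility with $\Delta_*$, restriction, and the derived bifunctors are consistent with the cited results and do not change the argument.
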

\section{Co-admissibility and the immersion functor}\label{Chapter co-admissibility and the immersion functor}
Let $X$ be a separated smooth rigid analytic space with a Lie algebroid $\mathscr{L}$. This section is devoted to understanding the behavior of co-admissibility with respect to the immersion functor: 
\begin{equation*}
\Delta_*^{\operatorname{S}}:\Mod_{\Indban}(\wideparen{U}(\mathscr{L})^e)\rightarrow \Mod_{\Indban}(\wideparen{U}(\mathscr{L}^2)).
\end{equation*}
In particular, our goal is studying certain subcategories of $\Mod_{\Indban}(\wideparen{U}(\mathscr{L})^e)$ 
such that their essential images under $\Delta^{\operatorname{S}}_*$ are Serre subcategories of  $\mathcal{C}(\wideparen{U}(\mathscr{L}^2))$. This will allow us to employ the homological machinery  of $\mathcal{C}$-complexes to our study of Hochschild (co)-homology. Due to some technical issues, we will focus on the case $\mathscr{L}=\mathcal{T}_{X/K}$. However, We would like to point out that we believe that most of the technology developed in this section should generalize to the case of an arbitrary Lie algebroid. For this reason, we will work in the general setting as much as we are currently able to.\bigskip

The first step is identifying  a notion of co-admissible $\wideparen{\D}_X$-bimodules that behaves well with respect to the immersion functor. Recall that  our interest of the bi-enveloping algebra $\wideparen{E}_X$ stems from the fact that $\wideparen{\D}_X^e$ is a very complicated sheaf. In particular, it is not clear if $\wideparen{\D}_X^e$ is locally Fréchet-Stein. In fact, it is not even clear if $\wideparen{\D}_X^e$ is a sheaf of complete bornological algebras. Thus, we cannot rely on the properties of $\wideparen{\D}_X^e$ to define a good notion of co-admissibility for bimodules.\\

As a first approximation, we will define
$\mathcal{C}^{\operatorname{Bi}}_B(\wideparen{\D}_X)$ as the full subcategory of  $\Mod_{\Indban}(\wideparen{\D}_X^e)$ given by the modules satisfying that they are co-admissible as left and right $\wideparen{\D}_X$-modules. This category satisfies some of the properties we are looking for. In particular, we will show below that if $X$ is an affinoid space admitting an étale map $X\rightarrow \mathbb{A}^n_K$, and $\mathcal{M}\in \mathcal{C}^{\operatorname{Bi}}_B(\wideparen{\D}_X)$, then $\mathcal{M}(X)$ is canonically a co-admissible $\wideparen{E}_X(X^2)$-module. However, it is not clear that $\Delta_*(\mathcal{M})$ is a co-admissible $\wideparen{E}_X$-module.  Indeed, under the assumptions above, $\Delta_*(\mathcal{M})$ is a co-admissible if and only if we have an isomorphism of $\wideparen{E}_X$-modules:
 \begin{equation*}
     \Delta^E_*(\mathcal{M})=\operatorname{Loc}(\mathcal{M}(X)),
 \end{equation*}
and it is not clear that the right hand side has support contained in $\Delta\subset X^2$.\bigskip

In order to remedy this, we need to impose conditions on $\mathcal{M}(X)$ which ensure that  $\operatorname{Loc}(\mathcal{M}(X))$ has support contained in the diagonal. This line of thought leads to the category of co-admissible diagonal bimodules, which we denote 
$\mathcal{C}_B^{\Bi}(\wideparen{\D}_X)_{\Delta}$. As we will see below, $\mathcal{C}_B^{\Bi}(\wideparen{\D}_X)_{\Delta}$ is a full abelian subcategory of $\mathcal{C}_B^{\Bi}(\wideparen{\D}_X)$
which satisfies that the immersion functor:
\begin{equation*}
    \Delta_*^{\operatorname{S}}:\mathcal{C}_B^{\Bi}(\wideparen{\D}_X)_{\Delta}\rightarrow \mathcal{C}(\wideparen{\D}_{X^2})_{\Delta},
\end{equation*}
is an exact immersion of abelian categories. Furthermore, the essential image of $\mathcal{C}_B^{\Bi}(\wideparen{\D}_X)_{\Delta}$ is a Serre subcategory of $\mathcal{C}(\wideparen{\D}_{X^2})_{\Delta}$.\bigskip

As above, these results are later generalized in the geometric and homological directions. In the geometric direction, we will show that the definitions can be generalized to an arbitrary separated and smooth rigid analytic space $X$. In the homological direction, we will define an adequate notion of diagonal $\mathcal{C}$-complexes, and show that the results above extend to the corresponding triangulated categories.
\subsection{Categories of co-admissible bimodules}\label{Section Categories of co-admissible bimodules}

Our study of Hochschild cohomology will be centered around the study of the following category:
\begin{defi}\label{defi sehaves of co-admissible bimodules}
Let $\mathcal{C}^{\operatorname{Bi}}_B(\wideparen{U}(\mathscr{L}))$
    be  the full subcategory of $\Mod_{\Indban}(U(\mathscr{L})^e)$ given by objects $\mathcal{M}$, such that $\mathcal{M}$ is co-admissible as a $\wideparen{U}(\mathscr{L})$-module and as a  $\wideparen{U}(\mathscr{L})^{\op}$-module. We call $\mathcal{C}^{\operatorname{Bi}}_B(\wideparen{U}(\mathscr{L}))$ the category of co-admissible $\wideparen{U}(\mathscr{L})$-bimodules.
\end{defi}
In this section we will show that, locally on $X$, global sections of co-admissible bimodules are co-admissible over the complete bi-enveloping algebra of $\mathscr{L}$.\newline
The contents of this section rely heavily on techniques from $p$-adic functional analysis. For this reason, it will be convenient to work with locally convex or bornological spaces instead of Ind-Banach spaces. We remark that all the spaces we will be interested in are (bornologifications of) Fréchet spaces which satisfy some nuclearity condition. Thus, not much will be lost with this change (\emph{cf.} \cite[Lemma 4.3]{bode2021operations}).\\
We denote the category of locally convex $K$-vector spaces by $LCS_K$, and let $\widehat{\otimes}_K^{LCS}$ be the completed projective tensor product in  $LCS_K$ (cf. \cite[Chapter IV 17.B]{schneider2013nonarchimedean}). As before, we let $\widehat{\mathcal{B}}c_K$ be the category of complete bornological spaces, and let $\widehat{\otimes}_K$ denote the completed projective bornological tensor product.\bigskip

Assume that $X=\Sp(A)$ is a smooth affinoid variety, and let $\mathfrak{X}=\Spf(\mathcal{A})$ be an affine formal model of $X$. Additionally, we will assume that $L:=\mathscr{L}(X)$ admits a free $(\mathcal{R},\mathcal{A})$-Lie lattice $\mathcal{L}$. As shown in Corollary \ref{coro frechet-stein presentation of bi-enveloping algebra}, for each non-negative integer $n,m\geq 0$ we have a Fréchet-Stein presentation:
\begin{equation*}
    \wideparen{E}(L)=\varprojlim_s\left(\widehat{U}(\pi^{n+s}\mathcal{L})_K\widehat{\otimes}_K\widehat{U}(\pi^{m+s}\mathcal{L})_K^{\op}\right)=\wideparen{U}(L)\widehat{\otimes}_K\wideparen{U}(L)^{\op}.
\end{equation*}
In order to keep thing simple, for each non-negative integer $n,m\geq 0$ we will write:
\begin{equation*}
    \widehat{E}_{n,m}:=\widehat{U}(\pi^{n}\mathcal{L})_K\widehat{\otimes}_K\widehat{U}(\pi^{m}\mathcal{L})_K^{\op}, \quad \widehat{U}_{n}=\widehat{U}(\pi^{n}\mathcal{L})_K.
\end{equation*}
Similarly, we write $\wideparen{E}=\wideparen{E}(L)$ and $\wideparen{U}=\wideparen{U}(L)$. Let $\mathcal{M}\in\mathcal{C}(\wideparen{U})$ be a co-admissible $\wideparen{U}$-module. In particular, writing $\mathcal{M}_n:=\widehat{U}_n\otimes_{\wideparen{U}}\mathcal{M}$, we have: $\mathcal{M}=\varprojlim_n \mathcal{M}_n$.
\begin{defi}\label{defi different definitions of co-admissible bimodules}
We define the following full subcategories of $\Mod_{\widehat{\mathcal{B}}c}(\wideparen{E})$:
\begin{enumerate}[label=(\roman*)]
    \item Let  $\mathcal{C}_L^{\operatorname{Bi}}(\wideparen{U})$
    be given by the objects $\mathcal{M}$ which are co-admissible $\wideparen{U}$-modules.
    \item Let  $\mathcal{C}_R^{\operatorname{Bi}}(\wideparen{U})$
    be given by the objects $\mathcal{M}$ which are co-admissible $\wideparen{U}^{\op}$-modules.
    \item Let  $\mathcal{C}_B^{\operatorname{Bi}}(\wideparen{U})$
    be the intersection of the two previous categories.
\end{enumerate}
\end{defi}
Let $\mathcal{M}\in \mathcal{C}_L^{\operatorname{Bi}}(\wideparen{U}(L))$, and $\mathcal{M}=\varprojlim_n \mathcal{M}_n$ be a co-admissible $\wideparen{U}$-module presentation of $\mathcal{M}$. As shown in \cite[\textnormal{Section 7.2}]{ardakov2019},
there is a canonical $\wideparen{U}$-linear isomorphism:
\begin{equation*}
    \operatorname{End}_{\wideparen{U}}(\mathcal{M})=\varprojlim_n \operatorname{End}_{\widehat{U}_n}(\mathcal{M}_n).
\end{equation*}
Each $\mathcal{M}_n$ is a finite module over a Banach $K$-algebra. Hence, it is a $K$-Banach space. Thus, we may regard $\operatorname{End}_{K}(\mathcal{M}_n)$ as a Banach space with respect to the operator norm. The canonical inclusion $\operatorname{End}_{\widehat{U}_n}(\mathcal{M}_n)\subset \operatorname{End}_{K}(\mathcal{M}_n)$ has closed image. Thus, $\operatorname{End}_{\widehat{U}_n}(\mathcal{M}_n)$ is also a Banach space.
Hence, we may regard $\operatorname{End}_{\wideparen{U}}(\mathcal{M})$ as a topological $K$-vector space with respect to the inverse limit topology. This topology turns $\operatorname{End}_{\wideparen{U}}(\mathcal{M})$  into a Fréchet space. From now on, we will always regard $\operatorname{End}_{\wideparen{U}}(\mathcal{M})$ as a Fréchet space with respect to this topology.\bigskip

By definition of $\mathcal{M}\in\mathcal{C}_L^{\operatorname{Bi}}(\wideparen{U}(L))$, we have a bounded action:
\begin{equation*}
\mathcal{M}\widehat{\otimes}_K\left(\wideparen{U}\widehat{\otimes}_K\wideparen{U}^{\op}\right)\rightarrow \mathcal{M}.
\end{equation*}
Furthermore, by \cite[Proposition 5.5]{bode2021operations}, the spaces 
$\mathcal{M}$, $\wideparen{U}$ and $\wideparen{U}^{\op}$ are $A$-nuclear Fréchet spaces. Hence, we may apply  \cite[Corollary 5.20]{bode2021operations} to show that this map arises from a continuous action:
\begin{equation*}
\mathcal{M}\widehat{\otimes}^{LCS}_K\left(\wideparen{U}\widehat{\otimes}^{LCS}_K\wideparen{U}^{\op}\right)\rightarrow \mathcal{M},
\end{equation*}
between the associated locally convex spaces. The properties of the projective tensor product in $LCS_K$ show that this is equivalent to a continuous map:
\begin{equation*}
    \wideparen{U}^{\op}\rightarrow \operatorname{End}_{\wideparen{U}}(\mathcal{M}).
\end{equation*}
As both spaces are Fréchet, it follows that this map is continuous if and only if for every $n\geq 0$ there is some $m\geq 0$ such that we have a commutative diagram:
\begin{equation*}
\begin{tikzcd}
\wideparen{U}^{\op} \arrow[r] \arrow[d] & \operatorname{End}_{\wideparen{U}}(\mathcal{M}) \arrow[d] \\
\widehat{U}_m^{\op} \arrow[r]      & \operatorname{End}_{\widehat{U}_n}(\mathcal{M}_n)   
\end{tikzcd}
\end{equation*}
where all maps in the diagram are continuous maps of Banach $K$-algebras.
\begin{prop}\label{prop topology of co-admissible bimodules}
    Let $\mathcal{M}\in \mathcal{C}^{\Bi}_L(\wideparen{U})$, and assume that $\mathcal{M}$ is co-admissible as an abstract $\wideparen{U}^{\op}$-module. Then the canonical topologies of $\mathcal{M}$ as a co-admissible $\wideparen{U}$-module and as a co-admissible $\wideparen{U}^{\op}$-module agree. In particular, $\mathcal{M}\in \mathcal{C}^{\Bi}_B(\wideparen{U})$.
\end{prop}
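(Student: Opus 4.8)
The plan is to show that the abstract $\wideparen{U}^{\op}$-module structure on $\mathcal{M}$ is automatically continuous for the fixed Fréchet topology of $\mathcal{M}$ (coming from its structure as a co-admissible $\wideparen{U}$-module), and then to invoke the uniqueness of the Fréchet topology on a co-admissible module over a Fréchet–Stein algebra. First I would recall that, since $\mathcal{M}\in\mathcal{C}^{\Bi}_L(\wideparen{U}(L))$, the discussion immediately preceding the proposition already produces a continuous algebra homomorphism $\wideparen{U}^{\op}\to\operatorname{End}_{\wideparen{U}}(\mathcal{M})$, where the target carries its natural Fréchet topology as $\varprojlim_n\operatorname{End}_{\widehat{U}_n}(\mathcal{M}_n)$. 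Composing with the evaluation map $\operatorname{End}_{\wideparen{U}}(\mathcal{M})\widehat{\otimes}^{LCS}_K\mathcal{M}\to\mathcal{M}$ (which is separately continuous, hence continuous since we are working with Fréchet spaces and the Banach–Steinhaus machinery applies), we obtain that the right action $\mathcal{M}\times\wideparen{U}^{\op}\to\mathcal{M}$ is continuous for the given topology on $\mathcal{M}$. The point is that the \emph{abstract} right module structure assumed in the hypothesis must coincide with this action, because the bimodule axioms force the right action to be the one induced by $\operatorname{End}_{\wideparen{U}}(\mathcal{M})$, so there is nothing to check here beyond unwinding definitions.

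Next I would exploit the fact that $\mathcal{M}$ is, by hypothesis, co-admissible as an abstract $\wideparen{U}^{\op}$-module: by \cite[Corollary 3.3]{schneider2002algebras} (the relevant statement from Schneider–Teitelbaum on Fréchet–Stein algebras, also used in the excerpt via \cite[Corollary 3.4]{schneider2002algebras}), a co-admissible module over a Fréchet–Stein algebra carries a \emph{canonical} Fréchet topology, and this topology is the unique one making it a topological module with the property that $\mathcal{M}\cong\varprojlim_n\widehat{U}_n\otimes_{\wideparen{U}}\mathcal{M}$ with each factor finite. So I would verify that the given Fréchet topology on $\mathcal{M}$ (from the left structure) satisfies this universal characterization with respect to the right Fréchet–Stein presentation $\wideparen{U}^{\op}=\varprojlim_m\widehat{U}_m^{\op}$: concretely, using the commutative diagram displayed just before the proposition, for each $n$ the map $\widehat{U}_m^{\op}\to\operatorname{End}_{\widehat{U}_n}(\mathcal{M}_n)$ exhibits $\mathcal{M}_n$ as a finitely generated $\widehat{U}_m^{\op}$-module, and one checks that $\mathcal{M}_n=\widehat{U}_m^{\op}\otimes_{\wideparen{U}^{\op}}\mathcal{M}$ for suitable $m=m(n)$. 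Passing to the inverse limit identifies the left-induced topology with the right-induced topology, which is exactly the assertion; and since both structures are now continuous and co-admissible, $\mathcal{M}\in\mathcal{C}^{\Bi}_B(\wideparen{U}(L))$ by definition.

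The main obstacle I anticipate is the bookkeeping in matching the two Fréchet–Stein presentations: one must produce, for each $n$, an index $m$ such that the finite $\widehat{U}_n$-module $\mathcal{M}_n$ is simultaneously a finite $\widehat{U}_m^{\op}$-module \emph{and} realized as $\widehat{U}_m^{\op}\otimes_{\wideparen{U}^{\op}}\mathcal{M}$, compatibly as $n$ varies, so that the inverse limits coincide on the nose rather than merely up to topological isomorphism. This is where the continuity of $\wideparen{U}^{\op}\to\operatorname{End}_{\wideparen{U}}(\mathcal{M})$ is essential, since it is precisely what guarantees the existence of such $m(n)$ and the compatibility of the transition maps; the rest is a routine application of the open mapping theorem (all spaces in sight are Fréchet, indeed nuclear Fréchet by \cite[Proposition 5.5]{bode2021operations}) to upgrade a continuous bijection of co-admissible modules to a topological isomorphism. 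I would organize the argument so that this matching lemma is isolated as the one genuinely technical point, with everything else following formally from the material already set up in the excerpt.
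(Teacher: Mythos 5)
Your overall strategy is the one the paper uses: extract from $\mathcal{M}\in\mathcal{C}^{\Bi}_L(\wideparen{U}(L))$ the continuous algebra map $\Phi\colon\wideparen{U}^{\op}\to\operatorname{End}_{\wideparen{U}}(\mathcal{M})$, use it to attach to each $n$ an index $m(n)$ with a continuous factorization $\widehat{U}_{m(n)}^{\op}\to\operatorname{End}_{\widehat{U}_n}(\mathcal{M}_n)$, compare the two Fréchet--Stein presentations level by level, and finish with the open mapping theorem. The step that does not survive as written is your ``matching lemma'': you assert that $\mathcal{M}_n$ is a finitely generated $\widehat{U}_{m}^{\op}$-module and that $\mathcal{M}_n=\widehat{U}_{m}^{\op}\otimes_{\wideparen{U}^{\op}}\mathcal{M}=\mathcal{M}^r_m$ for suitable $m$. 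Neither claim is justified by what you have set up, and the natural attempt to prove them is circular. Surjectivity of the canonical map $\mathcal{M}^r_m\to\mathcal{M}_n$ would follow from density of the image of $\mathcal{M}$ \emph{plus} closedness of the image of the finite module $\mathcal{M}^r_m$, but closedness is exactly what finiteness of $\mathcal{M}_n$ over $\widehat{U}_m^{\op}$ would give you, so you are assuming what you want to prove; and identifying $\mathcal{M}_n$ with $\mathcal{M}\otimes_{\wideparen{U}}\widehat{U}_m$ amounts to saying that the \emph{left} action on $\mathcal{M}^r_m$ factors through $\widehat{U}_n$, i.e.\ the mirror-image continuity statement for the right-hand topology, which is the content of the proposition itself. (The genuine level-wise identification is only established later, in Lemma~\ref{Lemma 2 embedding Theorem}, and there it reads $\widehat{E}_{n,m}\otimes_{V}\mathcal{M}\cong\mathcal{M}_n$ with the full bi-enveloping Banach algebra $\widehat{E}_{n,m}$ on the left, not $\widehat{U}_m^{\op}$ alone.)

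The repair is to demand much less at each finite level, which is what the paper does: from the factorization $\Phi_n\colon\widehat{U}_m^{\op}\to\operatorname{End}_{\widehat{U}_n}(\mathcal{M}_n)$ one builds a continuous $\widehat{U}_m^{\op}$-linear map $\psi_{n,m}\colon\mathcal{M}^r_m\to\mathcal{M}_n$ compatible with the two projections from $\mathcal{M}$, with no claim of bijectivity. Continuity of $\psi_{n,m}$ reduces, via a finite presentation of $\mathcal{M}^r_m$ over $\widehat{U}_m^{\op}$, to continuity of $f\mapsto xf$ for fixed generators $x$, i.e.\ to continuity of $\Phi_n$. Since the topology of $\mathcal{M}$ is $\varprojlim_n\mathcal{M}_n$ and that of $\mathcal{M}^r$ is $\varprojlim_m\mathcal{M}^r_m$, these one-way comparison maps already show that the identity between the two topologized copies of $\mathcal{M}$ is continuous, and a single application of the open mapping theorem (both are Fréchet) yields the homeomorphism. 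If you reorganize your argument so that the matching step produces only these continuous compatible maps, the rest of your plan goes through; as stated, the intermediate isomorphism claim is both unprovable from your hypotheses and unnecessary.
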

\begin{proof}
Define the following families of finite $\widehat{U}_n$-modules, resp. finite $\widehat{U}^{\op}_m$-modules:
\begin{equation*}
    \mathcal{M}_n=\widehat{U}_n\otimes_{\wideparen{U}}\mathcal{M}, \quad \mathcal{M}^r_m=\widehat{U}^{\op}_m\otimes_{\wideparen{U}^{\op}}\mathcal{M}=\mathcal{M}\otimes_{\wideparen{U}}\widehat{U}_m.
\end{equation*}
Then we have the following inverse limits:
\begin{equation*}
    \mathcal{M} =\varprojlim \mathcal{M}_n \textnormal{,  } \mathcal{M} =\varprojlim \mathcal{M}^r_m,
\end{equation*}
where the first is an isomorphism of topological vector spaces, and the second one is an isomorphism of abstract $\wideparen{U}^{\op}$-modules. We let 
$\mathcal{M}^r$ be $\mathcal{M}$ regarded as a Fréchet space with respect to the topology given by the inverse limit $\mathcal{M} =\varprojlim \mathcal{M}^r_m$.\bigskip

By the open mapping theorem, $\mathcal{M}=\mathcal{M}^r$ as topological $K$-vector spaces if and only if the identity map $\mathcal{M}\rightarrow \mathcal{M}^r$ is continuous. As shown above, the fact that $\mathcal{M}\in \mathcal{C}^{\Bi}_L(\wideparen{U})$ implies that the map $\Phi:\wideparen{U}^{\op}\rightarrow \operatorname{End}_{\wideparen{U}}(\mathcal{M})$ is continuous. Hence, for each $n\geq 0$ there is a minimal $m\geq 0$ such that $\Phi$ factors as a continuous map of Banach $K$-algebras:
\begin{equation*}
  \Phi_{n}:\widehat{U}_m^{\op}\rightarrow \operatorname{End}_{\widehat{U}_n}\left(\mathcal{M}_n\right).  
\end{equation*}
Fix  $n,m\geq 0$ as above. We have the following isomorphisms of $\widehat{U}_n\otimes_K\widehat{U}_m$-modules:
\begin{equation}\label{equation 1 frechet topology coadmissible bimodules}
       \widehat{U}_n\otimes_{\wideparen{U}}\mathcal{M}^r_m \rightarrow \widehat{U}_n\otimes_{\wideparen{U}}\mathcal{M}\otimes_{\wideparen{U}}\widehat{U}_m\rightarrow \mathcal{M}_n\otimes_{\wideparen{U}}\widehat{U}_m.      
\end{equation}
Additionally, we have a pair of $\widehat{U}_m^{\op}$-linear maps:
\begin{equation}\label{equation 2 frechet topology coadmissible bimodules}
\mathcal{M}^r_m\rightarrow   \widehat{U}_n\otimes_{\wideparen{U}} \mathcal{M}^r_m, \quad \mathcal{M}_n\otimes_{\wideparen{U}}\widehat{U}_m\rightarrow \mathcal{M}_n,
\end{equation}
where the first one is the canonical inclusion $x\mapsto x\otimes 1$, and the second one is given by the $\wideparen{U}$-linear action of $\widehat{U}^{\op}_m$ on $\mathcal{M}_n$ induced by $\Phi_n$.\\
Combining the maps in $(\ref{equation 1 frechet topology coadmissible bimodules})$ and $(\ref{equation 2 frechet topology coadmissible bimodules})$, we arrive at a $\widehat{U}_m^{\op}$-linear map:
\begin{equation*}
    \psi_{n,m}: \mathcal{M}_m^r\rightarrow \mathcal{M}_n,
\end{equation*}
which fits into the following commutative diagram:
\begin{equation*}
\begin{tikzcd}
\mathcal{M}^r \arrow[r] \arrow[d] & \mathcal{M} \arrow[d] \\
\mathcal{M}_m^r \arrow[r]         & \mathcal{M}_n        
\end{tikzcd}
\end{equation*}
where the upper horizontal map is the identity. As the topologies on $\mathcal{M}$ and $\mathcal{M}^r$ are given by inverse limits, it is enough to show that $\psi_{n,m}$ is continuous. However, as $\mathcal{M}_m^r$ is a finite $ \widehat{U}_m^{\op}$-module, we can choose an epimorphism $ \left(\widehat{U}_m^{\op}\right)^s\rightarrow \mathcal{M}^r_m$, and it suffices to show continuity of the composition: 
\begin{equation*}
    \left(\widehat{U}_m^{\op}\right)^s\rightarrow\mathcal{M}^r_m\rightarrow \mathcal{M}_n.
\end{equation*}
Again, we may reduce it to showing that for any $x\in \mathcal{M}^r_m$, the map:
\begin{equation*}
     \widehat{U}_m^{\op}\rightarrow \mathcal{M}_n\textnormal{, } f\mapsto xf,
\end{equation*}
is continuous. And this follows by $\Phi_n:\widehat{U}_m^{\op}\rightarrow \operatorname{End}_{\widehat{U}_n}(\mathcal{M}_n)$ being continuous.
\end{proof}
We may use this to obtain a characterization of the elements of $\mathcal{C}^{\Bi}_B(\wideparen{U})$:
\begin{coro}\label{coro topological equivalences to co-admissible bi-modules}
Let $\mathcal{M}\in \Mod_{\widehat{\mathcal{B}}c_K}(\wideparen{E})$. The following are equivalent:
\begin{enumerate}[label=(\roman*)]
    \item $\mathcal{M}\in \mathcal{C}^{\Bi}_L(\wideparen{U})$ and $\mathcal{M}$ is co-admissible as an abstract $\wideparen{U}^{\op}$-module.
    \item  $\mathcal{M}\in \mathcal{C}^{\Bi}_R(\wideparen{U})$ and $\mathcal{M}$ is co-admissible as an abstract $\wideparen{U}$-module.
    \item $\mathcal{M}\in \mathcal{C}^{\Bi}_B(\wideparen{U})$.
\end{enumerate}
\end{coro}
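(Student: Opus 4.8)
The plan is to treat $(iii)$ as the hub and to deduce both substantive implications from Proposition~\ref{prop topology of co-admissible bimodules}, its left--right mirror being obtained by symmetry. The backward directions $(iii)\Rightarrow(i)$ and $(iii)\Rightarrow(ii)$ are purely formal: by Definition~\ref{defi different definitions of co-admissible bimodules}, if $\mathcal{M}\in\mathcal{C}^{\Bi}_B(\wideparen{U}(L))$ then it lies in both $\mathcal{C}^{\Bi}_L(\wideparen{U}(L))$ and $\mathcal{C}^{\Bi}_R(\wideparen{U}(L))$; and a co-admissible topological $\wideparen{U}^{\op}$-module (resp.\ $\wideparen{U}$-module) is in particular, after forgetting the topology, an abstract module admitting the inverse-limit presentation $\mathcal{M}=\varprojlim_m\mathcal{M}^r_m$ (resp.\ $\mathcal{M}=\varprojlim_n\mathcal{M}_n$) with finite layers that defines abstract co-admissibility. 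So $(i)$ and $(ii)$ follow with no extra work.

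For $(i)\Rightarrow(iii)$ I would invoke Proposition~\ref{prop topology of co-admissible bimodules} essentially verbatim. Suppose $\mathcal{M}\in\mathcal{C}^{\Bi}_L(\wideparen{U}(L))$ and $\mathcal{M}$ is co-admissible as an abstract $\wideparen{U}^{\op}$-module. Being co-admissible over $\wideparen{U}$, $\mathcal{M}$ is a nuclear Fréchet space by \cite[Proposition~5.5]{bode2021operations}, so Proposition~\ref{prop topology of co-admissible bimodules} applies and identifies the canonical Fréchet topology attached to the abstract co-admissible $\wideparen{U}^{\op}$-structure with the topology $\mathcal{M}$ already carries as an object of $\Mod_{\widehat{\mathcal{B}}c}(\wideparen{E}(L))$. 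Hence the right action realises $\mathcal{M}$ as a co-admissible topological $\wideparen{U}^{\op}$-module, i.e.\ $\mathcal{M}\in\mathcal{C}^{\Bi}_R(\wideparen{U}(L))$, and together with the hypothesis this gives $\mathcal{M}\in\mathcal{C}^{\Bi}_B(\wideparen{U}(L))$.

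The implication $(ii)\Rightarrow(iii)$ is the mirror image. One way is to observe that the proof of Proposition~\ref{prop topology of co-admissible bimodules} is insensitive to interchanging the two tensor factors of $\wideparen{E}(L)$: it uses only the decompositions $\operatorname{End}_{\wideparen{U}}(\mathcal{M})=\varprojlim_n\operatorname{End}_{\widehat{U}_n}(\mathcal{M}_n)$ and $\operatorname{End}_{\wideparen{U}^{\op}}(\mathcal{M})=\varprojlim_m\operatorname{End}_{\widehat{U}^{\op}_m}(\mathcal{M}^r_m)$, the nuclearity facts of \cite[Proposition~5.5, Corollary~5.20]{bode2021operations}, and the open mapping theorem, all of which are left--right symmetric; so the mirrored statement holds, and applying it to $\mathcal{M}\in\mathcal{C}^{\Bi}_R(\wideparen{U}(L))$ abstractly co-admissible over $\wideparen{U}$ yields again that the two canonical topologies coincide. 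Alternatively --- and this is the version I would actually write, to avoid duplicating a proof --- one regards the same complete bornological space as a left module over $\wideparen{E}(L)^{\op}=p_1^{-1}\wideparen{U}^{\op}\overrightarrow{\otimes}_Kp_2^{-1}\wideparen{U}$, under which $(ii)$ becomes exactly an instance of $(i)$ with $\wideparen{U}^{\op}$ in place of $\wideparen{U}$.

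The logical skeleton is entirely formal once Proposition~\ref{prop topology of co-admissible bimodules} is in hand, so the only point needing care is the bookkeeping of the last step: making the identification of a left $\wideparen{E}(L)$-module that is co-admissible over $\wideparen{U}$ with a left $\wideparen{E}(L)^{\op}$-module that is co-admissible over $(\wideparen{U}^{\op})^{\op}$ precise, so that Proposition~\ref{prop topology of co-admissible bimodules} applies on the nose. I do not anticipate any genuine difficulty beyond this.
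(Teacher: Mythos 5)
Your proof is correct and matches the paper's intent exactly: the paper gives no explicit argument but states the corollary as an immediate consequence of Proposition~\ref{prop topology of co-admissible bimodules} (for $(i)\Leftrightarrow(iii)$) together with its left--right mirror (for $(ii)\Leftrightarrow(iii)$), with the backward implications being formal. Your handling of the symmetry via $\wideparen{E}(L)^{\op}$ is a reasonable way to make the mirror precise.
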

Our next goal is finding the relation between these categories and the category of co-admissible $\wideparen{E}$-modules. For the rest of the section, we fix some complete  bornological module $\mathcal{M}\in \mathcal{C}^{\Bi}_L(\wideparen{U})$. We will need the following technical lemmas:
\begin{Lemma}\label{lemma topology and subalgebras}
Let $B$ be a Banach $K$-algebra, and $C\subset B$ a closed $K$-subalgebra. Let $M$ be a $B$-module which is finite as an $C$-module. Then the topologies of $M$ as a finite $C$-module and as a finite $B$-module agree. 
\end{Lemma}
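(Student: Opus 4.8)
The plan is to realize both topologies as quotient topologies coming from finite presentations over the respective algebras, prove one inclusion by a soft functoriality argument, and then close the gap with Banach's open mapping theorem.

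First I would recall the general formalism for the canonical topology on a finitely generated module over a Banach $K$-algebra $A$: a choice of generators $y_1,\dots,y_n$ of a finite $A$-module $N$ gives a surjection $A^n\to N$, and one equips $N$ with the quotient topology. This topology is independent of the chosen generators, since any $A$-linear map $f\colon N\to N'$ between finite $A$-modules lifts — after choosing generators and lifts of their images — to a bounded $A$-linear map $A^n\to A^m$ of free modules, hence $f$ is continuous for the quotient topologies (applying this to $f=\operatorname{Id}$ with two generating sets shows well-definedness). Now since the generators $m_1,\dots,m_n$ of $M$ as a $C$-module also generate $M$ over $B$, the module $M$ is finite over $B$ as well, and we obtain surjections $\varphi_C\colon C^n\to M$ and $\varphi_B\colon B^n\to M$ whose quotient topologies $\tau_C$ and $\tau_B$ are, by definition, the topology of $M$ as a finite $C$-module and as a finite $B$-module, respectively. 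In the setting at hand these are complete norm topologies, i.e. $(M,\tau_C)$ and $(M,\tau_B)$ are Banach $K$-spaces (this is the same fact already invoked above for the $\mathcal{M}_n$).

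Next I would prove the easy inclusion $\tau_B\subseteq\tau_C$. The inclusion $C\hookrightarrow B$ is bounded, hence so is the coordinatewise map $C^n\hookrightarrow B^n$, and by construction $\varphi_B\circ\bigl(C^n\hookrightarrow B^n\bigr)=\varphi_C$. Therefore $\varphi_C\colon C^n\to(M,\tau_B)$ is continuous, and the universal property of the quotient topology defining $\tau_C$ gives that $\operatorname{Id}\colon(M,\tau_C)\to(M,\tau_B)$ is continuous.

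Finally, $\operatorname{Id}\colon(M,\tau_C)\to(M,\tau_B)$ is a continuous $K$-linear bijection of Banach $K$-spaces, so by Banach's open mapping theorem (valid in the non-archimedean setting, see \cite{schneider2013nonarchimedean}) it is a homeomorphism; thus $\tau_C=\tau_B$. The only substantive point beyond bookkeeping is the completeness of the two quotient topologies — equivalently, that $\ker\varphi_C$ is closed in $C^n$ and $\ker\varphi_B$ is closed in $B^n$ — which is exactly the statement that finite modules over the relevant Banach $K$-algebras carry a canonical complete topology; this is the step one must take care with when the algebras involved are not affinoid, and it is precisely what makes the open mapping theorem applicable here.
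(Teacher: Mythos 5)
Your proof is correct and follows essentially the same route as the paper's: both arguments reduce to showing that the identity map from $M$ with its $C$-module topology to $M$ with its $B$-module topology is continuous (the paper does this by observing that the $B^{\circ}$-lattice generated by the $m_i$ contains the $C^{\circ}$-lattice and is therefore open, you do it via the universal property of the quotient topologies from $C^n\to M$ and $B^n\to M$), and then both conclude with Banach's open mapping theorem. Your explicit flagging of the completeness of the two quotient topologies is a point the paper leaves implicit, but the underlying argument is the same.
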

\begin{proof}
Denote both topologies by $M^C$ and $M^B$ respectively. Let $m_1,\cdots,m_r$ be a finite system of generators of $M$ as an $C$-module. Let $B^{\circ}$ be the closed unit ball and $C^{\circ}=B^{\circ}\cap C$.  Let $\mathcal{M}$ be the $B^{\circ}$-module generated by $m_1,\cdots,m_r$. We have a basis of open neighborhoods of zero in $M^B$ given by the family $\left(\pi^n \mathcal{M}\right)_{n\geq 0}$. Notice that $\mathcal{M}$ is an $C^{\circ}$-module which  contains the $C^{\circ}$-module generated by $m_1,\cdots,m_r$. Hence, $\mathcal{M}$ is open in $M^C$. Thus, the identity map $M^C\rightarrow M^B$ is $K$-linear, continuous and bijective. Therefore it is a homeomorphism by Banach's open mapping theorem.
\end{proof}
\begin{Lemma}\label{Lemma extension-restriction for bi-modules}
 Let $R\rightarrow B,C\rightarrow D$ be morphisms of $K$-algebras, and let $M$ be an $R\otimes_KC$-module. There is a canonical $B\otimes_KD^{\op}$-linear isomorphism:
 \begin{equation*}
     B\otimes_KD^{\op}\otimes_{R\otimes_KC^{\op}}M\rightarrow B\otimes_RM\otimes_C D.
 \end{equation*}
\end{Lemma}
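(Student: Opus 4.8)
The statement to be proved is Lemma \ref{Lemma extension-restriction for bi-modules}: given morphisms of $K$-algebras $R\to B$ and $C\to D$, and an $R\otimes_KC$-module $M$, there is a canonical $B\otimes_KD^{\op}$-linear isomorphism $B\otimes_KD^{\op}\otimes_{R\otimes_KC^{\op}}M\to B\otimes_RM\otimes_CD$. The plan is to construct the map directly and then exhibit an inverse, all at the level of abstract modules (no topology is involved in this lemma), so this is a purely algebraic verification.

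First I would clarify the module structures. An $R\otimes_KC$-module $M$ is the same datum as a $K$-vector space with commuting left $R$- and left $C$-actions, equivalently an $(R,C^{\op})$-bimodule, equivalently an $R\otimes_KC^{\op}$-module; so the tensor product $B\otimes_KD^{\op}\otimes_{R\otimes_KC^{\op}}M$ makes sense with $R\otimes_KC^{\op}$ acting on $B\otimes_KD^{\op}$ via $R\to B$ on the left factor and via $C^{\op}\to D^{\op}$ on the right factor. On the other side, $B\otimes_RM$ carries a residual left $C$-action (from $M$, which commutes with the $R$-action), hence $B\otimes_RM\otimes_CD$ is defined; it is a left $B$-module via the left factor and a right $D$-module via the right factor, i.e. a $B\otimes_KD^{\op}$-module.

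Next I would write down the two mutually inverse maps on elementary tensors. In one direction, send $(b\otimes d)\otimes m\mapsto (b\otimes m)\otimes d$; in the other, send $(b\otimes m)\otimes d\mapsto (b\otimes d)\otimes m$. To see these are well defined one checks that they respect the defining relations of the three relative tensor products involved: $R$-balancing on the left, $C$-balancing in $B\otimes_RM\otimes_CD$, and $R\otimes_KC^{\op}$-balancing in $B\otimes_KD^{\op}\otimes_{R\otimes_KC^{\op}}M$. Concretely, for $r\in R$ one uses that $r$ moves across $b\otimes m$ in $B\otimes_RM$ exactly as it moves across $b\otimes d$ under $R\to B$ acting on the first factor of $B\otimes_KD^{\op}$; for $c\in C$ one uses that $c$ moves across $m\otimes d$ in $M\otimes_CD$ exactly as $c^{\op}\in C^{\op}$ moves across $b\otimes d$ under $C^{\op}\to D^{\op}$. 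The cleanest way to present this is via the universal property of the balanced tensor product: define the map out of $B\otimes_KD^{\op}\otimes_{R\otimes_KC^{\op}}M$ by first giving a $K$-trilinear map $(B\otimes_KD^{\op})\times M\to B\otimes_RM\otimes_CD$ that is $R\otimes_KC^{\op}$-balanced, which descends, and then check $B\otimes_KD^{\op}$-linearity of the result from the formula. The reverse map is constructed symmetrically, building it as an iterated map out of $B\otimes_RM$ and then out of $(B\otimes_RM)\otimes_CD$. Composing the two in either order is the identity on elementary tensors, hence the identity, so both are isomorphisms.

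I do not expect a serious obstacle here: this is a bookkeeping lemma of the same flavor as the standard identity $B\otimes_R(M\otimes_CD)\cong (B\otimes_RM)\otimes_CD$ together with $S\otimes_RM\cong S\otimes_R M$ under a flat-free base change, and the only care needed is to keep the left/right and opposite-algebra conventions consistent throughout. The mildly delicate point is simply making sure that the $C^{\op}$ in $R\otimes_KC^{\op}$ really matches the right $D$-action through $D^{\op}$, i.e. that "$M$ as an $R\otimes_KC$-module" is being read as "$M$ as an $R\otimes_KC^{\op}$-module = $(R,C^{\op})$-bimodule" consistently on both sides; once that identification is fixed, the isomorphism is forced. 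I would therefore present the proof in two short paragraphs: one fixing conventions and the universal-property setup, one writing the two maps and noting they are inverse.
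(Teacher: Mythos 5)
Your proposal is correct, but it takes a different route from the paper. You construct the isomorphism directly on elementary tensors, sending $(b\otimes d)\otimes m\mapsto (b\otimes m)\otimes d$ and back, and verify well-definedness by checking the balancing relations (or, more cleanly, by invoking the universal property of the balanced tensor product for the forward map and iterated universal properties for the reverse map). The paper instead argues by Yoneda: it shows that both sides corepresent the functor $\operatorname{Hom}_{R\otimes_KC^{\op}}(M,-)$ on $(B,D)$-bimodules, building the natural bijection of Hom-sets by applying the extension--restriction adjunction twice, first along $R\rightarrow B$ and then along $C^{\op}\rightarrow D^{\op}$, and checking that the two resulting maps $\Psi$ and $i^*$ are mutually inverse. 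The representability argument buys you the well-definedness on the left-hand side for free (it is never necessary to check relations in $B\otimes_KD^{\op}\otimes_{R\otimes_KC^{\op}}M$ by hand) and is phrased in a way that transfers to the closed symmetric monoidal setting used elsewhere in the paper; your direct construction is more elementary and has the advantage of exhibiting the explicit formula for the isomorphism, which is what is actually used in the subsequent lemmas. You also correctly identify the one genuinely delicate point, namely that ``$R\otimes_KC$-module'' in the hypothesis must be read as ``$(R,C)$-bimodule, i.e.\ left $R\otimes_KC^{\op}$-module'' for the tensor product on the left-hand side to make sense; the paper makes the same identification implicitly in its proof.
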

\begin{proof}
By Yoneda's Lemma, it is enough to show that we have a natural isomorphism of functors:
\begin{equation*}
    \operatorname{Hom}_{B\otimes_KD^{\op}}(B\otimes_RM\otimes_C D,-)\rightarrow \operatorname{Hom}_{B\otimes_KD^{\op}}(B\otimes_KD^{\op}\otimes_{R\otimes_KC^{\op}}M,-).
\end{equation*}
Let $N$ be a $(B,D)$-bimodule and  $\varphi:M\rightarrow N$ be a morphism of $(R,C)$-bimodules. In particular, $\varphi$ is a morphism of left $R$-modules. Hence, the extension-restriction adjunction associated to the map of $K$-algebras $R\rightarrow B$ yields a unique map:
\begin{align*}
    \varphi':B&\otimes_RM\rightarrow N\\ b&\otimes m\mapsto b\varphi(m)
\end{align*}
Notice that $B\otimes_RM$ is a right $C$-module, with action $(b\otimes m)a=b\otimes (ma)$. Furthermore, as $\varphi$ is a morphism of $(R,C)$-bimodules, we have:
\begin{equation*}
    \varphi'(b\otimes ma)=b\varphi(ma)=(b\varphi(m))a.
\end{equation*}
Applying the extension-restriction adjunction with respect to  $C^{\op}\rightarrow D^{\op}$ yields:
\begin{equation*}
    \varphi'':B\otimes_RM\otimes_C D\rightarrow N.
\end{equation*}
Hence, we can define a $K$-linear map:
\begin{equation*}
  \Psi:\operatorname{Hom}_{R\otimes C^{\op}}(M,N)\rightarrow  \operatorname{Hom}_{B\otimes D^{\op}}(B\otimes_RM\otimes_C D,N), \textnormal{   } \varphi\mapsto\varphi''.
\end{equation*}
Notice that the inclusion $i:M\rightarrow B\otimes_RM\otimes_C D$, $m\mapsto 1\otimes m\otimes 1$ is $R\otimes_KC^{\op}$-linear. Hence, for any $B\otimes D^{\op}$-linear map $\psi:B\otimes_RM\otimes_C D\rightarrow N$ we get an $R\otimes_KC^{\op}$-linear map $\psi\circ i$, by pullback. Hence, we get a $K$-linear map:
\begin{equation*}
  i^*:  \operatorname{Hom}_{B\otimes_KD^{\op}}(B\otimes_RM\otimes_C D,N)\rightarrow\operatorname{Hom}_{R\otimes_KC^{\op}}(M,N), \textnormal{   } \varphi\mapsto\varphi\circ i.
\end{equation*}
Showing that $\Psi$ and $i^*$ are mutually inverse is a straightforward calculation.
\end{proof}
For simplicity, we will write $V=\wideparen{U}\otimes_K\wideparen{U}^{\op}$ until the end of this section.
\begin{Lemma}\label{Lemma 1 embedding Theorem}
For any $n,m\geq 0$, consider the following $\widehat{E}_{n,m}$-module:
\begin{equation*}   \mathcal{M}^b_{n,m}=\widehat{E}_{n,m}\otimes_{V}M.
\end{equation*}
Then $\mathcal{M}^b_{n,m}$ is a finite $\widehat{E}_{n,m}$-module, and the map $\mathcal{M}\rightarrow \mathcal{M}^b_{n,m}$ has dense image.
\end{Lemma}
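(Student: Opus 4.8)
The plan is to first establish that $\mathcal{M}^b_{n,m}$ is a finite $\widehat{E}_{n,m}$-module and then to deduce the density statement from this. Write $\wideparen{U}=\wideparen{U}(L)$, $V=\wideparen{U}\otimes_K\wideparen{U}^{\op}$, and introduce the \emph{algebraic} tensor product $V':=\widehat{U}_n\otimes_K\widehat{U}_m^{\op}$ of the two Banach algebras, so that the structure morphism $V\to\widehat{E}_{n,m}$ factors as $V\to V'\to\widehat{E}_{n,m}$, the second arrow being the completion map of $V'=\widehat{U}_n\otimes_K\widehat{U}_m^{\op}$ into $\widehat{E}_{n,m}=\widehat{U}_n\widehat{\otimes}_K\widehat{U}_m^{\op}$. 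By transitivity of the tensor product we obtain a natural identification $\mathcal{M}^b_{n,m}=\widehat{E}_{n,m}\otimes_V\mathcal{M}=\widehat{E}_{n,m}\otimes_{V'}\left(V'\otimes_V\mathcal{M}\right)$, so it is enough to show that $V'\otimes_V\mathcal{M}$ is a finite $V'$-module.

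To compute $V'\otimes_V\mathcal{M}$ I would invoke Lemma \ref{Lemma extension-restriction for bi-modules} with $R=C=\wideparen{U}$ (so that $R\otimes_KC^{\op}=V$), with $B=\widehat{U}_n$ and $D=\widehat{U}_m$, viewing $\mathcal{M}$ as a $(\wideparen{U},\wideparen{U})$-bimodule via its $V$-module structure. This yields a canonical $V'$-linear isomorphism $V'\otimes_V\mathcal{M}\cong\widehat{U}_n\otimes_{\wideparen{U}}\mathcal{M}\otimes_{\wideparen{U}}\widehat{U}_m=\mathcal{M}_n\otimes_{\wideparen{U}}\widehat{U}_m$, where $\mathcal{M}_n$ is given the right $\wideparen{U}$-module structure inherited from $\mathcal{M}$. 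Since $\mathcal{M}$ is co-admissible as a left $\wideparen{U}$-module, $\mathcal{M}_n$ is a finite $\widehat{U}_n$-module; choosing generators $\overline{x}_1,\dots,\overline{x}_s$ of $\mathcal{M}_n$ over $\widehat{U}_n$, the elements $\overline{x}_i\otimes 1$ generate $\mathcal{M}_n\otimes_{\wideparen{U}}\widehat{U}_m$ over $V'$, since for $x=\sum_i a_i\overline{x}_i\in\mathcal{M}_n$ (with $a_i\in\widehat{U}_n$) and $u\in\widehat{U}_m$ one has $x\otimes u=\sum_i (a_i\otimes u^{\op})\cdot(\overline{x}_i\otimes 1)$. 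Hence $V'\otimes_V\mathcal{M}$ is finite over $V'$ and $\mathcal{M}^b_{n,m}$ is finite over $\widehat{E}_{n,m}$. We emphasise that this argument is uniform in $n$ and $m$: no largeness of $m$ relative to $n$ is needed.

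For the density statement, recall that $\widehat{E}_{n,m}$ is a two-sided Noetherian Banach $K$-algebra, as it occurs as a term of the Fréchet-Stein presentation of Corollary \ref{coro frechet-stein presentation of bi-enveloping algebra}; hence the finite module $\mathcal{M}^b_{n,m}$ carries a canonical Banach topology for which the action $\widehat{E}_{n,m}\times\mathcal{M}^b_{n,m}\to\mathcal{M}^b_{n,m}$ is continuous. The canonical map $\iota\colon\mathcal{M}\to\mathcal{M}^b_{n,m}$, $x\mapsto 1\otimes x$, has image containing every element $v\otimes x$ with $v$ in the image of $V$ in $\widehat{E}_{n,m}$, because $v\otimes x=1\otimes vx=\iota(vx)$. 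For fixed $x$, the map $e\mapsto e\otimes x=e\cdot(1\otimes x)$ is continuous on $\widehat{E}_{n,m}$, and the image of $V$ is dense in $\widehat{E}_{n,m}$ by construction of the latter as a completion; therefore $e\otimes x$ lies in the closure of $\iota(\mathcal{M})$ for every $e\in\widehat{E}_{n,m}$. Since such elements span $\mathcal{M}^b_{n,m}$ and the closure of $\iota(\mathcal{M})$ is a $K$-subspace, $\iota$ has dense image.

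The genuinely delicate bookkeeping — and the only real point where care is required — is distinguishing completed from algebraic tensor products: one cannot replace $\wideparen{U}$ by $\widehat{U}_n$ directly inside $\otimes_V\mathcal{M}$ since the transition maps have only dense image, and the argument works precisely because finiteness of $\mathcal{M}_n$ over $\widehat{U}_n$ provides a finite generating family that is preserved by all of the (purely algebraic) base changes involved. Beyond this, and the routine matching of opposite-algebra variances in Lemma \ref{Lemma extension-restriction for bi-modules}, there is no essential obstacle.
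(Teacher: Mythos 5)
Your proof is correct and follows essentially the same route as the paper's: finiteness is reduced, via Lemma \ref{Lemma extension-restriction for bi-modules}, to the identification $\left(\widehat{U}_n\otimes_K\widehat{U}_m^{\op}\right)\otimes_V\mathcal{M}\cong\mathcal{M}_n\otimes_{\wideparen{U}}\widehat{U}_m$ together with finiteness of $\mathcal{M}_n$, and density follows from separate continuity of the action of the noetherian Banach algebra $\widehat{E}_{n,m}$ on the finite module together with density of the image of $V$ in $\widehat{E}_{n,m}$. The only cosmetic difference is that you make the intermediate base change through the algebraic tensor product $V'=\widehat{U}_n\otimes_K\widehat{U}_m^{\op}$ explicit, which the paper leaves implicit.
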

\begin{proof}
Fix non-negative integers $n,m\geq 0$. In order to show that $\mathcal{M}^b_{n,m}$ is a finite $\widehat{E}_{n,m}$-module, it is enough to show that:
    \begin{equation}\label{equation 1 co-admissible over E(L)}      \left(\widehat{U}_n\otimes_K\widehat{U}_m^{\op}\right)\otimes_{V}\mathcal{M},
    \end{equation}
    is a finite $\widehat{U}_n\otimes_K\widehat{U}_m^{\op}$-module. By Lemma \ref{Lemma extension-restriction for bi-modules}, it follows that the module in $(\ref{equation 1 co-admissible over E(L)})$ is canonically isomorphic to:
    \begin{equation}\label{equation 2 co-admissible over E(L)}
\widehat{U}_n\otimes_{\wideparen{U}}\mathcal{M}\otimes_{\wideparen{U}}\widehat{U}_m.
    \end{equation}
    By definition of co-admissible $\wideparen{U}$-module, we have a  $\widehat{U}_n$-linear isomorphism:
    \begin{equation*}    
    \widehat{U}_n\otimes_{\wideparen{U}}\mathcal{M}\rightarrow \mathcal{M}_n.
    \end{equation*}
Hence, we have a $\widehat{U}_n\otimes_K\widehat{U}_m^{\op}$-linear isomorphism:
\begin{equation}
    \widehat{U}_n\otimes_{\wideparen{U}}\mathcal{M}\otimes_{\wideparen{U}}\widehat{U}_m\rightarrow \mathcal{M}_n\otimes_{\wideparen{U}}\widehat{U}_m.
\end{equation}
$\mathcal{M}_n$ is a finitely generated $\widehat{U}_n$-module by definition of co-admissibility. Hence, it follows that $\left(\widehat{U}_n\otimes_K\widehat{U}_m^{\op}\right)\otimes_{V}\mathcal{M}$ is a finite $\widehat{U}_n\otimes_K\widehat{U}_m^{\op}$-module, as wanted.\\
In order to show that $\mathcal{M}$ is dense in $\mathcal{M}^b_{n,m}$, choose a simple tensor $a\otimes m\in \mathcal{M}^b_{n,m}$. By Corollary  \ref{coro frechet-stein presentation of bi-enveloping algebra}, we have:
\begin{equation*} \widehat{V}=\wideparen{U}\widehat{\otimes}_K\wideparen{U}^{\op}=\varprojlim_{s}\widehat{E}_{n+s,m+s}.
\end{equation*}
In particular, $\widehat{E}_{n,m}$ is the completion of $V$ with respect to some norm. Hence, we may choose a sequence $\left(a_n\right)_{n\geq 0}$  in $V$ which converges to $a$ in $\widehat{E}_{n,m}$. As  $\widehat{E}_{n,m}$ is a two-sided noetherian Banach $K$-algebra, and $\mathcal{M}^b_{n,m}$ is finite module, we may regard $\mathcal{M}^b_{n,m}$  as a topological module. Furthermore, the action of $\widehat{E}_{n,m}$ on $\mathcal{M}^b_{n,m}$ is separately continuous. Hence, we have:
\begin{equation*}
    a\otimes m =\left(\varinjlim a_n\right)\otimes m=\varinjlim \left(a_n\otimes m\right)=\varinjlim \left(1\otimes a_nm\right).
\end{equation*}
Thus, every tensor can be approximated by elements in the image of $\mathcal{M}$.
\end{proof}
\begin{Lemma}\label{Lemma 2 embedding Theorem}
 Let $n,m\geq 0$, and assume we have a continuous factorization:
 \begin{equation*}
     \widehat{U}_m^{\op}\rightarrow \operatorname{End}_{\widehat{U}_n}(\mathcal{M}_n).
 \end{equation*}
 Then there is a $\widehat{E}_{n,m}$-module structure on $\mathcal{M}_n$, and a $\widehat{E}_{n,m}$-linear isomorphism:
 \begin{equation*}
\mathcal{M}^b_{n,m}:=\widehat{E}_{n,m}\otimes_{V}\mathcal{M}\rightarrow \mathcal{M}_n.
 \end{equation*}
\end{Lemma}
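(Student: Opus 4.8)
The idea is to build the $\widehat{E}_{n,m}$-module structure on $\mathcal{M}_n$ from two commuting bounded actions, obtain the comparison map by extension of scalars, and then check bijectivity by exhibiting an explicit inverse on a dense subspace. First I would produce the module structure. Since $\mathcal{M}_n=\widehat{U}_n\otimes_{\wideparen{U}}\mathcal{M}$ is a finite module over the noetherian Banach algebra $\widehat{U}_n$, it carries a canonical Banach structure on which $\widehat{U}_n$ acts by bounded operators. The hypothesis gives a continuous algebra map $\widehat{U}_m^{\op}\to\operatorname{End}_{\widehat{U}_n}(\mathcal{M}_n)$, which via the isometric inclusion $\operatorname{End}_{\widehat{U}_n}(\mathcal{M}_n)\hookrightarrow\operatorname{End}_K(\mathcal{M}_n)$ yields a second bounded action commuting with the $\widehat{U}_n$-action. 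Hence there is an algebra homomorphism $\widehat{U}_n\otimes_K\widehat{U}_m^{\op}\to\operatorname{End}_K(\mathcal{M}_n)$ with bounded image, and by the universal property of $\widehat{\otimes}_K$ it extends uniquely and continuously to $\widehat{E}_{n,m}=\widehat{U}_n\widehat{\otimes}_K\widehat{U}_m^{\op}\to\operatorname{End}_K(\mathcal{M}_n)$. This is the desired structure; its restriction along $\widehat{U}_n\to\widehat{E}_{n,m}$ recovers the original one, so $\mathcal{M}_n$ is finite over $\widehat{E}_{n,m}$.

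Next, I would construct the map. By the construction of the factorization (cf. the discussion preceding Proposition \ref{prop topology of co-admissible bimodules}), the $V$-module structure on $\mathcal{M}_n$ obtained by restriction along $V\to\widehat{E}_{n,m}$ agrees with the one coming from the canonical localisation map $\mathcal{M}\to\mathcal{M}_n$; in particular that map is $V$-linear. Applying the extension/restriction of scalars adjunction of Proposition \ref{prop extension-restriction of scalars complete bornological case} to $V\to\widehat{E}_{n,m}$, it factors uniquely through an $\widehat{E}_{n,m}$-linear map $\mathcal{M}^b_{n,m}=\widehat{E}_{n,m}\otimes_V\mathcal{M}\to\mathcal{M}_n$.

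Finally, bijectivity. By Lemma \ref{Lemma 1 embedding Theorem}, $\mathcal{M}^b_{n,m}$ is a finite $\widehat{E}_{n,m}$-module and $\mathcal{M}$ has dense image in it; similarly $\mathcal{M}$ has dense image in $\mathcal{M}_n$ because $\wideparen{U}$ is dense in $\widehat{U}_n$ and the action on the finite module $\mathcal{M}_n$ is continuous. For surjectivity: the image of $\mathcal{M}^b_{n,m}\to\mathcal{M}_n$ is a finitely generated, hence closed, $\widehat{E}_{n,m}$-submodule containing the (dense) image of $\mathcal{M}$, so it is all of $\mathcal{M}_n$. For injectivity I would construct an inverse $\mathcal{M}_n=\widehat{U}_n\otimes_{\wideparen{U}}\mathcal{M}\to\mathcal{M}^b_{n,m}$ on simple tensors by $a\otimes m\mapsto(a\otimes 1)\cdot(1\otimes_V m)$, where $a\otimes 1\in\widehat{U}_n\subset\widehat{E}_{n,m}$; this is well defined over $\wideparen{U}$ since $\widehat{E}_{n,m}\otimes_V(-)$ is $V$-balanced and $\wideparen{U}$ acts on $\mathcal{M}$ by left multiplication. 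One then verifies it is $\widehat{U}_n$-linear (immediate) and $\widehat{U}_m^{\op}$-linear — here one compares the $\widehat{U}_m^{\op}$-action on $\mathcal{M}_n$ coming from the continuous factorization with the one on $\widehat{E}_{n,m}\otimes_V\mathcal{M}$: they agree on the $\wideparen{U}^{\op}$-dense subspace and both are continuous (module maps between finite modules over a noetherian Banach algebra are continuous and submodules are closed, Lemma \ref{lemma topology and subalgebras}), hence agree, so the map is $\widehat{E}_{n,m}$-linear; its two composites with $\mathcal{M}^b_{n,m}\to\mathcal{M}_n$ are the identity because they are so on the dense images of $\mathcal{M}$, using the Fréchet–Stein presentation of Corollary \ref{coro frechet-stein presentation of bi-enveloping algebra} and Lemma \ref{Lemma extension-restriction for bi-modules} to identify the uncompleted model $(\widehat{U}_n\otimes_K\widehat{U}_m^{\op})\otimes_V\mathcal{M}\cong\widehat{U}_n\otimes_{\wideparen{U}}\mathcal{M}\otimes_{\wideparen{U}}\widehat{U}_m$. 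The main obstacle is precisely this last compatibility check: showing that the continuous-factorization hypothesis (rather than mere abstract compatibility of the left and right actions) is what forces the two $\widehat{E}_{n,m}$-module structures in play to match, which is an essentially topological density argument.
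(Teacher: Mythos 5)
Your proposal is correct and follows essentially the same route as the paper: extend the action to $\widehat{E}_{n,m}$ by completeness, obtain $\psi$ from the $V$-linear projection, get surjectivity from density plus closedness of finitely generated submodules over the noetherian Banach algebra, and prove injectivity via the explicit section $a\otimes m\mapsto (a\otimes 1)\cdot(1\otimes_V m)$. The only (immaterial) difference is at the last step: the paper merely checks $\psi\circ j=\operatorname{Id}$ and then kills $\operatorname{Ker}(\psi)$ by observing that the dense image of $\mathcal{M}$ lies in the closed image of $j$, whereas you verify directly that $j$ is a two-sided $\widehat{E}_{n,m}$-linear inverse by a density argument; both work.
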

\begin{proof}
 Our assumptions imply that $\mathcal{M}_n$ is a topological module over $\widehat{U}_n\otimes_K\widehat{U}_m^{\op}$. As $\mathcal{M}_n$ is a Banach $K$-vector space, it is complete. Hence, the action of $\widehat{U}_n\otimes_K\widehat{U}_m^{\op}$ on $\mathcal{M}_n$ extends to an action of $\widehat{E}_{n,m}$. Furthermore, as $\mathcal{M}_n$ is a finite $\widehat{U}_n$-module, it is also a finite $\widehat{E}_{n,m}$-module.\\
 The projection $\mathcal{M}\rightarrow \mathcal{M}_n$ is $V$-linear. Thus, it induces a map of $\widehat{E}_{n,m}$-modules:
 \begin{equation*}
\psi:\mathcal{M}^b_{n,m}:=\widehat{E}_{n,m}\otimes_{V}\mathcal{M}\rightarrow \mathcal{M}_n.
 \end{equation*}
As the image of $\mathcal{M}$ is dense in $\mathcal{M}_n$, it follows that the image of $\psi$ is dense in $\mathcal{M}_n$. Since $\widehat{E}_{n,m}$ is a noetherian Banach $K$-algebra, every submodule of a finitely generated module is closed. Hence, the image of $\psi$ is both dense and closed, so $\psi$ is surjective.  The proof of Lemma \ref{Lemma 1 embedding Theorem} shows that:
 \begin{equation*}
     \mathcal{M}^b_{n,m}=\widehat{E}_{n,m}\otimes_{\left(\widehat{U}_n\otimes_K\widehat{U}_m^{\op}\right)} \left(\widehat{U}_n\otimes_{\wideparen{U}}\mathcal{M}\otimes_{\wideparen{U}}\widehat{U}_m \right).
 \end{equation*}
Thus, there is a bounded $K$-linear map $j:\mathcal{M}_n=\widehat{U}_n\otimes_{\wideparen{U}}\mathcal{M}\rightarrow \mathcal{M}^b_{n,m}$ of the form:
\begin{equation*}
    j(a\otimes m)= 1\otimes \left( a\otimes m\otimes 1\right).
\end{equation*}
In particular, we have $\psi\circ j= \operatorname{Id}_{\mathcal{M}_n}$, so we get a decomposition:
 \begin{equation}
     \mathcal{M}^b_{n,m}=\operatorname{Ker}(\psi)\oplus \mathcal{M}_n.
 \end{equation}
 Let $\overline{\mathcal{M}}$ denote the image of $\mathcal{M}$ inside $\mathcal{M}^b_{n,m}$. Our construction of $j$ shows that $\overline{\mathcal{M}}\subset \mathcal{M}_n$. However, by Lemma \ref{Lemma 1 embedding Theorem},  $\overline{\mathcal{M}}$ is dense in $\mathcal{M}^b_{n,m}$. As $\mathcal{M}_n$ is closed in $\mathcal{M}^b_{n,m}$, it follows that $\operatorname{Ker}(\psi)=0$. 
 \end{proof}
 All these technical lemmas allow us to show the following proposition:
\begin{prop}\label{prop bi-coadmissible to E-coadmissible}
 There is a canonical isomorphism of $\wideparen{E}$-modules:
 \begin{equation*}
     \mathcal{M}=\varprojlim_{n,m}\mathcal{M}^b_{n,m}.
 \end{equation*}
 In particular, $\mathcal{M}$ is a co-admissible $\wideparen{E}$-module
\end{prop}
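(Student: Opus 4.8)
The plan is to use the technical Lemmas \ref{Lemma 1 embedding Theorem} and \ref{Lemma 2 embedding Theorem} to exhibit the family $(\mathcal{M}^b_{n,m})$ as the coherent sheaf of finite modules attached to a co-admissible module over the Fréchet-Stein presentation $\wideparen{E}=\varprojlim_{n,m}\widehat{E}_{n,m}$ of Corollary \ref{coro frechet-stein presentation of bi-enveloping algebra}. First I would introduce the set
$$S=\{\,(n,m)\ :\ \text{the action }\wideparen{U}^{\op}\to\End_{\wideparen{U}}(\mathcal{M})\text{ factors continuously as }\widehat{U}_m^{\op}\to\End_{\widehat{U}_n}(\mathcal{M}_n)\,\}.$$
By the discussion preceding Proposition \ref{prop topology of co-admissible bimodules}, the hypothesis $\mathcal{M}\in\mathcal{C}^{\Bi}_L(\wideparen{U}(L))$ guarantees that $\wideparen{U}^{\op}\to\End_{\wideparen{U}}(\mathcal{M})$ is a continuous map of Fréchet algebras; hence for every $n$ there is some $m$ with $(n,m)\in S$, and if $(n,m)\in S$ then $(n,m')\in S$ for all $m'\ge m$ (the factorization composes with $\widehat{U}_{m'}^{\op}\to\widehat{U}_m^{\op}$). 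Therefore $S$ is cofinal in the index poset of all pairs $(n,m)$, which is itself cofinal over the diagonal subsystem of Corollary \ref{coro frechet-stein presentation of bi-enveloping algebra}; so $\varprojlim_{n,m}\widehat{E}_{n,m}=\wideparen{E}$, and every inverse limit indexed by $\{(n,m)\}$ below may equivalently be taken over $S$. Also $S\to\mathbb{N}$, $(n,m)\mapsto n$, is cofinal, so $\varprojlim_{(n,m)\in S}\mathcal{M}_n=\varprojlim_n\mathcal{M}_n=\mathcal{M}$ as topological $\wideparen{U}$-modules by co-admissibility of $\mathcal{M}$ over $\wideparen{U}$.

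Next, for each $(n,m)\in S$, Lemma \ref{Lemma 2 embedding Theorem} supplies a canonical $\widehat{E}_{n,m}$-linear isomorphism $\psi_{n,m}\colon\mathcal{M}^b_{n,m}\to\mathcal{M}_n$, induced by the projection $\mathcal{M}\to\mathcal{M}_n$, where $\mathcal{M}_n$ carries the $\widehat{E}_{n,m}$-structure extending its $\widehat{U}_n\otimes_K\widehat{U}_m^{\op}$-action; and by Lemma \ref{Lemma 1 embedding Theorem} the image of $\mathcal{M}$ is dense in $\mathcal{M}^b_{n,m}$. I would then check that the $\psi_{n,m}$ are compatible with the transition maps of both inverse systems: for $(n,m)\le(n',m')$ in $S$, the two continuous maps $\mathcal{M}^b_{n',m'}\to\mathcal{M}_n$ obtained by going around the evident square both restrict, on the dense image of $\mathcal{M}$, to the canonical projection $\mathcal{M}\to\mathcal{M}_n$, hence coincide since $\mathcal{M}_n$ is Hausdorff. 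Passing to the inverse limit over $S$ and using the identification $\varprojlim_{(n,m)\in S}\mathcal{M}_n=\mathcal{M}$ from the previous paragraph yields a canonical isomorphism $\mathcal{M}\cong\varprojlim_{n,m}\mathcal{M}^b_{n,m}$. It remains to note that this isomorphism is $\wideparen{E}$-linear: the canonical maps $\mathcal{M}\to\mathcal{M}^b_{n,m}$, $x\mapsto 1\otimes x$, carry the $V$-action to the $\widehat{E}_{n,m}$-action along $V\to\widehat{E}_{n,m}$ (since $\bar a\otimes x=1\otimes ax$ for $a\in V$), and since $V$ is dense in $\wideparen{E}$, the $\widehat{E}_{n,m}$-action on the finite module $\mathcal{M}^b_{n,m}$ is separately continuous, and the $\wideparen{E}$-module structure on $\mathcal{M}$ is the unique continuous extension of its $V$-module structure, the map $\mathcal{M}\to\varprojlim_{n,m}\mathcal{M}^b_{n,m}$ is $\wideparen{E}$-linear.

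Finally, co-admissibility of $\mathcal{M}$ over $\wideparen{E}$ follows from the coherent-sheaf criterion for co-admissible modules over a Fréchet-Stein algebra (\cite[Section 3]{schneider2002algebras}): each $\mathcal{M}^b_{n,m}=\widehat{E}_{n,m}\otimes_V\mathcal{M}$ is finite over $\widehat{E}_{n,m}$ by Lemma \ref{Lemma 1 embedding Theorem}; the transition maps satisfy base change, since transitivity of the tensor product gives
$$\widehat{E}_{n,m}\otimes_{\widehat{E}_{n',m'}}\mathcal{M}^b_{n',m'}=\widehat{E}_{n,m}\otimes_{\widehat{E}_{n',m'}}\bigl(\widehat{E}_{n',m'}\otimes_V\mathcal{M}\bigr)=\widehat{E}_{n,m}\otimes_V\mathcal{M}=\mathcal{M}^b_{n,m}$$
for $(n,m)\le(n',m')$; and $\mathcal{M}\cong\varprojlim_{n,m}\mathcal{M}^b_{n,m}$ by the previous step. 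Hence $(\mathcal{M}^b_{n,m})$ is the coherent sheaf of the co-admissible $\wideparen{E}$-module $\mathcal{M}$, and in particular $\widehat{E}_{n,m}\otimes_{\wideparen{E}}\mathcal{M}=\mathcal{M}^b_{n,m}$. The main obstacle is the naturality bookkeeping in the second paragraph — confirming that the a priori choice-dependent identifications of Lemma \ref{Lemma 2 embedding Theorem} assemble into a single well-defined isomorphism of $\wideparen{E}$-modules, compatible with all transition maps and with the $\wideparen{E}$-action; everything else reduces to cofinality arguments and direct applications of Lemmas \ref{Lemma 1 embedding Theorem}--\ref{Lemma 2 embedding Theorem} and Corollary \ref{coro frechet-stein presentation of bi-enveloping algebra}.
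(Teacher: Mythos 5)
Your proposal is correct and follows essentially the same route as the paper: both arguments use Lemma \ref{Lemma 1 embedding Theorem} for finiteness and base change of the $\mathcal{M}^b_{n,m}$, and Lemma \ref{Lemma 2 embedding Theorem} together with a cofinality argument (your set $S$ versus the paper's choice of a minimal $m(n)$ for each $n$) to identify $\varprojlim_{n,m}\mathcal{M}^b_{n,m}$ with $\varprojlim_n\mathcal{M}_n=\mathcal{M}$. Your additional bookkeeping on compatibility of the identifications with the transition maps and on $\wideparen{E}$-linearity is detail the paper leaves implicit, not a different method.
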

\begin{proof}
 By Lemma \ref{Lemma 1 embedding Theorem}, each of the 
 $\widehat{E}_{n,m}$-modules $\mathcal{M}^b_{n,m}$ is finitely generated. Furthermore, by construction, we have:
\begin{equation*}
  \mathcal{M}^b_{n,m}=\widehat{E}_{n,m}\otimes_{\widehat{E}_{n+1,m}}\mathcal{M}^b_{n+1,m}, \quad\mathcal{M}^b_{n,m}=\widehat{E}_{n,m}\otimes_{\widehat{E}_{n,m+1}}\mathcal{M}^b_{n,m+1}.
\end{equation*}
Hence, the $\wideparen{E}$-module $\varprojlim_{n,m}\mathcal{M}^b_{n,m}$
is a co-admissible $\wideparen{E}$-module. We just need to show that this inverse limit is isomorphic to $\mathcal{M}$ as an $\wideparen{E}$-module.\\
As $\mathcal{M}$ is a co-admissible $\wideparen{U}$-bimodule, the discussion at the beginning of the section shows that for each $n\geq 0$ there is a minimal $m(n)\geq 0$ such that we have a continuous factorization:
\begin{equation*}
     \widehat{U}_{m(n)}^{\op}\rightarrow \operatorname{End}_{\widehat{U}_n}(\mathcal{M}_n).
 \end{equation*}
 Hence, by Lemma \ref{Lemma 2 embedding Theorem} we have an isomorphism  $\mathcal{M}^b_{n,m(n)}\rightarrow \mathcal{M}_n$. As the family $\left( \mathcal{M}^b_{n,m(n)}\right)_{n\geq 0}$ is co-initial in $\left( \mathcal{M}^b_{n,m}\right)_{n,m\geq 0}$, it follows that we have:
 \begin{equation*}
     \varprojlim_{n,m}\mathcal{M}^b_{n,m}=\varprojlim_{n}\mathcal{M}^b_{n,m(n)}=\varprojlim_{n}\mathcal{M}_{n}=\mathcal{M}.
 \end{equation*}
 Thus showing that $\mathcal{M}$ is a co-admissible $\wideparen{E}$-module.
\end{proof}
\begin{coro}\label{coro topologies of bi-coadmissible and E-coadmissible}
The topologies of $\mathcal{M}$ as a co-admissible $\wideparen{U}$-module and as a co-admissible $\wideparen{E}$-module agree.
\end{coro}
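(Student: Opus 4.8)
The plan is to upgrade the \emph{algebraic} identification obtained in the proof of Proposition \ref{prop bi-coadmissible to E-coadmissible} to a topological one. Recall that the topology of $\mathcal{M}$ as a co-admissible $\wideparen{U}$-module is the inverse limit topology on $\varprojlim_n \mathcal{M}_n$, while its topology as a co-admissible $\wideparen{E}$-module is the inverse limit topology on $\varprojlim_{n,m} \mathcal{M}^b_{n,m}$. Since an inverse limit over a co-initial subsystem carries the same limit topology as the full inverse limit, and since the proof of Proposition \ref{prop bi-coadmissible to E-coadmissible} exhibits $\left(\mathcal{M}^b_{n,m(n)}\right)_{n\geq 0}$ as a co-initial subsystem of $\left(\mathcal{M}^b_{n,m}\right)_{n,m\geq 0}$ (with $m(n)$ minimal admitting the continuous factorization $\widehat{U}^{\op}_{m(n)}\to \operatorname{End}_{\widehat{U}_n}(\mathcal{M}_n)$), it suffices to check that for each $n$ the isomorphism $\mathcal{M}^b_{n,m(n)} \xrightarrow{\sim} \mathcal{M}_n$ of Lemma \ref{Lemma 2 embedding Theorem} is a homeomorphism, compatibly with the transition maps.

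First I would note that this isomorphism is $\widehat{E}_{n,m(n)}$-linear, where $\mathcal{M}_n$ is viewed as an $\widehat{E}_{n,m(n)}$-module via the factorization above. As both $\mathcal{M}^b_{n,m(n)}$ and $\mathcal{M}_n$ are finite modules over the two-sided noetherian Banach $K$-algebra $\widehat{E}_{n,m(n)}$ (cf. the Fréchet-Stein presentation in Corollary \ref{coro frechet-stein presentation of bi-enveloping algebra}), they carry canonical Banach topologies for which module homomorphisms are automatically continuous; hence the isomorphism is a homeomorphism for the $\widehat{E}_{n,m(n)}$-module topology on $\mathcal{M}_n$. It then remains to compare that topology with the topology of $\mathcal{M}_n$ as a finite $\widehat{U}_n$-module (the one defining the co-admissible $\wideparen{U}$-module structure). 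Here I would invoke Lemma \ref{lemma topology and subalgebras}, applied to the closed subalgebra $\widehat{U}_n=\widehat{U}_n\otimes_K 1 \subset \widehat{U}_n\widehat{\otimes}_K\widehat{U}^{\op}_{m(n)}=\widehat{E}_{n,m(n)}$ — the embedding being closed because $1$ spans a complemented line in $\widehat{U}^{\op}_{m(n)}$, e.g. via the PBW basis — which gives that these two topologies on $\mathcal{M}_n$ coincide.

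Combining these identifications, the topological inverse system $\left(\mathcal{M}^b_{n,m(n)}\right)_n$ agrees with $\left(\mathcal{M}_n\right)_n$, compatibly with transition maps by functoriality of the isomorphisms in Lemma \ref{Lemma 2 embedding Theorem}; passing to the co-initial subsystem then yields the equality of Fréchet topologies $\varprojlim_{n,m}\mathcal{M}^b_{n,m}=\varprojlim_n\mathcal{M}_n$ asserted in the corollary. I expect the only genuinely non-formal step to be the verification that $\widehat{U}_n$ sits inside $\widehat{E}_{n,m(n)}$ as a \emph{closed} subalgebra so that Lemma \ref{lemma topology and subalgebras} applies; everything else is a routine manipulation of inverse limits together with the uniqueness of the canonical topology on finite modules over a noetherian Banach $K$-algebra.
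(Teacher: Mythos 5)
Your proof is correct and follows essentially the same route as the paper's: identify $\mathcal{M}^b_{n,m(n)}$ with $\mathcal{M}_n$ via Lemma \ref{Lemma 2 embedding Theorem}, compare the $\widehat{U}_n$- and $\widehat{E}_{n,m(n)}$-module topologies on $\mathcal{M}_n$ via Lemma \ref{lemma topology and subalgebras}, and conclude by the co-initiality of the subsystem $\left(\mathcal{M}^b_{n,m(n)}\right)_{n}$ established in Proposition \ref{prop bi-coadmissible to E-coadmissible}. Your extra care in justifying that $\widehat{U}_n\otimes_K 1$ is closed in $\widehat{E}_{n,m(n)}$ (a hypothesis of Lemma \ref{lemma topology and subalgebras} that the paper applies without comment) is a welcome refinement rather than a deviation.
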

\begin{proof}
By Lemma \ref{Lemma 2 embedding Theorem}, for each $n\geq 0$, there is some $m(n)\geq 0$ such that $\mathcal{M}^b_{n,m(n)}=\mathcal{M}_n$ as $\widehat{E}_{n,m(n)}$-modules. By Lemma \ref{lemma topology and subalgebras}, the topologies on $\mathcal{M}_n$ induced by $\widehat{U}_n$ and $\widehat{E}_{n,m(n)}$ agree. By Proposition \ref{prop bi-coadmissible to E-coadmissible}, we have: $\mathcal{M}=\varprojlim_{n}\mathcal{M}^b_{n,m(n)}$. As the topologies of $\mathcal{M}$ as a co-admissible $\wideparen{E}$-module and $\wideparen{U}$-module are determined by this inverse limit, it follows that they agree. 
\end{proof}
We may condense the contents of this section into the following theorem:
\begin{teo}\label{teo embedding co-admissible bimodules}
There is an embedding of abelian categories:
\begin{equation*}
\mathcal{C}^{\operatorname{Bi}}_L(\wideparen{U}) \rightarrow \mathcal{C}(\wideparen{E}),
\end{equation*}
which satisfies the following properties:
\begin{enumerate}[label=(\roman*)]
    \item For $\mathcal{M}\in \mathcal{C}_L^{\operatorname{Bi}}(\wideparen{U})$, the canonical topologies of $\mathcal{M}$ as a co-admissible $\wideparen{U}$-module and as a co-admissible $\wideparen{E}$-module agree.
    \item $\mathcal{C}_L^{\operatorname{Bi}}(\wideparen{U})$ is a Serre subcategory of $\mathcal{C}(\wideparen{E})$.
\end{enumerate}
The corresponding results for $\mathcal{C}^{\operatorname{Bi}}_R(\wideparen{U}(L))$ hold mutatis-mutandis.
\end{teo}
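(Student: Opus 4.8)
The plan is to recognise that the theorem mostly repackages results already established in this section, the one genuinely new point being the Serre property in $(ii)$. I would take the functor to be the inclusion. An object $\mathcal{M}\in\mathcal{C}_L^{\operatorname{Bi}}(\wideparen{U}(L))$ is by definition an object of $\Mod_{\widehat{\mathcal{B}}c}(\wideparen{E})$ that is co-admissible as a left $\wideparen{U}$-module, and Proposition~\ref{prop bi-coadmissible to E-coadmissible} shows that any such $\mathcal{M}$ is automatically co-admissible over $\wideparen{E}$, hence lies in $\mathcal{C}(\wideparen{E})$. Full faithfulness is immediate: both categories are full subcategories of $\Mod_{\widehat{\mathcal{B}}c}(\wideparen{E})$, and (by Schneider--Teitelbaum) every abstract module homomorphism between co-admissible modules over a Fréchet--Stein algebra is automatically continuous, so the two descriptions of the morphism sets agree. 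Property $(i)$ is precisely Corollary~\ref{coro topologies of bi-coadmissible and E-coadmissible}.

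For $(ii)$ I would first record the description
\[
\mathcal{C}_L^{\operatorname{Bi}}(\wideparen{U}(L)) \;=\; \bigl\{\, \mathcal{M}\in\mathcal{C}(\wideparen{E})\ :\ \mathcal{M}\text{ is co-admissible over }\wideparen{U}\text{ via }\wideparen{U}\to\wideparen{E},\ u\mapsto u\otimes 1 \,\bigr\},
\]
which follows from Proposition~\ref{prop bi-coadmissible to E-coadmissible}, Corollary~\ref{coro topological equivalences to co-admissible bi-modules} and uniqueness of the canonical Fréchet topology on a co-admissible module. Since restriction of scalars along $\wideparen{U}\to\wideparen{E}$ is exact, and a short exact sequence in $\mathcal{C}(\wideparen{E})$ is exact on the underlying (Fréchet) $\wideparen{E}$-modules, closure under extensions reduces to the standard fact that $\mathcal{C}(\wideparen{U})$ is closed under extensions inside $\Mod(\wideparen{U})$. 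Once closure under subobjects is known, closure under quotients follows for free, since a cokernel $\mathcal{M}/\mathcal{M}'$ of a monomorphism $\mathcal{M}'\hookrightarrow\mathcal{M}$ of co-admissible $\wideparen{U}$-modules is again a co-admissible $\wideparen{U}$-module.

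The crux is therefore closure under subobjects: given a monomorphism $\mathcal{M}'\hookrightarrow\mathcal{M}$ in $\mathcal{C}(\wideparen{E})$ with $\mathcal{M}\in\mathcal{C}_L^{\operatorname{Bi}}(\wideparen{U}(L))$, one must show $\mathcal{M}'$ is co-admissible over $\wideparen{U}$. I would argue at the finite levels of the Fréchet--Stein presentation $\wideparen{E}=\varprojlim_s\widehat{E}_{n+s,m+s}$: exactness of $\widehat{E}_{n,m}\otimes_{\wideparen{E}}-$ on co-admissible $\wideparen{E}$-modules gives injections $(\mathcal{M}')^b_{n,m}\hookrightarrow\mathcal{M}^b_{n,m}$ of finite $\widehat{E}_{n,m}$-modules, and, arguing as in the proof of Proposition~\ref{prop bi-coadmissible to E-coadmissible} via Lemma~\ref{Lemma 2 embedding Theorem}, for each $n$ and each $m$ at least the minimal $m(n)$ one has an identification $\mathcal{M}^b_{n,m}\cong\mathcal{M}_n=\widehat{U}_n\otimes_{\wideparen{U}}\mathcal{M}$ of $\widehat{U}_n$-modules. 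Since $\widehat{U}_n$ is two-sided Noetherian and $\mathcal{M}_n$ is finite over it, each $(\mathcal{M}')^b_{n,m}$ is a finite $\widehat{U}_n$-submodule of $\mathcal{M}_n$; writing $\mathcal{M}'_n:=(\mathcal{M}')^b_{n,m(n)}$ and unwinding the transition maps through Lemma~\ref{Lemma extension-restriction for bi-modules} (so that $\widehat{E}_{n,m}\otimes_{\widehat{E}_{n+1,m}}-$ becomes $\widehat{U}_n\otimes_{\widehat{U}_{n+1}}-$ after restriction), one obtains $\widehat{U}_n\otimes_{\widehat{U}_{n+1}}\mathcal{M}'_{n+1}\cong\mathcal{M}'_n$ and $\mathcal{M}'=\varprojlim_n\mathcal{M}'_n$, so $\mathcal{M}'$ is co-admissible over $\wideparen{U}$ by the Fréchet--Stein criterion. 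The book-keeping matching the $\widehat{E}_{n,m}$-level base-change functors with the $\widehat{U}_n$-level ones and tracking which index $m$ realises $\mathcal{M}^b_{n,m}\cong\mathcal{M}_n$ is where I expect the real difficulty to lie; everything else is formal. Finally, the claim for $\mathcal{C}_R^{\operatorname{Bi}}(\wideparen{U}(L))$ follows by running the same argument with $\wideparen{U}$ replaced by $\wideparen{U}^{\op}$ and the two tensor factors of $\wideparen{E}$ interchanged.
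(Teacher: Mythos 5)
Your overall architecture (the functor is the inclusion, item $(i)$ is Corollary \ref{coro topologies of bi-coadmissible and E-coadmissible}, and closure under extensions is done level-wise via flatness of $\wideparen{U}\rightarrow\widehat{U}_n$, noetherianity and Mittag--Leffler) agrees with the paper. The problem is that the one step you yourself identify as ``the crux'' --- closure under subobjects --- is exactly the step you do not carry out, and the level-wise strategy you sketch for it has a concrete obstruction. For the ambient module $\mathcal{M}$, Lemma \ref{Lemma 2 embedding Theorem} gives $\mathcal{M}^b_{n,m}\cong\mathcal{M}_n=\widehat{U}_n\otimes_{\wideparen{U}}\mathcal{M}$ once $m\geq m(n)$, because the right action on $\mathcal{M}_n$ is already continuous for $\widehat{U}_{m(n)}^{\op}$. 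For the subobject $\mathcal{M}'$ you have no such statement: all you know is $(\mathcal{M}')^b_{n,m}\cong\widehat{U}_n\otimes_{\wideparen{U}}\mathcal{M}'\otimes_{\wideparen{U}}\widehat{U}_m$ (Lemma \ref{Lemma extension-restriction for bi-modules}), which need not coincide with $\widehat{U}_n\otimes_{\wideparen{U}}\mathcal{M}'$, and the transition map between your proposed terms $\mathcal{M}'_{n}:=(\mathcal{M}')^b_{n,m(n)}$ and $\mathcal{M}'_{n+1}:=(\mathcal{M}')^b_{n+1,m(n+1)}$ is not $\widehat{U}_n\otimes_{\widehat{U}_{n+1}}-$ but carries an extra right base change $-\otimes_{\widehat{U}_{m(n+1)}}\widehat{U}_{m(n)}$ whenever $m(n)\neq m(n+1)$. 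So the isomorphism $\widehat{U}_n\otimes_{\widehat{U}_{n+1}}\mathcal{M}'_{n+1}\cong\mathcal{M}'_n$ needed for the Fr\'echet--Stein criterion is precisely what is missing, not mere book-keeping. (One would also have to check that the family $(n,m(n))$ is cofinal in the system computing $\mathcal{M}'=\varprojlim(\mathcal{M}')^b_{n,m}$.)

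The paper avoids all of this with a topological argument: since every morphism of co-admissible $\wideparen{E}$-modules is strict, $\mathcal{M}'$ is a \emph{closed} submodule of $\mathcal{M}$ for its canonical Fr\'echet topology as a co-admissible $\wideparen{E}$-module; by item $(i)$ this topology coincides with the canonical topology of $\mathcal{M}$ as a co-admissible $\wideparen{U}$-module, so $\mathcal{M}'$ is a closed $\wideparen{U}$-submodule and Schneider--Teitelbaum's Lemma 3.6 immediately gives that $\mathcal{M}'$ and $\mathcal{M}/\mathcal{M}'$ are co-admissible over $\wideparen{U}$. This is the missing idea: item $(i)$ is not just a statement to be recorded, it is the tool that makes the Serre property a one-line consequence of the classical theory. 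I would either adopt this route or, if you insist on the level-wise argument, supply the stabilisation statement for $(\mathcal{M}')^b_{n,m}$ in $m$ (e.g.\ by exhibiting all of them as submodules of the fixed noetherian module $\mathcal{M}_n$ and showing the inclusions are eventually equalities); as written, the proof is incomplete at its central point.
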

\begin{proof}
As item $(i)$ is Corollary \ref{coro topologies of bi-coadmissible and E-coadmissible}, we only need to show item $(ii)$. Consider the following short exact sequence of co-admissible $\wideparen{E}$-modules:
    \begin{equation*}
        0\rightarrow \mathcal{M}_1 \rightarrow \mathcal{M}_2\rightarrow \mathcal{M}_3\rightarrow 0.
    \end{equation*}
    As morphisms of co-admissible modules are strict, it follows that $\mathcal{M}_1$ is a closed submodule of $\mathcal{M}_2$, and that it has the subspace topology. Similarly, $\mathcal{M}_3$ carries the quotient topology.\\
Assume $\mathcal{M}_2$ is in  $\mathcal{C}_L^{\operatorname{Bi}}(\wideparen{U})$. By item $(i)$ and the discussion above, $\mathcal{M}_1$ is a closed $\wideparen{U}$-submodule of $\mathcal{M}_2$ with respect to its topology as a co-admissible $\wideparen{U}$-module. Hence,  
by  \cite[Lemma 3.6]{schneider2002algebras}, it follows that $\mathcal{M}_1$ and $\mathcal{M}_3$ are co-admissible $\wideparen{U}$-modules.\\ 
Conversely, assume that $\mathcal{M}_1$ and $\mathcal{M}_3$ are objects of $\mathcal{C}^{\operatorname{Bi}}_L(\wideparen{U})$. By \cite[Remark 3.2]{schneider2002algebras}, the map $\wideparen{U}\rightarrow \widehat{U}_n$ is two- sided flat for all $n\geq 0$. Hence, tensoring by $\widehat{U}_n$ we obtain a family of short exact sequences of $\widehat{U}_n$-modules:
\begin{equation*} 
        0\rightarrow \mathcal{M}_{1,n} \rightarrow \mathcal{M}_{2,n}\rightarrow \mathcal{M}_{3,n}\rightarrow 0,
\end{equation*}
where $\mathcal{M}_{1,n}$ and $\mathcal{M}_{3,n}$ are finite $\widehat{U}_n$-modules by assumption. As $\widehat{U}_n$ is two-sided noetherian, it follows that $\mathcal{M}_{2,n}$ is also finite. Taking inverse limits, and using the fact that inverse limits associated to coherent sheaves of  $\wideparen{U}$-modules satisfy the Mittag-Leffler condition, it follows that we have a short exact sequence:
\begin{equation*}
    0\rightarrow \mathcal{M}_1 \rightarrow \varprojlim_n \mathcal{M}_{2,n}\rightarrow \mathcal{M}_3\rightarrow 0.
\end{equation*}
By the Snake Lemma, it follows that we have a $\wideparen{U}$-linear isomorphism:
\begin{equation*}
    \psi:\mathcal{M}_2\rightarrow \varprojlim_n \mathcal{M}_{2,n}.
\end{equation*}
This is a continuous isomorphism, as continuity of the map $\wideparen{U}\rightarrow \widehat{U}_n$ implies that the maps $\mathcal{M}_2\rightarrow \mathcal{M}_{2,n}$ are also continuous. Thus, $\mathcal{M}_2$ is a co-admissible $\wideparen{U}$-module, and hence an object of $\mathcal{C}^{\operatorname{Bi}}_L(\wideparen{U})$. 
\end{proof}
\subsection{Co-admissibility and the immersion functor I}
Now we will make use of the results in the preceding section to construct sheaves of co-admissible $\wideparen{E}(\mathscr{L})$-modules with support contained in the diagonal $\Delta\subset X^2$. As we will make extensive use of Kashiwara's equivalence (cf. \cite[Theorem 7.1]{ardakov2015d}, \cite[Proposition 9.5]{bode2021operations}), we will focus on the case $\mathscr{L}=\mathcal{T}_{X/K}$. In this case we have $\mathcal{T}_{X/K}^2=\mathcal{T}_{X^2/K}$, so that we have:
\begin{equation*}
    \wideparen{U}(\mathcal{T}_{X/K}^2)=\wideparen{\D}_{X^2}, \quad \wideparen{E}_X:=\wideparen{E}(\mathcal{T}_{X/K}).
\end{equation*}

As usual, we deal with the local case first. Let $X=\Sp(A)$ be a smooth affinoid variety equipped with an étale map $X\rightarrow \mathbb{A}^n$. In particular, there are some rigid functions $f_1,\cdots, f_n\in A$ such that $df_1,\cdots,df_n$ are an $A$-basis of $\Omega_{X}(X)$. Hence, there is a basis $v_1,\cdots,v_n$ of $\mathcal{T}_{X/K}$ satisfying $df_i(v_j)=\delta_{i,j}$, for $1\leq i,j\leq n$.
Notice that in this case, $\mathcal{T}_{X/K}$ satisfies the conditions of Section \ref{Section Categories of co-admissible bimodules}.\bigskip 

Let $\mathcal{I}_{\Delta}$ be the coherent sheaf of ideals of $\OX_{X^2}$ associated to the diagonal embedding $\Delta:X\rightarrow X^2$. By abuse of notation, we will often times identify $\mathcal{I}_{\Delta}$ with its global sections, and simply write $\mathcal{I}_{\Delta}=\mathcal{I}_{\Delta}(X^2)$ when no confusion is possible. We now introduce some machinery from \cite{ardakov2015d}:
\begin{defi}[{\cite[Definition 5.2]{ardakov2015d}}]
Let $I$ be an ideal in a commutative $K$-algebra $B$ and let $L$ be a $(K,B)$-Lie algebra. We call a subset $\{x_1,\cdots,x_n\}\subset L$ an $I$-standard basis if the following hold:
\begin{enumerate}[label=(\roman*)]
    \item $\{x_1,\cdots,x_n\}$ is a $B$-module basis of $L$
    \item There is a generating set $\{g_1,\cdots,g_r\}$ of $I$ with $r\leq n$.
    \item $x_i\cdot g_j=\delta_{i,j}$ for $1\leq i \leq n$, and $1\leq j\leq r$.
\end{enumerate}
\end{defi}
We can now adapt this concept to our case of interest:
\begin{Lemma}\label{Lemma support on the diagonal}
Consider the following ideals for $1\leq r\leq n$:
\begin{equation*}
    \mathcal{J}_r=\left(1\otimes f_1-f_1\otimes 1,\cdots, 1\otimes f_r-f_r\otimes 1 \right),
\end{equation*}
and let  $\mathcal{J}=\mathcal{J}_n$. The following hold:
 \begin{enumerate}[label=(\roman*)]
     \item The following subset is a $\mathcal{J}$-standard basis:
     \begin{equation*}
         \mathcal{V}:=\{1\otimes v_1,\cdots, 1\otimes v_n,1\otimes v_1 + v_1\otimes 1, \cdots, 1\otimes v_n + v_n\otimes 1\}\subset \mathcal{T}_{X^2/K}(X).
     \end{equation*}
     \item Let $B_r=A\widehat{\otimes}_KA/\mathcal{J}_r$. Then $Y_r=\Sp(B_r)$ is a smooth affinoid space, and we write $Y=Y_n$.
     \item We have a closed immersion $i:X\rightarrow Y$ making $X$ a union of connected components of $Y$.
 \end{enumerate}
\end{Lemma}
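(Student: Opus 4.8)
<br>

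The plan is to verify the three assertions in sequence, since each builds on the previous one. The main tool will be the étale coordinate structure on $X$: the functions $f_1,\dots,f_n$ and the dual vector fields $v_1,\dots,v_n$, together with the product description $\mathcal{T}_{X^2/K}(X) = \Der_K(A)\widehat{\otimes}_KA \oplus A\widehat{\otimes}_K\Der_K(A)$ from Lemma \ref{Lemma derivations on fiber products and completions}. Throughout I will write $p_1, p_2$ for the two projections $A \to A\widehat{\otimes}_KA$.

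First, for $(i)$: the stated set $\mathcal{V}$ has $2n$ elements, and I would check it is an $A\widehat{\otimes}_KA$-module basis of $\mathcal{T}_{X^2/K}(X)$ by noting that the family $\{1\otimes v_1,\dots,1\otimes v_n\}$ spans the summand $A\widehat{\otimes}_K\Der_K(A)$ (after base change), while the family $\{v_1\otimes 1,\dots,v_n\otimes 1\}$ spans $\Der_K(A)\widehat{\otimes}_K A$; replacing the latter by $\{1\otimes v_i + v_i\otimes 1\}$ is an invertible change of basis. Next I would verify the derivation formula: for the generators $g_j = 1\otimes f_j - f_j\otimes 1$ of $\mathcal{J}$, one computes $(1\otimes v_i)(g_j) = 1\otimes v_i(f_j) = \delta_{i,j}$ since $v_i$ annihilates $f_j$ for $i\neq j$ and $df_j(v_j)=1$, while the derivation $1\otimes v_i + v_i\otimes 1$ kills $g_j$ because $(1\otimes v_i)(1\otimes f_j) = (v_i\otimes 1)(f_j\otimes 1) = \delta_{i,j}\otimes$-scalars cancel in the difference. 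Thus the first $n$ elements of $\mathcal{V}$ and the generators $\{g_1,\dots,g_n\}$ witness the $\mathcal{J}$-standard basis property with $r=n$.

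For $(ii)$: $B_r = A\widehat{\otimes}_KA/\mathcal{J}_r$ is the quotient cutting out the locus where $f_1,\dots,f_r$ agree on the two factors. I would argue smoothness by observing that $X\times X \to \mathbb{A}^n\times\mathbb{A}^n$ is étale, $Y_r$ is the preimage of the closed subscheme of $\mathbb{A}^n\times\mathbb{A}^n$ cut out by $T_i - S_i$ for $1\le i\le r$ (coordinates $S,T$ on the two copies), and that subscheme is itself $\mathbb{A}^{2n-r}$, hence smooth; étale pullback of a smooth affinoid is smooth affinoid. Alternatively one checks directly that $\Omega^1_{B_r/K}$ is locally free of rank $2n-r$ using the conormal sequence, the key point being that $d(1\otimes f_i - f_i\otimes 1)$ for $1\le i\le r$ are part of a basis of $\Omega^1_{X^2/K}$, which follows from the decomposition $\Omega^1_{X^2/K} = p_1^*\Omega^1_X\oplus p_2^*\Omega^1_X$ and the fact that $\{df_i\}$ is an $A$-basis of $\Omega^1_X(X)$.

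For $(iii)$: the closed immersion $i\colon X\to Y = Y_n$ is induced by the surjection $B_n \to A$ coming from the multiplication/diagonal map $A\widehat{\otimes}_KA \to A$, which visibly factors through $\mathcal{J}$. To see that $X$ is a union of connected components of $Y$, I would show $i$ is also an open immersion, equivalently that $i$ is étale; since $i$ is a closed immersion this forces the image to be open and closed. One way: both $X$ and $Y$ have dimension $n$ (by $(ii)$, $\dim Y = 2n - n = n$), $i$ is a closed immersion of smooth rigid spaces of the same dimension, hence an isomorphism onto a union of connected components — this is the standard fact that a closed immersion between smooth spaces of equal dimension is an open-and-closed immersion. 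Alternatively, compute that $\mathcal{J}/\mathcal{J}^2$ is free of rank $n$ over $A$ generated by the classes of the $g_j$, that these classes map to a basis of the conormal bundle, so the closed immersion is regular of codimension $n$ inside an $n$-dimensional ambient space near $X$, forcing it to be open. The main obstacle here is being careful that the ambient space $Y$ may have other components away from $X$ (indeed it does, as $Y$ is a "partial diagonal" which need not be connected after étale base change), so the argument must be genuinely local around $X$ and not assert $X = Y$; the cleanest route is the dimension-count via $(ii)$ combined with smoothness of both sides.
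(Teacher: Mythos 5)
Your proposal is correct, and for parts $(i)$ and $(iii)$ it follows essentially the paper's own argument: the invertible change of basis from $\{1\otimes v_i,\,v_i\otimes 1\}$ to $\mathcal{V}$, the explicit pairing computation $(1\otimes v_i)(g_j)=\delta_{i,j}$ and $(1\otimes v_i+v_i\otimes 1)(g_j)=0$, and then for $(iii)$ the dimension count $\dim Y=2n-n=\dim X$ combined with smoothness (hence reducedness, and irreducible components $=$ connected components) to conclude that $X$ is a union of connected components of $Y$ — including the correct caveat that one must not claim $X=Y$.

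The only genuine divergence is in $(ii)$. Your primary route — base-changing the étale map $X^2\to\mathbb{A}^{2n}$ along the linear subspace $\{T_i=S_i\}_{1\le i\le r}\cong\mathbb{A}^{2n-r}$ and invoking that an affinoid étale over a smooth space is smooth — is different from the paper's argument, which instead exhibits an explicit splitting of the conormal sequence
\begin{equation*}
    \mathcal{J}_r/\mathcal{J}_r^2\rightarrow B_r\otimes_{A\widehat{\otimes}_KA}\Omega_{X^2/K}(X^2)\rightarrow \Omega_{Y_r/K}(Y_r)\rightarrow 0
\end{equation*}
using the basis $\{1\otimes df_j-df_j\otimes 1,\ 1\otimes df_j\}$ of $\Omega_{X^2/K}(X^2)$. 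Both work; the étale base-change argument is shorter, while the paper's computation has the side benefit of directly identifying $\Omega_{Y_r/K}$ as locally free of rank $2n-r$, which is exactly the input needed for the dimension count in $(iii)$ (your version recovers this as $\dim\mathbb{A}^{2n-r}$, which is fine). Your secondary route for $(ii)$ via the conormal sequence is precisely the paper's.
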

\begin{proof}
By Lemma \ref{Lemma derivations on fiber products and completions}, we know that the choice of an $A$-basis $v_1,\cdots,v_n$ of $\mathcal{T}_{X/K}(X)$ induces a basis $1\otimes v_1,\cdots,1\otimes v_n,v_1\otimes 1,\cdots, v_n\otimes 1$ of $\mathcal{T}_{X^2/K}(X^2)$.  A simple calculation then shows that $\mathcal{V}$ is also an $A\widehat{\otimes}_KA$-basis of $\mathcal{T}_{X^2/K}(X^2)$. Furthermore, for $1\leq i\leq n$, and $1\leq j\leq n$, we have:
\begin{equation*}
    1\otimes v_i(1\otimes f_j- f_j\otimes 1)=\delta_{i,j}, \textnormal{ and } (1\otimes v_i+v_i\otimes 1)(1\otimes f_j- f_j\otimes 1)=0.
\end{equation*}
Thus, $\mathcal{V}$ is a $\mathcal{J}$-adapted basis of $\mathcal{T}_{X^2/K}(X^2)$, and $(i)$ holds.\\
For statement $(ii)$, fix $1\leq r\leq n$, and consider the standard  right-exact sequence of coherent $B$-modules associated  to the closed immersion $Y_r\rightarrow X^2$:
 \begin{equation}\label{equation showing that A^2/J is smooth}
     \mathcal{J}_r/\mathcal{J}_r^2\rightarrow B_r\otimes_{A\widehat{\otimes}_KA}\Omega_{X^2/K}(X^2)\rightarrow \Omega_{Y_r/K}(Y_r)\rightarrow 0.
 \end{equation}
 In order to show that $Y_r$ is smooth, it suffices to show that $(\ref{equation showing that A^2/J is smooth})$ is split and exact. By definition, the image in $B_r\otimes_{A\widehat{\otimes}_KA}\Omega_{X^2/K}(X^2)$  of the equivalence class in $\mathcal{J}_r/\mathcal{J}_r^2$ of each of the generators $1\otimes f_j-f_j\otimes 1$ is given by the expression:
\begin{equation*}
    1\otimes \left(1\otimes d(f_j)-d(f_j)\otimes 1 \right).
\end{equation*}
Applying \ref{Lemma derivations on fiber products and completions} again, it follows that $\Omega_{X^2/K}(X^2)$ has a $A\widehat{\otimes}_KA$-basis given by  elements of the form $1\otimes d(f_j), d(f_j)\otimes 1$ for $1\leq j\leq n$. Thus, applying the same argument as above, it follows that the elements of the form:
\begin{equation*}
    1\otimes d(f_j)-d(f_j)\otimes 1, 1\otimes d(f_j) \textnormal{, where } 1\leq j\leq n,
\end{equation*}
also form a basis of $\Omega_{X^2/K}(X^2)$. Thus, we obtain a split of the sequence $(\ref{equation showing that A^2/J is smooth})$  by defining a map:
\begin{equation*}
    B_r\otimes_{A\widehat{\otimes}_KA}\Omega_{X^2/K}(X^2)\rightarrow \mathcal{J}_r/\mathcal{J}_r^2,  \quad 1\otimes \left(1\otimes d(f_j)-d(f_j)\otimes 1 \right)\mapsto 1\otimes f_j-f_j\otimes 1,
\end{equation*}
for $1\leq j\leq r$, and sending the rest of the basis to $0$. Therefore $Y_r$ is smooth for each $1\leq r\leq n$.\\ 
 In order to show $(iii)$, let $\mathcal{I}_{\Delta}$ be the sheaf of ideals associated to the diagonal $\Delta:X\rightarrow X^2$. We notice that $\mathcal{J}\subset \mathcal{I}_{\Delta}$, so we have a closed immersion $X\rightarrow Y$. As $Y$ is smooth, its dimension agrees with  the rank of the vector bundle $\Omega_{Y/K}$. By our discussion above, $\mathcal{J}/\mathcal{J}^2$ is a vector bundle of rank $n$, and by smoothness of $X^2$, $\Omega_{X^2/K}$ is a vector bundle of rank $2n$. Thus, the fact that $(\ref{equation showing that A^2/J is smooth})$ is exact implies that $\operatorname{dim}(Y)=n$. As $\operatorname{dim}(X)=n$, and $X$ is smooth (in particular reduced), it follows that $X$ is a union of irreducible components of $Y$. However, as $Y$ is smooth, its irreducible components agree with its connected components, and we are done.
\end{proof}
Let us now introduce the following objects:
\begin{defi}[{\cite[Definitions 4.2 and 5.5]{ardakov2015d}}]
Let  $C$ be an affinoid algebra, $\mathcal{M}$ a complete bornological $C$-module and  $\mathcal{I}\subset C$ an ideal. Consider the following objects:
 \begin{enumerate}[label=(\roman*)]
     \item $\mathcal{M}[\mathcal{I}]=\{ m\in \mathcal{M} \textnormal{ } \vert \textnormal{ } cm=0 \textnormal{ for all } c\in \mathcal{I}\}$.
     \item $\mathcal{M}_{\infty}(\mathcal{I})=\{ m\in \mathcal{M} \textnormal{ } \vert \textnormal{ } \varinjlim_nc^nm=0 \textnormal{ for all } c\in \mathcal{I}\}$.
 \end{enumerate}   
\end{defi}
The relevance of the previous spaces is highlighted by the following theorem:
\begin{teo}[{\cite[Theorem 6.7]{ardakov2015d}}]\label{teo support in a closed subvariety}
Let $X=\Sp(A)$ be a smooth affinoid space with a Lie algebroid $\mathscr{L}$. Let $\mathcal{I}\subset \OX$ be the ideal associated to a closed immersion $Y\rightarrow X$. Assume $\mathscr{L}(X)$ has an $\mathcal{I}(X)$-standard basis, and let $\mathcal{M}$ be a co-admissible $\wideparen{U}(\mathscr{L})$-module. The following are equivalent:
\begin{enumerate}[label=(\roman*)]
    \item $\mathcal{M}$ is supported on $Y$.
    \item  $\mathcal{M}(X)=\mathcal{M}(X)_{\infty}(\mathcal{I}(X))$.
\end{enumerate}
\end{teo}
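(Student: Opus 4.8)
The plan is to reduce the statement to the finite Banach level provided by a Fr\'echet--Stein presentation and then exploit the commutation relations encoded in an $\mathcal{I}(X)$-standard basis. Since a standard basis is in particular an $A$-module basis of $\mathscr{L}(X)$, the latter is free, so we may fix a free $(\mathcal{R},\mathcal{A})$-Lie lattice $\mathcal{L}$ and write $\wideparen{U}(\mathscr{L}(X))=\varprojlim_n\widehat{U}_n$ with $\widehat{U}_n=\widehat{U}(\pi^n\mathcal{L})_K$ two-sided noetherian Banach $K$-algebras, and $\mathcal{M}(X)=\varprojlim_n M_n$ with $M_n$ finite over $\widehat{U}_n$, flat transition maps, and vanishing $R^1\varprojlim$. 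Because $\mathcal{M}(X)$ carries the inverse limit topology, for $c\in\mathcal{I}(X)$ we have $\varinjlim_kc^km=0$ in $\mathcal{M}(X)$ precisely when the image of $c^km$ tends to zero in every $M_n$; hence condition $(ii)$ is equivalent to: multiplication by each $c\in\mathcal{I}(X)$ is topologically nilpotent on every $M_n$. The structural input used throughout is that a standard basis $\{x_1,\dots,x_n\}$ comes with generators $g_1,\dots,g_r$ of $I:=\mathcal{I}(X)$ with $x_i\cdot g_j=\delta_{ij}$; since $[x,a]=\rho(x)(a)$ in $\wideparen{U}(\mathscr{L})$ for $a\in A$, this reads $[x_i,g_j]=\delta_{ij}$, so $x_1,\dots,x_r,g_1,\dots,g_r$ generate a Weyl-algebra-type subalgebra of $\wideparen{U}(\mathscr{L}(X))$ and of each $\widehat{U}_n$.

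For $(ii)\Rightarrow(i)$ it suffices to prove $\mathcal{M}(U)=0$ for every affinoid subdomain $U\subset X$ with $U\cap Y=\varnothing$. On such a $U$ the $g_i$ generate the unit ideal of $\OX_X(U)$. Restricting along $\wideparen{U}(\mathscr{L}(X))\to\wideparen{U}(\mathscr{L}(U))$ and using co-admissibility, $\mathcal{M}(U)$ is the inverse limit of finite modules $N_m$ over the two-sided noetherian Banach algebras of a Fr\'echet--Stein presentation of $\wideparen{U}(\mathscr{L}(U))$, and topological nilpotence of each $g_i$ on the $M_n$ is inherited by the $N_m$ by continuity of the action. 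The key lemma to establish is then that a topologically nilpotent $g_i$ is actually nilpotent on each $N_m$: the relation $[x_i,g_i]=1$ yields an invariant filtration on $N_m$ built from the $x_i$- and $g_i$-actions on which a Nakayama-type argument applies, using that finitely generated submodules over a noetherian Banach algebra are closed, so algebraic and topological nilpotence of the relevant quotient tower coincide. Granting this, $I^NN_m=0$ for $N\gg0$; since $I\cdot\OX_X(U)=\OX_X(U)$, this forces $N_m=0$, hence $\mathcal{M}(U)=0$.

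For $(i)\Rightarrow(ii)$ I would induct on the number $r$ of generators of $\mathcal{I}$, peeling off one transverse direction at a time using the standard basis. Assuming $\mathcal{M}$ is supported on $Y$, the base case $r=1$ is the crux: here $\mathcal{I}=(g_1)$ with $[x_1,g_1]=1$, so each $M_n$ is a module over a Weyl algebra over the functions constant along the $g_1$-direction, and the hypothesis says $\mathcal{M}$ vanishes on the admissible open $\{g_1\neq0\}$, which is covered by Laurent subdomains where $g_1$ is invertible with bounded inverse. One shows, via the localization functor of Proposition \ref{prop properties of Loc}, that localizing $M_n$ at $g_1$ kills it, and then, using finiteness over $\widehat{U}_n$ together with the Weyl relation $[x_1,g_1]=1$, deduces that $g_1^kM_n\to0$, i.e.\ $g_1$ is topologically nilpotent. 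For the inductive step one restricts to $V(g_r)$, which by the explicit computation behind Lemma \ref{Lemma support on the diagonal} is again smooth and inherits a standard basis of shorter length, applies the inductive hypothesis there, and transports the conclusion back. Finally one upgrades topological nilpotence of the finitely many generators $g_i$ to that of an arbitrary $c\in\mathcal{I}(X)$ by a routine estimate once each $g_i$ is nilpotent modulo each $\pi^s$ on each $M_n$.

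The main obstacle in both directions is the passage between the \emph{analytic} notion (topological nilpotence of the $\mathcal{I}$-action, formulated through completed inverse limits) and an \emph{algebraic} finiteness statement (honest nilpotence of each $g_i$ on the finite modules $M_n$ and $N_m$). This is exactly where the standard basis is indispensable: without the relations $[x_i,g_j]=\delta_{ij}$ the subspaces $g_iM_n$ are not even submodules and there is no Nakayama lemma to invoke, whereas with them one obtains a Weyl-algebra filtration and can leverage noetherianity. Everything else --- the reduction to the Banach level, the $R^1\varprojlim$ bookkeeping, the unit-ideal argument on subdomains disjoint from $Y$, and the smoothness/standard-basis bookkeeping in the induction --- is routine given the functional-analytic toolkit already in place (Banach's open mapping theorem, closedness of finitely generated submodules over noetherian Banach algebras, and the localization functor).
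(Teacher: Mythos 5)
First, note that the paper itself contains no proof of this statement: it is imported verbatim as Theorem~6.7 of Ardakov--Wadsley \cite{ardakov2015d} and used as a black box, so there is no internal argument to compare yours against. Measured against the proof in the cited source, your outline correctly locates where the difficulty sits --- the passage between topological nilpotence of the $\mathcal{I}$-action and a finiteness statement at the Banach level --- but the mechanism you propose for crossing that gap does not work.

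The problem is your ``key lemma'' that topological nilpotence of $g_i$ on a finite module over a noetherian Banach algebra upgrades, via the relation $[x_i,g_i]=1$ and a Nakayama-type filtration, to honest nilpotence. Even with the Weyl relation in place this is false. Take $X$ the closed unit disc with coordinate $t$, $\mathcal{I}=(t)$, standard basis $\{\partial_t\}$, and $\mathcal{M}=\wideparen{\D}_X/\wideparen{\D}_X\,t$ (a co-admissible module supported at the origin). The level-$n$ module $M_n=\widehat{U}(\pi^n\mathcal{T}_{\mathcal{A}})_K\otimes_{\wideparen{\D}_X(X)}\mathcal{M}(X)\cong K\langle\pi^n\partial_t\rangle\cdot\delta$ is cyclic, hence finite, and $t$ acts topologically nilpotently on it (from $t\partial_t^k\delta=-k\partial_t^{k-1}\delta$ one gets $\Vert t^jm\Vert\le|\pi|^{nj}\Vert m\Vert$), yet $t^N\ne0$ on $M_n$ for every $N$, since $t^N\bigl((\pi^n\partial_t)^k\delta\bigr)=\pm\tfrac{k!}{(k-N)!}\pi^{nN}(\pi^n\partial_t)^{k-N}\delta$ and we are in characteristic zero. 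So noetherianity plus the Weyl relation do not yield nilpotence; note also that at level $n$ the available relation is $[\pi^nx_i,g_j]=\pi^n\delta_{ij}$, not $\delta_{ij}$, so the classical Weyl-algebra torsion argument does not transfer verbatim. The invertibility of the $g_j$ on a subdomain $U$ disjoint from $Y$ must therefore be woven into the vanishing argument itself rather than invoked only at the end, and your sketch gives no indication of how. This is exactly what the cited proof does: its engine is the single-function criterion (Proposition~6.4 and Corollary~6.6 of \cite{ardakov2015d}) that the completed localization of $\mathcal{M}$ to $X(1/f)$ vanishes if and only if $\mathcal{M}(X)=\mathcal{M}(X)_{\infty}(f)$, proved by a direct analysis of that localization; the theorem then follows by covering $X\setminus Y$ by the Laurent domains $X(1/g_j)$, which also disposes of your direction $(i)\Rightarrow(ii)$ without the induction on $r$ --- an induction which, as you set it up, would need Kashiwara's equivalence, proved in the source only after, and from, this theorem.
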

As shown in the previous section, any bornological module $\mathcal{M}\in \mathcal{C}_L^{\operatorname{Bi}}(\wideparen{\D}_X(X))$ is canonically a co-admissible $\wideparen{E}_X(X^2)$-module. Hence, by Definition \ref{defi localization functor for co-admissible E(l)-modules}, we may consider the co-admissible $\wideparen{E}_X$-module $\operatorname{Loc}(\mathcal{M})$ on $X^2$. We are interested in finding conditions under which $\operatorname{Loc}(\mathcal{M})$ has support contained in the diagonal. As shown in Proposition \ref{prop equivalence of pullback and side-changing sheaves}, the side-changing operators $\wideparen{\mathbb{T}}^*$, and $\wideparen{\mathbb{T}}^{-1,*}$ do not change the support, and send co-admissible modules to co-admissible modules. Hence, $\operatorname{Loc}(\mathcal{M})$ is supported on the diagonal if and only if $\wideparen{\mathbb{T}}^{-1,*}(\operatorname{Loc}(\mathcal{M}))$ is a co-admissible $\wideparen{\D}_{X^2}$-module supported on the diagonal. Furthermore, the isomorphism:
\begin{equation*}
    \wideparen{\mathbb{T}}^{-1}: \wideparen{E}_X\rightarrow \wideparen{\D}_{X^2}
\end{equation*}
is the identity on $\OX_{X^2}$. In particular, we have the following identifications:
\begin{equation*}
  \operatorname{Loc}(\mathcal{M})=\wideparen{\mathbb{T}}^{-1,*}\operatorname{Loc}(\mathcal{M})=\operatorname{Loc}(\wideparen{\mathbb{T}}^{-1,*}\mathcal{M}),  
\end{equation*}
as sheaves of  Ind-Banach $\mathcal{O}_{X^2}$-modules.
\begin{prop}\label{prop co-admissible EX modules supported on the diagonal}
Consider a module $\mathcal{M}\in \mathcal{C}(\wideparen{E}_X(X^2))$. Then the co-admissible $\wideparen{E}_X$-module $\operatorname{Loc}(\mathcal{M})$ is supported on the diagonal $\Delta\subset X^2$ if and only if:
    \begin{equation*}
        \mathcal{M}=\mathcal{M}_{\infty}(\mathcal{I}_{\Delta}).
    \end{equation*}
\end{prop}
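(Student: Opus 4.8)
The plan is to transport the whole equivalence to the sheaf $\wideparen{\D}_{X^2}$, where the support criterion of \cite{ardakov2015d} is available, and then to bridge the gap between the diagonal ideal $\mathcal{I}_{\Delta}$ and the ideal $\mathcal{J}$ produced by Lemma \ref{Lemma support on the diagonal}. First I would invoke Proposition \ref{prop equivalence of pullback and side-changing sheaves} together with the identification $\operatorname{Loc}(\mathcal{M})=\operatorname{Loc}(\wideparen{\mathbb{T}}^{-1,*}\mathcal{M})$ of sheaves of $\OX_{X^2}$-modules recorded just before the statement: put $\mathcal{N}=\wideparen{\mathbb{T}}^{-1,*}\mathcal{M}\in\mathcal{C}(\wideparen{\D}_{X^2}(X^2))$. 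Since $\wideparen{\mathbb{T}}^{-1}$ is the identity on $\OX_{X^2}$, one has $\mathcal{N}(X^2)=\mathcal{M}(X^2)$ as $\OX_{X^2}(X^2)$-modules, hence $\mathcal{N}(X^2)_{\infty}(\mathcal{I}_{\Delta})=\mathcal{M}(X^2)_{\infty}(\mathcal{I}_{\Delta})$, and $\operatorname{Loc}(\mathcal{N})$ is supported on $\Delta$ exactly when $\operatorname{Loc}(\mathcal{M})$ is. So it is enough to show that $\operatorname{Loc}(\mathcal{N})$ is supported on $\Delta$ if and only if $\mathcal{N}(X^2)=\mathcal{N}(X^2)_{\infty}(\mathcal{I}_{\Delta})$.

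Next I would fix the geometry coming from Lemma \ref{Lemma support on the diagonal}. Write $\mathcal{J}=\mathcal{J}_n\subseteq\mathcal{I}_{\Delta}$, $Y=V(\mathcal{J})$, and let $Y'$ be the union of the connected components of $Y$ other than $\Delta$ (Lemma \ref{Lemma support on the diagonal}(iii) exhibits $\Delta=X$ as such a union, so $Y=\Delta\sqcup Y'$). Set $W=X^2\setminus Y'$ and $U=X^2\setminus\Delta$: these are admissible open subspaces of the affinoid $X^2$, they cover $X^2$ because $\Delta\cap Y'=\varnothing$, and $W\cap Y=\Delta$ while $U\cap\Delta=\varnothing$. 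Hence $\mathcal{I}_{\Delta}|_W=\mathcal{J}|_W$, so on every affinoid $V\subseteq W$ the module $\mathcal{T}_{X^2/K}(V)$ carries an $\mathcal{I}_{\Delta}(V)$-standard basis (restrict the $\mathcal{J}$-standard basis $\mathcal{V}$ of Lemma \ref{Lemma support on the diagonal}(i)), whereas on every affinoid $V\subseteq U$ the ideal $\mathcal{I}_{\Delta}(V)$ is the unit ideal. This is the device that lets Theorem \ref{teo support in a closed subvariety} be used with $\mathcal{I}_{\Delta}$ on $W$, even though $\mathcal{I}_{\Delta}$ has no standard basis on all of $X^2$.

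For the forward implication I would assume $\operatorname{Loc}(\mathcal{N})$ is supported on $\Delta$. On affinoids $V\subseteq U$ the sheaf vanishes, so $\operatorname{Loc}(\mathcal{N})(V)=\operatorname{Loc}(\mathcal{N})(V)_{\infty}(\mathcal{I}_{\Delta}(V))$ trivially; on affinoids $V\subseteq W$, $\operatorname{Loc}(\mathcal{N})|_V$ is a co-admissible $\wideparen{\D}_V$-module supported on $\Delta\cap V=V(\mathcal{I}_{\Delta}(V))$, so Theorem \ref{teo support in a closed subvariety}, applied with the standard basis $\mathcal{V}|_V$, gives $\operatorname{Loc}(\mathcal{N})(V)=\operatorname{Loc}(\mathcal{N})(V)_{\infty}(\mathcal{I}_{\Delta}(V))$. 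Since the Fréchet topology on $\mathcal{N}(X^2)$ realises it as a closed subspace of the product of the localizations over a finite affinoid refinement of $\{W,U\}$, and $\mathcal{M}_{\infty}(\mathcal{I}_{\Delta})$ is a closed condition there, this local information glues to $\mathcal{N}(X^2)=\mathcal{N}(X^2)_{\infty}(\mathcal{I}_{\Delta})$.

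For the converse I would start from $\mathcal{N}(X^2)=\mathcal{N}(X^2)_{\infty}(\mathcal{I}_{\Delta})$; since $\mathcal{J}\subseteq\mathcal{I}_{\Delta}$ this forces $\mathcal{N}(X^2)=\mathcal{N}(X^2)_{\infty}(\mathcal{J})$, and Theorem \ref{teo support in a closed subvariety}, applied over $X^2$ with $\mathcal{V}$ and the smooth closed subspace $Y$, shows that $\operatorname{Loc}(\mathcal{N})$ is supported on $Y$. Restricted to $W$ it is then supported on $W\cap Y=\Delta$, so it only remains to prove $\operatorname{Loc}(\mathcal{N})|_U=0$. On an affinoid $V\subseteq U$ I would pick power-bounded $c_1,\dots,c_k\in\mathcal{I}_{\Delta}(X^2)$ and $b_1,\dots,b_k\in\OX_{X^2}(V)$ with $\sum_i c_ib_i=1$; each $c_i$ acts topologically nilpotently on $\mathcal{N}(X^2)$ by hypothesis, hence on the dense image of $\mathcal{N}(X^2)$ in $\operatorname{Loc}(\mathcal{N})(V)$ via the continuous localization map, and expanding $1=(\sum_i c_ib_i)^M$ with a pigeonhole estimate together with uniform boundedness on each Banach term of a Fréchet–Stein presentation of $\wideparen{\D}_{X^2}(V)$ should force $\operatorname{Loc}(\mathcal{N})(V)=0$. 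I expect this final step to be the main obstacle: precisely because $\mathcal{I}_{\Delta}$ is strictly larger than $\mathcal{J}$ and carries no standard basis, Theorem \ref{teo support in a closed subvariety} cannot be invoked on $U$, so the elementwise topological-nilpotence hypothesis has to be propagated by hand through the (a priori non-strict) localization and converted, via the unit-ideal relation on $U$, into the vanishing estimate.
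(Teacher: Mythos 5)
Your reduction to $\mathcal{N}=\wideparen{\mathbb{T}}^{-1,*}\mathcal{M}$ is exactly the paper's first move, and your treatment of the direction ``support in $\Delta$ $\Rightarrow$ $\mathcal{N}=\mathcal{N}_{\infty}(\mathcal{I}_{\Delta})$'' is a correct but genuinely different route: the paper never introduces the cover $\{W,U\}$, instead applying Kashiwara's equivalence twice to get $\operatorname{Loc}(\mathcal{N})\simeq\Delta_+\operatorname{Loc}(\mathcal{N}[\mathcal{I}_{\Delta}])\simeq\operatorname{Loc}(\mathcal{N}_{\infty}(\mathcal{I}_{\Delta}))$ and then reading off the equality on global sections. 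Your observation that $\mathcal{I}_{\Delta}|_W=\mathcal{J}|_W$ (both radical with the same zero locus on $W$), so that the $\mathcal{J}$-standard basis becomes an $\mathcal{I}_{\Delta}$-standard basis there, is a nice way to make Theorem \ref{teo support in a closed subvariety} directly applicable to $\mathcal{I}_{\Delta}$; together with the strict-exactness of the augmented \v{C}ech complex of a co-admissible module, the gluing step you sketch does close.

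The converse direction, however, has a genuine gap exactly where you predicted. After Theorem \ref{teo support in a closed subvariety} places the support inside $Y$, what remains is to kill $\operatorname{Loc}(\mathcal{N})|_{Y'}$, and your pigeonhole expansion of $1=(\sum_i c_ib_i)^M$ does not obviously converge: the hypothesis $\mathcal{N}=\mathcal{N}_{\infty}(\mathcal{I}_{\Delta})$ gives $c^nm'\to 0$ for each \emph{fixed} element $m'$, with a rate depending on $m'$, whereas the multinomial terms have the form $\prod_ib_i^{a_i}\,c_j^{a_j}\bigl(\prod_{i\neq j}c_i^{a_i}m\bigr)$ with only one exponent guaranteed large and the element inside the parentheses varying with $M$. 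Without a uniformity statement this estimate does not terminate, and such uniformity is precisely the nontrivial content of \cite[Corollary 6.6]{ardakov2015d}. The paper sidesteps this by exploiting Lemma \ref{Lemma support on the diagonal}(iii): since $\Delta$ is a union of connected components of the smooth space $Y$, there is a \emph{single} function $f\in\mathcal{I}_{\Delta}$ whose image $\overline{f}$ cuts out $Y\setminus\Delta$ as $Y(1/\overline{f})$; applying Kashiwara along $j:Y\to X^2$, the module $\mathcal{N}[\mathcal{J}]$ satisfies $\mathcal{N}[\mathcal{J}]=\mathcal{N}[\mathcal{J}]_{\infty}((\overline{f}))$, and the one-element result \cite[Corollary 6.6]{ardakov2015d} then gives the vanishing on $Y(1/\overline{f})$. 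If you want to stay inside your framework and avoid the second Kashiwara step, you could instead cover $U=X^2\setminus\Delta$ admissibly by affinoid subdomains each contained in some Laurent domain $X^2(1/f)$ with $f\in\mathcal{I}_{\Delta}$ (rescaling generators by powers of $\pi$), apply \cite[Corollary 6.6]{ardakov2015d} on each, and use that localization of the zero module is zero; but some citable form of that corollary is indispensable, and the bare elementwise nilpotence argument you propose will not replace it.
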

\begin{proof}
For simplicity of the notation, we will write $\mathcal{N}=\wideparen{\mathbb{T}}^{-1,*}\mathcal{M}$. 
Assume that $\mathcal{M}=\mathcal{M}_{\infty}(\mathcal{I}_{\Delta})$. As discussed above, we have $\mathcal{M}=\mathcal{N}$ as complete bornological $A\widehat{\otimes}_KA$-modules. Hence, this implies 
$\mathcal{N}=\mathcal{N}_{\infty}(\mathcal{I}_{\Delta})$. As $\mathcal{J}\subset \mathcal{I}_{\Delta}$, it follows that $\mathcal{N}=\mathcal{N}_{\infty}(\mathcal{J})$. Furthermore, as shown in Lemma \ref{Lemma support on the diagonal}, $\mathcal{T}_{X^2/K}(X^2)$ has a $\mathcal{J}$-adapted basis.  Thus, Theorem \ref{teo support in a closed subvariety} implies that $\operatorname{Loc}(\mathcal{N})$ has support contained in the closed sub variety $j:Y\rightarrow X^2$. By Kashiwara's equivalence \cite[Theorem 7.1]{ardakov2015d},
we have:
\begin{equation*}
    \operatorname{Loc}(\mathcal{N})\simeq j_+j^!\operatorname{Loc}(\mathcal{N}).
\end{equation*}
On the other hand, $j^!\operatorname{Loc}(\mathcal{N})$ is the co-admissible $\wideparen{\D}_Y$-module with global sections:
\begin{equation*}
    \Gamma(Y,j^!\operatorname{Loc}(\mathcal{N}))=\mathcal{N}[\mathcal{J}].
\end{equation*}
By statement $(iii)$ in  Lemma \ref{Lemma support on the diagonal}, $X$ is a union of connected components of $Y$.  Hence, there is an element $f\in \mathcal{I}_{\Delta}$ such that its image $\overline{f}\in A\widehat{\otimes}_KA/\mathcal{J}$ satisfies:
\begin{equation*}
    Y(1/\overline{f})=Y\setminus X, \quad X=\mathbb{V}(\overline{f}).
\end{equation*}
Let $(f)$, and $(\overline{f})$ be the ideals in $A\widehat{\otimes}_KA$ and $A\widehat{\otimes}_KA/\mathcal{J}$ generated by $f$ and  $\overline{f}$ respectively. Then the following holds:
\begin{equation*}
   \mathcal{N}[\mathcal{J}]\subset \mathcal{N}=\mathcal{N}_{\infty}(\mathcal{I}_{\Delta}). 
\end{equation*}
Thus, as $f\in \mathcal{I}_{\Delta}$, it follows that we have the following identifications:
\begin{equation*}
    \mathcal{N}[\mathcal{J}]=\mathcal{N}[\mathcal{J}]_{\infty}((f))=\mathcal{N}[\mathcal{J}]_{\infty}((\overline{f})).
\end{equation*}
By \cite[Corollary 6.6]{ardakov2015d}, it follows that $\operatorname{Loc}(\mathcal{N}[\mathcal{J}])(Y(\frac{1}{\overline{f}}))=0$. Thus, the support of $\operatorname{Loc}(\mathcal{N}[\mathcal{J}])$ is contained in $X$. Let $i:X\rightarrow Y$ denote the closed immersion. We may then apply Kashiwara's equivalence again to obtain $\operatorname{Loc}(\mathcal{N}[\mathcal{J}])\simeq i_+i^!\operatorname{Loc}(\mathcal{N}[\mathcal{J}])$. Hence, we have:
\begin{equation*}
   \operatorname{Loc}(\mathcal{N})\simeq j_+i_+i^!\operatorname{Loc}(\mathcal{N}[\mathcal{J}])\simeq\Delta_+i^!\operatorname{Loc}(\mathcal{N}[\mathcal{J}]), 
\end{equation*}
where we are using the fact that $\Delta=j\circ i$, together with \cite[Lemma 7.9]{bode2021operations}. Thus, $\operatorname{Loc}(\mathcal{M})$ has support contained in the diagonal, as we wanted to show.\\
Conversely, assume that $\operatorname{Loc}(\mathcal{M})$ has support contained in the diagonal. As the side-switching operators do not change the support, it follows that $\operatorname{Loc}(\mathcal{N})$ also has support contained in the diagonal. By \cite[Corollary 5.6]{ardakov2015d}, the submodule $\mathcal{N}_{\infty}(\mathcal{I}_{\Delta})\subset \mathcal{N}$ is also co-admissible, and has support contained in the diagonal. Hence, by Kashiwara's equivalence, we have:
\begin{multline*}
    \operatorname{Loc}(\mathcal{N}_{\infty}(\mathcal{I}_{\Delta}))\simeq \Delta_+\Delta^!\operatorname{Loc}(\mathcal{N}_{\infty}(\mathcal{I}_{\Delta}))\simeq\Delta_+\operatorname{Loc}(\mathcal{N}_{\infty}(\mathcal{I}_{\Delta})[\mathcal{I}_{\Delta}])\\
    =\Delta_+\operatorname{Loc}(\mathcal{N}[\mathcal{I}_{\Delta}])\simeq \Delta_+\Delta^!\operatorname{Loc}(\mathcal{N})\simeq\operatorname{Loc}(\mathcal{N}),
\end{multline*}
thus showing that $\mathcal{N}=\mathcal{N}_{\infty}(\mathcal{I}_{\Delta})$, as wanted. The converse follows at once from Kashiwara's equivalence.
\end{proof}
Let $\mathcal{M}$ be a co-admissible $\wideparen{E}_X$-module supported on the diagonal. By the side-switching for bimodules in Proposition \ref{prop equivalence of categories of co-admissible odules over E and U}, together with Kashiwara's equivalence, we have that:
\begin{equation*}
    \mathcal{M}\simeq\operatorname{S}\Delta_+\Delta^!\operatorname{S}^{-1}\mathcal{M}=\operatorname{S}\left(\Delta_*\left( \wideparen{\D}_{X^2\leftarrow X}\overrightarrow{\otimes}_{\wideparen{\D}_X}\Delta^{-1}\left(\operatorname{S}^{-1}\mathcal{M}\right)[\mathcal{I}_{\Delta}] \right)\right).
\end{equation*}
This will be key in our calculations with Hochschild (co)-homology bellow. 
\begin{Lemma}\label{lemma sections pullback}
Let $\mathcal{M}$ be a co-admissible $\wideparen{E}_X$-module supported on the diagonal, and $V\subset X$ be an affinoid subdomain. Then we have $\Delta^{-1}\mathcal{M}(V)=\mathcal{M}(V^2)$.
\end{Lemma}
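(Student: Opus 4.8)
The plan is to reduce everything to the fact that, since $X$ is separated, the diagonal $\Delta\colon X\to X^2$ is a closed immersion, and that for sheaves supported on a closed analytic subset the pushforward--pullback pair is an equivalence.

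First I would invoke Proposition \ref{prop sheaves of modules supported on the diagonal} (resp. its more general version, Proposition \ref{prop sheaves of modules supported on the diagonal general case}, which rests on \cite[Appendix A]{ardakov2015d}): the functors $\Delta_*$ and $\Delta^{-1}$ restrict to mutually inverse equivalences between $\operatorname{Shv}(X,\Indban)$ and the full subcategory $\operatorname{Shv}(X^2,\Indban)_{\Delta}$ of sheaves supported on the diagonal. By hypothesis $\mathcal{M}$ is a co-admissible $\wideparen{E}_X$-module supported on the diagonal; in particular its underlying sheaf of Ind-Banach spaces lies in $\operatorname{Shv}(X^2,\Indban)_{\Delta}$, so the counit of the adjunction is a canonical isomorphism $\mathcal{M}\xrightarrow{\ \sim\ }\Delta_*\Delta^{-1}\mathcal{M}$ in $\operatorname{Shv}(X^2,\Indban)$.

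Next I would evaluate this isomorphism on the affinoid admissible open $V^2=V\times V\subset X^2$ (which is admissible since $V$ is an affinoid subdomain of $X$). By the very definition of the pushforward one has $(\Delta_*\Delta^{-1}\mathcal{M})(V^2)=(\Delta^{-1}\mathcal{M})(\Delta^{-1}(V^2))$, so it only remains to identify the admissible open $\Delta^{-1}(V^2)\subset X$. Writing $p_1,p_2\colon X^2\to X$ for the two projections and using $p_i\circ\Delta=\operatorname{Id}_X$, we have $V^2=p_1^{-1}(V)\cap p_2^{-1}(V)$, whence $\Delta^{-1}(V^2)=\Delta^{-1}p_1^{-1}(V)\cap\Delta^{-1}p_2^{-1}(V)=V\cap V=V$. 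Combining these identities gives $\mathcal{M}(V^2)=(\Delta^{-1}\mathcal{M})(V)$, which is the claim.

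The only point requiring a little care --- and the closest thing to an obstacle --- is to make sure the equivalence $\Delta_*\Delta^{-1}\cong\operatorname{Id}$ is applied at the level of the underlying sheaves of Ind-Banach spaces of $\mathcal{M}$, rather than merely at the level of its $\wideparen{\D}_X^e$-module structure; this is immediate from the definition of ``supported on the diagonal'' (a sheaf $\mathcal{F}$ with $\mathcal{F}(U)=0$ whenever $U\cap\Delta=\varnothing$) together with Proposition \ref{prop sheaves of modules supported on the diagonal general case}, and in fact co-admissibility of $\mathcal{M}$ plays no role in this particular statement. No genuine computation is involved.
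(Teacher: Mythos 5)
Your proof is correct and is essentially the paper's own argument: the paper writes $\mathcal{M}=\Delta_*\mathcal{N}$ with $\mathcal{N}=\Delta^{-1}\mathcal{M}$ via the same support equivalence (Proposition \ref{prop sheaves of modules supported on the diagonal}) and then evaluates on $V^2$, which is exactly your use of the counit isomorphism $\mathcal{M}\cong\Delta_*\Delta^{-1}\mathcal{M}$ together with $\Delta^{-1}(V^2)=V$. Your explicit verification of $\Delta^{-1}(V^2)=V$ and the remark that co-admissibility is not needed are both consistent with the paper's (terser) proof.
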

\begin{proof}
As $\mathcal{M}$ has support contained in the diagonal, there is some sheaf of Ind-Banach spaces $\mathcal{N}$ on $X$ such that $\mathcal{M}=\Delta_*\mathcal{N}$. Thus, we have:
\begin{equation*}
    \mathcal{M}(V^2)=\Delta_*\mathcal{N}(V^2)=\mathcal{N}(V)=\Delta^{-1}\Delta_*\mathcal{N}(V)=\Delta^{-1}\mathcal{M}(V),
\end{equation*}
as we wanted to show.
\end{proof}
Recall the category of co-admissible bimodules $\mathcal{C}_B^{\Bi}(\wideparen{\D}_X)$ from Definition \ref{defi sehaves of co-admissible bimodules}.
\begin{defi}\label{defi diagonal co-admissible bimodules local case}
We define $\mathcal{C}_B^{\Bi}(\wideparen{\D}_X)_{\Delta}$ as the full subcategory of $\mathcal{C}_B^{\Bi}(\wideparen{\D}_X)$ given by objects $\mathcal{M}$ satisfying that:
\begin{equation*}
    \mathcal{M}(X)=\mathcal{M}(X)_{\infty}(\mathcal{I}_{\Delta}).
\end{equation*}
\end{defi}
Expanding on our previous results, we wish to investigate the relation between $\mathcal{C}_B^{\Bi}(\wideparen{\D}_X)$ and $\mathcal{C}(\wideparen{E}_X)$. In particular, we want to determine the image of $\mathcal{C}_B^{\Bi}(\wideparen{\D}_X)$ under the extension functor:
\begin{equation*}
    \Delta_*^E:\Mod_{\Indban}(\wideparen{\D}_X^e)\rightarrow \Mod_{\Indban}(\wideparen{E}_X).
\end{equation*}
Let us start with the following lemma:
\begin{Lemma}\label{Lemma diagonal co-admissible bimodules are abelian}
    $\mathcal{C}_B^{\Bi}(\wideparen{\D}_X)_{\Delta}$ is an abelian category.
\end{Lemma}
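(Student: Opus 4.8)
The plan is to deduce this from the two structural facts already at our disposal: first, that $\mathcal{C}_B^{\Bi}(\wideparen{\D}_X)$ embeds into $\mathcal{C}(\wideparen{E}_X(X^2))$ as a Serre subcategory (Theorem \ref{teo embedding co-admissible bimodules}), and second, that the torsion condition $\mathcal{M}(X)=\mathcal{M}(X)_{\infty}(\mathcal{I}_{\Delta})$ cuts out, via localization, exactly the sheaves of co-admissible $\wideparen{E}_X$-modules supported on the diagonal (Proposition \ref{prop co-admissible EX modules supported on the diagonal}). Since $\Mod_{\Indban}(\wideparen{\D}_X^e)$ is quasi-abelian, it suffices to check that $\mathcal{C}_B^{\Bi}(\wideparen{\D}_X)_{\Delta}$ is closed under kernels and cokernels inside $\Mod_{\Indban}(\wideparen{\D}_X^e)$ and that every morphism between objects of $\mathcal{C}_B^{\Bi}(\wideparen{\D}_X)_{\Delta}$ is strict; strictness of morphisms of co-admissible modules over a Fréchet--Stein algebra is already known (it follows from \cite[Corollary 3.4]{schneider2002algebras}, exactly as in the proof that $\mathcal{C}(\wideparen{E}(\mathscr{L}))$ is abelian), so the real content is stability under kernels and cokernels.

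First I would record that $\mathcal{C}_B^{\Bi}(\wideparen{\D}_X)_{\Delta}$ is a full subcategory of the abelian category $\mathcal{C}(\wideparen{E}_X(X^2))$ (via $\mathcal{M}\mapsto \mathcal{M}(X)$, using Theorem \ref{teo embedding co-admissible bimodules} together with Corollary \ref{coro topological equivalences to co-admissible bi-modules} to identify $\mathcal{C}_B^{\Bi}(\wideparen{\D}_X)$ with $\mathcal{C}_B^{\Bi}(\wideparen{\D}_X(X))$ in the local, étale-over-$\mathbb{A}^n$ setting), so kernels and cokernels of morphisms $f\colon\mathcal{M}\to\mathcal{N}$ in $\mathcal{C}_B^{\Bi}(\wideparen{\D}_X)_{\Delta}$ are computed in $\mathcal{C}(\wideparen{E}_X(X^2))$ and are a priori co-admissible $\wideparen{E}_X(X^2)$-modules; since $\mathcal{C}_B^{\Bi}(\wideparen{\D}_X)$ is a Serre subcategory of $\mathcal{C}(\wideparen{E}_X(X^2))$, these kernels and cokernels already lie in $\mathcal{C}_B^{\Bi}(\wideparen{\D}_X)$. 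It remains only to check that they satisfy the diagonal-support condition $(-)=(-)_{\infty}(\mathcal{I}_{\Delta})$. For this, the cleanest route is to pass through $\operatorname{Loc}(-)$ and Proposition \ref{prop co-admissible EX modules supported on the diagonal}: the condition $\mathcal{M}(X)=\mathcal{M}(X)_{\infty}(\mathcal{I}_{\Delta})$ is equivalent to $\operatorname{Loc}(\mathcal{M}(X))$ being supported on $\Delta\subset X^2$, and the derived category of $\wideparen{E}_X$-modules supported on a closed subspace is closed under kernels and cokernels (it is the heart-level incarnation of $\operatorname{D}(\operatorname{Shv}(X^2,\Indban)_{\Delta})$, cf.\ Lemma \ref{Lemma properties of inclusion of derived category of modules supported on a closed subspace} and Proposition \ref{prop sheaves of modules supported on the diagonal}, together with exactness of $\operatorname{Loc}(-)$ from Proposition \ref{prop properties of Loc}). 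Concretely, since $\operatorname{Loc}$ is exact and fully faithful, $\operatorname{Loc}(\ker f)=\ker(\operatorname{Loc}(f))$ and $\operatorname{Loc}(\operatorname{coker} f)=\operatorname{coker}(\operatorname{Loc}(f))$, and a sub- or quotient-sheaf of a sheaf supported on $\Delta$ is again supported on $\Delta$ (the support condition $\mathcal{F}(U)=0$ for $U\cap\Delta=\varnothing$ is inherited by subsheaves and quotient sheaves because taking sections over a fixed $U$ is exact on the left heart and right-exact after sheafification). Hence $\ker f$ and $\operatorname{coker} f$ satisfy the diagonal condition, so they lie in $\mathcal{C}_B^{\Bi}(\wideparen{\D}_X)_{\Delta}$.

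The main obstacle I anticipate is bookkeeping rather than any deep difficulty: one must be careful that the ``kernel'' and ``cokernel'' computed inside the quasi-abelian $\Mod_{\Indban}(\wideparen{\D}_X^e)$ genuinely agree, after applying $\Gamma(X,-)$ and $\operatorname{Loc}$, with the abelian kernel/cokernel in $\mathcal{C}(\wideparen{E}_X(X^2))$ — this uses strictness of the morphism (so that $\operatorname{CoIm}=\operatorname{Im}$ and there is no discrepancy between the quasi-abelian image and the abelian one) and the fact that $\Gamma(X,-)$ is exact on co-admissible $\wideparen{E}_X$-modules with trivial higher cohomology (Proposition \ref{prop properties of Loc}(iii)). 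Once strictness is in hand, the identification of the two notions of kernel and cokernel is automatic, and the argument above closes. I would therefore structure the proof as: (1) reduce to checking closure under kernels/cokernels plus strictness; (2) invoke \cite[Corollary 3.4]{schneider2002algebras} for strictness; (3) use the Serre property from Theorem \ref{teo embedding co-admissible bimodules} to land back in $\mathcal{C}_B^{\Bi}(\wideparen{\D}_X)$; (4) use exactness of $\operatorname{Loc}$ and hereditariness of the support condition, via Proposition \ref{prop co-admissible EX modules supported on the diagonal}, to land back in $\mathcal{C}_B^{\Bi}(\wideparen{\D}_X)_{\Delta}$.
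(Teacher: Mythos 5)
Your proposal is correct and its skeleton coincides with the paper's: reduce to showing that $\mathcal{C}_B^{\Bi}(\wideparen{\D}_X)_{\Delta}$ is closed under kernels and cokernels inside the quasi-abelian category $\Mod_{\Indban}(\wideparen{\D}_X^e)$ and that all its morphisms are strict, with strictness coming from the general fact about morphisms of co-admissible modules. The only place you diverge is the verification that $\operatorname{Ker}(f)$ and $\operatorname{Coker}(f)$ still satisfy the diagonal condition: you route this through $\operatorname{Loc}$ and Proposition \ref{prop co-admissible EX modules supported on the diagonal}, arguing that sub- and quotient-sheaves of a sheaf supported on $\Delta$ are again supported on $\Delta$, whereas the paper checks the condition $(-)=(-)_{\infty}(\mathcal{I}_{\Delta})$ directly on global sections — a strict submodule carries the subspace topology and a strict quotient the quotient topology, so $\mathcal{I}_{\Delta}$-torsion in the sense of $(-)_{\infty}$ is inherited immediately from $\mathcal{M}(X)$ and $\mathcal{N}(X)$. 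Your detour is valid (and has the mild appeal of reusing the geometric support characterization), but it requires the extra bookkeeping you yourself flag, namely that $\operatorname{Loc}$ commutes with kernels and cokernels and that the quotient-sheaf support claim is justified via sheafification of the presheaf quotient; the paper's algebraic check avoids all of this. Likewise, your appeal to the Serre-subcategory property of Theorem \ref{teo embedding co-admissible bimodules} to conclude $\operatorname{Ker}(f),\operatorname{Coker}(f)\in\mathcal{C}_B^{\Bi}(\wideparen{\D}_X)$ is a slightly heavier tool than the paper's direct appeal to the abelianness of $\mathcal{C}(\wideparen{\D}_X)$ and $\mathcal{C}(\wideparen{\D}_X^{\op})$, but both are legitimate.
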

\begin{proof}
As  $\mathcal{C}_B^{\Bi}(\wideparen{\D}_X)_{\Delta}$ is a full subcategory of $\Mod_{\Indban}(\wideparen{\D}_X^e)$, and this category is quasi-abelian, it suffices to show that it contains the kernels and cokernels of its maps, and that every morphism is strict. As every morphism between co-admissible modules is strict, it is clear that every morphism in  $\mathcal{C}_B^{\Bi}(\wideparen{\D}_X)_{\Delta}$ is strict. Let $f:\mathcal{M}\rightarrow \mathcal{N}$ be a morphism in $\mathcal{C}_B^{\Bi}(\wideparen{\D}_X)_{\Delta}$, and consider the strict exact complex:
\begin{equation}\label{equation bimodules supported on diagonal is abelian}
    0\rightarrow \operatorname{Ker}(f)\rightarrow \mathcal{M}\rightarrow \mathcal{N}\rightarrow \operatorname{Coker}(f)\rightarrow 0.
\end{equation}
As $\mathcal{C}(\wideparen{\D}_X)$ and $\mathcal{C}(\wideparen{\D}^{\op}_X)$ are abelian categories, it is clear that:
\begin{equation*}
    \operatorname{Ker}(f),\operatorname{Coker}(f)\in \mathcal{C}_B^{\Bi}(\wideparen{\D}_X).
\end{equation*}
Furthermore, the fact that $X$ is affinoid implies that $(\ref{equation bimodules supported on diagonal is abelian})$ is exact after applying $\Gamma(X,-)$. In particular, we have a strict injection $\operatorname{Ker}(f)(X)\rightarrow \mathcal{M}(X)$, and a strict surjection $\mathcal{N}(X)\rightarrow \operatorname{Coker}(f)(X)$. By assumption, we have $\mathcal{M}(X)=\mathcal{M}(X)_{\infty}(\mathcal{I}_{\Delta})$, and $\mathcal{N}(X)=\mathcal{N}(X)_{\infty}(\mathcal{I}_{\Delta})$. Thus, we have $\operatorname{Ker}(f)(X)=\operatorname{Coker}(f)(X)_{\infty}(\mathcal{I}_{\Delta})$ and $\operatorname{Ker}(f)(X)=\operatorname{Coker}(f)(X)_{\infty}(\mathcal{I}_{\Delta})$, as we wanted to show.
\end{proof}
\begin{prop}\label{prop embedding Theorem}
We have an exact embedding of abelian categories:
\begin{equation*}
    \Delta_*^E:\mathcal{C}_B^{\Bi}(\wideparen{\D}_X)_{\Delta}\rightarrow \mathcal{C}(\wideparen{E}_X)_{\Delta}.
\end{equation*}
In particular, for $\mathcal{M}\in \mathcal{C}_B^{\Bi}(\wideparen{\D}_X)_{\Delta}$ we have $\Delta^E_*\mathcal{M}=\operatorname{Loc}(\mathcal{M}(X))$.
\end{prop}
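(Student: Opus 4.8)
The plan is to prove the identity $\Delta_*\mathcal{M}=\operatorname{Loc}(\mathcal{M}(X))$ and then read off exactness and full faithfulness formally. Let $\mathcal{M}\in\mathcal{C}_B^{\operatorname{Bi}}(\wideparen{\D}_X)_{\Delta}$. By Definition \ref{defi sehaves of co-admissible bimodules} its global sections $\mathcal{M}(X)$ lie in $\mathcal{C}_B^{\operatorname{Bi}}(\wideparen{\D}_X(X))$, so by Theorem \ref{teo embedding co-admissible bimodules} (through Proposition \ref{prop bi-coadmissible to E-coadmissible}) $\mathcal{M}(X)$ is a co-admissible $\wideparen{E}_X(X^2)$-module; here one uses that $X$, carrying an étale map to $\mathbb{A}^n_K$, satisfies the hypotheses of Section \ref{Section Categories of co-admissible bimodules}. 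Hence $\operatorname{Loc}(\mathcal{M}(X))$ is a well-defined co-admissible $\wideparen{E}_X$-module. Since $\mathcal{M}(X)=\mathcal{M}(X)_{\infty}(\mathcal{I}_{\Delta})$ by Definition \ref{defi diagonal co-admissible bimodules local case}, Proposition \ref{prop co-admissible EX modules supported on the diagonal} shows $\operatorname{Loc}(\mathcal{M}(X))$ is supported on the diagonal, so it is an object of $\mathcal{C}(\wideparen{E}_X)_{\Delta}$.

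Next I would produce a canonical comparison morphism and check it is an isomorphism. As $(\Delta_*\mathcal{M})(X^2)=\mathcal{M}(X)$, applying Proposition \ref{prop properties of Loc}(iv) to $\operatorname{id}_{\mathcal{M}(X)}$ yields a canonical $\wideparen{E}_X$-linear map $\varphi:\operatorname{Loc}(\mathcal{M}(X))\to\Delta_*\mathcal{M}$. Both source and target are supported on $\Delta$, so by the equivalence of Proposition \ref{prop sheaves of modules supported on the diagonal} it suffices to check that $\Delta^{-1}\varphi$ is an isomorphism, and since this is a map of sheaves on $X$ it is enough to evaluate on affinoid subdomains $V\subset X$. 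Using the Lemma established just above Definition \ref{defi diagonal co-admissible bimodules local case}, namely $\Delta^{-1}\mathcal{N}(V)=\mathcal{N}(V^2)$ for $\mathcal{N}$ supported on the diagonal, $\Delta^{-1}\varphi$ at $V$ becomes the multiplication map
\begin{equation*}
    \wideparen{E}_X(V^2)\overrightarrow{\otimes}_{\wideparen{E}_X(X^2)}\mathcal{M}(X)\longrightarrow \mathcal{M}(V).
\end{equation*}
Since $\wideparen{E}_X=p_1^{-1}\wideparen{\D}_X\overrightarrow{\otimes}_Kp_2^{-1}\wideparen{\D}_X^{\op}$, Lemma \ref{Lemma extension-restriction for bi-modules}, suitably completed, identifies the left-hand side with $\wideparen{\D}_X(V)\overrightarrow{\otimes}_{\wideparen{\D}_X(X)}\mathcal{M}(X)\overrightarrow{\otimes}_{\wideparen{\D}_X(X)}\wideparen{\D}_X(V)$. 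Co-admissibility of $\mathcal{M}$ as a right $\wideparen{\D}_X$-module gives $\mathcal{M}(X)\overrightarrow{\otimes}_{\wideparen{\D}_X(X)}\wideparen{\D}_X(V)=\mathcal{M}(V)$, and co-admissibility as a left $\wideparen{\D}_X$-module together with the idempotence $\wideparen{\D}_X(V)\overrightarrow{\otimes}_{\wideparen{\D}_X(X)}\wideparen{\D}_X(V)=\wideparen{\D}_X(V)$ of affinoid restriction for $\wideparen{\D}_X$ (from \cite{ardakov2019}) gives $\wideparen{\D}_X(V)\overrightarrow{\otimes}_{\wideparen{\D}_X(X)}\mathcal{M}(V)=\mathcal{M}(V)$. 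Thus $\Delta^{-1}\varphi$ is an isomorphism for every affinoid $V$, hence $\varphi$ is an isomorphism and $\Delta_*\mathcal{M}=\operatorname{Loc}(\mathcal{M}(X))$, an object of $\mathcal{C}(\wideparen{E}_X)_{\Delta}$.

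The remaining assertions are formal. By Proposition \ref{prop sheaves of modules supported on the diagonal}, $\Delta_*$ is one half of an equivalence of quasi-abelian categories $\Mod_{\Indban}(\wideparen{\D}_X^e)\leftrightarrows\Mod_{\Indban}(\wideparen{E}_X)_{\Delta}$, hence is strongly exact and fully faithful; by the previous paragraph it carries $\mathcal{C}_B^{\operatorname{Bi}}(\wideparen{\D}_X)_{\Delta}$ into $\mathcal{C}(\wideparen{E}_X)_{\Delta}$. Since both of these are abelian (Lemma \ref{Lemma diagonal co-admissible bimodules are abelian}, and the analogous statement for $\mathcal{C}(\wideparen{E}_X)_{\Delta}$), with kernels, cokernels and images computed as in the ambient quasi-abelian categories, the restricted functor $\Delta_*:\mathcal{C}_B^{\operatorname{Bi}}(\wideparen{\D}_X)_{\Delta}\to\mathcal{C}(\wideparen{E}_X)_{\Delta}$ is an exact, fully faithful embedding.

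The main obstacle is the isomorphism $\wideparen{E}_X(V^2)\overrightarrow{\otimes}_{\wideparen{E}_X(X^2)}\mathcal{M}(X)=\mathcal{M}(V)$ in the middle step: one must reconcile the two-sided affinoid localization of $\mathcal{M}(X)$ with the one-sided ones, which means carrying out the base-change computation of Lemma \ref{Lemma extension-restriction for bi-modules} compatibly with the completed tensor products — i.e. at the Banach level of the truncations $\widehat{E}_{n,m}$ and their base changes to $V^2$, exploiting the cofinality $\mathcal{M}^b_{n,m(n)}=\mathcal{M}(X)_n$ furnished by Proposition \ref{prop bi-coadmissible to E-coadmissible} — and one must know that the right $\wideparen{\D}_X$-module structure on the sheaf $\mathcal{M}$ (not merely on $\mathcal{M}(X)$) is co-admissible, which is exactly what membership in $\mathcal{C}_B^{\operatorname{Bi}}(\wideparen{\D}_X)_{\Delta}\subset\mathcal{C}_B^{\operatorname{Bi}}(\wideparen{\D}_X)$ guarantees.
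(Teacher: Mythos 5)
Your proposal is correct and follows essentially the same route as the paper's proof: establish that $\mathcal{M}(X)$ is co-admissible over $\wideparen{E}_X(X^2)$ with localization supported on the diagonal, construct the comparison map from $\operatorname{Loc}(\mathcal{M}(X))$ to $\Delta_*\mathcal{M}$ via Proposition \ref{prop properties of Loc}, and verify it is an isomorphism on affinoid subdomains using the two one-sided co-admissibility hypotheses together with $\wideparen{\D}_X(V)\overrightarrow{\otimes}_{\wideparen{\D}_X(X)}\wideparen{\D}_X(V)=\wideparen{\D}_X(V)$. Your closing remarks on reconciling the completed two-sided base change with Lemma \ref{Lemma extension-restriction for bi-modules} merely make explicit a step the paper attributes to Proposition \ref{prop enveloping of product as product of envelopings}.
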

\begin{proof}
Choose any $\mathcal{M}\in \mathcal{C}_B^{\Bi}(\wideparen{\D}_X)_{\Delta}$. By definition, we have $\mathcal{M}(X)\in \mathcal{C}_B^{\Bi}(\wideparen{\D}_X(X))$. Hence, it follows by Theorem \ref{teo embedding co-admissible bimodules}
that $\mathcal{M}(X)\in \mathcal{C}(\wideparen{E}_X(X^2))$. Thus, we may consider the co-admissible $\wideparen{E}_X$-module $\mathcal{N}=\operatorname{Loc}(\mathcal{M}(X))$. As $\mathcal{M}\in \mathcal{C}_B^{\Bi}(\wideparen{\D}_X)_{\Delta}$, it follows by  Proposition \ref{prop co-admissible EX modules supported on the diagonal}  that $\mathcal{N}$ is a co-admissible $\wideparen{E}_X$-module supported on $\Delta\subset X^2$. Consider the following presheaf in $X^2$:
\begin{equation*}
    \Tilde{\mathcal{M}}:=\wideparen{E}_X\overrightarrow{\otimes}^{\operatorname{psh}}_{p_1^{-1}\wideparen{\D}_X\overrightarrow{\otimes}_K^{\operatorname{psh}}p_2^{-1}\wideparen{\D}_X^{\op}}\Delta_*\mathcal{M},
\end{equation*}
where the operator $\overrightarrow{\otimes}^{\operatorname{\operatorname{psh}}}$ is the tensor product in the category of presheaves of Ind-Banach spaces, and all the pullbacks are also taken in the category of presheaves.\\
With this definition, it follows that we have:
\begin{equation*}
    \Gamma(X^2,\Tilde{\mathcal{M}})=\mathcal{M}(X).
\end{equation*}
Thus, we may follow the arguments in the proof of Proposition \ref{prop properties of Loc} to show that $\operatorname{Id}_{\mathcal{M}(X)}$ induces a unique $\wideparen{E}_X$-linear map of presheaves of Ind-Banach modules:
 \begin{equation*}
    \mathcal{N}\rightarrow \Tilde{\mathcal{M}}.
 \end{equation*}
Consider the following morphism of presheaves:
\begin{equation*}
    \varphi:\mathcal{N}\rightarrow \Tilde{\mathcal{M}}\rightarrow \Delta_*\Delta^{-1}\Tilde{\mathcal{M}}=\Delta_*\left(\Delta^{-1}\wideparen{E}_X\overrightarrow{\otimes}^{\operatorname{psh}}_{\wideparen{\D}_X^e}\mathcal{M}\right).
\end{equation*}
Notice that $\Tilde{\mathcal{M}}$ has support contained in $\Delta$. Hence, the map $\Tilde{\mathcal{M}}\rightarrow \Delta_*\Delta^{-1}\Tilde{\mathcal{M}}$ induces an isomorphism on sheafifications. As the sheafification of $\Tilde{\mathcal{M}}$ is $\Delta_*^E\mathcal{M}$, it is enough to show that $\varphi$ induces an isomorphism:
\begin{equation*}
    \Delta^{-1}\mathcal{N}\rightarrow \Delta^{-1}\wideparen{E}_X\overrightarrow{\otimes}^{\operatorname{psh}}_{\wideparen{\D}_X^e}\mathcal{M}=\Delta^{-1}\OX_{X^2}\overrightarrow{\otimes}^{\operatorname{psh}}_{\OX_X\overrightarrow{\otimes}^{\operatorname{psh}}_K\OX_X}\mathcal{M}=:\mathcal{H}.
\end{equation*}
Choose an affinoid subdomain $V\subset X$. By Lemma \ref{lemma sections pullback} we have the following:
 \begin{align}\label{equation immersion Theorem}
   \Delta^{-1}\mathcal{N}(V)=\mathcal{N}(V^2)=&\wideparen{E}_X(V^2)\overrightarrow{\otimes}_{\wideparen{E}_X(X^2)}\mathcal{M}(X)\\ =&\wideparen{\D}_X(V)\overrightarrow{\otimes}_{\wideparen{\D}_X(X)}\mathcal{M}\overrightarrow{\otimes}_{\wideparen{\D}_X(X)}\wideparen{\D}_X(V)\nonumber,
\end{align}
where the last identity follows by the definition of $\wideparen{E}_X$, and Proposition \ref{prop enveloping of product as product of envelopings}. By assumption, $\mathcal{M}$ is a co-admissible module over $\wideparen{\D}_X$ and $\wideparen{\D}_X^{\op}$. Thus, by definition of co-admissibility we have identities:
\begin{equation*}
\wideparen{\D}_X(V)\overrightarrow{\otimes}_{\wideparen{\D}_X(X)}\mathcal{M}(X)=\mathcal{M}(V)=\mathcal{M}(X)\overrightarrow{\otimes}_{\wideparen{\D}_X(X)}\wideparen{\D}_X(V).
\end{equation*}
Hence, the identity (\ref{equation immersion Theorem}) may be reformulated into the following chain of identities:
\begin{align*}
\Delta^{-1}\mathcal{N}(V)=&\mathcal{M}(V)\overrightarrow{\otimes}_{\wideparen{\D}_X(X)}\wideparen{\D}_X(V)\\
=&\left(\mathcal{M}(X)  \overrightarrow{\otimes}_{\wideparen{\D}_X(X)}\wideparen{\D}_X(V)\right)\overrightarrow{\otimes}_{\wideparen{\D}_X(X)}\wideparen{\D}_X(V)\\=&
\mathcal{M}(X)  \overrightarrow{\otimes}_{\wideparen{\D}_X(X)}\left(\wideparen{\D}_X(V)\overrightarrow{\otimes}_{\wideparen{\D}_X(X)}\wideparen{\D}_X(V)\right)\\=&\mathcal{M}(X)\overrightarrow{\otimes}_{\wideparen{\D}_X(X)}\wideparen{\D}_X(V)\\=&\mathcal{M}(V).  
\end{align*}
On the other hand, by definition of the pullback in the category of presheaves:
\begin{equation*}
    \mathcal{H}(V)=\varinjlim_{U}\left(\OX_{X^2}(U)\overrightarrow{\otimes}_{\OX_{X^2}(V^2)}\mathcal{M}(V)\right).
\end{equation*}
where the $U$ range over all the affinoid subdomains $U\subset V^2$ such that $V=\Delta(X)\cap U$. Hence, it is enough to show that we have an isomorphism:
\begin{equation*}
    \OX_{X^2}(U)\overrightarrow{\otimes}_{\OX_{X^2}(V^2)}\mathcal{M}(V)=\mathcal{M}(V),
\end{equation*}
for each $U$ as above. However, by definition we have that $\mathcal{N}$ is a co-admissible $\wideparen{E}_X$-module with support contained in the diagonal. Hence, as $V^2\cap \Delta(X)=U\cap \Delta(X)$ by our choice of $U$, it follows that we have:
\begin{align*}
    \mathcal{M}(V)=&\mathcal{N}(V^2)=\mathcal{N}(U)=\OX_{X^2}(U)\overrightarrow{\otimes}_{\OX_{X^2}(V^2)}\mathcal{M}(V),
\end{align*}
where the last identity follows by the definition of the $\operatorname{Loc}(-)$ functor (\emph{cf.} Proposition \ref{prop properties of Loc}), and the fact that in the current conditions there is an isomorphism $\wideparen{E}_X\cong \OX_{X^2}\overrightarrow{\otimes}_KK\{x_1,\cdots,x_n\}$. Hence, we have obtained an isomorphism:
\begin{equation*}
  \operatorname{Loc}(\mathcal{M}(X))\rightarrow \Delta^E_*\mathcal{M},
\end{equation*}
as we wanted to show. This shows that the essential image of $\mathcal{C}_B^{\Bi}(\wideparen{\D}_X)_{\Delta}$ under the extension functor $\Delta_*^E$ is indeed contained in $\mathcal{C}(\wideparen{E}_X)_{\Delta}$. Furthermore, as $X$ is affinoid with free tangent sheaf, exactness can be checked at the global sections, and thus it follows that $\Delta_*^E$ is exact. Similarly, as morphisms in both categories are determined at the level of global sections, it follows that the functor is also fully faithful.
\end{proof}
We can draw a plethora of conclusions from this proposition:
\begin{coro}
The immersion functor induces an exact embedding:
    \begin{equation*}
        \Delta_*^{\operatorname{S}}:\mathcal{C}_B^{\Bi}(\wideparen{\D}_X)_{\Delta}\rightarrow \mathcal{C}(\wideparen{\D}_{X^2})_{\Delta}.
    \end{equation*}
\end{coro}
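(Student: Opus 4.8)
Since $\Delta_*^{\operatorname{S}}=\operatorname{S}^{-1}\circ\Delta_*$ by Definition \ref{defi immersion functor}, the plan is simply to factor the claim through the two pieces that have already been established: the exact embedding $\Delta_*\colon\mathcal{C}_B^{\Bi}(\wideparen{\D}_X)_{\Delta}\to\mathcal{C}(\wideparen{E}_X)_{\Delta}$ of Proposition \ref{prop embedding Theorem}, and the side-switching equivalence $\operatorname{S}^{-1}$ restricted to co-admissible modules supported on the diagonal. There is no genuine new content to produce; the work is in checking that the target subcategory is the correct one and that the composite inherits the properties of an embedding of abelian categories.

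\textbf{Key steps.} First I would recall that we are in the local setting of this subsection, so $\mathscr{L}=\mathcal{T}_{X/K}$, and by the remark following Theorem \ref{teo product of Lie algebroids} we have $\mathcal{T}_{X/K}^2=\mathcal{T}_{X^2/K}$, hence $\wideparen{U}(\mathscr{L}^2)=\wideparen{\D}_{X^2}$. Next, Theorem \ref{teo equivalence of co-admissible modules under side-changing} provides mutually inverse equivalences of abelian categories $\operatorname{S}\colon\mathcal{C}(\wideparen{U}(\mathscr{L}^2))\leftrightarrows\mathcal{C}(\wideparen{E}(\mathscr{L}))\colon\operatorname{S}^{-1}$ which moreover preserve the supports of sheaves; specializing $\mathscr{L}=\mathcal{T}_{X/K}$ and restricting to the full subcategories of modules supported on $\Delta\subset X^2$, this yields an equivalence of abelian categories $\operatorname{S}^{-1}\colon\mathcal{C}(\wideparen{E}_X)_{\Delta}\to\mathcal{C}(\wideparen{\D}_{X^2})_{\Delta}$. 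Finally, Proposition \ref{prop embedding Theorem} gives the exact embedding $\Delta_*\colon\mathcal{C}_B^{\Bi}(\wideparen{\D}_X)_{\Delta}\to\mathcal{C}(\wideparen{E}_X)_{\Delta}$, so composing the two functors produces $\Delta_*^{\operatorname{S}}\colon\mathcal{C}_B^{\Bi}(\wideparen{\D}_X)_{\Delta}\to\mathcal{C}(\wideparen{\D}_{X^2})_{\Delta}$; being the composite of an exact embedding with an equivalence of abelian categories, it is itself an exact embedding of abelian categories.

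\textbf{Main obstacle.} Honestly there is no serious obstacle: every ingredient has been proved. The only points requiring care are bookkeeping ones — verifying that $\operatorname{S}^{-1}$ carries the specific subcategory $\mathcal{C}(\wideparen{E}_X)_{\Delta}$ \emph{onto} $\mathcal{C}(\wideparen{\D}_{X^2})_{\Delta}$ (this is exactly the support-preservation clause of Theorem \ref{teo equivalence of co-admissible modules under side-changing}, combined with $\wideparen{U}(\mathcal{T}_{X/K}^2)=\wideparen{\D}_{X^2}$), and confirming that ``embedding of abelian categories'' (a fully faithful exact functor) is stable under postcomposition with an equivalence. I would also note in passing that this corollary is the local analogue of the general statement $(iii)$ announced in the introduction, whose globalization will follow once the category $\mathcal{C}_B^{\Bi}(\wideparen{\D}_X)_{\Delta}$ has been defined for arbitrary smooth separated $X$.
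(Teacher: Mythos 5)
Your proposal is correct and matches the paper's (implicit) argument: the corollary is stated without proof precisely because it is the composite of the exact embedding $\Delta_*$ from Proposition \ref{prop embedding Theorem} with the support-preserving side-switching equivalence $\operatorname{S}^{-1}$ of Theorem \ref{teo equivalence of co-admissible modules under side-changing}, specialized to $\mathscr{L}=\mathcal{T}_{X/K}$ so that $\wideparen{U}(\mathscr{L}^2)=\wideparen{\D}_{X^2}$. Your bookkeeping remarks about the target subcategory and stability of embeddings under postcomposition with an equivalence are exactly the right checks.
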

\begin{coro}\label{coro khiels Theorem for diagonal co-admissible bi-modules}
Let $\mathcal{M}\in\Mod_{\Indban}(\wideparen{\D}_X^e)$. The following are equivalent:
\begin{enumerate}[label=(\roman*)]
    \item $\mathcal{M}\in \mathcal{C}_B^{\Bi}(\wideparen{\D}_X)_{\Delta}$.
    \item There is an affinoid cover $(V_i)_{i=1}^n$ of $X$ such that $\mathcal{M}_{\vert V_i}\in \mathcal{C}_B^{\Bi}(\wideparen{\D}_{V_i})_{\Delta}$.
    \item For any affinoid cover $(V_i)_{i=1}^n$ of $X$ we have $\mathcal{M}_{\vert V_i}\in \mathcal{C}_B^{\Bi}(\wideparen{\D}_{V_i})_{\Delta}$.
\end{enumerate}
\end{coro}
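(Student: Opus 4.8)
The plan is to prove the two nontrivial implications $(ii)\Rightarrow(i)$ and $(i)\Rightarrow(iii)$ by the same argument read in opposite directions, with $(iii)\Rightarrow(ii)$ being immediate. The guiding observation is that membership in $\mathcal{C}_B^{\operatorname{Bi}}(\wideparen{\D}_X)_{\Delta}$ is the conjunction of two conditions, each of which I expect to be local on $X$: that $\mathcal{M}$ be co-admissible as a left and as a right $\wideparen{\D}_X$-module, and the torsion condition $\mathcal{M}(X)=\mathcal{M}(X)_{\infty}(\mathcal{I}_{\Delta})$ of Definition \ref{defi diagonal co-admissible bimodules local case}. For the first condition I would invoke Kiehl's theorem for co-admissible modules over the Fréchet--Stein enveloping algebra of a Lie algebroid \cite[Theorem 8.4]{ardakov2019}, applied separately to $\wideparen{\D}_X$ and $\wideparen{\D}_X^{\op}$; for the second, the idea is to re-express it through supports, since supports can be tested on any admissible affinoid cover of $X^2$.

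First I would set up the localization picture. If $\mathcal{M}\in\Mod_{\Indban}(\wideparen{\D}_X^e)$ is co-admissible as a left and right $\wideparen{\D}_X$-module, then Theorem \ref{teo embedding co-admissible bimodules} makes $\mathcal{M}(X)$ a co-admissible $\wideparen{E}_X(X^2)$-module, so the sheaf $\operatorname{Loc}(\mathcal{M}(X))\in\mathcal{C}(\wideparen{E}_X)$ is defined, and by Proposition \ref{prop co-admissible EX modules supported on the diagonal} the condition $\mathcal{M}(X)=\mathcal{M}(X)_{\infty}(\mathcal{I}_{\Delta})$ is equivalent to $\operatorname{Loc}(\mathcal{M}(X))$ being supported on the diagonal $\Delta\subset X^2$. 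Next I would record the restriction compatibility: the double-localization computation in the proof of Proposition \ref{prop embedding Theorem}, namely $\wideparen{E}_X(V^2)\overrightarrow{\otimes}_{\wideparen{E}_X(X^2)}\mathcal{M}(X)=\mathcal{M}(V)$ for an affinoid subdomain $V\subset X$, identifies $\operatorname{Loc}(\mathcal{M}(X))_{\vert V^2}$ with $\operatorname{Loc}(\mathcal{M}(V))$. I would also note that an affinoid subdomain $V\subset X$ inherits an étale map to $\mathbb{A}^n$ (an affinoid subdomain inclusion is étale, and étale maps compose), so the category $\mathcal{C}_B^{\operatorname{Bi}}(\wideparen{\D}_V)_{\Delta}$ and the corresponding instance of Proposition \ref{prop co-admissible EX modules supported on the diagonal} over $V$ are indeed available.

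With these preliminaries, $(ii)\Rightarrow(i)$ would run as follows. From $\mathcal{M}_{\vert V_i}\in\mathcal{C}_B^{\operatorname{Bi}}(\wideparen{\D}_{V_i})_{\Delta}$, Kiehl's theorem gives that $\mathcal{M}$ is co-admissible as a left and right $\wideparen{\D}_X$-module, so $\operatorname{Loc}(\mathcal{M}(X))$ is defined; each restriction $\operatorname{Loc}(\mathcal{M}(X))_{\vert V_i^2}=\operatorname{Loc}(\mathcal{M}(V_i))$ is supported on $\Delta\cap V_i^2$ by Proposition \ref{prop co-admissible EX modules supported on the diagonal} over $V_i$ (using $\mathcal{M}(V_i)=\mathcal{M}(V_i)_{\infty}(\mathcal{I}_{\Delta})$); since $(V_i^2)_i$ covers $X^2$, $\operatorname{Loc}(\mathcal{M}(X))$ is supported on the diagonal; and a final application of Proposition \ref{prop co-admissible EX modules supported on the diagonal} over $X$ gives $\mathcal{M}(X)=\mathcal{M}(X)_{\infty}(\mathcal{I}_{\Delta})$, hence $\mathcal{M}\in\mathcal{C}_B^{\operatorname{Bi}}(\wideparen{\D}_X)_{\Delta}$. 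For $(i)\Rightarrow(iii)$ one reverses the flow: restriction preserves left and right co-admissibility by \cite[Theorem 8.4]{ardakov2019}, Proposition \ref{prop co-admissible EX modules supported on the diagonal} over $X$ shows $\operatorname{Loc}(\mathcal{M}(X))$ is diagonally supported, hence so is each $\operatorname{Loc}(\mathcal{M}(V_i))=\operatorname{Loc}(\mathcal{M}(X))_{\vert V_i^2}$, and Proposition \ref{prop co-admissible EX modules supported on the diagonal} over $V_i$ returns the torsion condition for $\mathcal{M}(V_i)$.

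I expect the main obstacle to be the bookkeeping around the restriction compatibility $\operatorname{Loc}(\mathcal{M}(X))_{\vert V^2}=\operatorname{Loc}(\mathcal{M}(V))$ for affinoid subdomains $V\subset X$ — which really rests on the fact that for $\mathcal{M}\in\mathcal{C}_B^{\operatorname{Bi}}(\wideparen{\D}_X)$ the bimodule localization commutes with passing to $V^2$, essentially already extracted from the proof of Proposition \ref{prop embedding Theorem} — together with the mild point that affinoid subdomains of $X$ still carry an étale chart, so that the local machinery of this subsection genuinely applies over the $V_i$. Once these are in place the proof is a formal chase through Theorem \ref{teo embedding co-admissible bimodules}, Proposition \ref{prop embedding Theorem}, Proposition \ref{prop co-admissible EX modules supported on the diagonal} and \cite[Theorem 8.4]{ardakov2019}, with no further analytic input required.
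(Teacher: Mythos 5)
Your overall strategy (reduce everything to left/right co-admissibility via Kiehl's theorem plus the torsion condition, reinterpreted through supports of $\operatorname{Loc}(\mathcal{M}(X))$ via Proposition \ref{prop co-admissible EX modules supported on the diagonal}) is the right one, and your $(i)\Rightarrow(iii)$ direction matches the paper's proof: there one knows $\Delta_*\mathcal{M}=\operatorname{Loc}(\mathcal{M}(X))$ from Proposition \ref{prop embedding Theorem}, restricts to $V_i^2$ to get $\operatorname{Loc}(\mathcal{M}(V_i))$ supported on the diagonal, and applies Proposition \ref{prop co-admissible EX modules supported on the diagonal} over $V_i$. Your preliminary points (the restriction identity $\operatorname{Loc}(\mathcal{M}(X))_{\vert V^2}=\operatorname{Loc}(\mathcal{M}(V))$ extracted from the proof of Proposition \ref{prop embedding Theorem}, and the availability of \'etale charts on affinoid subdomains) are also fine.

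However, your $(ii)\Rightarrow(i)$ argument has a genuine gap: you assert that ``since $(V_i^2)_i$ covers $X^2$, $\operatorname{Loc}(\mathcal{M}(X))$ is supported on the diagonal.'' The family $(V_i^2)_{i=1}^n$ does \emph{not} cover $X^2$ — only $(V_i\times V_j)_{i,j}$ does — and a point $(x,y)$ with $x\in V_i\setminus V_j$, $y\in V_j\setminus V_i$ lies in no $V_k^2$. So knowing that each $\operatorname{Loc}(\mathcal{M}(V_i))$ is diagonally supported says nothing about the sections of $\operatorname{Loc}(\mathcal{M}(X))$ over the off-diagonal rectangles $V_i\times V_j$, and you cannot conclude that $\operatorname{Loc}(\mathcal{M}(X))$ is supported on $\Delta$ this way. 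The repair is to prove the torsion condition directly rather than through supports: since $\mathcal{M}$ is already known (by Kiehl) to lie in $\mathcal{C}_B^{\operatorname{Bi}}(\wideparen{\D}_X)$, the sheaf axiom gives a strict closed embedding of Fr\'echet spaces $\mathcal{M}(X)\rightarrow\prod_i\mathcal{M}(V_i)$ compatible with the $\mathcal{I}_{\Delta}$-actions; for $c\in\mathcal{I}_{\Delta}(X^2)$ and $m\in\mathcal{M}(X)$ one has $(c^nm)_{\vert V_i}=(c_{\vert V_i^2})^n m_{\vert V_i}\rightarrow 0$ for every $i$ by hypothesis, whence $c^nm\rightarrow 0$ in $\mathcal{M}(X)$, i.e.\ $\mathcal{M}(X)=\mathcal{M}(X)_{\infty}(\mathcal{I}_{\Delta})$. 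This is presumably what the paper intends by ``the converse is clear.''
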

\begin{proof}
Let $\mathcal{M}\in \mathcal{C}_B^{\Bi}(\wideparen{\D}_X)_{\Delta}$, and let $(V_i)_{i=1}^n$ be an affinoid cover of $X$. By Kiehl's Theorem for co-admissible modules, it follows that $\mathcal{M}_{\vert V_i}\in\mathcal{C}_B^{\Bi}(\wideparen{\D}_{V_i})$ for $1\leq i\leq n$. Thus, we only need to show that:
\begin{equation*}
    \mathcal{M}(V_i)=\mathcal{M}(V_i)_{\infty}(\mathcal{I}_{\Delta}(V_i)).
\end{equation*}
By Proposition \ref{prop embedding Theorem} we have $\Delta^E_*\mathcal{M}=\operatorname{Loc}(\mathcal{M}(X))$. Thus, for $1\leq i\leq n$ we have:
\begin{equation*}
(\Delta_{\vert V_i})^E_*\left(\mathcal{M}_{\vert V_i}\right)=\left(\Delta^E_*\mathcal{M}\right)_{\vert V_i^2}=\operatorname{Loc}(\mathcal{M}(X))_{\vert V_i^2}= \operatorname{Loc}(\mathcal{M}(V_i)).   
\end{equation*}
Thus, the co-admissible $\wideparen{E}_{V_i}$-module associated to $\mathcal{M}(V_i)$ has support contained in the diagonal. Hence, by Proposition \ref{prop co-admissible EX modules supported on the diagonal}, we have $\mathcal{M}(V_i)=\mathcal{M}(V_i)_{\infty}(\mathcal{I}_{\Delta}(V_i))$.
Thus, $(i)$ implies $(ii)$ and $(iii)$. The converse is clear.
\end{proof}
As a consequence of our results thus far, it follows that the category of co-admissible diagonal bimodules $\mathcal{C}_B^{\Bi}(\wideparen{\D}_X)_{\Delta}$ presents similar behavior to 
the usual category of co-admissible modules $\mathcal{C}(\wideparen{\D}_X)$. In our current setting, the sheaves in $\mathcal{C}(\wideparen{\D}_X)$ are completely determined by their global sections, so one may expect that this phenomena carries over to $\mathcal{C}_B^{\Bi}(\wideparen{\D}_X)_{\Delta}$. We conclude this section by confirming this intuition. Let us start by defining the relevant categories:
\begin{defi}
Let $*=L,R$, or $B$. We define $\mathcal{C}_*^{\operatorname{Bi}}(\wideparen{\D}_X(X))_{\Delta}$ as the full subcategory of $\mathcal{C}_*^{\operatorname{Bi}}(\wideparen{\D}_X(X))$, given by objects $\mathcal{M}$ such that $ \mathcal{M}=\mathcal{M}_{\infty}(\mathcal{I}_{\Delta})$. 
\end{defi}
\begin{coro}\label{coro co-admissible EX-modules supported on the diagonal}
 Let $\mathcal{M}\in \mathcal{C}_*^{\operatorname{Bi}}(\wideparen{\D}_X(X))$. Then $\mathcal{M}\in \mathcal{C}(\wideparen{E}_X(X^2))$. Furthermore, $\operatorname{Loc}(\mathcal{M})$ is supported on $\Delta\subset X^2$ if and only if $ \mathcal{M}\in \mathcal{C}_*^{\operatorname{Bi}}(\wideparen{\D}_X(X))_{\Delta}$.
\end{coro}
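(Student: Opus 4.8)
The plan is to read this off directly from Theorem \ref{teo embedding co-admissible bimodules} together with Proposition \ref{prop co-admissible EX modules supported on the diagonal}, since under the running hypothesis of this subsection (an étale map $X\to\mathbb{A}^n$) all the genuinely analytic work has already been carried out: $\mathcal{T}_{X/K}(X)$ then admits a free $(\mathcal{R},\mathcal{A})$-Lie lattice, so Section \ref{Section Categories of co-admissible bimodules} applies verbatim with $L=\mathcal{T}_{X/K}(X)$, $\wideparen{U}(\mathcal{T}_{X/K})^e=\wideparen{\D}_X^e$ and $\wideparen{E}(\mathcal{T}_{X/K})=\wideparen{E}_X$.

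First I would prove the inclusion $\mathcal{M}\in\mathcal{C}(\wideparen{E}_X(X^2))$. For $*=L$ this is exactly Theorem \ref{teo embedding co-admissible bimodules}; for $*=R$ it is the ``mutatis-mutandis'' half of the same theorem for $\mathcal{C}_R^{\operatorname{Bi}}$; and for $*=B$ it follows from the inclusion $\mathcal{C}_B^{\operatorname{Bi}}(\wideparen{\D}_X(X))\subset\mathcal{C}_L^{\operatorname{Bi}}(\wideparen{\D}_X(X))$ and the case $*=L$. In each case part $(i)$ of Theorem \ref{teo embedding co-admissible bimodules} tells us that the topology (equivalently, bornology) on $\mathcal{M}$ as a co-admissible $\wideparen{E}_X(X^2)$-module coincides with the one coming from its $\wideparen{\D}_X(X)$-module structure, so $\operatorname{Loc}(\mathcal{M})$ is unambiguously defined.

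Next I would apply Proposition \ref{prop co-admissible EX modules supported on the diagonal} to the co-admissible $\wideparen{E}_X(X^2)$-module $\mathcal{M}$: it states precisely that $\operatorname{Loc}(\mathcal{M})$ is supported on $\Delta\subset X^2$ if and only if $\mathcal{M}=\mathcal{M}_{\infty}(\mathcal{I}_{\Delta})$. Since $\mathcal{C}_*^{\operatorname{Bi}}(\wideparen{\D}_X(X))_{\Delta}$ is by definition the full subcategory of $\mathcal{C}_*^{\operatorname{Bi}}(\wideparen{\D}_X(X))$ cut out by the condition $\mathcal{M}=\mathcal{M}_{\infty}(\mathcal{I}_{\Delta})$, this is exactly the asserted equivalence, and the proof is complete.

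There is essentially no hard step remaining; the only point requiring care is the bookkeeping that identifies the condition $\mathcal{M}=\mathcal{M}_{\infty}(\mathcal{I}_{\Delta})$ in the two guises in which it appears: as a condition on the $\wideparen{E}_X(X^2)$-module $\mathcal{M}$ (used in Proposition \ref{prop co-admissible EX modules supported on the diagonal}) and as a condition on the $\wideparen{\D}_X(X)$-bimodule $\mathcal{M}$ (used in the definition of $\mathcal{C}_*^{\operatorname{Bi}}(\wideparen{\D}_X(X))_{\Delta}$). These agree because the action of $\OX_{X^2}(X^2)=A\widehat{\otimes}_KA$ on $\mathcal{M}$ factoring through $\wideparen{E}_X(X^2)$ is the same as the one induced by the two $\wideparen{\D}_X(X)$-module structures, which is precisely the content of part $(i)$ of Theorem \ref{teo embedding co-admissible bimodules} invoked above.
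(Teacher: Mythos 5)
Your proposal is correct and follows exactly the route of the paper, whose proof is precisely the combination of Theorem \ref{teo embedding co-admissible bimodules} (for membership in $\mathcal{C}(\wideparen{E}_X(X^2))$, with the cases $*=R,B$ handled as you indicate) and Proposition \ref{prop co-admissible EX modules supported on the diagonal} (for the support criterion). Your additional remark identifying the two guises of the condition $\mathcal{M}=\mathcal{M}_{\infty}(\mathcal{I}_{\Delta})$ via part $(i)$ of the theorem is a correct and worthwhile piece of bookkeeping that the paper leaves implicit.
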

\begin{proof}
This is a consequence of Theorem  \ref{teo embedding co-admissible bimodules}, and Proposition \ref{prop co-admissible EX modules supported on the diagonal}.
\end{proof}
Our next goal is constructing a localization functor:
\begin{equation*}
    \operatorname{Loc}(-):\mathcal{C}_B^{\operatorname{Bi}}(\wideparen{\D}_X(X))_{\Delta}\rightarrow \mathcal{C}_B^{\operatorname{Bi}}(\wideparen{\D}_X)_{\Delta},
\end{equation*}
which shows that co-admissible bimodules supported on the diagonal are completely determined by their global sections.
\begin{prop}\label{prop localization bimodules with support contained in the diagonal}
There is an equivalence of abelian categories:
\begin{equation*}
    \operatorname{Loc}(-):\mathcal{C}_B^{\operatorname{Bi}}(\wideparen{\D}_X(X))_{\Delta}\rightarrow \mathcal{C}_B^{\operatorname{Bi}}(\wideparen{\D}_X)_{\Delta},
\end{equation*}
given by sending $\mathcal{M}\in \mathcal{C}_B^{\operatorname{Bi}}(\wideparen{\D}_X(X))_{\Delta}$ to the unique $\wideparen{\D}_X^e$-module defined on affinoid subdomains $V\subset X$ by:
\begin{equation*}
    \operatorname{Loc}(\mathcal{M})(V)= \wideparen{\D}_X(V)\overrightarrow{\otimes}_{\wideparen{\D}_X(X)}\mathcal{M}\overrightarrow{\otimes}_{\wideparen{\D}_X(X)}\wideparen{\D}_X(V).
\end{equation*}
\end{prop}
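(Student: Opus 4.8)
The plan is to recognize $\operatorname{Loc}$ as the composite of three functors whose properties are already in hand, and to identify $\Gamma(X,-)$ as its quasi-inverse. Concretely, for $\mathcal{M}\in\mathcal{C}_B^{\Bi}(\wideparen{\D}_X(X))_{\Delta}$, first view $\mathcal{M}$ as a co-admissible $\wideparen{E}_X(X^2)$-module via Theorem \ref{teo embedding co-admissible bimodules}, then form the co-admissible $\wideparen{E}_X$-module $\operatorname{Loc}_{\wideparen{E}_X}(\mathcal{M})$ of Definition \ref{defi localization functor for co-admissible E(l)-modules}, and finally pull back along the diagonal, setting $\operatorname{Loc}(\mathcal{M}):=\Delta^{-1}\operatorname{Loc}_{\wideparen{E}_X}(\mathcal{M})$. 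The point that makes the last step legitimate is Corollary \ref{coro co-admissible EX-modules supported on the diagonal}: since $\mathcal{M}=\mathcal{M}_{\infty}(\mathcal{I}_{\Delta})$, the sheaf $\operatorname{Loc}_{\wideparen{E}_X}(\mathcal{M})$ is supported on $\Delta\subset X^2$, so by Proposition \ref{prop sheaves of modules supported on the diagonal} applying $\Delta^{-1}$ yields a genuine sheaf of $\wideparen{\D}_X^e$-modules (using $\Delta^{-1}\wideparen{E}_X=\wideparen{\D}_X^e$). Functoriality of $\operatorname{Loc}$ in $\mathcal{M}$ is then inherited from the embedding of Theorem \ref{teo embedding co-admissible bimodules}, the functoriality of $\operatorname{Loc}_{\wideparen{E}_X}$ (Proposition \ref{prop properties of Loc}), and that of $\Delta^{-1}$.

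Next I would compute the sections of $\operatorname{Loc}(\mathcal{M})$ and check that it lies in $\mathcal{C}_B^{\Bi}(\wideparen{\D}_X)_{\Delta}$. For an affinoid subdomain $V\subset X$, using the identity $\Delta^{-1}\mathcal{N}(V)=\mathcal{N}(V^2)$ valid for any co-admissible $\wideparen{E}_X$-module $\mathcal{N}$ supported on the diagonal, together with $\operatorname{Loc}_{\wideparen{E}_X}(\mathcal{M})(V^2)=\wideparen{E}_X(V^2)\overrightarrow{\otimes}_{\wideparen{E}_X(X^2)}\mathcal{M}$ and the description $\wideparen{E}_X(V^2)=\wideparen{\D}_X(V)\overrightarrow{\otimes}_K\wideparen{\D}_X(V)^{\op}$ coming from Proposition \ref{prop enveloping of product as product of envelopings}, one gets
\[
\operatorname{Loc}(\mathcal{M})(V)=\wideparen{\D}_X(V)\overrightarrow{\otimes}_{\wideparen{\D}_X(X)}\mathcal{M}\overrightarrow{\otimes}_{\wideparen{\D}_X(X)}\wideparen{\D}_X(V),
\]
exactly as in the computation carried out in the proof of Proposition \ref{prop embedding Theorem}. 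Taking $V=X$ gives $\Gamma(X,\operatorname{Loc}(\mathcal{M}))=\mathcal{M}$. Co-admissibility of $\operatorname{Loc}(\mathcal{M})$ as a left (resp. right) $\wideparen{\D}_X$-module follows from this formula: two-sided co-admissibility of $\mathcal{M}$ collapses the right-hand side to the one-sided localization of a co-admissible $\wideparen{\D}_X(X)$- (resp. $\wideparen{\D}_X(X)^{\op}$-) module, and the condition $\operatorname{Loc}(\mathcal{M})(X)=\mathcal{M}=\mathcal{M}_{\infty}(\mathcal{I}_{\Delta})$ is the hypothesis, so $\operatorname{Loc}(\mathcal{M})\in\mathcal{C}_B^{\Bi}(\wideparen{\D}_X)_{\Delta}$. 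Conversely, $\Gamma(X,-)$ sends $\mathcal{C}_B^{\Bi}(\wideparen{\D}_X)_{\Delta}$ to $\mathcal{C}_B^{\Bi}(\wideparen{\D}_X(X))_{\Delta}$ by the very Definition \ref{defi diagonal co-admissible bimodules local case}.

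It then remains to see that $\operatorname{Loc}$ and $\Gamma(X,-)$ are mutually inverse. One direction, $\Gamma(X,-)\circ\operatorname{Loc}=\operatorname{Id}$, was just obtained. For the other, given $\mathcal{N}\in\mathcal{C}_B^{\Bi}(\wideparen{\D}_X)_{\Delta}$, Proposition \ref{prop embedding Theorem} gives $\Delta_*\mathcal{N}=\operatorname{Loc}_{\wideparen{E}_X}(\mathcal{N}(X))$, whence
\[
\operatorname{Loc}(\mathcal{N}(X))=\Delta^{-1}\operatorname{Loc}_{\wideparen{E}_X}(\mathcal{N}(X))=\Delta^{-1}\Delta_*\mathcal{N}=\mathcal{N},
\]
the last equality because $\Delta$ is a closed immersion, so $\Delta^{-1}\Delta_*=\operatorname{Id}$ (Proposition \ref{prop sheaves of modules supported on the diagonal}). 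These identifications are natural, so $\operatorname{Loc}$ is an equivalence of categories; alternatively, one may invoke Corollary \ref{coro immersion functor 2} to see directly that $\Gamma(X,-)$ is fully faithful on $\mathcal{C}_B^{\Bi}(\wideparen{\D}_X)_{\Delta}$, with essential surjectivity supplied by the previous paragraph. Since $\mathcal{C}_B^{\Bi}(\wideparen{\D}_X)_{\Delta}$ is abelian by Lemma \ref{Lemma diagonal co-admissible bimodules are abelian}, the equivalence transports this structure to $\mathcal{C}_B^{\Bi}(\wideparen{\D}_X(X))_{\Delta}$ and is in particular exact, giving the stated equivalence of abelian categories.

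The step I expect to be the main obstacle is the sections computation in the second paragraph, namely verifying that localizing the co-admissible $\wideparen{E}_X$-module along an affinoid $V^2$ and restricting to the diagonal reproduces the two-sided tensor formula. This rests on the compatibility between the Fréchet--Stein presentation $\wideparen{E}_X(X^2)=\wideparen{\D}_X(X)\overrightarrow{\otimes}_K\wideparen{\D}_X(X)^{\op}$ and the two-sided co-admissibility of $\mathcal{M}$ (together with the idempotence $\wideparen{\D}_X(V)\overrightarrow{\otimes}_{\wideparen{\D}_X(X)}\wideparen{\D}_X(V)=\wideparen{\D}_X(V)$ for open subdomains). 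However this is essentially identical to the argument already executed in the proof of Proposition \ref{prop embedding Theorem}, so in the end the present proposition is a repackaging of results already established rather than a genuinely new difficulty.
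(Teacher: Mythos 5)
Your proof is correct and follows essentially the same route as the paper's: localize $\mathcal{M}$ over $\wideparen{E}_X$, use the diagonal support (via Corollary \ref{coro co-admissible EX-modules supported on the diagonal} and Proposition \ref{prop sheaves of modules supported on the diagonal}) to pull back to a sheaf of $\wideparen{\D}_X^e$-modules with the stated sections, and deduce the equivalence from Proposition \ref{prop embedding Theorem} and Corollary \ref{coro immersion functor 2}. One small wording point: the collapse of $\wideparen{\D}_X(V)\overrightarrow{\otimes}_{\wideparen{\D}_X(X)}\mathcal{M}\overrightarrow{\otimes}_{\wideparen{\D}_X(X)}\wideparen{\D}_X(V)$ to the one-sided localizations is a consequence of the diagonal support (sections of the $\wideparen{E}_X$-module over $V\times X$, $V\times V$ and $X\times V$ all agree), not of two-sided co-admissibility alone as you state — but since you establish the support statement earlier in the same paragraph, this is a misattribution rather than a gap.
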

\begin{proof}
Consider a module $\mathcal{M}\in \mathcal{C}_B^{\operatorname{Bi}}(\wideparen{\D}_X(X))_{\Delta}$. Let $\mathcal{M}^L=\operatorname{Loc}(\mathcal{M})$ be the associated sheaf of co-admissible $\wideparen{\D}_X$-modules,  $\mathcal{M}^R=\operatorname{Loc}(\mathcal{M})$ be the analogous sheaf of co-admissible $\wideparen{\D}_X^{\op}$-modules, and  $\mathcal{M}^B:= \operatorname{Loc}(\mathcal{M})$ be the corresponding co-admissible $\wideparen{E}_X$-module. By Corollary \ref{coro co-admissible EX-modules supported on the diagonal}, $\mathcal{M}^B$ is supported on the diagonal. Hence, for any affinoid subdomain $V\subset X$ we have:
\begin{equation*}
    \mathcal{M}^B(V\times X)=\mathcal{M}^B(V\times V)=\mathcal{M}^B(X\times V).
\end{equation*}
Thus, we have the following identities of complete bornological $\wideparen{E}_X(V\times V)$-modules:
\begin{equation*}
 \wideparen{\D}_X(V)\overrightarrow{\otimes}_{\wideparen{\D}_X(X)}\mathcal{M}= \wideparen{\D}_X(V)\overrightarrow{\otimes}_{\wideparen{\D}_X(X)}\mathcal{M}\overrightarrow{\otimes}_{\wideparen{\D}_X(X)}\wideparen{\D}_X(V)=\mathcal{M}\overrightarrow{\otimes}_{\wideparen{\D}_X(X)}\wideparen{\D}_X(V).
\end{equation*}
 In particular, $\operatorname{Loc}(\mathcal{M})=\mathcal{M}^L=\mathcal{M}^R$. Hence, it follows that $\operatorname{Loc}(\mathcal{M})\in \mathcal{C}_B^{\operatorname{Bi}}(\wideparen{\D}_X)_{\Delta}$. The fact that this is an equivalence follows by Proposition \ref{prop embedding Theorem}.
\end{proof}
\begin{coro}\label{coro immersion Theorem local case}
    We have the following commutative diagram of categories:
    \begin{equation*}
\begin{tikzcd}
\mathcal{C}_B^{\operatorname{Bi}}(\wideparen{\D}_X(X))_{\Delta} \arrow[d, "\operatorname{Loc}"'] \arrow[r] & \mathcal{C}(\wideparen{E}_X(X^2))_{\Delta} \arrow[d, "\operatorname{Loc}"] \\
\mathcal{C}_B^{\operatorname{Bi}}(\wideparen{\D}_X)_{\Delta} \arrow[r, "\Delta_*"]                         & \mathcal{C}(\wideparen{E}_X))_{\Delta}                                    
\end{tikzcd}
    \end{equation*}
    satisfying the following properties:
    \begin{enumerate}[label=(\roman*)]
        \item The upper horizontal map is the restriction to $\mathcal{C}_B^{\operatorname{Bi}}(\wideparen{\D}_X(X))_{\Delta}$ of the map constructed in Theorem \ref{teo embedding co-admissible bimodules}.
        \item The vertical maps are equivalences of abelian categories.
        \item The horizontal maps are embeddings of abelian categories. Furthermore, their essential images are Serre subcategories.
    \end{enumerate}
\end{coro}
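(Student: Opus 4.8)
The plan is to assemble the commutative square from the pieces already established, then read off properties (i)--(iii) from the corresponding properties of those pieces. I would first recall the three functors involved: the top horizontal map is the embedding $\mathcal{C}_B^{\operatorname{Bi}}(\wideparen{\D}_X(X)) \rightarrow \mathcal{C}(\wideparen{E}_X(X^2))$ of Theorem \ref{teo embedding co-admissible bimodules}, restricted to the subcategories cut out by the condition $\mathcal{M} = \mathcal{M}_\infty(\mathcal{I}_\Delta)$; by Corollary \ref{coro co-admissible EX-modules supported on the diagonal} this restriction indeed lands in $\mathcal{C}(\wideparen{E}_X(X^2))_{\Delta}$, so (i) is essentially a definitional unwinding. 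The left vertical map is the localization equivalence $\operatorname{Loc}$ of Proposition \ref{prop localization bimodules with support contained in the diagonal}, and the right vertical map is the localization equivalence $\operatorname{Loc}\colon \mathcal{C}(\wideparen{E}_X(X^2))_{\Delta} \rightarrow \mathcal{C}(\wideparen{E}_X)_{\Delta}$ obtained from Proposition \ref{prop properties of Loc} together with Proposition \ref{prop co-admissible EX modules supported on the diagonal} (the latter guaranteeing that the support condition is preserved). The bottom horizontal map is $\Delta_*$, which restricts to an embedding $\mathcal{C}_B^{\operatorname{Bi}}(\wideparen{\D}_X)_{\Delta} \rightarrow \mathcal{C}(\wideparen{E}_X)_{\Delta}$ by Proposition \ref{prop embedding Theorem}.

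Next I would verify commutativity. Given $\mathcal{M} \in \mathcal{C}_B^{\operatorname{Bi}}(\wideparen{\D}_X(X))_{\Delta}$, going down-then-right produces $\Delta_*(\operatorname{Loc}(\mathcal{M}))$, and by Proposition \ref{prop embedding Theorem} this equals $\operatorname{Loc}(\operatorname{Loc}(\mathcal{M})(X)) = \operatorname{Loc}(\mathcal{M})$, since $\operatorname{Loc}(\mathcal{M})(X) = \mathcal{M}$. Going right-then-down produces $\operatorname{Loc}(\mathcal{M})$ where now $\operatorname{Loc}$ denotes the right vertical functor, and by construction (the explicit formula for sections of $\operatorname{Loc}$ on affinoid subdomains $V \subset X^2$) this agrees with $\Delta_*\operatorname{Loc}(\mathcal{M})$ on the cofinal family of products $V^2$, hence everywhere by the support-on-the-diagonal reduction. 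So the square commutes up to canonical isomorphism; this is a short diagram chase using only the explicit descriptions already in hand.

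For (ii), the left vertical map is an equivalence by Proposition \ref{prop localization bimodules with support contained in the diagonal}. For the right vertical map, $\operatorname{Loc}$ is fully faithful and exact by Proposition \ref{prop properties of Loc}(i),(iv), and essential surjectivity onto $\mathcal{C}(\wideparen{E}_X)_{\Delta}$ follows from Kiehl's theorem for co-admissible $\wideparen{E}_X$-modules (Corollary \ref{coro Kiehl's Theorem for E-modules}) plus the fact that the support condition is local and is detected by $\operatorname{Loc}$ via Proposition \ref{prop co-admissible EX modules supported on the diagonal}: any co-admissible $\wideparen{E}_X$-module supported on $\Delta$ is $\operatorname{Loc}$ of its global sections, which then lie in $\mathcal{C}(\wideparen{E}_X(X^2))_{\Delta}$. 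For (iii), the top horizontal map is an embedding with Serre-subcategory image by Theorem \ref{teo embedding co-admissible bimodules}(ii); since the vertical arrows are equivalences and the square commutes, the bottom horizontal map $\Delta_*$ inherits both properties. One must also check that the Serre property (closure under subobjects, quotients, extensions) is compatible with passing to the diagonal-supported subcategories, but this is immediate: a subobject, quotient, or extension of modules satisfying $\mathcal{M} = \mathcal{M}_\infty(\mathcal{I}_\Delta)$ again satisfies it (this was already used in Lemma \ref{Lemma diagonal co-admissible bimodules are abelian}), and Proposition \ref{prop sheaves of modules supported on the diagonal} identifies $\mathcal{C}(\wideparen{E}_X)_{\Delta}$ as a Serre subcategory of $\mathcal{C}(\wideparen{E}_X)$.

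The main obstacle is not any single deep step but rather carefully tracking that all four functors restrict correctly to the diagonal-supported subcategories and that the identifications in Proposition \ref{prop embedding Theorem} and Proposition \ref{prop localization bimodules with support contained in the diagonal} are compatible on the nose (not merely up to non-canonical isomorphism), so that the square genuinely commutes and the Serre-subcategory conclusion transports cleanly. Concretely, the delicate point is confirming that the right vertical $\operatorname{Loc}$ restricted to diagonal-supported modules is $\Delta_* \circ (\text{left }\operatorname{Loc})$, which rests on the formula $\operatorname{Loc}(\mathcal{M})(V^2) = \wideparen{\D}_X(V)\overrightarrow{\otimes}_{\wideparen{\D}_X(X)}\mathcal{M}\overrightarrow{\otimes}_{\wideparen{\D}_X(X)}\wideparen{\D}_X(V)$ from Proposition \ref{prop enveloping of product as product of envelopings} and the co-admissibility flatness identities; this is exactly the computation already carried out inside the proof of Proposition \ref{prop embedding Theorem}, so it can be cited rather than redone.
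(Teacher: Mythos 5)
Your proposal is correct and follows essentially the same route as the paper: the paper's own proof is precisely a summary citation of Propositions \ref{prop embedding Theorem} and \ref{prop localization bimodules with support contained in the diagonal} for commutativity, of \ref{prop localization bimodules with support contained in the diagonal} and Definition \ref{defi localization functor for co-admissible E(l)-modules} for (ii), and of Theorem \ref{teo embedding co-admissible bimodules} for (iii), which is exactly the assembly you carry out in more detail. Your extra verification that $\Delta_*(\operatorname{Loc}(\mathcal{M}))=\operatorname{Loc}(\operatorname{Loc}(\mathcal{M})(X))=\operatorname{Loc}(\mathcal{M})$ is the correct unwinding of the identification $\Delta_*\mathcal{M}=\operatorname{Loc}(\mathcal{M}(X))$ from Proposition \ref{prop embedding Theorem}.
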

\begin{proof}
    This is a summary of the contents of previous sections. Namely, of the diagram follows by Propositions \ref{prop embedding Theorem} and \ref{prop localization bimodules with support contained in the diagonal}. Claim  $(ii)$ is showed in Proposition \ref{prop localization bimodules with support contained in the diagonal} and Definition \ref{defi localization functor for co-admissible E(l)-modules}, and $(iii)$ was part of Theorem \ref{teo embedding co-admissible bimodules}.
\end{proof}
\subsection{Co-admissibility and the immersion functor II}
In this section, we generalize the results of the previous sections to the non-affinoid setting. As before, Kashiwara's equivalence will play an important role. Thus, we will only deal with the case $\mathscr{L}=\mathcal{T}_{X/K}$. For the rest of this section, we fix a separated smooth rigid analytic space $X$. Let us start with the following definition:
\begin{defi}\label{defi co-admissible diagonal bimodule}
Let $\mathcal{C}_B^{\operatorname{Bi}}(\wideparen{\D}_X)_{\Delta}$ be the full subcategory of $\Mod_{\Indban}(\wideparen{\D}_X^e)$ given by the modules $\mathcal{M}$ such that there is an admissible affinoid cover $(V_i)_{i\in I}$ of $X$ satisfying the following:
\begin{enumerate}[label=(\roman*)]
    \item For each $i\in I$ there is an étale map $V_i\rightarrow \mathbb{A}^n_K$.
    \item For each $i\in I$, we have $\mathcal{M}_{\vert V_i}\in \mathcal{C}_B^{\operatorname{Bi}}(\wideparen{\D}_{V_i})_{\Delta}$.
\end{enumerate}
We call $\mathcal{C}_B^{\operatorname{Bi}}(\wideparen{\D}_X)_{\Delta}$ the category of  co-admissible diagonal $\wideparen{\D}_X$-bimodules.
\end{defi}
Notice that if $X$ is affinoid and satisfies that there is an étale map $X\rightarrow \mathbb{A}^n_K$, then by Corollary \ref{coro khiels Theorem for diagonal co-admissible bi-modules} this definition agrees with the one given in Definition \ref{defi diagonal co-admissible bimodules local case}. Furthermore, we can show the following:
\begin{Lemma}\label{Lemma local properties of diagonal co-admissible bimodules}
Let $\mathcal{M}\in\Mod_{\Indban}(\wideparen{\D}_X^e)$. The following are equivalent:
\begin{enumerate}[label=(\roman*)]
    \item $\mathcal{M}\in \mathcal{C}_B^{\operatorname{Bi}}(\wideparen{\D}_X)_{\Delta}$.
    \item For every admissible affinoid cover $(V_i)_{i\in I}$ of $X$ satisfying that for each $i\in I$ there is an étale map $V_i\rightarrow \mathbb{A}_K^n$, we have $\mathcal{M}_{\vert V_i}\in \mathcal{C}_B^{\operatorname{Bi}}(\wideparen{\D}_{V_i})_{\Delta}$.
\end{enumerate}
\end{Lemma}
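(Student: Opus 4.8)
The implication $(ii)\Rightarrow(i)$ is immediate: a smooth rigid analytic space admits an admissible affinoid cover by spaces carrying an étale map to some $\mathbb{A}^n_K$, so applying $(ii)$ to any such cover exhibits $\mathcal{M}$ as an object of $\mathcal{C}_B^{\operatorname{Bi}}(\wideparen{\D}_X)_{\Delta}$. Thus the only content is $(i)\Rightarrow(ii)$, and the plan is to reduce it entirely to the affinoid version already established in Corollary \ref{coro khiels Theorem for diagonal co-admissible bi-modules}, exploiting that $X$ is separated (so intersections of affinoid opens are affinoid) and that affinoid opens are quasi-compact.

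First I would record the following restriction property, which falls straight out of Corollary \ref{coro khiels Theorem for diagonal co-admissible bi-modules}: if $V$ is an affinoid space with an étale map $V\to\mathbb{A}^n_K$, $U\subset V$ an affinoid subdomain, and $\mathcal{N}\in\mathcal{C}_B^{\operatorname{Bi}}(\wideparen{\D}_{V})_{\Delta}$, then $\mathcal{N}_{\vert U}\in\mathcal{C}_B^{\operatorname{Bi}}(\wideparen{\D}_{U})_{\Delta}$. Indeed $\{V,U\}$ is a finite admissible affinoid cover of $V$, and the composite $U\hookrightarrow V\to\mathbb{A}^n_K$ is étale (open immersions and composites of étale morphisms are étale), so the implication $(i)\Rightarrow(iii)$ of Corollary \ref{coro khiels Theorem for diagonal co-admissible bi-modules} applied to $V$ yields the claim. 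Note that this step uses only the intrinsic diagonal ideal $\mathcal{I}_{\Delta}$ and co-admissibility as a left and right $\wideparen{\D}$-module, so the notation $\mathcal{C}_B^{\operatorname{Bi}}(\wideparen{\D}_{U})_{\Delta}$ is independent of the chosen étale coordinates.

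Now assume $(i)$ and fix an admissible affinoid cover $(V_i)_{i\in I}$ of $X$ witnessing it, so each $V_i$ has an étale map to some $\mathbb{A}^{n_i}_K$ and $\mathcal{M}_{\vert V_i}\in\mathcal{C}_B^{\operatorname{Bi}}(\wideparen{\D}_{V_i})_{\Delta}$. Let $(W_j)_{j\in J}$ be an arbitrary admissible affinoid cover with étale maps $W_j\to\mathbb{A}^{m_j}_K$, and fix $j\in J$. Since $X$ is separated, each $V_i\cap W_j$ is affinoid, and since $W_j$ is quasi-compact we may choose finitely many indices $i_1,\dots,i_s\in I$ with $W_j=\bigcup_{k=1}^s(V_{i_k}\cap W_j)$. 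By the restriction property of the previous paragraph, $\mathcal{M}_{\vert V_{i_k}\cap W_j}\in\mathcal{C}_B^{\operatorname{Bi}}(\wideparen{\D}_{V_{i_k}\cap W_j})_{\Delta}$ for each $k$, and $V_{i_k}\cap W_j\hookrightarrow W_j\to\mathbb{A}^{m_j}_K$ is étale. Hence $(V_{i_k}\cap W_j)_{k=1}^s$ is a finite admissible affinoid cover of $W_j$, whose members carry étale maps to affine space, along which $\mathcal{M}_{\vert W_j}$ restricts into the corresponding local diagonal categories; applying the implication $(ii)\Rightarrow(i)$ of Corollary \ref{coro khiels Theorem for diagonal co-admissible bi-modules} to $W_j$ gives $\mathcal{M}_{\vert W_j}\in\mathcal{C}_B^{\operatorname{Bi}}(\wideparen{\D}_{W_j})_{\Delta}$. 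As $j$ was arbitrary, $(ii)$ follows.

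The argument is essentially bookkeeping, and no functional-analytic input is needed beyond what already enters Corollary \ref{coro khiels Theorem for diagonal co-admissible bi-modules}. The only point that deserves care — the mild obstacle — is checking at each step that the cited corollary genuinely applies: that the relevant affinoid spaces still admit étale maps to affine space, and that the local notion of a diagonal co-admissible bimodule is coordinate-independent, both of which are handled in the second paragraph above.
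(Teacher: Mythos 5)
Your proof is correct and follows essentially the same route as the paper's: reduce $(i)\Rightarrow(ii)$ to the affinoid case by intersecting the witnessing cover with the arbitrary cover, invoking separatedness for affinoidness of intersections, quasi-compactness to pass to finite subcovers, and then applying Corollary \ref{coro khiels Theorem for diagonal co-admissible bi-modules} twice. Your explicit check that affinoid subdomains inherit étale maps to affine space (so the local categories are defined and coordinate-independent) is a detail the paper leaves implicit, but it does not change the argument.
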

\begin{proof}
Clearly statement $(ii)$ implies $(i)$. Thus, let $\mathcal{M}\in \mathcal{C}_B^{\operatorname{Bi}}(\wideparen{\D}_X)_{\Delta}$  and let $(V_i)_{i\in I}$ be an admissible affinoid cover of $X$ as in the statement. By definition of $\mathcal{C}_B^{\operatorname{Bi}}(\wideparen{\D}_X)_{\Delta}$, there is an affinoid cover $(W_j)_{j\in J}$ of $X$ such that for each $j\in J$ there is an étale map $W_j\rightarrow \mathbb{A}^n_K$, and $\mathcal{M}_{\vert W_j}\in \mathcal{C}_B^{\operatorname{Bi}}(\wideparen{\D}_{W_j})_{\Delta}$. As $X$ is separated, for each $i\in I$ the family $(V_i\cap W_j)_{j\in J}$ is an admissible affinoid cover of $V_i$. Furthermore, as $V_i$ is affinoid, it is quasi-compact. Hence, we may reduce this to a finite affinoid cover. But then by Corollary \ref{coro khiels Theorem for diagonal co-admissible bi-modules}, it follows that $\mathcal{M}_{\vert V_i\cap W_j}\in \mathcal{C}_B^{\operatorname{Bi}}(\wideparen{\D}_{V_i\cap W_j})_{\Delta}$ for each $i\in I$ and each $j\in J$. A second application of Corollary \ref{coro khiels Theorem for diagonal co-admissible bi-modules}, shows that $\mathcal{M}_{\vert V_i}\in \mathcal{C}_B^{\operatorname{Bi}}(\wideparen{\D}_{V_i})_{\Delta}$.
\end{proof}
\begin{prop}
Let $\mathcal{M}\in \Mod_{\Indban}(\wideparen{\D}_X^e)$. The following are equivalent:
\begin{enumerate}[label=(\roman*)]
    \item $\mathcal{M}\in \mathcal{C}_B^{\operatorname{Bi}}(\wideparen{\D}_X)_{\Delta}$.
    \item There is an admissible cover $(V_i)_{i\in I}$ of $X$ such that $\mathcal{M}_{\vert V_i}\in \mathcal{C}_B^{\operatorname{Bi}}(\wideparen{\D}_{V_i})_{\Delta}$ for each $i\in I$.
\end{enumerate}
\end{prop}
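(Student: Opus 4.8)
The plan is to deduce the equivalence directly from Definition \ref{defi co-admissible diagonal bimodule}, using that admissible coverings of rigid spaces compose; in other words, this Proposition is the ``sheaf-theoretic'' completion of Lemma \ref{Lemma local properties of diagonal co-admissible bimodules}, now allowing arbitrary (possibly non-affinoid) admissible covers. The implication $(i)\Rightarrow(ii)$ will be immediate: by Definition \ref{defi co-admissible diagonal bimodule} there is an admissible \emph{affinoid} cover $(V_i)_{i\in I}$ of $X$ with each $V_i$ étale over $\mathbb{A}^n_K$ and $\mathcal{M}_{\vert V_i}\in\mathcal{C}_B^{\operatorname{Bi}}(\wideparen{\D}_{V_i})_{\Delta}$, and this is in particular an admissible cover with the property required in $(ii)$.

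For $(ii)\Rightarrow(i)$, I would start from an admissible cover $(V_i)_{i\in I}$ of $X$ with $\mathcal{M}_{\vert V_i}\in\mathcal{C}_B^{\operatorname{Bi}}(\wideparen{\D}_{V_i})_{\Delta}$ for all $i$; note that each $V_i$, being open in the separated smooth space $X$, is again separated and smooth, so $\mathcal{C}_B^{\operatorname{Bi}}(\wideparen{\D}_{V_i})_{\Delta}$ is well defined by Definition \ref{defi co-admissible diagonal bimodule}. Unwinding that definition for each $i$ produces an admissible affinoid cover $(W_{ij})_{j\in J_i}$ of $V_i$ such that each $W_{ij}$ admits an étale map $W_{ij}\rightarrow\mathbb{A}^n_K$ and $\mathcal{M}_{\vert W_{ij}}=(\mathcal{M}_{\vert V_i})_{\vert W_{ij}}\in\mathcal{C}_B^{\operatorname{Bi}}(\wideparen{\D}_{W_{ij}})_{\Delta}$; since $W_{ij}$ is affinoid and étale over affine space, this last category coincides with the one of Definition \ref{defi diagonal co-admissible bimodules local case}, consistently with Corollary \ref{coro khiels Theorem for diagonal co-admissible bi-modules}. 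Because $(V_i)_{i\in I}$ is an admissible cover of $X$ and each $(W_{ij})_{j\in J_i}$ is an admissible cover of $V_i$, the family $(W_{ij})_{i\in I,\, j\in J_i}$ is an admissible affinoid cover of $X$ (transitivity of admissible coverings in the rigid $G$-topology, cf.\ \cite{bosch2014lectures}). This cover then witnesses both conditions of Definition \ref{defi co-admissible diagonal bimodule}, so $\mathcal{M}\in\mathcal{C}_B^{\operatorname{Bi}}(\wideparen{\D}_X)_{\Delta}$, as desired.

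I do not expect a serious obstacle: the argument is a routine local-nature statement once the relevant compatibilities are in hand. The only points deserving a word of care are, first, the transitivity of admissible coverings, which is part of the structure of the admissible $G$-topology and requires no new input here, and second, the consistency---already recorded in Corollary \ref{coro khiels Theorem for diagonal co-admissible bi-modules} and Lemma \ref{Lemma local properties of diagonal co-admissible bimodules}---between the general Definition \ref{defi co-admissible diagonal bimodule} and the affinoid-local Definition \ref{defi diagonal co-admissible bimodules local case} on an affinoid étale over $\mathbb{A}^n_K$. In particular, no use of smoothness beyond that already encoded in the hypotheses is needed, since the refinement to charts étale over affine space is supplied by the assumption $\mathcal{M}_{\vert V_i}\in\mathcal{C}_B^{\operatorname{Bi}}(\wideparen{\D}_{V_i})_{\Delta}$ rather than constructed.
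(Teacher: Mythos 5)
Your proof is correct and follows essentially the same route as the paper: both directions come down to Lemma \ref{Lemma local properties of diagonal co-admissible bimodules} together with transitivity of admissible coverings, taking for each $V_i$ a witness affinoid cover from Definition \ref{defi co-admissible diagonal bimodule} and observing that the union of these is an admissible affinoid cover of $X$ witnessing the definition globally. The consistency points you flag (affinoid-local versus general definition) are exactly the ones the paper delegates to Corollary \ref{coro khiels Theorem for diagonal co-admissible bi-modules} and Lemma \ref{Lemma local properties of diagonal co-admissible bimodules}.
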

\begin{proof}
This follows at once from  Lemma \ref{Lemma local properties of diagonal co-admissible bimodules}, as given an admissible cover $(V_i)_{i\in I}$ of $X$, we may refine it by an admissible affinoid cover and, conversely, given admissible affinoid covers of each of the $U_i$, the union of all such covers is an admissible affinoid cover of $X$.
\end{proof}
Now that we have shown that $\mathcal{C}_B^{\operatorname{Bi}}(\wideparen{\D}_X)_{\Delta}$ may be determined locally, we can start studying some of its algebraic properties:
\begin{prop}\label{prop co-admissible diag is abelian}
 $\mathcal{C}_B^{\operatorname{Bi}}(\wideparen{\D}_X)_{\Delta}$ is an abelian category.  
\end{prop}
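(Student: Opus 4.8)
The category $\mathcal{C}_B^{\operatorname{Bi}}(\wideparen{\D}_X)_{\Delta}$ is by construction a full subcategory of the quasi-abelian category $\Mod_{\Indban}(\wideparen{\D}_X^e)$. The strategy is to verify that it is an additive subcategory which is closed under taking kernels and cokernels in $\Mod_{\Indban}(\wideparen{\D}_X^e)$, and in which every morphism is strict. A full additive subcategory of a quasi-abelian category closed under kernels and cokernels, all of whose morphisms are strict, is abelian (since $\operatorname{CoIm}$ and $\operatorname{Im}$ of a morphism are then computed in the subcategory and the canonical map between them is an isomorphism there), so this suffices. Everything will be reduced to the affinoid case already established in Lemma \ref{Lemma diagonal co-admissible bimodules are abelian} by working on an admissible affinoid cover.

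\textbf{Step 1: localization.} Since $X$ is smooth, it admits an admissible affinoid cover $(V_i)_{i\in I}$ such that each $V_i$ carries an étale map $V_i\to \mathbb{A}^n_K$. For the open immersion $i_{V}\colon V_i\hookrightarrow X$ one has $\wideparen{\D}_{V_i}^e = i_V^{-1}\wideparen{\D}_X^e$, because $i_V^{-1}$ commutes with $\overrightarrow{\otimes}_K$ and $i_V^{-1}\wideparen{\D}_X=\wideparen{\D}_{V_i}$; hence the restriction functor $(-)_{\vert V_i}\colon \Mod_{\Indban}(\wideparen{\D}_X^e)\to \Mod_{\Indban}(\wideparen{\D}_{V_i}^e)$ is (as $i_V^{-1}$) strongly exact. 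In particular it commutes with kernels and cokernels and preserves strictness; conversely, a morphism whose coimage–image comparison is an isomorphism on every member of an admissible cover is strict, so strictness of a morphism in $\Mod_{\Indban}(\wideparen{\D}_X^e)$ can be tested locally on $(V_i)_{i\in I}$. Additivity: $\mathcal{C}_B^{\operatorname{Bi}}(\wideparen{\D}_X)_{\Delta}$ contains $0$ and, since closure under finite direct sums is local and holds over each $V_i$ by the affinoid case (where $\mathcal{C}_B^{\operatorname{Bi}}(\wideparen{\D}_{V_i})_{\Delta}$ is abelian, hence additive), it is additive.

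\textbf{Step 2: kernels, cokernels, strictness.} Let $f\colon \mathcal{M}\to\mathcal{N}$ be a morphism in $\mathcal{C}_B^{\operatorname{Bi}}(\wideparen{\D}_X)_{\Delta}$, and form $\operatorname{Ker}(f)$ and $\operatorname{Coker}(f)$ in $\Mod_{\Indban}(\wideparen{\D}_X^e)$. For each $i\in I$ we have $\operatorname{Ker}(f)_{\vert V_i}=\operatorname{Ker}(f_{\vert V_i})$ and $\operatorname{Coker}(f)_{\vert V_i}=\operatorname{Coker}(f_{\vert V_i})$ by strong exactness of restriction. By Lemma \ref{Lemma local properties of diagonal co-admissible bimodules} we have $\mathcal{M}_{\vert V_i},\mathcal{N}_{\vert V_i}\in \mathcal{C}_B^{\operatorname{Bi}}(\wideparen{\D}_{V_i})_{\Delta}$, and by Lemma \ref{Lemma diagonal co-admissible bimodules are abelian} this latter category is abelian, so $f_{\vert V_i}$ is strict and $\operatorname{Ker}(f_{\vert V_i}),\operatorname{Coker}(f_{\vert V_i})\in \mathcal{C}_B^{\operatorname{Bi}}(\wideparen{\D}_{V_i})_{\Delta}$. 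Since strictness is local (Step 1), $f$ is strict; and since $\operatorname{Ker}(f)$, $\operatorname{Coker}(f)$ restrict to objects of $\mathcal{C}_B^{\operatorname{Bi}}(\wideparen{\D}_{V_i})_{\Delta}$ for all $i$ in an admissible affinoid cover by étale-over-affine-space pieces, Definition \ref{defi co-admissible diagonal bimodule} (together with Lemma \ref{Lemma local properties of diagonal co-admissible bimodules}) shows $\operatorname{Ker}(f),\operatorname{Coker}(f)\in \mathcal{C}_B^{\operatorname{Bi}}(\wideparen{\D}_X)_{\Delta}$. As the subcategory is full, these are also a kernel and a cokernel of $f$ inside $\mathcal{C}_B^{\operatorname{Bi}}(\wideparen{\D}_X)_{\Delta}$.

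\textbf{Step 3: conclusion.} Combining Steps 1 and 2, $\mathcal{C}_B^{\operatorname{Bi}}(\wideparen{\D}_X)_{\Delta}$ is a full additive subcategory of $\Mod_{\Indban}(\wideparen{\D}_X^e)$ closed under kernels and cokernels, and every morphism is strict. For such a morphism $f$, the objects $\operatorname{CoIm}(f)=\operatorname{Coker}(\operatorname{Ker}(f)\to\mathcal{M})$ and $\operatorname{Im}(f)=\operatorname{Ker}(\mathcal{N}\to\operatorname{Coker}(f))$, formed in the subcategory, coincide with the corresponding objects in $\Mod_{\Indban}(\wideparen{\D}_X^e)$, and the canonical map $\operatorname{CoIm}(f)\to\operatorname{Im}(f)$ is an isomorphism because $f$ is strict. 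Hence $\mathcal{C}_B^{\operatorname{Bi}}(\wideparen{\D}_X)_{\Delta}$ is abelian. The only genuinely nontrivial input is the affinoid case (Lemma \ref{Lemma diagonal co-admissible bimodules are abelian}) and the fact that strictness descends along admissible affinoid covers; the rest is formal.
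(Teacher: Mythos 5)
Your proposal is correct and follows essentially the same route as the paper: reduce to an admissible affinoid cover by pieces étale over affine space and invoke the affinoid case (Lemma \ref{Lemma diagonal co-admissible bimodules are abelian}), using that kernels, cokernels and strictness in $\Mod_{\Indban}(\wideparen{\D}_X^e)$ can be checked locally. The paper's proof is a two-line version of exactly this argument; your write-up just makes the localization and the formal "full subcategory closed under kernels and cokernels with all morphisms strict is abelian" step explicit.
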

\begin{proof}
As $\mathcal{C}_B^{\operatorname{Bi}}(\wideparen{\D}_X)_{\Delta}$ is a subcategory of $\Mod_{\Indban}(\wideparen{\D}_X^e)$, it suffices to show that it contains kernels and cokernels of maps in $\mathcal{C}_B^{\operatorname{Bi}}(\wideparen{\D}_X)_{\Delta}$, and that every morphism in $\mathcal{C}_B^{\operatorname{Bi}}(\wideparen{\D}_X)_{\Delta}$ is strict. This may be shown locally, where we can use Lemma \ref{Lemma diagonal co-admissible bimodules are abelian}.    
\end{proof}
\begin{Lemma}\label{Lemma immersion Theorem}
Let $X$ be a smooth affinoid space and $f:U\rightarrow X$ be the inclusion of an affinoid subspace. Let $h:Y\rightarrow X$ be a closed immersion, and assume we have a factorization  $g:Y\rightarrow U$, $h=fg$. Then the pushforward of $\wideparen{\D}$-modules:
\begin{equation*}
    f_+:\operatorname{D}_{\mathcal{C}^Y}(\wideparen{\D}_{U})\rightarrow \operatorname{D}_{\mathcal{C}^Y}(\wideparen{\D}_{X}),
\end{equation*}
satisfies the identity $f_+=f_*$.
\end{Lemma}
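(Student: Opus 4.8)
The plan is to use the factorization $h=f\circ g$ through the closed immersion $g\colon Y\to U$ to reduce the computation of $f_+$ on $\operatorname{D}_{\mathcal{C}^Y}(\wideparen{\D}_U)$ to pushforward along the closed immersion $h\colon Y\to X$, which is exact on sheaves. First I would unwind the definition of $f_+$ in the case of an open immersion. Since $U\subset X$ is the inclusion of an affinoid subspace, $f$ is an open immersion, so $f^{-1}\OX_X=\OX_U$ and $f^{-1}\wideparen{\D}_X=\wideparen{\D}_U$; hence $\wideparen{\D}_{U\rightarrow X}=\OX_U\overrightarrow{\otimes}_{\OX_U}f^{-1}\wideparen{\D}_X=\wideparen{\D}_U$, and, using the side-switching isomorphism $\Omega_U\overrightarrow{\otimes}_{\OX_U}\wideparen{\D}_U\overrightarrow{\otimes}_{\OX_U}\Omega_U^{-1}\cong\wideparen{\D}_U$ (\emph{cf.} \cite[Lemma 1.2.7]{hotta2007d} and Corollary \ref{coro image of enveloping algebra under T}), also $\wideparen{\D}_{X\leftarrow U}$ reduces to $\wideparen{\D}_U$ as a $(\wideparen{\D}_U,\wideparen{\D}_U)$-bimodule. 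Since moreover $\operatorname{dim}(U)=\operatorname{dim}(X)$, the transfer-bimodule tensor factor disappears and $f_+\mathcal{N}^{\bullet}=Rf_*\mathcal{N}^{\bullet}$ for every $\mathcal{N}^{\bullet}\in\operatorname{D}(\wideparen{\D}_U)$.

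Next I would exploit that $\mathcal{N}^{\bullet}\in\operatorname{D}_{\mathcal{C}^Y}(\wideparen{\D}_U)$ has cohomology supported on $Y$. By the module-category version of Lemma \ref{Lemma properties of inclusion of derived category of modules supported on a closed subspace}, the canonical morphism $\mathcal{N}^{\bullet}\to g_*g^{-1}\mathcal{N}^{\bullet}$ is an isomorphism in $\operatorname{D}(\wideparen{\D}_U)$, and since $g$ is a closed immersion the underived pushforward $g_*$ is exact, so it represents $Rg_*$ and $g_*g^{-1}\mathcal{N}^{\bullet}$ is a complex all of whose terms are supported on $Y$. Therefore
\begin{equation*}
f_+\mathcal{N}^{\bullet}=Rf_*\mathcal{N}^{\bullet}\cong Rf_*\left(g_*g^{-1}\mathcal{N}^{\bullet}\right)=Rf_*\circ Rg_*\left(g^{-1}\mathcal{N}^{\bullet}\right)=R(f\circ g)_*\left(g^{-1}\mathcal{N}^{\bullet}\right)=Rh_*\left(g^{-1}\mathcal{N}^{\bullet}\right).
\end{equation*}
As $h$ is a closed immersion, $h_*$ is exact, so $Rh_*=h_*$, and hence $f_+\mathcal{N}^{\bullet}\cong h_*\left(g^{-1}\mathcal{N}^{\bullet}\right)=f_*g_*\left(g^{-1}\mathcal{N}^{\bullet}\right)=f_*\left(g_*g^{-1}\mathcal{N}^{\bullet}\right)$, i.e.\ the underived pushforward applied to a representative of $\mathcal{N}^{\bullet}$ with terms supported on $Y$.

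Finally I would record that $f_*$, restricted to complexes with terms supported on $Y$, is exact: such a complex is of the form $g_*\mathcal{Q}^{\bullet}$ for a complex $\mathcal{Q}^{\bullet}$ on $Y$ (since $g_*$ is an equivalence onto sheaves supported on $Y$), and $f_*g_*\mathcal{Q}^{\bullet}=h_*\mathcal{Q}^{\bullet}$ with $h_*$ exact. Consequently $f_*$ descends to a well-defined triangulated functor $\operatorname{D}_{\mathcal{C}^Y}(\wideparen{\D}_U)\to\operatorname{D}_{\mathcal{C}^Y}(\wideparen{\D}_X)$, and the chain of isomorphisms above is precisely the natural identification $f_+=f_*$; naturality follows since every arrow of the proof is functorial in $\mathcal{N}^{\bullet}$. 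I expect the step requiring most care to be this last one: checking that the naive pushforward of a $\mathcal{C}$-complex supported on $Y$ already computes $Rf_*$ — equivalently, that $R^if_*$ vanishes on sheaves supported on $Y$ — which is exactly where the hypothesis that $Y$ is closed in $X$, not merely in $U$, enters, through the factorization $h=f\circ g$.
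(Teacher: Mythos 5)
Your proof is correct, but it takes a genuinely different route from the paper's. The paper never unwinds the definition of $f_+$ for the open immersion $f$; instead it writes $h_+=f_+g_+$ and $h^!=g^!f^!$, invokes Kashiwara's equivalence for the two closed immersions $g$ and $h$ to conclude that $f_+$ and $f^!=f^{-1}$ are mutually inverse equivalences between $\operatorname{D}_{\mathcal{C}^Y}(\wideparen{\D}_U)$ and $\operatorname{D}_{\mathcal{C}^Y}(\wideparen{\D}_X)$, and then observes that $f_*$ is likewise inverse to $f^{-1}$ on complexes supported on $Y$ (via Lemma \ref{Lemma properties of inclusion of derived category of modules supported on a closed subspace}); hence $f_+=f_*$ because both are inverse to the same functor. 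You instead compute $f_+$ directly: the transfer bimodule $\wideparen{\D}_{X\leftarrow U}$ trivializes for an open immersion, so $f_+=Rf_*$, and $Rf_*$ agrees with the underived $f_*$ on complexes supported on $Y$ because such complexes are quasi-isomorphic to $g_*g^{-1}(-)$, whose pushforward is computed by the exact functor $h_*$. Your route is more elementary — it uses only the sheaf-theoretic support lemma and exactness of pushforward along closed immersions, not the $\wideparen{\D}$-module Kashiwara equivalence — and it isolates the real mechanism, namely that $R^if_*$ vanishes on sheaves supported on $Y$ precisely because $Y$ is closed in $X$ and not merely in $U$. What the paper's argument buys in exchange is that it simultaneously exhibits $f_+$ as an equivalence $\operatorname{D}_{\mathcal{C}^Y}(\wideparen{\D}_U)\simeq\operatorname{D}_{\mathcal{C}^Y}(\wideparen{\D}_X)$, and in particular that $f_+$ lands in $\mathcal{C}$-complexes, a point your computation leaves to the general theory. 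One small slip: your remark that $\operatorname{dim}(U)=\operatorname{dim}(X)$ is irrelevant here, since the dimension shift occurs only in the definition of $f^!$ and not of $f_+$; this does not affect the argument.
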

\begin{proof}
By assumption, we have $h=fg$. As $f$ is an open immersion, it is smooth. Thus, by  \cite[Lemma 7.8]{bode2021operations}, we have $h_+=f_+g_+$. On the other hand,
we always have $h^!=g^!f^!$. By Kashiwara's equivalence, we have mutually inverse equivalences of triangulated categories: 
\begin{equation*}
    h_+:\operatorname{D}_\mathcal{C}(\wideparen{\D}_{Y})\leftrightarrows \operatorname{D}_{\mathcal{C}^Y}(\wideparen{\D}_{X}):h^!, \quad g_+:\operatorname{D}_\mathcal{C}(\wideparen{\D}_{Y})\leftrightarrows \operatorname{D}_{\mathcal{C}^Y}(\wideparen{\D}_{U}):g^!.
\end{equation*}
Hence, the composition rules described above imply that we have mutually inverse equivalences of triangulated categories: 
\begin{equation*}
    f_+:\operatorname{D}_{\mathcal{C}^Y}(\wideparen{\D}_{U})\leftrightarrows \operatorname{D}_{\mathcal{C}^Y}(\wideparen{\D}_{X}):f^!.
\end{equation*}
As $f:U\rightarrow X$ is an open immersion, for every $\mathcal{M}\in \operatorname{D}(\wideparen{\D}_X)$ we have:
\begin{equation*}
    f^!\mathcal{M}:=\OX_U\overrightarrow{\otimes}_{f^{-1}\OX_X}^{\mathbb{L}}f^{-1}\mathcal{M}[\operatorname{dim}(U)-\operatorname{dim}(X)]=f^{-1}\mathcal{M}=\mathcal{M}_{\vert U}.
\end{equation*}
Arguing as above, and using Lemma \ref{Lemma properties of inclusion of derived category of modules supported on a closed subspace}, we have mutually inverse equivalences:  $f_*:\operatorname{D}(\wideparen{\D}_U)_Y\leftrightarrows \operatorname{D}(\wideparen{\D}_X)_Y:f^{-1}$. The fact that $f^!=f^{-1}$ implies that $f_+=f_*$.
\end{proof}
\begin{teo}\label{teo immersion Theorem of sheaves of diagonal co-admissible bimodules}
Let $X$ be a smooth and separated rigid space. The extension functor induces an exact embedding of abelian categories:
\begin{equation*}
    \Delta^E_*:\mathcal{C}_B^{\operatorname{Bi}}(\wideparen{\D}_X)_{\Delta}\rightarrow \mathcal{C}(\wideparen{E}_X)_{\Delta}.
\end{equation*}
Furthermore, its essential image is a Serre subcategory of $\mathcal{C}(\wideparen{E}_X)$. 
\end{teo}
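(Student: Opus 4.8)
\textbf{Proof proposal for Theorem \ref{teo immersion Theorem of sheaves of diagonal co-admissible bimodules}.}

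The plan is to reduce everything to the affinoid situation already handled in Corollary \ref{coro immersion Theorem local case}, and then glue. First, we fix $\mathcal{M}\in \mathcal{C}_B^{\operatorname{Bi}}(\wideparen{\D}_X)_{\Delta}$ and choose an admissible affinoid cover $(V_i)_{i\in I}$ of $X$ such that each $V_i$ admits an étale map $V_i\rightarrow \mathbb{A}^n_K$ and $\mathcal{M}_{\vert V_i}\in \mathcal{C}_B^{\operatorname{Bi}}(\wideparen{\D}_{V_i})_{\Delta}$ (using Lemma \ref{Lemma local properties of diagonal co-admissible bimodules} this is harmless). The family $(V_i\times V_j)_{i,j\in I}$ is then an admissible affinoid cover of $X^2$. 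I would first show that $\Delta_*\mathcal{M}$ is an Ind-Banach $\wideparen{E}_X$-module supported on the diagonal; this is immediate since $\Delta_*$ only changes the ambient space and $\Delta^{-1}\wideparen{E}_X=\wideparen{\D}_X^e$, so $\Delta_*\mathcal{M}\in \Mod_{\Indban}(\wideparen{E}_X)_{\Delta}$ by Proposition \ref{prop sheaves of modules supported on the diagonal}, and $\Delta_*$ is an exact equivalence onto that subcategory. The content is then to check that $\Delta_*\mathcal{M}$ is \emph{co-admissible}. By Corollary \ref{coro Kiehl's Theorem for E-modules} (Kiehl's theorem for $\wideparen{E}_X$-modules), co-admissibility can be checked on the admissible affinoid cover $(V_i\times V_j)_{i,j\in I}$, so it suffices to identify $(\Delta_*\mathcal{M})_{\vert V_i\times V_j}$ as a co-admissible $\wideparen{E}_X{}_{\vert V_i\times V_j}$-module.

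The key computation is the restriction formula: for an affinoid $V\subset X$, $\Delta_X$ restricts along $V\hookrightarrow X$ to the diagonal $\Delta_V:V\rightarrow V^2$, so $(\Delta_{X,*}\mathcal{M})_{\vert V^2}=\Delta_{V,*}(\mathcal{M}_{\vert V})$; more generally, for $i\ne j$ the sheaf $(\Delta_{X,*}\mathcal{M})_{\vert V_i\times V_j}$ is supported on $(V_i\cap V_j)\times(V_i\cap V_j)$ and equals $\Delta_{V_i\cap V_j,*}(\mathcal{M}_{\vert V_i\cap V_j})$ pushed forward along the open immersion $(V_i\cap V_j)^2\hookrightarrow V_i\times V_j$, which is just the extension-by-the-sheaf-property since the support sits inside the intersection. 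Because $V_i$ admits an étale map to $\mathbb{A}^n_K$, Corollary \ref{coro immersion Theorem local case} applies: $\Delta_{V_i,*}(\mathcal{M}_{\vert V_i})=\operatorname{Loc}(\mathcal{M}(V_i))$ is a co-admissible $\wideparen{E}_{V_i}$-module, and similarly for $V_i\cap V_j$ (which inherits an étale map to affine space, so Corollary \ref{coro khiels Theorem for diagonal co-admissible bi-modules} keeps us inside $\mathcal{C}_B^{\operatorname{Bi}}(\wideparen{\D}_{V_i\cap V_j})_{\Delta}$). Feeding these local identifications back through Corollary \ref{coro Kiehl's Theorem for E-modules} shows $\Delta_*\mathcal{M}\in\mathcal{C}(\wideparen{E}_X)_{\Delta}$. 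Exactness of $\Delta_*$ as a functor $\mathcal{C}_B^{\operatorname{Bi}}(\wideparen{\D}_X)_{\Delta}\rightarrow\mathcal{C}(\wideparen{E}_X)_{\Delta}$, and full faithfulness, then follow from the equivalence $\Delta_*:\Mod_{\Indban}(\wideparen{\D}_X^e)\leftrightarrows\Mod_{\Indban}(\wideparen{E}_X)_{\Delta}$ of Proposition \ref{prop sheaves of modules supported on the diagonal} together with the fact that both source and target are abelian (Propositions \ref{prop co-admissible diag is abelian} and the discussion around $\mathcal{C}(\wideparen{E}_X)$), since an equivalence of quasi-abelian categories restricting to an exact functor between abelian subcategories is automatically an exact embedding.

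For the Serre subcategory claim, I would take a short strict exact sequence $0\rightarrow \mathcal{N}_1\rightarrow\mathcal{N}_2\rightarrow\mathcal{N}_3\rightarrow 0$ in $\mathcal{C}(\wideparen{E}_X)$ with $\mathcal{N}_2$ in the essential image of $\Delta_*$ restricted to $\mathcal{C}_B^{\operatorname{Bi}}(\wideparen{\D}_X)_{\Delta}$, and conversely one with $\mathcal{N}_1,\mathcal{N}_3$ in that image, and check membership locally on the cover $(V_i\times V_j)$. Pulling back by $\Delta^{-1}$ (exact, and the inverse equivalence onto the diagonal-supported subcategory) converts these to strict exact sequences of $\wideparen{\D}_X^e$-modules; since $\mathcal{C}(\wideparen{\D}_X)$ and $\mathcal{C}(\wideparen{\D}_X^{\op})$ are Serre subcategories of the respective module categories (by \cite[Lemma 3.6]{schneider2002algebras}, as used in the proof of Theorem \ref{teo embedding co-admissible bimodules}), the outer/middle terms are co-admissible as one-sided modules; and the condition $\mathcal{M}(V_i)=\mathcal{M}(V_i)_\infty(\mathcal{I}_\Delta)$ is visibly closed under sub/quotient/extension on global sections over each affinoid $V_i$ (this is exactly Lemma \ref{Lemma diagonal co-admissible bimodules are abelian}), so being a co-admissible diagonal bimodule is preserved in both directions. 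The main obstacle I expect is the bookkeeping in the restriction formula on the off-diagonal pieces $V_i\times V_j$ with $i\ne j$ — making precise that $\Delta_*\mathcal{M}$ restricted there is genuinely the localization of $\mathcal{M}(V_i\cap V_j)$ and not something larger — but this is controlled by the support lying in the closed subspace $(V_i\cap V_j)^2\subset V_i\times V_j$ together with the equivalence $i_*:\Mod_{\Indban}(\mathscr{B})\leftrightarrows\Mod_{\Indban}(\mathscr{A})_Y:i^{-1}$ of Proposition \ref{prop sheaves of modules supported on the diagonal general case}, applied to the closed immersion $(V_i\cap V_j)^2\hookrightarrow V_i\times V_j$.
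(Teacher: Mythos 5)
Your overall strategy (cover $X$ by affinoids with étale maps to $\mathbb{A}^n_K$, check co-admissibility of $\Delta_*\mathcal{M}$ on the cover $(V_i\times V_j)$ via Kiehl's theorem for $\wideparen{E}_X$-modules, and feed in the affinoid case) is the same as the paper's, and your reduction of exactness, full faithfulness, and the Serre property to Proposition \ref{prop sheaves of modules supported on the diagonal} and the local Corollary \ref{coro immersion Theorem local case} is fine. But there is a genuine gap at the one step that carries the weight of the theorem: the off-diagonal pieces $V_i\times V_j$ with $W:=V_i\cap V_j\subsetneq V_i$. What you actually establish there is that $(\Delta_*\mathcal{M})_{\vert V_i\times V_j}=g_*\bigl(\Delta_{W,*}(\mathcal{M}_{\vert W})\bigr)$ where $g\colon W^2\hookrightarrow V_i\times V_j$ is an \emph{open} immersion, and that $\Delta_{W,*}(\mathcal{M}_{\vert W})=\operatorname{Loc}(\mathcal{M}(W))$ is co-admissible over $\wideparen{E}_W$. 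That is not yet co-admissibility over $\wideparen{E}_{X\vert V_i\times V_j}$: by Definition \ref{defi localization functor for co-admissible E(l)-modules} you must produce a co-admissible module over the Fréchet--Stein algebra $\wideparen{E}_X(V_i\times V_j)=\wideparen{\D}_X(V_i)\overrightarrow{\otimes}_K\wideparen{\D}_X(V_j)^{\op}$ (not over $\wideparen{E}_W(W^2)$) whose localization over $V_i\times V_j$ is $g_*\operatorname{Loc}(\mathcal{M}(W))$. Extension along an open immersion of a co-admissible module is not co-admissible in general; it works here only because the support is the closed subset $\Delta(W)\subset V_i\times V_j$, and proving it is a Kashiwara-type statement, not bookkeeping. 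The tool you cite for this, Proposition \ref{prop sheaves of modules supported on the diagonal general case}, does not help: it concerns closed immersions (you even mislabel $(V_i\cap V_j)^2\hookrightarrow V_i\times V_j$ as a closed immersion, whereas it is open), and it is an equivalence of categories of \emph{all} Ind-Banach modules, saying nothing about co-admissibility.

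The paper closes exactly this gap as follows: apply $\operatorname{S}^{-1}$ to pass to $\wideparen{\D}_{V_i\times V_j}$-modules, use Kashiwara's equivalence on $W^2$ to write $\operatorname{S}^{-1}f_*(\mathcal{M}_{\vert W})=f_+\mathcal{N}$ for a co-admissible $\wideparen{\D}_W$-module $\mathcal{N}$ (where $f\colon W\to W^2$ is the diagonal), then use the factorization $\Delta_+=g_+\circ f_+$ together with Lemma \ref{Lemma immersion Theorem}, which says that on complexes supported on the closed subspace the $\wideparen{\D}$-module pushforward $g_+$ along the open immersion coincides with $g_*$. This identifies $\operatorname{S}^{-1}\Delta_*(\mathcal{M}_{\vert W})$ with $\Delta_+\mathcal{N}$, and co-admissibility then follows because $\Delta_+$ along the closed immersion $W\to V_i\times V_j$ preserves co-admissibility. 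Your proof needs this chain (or an equivalent direct argument that $\mathcal{M}(W)$ is co-admissible over $\wideparen{\D}_X(V_i)\overrightarrow{\otimes}_K\wideparen{\D}_X(V_j)^{\op}$ with the correct localization); as written, the claim that the local identifications "feed back through Kiehl's theorem" is unsupported at precisely this point.
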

\begin{proof}
Let $(V_i)_{i\in I}$ be an admissible affinoid cover of $X$ such that for each $i\in I$ there is an étale map $V_i\rightarrow \mathbb{A}^{n}_K$. Then the family $(V_i\times V_j)_{i,j\in I}$ yields an admissible affinoid cover of $X^2$, and for each $i,j\in I$ we have an étale map $V_{i}\times V_j\rightarrow \mathbb{A}^{2n}_K$. Let $\mathcal{M}\in \mathcal{C}_B^{\operatorname{Bi}}(\wideparen{\D}_X)_{\Delta}$. By Lemma \ref{Lemma kiehl's Theorem for E-modules on general rigid spaces},  it is enough to show that $\left(\Delta_{*}^E\mathcal{M}\right)_{\vert V_i\times V_j}$ is a sheaf of co-admissible $\wideparen{E}_{X\vert V_i\times V_j}$-modules for each $i,j\in I$. Choose a pair $i,j\in I$. If $V_i\cap V_j$ is empty, then $\left(\Delta_{*}^E\mathcal{M}\right)_{\vert V_i\times V_j}=0$. In particular, it is co-admissible. Assume then that $W=V_i\cap V_j$ is non-empty. Then we have a pullback diagram:
\begin{equation*}
\begin{tikzcd}
X \arrow[rr, "\Delta"]                                           &                          & X^2                     \\
W \arrow[u] \arrow[r, "f"'] \arrow[rr, "\Delta", bend left] & W^2 \arrow[r, "g"'] & V_i\times V_j \arrow[u]
\end{tikzcd}
\end{equation*}
where the vertical arrows and $g$ are open immersions, and $\Delta$ and $f$ are closed immersions. By functoriality of the pushforward of sheaves, we know that:
\begin{equation*}
    \left(\Delta_*^E\mathcal{M}\right)_{\vert V_i\times V_j}=\Delta^E_*(\mathcal{M}_{\vert W})=g_*f_*^E(\mathcal{M}_{\vert W}),
\end{equation*}
where $f_*^E:=\OX_{W^2}\overrightarrow{\otimes}_{p_1^{-1}\OX_W\overrightarrow{\otimes}_Kp_2^{-1}\OX_W}f_*\mathcal{M}_{\vert W}$.
By Proposition \ref{prop embedding Theorem}, it follows that $f_*^E(\mathcal{M}_{\vert W})=\operatorname{Loc}(\mathcal{M}(W))$ is a co-admissible $\wideparen{E}_{W}$-module supported on the diagonal. Equivalently, the induced complete bornological $\wideparen{\D}_{W^2}$-module:
\begin{equation*}
  \operatorname{S}^{-1}f_*^E(\mathcal{M}_{\vert W})=p_2^{-1}\Omega_{W}^{-1}\overrightarrow{\otimes}_{p_2^{-1}\OX_W}f_*^E(\mathcal{M}_{\vert W}),  
\end{equation*}
is a co-admissible $\wideparen{\D}_{W^2}$-module supported on the diagonal: $\Delta(W)\subset W^2\subset V_i\times V_j$. Thus, there is a co-admissible $\wideparen{\D}_W$-module $\mathcal{N}$ such that $\operatorname{S}^{-1}f_*^E(\mathcal{M}_{\vert W})=f_+\mathcal{N}$. As $g$ is an open immersion, it is smooth. Hence, by \cite[Lemma 7.8]{bode2021operations} we have $\Delta_+=g_+\circ f_+$. Thus, by Lemma \ref{Lemma immersion Theorem}, we have:
\begin{align*}
\Delta_+\mathcal{N}=g_+f_+\mathcal{N}=g_+\left(\operatorname{S}^{-1}f_*^E(\mathcal{M}_{\vert W}) \right)=&g_*\left(\operatorname{S}^{-1}f_*^E(\mathcal{M}_{\vert W}) \right)\\
 =&g_*\left(p_2^{-1}\Omega_{W}^{-1}\overrightarrow{\otimes}_{p_2^{-1}\OX_W}f_*^E(\mathcal{M}_{\vert W})\right)\\=&
 p_2^{-1}\Omega_{V_j}^{-1}\overrightarrow{\otimes}_{p_2^{-1}\OX_{V_j}}g_*\left(f_*^E(\mathcal{M}_{\vert W}) \right)\\=&\operatorname{S}^{-1}\Delta_*^E(\mathcal{M}_{\vert W}),
\end{align*}
where the fourth identity follows by the fact that the fourth and fifth terms have supports contained in the diagonal. As the pushforward of $\wideparen{\D}$-modules along a closed immersion preserves co-admissibility, it follows that $\operatorname{S}^{-1}\Delta_*^E(\mathcal{M}_{\vert W})=\Delta_+\mathcal{N}$
is a co-admissible $\wideparen{\D}_{V_i\times V_j}$-module. Hence, $\Delta_*(\mathcal{M}_{\vert W})$  is a co-admissible $\wideparen{E}_{X\vert V_i\times V_j}$-module, as we wanted to show. As $\mathcal{C}(\wideparen{E}_X)_{\Delta}$ is a Serre subcategory of $\mathcal{C}(\wideparen{E}_X)$, it suffices to show that the essential image of $\Delta_*:\mathcal{C}_B^{\operatorname{Bi}}(\wideparen{\D}_X)_{\Delta}\rightarrow \mathcal{C}(\wideparen{E}_X)_{\Delta}$ is a Serre subcategory. As this can be done locally, it follows by $(iii)$ in Corollary \ref{coro immersion Theorem local case}.
\end{proof}
As always, we get the corresponding result for the immersion functor:
\begin{coro}\label{coro immersion Theorem of sheaves of diagonal co-admissible bimodules}
There is an exact embedding of abelian categories:
\begin{equation*}
    \Delta^{\operatorname{S}}_*:\mathcal{C}_B^{\operatorname{Bi}}(\wideparen{\D}_X)_{\Delta}\rightarrow \mathcal{C}(\wideparen{\D}_{X^2})_{\Delta}.
\end{equation*}
Furthermore, the essential image of this functor is a Serre subcategory of $\mathcal{C}(\wideparen{\D}_{X^2})_{\Delta}$.
\end{coro}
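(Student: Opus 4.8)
The statement to be proved is Corollary~\ref{coro immersion Theorem of sheaves of diagonal co-admissible bimodules}, which asserts that the immersion functor restricts to an exact embedding $\Delta^{\operatorname{S}}_*:\mathcal{C}_B^{\operatorname{Bi}}(\wideparen{\D}_X)_{\Delta}\rightarrow \mathcal{C}(\wideparen{\D}_{X^2})_{\Delta}$ whose essential image is a Serre subcategory. The whole point is that everything has already been done: by Definition~\ref{defi immersion functor} we have $\Delta^{\operatorname{S}}_*=\operatorname{S}^{-1}\circ\Delta_*$, and Theorem~\ref{teo immersion Theorem of sheaves of diagonal co-admissible bimodules} provides the exact embedding $\Delta_*:\mathcal{C}_B^{\operatorname{Bi}}(\wideparen{\D}_X)_{\Delta}\rightarrow \mathcal{C}(\wideparen{E}_X)_{\Delta}$ with Serre essential image, while Theorem~\ref{teo equivalence of co-admissible modules under side-changing} gives that $\operatorname{S}^{-1}:\mathcal{C}(\wideparen{E}_X)\rightarrow\mathcal{C}(\wideparen{\D}_{X^2})$ is an equivalence of abelian categories preserving supports. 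So the plan is simply to compose these two facts and check that the support and Serre conditions survive.

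\textbf{Step 1.} First I would invoke Theorem~\ref{teo immersion Theorem of sheaves of diagonal co-admissible bimodules} to get that $\Delta_*$ lands in $\mathcal{C}(\wideparen{E}_X)_{\Delta}$ and is an exact embedding with Serre essential image inside $\mathcal{C}(\wideparen{E}_X)$. \textbf{Step 2.} Next, apply $\operatorname{S}^{-1}$. By Theorem~\ref{teo equivalence of co-admissible modules under side-changing}, $\operatorname{S}^{-1}$ restricts to an equivalence $\mathcal{C}(\wideparen{E}_X)\rightarrow\mathcal{C}(\wideparen{U}(\mathcal{T}_{X/K}^2))=\mathcal{C}(\wideparen{\D}_{X^2})$ (using the identification $\mathcal{T}_{X/K}^2=\mathcal{T}_{X^2/K}$), and this equivalence preserves the supports of the sheaves. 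Therefore it carries $\mathcal{C}(\wideparen{E}_X)_{\Delta}$ isomorphically onto $\mathcal{C}(\wideparen{\D}_{X^2})_{\Delta}$, the subcategory of co-admissible $\wideparen{\D}_{X^2}$-modules supported on the diagonal. \textbf{Step 3.} Composing, $\Delta^{\operatorname{S}}_*=\operatorname{S}^{-1}\circ\Delta_*$ is a composition of an exact embedding with an equivalence of abelian categories, hence itself an exact embedding $\mathcal{C}_B^{\operatorname{Bi}}(\wideparen{\D}_X)_{\Delta}\rightarrow \mathcal{C}(\wideparen{\D}_{X^2})_{\Delta}$. \textbf{Step 4.} For the Serre statement: a subcategory is Serre if it is closed under subobjects, quotients, and extensions. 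Since $\operatorname{S}^{-1}:\mathcal{C}(\wideparen{E}_X)\rightarrow\mathcal{C}(\wideparen{\D}_{X^2})$ is an equivalence of abelian categories, it sends Serre subcategories to Serre subcategories; applying it to the essential image of $\Delta_*$ inside $\mathcal{C}(\wideparen{E}_X)$ (which is Serre by Step 1) yields that the essential image of $\Delta^{\operatorname{S}}_*$ is a Serre subcategory of $\mathcal{C}(\wideparen{\D}_{X^2})$ — and since it consists of objects supported on the diagonal, it is a Serre subcategory of $\mathcal{C}(\wideparen{\D}_{X^2})_{\Delta}$ as claimed.

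\textbf{Main obstacle.} Honestly there is no serious obstacle here: the corollary is a formal consequence of the two preceding theorems, and the only thing to be careful about is bookkeeping — making sure the support-preservation clause of Theorem~\ref{teo equivalence of co-admissible modules under side-changing} is what identifies $\operatorname{S}^{-1}(\mathcal{C}(\wideparen{E}_X)_{\Delta})$ with $\mathcal{C}(\wideparen{\D}_{X^2})_{\Delta}$ rather than some larger subcategory, and correctly quoting the identification $\mathcal{T}_{X/K}^2=\mathcal{T}_{X^2/K}$ so that $\wideparen{U}(\mathcal{T}_{X/K}^2)=\wideparen{\D}_{X^2}$. If anything needs a sentence of care it is the remark that an equivalence of abelian categories transports the Serre property, which is immediate from the definition since subobjects, quotients, and extensions are all categorical notions.
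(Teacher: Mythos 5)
Your proposal is correct and is exactly the argument the paper intends: the corollary is stated as an immediate consequence of Theorem \ref{teo immersion Theorem of sheaves of diagonal co-admissible bimodules} composed with the support-preserving side-switching equivalence $\operatorname{S}^{-1}$ of Theorem \ref{teo equivalence of co-admissible modules under side-changing}. The paper gives no separate proof ("As always, we get the corresponding result for the immersion functor"), and your four steps supply precisely the bookkeeping it leaves implicit.
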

\subsection{\texorpdfstring{C-complexes and the immersion functor}{}}
We will now extend the results of the previous sections to $\mathcal{C}$-complexes. As before, we fix a smooth and separated rigid space $X$. We start by defining $\mathcal{C}$-complexes in the category of bimodules:
\begin{defi}
Choose $C^{\bullet}\in\operatorname{D}(\wideparen{\D}^e_X)$. We make the following definitions:
\begin{enumerate}[label=(\roman*)]
    \item We say $C^{\bullet}$ is a bi-$\mathcal{C}$-complex if it is a $\mathcal{C}$-complex with respect to its structure as a complex of $\wideparen{\D}_X$-modules and $\wideparen{\D}^{\op}_X$-modules. We denote the full subcategory of $\operatorname{D}(\wideparen{\D}^e_X)$ given by  bi-$\mathcal{C}$-complexes by $\operatorname{D}^{\operatorname{Bi}}_{\mathcal{C}}(\wideparen{\D}_X)$.
    \item We say $C^{\bullet}$ is a diagonal $\mathcal{C}$-complex if it is a bi-$\mathcal{C}$-complex such that its cohomology sheaves are  co-admissible diagonal $\wideparen{\D}_X$-bimodules. We denote the full subcategory of $\operatorname{D}(\wideparen{\D}^e_X)$ given by  diagonal $\mathcal{C}$-complexes by $\operatorname{D}^{\operatorname{Bi}}_{\mathcal{C}}(\wideparen{\D}_X)_{\Delta}$.
\end{enumerate}
\end{defi}
We start by showing some basic properties of $\operatorname{D}^{\operatorname{Bi}}_{\mathcal{C}}(\wideparen{\D}_X)$ and 
$\operatorname{D}^{\operatorname{Bi}}_{\mathcal{C}}(\wideparen{\D}_X)_{\Delta}$:
\begin{Lemma}\label{Lemma diagonal bimodules Serre subcategory}
The category $\mathcal{C}_B^{\operatorname{Bi}}(\wideparen{\D}_X)_{\Delta}$ is a Serre subcategory of
  $\mathcal{C}_B^{\operatorname{Bi}}(\wideparen{\D}_X)$.    
\end{Lemma}
\begin{proof}
By definition of $\mathcal{C}_B^{\operatorname{Bi}}(\wideparen{\D}_X)_{\Delta}$, we may assume that $X$ is affinoid and admits an étale map $X\rightarrow \mathbb{A}^n_K$. Consider a short exact sequence in $\mathcal{C}_B^{\operatorname{Bi}}(\wideparen{\D}_X)$:
\begin{equation*}
    0\rightarrow \mathcal{M}_1\rightarrow \mathcal{M}_2\rightarrow \mathcal{M}_3\rightarrow 0,
\end{equation*}
and assume that $\mathcal{M}_1, \mathcal{M}_3\in \mathcal{C}_B^{\operatorname{Bi}}(\wideparen{\D}_X)_{\Delta}$. We need to show $\mathcal{M}_2\in \mathcal{C}_B^{\operatorname{Bi}}(\wideparen{\D}_X)_{\Delta}$. As $X$ is affinoid, we get a strict short exact sequence on global sections:
\begin{equation*}
     0\rightarrow \mathcal{M}_1(X)\rightarrow \mathcal{M}_2(X)\rightarrow \mathcal{M}_3(X)\rightarrow 0.
\end{equation*}
By Theorem \ref{teo embedding co-admissible bimodules}, we can regard this as a short exact sequence of co-admissible $\wideparen{E}_X(X^2)$-modules. Hence, there is a short exact sequence of $\wideparen{E}_X$-modules:
\begin{equation*}
    0\rightarrow \operatorname{Loc}(\mathcal{M}_1(X))\rightarrow \operatorname{Loc}(\mathcal{M}_2(X))\rightarrow \operatorname{Loc}(\mathcal{M}_3(X))\rightarrow 0.
\end{equation*}
As the first and third terms of the sequence have support contained in the diagonal $\Delta(X)\subset X^2$, it follows that $\operatorname{Loc}(\mathcal{M}_2(X))$ also has support contained in the diagonal. However, by  Proposition \ref{prop co-admissible EX modules supported on the diagonal}, this is equivalent to:
\begin{equation*}
    \mathcal{M}_2(X)=\mathcal{M}_2(X)_{\infty}(\mathcal{I}_{\Delta}(X^2)),
\end{equation*}
which shows that $\mathcal{M}_2\in \mathcal{C}_B^{\operatorname{Bi}}(\wideparen{\D}_X)_{\Delta}$, as we wanted to show.
\end{proof}
\begin{prop}\label{prop diagonal c comp are triangulated}
The categories $\operatorname{D}^{\operatorname{Bi}}_{\mathcal{C}}(\wideparen{\D}_X)$ and 
$\operatorname{D}^{\operatorname{Bi}}_{\mathcal{C}}(\wideparen{\D}_X)_{\Delta}$ are triangulated subcategories of the derived category  $\operatorname{D}(\wideparen{\D}^e_X)$.    
\end{prop}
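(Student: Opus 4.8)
The plan is to reduce both claims to the already-established fact (Definition \ref{defi C complexes} and the surrounding discussion, together with \cite[Proposition 6.4]{bode2021operations}) that the category of $\mathcal{C}$-complexes $\operatorname{D}_{\mathcal{C}}(\wideparen{\D}_{X})$ sits inside $\operatorname{D}(\wideparen{\D}_X)$ as a triangulated subcategory, and similarly for $\operatorname{D}_{\mathcal{C}}(\wideparen{\D}_X^{\op})$. Recall that a full subcategory of a triangulated category is a triangulated subcategory precisely when it is closed under shifts $[\pm 1]$ and under taking cones of morphisms (equivalently, for any distinguished triangle, if two of the three vertices lie in the subcategory, so does the third). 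So the whole proof amounts to checking these two closure conditions for $\operatorname{D}^{\operatorname{Bi}}_{\mathcal{C}}(\wideparen{\D}_X)$ and $\operatorname{D}^{\operatorname{Bi}}_{\mathcal{C}}(\wideparen{\D}_X)_{\Delta}$.

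First I would handle $\operatorname{D}^{\operatorname{Bi}}_{\mathcal{C}}(\wideparen{\D}_X)$. The key observation is that the forgetful functors $\operatorname{D}(\wideparen{\D}_X^e)\rightarrow \operatorname{D}(\wideparen{\D}_X)$ and $\operatorname{D}(\wideparen{\D}_X^e)\rightarrow \operatorname{D}(\wideparen{\D}_X^{\op})$ are triangulated (they are induced by the exact restriction-of-scalars functors along $\wideparen{\D}_X\rightarrow \wideparen{\D}_X^e$ and $\wideparen{\D}_X^{\op}\rightarrow \wideparen{\D}_X^e$, and do not change the underlying complexes of sheaves, hence send distinguished triangles to distinguished triangles). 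Therefore $C^{\bullet}[\pm 1]$ has underlying left (resp. right) $\wideparen{\D}_X$-complex equal to the shift of the underlying complex of $C^{\bullet}$, and a shift of a $\mathcal{C}$-complex is a $\mathcal{C}$-complex since $\operatorname{D}_{\mathcal{C}}(\wideparen{\D}_X)$ is triangulated; likewise, given a distinguished triangle $C_1^{\bullet}\rightarrow C_2^{\bullet}\rightarrow C_3^{\bullet}\rightarrow C_1^{\bullet}[1]$ in $\operatorname{D}(\wideparen{\D}_X^e)$ with $C_1^{\bullet},C_2^{\bullet}\in\operatorname{D}^{\operatorname{Bi}}_{\mathcal{C}}(\wideparen{\D}_X)$, applying the forgetful functor to left (resp. right) modules yields a distinguished triangle in $\operatorname{D}(\wideparen{\D}_X)$ (resp. $\operatorname{D}(\wideparen{\D}_X^{\op})$) with the first two terms $\mathcal{C}$-complexes, so by triangulatedness of $\operatorname{D}_{\mathcal{C}}(\wideparen{\D}_X)$ the term $C_3^{\bullet}$ is a $\mathcal{C}$-complex over $\wideparen{\D}_X$ and over $\wideparen{\D}_X^{\op}$; hence $C_3^{\bullet}\in\operatorname{D}^{\operatorname{Bi}}_{\mathcal{C}}(\wideparen{\D}_X)$. (I would also remark that $\operatorname{D}^{\operatorname{Bi}}_{\mathcal{C}}(\wideparen{\D}_X)$ is nonempty, containing $0$, so it is a genuine triangulated subcategory.)

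For $\operatorname{D}^{\operatorname{Bi}}_{\mathcal{C}}(\wideparen{\D}_X)_{\Delta}$ the argument is the same modulo one extra point: besides being a bi-$\mathcal{C}$-complex, the defining condition is that the cohomology sheaves $\operatorname{H}^i(C^{\bullet})$ lie in $\mathcal{C}_B^{\operatorname{Bi}}(\wideparen{\D}_X)_{\Delta}$, which by Proposition \ref{prop co-admissible diag is abelian} is an \emph{abelian} full subcategory of $\Mod_{\Indban}(\wideparen{\D}_X^e)$ in which every morphism is strict, and which (being characterized by a local support-type condition, $\mathcal{M}(X)=\mathcal{M}(X)_{\infty}(\mathcal{I}_{\Delta})$ on étale-coordinatizable affinoids, cf.\ Corollary \ref{coro khiels Theorem for diagonal co-admissible bi-modules}) is closed under subobjects, quotients and extensions inside $\mathcal{C}_B^{\operatorname{Bi}}(\wideparen{\D}_X)$. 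Given a distinguished triangle with $C_1^{\bullet},C_2^{\bullet}\in\operatorname{D}^{\operatorname{Bi}}_{\mathcal{C}}(\wideparen{\D}_X)_{\Delta}$, the previous paragraph already gives $C_3^{\bullet}\in\operatorname{D}^{\operatorname{Bi}}_{\mathcal{C}}(\wideparen{\D}_X)$; and the long exact cohomology sequence of the triangle exhibits each $\operatorname{H}^i(C_3^{\bullet})$ as an extension of a subobject of $\operatorname{H}^{i+1}(C_1^{\bullet})$ by a quotient of $\operatorname{H}^i(C_2^{\bullet})$ (all of these being morphisms of co-admissible $\wideparen{\D}_X^e$-bimodules, hence strict, so the image/kernel/cokernel computations take place in the abelian category $\mathcal{C}_B^{\operatorname{Bi}}(\wideparen{\D}_X)$), whence $\operatorname{H}^i(C_3^{\bullet})\in\mathcal{C}_B^{\operatorname{Bi}}(\wideparen{\D}_X)_{\Delta}$. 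Closure under $[\pm 1]$ is immediate since shifting only relabels the cohomology sheaves. The main—and really the only—obstacle is making sure the exact-sequence bookkeeping is legitimate, i.e.\ that the connecting maps and induced maps in the long exact sequence are morphisms in the \emph{abelian} category $\mathcal{C}_B^{\operatorname{Bi}}(\wideparen{\D}_X)_{\Delta}$ so that subobjects and quotients behave correctly; this is exactly what Proposition \ref{prop co-admissible diag is abelian} (strictness of all morphisms) is there to guarantee, so no serious difficulty is expected.
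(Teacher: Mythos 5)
Your proposal is correct and follows essentially the same route as the paper: reduce the bi-$\mathcal{C}$ condition to the left and right module cases via the exact forgetful functors, then handle the diagonal condition through the long exact cohomology sequence, using strictness of morphisms of co-admissible bimodules to work inside the abelian category $\mathcal{C}_B^{\operatorname{Bi}}(\wideparen{\D}_X)$ and the fact that $\mathcal{C}_B^{\operatorname{Bi}}(\wideparen{\D}_X)_{\Delta}$ is a Serre subcategory (which the paper extracts from Theorem \ref{teo immersion Theorem of sheaves of diagonal co-admissible bimodules}). The only cosmetic difference is that you verify closure under cones with the third vertex unknown while the paper puts the unknown vertex in the middle; these are equivalent by rotation.
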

\begin{proof}
Notice that the strict monomorphisms:
\begin{equation*}
    \wideparen{\D}_X\rightarrow \wideparen{\D}_X^{e}, \textnormal{ }\wideparen{\D}_X^{\op}\rightarrow \wideparen{\D}_X^{e},
\end{equation*}
induce strongly exact forgetful functors:
\begin{align*}
    &\operatorname{Forget}:\Mod_{\Indban}(\wideparen{\D}_X^{e})\rightarrow \Mod_{\Indban}(\wideparen{\D}_X),\\
    &\operatorname{Forget}:\Mod_{\Indban}(\wideparen{\D}_X^{e})\rightarrow \Mod_{\Indban}(\wideparen{\D}^{\op}_X).
\end{align*}
These lift to triangulated functors:
\begin{equation*}
\operatorname{Forget}:\operatorname{D}(\wideparen{\D}_X^{e})\rightarrow \operatorname{D}(\wideparen{\D}_X),\textnormal{ }\operatorname{Forget}:\operatorname{D}(\wideparen{\D}_X^{e})\rightarrow \operatorname{D}(\wideparen{\D}^{\op}_X).
\end{equation*}
By \cite[Theorem 8.9]{bode2021operations}, the categories $\operatorname{D}_{\mathcal{C}}(\wideparen{\D}_X)$ and $\operatorname{D}_{\mathcal{C}}(\wideparen{\D}^{\op}_X)$ are triangulated subcategories of $\operatorname{D}(\wideparen{\D}_X)$ and $\operatorname{D}_{\mathcal{C}}(\wideparen{\D}^{\op}_X)$ respectively. Thus, $\operatorname{D}^{\operatorname{Bi}}_{\mathcal{C}}(\wideparen{\D}_X)$ is a triangulated subcategory of $\operatorname{D}(\wideparen{\D}_X^{e})$.\\
Now consider a distinguished triangle:
\begin{equation*}
    C_1^{\bullet}\rightarrow C_2^{\bullet}\rightarrow C_3^{\bullet}\rightarrow C_1^{\bullet}[1],
\end{equation*}
such that $ C_1^{\bullet},C_3^{\bullet}\in \operatorname{D}^{\operatorname{Bi}}_{\mathcal{C}}(\wideparen{\D}_X)_{\Delta}$ and $C_2^{\bullet}\in \operatorname{D}^{\operatorname{Bi}}_{\mathcal{C}}(\wideparen{\D}_X)$. We need to show that $C_2^{\bullet}$ is an object of $  \operatorname{D}^{\operatorname{Bi}}_{\mathcal{C}}(\wideparen{\D}_X)_{\Delta}$. It is enough to show that $\operatorname{H}^i(C_2^{\bullet})\in \mathcal{C}_B^{\operatorname{Bi}}(\wideparen{\D}_X)_{\Delta}$ for each $i\in\mathbb{Z}$. For each $i\in\mathbb{Z}$, we have the following exact sequence in $LH(\Mod_{\Indban}(\wideparen{\D}_X)^{e})$:
\begin{equation*}
    \operatorname{H}^{i-1}(C^{\bullet}_3)\rightarrow\operatorname{H}^{i}(C^{\bullet}_1)\rightarrow \operatorname{H}^{i}(C^{\bullet}_2)\rightarrow \operatorname{H}^{i}(C^{\bullet}_3)\rightarrow\operatorname{H}^{i+1}(C^{\bullet}_1).
\end{equation*}
By assumption, all elements in this sequence are bi-co-admissible $\wideparen{\D}_X$-modules. Hence, we may regard this as an exact sequence in the abelian category $\mathcal{C}_B^{\operatorname{Bi}}(\wideparen{\D}_X)$. Furthermore, as a consequence of Lemma \ref{Lemma diagonal bimodules Serre subcategory},  $\mathcal{C}_B^{\operatorname{Bi}}(\wideparen{\D}_X)_{\Delta}$ is a Serre subcategory of
  $\mathcal{C}_B^{\operatorname{Bi}}(\wideparen{\D}_X)$. Thus, $\operatorname{H}^{i}(C^{\bullet}_2)\in\mathcal{C}_B^{\operatorname{Bi}}(\wideparen{\D}_X)_{\Delta}$ for each $i\in \mathbb{Z}$, and we are done.
\end{proof}
Next, we want to understand the behavior of $\mathcal{C}$-complexes with respect to the derived extension functor $\EDelta$. Let us start with the following lemma:
\begin{Lemma}\label{lemma aciclycity of diagonal co-ad for extension fucnt}
Let $\mathcal{M}\in \mathcal{C}_B^{\operatorname{Bi}}(\wideparen{\D}_X)_{\Delta}$. Then there is an isomorphism in $\operatorname{D}(\wideparen{E}_X)$:
\begin{equation*}
   \EDelta\mathcal{M} \xrightarrow[]{\cong}\Delta_*^E\mathcal{M}.
\end{equation*}
\end{Lemma}
\begin{proof}
By definition of the derived extension functor we have:
\begin{equation*}
    \EDelta\mathcal{M}= I(\wideparen{E}_X)\Tilde{\otimes}^{\mathbb{L}}_{p_1^{-1}I(\wideparen{\D}_X)\Tilde{\otimes}_Kp_2^{-1}I(\wideparen{\D}_X)^{\op}}\Delta_*I(\mathcal{M}),
\end{equation*}
where $\Tilde{\otimes}$ denotes the tensor product in the left heart $\operatorname{Shv}(X^2,LH(\widehat{\mathcal{B}}c_K))$. Thus, we need to show that the canonical map:
\begin{equation*}
    \EDelta\mathcal{M}\rightarrow I(\wideparen{E}_X)\Tilde{\otimes}_{p_1^{-1}I(\wideparen{\D}_X)\Tilde{\otimes}_Kp_2^{-1}I(\wideparen{\D}_X)^{\op}}\Delta_*I(\mathcal{M}),
\end{equation*}
is a quasi-isomorphism, and that the  canonical map:
\begin{equation*}
I(\wideparen{E}_X)\Tilde{\otimes}_{p_1^{-1}I(\wideparen{\D}_X)\Tilde{\otimes}_Kp_2^{-1}I(\wideparen{\D}_X)^{\op}}\Delta_*I(\mathcal{M}) \rightarrow I(\Delta_*^E\mathcal{M}), 
\end{equation*}
is an isomorphism as well. As these maps are already defined globally, we may as well assume that $X$ is affinoid with an étale map $X\rightarrow \mathbb{A}^n_K$. By the properties of the tensor product of sheaves, for each $i\in \mathbb{Z}$  the cohomology sheaf $\operatorname{H}^i(\EDelta\mathcal{M})$ is the sheafification of the presheaf sending an affinoid subdomain $V\subset X^2$ to:
\begin{equation}
    \mathcal{F}^i(V):=\operatorname{H}^i\left(I(\wideparen{E}_X(V))\Tilde{\otimes}^{\mathbb{L}}_{I(\wideparen{\D}_X(p_1(V))\Tilde{\otimes}_KI(\wideparen{\D}_X(p_2(V))^{\op}}I(\mathcal{M})(V\cap \Delta(X))\right).
\end{equation}
First, notice that $\mathcal{F}^i(V)=0$ whenever $V\cap \Delta(X)$ is empty. In particular, the $\mathcal{F}^i$ are supported in $\Delta(X)$, so  their sheafifications $\Tilde{\mathcal{F}}^i$ will be supported at $\Delta(X)$ as well. Let us first show that $\Tilde{\mathcal{F}}^i=0$ for all $i< 0$. Let $U\subset X$ be an affinoid subdomain, then by definition of the pullback in the category of presheaves we have:
\begin{equation}\label{equation tor}
    \Delta^{-1}\mathcal{F}^i(U)=\varinjlim_V \operatorname{H}^i\left(I(\wideparen{E}_X(V))\Tilde{\otimes}^{\mathbb{L}}_{I(\wideparen{\D}_X(p_1(V))\Tilde{\otimes}_KI(\wideparen{\D}_X(p_2(V))^{\op}}I(\mathcal{M})(U)\right),
\end{equation}
where the $V\subset X^2$ are affinoid subdomains satisfying $V\subset U^2$ and $V\cap \Delta(X)=U$. In this case, notice that for $i=1,2$ we have:
\begin{equation*}
    U=p_i(\Delta(U))\subseteq p_i(\Delta(V))\subseteq p_i(\Delta(U^2))=U.
\end{equation*}
Hence, we may rewrite the expression in (\ref{equation tor}) as follows:
\begin{equation*}
    \Delta^{-1}\mathcal{F}^i(U)=\varinjlim_V \operatorname{H}^i\left(I(\wideparen{E}_X(V))\Tilde{\otimes}^{\mathbb{L}}_{I(\wideparen{E}_X(U^2))}I(\mathcal{M})(U)\right).
\end{equation*}
However, by Theorem \ref{teo immersion Theorem of sheaves of diagonal co-admissible bimodules}  $\mathcal{M}(U)$, is a co-admissible $\wideparen{E}_X(U^2)$-module, and the transition map $\wideparen{E}_X(U^2)\rightarrow \wideparen{E}_X(V)$ is $c$-flat. Hence, it follows by \cite[Proposition 5.33]{bode2021operations} that we have:
\begin{equation*}
    \Delta^{-1}\mathcal{F}^0(U)=\varinjlim_V I(\wideparen{E}_X(V)\overrightarrow{\otimes}_{\wideparen{E}_X(U^2)}\mathcal{M}(U)) =I(\mathcal{M}(U))=\Delta^{-1}I(\Delta_*^E\mathcal{M})(U),
\end{equation*}
 and $\Delta^{-1}\mathcal{F}^i(U)=0$ for all $i< -1$, as we wanted to show.
\end{proof}
We now extend this to the category of diagonal $\mathcal{C}$-complexes:
\begin{prop}\label{prop immersion functor and C-complexes}
The derived extension functor induces a triangulated functor:
\begin{equation*}    \EDelta:\operatorname{D}^{\operatorname{Bi}}_{\mathcal{C}}(\wideparen{\D}_X)_{\Delta}\rightarrow \operatorname{D}_{\mathcal{C}^{\Delta}}(\wideparen{E}_X).
\end{equation*}
Furthermore, $\EDelta$ is an equivalence onto its essential image, with inverse:
\begin{equation*}
    \Delta^{-1}:\operatorname{D}_{\mathcal{C}^{\Delta}}(\wideparen{E}_X)\rightarrow \operatorname{D}(\wideparen{\D}_X^e).
\end{equation*}
Thus, for every diagonal $\mathcal{C}$-complex $\mathcal{M}^{\bullet}$ there is a canonical isomorphism in $\operatorname{D}(\wideparen{\D}_X^e)$:
\begin{equation*}
    \mathcal{M}^{\bullet}\xrightarrow[]{\cong} \Delta^{-1}\EDelta \mathcal{M}^{\bullet}.
\end{equation*}
\end{prop}
\begin{proof}
 Let $C^{\bullet}$ be a diagonal $\mathcal{C}$-complex. By construction, we know that $\EDelta(C^{\bullet})$ is an object of $\operatorname{D}(\wideparen{E}_X)$ with cohomology supported on the diagonal. We need  to show that  it is a $\mathcal{C}$-complex. We know that $\EDelta(C^{\bullet})$ is a $\mathcal{C}$-complex over $\wideparen{E}_X$ if and only if $\Ifunct (C^{\bullet})$ is a $\mathcal{C}$-complex over $\wideparen{\D}_{X^2}$. Hence, we may use Lemma \ref{Lemma immersion Theorem}, and the same argument as in the proof of Theorem \ref{teo immersion Theorem of sheaves of diagonal co-admissible bimodules}
 to reduce the proof to the case where $X=\Sp(A)$, and has an étale map $X\rightarrow \mathbb{A}^n_K$.\\
First, recall that, by definition, we have that $\EDelta$  is the left derived functor of:
\begin{equation*}
    \Delta_*^{I(E)}(-):=I(\wideparen{E}_X)\Tilde{\otimes}^{\mathbb{L}}_{p_1^{-1}I(\wideparen{\D}_X)\Tilde{\otimes}_Kp_2^{-1}I(\wideparen{\D}_X)^{\op}}\Delta_*(-).
\end{equation*}
By definition of a diagonal $\mathcal{C}$-complex, all cohomology groups of $C^{\bullet}$ are co-admissible diagonal $\wideparen{\D}_X$-bimodules. In particular, it follows by Lemma \ref{lemma aciclycity of diagonal co-ad for extension fucnt} that all the cohomology groups of $C^{\bullet}$ are $\EDelta$-acyclic. Hence, we can use \cite[Tag 06XW]{stacks-project} to conclude that for all $i\in \mathbb{Z}$ we have:
\begin{equation*}
    \operatorname{H}^i\left(\EDelta(C^{\bullet}) \right)=\EDelta\left(\operatorname{H}^i(C^{\bullet}) \right).
\end{equation*}
In particular, the cohomology groups of $\EDelta(C^{\bullet})$ are co-admissible modules supported in $\Delta(X)$. Choose a Fréchet-Stein presentation $\wideparen{\D}_X(X)=\varprojlim_n \widehat{D}_n$  such that we have a Fréchet-Stein presentation $\wideparen{\D}_X(X)^e=\varprojlim_n \widehat{D}_n^e$. It suffices to show that for all $n\geq 0$ we have:
\begin{equation*} \widehat{D}_n^e\overrightarrow{\otimes}_{\wideparen{\D}_X(X)^e}\Delta_*^E\operatorname{H}^i(C^{\bullet})(X^2)=\widehat{D}_n^e\overrightarrow{\otimes}_{\wideparen{\D}_X(X)^e}\operatorname{H}^i(C^{\bullet})(X)=0, 
\end{equation*}
 for all but finitely many $i\in \mathbb{Z}$. Notice that we have:
 \begin{equation*} 
 \widehat{D}_n^e\overrightarrow{\otimes}_{\wideparen{\D}_X(X)^e}\operatorname{H}^i(C^{\bullet})(X)=\widehat{\D}_n\overrightarrow{\otimes}_{\wideparen{\D}_X(X)}\operatorname{H}^i(C^{\bullet})(X)\overrightarrow{\otimes}_{\wideparen{\D}_X(X)}\widehat{\D}_n.   
 \end{equation*}
 As $C^{\bullet}$ is a diagonal $\mathcal{C}$-complex, it is in particular a $\mathcal{C}$-complex over $\wideparen{\D}_X$. Hence, the right hand side of the above equation is zero for all but finitely many $i\in \mathbb{Z}$.\bigskip
 
 For the last part, we first notice that there is a canonical morphism of sheaves of Ind-Banach algebras $\wideparen{\D}_X^e\rightarrow \Delta^{-1}\wideparen{E}_X$, so that the functor is well-defined. Additionally, for any $\mathcal{M}^{\bullet}\in\operatorname{D}^{\operatorname{Bi}}_{\mathcal{C}}(\wideparen{\D}_X)_{\Delta}$  there is a canonical $\wideparen{\D}_X^e$-linear map:
 \begin{equation*}
     \mathcal{M}^{\bullet}\rightarrow \Delta^{-1}\EDelta \mathcal{M}^{\bullet}=\Delta^{-1}\wideparen{E}_X\overrightarrow{\otimes}_{\wideparen{\D}_X^e}^{\mathbb{L}}\mathcal{M}^{\bullet}.
 \end{equation*}
 We need to show that this is a quasi-isomorphism. However, as $\Delta^{-1}$ is an exact functor, if suffices to show that for every $i\in \mathbb{Z}$ the following map is an isomorphism:
 \begin{equation*}
     \operatorname{H}^i(\mathcal{M}^{\bullet})\rightarrow \Delta^{-1}\operatorname{H}^i(\EDelta(\mathcal{M}^{\bullet}))=\Delta^{-1}\operatorname{H}^i(\EDelta(\mathcal{M}^{\bullet}))=\Delta^{-1}\EDelta(\operatorname{H}^i(\mathcal{M}^{\bullet})),
 \end{equation*}
 where the last identity follows because $\mathcal{M}^{\bullet}$ is a diagonal $\mathcal{C}$-complex, so its cohomology sheaves are co-admissible diagonal $\wideparen{\D}_X$-bimodules. Thus, we are reduced to showing $\mathcal{M}=\Delta^{-1}\Delta_*^E\mathcal{M}$ for $\mathcal{M}\in \mathcal{C}_B^{\operatorname{Bi}}(\wideparen{\D}_X)_{\Delta}$. This follows from Theorem \ref{teo immersion Theorem of sheaves of diagonal co-admissible bimodules}.
\end{proof}
\begin{coro}\label{coro immersion functor and C-complexes}
The derived immersion functor induces a triangulated functor:
\begin{equation*}
\Ifunct:\operatorname{D}^{\operatorname{Bi}}_{\mathcal{C}}(\wideparen{\D}_X)_{\Delta}\rightarrow \operatorname{D}_{\mathcal{C}^{\Delta}}(\wideparen{\D}_X^2).
\end{equation*}    
\end{coro}
\begin{coro}\label{coro Hh as derived tensor product}
There is a natural transformation of functors:
\begin{equation*}
    \mathcal{HH}_{\bullet}(\wideparen{\D}_X,-)\xrightarrow[]{\cong}\wideparen{\D}_X\overrightarrow{\otimes}_{\wideparen{\D}_X^e}^{\mathbb{L}}-.
\end{equation*}
\end{coro}
\begin{proof}
Follows from Proposition \ref{prop bi-enveloping algebra and HHo} and the second part of Proposition \ref{prop immersion functor and C-complexes}.
\end{proof}
\subsection{\texorpdfstring{D-cap as a co-admissible diagonal bimodule}{}}
In this section we will show that  $\wideparen{\D}_X$ is an object of $\mathcal{C}_B^{\operatorname{Bi}}(\wideparen{\D}_X)_{\Delta}$, and determine its image under the immersion functor. This will be instrumental for our calculations of Hochschild (co)-homology of diagonal $C$-complexes in the final chapter.
\begin{prop}\label{prop sheaf of completed differential operators is diagonal bimodule}
Let $X$ be smooth and separated. Then $\wideparen{\D}_X\in\mathcal{C}_B^{\operatorname{Bi}}(\wideparen{\D}_X)_{\Delta}$.
\end{prop}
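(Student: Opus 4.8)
The plan is to verify the defining conditions of Definition \ref{defi co-admissible diagonal bimodule} locally. Since $X$ is separated and smooth, I would first choose an admissible affinoid cover $(V_i)_{i\in I}$ of $X$ such that each $V_i$ admits an étale map $V_i\to \mathbb{A}^n_K$; such a cover exists by smoothness. By the locality statement of Lemma \ref{Lemma local properties of diagonal co-admissible bimodules} (and the fact that restriction of $\wideparen{\D}_X$ to $V_i$ is $\wideparen{\D}_{V_i}$), it suffices to show $\wideparen{\D}_{V_i}\in \mathcal{C}_B^{\operatorname{Bi}}(\wideparen{\D}_{V_i})_{\Delta}$ for each $i$, i.e. we may assume $X=\Sp(A)$ is a smooth affinoid with an étale map $X\to\mathbb{A}^n_K$, which is exactly the setting of Definition \ref{defi diagonal co-admissible bimodules local case}.

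In this local situation there are two things to check. First, that $\wideparen{\D}_X\in\mathcal{C}_B^{\operatorname{Bi}}(\wideparen{\D}_X)$, i.e. $\wideparen{\D}_X$ is co-admissible as a left $\wideparen{\D}_X$-module and as a right $\wideparen{\D}_X$-module. This is standard: $\wideparen{\D}_X$ is the sheaf of Fréchet–Stein enveloping algebras of $\mathcal{T}_{X/K}$, and a Fréchet–Stein algebra is always co-admissible over itself on either side, with $\wideparen{\D}_X(V)=\wideparen{\D}_X(V)\overrightarrow{\otimes}_{\wideparen{\D}_X(X)}\wideparen{\D}_X(X)$ for affinoid subdomains $V\subset X$; invoking the results of \cite{ardakov2019} recalled in Section \ref{Section background Lie algebroids} handles this. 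Second, and this is the real content, I must check the diagonal support condition:
\begin{equation*}
    \wideparen{\D}_X(X)=\wideparen{\D}_X(X)_{\infty}(\mathcal{I}_{\Delta}),
\end{equation*}
where $\mathcal{I}_{\Delta}\subset A\widehat{\otimes}_KA$ is the ideal of the diagonal and $\wideparen{\D}_X(X)$ is regarded as a module over $\wideparen{E}_X(X^2)=\wideparen{\D}_X(X)\overrightarrow{\otimes}_K\wideparen{\D}_X(X)^{\op}$ (equivalently, $A\widehat{\otimes}_KA$ acts through this). The generators of $\mathcal{I}_{\Delta}$ are the elements $1\otimes f - f\otimes 1$ for $f\in A$; the action of such an element on $P\in\wideparen{\D}_X(X)$ is $f\cdot P - P\cdot f = [f,P]$ (left multiplication by $f$ in the left factor, right multiplication by $f$ in the right factor). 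Using the PBW-type filtration $F_\bullet\wideparen{\D}_X$ together with the almost-commutativity recalled in Proposition \ref{PBW version for enveloping algebras} and Corollary \ref{coro universal enveloping algebras of products and colimits}, one has $[f, F_k\wideparen{\D}_X(X)]\subset F_{k-1}\wideparen{\D}_X(X)$, so iterated brackets against any finite set of such $f$'s eventually kill any element of $\wideparen{\D}_X(X)$ after finitely many steps \emph{modulo lower filtration}; the completeness of $\wideparen{\D}_X(X)$ as a Fréchet space then lets one conclude that $\bigl(\operatorname{ad}(1\otimes f - f\otimes 1)\bigr)^n P\to 0$ in $\wideparen{\D}_X(X)$ for every $P$, which is precisely membership in $\wideparen{\D}_X(X)_{\infty}(\mathcal{I}_{\Delta})$. (One should be slightly careful: $\mathcal{I}_{\Delta}$ is generated by $1\otimes f - f\otimes 1$ only for $f$ in a finite generating set $f_1,\dots,f_n$ of $A$ over the affinoid algebra underlying $\mathbb{A}^n_K$; since being in $M_{\infty}(\mathcal{I})$ only needs to be checked against generators, it is enough to treat these finitely many $f_i$, and here one can even use that $df_1,\dots,df_n$ form a basis of $\Omega^1_{X/K}$ so the associated derivations filter things in a controlled way as in Lemma \ref{Lemma support on the diagonal}.)

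The main obstacle is the filtration/convergence argument in the previous paragraph: one must show not merely that each $\operatorname{ad}(1\otimes f_i-f_i\otimes 1)$ lowers the PBW filtration on the uncompleted $\wideparen{U}(\mathscr{L})=\wideparen{\D}_X(X)$, but that this estimate is \emph{uniform} enough across the Banach pieces $\widehat{U}(\pi^{n}\mathcal{T}_{\mathcal{A}})_K$ of the Fréchet–Stein presentation to produce genuine convergence to $0$ in the Fréchet topology. Concretely I would fix an affine formal model $\mathcal{A}$ and a free Lie lattice $\mathcal{T}_{\mathcal{A}}$ with $df_i(v_j)=\delta_{ij}$, write $P$ in PBW coordinates, and compute $\operatorname{ad}(1\otimes f_i - f_i\otimes 1)$ explicitly in terms of the $v_j$: it acts essentially as the derivation $\partial/\partial v_i$ on the polynomial part, so $n$-fold application annihilates any term of $v$-degree $<n$ and the remaining tail lies in $\pi^{?}$-small submodules, giving the convergence. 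Once this local computation is in place, the globalization is immediate from Lemma \ref{Lemma local properties of diagonal co-admissible bimodules}, so the bulk of the proof is this one explicit commutator estimate. Combining it with the fact that $\wideparen{\D}_X$ is co-admissible on both sides finishes the verification that $\wideparen{\D}_X\in\mathcal{C}_B^{\operatorname{Bi}}(\wideparen{\D}_X)_{\Delta}$; in particular, by Theorem \ref{teo immersion Theorem of sheaves of diagonal co-admissible bimodules}, $\Delta_*\wideparen{\D}_X$ is then automatically a co-admissible $\wideparen{E}_X$-module supported on the diagonal, which is what makes this proposition useful downstream.
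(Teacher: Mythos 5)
Your proposal is correct in outline and reduces to the same local situation as the paper (affinoid $X$ with an \'etale map to $\mathbb{A}^n_K$, co-admissibility on both sides being immediate, the content being $\wideparen{\D}_X(X)=\wideparen{\D}_X(X)_{\infty}(\mathcal{I}_{\Delta})$), but you attack the key step by a genuinely different and harder route. You propose to prove, for an \emph{arbitrary} element $P$ of the completed algebra, that the iterated commutators $\operatorname{ad}(1\otimes f_i-f_i\otimes 1)^nP$ converge to $0$, which forces you into the uniform estimate across the Banach quotients $\widehat{U}(\pi^m\mathcal{T}_{\mathcal{A}})_K$ that you correctly identify as the bulk of the work (and which you only sketch), together with the reduction of the $\mathcal{M}_{\infty}(\mathcal{I}_{\Delta})$ condition from all of $\mathcal{I}_{\Delta}$ to its generators, which you flag but do not justify. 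The paper sidesteps both issues: it works on the dense subalgebra $\D_X(X)$ of finite-order differential operators, where $c_f^n\alpha$ is \emph{exactly zero} for $n$ large (this is essentially the definition of the order filtration, so no convergence estimate and no restriction to generators is needed), and then concludes by density of $\D_X(X)$ in $\wideparen{\D}_X(X)$ together with the fact that $\wideparen{\D}_X(X)_{\infty}(\mathcal{I}_{\Delta})$ is a closed (indeed co-admissible, by \cite[Corollary 5.6]{ardakov2015d}) submodule. Your route should go through if the estimate is carried out — in PBW coordinates $\operatorname{ad}(c_{f_i})$ does act as $\partial/\partial v_i$ up to lower-order terms, and the $\pi$-adic bounds on the coefficients of an element of $\widehat{U}(\pi^m\mathcal{T}_{\mathcal{A}})_K$ do give the required decay — but the paper's argument buys you the same conclusion with no analysis at all, at the cost of invoking closedness of $\mathcal{M}_{\infty}(\mathcal{I}_{\Delta})$ from \cite{ardakov2015d}. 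You should either complete your estimate or switch to the density argument; as written the central step is still a sketch.
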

\begin{proof}
As this is a local property, we may assume that $X=\Sp(A)$ is affinoid and there is an étale map $f:X\rightarrow \mathbb{A}^n_K$. In this case, we have that $\wideparen{\D}_X(X)$ is the completion of $\D_X(X)$ with respect to the induced bornology. Furthermore, $\D_X(X)$ has a canonical filtration $\Phi_{\bullet}\D_X(X)$ known as the PBW filtration, or filtration by order of differential operators. The ideal $\mathcal{I}_{\Delta}$ is given by elements of the form $c_f= f\otimes 1 -1\otimes f$, for $f\in A$. In particular, for any $\alpha\in \D_X(X)$, we have $c_f \alpha=[f,\alpha]$. By definition of the order of a differential operator, it follows that $\alpha\in\Phi_n\D_X(X)$ if and only if $c_f^n\alpha=0$ for all $f\in A$. As the PBW filtration is exhaustive, it follows that $\D_X(X)\subset \wideparen{\D}_X(X)_{\infty}(\mathcal{I}_{\Delta})$. However, as $\D_X(X)$ is dense in $\wideparen{\D}_X(X)$, it follows that
$\wideparen{\D}_X(X)=\wideparen{\D}_X(X)_{\infty}(\mathcal{I}_{\Delta})$. The fact that $\wideparen{\D}_X$ is co-admissible as a left and right module over itself is clear.
\end{proof}
Thus, $\Delta_*^{\operatorname{S}}(\wideparen{\D}_X)$ is a co-admissible $\wideparen{\D}_{X^2}$-module supported on the diagonal.
\begin{prop}\label{prop image of D under side changing}
There is an isomorphism of co-admissible $\wideparen{D}_{X^2}$-modules:  
 \begin{equation*}
     \Delta_*^{\operatorname{S}}(\wideparen{\D}_X)=\Delta_+\OX_X.
 \end{equation*}   
\end{prop}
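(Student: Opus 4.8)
The plan is to reduce to the affinoid case and then identify the two sheaves by means of the rigid analytic Kashiwara equivalence along the diagonal.

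Since the statement is local on $X$, I would first assume $X=\Sp(A)$ is a smooth affinoid admitting an étale morphism $X\rightarrow\mathbb{A}^n_K$, so that $\mathcal{T}_{X/K}$ is free; fix $f_1,\dots,f_n\in A$ with $df_1,\dots,df_n$ an $\OX_X$-basis of $\Omega_X$ and dual vector fields $\partial_1,\dots,\partial_n$. Both sides of the claimed identity lie in $\mathcal{C}(\wideparen{\D}_{X^2})_{\Delta}$, the abelian category of co-admissible $\wideparen{\D}_{X^2}$-modules supported on the diagonal: the left-hand side because $\wideparen{\D}_X\in\mathcal{C}_B^{\operatorname{Bi}}(\wideparen{\D}_X)_{\Delta}$ by Proposition~\ref{prop sheaf of completed differential operators is diagonal bimodule} and $\Delta_*^{\operatorname{S}}$ restricts to an exact embedding $\mathcal{C}_B^{\operatorname{Bi}}(\wideparen{\D}_X)_{\Delta}\rightarrow\mathcal{C}(\wideparen{\D}_{X^2})_{\Delta}$ by Corollary~\ref{coro immersion Theorem of sheaves of diagonal co-admissible bimodules}; the right-hand side because $\OX_X$ is a co-admissible $\wideparen{\D}_X$-module (an integrable connection) and $\Delta_+$ sends co-admissible modules to co-admissible modules supported on $\Delta$ by the rigid analytic Kashiwara equivalence (\cite[Proposition 9.5]{bode2021operations}).

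By that same Kashiwara equivalence, $\Delta_+:\mathcal{C}(\wideparen{\D}_X)\leftrightarrows\mathcal{C}(\wideparen{\D}_{X^2})_{\Delta}:\Delta^{!}$ are mutually inverse with $\Delta^{!}\Delta_+\cong\operatorname{Id}$, so it suffices to produce a functorial isomorphism $\Delta^{!}\Delta_*^{\operatorname{S}}(\wideparen{\D}_X)\cong\OX_X$ of co-admissible $\wideparen{\D}_X$-modules; applying $\Delta_+$ then gives the proposition. To compute the left-hand side I would first unwind $\Delta_*^{\operatorname{S}}(\wideparen{\D}_X)=\operatorname{S}^{-1}\Delta_*(\wideparen{\D}_X)$: since $\Delta_*\wideparen{\D}_X$ is supported on the diagonal and $\operatorname{S}^{-1}$ does not change supports (Proposition~\ref{prop side changing for bimodules}), applying $\Delta^{-1}$ yields $\Delta^{-1}\Delta_*^{\operatorname{S}}(\wideparen{\D}_X)=\Omega_X^{-1}\overrightarrow{\otimes}_{\OX_X}\wideparen{\D}_X$, a $\wideparen{\D}_X\overrightarrow{\otimes}_K\wideparen{\D}_X$-module in which the first factor acts by left multiplication on $\wideparen{\D}_X$ and the second factor acts by the side-switching rule $(\ref{equation side-changing})$ through the Lie-derivative action on $\Omega_X^{-1}$ and right multiplication on $\wideparen{\D}_X$. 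Then, using the transfer-bimodule formula $\Delta^{!}(-)=\OX_X\overrightarrow{\otimes}^{\mathbb{L}}_{\Delta^{-1}\OX_{X^2}}\Delta^{-1}(-)[\operatorname{dim}(X)-\operatorname{dim}(X^2)]$ together with the explicit presentations of $\wideparen{\D}_X$ and $\wideparen{\D}_{X^2}$ (Propositions~\ref{prop enveloping of product as product of envelopings} and~\ref{prop explicit Fréchet-Stein presentation}), I would resolve $\OX_X$ over $\OX_{X^2}$ by the Koszul complex on the regular sequence $\{1\otimes f_i-f_i\otimes 1\}_{i=1}^n$; a direct check shows the resulting complex computing $\Delta^{!}\Delta_*^{\operatorname{S}}(\wideparen{\D}_X)$ is strict with cohomology concentrated in degree $0$ and equal to $\OX_X$. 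This is the $p$-adic analytic counterpart of the classical computation $\Delta^{!}\Delta_+\OX_X=\OX_X$ combined with the classical realisation of $\D_X$ as the diagonal bimodule.

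The main obstacle is the bookkeeping of module structures: one must carefully transport the two left $\wideparen{\D}_X$-actions carried by $\operatorname{S}^{-1}(\wideparen{\D}_X)$ through $\Delta^{-1}$ and the transfer bimodules, and verify that the derived tensor products appearing in $\Delta^{!}$ (and, if one instead argues by comparing global sections, in $\Delta_+$) are in fact underived in the étale-coordinate setting — which holds because every module in sight is built from free or nuclear Fréchet pieces and the relevant Koszul/Spencer resolutions are strict. Alternatively, one may bypass the explicit computation by invoking the exact embedding $F:\operatorname{Conn}(X)\rightarrow\mathcal{C}_B^{\operatorname{Bi}}(\wideparen{\D}_X)_{\Delta}$ with $\Delta^{!}\circ\Delta_*^{\operatorname{S}}\circ F=\operatorname{Id}$ and $F(\OX_X)=\wideparen{\D}_X$; I would use this only as a consistency check and keep the self-contained Kashiwara argument as the main proof.
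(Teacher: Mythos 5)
Your overall strategy coincides with the paper's: reduce to an affinoid $X$ with an \'etale map to $\mathbb{A}^n_K$, use Kashiwara's equivalence to reduce the claim to $\Delta^{!}\Delta_*^{\operatorname{S}}(\wideparen{\D}_X)=\bigl(\Omega_{X}^{-1}\overrightarrow{\otimes}_{\OX_X}\wideparen{\D}_X\bigr)[\mathcal{I}_{\Delta}]\cong\OX_X$, and compute this via the generators $1\otimes f_i-f_i\otimes 1$ of $\mathcal{I}_{\Delta}$. The one place where your proposal substitutes an assertion for the argument is the sentence ``a direct check shows the resulting complex \dots is strict with cohomology concentrated in degree $0$ and equal to $\OX_X$'': that check is the entire content of the proof. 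The paper carries it out one generator at a time, factoring $\Delta$ through the chain of closed immersions $X\to X_n\to\cdots\to X_1\to X^2$ cut out by the $\mathcal{J}_r$, so that at each stage one only has to identify the kernel of $\operatorname{ad}(f_1\otimes 1-1\otimes f_1)$ on $\wideparen{\D}_X$; writing an operator as a convergent series $\sum_i P_i\partial_1^{i}$ with $P_i$ in the subalgebra generated by $A$ and $\partial_2,\dots,\partial_n$, the commutator identity $x\partial^{j+1}-\partial^{j+1}x=(j+1)\partial^{j}$ (and characteristic zero, to divide by $j+1$) forces $P_i=0$ for $i\ge 1$, identifying the kernel and, dually, giving the surjectivity needed for the complex to be concentrated in degree $0$. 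You should either reproduce this power-series argument or explain why your all-at-once Koszul complex is strict and acyclic in positive degrees; as stated, the proposal names the right complex but does not prove anything about its cohomology. Your closing alternative via $\Delta^{!}\circ\Delta_*^{\operatorname{S}}\circ F=\operatorname{Id}$ is indeed only a consistency check, since in the paper that statement is established in the appendix by the same kind of computation and would be circular as a proof here.
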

\begin{proof}
By Kashiwara's equivalence, it is enough to show that:
\begin{equation*}
    \Delta^{!}\Delta_*^{\operatorname{S}}(\wideparen{\D}_X)=\OX_X.
\end{equation*}
We can assume that $X=\Sp(A)$ is affinoid, and that there is an étale map $X\rightarrow \mathbb{A}^n_K$, induced by local coordinates $x_1,\cdots,x_n\in A$. These local coordinates induce a basis $\partial_{x_1},\cdots, \partial_{x_n}$ of $\mathcal{T}_{X/K}$. This basis satisfies the following properties:
\begin{equation*}    \mathcal{T}_{X/K}=\bigoplus_{i=1}^n\OX_X\partial_{x_i}, \textnormal{ } [\partial_{x_i},\partial_{x_j}]=0 \textnormal{, } \partial_{x_i}(x_j)=\delta_{i,j}\textnormal{, for all } 1\leq i,j\leq n.
\end{equation*}
Notice that, under these conditions, we have the following identity in $\wideparen{\D}_X(X)$ for each $j\geq 0$, and each $1\leq i\leq n$:
\begin{equation*}
    x_i\partial_{x_i}^{j+1}-\partial_{x_i}^{j+1}x_i=(j+1)\partial_{x_i}^j. 
\end{equation*}
Next, notice that, as co-admissible modules are determined by their global sections, we need to show that the following identity of co-admissible $\wideparen{\D}_X(X)$-modules holds:
\begin{equation*}
    \left(\Omega_{X}^{-1}(X)\overrightarrow{\otimes}_{\OX_X(X)}\wideparen{\D}_X(X)\right)[\mathcal{I}_{\Delta}(X^2)]=\OX_X(X).
\end{equation*}
Our choice of an étale map $X\rightarrow \mathbb{A}^n_K$ induces a trivialization of $\Omega_X$ as a line bundle. In particular, we  can freely take $\Omega_{X}^{-1}\overrightarrow{\otimes}_{\OX_X}\wideparen{\D}_X=\wideparen{\D}_X$, and every $a\otimes b\in A\widehat{\otimes}_KA$ acts on each $\alpha \in \wideparen{\D}_X(X)$ by the formula $(a\otimes b)\alpha=a\alpha b$.\\
Recall from Lemma \ref{Lemma support on the diagonal} the ideals $\mathcal{J}_r$ in $A\widehat{\otimes}_KA$ generated by $(1\otimes x_j-x_j\otimes 1)_{j=1}^r$, and let $X_r=\Sp(A\widehat{\otimes}_KA/\mathcal{J}_r)$. We have an ascending chain of ideals:
\begin{equation*}
    \mathcal{J}_1\subset \mathcal{J}_2\subset \cdots \subset \mathcal{J}_n\subset \mathcal{I}_{\Delta}.
\end{equation*}
Hence, we get an associated family of closed immersions:
\begin{equation*}
    X\xrightarrow[]{i_{n+1}} X_n\rightarrow \cdots \rightarrow X_1 \xrightarrow[]{i_1} X^2,
\end{equation*}
such that the composition of all these maps is $\Delta:X\rightarrow X^2$.\\
By \cite[Lemma 9.3]{bode2021operations},  $i_1^{!}\wideparen{\D}_X$ is the co-admissible $\wideparen{\D}_{X_1}$-module associated to the kernel of the following short exact sequence:
\begin{equation*}
    0\rightarrow \wideparen{\D}_X(X)[\mathcal{J}_1] \rightarrow \wideparen{\D}_X(X)\xrightarrow[]{1\otimes x_1-x_1\otimes 1}\wideparen{\D}_X(X)\rightarrow 0.
\end{equation*}
Let $\wideparen{\D}_X(X)'$ be the subalgebra of $\wideparen{\D}_X(X)$ which is topologically generated by $A$, and $\partial_{x_2},\cdots, \partial_{x_n}$.
Notice that $x_1$ commutes with every element in $\wideparen{\D}_X(X)'$. Hence, we have $\wideparen{\D}_X(X)'\subset \wideparen{\D}_X(X)[\mathcal{J}_1]$. By construction of $\wideparen{\D}_X(X)$, we may express every element $f\in \wideparen{\D}_X(X)$ as the power series:
\begin{equation*}
    f=\sum_{i\geq 0}P_i\partial_{x_1}^i, \textnormal{ where } P_i\in \wideparen{\D}_X(X)'.
\end{equation*}
Assume that $f\in \wideparen{\D}_X(X)[\mathcal{J}_1]$. Then we have:
\begin{equation*}
    0=x_1f-fx_1=\sum_{i\geq 0}P_i(x_1\partial_{x_1}^i-\partial_{x_1}^ix_1)=\sum_{i\geq 1}iP_i\partial_{x_1}^{i-1}.
\end{equation*}
Hence, $P_i=0$ for $i\geq 1$, so $f\in \wideparen{\D}_X(X)'$. It follows that $ \wideparen{\D}_X(X)[\mathcal{J}_1]=\wideparen{\D}_X(X)'$.\\
We may inductively repeat the argument above, and it follows that we have:
\begin{equation*}
    \wideparen{\D}_X(X)[\mathcal{J}_n]=\OX_X(X),
\end{equation*}
and the result follows from the fact that the action of $\mathcal{I}_{\Delta}(X)$ on $\OX_X(X)$ is trivial.
\end{proof}
\begin{coro}\label{coro image of DX under right immersion functor}
There is an isomorphism of co-admissible $\wideparen{\D}_{X^2}$-modules:
\begin{equation*}
    \Delta_*^{\operatorname{S}_r}(\wideparen{\D}_X)=\Delta^r_+\Omega_{X}.
\end{equation*}  
\end{coro}
\begin{proof}
We have the following identities:
\begin{align*}
\Delta_*^{\operatorname{S}_r}(\wideparen{\D}_X)=\operatorname{S}^{-1}_r(\operatorname{T}(\Delta_*^E\wideparen{\D}_X))=&\Omega_{X^2/K}\overrightarrow{\otimes}_{\OX_{X^2}}\operatorname{S}^{-1}(\Delta_*^E\wideparen{\D}_X)\\=&\Omega_{X^2/K}\overrightarrow{\otimes}_{\OX_{X^2}}\Delta_+\OX_X\\=&\Delta_+^r\Omega_{X}, 
\end{align*}
where the first identity is Corollary \ref{coro image of enveloping algebra under T}, the second is Proposition \ref{prop equivalence left and right E(L)-modules}, the third is Proposition \ref{prop image of D under side changing}, and the last one follows by \cite[pp. 100]{bode2021operations}.
\end{proof}
We can use a similar approach to produce multiple examples of objects in $\mathcal{C}_B^{\operatorname{Bi}}(\wideparen{\D}_X)_{\Delta}$. However, as this is not directly used in the body of the text, we will postpone it to Appendix \ref{appendix examples of co-admissible diagonal bimodules}.
\section{Hochschild (Co)-homology}\label{Chapter Hochschil (co)-homology}
Let $X$ be a separated smooth rigid space. In this final chapter, we will apply the machinery developed thus far to obtain explicit calculations of the Hochschild cohomology and homology of diagonal $\mathcal{C}$-complexes on $X$. In particular, we obtain a canonical quasi-isomorphism in $\operatorname{D}(\operatorname{Shv}(X,\Indban))$:
\begin{equation*}
    \mathcal{HH}^{\bullet}(\wideparen{\D}_X,\wideparen{\D}_X)=\Omega_{X/K}^{\bullet},
\end{equation*}
which allows us to relate the Hochschild cohomology of $\wideparen{\D}_X$ with the de Rham cohomology of $X$. Similarly, the Hochschild homology of $\wideparen{\D}_X$ can be calculated via the formula:
\begin{equation*}
    \mathcal{HH}_{\bullet}(\wideparen{\D}_X,\wideparen{\D}_X)=\Omega_{X/K}^{\bullet}[2\operatorname{dim}(X)].
\end{equation*}
Hence, we obtain an upper bound on the degrees on which de Rham cohomology is non-zero, and obtain a duality between Hochschild cohomology and homology.
\subsection{Hochschild Cohomology}\label{Section hochschild cohomology}
Let $X$ be a smooth and separated rigid analytic variety. In this section, we will study the inner Hochschild cohomology sheaves of diagonal $\mathcal{C}$-complexes on $X$.  In order to do this, we will first show a version of  Kashiwara's equivalence for sheaves of inner homomorphisms:
\begin{prop}\label{prop derived internal hom and kashiwara}
Let $\mathcal{M}\in \operatorname{D}_{\mathcal{C}}^-(\wideparen{\D}_X)$, and     $\mathcal{N}\in \operatorname{D}_{\mathcal{C}^{\Delta}}(\wideparen{\D}_{X^2})$. There is a canonical isomorphism of complexes in $\operatorname{D}(\operatorname{Shv}(X,\Indban))$:
\begin{equation*}
    R\underline{\mathcal{H}om}_{\wideparen{\D}_{X}^2}(\Delta_+\mathcal{M},\mathcal{N})=\Delta_*R\underline{\mathcal{H}om}_{\wideparen{\D}_{X}}(\mathcal{M},\Delta^!\mathcal{N}).
\end{equation*}
\end{prop}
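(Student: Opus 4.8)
The plan is to reduce the statement, via adjunction, to the known Kashiwara equivalence for $\wideparen{\D}$-modules (Theorem from \cite{bode2021operations}) together with the derived tensor-hom adjunction $(\ref{equation derived tensor relative hom adjunction})$ and the material on closed immersions from Section \ref{section closed immersions}. Concretely, I would test both sides against an arbitrary complex $\mathcal{V}^{\bullet}\in \operatorname{D}(\operatorname{Shv}(X,\Indban))$ and use Yoneda. First, since $\Delta:X\to X^2$ is a closed immersion with $\Delta^{-1}\wideparen{\D}_{X^2}$ not literally equal to $\wideparen{\D}_X$, I have to be careful: the correct statement should involve the immersion functor $\Delta_*^{\operatorname{S}}$ and $\Delta^!$ rather than the bare pushforward, but morally the argument is the same. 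I would write $R\underline{\mathcal{H}om}_{\wideparen{\D}_{X^2}}(\Delta_+\mathcal{M},\mathcal{N})$, apply $R\Gamma(U,-)$ for an admissible open $U\subset X^2$, and unwind using the adjunction between $\Delta_+$ and $\Delta^!$ from the cited Kashiwara theorem, noting that $\Delta^!$ on the $\wideparen{\D}_{X^2}$-side corresponds under side-switching to the restriction-type functor appearing on the right.

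The key steps, in order, would be: (1) Observe that both sides are supported on the diagonal — the left side by Lemma \ref{Lemma support of inner homomorphism of sheaves with support in a closed subspace} (both $\Delta_+\mathcal{M}$ and, after the support hypothesis on $\mathcal{N}$, the target have cohomology on $\Delta$), and the right side trivially because it is a pushforward along $\Delta$. (2) Using Lemma \ref{Lemma properties of inclusion of derived category of modules supported on a closed subspace}, it therefore suffices to prove the identity after applying $\Delta^{-1}$, i.e. to identify $\Delta^{-1}R\underline{\mathcal{H}om}_{\wideparen{\D}_{X^2}}(\Delta_+\mathcal{M},\mathcal{N})$ with $R\underline{\mathcal{H}om}_{\wideparen{\D}_X}(\mathcal{M},\Delta^!\mathcal{N})$. (3) Apply Proposition \ref{Proposition cohomology and closed immersions} to rewrite the left-hand side as $R\underline{\mathcal{H}om}_{\Delta^{-1}\wideparen{\D}_{X^2}}$ of the pulled-back modules, and then use the transfer-bimodule description of $\Delta^!$ (from the definition of the inverse image functor $f^!$ in Section \ref{Section background Lie algebroids}) together with the derived base-change-along-a-bimodule result, Proposition \ref{prop derived tensor-hom adjunction wrt a flat bimodule}, to move the transfer bimodule $\wideparen{\D}_{X^2\leftarrow X}$ (equivalently $\wideparen{\D}_{X\to X^2}$) from one argument of $R\underline{\mathcal{H}om}$ to the other. (4) The bimodule $\wideparen{\D}_{X\to X^2}=\OX_X\overrightarrow{\otimes}_{\Delta^{-1}\OX_{X^2}}\Delta^{-1}\wideparen{\D}_{X^2}$ satisfies the flatness hypothesis of Proposition \ref{prop derived tensor-hom adjunction wrt a flat bimodule} because $\OX_X$ is a quotient of $\Delta^{-1}\OX_{X^2}$ by a regular sequence in the smooth separated case, so the Koszul resolution makes the derived and underived tensor products agree; this needs to be checked carefully, perhaps locally where $\mathcal{I}_\Delta$ is generated by a regular sequence of length $\dim X$.

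The main obstacle, I expect, will be step (4): matching the shifts and the twist by $\Omega_X$ that appear in the definitions of $\Delta_+$ and $\Delta^!$, and verifying that the flatness/coherence hypothesis of Proposition \ref{prop derived tensor-hom adjunction wrt a flat bimodule} genuinely applies to the transfer bimodule rather than just morally. One must use that $\mathcal{M}\in\operatorname{D}^-_{\mathcal{C}}(\wideparen{\D}_X)$ (bounded above, with co-admissible cohomology) so that $\Delta_+\mathcal{M}$ lands in $\operatorname{D}^-$, matching the boundedness hypotheses in Propositions \ref{prop derived tensor-hom adjunction wrt a flat bimodule} and \ref{prop expression of higher ext functors}. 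A clean way to organize this is to first prove the local statement on an affinoid $X=\Sp(A)$ admitting an étale map to $\mathbb{A}^n_K$, where $\mathcal{I}_\Delta$ is generated by the regular sequence $1\otimes x_i - x_i\otimes 1$, establish the Koszul-resolution flatness there, and then glue using Proposition \ref{prop restriction of inner hom} (compatibility of $R\underline{\mathcal{H}om}$ with restriction to admissible opens) together with Lemma \ref{Lemma immersion Theorem}-type arguments; the Yoneda formalism ensures the local isomorphisms are canonical and hence glue.
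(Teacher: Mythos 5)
Your architecture coincides with the paper's: both reduce to the diagonal via the support hypotheses (writing $\mathcal{N}=\Delta_*\Delta^{-1}\mathcal{N}$ and invoking Lemma \ref{Lemma properties of inclusion of derived category of modules supported on a closed subspace} and Proposition \ref{Proposition cohomology and closed immersions}), then apply Proposition \ref{prop derived tensor-hom adjunction wrt a flat bimodule} to the transfer bimodule $\wideparen{\D}_{X^2\leftarrow X}$ to move it across the $R\underline{\mathcal{H}om}$, and finally identify $R\underline{\mathcal{H}om}_{\Delta^{-1}\wideparen{\D}_{X^2}}(\wideparen{\D}_{X^2\leftarrow X},\Delta^{-1}\mathcal{N})$ with $\Delta^!\mathcal{N}$ (the paper cites Lemma 7.4 of Bode for this). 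Your preliminary worry that the statement ``should involve $\Delta_*^{\operatorname{S}}$'' is unfounded: $\Delta_+$ and $\Delta^!$ are the $\wideparen{\D}$-module operations, which already incorporate the transfer bimodules, so the statement is correct as written.

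The genuine problem is your step (4). The hypothesis of Proposition \ref{prop derived tensor-hom adjunction wrt a flat bimodule} that must be verified is that $\wideparen{\D}_{X^2\leftarrow X}\overrightarrow{\otimes}^{\mathbb{L}}_{\wideparen{\D}_X}\mathcal{M}=I(\wideparen{\D}_{X^2\leftarrow X}\overrightarrow{\otimes}_{\wideparen{\D}_X}\mathcal{M})$ for all $\mathcal{M}$, i.e.\ flatness of the transfer bimodule with respect to its \emph{right} $\wideparen{\D}_X$-structure. Your Koszul argument addresses the left $\Delta^{-1}\OX_{X^2}$-structure instead, and the claim that ``the Koszul resolution makes the derived and underived tensor products agree'' is false for that structure: $\OX_X\overrightarrow{\otimes}^{\mathbb{L}}_{\Delta^{-1}\OX_{X^2}}(-)$ has cohomology spread over several degrees — this non-flatness is precisely why $\Delta^!$ carries the shift $[\operatorname{dim}X-\operatorname{dim}X^2]$. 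The correct verification, which the paper gives, is local and much simpler: choosing an étale map $X\to\mathbb{A}^n_K$ and factoring $\Delta$ through the smooth space $Y$ of Lemma \ref{Lemma support on the diagonal}, one obtains an isomorphism of right $\wideparen{\D}_X$-modules $\wideparen{\D}_{X^2\leftarrow X}\cong K\{\alpha_1,\cdots,\alpha_n\}\overrightarrow{\otimes}_K\wideparen{\D}_X$, so that $\wideparen{\D}_{X^2\leftarrow X}\overrightarrow{\otimes}_{\wideparen{\D}_X}(-)\cong K\{\alpha_1,\cdots,\alpha_n\}\overrightarrow{\otimes}_K(-)$, which is exact because every Ind-Banach space is flat for $\overrightarrow{\otimes}_K$. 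No regular-sequence argument is needed, and none would supply the required right-module flatness; you should replace step (4) with this local freeness computation.
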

\begin{proof}
This proof follows \cite[Proposition 1.5.25]{hotta2007d}. We will start by showing that the transfer bimodule $\wideparen{\D}_{X^2\leftarrow X}$ is a $(\Delta^{-1}\wideparen{\D}_{X^2},\wideparen{\D}_{X})$-bimodule satisfying the conditions of Proposition
\ref{prop derived tensor-hom adjunction wrt a flat bimodule}. As this may be done locally, we may assume that $X$ is affinoid and admits an étale map $X\rightarrow \mathbb{A}^n_K$.\newline Let $Y\rightarrow X^2$ be the smooth rigid space from Lemma \ref{Lemma support on the diagonal}. By the discussion after Definition \ref{defi integrable connections}, we know that we have an isomorphism of right
$\wideparen{\D}_{Y}$-modules:
\begin{equation*}
    \wideparen{\D}_{X^2\leftarrow Y}\cong  K\{\alpha_1,\cdots,\alpha_n \}\overrightarrow{\otimes}_K\wideparen{\D}_{Y}.
\end{equation*}
As the canonical closed immersion $X\rightarrow Y$ identifies $X$ with a connected component of $Y$, it follows that $\wideparen{\D}_{X^2\leftarrow X}\cong  K\{\alpha_1,\cdots,\alpha_n \}\overrightarrow{\otimes}_K\wideparen{\D}_{X}$. Hence, for any $\mathcal{M}\in \Mod_{\Indban}(\wideparen{\D}_{X})$ we have:
\begin{align*}
    \wideparen{\D}_{X^2\leftarrow X}\overrightarrow{\otimes}^{\mathbb{L}}_{\wideparen{\D}_{X}}\mathcal{M}=&I\left(K\{\alpha_1,\cdots,\alpha_n \}\widetilde{\otimes}_K\wideparen{\D}_{X}\right)\widetilde{\otimes}^{\mathbb{L}}_{I(\wideparen{\D}_{X})}I(\mathcal{M})\\=&I(K\{\alpha_1,\cdots,\alpha_n \})\widetilde{\otimes}_KI(\mathcal{M})\\
    =&I(K\{\alpha_1,\cdots,\alpha_n \}\widetilde{\otimes}_K\mathcal{M})\\=&I(\wideparen{\D}_{X^2\leftarrow X}\overrightarrow{\otimes}_{\wideparen{\D}_{X}}\mathcal{M}),
\end{align*}
as we wanted to show. We then have the following chain of identities:
\begin{align*}
R\underline{\mathcal{H}om}_{\wideparen{\D}_{X^2}}(\Delta_+\mathcal{M},\mathcal{N})
=&R\underline{\mathcal{H}om}_{\wideparen{\D}_{X^2}}(\Delta_*\left(\wideparen{\D}_{X^2\leftarrow X}\overrightarrow{\otimes}_{\wideparen{\D}_X}\mathcal{M}\right),\Delta_*\Delta^{-1}\mathcal{N})\\=&
\Delta_*R\underline{\mathcal{H}om}_{\Delta^{-1}\wideparen{\D}_{X^2}}(\wideparen{\D}_{X^2\leftarrow X}\overrightarrow{\otimes}_{\wideparen{\D}_X}\mathcal{M},\Delta^{-1}\mathcal{N})\\
=&\Delta_*R\underline{\mathcal{H}om}_{\wideparen{\D}_{X}}(\mathcal{M},R\underline{\mathcal{H}om}_{\Delta^{-1}\wideparen{\D}_{X^2}}(\wideparen{\D}_{X^2\leftarrow X},\Delta^{-1}\mathcal{N}))\\  
=&\Delta_*R\underline{\mathcal{H}om}_{\wideparen{\D}_{X}}(\mathcal{M},\Delta^{!}\mathcal{N})),
\end{align*}
where the first identity follows by Lemma \ref{Lemma properties of inclusion of derived category of modules supported on a closed subspace}, the second by Proposition \ref{Proposition cohomology and closed immersions}, the third is Proposition \ref{prop derived tensor-hom adjunction wrt a flat bimodule}, and the last one is \cite[Lemma 7.4]{bode2021operations}.
\end{proof}
\begin{coro}\label{coro expression of Hochschild cohomology sheaves}
Let $\mathcal{M}^{\bullet}$ be a diagonal $\mathcal{C}$-complex on $X$. Then we have:
\begin{equation*}
    \mathcal{HH}^{\bullet}(\wideparen{\D}_X,\mathcal{M}^{\bullet})=R\underline{\mathcal{H}om}_{\wideparen{\D}_X}(\OX_X,\Delta^!\Ifunct\mathcal{M}^{\bullet}).
\end{equation*}
\end{coro}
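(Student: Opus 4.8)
The strategy is to chain together the two expressions for the inner Hochschild cohomology complex that we already have and then feed them into the Kashiwara-type statement for inner homomorphisms. The starting point is Corollary \ref{coro hochschild cohomology and the immersion functor}(i), which gives
\begin{equation*}
    \mathcal{HH}^{\bullet}(\mathcal{M}^{\bullet})=\Delta^{-1}R\underline{\mathcal{H}om}_{\wideparen{\D}_{X^2}}(\Delta_*^{\operatorname{S}}\wideparen{\D}_X,\Delta_*^{\operatorname{S}}\mathcal{M}^{\bullet}).
\end{equation*}
By Proposition \ref{prop image of D under side changing} we have $\Delta_*^{\operatorname{S}}\wideparen{\D}_X=\Delta_+\OX_X$. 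By Proposition \ref{prop sheaf of completed differential operators is diagonal bimodule}, $\OX_X$ (via the embedding $F$ of integrable connections, or directly) gives that $\wideparen{\D}_X$ is a diagonal co-admissible bimodule, hence $\OX_X$ is a $\mathcal{C}$-complex on $X$ and in particular lies in $\operatorname{D}_{\mathcal{C}}^-(\wideparen{\D}_X)$; meanwhile Corollary \ref{coro immersion functor and C-complexes} tells us that $\Delta_*^{\operatorname{S}}\mathcal{M}^{\bullet}\in\operatorname{D}_{\mathcal{C}^{\Delta}}(\wideparen{\D}_{X^2})$. So both arguments of the inner Hom sit in exactly the categories for which Proposition \ref{prop derived internal hom and kashiwara} applies.

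The main step is then to apply Proposition \ref{prop derived internal hom and kashiwara} with $\mathcal{M}=\OX_X$ and $\mathcal{N}=\Delta_*^{\operatorname{S}}\mathcal{M}^{\bullet}$, which yields
\begin{equation*}
    R\underline{\mathcal{H}om}_{\wideparen{\D}_{X^2}}(\Delta_+\OX_X,\Delta_*^{\operatorname{S}}\mathcal{M}^{\bullet})=\Delta_*R\underline{\mathcal{H}om}_{\wideparen{\D}_X}(\OX_X,\Delta^!\Delta_*^{\operatorname{S}}\mathcal{M}^{\bullet}).
\end{equation*}
Applying $\Delta^{-1}$ to both sides and using $\Delta^{-1}\Delta_*=\operatorname{Id}$ (since $\Delta$ is a closed immersion, by the equivalence $\operatorname{Shv}(X,\Indban)\leftrightarrows\operatorname{Shv}(X^2,\Indban)_{\Delta}$ of Proposition \ref{prop sheaves of modules supported on the diagonal general case}, which extends to the derived level by Lemma \ref{Lemma properties of inclusion of derived category of modules supported on a closed subspace}) gives precisely
\begin{equation*}
    \mathcal{HH}^{\bullet}(\mathcal{M}^{\bullet})=R\underline{\mathcal{H}om}_{\wideparen{\D}_X}(\OX_X,\Delta^!\Delta_*^{\operatorname{S}}\mathcal{M}^{\bullet}),
\end{equation*}
which is the claimed identity. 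The remaining bookkeeping is to check that the chain of identifications is natural in $\mathcal{M}^{\bullet}$, which follows from the naturality built into Propositions \ref{prop derived internal hom and kashiwara} and \ref{Proposition internal hom and side-switching operators} and Corollary \ref{coro hochschild cohomology and the immersion functor}.

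\textbf{Expected obstacle.} The genuinely delicate point is verifying that the hypotheses of Proposition \ref{prop derived internal hom and kashiwara} are met: we need $\OX_X$ to be a \emph{bounded-above} $\mathcal{C}$-complex (so that the flatness/boundedness condition on the transfer bimodule in Proposition \ref{prop derived tensor-hom adjunction wrt a flat bimodule} is usable) and $\Delta_*^{\operatorname{S}}\mathcal{M}^{\bullet}$ to be a $\mathcal{C}$-complex on $X^2$ with cohomology supported on the diagonal. The first is immediate since $\OX_X$ is concentrated in degree $0$; the second is exactly Corollary \ref{coro immersion functor and C-complexes}, but one should be careful that the boundedness hypothesis ``$\mathcal{M}\in\operatorname{D}_{\mathcal{C}}^-$'' in Proposition \ref{prop derived internal hom and kashiwara} refers to the \emph{first} argument $\OX_X$ and imposes no boundedness on $\mathcal{M}^{\bullet}$ itself, so the statement genuinely holds for all diagonal $\mathcal{C}$-complexes $\mathcal{M}^{\bullet}\in\operatorname{D}^{\operatorname{Bi}}_{\mathcal{C}}(\wideparen{\D}_X)_{\Delta}$. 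Once these memberships are confirmed, the proof is a one-line composition of already-established isomorphisms.
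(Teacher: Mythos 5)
Your proof is correct and follows exactly the route the paper takes: it chains Corollary \ref{coro hochschild cohomology and the immersion functor}, Proposition \ref{prop image of D under side changing}, Corollary \ref{coro immersion functor and C-complexes}, and Proposition \ref{prop derived internal hom and kashiwara}, which are precisely the four results the paper's proof cites. The hypothesis checks you flag (boundedness of $\OX_X$ and diagonal support of $\Delta_*^{\operatorname{S}}\mathcal{M}^{\bullet}$) are the right ones and are handled correctly.
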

\begin{proof}
This is a consequence of  Corollary \ref{coro hochschild cohomology and the immersion functor}, Corollary \ref{coro immersion functor and C-complexes}, Proposition \ref{prop image of D under side changing}, and Proposition \ref{prop derived internal hom and kashiwara}. 
\end{proof}
Recall from \cite[Section 6.3]{bode2021operations} that for any smooth rigid analytic variety $X$ we have a canonical resolution of $\OX_X$ by locally free $\wideparen{\D}_X$-modules. Namely, we have the Spencer resolution:
\begin{equation}\label{equation Spencer resolution}
    0\rightarrow \wideparen{\D}_X\overrightarrow{\otimes}_{\OX_X}\wedge^n\mathcal{T}_{X/K}\rightarrow \cdots \rightarrow \wideparen{\D}_X\rightarrow \OX_X.
\end{equation}
For $1\leq i\leq n$, the chain maps are the completions of the following $\mathcal{\D}_X$-linear maps:
\begin{multline*}
    \quad\quad\quad\quad\quad\quad\quad\quad\quad\quad\quad\quad\mathcal{D}_X\otimes_{\OX_X}\wedge^i\mathcal{T}_{X/K}\rightarrow \mathcal{D}_X\otimes_{\OX_X}\wedge^{i-1}\mathcal{T}_{X/K}\\
       P\otimes (\wedge \partial_j)\mapsto \sum_j (-1)^{j+1}P\partial_j\otimes \partial_1\wedge \cdots \wedge \widehat{\partial_j}\wedge\cdots \wedge \partial_i \\+ \sum_{j<j'} (-1)^{j+j'}P[\partial_j,\partial_{j'}]\otimes \partial_1\wedge \cdots \wedge \widehat{\partial_j}\wedge\cdots \wedge \widehat{\partial_j'}\wedge\cdots \wedge\partial_i
\end{multline*}
We will denote this resolution by:
\begin{equation*}
    S^{\bullet}:=\left(0\rightarrow \wideparen{\D}_X\overrightarrow{\otimes}_{\OX_X}\wedge^n\mathcal{T}_{X/K}\rightarrow \cdots \rightarrow \wideparen{\D}_X \rightarrow 0\right).
\end{equation*}
Applying side-changing, we get a resolution of $\Omega_X$ by finite locally-free $\wideparen{\D}_X^{\op}$-modules:
\begin{equation}\label{equation side-changing of the Spencer resolution}
\overline{S}^{\bullet}:=\left(0\rightarrow \wideparen{\D}_X\rightarrow \cdots \rightarrow \Omega_{X}\overrightarrow{\otimes}_{\OX_X}\wideparen{\D}_X \rightarrow 0\right).
\end{equation}
We may use this resolution together with Corollary \ref{coro expression of Hochschild cohomology sheaves} to give a precise description of the inner Hochschild complex:
\begin{defi}
For any $\mathcal{M}^{\bullet}\in\operatorname{D}(\wideparen{\D}_X)$, the de Rham complex of $\mathcal{M}^{\bullet}$ is defined as the following complex in $\operatorname{D}(\operatorname{Shv}(X,\Indban))$:
\begin{equation*}
    \operatorname{dR}(\mathcal{M}^{\bullet}):= R\underline{\mathcal{H}om}_{\wideparen{\D}_X}(\OX_{X},\mathcal{M}^{\bullet}).
\end{equation*}
\end{defi}
The idea is relating the Hochschild cohomology complex with the de Rham complex:
\begin{Lemma}
Let $\mathcal{M}^{\bullet}\in\operatorname{D}(\wideparen{\D}_X)$. Then we have an identification:
\begin{equation*}
    R\underline{\mathcal{H}om}_{\wideparen{\D}_X}(\OX_{X},\mathcal{M}^{\bullet})=\Omega_X\overrightarrow{\otimes}^{\mathbb{L}}_{\wideparen{\D}_X}\mathcal{M}^{\bullet}[-n].
\end{equation*}
\end{Lemma}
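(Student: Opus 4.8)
The plan is to compute both sides through the Spencer resolution of $\OX_X$ and its side-change. Recall from $(\ref{equation Spencer resolution})$ that $S^{\bullet}\to\OX_X$ is a bounded resolution of $\OX_X$ by finite locally free $\wideparen{\D}_X$-modules, the term $S^{-i}=\wideparen{\D}_X\overrightarrow{\otimes}_{\OX_X}\wedge^i\mathcal{T}_{X/K}$ sitting in cohomological degree $-i$. Working locally, each $S^{-i}$ is a direct summand of a finite free $\wideparen{\D}_X$-module, so $\underline{\mathcal{H}om}_{\wideparen{\D}_X}(S^{-i},-)$ is exact; together with the fact that the cone of $S^{\bullet}\to\OX_X$ is strictly exact (so that $\underline{\mathcal{H}om}_{\wideparen{\D}_X}(-,\mathcal{I}^{\bullet})$ annihilates it for $\mathcal{I}^{\bullet}$ homotopically injective) and that $S^{\bullet}$ is bounded (so products and direct sums agree in the totalisation), this yields a canonical isomorphism $R\underline{\mathcal{H}om}_{\wideparen{\D}_X}(\OX_X,\mathcal{M}^{\bullet})=\underline{\mathcal{H}om}_{\wideparen{\D}_X}(S^{\bullet},\mathcal{M}^{\bullet})$. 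Since $S^{-i}$ is the extension of scalars along $\OX_X\to\wideparen{\D}_X$ of the finite locally free $\OX_X$-module $\wedge^i\mathcal{T}_{X/K}$, the extension--restriction adjunction identifies $\underline{\mathcal{H}om}_{\wideparen{\D}_X}(S^{-i},\mathcal{M}^{\bullet})$ with $\underline{\mathcal{H}om}_{\OX_X}(\wedge^i\mathcal{T}_{X/K},\mathcal{M}^{\bullet})=\Omega^i_X\overrightarrow{\otimes}_{\OX_X}\mathcal{M}^{\bullet}$, the last step because $\Omega^i_X:=\wedge^i\Omega^1_{X/K}=(\wedge^i\mathcal{T}_{X/K})^{\vee}$ is finite locally free over $\OX_X$.

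For the right-hand side I would use the side-changed Spencer resolution $\overline{S}^{\bullet}\to\Omega_X$ from $(\ref{equation side-changing of the Spencer resolution})$, a bounded complex of finite locally free (hence flat) $\wideparen{\D}_X^{\op}$-modules, which is therefore $K$-flat; consequently $\Omega_X\overrightarrow{\otimes}^{\mathbb{L}}_{\wideparen{\D}_X}\mathcal{M}^{\bullet}=\overline{S}^{\bullet}\overrightarrow{\otimes}_{\wideparen{\D}_X}\mathcal{M}^{\bullet}$, valid for arbitrary $\mathcal{M}^{\bullet}$ precisely because $\overline{S}^{\bullet}$ is bounded. The crucial comparison is the duality identity $\underline{\mathcal{H}om}_{\wideparen{\D}_X}(S^{\bullet},\wideparen{\D}_X)\cong\overline{S}^{\bullet}[-n]$ in the category of complexes of right $\wideparen{\D}_X$-modules: applying the adjunction as above gives $\underline{\mathcal{H}om}_{\wideparen{\D}_X}(S^{-i},\wideparen{\D}_X)=\Omega^i_X\overrightarrow{\otimes}_{\OX_X}\wideparen{\D}_X$ as right $\wideparen{\D}_X$-modules, and under the canonical isomorphism $\wedge^{i}\mathcal{T}_{X/K}\overrightarrow{\otimes}_{\OX_X}\Omega_X\cong\Omega^{n-i}_X$ this matches the degree-$i$ term $\overline{S}^{-(n-i)}$; that the transpose of the Spencer differential becomes the differential of $\overline{S}^{\bullet}$ is exactly the assertion that $\overline{S}^{\bullet}$ is the side-change of $S^{\bullet}$, so it is a matter of unwinding the chain-map formulas recalled after $(\ref{equation Spencer resolution})$. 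Since $S^{-i}$ is finite locally free over $\wideparen{\D}_X$ one also has, locally and hence globally, $\underline{\mathcal{H}om}_{\wideparen{\D}_X}(S^{-i},\mathcal{M}^{\bullet})=\underline{\mathcal{H}om}_{\wideparen{\D}_X}(S^{-i},\wideparen{\D}_X)\overrightarrow{\otimes}_{\wideparen{\D}_X}\mathcal{M}^{\bullet}$. Totalising over $i$ and combining with the two displays above then gives $R\underline{\mathcal{H}om}_{\wideparen{\D}_X}(\OX_X,\mathcal{M}^{\bullet})=\underline{\mathcal{H}om}_{\wideparen{\D}_X}(S^{\bullet},\wideparen{\D}_X)\overrightarrow{\otimes}_{\wideparen{\D}_X}\mathcal{M}^{\bullet}=(\overline{S}^{\bullet}[-n])\overrightarrow{\otimes}_{\wideparen{\D}_X}\mathcal{M}^{\bullet}=\Omega_X\overrightarrow{\otimes}^{\mathbb{L}}_{\wideparen{\D}_X}\mathcal{M}^{\bullet}[-n]$, which is the claim. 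Equivalently, one may isolate the special case $\mathcal{M}^{\bullet}=\wideparen{\D}_X$, namely $R\underline{\mathcal{H}om}_{\wideparen{\D}_X}(\OX_X,\wideparen{\D}_X)=\Omega_X[-n]$ in $\operatorname{D}(\wideparen{\D}_X^{\op})$, together with the projection-type identity $R\underline{\mathcal{H}om}_{\wideparen{\D}_X}(\OX_X,-)=R\underline{\mathcal{H}om}_{\wideparen{\D}_X}(\OX_X,\wideparen{\D}_X)\overrightarrow{\otimes}^{\mathbb{L}}_{\wideparen{\D}_X}(-)$, which holds because $\OX_X$ is perfect over $\wideparen{\D}_X$.

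I expect the main obstacle to be bookkeeping rather than anything deep: one must make the term-by-term identifications genuinely functorial in $\mathcal{M}^{\bullet}$ and compatible with the differentials, so that the resulting comparison is an isomorphism of complexes and not merely a degreewise one, and one must be slightly careful with the exchange of $R\underline{\mathcal{H}om}$ and $\overrightarrow{\otimes}^{\mathbb{L}}_{\wideparen{\D}_X}$ for possibly unbounded $\mathcal{M}^{\bullet}$. Both difficulties are handled by working throughout with the bounded resolutions $S^{\bullet}$ and $\overline{S}^{\bullet}$ by finite locally free modules — for which $\underline{\mathcal{H}om}$ and $\overrightarrow{\otimes}$ are already exact and need no further derivation — rather than invoking abstract perfectness; and $n=\dim X$ is to be read as locally constant, the whole argument being local on $X$.
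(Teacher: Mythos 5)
Your argument is correct and is essentially the paper's: the proof given there is simply the citation "the proof of \cite[Proposition 2.6.14]{hotta2007d} works in this setting", and what you have written out — resolving $\OX_X$ by the Spencer complex $S^{\bullet}$, identifying $\underline{\mathcal{H}om}_{\wideparen{\D}_X}(S^{-i},\mathcal{M}^{\bullet})\cong\Omega^i_X\overrightarrow{\otimes}_{\OX_X}\mathcal{M}^{\bullet}$ by extension–restriction, and matching $\underline{\mathcal{H}om}_{\wideparen{\D}_X}(S^{\bullet},\wideparen{\D}_X)$ with the side-changed resolution $\overline{S}^{\bullet}[-n]$ — is precisely the content of that reference, transported to the Ind-Banach setting with the correct care about boundedness and local freeness.
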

\begin{proof}
The proof of \cite[Proposition 2.6.14]{hotta2007d} works in this setting.    
\end{proof}
\begin{obs}\label{remark de rham complex of a variety as the de rham complex of its structure sheaf}
 Notice that we have:
 \begin{equation*}
  \operatorname{dR}(\OX_X)=R\underline{\mathcal{H}om}_{\wideparen{\D}_X}(\OX_{X},\OX_X)=\Omega_X\overrightarrow{\otimes}^{\mathbb{L}}_{\wideparen{\D}_X}\OX_X[-n]=\overline{S}^{\bullet}\overrightarrow{\otimes}_{\wideparen{\D}_X}\OX_X[-n]=\Omega_{X}^{\bullet}.  
 \end{equation*}
Hence, we recover $\Omega_X^{\bullet}$ as the de Rham complex of the structure sheaf $\OX_X$. 
\end{obs}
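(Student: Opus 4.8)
The final statement to prove is the identity
\begin{equation*}
\operatorname{dR}(\OX_X)=R\underline{\mathcal{H}om}_{\wideparen{\D}_X}(\OX_{X},\OX_X)=\Omega_X\overrightarrow{\otimes}^{\mathbb{L}}_{\wideparen{\D}_X}\OX_X[-n]=\overline{S}^{\bullet}\overrightarrow{\otimes}_{\wideparen{\D}_X}\OX_X[-n]=\Omega_{X}^{\bullet},
\end{equation*}
i.e.\ that the de Rham complex of the sheaf $\OX_X$, regarded as a left $\wideparen{\D}_X$-module, is canonically the algebraic de Rham complex $\Omega_X^{\bullet}$.

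\textbf{Overall approach.} The plan is to read off the first two equalities from the definition and the preceding Lemma (the rigid-analytic analogue of \cite[Proposition 2.6.14]{hotta2007d}), so that the only real content is computing $\Omega_X\overrightarrow{\otimes}^{\mathbb{L}}_{\wideparen{\D}_X}\OX_X[-n]$ explicitly. For this I would resolve $\OX_X$ (as a \emph{left} $\wideparen{\D}_X$-module) by the Spencer complex $S^{\bullet}$ from \eqref{equation Spencer resolution}, which consists of finite locally free, hence flat, $\wideparen{\D}_X$-modules; equivalently, one may resolve $\Omega_X$ on the right by its side-switched Spencer complex $\overline{S}^{\bullet}$ of \eqref{equation side-changing of the Spencer resolution}. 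Since the terms of these complexes are flat for $\overrightarrow{\otimes}_{\wideparen{\D}_X}$, the derived tensor product is computed by the honest complex $\overline{S}^{\bullet}\overrightarrow{\otimes}_{\wideparen{\D}_X}\OX_X$, which gives the third equality. The last step is to identify the complex $\overline{S}^{\bullet}\overrightarrow{\otimes}_{\wideparen{\D}_X}\OX_X[-n]$ term by term with $\Omega_X^{\bullet}$: in degree $i$ the term of $\overline{S}^{\bullet}$ is $\Omega_X\overrightarrow{\otimes}_{\OX_X}\wideparen{\D}_X\overrightarrow{\otimes}_{\OX_X}\wedge^{n-i}\mathcal{T}_{X/K}$ (up to the shift), and tensoring over $\wideparen{\D}_X$ with $\OX_X$ collapses this to $\Omega_X\overrightarrow{\otimes}_{\OX_X}\wedge^{n-i}\mathcal{T}_{X/K}\cong \Omega_X^{i}$ via the canonical pairing between top differentials, $\wedge^{n-i}\mathcal{T}$ and $\Omega^i$ (duality of the exterior powers of a locally free sheaf of rank $n$). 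Finally I would check that the differentials of $\overline{S}^{\bullet}\overrightarrow{\otimes}_{\wideparen{\D}_X}\OX_X$ become, under this identification, exactly the exterior derivative of $\Omega_X^{\bullet}$; this is the same computation as in the classical algebraic $\mathcal{D}$-module case (\emph{cf.}\ \cite[Lemma 1.5.27]{hotta2007d} or the discussion around the Spencer--de Rham duality), carried out on an affinoid with an \'etale map $X\to\mathbb{A}^n_K$ and a coordinate basis $\partial_{x_1},\dots,\partial_{x_n}$, then glued.

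\textbf{Order of the steps.} First, unwind the definitions: $\operatorname{dR}(\OX_X)=R\underline{\mathcal{H}om}_{\wideparen{\D}_X}(\OX_X,\OX_X)$ by definition, and apply the preceding Lemma with $\mathcal{M}^{\bullet}=\OX_X$ to get the rewriting as $\Omega_X\overrightarrow{\otimes}^{\mathbb{L}}_{\wideparen{\D}_X}\OX_X[-n]$. Second, justify that $\overline{S}^{\bullet}$ computes the derived tensor product: its terms $\Omega_X\overrightarrow{\otimes}_{\OX_X}\wideparen{\D}_X\overrightarrow{\otimes}_{\OX_X}\wedge^{\bullet}\mathcal{T}_{X/K}$ are of the form ``locally free $\OX_X$-module $\overrightarrow{\otimes}_{\OX_X}\wideparen{\D}_X$'', hence flat right $\wideparen{\D}_X$-modules by the discussion in Section~\ref{Section derived tensor-hom 2} (free modules of the shape $V\overrightarrow{\otimes}_K\wideparen{\D}_X$ are flat, and this is $\OX_X$-locally of that shape), and $\overline{S}^{\bullet}\to\Omega_X$ is a strict quasi-isomorphism because the Spencer complex is; strictness can be checked locally using the explicit Koszul-type differentials. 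Third, perform the term-wise identification and the differential check on an affinoid chart, and fourth, observe that all identifications are $\OX_X$-linear and canonical, hence glue to the global statement; record the conclusion, which also re-proves Remark~\ref{remark de rham complex of a variety as the de rham complex of its structure sheaf} as stated.

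\textbf{Main obstacle.} The routine but genuinely careful part is the last step: matching the differentials of $\overline{S}^{\bullet}\overrightarrow{\otimes}_{\wideparen{\D}_X}\OX_X$ with $d\colon\Omega_X^i\to\Omega_X^{i+1}$ up to sign, and keeping track of the degree shift by $[-n]$ and of the reversal of the grading induced by side-switching (Spencer is homologically graded, de Rham cohomologically graded). One must also make sure that the completed tensor products behave as expected — i.e.\ that $\overline{S}^{\bullet}\overrightarrow{\otimes}_{\wideparen{\D}_X}\OX_X$ really is the strict complex with terms $\Omega_X\overrightarrow{\otimes}_{\OX_X}\wedge^{n-i}\mathcal{T}_{X/K}$ and no hidden higher $\mathrm{Tor}$ or completion defect; this uses that each $\wideparen{\D}_X\overrightarrow{\otimes}_{\OX_X}\wedge^j\mathcal{T}_{X/K}$ is flat and that $-\overrightarrow{\otimes}_{\wideparen{\D}_X}\OX_X$ applied to $\wideparen{\D}_X\overrightarrow{\otimes}_{\OX_X}\wedge^j\mathcal{T}$ gives $\wedge^j\mathcal{T}$ on the nose, which follows from the projection-formula-type identity $(V\overrightarrow{\otimes}_{\OX_X}\wideparen{\D}_X)\overrightarrow{\otimes}_{\wideparen{\D}_X}\OX_X=V\overrightarrow{\otimes}_{\OX_X}\OX_X$. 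I expect no conceptual difficulty beyond bookkeeping, since every ingredient (Spencer resolution, side-switching, flatness of $V\overrightarrow{\otimes}_K\wideparen{\D}_X$, duality of exterior powers) is already available in the excerpt.
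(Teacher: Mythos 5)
Your proposal is correct and follows essentially the same route as the paper: the first two equalities come from the definition of $\operatorname{dR}$ and the preceding Lemma, the third from replacing $\Omega_X$ by the side-switched Spencer resolution $\overline{S}^{\bullet}$ of flat (locally free) right $\wideparen{\D}_X$-modules, and the last from the termwise collapse $(\Omega_X\overrightarrow{\otimes}_{\OX_X}\wideparen{\D}_X\overrightarrow{\otimes}_{\OX_X}\wedge^{j}\mathcal{T}_{X/K})\overrightarrow{\otimes}_{\wideparen{\D}_X}\OX_X\cong\Omega_X^{n-j}$ together with the standard identification of the induced differentials with the exterior derivative. The bookkeeping points you flag (degree shift, strictness, absence of completion defects) are exactly the ones implicit in the paper's remark, so there is nothing further to add.
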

 We may condense our results into the following theorem:
\begin{teo}\label{teo explicit computation of hochschild cohomology}
Let $X$ be a separated and smooth rigid space. For any diagonal $\mathcal{C}$-complex 
$\mathcal{M}^{\bullet}\in \operatorname{D}^{\operatorname{Bi}}_{\mathcal{C}}(\wideparen{\D}_X)_{\Delta}$ we have the following identifications:
\begin{align*}
    \mathcal{HH}^{\bullet}(\wideparen{\D}_X,\mathcal{M}^{\bullet})=\operatorname{dR}\left(\Delta^!\Ifunct\mathcal{M}^{\bullet}\right)=&\operatorname{dR}\left(\Omega_X^{-1}\overrightarrow{\otimes}_{\OX_X}\mathcal{M}^{\bullet}[\mathcal{I}_{\Delta}]\right)\\=&\Omega_X\overrightarrow{\otimes}^{\mathbb{L}}_{\wideparen{\D}_X}\left(\Omega_X^{-1}\overrightarrow{\otimes}_{\OX_X}\mathcal{M}^{\bullet}[\mathcal{I}_{\Delta}]\right)[-\operatorname{dim}(X)].
\end{align*}
In particular, we have:
 \begin{enumerate}[label=(\roman*)]
        \item $\mathcal{HH}^{\bullet}(\wideparen{\D}_X,\wideparen{\D}_X)=\Omega_{X}^{\bullet}$ as objects in $\operatorname{D}(\operatorname{Shv}(X,\Indban))$.
        \item $\operatorname{HH}^{\bullet}(\wideparen{\D}_X,\wideparen{\D}_X)=R\Gamma(X,\Omega_{X}^{\bullet})$ as objects in $\operatorname{D}(\widehat{\mathcal{B}}c_K)$. 
    \end{enumerate}
And there is a Hodge-de Rham spectral sequence in $LH(\widehat{\mathcal{B}}c_K)$:
\begin{equation*}   E^{p,q}_1:=\operatorname{H}^q(X,\Omega_{X}^p)\Rightarrow \operatorname{HH}^{p+q}(\wideparen{\D}_X).
\end{equation*}
\end{teo}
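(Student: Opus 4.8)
The plan is to assemble the statement from identifications that are already in place. The first equality is immediate: by Corollary~\ref{coro expression of Hochschild cohomology sheaves} one has $\mathcal{HH}^{\bullet}(\mathcal{M}^{\bullet}) = R\underline{\mathcal{H}om}_{\wideparen{\D}_X}(\OX_X, \Delta^!\Delta_*^{\operatorname{S}}\mathcal{M}^{\bullet})$, and the right-hand side is by definition $\operatorname{dR}(\Delta^!\Delta_*^{\operatorname{S}}\mathcal{M}^{\bullet})$. The last equality is the Lemma stated immediately before the theorem, namely $\operatorname{dR}(\mathcal{N}^{\bullet}) = \Omega_X\overrightarrow{\otimes}^{\mathbb{L}}_{\wideparen{\D}_X}\mathcal{N}^{\bullet}[-\dim X]$ for $\mathcal{N}^{\bullet}\in\operatorname{D}(\wideparen{\D}_X)$, applied to $\mathcal{N}^{\bullet} = \Delta^!\Delta_*^{\operatorname{S}}\mathcal{M}^{\bullet}$. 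Hence the weight of the first assertion rests entirely on the middle identification $\Delta^!\Delta_*^{\operatorname{S}}\mathcal{M}^{\bullet} = \Omega_X\overrightarrow{\otimes}_{\OX_X}\mathcal{M}^{\bullet}[\mathcal{I}_{\Delta}]$.

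To prove that middle identification I would first reduce to the case of a single co-admissible diagonal bimodule concentrated in degree $0$: on $\operatorname{D}^{\operatorname{Bi}}_{\mathcal{C}}(\wideparen{\D}_X)_{\Delta}$ the functor $\Delta_*^{\operatorname{S}}$ is strongly exact (Corollary~\ref{coro immersion functor and C-complexes}), $\Delta^!$ is the derived inverse image, and $[\mathcal{I}_{\Delta}]$ is, up to the shift coming from $\Delta^!$, an exact functor on diagonal co-admissible modules (using that the $\mathcal{I}_{\Delta}$-torsion of a diagonal co-admissible module is again co-admissible, by \cite[Corollary 5.6]{ardakov2015d}); so the general formula follows from the degree-$0$ case by the hypercohomology/dévissage spectral sequence. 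For a module $\mathcal{M}\in\mathcal{C}_B^{\operatorname{Bi}}(\wideparen{\D}_X)_{\Delta}$ the claim is local, so one may assume $X=\Sp(A)$ affinoid with an étale morphism $X\to\mathbb{A}^n_K$. Then $\Delta_*\mathcal{M} = \operatorname{Loc}(\mathcal{M}(X))$ by Proposition~\ref{prop embedding Theorem}; applying $\operatorname{S}^{-1}$ and using $p_2\circ\Delta=\operatorname{Id}$ gives an explicit description of $\Delta^{-1}\Delta_*^{\operatorname{S}}\mathcal{M}$ in terms of $\mathcal{M}$ and the side-switching twist; and the key identity recorded after Proposition~\ref{prop co-admissible EX modules supported on the diagonal}, combined with the full faithfulness of Kashiwara's equivalence, shows that $\Delta^!$ of the diagonal-supported co-admissible $\wideparen{\D}_{X^2}$-module $\Delta_*^{\operatorname{S}}\mathcal{M}$ coincides with its naive $\mathcal{I}_{\Delta}$-torsion. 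I expect this last point to be the main obstacle: it is precisely the statement that the derived inverse image $\Delta^! = \OX_X\overrightarrow{\otimes}^{\mathbb{L}}_{\Delta^{-1}\OX_{X^2}}\Delta^{-1}(-)[-n]$ degenerates to the honest submodule functor $[\mathcal{I}_{\Delta}]$, which requires the Tor-vanishing afforded by the facts that $\mathcal{I}_{\Delta}$ is locally generated by the regular sequence $\{1\otimes x_i - x_i\otimes1\}$ and that $\Delta_*^{\operatorname{S}}\mathcal{M}$ is $\mathcal{I}_{\Delta}$-torsion in the strong sense $\mathcal{N}=\mathcal{N}_\infty(\mathcal{I}_{\Delta})$, so that one can run the iterated Kashiwara argument of Proposition~\ref{prop co-admissible EX modules supported on the diagonal} over the coordinate hyperplanes $\mathbb{V}(1\otimes x_i - x_i\otimes1)$ as in the proof of Proposition~\ref{prop image of D under side changing}; keeping careful track of the single $\Omega_X$ twist along the way is the one piece of delicate bookkeeping.

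The special case $\mathcal{M}^{\bullet}=\wideparen{\D}_X$ then follows at once: $\wideparen{\D}_X\in\mathcal{C}_B^{\operatorname{Bi}}(\wideparen{\D}_X)_{\Delta}$ by Proposition~\ref{prop sheaf of completed differential operators is diagonal bimodule}, so the general formula applies; by Proposition~\ref{prop image of D under side changing} one has $\Delta_*^{\operatorname{S}}(\wideparen{\D}_X)=\Delta_+\OX_X$, hence $\Delta^!\Delta_*^{\operatorname{S}}(\wideparen{\D}_X)=\Delta^!\Delta_+\OX_X=\OX_X$ by Kashiwara's equivalence; and therefore $\mathcal{HH}^{\bullet}(\wideparen{\D}_X)=\operatorname{dR}(\OX_X)=\Omega_X^{\bullet}$ by Remark~\ref{remark de rham complex of a variety as the de rham complex of its structure sheaf}. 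Applying $R\Gamma(X,-)$ and unwinding the definition of $\operatorname{HH}^{\bullet}$ yields $\operatorname{HH}^{\bullet}(\wideparen{\D}_X)=R\Gamma(X,\Omega_X^{\bullet})$ in $\operatorname{D}(\widehat{\mathcal{B}}c_K)$.

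Finally, for the Hodge--de Rham spectral sequence, equip the bounded complex $\Omega_X^{\bullet}\in\operatorname{D}(\operatorname{Shv}(X,\Indban))$ with its stupid (brutal) filtration $\sigma_{\geq p}\Omega_X^{\bullet}$, whose $p$-th graded piece is $\Omega_X^p[-p]$. Using the equivalence $\operatorname{D}(\widehat{\mathcal{B}}c_K)\simeq\operatorname{D}(LH(\widehat{\mathcal{B}}c_K))$ and the fact that $\operatorname{Shv}(X,LH(\widehat{\mathcal{B}}c_K))$ is a Grothendieck abelian category, the spectral sequence of the filtered complex $R\Gamma(X,\Omega_X^{\bullet})$ is available; its $E_1$-page is $E_1^{p,q}=\operatorname{H}^q(X,\Omega_X^p)$, where the cohomology is computed in $LH(\widehat{\mathcal{B}}c_K)$ and each $\Omega_X^p$ is a coherent $\OX_X$-module regarded as an object of the heart via $I$, and it converges to $\operatorname{H}^{p+q}(R\Gamma(X,\Omega_X^{\bullet}))=\operatorname{HH}^{p+q}(\wideparen{\D}_X)$; boundedness of the filtration, which holds since $\dim X<\infty$, guarantees convergence. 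This completes the plan.
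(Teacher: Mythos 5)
Your proposal is correct and follows essentially the same route as the paper: the first and last equalities are obtained exactly as in the text from Corollary \ref{coro expression of Hochschild cohomology sheaves} and the lemma immediately preceding the theorem, and the special case $\mathcal{M}^{\bullet}=\wideparen{\D}_X$ is handled via Proposition \ref{prop image of D under side changing}, Kashiwara's equivalence and Remark \ref{remark de rham complex of a variety as the de rham complex of its structure sheaf}. You in fact supply more detail than the paper does on the middle identification $\Delta^!\Delta_*^{\operatorname{S}}\mathcal{M}^{\bullet}=\Omega_X\overrightarrow{\otimes}_{\OX_X}\mathcal{M}^{\bullet}[\mathcal{I}_{\Delta}]$ (which the paper leaves implicit in the discussion following Proposition \ref{prop co-admissible EX modules supported on the diagonal}) and on the construction of the Hodge--de Rham spectral sequence via the stupid filtration.
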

\begin{proof}
The first part was shown in Corollary \ref{coro expression of Hochschild cohomology sheaves}. For the second part, we have:
\begin{equation*}
    \mathcal{HH}^{\bullet}(\wideparen{\D}_X,\wideparen{\D}_X)=\operatorname{dR}\left(\Delta^!\Delta_*^{\operatorname{S}}\wideparen{\D}_X\right)=\operatorname{dR}\left(\OX_X\right)=\Omega_{X}^{\bullet},
\end{equation*}
where the second identity is Proposition \ref{prop image of D under side changing}, and the third one is Remark \ref{remark de rham complex of a variety as the de rham complex of its structure sheaf}.
\end{proof}
Notice that the Hochschild cohomology spaces:
\begin{equation*}
    \operatorname{HH}^{n}(\wideparen{\D}_X):=\operatorname{H}^n(R\Gamma(X,\Omega_{X}^{\bullet})),
\end{equation*}
do not agree with the usual de Rham cohomology groups. Indeed, by forgetting the bornological structure, one can regard $\Omega_{X}^{\bullet}$ as an object of $\operatorname{D}(\operatorname{Shv}(X,\operatorname{Vect}_K))$. Then we have:
\begin{equation*}
    \operatorname{H}^{n}_{\operatorname{dR}}(X):=\mathbb{H}^n(X,\Omega_{X}^{\bullet}),
\end{equation*}
In particular, $\operatorname{H}^{n}_{\operatorname{dR}}(X)$ is a $K$-vector space. On the other hand, our definition of Hochschild cohomology spaces takes into account the canonical bornological structure of 
$\Omega_{X}^{\bullet}$. In particular, as the complex $\Omega_{X}^{\bullet}$ is not necessarily strict, the spaces $\operatorname{HH}^{n}(\wideparen{\D}_X)$ are elements in $LH(\widehat{\mathcal{B}}c_K)$, which need not be complete bornological spaces. However, we can use the material of \cite{prosmans2000homological} to recover the de Rham cohomology groups from the Hochschild cohomology groups.\\

Indeed, recall that we have an adjunction:
\begin{equation*}
    J:\widehat{\mathcal{B}}c_K\leftrightarrows \mathcal{B}c_K:\widehat{\operatorname{Cpl}},
\end{equation*}
where $J:\widehat{\mathcal{B}}c_K\rightarrow \mathcal{B}c_K$ denotes the inclusion, and $\widehat{\operatorname{Cpl}}:\mathcal{B}c_K\rightarrow \widehat{\mathcal{B}}c_K$ denotes the completion of bornological spaces. This adjunction satisfies the following properties:
\begin{prop}[{\cite[Proposition 5.14]{prosmans2000homological}}]\label{prop properties of J}
 The functor $J:\widehat{\mathcal{B}}c_K\rightarrow \mathcal{B}c_K$ is exact, and gives rise to a functor:
 \begin{equation*}
     J:\operatorname{D}(\widehat{\mathcal{B}}c_K)\rightarrow \operatorname{D}(\mathcal{B}c_K).
 \end{equation*}
 The functor $\widehat{\operatorname{Cpl}}:\mathcal{B}c_K\rightarrow \widehat{\mathcal{B}}c_K$ is cokernel preserving, and has a left derived functor:
 \begin{equation*}
     L\widehat{\operatorname{Cpl}}:\operatorname{D}(\mathcal{B}c_K)\rightarrow \operatorname{D}(\widehat{\mathcal{B}}c_K).
 \end{equation*}
 These derived functors fit into a derived adjunction:
 \begin{equation*}
     J:\operatorname{D}(\widehat{\mathcal{B}}c_K)\leftrightarrows \operatorname{D}(\mathcal{B}c_K):L\widehat{\operatorname{Cpl}}.
 \end{equation*}
 Furthermore, we have an identification of derived functors:
 \begin{equation*}
     L\widehat{\operatorname{Cpl}}\circ J=\operatorname{Id}.
 \end{equation*}
 In particular, $\operatorname{D}(\widehat{\mathcal{B}}c_K)$ is equivalent to a full triangulated subcategory of $\operatorname{D}(\mathcal{B}c_K)$, and $LH(\widehat{\mathcal{B}}c_K)$ is equivalent to a reflexive abelian subcategory of $LH(\mathcal{B}c_K)$.
\end{prop}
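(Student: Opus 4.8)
The plan is to reduce everything to two facts already recorded above: that $\widehat{\operatorname{Cpl}}$ is left adjoint to the inclusion $J\colon\widehat{\mathcal{B}}c_K\to\mathcal{B}c_K$, and that a short exact sequence in $\widehat{\mathcal{B}}c_K$ is strict if and only if its image under $J$ is strict. First I would show that $J$ is exact in the quasi-abelian sense. Being a right adjoint, $J$ preserves kernels; the strictness criterion says it preserves strict monomorphisms and strict epimorphisms; hence it carries a strictly exact short sequence $0\to A\to B\to C\to 0$ to a strictly exact one, and therefore a strictly exact complex to a strictly exact complex. Consequently $J$ sends quasi-isomorphisms to quasi-isomorphisms and, applied degreewise, descends to a triangulated functor $J\colon\operatorname{D}(\widehat{\mathcal{B}}c_K)\to\operatorname{D}(\mathcal{B}c_K)$ which is its own right derived functor $RJ=J$.

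Next I would treat $\widehat{\operatorname{Cpl}}$. As a left adjoint it preserves all colimits, in particular cokernels, so it is right exact; to derive it I would use that $\mathcal{B}c_K$ is quasi-elementary and that, as recalled above, every bornological space $V$ admits a strict epimorphism from a direct sum of Banach spaces of the form $c_0(X,W)$ (indeed $V=\varinjlim V_B$). These Banach spaces, and direct sums of them, are complete, so $\widehat{\operatorname{Cpl}}$ acts on them as the identity; in particular $\widehat{\operatorname{Cpl}}$ is exact on this strictly generating class. This is exactly the input needed to build a left derived functor $L\widehat{\operatorname{Cpl}}\colon\operatorname{D}(\mathcal{B}c_K)\to\operatorname{D}(\widehat{\mathcal{B}}c_K)$ via resolutions by such objects, and, since $J$ is exact, the underived adjunction $\widehat{\operatorname{Cpl}}\dashv J$ then lifts formally to a derived adjunction in which $L\widehat{\operatorname{Cpl}}$ is left adjoint to $J$.

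The core step is the identification $L\widehat{\operatorname{Cpl}}\circ J=\operatorname{Id}$. Underived this is immediate, $\widehat{\operatorname{Cpl}}\circ J=\operatorname{Id}_{\widehat{\mathcal{B}}c_K}$, because $J$ is fully faithful; but derived functors do not compose for free. To repair this I would, given $M\in\widehat{\mathcal{B}}c_K$, choose a left resolution $Q^\bullet\to M$ by complete projective objects of $\widehat{\mathcal{B}}c_K$ (which exist since $\widehat{\mathcal{B}}c_K$ is quasi-elementary); then $J(Q^\bullet)\to J(M)$ is a left resolution in $\mathcal{B}c_K$ whose terms lie in the essential image of $J$, a full subcategory on which $\widehat{\operatorname{Cpl}}$ is inverse to $J$ and hence exact. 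Such a resolution therefore computes the left derived functor, giving $L\widehat{\operatorname{Cpl}}(J(M))\simeq\widehat{\operatorname{Cpl}}(J(Q^\bullet))=Q^\bullet\simeq M$ functorially, and the same for complexes. From $L\widehat{\operatorname{Cpl}}\circ J=\operatorname{Id}$ together with the derived adjunction one reads off
\begin{equation*}
\operatorname{Hom}_{\operatorname{D}(\mathcal{B}c_K)}(J(A),J(B))=\operatorname{Hom}_{\operatorname{D}(\widehat{\mathcal{B}}c_K)}(L\widehat{\operatorname{Cpl}}\,J(A),B)=\operatorname{Hom}_{\operatorname{D}(\widehat{\mathcal{B}}c_K)}(A,B),
\end{equation*}
so $J$ is fully faithful and $\operatorname{D}(\widehat{\mathcal{B}}c_K)$ is a full triangulated subcategory of $\operatorname{D}(\mathcal{B}c_K)$. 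Transporting along the equivalences $I\colon\operatorname{D}(\mathcal{E})\xrightarrow{\sim}\operatorname{D}(LH(\mathcal{E}))$ and using that $I$ commutes with $J$ yields that $LH(\widehat{\mathcal{B}}c_K)\to LH(\mathcal{B}c_K)$ is fully faithful, with left adjoint induced by $\widehat{\operatorname{Cpl}}$, i.e. a reflexive abelian subcategory.

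The main obstacle I anticipate is precisely the derived identity $L\widehat{\operatorname{Cpl}}\circ J=\operatorname{Id}$: it forces one to check the derivability hypotheses with care, namely that the class of complete projective objects of $\widehat{\mathcal{B}}c_K$ is strictly generating and is carried by $J$ into a class of objects in $\mathcal{B}c_K$ adapted to computing $L\widehat{\operatorname{Cpl}}$ (equivalently, that $J$ preserves the relevant $\widehat{\operatorname{Cpl}}$-acyclicity). This is where quasi-elementarity of both $\mathcal{B}c_K$ and $\widehat{\mathcal{B}}c_K$ genuinely enters; the exactness of $J$, the right exactness of $\widehat{\operatorname{Cpl}}$, and the formal lifting of the adjunction are routine once that point is settled.
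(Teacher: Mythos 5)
The paper offers no proof of this proposition; it is imported verbatim from Prosmans--Schneiders, so the only question is whether your argument stands on its own. Your first paragraph (exactness of $J$ from preservation of kernels plus the strictness criterion, hence descent to a triangulated functor on derived categories) and your final formal deductions (the derived adjunction, full faithfulness from $L\widehat{\operatorname{Cpl}}\circ J=\operatorname{Id}$, transport to the left hearts) are sound, and the overall architecture is the expected one.

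The gap sits exactly where you anticipated it. Your claim that every object of $\mathcal{B}c_K$ admits a strict epimorphism from a direct sum of Banach spaces of the form $c_0(X,W)$ is false: the presentation $V=\varinjlim V_B$ only produces \emph{normed} pieces $V_B$ for a general bornological space, and the $c_0$-generation statement of Proposition \ref{prop bornological spaces are quasi-abelian}(iii) is specific to $\widehat{\mathcal{B}}c_K$. Indeed, if $V$ is an incomplete normed space and $\bigoplus_iE_i\to V$ were a strict epimorphism with each $E_i$ Banach, then $V^{\circ}$ would be commensurable with the image of the unit ball of a finite sub-sum $E$, realizing $V$ up to equivalent norms as a separated quotient of $E$, hence Banach --- a contradiction. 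Consequently the essential image of $J$ is not strictly generating in $\mathcal{B}c_K$, and ``$\widehat{\operatorname{Cpl}}$ is exact on the essential image of $J$'' does not entitle you to compute $L\widehat{\operatorname{Cpl}}$ by resolutions with terms there: an adapted ($F$-projective) subcategory must in particular be generating, and the objects $J(c_0(X,W))$ are not even projective in $\mathcal{B}c_K$ (lifting along a strict epimorphism with incomplete source requires summing a series in that source). As written, the acyclicity of the $J(Q^i)$ for $L\widehat{\operatorname{Cpl}}$ is an instance of the very identity $L\widehat{\operatorname{Cpl}}\circ J=\operatorname{Id}$ you are proving, so the argument is circular. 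The repair is to take the adapted class to be the direct sums of \emph{semi-normed} spaces, which do strictly generate $\mathcal{B}c_K$, to check that $\widehat{\operatorname{Cpl}}$ sends strict short exact sequences of such objects to strict short exact sequences (completion of an isometric embedding of semi-normed spaces remains a strict monomorphism with the expected cokernel), and only then to observe that $J(Q^{\bullet})$ is a resolution by objects of this class because Banach spaces are in particular semi-normed; with that substitution your computation $L\widehat{\operatorname{Cpl}}(J(M))=\widehat{\operatorname{Cpl}}(J(Q^{\bullet}))=Q^{\bullet}\simeq M$ and everything downstream goes through. A minor residual point: the statement concerns the unbounded derived categories while $\mathcal{B}c_K$ is only quasi-elementary, so extending $L\widehat{\operatorname{Cpl}}$ from $\operatorname{D}^-$ to $\operatorname{D}$ also deserves a word.
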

We will denote the restriction of $J:\operatorname{D}(\widehat{\mathcal{B}}c_K)\rightarrow \operatorname{D}(\mathcal{B}c_K)$ to $LH(\widehat{\mathcal{B}}c_K)$ by:
\begin{equation*}
    J:LH(\widehat{\mathcal{B}}c_K)\rightarrow LH(\mathcal{B}c_K), \quad \mathcal{V}\mapsto J(\mathcal{V})=\operatorname{H}^0(J(\mathcal{V})).
\end{equation*}
By the previous proposition, this is an exact functor. Furthermore, its extension to the derived categories is given by $J$ (\emph{cf}. \cite[Proposition 1.3.10]{schneiders1999quasi}).\\
On the other hand, we have the forgetful functor:
\begin{equation*}
    \operatorname{Forget}(-):\mathcal{B}c_K\rightarrow \operatorname{Vect}_K.
\end{equation*}
By construction of the quasi-abelian structure on bornological spaces, this functor is strongly exact. In particular, it preserves arbitrary kernels and cokernels. As $\operatorname{Vect}_K$ is an abelian category, it is canonically isomorphic to its left heart. Thus, we get an exact functor:
\begin{equation*}  \widetilde{\operatorname{Forget}}(-):LH(\mathcal{B}c_K)\rightarrow\operatorname{Vect}_K.
\end{equation*}
Both of these functors extend to functors between the corresponding derived categories, and these extensions get identified via the isomorphism:
\begin{equation*}
    \operatorname{D}(\mathcal{B}c_K) =\operatorname{D}(LH(\mathcal{B}c_K)).
\end{equation*}
Thus, in order to simplify notation, we will denote $\widetilde{\operatorname{Forget}}(-)$ simply by $\operatorname{Forget}(-)$.\\

In order to apply these functors to our setting, we need to extend them to the corresponding categories of sheaves. However, as $\mathcal{B}c_K$ and $\widehat{\mathcal{B}}c_K$ are not elementary quasi-abelian categories, their associated categories of sheaves are not well-behaved (\emph{i.e.} no sheafification functor). However, as $\mathcal{B}c_K$ and $\widehat{\mathcal{B}}c_K$ are quasi-elementary, their left hearts are Grothendieck abelian categories. Hence, their categories of sheaves are also Grothendieck, and thus are suited for homological algebra.
\begin{defi}\label{defi J and Forget for sheaves}
We define the following functor:
    \begin{equation*}
        J:\operatorname{Shv}(X,LH(\widehat{\mathcal{B}}c_K))\rightarrow \operatorname{Shv}(X,LH(\mathcal{B}c_K)), \quad \mathcal{F}\mapsto J(\mathcal{F}),
    \end{equation*}
    where $J(\mathcal{F})$ satisfies $J(\mathcal{F})(U)=J(\mathcal{F}(U))$ for every open $U\subset X$. Analogously:
    \begin{equation*}
       \operatorname{Forget}(-):\operatorname{Shv}(X,LH(\mathcal{B}c_K))\rightarrow \operatorname{Shv}(X,\operatorname{Vect}_K), \quad \mathcal{G}\mapsto \operatorname{Forget}(\mathcal{G}),
    \end{equation*}
where $\operatorname{Forget}(\mathcal{G})$ satisfies $\operatorname{Forget}(\mathcal{G})(U)=\operatorname{Forget}(\mathcal{G}(U))$ for every open $U\subset X$.
\end{defi}
Notice that as both of the following functors are exact:
\begin{equation*}
    J:LH(\widehat{\mathcal{B}}c_K)\rightarrow LH(\mathcal{B}c_K), \textnormal{ } \operatorname{Forget}(-):LH(\mathcal{B}c_K)\rightarrow\operatorname{Vect}_K.
\end{equation*}
Hence, the definitions above make sense. Furthermore, we have the following:
\begin{Lemma}\label{lemma J commutes with filtered colim}
The functor $J:LH(\widehat{\mathcal{B}}c_K)\rightarrow LH(\mathcal{B}c_K)$ commutes with filtered colimits.
\end{Lemma}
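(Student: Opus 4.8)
The plan is to reduce the statement, by a soft categorical argument, to the assertion that $J$ preserves arbitrary coproducts, and then to deduce that from the presentations of left-heart objects by the projective generators $c_0(X,V)$, together with one elementary functional-analytic fact.

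For the reduction: both $LH(\widehat{\mathcal{B}}c_K)$ and $LH(\mathcal{B}c_K)$ are Grothendieck abelian categories, and $J$ is exact by Proposition \ref{prop properties of J}, hence preserves cokernels. In any AB5 category a filtered colimit is a cokernel of a morphism between two coproducts,
\[
\varinjlim_{i\in I}M_i=\operatorname{coker}\Bigl(\textstyle\bigoplus_{(i\to j)}M_i\longrightarrow\bigoplus_{i\in I}M_i\Bigr),
\]
so an exact functor between Grothendieck categories commutes with filtered colimits as soon as it commutes with set-indexed coproducts. Thus it suffices to show $J$ preserves coproducts. Next, recall (Proposition \ref{prop bornological spaces are quasi-abelian} and the ensuing discussion) that the objects $I(c_0(X,V))$ form a generating family of projectives of $LH(\widehat{\mathcal{B}}c_K)$, so every $W\in LH(\widehat{\mathcal{B}}c_K)$ has a presentation $I(P')\to I(P)\to W\to 0$ with $P,P'$ direct sums of Banach spaces $c_0(X,V)$. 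Given a family $(W_\alpha)$, choosing such presentations and forming coproducts gives an exact sequence $\bigoplus_\alpha I(P'_\alpha)\to\bigoplus_\alpha I(P_\alpha)\to\bigoplus_\alpha W_\alpha\to 0$; since $\widehat{\mathcal{B}}c_K$ is quasi-elementary, $I$ commutes with coproducts (this is the place where strong exactness of \emph{direct sums}, as opposed to of filtered colimits, is used), so $\bigoplus_\alpha I(P_\alpha)=I\bigl(\bigoplus_\alpha P_\alpha\bigr)$, and the analogous statement holds in $\mathcal{B}c_K$. Applying the exact functor $J$, using the identity $J\circ I_{\widehat{\mathcal{B}}c_K}=I_{\mathcal{B}c_K}\circ J$ (valid because all three functors are exact and the left-heart functor extends $J$ on the quasi-abelian categories, cf. \cite[Section 1.3]{schneiders1999quasi}), and using that $I_{\mathcal{B}c_K}$ also commutes with coproducts, the whole computation collapses to the single claim that the inclusion $J\colon\widehat{\mathcal{B}}c_K\to\mathcal{B}c_K$ preserves the coproduct $\bigoplus_\beta c_0(X_\beta,V_\beta)$, i.e. that a direct sum of Banach spaces is again complete as a bornological space. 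This holds: such a direct sum is the filtered colimit, along split monomorphisms, of its finite partial sums, each of which is Banach, and a colimit of Banach spaces along split monomorphisms is complete.

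The hard part is not a computation but the bookkeeping: one must remember that $I\colon\widehat{\mathcal{B}}c_K\to LH(\widehat{\mathcal{B}}c_K)$ does \emph{not} commute with filtered colimits — this failure is exactly why $\widehat{\mathcal{B}}c_K$ is only quasi-elementary — so one cannot compute the colimit in $\widehat{\mathcal{B}}c_K$ and then apply $I$; the argument has to be channelled through coproducts and cokernels, where $I$ is well behaved, and through presentations by the generators $c_0(X,V)$. The only other point needing care is the compatibility $J\circ I_{\widehat{\mathcal{B}}c_K}=I_{\mathcal{B}c_K}\circ J$, which is a general feature of the extension of an exact functor between quasi-abelian categories to the left hearts.
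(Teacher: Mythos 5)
Your proof is correct, but it takes a genuinely different route from the paper's. The paper never reduces to coproducts: instead it realises $LH(\mathcal{B}c_K)$ as the left heart of the \emph{elementary} quasi-abelian category $\operatorname{Ind}(\operatorname{Sns}_K)$ of Ind-semi-normed spaces, places the inclusion $F:\Indban\rightarrow \operatorname{Ind}(\operatorname{Sns}_K)$ underneath $J$ in a square, and exploits the fact that for elementary categories the functors $I$ \emph{do} commute with filtered colimits, as does $F$; passing to the functor categories $\Indban^{\mathcal{I}}$ and $\operatorname{Ind}(\operatorname{Sns}_K)^{\mathcal{I}}$ and invoking the uniqueness of exact extensions to the left heart (\cite[Proposition 1.2.34]{schneiders1999quasi}) then forces the remaining square involving $J$ and $\varinjlim_{\mathcal{I}}$ to commute. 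Your argument stays entirely inside the bornological world: you reduce filtered colimits to coproducts plus cokernels, resolve by the projective generators $c_0(X,V)$, use that $I$ commutes with direct sums (but not filtered colimits) on a quasi-elementary category, and bottom out in the concrete fact that a direct sum of Banach spaces is bornologically complete, so that $J:\widehat{\mathcal{B}}c_K\rightarrow \mathcal{B}c_K$ preserves the relevant coproducts. The paper's route buys uniformity — once the elementary model $\operatorname{Ind}(\operatorname{Sns}_K)$ is available, everything is forced by exactness with no mention of generators — while yours avoids introducing $\operatorname{Ind}(\operatorname{Sns}_K)$ altogether and isolates the single piece of functional analysis actually needed; the compatibility $J\circ I=I\circ J$ you invoke is the same extension-to-left-hearts principle the paper relies on, so both proofs ultimately rest on the same general machinery applied at different points.
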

\begin{proof}
We start by giving a slightly different interpretation of the functor $J:LH(\widehat{\mathcal{B}}c_K)\rightarrow LH(\mathcal{B}c_K)$. Indeed, let $\operatorname{Sns}_K$ be the category of semi-normed spaces over $K$, and let $\operatorname{Ind}(\operatorname{Sns}_K)$ be its associated Ind-category. Then by \cite[Proposition 3.10]{prosmans2000homological}, $\operatorname{Ind}(\operatorname{Sns}_K)$ is an elementary quasi-abelian category. Furthermore, there is a canonical identification:
\begin{equation*}
    LH(\mathcal{B}c_K)=LH(\operatorname{Ind}(\operatorname{Sns}_K)).
\end{equation*}
In virtue of  \cite[Section 5]{prosmans2000homological}, we can use the fact that Banach space is a semi normed space, to obtain a canonical inclusion:
\begin{equation*}
    F:\Indban\rightarrow \operatorname{Ind}(\operatorname{Sns}_K).
\end{equation*}
Using the identification $LH(\Indban)=LH(\widehat{\mathcal{B}}c_K)$, we arrive at the following diagram:
\begin{equation*}
\begin{tikzcd}
LH(\widehat{\mathcal{B}}c_K) \arrow[r, "J"] & LH(\mathcal{B}c_K)                                      \\
\Indban \arrow[r, "F"] \arrow[u, "I"]       & \operatorname{Ind}(\operatorname{Sns}_K) \arrow[u, "I"]
\end{tikzcd}
\end{equation*}
where all functors except for maybe $J$ commute with filtered colimits. Let $\mathcal{I}$ be a small filtered category.  We let $\Indban^{\mathcal{I}}$
be the category of functors $\mathcal{I}\rightarrow\Indban$. As $\Indban$ is an elementary quasi-abelian category, it follows by  \cite[Proposition 1.4.15]{schneiders1999quasi}
that $\Indban^{\mathcal{I}}$ is a quasi-abelian category, and that there is a canonical isomorphism:
\begin{equation}\label{equation 1 J commutes with filtcolim}
    LH(\Indban^{\mathcal{I}})=LH(\widehat{\mathcal{B}}c_K)^{\mathcal{I}}.
\end{equation}
Similarly, $\operatorname{Ind}(\operatorname{Sns}_K)^{\mathcal{I}}$, is also a quasi-abelian category, and there is a canonical equivalence:
\begin{equation}\label{equation 2 J commutes with filtcolim}
    LH(\operatorname{Ind}(\operatorname{Sns}_K)^{\mathcal{I}})=LH(\mathcal{B}c_K)^{\mathcal{I}}.
\end{equation}
As both $\Indban$, and $\operatorname{Ind}(\operatorname{Sns}_K)$ are co-complete, we have the following additive functors:
\begin{equation*}
    \varinjlim_{\mathcal{I}}:\Indban^{\mathcal{I}}\rightarrow \Indban, \quad \varinjlim_{\mathcal{I}}:\operatorname{Ind}(\operatorname{Sns}_K)^{\mathcal{I}}\rightarrow \operatorname{Ind}(\operatorname{Sns}_K),
\end{equation*}
where the first one is given by sending a functor $\mathcal{I} \rightarrow \Indban$  to its colimit, and the second one is defined analogously. Notice that $F:\Indban\rightarrow \operatorname{Ind}(\operatorname{Sns}_K)$ commutes with filtered colimits if and only if the following diagram is commutative:
\begin{equation*}
\begin{tikzcd}
\Indban^{\mathcal{I}} \arrow[d, "\varinjlim_{\mathcal{I}}"] \arrow[r, "F_*"] & \operatorname{Ind}(\operatorname{Sns}_K)^{\mathcal{I}} \arrow[d, "\varinjlim_{\mathcal{I}}"] \\
\Indban \arrow[r, "F"]                                                       & \operatorname{Ind}(\operatorname{Sns}_K)                                                    
\end{tikzcd}
\end{equation*}
Passing onto the left hearts, and using the identities (\ref{equation 1 J commutes with filtcolim}), (\ref{equation 2 J commutes with filtcolim}), and the fact that:
\begin{equation*}
    I:\Indban \rightarrow LH(\widehat{\mathcal{B}}c_K), \quad I:\operatorname{Ind}(\operatorname{Sns}_K)\rightarrow LH(\mathcal{B}c_K),
\end{equation*}
commute with filtered colimits, we arrive at the following diagram:
\begin{equation*}
\begin{tikzcd}
LH(\widehat{\mathcal{B}}c_K)^{\mathcal{I}} \arrow[d, "\varinjlim_{\mathcal{I}}"] \arrow[r, "J_*"] & LH(\mathcal{B}c_K)^{\mathcal{I}} \arrow[d, "\varinjlim_{\mathcal{I}}"] \\
LH(\widehat{\mathcal{B}}c_K) \arrow[r, "J"]                                                       & LH(\mathcal{B}c_K)                                                    
\end{tikzcd}
\end{equation*}
We are done if we show that this diagram commutes. However, $LH(\widehat{\mathcal{B}}c_K)$
is a Grothendieck abelian category. In particular, filtered colimits are exact. Hence, the vertical maps in the diagram are exact. As mentioned above, $J:LH(\widehat{\mathcal{B}}c_K)\rightarrow LH(\mathcal{B}c_K)$ is also exact. Hence, $J_*$ is exact as well. Thus, it follows by \cite[Proposition 1.2.34]{schneiders1999quasi} that the diagram is commutative, as we wanted to show.
\end{proof}
\begin{Lemma}\label{lemma exactness of extension to sheaves}
The following hold:
\begin{enumerate}[label=(\roman*)]
    \item The functor $J:\operatorname{Shv}(X,LH(\widehat{\mathcal{B}}c_K))\rightarrow \operatorname{Shv}(X,LH(\mathcal{B}c_K))$ is exact.
    \item The functor $\operatorname{Forget}(-):\operatorname{Shv}(X,LH(\mathcal{B}c_K))\rightarrow \operatorname{Shv}(X,\operatorname{Vect}_K)$ is exact.
\end{enumerate}
\end{Lemma}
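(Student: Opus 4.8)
The plan is to derive both statements from a single general principle: if $\mathcal{C}$ and $\mathcal{D}$ are Grothendieck abelian categories and $F\colon\mathcal{C}\to\mathcal{D}$ is an exact functor which commutes with filtered colimits, then the functor $F_*\colon\operatorname{Shv}(X,\mathcal{C})\to\operatorname{Shv}(X,\mathcal{D})$ obtained by applying $F$ sectionwise (as in Definition \ref{defi J and Forget for sheaves}) is again exact. I would prove this principle as follows. Left exactness of $F_*$ is sectionwise: the inclusion $\operatorname{Shv}(X,-)\hookrightarrow\operatorname{PreShv}(X,-)$ is left exact, a short strictly exact sequence of sheaves is therefore left exact as a sequence of presheaves, and $F$ is left exact, so $0\to F(\mathcal{F}'(U))\to F(\mathcal{F}(U))\to F(\mathcal{F}''(U))$ is exact for every admissible open $U$. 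For right exactness, the key point is that $F_*$ commutes with sheafification. Since $\mathcal{C}$ and $\mathcal{D}$ are Grothendieck abelian, sheafification is the twofold plus construction, and the plus construction is given by
\begin{equation*}
\mathcal{F}^{+}(U)=\varinjlim_{\mathcal{U}\in\operatorname{Cov}(U)}\operatorname{Eq}\Big(\prod_{i}\mathcal{F}(U_i)\rightrightarrows\prod_{i,j}\mathcal{F}(U_i\cap U_j)\Big),
\end{equation*}
a filtered colimit (over refinements of covers) of a finite limit. As $F$ commutes with filtered colimits and with finite limits, $F_*$ commutes with $(-)^{+}$, hence with $(-)^{a}=((-)^{+})^{+}$. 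Now if $f\colon\mathcal{F}\to\mathcal{G}$ is an epimorphism of sheaves with presheaf cokernel $\mathcal{Q}$, then $\mathcal{Q}^{a}=0$; since $F$ is right exact, $F_*(\mathcal{Q})$ is the presheaf cokernel of $F_*(f)$, and $(F_*\mathcal{Q})^{a}=F_*(\mathcal{Q}^{a})=0$, so $F_*(f)$ is an epimorphism of sheaves. Together with sectionwise left exactness this gives exactness of $F_*$.

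Next I would apply the principle to $J$. By Proposition \ref{prop bornological spaces are quasi-abelian}, $\widehat{\mathcal{B}}c_K$ and $\mathcal{B}c_K$ are quasi-elementary, so $LH(\widehat{\mathcal{B}}c_K)$ and $LH(\mathcal{B}c_K)$ are Grothendieck abelian categories. The functor $J\colon LH(\widehat{\mathcal{B}}c_K)\to LH(\mathcal{B}c_K)$ is exact by Proposition \ref{prop properties of J} and the discussion following it, and it commutes with filtered colimits by Lemma \ref{lemma J commutes with filtered colim}. The principle then yields part $(i)$.

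For part $(ii)$ I would apply the principle to $\operatorname{Forget}$. The forgetful functor $\operatorname{Forget}\colon\mathcal{B}c_K\to\operatorname{Vect}_K$ admits both a left adjoint (equipping a $K$-vector space with its finest bornology) and a right adjoint (equipping it with the bornology consisting of all subsets), hence preserves all limits and colimits; in particular it is strongly exact and commutes with filtered colimits. Its extension $\widetilde{\operatorname{Forget}}\colon LH(\mathcal{B}c_K)\to\operatorname{Vect}_K$ to the left heart is exact, as recalled in the discussion preceding the statement, and it still commutes with filtered colimits: arguing exactly as in the proof of Lemma \ref{lemma J commutes with filtered colim}, exactness of $\widetilde{\operatorname{Forget}}$ and of the colimit functors on the Grothendieck abelian categories $LH(\mathcal{B}c_K)$ and $\operatorname{Vect}_K$ forces the relevant square to commute by \cite[Proposition 1.2.34]{schneiders1999quasi}. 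Since $\operatorname{Vect}_K$ is Grothendieck abelian, the principle gives exactness of the sectionwise extension, which is part $(ii)$.

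The only genuinely non-formal point is the compatibility of a sectionwise functor with sheafification, and within that the identification of the plus construction as a filtered colimit of finite limits to which the hypotheses on $F$ apply; the rest is bookkeeping, and the essential inputs (exactness of $J$ and of $\widetilde{\operatorname{Forget}}$ on the left hearts, and commutation of $J$ with filtered colimits) are already available in the excerpt. A subtlety to keep in mind is that sections of a sheaf valued in a left heart are objects of an abelian category rather than sets, so epimorphisms of sheaves must be detected through the vanishing of the sheafified presheaf cokernel rather than by lifting sections locally; the argument above is arranged to respect this, which is why I route surjectivity through cokernels and sheafification rather than through a local-lifting criterion.
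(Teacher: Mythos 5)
Your proof is correct and follows essentially the same route as the paper: left exactness is checked sectionwise via the presheaf inclusion, and right exactness is reduced to commutation with sheafification, which (given left exactness) reduces to commutation with filtered colimits, handled by Lemma \ref{lemma J commutes with filtered colim} for $J$ and by an adjoint/colimit argument for $\operatorname{Forget}$. The only cosmetic difference is that for $\operatorname{Forget}$ on the left heart the paper factors it as $\operatorname{Forget}\circ C$ through the left adjoint $C$ of $I$, whereas you rerun the argument of Lemma \ref{lemma J commutes with filtered colim}; both work.
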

\begin{proof}
 As kernels of morphisms of sheaves can be calculated in the category of presheaves, it follows from left exactness of $J:LH(\widehat{\mathcal{B}}c_K)\rightarrow LH(\mathcal{B}c_K)$, and $\operatorname{Forget}(-):LH(\mathcal{B}c_K)\rightarrow\operatorname{Vect}_K$
 that both functors are left exact. In order to show that they are also right exact, we need to show that they commute with sheafification. As the functors are left exact, it suffices to show that they commute with filtered colimits. The first functor was dealt with in Lemma \ref{lemma J commutes with filtered colim}.\\
 For the second functor, notice that $\operatorname{Forget}(-)$ can be expressed as the following composition:
 \begin{equation*}
LH(\mathcal{B}c_K) \xrightarrow[]{C} \mathcal{B}c_K \xrightarrow[]{\operatorname{Forget}(-)}\operatorname{Vect}_K,
 \end{equation*}
where $C:LH(\mathcal{B}c_K)\rightarrow \mathcal{B}c_K$ is right adjoint to $I:\mathcal{B}c_K\rightarrow LH(\mathcal{B}c_K)$. As $C$ is a right adjoint, it commutes with filtered colimits. Furthermore, the definition of colimit
of bornological spaces shows that $\operatorname{Forget}(-):\mathcal{B}c_K\rightarrow\operatorname{Vect}_K$ also commutes with filtered colimits. Thus, the functor $\operatorname{Forget}(-):LH(\mathcal{B}c_K)\rightarrow\operatorname{Vect}_K$ commutes with filtered colimits, as we wanted to show.
\end{proof}
We can use these lemmas to show the following proposition:
\begin{prop}\label{prop comparison of ext functors}
We have the following identities of derived functors:
\begin{enumerate}[label=(\roman*)]
    \item $\operatorname{Forget}( R\Gamma(X,-))=R\Gamma(X,\operatorname{Forget}(-)).$
    \item $R\Gamma(X,J(-))=JR\Gamma(X,-).$
\end{enumerate}
\end{prop}
\begin{proof}
For the first identity, notice that as $\operatorname{Forget}(-)$ is exact, it suffices to show that it sends injective objects to $\Gamma(X,-)$-acyclic objects. In particular, it suffices to show that the image under 
$\operatorname{Forget}(-)$ of any injective object in $\operatorname{Shv}(X,LH(\mathcal{B}c_K))$ is \v{C}ech-acyclic. However, this follows by the fact that $\operatorname{Forget}(-)$ is exact, and every injective object in $\operatorname{Shv}(X,LH(\mathcal{B}c_K))$ is \v{C}ech-acyclic. The second identity is completely analogous.
\end{proof}
As a consequence, we obtain an interpretation of de Rham cohomology of $X$ in terms of the Hochschild cohomology groups of $\wideparen{\D}_X$:
\begin{coro}\label{coro recovering de Rham cohomology (without topology) from Hochschild cohomology}
We have the following identity in $\operatorname{D}(\operatorname{Vect}_K)$:
\begin{equation*}
    \operatorname{H}_{\operatorname{dR}}^{\bullet}(X)=\operatorname{Forget}(J\operatorname{HH}^{\bullet}(\wideparen{\D}_X)).
\end{equation*}
\end{coro}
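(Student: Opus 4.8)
The plan is to chain together the explicit computation of the Hochschild cohomology complex of $\wideparen{\D}_X$ from Theorem \ref{teo explicit computation of hochschild cohomology} with the two exact functors $J$ and $\operatorname{Forget}(-)$ introduced above, and then use the compatibility of these functors with derived global sections established in Proposition \ref{prop comparison of ext functors}. Recall that by Theorem \ref{teo explicit computation of hochschild cohomology} we have $\mathcal{HH}^{\bullet}(\wideparen{\D}_X)=\Omega_{X}^{\bullet}$ as objects in $\operatorname{D}(\operatorname{Shv}(X,\Indban))$, and hence $\operatorname{HH}^{\bullet}(\wideparen{\D}_X)=R\Gamma(X,\Omega_{X}^{\bullet})$ as objects in $\operatorname{D}(\widehat{\mathcal{B}}c_K)$, where $\Omega_X^{\bullet}$ carries its canonical bornological structure.

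First I would apply $J:\operatorname{D}(\widehat{\mathcal{B}}c_K)\rightarrow \operatorname{D}(\mathcal{B}c_K)$ to this identity. Using part $(ii)$ of Proposition \ref{prop comparison of ext functors}, namely $R\Gamma(X,J(-))=JR\Gamma(X,-)$, we obtain $J\operatorname{HH}^{\bullet}(\wideparen{\D}_X)=J R\Gamma(X,\Omega_X^{\bullet})=R\Gamma(X,J(\Omega_X^{\bullet}))$ in $\operatorname{D}(\mathcal{B}c_K)$, where here $J(\Omega_X^{\bullet})$ denotes the termwise application of the (sheaf-level) functor $J$ from Definition \ref{defi J and Forget for sheaves}. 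Next I would apply $\operatorname{Forget}(-):\operatorname{D}(\mathcal{B}c_K)\rightarrow \operatorname{D}(\operatorname{Vect}_K)$ and invoke part $(i)$ of Proposition \ref{prop comparison of ext functors}, giving $\operatorname{Forget}(J\operatorname{HH}^{\bullet}(\wideparen{\D}_X))=\operatorname{Forget}(R\Gamma(X,J(\Omega_X^{\bullet})))=R\Gamma(X,\operatorname{Forget}(J(\Omega_X^{\bullet})))$ in $\operatorname{D}(\operatorname{Vect}_K)$. It then remains to identify $\operatorname{Forget}(J(\Omega_X^{\bullet}))$ with the usual de Rham complex $\Omega_{X/K}^{\bullet}$ viewed in $\operatorname{D}(\operatorname{Shv}(X,\operatorname{Vect}_K))$, so that $R\Gamma(X,\operatorname{Forget}(J(\Omega_X^{\bullet})))=\mathbb{H}^{\bullet}(X,\Omega_{X/K}^{\bullet})=\operatorname{H}_{\operatorname{dR}}^{\bullet}(X)$; since $J$ and $\operatorname{Forget}$ are both exact (Lemma \ref{lemma exactness of extension to sheaves}), they commute with taking the complex termwise, and on each term $\Omega_X^p$ the composite $\operatorname{Forget}\circ J$ simply recovers the underlying $K$-vector space sheaf $\Omega_{X/K}^p$ with its differentials, because these functors do not alter the underlying $K$-linear data — this is essentially the content of the discussion following Proposition \ref{prop properties of J} together with strong exactness of $\operatorname{Forget}(-):\mathcal{B}c_K\rightarrow \operatorname{Vect}_K$.

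The step I expect to be the main obstacle, or at least the one requiring the most care, is the last identification: checking that $\operatorname{Forget}(J(\Omega_X^{\bullet}))$ is genuinely the de Rham complex of $X$ as a complex of sheaves of $K$-vector spaces, and not merely abstractly isomorphic to it after forgetting more structure than intended. One has to track that the sheaf-level functors of Definition \ref{defi J and Forget for sheaves} are computed section-by-section on admissible opens, that $J$ on a Fréchet space is the identity on the underlying vector space (only the bornology changes), and that $\operatorname{Forget}$ then just reads off that vector space; combining these, $\operatorname{Forget}(J(\Omega_X^p))(U)$ is the ordinary $K$-vector space $\Omega_{X/K}^p(U)$ with the usual exterior derivative as the connecting map, which is exactly the complex computing $\operatorname{H}_{\operatorname{dR}}^{\bullet}(X)$ in the sense recalled in the paragraph before the corollary. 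Once this is in place, the chain of equalities assembled above yields $\operatorname{H}_{\operatorname{dR}}^{\bullet}(X)=\operatorname{Forget}(J\operatorname{HH}^{\bullet}(\wideparen{\D}_X))$ in $\operatorname{D}(\operatorname{Vect}_K)$, which is the claim; passing to cohomology in each degree then gives the corresponding statement for the individual de Rham and Hochschild cohomology $K$-vector spaces.
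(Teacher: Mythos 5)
Your proposal is correct and follows exactly the paper's argument: Theorem \ref{teo explicit computation of hochschild cohomology} gives $\operatorname{HH}^{\bullet}(\wideparen{\D}_X)=R\Gamma(X,\Omega_X^{\bullet})$, and the two commutation identities of Proposition \ref{prop comparison of ext functors} move $J$ and $\operatorname{Forget}$ inside $R\Gamma$, leaving the identification of $\operatorname{Forget}(J\Omega_X^{\bullet})$ with the underlying de Rham complex. Your extra care on that last identification is a reasonable elaboration of what the paper leaves implicit, but it is not a different route.
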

\begin{proof}
This follows by Theorem \ref{teo explicit computation of hochschild cohomology}, and Proposition \ref{prop comparison of ext functors}, as we have:
\begin{align*}
    \operatorname{Forget}(J\operatorname{HH}^{\bullet}(\wideparen{\D}_X))=\operatorname{Forget}(JR\Gamma(X,\Omega^{\bullet}_{X/K}))=&R\Gamma(X,\operatorname{Forget}(J\Omega^{\bullet}_{X/K}))\\=&\operatorname{H}_{\operatorname{dR}}^{\bullet}(X).
\end{align*}
\end{proof}
\begin{obs}
Notice that we could use the previous proposition to endow the de Rham cohomology groups $\operatorname{H}_{dR}^{n}(X)$ with a bornology. However, this bornology is usually not interesting, and presents pathological behavior. However, in \cite{HochschildDmodules2} we will study a class of spaces for which this induced bornology makes the de Rham cohomology groups into nuclear Fréchet spaces.
\end{obs}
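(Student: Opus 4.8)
The statement is a remark rather than a theorem, so the ``proof'' I envisage is really a justification of its three assertions: that the construction exists, that it is generically degenerate, and that it becomes meaningful on a restricted class of spaces. My plan is to make each of these precise and indicate the argument.

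First I would pin down the construction. By Theorem \ref{teo explicit computation of hochschild cohomology}, $\operatorname{HH}^{n}(\wideparen{\D}_X)=\operatorname{H}^{n}\!\left(R\Gamma(X,\Omega_{X/K}^{\bullet})\right)$ is an object of $LH(\widehat{\mathcal{B}}c_K)$. Pushing it into $LH(\mathcal{B}c_K)$ via the exact functor $J$ (Proposition \ref{prop properties of J}) and then applying the reflector $C:LH(\mathcal{B}c_K)\rightarrow \mathcal{B}c_K$ (the left adjoint of the inclusion $\mathcal{B}c_K\rightarrow LH(\mathcal{B}c_K)$, as in the proof of Lemma \ref{lemma exactness of extension to sheaves}) produces an honest bornological space $C(J(\operatorname{HH}^{n}(\wideparen{\D}_X)))$. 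A short diagram chase then identifies its underlying $K$-vector space with $\operatorname{H}_{\operatorname{dR}}^{n}(X)$: indeed $\operatorname{Forget}\circ J$ is exact, so it commutes with taking $\operatorname{H}^{n}$, and the functor $\operatorname{Forget}$ of Definition \ref{defi J and Forget for sheaves} factors as $\operatorname{Forget}\circ C$; combining this with Corollary \ref{coro recovering de Rham cohomology (without topology) from Hochschild cohomology} gives $\operatorname{Forget}\bigl(C(J(\operatorname{HH}^{n}(\wideparen{\D}_X)))\bigr)=\operatorname{H}_{\operatorname{dR}}^{n}(X)$. This complete-up-to-separation bornological space is the ``induced bornology'' of the remark. (Note one must factor through $\mathcal{B}c_K$ and not apply $C:LH(\widehat{\mathcal{B}}c_K)\rightarrow\widehat{\mathcal{B}}c_K$ directly, since the completion of a quotient by a non-closed subspace would, in general, change the underlying vector space.)

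Next I would explain why this bornology is generically useless. The obstruction is the strictness of the complex $R\Gamma(X,\Omega_{X/K}^{\bullet})$, which one computes as the total complex of the augmented \v{C}ech complex of an admissible affinoid cover, combined with the side-switched Spencer resolution $(\ref{equation side-changing of the Spencer resolution})$, and which reduces to $\Omega_{X/K}^{\bullet}(X)$ when $X$ is affinoid. Since $I:\widehat{\mathcal{B}}c_K\rightarrow LH(\widehat{\mathcal{B}}c_K)$ is fully faithful but not essentially surjective, and $\operatorname{D}(\widehat{\mathcal{B}}c_K)\simeq \operatorname{D}(LH(\widehat{\mathcal{B}}c_K))$, the object $\operatorname{HH}^{n}(\wideparen{\D}_X)$ lies in the essential image of $I$ --- i.e.\ is an honest complete bornological space --- precisely when the de Rham differentials $d^{n-1}$ and $d^{n}$ of this complex are strict. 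For a general smooth rigid space this fails already at the affinoid level: the image of $d^{n-1}$ is dense but not closed in $\operatorname{Ker}(d^{n})$, so $C(J(\operatorname{HH}^{n}(\wideparen{\D}_X)))$ is the bornological quotient of $\operatorname{Ker}(d^{n})$ by a non-closed subspace, hence non-separated, its separated quotient being a proper quotient of $\operatorname{H}_{\operatorname{dR}}^{n}(X)$. Thus the induced bornology ``forgets'' part of the cohomology and carries no useful analytic content. The real work here --- and the step I expect to be the main obstacle --- is to substantiate the word ``pathological'' with one explicit example (e.g.\ an affinoid annulus or a suitable polydisc), where the failure of the de Rham differential to have closed image, and hence the non-separatedness of the resulting bornology, can be checked by hand.

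Finally, for the positive half of the remark I would invoke the Stein hypothesis. If $X$ is a smooth Stein space, coherent cohomology vanishes in positive degrees, so $R\Gamma(X,\Omega_{X/K}^{\bullet})=\Omega_{X/K}^{\bullet}(X)$, which is a strict complex of nuclear Fréchet spaces (as recorded in the introduction and proved in \cite{HochschildDmodules2}); a strict complex of nuclear Fréchet spaces has nuclear Fréchet cohomology, and strictness forces each $\operatorname{HH}^{n}(\wideparen{\D}_X)$ to be $I$ of that honest cohomology space, so $C$ and $J$ act by bornologification only and $C(J(\operatorname{HH}^{n}(\wideparen{\D}_X)))$ is a nuclear Fréchet space with underlying vector space $\operatorname{H}_{\operatorname{dR}}^{n}(X)$. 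I would only record this conclusion here and defer the verifications --- strictness of $\Omega_{X/K}^{\bullet}(X)$, the degeneration behaviour of the Hodge--de Rham spectral sequence of Theorem \ref{teo explicit computation of hochschild cohomology}, and the comparison with Taylor's Hochschild cohomology --- to \cite{HochschildDmodules2}.
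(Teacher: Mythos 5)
The statement you are treating is a remark, not a theorem: the paper offers no argument for it at all and defers every substantive claim to \cite{HochschildDmodules2}, so there is no proof of record to compare your proposal against. Judged on its own terms, your elaboration is consistent with the paper's intent and essentially correct as a sketch. Making the induced bornology precise as $C(J(\operatorname{HH}^{n}(\wideparen{\D}_X)))$, and recovering the underlying vector space $\operatorname{H}^{n}_{\operatorname{dR}}(X)$ from the factorisation of $\operatorname{Forget}$ through $C$ together with Corollary \ref{coro recovering de Rham cohomology (without topology) from Hochschild cohomology}, is a sensible reading of ``endow $\operatorname{H}^{n}_{\operatorname{dR}}(X)$ with a bornology,'' and your caveat that one should pass through $\mathcal{B}c_K$ rather than apply the completed reflector $LH(\widehat{\mathcal{B}}c_K)\rightarrow\widehat{\mathcal{B}}c_K$ (which could change the underlying vector space) is exactly the right technical point. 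Two small remarks. First, your strictness criterion is mildly overstated: $\operatorname{HH}^{n}$ lies in the essential image of $I$ precisely when $d^{n-1}$ is strict; strictness of $d^{n}$ plays no role, since $\operatorname{Ker}(d^{n})\rightarrow C^{n}$ is a strict monomorphism in any case. This does not affect your argument, which only uses the failure of strictness of $d^{n-1}$. Second, as you concede, ``pathological'' needs an explicit witness; the standard one is the closed unit disc $\Sp K\langle x\rangle$, where $d$ has dense but non-closed image in $\Omega^{1}$ because of the unbounded denominators $n+1$ occurring in antiderivatives, so the quotient bornology on the (infinite-dimensional) $\operatorname{H}^{1}_{\operatorname{dR}}$ is non-separated. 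Your Stein-case sketch reproduces exactly the facts the paper records in the introduction and defers to \cite{HochschildDmodules2} (strictness of $\Omega^{\bullet}_{X/K}(X)$ as a complex of nuclear Fréchet spaces and acyclicity of coherent sheaves), so deferring those verifications, as the remark itself does, is appropriate.
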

\subsection{Hochschild Homology}
Let $X$ be a separated smooth rigid analytic variety. In this section, we will use the machinery developed thus far to obtain an explicit description of the inner Hochschild homology complex  of diagonal $\mathcal{C}$-complexes on $X$.  We start with the following proposition:
\begin{prop}\label{prop derived complete tensor product and Kashiwara}
Let $\mathcal{M}\in \operatorname{D}_{\mathcal{C}}(\wideparen{\D}_X)$, and     $\mathcal{N}\in \operatorname{D}_{\mathcal{C}^{\Delta}}(\wideparen{\D}_{X^2})$. There is a canonical isomorphism in $\operatorname{D}(\operatorname{Shv}(X,\Indban))$:
\begin{equation*}
   \Delta^r_+\mathcal{M}\overrightarrow{\otimes}^{\mathbb{L}}_{\wideparen{\D}_{X^2}}\mathcal{N}=\Delta_*\left(\mathcal{M}\overrightarrow{\otimes}^{\mathbb{L}}_{\wideparen{\D}_{X}}\Delta^!\mathcal{N}\right)[\operatorname{dim}(X)].
\end{equation*}
\end{prop}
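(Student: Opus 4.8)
The plan is to mirror the proof of Proposition \ref{prop derived internal hom and kashiwara}, replacing the inner-hom manipulations by their analogues for the derived relative tensor product; the statement is essentially the Verdier-dual of that proposition, and the same transfer bimodule together with the Kashiwara-type input will do the work. As a preliminary reduction I would pass to the local situation in which $X=\Sp(A)$ is affinoid and admits an étale map $X\rightarrow\mathbb{A}^n_K$. This is legitimate: $\Delta^r_+$, $\Delta^!$, $\overrightarrow{\otimes}^{\mathbb{L}}$ and $\Delta_*$ all commute with restriction to admissible affinoid subdomains (by \ref{prop restriction of inner hom}, \ref{Lemma iso tensor extension restriction} and the base-change material of Section \ref{section closed immersions}), so an isomorphism of complexes of sheaves can be checked on such a cover.

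In this local setting I would use the smooth rigid space $Y\hookrightarrow X^2$ of Lemma \ref{Lemma support on the diagonal} together with the computation already recalled in the proof of \ref{prop derived internal hom and kashiwara}: the transfer bimodule $\wideparen{\D}_{X\rightarrow X^2}$ is isomorphic, as a left $\wideparen{\D}_X$-module, to $\wideparen{\D}_X\overrightarrow{\otimes}_KK\{\alpha_1,\cdots,\alpha_n\}$ (dually to the isomorphism $\wideparen{\D}_{X^2\leftarrow X}\cong K\{\alpha_1,\cdots,\alpha_n\}\overrightarrow{\otimes}_K\wideparen{\D}_X$ of right $\wideparen{\D}_X$-modules). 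Since $K\{\alpha_1,\cdots,\alpha_n\}$ is flat for $\overrightarrow{\otimes}_K$, this shows $\wideparen{\D}_{X\rightarrow X^2}$ is flat as a left $\wideparen{\D}_X$-module and, exactly as in \ref{prop derived internal hom and kashiwara}, satisfies the hypothesis of Proposition \ref{prop derived tensor-hom adjunction wrt a flat bimodule}: for a module $\mathcal{M}$ one has $\mathcal{M}\overrightarrow{\otimes}^{\mathbb{L}}_{\wideparen{\D}_X}\wideparen{\D}_{X\rightarrow X^2}=I(\mathcal{M}\overrightarrow{\otimes}_{\wideparen{\D}_X}\wideparen{\D}_{X\rightarrow X^2})$. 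In particular the right direct image can be written as $\Delta^r_+\mathcal{M}=\Delta_*\big(\mathcal{M}\overrightarrow{\otimes}^{\mathbb{L}}_{\wideparen{\D}_X}\wideparen{\D}_{X\rightarrow X^2}\big)$ and is homologically well-behaved.

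Then I would run the following chain of identifications. Since $\mathcal{N}\in\operatorname{D}_{\mathcal{C}^{\Delta}}(\wideparen{\D}_{X^2})$ has cohomology supported on $\Delta$, Lemma \ref{Lemma properties of inclusion of derived category of modules supported on a closed subspace} gives $\mathcal{N}=\Delta_*\Delta^{-1}\mathcal{N}$; Proposition \ref{prop derived tensor products and closed immersions} then moves the derived tensor product over $\wideparen{\D}_{X^2}$ inside $\Delta_*$, so that $\Delta^r_+\mathcal{M}\overrightarrow{\otimes}^{\mathbb{L}}_{\wideparen{\D}_{X^2}}\mathcal{N}$ becomes $\Delta_*\big((\mathcal{M}\overrightarrow{\otimes}^{\mathbb{L}}_{\wideparen{\D}_X}\wideparen{\D}_{X\rightarrow X^2})\overrightarrow{\otimes}^{\mathbb{L}}_{\Delta^{-1}\wideparen{\D}_{X^2}}\Delta^{-1}\mathcal{N}\big)$. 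Associativity of the derived tensor product, with $\wideparen{\D}_{X\rightarrow X^2}$ regarded as a $(\wideparen{\D}_X,\Delta^{-1}\wideparen{\D}_{X^2})$-bimodule, rewrites the argument of $\Delta_*$ as $\mathcal{M}\overrightarrow{\otimes}^{\mathbb{L}}_{\wideparen{\D}_X}\big(\wideparen{\D}_{X\rightarrow X^2}\overrightarrow{\otimes}^{\mathbb{L}}_{\Delta^{-1}\wideparen{\D}_{X^2}}\Delta^{-1}\mathcal{N}\big)$. Finally, from the defining formula for $f^!$ with $f=\Delta$ and $\operatorname{dim}(X^2)=2\operatorname{dim}(X)$, one has $\wideparen{\D}_{X\rightarrow X^2}\overrightarrow{\otimes}^{\mathbb{L}}_{\Delta^{-1}\wideparen{\D}_{X^2}}\Delta^{-1}\mathcal{N}=\Delta^!\mathcal{N}[\operatorname{dim}(X^2)-\operatorname{dim}(X)]=\Delta^!\mathcal{N}[\operatorname{dim}(X)]$; pulling the shift out of $\Delta_*$ yields $\Delta_*\big(\mathcal{M}\overrightarrow{\otimes}^{\mathbb{L}}_{\wideparen{\D}_X}\Delta^!\mathcal{N}\big)[\operatorname{dim}(X)]$. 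Naturality of each step makes the resulting isomorphism canonical.

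The main obstacle I expect is the associativity of the derived relative tensor product over sheaves of Ind-Banach algebras, together with the boundedness bookkeeping: the functor $\overrightarrow{\otimes}^{\mathbb{L}}$ of \ref{equation derived functor relative tensor product in boinded above derived categories} is only available on $\operatorname{D}^-$, whereas $\mathcal{C}$-complexes are merely "essentially bounded" (only finitely many cohomology sheaves survive after completing at each Fréchet–Stein level). I would deal with this by working with the left-heart functor $\widetilde{\otimes}^{\mathbb{L}}_{I(-)}$, which is defined on all of $\operatorname{D}$, proving associativity there by means of the "projective pair" resolutions used in \cite[Theorem 14.4.8]{Kashiwara2006} and the flatness of $\wideparen{\D}_{X\rightarrow X^2}$ noted above (so that no boundedness hypothesis is needed for that factor), and then transferring back along the identification $\overrightarrow{\otimes}^{\mathbb{L}}=\widetilde{\otimes}^{\mathbb{L}}_{I(-)}$ on the bounded-above truncations produced by the essential boundedness of $\mathcal{M}$ and $\mathcal{N}$. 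A secondary point requiring care is matching the right-module conventions for $\Delta^r_+$ — namely which transfer bimodule enters and on which side the flatness is invoked — and it is precisely the local splitting $\wideparen{\D}_{X\rightarrow X^2}\cong\wideparen{\D}_X\overrightarrow{\otimes}_KK\{\alpha_1,\cdots,\alpha_n\}$ that makes this bookkeeping go through, just as in \ref{prop derived internal hom and kashiwara}.
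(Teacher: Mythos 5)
Your proposal is correct and follows essentially the same route as the paper: write $\Delta^r_+\mathcal{M}=\Delta_*\bigl(\mathcal{M}\overrightarrow{\otimes}^{\mathbb{L}}_{\wideparen{\D}_X}\wideparen{\D}_{X\rightarrow X^2}\bigr)$, use Proposition \ref{prop derived tensor products and closed immersions} to pass the tensor product over $\wideparen{\D}_{X^2}$ through the closed immersion, apply associativity, and recognize $\Delta^!\mathcal{N}[\operatorname{dim}(X)]$ from the defining formula, finishing with the support argument. The extra machinery you invoke (local reduction, flatness of the transfer bimodule, left-heart bookkeeping) is not used in the paper's two-line argument, which treats associativity as formal, but it is harmless and addresses real implicit technicalities.
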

\begin{proof}
We have the following identities:
\begin{multline*}
\Delta^{-1}\left(\Delta^r_+\mathcal{M}\overrightarrow{\otimes}^{\mathbb{L}}_{\wideparen{\D}_{X^2}}\mathcal{N}\right)=\Delta^{-1}\left( \Delta_*\left(\mathcal{M}\overrightarrow{\otimes}^{\mathbb{L}}_{\wideparen{\D}_X}\wideparen{\D}_{X\rightarrow X^2}   \right)\overrightarrow{\otimes}^{\mathbb{L}}_{\wideparen{\D}_{X^2}}\mathcal{N}\right)\\= \left(\mathcal{M}\overrightarrow{\otimes}^{\mathbb{L}}_{\wideparen{\D}_X}\wideparen{\D}_{X\rightarrow X^2}   \right)\overrightarrow{\otimes}^{\mathbb{L}}_{\Delta^{-1}\wideparen{\D}_{X^2}}\Delta^{-1}\mathcal{N},    
\end{multline*}
where the second identity follows by Proposition \ref{prop derived tensor products and closed immersions}. Hence, we have:
\begin{multline*}
\Delta^{-1}\left(\Delta^r_+\mathcal{M}\overrightarrow{\otimes}^{\mathbb{L}}_{\wideparen{\D}_{X^2}}\mathcal{N}\right)= \mathcal{M}\overrightarrow{\otimes}^{\mathbb{L}}_{\wideparen{\D}_X}\left(\wideparen{\D}_{X\rightarrow X^2}   \overrightarrow{\otimes}^{\mathbb{L}}_{\Delta^{-1}\wideparen{\D}_{X^2}}\Delta^{-1}\mathcal{N}\right)\\ = \mathcal{M}\overrightarrow{\otimes}^{\mathbb{L}}_{\wideparen{\D}_{X}}\Delta^!\mathcal{N}[\operatorname{dim}(X)].
\end{multline*}
As $\Delta^r_+\mathcal{M}\overrightarrow{\otimes}^{\mathbb{L}}_{\wideparen{\D}_{X^2}}\mathcal{N}$ has support contained in the diagonal, we are done.
\end{proof}
This allows us to calculate the Hochschild homology of diagonal $\mathcal{C}$-complexes:
\begin{teo}\label{teo expression of Hochschild homology sheaves}
Let $X$ be a  smooth and separated rigid space. For any diagonal $\mathcal{C}$-complex 
$\mathcal{M}^{\bullet}\in \operatorname{D}^{\operatorname{Bi}}_{\mathcal{C}}(\wideparen{\D}_X)_{\Delta}$ we have:
\begin{equation*}
    \mathcal{HH}_{\bullet}(\wideparen{\D}_X,\mathcal{M}^{\bullet})=\wideparen{\D}_X\overrightarrow{\otimes}^{\mathbb{L}}_{\wideparen{\D}_X^e}\mathcal{M}^{\bullet}=\Omega_X\overrightarrow{\otimes}^{\mathbb{L}}_{\wideparen{\D}_X}\left(\Omega_X\overrightarrow{\otimes}_{\OX_X}\mathcal{M}^{\bullet}[\mathcal{I}_{\Delta}]\right)[\operatorname{dim}(X)].
\end{equation*}
In particular the following identities hold:
\begin{enumerate}[label=(\roman*)]
\item $\mathcal{HH}_{\bullet}(\wideparen{\D}_X,\mathcal{M}^{\bullet})=\mathcal{HH}^{\bullet}(\wideparen{\D}_X,\mathcal{M}^{\bullet})[2\operatorname{dim}(X)]$.
\item $\operatorname{HH}_{\bullet}(\wideparen{\D}_X,\mathcal{M}^{\bullet})=\operatorname{HH}^{\bullet}(\wideparen{\D}_X,\mathcal{M}^{\bullet})[2\operatorname{dim}(X)]$.
\item  $\operatorname{HH}_{n}(\wideparen{\D}_X,\mathcal{M}^{\bullet})=\operatorname{HH}^{2\operatorname{dim}(X)-n}(\wideparen{\D}_X,\mathcal{M}^{\bullet})$ for all $n\in\mathbb{Z}$.
\end{enumerate}
Furthermore, for $\mathcal{M}=\wideparen{\D}_X$, we have:
\begin{equation*}
    \mathcal{HH}_{\bullet}(\wideparen{\D}_X,\wideparen{\D}_X)=\Omega_{X}\overrightarrow{\otimes}^{\mathbb{L}}_{\wideparen{\D}_X}\OX_X[\operatorname{dim}(X)]=\Omega_{X}^{\bullet}[2\operatorname{dim}(X)].
\end{equation*}
\end{teo}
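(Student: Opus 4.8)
The plan is to derive Theorem \ref{teo expression of Hochschild homology sheaves} by combining Corollary \ref{coro hochschild cohomology and the immersion functor}(ii) with the Kashiwara-type formula for derived tensor products (Proposition \ref{prop derived complete tensor product and Kashiwara}), mirroring the structure of the proof of the cohomological statement (Theorem \ref{teo explicit computation of hochschild cohomology}). First I would recall from Corollary \ref{coro hochschild cohomology and the immersion functor}(ii) that
\begin{equation*}
\mathcal{HH}_{\bullet}(\mathcal{M}^{\bullet})=\Delta^{-1}\left(\Delta_{*}^{\operatorname{S}_r}\wideparen{\D}_X\overrightarrow{\otimes}^{\mathbb{L}}_{\wideparen{\D}_{X^2}}\Delta_{*}^{\operatorname{S}}\mathcal{M}^{\bullet}\right),
\end{equation*}
which gives the first displayed equality once one checks $\Delta_*^{\operatorname{S}_r}$ and $\Delta_*^{\operatorname{S}}$ indeed land in the categories where Proposition \ref{prop derived complete tensor product and Kashiwara} applies; this is exactly Corollary \ref{coro immersion functor and C-complexes} together with Proposition \ref{prop immersion functor and C-complexes}, so that $\Delta_*^{\operatorname{S}}\mathcal{M}^{\bullet}\in\operatorname{D}_{\mathcal{C}^{\Delta}}(\wideparen{\D}_{X^2})$.

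Next I would identify the two transfer objects. By Corollary \ref{coro image of DX under right immersion functor} we have $\Delta_*^{\operatorname{S}_r}(\wideparen{\D}_X)=\Delta_+^r\Omega_X$, so the right factor of the tensor product over $\wideparen{\D}_{X^2}$ is precisely of the form $\Delta_+^r\mathcal{M}$ with $\mathcal{M}=\Omega_X$, and Proposition \ref{prop derived complete tensor product and Kashiwara} yields
\begin{equation*}
\Delta_+^r\Omega_X\overrightarrow{\otimes}^{\mathbb{L}}_{\wideparen{\D}_{X^2}}\Delta_*^{\operatorname{S}}\mathcal{M}^{\bullet}=\Delta_*\left(\Omega_X\overrightarrow{\otimes}^{\mathbb{L}}_{\wideparen{\D}_X}\Delta^!\Delta_*^{\operatorname{S}}\mathcal{M}^{\bullet}\right)[\operatorname{dim}(X)].
\end{equation*}
Applying $\Delta^{-1}$ and using that $\Delta^!\Delta_*^{\operatorname{S}}\mathcal{M}^{\bullet}=\Omega_X\overrightarrow{\otimes}_{\OX_X}\mathcal{M}^{\bullet}[\mathcal{I}_{\Delta}]$ — which is the computation already carried out inside the proof of Corollary \ref{coro expression of Hochschild cohomology sheaves} and Theorem \ref{teo explicit computation of hochschild cohomology} — gives the second displayed equality. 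Comparing with the formula for $\mathcal{HH}^{\bullet}(\mathcal{M}^{\bullet})$ from Theorem \ref{teo explicit computation of hochschild cohomology}, namely $\Omega_X\overrightarrow{\otimes}^{\mathbb{L}}_{\wideparen{\D}_X}(\Omega_X\overrightarrow{\otimes}_{\OX_X}\mathcal{M}^{\bullet}[\mathcal{I}_{\Delta}])[-\operatorname{dim}(X)]$, the two differ exactly by the shift $[2\operatorname{dim}(X)]$, which proves (i); then (ii) follows by applying $R\Gamma(X,-)$, and (iii) is the translation of (ii) into cohomology groups with the sign conventions $\operatorname{HH}_n=\operatorname{H}^{-n}$, $\operatorname{HH}^n=\operatorname{H}^n$.

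For the last sentence, I would specialize $\mathcal{M}^{\bullet}=\wideparen{\D}_X$, which is a diagonal $\mathcal{C}$-complex by Proposition \ref{prop sheaf of completed differential operators is diagonal bimodule}; then $\Delta^!\Delta_*^{\operatorname{S}}\wideparen{\D}_X=\OX_X$ by Proposition \ref{prop image of D under side changing}, so $\mathcal{HH}_{\bullet}(\wideparen{\D}_X)=\Omega_X\overrightarrow{\otimes}^{\mathbb{L}}_{\wideparen{\D}_X}\OX_X[\operatorname{dim}(X)]$, and by Remark \ref{remark de rham complex of a variety as the de rham complex of its structure sheaf} this equals $\Omega_X^{\bullet}[2\operatorname{dim}(X)]$. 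The main obstacle I anticipate is not any single step but rather the careful bookkeeping needed to verify the hypotheses of Proposition \ref{prop derived complete tensor product and Kashiwara} in the globalized (non-affinoid, non-free) setting — in particular checking that $\Delta_*^{\operatorname{S}_r}\wideparen{\D}_X$ genuinely agrees with $\Delta_+^r\Omega_X$ as a complex in $\operatorname{D}_{\mathcal{C}}(\wideparen{\D}_{X^2}^{\op})$ and that side-switching commutes with the relevant derived tensor products, for which one invokes Propositions \ref{Proposition completed tensor and side-switching operators} and \ref{prop equivalence left and right E(L)-modules}; once those compatibilities are in place the argument is essentially formal.
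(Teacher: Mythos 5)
Your proposal is correct and follows exactly the same route as the paper's own proof: the first equality via Corollary \ref{coro hochschild cohomology and the immersion functor}, the identification $\Delta_*^{\operatorname{S}_r}\wideparen{\D}_X=\Delta_+^r\Omega_X$ from Corollary \ref{coro image of DX under right immersion functor}, the Kashiwara-type tensor formula of Proposition \ref{prop derived complete tensor product and Kashiwara}, and then comparison with Theorem \ref{teo explicit computation of hochschild cohomology} for the shift statements. The additional compatibility checks you flag are exactly the ones the paper relies on implicitly, so nothing is missing.
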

\begin{proof}
The first identity is Corollary \ref{coro Hh as derived tensor product}, the second one is a consequence of Corollaries \ref{coro hochschild cohomology and the immersion functor}, \ref{coro image of DX under right immersion functor}, and Proposition \ref{prop derived complete tensor product and Kashiwara}. The rest follows by the description of the inner Hochschild cohomology complex from Theorem \ref{teo explicit computation of hochschild cohomology}.
\end{proof}
Thus, we obtain a duality between Hochschild cohomology and homology for $\wideparen{\D}$-modules on separated smooth rigid analytic spaces. This can be regarded as a $p$-adic version of the classical duality between Hochschild cohomology and homology of  $\D$-modules on smooth algebraic varieties and smooth complex analytic spaces.\\  
Furthermore, for each $n\geq 0$ we have the following identity:
\begin{equation*}
 \operatorname{HH}_n(\wideparen{\D}_X)=\operatorname{H}^{2\operatorname{dim}(X)-n}\left(R\Gamma(X,\Omega_{X}^{\bullet})\right).
\end{equation*}
This was obtained for affine algebraic varieties over a  characteristic zero field, Stein complex manifolds, and compact $C^{\infty}$-manifolds by Wodzicki in $\textnormal{\cite[Theorem 2]{wodzicki1987cyclic}}$. The  result for affine algebraic varieties was also  shown by Kassel-Mitschi (see \cite[pp. 19]{pseudodiff}),  and for smooth real manifolds by Brylinski-Getzler in \cite[pp. 18]{pseudodiff}. As a consequence of Theorem \ref{teo expression of Hochschild homology sheaves}, we recover the following classical result:
\begin{coro}
 Let $X$ be a separated and smooth rigid analytic space. We have:
 \begin{equation*}
     \operatorname{HH}^{n}(\wideparen{\D}_X)=0 \textnormal{ for all }n> 2\operatorname{dim}(X).
 \end{equation*}
 In particular, $\operatorname{H}^n_{\operatorname{dR}}(X)=0$ for all $n> 2\operatorname{dim}(X)$.   
\end{coro}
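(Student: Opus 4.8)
The final statement is an immediate consequence of the explicit computations of Hochschild cohomology and homology obtained in the preceding sections, together with the duality relating them. Here is how I would organize the argument.

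\textbf{Setup.} By Proposition \ref{prop sheaf of completed differential operators is diagonal bimodule}, $\wideparen{\D}_X$ is an object of $\mathcal{C}_B^{\operatorname{Bi}}(\wideparen{\D}_X)_{\Delta}$, hence in particular a diagonal $\mathcal{C}$-complex. So Theorem \ref{teo explicit computation of hochschild cohomology} applies to $\mathcal{M}^{\bullet}=\wideparen{\D}_X$, giving the identification $\operatorname{HH}^{\bullet}(\wideparen{\D}_X)=R\Gamma(X,\Omega_{X}^{\bullet})$ in $\operatorname{D}(\widehat{\mathcal{B}}c_K)$. The de Rham complex $\Omega_{X/K}^{\bullet}$ is concentrated in degrees $0,1,\dots,\operatorname{dim}(X)$, so it is a fortiori concentrated in degrees $\le 2\operatorname{dim}(X)$ (indeed in degrees $\le \operatorname{dim}(X)$, but the sharper bound $2\operatorname{dim}(X)$ is what is needed for the symmetry with homology).

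\textbf{Main step.} I would invoke Theorem \ref{teo expression of Hochschild homology sheaves}, specifically part $(iii)$: $\operatorname{HH}_{n}(\wideparen{\D}_X)=\operatorname{HH}^{2\operatorname{dim}(X)-n}(\wideparen{\D}_X)$ for all $n\in\mathbb{Z}$, equivalently $\operatorname{HH}_{n}(\wideparen{\D}_X)=\operatorname{H}^{2\operatorname{dim}(X)-n}(R\Gamma(X,\Omega_{X}^{\bullet}))$. Since $\Omega_{X}^{\bullet}$ is a complex supported in cohomological degrees $[0,\operatorname{dim}(X)]$ and $R\Gamma(X,-)$ is computed via an appropriate acyclic resolution or \v{C}ech complex, the hypercohomology $\operatorname{H}^{m}(R\Gamma(X,\Omega_{X}^{\bullet}))$ vanishes for $m>2\operatorname{dim}(X)$: one has the hypercohomology spectral sequence $E_1^{p,q}=\operatorname{H}^q(X,\Omega_X^p)\Rightarrow \operatorname{H}^{p+q}(R\Gamma(X,\Omega_{X}^{\bullet}))$ (this is precisely the Hodge-de Rham spectral sequence appearing at the end of Theorem \ref{teo explicit computation of hochschild cohomology}), and $E_1^{p,q}=0$ whenever $p>\operatorname{dim}(X)$ or $q>\operatorname{dim}(X)$, since $X$ has dimension $\operatorname{dim}(X)$ and sheaf cohomology of any sheaf vanishes above the dimension. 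Hence $E_\infty^{p,q}=0$ for $p+q>2\operatorname{dim}(X)$, so $\operatorname{HH}^{m}(\wideparen{\D}_X)=0$ for $m>2\operatorname{dim}(X)$. The statement about de Rham cohomology then follows from Corollary \ref{coro recovering de Rham cohomology (without topology) from Hochschild cohomology}: $\operatorname{H}_{\operatorname{dR}}^{n}(X)=\operatorname{Forget}(J\operatorname{HH}^{n}(\wideparen{\D}_X))$, and applying the exact functors $J$ and $\operatorname{Forget}$ to a vanishing object yields a vanishing object.

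\textbf{Remark on the obstacle.} There is essentially no obstacle here, since all the substantive work was done in Theorems \ref{teo explicit computation of hochschild cohomology} and \ref{teo expression of Hochschild homology sheaves} and Corollary \ref{coro recovering de Rham cohomology (without topology) from Hochschild cohomology}; the only point requiring a word of care is the cohomological amplitude bound for $R\Gamma(X,-)$ applied to a complex of sheaves on a rigid analytic space of dimension $\operatorname{dim}(X)$. For this I would cite the standard Grothendieck-type vanishing: for a (quasi-compact quasi-separated, or more generally paracompact) rigid analytic space of dimension $d$, sheaf cohomology vanishes in degrees $>d$; alternatively, for the purposes of this paper one can reduce to an admissible affinoid cover and use \v{C}ech cohomology with the convention that the relevant \v{C}ech complex for a suitable cover has length $\le d$, or simply observe that this vanishing is already implicitly used throughout \cite{bode2021operations} and \cite{ardakov2019}. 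With that in hand, the two-line spectral sequence argument completes the proof.
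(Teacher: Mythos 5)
Your argument diverges from the paper's at the crucial step, and the step where it diverges is where the gap lies. The paper does not prove the vanishing by bounding the hypercohomology of $\Omega_{X}^{\bullet}$; it runs the duality of Theorem \ref{teo expression of Hochschild homology sheaves} in the opposite direction. For $n>2\operatorname{dim}(X)$ it identifies $\operatorname{HH}^{n}(\wideparen{\D}_X)$ with a Hochschild \emph{homology} group of $\wideparen{\D}_X$ in degree $2\operatorname{dim}(X)-n<0$, i.e.\ with $\operatorname{H}^{n-2\operatorname{dim}(X)}\bigl(\wideparen{\D}_X\overrightarrow{\otimes}^{\mathbb{L}}_{\wideparen{\D}_X^e}\wideparen{\D}_X\bigr)$, and then uses only that a left derived tensor product of objects concentrated in degree $0$ has no cohomology in positive degrees. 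No statement about the cohomological dimension of $X$ enters. You instead go from cohomology to homology and back to the hypercohomology of $\Omega_X^{\bullet}$, which returns you to the original problem and forces you to prove the vanishing of $R\Gamma$ in high degrees directly.

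That is where your proof has a genuine gap: everything rests on the assertion $E_1^{p,q}=\operatorname{H}^q(X,\Omega_X^p)=0$ for $q>\operatorname{dim}(X)$, which you dismiss as "standard Grothendieck-type vanishing". In the generality of the statement, $X$ is only separated and smooth — not quasi-compact, not paracompact — so none of the hypotheses under which you propose to cite the vanishing is available, and reducing to an admissible affinoid cover only bounds \v{C}ech degrees by the size of the cover, not by $\operatorname{dim}(X)$. Moreover the groups $\operatorname{H}^q(X,\Omega_X^p)$ in the Hodge-de Rham spectral sequence of Theorem \ref{teo explicit computation of hochschild cohomology} are objects of $LH(\widehat{\mathcal{B}}c_K)$, computed in $\operatorname{Shv}(X,LH(\widehat{\mathcal{B}}c_K))$; even granting the classical vanishing for the underlying abelian sheaves, you would still have to transport it through the comparison functors $J$ and $\operatorname{Forget}$ (using their exactness and faithfulness together with Proposition \ref{prop comparison of ext functors}), which you do not do. Finally, the paper explicitly advertises this corollary as an \emph{alternative} proof of the vanishing of de Rham cohomology above degree $2\operatorname{dim}(X)$, so importing a cohomological-dimension bound as an unproved input both defeats the purpose and leaves the argument incomplete. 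Replacing your spectral-sequence step by the paper's observation that $\wideparen{\D}_X\overrightarrow{\otimes}^{\mathbb{L}}_{\wideparen{\D}_X^e}\wideparen{\D}_X$ is concentrated in non-positive degrees closes the gap.
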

\begin{proof}
Choose some $n> 2\operatorname{dim}(X)$. By Theorem \ref{teo expression of Hochschild homology sheaves} we have:
\begin{equation*}
   \operatorname{HH}^{n}(\wideparen{\D}_X)=\operatorname{HH}_{n-2\operatorname{dim}(X)}(\wideparen{\D}_X)= \operatorname{H}^{n-2\operatorname{dim}(X)}\left(\wideparen{\D}_X\overrightarrow{\otimes}^{\mathbb{L}}_{\wideparen{\D}_X^e}\wideparen{\D}_X \right).
\end{equation*}
 As $\wideparen{\D}_X\overrightarrow{\otimes}^{\mathbb{L}}_{\wideparen{\D}_X^e}\wideparen{\D}_X$ is trivial on positive degrees, this shows $\operatorname{HH}^{n}(\wideparen{\D}_X)=0$.
\end{proof}
\appendix
\section{}
\subsection{Examples of co-admissible diagonal bimodules}\label{appendix examples of co-admissible diagonal bimodules}
The goal of this appendix is producing some examples of objects in $\mathcal{C}_B^{\operatorname{Bi}}(\wideparen{\D}_X)_{\Delta}$. We start by recalling the following well-known theorem:
\begin{teo}\label{teo characterization of integrable connections}
Let $\mathcal{V}$ be a co-admissible $\wideparen{\D}_X$-module which is also a coherent $\OX_X$-module, 
 then $\mathcal{V}$ is a vector bundle.    
\end{teo}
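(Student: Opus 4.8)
The statement is local on $X$, so we may assume $X = \Sp(A)$ is a smooth affinoid space admitting an étale map $X \to \mathbb{A}^n_K$, with $\mathcal{T}_{X/K}$ free on a basis $\partial_1, \dots, \partial_n$. Let $\mathcal{V}$ be a co-admissible $\wideparen{\D}_X$-module which is coherent as an $\OX_X$-module; write $V = \mathcal{V}(X)$, a finitely generated $A$-module. The goal is to show $V$ is projective over $A$, equivalently (by the Quillen--Suslin--type results for affinoid algebras, or since $A$ is regular) that $V$ is locally free. The plan is to exhibit, locally around each point of $\Sp(A)$, an $\OX_X$-basis by using the connection to rigidify, mimicking the classical argument that a module with an integrable connection over a smooth base is locally free.

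First I would recall that the $\wideparen{\D}_X$-action on $\mathcal{V}$ restricts to a $\D_X$-action (since $\D_X \to \wideparen{\D}_X$ has dense image and $\mathcal{V}$ is a complete bornological module), hence to an integrable connection $\nabla : \mathcal{V} \to \Omega^1_{X/K} \overrightarrow{\otimes}_{\OX_X} \mathcal{V}$, i.e. $\OX_X$-linear operators $\partial_i : V \to V$ satisfying the Leibniz rule $\partial_i(av) = \partial_i(a)v + a\partial_i(v)$ and $[\partial_i, \partial_j] = 0$. Now fix a point $x \in \Sp(A)$ with residue field $k(x)$, let $\mathfrak{m}_x$ be the corresponding maximal ideal, and pass to the complete local ring $\widehat{A}_x$, which by smoothness is isomorphic to a power series ring $k(x)\langle\!\langle t_1, \dots, t_n\rangle\!\rangle$ (or its completion), with the $\partial_i$ corresponding to $\partial/\partial t_i$ after a coordinate change. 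The key step is the standard ``formal integration of a connection'': choose a $k(x)$-basis of $V/\mathfrak{m}_x V$, lift to elements $e_1, \dots, e_r \in V \otimes_A \widehat{A}_x$, and use the flat sections / Taylor expansion argument — solve the system $\partial_i(v) = 0$ order by order in the $\mathfrak{m}_x$-adic filtration — to produce a $\widehat{A}_x$-basis of $V \otimes_A \widehat{A}_x$ consisting of horizontal sections. This shows $V \otimes_A \widehat{A}_x$ is free; since this holds at every point and $V$ is finitely generated over the noetherian ring $A$, faithful flatness of completion and the local criterion for freeness give that $V$ is locally free, hence a vector bundle. Coherence of $\mathcal{V}$ as an $\OX_X$-module then propagates this to all of $X$ via Kiehl's theorem.

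The main obstacle I anticipate is the $p$-adic convergence issue in the formal integration step: unlike the classical analytic or algebraic case, solving $\partial_i(v) = 0$ recursively introduces denominators (one divides by integers when integrating $t^k \mapsto t^{k+1}/(k+1)$), so the naive Taylor series for a flat section need not converge $p$-adically. The correct fix is not to produce globally flat sections but to argue more carefully: one only needs freeness of $V \otimes_A \widehat{A}_x$, and here one can instead use that $\mathcal{V}$ being co-admissible forces $V$ to be $\mathfrak{m}_x$-adically separated with well-controlled associated graded, and invoke the structure of the Spencer/de Rham complex $\Omega^1_{X/K} \overrightarrow{\otimes}_{\OX_X} \mathcal{V}$ to show the connection has no higher cohomology locally, which by a dimension count forces $V$ to be $\OX_X$-locally free. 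Alternatively — and this is likely the cleanest route for this paper — one observes that a coherent $\OX_X$-module with integrable connection is a vector bundle by the rigid-analytic analogue of the classical theorem, for which a reference (e.g. in the work of Ardakov--Wadsley or the foundational literature on $\wideparen{\D}$-modules) can be cited directly; I would structure the proof to reduce to such a citation after establishing that the co-admissible $\wideparen{\D}_X$-module structure does induce an honest integrable connection on the coherent sheaf $\mathcal{V}$.
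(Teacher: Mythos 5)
Your overall route --- reduce to a smooth affinoid admitting an \'etale map to $\mathbb{A}^n_K$, extract an integrable connection from the $\wideparen{\D}_X$-action, prove freeness of $V\otimes_A\widehat{A}_{\mathfrak{m}_x}$ at every closed point, and descend via faithful flatness of $A_{\mathfrak{m}_x}\to\widehat{A}_{\mathfrak{m}_x}$ --- is exactly the paper's proof, which observes that the argument of \cite[Theorem 1.4.10]{hotta2007d} carries over to the (completed) stalks of a smooth affinoid and then performs precisely the faithful-flatness descent you describe. So the skeleton is correct and matches the paper.

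The trouble is in how you complete the key step. The ``$p$-adic convergence issue'' you flag is not actually present: the integration of the connection takes place in the formal completion $\widehat{A}_{\mathfrak{m}_x}\cong k(x)[[t_1,\dots,t_n]]$, a power series ring over a characteristic-zero field, where the flat-section series $\sum_\alpha \tfrac{(-t)^\alpha}{\alpha!}\,\partial^\alpha v$ is a perfectly well-defined \emph{formal} power series --- the denominators $\alpha!$ are units of $k(x)$ and no $p$-adic estimate is needed, because (as you yourself note) one only requires formal freeness, not convergent horizontal sections; equivalently one can run the Nakayama-type argument of \cite{hotta2007d}, which only differentiates and never integrates. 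Descent to $A_{\mathfrak{m}_x}$ is then by faithful flatness of completion, not by convergence. By contrast, the ``fix'' you substitute for this step --- that co-admissibility controls the associated graded, that the Spencer/de Rham complex has no higher local cohomology, and that a ``dimension count'' then forces local freeness --- is not an argument: none of these assertions is justified, and vanishing of local de Rham cohomology of $\mathcal{V}$ would not in any case imply $\OX_X$-local freeness. Delete that paragraph, carry out (or cite) the formal flat-sections argument over $k(x)[[t_1,\dots,t_n]]$, and your proof coincides with the paper's.
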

\begin{proof}
It suffices to show the case where $X=\Sp(A)$ is affinoid, with an étale map $X\rightarrow \mathbb{A}^n_k$. As $\mathcal{V}$ is a coherent module, for $x\in X$ we have $\mathcal{V}_x=\mathcal{V}(X)\otimes_{\OX_X(X)}\OX_{X,x}$. Then, the proof of \cite[1.4.10]{hotta2007d} shows that $\mathcal{V}_x$ is a finite-free module over $\OX_{X,x}$. Thus, $\widehat{\mathcal{V}}_{x}=\mathcal{V}(X)\otimes_{\OX_X(X)}\widehat{\OX}_{X,x}$ is a finite-free module over the maximal adic completion $\widehat{\OX}_{X,x}$. Let $\mathfrak{m}_x\subset A$ be the maximal ideal associated to the point $x\in X$. Then, by \cite[4.1.2]{bosch2014lectures}, we have that $\widehat{A}_{\mathfrak{m}_x}=\widehat{\OX}_{X,x}$. As $A$ is noetherian, the map $A_{\mathfrak{m}_x}\rightarrow\widehat{A}_{\mathfrak{m}_x}$ is faithfully flat. In particular, $\widehat{\mathcal{V}}_{x}$ is finite-free if and only if $\mathcal{V}(X)\otimes_{A}A_{\mathfrak{m}_x}$ is finite-free. Applying the discussion above, it follows that $\mathcal{V}(X)\otimes_{A}A_{\mathfrak{m}_x}$ is finite free for every maximal ideal of $A$. Thus, $\mathcal{V}(X)$ is finite and locally free. Equivalently, $\mathcal{V}$ is a vector bundle.
\end{proof}
\begin{defi}\label{defi integrable connections}
We define $\operatorname{Conn}(X)$ as the full subcategory of $\mathcal{C}(\wideparen{\D}_X)$ given by the $\OX_X$-coherent modules. We call elements $\mathcal{V}\in \operatorname{Conn}(X)$ integrable connections.
\end{defi}
By \cite[Theorem 7.3]{ardakov2015d} there is an equivalence of categories between the categories of $\OX_X$-coherent co-admissible $\wideparen{\D}_X$-modules, and the category of $\OX_X$-coherent $\D_X$-modules. In particular, and integrable connection is equivalent to a vector bundle $\mathcal{V}$, equipped with a bounded $K$-linear map: 
\begin{equation*}
    \nabla:\mathcal{V}\rightarrow \mathcal{V}\otimes_{\OX_X}\Omega^1_{X/K},
\end{equation*}
called an integrable (flat) connection on $\mathcal{V}$. This map needs to satisfy the following conditions: 
\begin{enumerate}[label=(\roman*)]
\item  $\nabla(fv)=v\otimes d(f)+f\nabla(v)$,  for an admissible open $U\subset X$, and sections $f\in \OX_X(U),v\in\mathcal{V}(U)$.
    \item The bounded morphism: $\nabla^2:\mathcal{V}\rightarrow \mathcal{V}\otimes_{\OX_X}\Omega^2_{X/K}$, defined by:
    \begin{equation*}
    \nabla^2:=(\nabla\otimes 1)\circ\nabla + \nabla\otimes d,    
    \end{equation*}
    satisfies $\nabla^2=0$. This is called the integrability condition.
    
\end{enumerate}
Notice that Theorem \ref{teo characterization of integrable connections} implies that $\operatorname{Conn}(X)$ is an abelian category. We will now show that every integrable connection on $X$ induces a co-admissible diagonal bimodule in a natural way. In order to do so, we will need an explicit description of the pushforward along $\Delta:X\rightarrow X^2$ in the local case. We borrow this description from \cite[Example 1.3.5]{hotta2007d}.\\
Let $X=\Sp(A)$ be an affinoid variety with an étale map $X\rightarrow \mathbb{A}^n_K$ induced by sections $f_1,\cdots,f_n$. These sections yield a basis of $\Omega_{X}^1(X)$, and we let
$\partial_1,\cdots,\partial_n\subset \mathcal{T}_{X/K}(X)$ be its dual basis. Define the following vector fields in $\mathcal{T}_{X^2/K}(X^2)$:
\begin{equation*}
  \alpha_i=\partial_i\otimes 1 -1\otimes\partial_i, \textnormal{ }  \beta_i=\partial_i\otimes1+1\otimes\partial_i, \textnormal{ for }1\leq i \leq n.
\end{equation*}
Then $ \mathcal{B}:=\{ \alpha_1,\cdots,\alpha_n,\beta_1,\cdots,\beta_n \}$ is a $\OX_{X^2}(X^2)$-basis of $\mathcal{T}_{X^2/K}(X^2)$.
Furthermore, this is the dual basis of the differentials of
the following family of rigid functions:
\begin{equation*}
    \{ f_1\otimes 1-1\otimes f_1,\cdots,f_n\otimes 1-1\otimes f_n,f_1\otimes 1+1\otimes f_1, \cdots,f_n\otimes 1 + 1\otimes f_n\}\subset \OX_{X^2}(X^2).
\end{equation*}

Recall the smooth rigid variety $Y$ from Lemma \ref{Lemma support on the diagonal}, and consider the canonical closed immersion $i:Y\rightarrow X^2$. By definition, $Y$ is the zero locus of the sections $f_1\otimes 1-1\otimes f_1,\cdots, f_n\otimes 1 - 1\otimes f_n$. Hence, as shown in  \cite[Lemma 7.9]{bode2021operations} we have an isomorphism of right $\wideparen{\D}_Y$-modules:
\begin{equation}\label{equation decomposition transfer bimodule}
    \wideparen{\D}_{X^2\leftarrow Y}\cong K\{\alpha_1,\cdots,\alpha_n \}\overrightarrow{\otimes}_K\wideparen{\D}_{Y}.
\end{equation}
We can use this isomorphism to give an explicit description of the left action of $i^{-1}\wideparen{\D}_{X^2}$ on $\wideparen{\D}_{X^2\leftarrow Y}$. Indeed, we have a decomposition of sheaves of Ind-Banach $K$-vector spaces:
\begin{equation}\label{equation decomposition of DX2}
\wideparen{\D}_{X^2}\cong K\{\alpha_1,\cdots,\alpha_n,\beta_1,\cdots,\beta_n \}\overrightarrow{\otimes}_K\OX_{X^2}.   
\end{equation}
Regard $K\{\alpha_1,\cdots,\alpha_n,\beta_1,\cdots,\beta_n \}\overrightarrow{\otimes}_K\OX_{X^2}$ as a sheaf of Ind-Banach $K$-algebras with respect to the unique structure making the previous identification an algebra isomorphism.\bigskip

Identify $K\{\alpha_1,\cdots,\alpha_n\}$ with the sheaf
defined on any affinoid subdomain $U\subset X^2$ by:
\begin{equation*}
    K\{\alpha_1,\cdots,\alpha_n\}(U)=K\{\alpha_1,\cdots,\alpha_n\}\overrightarrow{\otimes}_K1\subset \wideparen{\D}_{X^2}(U).
\end{equation*}
By construction of $\mathcal{B}$, we have $[\alpha_i,\alpha_j]=0$ for all $1\leq i,j\leq n$. Thus, $K\{\alpha_1,\cdots,\alpha_n\}$, is a sheaf of subalgebras of $\wideparen{\D}_{X^2}$. Furthermore, 
$K\{\alpha_1,\cdots,\alpha_n\}$ is a sheaf of commutative Ind-Banach $K$-algebras. Analogously, we may regard  $K\{\beta_1,\cdots,\beta_n \}$ as a sheaf of commutative subalgebras of $\wideparen{\D}_{X^2}$.\bigskip

Define $\wideparen{\D}':=K\{\beta_1,\cdots,\beta_n \}\overrightarrow{\otimes}_K\OX_{X^2}$. Notice that this is a subsheaf of $\wideparen{\D}_{X^2}$ which contains the unit  of the sheaf of algebras $\wideparen{\D}_{X^2}$. Furthermore, $\wideparen{\D}'$ is closed under the product in $\wideparen{\D}_{X^2}$. Hence, $\wideparen{\D}'$ is canonically a sheaf of Ind-Banach $K$-algebras, and the inclusion $\wideparen{\D}'\rightarrow \wideparen{\D}_{X^2}$ is a morphism of sheaves of Ind-Banach $K$-algebras.\bigskip

We may rewrite $(\ref{equation decomposition of DX2})$  as 
$\wideparen{\D}_{X^2}\cong K\{\alpha_1,\cdots,\alpha_n\}\overrightarrow{\otimes}_K\wideparen{\D}'$, and this is an identity of right $\wideparen{\D}'$-modules. Therefore,  we have an isomorphism of right $i^{-1}\wideparen{\D}'$-modules:
\begin{multline}\label{equation decomposition pullback od differentials on product}
    i^{-1}\wideparen{\D}_{X^2}=i^{-1}\left( K\{\alpha_1,\cdots,\alpha_n,\beta_1,\cdots,\beta_n \}\overrightarrow{\otimes}_K\OX_{X^2}\right)\\
    =K\{\alpha_1,\cdots,\alpha_n\}\overrightarrow{\otimes}_Ki^{-1}\wideparen{\D}'.
\end{multline}
Hence, in order to define the action of $i^{-1}\wideparen{\D}_{X^2}$ on $\wideparen{\D}_{X^2\leftarrow Y}$, it suffices to give actions of 
$K\{\alpha_1,\cdots,\alpha_n\}$ and $i^{-1}\wideparen{\D}$. The action of $K\{\alpha_1,\cdots,\alpha_n\}$ on $\wideparen{\D}_{X^2\leftarrow Y}$ is given by left multiplication using $(\ref{equation decomposition transfer bimodule})$. For the action of $i^{-1}\wideparen{\D}'$, notice that we have:
\begin{equation*}  i^{-1}\wideparen{\D}'\overrightarrow{\otimes}_{i^{-1}\OX_{X^2}}\OX_Y=K\{\beta_1,\cdots,\beta_n \}\overrightarrow{\otimes}_K\OX_Y\cong \wideparen{\D}_Y.
\end{equation*}
Hence, we have a canonical action of $i^{-1}\wideparen{\D}'$ on $\wideparen{\D}_Y$ given by left multiplication. The action of $i^{-1}\wideparen{\D}'$ on $\wideparen{\D}_{X^2\leftarrow Y}$ is defined via the following composition:
\begin{multline*}
 i^{-1}\wideparen{\D}'\overrightarrow{\otimes}_K\wideparen{\D}_{X^2\leftarrow Y}\cong i^{-1}\wideparen{\D}'\overrightarrow{\otimes}_K K\{\alpha_1,\cdots,\alpha_n \}\overrightarrow{\otimes}_K\wideparen{\D}_{Y} \rightarrow i^{-1}\wideparen{\D}_{X^2}\overrightarrow{\otimes}_K\wideparen{\D}_{Y} \\ 
 \cong K\{\alpha_1,\cdots,\alpha_n \}\overrightarrow{\otimes}_K i^{-1}\wideparen{\D}'\overrightarrow{\otimes}_K\wideparen{\D}_{Y}\rightarrow K\{\alpha_1,\cdots,\alpha_n \}\overrightarrow{\otimes}_K\wideparen{\D}_{Y},  
\end{multline*}
where the first map is $(\ref{equation decomposition transfer bimodule})$, the second map is induced by the multiplication in $i^{-1}\wideparen{\D}_{X^2}$, the third is $(\ref{equation decomposition pullback od differentials on product})$, and the last is induced by the action of $i^{-1}\wideparen{\D}'$ on $\wideparen{\D}_{Y}$.\\ 
By Proposition \ref{prop enveloping of product as product of envelopings}, we have a decomposition $\wideparen{\D}_{X^2}=p_1^{-1}\wideparen{\D}_{X}\overrightarrow{\otimes}_Kp_2^{-1}\wideparen{\D}_{X}$,
as sheaves of Ind-Banach $K$-vector spaces. Under the isomorphism $(\ref{equation decomposition of DX2})$, we get isomorphisms:
\begin{align}\label{equation decomposition of the two DX factors in DX2}
    p_1^{-1}\wideparen{\D}_{X}\cong K\{\partial_1\otimes 1,\cdots,\partial_n\otimes 1\}\overrightarrow{\otimes}_Kp_1^{-1}\OX_{X},
\end{align}
\begin{equation}  
    p_2^{-1}\wideparen{\D}_{X}\cong K\{1\otimes\partial_1,\cdots,1\otimes\partial_n\}\overrightarrow{\otimes}_Kp_2^{-1}\OX_{X}.
\end{equation}
As $X$ is affinoid we have $\wideparen{\D}_{X^2}(X^2)\cong K\{\alpha_1,\cdots,\alpha_n,\beta_1,\cdots,\beta_n \}\overrightarrow{\otimes}_K \OX_{X^2}(X^2)$. Similarly, we get two different identifications of subalgebras of $\wideparen{\D}_{X^2}(X^2)$:
\begin{multline*}
  \wideparen{\D}_{X}(X)\cong \wideparen{\D}_1:= K\{\partial_1\otimes 1,\cdots,\partial_n\otimes 1\}\overrightarrow{\otimes}_K(A\overrightarrow{\otimes}_K1),\\ \wideparen{\D}_{X}(X)\cong \wideparen{\D}_2:=K\{1\otimes\partial_1,\cdots,1\otimes\partial_n\}\overrightarrow{\otimes}_K(1\otimes A),  
\end{multline*}
 We will now describe the actions of $\wideparen{\D}_1$ and $\wideparen{\D}_2$ on $\wideparen{\D}_{X^2\leftarrow Y}(X)$. Notice that for any $1\leq i\leq n$ we have the following identities in $\mathcal{T}_{X^2/K}(X^2)$: 
 \begin{equation*}
    \partial_i\otimes 1=\frac{\alpha_i+\beta_i}{2}, \textnormal{ } 1\otimes\partial_i=\frac{\beta_i-\alpha_i}{2}. 
 \end{equation*}
 Hence, for $f\in K\{\alpha_1,\cdots,\alpha_n \}$, and $d\in \wideparen{\D}_{Y}(Y)$
we have:
\begin{multline}\label{equation explicit action of operators on the pushforward}
\partial_i\otimes 1\left(f\otimes d \right)=\frac{1}{2}\left(\alpha_if\otimes d + f\otimes \beta_id  \right),\\  1\otimes \partial_i\left(f\otimes d \right)=\frac{1}{2}\left(f\otimes \beta_id-\alpha_if\otimes d\right), 
\end{multline}
where we have used the fact that $K\{\alpha_1,\cdots,\alpha_n,\beta_1,\cdots,\beta_n \}$ is a commutative subalgebra of $\wideparen{\D}_{X^2}(X^2)$.
\begin{Lemma}\label{Lemma integrable connections to diagonal bimodules}
Let $X=\Sp(A)$ be a smooth affinoid space with an étale map $X\rightarrow \mathbb{A}^n_K$, and let $\mathcal{V}$ be an integrable connection on $X$. We have the following identification:
\begin{equation*}
    \Delta_+\mathcal{V}(X^2)=K\{\alpha_1,\cdots,\alpha_n \}\overrightarrow{\otimes}_K\mathcal{V}(X).
\end{equation*}
\end{Lemma}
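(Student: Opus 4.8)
The plan is to unwind the definition of the direct image functor $\Delta_+$ along the closed immersion $\Delta \colon X \to X^2$ and use the explicit description of the transfer bimodule $\wideparen{\D}_{X^2 \leftarrow X}$ developed just above. By definition, $\Delta_+\mathcal{V} = R\Delta_*\!\left(\wideparen{\D}_{X^2 \leftarrow X} \overrightarrow{\otimes}^{\mathbb{L}}_{\wideparen{\D}_X} \mathcal{V}\right)$; since $\mathcal{V}$ is an integrable connection, it is $\OX_X$-coherent, hence flat as an $\OX_X$-module, and the transfer bimodule is locally free over $\wideparen{\D}_X$ on the right (indeed $\wideparen{\D}_{X^2\leftarrow X} \cong K\{\alpha_1,\dots,\alpha_n\}\overrightarrow{\otimes}_K \wideparen{\D}_X$ as a right $\wideparen{\D}_X$-module, because $X$ is a union of connected components of the space $Y$ of Lemma \ref{Lemma support on the diagonal} and $\wideparen{\D}_{X^2\leftarrow Y} \cong K\{\alpha_1,\dots,\alpha_n\}\overrightarrow{\otimes}_K\wideparen{\D}_Y$). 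Thus the derived tensor product is concentrated in degree zero and computes to $K\{\alpha_1,\dots,\alpha_n\}\overrightarrow{\otimes}_K \mathcal{V}$. Since $\Delta$ is a closed immersion, $R\Delta_* = \Delta_*$ and, as we are computing global sections on $X^2$, we get $\Delta_+\mathcal{V}(X^2) = \left(K\{\alpha_1,\dots,\alpha_n\}\overrightarrow{\otimes}_K \mathcal{V}\right)(X) = K\{\alpha_1,\dots,\alpha_n\}\overrightarrow{\otimes}_K \mathcal{V}(X)$, where the last identity uses that $K\{\alpha_1,\dots,\alpha_n\}\overrightarrow{\otimes}_K(-)$ is strongly exact (\emph{cf.}\ \cite[Section 5.5]{bode2021operations}), so the presheaf tensor product is already a sheaf and commutes with taking sections over an affinoid.

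The key steps, in order, are: first, record that $\mathcal{V}$ being an integrable connection makes it $\OX_X$-coherent (Theorem \ref{teo characterization of integrable connections}), hence that $\OX_X \overrightarrow{\otimes}^{\mathbb{L}}_{f^{-1}\OX_X} f^{-1}\mathcal{V}$-type expressions degenerate; second, invoke the isomorphism $(\ref{equation decomposition transfer bimodule})$ of right $\wideparen{\D}_Y$-modules together with the fact (from Lemma \ref{Lemma support on the diagonal}(iii)) that $X$ is a connected-component subspace of $Y$, to get $\wideparen{\D}_{X^2\leftarrow X} \cong K\{\alpha_1,\dots,\alpha_n\}\overrightarrow{\otimes}_K\wideparen{\D}_X$ as right $\wideparen{\D}_X$-modules; third, use flatness of Banach spaces (hence of $K\{\alpha_1,\dots,\alpha_n\}$, being a Fréchet limit of such) with respect to $\overrightarrow{\otimes}_K$, to conclude $\wideparen{\D}_{X^2\leftarrow X}\overrightarrow{\otimes}^{\mathbb{L}}_{\wideparen{\D}_X}\mathcal{V} = K\{\alpha_1,\dots,\alpha_n\}\overrightarrow{\otimes}_K\mathcal{V}$ with no higher Tor; fourth, push forward along the closed immersion $\Delta$ and take global sections over $X^2$, using strong exactness of $-\overrightarrow{\otimes}_K K\{\alpha_1,\dots,\alpha_n\}$ to commute the tensor product past sheafification and past $\Gamma(X,-)$.

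The main obstacle I anticipate is making the identification $\wideparen{\D}_{X^2\leftarrow X}\cong K\{\alpha_1,\dots,\alpha_n\}\overrightarrow{\otimes}_K\wideparen{\D}_X$ fully precise as a \emph{right $\wideparen{\D}_X$-module} isomorphism and then genuinely controlling that the derived tensor product has no higher cohomology — one needs that $\wideparen{\D}_{X^2\leftarrow X}$ is flat (or at least acyclic for $-\overrightarrow{\otimes}_{\wideparen{\D}_X}\mathcal{V}$) on the right, which follows from it being a "free" $\wideparen{\D}_X$-module on the pro-finite basis $K\{\alpha_1,\dots,\alpha_n\}$ in the sense of \cite[Lemma 7.9]{bode2021operations}, but the bookkeeping of completed tensor products here is delicate. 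Once that is in place, the remaining steps are essentially formal manipulations with the strongly exact functor $-\overrightarrow{\otimes}_K K\{\alpha_1,\dots,\alpha_n\}$ and with $\Delta_* = R\Delta_*$ for a closed immersion. It may be cleanest to cite \cite[Example 1.3.5]{hotta2007d} and the transfer-bimodule discussion preceding the statement as the template, and simply note the $p$-adic analytic modifications (replacing $D_X$ by $\wideparen{\D}_X$, and the polynomial ring by $K\{\alpha_1,\dots,\alpha_n\}$).
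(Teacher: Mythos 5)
Your proposal is correct and follows essentially the same route as the paper: the paper factors $\Delta = i\circ j$ through the auxiliary space $Y$ and uses Lemma~\ref{Lemma immersion Theorem} to reduce to $i_+j_*\mathcal{V}$, but the substance — the identification $\wideparen{\D}_{X^2\leftarrow Y}\cong K\{\alpha_1,\dots,\alpha_n\}\overrightarrow{\otimes}_K\wideparen{\D}_Y$ (restricted to the component $X$), the resulting degeneration of the derived tensor product, and commuting $K\{\alpha_1,\dots,\alpha_n\}\overrightarrow{\otimes}_K(-)$ past global sections — is exactly what you wrote. The only stray remark is your appeal to flatness of $\mathcal{V}$ over $\OX_X$, which is not what is needed here (the relevant freeness is of the transfer bimodule as a right $\wideparen{\D}_X$-module, which you also correctly invoke).
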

\begin{proof}
 Consider the closed immersion $j:X\rightarrow Y$. Then we have $\Delta= i\circ j$ and, by Lemma \ref{Lemma immersion Theorem}, we have
$\Delta_+\mathcal{V}=i_+ j_+\mathcal{V}=i_+j_*\mathcal{V}$.
Thus, the discussion above shows that $\Delta_+\mathcal{V}$ is the co-admissible  $\wideparen{\D}_{X^2}$-module with global sections: 
\begin{equation*}
    \Delta_+\mathcal{V}(X^2)=i_*\left(\wideparen{\D}_{X^2\leftarrow Y}\overrightarrow{\otimes}_{\wideparen{\D}_Y}j_*\mathcal{V} \right)(X^2) =K\{\alpha_1,\cdots,\alpha_n \}\widehat{\otimes}_K\mathcal{V}(X),
\end{equation*}
as we wanted to show.
\end{proof}
\begin{Lemma}\label{Lemma 2 integrable connections to diagonal bimodules}
Let $X=\Sp(A)$ be a smooth affinoid space as in Lemma \ref{Lemma integrable connections to diagonal bimodules}. Assume there is an affine formal model $\mathcal{A}\subset A$ such that $\partial_i(\mathcal{A})\subset \mathcal{A}$ for all $1\leq i\leq n$. Then $\mathcal{T}_{\mathcal{A}}=\bigoplus_{i=1}^n \mathcal{A}\partial_i$ is a finite-free $\mathcal{A}$-Lie lattice of $\mathcal{T}_{X/K}(X)$, and we have:
\begin{enumerate}[label=(\roman*)]
    \item $\mathcal{A}\widehat{\otimes}_{\mathcal{R}}\mathcal{A}\subset A\overrightarrow{\otimes}_KA$ is an affine formal model.
    \item $\mathcal{T}=\bigoplus_{i=1}^n\mathcal{A}\widehat{\otimes}_{\mathcal{R}}\mathcal{A}\alpha_i\oplus \bigoplus_{i=1}^n\mathcal{A}\widehat{\otimes}_{\mathcal{R}}\mathcal{A}\beta_i$ is a finite free $\mathcal{A}\widehat{\otimes}_{\mathcal{R}}\mathcal{A}$-Lie lattice of $\mathcal{T}_{X^2/K}(X^2)$.
    \item $\widehat{U}(\pi^m\mathcal{T})_K\otimes_{\wideparen{\D}_{X^2}(X^2)}\Delta_+\mathcal{V}(X^2)=K\langle \pi^m\alpha_1,\cdots,\pi^m\alpha_n\rangle\overrightarrow{\otimes}_K\mathcal{V}(X)$.  
\end{enumerate}
\end{Lemma}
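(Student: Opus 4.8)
\textbf{Proof proposal for Lemma \ref{Lemma 2 integrable connections to diagonal bimodules}.}

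The plan is to verify the three statements in turn, each reducing essentially to the explicit local descriptions already established in the excerpt. For $(i)$, I would invoke Proposition \ref{prop affine formal model on product}: since $\mathcal{A}$ is an affine formal model of $A$, the completed tensor $\mathcal{A}\widehat{\otimes}_{\mathcal{R}}\mathcal{A}$ is an admissible $\mathcal{R}$-algebra with $\left(\mathcal{A}\widehat{\otimes}_{\mathcal{R}}\mathcal{A}\right)\otimes_{\mathcal{R}}K = A\widehat{\otimes}_KA = A\overrightarrow{\otimes}_KA$, which is precisely the assertion that it is an affine formal model. For $(ii)$, the hypothesis $\partial_i(\mathcal{A})\subset\mathcal{A}$ makes $\mathcal{T}_{\mathcal{A}}=\bigoplus\mathcal{A}\partial_i$ a finite-free $(\mathcal{R},\mathcal{A})$-Lie lattice. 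Then by Lemma \ref{Lemma continuity of bracket} (applied with $\mathcal{L}_1=\mathcal{L}_2=\mathcal{T}_{\mathcal{A}}$), the module $\mathcal{M}=\mathcal{T}_{\mathcal{A}}\otimes_{\mathcal{R}}\mathcal{A}\oplus\mathcal{A}\otimes_{\mathcal{R}}\mathcal{T}_{\mathcal{A}}$ and its completion $\mathcal{T}_{\mathcal{A}}\times\mathcal{T}_{\mathcal{A}}$ form a finite-free $(\mathcal{R},\mathcal{A}\widehat{\otimes}_{\mathcal{R}}\mathcal{A})$-Lie lattice of $\mathcal{T}_{X^2/K}(X^2)=\mathcal{T}_{X/K}(X)\times\mathcal{T}_{X/K}(X)$. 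It remains to observe that $\{\alpha_i,\beta_i\}$ is obtained from $\{\partial_i\otimes 1, 1\otimes\partial_i\}$ by the unimodular change of basis $\alpha_i=\partial_i\otimes1-1\otimes\partial_i$, $\beta_i=\partial_i\otimes1+1\otimes\partial_i$, whose matrix has determinant $\pm 2$; since $2$ is a unit in $K$ but perhaps not in $\mathcal{R}$, I should be slightly careful — but actually the lattice spanned by $\{\alpha_i,\beta_i\}$ over $\mathcal{A}\widehat{\otimes}_{\mathcal{R}}\mathcal{A}$ equals the one spanned by $\{\partial_i\otimes1,1\otimes\partial_i\}$ after possibly rescaling; more precisely, one checks $\partial_i\otimes1=\tfrac12(\alpha_i+\beta_i)$ is not integral, so I would instead simply define $\mathcal{T}$ as the free lattice on $\{\alpha_i,\beta_i\}$ and verify directly via equation $(\ref{description bracket product of vector fields})$ that it is closed under the bracket and under the anchor-action on $\mathcal{A}\widehat{\otimes}_{\mathcal{R}}\mathcal{A}$, which follows since $\alpha_i$ and $\beta_i$ act on $\OX_{X^2}$ through $\partial_i\otimes1$ and $1\otimes\partial_i$ and $\partial_i(\mathcal{A})\subset\mathcal{A}$.

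For $(iii)$, the key input is the explicit description of the pushforward from Lemma \ref{Lemma integrable connections to diagonal bimodules}: $\Delta_+\mathcal{V}(X^2)=K\{\alpha_1,\dots,\alpha_n\}\overrightarrow{\otimes}_K\mathcal{V}(X)$, together with the decomposition $(\ref{equation decomposition transfer bimodule})$ of the transfer bimodule and the action formulas $(\ref{equation explicit action of operators on the pushforward})$. I would compute $\widehat{U}(\pi^m\mathcal{T})_K\overrightarrow{\otimes}_{\wideparen{\D}_{X^2}(X^2)}\Delta_+\mathcal{V}(X^2)$ by using Proposition \ref{prop explicit Fréchet-Stein presentation} / Corollary \ref{coro colimit topology integral level}, namely that $\widehat{U}(\pi^m\mathcal{T})_K$, with respect to the basis $\{\alpha_i,\beta_i\}$, is the $\pi$-adic-style completion of $U(\pi^m\mathcal{T})_K$; concretely, using $(\ref{equation decomposition transfer bimodule})$-style decompositions, $\widehat{U}(\pi^m\mathcal{T})_K\cong K\langle\pi^m\alpha_1,\dots,\pi^m\alpha_n\rangle\,\widehat{\otimes}\,\widehat{U}(\pi^m\mathcal{T}')_K$ where $\mathcal{T}'$ is spanned by the $\beta_i$'s and (a lattice in) $\mathcal{A}\widehat{\otimes}_{\mathcal{R}}\mathcal{A}$, and $\widehat{U}(\pi^m\mathcal{T}')_K$ is identified with (the completion along $\beta$'s of) $\wideparen{\D}_Y(Y)$-type algebra. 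Since $\mathcal{V}$ is $\OX_X$-coherent, the action of the $\beta_i$'s converges $\pi^m$-adically on $\mathcal{V}(X)$ (that is the integrability/coherence bound from Remark \ref{remark all derivations are bounded}), so tensoring over $\wideparen{\D}_{X^2}(X^2)$ collapses the $\wideparen{\D}'$-factor and replaces the entire-function algebra $K\{\alpha_i\}$ in the $\alpha$-directions by the Tate algebra $K\langle\pi^m\alpha_i\rangle$, leaving $K\langle\pi^m\alpha_1,\dots,\pi^m\alpha_n\rangle\overrightarrow{\otimes}_K\mathcal{V}(X)$. The formula $(\ref{equation explicit action of operators on the pushforward})$ is what lets me see that $\partial_i\otimes1$ and $1\otimes\partial_i$ act on $\Delta_+\mathcal{V}(X^2)$ through $\tfrac12(\alpha_i\pm\text{(action via }\beta_i\text{ on }\mathcal{V}))$, so after base change the $\beta$-contribution is absorbed into the finite $\OX_X$-module structure of $\mathcal{V}(X)$.

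The main obstacle I anticipate is item $(iii)$: making precise the claim that base change along $\wideparen{\D}_{X^2}(X^2)\to\widehat{U}(\pi^m\mathcal{T})_K$ commutes with the entire-to-Tate "truncation" only in the $\alpha$-directions while fully collapsing the $\beta$-directions. This requires keeping careful track of the two commuting families of operators and of which lattice in $\mathcal{T}_{X^2/K}(X^2)$ one is completing with respect to; the cleanest route is probably to factor $\Delta=i\circ j$ as in Lemma \ref{Lemma integrable connections to diagonal bimodules}, use $j_+=j_*$ (Lemma \ref{Lemma immersion Theorem}) so that $j_*\mathcal{V}$ is the integrable connection $\mathcal{V}$ viewed on $Y$ with its $\wideparen{\D}_Y$-structure, then apply the transfer-bimodule decomposition $(\ref{equation decomposition transfer bimodule})$ for $i:Y\to X^2$ together with Corollary \ref{coro expression of enveloping algebra of product} / Proposition \ref{prop enveloping of product as product of envelopings} to identify $\widehat{U}(\pi^m\mathcal{T})_K$ with $\widehat{U}(\pi^m\mathcal{T}^{\alpha})_K\widehat{\otimes}_K\widehat{U}(\pi^m\mathcal{T}^{Y})_K$ in a way compatible with the module structure, and finally use co-admissibility of $\mathcal{V}$ as a $\wideparen{\D}_Y$-module to perform the base change $\widehat{U}(\pi^m\mathcal{T}^{Y})_K\overrightarrow{\otimes}_{\wideparen{\D}_Y(Y)}\mathcal{V}=\widehat{U}(\pi^m\mathcal{T}^{Y})_K\overrightarrow{\otimes}_{\wideparen{\D}_Y(Y)}\mathcal{V}$, which is just $\mathcal{V}(Y)=\mathcal{V}(X)$ since $\mathcal{V}$ is $\OX$-coherent and hence its own "level-$m$ piece". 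Everything else is bookkeeping with the formulas already in the excerpt.
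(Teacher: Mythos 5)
Your proposal is correct and follows essentially the same route as the paper: the paper only writes out part $(iii)$, and does so exactly as you propose --- by splitting $\widehat{U}(\pi^m\mathcal{T})_K$ into the $K\langle\pi^m\alpha_1,\cdots,\pi^m\alpha_n\rangle$-factor and the $\beta$/$\OX$-factor coming from $\wideparen{\D}_Y$, and using that an integrable connection has the trivial Fréchet--Stein presentation $\mathcal{V}(X)=\varprojlim_m\mathcal{V}(X)$, so that base change collapses the $\beta$-directions and truncates $K\{\alpha_1,\cdots,\alpha_n\}$ to $K\langle\pi^m\alpha_1,\cdots,\pi^m\alpha_n\rangle$. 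Your extra care in $(ii)$ about the change of basis from $\{\partial_i\otimes 1,1\otimes\partial_i\}$ to $\{\alpha_i,\beta_i\}$ having determinant a power of $2$ is well placed (the paper omits the proofs of $(i)$ and $(ii)$ entirely): one must indeed take $\mathcal{T}$ to be the lattice freely generated by the $\alpha_i,\beta_i$ and verify closure under the bracket and the anchor directly.
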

\begin{proof}
We only show statement $(iii)$. As the statement does not need sheaf theory, we will work with the underlying bornological spaces. Let $B=A\widehat{\otimes}_KA$. Notice that, by statement $(ii)$, we have a Fréchet-Stein presentation:
\begin{equation*}
    \wideparen{\D}_{X^2}(X^2)=\wideparen{U}(\mathcal{T}_{X^2/K}(X^2))=\varprojlim_m\widehat{U}(\pi^m\mathcal{T})_K.
\end{equation*}
Similarly, we have $\wideparen{\D}_{X}(X)=\varprojlim_m\widehat{U}(\pi^m\mathcal{T}_{\mathcal{A}})_K$. As $\mathcal{V}$ is an integrable connection, we may apply \cite[Section 7.2]{ardakov2015d} to show that for every $m\geq 0$ we have the following identities of $\wideparen{\D}_{X}(X)$-modules:
\begin{equation*}
\mathcal{V}(X)= \wideparen{\D}_{X}(X)\widehat{\otimes}_{\D_{X}(X)}\mathcal{V}(X)= \widehat{U}(\pi^m\mathcal{T}_{\mathcal{A}})_K\widehat{\otimes}_{\D_{X}(X)}\mathcal{V}(X). 
\end{equation*}
In particular, we have a presentation of $\mathcal{V}(X)$ as a co-admissible $\wideparen{\D}_{X}(X)$-module given by the trivial inverse limit $\mathcal{V}(X)=\varprojlim_m \mathcal{V}(X)$. The version for left modules  of \cite[5.4]{ardakov2015d}, shows that we have the following identities of $\wideparen{\D}_{X^2}(X^2)$-modules:
\begin{align*}
 &\wideparen{\D}_{X^2\leftarrow Y}(Y)\widehat{\otimes}_{\wideparen{\D}_Y(Y)}\mathcal{V}(X)=\wideparen{\D}_{X^2\leftarrow Y}(Y)\wideparen{\otimes}_{\wideparen{\D}_Y(Y)}\mathcal{V}(X)\\
 &=\varprojlim_m\left(\left(K\langle\pi^m\alpha_1,\cdots,\pi^m\alpha_n,\pi^m\beta_1,\cdots,\pi^m\beta_n\rangle\widehat{\otimes}_KB\right)\widehat{\otimes}_{\wideparen{\D}_Y(Y)}\mathcal{V}(X) \right)\\&=
\varprojlim_m\left(\left(K\langle\pi^m\alpha_1,\cdots,\pi^m\alpha_n,\pi^m\beta_1,\cdots,\pi^m\beta_n\rangle\widehat{\otimes}_KB\right)\widehat{\otimes}_{\widehat{U}(\pi^m\mathcal{T}_{\mathcal{A}})_K}\mathcal{V}(X)\right)\\&=
 \varprojlim_m\left(B\langle\pi^m\alpha_1,\cdots,\pi^m\alpha_n,\pi^m\beta_1,\cdots,\pi^m\beta_n\rangle\widehat{\otimes}_{B\langle\pi^m\beta_1,\cdots,\pi^m\beta_n\rangle}\mathcal{V}(X) \right)\\&=
\varprojlim_m\left(K\langle\pi^m\alpha_1,\cdots,\pi^m\alpha_n\rangle\widehat{\otimes}_K\mathcal{V}(X) \right).
\end{align*}
Hence, we have the following Fréchet-Stein presentation of $\Delta_+\mathcal{V}(X^2)$:
\begin{equation*}
    K\{\alpha_1,\cdots,\alpha_n\}\widehat{\otimes}_K\mathcal{V}(X)= \varprojlim_m\left(K\langle\pi^m\alpha_1,\cdots,\pi^m\alpha_n\rangle\widehat{\otimes}_K\mathcal{V}(X) \right),
\end{equation*}
with respect to $\wideparen{\D}_{X^2}(X^2)=\varprojlim_m\widehat{U}(\pi^m\mathcal{T})_K$. In particular, we have:
\begin{multline*}
\widehat{U}(\pi^m\mathcal{T})_K\otimes_{\wideparen{\D}_{X^2}(X^2)}\varprojlim_m\left(K\langle\pi^m\alpha_1,\cdots,\pi^m\alpha_n\rangle\widehat{\otimes}_K\mathcal{V}(X) \right)\\=K\langle \pi^m\alpha_1,\cdots,\pi^m\alpha_n\rangle\overrightarrow{\otimes}_K\mathcal{V}(X),    
\end{multline*}
as we wanted to show.
\end{proof}
\begin{prop}
There is an exact embedding of abelian categories:
\begin{equation*}
  \operatorname{Conn}(X)\rightarrow \mathcal{C}^{\Bi}_B(\wideparen{\D}_X)_{\Delta},  
\end{equation*}
given by the following expression:
\begin{equation}\label{equation integrable connections to diagonal bimodules}
    \mathcal{V}\mapsto \wideparen{\D}_{\mathcal{V}}=\Delta^{-1}\left(\operatorname{S}\left(\Delta_+\mathcal{V} \right)\right)=\Omega_X\overrightarrow{\otimes}_{\OX_X}\left(\wideparen{\D}_{X^2\leftarrow X}\overrightarrow{\otimes}_{\wideparen{\D}_{X}}\mathcal{V}\right).
\end{equation}
In particular, we have $\wideparen{\D}_{\OX_X}=\wideparen{\D}_X$, and $\Delta^!\Delta^{\operatorname{S}}(\wideparen{\D}_{\mathcal{V}})=\mathcal{V}$.
\end{prop}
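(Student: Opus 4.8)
The plan is to verify that the assignment $\mathcal{V}\mapsto \wideparen{\D}_{\mathcal{V}}$ is well-defined (lands in $\mathcal{C}^{\Bi}_B(\wideparen{\D}_X)_{\Delta}$), is exact and fully faithful, and satisfies the two stated identities. Since all the relevant conditions defining $\mathcal{C}^{\Bi}_B(\wideparen{\D}_X)_{\Delta}$ are local (by Lemma \ref{Lemma local properties of diagonal co-admissible bimodules}), the first step is to reduce to the case where $X=\Sp(A)$ is smooth affinoid with an \'etale map $X\rightarrow \mathbb{A}^n_K$, and moreover there is an affine formal model $\mathcal{A}\subset A$ stabilized by the $\partial_i$, so that the explicit descriptions of Lemmas \ref{Lemma integrable connections to diagonal bimodules} and \ref{Lemma 2 integrable connections to diagonal bimodules} apply. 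Such covers always exist: after shrinking, \'etale coordinates provide $\partial_i$, and one can always find a formal model stabilized by finitely many derivations.

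First I would check that $\wideparen{\D}_{\mathcal{V}}$ is a diagonal co-admissible bimodule. By construction $\Delta_+\mathcal{V}$ is a co-admissible $\wideparen{\D}_{X^2}$-module supported on the diagonal (Kashiwara's equivalence, since $\mathcal{V}$ is co-admissible on $X$), hence $\operatorname{S}(\Delta_+\mathcal{V})$ is a co-admissible $\wideparen{E}_X$-module supported on the diagonal (Theorem \ref{teo equivalence of co-admissible modules under side-changing}), and therefore $\Delta^{-1}\operatorname{S}(\Delta_+\mathcal{V})\in\Mod_{\Indban}(\wideparen{\D}_X^e)$ is defined via Proposition \ref{prop sheaves of modules supported on the diagonal}. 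The content is that it lies in $\mathcal{C}^{\Bi}_B(\wideparen{\D}_X)_{\Delta}$, i.e.\ that $\wideparen{\D}_{\mathcal{V}}$ is co-admissible as a left and as a right $\wideparen{\D}_X$-module and that $\wideparen{\D}_{\mathcal{V}}(X)=\wideparen{\D}_{\mathcal{V}}(X)_{\infty}(\mathcal{I}_{\Delta})$. The support condition on $\Delta_+\mathcal{V}$ gives the $\mathcal{I}_{\Delta}$-torsion condition via Proposition \ref{prop co-admissible EX modules supported on the diagonal}. For the left/right co-admissibility one uses the explicit presentation: Lemma \ref{Lemma 2 integrable connections to diagonal bimodules}(iii) computes $\widehat{U}(\pi^m\mathcal{T})_K\otimes_{\wideparen{\D}_{X^2}(X^2)}\Delta_+\mathcal{V}(X^2)=K\langle\pi^m\alpha_1,\dots,\pi^m\alpha_n\rangle\overrightarrow{\otimes}_K\mathcal{V}(X)$; since $\mathcal{V}(X)$ is finitely generated over $A$ and $K\langle\pi^m\alpha_1,\dots,\pi^m\alpha_n\rangle\overrightarrow{\otimes}_K A$ is a finite module over $\widehat{U}(\pi^m\mathcal{T}_{\mathcal{A}})_K$ on each side, one reads off that each graded piece is finite over the appropriate $\widehat{U}(\pi^m\mathcal{T}_{\mathcal{A}})_K$, and taking inverse limits gives co-admissibility over $\wideparen{\D}_X$ and $\wideparen{\D}_X^{\op}$. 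Here I would use Theorem \ref{teo embedding co-admissible bimodules} and Corollary \ref{coro topological equivalences to co-admissible bi-modules} to pass back and forth between bi-co-admissibility and $\wideparen{E}_X$-co-admissibility. Exactness and full faithfulness then follow formally: $\mathcal{V}\mapsto\Delta_+\mathcal{V}$ is an exact fully faithful embedding onto $\mathcal{C}$-complexes supported on $\Delta$ (Kashiwara), $\operatorname{S}$ is an equivalence (Theorem \ref{teo equivalence of co-admissible modules under side-changing}), and $\Delta^{-1}$ restricted to sheaves supported on $\Delta$ is an equivalence (Proposition \ref{prop sheaves of modules supported on the diagonal}); composing gives that $\mathcal{V}\mapsto\wideparen{\D}_{\mathcal{V}}$ is an exact embedding of abelian categories, with quasi-inverse on its image given by $\mathcal{M}\mapsto \Delta^!\Delta_*^{\operatorname{S}}\mathcal{M}$.

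Next I would verify the second displayed formula in \eqref{equation integrable connections to diagonal bimodules}, namely
\begin{equation*}
\Delta^{-1}\operatorname{S}(\Delta_+\mathcal{V})=\Omega_X\overrightarrow{\otimes}_{\OX_X}\left(\wideparen{\D}_{X^2\leftarrow X}\overrightarrow{\otimes}_{\wideparen{\D}_X}\mathcal{V}\right).
\end{equation*}
Unwinding definitions: $\Delta_+\mathcal{V}=\Delta_*(\wideparen{\D}_{X^2\leftarrow X}\overrightarrow{\otimes}_{\wideparen{\D}_X}\mathcal{V})$ (the degree shift vanishes since $\mathcal{V}$ is a module and $\Delta$ has equal relative dimension $0$ after accounting for $\dim X - \dim X^2 = -n$ against the transfer bimodule normalization — in fact $\Delta_+$ for a closed immersion is concentrated in a single degree on co-admissible modules by Kashiwara), then $\operatorname{S}=p_2^{-1}\Omega_{\mathscr{L}}\overrightarrow{\otimes}_{p_2^{-1}\OX_X}(-)$ and $\Delta^{-1}$ pulls the $p_2^{-1}\Omega$ back to $\Omega_X$ via $\Delta^{-1}p_2^{-1}=\operatorname{Id}$, giving exactly $\Omega_X\overrightarrow{\otimes}_{\OX_X}(\wideparen{\D}_{X^2\leftarrow X}\overrightarrow{\otimes}_{\wideparen{\D}_X}\mathcal{V})$; this is a formal chase using $\Delta^{-1}\Delta_*=\operatorname{Id}$ and compatibility of $\Delta^{-1}$ with tensor products. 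Finally, for the two special cases: $\wideparen{\D}_{\OX_X}=\wideparen{\D}_X$ follows because $\operatorname{S}(\Delta_+\OX_X)=\Delta_*^{\operatorname{S}}(\wideparen{\D}_X)$ composed with... more precisely, by Proposition \ref{prop image of D under side changing} we have $\Delta_*^{\operatorname{S}}(\wideparen{\D}_X)=\Delta_+\OX_X$, i.e.\ $\operatorname{S}^{-1}\Delta_*(\wideparen{\D}_X)=\Delta_+\OX_X$, equivalently $\Delta_*(\wideparen{\D}_X)=\operatorname{S}(\Delta_+\OX_X)$, so $\wideparen{\D}_{\OX_X}=\Delta^{-1}\operatorname{S}(\Delta_+\OX_X)=\Delta^{-1}\Delta_*(\wideparen{\D}_X)=\wideparen{\D}_X$. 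And $\Delta^!\Delta_*^{\operatorname{S}}(\wideparen{\D}_{\mathcal{V}})=\mathcal{V}$ follows because $\Delta_*^{\operatorname{S}}\wideparen{\D}_{\mathcal{V}}=\operatorname{S}^{-1}\Delta_*\Delta^{-1}\operatorname{S}(\Delta_+\mathcal{V})=\operatorname{S}^{-1}\operatorname{S}(\Delta_+\mathcal{V})=\Delta_+\mathcal{V}$ (using $\Delta_*\Delta^{-1}=\operatorname{Id}$ on sheaves supported on $\Delta$), and then $\Delta^!\Delta_+\mathcal{V}=\mathcal{V}$ by Kashiwara's equivalence. The main obstacle I anticipate is the bookkeeping in the first step — confirming that the explicit local presentations from Lemma \ref{Lemma 2 integrable connections to diagonal bimodules} really do exhibit $\wideparen{\D}_{\mathcal{V}}(V)$ as $\wideparen{\D}_X(V)\overrightarrow{\otimes}_{\wideparen{\D}_X(X)}\wideparen{\D}_{\mathcal{V}}(X)$ and symmetrically on the right for all affinoid $V$, so that $\Delta_*\wideparen{\D}_{\mathcal{V}}=\operatorname{Loc}(\wideparen{\D}_{\mathcal{V}}(X))$ in the sense of Proposition \ref{prop embedding Theorem}; once the affine formal model is chosen compatibly this is the argument of Proposition \ref{prop embedding Theorem} applied verbatim, but checking the hypotheses of that proposition (co-admissibility on both sides with matching topologies, via Corollary \ref{coro topologies of bi-coadmissible and E-coadmissible}) is where the genuine work lies.
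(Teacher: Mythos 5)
Your overall strategy coincides with the paper's: reduce to the affinoid case with \'etale coordinates and a compatible affine formal model, obtain the $\mathcal{I}_{\Delta}$-torsion condition from the support of $\Delta_+\mathcal{V}$ via Theorem \ref{teo equivalence of co-admissible modules under side-changing} and Proposition \ref{prop co-admissible EX modules supported on the diagonal}, and verify left and right co-admissibility from the explicit presentation $\Delta_+\mathcal{V}(X^2)=K\{\alpha_1,\dots,\alpha_n\}\overrightarrow{\otimes}_K\mathcal{V}(X)$. The formal parts of your argument (exactness and full faithfulness by composing Kashiwara's equivalence with the side-switching operator and the $\Delta_*/\Delta^{-1}$ equivalence; the identity $\wideparen{\D}_{\OX_X}=\wideparen{\D}_X$ via Proposition \ref{prop image of D under side changing}; and $\Delta^!\Delta_*^{\operatorname{S}}\wideparen{\D}_{\mathcal{V}}=\mathcal{V}$) are correct and consistent with what the paper does or leaves implicit.

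The gap is in the one step that carries the real content. You assert that $K\langle\pi^m\alpha_1,\dots,\pi^m\alpha_n\rangle\overrightarrow{\otimes}_K\mathcal{V}(X)$ is finite over $\widehat{U}(\pi^m\mathcal{T}_{\mathcal{A}})_K$ ``on each side'' and that this can be read off from the presentation. It cannot: the two copies of $\wideparen{\D}_X(X)$ inside $\wideparen{\D}_{X^2}(X^2)$ do not act on $K\{\alpha_1,\dots,\alpha_n\}\overrightarrow{\otimes}_K\mathcal{V}(X)$ by multiplication by the $\alpha_i$. By equation $(\ref{equation explicit action of operators on the pushforward})$ one has $\partial_i\otimes 1(f\otimes v)=\tfrac12(\alpha_i f\otimes v+f\otimes\beta_i v)$, so the action mixes multiplication by $\alpha_i$ with the connection acting on $\mathcal{V}$, and finiteness (in fact freeness of rank equal to the rank of $\mathcal{V}$) is precisely what has to be proved. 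The paper's proof does this by passing to the dense polynomial subalgebras $\D_1\cong(A\overrightarrow{\otimes}_K1)\otimes_KK[\partial_1\otimes 1,\dots,\partial_n\otimes 1]$ and $B_{\mathcal{V}}=K[\alpha_1,\dots,\alpha_n]\otimes_KA_{\mathcal{V}}$, filtering both by total degree, checking that the map respects the filtrations with leading term $2^{-r}\alpha_1^{m_1}\cdots\alpha_n^{m_n}\otimes f$ so that the associated graded map is an isomorphism, concluding that each filtration step is a bijective bounded map of Banach spaces and hence an isomorphism, and then completing and taking inverse limits. Without this argument (or an equivalent one) the co-admissibility of $\wideparen{\D}_{\mathcal{V}}$ over $\wideparen{\D}_X$ and $\wideparen{\D}_X^{\op}$ --- and hence its membership in $\mathcal{C}^{\Bi}_B(\wideparen{\D}_X)_{\Delta}$ --- is not established; your appeal to ``graded pieces'' gestures in the right direction but does not identify the filtration or why its associated graded is an isomorphism.
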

\begin{proof}
Let $\mathcal{V}$ be an integrable connection on $X$. By construction, $\wideparen{\D}_{\mathcal{V}}$ is an Ind-Banach $\wideparen{\D}_X^e$-module. Furthermore, as $\Delta_+\mathcal{V}$ has support contained in the diagonal, by Theorem \ref{teo equivalence of co-admissible modules under side-changing} and Proposition \ref{prop co-admissible EX modules supported on the diagonal},
we have $\operatorname{S}\Delta_+\mathcal{V}(X)_{\infty}(\mathcal{I}_{\Delta})=\operatorname{S}\Delta_+\mathcal{V}(X)$. Thus, we have:
\begin{equation*}
    \wideparen{\D}_{\mathcal{V}}(X)_{\infty}(\mathcal{I}_{\Delta})=\wideparen{\D}_{\mathcal{V}}(X).
\end{equation*}
Hence, we just need to show that $\wideparen{\D}_{\mathcal{V}}$ is co-admissible as a $\wideparen{\D}_X$ and $\wideparen{\D}^{\op}_X$ module. As this is a purely local property, we may assume that $X=\Sp(A)$ is affinoid and satisfies the conditions of Lemma \ref{Lemma 2 integrable connections to diagonal bimodules}. We are also free to assume that $\mathcal{V}\cong \bigoplus^r_{i=1} \OX_X$ as a coherent $\OX_X$-module. For simplicity, we will assume $r=1$. The general case is analogous. Also, as we do not need to work with sheaves, we will work with the underlying complete bornological spaces.\\
Let $\OX_{\mathcal{V}}$ denote $\OX_X$ with the $\wideparen{\D}_X$-module structure induced by the isomorphism $\mathcal{V}\cong \OX_X$, and  let $A_{\mathcal{V}}=\OX_{\mathcal{V}}(X)$.
By Lemma \ref{Lemma integrable connections to diagonal bimodules}, we have:
\begin{equation*}
    \Delta_+\OX_{\mathcal{V}}(X^2)=K\{\alpha_1,\cdots,\alpha_n \}\widehat{\otimes}_KA_{\mathcal{V}}=: \widehat{B}_{\mathcal{V}}.
\end{equation*}
The element $1\otimes 1\in \widehat{B}_{\mathcal{V}}$ induces a canonical $\wideparen{\D}_{X^2}(X^2)$-linear bounded map:
\begin{equation*}
    \psi:\wideparen{\D}_{X^2}(X^2)\rightarrow \wideparen{B}_{\mathcal{V}}.
\end{equation*}
Recall the discussion from the beginning of the appendix, and in particular the definition  of the algebras $\wideparen{\D}_1$  and $\wideparen{\D}_2$. By restriction, we get two bounded maps:
\begin{equation*}
 \varphi_1:\wideparen{\D}_1\rightarrow \wideparen{B}_{\mathcal{V}}, \quad  \varphi_2:\wideparen{\D}_2\rightarrow \wideparen{B}_{\mathcal{V}}, 
\end{equation*}
which endow $\wideparen{B}_{\mathcal{V}}$ with two different bornological $\wideparen{\D}_{X}(X)$-module structures. We need to show that $\widehat{B}_{\mathcal{V}}$ is co-admissible with respect to both structures. We do this only for the first one, as the second is analogous. By Lemma \ref{Lemma 2 integrable connections to diagonal bimodules}, for each $m\geq 0$, the map $\psi:\wideparen{\D}_{X^2}(X^2)\rightarrow \wideparen{B}_{\mathcal{V}}$ induces a map: 
\begin{equation*}
\psi^m:\widehat{U}(\pi^m\mathcal{T})_K\rightarrow K\langle \pi^m\alpha_1,\cdots,\pi^m\alpha_n\rangle\widehat{\otimes}_K\mathcal{V}(X)=:\widehat{B}^m_{\mathcal{V}},    
\end{equation*}
where $\wideparen{B}_{\mathcal{V}}=\varprojlim_m\widehat{B}^m_{\mathcal{V}}$. Similarly, the composition $\wideparen{\D}_1\rightarrow \wideparen{\D}_{X^2}(X^2)\rightarrow \widehat{U}(\pi^m\mathcal{T})_K$ factors as follows:
\begin{equation*}
 \wideparen{\D}_1\rightarrow \widehat{\D}^m_1:=K\langle\pi^m\partial_1\otimes 1,\cdots,\pi^m\partial_n\otimes 1\rangle\widehat{\otimes}_K(A\widehat{\otimes}_K1)\rightarrow  \widehat{U}(\pi^m\mathcal{T})_K, 
\end{equation*}
such that $\wideparen{\D}_1=\varprojlim_m\widehat{\D}^m_1$ is a Fréchet-Stein presentation. In particular, we have a family of maps:
\begin{equation*}
    \widehat{\varphi}_1^m:\widehat{\D}^m_1 \rightarrow \widehat{U}(\pi^m\mathcal{T})_K\rightarrow \widehat{B}^m_{\mathcal{V}},
\end{equation*}
such that $\varphi_1=\varprojlim \widehat{\varphi}_1^m$. It is enough to show that $\widehat{\varphi}_1^m$ is an isomorphism for $m\geq 0$.\\
It suffices to show this for  $m=0$, as the other cases follow from this case multiplying the sections inducing the map $X\rightarrow \mathbb{A}^n_K$ by an appropriate power of $\pi$. Thus,  we will drop the $m$ from the notation for the rest of the proof. Consider the following $K$-vector spaces:
\begin{equation*}
  \D_1\cong (A\widehat{\otimes}_K 1)\otimes_K K[\partial_1\otimes 1,\cdots,  \partial_n\otimes 1], \textnormal{ } B_{\mathcal{V}}:=K[\alpha_1,\cdots,\alpha_n ]\otimes_KA_{\mathcal{V}}. 
\end{equation*}
Notice that $\D_1$ is a dense subalgebra of $\widehat{\D}_1$, and 
$B_{\mathcal{V}}$, is a dense subspace of $\widehat{B}_{\mathcal{V}}$. Hence, we regard them as bornological $K$-vector spaces with respect to the subspace bornology.\\
We will first show that $\widehat{\varphi}_1$ restricts to an isomorphism:
$\widetilde{\varphi}_1:\D_1\rightarrow B_{\mathcal{V}}$. Any polynomial ring $K[x_1,\cdots,x_n ]$ has a canonical filtration given by the total degree of the polynomials. Hence, it induces filtrations $F_{\bullet}\D_1$ and 
$F_{\bullet}B_{\mathcal{V}}$. We first show that $\widetilde{\varphi}_1$ respects those filtrations. Choose some non-negative integer $r\geq 0$. Elements in $F_{r}\D_1$ are $K$-linear combinations of operators of the form:
\begin{equation*}
   \gamma=(1\otimes \partial_1)^{m_1}\cdots (1\otimes \partial_n)^{m_n}f, \textnormal{ where } f\in A, \textnormal{ and }\sum_im_i\leq r.
\end{equation*}
We may assume for simplicity that
$\sum_im_i= r$. Then we have:
\begin{equation}\label{equation image of map in proof of integrable connections to diagonal bimodules}
    \widetilde{\varphi}_1(\gamma)=\frac{1}{2^r}(\alpha_1+\beta_1)^{m_1}\cdots (\alpha_n+\beta_n)^{m_n}\left(1\otimes f\right)=\frac{1}{2^r}\alpha_1^{m_1}\cdots\alpha_n^{m_n}\otimes f + R(\gamma).
\end{equation}
Notice that $R(\gamma)$ has total degree in the $\alpha_i$ at most $r-1$. Thus, it is contained in $F_{r-1}B_{\mathcal{V}}$. Hence, $\widetilde{\varphi}_1$ is a map of filtered $K$-vector spaces, and we get a map between the associated graded spaces:
\begin{equation*}
  \gr\widetilde{\varphi}_1:\gr\D_1\rightarrow \gr B_{\mathcal{V}}.  
\end{equation*}
Given an element $x\in \D_1$, we denote its image in $\gr\D_1$ by $[x]$, and we adopt the same convention for $B_{\mathcal{V}}$. As shown in equation $(\ref{equation image of map in proof of integrable connections to diagonal bimodules})$, we have:
\begin{equation*}
    \gr\widetilde{\varphi}_1([\gamma])=[\alpha_1^{m_1}\cdots\alpha_n^{m_n}\otimes f].
\end{equation*}
In particular, $\gr\widetilde{\varphi}_1$ is an isomorphism of graded $K$-vector spaces, so $\widetilde{\varphi}_1:\D_1\rightarrow B_{\mathcal{V}}$ is a bounded and bijective map of filtered bornological $K$-vector spaces.\\
Notice that for each $r\geq 0$, $F_r\D_1$ and $F_rB_{\mathcal{V}}$
are finite direct sums of Banach spaces. In particular, $F_r\D_1$ is a finite sum of copies of $A$, and $F_rB_{\mathcal{V}}$ is a finite sum of copies of $A_{\mathcal{V}}$. Hence, the restriction 
$\widetilde{\varphi}_1:F_r\D_1\rightarrow F_rB_{\mathcal{V}}$ is a bijective $K$-linear bounded map between Banach spaces. Hence, it is an isomorphism of Banach spaces. We have identities   $\D_1=\varinjlim_r F_r\D_1$, and $B_{\mathcal{V}}=\varinjlim_rF_rB_{\mathcal{V}}$ of bornological $K$-vector spaces. Therefore, $\widetilde{\varphi}_1:\D_1\rightarrow B_{\mathcal{V}}$ is an isomorphism of bornological $K$-vector spaces. Thus, it induces an isomorphism between the completions, and we get an isomorphism $\widehat{\varphi}_1:\widehat{\D}_1\rightarrow \widehat{B}_{\mathcal{V}}$. By taking inverse limits, it follows that $\varphi_1:\wideparen{\D}_1\rightarrow \wideparen{B}_{\mathcal{V}}$ is an isomorphism of complete bornological $K$-vector spaces, thus showing that $\wideparen{B}_{\mathcal{V}}$ is a co-admissible $\wideparen{\D}_X(X)\cong \wideparen{\D}_1$-module. Equivalently, $\wideparen{B}_{\mathcal{V}}$ is a co-admissible $\wideparen{\D}_X(X)\cong\wideparen{\D}_2$-module. Using the side-changing equivalence \cite[Theorem 3.4]{ardakov2015d}, it follows that $S(\Delta_+\mathcal{V})(X^2)=\Omega_{X}(X)\widehat{\otimes}_{A}B_{\mathcal{V}}$ is a co-admissible $\wideparen{\D}_X(X)^{\op}$-module.\bigskip

Furthermore, for every affinoid subdomain  $V\subset X$ we have:
\begin{align*}
    \wideparen{\D}_{\mathcal{V}}(V)=\operatorname{S}(\Delta_+\mathcal{V})(V\times X)=\wideparen{\D}_X(V)\widehat{\otimes}_{\wideparen{\D}_X(X)}\wideparen{\D}_{\mathcal{V}}(X)=&\operatorname{S}(\Delta_+\mathcal{V})(X\times V)\\=&\wideparen{\D}_{\mathcal{V}}(X)\widehat{\otimes}_{\wideparen{\D}_X(X)}\wideparen{\D}_X(V).
\end{align*}
Hence, $\wideparen{\D}_{\mathcal{V}}$ is a co-admissible $\wideparen{\D}_{X}$-module and a co-admissible $\wideparen{\D}_X^{\op}$-module.
\end{proof}
\begin{coro}
 Let $C^{\bullet}\in\operatorname{D}_{\mathcal{C}}(\wideparen{\D}_X)^-$ be a bounded above $\mathcal{C}$-complex such that:
 \begin{equation*}
     \operatorname{H}^i(C^{\bullet})\in \operatorname{Conn}(X), \textnormal{ for each } i\in\mathbb{Z}.
 \end{equation*}
Then $\Delta^{-1}\left(\operatorname{S}\left(\Delta_+C^{\bullet} \right)\right)$  is a bounded above diagonal $\mathcal{C}$-complex on $X$.
\end{coro}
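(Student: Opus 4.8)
We wish to show that the functor $\Phi:=\Delta^{-1}\circ\operatorname{S}\circ\Delta_{+}$, which by $(\ref{equation integrable connections to diagonal bimodules})$ restricts on $\operatorname{Conn}(X)$ to the assignment $\mathcal{V}\mapsto\wideparen{\D}_{\mathcal{V}}$, sends a $\mathcal{C}$-complex with cohomology in $\operatorname{Conn}(X)$ to an object of $\operatorname{D}^{\operatorname{Bi}}_{\mathcal{C}}(\wideparen{\D}_X)_{\Delta}$. The plan is, first, to observe that $\Phi$ is exact, so that it commutes with taking cohomology sheaves, and then to control those cohomology sheaves using the preceding proposition together with the finiteness built into the definition of a $\mathcal{C}$-complex.

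For exactness: since $\Delta$ is a closed immersion we have $R\Delta_{*}=\Delta_{*}$, which is exact, and by the computation carried out in the proof of Proposition \ref{prop derived internal hom and kashiwara} the transfer bimodule $\wideparen{\D}_{X^{2}\leftarrow X}$ is locally free, hence flat, as a right $\wideparen{\D}_{X}$-module; therefore $\wideparen{\D}_{X^{2}\leftarrow X}\overrightarrow{\otimes}^{\mathbb{L}}_{\wideparen{\D}_{X}}(-)$ is underived and $\Delta_{+}$ restricts to an exact functor $\mathcal{C}(\wideparen{\D}_X)\rightarrow\mathcal{C}(\wideparen{\D}_{X^{2}})$. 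Both $\operatorname{S}$ (Proposition \ref{prop side changing for bimodules}) and $\Delta^{-1}$ (Proposition \ref{prop sheaves of modules supported on the diagonal}) are equivalences of quasi-abelian categories, in particular strongly exact. Hence $\Phi$ is exact on $\operatorname{D}(\wideparen{\D}_X)$, so $\operatorname{H}^{i}(\Phi(C^{\bullet}))=\Phi(\operatorname{H}^{i}(C^{\bullet}))=\wideparen{\D}_{\operatorname{H}^{i}(C^{\bullet})}$ for all $i$, the last equality being precisely the definition in $(\ref{equation integrable connections to diagonal bimodules})$.

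Now, since $\operatorname{H}^{i}(C^{\bullet})\in\operatorname{Conn}(X)$, the preceding proposition (the exact embedding $\operatorname{Conn}(X)\rightarrow\mathcal{C}_{B}^{\operatorname{Bi}}(\wideparen{\D}_X)_{\Delta}$) gives $\wideparen{\D}_{\operatorname{H}^{i}(C^{\bullet})}\in\mathcal{C}_{B}^{\operatorname{Bi}}(\wideparen{\D}_X)_{\Delta}$. Thus the cohomology sheaves of $\Phi(C^{\bullet})$ are co-admissible diagonal bimodules; in particular they are co-admissible as left and as right $\wideparen{\D}_X$-modules, and they lie in $\mathcal{C}_{B}^{\operatorname{Bi}}(\wideparen{\D}_X)_{\Delta}$, which is the ``diagonal'' requirement. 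It remains only to verify condition $(ii)$ in the definition of a $\mathcal{C}$-complex, for both the left and the right $\wideparen{\D}_X$-structure of $\Phi(C^{\bullet})$. This is a local condition, so we may assume $X=\Sp(A)$ with an étale map $X\rightarrow\mathbb{A}^{n}_{K}$ and, as in Lemma \ref{Lemma 2 integrable connections to diagonal bimodules}, an affine formal model $\mathcal{A}$ with $\mathcal{T}_{\mathcal{A}}=\bigoplus_{i}\mathcal{A}\partial_i$. The crucial point is that $C^{\bullet}$ has only finitely many non-zero cohomology sheaves: for an integrable connection $\mathcal{V}$ one has $\widehat{U}(\pi^{m}\mathcal{T}_{\mathcal{A}})_{K}\overrightarrow{\otimes}_{\wideparen{\D}_X(X)}\mathcal{V}(X)=\mathcal{V}(X)$ by \cite[Section 7.2]{ardakov2015d} (used in exactly this way in the proof of Lemma \ref{Lemma 2 integrable connections to diagonal bimodules}), so the $\mathcal{C}$-complex condition $(ii)$ applied to $C^{\bullet}$ forces $\operatorname{H}^{i}(C^{\bullet})(X)=0$, hence $\operatorname{H}^{i}(C^{\bullet})=0$, for all but finitely many $i$. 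Consequently $\operatorname{H}^{i}(\Phi(C^{\bullet}))=\wideparen{\D}_{\operatorname{H}^{i}(C^{\bullet})}=0$ for all but finitely many $i$, so condition $(ii)$ is vacuous for both module structures. Combined with the previous paragraph, this shows $\Phi(C^{\bullet})$ is a bi-$\mathcal{C}$-complex with cohomology in $\mathcal{C}_{B}^{\operatorname{Bi}}(\wideparen{\D}_X)_{\Delta}$, i.e. a diagonal $\mathcal{C}$-complex.

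I expect the main obstacle to be the first step: making rigorous that $\Delta_{+}$, a priori a derived functor on $\operatorname{D}(\wideparen{\D}_X)$, restricts to an exact functor on the hearts (so that $\Phi$ commutes with $\operatorname{H}^{i}$), which hinges on the local flatness of the transfer bimodule, together with the clean identification of $\Phi(\mathcal{V})$ with $\wideparen{\D}_{\mathcal{V}}$. Once that and the finiteness observation of the third paragraph are in place, the preceding proposition supplies everything else and no further computation is needed.
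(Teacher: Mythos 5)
Your argument is correct and is precisely the chain of reasoning the paper leaves implicit (the corollary is stated without proof as a consequence of the preceding proposition): exactness of $\Delta_{+}$ via the local freeness of $\wideparen{\D}_{X^{2}\leftarrow X}$ established in the proof of Proposition \ref{prop derived internal hom and kashiwara}, strong exactness of $\operatorname{S}$ and $\Delta^{-1}$, and the identification $\operatorname{H}^{i}(\Phi(C^{\bullet}))=\wideparen{\D}_{\operatorname{H}^{i}(C^{\bullet})}\in\mathcal{C}_{B}^{\operatorname{Bi}}(\wideparen{\D}_X)_{\Delta}$. Your observation that condition $(ii)$ forces $C^{\bullet}$ to have (locally) only finitely many non-vanishing cohomology sheaves, because $\widehat{U}(\pi^{m}\mathcal{T}_{\mathcal{A}})_{K}\overrightarrow{\otimes}_{\wideparen{\D}_X(X)}\mathcal{V}(X)=\mathcal{V}(X)$ for an integrable connection, is exactly the right way to dispose of the finiteness condition for $\Phi(C^{\bullet})$.
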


Mathematisch-Naturwissenschaftliche Fakult\"at der Humboldt-Universit\"at zu Berlin, Rudower Chaussee 25, 12489 Berlin, Germany \newline
\textit{Email address: fpvmath@gmail.com}
\end{document}